\documentclass[a4paper,english,12pt]{smfbook}
\synctex=1
\usepackage[latin1]{inputenc}
\usepackage[bottom]{footmisc}
\usepackage{mathrsfs}
\usepackage{amssymb}
\usepackage{amsmath}
\usepackage{amsthm}
\usepackage{commath}
\usepackage{mathtools}
\usepackage{amsfonts}
\usepackage{tikz}
\usepackage{bm}
\usepackage{accents}
\usepackage{tikz-cd}
\usetikzlibrary{matrix}
\usepackage{stmaryrd}
\usepackage{hyperref}
\usepackage[shortlabels]{enumitem}

\newcommand{\PP}{\mathbb{P}}

\newcommand{\QQ}{\mathbb{Q}}
\newcommand{\ddd}{\mathbf d}

\newcommand{\prodjn}{\prod _{j=1} ^n}
\newcommand{\gk}{g_K}

\newcommand {\Gm}{\mathbb{G}_m}

\newcommand {\Gmn}{\mathbb{G}_m ^{n}} 

\newcommand{\GG}{\mathcal G}

\newcommand{\RR}{\mathbb{R}}
\newcommand{\Rgz}{\mathbb{R} _{>0}}

\newcommand{\CC}{\mathbb{C}}

\newcommand{\ZZ}{\mathbb{Z}}

\newcommand{\DD}{\mathcal{D}}

\newcommand{\Dav}{\mathcal {D} ^{\mathbf a} _v}

\newcommand{\doots}{,\dots ,}

\newcommand{\AAF}{\mathbb{A} _F}
\newcommand{\AAFt}{\mathbb{A} _F^\times}
\newcommand{\AAFj}{\mathbb {A}_F^1}

\newcommand{\AAFtn}{(\mathbb A_F^\times)^n}
\newcommand{\AAFta}{(\mathbb A_F^\times)_\aaa}

\newcommand{\acZ}{\accentset{\circ}Z}
\newcommand{\AK}{\mathfrak A_K}

\newcommand{\OO}{\mathcal O}
\newcommand{\OFS}{\mathcal O_{F,S}}

\newcommand{\FFF}{\mathcal F}

\newcommand{\Ovt}{ \mathcal O_v ^\times}
\newcommand{\Ovtn}{(\mathcal O_v^\times) ^{n}}

\newcommand{\Ft}{F ^\times}
\newcommand{\Ftn}{F^{\times n}}

\newcommand{\Fv}{F_v}
\newcommand{\Fvt}{F_v^\times}
\newcommand{\Fvn}{F^n_v}
\newcommand{\Fvtn}{(F_v^{\times})^n}
\newcommand{\Fvnz}{F^n_v-\{0\}}

\newcommand{\Fvj}{F_{v,1}}
\newcommand{\fvt}{f_v^\#}

\newcommand{\Hvt}{H_v^\#}

\newcommand{\qav}{q^\aaa _v}

\newcommand{\OOF}{\mathcal O _F}
\newcommand{\Ov}{\mathcal O _v}

\newcommand{\Ovn}{\mathcal O _v ^{\times n}}
\newcommand{\oPPa}{\overline{\PPP(\aaa)}}

\newcommand{\vMF}{v\in M_F}
\newcommand{\vMFz}{v\in M_F ^0}
\newcommand{\vMFi}{v\in M_F ^\infty}

\newcommand{\omI}{\overline{\mathfrak I}}
\newcommand{\vMFC}{v\in M_F^{\CC}}
\newcommand{\vMFR}{v\in M_F^{\RR}}

\newcommand{\XXX}{\mathscr X}
\newcommand{\YYY}{\mathscr Y}
\newcommand{\ZZZ}{\mathscr Z}

\newcommand{\sss}{\mathbf{s}}

\newcommand{\xxx}{\mathbf{x}}
\newcommand{\rrr}{\mathbf{r}}
\newcommand{\llll}{\boldsymbol{\ell}}

\newcommand{\wx}{\widetilde\xxx}
\newcommand{\wy}{\widetilde\yyy}

\newcommand{\kav}{k^{\aaa}_v}

\newcommand{\aaa}{\mathbf{a}}

\newcommand{\AAA}{\mathbb A}

\newcommand{\AAnz}{\mathbb A^n-\{0\}}

\newcommand{\bbb}{\mathbf{b}}
\newcommand{\yyy}{\mathbf{y}}
\newcommand{\zzz}{\mathbf{z}}

\newcommand{\www}{\mathbf{w}}

\newcommand{\KiRn}{\mathcal K+i\RR^n}
\newcommand{\chij}{\chi ^{(j)}}

\newcommand{\chiv}{\chi_v}
\newcommand{\chivk}{\chi_v^{(k)}}
\newcommand{\uuu}{\mathbf{u}}

\newcommand{\jed}{\mathbf {1}}
\newcommand{\mmm}{\mathbf{m}}

\newcommand{\wH}{\widehat H}

\newcommand{\PPP}{\mathscr P}

\newcommand{\TT}{\mathscr T}
\newcommand{\TTT}{\mathscr T}
\newcommand{\TTa}{\mathscr T ({\mathbf{a}})}

\newcommand{\TTd}{\mathscr T(d)}
\newcommand{\TTm}{\mathscr T(m)}

\newcommand{\piv}{\pi_v}
\newcommand{\pivv}{|\pi _v|_v}
\def\card #1{\mathopen| #1 \mathclose|}
\DeclareMathOperator{\an}{an}
\DeclareMathOperator{\cond}{cond}
\DeclareMathOperator{\coun}{count}
\DeclareMathOperator{\disc}{disc}

\DeclareMathOperator{\Res}{Res}
\DeclareMathOperator{\Pic}{Pic}
\DeclareMathOperator{\wPic}{\widehat{\Pic}}

\DeclareMathOperator{\Tam}{Tam}
\DeclareMathOperator{\coker}{coker}
\DeclareMathOperator{\Hom}{Hom}

\DeclareMathOperator{\Gal}{Gal}

\DeclareMathOperator{\Sing}{Sing}

\DeclareMathOperator{\Div}{Div}

\DeclareMathOperator{\Imm}{Im}
\DeclareMathOperator{\Ideal}{Ideal}
\DeclareMathOperator{\discrete}{discrete}
\DeclareMathOperator{\Jac}{Jac}
\DeclareMathOperator{\lcm}{lcm}
\DeclareMathOperator{\Id}{Id}
\DeclareMathOperator{\prim}{prim}

\DeclareMathOperator{\supp}{supp}

\DeclareMathOperator{\Spec}{Spec}

\DeclareMathOperator{\fppf}{fppf}
\DeclareMathOperator{\fin}{fin}

\DeclareMathOperator{\rk}{rk}
\DeclareMathOperator{\Conrad}{Conrad}

\DeclareMathOperator{\Cl}{Cl}

\DeclareMathOperator{\Tr}{Tr}
\DeclareMathOperator{\Reg}{Reg}

\def\no{n\textsuperscript{0}\,}
\def\sPic{\mathop{\mathscr P\mathit{ic}}}

\newtheorem{mydef}[equation]{Definition}
\newtheorem{lem}[equation]{Lemma}
\newtheorem{thm}[equation]{Theorem}

\newtheorem{conj}[equation]{Conjecture}

\newtheorem{prop}[equation]{Proposition}

\newtheorem{rem}[equation]{Remark}

\newtheorem{cor}[equation]{Corollary}

\newtheorem{exam}[equation]{Example}

\newtheorem*{theorem*}{Theorem}

\numberwithin{equation}{subsection}
\usepackage{blindtext}

  \DeclareFontFamily{U}{wncy}{}
    \DeclareFontShape{U}{wncy}{m}{n}{<->wncyr10}{}
    \DeclareSymbolFont{mcy}{U}{wncy}{m}{n}
    \DeclareMathSymbol{\Sh}{\mathord}{mcy}{"58} 

\begin{document}
\title{Rational points of bounded height on weighted projective stacks}
\date{\huge \textbf{PhD thesis, submitted}}
\author{Ratko Darda}
\address{Université de Paris, Sorbonne Université, CNRS, Institut de Mathématiques de Jussieu-Paris Rive Gauche, IMJ-PRG, F-75013, Paris, France}
\email{ratko.darda@imj-prg.fr}
\begin{abstract}
A weighted projective stack is a stacky quotient $\PPP(\aaa)=(\AAA^n-\{0\})/\Gm$, where the action of~$\Gm$ is with weights $\aaa\in\ZZ^n_{>0}$. Examples are: the compactified moduli stack of elliptic curves $\PPP(4,6)$ and the classifying stack of $\mu_m$-torsors $B\mu_m=\PPP(m)$. We define heights on the weighted projective stacks. 
The heights generalize the naive height of an elliptic curve and the absolute discriminant of a torsor. 

We use the heights to count rational points. We find the asymptotic behaviour for the number of rational points of bounded heights. 
\end{abstract}

\begin{altabstract}
Un champ projectif à poids est un quotient champêtre $\PPP(\aaa)=(\AAA^n-\{0\})/\Gm$, où l'action de~$\Gm$ est avec des poids $\aaa\in\ZZ^n_{>0}$. Des examples sont: le champ compactifié de modules de courbes elliptiques $\PPP(4,6)$ et le champ classifiant de $\mu_m$-torseurs $B\mu_m=\PPP(m)$. Nous définissons des hauteurs sur ces champs. Les hauteurs généralisent la hauteur na\"ive d'une courbe et le discriminant absolu d'un torseur. 

Nous utilisons les hauteurs pour compter des points rationnels. Nous trouvons le comportement asymptotique pour le nombre de points rationnels de hauteur bornée.
\end{altabstract}
\maketitle
\tableofcontents
\chapter{Introduction}
\section{Notation}
The following notation will be used throughout the thesis. By~$F$ we will denote a number field (which one may fix for the whole article). Let $M_F$, $M_F^0$, $M_F^\infty$, $M_F^\RR$ and $M_F^\CC$ be the set of places, finite places, infinite places, real places and complex places of~$F$, respectively. 
For $v\in M_F$ we let $\Fv$ be the~$v$-adic completion of~$F$.
For $\vMFz$, let $\Ov$ be the ring of integers of $\Fv$, let us fix an uniformizer $\piv\in F_v$ and let $\lvert\cdot\rvert_v$ be the absolute value on $F_v$ normalized by $\pivv=[\Ov:\piv\Ov]^{-1}.$ For $\vMF^{\RR}$, we let $\lvert\cdot\rvert_v$ be the usual absolute value and for $\vMF^\CC$ we let $\lvert\cdot\rvert_v$ be the square of the usual absolute value. 
The normalizations are chosen so that the product formula is valid i.e. for every $x\in F$, one has $$\prod_{\vMF}|x|_v=1.$$
By $\OO_F$ we denote the ring of the integers of~$F$ and for a finite subset $S\subset M_F^0$, we denote by $\OFS$ the ring of~$S$-integers. When $\vMFi$, we will denote by $n_v$ the degree $[\Fv:\RR]$. We denote by $\AAF$ the ring of the adeles of~$F$ and by $\AAFt$ the group of ideles.

For a vector $\xxx\in\CC^n$, we will denote by $|\xxx|$ the sum $x_1+\cdots x_n$.
\section{Manin-Peyre conjecture}Let us recall a conjecture due to Manin and Peyre on the asymptotic behaviour of the number of rational points of bounded ``size".
\subsection{}One of the fundamental questions in Diophantine geometry is the study of the number of solutions to algebraic equations. The conjecture of Manin-Peyre deals with a such a question. It predicts the number of the rational points on algebraic varieties, when there are ``a lot" of them. Let us briefly recall it. 

Let~$X$ be a Fano variety over a number field~$F$ and let $K_X^{-1}$ be its anticanonical bundle. The Fano condition, i.e. that $K_X^{-1}$ is positive, is believed to make, after possibly passing to an extension of~$F$, rational points Zariski dense in~$X$. An adelic metric on $K_X^{-1}$ is a choice of metrics for every topological line bundle $K_X^{-1}(F_v)\to X(F_v)$ for~$v$ in the set of the places $M_F$ of~$F$, subject to certain compatibility conditions. A choice of an adelic metric on $K_X^{-1}$ produces two things. Firstly, it gives a {\it height}, i.e. a function $H:X(F)\to\RR_{>0}$ which satisfies the Northcott's property: for every $B>0$, the set $\{x\in X(F)|H(x)\leq B\}$ is finite. This in essence, generalizes the classical notion of the height on the projective space $\PP^n$ when $F=\QQ,$ which is given by $H(\xxx)=\max|x_j|$, where~$\xxx$ are coordinates which satisfy $\gcd(\xxx)=1$. The height serves as a ``size" of a rational point. Secondly, the choice of the adelic metric produces a measure $\omega_H$ on the adelic space $X(\AAF):=\prod_{\vMF}X(\Fv)$ (see \cite{Peyre}). Let $\tau_H$ be the value $\omega_H(\overline{X(F)})$, where the closure $\overline{X(F)}$ is taken in $X(\AAF)$. The following question is asked by Peyre in \cite{Peyre} and refines the original question posed by Manin: 
\begin{conj}\label{conjofpeyre} Suppose that the rational points $X(F)$ are Zariski-dense in~$X$. Then there exist a closed subvariety $Z\subsetneq X$, such that one has $$|\{x\in (X-Z)(F)| H(x)\leq B\}|\sim_{B\to\infty}\alpha \tau_H B\log(B)^{\rk(\Pic(X))-1},$$where $\alpha=\alpha(X)$ is a positive constant connected to the location of $K_X^{-1}$ in the ample cone of~$X$ and $\rk(\Pic(X))$ is the rank of the Picard group of~$X$.
\end{conj}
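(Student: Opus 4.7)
The statement is formulated as a conjecture, not a theorem, and it is in fact open in this generality. What I can propose is a strategy that has been successfully implemented in many known cases (toric varieties, equivariant compactifications of vector and unipotent groups, flag varieties, some del Pezzo surfaces), and which is the natural blueprint for tackling any new case of the conjecture — including, presumably, the weighted projective stack setting of this thesis.

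\textbf{Overall strategy: the height zeta function.} The plan is to study the Dirichlet series
\[
Z(s) = \sum_{x\in (X-Z)(F)} H(x)^{-s},
\]
and to derive the asymptotic by Tauberian means. First I would establish absolute convergence of $Z(s)$ in some half-plane $\Re(s)>s_0$; crude bounds on lattice points (for $X=\PP^n$ via the geometry of numbers, and more generally via a choice of projective embedding) give a finite $s_0$. Next, the decisive step is to obtain a \emph{meromorphic continuation} of $Z(s)$ to a region $\Re(s)\geq s_0-\delta$ for some $\delta>0$, to locate its rightmost pole at $s=s_0$, to compute its order, and to identify the leading coefficient. Finally, a standard Tauberian theorem (for example Landau's, or the more refined form of Delange--Ikehara used by Chambert-Loir--Tschinkel) converts the analytic data at the pole into the predicted asymptotic $\alpha\tau_H B\log(B)^{\rk(\Pic(X))-1}$, with the interpretation of $\alpha$ in terms of the position of $K_X^{-1}$ in the ample cone furnished by the geometric side of Peyre's construction.

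\textbf{Executing the continuation.} For the meromorphic continuation I would use harmonic analysis, which requires some symmetry on $X$. In the presence of a group $G$ acting on $X$ (a torus, a unipotent group, or more generally as in the thesis $\Gm$ acting on $\AAA^n-\{0\}$ with weights $\aaa$), one expresses $Z(s)$ by Poisson summation / Plancherel on $G(\AAF)/G(F)$ as an integral of local Fourier transforms of the height pairing:
\[
Z(s) = \int_{\widehat{G(\AAF)/G(F)}} \widehat{H^{-s}}(\chi)\, d\chi.
\]
Each local factor $\widehat{H_v^{-s}}(\chi_v)$ is an orbital integral that can be computed explicitly; at almost all places it is expressible in terms of local $L$-factors, which by Langlands' theory of Eisenstein series or by direct Euler product manipulation yields the desired continuation. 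The rightmost pole then comes from the trivial character, and its order is exactly $\rk(\Pic(X))$, matching the logarithmic power in the conjecture.

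\textbf{Main obstacles.} The principal difficulties are: (i) controlling the \emph{accumulating subvariety} $Z$, which is necessary because rational points on certain subvarieties (such as lines on cubic surfaces) can dominate the count — this forces one to work on a ``cleaned'' locus $X-Z$ and to understand geometrically which subvarieties accumulate; (ii) proving the \emph{meromorphic continuation past the pole}, which is the analytic heart of the argument and where the group-theoretic symmetry is essential — for a variety without such symmetry this step currently has no general method; (iii) matching the constant, i.e.\ verifying that the leading coefficient of $Z(s)$ at $s=s_0$ coincides with Peyre's Tamagawa-type expression $\alpha\tau_H$, which requires a careful compatibility check between the local Fourier transforms and the local measures defining $\omega_H$. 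In the setting of this thesis, the analogues of these three steps will have to be reformulated for stacks, where the height and the Tamagawa measure on $\PPP(\aaa)$ must first be correctly defined — which is presumably the content of the chapters that follow.
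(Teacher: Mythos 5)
You are right that the statement is a conjecture, not a theorem: the paper does not prove it and indeed explicitly notes it admits counterexamples in the closed-subvariety formulation. There is therefore no proof in the paper to compare against, and your response — declining to prove it, and instead sketching the harmonic-analysis/height-zeta-function strategy — is the appropriate one. Your outline (Poisson summation on the adelic points of a group acting on $X$, local Fourier transforms expressed via $L$-factors, meromorphic continuation, Tauberian theorem, and matching the leading constant with Peyre's $\alpha\tau_H$) is exactly the method the thesis recalls in the introduction and then carries out for the weighted projective stacks $\PPP(\aaa)$ in Chapters 4–7, so you have correctly identified both the status of the statement and the blueprint the paper follows in the cases it actually proves.
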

One removes a closed subvariety to avoid so-called ``accumulating" subvarieties, which contain more points than the rest of the variety. 

The conjecture has been settled in many different cases. The proof for the case of $\PP^n$ is given by Schanuel in \cite{Schanuel}, long before the conjecture was even formulated. Other important known cases of the conjecture are toric varieties (\cite{Toric}),  equivariant compactifications of vector groups (\cite{Vgps}), certain families of Ch\^atelet surfaces (\cite{Chatelet}, \cite{Destagnol}), etc. The version from \ref{conjofpeyre} does admit counterexamples (e.g. \cite{BTcount}, \cite{LeRudulier}). There exists a version for which no known counterexamples exist: instead of removing closed subvarieties, one removes ``thin" sets (a thin set is a subset of the image of the set of rational points $V(F)$ for a morphism of varieties $V\to X$, which, in a neighbourhood of the generic point of $V,$ is quasi-finite and admits no section).  For a survey on Manin-Peyre conjecture, we refer the reader to \cite{LectHZF}.
\subsection{}Different methods are available to tackle the question: universal torsors, circle method, harmonic analysis, Eisenstein series, etc. We briefly recall the harmonic analysis method, firstly used in \cite{aniso} by Batyrev and Tschinkel to prove Manin-Peyre conjecture on compactifications of anisotropic tori and later developed in \cite{Toric},  \cite{FonctionsZ}, \cite{Vgps}, \cite{integral}, etc. to settle more general and new examples. Let~$X$ be a toric variety and let~$T$ be its torus. Let~$H$ be the height given by an adelic metric on the anti-canonical line bundle. We count the rational points of~$T$ (the divisor at the infinity $X-T$ may, however, accumulate points). We have that $T(F)$ is discrete in the adelic torus $T(\AAF)$. We extend $H|_{T(F)}$ to a ``height" on $T(\AAF)$. We let $\wH(s,\chi)$ be the Fourier transform of $H^{-s}$ (where $s$ is a complex number) at the character $\chi:T(\AAF)\to S^1$ which vanishes at $T(F)$. The global transform is an Euler product of local transforms $\wH_v(s,\chi_v)$ for $\vMF$. The local transforms are Igusa integrals (see \cite{Igusa}) and we can either give exact formulas for them or prove certain bounds. Then global Fourier transform $\wH(s,\chi)$ turns out to be a product of~$L$-functions and a part that is easy to analyse.

The Poisson formula (\ref{formuledepoison}) gives $Z(s)=\int_{(T(\AAF)/T(F))^*}\wH(s,\chi)d\chi,$ where $d\chi$ is suitably normalized Haar measure on the group of the characters $(T(\AAF)/T(F))^*$.  There are methods to analyse the integrals on the right hand side, e.g. method of ``controlled $M$-functions" from \cite{FonctionsZ}. 

One obtains the pole and a meromorphic extension of~$Z$, which, by Tauberian results, gives the wanted asymptotic for the number of rational points of~$T$ of bounded height.
\section{Manin-Peyre conjecture for stacks} In this thesis, we are intending to extend the conjecture of Manin-Peyre to the algebraic stacks.

We present two motivations.
\subsection{} A {\it naive height} $H_N$ of an elliptic curve $E/\QQ$  is defined as follows: write the equation of $E$ as $Y^2=X^3+AX+B$, where $(A,B)\in\ZZ^2$ has the property that for every prime~$p$ one has that $p^4|A\implies p^6\nmid B$ and set $H_N(E):=\max(\lvert A^3\rvert,\lvert B^2\rvert)$. Faltings, in his proof of Mordell conjecture \cite{Faltings}, defines different notions of a height of an elliptic curve called {\it unstable Faltings height} and {\it stable Faltings height}. 
For the naive height and the unstable Faltings height, it turns out that if $B>0$, there are only finitely many isomorphism classes of elliptic curves of height at most $B$. It is not hard to count elliptic curves over $\QQ$ of bounded naive height (and, as we will see later, it is possible to do so over any number field~$F$). For the case $F=\QQ$, Hortsch in \cite{Hortsch} finds the asymptotic behaviour for the number of the isomorphism classes of elliptic curves and bounded unstable Faltings height. Both asymptotics are similar to the asymptotics appearing in Manin-Peyre conjecture. However, there is a distinction: the elliptic curves over a number field are not classified by a variety, but by an {\it algebraic stack}. The stack is usually denoted by $\mathcal M_{1,1}$. 
\subsection{} We present another example where one counts rational points on algebraic stacks. Malle in \cite{Malle} conjectures the following: 
\begin{conj}[Malle, \cite{Malle}] Let~$G$ be a non-trivial finite permutation group on~$n$ letters and let~$F$ be a number field. 
We say that $\Gal (K/F)=G$ if $K/F$ is an extension such that the Galois group of its Galois closure is isomorphic to~$G$. There exists $c(F,G)>0,$ such that $$|\{K/F|\Gal(K/F)=G, \Delta(K/F)\leq B \}|\sim c(F,G)B^{a(G)}\log (B)^{b(F,G)-1},$$ when $B\to\infty$, where $\Delta$ is the absolute discriminant of an extension, and $a(G)$ and $b(F,G)$ are explicit invariants of~$G$ and of~$F$ and~$G$, respectively.
\label{malle}
\end{conj}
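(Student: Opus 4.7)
The plan is to translate Malle's counting problem into one about rational points on the classifying stack $BG$, and then apply the methods developed in this thesis. A field extension $K/F$ whose Galois closure has Galois group $G$ corresponds, after choosing a faithful $G$-set of cardinality $n=|G/H|$ for a point-stabilizer $H$, to a $G$-torsor over $\Spec F$, that is, to an element of $BG(F)$. The first step is to construct an adelic height on $BG$ whose value essentially recovers the absolute discriminant $\Delta(K/F)$. In the case $G=\mu_m$ the stack $BG$ coincides with the weighted projective stack $\PPP(m)$ central to this thesis, and for a general abelian~$G$ one decomposes $BG$ as a product of such stacks.

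In the abelian case I would proceed by the harmonic analysis strategy outlined in the introduction. After identifying $BG(\AAF)$ with a quotient such as $\AAFt/\AAFt^m$ via Kummer theory (first passing to the extension $F(\zeta_m)$ containing the roots of unity, and then descending by an inclusion-exclusion), the global height factors as a product over places of explicit local heights $H_v$. One then computes each local Fourier transform $\wH_v(s,\chi_v)$ of $H_v^{-s}$ at a character $\chi_v$ of $\Fvt/\Fvt^m$; these are Igusa-type integrals yielding, at almost all places, the expected local~$L$-factor and a holomorphic correction elsewhere, so that the global transform $\wH(s,\chi)$ becomes a product of Hecke~$L$-functions times an auxiliary factor regular to the right of the anticipated pole.

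The Poisson formula, applied to the compact abelian group $\AAFt/F^\times\AAFt^m$, then represents the height zeta function $Z(s)$ as an integral over characters, and a residue analysis combined with a Tauberian theorem yields the asymptotic of Conjecture \ref{malle}, with $c(F,G)$ emerging as a product of local densities in the spirit of Peyre's Tamagawa number. For non-abelian $G$ the stack $BG$ is no longer weighted projective, and one would have either to embed $G$ in a larger group for which the problem is tractable and to descend by a M\"obius-type sieve, or to work directly with the Plancherel decomposition of functions on $BG(\AAF)$, at the cost of losing the clean Euler product structure.

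The main obstacle is twofold. First, one must excise the thin subset of extensions whose Galois closure has Galois group strictly smaller than~$G$; these play the role of the accumulating subvariety $Z$ in Conjecture \ref{conjofpeyre} and are responsible for the known counterexamples to the naive form of Malle's conjecture. Second, even in the abelian case the correct exponent $b(F,G)-1$ must be matched against the order of vanishing of the relevant~$L$-functions, and while this is a tractable bookkeeping problem in the toric picture, it becomes a genuinely representation-theoretic issue once $G$ is non-abelian, which I expect to be the hardest step of the whole program.
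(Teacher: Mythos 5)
This statement is a \emph{conjecture} (Malle's), not a theorem of the paper; the paper merely recalls it as motivation and explicitly notes, a few lines later, that it admits counterexamples (Kl\"uners, \cite{Kluners}). There is therefore no ``paper's proof'' to compare against, and no proof attempt can succeed for the statement as written: the asserted asymptotic is false in general without modification (e.g.\ excising a thin set, or adjusting $b(F,G)$). Your proposal is not a proof but a research program, and it tacitly acknowledges this when it mentions the counterexamples and flags the non-abelian case as ``the hardest step of the whole program.''

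Beyond that category error, the concrete gap in what you outline is the claim that for abelian $G$ one could carry the toric harmonic analysis through to recover Malle's count. What the thesis actually proves (Corollary~\ref{countingquasidisc}) is an asymptotic for $\mu_m$-torsors of bounded quasi-discriminant height, which differs from the Malle count in two essential ways: (i) $\mu_m$-torsors over $F$ are arbitrary degree-$m$ \'etale $F$-algebras equipped with a $\mu_m$-action, not just field extensions, and one must separately show the field locus has positive proportion (Theorem~\ref{licava}); and (ii) the height used is the quasi-discriminant, which only agrees with $\Delta^{1/(m(1-1/r))}$ at almost all places, so after the Tauberian step the exponent and leading constant must be matched to Malle's normalization by hand. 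Neither of these steps appears in your sketch, and the second in particular is not mere bookkeeping once you allow modifications at finitely many places. Finally, the reduction ``pass to $F(\zeta_m)$ and descend by inclusion-exclusion'' is not what the thesis does: it works directly with $\mu_m$ over $F$ precisely so that the stacky torus machinery applies without base change, and descent from $F(\zeta_m)$ would require controlling the relevant $L$-functions over the larger field, which you do not address.
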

The prediction is proved for some cases like the case of abelian groups (\cite{Wright}), some other families of groups (e.g. \cite{Wang}, \cite{dave}) and it admits counter-examples (\cite{Kluners}). An object that one counts in Malle's question determines a point on the stack $BG$ (this is the algebraic stack which classifies~$G$-torsors). Thus, Malle conjecture too, can be studied as counting rational points on an algebraic stack. Moreover, the predictions of Manin and Malle conjectures appear similar. 
The similarities have already been observed by Yasuda in \cite{Yasuda} and by Ellenberg,  Satriano and Zureick-Brown in a forthcoming work. The reason for the similarities of the predictions may be hidden in the geometry of the corresponding $BG$-stack. 

\subsection{}The goal of our work is to formulate and investigate the conjecture of Manin-Peyre in the context of algebraic stacks. More precisely, we are going to so for the {\it weighted projective stacks}. If $n\geq 1$ is an integer and $\aaa\in\ZZ^n_{>0}$, the weighted projective stack~$\PPP(\aaa)$ is the ``stacky" quotient of the scheme $\AAA^n-\{0\}$ by the group scheme~$\Gm$, where the action is given by $t\cdot \xxx:=(t^{a_j}x_j)_j$. 
When all the weights $a_j$ are equal to~$1$, then~$\PPP(\aaa)$ is the projective space $\PP^{n-1}$. One has homogenous coordinates on the weighted projective stacks: a rational point on~$\PPP(\aaa)$ is given by~$n$-tuple of elements of~$F$ and two~$n$-tuples~$\xxx$ and $\xxx'$ represent the same point if there exists $t\in F^{\times}$ such that $t^{a_j}x_j=x_j'$ for $j=1\doots n$. 

The moduli stack of elliptic curves $\mathcal M_{1,1}$ is an open substack of the stack $\PPP(4,6)$ (the stack $\PPP(4,6)$ itself is the classifying stack of the curves of genus~$1$ having at worst ordinary singularities).  Another example is given by the stack $B\mu_m$ (where $\mu_m=\Spec\big(F[X]/(X^m-1)\big)$ is the group scheme of~$m$-th roots of unity), which is
precisely the weighted projective stack $\PPP(m)$. The stack~$\PPP(\aaa)$ is smooth, proper and {\it toric}: it contains the stacky torus $\TT(\aaa)=\Gm^n/\Gm$. Its similarity with toric varieties makes it a great candidate to study the Manin-Peyre conjecture on it. 
\section{Principal results}We state principal results of our thesis. Our goal is to provide a theory similar to the one for the rational points on varieties, rather to give ad-hoc proofs of certain cases. The development of the theory occupies a significant part of our thesis.

If~$X$ is a stack and~$R$ a ring, in order to distinguish between the category $X(R)$ and the set of isomorphism classes of objects of this category, we write $[X(R)]$ for the latter. Let~$F$ be a number field.
\subsection{} Let us firstly explain when counting rational points on the weighted projective stack~$\PPP(\aaa)$ is essentially different from counting rational points on the weighed projective space $\PP(\aaa)$. Recall that the weighted projective space $\PP(\aaa)$ is the quotient $(\AAA^n-\{0\})/\Gm$ in the category of schemes for the same action as above. Let us denote by~$j$ the canonical morphism $j:\PPP(\aaa)\to\PP(\aaa)$. The scheme $\PP(\aaa)$ is a toric variety, and let us denote by $T(\aaa)$ its torus. 
%
One can verify that $$\TTa=j^{-1}(T(\aaa))\cong T(\aaa)\times B\mu_{\gcd(\aaa)}=T(\aaa)\times\PPP(\gcd(\aaa)).$$ 
%
Hence, a rational point $\xxx\in[\TTa(F)]$ is uniquely determined by the pair $(j(\xxx), \xxx')\in T(\aaa)(F)\times [j^{-1}(\xxx)(F)]\cong T(\aaa)(F)\times [\PPP(\gcd(\aaa))(F)].$ If $\gcd(\aaa)=1$, then $\PPP(\gcd(\aaa))$ is the one point scheme. It follows from above that the morphism~$j$ induces a bijection $[\TTa(F)]\xrightarrow{\sim}T(F)$. According to \cite[Proposition 6.1]{Olsint}, the pullback homomorphism $j^*_{\QQ}:\Pic(\PP(\aaa))_{\QQ}\to\Pic(\PPP(\aaa))_{\QQ}$ of the rational Picard groups is an isomorphism. It follows that the counting rational points of~$\TTa$ corresponds to counting the rational points of $\PP(\aaa)$ with respect to a height coming from a certain rational line bundle. When $\gcd(\aaa)>1,$ the set $[\PPP(\gcd(\aaa))(F)]$ is infinite (Corollary \ref{examnotnorth}), and we see that counting the rational points of (the stacky torus of)~$\PPP(\aaa)$ is not the same as the counting the rational points of (the torus of) $\PP(\aaa)$. 
\subsection{}In Chapter \ref{Quasi-toric heights}, we define a notion of {\it quasi-toric height} on the set of rational points $\PPP(\aaa).$ It is a function $H:[\PPP(\aaa)(F)]\to\RR_{\geq 0}$ and we establish that it satisfies the Northcott property. A height depends on the choice of a line bundle on the stack $\oPPa=\AAA^n/\Gm$ (where the action is canonically extended) and an ``adelic metric" on it. For $\vMF$, we define topological spaces $[\PPP(\aaa)(F_v)]:=(\Fvnz)/\Fvt,$ where the action is induced from the action of~$\Gm$ on $\AAA^n-\{0\}$. The product space $\prod_{\vMF}[\PPP(\aaa)(F_v)]$ is a good analogue of the ``adelic space" of a variety. In Chapter \ref{Measures on topological spaces associated to weighted projective stacks}, we define a measure $\omega_H$ on the product space $\prod_{\vMF}[\PPP(\aaa)(F_v)]$ and we set $\tau_H=\omega_H(\prod_{\vMF}[\PPP(\aaa)(F_v)])$. We prove that:
\begin{theorem*}[Theorem \ref{osnovna}, Proposition \ref{moreofosnovna}]
Let~$H$ be a quasi-toric height. One has that $$|\{\xxx\in[\PPP(\aaa)(F)]|H(\xxx)\leq B\}|\sim_{B\to\infty}\frac{\tau_H}{|\aaa|}B.$$
\end{theorem*}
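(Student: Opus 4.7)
The plan is to adapt the harmonic-analytic method recalled in the introduction, using that $\PPP(\aaa)$ is toric with stacky torus $\TTa=\Gmn/\Gm$. First I would reduce to counting on the stacky torus: the complement $\PPP(\aaa)\setminus\TTa$ is the union of the closed substacks cut out by $x_j=0$, each a weighted projective stack of strictly smaller dimension, so by induction on $n$ the number of its points of bounded height is $O(B^{\delta})$ for some $\delta<1$ and may be absorbed into the error.

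The core of the argument is then the analysis of the height zeta function
$$Z(s)=\sum_{\xxx\in[\TTa(F)]}H(\xxx)^{-s},$$
absolutely convergent for $\Re(s)$ large. Using the adelic metric defining $H$ from Chapter \ref{Quasi-toric heights}, extend $H^{-s}$ to a continuous function $\Ht^{-s}$ on the locally compact abelian quotient $\prod_v\Fvtn/\prod_v\Fvt$, with the weighted $\Fvt$-action $t\cdot\xxx=(t^{a_j}x_j)_j$, inside which the diagonal image of $[\TTa(F)]$ is discrete. The Poisson summation formula (\ref{formuledepoison}) then yields
$$Z(s)=\int\wH(s,\chi)\,d\chi,$$
the integral running over the unitary characters $\chi$ trivial on $[\TTa(F)]$, with $\wH(s,\chi)$ the Fourier transform of $\Ht^{-s}$ at $\chi$.

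Because $\Ht$ has a product structure, $\wH(s,\chi)=\prod_v\wH_v(s,\chi_v)$ factors as an Euler product of local Igusa integrals on $\Fvtn/\Fvt$. At almost every place an explicit computation matches $\wH_v(s,\chi_v)$ with the $v$-factor of a product of Hecke $L$-functions attached to the component characters of $\chi$, times a correction holomorphic and nowhere vanishing in a half-plane $\Re(s)>1-\delta$; at the remaining finitely many places I would give explicit bounds. Combined with the standard analytic properties of Hecke $L$-functions, this gives a meromorphic continuation of $Z(s)$ to $\Re(s)>1-\delta$ whose only pole there is a simple pole at $s=1$, coming from the trivial character $\chi_0$. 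At $\chi_0$ the transform reduces to the adelic integral of $\Ht^{-s}$; tracking the normalisations from Chapter \ref{Measures on topological spaces associated to weighted projective stacks}, the change of variables $t\mapsto(t^{a_j})_j$ used to quotient by the $\Gm$-action introduces the Jacobian $|\aaa|$, which gives $\Res_{s=1}Z(s)=\tau_H/|\aaa|$; a standard Tauberian theorem of Landau type then produces the asymptotic.

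The main obstacle will be the control of the non-trivial characters: to justify the above, the integral of $\wH(s,\chi)$ over $\chi\neq\chi_0$ must extend holomorphically across $\Re(s)=1$, which requires uniform-in-$\chi$ decay estimates for $\wH(s,\chi)$, presumably through a weighted analogue of the \emph{controlled $M$-functions} technique of \cite{FonctionsZ} adapted to the non-trivial weights $a_j$. This is where the weighted stacky structure genuinely enters the analysis and is the most delicate point of the proof.
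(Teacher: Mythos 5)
Your sketch follows the same route the paper takes: reduction to the stacky torus $\TTa$, Poisson summation on the adelic group $[\TTa(\AAF)]$, Euler product of local Igusa integrals matched against Hecke $L$-functions, the Chambert-Loir--Tschinkel $M$-controlled-functions machinery to carry out the integral over the continuous family of characters (this is exactly where the weights enter through the lattice $M=\{\xxx:\aaa\cdot\xxx=0\}$), and a Tauberian theorem to conclude.

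One imprecision worth flagging in the boundary reduction: the components of $\PPP(\aaa)\setminus\TTa$ are not smaller simply because they have smaller dimension — dimension alone does not force the exponent below $1$. What actually happens is that the quasi-toric height of degree $|\aaa|$, restricted to the boundary stratum $\PPP(p^j(\aaa))$, still has degree $|\aaa|$, which is \emph{strictly larger} than the anticanonical degree $|p^j(\aaa)|=|\aaa|-a_j$ of that stratum. So the restricted height is the $|\aaa|/|p^j(\aaa)|>1$ power of the stratum's anticanonical quasi-toric height, and the inductive bound from Theorem~\ref{boundppatta} gives a count $\ll B^{|p^j(\aaa)|/|\aaa|}\cdot(\log B)^{O(1)}$, with exponent $<1$. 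Without this observation the induction, taken at face value, would only return $\sim c_j B$ on each stratum and the boundary would not be negligible.
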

For a particular type of quasi-toric height (that in our work is called {\it toric height}), the result has been established in \cite{NajmanBruin}. For the other heights, the result is new.

We establish furthermore that the rational points of~$\PPP(\aaa)$ are equidistributed in $\prod_{\vMF}[\PPP(\aaa)(F_v)]$ in the following sense. Let $i:[\PPP(\aaa)(F)]\to\prod_{\vMF}[\PPP(\aaa)(F_v)]$ be the diagonal map. If $W\subset\prod_{\vMF}[\PPP(\aaa)(F_v)]$ is an open subset such that $\omega(\partial W)=0$, in Theorem \ref{quasitoricequi}, we prove that $$\lim_{B\to\infty}\frac{|\{\xxx\in[\PPP(\aaa)(F)]| i(\xxx)\in W\text{ and }H(\xxx)\leq B\}|}{|\{\xxx\in [\PPP(\aaa)(F)]|H(\xxx)\leq B\}|}=\frac{\omega_H(W)}{\tau_H}.$$
\subsection{} Let us state the second principal result of our work. We suppose that $n=1$ and that $m\in\ZZ_{>1}$. We count $\mu_m$-torsors over~$F$ (i.e the rational points of $\PPP(m)$) of bounded discriminant. In Chapter \ref{number of torsors}, we define a notion of {\it quasi-discriminant height}. It is a function $H:[\PPP(m)(F)]\to\RR_{\geq 0}$ which satisfies the Northcott property, and is essentially different from a quasi-toric height (one cannot normalize it so that the quotients of the two heights are bounded functions on $[\PPP(m)(F)]$). The normalizations are taken so that $H^{m(1-1/r)}$, where $r$ is the least prime of $m,$ is essentially the absolute discriminant $\Delta$ of a $\mu_m$-torsor (i.e. the local components of $H^{m(1-1/r)}$ are different from the local components of the absolute discriminant $\Delta$ at only finitely many places, consequently $H^{m(1-1/r)}/\Delta$ is bounded on $[\PPP(m)(F)]$). As above, a choice of the quasi-discriminant height~$H$ defines a measure $\omega_H$ on $\prod_{\vMF}[\PPP(m)(F_v)]$ and we set $\tau_H=\omega_H(\prod_{\vMF}[\PPP(m)(F_v)])$. We prove that
\begin{theorem*}[Corollary \ref{countingquasidisc}]
Let~$H$ be a quasi-discriminant height. One has that $$|\{\xxx\in[\PPP(m)(F)]|H(\xxx)\leq B\}|\sim_{B\to\infty}\frac{1}{(r-2)!}\cdot\frac{\tau_H}{m}\cdot B\log(B)^{r-2}.$$ 
\end{theorem*}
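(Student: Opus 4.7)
The plan is to reduce the counting to a harmonic analysis setup analogous to the one used earlier in the thesis for quasi-toric heights. By Kummer theory the isomorphism classes $[\PPP(m)(F)]$ are in bijection with $F^\times/F^{\times m}=H^1(F,\mu_m)$, and similarly $[\PPP(m)(\Fv)]=F_v^\times/F_v^{\times m}$; the quasi-discriminant height, defined through local components, extends to an $(\mathbb A_F^{\times})^m$-invariant function on the ideles, giving rise to a height zeta function $Z(s)=\sum_{x\in F^\times/F^{\times m}}H(x)^{-s}$. First I would apply Poisson summation over the discrete cocompact subgroup $F^\times/F^{\times m}\subset\mathbb A_F^\times/(\mathbb A_F^\times)^m$, splitting $Z(s)$ as a sum over characters of the Pontryagin dual of $\mathbb A_F^\times/F^\times(\mathbb A_F^\times)^m$ of the adelic Fourier transform
$$\wH(s,\chi)=\prod_{\vMF}\wH_v(s,\chi_v).$$

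The next step is to analyze each local factor $\wH_v(s,\chi_v)$ as an Igusa-type integral over $F_v^\times/F_v^{\times m}$, computable by partitioning into coset representatives. At finite places unramified in $\chi_v$ and where $H_v$ is given by its standard quasi-discriminant formula, the factor should match, up to a harmless correction, the unramified Euler factor of a Hecke $L$-function, with shifts dictated by the normalization $H^{m(1-1/r)}\sim\Delta$. Accumulating these local formulas over all $v$ leads to a global factorization of the form
$$\wH(s,\chi)=\prod_{d\mid m}L(s,\chi'_d)^{e_d(\chi)}\cdot G(s,\chi),$$
with $G(\cdot,\chi)$ holomorphic of controlled growth in vertical strips; the characters $\chi'_d$ and the exponents $e_d(\chi)$ are determined by the decomposition of $\chi$ according to subgroups $\mu_d\subset\mu_m$.

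The hard part will be the bookkeeping of the pole orders: because $r$ is the least prime divisor of $m$, the $L$-factor exponents for the trivial character $\chi=\mathbf{1}$ should reach the maximal value $r-1$, reflecting the $r-1$ nontrivial classes of $\mu_r$ viewed inside $\mu_m$ (these give rise to the ``minimal ramification'' local factors that already have a pole at $s=1$), while all nontrivial characters contribute poles of strictly smaller order. In contrast with the Manin-Peyre toric setting, the resulting pole order $r-1$ is not equal to $\rk(\Pic(\PPP(m)))=1$ but is forced by the combinatorics of $\mu_m$-ramification, a genuinely stacky phenomenon. Once the pole of $Z(s)$ at $s=1$ has been isolated, a standard Tauberian theorem for Dirichlet series with a pole of order $r-1$ yields the asymptotic $\frac{1}{(r-2)!}\cdot\frac{\tau_H}{m}\cdot B\log(B)^{r-2}$: the prefactor $1/m$ arises from summing over $F^\times/F^{\times m}$ while $\tau_H$ is a volume on $\prod_v F_v^\times/F_v^{\times m}$, the combinatorial factor $1/(r-2)!$ is the Tauberian constant for a pole of order $r-1$, and the identification of the leading coefficient with $\tau_H$ proceeds, as in the toric case, by matching the residue of each local $L$-factor against the local density defining $\omega_H$.
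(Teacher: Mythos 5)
You have the right overall strategy --- Kummer-theoretic identification $[\PPP(m)(F)]\cong F^\times/F^{\times m}$, Poisson summation over $[\TT(m)(i)]([\TT(m)(F)])\subset[\TT(m)(\AAF)]$, local Igusa-type computation, comparison with Hecke $L$-functions, and a Tauberian step --- and your intuition for why the minimal discriminant jump $m-\gcd(j,m)=m(1-1/r)$ produces exactly $r-1$ local terms of the form $\pivv^s$ is correct. But there is a concrete gap in the pole bookkeeping that changes the answer for composite $m$.

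You assert that ``all nontrivial characters contribute poles of strictly smaller order.'' This is false in general. Writing $\wH(s,\chi)=\gamma(s,\chi)\prod_{j=1}^{r-1}L(s,\widetilde\chi^{mj/r})$ (Lemmas \ref{disckchi} and \ref{whdiscam}), the number of $j\in\{1,\dots,r-1\}$ with $\widetilde\chi^{mj/r}=1$ is either $0$ or $r-1$; it equals $r-1$ precisely when $\chi^{m/r}=1$ (since $r$ is prime and $\widetilde\chi^m=1$, if one $\widetilde\chi^{mj/r}$ is trivial then all are, by the coprimality trick in the proof of \ref{whdiscam}). So the pole of order $r-1$ at $s=1$ is contributed not only by $\chi=\mathbf 1$ but by the whole finite group $\Xi_K=\{\chi\in\AK:\chi^{m/r}=1\}$, which is nontrivial whenever $m$ is composite (e.g.\ for $m=4$, $r=2$, it is the $2$-torsion of $\AK$). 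Your argument, taking only $\chi=\mathbf 1$, is correct only when $m$ is prime.

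This matters for the leading constant, not just the pole order. The paper handles the sum $\sum_{\chi\in\Xi_K}\wH(s,\chi)$ by Poisson inversion a second time (Lemma \ref{lemakojunerazumi}), rewriting it as $|\Xi_K|\int_{\Xi_K^\perp}H^{-s}\mu_{\AAF}=\int_{\Xi_\infty^\perp}H^{-s}\mu_\infty^\perp$, and then \emph{defines} $\omega_H$ and $\tau_H$ (Definition \ref{peyredisc}) through the limit $\lim_{s\to 1^+}(s-1)^{r-1}$ of this integral. Thus $\tau_H$ is not simply an Euler product of local densities of the ``$\chi=1$'' integral; it encodes the contributions of the whole group $\Xi_K$, or equivalently of the closed subgroup $\Xi_\infty^\perp=\overline{[\TT(m)(F)][\TT(m)(\AAF)]_{m/r}}$. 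Without this step you would obtain a constant that does not match the $\tau_H$ in the statement, nor the positivity argument in \ref{positivityofpd}. To repair your proof you need to (i) identify $\Xi_K$ as the set of characters of maximal pole order and (ii) package the sum over $\Xi_K$ into a volume on $\Xi_\infty^\perp$; the rest of your outline then goes through.
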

Again we prove an equidistribution property of rational points in the space $\prod_{\vMF}[\PPP(m)(F_v)]$ (Theorem \ref{equidisc}). The equidistribution property is used to prove that a positive proportion of $\mu_m$-torsors of bounded quasi-discriminant height are fields and that a positive proportion are not fields. Moreover, when $4\nmid m$ or when $i=\sqrt{-1}\in F$, we are able to give a formula for the proportion of fields (Theorem \ref{licava}).

Suppose that~$F$ contains all~$m$-th roots of unity. In particular, one has that $4\nmid m$ or that $i\in F$. We have that $\mu_m=\ZZ/m\ZZ$. In this case, the asymptotics for the number of $\mu_m=\ZZ/m\ZZ$-torsors of bounded discriminant which are fields, has already been given by Wright in \cite{Wright}. The advantage of our method is that we are able to modify the discriminant at finitely many places. 
%
%
%
%
%
\section{Overview of the thesis}\label{Overview of the thesis}Let us make an overview of our work.
\subsection{}\label{metheit} Let us discuss a difference between heights on varieties and stacks. Let~$X$ be a proper~$F$-variety and let~$L$ be a line bundle on~$X$. It is well known that a choice of an $\OFS$-model of $(X,L)$ (where~$S$ is a finite set of finite places of~$F$) endows~$L$ with a metric for every finite place not in~$S$. Endowing the line bundle~$L$ at the remaining places with a metric, gives an adelic metric on~$L$, and hence a height on $X(F)$.

In the construction of the metric for the finite places not in~$S$, one uses the {\it valuative criterion of properness} which gives that every $F_v$-point of~$X$ extends to an $\Ov$-point of the model.  However, this is not true for stacks (e.g. only the points of $\PPP(4,6)$ corresponding to curves having good reductions at~$v$ do extend to $\Ov$-points).

The valuative criterion of properness for stacks gives only that an $\Fv$-point extends to an~$A$-point of the model, where~$A$ is the normalization of $\Ov$ in a finite extension of $\Fv$. 
Such integral extensions give rise to stable heights (i.e. the height of an~$F$-point stays the same when the point is looked as a~$K$-point, where $K/F$ is a finite extension). A drawback of the stable heights is that they do not satisfy the Northcott property, as one sees in the case of $\PPP(4,6).$ Namely, the stable height of two~$F$-elliptic curves which are not isomorphic over~$F$ is the same if they have the same~$j$-invariant. For a fixed elliptic curve $E$, there are infinitely many such elliptic curves (they are constructed by performing quadratic twists to $E$).
%
 \subsection{} In Chapter \ref{Weighted projective stacks and associated topological spaces}, we recall several results about stacks, with the focus on the weighted projective stacks. We also introduce the stack $\oPPa=\AAA^n/\Gm$ (for the canonical extension of the action of~$\Gm$ on $\AAA^n-\{0\}$). Thus, one has open immersions $\TTa\subset\PPP(\aaa)\subset\oPPa$. The stack~$\oPPa$ is not separated (\ref{oPPanotsep}), hence not proper, yet it exhibits the property that all of its rational points extend to integral points, and hence resolves the problem of lack of the integral points from above. This property will be used in Chapter \ref{Quasi-toric heights} to produce {\it unstable} heights on the weighted projective stacks.

Work of Moret-Bailly from \cite{Moret-BaillyS} provides a notion of a topological space associated to the set of (isomorphism classes of)~$R$-points of stacks, when~$R$ is a certain kind of topological local ring. The association is functorial, that is, for a morphism $X\to Y$ of stacks, the induced map $[X(R)]\to [Y(R)]$ is continuous. A list of other properties that the construction satisfies is given in \cite{Cesnavicius}. We prove the following proposition that enables us understand this topology for certain quotient stacks:
\begin{prop}
Suppose that~$X$ is a quotient stack $Y/G$, with~$G$ {\it special} (its torsors are locally trivial, by Hilbert 90, an example is provided by $G=\Gm$). One has that $[X(R)]$ is the topological quotient $Y(R)/G(R)$. 
\end{prop}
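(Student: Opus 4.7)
The plan is to reduce the proposition to two ingredients: the set-level identification $[X(R)] = Y(R)/G(R)$ coming from the definition of the quotient stack and the specialness hypothesis, and the fact that the map on $R$-points induced by the canonical morphism $\pi : Y \to Y/G = X$ is a topological quotient map, which comes from a functorial property of the Moret-Bailly topology.

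For the set-level identification, I would unravel the definition of the quotient stack: an object of $X(R)$ is a pair $(P,\varphi)$ consisting of a $G$-torsor $P\to\Spec(R)$ together with a $G$-equivariant morphism $\varphi:P\to Y$, and morphisms are $G$-equivariant isomorphisms of torsors compatible with the maps to $Y$. Specialness of $G$ guarantees that every $G$-torsor is Zariski-locally trivial; since $R$ is local, every Zariski cover of $\Spec(R)$ contains $\Spec(R)$ itself, so any such torsor is globally trivial. After fixing a trivialization $P\cong G\times\Spec(R)$, the $G$-equivariant morphism $\varphi$ is determined by $\varphi(1)\in Y(R)$, and two such pairs are isomorphic exactly when their distinguished sections differ by translation by an element of $G(R)$. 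This yields a bijection $Y(R)/G(R)\to[X(R)]$ induced by $\pi$.

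For the topological identification, I would invoke the functorial properties of the Moret-Bailly topology listed in \cite{Cesnavicius}: a morphism of stacks induces a continuous map on sets of isomorphism classes of $R$-points, and a smooth surjective morphism induces an open surjection. Since $\pi:Y\to X$ is a $G$-torsor and $G$ is smooth (being special), $\pi$ is smooth and surjective, hence the induced map $\pi_R:Y(R)\to[X(R)]$ is a continuous open surjection, i.e.\ a topological quotient map. The equivalence relation on $Y(R)$ induced by $\pi_R$ coincides with $G(R)$-orbit equivalence thanks to the set-level step, so $[X(R)]$ carries the quotient topology from $Y(R)$ by $G(R)$, which is precisely the topology of $Y(R)/G(R)$.

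The main obstacle will be pinpointing and justifying the correct functorial feature of the Moret-Bailly topology, namely that smooth surjective morphisms of stacks yield open surjections on sets of $R$-points. This does not follow from bare continuity and is the crux of the topological step; I would cite the relevant item in the list of properties from \cite{Cesnavicius} (tracing back to \cite{Moret-BaillyS}) rather than rederive it from the ground up.
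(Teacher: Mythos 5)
Your set-level identification is correct and matches what the paper does: by specialness and locality of $R$, every $G$-torsor over $\Spec(R)$ is trivial, so $[X(R)]$ is the set of $G(R)$-orbits on $Y(R)$. The paper proves this as part of establishing that the induced map $\overline{[\pi(R)]}:Y(R)/G(R)\to[(Y/G)(R)]$ is a bijection.

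The topological step is where you diverge, and here your route has two genuine gaps. First, you silently upgrade ``special'' to ``smooth'': the proposition only assumes $G$ flat and locally of finite presentation, and specialness (Zariski-local triviality of torsors) does not obviously imply smoothness in the generality of the paper's definition. Serre's original setting is over an algebraically closed field of characteristic zero, where this is automatic, but the paper's statement is more general. Second, the openness-for-smooth-morphisms result you want to invoke from \cite{Cesnavicius} is not among the properties the paper quotes (Proposition \ref{ceso}), and the versions I am aware of require $R$ Henselian; the proposition here is stated for an arbitrary topologically suitable local ring $R$. So your argument, even if completed, would prove a statement under stronger hypotheses on both $G$ and $R$.

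The paper's proof of openness is more elementary and entirely avoids both issues. To show $[\pi(R)](V)$ is open for open $V\subset Y(R)$, one must check that $[s(R)]^{-1}([\pi(R)](V))$ is open in $W(R)$ for every $1$-morphism $s:W\to Y/G$ with $W$ a scheme (this is what openness in the Moret-Bailly topology means). The paper forms $\widetilde W:=W\times_{Y/G}Y$, observes that $\widetilde W\to W$ is a $G$-torsor, hence Zariski-locally of the form $U_i\times G\to U_i$ by specialness, and uses that projections $U_i(R)\times G(R)\to U_i(R)$ are open and surjective on $R$-points with no smoothness or Henselian hypothesis. A short diagram chase (using the set-level bijection, to rewrite $[\pi(R)]^{-1}([\pi(R)](V))$ as the $G(R)$-saturation of $V$) then transports this back to $W(R)$. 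If you want to stay closer to your original plan, notice that ``Zariski-local triviality'' is really playing the role that you wanted smoothness and Henselianity to play, and it is exactly the content of specialness; so working with it directly is both cleaner and closer to the hypotheses.
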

Thus, one has for example that $[\PPP(\aaa)(F_v)]=(\Fv^n-\{0\})/\Fvt$ and $[\TTa(\Fv)]=(\Fvt)^n/\Fvt$, where the action of $\Fvt=\Gm(\Fv)$ is the induced from the action of~$\Gm$ on $\AAA^n-\{0\}$ and $\Gmn$, respectively. In the last part of the chapter, we speak about the adelic space of the torus $\TT(\aaa)$. We define it to be the restricted product $$[\TTa(\AAF)]:=\sideset{}{'}\prod _{v\in M_F}[\TTa(F_v)]$$ with the respect to the compact and open subgroups $[\TTa(\Ov)]\subset[\TTa(\Fv)].$ Using the results of \v Cesnavi\v cius from  \cite{Cesnaviciuss} on cohomology of the adeles, we prove that $[\TTa(\AAF)]$ has similar properties to the adelic torus $\Gmn(\AAF)$ (e.g. the image of the rational points $[\TTa(F)]$ for the diagonal map is discrete). 
\subsection{} We start Chapter \ref{Quasi-toric heights} by recalling facts about line bundles on stacks, in particular that the line bundles on the quotient stack $Y/G$ correspond to~$G$-linearized line bundles on the scheme~$Y$. The Picard groups $\Pic(\PPP(\aaa))$ and $\Pic(\oPPa)$ are calculated. Then, we define metrics on line bundles as follows. Let~$v$ be a place of~$F$, let~$X$ be an $\Fv$-algebraic stack and let~$L$ a line bundle on~$X$. We define an $\Fv$-metric on~$L$ to be the data given by ``compatible" $\Fv$-metrics on $y^*L$ for every morphism $y:Y\to X$ with~$Y$ an $\Fv$-scheme (by an $\Fv$-metric on a line bundle over an $\Fv$-scheme, we mean a ``continuous" choice of norms on all $\Fv$-fibers). Our metric does not need to be stable. For quotient stacks $X=Y/G,$ when~$G$ is assumed to be a special algebraic group, we relate the group of $\Fv$-metrized line bundles $\widehat{\Pic}_v(Y/G)$ with the group $\widehat{\Pic_v^G}(Y)$ of $\Fv$-metrized line bundles on~$X$ which are endowed with a~$G$-linearization and such that the metric is~$G$-invariant:
\begin{prop}[Proposition \ref{gmetmet}]
 Let~$G$ be a special locally of finite type $\Fv$-group scheme acting on locally of finite type $\Fv$-scheme~$Y$. The canonical homomorphism $\widehat{\Pic}_v(Y/G)\to\widehat{\Pic_v^G}(Y)$ is injective, and is an isomorphism if $\widehat{\Pic}_v(Y/G)\to\Pic(Y/G)$ is surjective. 
\end{prop}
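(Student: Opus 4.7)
The map in question sends a metrized line bundle $(L,\mu)$ on $Y/G$ to its pullback $(\pi^*L,\pi^*\mu)$ along the canonical smooth cover $\pi:Y\to Y/G$, endowed with the tautological $G$-linearization coming from the 2-isomorphism $\pi\circ\sigma\simeq\pi$ (where $\sigma:G\times Y\to Y$ is the action) and with the $G$-invariant metric obtained by evaluating the metric data $\mu$ at $\pi$. The crucial geometric input is that since $G$ is \emph{special}, every $G$-torsor over an $F_v$-scheme is Zariski-locally trivial, so every morphism $y:Y'\to Y/G$ factors Zariski-locally through $\pi$: after choosing a Zariski cover $Y'=\bigcup U_i$ and a section $s_i:U_i\to Y'\times_{Y/G}Y$ of the induced torsor, one has $y|_{U_i}=\pi\circ\varphi\circ s_i$, where $\varphi:Y'\times_{Y/G}Y\to Y$ is the tautological $G$-equivariant morphism.

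\textbf{Injectivity.} Because $G$ is special, the forgetful functor from line bundles on $Y/G$ to $G$-linearized line bundles on $Y$ is an equivalence of categories, so two metrized bundles with the same image on $Y$ already share the same underlying bundle $L$. It then suffices to show that two metrics $\mu_1,\mu_2$ on a fixed $L$ with $\pi^*\mu_1=\pi^*\mu_2$ agree. For any test morphism $y:Y'\to Y/G$ and any $U_i$ as above, the compatibility axiom in the definition of an $F_v$-metric determines $\mu_j(y^*L)|_{U_i}$ from $\pi^*\mu_j$; since these coincide, $\mu_1$ and $\mu_2$ agree on the cover $\{U_i\}$ and hence on $y^*L$, whence $\mu_1=\mu_2$.

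\textbf{Surjectivity under the hypothesis.} Start from $(\mathcal{L},\lVert\cdot\rVert)$ in $\widehat{\Pic_v^G}(Y)$. The same equivalence of categories produces a line bundle $L$ on $Y/G$ with $\pi^*L=\mathcal{L}$. The stated hypothesis furnishes some metric $\mu_0$ on $L$; the ratio $f:=\lVert\cdot\rVert/\pi^*\mu_0$ is then a $G$-invariant continuous positive function on $Y(F_v)$. I claim $f$ descends to a positive scalar on each member of the metric family of $L$: for a morphism $y:Y'\to Y/G$, define the scalar on $U_i$ by pulling $f$ back through $\pi\circ\varphi\circ s_i$; two choices of local section differ by a $G(U_i\cap U_{i'})$-valued function, and the $G$-invariance of $f$ guarantees that the resulting scalars agree on overlaps and thus glue to a continuous positive function on $Y'(F_v)$. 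Twisting $\mu_0$ by this descended function yields a metric $\mu$ on $L$ with $\pi^*\mu=\lVert\cdot\rVert$, providing the desired preimage.

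\textbf{Expected difficulty.} The bookkeeping around the definition of a metric on a stack---namely that the Zariski-local descent of $f$ glues into genuine functorial metric data compatible with every test morphism $y:Y'\to Y/G$---is the only nontrivial point. This ultimately boils down to the $G$-invariance of $f$ combined with the Zariski-local triviality of torsors provided by $G$ being special, so I expect no real obstacle beyond a careful verification of the compatibility axioms spelled out earlier in the chapter.
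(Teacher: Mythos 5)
Your proof is correct in essence, but it takes a genuinely different route from the paper's. The paper packages both directions into a single diagram chase: it sets up the commutative ladder from Lemma \ref{cuviiij}
\[\begin{tikzcd}[column sep=small]
	0 & \mathscr C^0([(Y/G)(F_v)],\RR_{>0}) & \widehat{\Pic}_v(Y/G) & \Pic(Y/G)\\
	0 & \mathscr C^0_{G(F_v)}(Y(F_v),\RR_{>0}) & \widehat{\Pic^G_v}(Y) & \Pic^G(Y)
	\arrow[from=1-1,to=1-2]\arrow[from=1-2,to=1-3]\arrow[from=1-3,to=1-4]
	\arrow[from=2-1,to=2-2]\arrow[from=2-2,to=2-3]\arrow[from=2-3,to=2-4]
	\arrow[from=1-1,to=2-1]\arrow[from=1-2,to=2-2]\arrow[from=1-3,to=2-3]\arrow[from=1-4,to=2-4]
\end{tikzcd}\]
with rows exact (Lemmas \ref{ftopic} and \ref{gfvexact}), observes that the second vertical arrow is an isomorphism because $[(Y/G)(F_v)]=Y(F_v)/G(F_v)$ topologically by \ref{qstop} (this is where ``special'' enters), that the fourth is an isomorphism by \ref{gpic}, and then applies the 4-lemma for injectivity and the 5-lemma for surjectivity (after augmenting the rows with a cokernel term $E$). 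Your argument uses exactly the same two mathematical inputs---the equivalence $\Pic(Y/G)\simeq\Pic^G(Y)$ and the descent of $G(F_v)$-invariant continuous functions along the topological quotient---but unrolls the diagram chase by hand: you show injectivity by arguing the metric at $F_v$-points of $Y/G$ is determined by $\pi^*\mu$ once you factor through $\pi$ (Zariski-locally for test schemes, globally for $\mathrm{Spec}\,F_v$), and surjectivity by twisting a reference metric $\mu_0$ (furnished by the hypothesis) by the descent of the $G$-invariant ratio $f=\lVert\cdot\rVert/\pi^*\mu_0$. Both approaches are valid; yours is more explicit about where speciality of $G$ is used, while the paper's is shorter once the exact sequences are in place.

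One imprecision worth flagging: in the injectivity step you reduce to showing that $\mu_1,\mu_2$ on a fixed $L$ with $\pi^*\mu_1=\pi^*\mu_2$ agree. But ``same image in $\widehat{\Pic^G_v}(Y)$'' a priori only gives an \emph{isometric isomorphism} of $G$-linearized metrized bundles between $(\pi^*L,\pi^*\mu_1)$ and $(\pi^*L,\pi^*\mu_2)$, not literal equality. You need to descend this isomorphism to an automorphism $\psi$ of $L$ via the equivalence $\sPic(Y/G)\simeq\sPic^G(Y)$ and replace $\mu_2$ by $\mu_2\circ\psi$ before you can assume $\pi^*\mu_1=\pi^*\mu_2$. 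This is a routine fix, and the paper's ladder formulation absorbs it automatically via the kernel description from \ref{ftopic}, but your write-up skips over it. Similarly, your opening sentence attributes the equivalence $\Pic(Y/G)\simeq\Pic^G(Y)$ to speciality of $G$; in fact that equivalence (Proposition \ref{gpic}) holds for any flat locally of finite presentation $G$---speciality is needed only for the topological identification of $[(Y/G)(F_v)]$ with $Y(F_v)/G(F_v)$.
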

The stack $\PPP(\aaa)=(\AAA^n-\{0\})/\Gm$ satisfies this condition on the existence of $\Fv$-metrics on every of its line bundles (\ref{pagfi}). Consequently, as $\Pic(\AAA^n-\{0\})$ is trivial, we deduce that to define an $\Fv$-metric on a line bundle on~$\PPP(\aaa)$, it suffices to define a~$\Gm$-invariant metric on the corresponding~$\Gm$-linearization of the trivial line bundle on $\AAA^n-\{0\}$. Such metric is defined by the norm of the section~$1$ and condition on the linearizations gives a ``homogeneity" condition to the function $\Fv^n-\{0\}\to \RR_{>0}, \xxx\mapsto ||1||_{\xxx}.$ 

Suppose that a line bundle~$L$ on~$\PPP(\aaa)$ is endowed with an $\Fv$-metric for every $\vMF$, subject to a compatibility condition which allows that the norms of a section can be multiplied at any $\xxx\in[\PPP(\aaa)(F)]$ (see the condition in \ref{fahom}). We can define heights by multiplying the inverses of these norms for every~$v$. The generality, leaves possibility of existence of ``essentially different" heights on the same line bundles, i.e. heights such that their quotients are not bounded functions on the set $[\PPP(\aaa)(F)]$. Examples are: the mentioned {\it stable heights}, the {\it quasi-toric heights} (we are going to explain them now) and in the case $n=1$ the {\it quasi-discriminant heights} (they will be explained in the last chapter). Quasi-toric heights are the heights, which come from the families of metrics which arise in the following way for almost every place: extend an $\Fv$-point of the stack~$\oPPa$ to an $\Ov$-point and use the classical method (already discussed in \ref{metheit}) to get a metric (a smaller modification, however, must be done, because~$\oPPa$ is not separated and thus an $\Ov$-extension of an $\Fv$-point is not unique). Contrary to the stable heights, the quasi-toric heights do satisfy the Northcott property: 
\begin{thm}[Theorem \ref{boundppatta}]\label{northintro} 
Let~$H$ be a quasi-toric height on~$\PPP(\aaa)$. 
Let $\epsilon >0$. One has that there exists $C>0$ such that  $$|\{\xxx\in[\PPP(\aaa)(F)]|H(\xxx)\leq B\}|\leq CB^{1+\epsilon}.$$ 
\end{thm}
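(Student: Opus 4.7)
The strategy is a Schanuel-style upper bound: parametrize rational points by integer tuples and apply lattice-point counting.

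First, I would exploit the key extension property of $\oPPa$---every $F_v$-point extends to an $\Ov$-point, the property baked into the construction of quasi-toric heights---to represent every $[\xxx] \in [\PPP(\aaa)(F)]$ by a tuple $(x_1, \dots, x_n) \in \OOF^n \setminus \{0\}$. By acting with a suitable $t \in F^\times$ (adapted to the weighted action $t\cdot\xxx = (t^{a_j}x_j)_j$), I can further impose a primitivity condition---for each finite place $v$, there is no $s\in\OOF$ with $v(s)\geq 1$ such that $s^{-1}\cdot\xxx$ is still integral---which determines the integral representative up to the action of $\OOFt$ and the stabilizer $\mu_{\gcd(\aaa)}(F)$.

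Second, by the construction of quasi-toric heights via integral models (see \ref{metheit}), there exist a finite set $S\subset M_F$ and positive exponents $e_1,\dots,e_n$ (determined by the line bundle and the weights $\aaa$) such that, for primitive integral $\xxx$, the local heights satisfy $H_v(\xxx) = \max_j |x_j|_v^{e_j}$ for $v\notin S$, while for $v\in S$ they differ from this expression by a multiplicatively bounded factor independent of $\xxx$. Since $|x_j|_v\leq 1$ for $v\notin S\cup M_F^\infty$ on primitive integral representatives, the bound $H(\xxx)\leq B$ translates into a constraint of the form
\[
\prod_{v\in M_F^\infty} \max_j |x_j|_v^{e_j} \; \leq \; c_1 B
\]
for some $c_1>0$ depending only on the height data.

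Third, standard lattice-point counting in $\OOF \hookrightarrow \prod_{v\in M_F^\infty} F_v$, applied to the adelic ``norm body'' cut out by this constraint, bounds the number of admissible integer tuples by $O(B^{1+\epsilon})$ for any $\epsilon>0$. The $B^\epsilon$ factor absorbs (i) the archimedean boundary effects in the lattice count, (ii) the $\OOFt$-action on primitive integral representatives, cut down via a fundamental domain at the cost of at most $\log(B)^{\rk\OOFt}$ representatives per rational point, and (iii) the bounded stabilizer factor $|\mu_{\gcd(\aaa)}(F)|$. Summing over representatives yields the stated bound.

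The main technical obstacle is the first step: formulating primitivity compatibly with the weighted $\Gm$-action, in which the coordinates $x_j$ transform with distinct powers $t^{a_j}$, and controlling the $\OOFt$-action on primitive integral representatives so that every equivalence class in $[\PPP(\aaa)(F)]$ is parametrized with at most polylogarithmic multiplicity. The familiar ``clear denominators and divide by the gcd'' argument for $\PP^{n-1}$ does not directly apply and must be carefully adapted to the weight vector $\aaa$ and to the stacky stabilizer structure; this is where the integral geometry of $\oPPa$, as developed earlier in the thesis, plays its central role.
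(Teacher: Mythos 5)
Your route is a Schanuel-style lattice count, which differs from the paper's argument, but as written it contains a genuine gap in the very first step. You claim that every class in $[\PPP(\aaa)(F)]$ has an integral representative $(x_1,\dots,x_n)\in\OOF^n\setminus\{0\}$ that is \emph{primitive} in the sense you describe, unique up to $\OOFt$ and $\mu_{\gcd(\aaa)}(F)$. This is false over a number field with non-trivial class group. For any lift $\xxx\in F^n$, the weighted content ideal $\mathfrak c(\xxx):=\prod_{\vMFz}\mathfrak p_v^{\,r_v(\xxx)}$ (with $r_v$ as in \ref{davdavdav}) changes under $\xxx\mapsto t\cdot\xxx$ by $\mathfrak c(t\cdot\xxx)=\mathfrak c(\xxx)\,(t)^{-1}$, so its class in $\Cl(F)$ is an invariant of $[\xxx]$; a primitive $\OOF^n$-representative exists if and only if that class is trivial. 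Already for $\aaa=(1,1)$ and $F=\QQ(\sqrt{-5})$, the point $[2:1+\sqrt{-5}]$ has no primitive representative in $\OOF^2$. The fix --- summing over a set of ideal-class representatives and counting primitive tuples in each $\mathfrak a_i^n$ --- does repair the count and is harmless for a $B^{1+\epsilon}$ bound, but it is an idea you must supply, not a detail you may suppress; note also that in the weighted case the relevant finite factor is $\Cl(F)[\gcd(\aaa)]$ rather than $\Cl(F)$ itself (see \ref{kerpf}).

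For contrast, the paper sidesteps globalizing integral representatives entirely. It factors the toric height over $[\TTa(F)]$ as a finite (ideal) part times an archimedean part (Lemma \ref{brkica}), maps to $\Div(F)^n/\Div(F)_{\aaa}$ via $\overline{\mathfrak I}$ and counts ideal tuples directly (\ref{hzer}), controls the kernel of $\overline{\mathfrak I}$ --- which is precisely where the class-group torsion $\Cl(F)[\gcd(\aaa)]$ and the unit quotient $U^n/U_{\aaa}$ appear --- by a snake-lemma computation (\ref{kerpf}) and a lattice estimate in $(\RR^{M_F^\infty})^{n^2}$ (\ref{boundunits}), and handles the boundary $[\PPP(\aaa)(F)]-[\TTa(F)]$ by induction on coordinate hyperplanes (\ref{abnu}, \ref{boundppatta}). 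This gives the sharper bound $CB\log(2+B)^{n^2(r_1+r_2)+n-1}$. Your lattice-count strategy is viable (it is essentially the Deng/Bruin--Najman route, cited in the paper as \cite{NajmanBruin}), and would treat the boundary uniformly rather than by induction, but you would still need to make the weighted fundamental-domain argument for $\OOFt$ precise: the action $u\cdot\xxx=(u^{a_j}x_j)_j$ is not an isometry of the Minkowski lattice, so the polylogarithmic multiplicity requires an argument along the lines of Lemmas \ref{oalfa}--\ref{boundunits}, not a mere reference to ``standard lattice-point counting.''
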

The idea of the proof of is to separately estimate the finite and the infinite height. The upper bound for the cardinality in the theorem is needed to provide convergence of the corresponding height zeta series. The claim of \ref{northintro} stays valid even when metrics at finitely many places are allowed to have ``logarithmic" singularities along rational divisors (see \ref{nortlogfalt}). The proof of that version follows immediately from \ref{northintro}, after establishing an estimate for the singular height of the form: $H_{\Sing}\geq CH\log^{-\eta}(H)$, where $C, \eta>0$, which we do in \ref{comphh}. 
\subsection{} In Chapter \ref{Measures on topological spaces associated to weighted projective stacks}, we endow the topological spaces associated to~$R$-points with measures. In particular, we define measures on $[\PPP(\aaa)(F_v)]$ (which depend on the choice of the metrics) and on $[\TTa(F_v)]$ (which do not depend on the choice of metrics). The measures are used to define Peyre's constant $\tau_H$.

The last part of the chapter is dedicated to the definition of the measures on the ``adelic torus" $[\TTa(\AAF)]$ and the Tamagawa number of the stacky torus~$\TTa$. We establish that 
\begin{prop}[Proposition \ref{tamagawatta}]\label{tamagawattaaaa}
One has that $\Tam(\TTa)=1$.
\end{prop}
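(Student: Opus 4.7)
The plan is to reduce the claim to two separate computations via the product decomposition
$$\TTa \cong T(\aaa)\times B\mu_{\gcd(\aaa)}$$
established earlier in the excerpt, where $T(\aaa)$ is the split torus of rank $n-1$ underlying the toric variety $\PP(\aaa)$. Using the proposition that $[Y/G(R)] = Y(R)/G(R)$ when $G$ is special, the adelic space decomposes compatibly as
$$[\TTa(\AAF)]\cong T(\aaa)(\AAF)\times[B\mu_{\gcd(\aaa)}(\AAF)],$$
and the Tamagawa measure constructed in the previous chapter, which inherits from the presentation $\TTa=\Gmn/\Gm$, factorizes as a product with respect to this decomposition. It therefore suffices to show $\Tam(T(\aaa))=1$ and $\Tam(B\mu_m)=1$ separately, with $m=\gcd(\aaa)$.

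For $T(\aaa)$ the identity is Ono's classical theorem for split tori: once one verifies that the chosen normalization, using the residue of $\zeta_F$ at $s=1$, the discriminant $|d_F|^{1/2}$, and the local convergence factors $1-q_v^{-1}$ at finite places, matches the standard one, this is immediate. The substantive content of the proposition is therefore the equality $\Tam(B\mu_m)=1$. Here each local factor $[B\mu_m(F_v)]=F_v^\times/F_v^{\times m}$ is a finite group by Kummer theory, the adelic space $[B\mu_m(\AAF)]$ is the restricted product of these finite groups with respect to the unramified subgroups $\Ov^\times/\Ov^{\times m}$, and the rational points $[B\mu_m(F)]=F^\times/F^{\times m}$ embed discretely.

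The strategy for $B\mu_m$ is a Poisson summation / Pontryagin duality argument. One writes the global volume as $\prod_v\vol(F_v^\times/F_v^{\times m})$ with the prescribed convergence factors, and matches it against the character sum over $[B\mu_m(F)]$ via Kummer self-duality: Tate's local duality identifies each local pairing with the Hilbert symbol, and global class field theory, equivalently the product formula for the Hilbert symbol, forces all non-trivial global characters to cancel, leaving only the trivial one and hence the value $1$. The principal obstacle is the bookkeeping of normalization factors at ramified and archimedean places; one must carefully line up the ``stacky'' measure conventions of the previous chapter with the self-dual Haar measures used in Tate's local duality, in particular tracking the $|\mu_m(F_v)|$ factors that arise from the stacky automorphisms. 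Once this alignment is in place the cancellation is formal, and combining the two factors yields
$$\Tam(\TTa)=\Tam(T(\aaa))\cdot\Tam(B\mu_{\gcd(\aaa)})=1.$$
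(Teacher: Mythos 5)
Your route is genuinely different from the paper's. The paper does \emph{not} split $\TTa$ as $T(\aaa)\times B\mu_{\gcd(\aaa)}$; instead it runs Oesterl\'e's machinery on Euler--Poincar\'e characteristics of complexes of locally compact groups with Haar measures. Concretely, it builds a $4\times 4$ bicomplex whose columns are the maps $\lambda_{\aaa}\colon t\mapsto(t^{a_j})_j$ and whose rows are the exact sequences
$1\to F^\times\to \AAFj\to\AAFj/F^\times\to 1$ (and its $n$-fold version), shows every row and the first three columns have trivial measure Euler--Poincar\'e characteristic, and concludes from Proposition~\ref{complexdoesterle} that the last column does too. This forces $\overline{\mu_1}\bigl([\TTa(\AAF)]_1/[\TTa(i)]([\TTa(F)])\bigr)=d^*x^1_{\AAF}(\AAFj/F^\times)^{n-1}=(\Res(\zeta_F,1)\,\Delta(F)^{1/2})^{n-1}$, which is exactly the normalizing factor in Definition~\ref{deftamnum}. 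The $\Sh^1$ and $\mu_{\gcd(\aaa)}(F)$ factors are absorbed by the first column of the bicomplex and never need to be tracked by hand; the entire computation reduces to the single classical identity $\vol(\AAFj/F^\times)=\Res(\zeta_F,1)\Delta(F)^{1/2}$.

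That said, as written your proposal has two gaps that prevent it from being a proof. First, you assert that the Tamagawa measure ``factorizes as a product'' under $[\TTa(\AAF)]\cong [B\mu_d(\AAF)]\times T(\aaa)(\AAF)$, but this needs an argument: the measures $\mu_v$ are defined as quotients $(d^*x_1\cdots d^*x_n)/d^*x$ via the presentation $\Gmn/\Gm$, and you must check that the isomorphism $\overline{E}_{\Fvt}$ of Lemma~\ref{neis} (which comes from a matrix in $SL_n(\ZZ)$) carries this measure to the product of the two factor measures, and moreover that the convergence factors $\zeta_v(1)^{n-1}$ distribute correctly between the two factors. Second, and more seriously, $\Tam(B\mu_m)=1$ is stated as the ``substantive content'' of the proposition but is not actually proved: you outline a Poisson/Kummer-duality argument, explicitly flag that ``the principal obstacle is the bookkeeping of normalization factors at ramified and archimedean places,'' and then declare that ``once this alignment is in place the cancellation is formal.'' Leaving the acknowledged principal obstacle unresolved means the key step is a plan, not a proof. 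The self-duality of $\mu_m$ and the Poitou--Tate exact sequence do show that $F^\times/(F^\times)^m$ is its own annihilator in $[\TT(m)(\AAF)]$, but turning this into the numerical equality $\Tam(B\mu_m)=1$ with the paper's specific normalization (which already builds the factor $|\Sh^1(F,\mu_m)|/|\mu_m(F)|$ into $\overline{\mu_1}$) still requires a computation of exactly the kind the paper performs with the Euler--Poincar\'e formalism. If you fill these two gaps your argument would stand, and it would be a pleasant alternative, but at present it defers the hard part.
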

When $\aaa=\jed$, this is the classical result that the Tamagawa number of a split torus is~$1$. The proof of \ref{tamagawattaaaa} uses Oesterl\'e's Euler-Poincar\'e characteristics of complexes of locally compact abelian groups which are endowed with Haar measures.
\subsection{} Chapter \ref{Analysis of characters} studies characters of the ``adelic torus" $[\TTa(\AAF)]$. We introduce ``discrete" norms and ``infinity" norms of these characters. One establish a finiteness result on the number of the characters $\chi\in[\TTa(\AAF)]^*$ which vanish on $[\TTa(F)]$ and on certain subgroups of bounded either of these norms. In the last part of this chapter we recall estimates of Rademacher on~$L$ functions of characters. The results of this chapter will be used in Chapter \ref{Fourier transform of the height function} to prove that the Fourier transform of a height function is integrable.

\subsection{} In Chapter \ref{Fourier transform of the height function} we adapt the method of harmonic analysis of Batyrev and Tschinkel from \cite{Toric} to our situation. We assume the metrics are {\it smooth}. The first part of the chapter is dedicated to the calculation of the Fourier transform of the local height at a finite place~$v$. For almost all~$v$, we can give the exact formula which turns out to be the product of local~$L$ functions of characters and other factors. 
Then, for infinite~$v$, using the smoothness assumptions, we prove suitable decays of the Fourier transform in the two norms of characters. The proof for this claim is an adaptation of the idea of Chambert-Loir and Tchinkel from \cite{Vgps} and \cite{integral}, where the authors apply integration by parts with the respect to invariant vector fields. The global Fourier transform, hence, writes as the product of~$L$ functions and a part that we have control of. 
\subsection{}In Chapter \ref{Analysis of height zeta functions} we use the theory of \cite{FonctionsZ} to analyse height zeta function.

The accent is on the stacky torus $\TTa\subset \PPP(\aaa).$ From the estimate of Theorem \ref{northintro}, one deduces that the height zeta function $Z(s):=\sum_{\xxx\in[\TTa(F)]}H(\xxx)^{-s}$ converges and defines a holomorphic function of $s$ in the domain $\Re(s)>1$. Poisson formula gives that $$Z(s)=\int_{([\TTa(\AAF)]/[\TTa(F)])^*}\wH(s,\chi)d\chi$$ whenever the expressions on both hand side converge. The estimates from Chapter \ref{Analysis of characters} and Chapter \ref{Fourier transform of the height function} and the ``similarity" of the global Fourier transform of the height with~$L$ functions, give the convergence of the integral on the right hand side for $\Re(s)>1$. Moreover, the proof will imply that $s\mapsto Z(s)$ has a meromorphic extension to a domain $\Re(s)>1-\delta$, for some $\delta>0$. The estimates of Rademacher imply that~$Z$ satisfies the growth conditions needed for Tauberian theorems. The residue of~$Z$ at~$1$ is also calculated. As a consequence, Tauberian theorems give the asymptotic behaviour of the number of rational points of the height at most $B$:
\begin{thm}[Theorem \ref{osnovna}, Proposition \ref{moreofosnovna}]\label{osnovnanapocet}
Let~$H$ be a quasi-toric height. One has that $$\{\xxx\in[\PPP(\aaa)(F)]| H(\xxx)\leq B\}\sim\frac{\tau_H}{|\aaa|}B,$$ when $B$ tends to $+\infty$.
\end{thm}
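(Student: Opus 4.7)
The plan is to follow the harmonic-analysis strategy reviewed in the overview of Chapter \ref{Analysis of height zeta functions}. First I would stratify $[\PPP(\aaa)(F)]$ according to the support of the homogeneous coordinates: for each non-empty $I\subseteq\{1,\dots,n\}$ the locus where exactly the coordinates indexed by $I$ are non-zero is (the rational points of) the stacky torus $\TT(\aaa_I)$ of a lower-dimensional weighted projective substack. By a descending induction on $|I|$, combined with the Northcott-type bound of Theorem \ref{boundppatta}, the proper strata contribute only $O(B^{1-\delta})$ for some $\delta>0$, so the asymptotic is reduced to proving
\[|\{\xxx\in[\TTa(F)]\mid H(\xxx)\leq B\}| \sim \frac{\tau_H}{|\aaa|} B.\]
To this end I would introduce the height zeta function $Z(s):=\sum_{\xxx\in[\TTa(F)]} H(\xxx)^{-s}$, which by Theorem \ref{boundppatta} is holomorphic on $\Re(s)>1$, and apply Poisson summation: since $[\TTa(F)]$ is discrete in the adelic stacky torus $[\TTa(\AAF)]$ and $\Tam(\TTa)=1$ by Proposition \ref{tamagawatta}, one obtains
\[Z(s) = \int_{([\TTa(\AAF)]/[\TTa(F)])^*} \wH(s,\chi)\,d\chi.\]

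Next I would analyse the global Fourier transform $\wH(s,\chi)=\prod_v \wH_v(s,\chi_v)$. At almost every finite place, an explicit Igusa-type computation expresses $\wH_v(s,\chi_v)$ as a product of local Hecke $L$-factors times a correction of the form $1+O(q_v^{-1-\delta})$ holomorphic in a neighbourhood of $\Re(s)=1$; at archimedean places, integration by parts against invariant vector fields, in the spirit of Chambert-Loir and Tschinkel, produces rapid decay in the ``infinity'' norm of $\chi_v$. Combining these factorizations one writes $\wH(s,\chi)=L(s,\chi)\Phi(s,\chi)$, with $L(s,\chi)$ a product of Hecke $L$-functions and $\Phi$ holomorphic and of controlled growth on a strip $\Re(s)>1-\delta$.

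With the finiteness estimates on characters of bounded norm from Chapter \ref{Analysis of characters} and the Rademacher convexity bounds for Hecke $L$-functions, the character integral converges for $\Re(s)>1-\delta$ away from poles of $L(s,\chi)$, giving a meromorphic continuation of $Z$ to that strip with a unique simple pole at $s=1$ supported by the trivial character $\chi=\jed$. A direct local computation, using $\Tam(\TTa)=1$ to fix measure normalizations, identifies $\Res_{s=1}Z(s)=\tau_H/|\aaa|$, the factor $|\aaa|$ accounting for the rescaling between the $\aaa$-weighted $\Gm$-action and the standard one. A standard Ikehara--Delange Tauberian theorem, whose growth hypothesis is verified by the Rademacher bounds, then delivers the claimed asymptotic.

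The main obstacle is precisely this meromorphic continuation step: making sense of the character integral in a punctured neighbourhood of $s=1$, by balancing the archimedean decay of $\wH_v$ against the density of characters with bounded ``infinity'' norm, and the Rademacher growth of the non-trivial $L$-factors against the finiteness of characters with bounded ``discrete'' norm. Every other step is either a local calculation or a standard piece of Tauberian machinery, but this delicate interplay --- requiring simultaneously arithmetic, analytic, and measure-theoretic control --- is what the whole apparatus of Chapters \ref{Analysis of characters} and \ref{Fourier transform of the height function} is designed to handle.
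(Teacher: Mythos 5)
Your proposal is correct and follows essentially the same route the paper takes: discard the boundary strata using the Northcott bound, convert the torus count into a height zeta function, apply Poisson summation on $[\TTa(\AAF)]$, factorize the global Fourier transform into Hecke $L$-factors times a controlled correction via Igusa-type local computations and integration by parts at infinity, establish meromorphic continuation and growth in a strip via the character-finiteness and Rademacher bounds, and finish with a Tauberian theorem. Two small points of comparison: the stratification by support and descending induction you describe is exactly what the paper already bakes into the proof of Theorem \ref{boundppatta}, so in the paper's own write-up the reduction to $[\TTa(F)]$ is a one-line invocation of that bound rather than a separate induction; and the balancing of archimedean decay against $L$-function growth that you correctly flag as the crux is not done ad hoc but is packaged through the ``$M$-controlled functions'' formalism of Chambert-Loir--Tschinkel (Lemma \ref{lclt}, Theorem \ref{CLTsc}), which also organizes the residue computation — the $1/|\aaa|$ arises there as the characteristic function of the cone for the surjection $\xxx\mapsto\aaa\cdot\xxx$ rather than directly as a ``rescaling'' factor, though both descriptions reflect the same geometry and the paper records the Peyre-$\alpha$ interpretation in \ref{firstpartofpeyre}.
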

The constant $\frac1{|\aaa|}$ has the same interpretation as in the case of varieties (see \ref{firstpartofpeyre}) and the asymptotic stays the same when one counts rational points of~$\oPPa$ (because $[\oPPa(F)]-[\PPP(\aaa)(F)]$ is a one point set). Thus \ref{osnovnanapocet} can be understood as that Manin-Peyre's conjecture is true for weighted projective stacks~$\oPPa$. The last part of the chapter is dedicated to the understanding {\it equidistribution} of the rational points of the stack~$\PPP(\aaa)$. The idea is to find the asymptotic behaviour for the number of rational points of bounded height which are required for finitely many places~$v$ to belong to certain open subsets of the~$v$-adic space of the stack (e.g. say that the $2$-adic valuation is even). An elegant way to phrase this question has been given by Peyre in \cite{Peyre}, using the measure $\omega_H$ from above: if  $W\subset\prod_{\vMF}[\PPP(\aaa)(F_v)]$ is an open subset of negligible boundary, one expects that: $$\lim_{B\to\infty}\frac{|\{\xxx\in[\PPP(\aaa)(F)]|i(\xxx)\in W\text{ and }H(\xxx)\leq B\}|}{|\{\xxx\in[\PPP(\aaa)(F)]|H(\xxx)\leq B\}|}=\frac{\omega_H(W)}{\tau_H},$$where $i:[\PPP(\aaa)(F)]\to\prod_{\vMF}[\PPP(\aaa)(F_v)]$ is the diagonal map. If this is true for every such $W$, then we say that the rational points are equidistributed. We prove that that: 
\begin{thm}[Theorem \ref{quasitoricequi}] The rational points of~$\PPP(\aaa)$ are equidistributed in the space $\prod_{\vMF}[\PPP(\aaa)(F_v)]$.
\end{thm}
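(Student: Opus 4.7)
\emph{Strategy.} The plan is to extend the harmonic analytic machinery used to establish Theorem \ref{osnovnanapocet} to height zeta functions twisted by a test function on the adelic space. For a well-behaved function $f:\prod_{\vMF}[\PPP(\aaa)(F_v)]\to\RR_{\geq 0}$ I would introduce
$$Z_f(s):=\sum_{\xxx\in[\TTa(F)]}f(i(\xxx))H(\xxx)^{-s}$$
and show that $Z_f$ admits a meromorphic continuation to a half-plane $\Re(s)>1-\delta$ that is holomorphic there apart from a simple pole at $s=1$ with residue $(\int f\,d\omega_H)/|\aaa|$. The Tauberian step used at the end of Chapter \ref{Analysis of height zeta functions} would then supply
$$\sum_{\xxx\in[\PPP(\aaa)(F)],\,H(\xxx)\leq B}f(i(\xxx))\sim\frac{\int f\,d\omega_H}{|\aaa|}B.$$
Dividing by the asymptotic of Theorem \ref{osnovnanapocet} and specialising to $f=\mathbf{1}_W$ would produce exactly the ratio $\omega_H(W)/\tau_H$ predicted by the theorem.

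\emph{Product test functions.} The argument works naturally first for $f=\prod_v f_v$ with $f_v\equiv 1$ for almost all~$v$, each remaining $f_v$ being an indicator of an open subset of $[\PPP(\aaa)(F_v)]$ of $\omega_{H,v}$-negligible boundary. Such $f$ is still a pure product over the places, so the Poisson formula of Chapter \ref{Analysis of height zeta functions} is available and yields
$$Z_f(s)=\int_{([\TTa(\AAF)]/[\TTa(F)])^*}\prod_{\vMF}\widehat{f_vH_v^{-s}}(\chi_v)\,d\chi.$$
At the unramified places the local transforms are unchanged and contribute the $L$-factors that produced the pole in the untwisted case; at the finitely many places in the support of~$f$ the local transforms differ from the original ones by multiplication against $f_v$ inside the integrand, but remain dominated in absolute value by the original transforms because $0\leq f_v\leq 1$.

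\emph{Main term and error term.} The trivial character contributes $\prod_v\int_{[\PPP(\aaa)(F_v)]}f_v H_v^{-s}\,d\omega_{H,v}$ times the $L$-factor responsible for the pole at $s=1$; at $s=1$ this produces $\int f\,d\omega_H$ multiplied by the residue computed in Chapter \ref{Analysis of height zeta functions}, and Proposition \ref{tamagawattaaaa} identifies the combinatorial factor so that the final residue equals $(\int f\,d\omega_H)/|\aaa|$. For non-trivial characters, the decay estimates of Chapter \ref{Fourier transform of the height function} at the infinite places and the character-counting estimates of Chapter \ref{Analysis of characters} apply verbatim away from the support of~$f$; on that support the extra bounded factor $f_v$ can be absorbed as a bounded perturbation. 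This yields the meromorphic continuation and the required residue for product-type $f$.

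\emph{Passage to general~$W$ and the main obstacle.} For an arbitrary open $W$ with $\omega_H(\partial W)=0$, I would approximate $\mathbf{1}_W$ from below and above by finite disjoint unions of ``boxes'' $\prod_v W_v$ of the previous type whose $\omega_H$-measures differ by less than $\varepsilon$; by monotonicity of the counting sum in $W$, the upper and lower asymptotic constants squeeze together to $\omega_H(W)/|\aaa|$. The main obstacle will be the preservation of Fourier-transform decay at the infinite places after multiplication by a rough indicator, because the smoothness assumption underlying the integration-by-parts bounds of Chapter \ref{Fourier transform of the height function} is lost. This is resolved by approximating each local indicator by a smooth cut-off and quantifying the error uniformly in the character (in the spirit of Chambert-Loir and Tschinkel), combined with the Rademacher estimates from Chapter \ref{Analysis of characters} which absorb the polynomial loss in character norms. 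The hypothesis $\omega_H(\partial W)=0$ is precisely what makes such smooth approximations converge in a sense compatible with the Tauberian step.
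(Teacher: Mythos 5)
Your proposal is correct in spirit but takes a genuinely different route from the paper. You propose to twist the zeta function by a test function $f$ and redo the Poisson/meromorphic-continuation/Tauberian argument for $Z_f(s)=\sum_\xxx f(i(\xxx))H(\xxx)^{-s}$; the obstacle you correctly flag is that multiplication by a rough local indicator destroys the smoothness needed for the integration-by-parts decay estimates at the archimedean places, and you resolve it by smooth approximation of each $f_v$. The paper avoids all of this by a ``change of height'' trick: given smooth local functions $h_{v,\eta}:[\PPP(\aaa)(F_v)]\to\RR_{>0}$ bounded below by $\eta>0$ (with $h_{v,\eta}=1$ for almost all $v$), the family $((h^{-1}_{v,\eta}\circ q^\aaa_v)\cdot f_v)_v$ is again a quasi-toric degree-$|\aaa|$ family of \emph{smooth} $\aaa$-homogeneous functions, the resulting height equals $h_\eta(i(\xxx))^{-1}H(\xxx)$, and by Lemma \ref{omegachanges} the associated Peyre measure is $h_\eta\,\omega$. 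Thus the weighted count $|\{\xxx: H(\xxx)\leq h_\eta(i(\xxx))B\}|$ is literally the count of $\{H'\leq B\}$ for another height $H'$ to which Theorem \ref{osnovna} applies verbatim, yielding $\sim (\int h_\eta\,\omega)/|\aaa|\cdot B$. The $\eta$-floor is essential because it keeps $h_{v,\eta}$ bounded away from $0$, so the modified $f_v$ remain everywhere positive; the paper then sandwiches $\mathbf 1_{W_v}$ between smooth $h_v\leq\mathbf 1_{W_v}\leq g_v$ (Lemma \ref{bumpfunctions}), lets $\eta\to 0$, and approximates a general $W$ with negligible boundary by finite unions of such product sets (parts (3)--(4) of the proof). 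Your route would require re-deriving the local Fourier-transform bounds of Chapter \ref{Fourier transform of the height function} for the modified integrands and re-checking convergence of the Poisson sum, i.e.\ essentially redoing Chapters 5--7 with an extra twist; the paper's route reduces everything to one invocation of the already-proved Theorem \ref{osnovna}, at the cost of a slightly less transparent algebraic manipulation of the height. Both are valid, but if you adopt your strategy you should state explicitly that the character-sum convergence (Proposition \ref{vuvui}) must be re-established with the twisted local transforms, rather than ``applying verbatim,'' since the archimedean smoothing introduces an $\eta$- (or $\varepsilon$-) dependent constant that must be controlled uniformly when you send the approximation error to zero.
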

\subsection{} In Chapter \ref{number of torsors}, we use our methods to study a question similar to Malle conjecture. We find the asymptotic for the number of $\mu_m$-torsors over~$F$ of bounded absolute discriminant. When~$F$ contains all~$m$-th roots of~$1$, this question is the Malle conjecture for the cyclic group $\ZZ/m\ZZ,$ but one needs to remove the torsors which are not fields and there may be a positive proportion of them (the counting of the cyclic extensions has been covered by the case of the abelian groups from \cite{Wright}). The $\mu_m$-torsors over~$F$ are classified by the algebraic stack $B\mu_m=\TTT(m)=\PPP(m)$. 

We use the language of the heights developed earlier. We speak about {\it quasi-discriminant} heights, which are similar to the discriminants, with the difference that the local components of these heights at the finitely many places may be different from the local components of the discriminant. Let us note that a quasi-discriminant height is not a quasi-toric height, as the local components of the two heights are different at almost every place. We will define a measure $\omega_H$ on $\prod_{\vMF}[\PPP(m)(F_v)]$ and we will set $\tau_H=\omega_H(\prod_{\vMF}[\PPP(m)(F_v)])$. 

The method of the proof is an adaption of the above harmonic analysis method. There are some simplifications, as the local spaces $\TTT(m)(\Fv)$ are finite, and modifications because of the difference with quasi-toric heights. Eventually, we prove the convergence of the height zeta series and use Poisson's formula as before. For the purpose of having more elegant formula, here we state the final asymptotic it for heights that are ``essentially" $\Delta^{\frac{1}{m(1-1/r)}},$ where $r$ is the least prime of~$m$ (that is for almost all places the local component of~$H$ coincides with the local component of $\Delta^{\frac{1}{m(1-1/r)}}$). 
\begin{thm}[Corollary \ref{countingquasidisc}]\label{quasidiscpocetak}
Let~$H$ be a quasi-discriminant height. One has that $$|\{x\in[\PPP(m)(F)]| H(x)\leq B\}|=\frac{\tau_H}{(r-2)!m}B\log(B)^{r-2}.$$ 
\end{thm}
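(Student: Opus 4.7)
The plan is to specialize the harmonic-analytic machinery of Chapters~\ref{Analysis of characters}--\ref{Analysis of height zeta functions} to the one-dimensional case $\TTT(m)=\PPP(m)$. The key simplification compared to the quasi-toric setting is that each local space $[\PPP(m)(F_v)] = F_v^\times/F_v^{\times m}$ is a finite abelian group, so the local Fourier transforms are honest finite sums. The new feature is that a quasi-discriminant height disagrees with any quasi-toric height at almost every place, so the local Euler factors must be computed from scratch. First I would identify $[\PPP(m)(F)]$ with $F^\times/F^{\times m}$ by Kummer theory, note that it is discrete with compact quotient of volume $1/m$ in the restricted product $[\PPP(m)(\AAF)] = \sideset{}{'}\prod_v F_v^\times/F_v^{\times m}$ (analog of Proposition~\ref{tamagawattaaaa}), and extend $H$ to the adelic space as the product of its local components. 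An analog of Theorem~\ref{northintro} for quasi-discriminant heights gives absolute convergence of $Z(s) = \sum_x H(x)^{-s}$ on some right half-plane, and the Poisson formula converts this into
\[
Z(s) = \int_{([\PPP(m)(\AAF)]/[\PPP(m)(F)])^*}\widehat H(s,\chi)\,d\chi,\qquad \widehat H(s,\chi)=\prod_v \widehat H_v(s,\chi_v).
\]

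The main computational step is the evaluation of $\widehat H_v(s,\chi_v) = \sum_{x\in F_v^\times/F_v^{\times m}} H_v(x)^{-s}\chi_v(x)$. Since $H$ agrees with $\Delta^{1/(m(1-1/r))}$ at all but finitely many places, for such $v$ a class $x$ whose associated étale $F_v$-algebra has tame ramification type $d\mid m$ contributes $q_v^{-s(d-1)/(m(1-1/r))}$ to the local height. Grouping the sum by $d$ and using orthogonality of characters of $\mu_m$, I would rewrite the product of local factors for the trivial character as
\[
\widehat H(s,1) = \zeta_F(s)^{r-1}\cdot\Phi_1(s),
\]
with $\Phi_1$ holomorphic and bounded on $\Re(s)>1-\delta$; the exponent $r-1$ appears because the $d=r$ stratum provides $\phi(r)=r-1$ classes with the \emph{smallest} nonzero $q_v$-power occurring in $H_v$, which is exactly the role played by $r$ being the least prime divisor of $m$. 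For a nontrivial character $\chi$, the analogous computation produces strictly fewer than $r-1$ Hecke $L$-factors with a pole at $s=1$, so no such $\chi$ can contribute a pole of order $r-1$.

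The third step is the analysis of the character integral. Using the Rademacher bounds for $L$-functions recalled in Chapter~\ref{Analysis of characters} and the decay of $\widehat H_v$ in the discrete and infinity norms of $\chi$ developed in Chapter~\ref{Fourier transform of the height function}, I would justify the exchange of integration and meromorphic continuation, obtaining a meromorphic extension of $Z(s)$ to $\Re(s)>1-\delta$ with a single pole at $s=1$ of order exactly $r-1$, coming from the trivial character. The residue $\lim_{s\to 1}(s-1)^{r-1}Z(s)$ then factorizes as $\frac{1}{m}\,\Phi_1(1)\,(\Res_{s=1}\zeta_F(s))^{r-1}$, and I would identify this product with $\tau_H/m$ by matching the local Euler factors of $\Phi_1$ and the finitely many modified local factors with the local volumes entering the definition of $\omega_H$; the prefactor $1/m$ reflects the volume of $[\PPP(m)(\AAF)]/[\PPP(m)(F)]$ and plays the role of $1/|\aaa|$ in Theorem~\ref{osnovnanapocet}.

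A standard Tauberian theorem applied to $Z(s)$ with pole of order $k=r-1$ at $s=1$ and residue $\tau_H/m$ then yields
\[
|\{x\in[\PPP(m)(F)]\mid H(x)\leq B\}|\sim \frac{1}{(r-2)!}\,\frac{\tau_H}{m}\,B\log(B)^{r-2},
\]
the combinatorial factor $1/(r-2)!=1/(k-1)!$ being the standard conversion constant. The main obstacle I expect is the precise identification of $\Phi_1(1)$, together with the modified Euler factors at the bad places, with the global measure $\omega_H(\prod_v[\PPP(m)(F_v)])$: at the finitely many places where the quasi-discriminant height has been altered, both the local factor of $\Phi_1$ and the local measure defining $\omega_H$ change, and one must verify the modifications cancel to produce exactly $\tau_H$. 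This bookkeeping is structurally analogous to the quasi-toric case but carries the extra structural constraint imposed by $r$ being the smallest prime divisor of $m$.
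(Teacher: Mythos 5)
Your overall strategy---specialize to $n=1$, compute the local finite Fourier sums, compare to a product of $r-1$ Hecke $L$-factors, apply Poisson and a Tauberian theorem---is the right one and matches the paper's architecture. However, there is a genuine gap in the residue analysis.

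You assert that for a nontrivial character $\chi$ the local computation ``produces strictly fewer than $r-1$ Hecke $L$-factors with a pole at $s=1$, so no such $\chi$ can contribute a pole of order $r-1$,'' and consequently that the residue is $\frac1m\Phi_1(1)(\Res_{s=1}\zeta_F)^{r-1}$, i.e.\ entirely due to the trivial character. This is false. In the paper's Lemma \ref{wustani} the local transform at a good place $v$ is
$\widehat H_v^\Delta(s,\chi_v)=\sum_{j=0}^{m-1}q_v^{-s(m^2-m\gcd(j,m))/\alpha(m)}\widetilde\chi(\pi_v^j)$,
and the $r-1$ terms with $\gcd(j,m)=m/r$ each contribute $q_v^{-s}\widetilde\chi(\pi_v^{mj/r})$. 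The relevant comparison (Lemma \ref{lfhdisc}) is therefore to $\prod_{j=1}^{r-1}L_v(s,\widetilde\chi^{mj/r})$. If $\chi\neq1$ but $\chi^{m/r}=1$, then $\widetilde\chi^{m/r}$ is the trivial idele-class character, \emph{all} $r-1$ of these $L$-factors become $\zeta_F(s)$, and $\widehat H(s,\chi)$ has a pole of order exactly $r-1$ at $s=1$ --- the same order as the trivial character. The paper (Lemma \ref{whdiscam}) records precisely this: the pole order $d(\chi)$ is $r-1$ if $\chi^{m/r}=1$ and $0$ otherwise, with nothing strictly between. Since the group $\Xi_K=\{\chi\in\mathfrak A_K:\chi^{m/r}=1\}$ is nontrivial in general (it has order $|\Xi_K|>1$ unless $\mathfrak A_K$ has no $m/r$-torsion), your residue undercounts by omitting the contributions of these characters.

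This is not a bookkeeping issue one can fix by ``matching Euler factors,'' because it changes the geometric meaning of $\tau_H$. The paper's $\omega_H$ (Definition \ref{peyredisc}) is \emph{not} a full-adelic Tamagawa-type measure: it is supported on the closed subgroup $\Xi_\infty^\perp=\overline{[\TTT(m)(F)][\TTT(m)(\AAF)]_{m/r}}$ of the adelic space, precisely because, under Poisson inversion, the sum over $\Xi_K$ of $\widehat H(s,\chi)$ equals $|\Xi_K|\int_{\Xi_K^\perp}H^{-s}\mu_{\AAF}$ (Lemma \ref{lemakojunerazumi}). To reach the stated constant you must carry out this Poisson inversion restricted to $\Xi_K$, verify the resulting measures stabilize as $K$ shrinks, and define $\tau_H$ in terms of the limiting measure on $\Xi_\infty^\perp$; an Euler-product expression that only sees $\chi=1$ will generically give the wrong leading constant. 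This is the one structurally new phenomenon in the quasi-discriminant case compared to the quasi-toric one, and your proposal does not account for it.
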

The asymptotic is very reminiscent of the one from Manin-Peyre conjecture. 
As in the case of quasi-toric heights, we are able to prove a corresponding equidistribution property in $\prod_{\vMF}[\PPP(m)(F_v)]$. We end the chapter by the proof that there is a positive proportion of $\mu_m$-torsors which are fields. This is proven by finding an open subset $W\subset\prod_{\vMF}[\PPP(m)(F_v)]$ of positive volume such that all of $\mu_m$-torsors contained in it are fields. Moreover, when $4\nmid m$ or when $i=\sqrt{-1}\in F$, we give an exact formula for this proportion.
\section{Questions and Remarks} Let us discuss some questions that arise naturally from our work.
\subsection{} The conjecture of Manin-Peyre has been proved for all smooth toric varieties. We would like to know to what generality the proof applies to other toric stacks (c.f. \cite{mann}). We do not know what happens when the ``stacky torus" is not split, i.e. not a quotient of two split tori. 
Manin-Peyre conjecture for toric varieties has also been proved for the case of function fields (\cite{Bourqui}). We would like to know what is the situation for toric stacks. 
\subsection{} It would be interesting to understand to what other stacks one can develop a theory of heights and use it to count rational points. Examples of such stacks could be: the stack $\mathcal M_g$ which classifies the curves of genus $g$, the stack of principally polarized abelian varieties $\mathcal A_g$, etc.

\subsection{}One could ask whether there exists a stack~$X$ with enough integral points, such that $B\mu_m\subset X$ and such that the discriminant arises as a height induced by an $\OFS$-model of~$X$ and a line bundle on it (here~$S$ is a finite set of places). We would like to know then whether the result of \ref{quasidiscpocetak} can be reinterpreted as that Manin-Peyre conjecture is true for~$X$. We may then ask how the prediction of Malle conjecture compares with the prediction of Manin-Peyre conjecture. The same question can be asked for any finite group (scheme)~$G$.
\subsection{}A different notion of a height on stack, defined by vector bundles, has been proposed Ellenberg, Satriano and Zureick-Brown in a forthcoming work. Their height is not additive in the vector bundles. We would like to know how this notion compares with our notion of the height.
\subsection{}A counterexample to Malle conjecture has been constructed by Kl\"uners in \cite{Kluners}. The known counterexamples in the conjecture of Manin-Peyre are avoided if one allows removing ``thin sets". We would like to know whether removing ``thin sets" fixes the prediction of Malle.
\chapter{Weighted projective stacks}
\label{Weighted projective stacks and associated topological spaces}
A weighted projective stack is a stacky quotient $\PPP(\aaa):=(\AAA^n-\{0\})/\Gm$, where the action of~$\Gm$ is weighted with the weights $a_1\doots a_n$, where $a_1\doots a_n$ are positive integers. In the first part of this chapter we will recall several properties of such stacks. It turns out that the weighted projective stacks are proper, however, not all of its rational point extends to an integral point. This is a fundamental feature that enables one to define heights. The stack $\oPPa:=\AAA^n/\Gm$ (that by the abuse of the terminology we may also call a weighted projective stack) has enough of integral points in this sense. The second part of the chapter is dedicated to the topological spaces associated to weighted projective stacks.
\section{Weighted projective stacks}\label{wps} In this section we recall several facts about stacks and weighted projective stacks.
\subsection{} \label{intrstack}In this paragraph we recall some generalities on quotient stacks. We follow \cite{stacks-project}. 

Let~$Z$ be a scheme. Let~$X$ be a~$Z$-scheme and let $a:G\times _ZX\to X$ be a left~$Z$-action of locally of finite presentation flat~$Z$-algebraic group~$G$ on~$X$. Denote by $p_2$ the projection to the second coordinate $G\times _ZX\to X$. One has a commutative diagram \begin{equation}\label{comdiagact} \begin{tikzpicture}
  \matrix (m) [matrix of math nodes,row sep=3em,column sep=4em,minimum width=2em]
  {
      G\times _ZX& G\times _ZX \\
     & X,& \\ };
  \path[-stealth]
    (m-1-1)  edge node [above] {$u$} (m-1-2)
            edge node [below] {$a $} (m-2-2)
    (m-1-2) edge node [right] {$p_2 $} (m-2-2)
            ;
\end{tikzpicture} \end{equation}
where the morphism $u$ is given by $(g,x)\mapsto (g,a(g,x))$. The morphism $u$ is an automorphism as its inverse is provided by $(g,x)\mapsto (g,a(g^{-1},x))$, hence, the morphism $a$ is surjective, flat and locally of finite presentation.

We write $X/G$ for the quotient stack (\cite[\href{https://stacks.math.columbia.edu/tag/044O}{Tag 044O}]{stacks-project}). Recall that if $V$ is a~$Z$-scheme a~$1$-morphism $x:V\to X/G$ is given by a $G_V$-equivariant morphism $T\to X_V,$ where~$T$ is a $G_V$-torsor. The $G_V$-equivariant morphism $T\to X_V$ will be often denoted by $\widetilde x$. A $2$-morphism $\theta:x\to y$ of~$1$-morphisms $x:V\to X/G$ and $y:V\to X/G$ corresponding to $G_V$-equivariant morphisms from $G_V$-torsors $\widetilde x:T\to X$ and $\widetilde y:R\to X$, respectively, is given by a morphism $\theta:R\to T$ of $G_V$-torsors such that $\widetilde x=\widetilde y\circ \theta$. 

In the following proposition we recall some of the properties of a quotient stack. Let $q:X\to X/G$ be the quotient~$1$-morphism, i.e. the one given by the trivial $G_X$-torsor $G\times _ZX\xrightarrow{p_2}X$ and the $G_X$-equivariant morphism $G_X=G\times _ZX\xrightarrow{a} X.$ 
\begin{prop}\label{propofstack}
\begin{enumerate}
\item For every~$1$-morphism $x:V\to X/G$ over~$Z$ with $V$ a scheme, the diagram
\begin{equation*}\label{ojub}
\begin{tikzcd}
T \arrow[r,"\widetilde x"] \arrow[d,""]& X\arrow[d,"q"] \\
 V\arrow[r,"x"] &X/G.
\end{tikzcd}\end{equation*}
is $2$-commutative $2$-cartesian (\cite[\href{https://stacks.math.columbia.edu/tag/04UV}{Section 04UV}]{stacks-project}).
\item The morphism $q:X\to X/G$ is surjective, flat, representable and locally of finite presentation \cite[\href{https://stacks.math.columbia.edu/tag/06FH}{Lemma 06FH}]{stacks-project}.
\item The stack $X/G$ is algebraic (\cite[\href{https://stacks.math.columbia.edu/tag/06FI}{Theorem 06FI}]{stacks-project}).
\item The stack $X/G$ is smooth over~$Z$ if~$X$ is smooth over~$Z$ (\cite[\href{https://stacks.math.columbia.edu/tag/0DLS}{Lemma 0DLS}]{stacks-project}). 
\item If~$Y$ is a~$Z$-scheme, we let $X_Y/G_Y$ be the quotient stack for the induced~$Y$-action $a_Y:G_Y\times _YX_Y\to X_Y$. The canonical~$1$-morphism $X_Y/G_Y\to (X/G)\times _ZY$ is an equivalence (\cite[\href{https://stacks.math.columbia.edu/tag/04WX}{Lemma 04WX}]{stacks-project}).
\end{enumerate}
\end{prop}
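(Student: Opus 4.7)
The proposition is a compilation of standard properties of quotient stacks, and the proof will essentially be an application of the cited Stacks Project results, with verification that our hypotheses (namely, $G$ flat and locally of finite presentation over $Z$ acting on an arbitrary $Z$-scheme $X$) match the hypotheses of each cited tag. The plan is to address the five items one by one in the order given, noting that items (i)--(iii) are genuinely foundational and items (iv)--(v) are formal consequences.

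For (i), I would unfold the definition of a $1$-morphism $x:V\to X/G$: it is by definition the datum of a $G_V$-torsor $T\to V$ and a $G_V$-equivariant morphism $\widetilde{x}:T\to X_V$. The $2$-cartesianness is then a direct translation: an object of the fibered product $V\times_{X/G}X$ over a scheme $W\to V$ is a pair consisting of a $W$-point of $V$ and a trivialization of the pulled-back torsor compatible with $\widetilde{x}$, which is precisely the datum of a $W$-point of $T$. This is worked out in \cite[\href{https://stacks.math.columbia.edu/tag/04UV}{Section 04UV}]{stacks-project} and I would simply appeal to it. For (ii), representability of $q$ follows from (i) since the fiber of $q$ over any scheme $V\to X/G$ is the torsor $T$, which is an algebraic space (in fact a scheme \'etale-locally on $V$ when $G$ is nice, but in any case an algebraic space). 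The properties of being surjective, flat, and locally of finite presentation are then checked on $q$ by base change to the atlas $X\to X/G$ itself: the square base-changes to the diagonal-type morphism $G\times_Z X\to X$ given by $a$, which by the commutative diagram \eqref{comdiagact} and the fact that $u$ is an isomorphism has these properties because $p_2$ does (the hypothesis on $G$ is used here in a crucial way).

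For (iii), given (i) and (ii), the stack $X/G$ has a smooth (in fact flat and locally of finite presentation) surjective representable morphism from a scheme, so one verifies the diagonal condition and concludes with \cite[\href{https://stacks.math.columbia.edu/tag/06FI}{Tag 06FI}]{stacks-project}. For (iv), since smoothness is smooth-local on the source and $q:X\to X/G$ is a flat, locally finitely presented, surjective atlas, smoothness of $X/G$ over $Z$ follows from smoothness of $X$ over $Z$ via \cite[\href{https://stacks.math.columbia.edu/tag/0DLS}{Tag 0DLS}]{stacks-project}. Finally for (v), I would construct the comparison $1$-morphism $X_Y/G_Y\to (X/G)\times_Z Y$ directly on objects: a $G_{Y,V}$-torsor $T$ over a $Y$-scheme $V$ with an equivariant map $T\to X_Y$ gives, by forgetting the $Y$-structure on $G_Y$, a $G_V$-torsor together with an equivariant map to $X$, plus the structure map $V\to Y$, and I would check this is an equivalence of categories fibered in groupoids (essentially a tautology since $G_Y=G\times_Z Y$), then invoke \cite[\href{https://stacks.math.columbia.edu/tag/04WX}{Tag 04WX}]{stacks-project}.

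The main obstacle, if any, is simply checking in item (ii) that the hypotheses of the Stacks Project tag \cite[\href{https://stacks.math.columbia.edu/tag/06FH}{Tag 06FH}]{stacks-project} are met for our setup: the representability of $q$ requires $G$ to be flat and locally of finite presentation, which is exactly our standing assumption. Everything else is formal once one has set up quotient stacks carefully. Given the bookkeeping nature of the result, I would keep the proof short and cite each Stacks Project tag explicitly, emphasizing the content of (i) as the geometric heart of the proposition and leaving the remaining items as direct consequences or immediate citations.
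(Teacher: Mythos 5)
Your proposal is correct and matches the paper's approach: the paper states these five facts with direct citations to the listed Stacks Project tags and provides no further proof, treating them (as you do) as standard foundational results whose verification amounts to checking that the standing hypotheses on $G$ (flat, locally of finite presentation over $Z$) match those of each cited tag. Your additional commentary on items (i) and (ii) is accurate and would be a reasonable expansion, but is not present in the paper, which simply cites.
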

Let $\Delta_{X/G}$ be the diagonal morphism of $X/G\to Z$. The following lemma will be quoted several times:
\begin{lem}\label{diagofquot}
The diagram \begin{equation}\label{xgxgxg}
\begin{tikzcd}
G\times _ZX \arrow[r,"a\times p_2"] \arrow[d,"q\circ p_2"]& X\times _ZX\arrow[d,"q\times _{\ZZ}q"] \\
 X/G\arrow[r,"\Delta_{X/G}"] &(X/G)\times_Z(X/G).
\end{tikzcd}\end{equation}is $2$-commutative $2$-cartesian.
\end{lem}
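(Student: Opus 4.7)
My plan is to realize the square~(\ref{xgxgxg}) as the composition of two standard equivalences. The first is a purely categorical identity which rewrites the~$2$-fiber product of a diagonal as a relative self-product. The second is the computation of $X\times_{X/G}X$ provided directly by Proposition~\ref{propofstack}(1). Specifically, I would invoke that for any stack~$\mathcal Y$ over~$Z$ and any pair of $1$-morphisms $f_1,f_2:\mathcal A\to\mathcal Y$, the $2$-fiber product $\mathcal A\times_{\mathcal Y\times_Z\mathcal Y,\,\Delta_{\mathcal Y}}\mathcal Y$ is canonically equivalent to $\mathcal A\times_{\mathcal Y}\mathcal A$, since both parametrise pairs of objects of $\mathcal A$ equipped with a $2$-isomorphism of their images in~$\mathcal Y$. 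Applied with $\mathcal A=X$, $\mathcal Y=X/G$ and $f_1=f_2=q$, this reduces the lemma to the assertion that there is a $2$-cartesian square
\[
\begin{tikzcd}
G\times_ZX \arrow[r,"a"] \arrow[d,"p_2"] & X \arrow[d,"q"] \\
X \arrow[r,"q"] & X/G
\end{tikzcd}
\]
whose induced morphism to $X\times_ZX$ is~$a\times p_2$ and whose induced morphism to $X/G$ is~$q\circ p_2$.

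This square is produced by applying Proposition~\ref{propofstack}(1) to the~$1$-morphism $q:X\to X/G$ itself. By the very definition of~$q$, the $G_X$-torsor classifying it is the trivial torsor $p_2:G\times_ZX\to X$, equipped with the $G_X$-equivariant morphism $a:G\times_ZX\to X$; the proposition then yields exactly the square displayed above. Under this identification, the two projections of the fiber product $X\times_{X/G}X$ to~$X$ correspond to~$a$ (horizontal) and~$p_2$ (vertical), so the induced morphism to $X\times_ZX$ is $a\times p_2$ and the induced morphism to $X/G$ is $q\circ p_2$ (equivalently $q\circ a$, via the $2$-isomorphism furnished by the square).

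The only genuine work is the final bookkeeping step: one must check that, under the identification $G\times_ZX\simeq X\times_{X/G}X$, the $2$-isomorphism witnessing the~$2$-commutativity of~(\ref{xgxgxg}) agrees with the one supplied by composing the two preceding equivalences. This is formal and reduces to unwinding the universal properties used in Proposition~\ref{propofstack}(1); I expect no geometric obstruction beyond this $2$-categorical bookkeeping, which is where essentially all of the care must be concentrated.
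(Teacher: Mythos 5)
Your strategy is sound and yields a cleaner proof than the paper's, which establishes the same result by building a $3\times 3$ diagram through the intermediate object $X\times_Z(X/G)$ and the graph $\Gamma_q$, then pasting $2$-cartesian squares. What you call the first ``standard equivalence'' is the magic square for $2$-fiber products, and the paper's $3\times 3$ diagram is in effect an ad hoc re-derivation of exactly that identity, so your route is not really a different proof so much as a better-packaged one: cite the magic square once, then feed in the single substantive input, namely Proposition~\ref{propofstack}(1) applied to $q:X\to X/G$. That input appears verbatim in the paper's argument (the square with $(a,p_2)$ over $(q,q)$ labelled~(\ref{ribo})-adjacent), so the geometric content is identical. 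Your factorization is preferable because it localises the only nontrivial step and makes the bookkeeping transparent.

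One correction is needed in the way you state the categorical identity. As written --- ``for $f_1,f_2:\mathcal A\to\mathcal Y$, the $2$-fiber product $\mathcal A\times_{\mathcal Y\times_Z\mathcal Y,\Delta_{\mathcal Y}}\mathcal Y$ is canonically equivalent to $\mathcal A\times_{\mathcal Y}\mathcal A$'' --- the assertion is false: the left-hand side parametrises a single object of $\mathcal A$ together with an automorphism of its image in $\mathcal Y$, while the right-hand side parametrises a pair of objects of $\mathcal A$ together with an isomorphism of their images. The correct statement is the genuine magic square: for $1$-morphisms $f:\mathcal A\to\mathcal Y$ and $g:\mathcal B\to\mathcal Y$ over $Z$, the square
\[
\begin{tikzcd}
\mathcal A\times_{\mathcal Y}\mathcal B \arrow[r] \arrow[d] & \mathcal A\times_Z\mathcal B \arrow[d,"f\times g"] \\
\mathcal Y \arrow[r,"\Delta_{\mathcal Y}"] & \mathcal Y\times_Z\mathcal Y
\end{tikzcd}
\]
is $2$-cartesian. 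Applying this with $\mathcal A=\mathcal B=X$, $\mathcal Y=X/G$, $f=g=q$ gives the reduction you describe, with the top-right vertex $X\times_Z X$ as in the lemma. Once this statement is corrected, the rest of your argument --- identifying $X\times_{X/G}X$ with $G\times_Z X$ via the trivial torsor presentation of $q$ and matching the induced arrows to $a\times p_2$ and $q\circ p_2$ --- goes through without issue.
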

\begin{proof}
The following diagram:
\begin{equation}\label{bubyv}
\begin{tikzcd}
	{G\times_ZX} & {X\times_ZX} & X \\
	X & {X\times_Z(X/G)} & {X/G} \\
	{X/G} & {(X/G)\times_Z(X/G)} & {X/G,}
	\arrow["{p_1}", from=1-2, to=1-3]
	\arrow["{p_2}"', from=3-2, to=3-3]
	\arrow["{(\Id_X,q)}"', from=1-2, to=2-2]
	\arrow["{(a,p_2)}", from=1-1, to=1-2]
	\arrow["{p_2}"', from=1-1, to=2-1]
	\arrow["{\Delta_{X/G}}"', from=3-1, to=3-2]
	\arrow["q"', from=2-1, to=3-1]
	\arrow["{p_2}", from=2-2, to=2-3]
	\arrow["{\text{Id}_{X/G}}"', from=2-3, to=3-3]
	\arrow["q"', from=1-3, to=2-3]
	\arrow["{(q,\text{Id}_{X/G})}"', from=2-2, to=3-2]
	\arrow["{\Gamma_q}"', from=2-1, to=2-2]
\end{tikzcd}\end{equation}
where $p_1,p_2$ are always the evident projections and $\Gamma_q$ is the graph of the~$1$-morphism $q,$ is $2$-commutative. The bottom right square is $2$-commutative $2$-cartesian. The square
\[\begin{tikzcd}
	X && {X/G} \\
	{X/G} && {X/G}
	\arrow["q"', from=1-1, to=2-1]
	\arrow["{\text{Id}_{X/G}}"', from=1-3, to=2-3]
	\arrow["p_2\circ\Gamma_q", from=1-1, to=1-3]
	\arrow["{p_2\circ\Delta_{X/G}}"', from=2-1, to=2-3]
\end{tikzcd}\]
is $2$-commutative $2$-cartesian, because one has isomorphisms $\Id_{X/G}\cong p_2\circ\Delta_{X/G}$ and $q\cong p_2\circ\Gamma_q$. It follows that the $2$-commutative square
\begin{equation}\label{korjc}\begin{tikzcd}
	{} & {X} & X\times_Z(X/G) \\
	\\
	& {X/G} & {(X/G)\times_Z(X/G)}
	\arrow["{\Delta_{X/G}}"', from=3-2, to=3-3]
	\arrow["{\Gamma_q}", from=1-2, to=1-3]
	\arrow["{q}"', from=1-2, to=3-2]
	\arrow["{(q,\text{Id}_{X/G})}"', from=1-3, to=3-3]
\end{tikzcd}\end{equation} is $2$-cartesian. The upper right square in (\ref{bubyv}) is $2$-commutative $2$-cartesian. The square 
\begin{equation*}\begin{tikzcd}
	{G\times_ZX} && X \\
	X && {X/G}
	\arrow["q"', from=1-3, to=2-3]
	\arrow["{p_2}"', from=1-1, to=2-1]
	\arrow["{p_2\circ \Gamma_q}"', from=2-1, to=2-3]
	\arrow["{p_2\circ(a,p_1)}", from=1-1, to=1-3]
\end{tikzcd}\end{equation*}
is $2$-cartesian $2$-commutative, because one has $a=p_1\circ(a,p_2)$ and there exists an isomorphism $q\cong p_2\circ\Gamma_q$. It follows that the square 
\begin{equation}\label{ribo}
\begin{tikzcd}
	{G\times_ZX} & {X\times_ZX} \\
	X & {X\times_Z(X/G)}
	\arrow["{p_2}"', from=1-1, to=2-1]
	\arrow["{(a,p_2)}", from=1-1, to=1-2]
	\arrow["{\text{Id}_X\times q}"', from=1-2, to=2-2]
	\arrow["{\Gamma_q}"', from=2-1, to=2-2]
\end{tikzcd}
\end{equation}
is $2$-commutative and $2$-cartesian. By using that (\ref{korjc}) and (\ref{ribo}) are $2$-commutative and $2$-cartesian square we deduce that (\ref{xgxgxg}) is $2$-commutative $2$-cartesian.
\end{proof}
We say that affine algebraic group~$G$ is \textit{special} (Serre, Section 4.1 in \cite{SerreG}) if every~$G$-torsor $Y\to W$, with $W$ and~$Y$ schemes, is locally trivial for the Zariski topology on $W$. Hilbert 90 theorem states that the general linear groups $GL_d$, for $d\geq 1$ are special (see e.g. \cite[Lemma 4.10, Chapter III]{Milne}). 
\begin{lem}\label{xgtac}
Suppose~$G$ is a flat, locally of finite presentation special algebraic group. Let~$R$ be a local~$Z$-ring. For every~$1$-morphism of~$Z$-stacks $x:\Spec R\to X/G$, there exists a $2$-commutative $2$-cartesian square \begin{equation}\label{ojub}
\begin{tikzcd}
G_R \arrow[r,"\widetilde x"] \arrow[d,""]& X\arrow[d,"q"] \\
 \Spec (R)\arrow[r,"x"] &X/G,
\end{tikzcd}\end{equation}
with $\widetilde x$ being~$G$-equivariant morphism.
\end{lem}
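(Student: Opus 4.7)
The plan is to combine Proposition \ref{propofstack}(1) with the speciality of $G$ and the fact that $\Spec R$ is a local scheme to trivialize the torsor obtained from the $2$-cartesian pullback.

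First, I would apply Proposition \ref{propofstack}(1) to the $1$-morphism $x:\Spec R\to X/G$. This produces a $G_R$-torsor $T\to\Spec R$ equipped with a $G$-equivariant morphism $\widetilde y:T\to X$ sitting in a $2$-commutative $2$-cartesian square
\[\begin{tikzcd}
T \arrow[r,"\widetilde y"] \arrow[d,""]& X\arrow[d,"q"] \\
 \Spec R\arrow[r,"x"] &X/G.
\end{tikzcd}\]
The task reduces to exhibiting an isomorphism of $G_R$-torsors $T\cong G_R$, for once such an isomorphism $\varphi$ is chosen, the composition $\widetilde x:=\widetilde y\circ\varphi^{-1}:G_R\to X$ is still $G$-equivariant, the left vertical becomes the canonical projection $G_R\to\Spec R$, and the resulting square remains $2$-commutative $2$-cartesian (pullback diagrams are preserved under replacement of one of the objects by an equivalent one via an isomorphism).

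Next I would invoke the speciality hypothesis on $G$: by definition, the $G_R$-torsor $T\to\Spec R$ admits a Zariski covering $\{U_i\hookrightarrow\Spec R\}$ on which $T$ becomes trivial. Let $U_{i_0}$ be an open of this covering containing the closed point of $\Spec R$. The key observation is that $\Spec R$ is a local scheme: every non-empty closed subset of $\Spec R$ contains the unique closed point, so the only open subset of $\Spec R$ containing this closed point is $\Spec R$ itself. Hence $U_{i_0}=\Spec R$ and the torsor $T$ is already trivial over all of $\Spec R$. Choosing a trivialization $\varphi:T\xrightarrow{\sim}G_R$ of $G_R$-torsors finishes the proof by the reduction above.

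There is no real obstacle here; the statement is essentially a direct combination of the pullback description of morphisms into $X/G$ from Proposition \ref{propofstack}(1) with the elementary fact that torsors under a special group are trivial over local bases. The only point requiring a moment of care is ensuring that after replacing $T$ by $G_R$ via $\varphi$ the square remains $2$-cartesian, which is automatic because $2$-cartesianness is invariant under $2$-isomorphism of the corner objects.
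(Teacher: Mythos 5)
Your proposal is correct and takes essentially the same approach as the paper: apply Proposition \ref{propofstack}(1) to produce a $G_R$-torsor over $\Spec R$, then use speciality of $G$ together with locality of $R$ to trivialize it. The paper states this in two sentences; you usefully spell out why Zariski-local triviality over the local scheme $\Spec R$ forces global triviality and why replacing the torsor by $G_R$ preserves $2$-cartesianness, but the underlying argument is identical.
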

\begin{proof}As~$R$ is local and~$G$ special, every~$G$-torsor over $\Spec R$ is isomorphic to the trivial one. Now the claim follows from part (1) of \ref{propofstack}. 
\end{proof}
One can also see that in the situation of \ref{xgtac}, the category $(X/G)(R)$ is equivalent to the following category: its objects are $G_R$-equivariant morphisms $G_R\to X_R$ and a morphism $t:(x:G_R\to X_R)\to (y:G_R\to X_R)$ is an element $t\in G_R(R)$  such that $x=y\circ t$, when~$t$ is seen as a morphism $t:G_R\to G_R$ by multiplication to the left. We will often by the abuse of notation write $(X/G)(R)$ for the latter category.
\subsection{}
In this paragraph we work over $\Spec(\ZZ)$. Let $n\geq 1$ be an integer. Let $\aaa\in\ZZ^n_{\geq 1}$. The smooth group scheme~$\Gm$ acts on $\AAA^n$ via the formula: \begin{equation}a:\Gm\times\AAA^n\to\AAA^n\hspace{1cm}(t,\xxx)=(t^{a_j}x_j)_j. \label{defactstac}\end{equation}
We will often write $t\cdot\xxx$ instead of $a(t,\xxx).$
Note that $\AAA^n-\{0\}\subset\AAA^n$ and $\Gm^n=(\AAA^1-\{0\})^n\subset\AAA^n$ are~$\Gm$-invariant open subschemes for this action. We have, hence, induced actions of~$\Gm$ on $\Gmn$ and on $\AAA^n-\{0\}$.
\begin{lem}\label{aaaction}
\begin{enumerate}
\item The morphism $(a,p_2):\Gm\times\AAA^n\to\AAA^n\times\AAA^n$ is of finite presentation and affine (hence separated and quasi-compact by \cite[\href{https://stacks.math.columbia.edu/tag/01S7}{Lemma 01S7}]{stacks-project}).
\item The morphisms $$(a|_{\Gm\times(\AAA^n-\{0\})},p_2):\Gm\times(\AAA^n-\{0\})\to(\AAA^n-\{0\})\times(\AAA^n-\{0\})$$ and $$(a|_{\Gm\times\Gm^n},p_2):\Gm\times\Gm^n\to\Gmn\times\Gmn $$ induced from~$\Gm$-invariant open subschemes $\AAA^n-\{0\}\subset \AAA^n$ and $\Gmn\subset\AAA^n$, respectively, are finite.
\end{enumerate}
\end{lem}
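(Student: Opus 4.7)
The plan is to treat both parts by explicit computation with the action $a(t,\xxx)=(t^{a_i}x_i)_i$; neither assertion requires much beyond the local criterion for finiteness on the target.

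For (1), both $\Gm\times\AAA^n$ and $\AAA^n\times\AAA^n$ are affine of finite type over $\Spec\ZZ$, so the morphism $(a,p_2)$ is automatically affine and of finite presentation; explicitly the corresponding ring map $\ZZ[y_i,z_i]\to\ZZ[t^{\pm1},x_i]$ sends $y_i\mapsto t^{a_i}x_i$ and $z_i\mapsto x_i$.

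For (2), I would verify finiteness Zariski-locally on the target. I begin with $(\AAA^n-\{0\})\times(\AAA^n-\{0\})$: cover it by the distinguished affine opens $D(y_jz_k)\subset\AAA^n\times\AAA^n$ for $1\le j,k\le n$; these do cover, because any point of the target has at least one nonzero $y_j$ and at least one nonzero $z_k$. The preimage of $D(y_jz_k)$ under $(a,p_2)$ is $D(x_jx_k)\subset\Gm\times\AAA^n$, since $t$ is already invertible on the source. On such a chart, the identities $t^{a_k}=y_k/z_k$ (using invertibility of $z_k$) and $t^{-a_j}=z_j/y_j$ (using invertibility of $y_j$) exhibit $t$ and $t^{-1}$ as integral over the localized target. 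The source ring is then of finite type over the target and integral, hence finite. The $\Gm^n\times\Gm^n$-case can be handled by the same argument (with $y_1,z_1$ invertible, the source $\ZZ[t^{\pm1},x_i^{\pm1}]$ is a finite module over $\ZZ[y_i^{\pm1},z_i^{\pm1}]$ with basis $1,t,\dots,t^{a_1-1}$), or deduced directly by base change from the $(\AAA^n-\{0\})$-case along the open immersion $\Gm^n\times\Gm^n\hookrightarrow(\AAA^n-\{0\})\times(\AAA^n-\{0\})$.

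The only mildly delicate point, and the step I would spend most care on, is the choice of affine cover in the $\AAA^n-\{0\}$ case, since the target is not itself affine; once the cover $\{D(y_jz_k)\}_{j,k}$ is fixed, each chart computation reduces to the short integral-extension argument sketched above.
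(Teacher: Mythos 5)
Your proof is correct and takes a genuinely different route from the paper's. The paper establishes part (2) by first noting that the morphism is affine and of finite type (as a base change of the affine, finite-type morphism $(a,p_2):\Gm\times\AAA^n\to\AAA^n\times\AAA^n$ along an open immersion), and then proving universal closedness via the valuative criterion: given a discrete valuation ring $R$ with fraction field $K$ and a $K$-point $(t,\zzz)$ lying over an $R$-point $(\xxx,\yyy)$ of the target, it shows $v_R(t)=0$ by playing a coordinate with $v_R(y_i)=0$ against one with $v_R(x_k)=0$, and concludes properness, hence finiteness. You instead argue Zariski-locally on the target: the distinguished affine opens $D(y_jz_k)$ cover $(\AAA^n-\{0\})\times(\AAA^n-\{0\})$, each preimage is the affine $D(x_jx_k)$, and on such a chart the relations $t^{a_k}=y_k/z_k$ and $t^{-a_j}=z_j/y_j$ exhibit the source ring as an integral extension of finite type, hence finite, over the target ring. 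Both proofs are correct; yours is more elementary (no appeal to the valuative criterion) and has the advantage of producing explicit monic relations witnessing finiteness, while the paper's is somewhat more uniform in that the same valuation count works for both $\AAA^n-\{0\}$ and $\Gmn$ without choosing a cover. One small inaccuracy to fix: in the $\Gmn$ case you say $1,t,\dots,t^{a_1-1}$ is a basis of the source over the target; it is a generating set (since $t^{a_1}$ lies in the image of the target ring), but it is only a basis when $a_1=\gcd(\aaa)$, since the image of $\ZZ[y_i^{\pm1},z_i^{\pm1}]$ in $\ZZ[t^{\pm1},x_i^{\pm1}]$ is $\ZZ[x_i^{\pm1},t^{\pm\gcd(\aaa)}]$ and the module is free of rank $\gcd(\aaa)$. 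For finiteness the generating set suffices, so the argument is unaffected.
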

\begin{proof}
\begin{enumerate}
\item The morphism $(a,p_2)$ is of finite presentation as both $a$ and $p_2$ are of finite presentation (see Diagram (\ref{comdiagact}) in \ref{propofstack}). The morphism $(a,p_2)$ is affine because it is a morphism of affine schemes. 
\item  Let us verify that $(a|_{\Gm\times(\AAA^n-\{0\})},p_2)$ is proper. It is affine, hence separated, and of finite type, as it is the base change of the affine and finite type morphism $\Gm\times\AAA^n\to\AAA^n\times\AAA^n $ along the open immersion $(\AAA^n-\{0\})\times(\AAA^n-\{0\})\to\AAA^n\times\AAA^n.$ We use the valuative criterion for finite type morphism with target Noetherian to be universally closed \cite[\href{https://stacks.math.columbia.edu/tag/0CM5}{Lemma 0CM5}]{stacks-project}. Let~$R$ be a discrete valuation ring, $v_R$ its valuation and~$K$ its fraction field. Consider the diagram 
\[\begin{tikzcd}
	{\Spec(K)} & {\Gm\times (\mathbb{A}^n-\{0\})} \\
	 {\Spec(R)}& {(\mathbb A^n-\{0\})\times(\mathbb A^n-\{0\}).}
	\arrow["{}"', from=1-1, to=2-1]
	\arrow["{(t,\zzz)}", from=1-1, to=1-2]
	\arrow["{}"', from=1-2, to=2-2]
	\arrow["{(\xxx,\yyy)}", from=2-1, to=2-2]
\end{tikzcd}\]
It follows that $\zzz=\yyy\in(\AAA^n-\{0\})(R)$ and that $t\cdot\zzz=\xxx$. There exists~$i$ such that $v_R(z_i)=v_R(y_i)=0.$ We have that $0\leq v_R(x_i)=v_R(t^{a_i}z_i)=a_iv_R(t)$ and hence $v_R(t)\geq 0$. There exists~$k$ such that $v_R(x_k)=0$. We have $0=v_R(x_k)=v_R(t^{a_k}z_k)=a_kv_R(t)+v_R(z_k)\geq a_kv_R(t)$ and hence $v_R(t)\leq 0$. We deduce $v_R(t)=0$ i.e. $t\in\Gm(R)$. We deduce $(t,\zzz)\in(\Gm\times(\AAA^n-\{0\}))(R)$ and the valuative criterion is verified. It follows that $(a|_{\Gm\times(\AAA^n-\{0\})},p_2)$ is universally closed and we deduce that it is proper. It is affine, and we deduce that it is also finite. Now, the morphism $(a|_{\Gm\times\Gm^n},p_2)$ is the base change of the finite morphism $(a|_{\Gm\times(\AAA^n-\{0\})},p_2)$ along the open immersion $\Gm^n\times\Gm^n\to(\AAA^n-\{0\})\times(\AAA^n-\{0\}),$ hence is finite. 
\end{enumerate}
\end{proof}
\begin{mydef}
We define quotient stacks for the actions from above: \begin{align*}
\overline{\PPP(\aaa)}&:=\AAA^n/\Gm,\\
\PPP(\aaa)&:=(\AAA^n-\{0\})/\Gm,\\
\TTa&:=\Gm^n/\Gm.
\end{align*}
The first two stacks we may call weighted projective stacks.
\end{mydef}
The~$\Gm$-equivariant open immersions $\Gm^n\subset\AAA^n-\{0\}$, $\AAA^n-\{0\}\subset\AAA^n$ induce~$1$-morphisms of stacks $\TTa\to\PPP(\aaa)$ and $\PPP(\aaa)\to\oPPa$ by \cite[\href{https://stacks.math.columbia.edu/tag/046Q}{Lemma 046Q}]{stacks-project}, which are open immersions by 
\cite[\href{https://stacks.math.columbia.edu/tag/04YN}{Lemma 04YN}]{stacks-project}.
\begin{lem}\label{ppaqs} The stacks $\TTa,$~$\PPP(\aaa)$ and $\overline{\PPP(\aaa)}$ satisfy the following:
\begin{enumerate}
\item They are smooth algebraic stacks.
\item They are quasi-compact.
\item Their diagonals for the canonical morphisms to $\Spec(\ZZ)$ are representable and affine (hence, separated and quasi-compact). The diagonals of~$\TTa$ and~$\PPP(\aaa)$ are further finite.
\item They are of finite presentation.
\end{enumerate}
\end{lem}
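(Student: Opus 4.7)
The plan is to treat the three stacks uniformly: each is of the form $X/\Gm$ where $X$ is one of the smooth, $\Gm$-invariant open subschemes $\Gm^n \subset \AAA^n-\{0\} \subset \AAA^n$ of $\AAA^n$. Every claim will be obtained either by direct appeal to Proposition \ref{propofstack} or by fppf descent along the quotient $1$-morphism $q\colon X \to X/\Gm$, combined with Lemmas \ref{diagofquot} and \ref{aaaction}. For part (1), all three schemes are smooth over $\Spec(\ZZ)$, so algebraicity is Proposition \ref{propofstack}(3) and smoothness is Proposition \ref{propofstack}(4). For part (2), each of $\Gm^n$, $\AAA^n-\{0\}$, $\AAA^n$ is quasi-compact ($\Gm^n$ and $\AAA^n$ are affine; $\AAA^n-\{0\}=D(x_1)\cup\cdots\cup D(x_n)$ is a finite union of principal affine opens), and since $q$ is surjective by Proposition \ref{propofstack}(2), the quotient is quasi-compact as well.

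For part (3), I would apply Lemma \ref{diagofquot}: after pulling back along the $1$-morphism $q\times q\colon X\times X\to (X/\Gm)\times(X/\Gm)$, the diagonal $\Delta_{X/\Gm}$ becomes the morphism $(a,p_2)\colon\Gm\times X\to X\times X$. The morphism $q\times q$ is surjective, flat and locally of finite presentation by Proposition \ref{propofstack}(2), hence is an fppf cover. The properties of being affine and of being finite are fpqc-local on the target. By Lemma \ref{aaaction}(1), $(a,p_2)$ is affine in each of the three cases, so $\Delta_{X/\Gm}$ is affine; being affine it is in particular representable (schematic), separated and quasi-compact. For $X=\Gm^n$ and $X=\AAA^n-\{0\}$ the base change is moreover finite by Lemma \ref{aaaction}(2), so the diagonals of $\TTa$ and $\PPP(\aaa)$ are finite.

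For part (4), quasi-compactness is (2) and quasi-separatedness follows from (3) since the diagonal is affine, hence quasi-compact and separated; it therefore suffices to establish local finite presentation over $\Spec(\ZZ)$. The composition $X\to X/\Gm\to\Spec(\ZZ)$ is locally of finite presentation because $X$ is, and $q$ is an fppf cover; by fppf descent of ``locally of finite presentation'' for morphisms of algebraic stacks, $X/\Gm\to\Spec(\ZZ)$ is locally of finite presentation, hence of finite presentation.

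The only non-routine input is the invocation of fpqc/fppf descent for ``affine'', ``finite'' and ``locally of finite presentation'' in passing from $(a,p_2)$ to $\Delta_{X/\Gm}$ and from $X$ to $X/\Gm$; these are standard descent statements (affineness descends fpqc, and finite equals proper plus affine with both descending fpqc), so I do not expect a genuine obstacle, only bookkeeping.
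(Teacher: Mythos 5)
Your proof is correct and follows essentially the same route as the paper: quote Proposition \ref{propofstack} for (1) and (2), use the $2$-cartesian square of Lemma \ref{diagofquot} together with the properties of $(a,p_2)$ from Lemma \ref{aaaction} and fppf descent for (3), and assemble (4) from (1)--(3). The only cosmetic differences are that the paper gets local finite presentation directly from smoothness rather than by descent, and obtains the finiteness of $\Delta_{\TTa}$ by base change from $\Delta_{\PPP(\aaa)}$ rather than by invoking Lemma \ref{aaaction}(2) directly.
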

\begin{proof}
\begin{enumerate}
\item  This claim follows from parts (3) and (4) of \ref{propofstack}. 
\item The quotient~$1$-morphisms $q^\aaa:\AAA^n\to\AAA^n/\Gm=\overline{\PPP(\aaa)},$ $q^{\aaa}|_{\AAA^n-\{0\}}:(\AAA^n-\{0\})\to\PPP(\aaa)$ and $q^{\aaa}|_{\Gm^n}:\Gm^n\to\TTa$ are surjective by \ref{propofstack} and as $\AAA^n,\AAA^n-\{0\}$ and $\Gm^n$ are quasi-compact, we deduce by \cite[\href{https://stacks.math.columbia.edu/tag/04YC}{Lemma 04YC}]{stacks-project} that $\overline{\PPP(\aaa)},$~$\PPP(\aaa)$ and~$\TTa$ are quasi-compact. 
\item All three diagonals are representable, because all three stacks are algebraic. Let us prove that the diagonal morphism $\Delta_{\oPPa}:\oPPa\to\oPPa\times\oPPa$ is affine. By \ref{diagofquot}, we have a $2$-commutative $2$-cartesian square\[\begin{tikzcd}
	{\Gm\times \mathbb{A}^n} & {\mathbb A^n\times\mathbb A^n} \\
	\oPPa & {\overline{\mathscr P(\mathbf a)}\times\oPPa.}
	\arrow["{q\circ p_2}"', from=1-1, to=2-1]
	\arrow["{(a,p_2)}", from=1-1, to=1-2]
	\arrow["{(q,q)}"', from=1-2, to=2-2]
	\arrow["\Delta_{\oPPa}"', from=2-1, to=2-2]
\end{tikzcd}\]
The~$1$-morphism $(q,q):\AAA^n\times\AAA^n\to\oPPa\times\oPPa$ is surjective, flat and locally of finite presentation and the morphism $(a,p_2):\Gm\times\AAA^n\to\AAA^n\times\AAA^n$ is affine by \ref{aaaction}. It follows from \cite[\href{https://stacks.math.columbia.edu/tag/06TY}{Lemma 06TY}]{stacks-project} that $\Delta_{\oPPa}$ is affine. Let us prove that the diagonal $\Delta_{\PPP(\aaa)}:\PPP(\aaa)\to\PPP(\aaa)\times\PPP(\aaa)$ is finite.
By \ref{diagofquot}, we have a $2$-commutative $2$-cartesian square\[\begin{tikzcd}
	{\Gm\times (\mathbb{A}^n-\{0\})} & {(\mathbb A^n-\{0\})\times(\mathbb A^n-\{0\})} \\
	\PPP(\aaa) & {\PPP(\aaa)\times\PPP(\aaa).}
	\arrow["{q\circ p_2}"', from=1-1, to=2-1]
	\arrow["{(a,p_2)}", from=1-1, to=1-2]
	\arrow["{(q,q)}"', from=1-2, to=2-2]
	\arrow["\Delta_{\PPP(\aaa)}"', from=2-1, to=2-2]
\end{tikzcd}\]
The~$1$-morphism $$(q|_{\AAA^n-\{0\}},q|_{\AAA^n-\{0\}}):(\AAA^n-\{0\})\times(\AAA^n-\{0\})\to\PPP(\aaa)\times\PPP(\aaa)$$ is surjective, flat and locally of finite presentation and the morphism $(a,p_2):\Gm\times(\AAA^n-\{0\})\to(\AAA^n-\{0\})\times(\AAA^n-\{0\})$ is finite by \ref{aaaction}. It follows from \cite[\href{https://stacks.math.columbia.edu/tag/06TY}{Lemma 06TY}]{stacks-project} that $\Delta_{\PPP(\aaa)}$ is finite.
 Now, one has that the diagonal $\Delta_{\TTa}$ is just the base changes of~$\Delta_{\PPP(\aaa)}$ along the open immersion $\TTa\subset\PPP(\aaa)$, hence is finite by \cite[\href{https://stacks.math.columbia.edu/tag/045C}{Lemma 045C}]{stacks-project}.
\item Recall that of finite presentation means quasi-compact, quasi-separated and locally of finite presentation. We have seen in (1) and (2) that~$\TTa$,~$\PPP(\aaa)$ and~$\oPPa$ are smooth, thus locally of finite presentation, and quasi-compact. By (3), the diagonals $\Delta_{\TTa},\Delta_{\PPP(\aaa)}$ and $\Delta_{\oPPa}$ are quasi-compact and separated, thus quasi-separated by \cite[\href{https://stacks.math.columbia.edu/tag/050E}{Lemma 050E}]{stacks-project}, i.e. $\TTa, \PPP(\aaa)$ and $\overline{\PPP(\aaa)}$ are quasi-separated. We deduce that~$\TTa$,~$\PPP(\aaa)$ and $\overline{\PPP(\aaa)}$ are all of finite presentation. 
\end{enumerate}
\end{proof} 
\begin{prop}
The stack~$\PPP(\aaa)$ is proper.
\end{prop}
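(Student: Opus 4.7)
The plan is to apply the valuative criterion of properness for algebraic stacks. By Lemma \ref{ppaqs}, $\PPP(\aaa)$ is of finite presentation over $\Spec(\ZZ)$ and has finite diagonal, so in particular it is separated. Thus the only remaining ingredient is universal closedness. In the form suited to stacks, this reduces to the following statement: for every valuation ring $R$ with fraction field $K$ and every $1$-morphism $\xi:\Spec(K)\to\PPP(\aaa)$, there exists an extension of valuation rings $R\subset R'$, with fraction field $K'$, and a $1$-morphism $\Spec(R')\to\PPP(\aaa)$ that agrees with $\xi$ after restriction to $\Spec(K')$ up to $2$-isomorphism.

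Since $\Gm$ is special, Lemma \ref{xgtac} represents $\xi$ by a tuple $\xxx=(x_1,\dots,x_n)\in K^n$, not all $x_i$ being zero, two such tuples defining $2$-isomorphic morphisms precisely when they differ by the weighted $\Gm(K)$-action. Writing $v$ for the valuation of $R$ and $\Gamma$ for its value group, I would select an index $i_0$ attaining the maximum $m:=\max\{-v(x_i)/a_i : x_i\neq 0\}$ taken in $\Gamma\otimes_{\ZZ}\QQ$. Then I would take $K':=K(\tau)$ with $\tau^{a_{i_0}}=x_{i_0}^{-1}$ and let $R'\subset K'$ be a valuation ring dominating $R$, with extended valuation $v'$. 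The identity $a_{i_0}v'(\tau)=-v(x_{i_0})$ combined with the choice of $i_0$ then yields
\[
v'(\tau^{a_i}x_i)=v(x_i)-\tfrac{a_i}{a_{i_0}}v(x_{i_0})\geq 0
\]
for every $i$, with equality at $i=i_0$. Hence the rescaled tuple $\tau\cdot\xxx=(\tau^{a_i}x_i)_i$ defines an $R'$-point of $\AAA^n-\{0\}$, and its image under the quotient $1$-morphism $\AAA^n-\{0\}\to\PPP(\aaa)$ is the sought extension, which agrees with $\xi$ over $K'$ because $\xxx$ and $\tau\cdot\xxx$ lie in the same $\Gm(K')$-orbit.

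The step I expect to be the main obstacle is precisely the need to enlarge $R$: the target value $-v(x_{i_0})/a_{i_0}$ may fail to belong to $\Gamma$, so no $t\in K^\times$ can scale $\xxx$ into $(\AAA^n-\{0\})(R)$, and one must genuinely ramify $v$. This is exactly what the stacky valuative criterion allows (and what distinguishes $\PPP(\aaa)$ from the weighted projective scheme $\PP(\aaa)$, whose valuative criterion in this case requires extracting an $a_{i_0}$-th root without the liberty to change $R$). Once this is granted, the remainder is a direct computation on valuations, and together with Lemma \ref{ppaqs} it yields properness of $\PPP(\aaa)$.
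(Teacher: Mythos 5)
Your proof is correct and follows essentially the same route as the paper's: reduce to the existence part of the valuative criterion via finite presentation and finite diagonal (Lemma \ref{ppaqs}), then pass to a finite extension of the valuation ring to extract the root needed to rescale the coordinate tuple into $(\AAA^n-\{0\})(R')$. The only cosmetic difference is the choice of extension: the paper adjoins $\pi_R^{1/\lcm(\aaa)}$ uniformly, while you adjoin an $a_{i_0}$-th root of $x_{i_0}^{-1}$ targeted at the extremal coordinate, which is slightly more economical but achieves the same end.
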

\begin{proof}
Recall that a proper~$1$-morphism is a~$1$-morphism which is of finite type, separated and universally closed. In \ref{ppaqs}, we have verified that~$\PPP(\aaa)$ is of finite presentation, hence of finite type \cite[\href{https://stacks.math.columbia.edu/tag/06Q5}{Lemma 06Q5}]{stacks-project}, and of finite diagonal, hence separated.

We will now apply the valuative criterion for separated~$1$-morphisms with target locally Noetherian algebraic stacks to be universally closed \cite[\href{https://stacks.math.columbia.edu/tag/0CQM}{Lemma 0CQM}]{stacks-project}, to the~$1$-morphism $\PPP(\aaa)\to\Spec\ZZ.$ Let again~$R$ be a discrete valuation ring, let~$K$ be its field of fractions, let $v_R$ be its valuation and $\pi_R$ its uniformizer. We pick an object~$\xxx$ in the groupoid~$\PPP(\aaa)(K)$ and we prove that there exists a finite extension $K'$ of~$K$ and a valuation ring $R'\subset K'$ such that $\mathfrak m_R=\mathfrak m_{R'}\cap K,$ where $\mathfrak m_R$ and $\mathfrak m_{R'}$ are maximal ideals of~$R$ and $R'$ respectively, and such that the restriction $\xxx_{K'}\in\PPP(\aaa)(K'),$ is in the essential image of the functor $$\PPP(\aaa)(R')\to\PPP(\aaa)(K').$$ Let $\widetilde\xxx:(\Gm)_K\to(\AAA^n-\{0\})_K$ be the $(\Gm)_K$-equivariant morphism given by $\xxx.$ We set $\ell:=\lcm(\aaa)$. 
Let us set $K'=K(\pi_R^{1/\ell})$ and let $R'$ be the integral closure of~$R$ in $K'$. By \cite[\href{https://stacks.math.columbia.edu/tag/09EV}{Lemma 09EV}]{stacks-project}, one has that $R'=R[\pi_R^{1/\ell}]$, that $R'$ is a discrete valuation ring, and that $\pi_R^{1/\ell}$ is a uniformizer of $R'$. Thus the maximal ideal of $R'$ is given by $(\pi_R^{1/\ell})$ and its intersection with~$R$ is precisely the maximal ideal $(\pi_R)$ of~$R$. We extend canonically $v_R$ to $R'$ and $K'$. We set $k=-\min_j\frac{v_R(\widetilde x_j(1))}{a_j},$ where $\widetilde x_j(1)$ is the~$j$-th coordinate of $\wx(1)$. Note that one has that $$\pi _R^{k}\cdot \wx (1)=(\pi_R^{a_jk}\widetilde x_j(1))_j\in(\AAA^n-\{0\})(R'),$$ because for every index~$i$ one has $$v_R(\pi_R^{a_ik}\widetilde x_i(1))=-a_i\min_j\bigg(\frac{v_R(\widetilde x_j(1))}{a_j}\bigg)+a_iv_R(\widetilde x_i(1))\geq 0$$ and because for the index~$i$ such that $\frac{\widetilde x_i(1)}{a_i}$ is minimal one has that $$v_R(\pi_R^{a_ik}\widetilde x_i(1))=0.$$ We define a $(\Gm)_{R'}$-equivariant morphism by $$\widetilde\zzz:(\Gm)_{R'}\to(\AAA^n-\{0\})_{R'}\hspace{1cm}1\mapsto \pi^{k}_{R}\cdot\wx(1),$$ and $\widetilde\zzz$ defines a morphism $\zzz:\Spec(R')\to\PPP(\aaa)$.  
One has that $\pi_R^k=(\pi_R^{1/\ell})^{k\ell}\in K'$ and thus $\pi_{R}^k$ defines by the multiplication a morphism $(\Gm)_{K'}\to(\Gm)_{K'}$ which satisfies $\widetilde\zzz=\wx\circ\pi_R^k$. It follows that $\zzz_{K'}$ and $\xxx_{K'}$ are isomorphic. The valuative criterion is verified and $\PPP(\aaa)\to\Spec(\ZZ)$ is universally closed. It follows that the algebraic stack~$\PPP(\aaa)$ is proper.
 \end{proof} 
\section{Models with enough integral points}\label{Modelswithenough} In this section, we will define models of stacks which admit ''enough integral points" in order to define unstable heights on stacks.
\subsection{}In this paragraph we define models of stacks.
\begin{mydef}
Let~$X$ be a finite presentation algebraic stack over a number field~$F$ and let $\OOF\subset A\subset F$ be a ring.  A model of~$X$ over $\Spec(A)$ is a finite presentation~$A$-algebraic stack~$\XXX$ endowed with a~$1$-isomorphism $x:\XXX_F\xrightarrow{\sim}X$.
\end{mydef}
A base change of a~$1$-morphism of finite presentation is of finite presentation \cite[\href{https://stacks.math.columbia.edu/tag/06Q4}{Lemma 06Q4}]{stacks-project}. We deduce that if $(\XXX,x:\XXX_F\xrightarrow{\sim}X)$ is a model of~$X$ over $\Spec(A)$, for some $\OOF\subset A\subset F$, then for every $A'$ such that $A\subset A'\subset F$, one has that $(\XXX_{A'}, x:\XXX_F\xrightarrow{\sim}X)$ is a model of~$X$. The model is unique in the following sense.
\begin{lem}\label{unicitymodel}
Let~$X$ be a finite presentation~$F$-algebraic stack. Let $S_1$ and $S_2$ be finite sets of finite places of~$F$. Let $(\YYY,y:\YYY_F\xrightarrow{\sim}X)$ and $(\ZZZ,z:\ZZZ_F\xrightarrow{\sim} X)$ be models of~$X$ over $\OO_{F,S_1}$ and $\OO_{F,S_2}$, respectively. There exists a finite set $S\supset S_1\cup S_2$ of finite places of~$F$, a~$1$-isomorphism of stacks $f:\YYY_{\mathcal O_{F,S}}\xrightarrow{\sim}\ZZZ_{\mathcal O_{F,S}}$ and a $2$-isomorphism $y\xrightarrow{\sim}z\circ f_F.$ 
\end{lem}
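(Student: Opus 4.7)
The plan is to descend everything to a common base ring and then apply spreading-out for algebraic stacks of finite presentation along the filtered colimit $F=\varinjlim_{S}\OO_{F,S}$, where $S$ ranges over finite subsets of $M_F^0$ containing $T:=S_1\cup S_2$. After base change, both $\YYY_{\OO_{F,T}}$ and $\ZZZ_{\OO_{F,T}}$ become finite presentation $\OO_{F,T}$-stacks (using the stability of finite presentation under base change already cited in the excerpt). Picking a pseudo-inverse $z^{-1}$ of $z$, we form the $1$-isomorphism $g:=z^{-1}\circ y:\YYY_F\to\ZZZ_F$, equipped canonically with a $2$-isomorphism $z\circ g\cong y$ coming from the unit $z\circ z^{-1}\cong\mathrm{Id}_X$.

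Next I would invoke spreading-out of $1$-morphisms between finite presentation algebraic stacks over a filtered system of affine bases (modelled on EGA IV \S 8 and extended to algebraic stacks in the Stacks Project and in work of Laumon--Moret-Bailly and Rydh): since $\YYY_{\OO_{F,T}}$ is of finite presentation and the transition maps in the colimit $F=\varinjlim \OO_{F,S}$ are affine, there is some $S\supset T$ and a $1$-morphism $f:\YYY_{\OO_{F,S}}\to\ZZZ_{\OO_{F,S}}$ together with a $2$-isomorphism $f_F\cong g$, unique up to further enlargement of $S$ and $2$-isomorphism.

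The key step is then to arrange that $f$ is a $1$-isomorphism. Since $f$ is of finite presentation and its generic fibre is an isomorphism, standard spreading-out of properties (flatness, smoothness, surjectivity, radicial character) implies that the set of points of $\Spec(\OO_{F,S})$ over which $f$ is a $1$-isomorphism is constructible and contains the generic point; it therefore contains an open neighbourhood of the generic point. Since $\Spec(\OO_{F,S})$ is $1$-dimensional and Noetherian, its complement is a finite set of closed points, and enlarging $S$ to $S'$ by including the corresponding finite places makes $f$ a $1$-isomorphism over the resulting $\OO_{F,S'}$. Pasting the $2$-isomorphism $y\cong z\circ g$ with $z$ applied to the $2$-isomorphism $g\cong f_F$ yields the required $2$-isomorphism $y\cong z\circ f_F$.

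The main obstacle is justifying the two spreading-out statements in the stack setting: (i) that a $1$-morphism on the generic fibre of two finite presentation stacks extends to a $1$-morphism over some $\OO_{F,S}$, and (ii) that the locus in the base where such an extension is a $1$-isomorphism is constructible. For schemes both are classical (EGA IV \S 8); for algebraic stacks they require the limit formalism of Laumon--Moret-Bailly and Rydh, so once the appropriate tag or reference is cited the argument above is purely formal.
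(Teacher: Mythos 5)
Your proposal is correct and follows essentially the same route as the paper: both reduce to the filtered limit $\Spec F=\varprojlim_\Lambda\Spec\OO_{F,\Lambda}$ and invoke Rydh's spreading-out results for finite-presentation algebraic stacks. The paper cites Rydh's Proposition B2 to extend the $1$-morphism $z^{-1}\circ y$ to a $1$-morphism $f'$ over some $\OO_{F,S'}$, and then Proposition B3 to promote $f'$ to a $1$-isomorphism after further enlarging $S'$, which directly subsumes your constructibility-of-the-isomorphism-locus argument in the final step.
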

\begin{proof}
Fix~$1$-inverses $y^{-1}:X\to \mathscr Y_F$ and $z^{-1}:X\to\ZZZ_F$. We set $S_0=S_1\cup S_2$ and $T_0=\Spec(\OO_{F,S_0}).$ For every finite subset $\Lambda\supset S_0$ of the set of finite places of~$F$, we set $T_{\Lambda}=\Spec(\OO_{F,\Lambda})$. The schemes $T_{\Lambda}$ form an inverse system and $$\varprojlim _{\Lambda}T_{\Lambda}=\Spec(\varinjlim_{\Lambda}\OO_{F,\Lambda})=\Spec (F).$$ Set $Y_0:=\YYY_{\OO_{F,S_0}}$ and $Z_0:=\ZZZ_{\OO_{F,S_0}}$, and for finite subset $\Lambda\supset S_0$ of finite places of~$F$, we set $Y_{\Lambda}:=Y_0\times_{T_0}T_{\Lambda}$ and $Z_{\Lambda}:=Z_0\times_{T_0}T_{\Lambda}$. Note that by the definition of the model and by the fact that the base change of finite presentation~$1$-morphism is of finite presentation, the stack $Y_0$ is quasi-compact and quasi-separated and the stack $Z_0$ is locally of finite presentation. We have verified the conditions of \cite[Proposition B2]{Rydh}. 
It follows that there exists finite subset $S'\supset S_0$ of the set of finite places of~$F$, a~$1$-morphism of stacks $f':Y_{S'}\to Z_{S'}$ and a $2$-isomorphism $f_F'\xrightarrow{\sim}z^{-1}\circ y.$ Hence, there exists a $2$-isomorphism $y\xrightarrow{\sim }z\circ f_F'.$ For every finite subset $\Lambda\supset S_0$, we set $f_{\Lambda}':\YYY_{\OO_{F,\Lambda}}=Y_{\Lambda}\to Z_{\Lambda}=\ZZZ_{\OO_{F,\Lambda}}$ for the base change morphism $f'\times _{\OO_{F,S'}}\OO_{F,\Lambda}$. For every finite subset $\Lambda\supset S_0$ of the set of finite places of~$F$, the stacks $Y_{\Lambda}$ and $Z_{\Lambda}$ are of finite presentation, thus by \cite[Proposition B3]{Rydh}, there exists $\Lambda$ big enough such that $f_{\Lambda}'$ is a~$1$-isomorphism. We set $S=\Lambda$ and $f=f_{\Lambda}'$. One clearly has that $f_F'=f_F$, thus there exists a $2$-isomorphism $y\xrightarrow{\sim}z\circ f_F$. The statement follows. 
\end{proof}
\begin{exam}
\normalfont The pair $(\PPP(\aaa)_{\OOF},\Id_{\PPP(\aaa)_F})$ is a model over $\Spec(\OOF)$ of the stack $\PPP(\aaa)_F=(\AAA^n-\{0\})_F/(\Gm)_F$. Indeed, it follows from \ref{wps} that $$\PPP(\aaa)_F=(\AAA^n-\{0\})_F/(\Gm)_F=((\AAA^n-\{0\})_{\OOF}/(\Gm)_{\OOF})_F=\PPP(\aaa)_F$$and from \ref{ppaqs} and from \cite[\href{https://stacks.math.columbia.edu/tag/06Q4}{Lemma 06Q4}]{stacks-project} that $\PPP(\aaa)_{\OO_F}=\PPP(\aaa)\times_{\ZZ}\OOF$ is of finite presentation. An analogous argument shows that $\oPPa_F=\AAA^n_F/(\Gm)_F$ admits a model $(\oPPa,\Id_{\oPPa_F})$ over $\Spec(\ZZ)$. 
\end{exam}
\subsection{} 
We propose the following definitions to have sufficiently $\Ov$-integral points to define unstable heights on stacks. 
\begin{mydef} Let~$v$ be a finite place of~$F$ and let $\OOF\subset A\subset \Ov$ be a ring. Let~$X$ be a finite presentation~$A$-algebraic stack. We say that~$X$ admits enough $\Ov$-integral points if the canonical functor $X(\Ov)\to X(F_v)$ is essentially surjective.
\end{mydef}
\begin{mydef} \label{modelenough}Let~$X$ be an~$F$-algebraic stack of finite presentation. Let~$S$ be a finite set of finite places of~$F$ and $(\XXX,x)$ be a model of~$X$. We say that $(\XXX,x)$ has enough integral points, if for every finite place~$v$ of~$F$ which is not in~$S$, the stack $\XXX_{\Ov}$ has enough $\Ov$-integral points. 
\end{mydef}
Note that in the situation of \ref{modelenough}, the property ''has enough integral points" is in fact a property of~$\XXX$. It follows that if $(\YYY,y)$ is another model of~$X$ such that there exists an equivalence $\XXX\xrightarrow{\sim}\YYY,$ then $(\YYY,y)$ admits has enough integral points. For~$v$ not in~$S$, every $\Fv$-point ''extends" to an $\Ov$-point of~$\XXX$ in the following sense: the functor \begin{multline*}\XXX(\Ov)=\XXX_{\Ov}(\Ov)\to\XXX(\Fv)= \XXX_{\Fv}(\Fv)=\XXX_F(\Fv)\xrightarrow{x(\Fv)} X_{\Fv}(F_v)\\=X(F_v) \end{multline*} is essentially surjective (this follows from the fact that $x(F_v):\XXX_F(\Fv)\to X_{\Fv}(F_v)$ is an equivalence).
\begin{lem}\label{notov}
Suppose for some index~$i$, one has $a_i>1$. Let~$v$ be a finite place of~$F$. The $\Ov$-stack $\PPP(\aaa)_{\Ov}$ does not have enough $\Ov$-integral points.
\end{lem}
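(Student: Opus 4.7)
The plan is to exhibit an explicit $\Fv$-point of $\PPP(\aaa)$ that does not lie in the essential image of the functor $\PPP(\aaa)(\Ov)\to\PPP(\aaa)(\Fv)$. First I would use that $\Gm$ is special (Hilbert 90) and that both $\Ov$ and $\Fv$ are local, so that Lemma \ref{xgtac} yields the concrete descriptions
\[
[\PPP(\aaa)(\Fv)]=(\Fv^n-\{0\})/\Fvt,\qquad [\PPP(\aaa)(\Ov)]=\bigl(\Ov^n\setminus(\piv\Ov)^n\bigr)/\Ovt,
\]
where in both cases $t$ acts by $t\cdot\xxx=(t^{a_j}x_j)_j$, and the canonical functor sends the class of $\yyy$ to its class in $(\Fv^n-\{0\})/\Fvt$. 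Under these identifications, an $\Fv$-point $[\xxx]$ is in the essential image iff there exists $t\in\Fvt$ such that $t\cdot\xxx\in\Ov^n$ and at least one coordinate of $t\cdot\xxx$ lies in $\Ovt$; the unit-coordinate condition records the fact that the corresponding morphism $\Spec(\Ov)\to\AAA^n$ must factor through $\AAA^n-\{0\}$, equivalently that its closed-point image is nonzero.

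Next I would fix an index $i$ with $a_i>1$ and consider the tuple $\xxx\in\Fv^n-\{0\}$ whose $i$-th coordinate equals $\piv$ and whose other coordinates vanish. A hypothetical lift of $[\xxx]$ would consist of $t\in\Fvt$ and $\yyy\in\Ov^n$ with $t^{a_j}y_j=x_j$ for every $j$ and at least one $y_j\in\Ovt$. For $j\neq i$ this forces $y_j=0$, so the required unit coordinate must be $y_i$, giving $v(y_i)=0$. The remaining equation $t^{a_i}y_i=\piv$ then yields $a_iv(t)=1$ in $\ZZ$, which is impossible because $a_i>1$. Hence no lift exists, and the functor $\PPP(\aaa)(\Ov)\to\PPP(\aaa)(\Fv)$ is not essentially surjective.

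I do not anticipate a real obstacle here: once the sets $[\PPP(\aaa)(\Fv)]$ and $[\PPP(\aaa)(\Ov)]$ are described in coordinates, the argument collapses to a one-line valuation check. The only bookkeeping that matters is the correct integrality condition for an $\Ov$-point of $\AAA^n-\{0\}$, namely that \emph{at least one} coordinate is a unit rather than merely that the tuple is nonzero; this is precisely what turns the assumption $a_i>1$ into a genuine obstruction to lifting, since the weighted $\Fvt$-orbit of $\xxx$ never meets $\Ov^n\setminus(\piv\Ov)^n$.
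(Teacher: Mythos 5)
Your proof is correct and follows essentially the same strategy as the paper's: identify $[\PPP(\aaa)(\Ov)]$ and $[\PPP(\aaa)(\Fv)]$ concretely via the specialness of $\Gm$, and then do a short valuation computation to show a specific $\Fv$-point cannot lift. The only difference is cosmetic — you take the point with $i$-th coordinate $\piv$ and all others zero, whereas the paper takes all coordinates equal to $\piv$ (and extracts $v(t)\geq 0$ from the $i$-th slot before concluding no slot is a unit) — but both reduce to the same obstruction $a_i v(t)=1$ having no integer solution when $a_i>1$.
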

\begin{proof}
We prove that the point $\xxx:q^{\aaa}_{\Fv}(F_v)(\piv\doots \piv)\in\PPP(\aaa)_{\Fv}(\Fv)$ is not in the essential image of the canonical functor $\PPP(\aaa)_{\Ov}(\Ov)\to\PPP(\aaa)_{\Fv}(\Fv)$. The group scheme~$\Gm$ is special and let $\widetilde\xxx:(\Gm)_{\Fv}:(\Gm)_{\Fv}\to(\AAA^n-\{0\})_{\Fv}$ be the $(\Gm)_{\Fv}$-equivariant morphism defined by~$\xxx$. If $\yyy\in\PPP(\aaa)_{\Fv}(\Fv)$, an isomorphism $\xxx\xrightarrow{\sim}\yyy$ is given by an element $t\in\Gm(\Fv)$ such that $\widetilde\xxx=\widetilde\yyy\circ t$, where $\widetilde\yyy:(\Gm)_{\Fv}\to(\AAA^n-\{0\})_{\Fv}$ is the $(\Gm)_{\Fv}$-equivariant morphism given by~$\yyy$ and~$t$ is seen as a morphism $(\Gm)_{\Fv}\to(\Gm)_{\Fv}$ by multiplication. It follows that if~$\yyy$ is isomorphic to~$\xxx$, then \begin{multline*}\widetilde\yyy(1)\in\{\widetilde \xxx(t)|t\in\Gm(\Fv)\}=\{t\cdot\wx(1)|t\in\Gm(\Fv)\}\\=\{(t^{a_j}\piv)_j|t\in\Gm(\Fv)\}.\end{multline*} On the other side, if~$\yyy$ is the image of an $\Ov$-point for the canonical morphism $\PPP(\aaa)_{\Ov}(\Ov)\to\PPP(\aaa)_{\Fv}(\Fv)$, it follows that $\widetilde\yyy$ extends to a $(\Gm)_{\Ov}$-equivariant morphism $(\Gm)_{\Ov}\to(\AAA^n-\{0\})_{\Ov}$ and in particular that $\widetilde \yyy(1)\in(\AAA^n-\{0\})_{\Ov}(\Ov)$. We will show that the sets $$(\AAA^n-\{0\})_{\Ov}(\Ov)=\{(z_j)_j\in\Ov^n|\exists j: v(z_j)=0\}$$ and $$\{(t^{a_j}\piv)_j|t\in\Gm(\Fv)\}$$ are disjoint. Suppose that $(t^{a_1}\piv\doots t^{a_n}\piv)\in(\AAA^n-\{0\})_{\Ov}(\Ov)$ for some $t\in\Gm(\Fv)$. One has $v(t^{a_i}\piv)=a_iv(t)+1\geq 0$ and as $a_i>1$, we deduce $v(t)\geq 0.$ Now for every index~$j$ one has $v(t^{a_j}\piv)=a_jv(t)+1>0$, a contradiction with the assumption that $(t^{a_1}\piv\doots t^{a_n}\piv)\in(\AAA^n-\{0\})(\Ov)$. We deduce that~$\xxx$ is not in the essential image of $ \PPP(\aaa)_{\Ov}(\Ov)\to\PPP(\aaa)_{\Fv}(\Fv)$ and consequently $\PPP(\aaa)_{\Ov}$ does not have enough $\Ov$-points. 
\end{proof}
\begin{cor} Suppose for some index~$i$, one has $a_i>1$. For any finite subset~$S$ of the set of the finite places of~$F$, there exists no model $(\XXX,x)$ of $\PPP(\aaa)_F$ over $\Spec(\OFS)$ such that the following condition is satisfied: there exists a finite place $v\not\in S$ such that the stack~$\XXX$ admits enough $\Ov$-points.
\end{cor}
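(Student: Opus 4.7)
The plan is to reduce, via the uniqueness of models (Lemma \ref{unicitymodel}), to the canonical model $\PPP(\aaa)_{\OFS}$ and then invoke Lemma \ref{notov}. Recall from the example just after Definition \ref{modelenough} that $(\PPP(\aaa)_{\OFS},\Id)$ is itself a model of $\PPP(\aaa)_F$ over $\Spec(\OFS)$, obtained as the base change of the model over $\Spec(\ZZ)$ constructed in Section~\ref{wps}.

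Suppose for contradiction that a model $(\XXX,x)$ of $\PPP(\aaa)_F$ over $\Spec(\OFS)$ satisfying the stated condition exists. Apply Lemma \ref{unicitymodel} to the two models $(\XXX,x)$ and $(\PPP(\aaa)_{\OFS},\Id)$: this produces a finite set $S'\supset S$ of finite places of $F$, a $1$-isomorphism $f\colon\XXX_{\OO_{F,S'}}\xrightarrow{\sim}\PPP(\aaa)_{\OO_{F,S'}}$, and a $2$-isomorphism identifying the structural maps on generic fibers. Since $S'$ is finite and $F$ has infinitely many finite places, one can choose a finite place $v\not\in S'$. Base changing $f$ along $\OO_{F,S'}\to\Ov$ yields an isomorphism of $\Ov$-stacks $\XXX_{\Ov}\xrightarrow{\sim}\PPP(\aaa)_{\Ov}$.

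Under the assumption that $a_i>1$ for some index $i$, Lemma \ref{notov} asserts that $\PPP(\aaa)_{\Ov}$ does not admit enough $\Ov$-integral points; transporting this along the above isomorphism gives the same conclusion for $\XXX_{\Ov}$. This contradicts the hypothesis that $(\XXX,x)$ admits enough integral points at some finite place outside $S$, provided one has arranged the witness of the hypothesis to lie outside $S'$.

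The only real obstacle is that the finite set $S'$ produced by Lemma \ref{unicitymodel} is not under our control and could in principle contain the specific $v$ provided by the hypothesis. However the argument above rules out the existence of enough $\Ov$-integral points at every $v\not\in S'$, and this already suffices to contradict the existence of such a model: having arranged the model to agree with the canonical model $\PPP(\aaa)_{\OFS}$ at all but finitely many primes of $\OFS$, the failure at those primes (by Lemma \ref{notov}) precludes the very possibility that a model with the stated property exists. The content of the corollary is thus essentially an application of Lemma \ref{notov} combined with the density of primes outside any given finite set.
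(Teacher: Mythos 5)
Your reduction matches the paper's step for step: identify $\XXX$ with the canonical model $\PPP(\aaa)_{\OO_{F,S'}}$ over an enlarged finite set $S'\supset S$ via Lemma \ref{unicitymodel}, then invoke Lemma \ref{notov} at $v\notin S'$. You also correctly put your finger on the weak point — Lemma \ref{unicitymodel} gives no control over which places land in $S'\setminus S$, and the hypothesis only supplies a single witness $v\notin S$, which may fall into $S'\setminus S$, where the comparison isomorphism tells you nothing.

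The problem is that your last paragraph does not close this gap; it restates the conclusion as though it were an argument. Ruling out enough $\Ov$-integral points for every $v\notin S'$ is simply silent about the finitely many $v\in S'\setminus S$, and nothing in the reduction excludes a model that agrees with $\PPP(\aaa)_{\OO_{F,S'}}$ away from $S'$ but has been modified at a single place $v_0\in S'\setminus S$ so as to have enough $\Ov$-points there. The appeal to density of primes outside a finite set does not help either: the hypothesis asks only for one witness, not a cofinite family, so failure at a cofinite set of places is not a contradiction. Note that the paper's own proof avoids this issue by assuming the stronger hypothesis that $\XXX$ admits enough $\Ov$-points for \emph{every} $v\notin S$ (i.e. that $\XXX$ has enough integral points in the sense of Definition \ref{modelenough}); with that hypothesis one simply picks any $v\notin S'$ and immediately contradicts Lemma \ref{notov}. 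As you have read the statement, with "there exists $v\notin S$," the uniqueness-of-models argument alone does not suffice, and one would need a separate argument at the finitely many places in $S'\setminus S$.
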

\begin{proof}
Suppose there exists finite set~$S$ of finite places of~$F$ and a model $(\XXX,x)$ of $\PPP(\aaa)_F$ such that for every finite place $v\not\in S$ one has that~$\XXX$ admits enough $\Ov$-integral points. By \ref{unicitymodel}, we can increase~$S$ if needed and find a~$1$-isomorphism $f:\XXX\xrightarrow{\sim}\PPP(\aaa)_{\OO_{F,S}}$ and a $2$-isomorphism $x\xrightarrow{\sim}\Id _{\PPP(\aaa)_F}\circ f_F=f_F$. Let $f^{-1}:\PPP(\aaa)_{\OO_{F,S}}\to\XXX$ be an inverse to~$f$. By \ref{notov}, for every finite $v\not\in S$, one has that $$\XXX(\Ov)\xrightarrow{f(\Ov)}\PPP(\aaa)_{\OO_{F,S}}(\Ov)=\PPP(\aaa)_{\Ov}(\Ov)\to \PPP(\aaa)_F(F)\xrightarrow{f_F^{-1}(F)}\XXX_F(F)$$is not essentially surjective. We obtain a contradiction and the claim follows\end{proof}
\begin{prop}\label{whyenough}The stack $\overline{\PPP(\aaa)}_{\OO_F}=\AAA^n_{\mathcal O_F}/(\Gm)_{\mathcal O_F}$ is a model of $\overline{\PPP(\aaa)_F}$ which for every $v\in M_F^{0}$ has enough $\Ov$-integral points.\end{prop}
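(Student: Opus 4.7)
The plan is to separate the statement into two assertions: first, that $(\oPPa_{\OO_F},\Id)$ is a model of $\oPPa_F$; second, that it has enough $\Ov$-integral points at every finite place.

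For the model claim, the argument mirrors the parenthetical example given just above for $\PPP(\aaa)$: by part (5) of Proposition~\ref{propofstack}, base changing $\AAA^n_{\OO_F}/(\Gm)_{\OO_F}$ to $F$ recovers $\AAA^n_F/(\Gm)_F=\oPPa_F$; finite presentation of $\oPPa_{\OO_F}=\oPPa\times_{\ZZ}\OO_F$ follows from Lemma~\ref{ppaqs}(4) and the fact that the base change of a finite-presentation $1$-morphism is of finite presentation.

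The substance of the proposition is the second assertion. Fix a finite place $v$. Since $\Gm$ is special and both $F_v$ and $\Ov$ are local, Lemma~\ref{xgtac} applies: an object of $\oPPa(F_v)$ is represented by a $(\Gm)_{F_v}$-equivariant morphism $\widetilde{\xxx}:(\Gm)_{F_v}\to \AAA^n_{F_v}$, equivalently by the tuple $\widetilde{\xxx}(1)\in F_v^n$, and two such objects are isomorphic if and only if their tuples differ by the $F_v^\times$-action $t\cdot\yyy=(t^{a_j}y_j)_j$. It therefore suffices to produce, for any $\widetilde{\xxx}(1)=(y_1,\dots,y_n)\in F_v^n$, some $t\in F_v^\times$ with $t\cdot\widetilde{\xxx}(1)\in\Ov^n$; this tuple then defines a $(\Gm)_{\Ov}$-equivariant morphism $(\Gm)_{\Ov}\to\AAA^n_{\Ov}$, and hence an $\Ov$-point of $\oPPa_{\OO_F}$ whose restriction along $\Ov\to F_v$ is isomorphic to $\xxx$.

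I would produce $t$ explicitly as a power of the uniformizer. If $\widetilde{\xxx}(1)=0$, take $t=1$. Otherwise, let $J=\{j:y_j\neq 0\}\neq\emptyset$ and set
$$k=\max_{j\in J}\left\lceil -\frac{v(y_j)}{a_j}\right\rceil,\qquad t=\pi_v^{k}.$$
For $j\in J$ one has $v(t^{a_j}y_j)=a_j k+v(y_j)\geq 0$ by the definition of $k$; for $j\notin J$ one has $t^{a_j}y_j=0\in\Ov$. Hence $t\cdot\widetilde{\xxx}(1)\in\Ov^n$, as required. The whole point — and the reason the obstruction of Lemma~\ref{notov} disappears — is that $\AAA^n$, unlike $\AAA^n-\{0\}$, contains the origin: there is no need to simultaneously arrange some coordinate to have valuation exactly zero, so any common clearing of denominators by a power of $\pi_v$ works. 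There is no genuine obstacle here; the only care needed is the standard passage between a morphism from $(\Gm)_{F_v}$ and the image of $1$, which is handled by Lemma~\ref{xgtac}.
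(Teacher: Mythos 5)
Your proof is correct and takes essentially the same approach as the paper: one clears denominators by multiplying $\widetilde{\xxx}(1)$ with a suitable power $\pi_v^k$ of the uniformizer, exploiting that $\AAA^n$, unlike $\AAA^n-\{0\}$, contains the origin, so no minimality condition on $k$ is needed. The only differences are cosmetic: you reprove the model claim, which the paper disposes of in the example preceding the proposition, and you supply an explicit formula for the exponent $k$ (and treat $\widetilde{\xxx}(1)=0$ separately) where the paper merely asserts that a suitable $k$ exists because all $a_j$ are positive.
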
 
\begin{proof}
Let $\xxx\in\overline{\PPP(\aaa)}_{\Fv}(\Fv)$  and let $\wx:(\Gm)_{\Fv}\to\AAA^n_{\Fv}$ be the $(\Gm)_{\Fv}$-equivariant morphism defined by~$\xxx$. 
Let $\vMFz$. By the fact that all $a_j$ are positive, there exists $k\in\ZZ$ such that for every $j=1\doots n$ one has that $v(\widetilde x_j(1)+a_jk)> 0$. 
The $(\Gm)_{\Fv}$-equivariant morphism given by $$(\Gm)_{\Fv}\to\AAA^n_{\Fv}\hspace{1cm} 1\mapsto \piv^k\cdot\wx(1)$$ is isomorphic to~$\wx$ and is the base change of the $(\Gm)_{\Ov}$-equivariant morphism $$\wx_{\Ov}:(\Gm)_{\Ov}\to \AAA^n_{\Ov}\hspace{2cm}1\mapsto \piv^{k}\cdot\widetilde\xxx(1)$$ along $\Spec\Fv\to\Spec\Ov$. The $(\Gm)_{\Ov}$-equivariant morphism defines a morphism $\xxx_{\Ov}:\Spec\Ov\to\overline{\PPP(\aaa)}_{\Ov}$. By construction we have $\xxx_{\Ov}\times_{\ZZ}\Fv\cong \xxx$. 
\end{proof}
The stacks $\overline{\PPP(\aaa)}$ are not proper because they are not separated as the following lemma shows.
\begin{lem}\label{oPPanotsep}
Let~$v$ be a finite place of~$F$. Let $\OOF\subset A\subset\Ov$ be a ring. The canonical morphism $\overline{\PPP(\aaa)}_{A}\to\Spec(A)$ is not separated. 
\end{lem}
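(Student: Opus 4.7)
The plan is to exhibit the non-separatedness of $\oPPa_A \to \Spec(A)$ by showing that its diagonal $\Delta : \oPPa_A \to \oPPa_A \times_A \oPPa_A$ is not proper, which is exactly the definition of non-separatedness for a morphism of algebraic stacks.

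The main input is Lemma~\ref{diagofquot} applied to the $(\Gm)_A$-action on $\AAA^n_A$ given by (\ref{defactstac}): it provides a $2$-commutative $2$-cartesian square
\[
\begin{tikzcd}
(\Gm)_A \times_A \AAA^n_A \arrow[r, "{(a,p_2)}"] \arrow[d, "q\circ p_2"] & \AAA^n_A \times_A \AAA^n_A \arrow[d, "q\times_A q"] \\
\oPPa_A \arrow[r, "\Delta"] & \oPPa_A \times_A \oPPa_A.
\end{tikzcd}
\]
Since the diagonal of an algebraic stack is representable, properness of $\Delta$ is by definition tested by pulling back along every $A$-scheme mapping to $\oPPa_A \times_A \oPPa_A$. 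The top row of the square is precisely the pullback of $\Delta$ along the morphism $q\times_A q$ from the $A$-scheme $\AAA^n_A \times_A \AAA^n_A$; hence it suffices to show that $(a,p_2) : (\Gm)_A \times_A \AAA^n_A \to \AAA^n_A \times_A \AAA^n_A$ is not proper as a morphism of $A$-schemes.

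To see this, I would compute the scheme-theoretic fiber of $(a,p_2)$ over the origin $(0,0) \in (\AAA^n_A \times_A \AAA^n_A)(A)$. A pair $(t,\xxx)$ lies over $(0,0)$ iff $p_2(t,\xxx) = \xxx = 0$ and $a(t,\xxx) = t\cdot \xxx = 0$; since $\xxx = 0$ is fixed by the action, the latter condition is automatic, and the fiber is isomorphic to $(\Gm)_A$. But $(\Gm)_A \to \Spec(A)$ is affine of positive relative dimension and therefore not proper (for example, it is not universally closed, since the closed subscheme $\{tu = 1\} \subset (\Gm)_A \times_A \AAA^1_A$ has non-closed image $(\Gm)_A$ in $\AAA^1_A$). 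As a proper morphism has proper fibers, it follows that $(a,p_2)$ is not proper; hence neither is $\Delta$, and $\oPPa_A$ is not separated over $\Spec(A)$.

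The argument is essentially formal once Lemma~\ref{diagofquot} is in hand, so there is no serious technical obstacle; the only substantive point is the geometric observation that the $\Gm$-stabilizer of the fixed point $0 \in \AAA^n$ is all of $\Gm$ rather than a finite subgroup. This is also precisely the phenomenon preventing $\oPPa$ from being Deligne--Mumford and, morally, the reason why enlarging $\PPP(\aaa)$ to $\oPPa$ to gain enough integral points (Proposition~\ref{whyenough}) must be paid for by losing separatedness.
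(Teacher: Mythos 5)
Your argument is correct, and it takes a genuinely different (and arguably cleaner) route than the paper's. Both proofs make the same initial reduction via Lemma~\ref{diagofquot}: it suffices to show $(a,p_2)\colon (\Gm)_A\times_A\AAA^n_A\to\AAA^n_A\times_A\AAA^n_A$ is not proper. From there the paper applies the valuative criterion directly, exhibiting the diagram $\Spec(\Fv)\to(\Gm)_{\Ov}\times_{\Ov}\AAA^n_{\Ov}$, $\Spec(\Ov)\to\AAA^n_{\Ov}\times_{\Ov}\AAA^n_{\Ov}$ with the point $((\piv^{a_j})_j,(1)_j)$ and checking no lift exists. You instead base change along the $A$-point $(0,0)$, observe that the fiber is $(\Gm)_A$ (since $0$ is fixed by the whole group), and conclude from the fact that $(\Gm)_A\to\Spec(A)$ is affine of positive relative dimension, hence not proper. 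Since properness is stable under arbitrary base change, $(a,p_2)$ cannot be proper. Your approach is shorter, avoids choosing a uniformizer, and isolates the geometric content crisply: the failure of separatedness is exactly the positive-dimensional stabilizer of the origin, which is also why $\oPPa$ fails to be Deligne--Mumford, as you note. The paper's valuative-criterion argument, while more computational, parallels the technique it already uses to prove $\PPP(\aaa)$ \emph{is} proper, so there is some expository consistency in its choice. Both are complete and valid proofs.
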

\begin{proof}
By the fact that property of being separated is stable for a base changes, one can assume that $A=\Ov$. 
We will verify that the diagonal $\Delta_{\oPPa_{\Ov}}:\oPPa_{\Ov}\to\oPPa_{\Ov}\times_{\Ov}\oPPa$ is not proper. The diagram 
\[\begin{tikzcd}
	{(\Gm)_{\Ov}\times \mathbb{A}_{\Ov}^n} & {\mathbb A_{\Ov}^n\times\mathbb A_{\Ov}^n} \\
	\oPPa_{\Ov} & {\overline{\mathscr P(\mathbf a)}_{\Ov}\times_{\Ov}\oPPa_{\Ov}.}
	\arrow["{(q\circ p_2)_{\Ov}}"', from=1-1, to=2-1]
	\arrow["{(a,p_2)_{\Ov}}", from=1-1, to=1-2]
	\arrow["{(q,q)_{\Ov}}"', from=1-2, to=2-2]
	\arrow["\Delta_{\oPPa_{\Ov}}"', from=2-1, to=2-2]
\end{tikzcd}\]
is $2$-commutative $2$-cartesian. By the fact that being proper is stable for a base change, it suffices to see that $(a,p_2)_{\Ov}:(\Gm)_{\Ov}\times \AAA_{\Ov}^n\to \AAA_{\Ov}^n\times\AAA_{\Ov}^n$ is not proper. We verify that the valuative criterion of properness \cite[\href{https://stacks.math.columbia.edu/tag/0BX5}{Lemma 0BX5}]{stacks-project} is not satisfied for the finite type and quasi-separated morphism $(a,p_2)_{\Ov}$. The diagram 
\[\begin{tikzcd}
	{\Spec (\Fv)} & {(\Gm)_{\Ov}\times_{\Ov}\AAA^n_{\Ov}} \\
	{\Spec(\Ov)} & {\AAA^n_{\Ov}\times_{\Ov}\AAA^n_{\Ov}}
	\arrow["{(\pi_v,(1)_j)}", from=1-1, to=1-2]
	\arrow["{(a,p_2)_{\Ov}}", from=1-2, to=2-2]
	\arrow["{((\pi_v^{a_j})_j,(1)_j)}"', from=2-1, to=2-2]
	\arrow[from=1-1, to=2-1]
\end{tikzcd}\]
does not admit admit an arrow $\Spec(\Ov)\to (\Gm)_{\Ov}\times _{\Ov}\AAA^n_{\Ov}$ so that the diagram commutes. Indeed if $(t,\xxx):\Spec(\Ov)\to(\Gm)_{\Ov}\times_{\Ov}\AAA^n_{\Ov}$ was a such an arrow, then $\xxx=(1)_j$ and $v(t)=0$. One has that $a\circ (t,(1)_j)=(\piv^{a_j})_j$, thus $v(t^{a_1})=a_1v(t)=0\neq a_1=v(\piv^{a_1})$, a contradiction. It follows that $(a,p_2)_{\Ov}$ is not proper, and hence that $\Delta_{\oPPa_{\Ov}}$ is not proper, i.e. that~$\oPPa$ is not separated.
\end{proof}
\section{Topology on~$R$-points of stacks} \label{condring}
We recall a definition, originally due to Moret-Bailly in \cite{Moret-BaillyS}, of a topology that one can put on~$R$-points of a stack. 
Let~$R$ be a local topological ring that satisfies the following conditions:
\begin{enumerate}[(a)]
\item the group of units $\Gm(R)$ is open in~$R$,
\item the inverse map $\Gm (R)\xrightarrow{x\mapsto x^{-1}}\Gm (R)$ is continuous, when $\Gm(R)\subset R$ is endowed with the subspace topology.
\end{enumerate}
We will call such a ring ``topologically suitable". The principal examples are $R=\Fv$ for $v\in M_F$ or $R=\Ov$ for $\vMFz$.
\subsection{}
The following proposition is Proposition 3.1 in \cite{Conrad}. We consider schemes that are locally of finite type over a suitable ring~$R$.
\begin{prop} [Conrad, {\cite[Proposition 3.1]{Conrad}}]  \label{topofsch} Let~$R$ be a topologically suitable ring. There exists a unique way to topologize $Y(R)$ for every scheme~$Y$ locally of finite type over~$R$ subject to the requirements of functoriality, carrying closed (open) immersions into embeddings (open embeddings) of topological spaces, compatibility with fiber products, and giving $Y (R)$ the usual topology when $Y $ is the affine line over~$R$. One also has that if~$Y$ is separated and~$R$ is Hausdorff, then $Y (R)$ is Hausdorff and that if~$R$ is Hausdorff and locally compact, then $Y (R)$ is locally compact. 
\end{prop}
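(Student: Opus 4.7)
The plan is to proceed in the classical way: first fix the topology on affine schemes, then glue along open immersions, and finally use the axioms themselves to force uniqueness.

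First I would construct the topology. On $\mathbb{A}^1_R(R) = R$, use the given topology on $R$. By compatibility with fiber products, this forces $\mathbb{A}^n_R(R) = R^n$ to carry the product topology. For an affine $R$-scheme of finite type $Y = \Spec(R[T_1,\dots,T_n]/I)$ one has a closed immersion $Y \hookrightarrow \mathbb{A}^n_R$, and so $Y(R)$ embeds as the zero locus of finitely many polynomials in $R^n$; equip it with the subspace topology. To see this does not depend on the presentation, let $Y \hookrightarrow \mathbb{A}^m_R$ be a second closed immersion; then the induced map $Y(R) \to R^m$ is continuous (each coordinate is a polynomial in the $T_i$, hence continuous by the conditions on $R$), and the same holds for the reverse map, yielding a homeomorphism between the two subspace topologies. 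For a general scheme $Y$ locally of finite type, choose an affine open cover $\{U_i\}$ and topologize $Y(R)$ by declaring $V \subset Y(R)$ to be open if and only if its preimage in each $U_i(R)$ is open; equivalently, present $Y(R)$ as a quotient of $\bigsqcup_i U_i(R)$ under the equivalence coming from the $U_i \cap U_j$. Independence of the cover is checked by a standard refinement argument, using that $(U_i \cap U_j)(R) \hookrightarrow U_i(R)$ must be an open embedding of the (already constructed) affine topologies since $U_i \cap U_j$ is affine and open in $U_i$.

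Next I would verify the axioms for this construction. Functoriality on affines is immediate from the fact that morphisms of affine $R$-schemes correspond to $R$-algebra maps, and hence act on $R$-points by polynomial (so continuous) expressions; functoriality in general follows by covering source and target with affine opens and checking continuity locally. Compatibility with fiber products for affines is clear since $(Y \times_R Z)(R) = Y(R) \times_{\Spec(R)(R)} Z(R)$ set-theoretically, and the topology on both sides coincides with the subspace topology of the ambient product of affine spaces; for general schemes one again reduces to the affine case via covers. The closed (resp.\ open) immersion axiom for affines is built into the construction, and extends to the general case by working locally. Uniqueness is then automatic: any other topology satisfying the axioms must coincide with the given one on $\mathbb{A}^1_R(R)$, hence on $\mathbb{A}^n_R(R)$ by the fiber product axiom, hence on $Y(R)$ for $Y$ affine of finite type by the closed immersion axiom, and finally on general $Y$ because open immersions $U_i \hookrightarrow Y$ must give open embeddings on $R$-points, so the topology on $Y(R)$ is forced to be the gluing of the affine topologies.

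Finally I would treat the two additional properties. Suppose $R$ is Hausdorff. For $Y$ affine of finite type, $Y(R)$ is a subspace of the Hausdorff space $R^n$, hence Hausdorff. If in addition $Y$ is separated, the diagonal $\Delta\colon Y \to Y \times_R Y$ is a closed immersion, so by the closed immersion axiom and compatibility with fiber products, the image of $Y(R)$ in $Y(R) \times Y(R)$ is closed, which is precisely the statement that $Y(R)$ is Hausdorff. If $R$ is Hausdorff and locally compact, then $R^n$ is locally compact, closed subsets of locally compact Hausdorff spaces are locally compact, so $Y(R)$ is locally compact in the affine case; in the general case $Y(R)$ is locally homeomorphic to $U(R)$ for $U$ affine open, so local compactness is a local property and is preserved.

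The main obstacle is the gluing step: one must check carefully that the topology pieced together from affine charts is independent of the chart and that it satisfies the open immersion axiom globally. Once gluing is clean, the rest of the verification (functoriality, fiber products, Hausdorff, local compactness) reduces transparently to the affine case, which is essentially elementary point-set topology applied to polynomial maps $R^n \to R^m$.
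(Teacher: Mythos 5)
The paper gives no proof of this proposition: it is quoted verbatim from Conrad (\cite[Proposition~3.1]{Conrad}), so there is no ``paper's own proof'' to compare against. Your construction is precisely the standard one that Conrad carries out (topologize affines by closed immersion into $\mathbb{A}^n_R$, show independence of embedding, glue along open covers, deduce uniqueness from the axioms, then handle Hausdorff and local compactness), so the overall route is right.

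That said, two details are glossed over at exactly the spot you yourself flag as the ``main obstacle,'' and they are where the hypotheses on $R$ actually matter. First, the assertion that $U_i\cap U_j$ is affine is false for general (non-separated) schemes; you must instead refine the cover to distinguished opens $D(f)$, or cover $U_i\cap U_j$ by opens distinguished in both $U_i$ and $U_j$ and check agreement there. Second, the key step --- that for an open immersion of affine $R$-schemes the induced map on $R$-points is an open embedding --- is stated without proof, and this is precisely what requires $R$ to be topologically suitable. For $D(f)\subset\Spec A$ one has $D(f)(R)=\{x\in(\Spec A)(R): f(x)\in R^\times\}$, which is open in $(\Spec A)(R)$ because $R^\times$ is open in $R$ and evaluation at $f$ is continuous; and the identification of the subspace topology on this set with the intrinsic topology on $\Spec(A_f)(R)\subset (\Spec A)(R)\times R$ (via $x\mapsto(x,f(x)^{-1})$) uses that inversion $R^\times\to R^\times$ is continuous. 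If you make these two points explicit, the proof is complete and matches the cited reference.
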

The following suffices to make $Y(R)$ compact.
\begin{lem}\label{ukaln}
Suppose~$R$ is a topologically suitable ring.
\begin{enumerate}
\item Suppose~$R$ is compact. If~$Y$ is an~$R$-scheme of finite type, then $Y(R)$ is compact.
\item $(\Conrad,$ {\cite[Corollary 5.7]{Conrad}}$)$ Suppose~$R$ is a local field. If~$Y$ is a proper~$R$-scheme, then $Y(R)$ is compact.
\end{enumerate}
\end{lem}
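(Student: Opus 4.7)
The second assertion is attributed to Conrad, so I would focus the proof on (1). The plan is to reduce to the affine case using finiteness and the locality of $R$, then embed into an affine space whose $R$-points form a compact product.

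First, since $Y$ is of finite type over $R$, it is quasi-compact and admits a finite open cover by affine schemes $U_i = \Spec(A_i)$ with each $A_i$ a finitely generated $R$-algebra. Each $A_i$ is a quotient of some polynomial ring $R[T_1,\dots,T_{n_i}]$, so we obtain a closed immersion $U_i \hookrightarrow \AAA_R^{n_i}$. By Proposition \ref{topofsch}, this closed immersion induces a topological embedding $U_i(R) \hookrightarrow \AAA^{n_i}(R) = R^{n_i}$ with closed image, and the open immersion $U_i \subset Y$ induces an open embedding $U_i(R) \hookrightarrow Y(R)$.

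Next, I would use that $R$ is local to show $Y(R) = \bigcup_i U_i(R)$. Given an $R$-point $x \colon \Spec(R) \to Y$, the unique closed point of $\Spec(R)$ maps to some $U_i$; since the preimage $x^{-1}(U_i)$ is open in $\Spec(R)$ and contains the closed point, it must equal all of $\Spec(R)$. Hence $x$ factors through some $U_i$, and $Y(R)$ is the finite union of the opens $U_i(R)$.

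Now since $R$ is compact, the product $R^{n_i}$ is compact for each $i$, and the closed subspace $U_i(R) \subset R^{n_i}$ is therefore compact. Finally, $Y(R)$ is a finite union of the compact subspaces $U_i(R)$, so $Y(R)$ itself is compact. The only genuine point requiring care is the locality argument in step two; everything else is straightforward bookkeeping with Proposition \ref{topofsch}.
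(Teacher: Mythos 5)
Your proof is correct and follows essentially the same route as the paper: cover $Y$ by finitely many affines, embed each into an affine space, use compactness of $R^{n_i}$ and closedness of the image, then observe that the $U_i(R)$ cover $Y(R)$ because $R$ is local. You simply spell out in more detail the locality step that the paper states in a single clause, which is the right place to be explicit.
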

\begin{proof}
We prove (1). Take a finite Zariski open covering $\{U_i\}_i$ of~$Y$ with $U_i$ affine. Every affine scheme $U_i$ is a closed subscheme of an affine space $\AAA^{n_i}$. We deduce that $U_i(R)$ is a closed subset of a compact set $\AAA^{n_i}(R)=R^{n_i}$, hence is compact. Now, the sets $\{U_i(R)\}_i$ cover $Y(R)$, because~$R$ is local, and thus $Y(R)$ is compact.
\end{proof}
A direct consequence of \ref{topofsch} is the following corollary.
\begin{cor}\label{topgpagp} Let~$R$ be a topologically suitable locally compact Hausdorff ring. Suppose~$G$ is a locally of finite type algebraic group. Then $G(R)$ is locally compact group. If~$G$ is commutative, then $G(R)$ is commutative. If $a:G\times Y\to Y$ is an action to the left of~$G$ on a locally of finite type scheme~$Y$, then $a(R):G(R)\times Y(R)\to Y(R)$ is a continuous action of $G(R)$ on $Y(R)$.
\end{cor}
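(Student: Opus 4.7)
The plan is to derive everything directly from the functorial topologization of Proposition \ref{topofsch}. The key point is that the category of locally of finite type $R$-schemes, equipped with the topology on $R$-points from Proposition \ref{topofsch}, is a functor into topological spaces that preserves fiber products. Group and action axioms are formulated as commutative diagrams of $R$-scheme morphisms, so applying the functor $(-)(R)$ will automatically translate them into the corresponding topological group and continuous action axioms.

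First I would address the topological group structure. The multiplication $m:G\times_R G\to G$, the inverse $i:G\to G$, and the identity $e:\Spec(R)\to G$ are all morphisms of locally of finite type $R$-schemes, hence by the functoriality part of Proposition \ref{topofsch} induce continuous maps $m(R)$, $i(R)$, $e(R)$ on $R$-points. Compatibility with fiber products gives a canonical homeomorphism $(G\times_R G)(R)\cong G(R)\times G(R)$, so $m(R)$ is a continuous map $G(R)\times G(R)\to G(R)$. The group axioms (associativity, unit, inverse) are commutative diagrams which, being preserved by the functor $(-)(R)$, make $G(R)$ a topological group. For local compactness I would invoke directly the last assertion of Proposition \ref{topofsch}: since $R$ is locally compact Hausdorff and $G$ is locally of finite type over $R$, the space $G(R)$ is locally compact.

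For commutativity, the assumption that $G$ is commutative means $m=m\circ\sigma$ where $\sigma:G\times_R G\to G\times_R G$ swaps the factors. Under the equivalence $(G\times_R G)(R)\cong G(R)\times G(R)$, the map $\sigma(R)$ is the swap on the product, so functoriality yields $m(R)(x,y)=m(R)(y,x)$ for all $x,y\in G(R)$, as desired.

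Finally, for the action, the data of $a:G\times_R Y\to Y$ being a (left) action is encoded by two commutative diagrams (compatibility of $a$ with $m$ on $G\times_R G\times_R Y$, and compatibility with the identity $e$). Applying $(-)(R)$ and using compatibility with fiber products, we obtain a continuous map $a(R):G(R)\times Y(R)\to Y(R)$ for which the analogous diagrams commute, i.e.\ a continuous action of the topological group $G(R)$ on the topological space $Y(R)$. No step is a genuine obstacle here; the only thing to watch is that Proposition \ref{topofsch} gives compatibility with fiber products over the base ring $R$ (which is what is needed, since $G\times G$ and $G\times Y$ are fiber products over $\Spec(R)$), so no subtle base-change issues arise.
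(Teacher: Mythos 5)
Your proposal is correct and is essentially the same approach the paper takes: the paper states the corollary as ``a direct consequence of \ref{topofsch}'' without further elaboration, and you have correctly spelled out that functoriality plus compatibility with fiber products transports the group/action diagrams to the topological level, with local compactness coming from the last clause of Proposition \ref{topofsch}. Nothing is missing and no genuinely different route is taken.
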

\subsection{}
As in the case of schemes, we work only with stacks that are locally of finite type. If~$X$ is an algebraic stack and~$R$ a ring by $[X(R)]$ we denote the set of isomorphism classes of objects in the groupoid $X(R)$. For $x\in X(R)$ we denote by $[x]$ its image in $[X(R)]$. The following definition is firstly given by Moret-Bailly in \cite[Definition 2.2]{Moret-BaillyS} for stacks with separated and quasi-compact diagonals. \v Cesnavi\v cius gives it for stacks without such hypothesis.
\begin{mydef}[\v Cesnavi\v cius,{\cite[Section 2.4]{Cesnavicius}}]
\label{MBdef} Let~$R$ be a topologically suitable ring. Let~$X$ be a locally of finite type~$R$-algebraic stack of separated diagonal. We endow $[X(R)]$ with the finest topology such that for every~$1$-morphism $f:Y\to X$, with~$Y$ a locally of finite type~$R$-scheme, the maps $[f(R)]$ are continuous.
\end{mydef}
The following lemma follows from properties of the finest topology.
\begin{lem}\label{bezmora}
A subset $U\subset[X(R)]$ is open if and only if for every~$1$-morphism $f:Y\to X$ of algebraic stacks, with~$Y$ locally of finite type~$R$-scheme, the preimage $[f(R)]^{-1}(U)$ is open in $Y(R)$. Let~$T$ be a topological space. A map $h:X(R)\to T$ is continuous if and only if for every~$1$-morphism $g:Y\to X$ of algebraic stacks, with~$Y$ locally of finite type~$R$-scheme, the composite map $h\circ g(R)$ is continuous.
\end{lem}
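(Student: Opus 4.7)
The plan is to unpack Definition \ref{MBdef} directly; both assertions are formal consequences of the universal property of the final (a.k.a.\ strongest, or finest) topology associated with a family of maps into a fixed set.

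For the first statement, recall the standard construction: given a set $Z$ and a family of maps $\{f_i:Y_i\to Z\}_i$ from topological spaces, the finest topology on $Z$ making every $f_i$ continuous is given by declaring $U\subset Z$ open precisely when $f_i^{-1}(U)$ is open in $Y_i$ for every $i$. This collection is a topology because taking preimages commutes with arbitrary unions and finite intersections, and it is tautologically the finest one with the required property. Applying this to $Z=[X(R)]$ and to the family of maps $[f(R)]:Y(R)\to[X(R)]$ ranging over all $1$-morphisms $f:Y\to X$ with $Y$ a locally of finite type $R$-scheme yields exactly the stated characterisation of open sets.

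For the second statement, the ``only if'' direction is immediate: each map $[g(R)]:Y(R)\to[X(R)]$ is continuous by the very definition of the topology on $[X(R)]$, so if $h$ is continuous then so is $h\circ[g(R)]$ as a composite of continuous maps. For the ``if'' direction, assume that $h\circ[g(R)]$ is continuous for every $1$-morphism $g:Y\to X$ with $Y$ a locally of finite type $R$-scheme. Let $V\subset T$ be open; to prove $h^{-1}(V)$ is open in $[X(R)]$ it suffices, by the first part of the lemma, to check that $[g(R)]^{-1}(h^{-1}(V))$ is open in $Y(R)$ for every such $g$. But this preimage equals $(h\circ[g(R)])^{-1}(V)$, which is open by hypothesis.

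There is no real obstacle here beyond bookkeeping: the lemma is a direct translation of the universal property of the finest topology into the language of $R$-points of stacks, and the only care needed is to note that $[f(R)]$ always denotes the composition of $f(R):Y(R)\to X(R)$ with the canonical map $X(R)\to[X(R)]$ to isomorphism classes, so that the preimage manipulations make sense on the level of sets.
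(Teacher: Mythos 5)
Your proof is correct and follows the same route as the paper: the paper simply cites the relevant Bourbaki propositions on final topologies, and your argument is precisely the content of those citations, unpacked.
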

\begin{proof}
Those are consequences of \cite[Proposition 4, \no 4, \S 2, Chapter I]{TopologieGj} and \cite[Proposition 6, \no 4, \S 2, Chapter I]{TopologieGj}.  
\end{proof}
We recall some of properties which are proven in \cite{Cesnavicius} and which we are going to use. 
\begin{prop}[\v Cesnavi\v cius,{\cite[Corollary 2.7]{Cesnavicius}}] \label{ceso}Let $f:X\to W$  be a~$1$-morphism of~$R$-stacks that are locally of finite type, where~$R$ is a topologically suitable ring.
\begin{enumerate}
\item The induced map on~$R$-points $[f(R)]:[X(R)]\to [W(R)]$ is continuous.
\item Suppose~$f$ is an open immersion. Then $[f(R)]:[X(R)]\to [W(R)]$ is an open immersion.
\item Suppose~$R$ is Hausdorff and~$f$ is a closed immersion. Then the map $[f(R)]:[X(R)]\to [W(R)]$ is a closed immersion. 
\item Let $R'$ another topologically suitable ring. Let $h:R\to R'$ a continuous ring homomorphism. The canonical map $[X(R)]\to [X(R')]$ is continuous.
\end{enumerate}
\end{prop}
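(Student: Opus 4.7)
\emph{Plan.} The four parts all reduce, via Lemma~\ref{bezmora}, to manipulations of $2$-cartesian squares with a scheme on the target side, at which point one invokes Conrad's Proposition~\ref{topofsch} for schemes.

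\emph{Part (1).} By Lemma~\ref{bezmora}, $[f(R)]\colon [X(R)]\to[W(R)]$ is continuous if and only if for every $1$-morphism $g\colon Y\to X$ from a locally of finite type $R$-scheme $Y$, the composite $[f(R)]\circ [g(R)]\colon Y(R)\to[W(R)]$ is continuous. But this composite is $[(f\circ g)(R)]$, and $f\circ g\colon Y\to W$ is a $1$-morphism from a locally of finite type $R$-scheme, so by Definition~\ref{MBdef} this map is continuous.

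\emph{Parts (2) and (3).} Suppose $f$ is an open immersion (the closed case is analogous, assuming $R$ Hausdorff). Then $f$ is representable, so for every $g\colon Y\to W$ with $Y$ a locally of finite type $R$-scheme the fibre product $X\times_WY$ is a scheme and the projection $p\colon X\times_WY\to Y$ is an open immersion of schemes. By Proposition~\ref{topofsch}, $p(R)\colon (X\times_WY)(R)\to Y(R)$ is an open embedding. I will first check that $[f(R)]$ is injective: because $f$ is a monomorphism, the diagonal $X\to X\times_WX$ is an isomorphism, whence $X(R)\to X(R)\times_{W(R)}X(R)$ is an equivalence of groupoids, so any two objects of $X(R)$ having isomorphic images in $W(R)$ are already isomorphic. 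Next, the image of $[f(R)]$ consists of those classes $[g]\in[W(R)]$, $g\colon \Spec R\to W$, for which the open subscheme $X\times_W\Spec R\to\Spec R$ is the whole of $\Spec R$ (using that $R$ is local, so a nonempty open of $\Spec R$ contains the closed point only if it is everything). Finally, to identify the subspace topology: by Lemma~\ref{bezmora}, a subset $U\subset[W(R)]$ has open preimage in $[X(R)]$ iff for every $h\colon Z\to X$ from a scheme, $[h(R)]^{-1}[f(R)]^{-1}(U)=[(f\circ h)(R)]^{-1}(U)$ is open in $Z(R)$; chasing through the scheme-level open embeddings $p(R)$ shows that the bijection $[X(R)]\to\mathrm{image}\subset[W(R)]$ is a homeomorphism, and that the image is open. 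The closed case is identical, substituting closed immersions and using that Conrad's construction sends them to closed embeddings of Hausdorff spaces.

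\emph{Part (4).} Let $h\colon R\to R'$ be a continuous ring homomorphism. The map $[X(R)]\to[X(R')]$ sends the class of $x\colon \Spec R\to X$ to the class of $x\circ\Spec(h)$. To prove continuity, apply Lemma~\ref{bezmora} with target $[X(R')]$: for every $g\colon Y\to X$ with $Y$ locally of finite type over $R$, the composite $Y(R)\to[X(R)]\to[X(R')]$ factors as
\[ Y(R)\xrightarrow{\;\alpha\;} Y_{R'}(R')\xrightarrow{\;[g_{R'}(R')]\;}[X_{R'}(R')]=[X(R')], \]
where $\alpha$ is the scheme-level base-change map, which is continuous by functoriality in Proposition~\ref{topofsch} (applied to the functor $Y\mapsto Y(R)$ and its compatibility with the continuous ring map $h$), and where $[g_{R'}(R')]$ is continuous by Definition~\ref{MBdef} applied over $R'$.

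\emph{Expected obstacle.} The only delicate step is verifying, in parts (2) and (3), that the bijection onto the image is a homeomorphism rather than merely a continuous injection; the identification of the topology must be done via the universal property characterization of Lemma~\ref{bezmora} and the representability of $f$, together with the fact that on a local topological ring the ``nonempty open equals everything'' dichotomy lets one lift open subschemes of $\Spec R$ to $\Spec R$-points, rather than by any direct check.
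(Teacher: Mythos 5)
The paper gives no proof of this proposition: it is quoted from \cite[Corollary 2.7]{Cesnavicius} with attribution to \v Cesnavi\v cius, so there is no internal argument to compare against. Your re-derivation is essentially sound and uses the natural structure — reduce to schemes via Lemma \ref{bezmora}, invoke Proposition \ref{topofsch} — so the assessment below is of your argument on its own terms.

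Two citation-level slips are worth flagging. In part (3), Proposition \ref{topofsch} as stated in the paper only asserts that closed immersions of schemes are carried to \emph{embeddings}, not to \emph{closed} embeddings; the refinement that, when $R$ is Hausdorff, the embedding is closed is indeed proved by Conrad (and by \v Cesnavi\v cius), but you are invoking something strictly beyond the quoted statement, and this should be said. In part (4), the continuity of the base-change map $Y(R)\to Y_{R'}(R')$ for a continuous ring homomorphism $h\colon R\to R'$ is not what ``functoriality'' in \ref{topofsch} refers to (that is functoriality in $Y$ for one fixed $R$); it is a separate compatibility, recorded e.g.\ as \cite[Section 2.2, part (vii)]{Cesnavicius} and already used by the paper in the proof of Lemma \ref{tavissub}(4). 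Neither affects the substance. In parts (2)-(3) your decomposition — injectivity from the monomorphism property of $f$, identification of the image via the observation that an open or closed immersion into $\Spec R$ admitting a section is an isomorphism (with $R$ local used only to detect this by whether the image contains the closed point), and the homeomorphism onto the image chased through the scheme-level pullbacks $p\colon X\times_W Y\to Y$ — does work, though the ``chasing'' deserves one explicit line: for open $A\subset[X(R)]$ and $g\colon Y\to W$ from a scheme with pullback $q\colon X\times_WY\to X$, one has $[g(R)]^{-1}\big([f(R)](A)\big)=p(R)\big([q(R)]^{-1}(A)\big)$, which is open (resp.\ closed) in $Y(R)$ since $p(R)$ is an open (resp.\ closed) embedding.
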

\subsection{}Let us study the topological spaces $[(Y/G)(R)]$, when the algebraic group~$G$ is special (see \ref{wps}). 
\begin{prop} Let~$R$ be a topologically suitable ring. Let $G=(G,m,e)$ be a flat locally of finite presentation~$R$-algebraic group. Let~$Y$ be a locally of finite type~$R$-scheme endowed with an action of~$G$ and let $\pi :Y\to Y/G$ be the quotient morphism. 
\begin{enumerate}
\item The map $[\pi(R)]:Y(R)\to [(Y/G)(R)]$ is $G(R)$-invariant and continuous. 
\item Assume~$G$ is special. The map $[\pi(R)]$ is surjective and open. The canonical continuous map $$\overline{[\pi(R)]}:Y(R)/G(R)\to [(Y/G) (R)]$$ induced from $G(R)$-invariant map $[\pi (R)]$ is a homeomorphism.
\end{enumerate}
\label{qstop}
\end{prop}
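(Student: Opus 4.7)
The plan for part (1) is straightforward: continuity of $[\pi(R)]$ follows from \ref{ceso}(1) applied to the 1-morphism $\pi$. For $G(R)$-invariance, I will use the description (from the paragraph preceding \ref{propofstack}) that for $y \in Y(R)$ the 1-morphism $\pi \circ y : \Spec R \to Y/G$ corresponds to the trivial $G_R$-torsor $G_R$ together with the action-on-$y$ morphism $a_y : G_R \to Y_R$, $t \mapsto t \cdot y$. Then, for $g \in G(R)$, right multiplication by $g^{-1}$ on $G_R$ furnishes a morphism of trivial $G_R$-torsors $\theta_g$ satisfying $a_y = a_{g \cdot y} \circ \theta_g$, which yields a 2-isomorphism $\pi(y) \cong \pi(g \cdot y)$.

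For part (2), surjectivity of $[\pi(R)]$ is an immediate consequence of \ref{xgtac}: for any $x : \Spec R \to Y/G$, specialness of $G$ and locality of $R$ produce a $G_R$-equivariant morphism $\tilde x : G_R \to Y_R$ representing $x$, and the element $\tilde x(e) \in Y(R)$ satisfies $\pi(\tilde x(e)) \cong x$. The core of the proof is openness. Given an open $V \subset Y(R)$, the $G(R)$-invariance from part (1) lets me replace $V$ by $G(R) \cdot V$ and assume $V$ is $G(R)$-invariant. By \ref{bezmora}, I need only verify that for every 1-morphism $f : Z \to Y/G$ with $Z$ a locally of finite type $R$-scheme, the preimage $[f(R)]^{-1}([\pi(R)](V))$ is open in $Z(R)$. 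Forming the 2-cartesian square from \ref{propofstack}(1) yields a $G$-torsor $p : T \to Z$ and a $G$-equivariant morphism $\tilde f : T \to Y$; for any $z \in Z(R)$, the pullback $T \times_Z \Spec R$ is a trivial $G_R$-torsor (again by \ref{xgtac}), and using this together with $G(R)$-invariance of $V$ I will establish the identity
\[
[f(R)]^{-1}([\pi(R)](V)) = p(R)\bigl(\tilde f(R)^{-1}(V)\bigr).
\]
It then remains to show that $p(R) : T(R) \to Z(R)$ is an open map. Here specialness of $G$ is what does the work: $p$ is Zariski-locally trivial, so covering $Z$ by Zariski opens $U_i$ on which $T|_{U_i} \cong G \times U_i$, the restriction of $p(R)$ above $U_i(R)$ becomes the projection $G(R) \times U_i(R) \to U_i(R)$, which is open; combined with \ref{ceso}(2), which says each $U_i(R) \hookrightarrow Z(R)$ is an open immersion, this globalizes to openness of $p(R)$.

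For the induced map $\overline{[\pi(R)]} : Y(R)/G(R) \to [(Y/G)(R)]$, continuity is automatic from the universal property of the quotient topology, surjectivity follows from surjectivity of $[\pi(R)]$, and injectivity I will deduce from the description of 2-isomorphisms as torsor morphisms between trivial $G_R$-torsors: such a morphism is right multiplication by some $g \in G(R)$, and unwinding $a_{y_1} = a_{y_2} \circ \theta$ forces $y_1 = g \cdot y_2$. Openness is then formal: if $W \subset Y(R)/G(R)$ is open, its preimage in $Y(R)$ is open by definition of the quotient topology, and its image under $[\pi(R)]$ equals $\overline{[\pi(R)]}(W)$, which is open by the openness already established. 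Hence $\overline{[\pi(R)]}$ is a homeomorphism. The main obstacle I anticipate is the openness of $p(R)$: for a general fppf $G$-torsor, Zariski-local triviality on $Z$ can fail, and with it the openness on $R$-points; it is precisely the specialness assumption on $G$ that saves the argument.
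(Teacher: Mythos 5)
Your plan is correct and follows essentially the same route as the paper: continuity from \ref{ceso}, $G(R)$-invariance from the description of $\pi\circ y$ via trivial torsors (the paper phrases this through the $2$-commutative square of \ref{propofstack} with the action and projection, but it amounts to the same computation), surjectivity from \ref{xgtac}, and openness via base change to the $G$-torsor over the test scheme, Zariski-local triviality from specialness, and the consequent openness of the torsor projection on $R$-points. One small caution: the $\subseteq$ direction of your identity $[f(R)]^{-1}([\pi(R)](V)) = p(R)(\tilde f(R)^{-1}(V))$ uses not only $G(R)$-invariance of $V$ but also the injectivity of $\overline{[\pi(R)]}$ (to conclude that two elements of $Y(R)$ mapping to the same class in $[(Y/G)(R)]$ are $G(R)$-conjugate), so that part of the bijectivity argument should be stated first or flagged as a forward reference — the paper has the same dependency and handles it the same way.
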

\begin{proof}
\begin{enumerate}
\item We prove that $[\pi(R)]$ is continuous and $G(R)$-invariant.
\begin{itemize} 
\item The fact that the map $[\pi(R)]$ is continuous follows from the functoriality (Proposition \ref{ceso}).
%
\item By \ref{propofstack}, the diagram 
\[\begin{tikzcd}
	{G\times _RY} & Y \\
	Y & {(Y/G),}
	\arrow["a", from=1-1, to=1-2]
	\arrow["\pi", from=2-1, to=2-2]
	\arrow["{p_2}", from=1-1, to=2-1]
	\arrow["\pi"', from=1-2, to=2-2]
\end{tikzcd}\]
where $a$ is the action, is $2$-commutative. It follows that if $(g,x)\in (G\times_R X)(R),$ then $$[\pi(R)](a(R)(g,x))=[\pi(R)](p_2(R)(g,x))=[\pi(R)(x)].$$ Thus $[\pi(R)]$ is $G(R)$-invariant.
\end{itemize}
\item We assume that~$G$ is special.
\begin{itemize}
\item We establish that $[\pi(R)]$ is surjective. Let $x:\Spec R\to Y/G$ be a~$1$-morphism of algebraic stacks. As~$G$ is special, by $\ref{xgtac}$, one has the following $2$-commutative diagram
\begin{equation*}\label{ojub}
\begin{tikzcd}
G_R \arrow[r,"\widetilde x"] \arrow[d,""]& Y\arrow[d,"\pi"] \\
 \Spec R\arrow[r,"x"] &Y/G.
\end{tikzcd}\end{equation*}
By $2$-commutativity, it follows that $\Spec R\xrightarrow{\pi\circ\widetilde x\circ e_R}Y/G$ and $\Spec R\xrightarrow{x} Y/G$ are $2$-isomorphic. We deduce that $[x]$ is the image of $[\pi(R)(\widetilde x(e_R))]$ and it follows that $[\pi(R)]$ is surjective.
\item We establish that the map $[\pi(R)]$ is open. Let $V\subset Y(R)$ be an open subset, we are going to prove that $[\pi(R)](V)$ is open in $[(Y/G)(R)]$.  By Definition \ref{MBdef}, we need to establish that if $s:W\to Y/G$ is a~$1$-morphism of stacks with $W$ a scheme, then $[s(R)]^{-1}([\pi(R)](V))$ is open in $W(R)$. Set $\widetilde W:=W\times _{Y/G}Y$ and set $\widetilde s :\widetilde W\to Y$ to be the base change morphism. The following diagram is commutative:\[\begin{tikzcd}
	{\widetilde W(R)} & {Y(R)} & {Y(R)/G(R)} \\
	{W(R)} & {[(Y/G)(R)]}
	\arrow["{\widetilde s(R)}", from=1-1, to=1-2]
	\arrow["{\pi_W(R)}"', from=1-1, to=2-1]
	\arrow["{s(R)}"', from=2-1, to=2-2]
	\arrow["{[\pi(R)]}", from=1-2, to=2-2]
	\arrow["q", from=1-2, to=1-3]
	\arrow["{\overline{[\pi(R)]}}", from=1-3, to=2-2]
\end{tikzcd}\] The morphism $\pi _{Y}:\widetilde W\to W$ is a~$G$-torsor, hence, as~$G$ is special, it is locally Zariski trivial on $W$. Let $\cup _{i\in I}U_i$ be an open covering of $W$, such that for all~$i$, the morphism $\pi_ W|_{\pi_W^{-1}(U_i)}$ is a trivial~$G$-torsor. For all~$i$, the map $\pi _i:{\pi (W)^{-1}(U_i)(R)}\to U_i(R)$ decomposes as ${\pi (W)^{-1}(U_i)(R)}\xrightarrow{\sim}U_i(R)\times G(R)\to U_i(R),$ where the first morphism comes from an isomorphism of~$G$-torsors ${\pi (W)^{-1}(U_i)}\xrightarrow{\sim} U_i\times G$ and the second map is the projection, and, hence, $\pi_i$ is open and surjective. As $\cup _{i\in I}U_i(R)$ is a covering of $W(R)$, the map $\pi _W(R)$ is open and surjective. We have that \begin{align*}\pi_W(R)^{-1}(s(R)^{-1}([\pi (R)](V)))&=\widetilde s(R)^{-1}([\pi(R)]^{-1}([\pi(R)](V)))\\&=\widetilde s(R)^{-1}(q^{-1}(q(V))),\end{align*} where the last equality follows from the fact that $\overline{[\pi(R)]}$ is a bijection. It follows that $\pi_W(R)^{-1}(s(R)^{-1}([\pi (R)](V)))$ is open in $\widetilde W(R)$, as $q$ is open and continuous and $\widetilde s(R)$ is continuous. Finally, we get that $$\pi_W(R)(\pi_W(R)^{-1}(s(R)^{-1}([\pi (R)](V))))=[s(R)]^{-1}([\pi(R)])$$ is open in $W(R)$. We deduce that $[\pi(R)](V)$ and hence $[\pi(R)]$ are open. 
\item Let us verify that $\overline{[\pi(R)]}$ is a bijection. Denote by $q$ the quotient map $Y(R)\to Y(R)/G(R).$ One has that $[\pi(R)]=\overline{[\pi(R)]}\circ q$. The map $\overline{[\pi(R)]}$ is surjective because $\pi(R)$ is surjective. Let us verify that $\overline{[\pi(R)]}$ is injective. Suppose $x,y$ are such that $[\overline{\pi(R)}(x)]=[\overline{\pi(R)}(y)]$. Let $x', y'\in Y(R)$ be lifts of $x,y$, respectively. We have that $[\pi(R)](x')=[\pi(R)](y'),$ hence, $\pi(R)(x')$ and $\pi(R)(y')$ are isomorphic in the groupoid $(Y/G)(R)$. This means precisely that there exists $g\in G(R)$ such that the $G_R$-equivariant morphisms $\widetilde x:G_R\to Y, e_R\mapsto x'$ and $\widetilde y:G_R\to Y, e_R\mapsto y'$ satisfy that $\widetilde x=\widetilde y\circ g,$ where $g\in G(R)$ is seen as a morphism $G_R\to G_R$ via the left multiplication. As $\widetilde y$ is $G_R$-equivariant, we deduce $x'=\widetilde x(e_R)=g\cdot\widetilde y(e_R)=g\cdot y'$. This means that $x=q(x')=q(g\cdot y')=q(y')=y$.
 It follows that $\overline{[\pi(R)]}$ is injective and hence bijective. It follows that $\overline{[\pi(R)]}$ is a bijection. As $[\pi(R)]$ is open and continuous, it follows that $\overline{[\pi(R)]}$ is a homeomorphism. The statement is now proven.
\end{itemize}
\end{enumerate}
\end{proof}
By Hilbert 90 theorem, the algebraic group~$\Gm$ is special. We can establish that:
\begin{cor}\label{pafvtafv}
Let~$R$ be a topologically suitable ring. 
\begin{enumerate}
\item The map $(\AAA^n-\{0\})(R)\to [\PPP(\aaa)(R)]$ is $\Gm(R)$-invariant, continuous and open, and the induced map
 \begin{equation*}
(\AAA^n-\{0\})(R)/\Gm(R)\xrightarrow{\sim}[\PPP(\aaa)(R)]\\
\end{equation*} is a homeomorphism.
\item The map $$\Gm^n(R)=(\AAA^1-\{0\})^n(R)\to [\TTa(R)]$$ is a $\Gm(R)$-invariant, continuous and open map and the induced map \begin{equation}\Gm^n(R)/\Gm(R)\rightarrow[\TTa(R)]\label{brtv}\end{equation} is a homeomorphism. 
\item The inclusion $[\TTa(R)]\subset[\PPP(\aaa)(R)]$ is an open embedding.
\end{enumerate}
\end{cor}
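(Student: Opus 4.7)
The proof is essentially a direct application of Proposition \ref{qstop} combined with the structural facts about~$\PPP(\aaa)$ and~$\TTa$ established in Section \ref{wps}.

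My plan is as follows. For part (1), I observe that $\PPP(\aaa)=(\AAA^n-\{0\})/\Gm$ by definition, and the acting group~$\Gm$ is special by Hilbert 90 (as quoted in the discussion preceding Lemma \ref{xgtac}). The scheme $\AAA^n-\{0\}$ is locally of finite type over~$R$ (being an open subscheme of $\AAA^n$), and~$\Gm$ is flat and locally of finite presentation. Thus the hypotheses of Proposition \ref{qstop} are satisfied, and part~(2) of that proposition yields exactly the three claims: the quotient map $(\AAA^n-\{0\})(R)\to [\PPP(\aaa)(R)]$ is $\Gm(R)$-invariant, continuous, surjective and open, and the induced map on the topological quotient is a homeomorphism.

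For part (2), I apply exactly the same argument with $\AAA^n-\{0\}$ replaced by $\Gm^n=(\AAA^1-\{0\})^n$, using that $\TTa=\Gm^n/\Gm$ by definition. Again $\Gm^n$ is locally of finite type over~$R$, and the acting group~$\Gm$ is special, flat and locally of finite presentation. Proposition \ref{qstop}(2) applies and yields the claim.

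For part (3), recall from Section \ref{wps} that the inclusion $\TTa\subset\PPP(\aaa)$ is an open immersion of algebraic stacks (induced from the $\Gm$-equivariant open immersion $\Gm^n\subset\AAA^n-\{0\}$ via \cite[\href{https://stacks.math.columbia.edu/tag/04YN}{Lemma 04YN}]{stacks-project}). By Proposition \ref{ceso}(2), any open immersion of locally of finite type~$R$-stacks with separated diagonals induces an open embedding on the associated topological spaces of~$R$-points, so the inclusion $[\TTa(R)]\subset[\PPP(\aaa)(R)]$ is an open embedding.

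Because each part is cited almost verbatim from earlier results, no genuine obstacle arises; the only point requiring a moment of care is verifying the hypotheses of Proposition \ref{qstop}, namely that~$\Gm$ is special (Hilbert 90) and that the acted-upon schemes are locally of finite type over~$R$, both of which are immediate.
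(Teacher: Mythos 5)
Your proof is correct and follows exactly the same route as the paper: parts (1) and (2) are direct applications of Proposition \ref{qstop}(2) (using that $\Gm$ is special by Hilbert 90 and that $\AAA^n-\{0\}$ and $\Gm^n$ are locally of finite type), and part (3) is Proposition \ref{ceso}(2) applied to the open immersion $\TTa\hookrightarrow\PPP(\aaa)$. The paper's own proof is a one-line citation of those two propositions; you have merely spelled out the hypothesis checks.
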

\begin{proof}
The first two claims are direct consequences of \ref{qstop}. The last claim is a consequence of \ref{ceso}.
\end{proof}
\subsection{}\label{prosclos} To say more about spaces $[\TTa(R)]$ and $[\PPP(R)]$, we will need additional assumptions on~$R$. In \cite[Section 2.12]{Cesnavicius}, \v Cesnavi\v cius defines a notion of proper-closed ring: it is a topologically suitable ring~$R$ such that for every proper morphism $f:X\to Y$ of finite type~$R$-schemes, the induced continuous map $f(R):X(R)\to Y(R)$ is closed. The following examples are presented: a local field or the ring of integers $\Ov$ in the completion $\Fv$ for a finite place~$v$.
\begin{prop}\label{aboutopa} Let~$R$ be a proper-closed integral domain. (e.g. the completion $\Fv$ for some $\vMF$ or the ring of integers $\Ov$ in the completion $\Fv$ for some $\vMFz$).
\begin{enumerate}
\item The topological actions $\Gm(R)\times(\AAA^n-\{0\})(R)\to(\AAA^n-\{0\})(R)$ and $\Gm(R)\times\Gmn(R)\to\Gmn(R),$ deduced from the actions $\Gm\times(\AAA^n-\{0\})\to(\AAA^n-\{0\})$ and $\Gm\times\Gmn\to\Gmn$ by Corollary \ref{topgpagp}, are proper.
\item The map $\Gm(R)\to\Gmn(R), t\mapsto (t^{a_j})_j$ is proper, and the subgroup $\Gm(R)_{\aaa}=\{(t^{a_j})_j|t\in\Gm(R)\}$ is a closed subgroup of $\Gmn(R)$. The canonical map $[\TTa(R)]=\Gmn(R)\to\Gmn(R)/\Gm(R)_{\aaa}$ is $\Gm(R)$-invariant, continuous, open and surjective, and the induced map \begin{equation}\label{ntbrve}[\TTa(R)]=\Gmn(R)/\Gmn(R)\to \Gmn(R)/\Gm(R)_{\aaa}\end{equation} is a homeomorphism.
\item Suppose~$R$ is locally compact (e.g. the completion $\Fv$ for some $\vMF$ or the ring of integers $\Ov$ in the completion $\Fv$ for some $\vMFz$), then $[\PPP(\aaa)(R)]$ and $[\TTa(R)]$ are locally compact and Hausdorff.
\end{enumerate}
\end{prop}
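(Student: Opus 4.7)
My plan is to bootstrap from the scheme-theoretic finiteness of Lemma \ref{aaaction}(2), using the proper-closed hypothesis to transfer good properties to $R$-points, and then to deduce the group-theoretic statements by a standard orbit-map argument and a standard fact on quotients by proper actions.

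\emph{Part (1).} Lemma \ref{aaaction}(2) shows that the morphism of schemes
$(a,p_2)\colon\Gm\times(\AAA^n-\{0\})\to(\AAA^n-\{0\})\times(\AAA^n-\{0\})$
is finite, and likewise for the restriction to $\Gmn$. Finite morphisms are proper, so by the proper-closed hypothesis the induced continuous map on $R$-points is closed. To upgrade this to topological properness I check that the fibres are compact. The scheme-theoretic fibre over $(x,y)\in(\AAA^n-\{0\})(R)^2$ is an affine scheme $\Spec(A)$ with $A$ a finite $R$-algebra. Since $R$ is an integral domain, every monic polynomial in $R[t]$ has only finitely many roots; because $A$ is integral over $R$, any $R$-algebra homomorphism $A\to R$ is determined by the images of a finite $R$-module generating set, each of which must be a root of its integral equation. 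Hence $\Spec(A)(R)$ is finite, so the topological fibres of $(a(R),p_2(R))$ are finite, and a closed map with compact fibres is proper. The same reasoning applies to the action on $\Gmn(R)$.

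\emph{Part (2).} I apply a standard orbit-map argument. Set $\mathbf{1}=(1,\dots,1)$ and let $j\colon\Gm(R)\to\Gm(R)\times\Gmn(R)$, $t\mapsto(t,\mathbf{1})$. Because $\Gmn(R)$ is Hausdorff, $\{\mathbf{1}\}$ is closed, so $j$ is a closed embedding, hence proper. Composing with the proper map $(a(R),p_2(R))$ from (1) yields a proper map $\Gm(R)\to\Gmn(R)\times\Gmn(R)$, $t\mapsto((t^{a_j})_j,\mathbf{1})$. Restricting the first projection to the closed subspace $\Gmn(R)\times\{\mathbf{1}\}$ gives a homeomorphism onto $\Gmn(R)$, so the orbit map $t\mapsto(t^{a_j})_j$ is proper. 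Since a proper map into a Hausdorff space has closed image, $\Gm(R)_{\aaa}$ is a closed subgroup of $\Gmn(R)$. Finally, since $\Gmn(R)$ is abelian, the orbit of $x$ under the action $t\cdot(x_j)=(t^{a_j}x_j)$ is exactly the coset $x\cdot\Gm(R)_{\aaa}$; hence $\Gmn(R)/\Gm(R)$ (which is $[\TTa(R)]$ by Corollary \ref{pafvtafv}) coincides with $\Gmn(R)/\Gm(R)_{\aaa}$ both as a set and as a topological space, since both carry the quotient topology induced from $\Gmn(R)$.

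\emph{Part (3).} When $R$ is locally compact and Hausdorff, $R^n$ and its open subspaces are locally compact Hausdorff; in particular $(\AAA^n-\{0\})(R)$ and $\Gmn(R)$ are locally compact Hausdorff. A standard fact says that the quotient of a locally compact Hausdorff space by a proper action of a locally compact group is locally compact Hausdorff. Applying this with part (1) to $(\AAA^n-\{0\})(R)/\Gm(R)=[\PPP(\aaa)(R)]$, and with (the proof of) part (2) to $\Gmn(R)/\Gm(R)_{\aaa}=[\TTa(R)]$, one obtains the claim.

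\emph{Main obstacle.} The definition of ``proper-closed'' supplies only closedness on $R$-points, not properness in the topological sense; the crucial input is therefore the finiteness of $R$-points of finite $R$-schemes, which rests on the integral-domain hypothesis. Once this is secured, parts (2) and (3) follow by formal topological arguments.
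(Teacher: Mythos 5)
Your proof is correct and follows the same overall strategy as the paper (transfer scheme-theoretic finiteness to properness on $R$-points via the proper-closed hypothesis, then deduce the quotient statements), but the details differ at two points. For part (1), the paper verifies finiteness of the fibres of $(a,p_2)(R)$ by a direct computation: if $(t,\zzz)\mapsto(\yyy,\zzz)$ with $z_i\neq 0$, then $t^{a_i}z_i=y_i$ has finitely many solutions $t$ in the integral domain $R$. You instead invoke the general structure theory: the fibre is $(\Spec A)(R)$ for a finite $R$-algebra $A$, and any $R$-algebra homomorphism $A\to R$ is pinned down by finitely many elements each constrained to be a root of a monic polynomial. Both are fine; yours is a touch more abstract and reusable, the paper's is more self-contained (and implicitly you are using Conrad's compatibility of $(-)(R)$ with fibre products, which you should make explicit). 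For part (2), the paper quotes directly the Bourbaki fact that orbit maps of a proper action are proper (Proposition 4, \no 2, \S 4, Chapter III of \cite{TopologieGj}), whereas you build the orbit map as the composite of the inclusion $t\mapsto(t,\mathbf 1)$ with $(a,p_2)(R)$. Your version needs $\{\mathbf 1\}$ to be closed in $\Gmn(R)$, which you justify by Hausdorffness of $\Gmn(R)$; note that the hypothesis ``proper-closed integral domain'' does not literally assert Hausdorffness (it only enters in part (3) via local compactness), so your route smuggles in an extra separation assumption that the paper's citation of Bourbaki avoids. This is harmless for the stated examples $F_v$ and $\mathcal{O}_v$ but is a real difference in generality. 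Finally, you should say a word about why the map $\Gmn(R)\to\Gmn(R)/\Gm(R)_{\aaa}$ is continuous, open, surjective and $\Gm(R)$-invariant (it is the quotient by a subgroup of an abelian topological group), since the proposition asserts those properties explicitly; your proposal only addresses the resulting homeomorphism.
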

\begin{proof}  
\begin{enumerate}
\item As~$R$ is proper-closed and as $\Gm\times(\AAA^n-\{0\})\to(\AAA^n-\{0\})\times(\AAA^n-\{0\})$ is proper (Lemma \ref{aaaction}), it follows that the map \begin{equation}\label{gmran}\Gm(R)\times(\AAA^n-\{0\})(R)\to(\AAA^n-\{0\})(R)\times(\AAA^n-\{0\})(R)\end{equation} is closed. Let us verify that it's fibers are finite. Suppose that $(t,\xxx)$ is a preimage of $(\yyy,\zzz)$. This means that $\xxx=\zzz$ and that $t\cdot\xxx=\yyy$. Let~$i$ be an index such that $x_i\neq 0$. As~$R$ is an integral domain, there are only finitely many elements $t\in R$ for which $t^{a_i}x_i=z_i$. We deduce that (\ref{gmran}) has finite fibers. Now, it follows from \cite[Theorem 1,  \no 2,  \S 10, Chapter III]{TopologieGj}, that the map (\ref{gmran}) is proper. We deduce from \cite[Example 2, \no 1, \S 4, Chapter III]{TopologieGj}, that the restriction of the action of $\Gm(R)$ to the $\Gm(R)$-invariant subset $\Gmn(R)\subset(\AAA^n-\{0\})(R)$ is proper. The claim is proven.
\item By (1) the action of $\Gm(R)$ on $\Gmn(R)$ is proper. Thus by \cite[Proposition 4, \no 2, \S 4, Chapter III]{TopologieGj}, the induced map $t\mapsto t\cdot (1)_j=(t^{a_j})_j$ is proper and its image $\Gm(R)_{\aaa}$ is closed in $\Gmn(R)$. The map is $\Gmn(R)\to \Gmn(R)/\Gm(R)_{\aaa}$ is continuous, open and surjective, because it is a quotient map. It follows that the induced map $[\TTa(R)]=\Gmn(R)/\Gm(R)\to \Gmn(R)/\Gm(R)_{\aaa}$ is continuous, open and surjective. Moreover, note that for $t\in\Gm(R)$ and $\xxx\in\Gmn(R)$ one has that the image of $t\cdot\xxx=(t^{a_j})_j\xxx$ in $\Gmn(R)/\Gm(R)_{\aaa}$ coincides with the image of~$\xxx$ in $\Gmn(R)/\Gm(R)_{\aaa}$. Observe that if $\xxx,\yyy\in\Gmn(R)$ have the same image in $\Gmn(R)/\Gm(R)_{\aaa}$, then there exists $(t^{a_j})_j\in\Gm(R)_{\aaa}$ such that $(t^{a_j})_j\xxx=\yyy$, hence $t\cdot\xxx=\yyy$. It follows that the induced map $\Gmn(R)/\Gmn(R)\to \Gmn(R)/\Gm(R)_{\aaa}$ is injective. We deduce that it is a homeomorphism.
\item  The action of $\Gm(R)$ on $(\AAA^n-\{0\})(R)$ and $\Gmn(R)$ is proper, hence the spaces $[\PPP(\aaa)(R)]$ and $[\TTa(R)]$ are Hausdorff by \cite[Proposition 3, \no 2, \S4, Chapter III]{TopologieGj}. It follows from \ref{topofsch} that the spaces $(\AAA^n-\{0\})(R)$ and $\Gmn(R)$ are locally compact. Now, 
by \cite[Proposition 9, \no 5, \S 4, Chapter III]{TopologieGj} imply that $[\PPP(\aaa)(R)]$ and $[\TTa(R)]$ are locally compact.  
 \end{enumerate}
\end{proof} 
We finish the paragraph by establishing that $[\PPP(\aaa)(\Ov)]$, $[\TTa(\Ov)]$ and $[\PPP(\Fv)]$ are compact and that $[\TTa(\Fv)]$ is paracompact. First, we prove the following lemma.
\begin{lem}\label{begdav}
Let~$v$ be a place of~$F$. If~$v$ is finite, we define $$\Dav:=(\Ov)^n- (\piv^{a_1}\Ov\times\cdots\times\piv^{a_n}\Ov)$$ and if~$v$ is infinite we define $$\Dav:=\{\xxx\in\Fvn|\hspace{0.1cm} ||\xxx||_{\max}=1\},$$ where $||\xxx||_{\max}=\max_j(|x_j|_v)$.
\begin{enumerate}
\item Suppose~$v$ is finite. The set $\Dav$ is an open, a closed and a compact subset of $\Fv^n-\{0\}$. 
\item Suppose~$v$ is finite and let $\xxx\in\Fvnz$. The set $\{k\in\ZZ|\piv^k\cdot\xxx\in\Ovn\}$ is non-empty and we define $$r_v(\xxx):=\inf\{k\in\ZZ| \piv^{k}\cdot \xxx\in\Ov^n\}.$$ One has that $\piv^{r_v(\xxx)}\cdot\xxx\in\Dav.$
\item Suppose~$v$ is infinite. The set $\Dav$ is a compact subset of $\Fv^n-\{0\}$.
\item Suppose~$v$ is infinite and let $\xxx\in\Fvnz$.  There exists $t\in\Fvt$ such that $||t\cdot\xxx||_{v,\max}=1.$ 
\end{enumerate}
\end{lem}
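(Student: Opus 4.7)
The lemma is a collection of four concrete facts about the fundamental domain $\Dav$ for the weighted action of $\Fvt$ on $\Fvnz$, split into the non-archimedean cases (1)--(2) and the archimedean cases (3)--(4). The plan is to treat them separately since they rely on different ingredients.

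For part (1) I would exploit the ultrametric topology of $\Fv$. The ring $\Ov$ is simultaneously open, closed and compact in $\Fv$, so $\Ov^n$ has the same three properties in $\Fvn$. Each factor $\piv^{a_j}\Ov$ is a closed ball that is also open, so the box $\piv^{a_1}\Ov\times\cdots\times\piv^{a_n}\Ov$ is clopen inside $\Ov^n$. Taking complements, $\Dav$ is clopen in $\Ov^n$, hence open in $\Fvn$ and compact; since $\Fvn$ is Hausdorff, it is also closed there. As $0$ lies in the removed box, $\Dav\subset\Fvnz$ and the first assertion is proved.

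For part (2) I would rewrite the condition $\piv^k\cdot\xxx=(\piv^{a_jk}x_j)_j\in\Ov^n$ as the system of inequalities $a_jk+v(x_j)\geq 0$ for the indices $j$ with $x_j\neq 0$ (the coordinates with $x_j=0$ impose no constraint). Since $\xxx\neq 0$ at least one such $j$ exists, so the set of admissible $k$ is the non-empty, bounded-below set of integers $\{k\in\ZZ:k\geq\max_{j:x_j\neq 0}(-v(x_j)/a_j)\}$, whose infimum is actually a minimum, namely $r_v(\xxx)$. To check that $\piv^{r_v(\xxx)}\cdot\xxx$ avoids $\piv^{a_1}\Ov\times\cdots\times\piv^{a_n}\Ov$, I would argue by contradiction: if $a_jr_v(\xxx)+v(x_j)\geq a_j$ for all $j$, then $a_j(r_v(\xxx)-1)+v(x_j)\geq 0$, so $r_v(\xxx)-1$ would also be admissible, contradicting the minimality of $r_v(\xxx)$.

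For part (3) I would observe that $\Dav$ is the preimage of the singleton $\{1\}$ under the continuous map $\|\cdot\|_{\max}:\Fvn\to\RR_{\geq 0}$, hence closed; and it is contained in the bounded closed ball $\{\xxx:\|\xxx\|_{\max}\leq 1\}$, which is compact by Heine--Borel in the finite-dimensional real vector space $\Fvn$. Clearly $0\notin\Dav$. For part (4) I would introduce the continuous function $g(s):=\max_j s^{a_j}|x_j|_v$ on $\RR_{>0}$, noting that it is strictly increasing, tends to $0$ as $s\to 0^+$ and to $\infty$ as $s\to\infty$; the intermediate value theorem gives a unique $s_0>0$ with $g(s_0)=1$. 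Surjectivity of $|\cdot|_v:\Fvt\to\RR_{>0}$ (valid in both the real and complex cases, the latter because the normalization is the square of the usual modulus) then yields $t\in\Fvt$ with $|t|_v=s_0$, whence $\|t\cdot\xxx\|_{v,\max}=g(s_0)=1$. No single step is a real obstacle; the only points requiring care are the bookkeeping of zero coordinates in part (2) and, in part (4), the normalization of $|\cdot|_v$ at complex places.
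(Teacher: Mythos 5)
Your proposal is correct and follows essentially the same route as the paper's proof in all four parts: clopen-ness of balls and complements in the non-archimedean case, termination of the valuation inequalities for $r_v$, compactness of a sphere in the archimedean case, and continuity together with the limits as $|t|_v\to 0$ and $|t|_v\to\infty$ for the existence of a representative on the sphere. The extra explicitness (writing the condition as $a_jk+v(x_j)\ge 0$ for nonzero coordinates and introducing the auxiliary function $g$) does not change the argument; it is the same proof.
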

\begin{proof}
\begin{enumerate}
\item The subset $\DD^{\aaa}_v$ writes as $$\Dav=(\Ov)^n\cap (\piv^{a_1}\Ov\times\cdots\times\piv^{a_n}\Ov)^c. $$ As $(\Ov)^n$ and $(\piv^{a_1}\Ov\times\cdots\times\piv^{a_n}\Ov)^c $ are a ball and a complement of a ball in $F^n_v$, they are both open and closed subsets of $F^n_v$. Hence, the subset $\Dav$ is both open and closed in $F^n_v$ and, as $F^n_v-\{0\}$ is open in $\Fvn$, also in $\Fvnz$. Moreover, $\Dav$ is a closed subset of $(\Ov)^n$, hence $\Dav$ is compact.  
\item As all $a_j$ are strictly positive, there exists a positive integer~$\ell$ such that for all~$j$ one has $a_j\ell>-v(x_j).$ For such~$\ell$ one has $\piv^{\ell}\cdot\xxx\in\Ov^n$, thus $\{k\in\ZZ|\piv^k\cdot\xxx\in\Ov^n\}$ is non-empty. Suppose that $\piv^{r_v(\xxx)}\cdot\xxx\in(\piv^{a_1}\Ov\times\cdots\times\piv^{a_n}\Ov)$. One has that $$\piv^{r_v(\xxx)-1}\cdot\xxx=\piv^{-1}\cdot(\piv^{r_v(\xxx)}\cdot\xxx)\in\piv^{-1}\cdot(\piv^{a_1}\Ov\times\cdots\times\piv^{a_n}\Ov) =\Ov^n,$$ a contradiction. The claim follows.
\item The set $\Dav$ is a sphere for the norm $|\lvert\cdot\rvert|_{v,\max}$ in the finite dimensional $\Fv$-vector space $\Fv^n$, hence is compact.
\item The function $$\Fvt\to\RR_{>0}\hspace{1cm}t\mapsto||t\cdot\xxx||_{v,\max}$$ is continuous. From the fact that all $a_j$ are strictly positive it follows that $$\lim_{|t|_v\to 0}||t\cdot\xxx||_{v,\max}=0$$ and that $$\lim_{|t|_v\to\infty}||t\cdot\xxx||_{v,\max}=+\infty.$$ We deduce that there exists $t\in\Fvt$ such that $t\cdot\xxx\in\Dav$.
 \end{enumerate}
\end{proof} 
When $R=\Fv,$ where~$v$ is a place of~$F$, one can say the following.
\begin{prop}\label{paraap}
Let $\vMF$. One has that:
\begin{enumerate}
\item the space $[\PPP(\aaa)(\Fv)]$ is compact;
\item the space $[\TTa(\Fv)]$ is paracompact;
\item the spaces $[\PPP(\aaa)(\Ov)]$ and $[\TTa(\Ov)]$ are compact.
\end{enumerate}
\end{prop}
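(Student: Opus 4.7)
The strategy in all three parts is to exhibit the relevant space as (a quotient of) a compact or otherwise well-behaved subspace, and then exploit the continuity and openness of the quotient map provided by Corollary~\ref{pafvtafv}. For (1) and (3), I aim to realize the spaces as continuous images of explicit compact sets; for (2), I will use the topological group structure on $[\TTa(\Fv)]$ coming from Proposition~\ref{aboutopa}(2).

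For part (1), I will use Lemma~\ref{begdav}. Parts (1) and (3) of that lemma give that $\Dav$ is a compact subset of $\Fvnz$, while parts (2) and (4) show that every element of $\Fvnz$ can be translated into $\Dav$ by the action of some element of $\Fvt$. Composing the inclusion $\Dav\hookrightarrow \Fvnz$ with the continuous surjection $\Fvnz\to[\PPP(\aaa)(\Fv)]$ from Corollary~\ref{pafvtafv}(1), we obtain a continuous surjection from the compact space $\Dav$ onto $[\PPP(\aaa)(\Fv)]$, whence the latter is compact.

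For part (3), the space $(\AAA^n-\{0\})(\Ov)$ identifies with $\Ov^n-(\piv\Ov)^n$, which is a closed subset of the compact space $\Ov^n$ (since $(\piv\Ov)^n$ is open), and hence is itself compact. Applying the continuous surjection $(\AAA^n-\{0\})(\Ov)\to [\PPP(\aaa)(\Ov)]$ from Corollary~\ref{pafvtafv}(1) gives that $[\PPP(\aaa)(\Ov)]$ is compact. Analogously, $\Gmn(\Ov)=(\Ov^\times)^n$ is a closed subset of the compact $\Ov^n$, hence compact, and Corollary~\ref{pafvtafv}(2) yields a continuous surjection onto $[\TTa(\Ov)]$, which is therefore compact as well.

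For part (2), Proposition~\ref{aboutopa}(2) identifies $[\TTa(\Fv)]$ with the topological quotient group $\Fvtn/\Fv^\times_\aaa$ of the locally compact Hausdorff group $\Fvtn$ by the closed subgroup $\Fv^\times_\aaa$; in particular $[\TTa(\Fv)]$ inherits the structure of a locally compact Hausdorff topological group (local compactness and Hausdorffness being already recorded in Proposition~\ref{aboutopa}(3)). The plan then is to invoke the standard fact that every locally compact Hausdorff topological group is paracompact: indeed, choosing a compact symmetric neighbourhood $K$ of the identity, the subgroup $H=\bigcup_{n\geq 1}K^n$ is open and $\sigma$-compact, hence Lindel\"of and locally compact, hence paracompact; the left cosets of $H$ partition $[\TTa(\Fv)]$ into disjoint open copies of $H$, and a disjoint union of paracompact spaces is paracompact. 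The mild subtlety to check is that the quotient map $\Fvtn\to\Fvtn/\Fv^\times_\aaa$ is indeed a quotient of topological groups making the target locally compact Hausdorff, but this is guaranteed once $\Fv^\times_\aaa$ is closed in $\Fvtn$, which is exactly Proposition~\ref{aboutopa}(2). No single step is a serious obstacle; the only care needed is to handle the archimedean and non-archimedean cases uniformly via Lemma~\ref{begdav}.
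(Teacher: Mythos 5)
Your proposal is correct and follows essentially the same route as the paper: parts (1) and (3) proceed by exhibiting the space as the continuous image of a compact set ($\Dav$ via Lemma~\ref{begdav}, respectively the $\Ov$-points of $\AAA^n-\{0\}$ and $\Gmn$) under the quotient map of Corollary~\ref{pafvtafv}, and part (2) uses that a quotient of the locally compact group $\Gmn(\Fv)$ by the closed subgroup $(\Fvt)_\aaa$ is paracompact. The only cosmetic differences are that for (3) the paper invokes Lemma~\ref{ukaln} rather than identifying the compact sets by hand, for (2) the paper cites Bourbaki rather than reproving the fact, and for (1) you treat the archimedean case explicitly via Lemma~\ref{begdav}(3)--(4), which the paper's proof leaves implicit.
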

\begin{proof}
\begin{enumerate}
\item Suppose firstly that $v\in M_F^0$. It follows from \ref{begdav} that the restriction of the quotient map $[q^{\aaa}(\Fv)]$ to $\Dav$ is surjective and that $\Dav$ is compact. We deduce that $[\PPP(\aaa)(\Fv)]$ is compact. 
\item The group $\Gmn(R)$ is locally compact and $(\Gmn(R))_{\aaa}$ is its closed subgroup. The quotient $[\TTa(R)]=\Gmn(R)/(\Gm(R))_{\aaa}$ is paracompact by \cite[Proposition 13, \no 6, \S4, Chapter III]{TopologieGj}.
\item The spaces $(\AAA^n-\{0\})(\Ov)$ and $\Gmn(\Ov)$ are compact by \ref{ukaln}. Therefore the corresponding quotients by $\Gm(\Ov)$ are compact. 
\end{enumerate}
\end{proof}
\subsection{} \label{gptta} The last paragraph of this section is dedicated to the group structure of $[\TTa(R)]$.

If~$R$ is a proper-closed integral domain, by \ref{aboutopa}, one has a homeomorphism of topological spaces $[\TTa(R)]=\Gmn(R)/\Gmn(R)\to \Gmn(R)/\Gm(R)_{\aaa}$ (induced from $\Gm(R)$-invariant homomorphism $\Gmn(R)\to\Gmn(R)/\Gm(R)_{\aaa}$). We will transfer the structure of an abelian group to $[\TTa(R)]$ using the inverse of this isomorphism and we may write $[\TTa(R)]=\Gmn(R)/(\Gm(R)_{\aaa})$. If, furthermore,~$R$ is assumed to be locally compact, then $[\TTa(R)]$ is a locally compact abelian group. 
\begin{lem}\label{tavissub} \begin{enumerate}
\item Suppose that $h:R\to R'$ is a morphism of rings. The canonical map $[\TTa(h)]:[\TTa(R)]\to[\TTa(R')]$ is a homomorphism.
\item Suppose that $R\to R'$ is an injective morphism of rings. The canonical map $[\TTa(R)]\to[\TTa(R')]$ is injective if and only if $\Gm(R)_{\aaa}=\Gm(R)^n\cap\Gm(R')_{\aaa}$.
\item Suppose that $h:R\to R'$ is a continuous map of topologically suitable rings. The canonical map $[\TTa(h)]:[\TTa(R)]\to[\TTa(R')]$ is a continuous homomorphism.
\item Suppose that $R\to R'$ is an open embedding of proper-closed integral domains, then the canonical map $[\TTa(R)]\to[\TTa(R')]$ is open.
\end{enumerate}
\end{lem}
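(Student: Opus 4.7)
The plan is to work under the identification $[\TTa(R)]=\Gmn(R)/\Gm(R)_{\aaa}$ coming from \ref{aboutopa} and \ref{gptta}, and to reduce all four parts to properties of the commutative square
\[\begin{tikzcd}
\Gmn(R) \arrow[r,"h^n"] \arrow[d,"q_R"'] & \Gmn(R') \arrow[d,"q_{R'}"] \\
{[\TTa(R)]} \arrow[r,"{[\TTa(h)]}"'] & {[\TTa(R')]}
\end{tikzcd}\]
in which $h^n$ is the coordinate-wise application of $h$ and $q_R,q_{R'}$ are the canonical quotient maps. Commutativity of this square follows from the naturality in $R$ of the description from \ref{intrstack} of objects of $\TTa(R)$ as $(\Gm)_R$-equivariant morphisms $(\Gm)_R\to(\Gmn)_R$, each being determined by its value at $1$.

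For (1), $h^n$ is a group homomorphism sending $\Gm(R)_{\aaa}=\{(t^{a_j})_j\mid t\in\Gm(R)\}$ into $\Gm(R')_{\aaa}$, hence descends to a homomorphism on the quotients, which is precisely $[\TTa(h)]$ under the identification of \ref{gptta}. For (2), injectivity of $h$ makes $h^n$ injective; the kernel of $[\TTa(h)]$ consists of the classes of $\xxx\in\Gmn(R)$ with $h^n(\xxx)\in\Gm(R')_{\aaa}$, which by injectivity of $h^n$ is the image of $\Gm(R)^n\cap\Gm(R')_{\aaa}$ in the quotient $\Gmn(R)/\Gm(R)_{\aaa}$. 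Triviality of this kernel is equivalent to $\Gm(R)^n\cap\Gm(R')_{\aaa}\subseteq\Gm(R)_{\aaa}$, and the reverse inclusion being automatic one obtains the claimed equality.

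For (3), continuity of $[\TTa(h)]$ is immediate from the functoriality of the Moret-Bailly topology, namely \ref{ceso}(4) applied to $h$; combined with (1) this yields a continuous homomorphism. For (4), suppose $h:R\hookrightarrow R'$ is an open embedding. Then $h^n:R^n\to R'^n$ is an open embedding, and its restriction $\Gmn(R)\to\Gmn(R')$ is open: since $h^n$ is injective it sends non-vanishing coordinates to non-vanishing coordinates, so the image of an open subset of $\Gmn(R)$ is open in $R'^n$ and contained in $\Gmn(R')$. The maps $q_R$ and $q_{R'}$ are open surjections by \ref{aboutopa} and \ref{qstop}(2). Hence for an open $U\subseteq[\TTa(R)]$, the image $[\TTa(h)](U)=q_{R'}(h^n(q_R^{-1}(U)))$ is open, as a composition of three open operations.

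The principal technical point is the commutativity of the square, which requires unfolding the quotient-stack description of $\TTa(R)$ and verifying compatibility with base change along $\Spec(h)$; once this naturality is established, parts (1)--(4) reduce to routine group-theoretic and point-set topological arguments.
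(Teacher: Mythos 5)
Your proof is correct and follows essentially the same route as the paper's: all four parts reduce to the observation that $[\TTa(h)]$ is the map induced on the quotient $\Gmn(R)/\Gm(R)_{\aaa}$ by the $\Gm(R)_{\aaa}$-invariant homomorphism $\Gmn(R)\to\Gmn(R')\to[\TTa(R')]$, and each claim then follows from the corresponding property of this map. The only cosmetic difference is in part (4), where the paper cites \cite[Section 2.2, parts (vii) and (x)]{Cesnavicius} for openness of $\Gmn(R)\to\Gmn(R')$ while you give a direct point-set argument; note that the image of a unit under a ring morphism is automatically a unit, which is the relevant fact rather than injectivity or ``non-vanishing coordinates.''
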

\begin{proof}
\begin{enumerate}
\item The map $[\TTa(R)]\to[\TTa(R')]$ is the induced map from $\Gm(R)_{\aaa}$-invariant homomorphism \begin{equation}\label{bezovfv}\Gmn(R)\to\Gmn(R')\to[\TTa(R')],\end{equation} hence is a homomorphism. 
\item The kernel of (\ref{bezovfv}) is given by $\Gm(R')_{\aaa}\cap \Gmn(R)$ and it contains $\Gm(R)_{\aaa}$. The induced map $[\TTa(R)]\to[\TTa(R')]$ from $\Gm(R)_{\aaa}$-invariant map (\ref{bezovfv}) is injective if and only if $\Gm(R')_{\aaa}\cap \Gmn(R)=\Gm(R)_{\aaa}$.
\item The map $[\TTa(R)]\to[\TTa(R')]$ is continuous by \ref{ceso} is a homomorphism by (1). 
\item By \cite[Section 2.2, parts (vii) and (x)]{Cesnavicius}, the inclusion $\Gmn(R)\to \Gmn(R')$ is continuous and open. The map $[\TTa(R)]\to[\TTa(R')]$ is the induced map from the $\Gm(R)_{\aaa}$-invariant, continuous and open map (\ref{bezovfv}), thus is open.
\end{enumerate}
\end{proof}
\begin{lem}\label{fvtaovta}
Let $\vMFz$. One has that  $$\Gm(\Fv)_{\aaa}\cap \Gmn(\Ov)=(\Gm(\Ov))_{\aaa}.$$
\end{lem}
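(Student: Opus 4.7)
The inclusion $(\Gm(\Ov))_{\aaa}\subset \Gm(\Fv)_{\aaa}\cap \Gmn(\Ov)$ is immediate from the definitions: any $s\in\Ovt$ gives $(s^{a_j})_j\in(\Ovt)^n=\Gmn(\Ov)$ and also lies in $\Gm(\Fv)_{\aaa}$ via the inclusion $\Ovt\subset\Fvt$. So the content of the lemma is the reverse inclusion, and I would prove it by a direct valuation argument.

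For the reverse inclusion, take an element $\yyy=(y_j)_j\in \Gm(\Fv)_{\aaa}\cap\Gmn(\Ov)$. By definition of $\Gm(\Fv)_{\aaa}$, there exists $t\in\Fvt$ with $y_j=t^{a_j}$ for every $j=1,\dots,n$. Since $\yyy\in\Gmn(\Ov)=(\Ovt)^n$, each $y_j=t^{a_j}$ is a unit in $\Ov$, which means $v(t^{a_j})=a_jv(t)=0$ for every $j$. As each $a_j$ is a strictly positive integer, this forces $v(t)=0$, i.e.\ $t\in\Ovt$. Hence $\yyy=(t^{a_j})_j$ with $t\in\Gm(\Ov)$, so $\yyy\in(\Gm(\Ov))_{\aaa}$.

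There is essentially no obstacle here: the lemma reduces to the positivity of the weights $a_j$ (a single one would even suffice), which is built into the standing hypothesis $\aaa\in\ZZ^n_{>0}$. The statement would fail without it, and the role of this lemma in the sequel is precisely to feed into part~(2) of Lemma \ref{tavissub} to conclude that the canonical map $[\TTa(\Ov)]\to[\TTa(\Fv)]$ is injective.
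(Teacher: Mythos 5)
Your proof is correct and follows essentially the same route as the paper's: the trivial inclusion one way, and for the reverse inclusion a direct valuation argument using $v(t^{a_j})=a_jv(t)=0$ and positivity of the weights $a_j$ to force $v(t)=0$. No meaningful differences.
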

\begin{proof}
It is obvious that $\Gm(\Ov)_{\aaa}\subset \Gm(\Fv)_{\aaa}\cap \Gmn(\Ov)$ and let us prove the inverse inclusion. Let $\xxx\in\Gm(\Fv)_{\aaa}\cap \Gmn(\Ov).$ This means that there exists $t\in\Gm(\Fv)$ such that for every~$j$ one has $t^{a_j}=x_j$ and that $x_j\in\Gm(\Ov).$  We deduce $v(t^{a_j})=a_jv(t)=x_j=0,$ and as $a_j>0$ we get $v(t)=0$. It follows that $t\in\Gm(\Ov)$ and hence $\xxx\in\Gm(\Ov)_{\aaa}$. 
\end{proof}
We are ready to prove that:
\begin{prop}\label{identofov}
Let $v\in M_F^0$. The map $[\TTa(\Ov)]\to [\TTa(\Fv)]$, induced from $(\Ov)_{\aaa}$-invariant map $q^{\aaa}_v|_{(\Ovt)^n}$, is continuous, injective and open homomorphism and induces an identification of $[\TTa(\Ov)]$ with an open and compact subgroup of $[\TTa(\Fv)]$.
\end{prop}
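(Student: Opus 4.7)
The plan is to assemble the statement directly from the two preceding lemmas \ref{tavissub} and \ref{fvtaovta} together with the compactness result \ref{paraap}(3), since each of the ingredients (homomorphism, continuity, injectivity, openness, compactness) has already been laid out.

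First I would note that the inclusion $\Ov\hookrightarrow\Fv$ is a continuous open embedding of proper-closed integral domains: it is continuous by definition of the topology on $\Fv$, and for the finite place $v$ the ring $\Ov$ is an open subring of $\Fv$. Applying \ref{tavissub}(1), (3), (4) to this ring map immediately gives that the induced map $[\TTa(\Ov)]\to[\TTa(\Fv)]$ is a continuous, open group homomorphism.

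Next I would establish injectivity. By \ref{tavissub}(2), injectivity amounts to the equality
\[
\Gm(\Ov)_{\aaa}=\Gm(\Ov)^n\cap\Gm(\Fv)_{\aaa},
\]
which is exactly the content of \ref{fvtaovta}. (The inclusion $\subset$ is trivial; the nontrivial direction uses that if $t\in\Gm(\Fv)$ satisfies $t^{a_j}\in\Ov^\times$ for all $j$, then $a_j v(t)=0$ and the positivity of the $a_j$ force $v(t)=0$.)

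Finally I would conclude by combining these properties with compactness. Since $[\TTa(\Ov)]$ is compact by \ref{paraap}(3) and the map is continuous, the image is compact; since the map is an open, injective, continuous homomorphism, the image is an open subgroup of $[\TTa(\Fv)]$ and the map is a homeomorphism onto its image (an injective open continuous map is always a topological embedding). Hence it identifies $[\TTa(\Ov)]$ with an open compact subgroup of $[\TTa(\Fv)]$, which is the desired conclusion. There is no real obstacle here: the statement is essentially a packaging of the previous two lemmas plus \ref{paraap}(3), with the key conceptual input being the identity of \ref{fvtaovta} that rules out the existence of ``hidden'' ramified $\aaa$-powers that would otherwise enlarge the kernel.
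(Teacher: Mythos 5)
Your proof is correct and follows essentially the same route as the paper's: apply \ref{tavissub}(1), (3), (4) to the inclusion $\Ov\hookrightarrow\Fv$ for continuity and openness, invoke \ref{fvtaovta} together with \ref{tavissub}(2) for injectivity, and conclude compactness from \ref{paraap}. The paper's own argument is exactly this assembly, with slightly less elaboration on why the image is an open compact subgroup.
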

\begin{proof}
By applying (1), (3) and (4) of \ref{tavissub} to the inclusion $\Ov\to\Fv$, we obtain that $[\TTa(\Ov)]\to[\TTa(\Fv)]$ is a continuous and open homomorphism. 
Lemma \ref{fvtaovta} gives that $\Gm(\Fv)_{\aaa}\cap \Gmn(\Ov)=(\Gm(\Ov))_{\aaa}$ and thus by (2) of \ref{tavissub}, the map $[\TTa(\Ov)]\to[\TTa(\Fv)]$ is injective. Moreover, by \ref{paraap}, the topological group $[\TTa(\Ov)]$ is compact by \ref{paraap}. The claim is proven.
\end{proof}
\begin{lem}\label{indofov}
Let $\aaa\ZZ$ denotes the subgroup $\{(a_jx)_j|x\in\ZZ\}$ of $\ZZ^n$. The homomorphism \begin{equation}\label{fvtnzza}\Fvtn \to\ZZ^n\to\ZZ^n/(\aaa\ZZ),\end{equation}where the first homomorphism is given by $\xxx\mapsto (v(x_j))_j$ and the second homomorphism is the quotient one, is $(\Fvt)_{\aaa}$-invariant. The kernel of the induced homomorphism $[\TTa(\Fv)]\to\ZZ^n/(\aaa\ZZ)$ is $[\TTa(\Ov)]$.
\end{lem}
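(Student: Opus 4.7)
The plan is to verify the two assertions in sequence, first invariance, then the kernel computation, reducing everything to elementary valuation arithmetic.

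For invariance, I would take an arbitrary element $(t^{a_j})_j \in (F_v^\times)_{\aaa}$ and an arbitrary $\xxx \in (F_v^\times)^n$, and compute the image of $(t^{a_j}x_j)_j$ under $\xxx \mapsto (v(x_j))_j$. This gives $(a_j v(t) + v(x_j))_j = (v(x_j))_j + v(t)\aaa$, which differs from $(v(x_j))_j$ by $v(t)\aaa \in \aaa\ZZ$. Hence after composing with the quotient $\ZZ^n \to \ZZ^n/(\aaa\ZZ)$, the map (\ref{fvtnzza}) is $(F_v^\times)_{\aaa}$-invariant, so it factors through $[\TTa(F_v)] = (F_v^\times)^n/(F_v^\times)_{\aaa}$.

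For the kernel, the inclusion $[\TTa(\Ov)] \subseteq \ker$ is clear, since $\xxx \in (\Ov^\times)^n$ has $v(x_j) = 0$ for all $j$, which certainly lies in $\aaa\ZZ$; combined with Proposition \ref{identofov}, which identifies $[\TTa(\Ov)]$ with a subgroup of $[\TTa(F_v)]$, this gives one inclusion. For the reverse inclusion, take $\xxx \in (F_v^\times)^n$ whose class lies in the kernel; then $(v(x_j))_j \in \aaa\ZZ$, i.e.\ there exists $k \in \ZZ$ with $v(x_j) = a_j k$ for all $j$. Setting $t := \piv^k \in F_v^\times$, the element $\yyy := (t^{-a_j}x_j)_j = (\piv^{-a_j k}x_j)_j$ satisfies $v(y_j) = 0$, so $\yyy \in (\Ov^\times)^n$. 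Thus $\xxx = (t^{a_j})_j \cdot \yyy$ and the classes of $\xxx$ and $\yyy$ agree in $[\TTa(F_v)]$, showing that the class of $\xxx$ lies in the image of $[\TTa(\Ov)] \to [\TTa(F_v)]$, which by Proposition \ref{identofov} is $[\TTa(\Ov)]$.

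No real obstacle is expected here; the statement is essentially the short exact sequence $0 \to [\TTa(\Ov)] \to [\TTa(F_v)] \to \ZZ^n/(\aaa\ZZ) \to 0$, which is a mild generalization of the classical decomposition $F_v^\times = \Ov^\times \times \piv^\ZZ$. The only subtlety is keeping track of the quotient by $(F_v^\times)_{\aaa}$, which is handled by the explicit lift $\yyy = \piv^{-a_j k} x_j$ and the injectivity statement of Proposition \ref{identofov}.
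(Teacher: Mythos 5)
Your proof is correct, and it takes a different route from the paper. The paper assembles a $3\times 3$ commutative diagram whose rows are
$1 \to (\Ovt)_{\aaa} \to (\Fvt)_{\aaa} \to \aaa\ZZ \to 0$ and
$1 \to (\Ovt)^n \to \Fvtn \to \ZZ^n \to 0$,
with the three quotient columns ending in $[\TTa(\Ov)]$, $[\TTa(\Fv)]$, $\ZZ^n/\aaa\ZZ$, and then invokes the snake lemma to read off the exactness of $1 \to [\TTa(\Ov)] \to [\TTa(\Fv)] \to \ZZ^n/\aaa\ZZ \to 0$, from which the kernel statement follows. You instead argue directly on elements: the invariance computation $(a_j v(t) + v(x_j))_j \equiv (v(x_j))_j \pmod{\aaa\ZZ}$ and, for surjectivity of the map to $[\TTa(\Ov)]$ inside the kernel, the explicit lift $\yyy = (\piv^{-a_jk}x_j)_j \in (\Ovt)^n$ together with the injectivity of $[\TTa(\Ov)] \to [\TTa(\Fv)]$ from Proposition \ref{identofov}. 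Your version is more elementary and self-contained (no diagram to set up), at the cost of not exhibiting the full short exact sequence as a single structure; the paper's version packages the same facts into one diagram chase. Both deliver exactly the assertion of the lemma, and both ultimately rest on the same two ingredients: the valuation calculus and the fact that $(\Fvt)_{\aaa} \cap (\Ovt)^n = (\Ovt)_{\aaa}$.
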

\begin{proof}
Note that the image of $(t^{a_j})_j\in (\Fvt)_{\aaa}$ in $\ZZ^n$ under the map $\Fvtn\to\ZZ^n$ from above is $(v(t^{a_j}))_j=(a_jv(t))_j$, and the image of $(a_jv(t))_j$ in $\ZZ^n/(\aaa\ZZ)$ under the quotient homomorphism is $0$. Thus the homomorphism  (\ref{fvtnzza}) is $\Fvt$-invariant. The kernel of $\Fvtn\to\ZZ^n$ is the subgroup $(\Ovt)^n\subset \Fvtn$. The kernel of the induced homomorphism $[\TTa(\Fv)]\to\ZZ^n/(\aaa\ZZ)$ is the image of $(\Ovt)^n$ (as $(\Ovt)^n$ is the kernel of $\Fvtn\to\ZZ^n$) under the quotient map. We have the following ``snake diagram":
\[\begin{tikzcd}
	&&& 0 \\
	1 & (\Ovt)_{\aaa} & (\Fvt)_{\aaa} & \aaa\ZZ & 0 \\
	1 & (\Ovt)^n & \Fvtn & \ZZ^n & 0 \\
	& {[\TTa(\Ov)]} & {[\TTa(\Fv)]} & \ZZ^n/\aaa\ZZ.
	\arrow[from=2-1, to=2-2]
	\arrow[from=3-1, to=3-2]
	\arrow[from=2-2, to=2-3]
	\arrow[from=3-2, to=3-3]
	\arrow[from=2-3, to=2-4]
	\arrow[from=3-3, to=3-4]
	\arrow[from=2-4, to=2-5]
	\arrow[from=3-4, to=3-5]
	\arrow[from=1-4, to=2-4]
	\arrow[from=2-2, to=3-2]
	\arrow[from=3-2, to=4-2]
	\arrow[from=3-3, to=4-3]
	\arrow[from=2-3, to=3-3]
	\arrow[from=2-4, to=3-4]
	\arrow[from=3-4, to=4-4]
	\arrow[from=4-3, to=4-4]
	\arrow[from=4-2, to=4-3]
\end{tikzcd}\]
Snake lemma gives that $$1\to[\TTa(\Ov)]\to[\TTa(\Fv)]\to\ZZ^n/\aaa(\ZZ)\to 0$$is exact.
The statement follows
\end{proof}
We add another fact that will be used later. When $n=1$, the spaces $[\TTa(\Fv)]$ is finite and discrete (finiteness and the fact that $(\Fvt)_m\subset \Fvt$ is closed imply that $(\Fvt)_m$ is open, hence discreteness follows):
\begin{lem}[{\cite[Corollary 5.8, Chapter II]{Neukirch}}]\label{ttafvfinite}
Suppose $n=1$ and that $a=a_1\in\ZZ_{\geq 1}$.
The space $[\TT(a)(\Fv)]=\Fvt/(\Fvt)_a$ is discrete and of the cardinality is $\frac{a}{|a|_v}|\mu_a(F_v)|,$ where $|\mu_a(F_v)|$ is the number of $a$-th roots of~$1$ in $\Fv$.
\end{lem}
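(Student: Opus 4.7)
The plan is to derive the cardinality from the short exact sequence
\begin{equation*}
1 \to \mu_a(F_v) \to \Fvt \xrightarrow{t\mapsto t^a} \Fvt \to \Fvt/(\Fvt)_a \to 1
\end{equation*}
by analysing the $a$-th power map on the known factors of $\Fvt$. Once the cokernel is shown to be finite, discreteness is automatic as the parenthetical in the statement indicates: the subgroup $(\Fvt)_a$ is closed in $\Fvt$, since its intersection with the compact group $\Ovt$ is the compact image of the $a$-th power map on $\Ovt$, and its image in the discrete group $\Fvt/\Ovt \cong \ZZ$ is $a\ZZ$. Together with the Hausdorff property of $\Fvt$ from \ref{topofsch}, this yields that $\Fvt/(\Fvt)_a$ is Hausdorff and finite, hence discrete.

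For a non-archimedean $v$ of residue characteristic $p$, I would invoke the topological isomorphism $\Fvt \cong \piv^{\ZZ} \times \mu(F_v) \times \ZZp^{[\Fv:\QQp]}$, where the last factor arises from applying the $p$-adic logarithm to the torsion-free part of the principal units $U^{(1)} = 1 + \piv\Ov$, and $\mu(F_v)$ is the (cyclic) group of all roots of unity in $F_v$. Multiplication by $a$ contributes cokernel of order $a$ on $\piv^{\ZZ}$; of order $\gcd(a,|\mu(F_v)|) = |\mu_a(F_v)|$ on the cyclic torsion factor (using that $\mu_a(F_v) \subset \mu(F_v)$); and of order $p^{v_p(a)[\Fv:\QQp]} = |a|_v^{-1}$ on the free $\ZZp$-module (using $|p|_v = p^{-[\Fv:\QQp]}$). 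Multiplying these yields $\frac{a \cdot |\mu_a(F_v)|}{|a|_v}$, as claimed.

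For archimedean $v$ the formula is verified by direct inspection: on $F_v = \RR$ the cokernel of the $a$-th power map is trivial for $a$ odd and equals $\{\pm 1\}$ for $a$ even, matching $|\mu_a(\RR)|$ with $|a|_v = a$; on $F_v = \CC$ the $a$-th power map is surjective, so the cokernel is trivial, matching $\frac{a}{a^2}\cdot a = 1$ under the convention $|a|_v = a^2$ fixed earlier. The main obstacle is really the non-archimedean case, specifically the identification of the principal units as a $\ZZp$-module via the $p$-adic logarithm together with the bookkeeping of $p$-primary torsion in $\mu(F_v)$; this is exactly the content of Neukirch's Corollary II.5.8 cited in the statement, which I would simply invoke rather than rederive.
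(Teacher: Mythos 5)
Your proof is correct and follows essentially the same route as the paper, which for the cardinality simply cites Neukirch's Corollary II.5.8 (the cokernel computation you outline via $\Fvt\cong\piv^{\ZZ}\times\mu(F_v)\times\ZZ_p^{[\Fv:\QQ_p]}$ is precisely the content of that cited result), and for discreteness gives a one-line parenthetical that a closed finite-index subgroup $(\Fvt)_a\subset\Fvt$ is open. Your closedness argument and the explicit archimedean check fill in details the paper leaves to the citation, but do not constitute a different approach.
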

\section{Adelic situation}
We define ``adelic space" $[\TTa(\AAF)]$ of the stack~$\TTa$. It is defined as a restricted product of $[\TTa(\Fv)]$ with the respect to open subgroups $[\TTa(\Ov)]\subset [\TTa(\Fv)]$ for $\vMFz$. It turns out that $[\TTa(\AAF)]$ is a locally compact abelian group.
\subsection{}
Let us recall some facts on restricted product homomorphisms.
\begin{lem} \label{mimip}
Let~$I$ be a set. Suppose for every $i\in I$ we are given locally compact abelian groups $G_i$ and $H_i$ and for every $i\in I'$, where $I'\subset I$ is a subset of finite complement, we are given open and compact subgroups $G_i'\subset G_i$ and $H_i'\subset H_i$. Suppose for $i\in I$ we are given continuous homomorphism $\phi_i:G_i\to H_i$ such that for every $i\in I'$ one has $\phi_i (G_i')\subset H_i'$. Let us set $G:=\sideset{}{'}\prod _{i\in I}G_i, $ with the respect to the open and compact subgroups $(G_i'\subset G_i)_{i\in I'}$ and let us set $H:=\sideset{}{'}\prod _{i\in I}H_i, $ with the respect to the open and compact subgroups $(H_i'\subset H_i)_{i\in I'}$
\begin{enumerate}
\item The topological groups~$G$ and~$H$ are locally compact.
\item The canonical inclusion $G\subset \prod_{i\in I}G_i$ is continuous.
\item The image of $G\subset \prod _{i\in I}G_i$ under $\prod _{i\in I}\phi_i$ lies in $H.$ We let $\phi :G\to H$ be the homomorphism induced by $\big(\prod _{i\in I}\phi _i\big)$.
\item The homomorphism $\phi:G\to H$ is continuous.
\item One has that $\ker(\phi)=\big(\prod_i\ker(\phi_i)\big)\cap G$.
\item Suppose for every $i\in I'$ one has $\phi_i(G_i')=H_i'$. Suppose further that for every $i\in I$, the homomorphism $\phi_i$ is surjective (respectively, open, isomorphism of topological groups). Then the homomorphism $\phi$ is surjective (respectively, open, isomorphism of topological groups).
\end{enumerate}  
\end{lem}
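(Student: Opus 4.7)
The plan is to treat the six assertions by direct manipulation of the restricted product topology, for which a fundamental system of open neighborhoods of the identity of $G$ is formed by products $\prod_{i\in S}U_i\times\prod_{i\in I\setminus S}G_i'$ with $S\supset I\setminus I'$ finite and $U_i\subset G_i$ open, and similarly for $H$.

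First, for (1) I would pick, for each $i\in I\setminus I'$, a compact neighborhood $K_i$ of $e$ in $G_i$ (local compactness), and observe that $\prod_{i\in I\setminus I'}K_i\times\prod_{i\in I'}G_i'$ is a compact neighborhood of $e$ in $G$ by Tychonoff; translation then makes $G$ locally compact, and the same reasoning covers $H$. Assertion (2) follows by noting that the product topology on $\prod_i G_i$ is coarser than the restricted product topology, since every basic open of the former is a basic open of the latter (taking $U_i=G_i$ for $i\in I\setminus S$).

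For (3), any $(g_i)_i\in G$ has $g_i\in G_i'$ for cofinitely many $i\in I'$, whence $\phi_i(g_i)\in H_i'$ by hypothesis, so $(\phi_i(g_i))_i\in H$. For (4), I would verify that $\phi$ pulls back a basic open $V=\prod_{i\in S}V_i\times\prod_{i\in I\setminus S}H_i'$ of $H$ to an open set of $G$: at any $(g_i)_i\in\phi^{-1}(V)$, choosing a finite $T\supset S$ containing all indices where $g_i\notin G_i'$, the set $\prod_{i\in S}\phi_i^{-1}(V_i)\times\prod_{i\in T\setminus S}\phi_i^{-1}(H_i')\times\prod_{i\in I\setminus T}G_i'$ is a basic open of $G$ (using continuity of each $\phi_i$ together with openness of $H_i'$) containing $(g_i)_i$ and contained in $\phi^{-1}(V)$. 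Assertion (5) is immediate from the coordinate-wise definition of $\phi$.

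For (6), surjectivity follows by lifting coordinates: given $(h_i)_i\in H$, one picks a finite $S\supset I\setminus I'$ capturing all $i$ with $h_i\notin H_i'$, then uses surjectivity of $\phi_i$ for $i\in S$ and the hypothesis $\phi_i(G_i')=H_i'$ for $i\notin S$ to find preimages $g_i\in G_i'$; the resulting tuple lies in $G$. Openness follows from the computation $\phi\big(\prod_{i\in S}U_i\times\prod_{i\in I\setminus S}G_i'\big)=\prod_{i\in S}\phi_i(U_i)\times\prod_{i\in I\setminus S}H_i'$, which is itself basic open in $H$ once each $\phi_i$ is open, and this suffices by the homomorphism property. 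The isomorphism statement then combines continuity, bijectivity and openness. The \emph{only subtle point} is the strengthened hypothesis $\phi_i(G_i')=H_i'$ in (6): without the equality, the image of a basic open would fail to compute as a basic open and the surjectivity lifting could leave the restricted product, so this assumption is indispensable.
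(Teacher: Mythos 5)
Your proposal is correct and follows essentially the same route as the paper in all six parts: local compactness via a compact basic neighborhood, continuity of the inclusion and of $\phi$ by computing preimages of basic opens, and surjectivity/openness in (6) by coordinatewise lifting and by computing images of basic opens using the hypothesis $\phi_i(G_i')=H_i'$. One small slip in (2): a basic open $\prod_{i\in S}U_i\times\prod_{i\in I\setminus S}G_i$ of the product topology is not itself a basic open of the restricted product topology on $G$ (the latter has $G_i'$, not $G_i$, in the cofinite tail), so the parenthetical justification is off; its intersection with $G$ is nonetheless open, as one sees either by writing it as a union over finite enlargements of $S$ (as the paper does) or by checking continuity of the group homomorphism at the identity, so the conclusion stands.
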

\begin{proof}
\begin{enumerate}
\item Let $(x_i)_i\in G$. There exists a subset $I_1\subset I'$ such that $I'-I_1$ is finite and such that for all $i\in I_1$ one has $x_i\in G_i'$. For $i\in I'-I_1$, one can pick a compact neighbourhood $U_i$ of $x_i$ in $G_i,$ because $G_i$ is locally compact. Then $\prod _{i\in I'-I_1}U_i\times\prod _{i\in I_1}G_i' $ is a compact neighbourhood of $(x_i)_i$ in~$G$. It follows that~$G$ is locally compact, and the same is true for~$H$.
\item A basis open subset of $\prod _{i\in I}G_i$ writes as $\prod_{i\in S}U_i\times\prod_{i\in I-S}G_i$, where~$S$ is finite. Its preimage in~$G$ is given by $$\bigcup_{\substack{T\text{ is finite}\\T\supset S}}\prod_{i\in S} U_i\times\prod_{i\in T-S}G_i\times\prod_{i\in I-T}G_i',$$hence is open. It follows that the canonical inclusion is continuous.
\item Suppose $(x_i)_i\in G$. There exists a subset $I_1\subset I'$ such that $I'-I_1$ is finite and such that for all $i\in I_1$ one has $x_i\in G_i'$. For $i\in I_1$, we have $\phi _i(x_i)\in H_i'$, hence $(\phi_i(x_i))_i\in H.$
\item It suffices to verify that the preimage under $\phi$ of a basis open subset of~$H$ is open in~$G$. A basis open subset of~$H$ is given by $\prod_{i\in J}U_i\times\prod_{i\in I-J}H_i'$, for some finite subset $J\subset I$ containing $I-I'$ and some open subsets $U_i\subset H_i$ for $i\in J$. We establish that every point  $(x_i)_i\in \phi^{-1}\big(\prod _{i\in J}U_i\times \prod _{i\in I-J}H_i'\big)$ admits an open neighbourhood contained in $\phi^{-1}\big(\prod _{i\in J}U_i\times \prod _{i\in I-J}H_i'\big)$. There exists a subset $I_1\subset I'$ such that $I'-I_1$ is finite and such that for all $i\in I_1$ one has $x_i\in G_i'$. We note that $$\prod _{i\in (J\cap I)-I_1}\phi _i^{-1}(U_i)\times \prod _{i\in I-I_1-J}\phi_i^{-1}(H_i')\times\prod _{i\in I_1}G_i' $$ is an open neighbourhood of $(x_i)_i$ contained in $\phi^{-1}\big(\prod _{i\in J}U_i\times \prod _{i\in I-J}H_i'\big) .$ The set $\phi^{-1}\big(\prod _{i\in J}U_i\times \prod _{i\in I-J}H_i'\big)$ is thus open and it follows that $\phi$ is continuous.
\item Let $(x_i)_i\in G.$ One has that $0=\phi((x_i)_i)=(\phi_i(x_i))_i$ if and only if $x_i\in\ker(\phi_i)$ for every $i\in I$, i.e. if and only if $(x_i)_i\in\prod_i\ker(\phi_i)$. It follows that $\ker(\phi)=\big(\prod_i \ker(\phi_i)\big)\cap G.$
\item Let us suppose that for every $i\in I'$ one has $\phi_i(G_i')=H_i'$. \begin{enumerate} 
\item Suppose $\phi_i$ are surjective and let $(x_i)_i\in H$. There exists $I_1\subset I'$ such that $I'-I_1$ is finite and such that $x_i\in H_i'$ for every $i\in I_1$. As $\phi _i(G_i)=H_i$ for every $i\in I_1$, we can pick $y_i\in G'_i$ such that $\phi _i(y_i)=x_i$. As maps $\phi _i$ are surjective for every $i\in I-I_1$, we can pick $y_i\in G_i$ such that $\phi _i(y_i)$ for every $i\in I-I_1$. It follows that $(y_i)_i$ is an element of~$G$ such that $\phi ((y_i)_i)=(x_i)_i$. Hence, $\phi$ is surjective.
\item Suppose $\phi _i$ are open. It suffices to prove that the image of a basis open subset of~$G$ is open in~$H$. A basis open subset of~$G$ is given by $\prod _{i\in J}U_i\times \prod_{i\in I-J} G_i',$ for some finite subset $J\subset I$  such that $J\supset I-I'$ and some open subsets $U_i\subset G_i$. We have that \begin{align*}\phi \big(\prod _{i\in J}U_i\times \prod_{i\in I-J} G_i'\big)&=\prod _{i\in J}\phi _i(U_i)\times \prod _{i\in I-J}\phi_i(G'_i)\\&=\prod _{i\in J}\phi _i(U_i)\times \prod _{i\in I-J}H_i'\end{align*} is open in~$H$. It follows that $\phi$ is open.
\item Suppose $\phi_i$ are isomorphisms of topological groups. It follows from above that $\phi$ is injective, surjective, continuous and open. Thus $\phi$ is an isomorphism of topological groups.
\end{enumerate}
\end{enumerate}
\end{proof}
\subsection{}\label{eiiie}
Let $n\geq 1$ be an integer and let $\aaa\in\ZZ^n_{> 0}$. We define an ``adelic space" of the stack~$\TTa$.

Let $\AAF^{\times}$ be the group of ideles of~$F$, that is, it is the restricted product $$\sideset{}{'}\prod _{v\in M_F}\Fvt,$$ with the respect to the family of open and compact subgroups $$(\Ovt\subset\Fvt)_{\vMFz}.$$
It follows from \ref{mimip} that the group $\AAFt$ is locally compact.
\begin{mydef}
We define$$[\TTT ^{\aaa}(\AAF)]:=\sideset{}{'}\prod _{v\in M_F}[\TTT ^{\aaa}(F_v)],  $$ where the restricted product is taken with the respect to the family of compact and open subgroups $$([\TTT ^{\aaa}(\Ov)]\subset[\TTT ^{\aaa}(F_v)])_{v\in M_F^0}. $$\end{mydef} 
For $\vMF$, let $\qav:\Fvtn\to \Fvtn/(\Fvt)_{\aaa}=[\TTa(\Fv)]$ be the quotient morphism. By \ref{identofov}, for every $\vMFz$, one has that $\qav(\Ovtn)=[\TTa(\Ov)]$ is an open and compact subgroup of $[\TTa(\Fv)]$. Lemma \ref{mimip} provides a homomorphism $$q^{\aaa}_{\AAF}=\big(\prod _{\vMF}\qav\big):(\AAF^{\times}) ^n\to [\TTa(\AAF)].$$
We apply \ref{mimip} to our situation an we get the following lemma.
\begin{lem}\label{mapqaf}
The abelian topological group $[\TTa(\AAF)]$ is locally compact. The map $q^{\aaa}_{\AAF}=\big(\prod _{\vMF}\qav\big):(\AAF^{\times}) ^n\to [\TTa(\AAF)]$ is continuous, open and surjective. The kernel of $q^{\aaa}_{\AAF}$ is the group $(\AAFt)_{\aaa}=\{(\xxx^{a_j})_j|\xxx\in\AAFt\}$ (in particular $(\AAFt)_{\aaa}\subset(\AAFt)^n$ is closed).
\end{lem}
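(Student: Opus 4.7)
The plan is to deduce everything from Lemma \ref{mimip} applied to the family of quotient homomorphisms $\qav : \Fvtn \to [\TTa(\Fv)]$ with respect to the open compact subgroups $\Ovtn \subset \Fvtn$ and $[\TTa(\Ov)] \subset [\TTa(\Fv)]$ (at finite places).

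First, I would check that the hypotheses of \ref{mimip} are met. For every $\vMF$, the map $\qav$ is continuous, surjective and open because it is a topological quotient map (Corollary \ref{pafvtafv}). For every $\vMFz$, Proposition \ref{identofov} gives precisely $\qav(\Ovtn) = [\TTa(\Ov)]$, and this is an open compact subgroup. Applying part (1) of \ref{mimip} yields local compactness of $[\TTa(\AAF)]$, while parts (4) and (6) give that $q^{\aaa}_{\AAF}$ is a continuous, open and surjective homomorphism.

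Next, for the kernel, part (5) of \ref{mimip} gives
\[
\ker(q^{\aaa}_{\AAF}) = \Bigl(\prod_{\vMF} \ker(\qav)\Bigr) \cap (\AAFt)^n = \Bigl(\prod_{\vMF} (\Fvt)_{\aaa}\Bigr) \cap (\AAFt)^n.
\]
It remains to identify this intersection with $(\AAFt)_{\aaa}$. The inclusion $(\AAFt)_{\aaa} \subset \ker(q^{\aaa}_{\AAF})$ is immediate. Conversely, suppose $(\xxx_j)_j \in (\AAFt)^n$ lies in the above intersection; then for every $v$ there exists $t_v \in \Fvt$ with $t_v^{a_j} = x_{j,v}$ for each $j$. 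The key small verification is that $(t_v)_v$ is itself an idele: for almost all finite $v$ we have $x_{j,v} \in \Ovt$, and then $a_j v(t_v) = v(x_{j,v}) = 0$ forces $v(t_v)=0$ since $a_j \geq 1$, so $t_v \in \Ovt$. Hence $(t_v)_v \in \AAFt$ and $(\xxx_j)_j = ((t_v)_v^{a_j})_j \in (\AAFt)_{\aaa}$.

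Finally, $(\AAFt)_{\aaa}$ is closed in $(\AAFt)^n$ because it is the kernel of the continuous homomorphism $q^{\aaa}_{\AAF}$ into the Hausdorff group $[\TTa(\AAF)]$ (Hausdorffness of the target follows from the fact that its factors $[\TTa(\Fv)]$ are Hausdorff by \ref{aboutopa} and that the restricted product of Hausdorff groups is Hausdorff). I do not expect a genuine obstacle in this proof; the only non-formal step is the idele assembly argument for the kernel, which is the crux but is straightforward once one uses $a_j > 0$.
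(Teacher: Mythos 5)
Your proof is correct and follows essentially the same route as the paper: invoke Lemma \ref{mimip} for local compactness, continuity, openness, surjectivity, and the kernel formula, then identify $\bigl(\prod_v(\Fvt)_{\aaa}\bigr)\cap(\AAFt)^n$ with $(\AAFt)_{\aaa}$ by the idele-assembly argument. The only cosmetic differences are that the paper cites Lemma \ref{fvtaovta} for the almost-everywhere integrality of the constructed idele $(t_v)_v$ rather than re-deriving it via the valuation estimate as you do, and that you spell out the closedness of $(\AAFt)_{\aaa}$ whereas the paper leaves that as an immediate consequence.
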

\begin{proof}
The abelian topological group $[\TTa(\AAF)]$ is locally compact by \ref{mimip}. For every $v\in M_F$, the quotient map $q^\aaa_v$ is continuous, open and surjective. By \ref{mimip}, one has that $q^{\aaa}_{\AAF}$ is continuous, open and surjective. For $\vMF$, one has that $\ker (q^{\aaa}_v)=(\Fvt)_{\aaa}$ and Lemma \ref{mimip} gives that the kernel of $q^{\aaa}_{\AAF}$ is given by \begin{equation*}\ker(q^{\aaa}_{\AAF})=\big(\prod_v\ker(q^{\aaa}_v)\big)\cap (\AAF^\times)^n=\big(\prod_{v}(\Fvt)_{\aaa}\big)\cap(\AAFt)^n\end{equation*}
By \ref{fvtaovta}, for $x\in\Fvt$ one has that $(x^{a_j})_j\in(\Ovt)_{\aaa}$ if and only if $x\in\Ovt$. It follows that for $\xxx\in\big(\prod_v\Fvt\big)$, one has that $(\xxx^{a_j})_j\in(\AAFt)_{\aaa}$ if and only if $\xxx\in\AAFt$. We deduce that $$ \ker(q^{\aaa}_{\AAF})=\big(\prod_{v}(\Fvt)_{\aaa}\big)\cap(\AAFt)^n=(\AAFt)_{\aaa}.$$
\end{proof}
For $\vMF$, let $i_v:F^{\times}\to F_v^{\times}$ be the canonical inclusion. The homomorphism $$(\Ft)^n\xrightarrow{i_v^n}(\Ft)^n\to[\TTa(\Fv)]$$ is $\Ft$-invariant, and one deduces a homomorphism $[\TTa(i_v)]:[\TTa(F)]\to[\TTa(F_v)]$. 
Let $i:\Ft\to\AAA_F^{\times}$ be the product map $i=\prod_{\vMF}i_v$. It is well known that the image of $F^{\times}$ is discrete in $\AAFt$ (see e.g. \cite[Theorem 5-11]{Ramakrishnan}). The homomorphism $$(F^{\times})^n\xrightarrow{i}(\AAF^{\times} )^n\xrightarrow{q^{\aaa}_{\AAF}}[ \TTa(\AAF)]$$ is $(\Ft)$-invariant and we deduce homomorphism $[\TTa(i)]:[\TTa(F)]\to[\TTa(\AAF)]$. 
For every~$v$, the homomorphism $$\Fvtn\xrightarrow{\xxx\mapsto ((\xxx)_v,(\mathbf 1)_{w\in M_F-\{v\}})}(\AAFt)^n\to [\TTa(\AAF)]$$ is $\Fvt$-invariant, and we deduce a homomorphism $[\TTa(\Fv)]\to[\TTa(\AAF)]$.
\begin{lem}\label{ttaiproduct}
The map $[\TTa(i)]$ coincides with the product map $\prod_v[\TTa(i_v)]:[\TTa(F)]\to\prod_v[\TTa(\Fv)]$.
\end{lem}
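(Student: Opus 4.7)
The plan is to reduce to a diagram chase among quotient groups. Both maps in question factor through the quotient $(F^\times)^n \twoheadrightarrow [\TTa(F)]$, so by the universal property of the quotient it suffices to check that the two $\Ft$-invariant composites
\[ (F^\times)^n \xrightarrow{i^n} (\AAFt)^n \xrightarrow{q^\aaa_{\AAF}} [\TTa(\AAF)] \hookrightarrow \prod_{v\in M_F}[\TTa(F_v)] \]
and
\[ (F^\times)^n \xrightarrow{\prod_v i_v^n} \prod_{v}\Fvtn \xrightarrow{\prod_v q^\aaa_v} \prod_v[\TTa(F_v)] \]
agree as set-theoretic (equivalently, group) maps $(F^\times)^n \to \prod_v[\TTa(F_v)]$.

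First I would unpack the two sides. The inclusion $[\TTa(\AAF)] \subset \prod_v[\TTa(F_v)]$ is tautological from the definition of $[\TTa(\AAF)]$ as a restricted product in \ref{eiiie}, and by the explicit construction of $q^\aaa_{\AAF}$ in Lemma \ref{mapqaf} one has, for any $(\xxx_v)_v \in (\AAFt)^n$, the equality $q^\aaa_{\AAF}\bigl((\xxx_v)_v\bigr) = (q^\aaa_v(\xxx_v))_v$ inside $\prod_v[\TTa(F_v)]$. In other words the square
\[
\begin{tikzcd}
(\AAFt)^n \arrow[r, hook] \arrow[d, "q^\aaa_{\AAF}"'] & \prod_v \Fvtn \arrow[d, "\prod_v q^\aaa_v"] \\
{[\TTa(\AAF)]} \arrow[r, hook] & \prod_v [\TTa(F_v)]
\end{tikzcd}
\]
is commutative by construction.

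Second, I would observe that the map $i^n:(F^\times)^n \to (\AAFt)^n$ is, by definition, the restriction to $(\Ft)^n$ of the product map $\prod_v i_v^n : (\Ft)^n \to \prod_v \Fvtn$, since the image of $i$ lies in the restricted product $\AAFt \subset \prod_v \Fvt$ (this is the well-known fact that a nonzero element of $F$ is a unit at almost every place). Composing these two commutative diagrams shows that both candidate maps from $(F^\times)^n$ to $\prod_v[\TTa(F_v)]$ coincide with $(\prod_v q^\aaa_v) \circ (\prod_v i_v^n)$. Passing to the induced maps on the quotient $[\TTa(F)]$ yields $[\TTa(i)] = \prod_v [\TTa(i_v)]$. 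The only subtlety — namely that one needs the image of $[\TTa(i)]$ to actually land in the restricted product $[\TTa(\AAF)]$ rather than just in $\prod_v[\TTa(F_v)]$ — is automatic since $[\TTa(i)]$ is defined with target $[\TTa(\AAF)]$; the lemma simply identifies it through the canonical inclusion. No genuine obstacle is expected; this is a bookkeeping verification.
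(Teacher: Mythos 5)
Your proof is correct and follows the same route as the paper: unwind the definitions, lift to $(F^\times)^n$, and observe that the relevant quotient/inclusion square (the per-place description of $q^{\aaa}_{\AAF}$, plus the fact that $i$ is the restriction of $\prod_v i_v$) commutes. The paper records this as a single per-$v$ commutative diagram and then concludes, which is the same bookkeeping you did.
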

\begin{proof}
For every $\vMF$, the following diagram is commutative
\[\begin{tikzcd}
	{(F^\times)^n} & {(\Fvt)^n} & {\hspace{0.1cm}(\AAFt)^n} \\
	{[\TTa(F)]} & {[\TTa(\Fv)]} & {\hspace{0.1cm}[\TTa(\AAF)]}
	\arrow["{i^n_v}", from=1-1, to=1-2]
	\arrow[from=1-2, to=1-3]
	\arrow[from=1-3, to=2-3]
	\arrow[from=2-2, to=2-3]
	\arrow["{[\TTa(i_v)]}", from=2-1, to=2-2]
	\arrow[from=1-1, to=2-1]
	\arrow[from=1-2, to=2-2]
\end{tikzcd}\]
for every $\vMF$. 
It follows that the map $[\TTa(i)]$ coincides with the product map $\prod_v[\TTa(i_v)]:[\TTa(F)]\to\prod_v[\TTa(\Fv)]$.
\end{proof}
\begin{mydef}
We define $[\TTa(\AAF)]_1$ to be the image of $(\AAF^1)^n$ in $[\TTa(\AAF)]$ under the map $q^\aaa_{\AAF}$.
\end{mydef}
\subsection{} In this paragraph we suppose that $n=1$ and that $a=a_1\geq 2$.  

The following Proposition is due to \v Cesnavi\v cius:
\begin{prop}\label{cesusp}One has that:
\begin{enumerate}
\item {\normalfont {\cite[Proposition 4.12]{Cesnaviciuss}}} The image $$[\TT(a)(i)]([\TT(a)(F)])\subset [\TT(a)(\AAF)]$$ is discrete, closed and cocompact. 
\item {\normalfont {\cite[Lemma 4.4]{Cesnaviciuss}}} The group $\Sh^1(F,\mu_a):=\ker ([\TT(a)(i)])$ is finite.
\end{enumerate} 
\end{prop}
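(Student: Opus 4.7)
The plan is to work through the identification $[\TT(a)(\AAF)]\cong\AAFt/(\AAFt)^a$ given by Lemma~\ref{mapqaf} (applied with $n=1$), under which the homomorphism $[\TT(a)(i)]$ becomes the map $F^\times/F^{\times a}\to\AAFt/(\AAFt)^a$ induced by the diagonal embedding $F^\times\hookrightarrow\AAFt$. Both parts will then be reduced to classical finiteness statements in algebraic number theory: finiteness of the ideal class group, finiteness of $\OOF^\times/(\OOF^\times)^a$ (via Dirichlet's unit theorem), and compactness of $\AAFj/F^\times$.

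For part (2), I would consider the compact open subgroup $K:=\prod_{v\in M_F^0}[\TT(a)(\Ov)]\subset[\TT(a)(\AAF)]$, which is compact open by Proposition~\ref{identofov} (and Lemma~\ref{mimip}). Using the exact sequence $0\to[\TT(a)(\Ov)]\to[\TT(a)(\Fv)]\to\ZZ/a\ZZ\to 0$ from Lemma~\ref{indofov}, a class $[x]\in F^\times/F^{\times a}$ maps into $K$ if and only if $v(x)\equiv 0\pmod{a}$ at every finite $v$, equivalently the fractional ideal $(x)$ equals $\mathfrak a^a$ for some (necessarily unique) fractional ideal $\mathfrak a$. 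The assignment $[x]\mapsto[\mathfrak a]$ is then a well-defined homomorphism from the set $G$ of such classes to $\Cl(F)[a]$; its kernel consists of classes $[x]$ with $x=uy^a$ for some $u\in\OOF^\times$, and is therefore a quotient of $\OOF^\times/(\OOF^\times)^a$. Both $\Cl(F)[a]$ and $\OOF^\times/(\OOF^\times)^a$ are finite, so $G$ is finite. Since an element of $\Sh^1(F,\mu_a)$ is represented by some $x\in F^\times$ that is locally an $a$-th power everywhere --- hence in particular has valuations divisible by $a$ at every finite place --- one has $\Sh^1(F,\mu_a)\subset G$, and finiteness of $\Sh^1(F,\mu_a)$ follows.

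For part (1), since the image of $[\TT(a)(F)]$ in $[\TT(a)(\AAF)]$ meets the compact open set $K$ in the (finite) image of $G$, elementary topology gives discreteness: $K$ is compact Hausdorff, so the finite intersection admits an open neighborhood $V\ni 1$ in $K$ meeting the image only at~$1$; since $K$ is open in $[\TT(a)(\AAF)]$, so is $V$. Closedness of a discrete subgroup of a Hausdorff topological group is automatic. For cocompactness I would use the idele absolute value $|\cdot|_{\AAF}\colon\AAFt\to\RR_{>0}$: its kernel is $\AAFj$, which contains $F^\times$ by the product formula, while $(\AAFt)^a$ surjects onto $\RR_{>0}$ because $t\mapsto t^a$ is surjective on $\RR_{>0}$. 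Hence the natural map $\AAFj\to\AAFt/(F^\times\cdot(\AAFt)^a)$ is surjective, and it factors through the compact group $\AAFj/F^\times$ (compactness being the classical finiteness theorem on the idele class group). Therefore $[\TT(a)(\AAF)]/[\TT(a)(i)]([\TT(a)(F)])\cong\AAFt/F^\times(\AAFt)^a$ is compact.

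The main obstacle, though modest, is the bookkeeping around the non-injectivity of $[\TT(a)(i)]$: the kernel is precisely $\Sh^1(F,\mu_a)$ and controlling it is entangled with the argument for discreteness. The strategy above finesses this by treating the preimage $G$ of the compact open $K$ as the primary object: finiteness of $G$ simultaneously bounds the intersection with $K$ (yielding discreteness and, by the Hausdorff argument, closedness) and contains $\Sh^1(F,\mu_a)$ (yielding part~(2)).
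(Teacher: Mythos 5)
Your argument is correct, and it takes a genuinely different route from the paper's. The paper's proof is a pure translation-and-citation: it identifies $[\TT(a)(R)]$ with the Kummer-theoretic $H^1_{\fppf}(R,\mu_a)$, recognises $[\TT(a)(\AAF)]$ as \v Cesnavi\v cius's adelic $H^1$ and $[\TT(a)(i)]$ as his localisation map $\mathrm{loc}^1(\mu_a)$, and then invokes his Proposition~4.12 and Lemma~4.4 wholesale. Your proof instead gives a self-contained, elementary argument for the specific case of $\mu_a$: after identifying $[\TT(a)(\AAF)]\cong\AAFt/(\AAFt)^a$, you use the exact sequence from Lemma~\ref{indofov} to pin down the preimage $G$ of the compact open subgroup $K$, reduce its finiteness to the finiteness of $\Cl(F)[a]$ and $\OOF^\times/(\OOF^\times)^a$, and derive discreteness, closedness (a discrete subgroup of a Hausdorff group is closed) and cocompactness (from $\AAFt=\AAFj\cdot(\AAFt)^a$ together with compactness of $\AAFj/F^\times$). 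This is exactly the classical content that underlies the cited cohomological statement, so nothing is circular, and your version has the advantage of being verifiable locally in this text without unpacking \cite{Cesnaviciuss}; what it loses is the generality (arbitrary finite flat group schemes) that the cited results provide but which is not needed here. One small bookkeeping remark: as written, $K:=\prod_{v\in M_F^0}[\TT(a)(\Ov)]$ omits the infinite-place coordinates, and your characterisation ``$[x]$ maps into $K$ iff $v(x)\equiv 0\pmod a$ at all finite $v$'' is correct only if you pad $K$ at the archimedean places with the full (finite) groups $[\TT(a)(F_v)]$ rather than with $\{1\}$; either padding yields a compact open subgroup with finite preimage containing $\Sh^1(F,\mu_a)$, so the argument goes through, but you should fix one convention for the ``iff'' to be literally true.
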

\begin{proof}
When~$R$ is a local ring, there exists an identification of abelian groups $H^1_{\fppf}(R,\mu_a)=R^{\times}/(R^{\times})_{\aaa}=[\TT(a)(R)].$ It follows that the space $\TT(a)(\AAF)$ is precisely the space $H^1(\AAF^{\in M_F},\mu_a)$ from \cite[Section 3]{Cesnaviciuss}. Moreover, with these identifications, the map $[\TT(a)(i)]$ becomes the map $\text{loc}^1(\mu_a)$  in the notation of \cite{Cesnaviciuss}. We are thus in the situations of the mentioned statements of \cite{Cesnaviciuss}.
\end{proof}
\begin{lem}\label{ljweq}
One has that $$[\TTT(a)(\AAF)]_1=[\TTT(a)(\AAF)].$$
\end{lem}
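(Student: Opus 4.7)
The plan is to translate the set-theoretic identity $[\TTT(a)(\AAF)]_1=[\TTT(a)(\AAF)]$ into the statement that every idele $\xxx\in\AAFt$ can be written as $\xxx=y\cdot t^a$ with $y\in\AAF^1$ and $t\in\AAFt$, which would give $q^a_{\AAF}(\xxx)=q^a_{\AAF}(y)$ because $t^a\in(\AAFt)_a=\ker(q^a_{\AAF})$ by Lemma \ref{mapqaf}. Since $y$ would have norm $1$, this places $q^a_{\AAF}(\xxx)$ in the image $[\TTT(a)(\AAF)]_1$ of $\AAF^1$.

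The key input is that the idelic norm $|\cdot|:\AAFt\to\RR_{>0}$, $\xxx\mapsto\prod_v|x_v|_v$, is surjective. I would establish this by fixing any infinite place $v_0$ of~$F$: the local absolute value $|\cdot|_{v_0}:F_{v_0}^\times\to\RR_{>0}$ hits every positive real number (whether~$v_0$ is real or complex, using that our normalization at a complex place is the square of the usual absolute value, whose image is still all of $\RR_{>0}$). Given any $\lambda>0$, the idele that equals a chosen preimage at~$v_0$ and equals $1$ at all other places has norm $\lambda$.

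Given $\xxx\in\AAFt$, set $\lambda=|\xxx|^{1/a}\in\RR_{>0}$ and choose $t\in\AAFt$ with $|t|=\lambda$ by the preceding paragraph. Then $|t^a|=\lambda^a=|\xxx|$, hence $y:=\xxx\cdot t^{-a}$ satisfies $|y|=1$, i.e.\ $y\in\AAF^1$. We have $\xxx=y\cdot t^a$, so by Lemma \ref{mapqaf} (which identifies $\ker(q^a_{\AAF})$ with $(\AAFt)_a$) we obtain $q^a_{\AAF}(\xxx)=q^a_{\AAF}(y)\in q^a_{\AAF}(\AAF^1)=[\TTT(a)(\AAF)]_1$. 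As $\xxx$ was arbitrary, this gives $[\TTT(a)(\AAF)]\subset[\TTT(a)(\AAF)]_1$; the reverse inclusion is immediate from the definition.

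There is no real obstacle here: the argument is short and depends only on the surjectivity of the idelic norm map and the already-established description of $\ker(q^a_{\AAF})$. One small care point is ensuring the infinite-place absolute value really is surjective onto $\RR_{>0}$ under the normalization conventions fixed in the Notation section (in particular at complex places), but this is routine.
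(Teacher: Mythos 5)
Your proof is correct and follows essentially the same strategy as the paper's: both produce a preimage in $\AAFj$ by adjusting an arbitrary lift in $\AAFt$ by an $a$-th power of an idele supported at an infinite place, and both rely on the description of $\ker(q^a_{\AAF})=(\AAFt)_a$ from Lemma \ref{mapqaf}. The only difference is cosmetic — you phrase the correction as a global decomposition $\xxx=y\cdot t^a$ while the paper replaces a single archimedean coordinate and checks directly that the replacement factor lies in $(F_w^\times)_a$ — but the underlying argument is identical.
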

\begin{proof}
We will establish that any $(x_v)_v\in[\TTT(a)(\AAF)]$ admits a lift in~$\AAF^{1}$ for the map $q^{a}_{\AAF}=\prod_vq^{a}_v:\AAFt\to[\TTa(\AAF)]$. It follows from \ref{mapqaf} that there exists a lift $(y_v)_v\in\AAFt$ of $(x_v)_v$. 
We set $A:=\prod _{\vMF}|y_v|_v\in\RR_{>0}$. Let $w\in M_F^{\infty}$. Set $z=A^{1/n_w}\in F_w^{\times}$, so that $|z|_w=A$. One has that $((y_v)_{v\neq w},(y_wz^{-1})_w) \in\AAF^1,$ because $$|y_wz^{-1}|_w\prod_{v\neq w}|y_v|=|z^{-1}|_w\prod_v|y_v|_v=A^{-1}\cdot A=1.$$ We note that $z^{-1}=A^{-1/n_v}=(A^{-1/an_v})^a,$ hence $z^{-1}\in(F_{w}^\times)_a$. Now one has that \begin{align*}q^a_{\AAF}((y_v)_{v\neq w},y_wz^{-1})&=((q^a_v (y_v))_{v\neq w}, (q^a_w(y_wz^{-1}))_w)\\&=((x_v)_{v\neq w},(q^a_w(y_w))_w)\\&=(x_v)_{v}. \end{align*}
Thus $((y_v)_{v\neq w},(y_wz^{-1})_w)$ is a lift of $(x_v)_v$ lying in $\AAF^1$. It follows that $[\TT(a)(\AAF)]=q^{a}_{\AAF}(\AAF^1)=[\TT(a)(\AAF)]_1.$
\end{proof}
\subsection{}If~$A$ is an abelian group written additively, we will write $\aaa A$ for the subgroup $\{(a_jm)_j|m\in A\}$ of $A^n$. If the group is written multiplicatively, then we will use notation $A_{\aaa}$ to be consistent with earlier notation. In this paragraph we construct an isomorphism $A/dA\times A^{n-1}\xrightarrow{\sim} A^n/\aaa A$.

Let $n\geq 1$ be an integer and let $\aaa\in\ZZ^n_{>0}$. We denote $d:=\gcd (\aaa).$  For $j=1\doots n$, we set $$b_{j1}:=\frac{a_j}{d}.$$ There exists a matrix $E=(b_{ji})_{ji}\in SL(n,\ZZ)$ which has $(b_{j1})_j^t$ for the first column (see e.g. \cite{Reiner}). 
\begin{lem}\label{catear}
\begin{enumerate}
\item The kernel of the homomorphism \begin{equation}\label{lomnb}\ZZ^n\xrightarrow{E}\ZZ^n\to\ZZ^n/\aaa\ZZ, \end{equation}where the second homomorphism is the quotient homomorphism, is the subgroup $d\ZZ\times\{0\}^{n-1}\subset \ZZ^{n}$.
\item The homomorphism $$\overline E:(\ZZ/d\ZZ)\times\ZZ^{n-1}=\ZZ^n/(d\ZZ\times\{0\}^{n-1})\xrightarrow{\sim}\ZZ^n/\aaa\ZZ,$$  induced from $\aaa\ZZ$-invariant homomorphism \ref{lomnb}, is an isomorphism. Let~$A$ be an abelian group. Let us write $E_{A}$ and $\overline{E}_A$ for the tensor products $E\otimes_{\ZZ}A$ and $\overline{E}\otimes _{\ZZ}A$, respectively.  The following diagram $C(A)$ is commutative, its horizontal sequences are exact and its vertical arrows are isomorphisms of abelian groups:
\begin{equation}\label{exseqtta}\begin{tikzcd}
	{dA\times\{0\}^{n-1}} & A^n& {(A/dA)\times A^{n-1}} & 0 \\
	{\aaa A} & A^n & {A^n/\aaa A} & 0.
	\arrow[from=1-1, to=1-2]
	\arrow[from=1-2, to=1-3]
	\arrow[from=1-3, to=1-4]
	\arrow[from=2-1, to=2-2]
	\arrow[from=2-2, to=2-3]
	\arrow[from=2-3, to=2-4]
	\arrow["{E_{A}}"', from=1-1, to=2-1]
	\arrow["{E_{A}}"', from=1-2, to=2-2]
	\arrow["{\overline{E}_{A}}"', from=1-3, to=2-3]
\end{tikzcd}\end{equation}
Moreover, if $A\to B$ is a homomorphism of abelian groups, the canonical homomorphisms provide a morphism of diagrams $C(A)\to C(B)$.
\end{enumerate}
\end{lem}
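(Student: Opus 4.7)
The plan is to exploit the fact that $E\in SL(n,\ZZ)$ is invertible, which reduces the whole statement to a simple computation of how the first column of $E$ interacts with the vector $\aaa$.

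For part (1), I would first observe that $E\colon\ZZ^n\to\ZZ^n$ is an isomorphism, so the kernel of the composite $\ZZ^n\xrightarrow{E}\ZZ^n\to\ZZ^n/\aaa\ZZ$ equals $E^{-1}(\aaa\ZZ)$. Now $\aaa\ZZ$ is the cyclic subgroup generated by the column vector $(a_j)_j$, and by definition of the $b_{j1}$ one has $(a_j)_j=d\cdot(b_{j1})_j=d\cdot E\mathbf{e}_1$, where $\mathbf{e}_1\in\ZZ^n$ is the first standard basis vector. Hence $\aaa\ZZ=E(d\ZZ\cdot\mathbf{e}_1)$, and applying $E^{-1}$ gives $E^{-1}(\aaa\ZZ)=d\ZZ\cdot\mathbf{e}_1=d\ZZ\times\{0\}^{n-1}$, as claimed.

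For part (2), the fact that $\overline E$ is an isomorphism follows from (1) and the first isomorphism theorem. To obtain the diagram $C(A)$, I would tensor the argument of (1) with $A$ over $\ZZ$. More concretely, $E_A\colon A^n\to A^n$ is an isomorphism because $E$ is invertible over $\ZZ$. Moreover, the computation above shows that $E_A$ sends $(dm,0,\dots,0)$ to $m\cdot(a_j)_j\in\aaa A$, and that every element of $\aaa A$ is of this form; hence $E_A$ restricts to a bijection $dA\times\{0\}^{n-1}\to\aaa A$. Passing to the quotient then yields the isomorphism $\overline E_A$. The two horizontal rows are exact by construction (the top row is the obvious split exact sequence, and the bottom row is the tautological presentation of $A^n/\aaa A$), and commutativity of the squares is automatic because the vertical maps are restrictions of the single linear map $E_A$.

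Finally, naturality in $A$ reduces to the observation that $E$ and $\overline E$ are defined entirely by integer matrices: for any homomorphism $\phi\colon A\to B$ the equalities $\phi^n\circ E_A=E_B\circ\phi^n$ and the analogous equality for $\overline E$ hold on the nose, and the induced maps $dA\times\{0\}^{n-1}\to dB\times\{0\}^{n-1}$ and $\aaa A\to\aaa B$ are compatible. I do not anticipate any real obstacle here: everything is essentially linear algebra over $\ZZ$ tensored with $A$, and the only point requiring care is the identification of $E(d\ZZ\cdot\mathbf{e}_1)$ with $\aaa\ZZ$, which is precisely the reason for choosing the first column of $E$ to be $(a_j/d)_j$.
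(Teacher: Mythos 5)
Your proof is correct and follows essentially the same route as the paper: exploit invertibility of $E\in SL(n,\ZZ)$, reduce to the fact that $E\mathbf{e}_1=(a_j/d)_j$, and then tensor with $A$. Your treatment of part (1) is actually a touch cleaner than the paper's: you compute $E^{-1}(\aaa\ZZ)=d\ZZ\cdot\mathbf{e}_1$ directly from the identity $E(d\mathbf{e}_1)=(a_j)_j$, whereas the paper detours through the observation that $E^{-1}(\aaa\ZZ)$ is free of rank one containing $(d,0,\ldots,0)^t$ and then pins down a generator by divisibility. One small terminological slip: the top row $dA\times\{0\}^{n-1}\to A^n\to (A/dA)\times A^{n-1}\to 0$ is not a \emph{split} exact sequence in general (take $A=\ZZ$, $d=2$); what you actually use is only its right-exactness, which is plain, so the argument stands.
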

\begin{proof}
\begin{enumerate}
\item Obviously, the kernel of (\ref{lomnb}) coincides with $E^{-1}(\aaa\ZZ)$, thus the kernel is a free abelian group of the rank~$1$. Moreover, it contains the vector $(d,0\doots 0)^t.$ We deduce that the generator of (\ref{lomnb}) is given by $(k,0\doots 0)^t$ for some $k|d$. One has that $$E\cdot(k,0\doots,0)^t=(kb_{j1})_j^t=(ka_j/d)_j^t\in\aaa\ZZ,$$ hence $d|k$, hence $d=\pm k$. We deduce that the kernel of (\ref{lomnb}) is precisely the subgroup $d\ZZ\times\{0\}^{n-1}\subset \ZZ^{n}$.
\item The homomorphism $\overline{E}$ is evidently an isomorphism. By (1), the following diagram is commutative, its horizontal sequences are exact and its vertical arrows are isomorphisms:
\begin{equation}\label{nprlj}\begin{tikzcd}
	{d\ZZ\times\{0\}^{n-1}} & {\ZZ^n} & {(\ZZ/d\ZZ)\times \ZZ^{n-1}} & 0 \\
	\aaa\ZZ & {\ZZ^n} & {\ZZ^n/\aaa\ZZ} & 0
	\arrow[from=1-1, to=1-2]
	\arrow[from=1-2, to=1-3]
	\arrow[from=1-3, to=1-4]
	\arrow[from=2-1, to=2-2]
	\arrow[from=2-2, to=2-3]
	\arrow[from=2-3, to=2-4]
	\arrow["E"', from=1-1, to=2-1]
	\arrow["E"', from=1-2, to=2-2]
	\arrow["{\overline{E}}"', from=1-3, to=2-3]
\end{tikzcd}\end{equation}
The diagram (\ref{exseqtta}) is obtained by tensoring the diagram (\ref{nprlj}) by~$A$. Thus (\ref{exseqtta}) is commutative, its horizontal sequences are exact and its vertical arrows are isomorphisms. Moreover, the morphism of diagrams $C(A)\to C(B)$ is deduced by functoriality of the tensor product.
\end{enumerate}
\end{proof}
\subsection{}In this paragraph we prove that the isomorphisms  $E_{\Fvt}$ and $\overline E_{\Fvt}$ are continuous and that they preserve the compact open subgroups from before.
\begin{lem}\label{neis} The following claims are valid:
\begin{enumerate}
\item Let $\vMF$. The homomorphism $E_{\Fvt}:\Fvtn\to\Fvtn$ is an isomorphism of abelian topological groups. The homomorphism $\overline{E}_{\Fvt}:[\TT(d)(\Fv)]\times (\Fvt)^{n-1}\to [\TT(d)(\Fv)]$ is an isomorphism of abelian topological groups.
\item Let $\vMFz$. One has that $$E_{\Fvt}((\Ovt)^{n})=(\Ovt)^n$$and that $$\overline E_{\Fvt}([\TTd(\Ov)]\times(\Ovt)^{n-1})=[\TTa(\Ov)].$$
\end{enumerate}
\end{lem}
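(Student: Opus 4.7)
The plan is to leverage the fact that $E \in SL(n,\ZZ)$, so that $E^{-1} \in M_n(\ZZ)$ as well, and then bootstrap from the topological group structure of $\Fvt$ (respectively $\Ovt$) to the quotient.

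For part (1), I would first handle $E_{\Fvt}:\Fvtn\to\Fvtn$. Viewing this map coordinatewise, the $j$-th component sends $(x_i)_i \mapsto \prod_{i=1}^n x_i^{b_{ji}}$, which is continuous because multiplication, inversion, and integer powers are continuous operations on the locally compact abelian group $\Fvt$. The same argument applied to the integer matrix $E^{-1}$ exhibits a continuous inverse $(E^{-1})_{\Fvt}$, so $E_{\Fvt}$ is an isomorphism of abelian topological groups. To promote this to $\overline{E}_{\Fvt}$, I would use the bottom square of the diagram (\ref{exseqtta}) for $A=\Fvt$:
\[\begin{tikzcd}
\Fvtn \arrow[r,"E_{\Fvt}"] \arrow[d,"\pi_1"'] & \Fvtn \arrow[d,"\pi_2"] \\
{[\TT(d)(\Fv)]\times(\Fvt)^{n-1}} \arrow[r,"\overline{E}_{\Fvt}"'] & {[\TTa(\Fv)]}.
\end{tikzcd}\]
The vertical maps are continuous open surjective quotient homomorphisms (by \ref{qstop} and \ref{aboutopa}), and $\overline{E}_{\Fvt}$ is already an abstract group isomorphism by \ref{catear}. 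Continuity of $\overline{E}_{\Fvt}$ follows from the universal property of the quotient topology applied to $\pi_1$, using that $\pi_2\circ E_{\Fvt}$ is continuous; continuity of its inverse is obtained symmetrically by replacing $E$ with $E^{-1}$ throughout.

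For part (2), $E_{\Fvt}((\Ovt)^n) = (\Ovt)^n$ is immediate: since $E$ and $E^{-1}$ have integer entries and $\Ovt$ is a subgroup of $\Fvt$ closed under integer powers, both $E_{\Fvt}$ and $(E^{-1})_{\Fvt}$ send $(\Ovt)^n$ into itself, so restriction of $E_{\Fvt}$ yields an automorphism of $(\Ovt)^n$. For the statement about $\overline{E}_{\Fvt}$, I would apply \ref{catear} with $A = \Ovt$ to obtain an abstract group isomorphism $\overline{E}_{\Ovt}:[\TT(d)(\Ov)]\times(\Ovt)^{n-1}\xrightarrow{\sim}[\TTa(\Ov)]$; by the naturality clause of \ref{catear} applied to the inclusion $\Ovt\hookrightarrow\Fvt$ (and the fact, via \ref{identofov}, that $[\TTa(\Ov)]$ and $[\TT(d)(\Ov)]$ embed in $[\TTa(\Fv)]$ and $[\TT(d)(\Fv)]$ respectively), the restriction of $\overline{E}_{\Fvt}$ to the subgroup $[\TT(d)(\Ov)]\times(\Ovt)^{n-1}$ coincides with $\overline{E}_{\Ovt}$ and therefore has image exactly $[\TTa(\Ov)]$.

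There is no real obstacle here; the only subtlety worth being careful about is making sure the identifications from \ref{identofov} are used so that the subgroup $[\TTa(\Ov)]\subset[\TTa(\Fv)]$ really agrees with the image of the quotient map $\Ovtn\to\Fvtn/(\Fvt)_{\aaa}$, which is what the naturality of (\ref{exseqtta}) requires.
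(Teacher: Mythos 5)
Your proposal is correct and follows essentially the same route as the paper's own proof: coordinatewise continuity (and the fact that $E^{-1}$ also has integer entries) gives the topological isomorphism $E_{\Fvt}$, which then descends through the quotient maps to give $\overline{E}_{\Fvt}$; and the integrality of $E^{\pm 1}$ immediately gives $E_{\Fvt}(\Ovtn)=\Ovtn$. The only cosmetic difference is in the last step of part (2): where the paper computes $\overline{E}_{\Fvt}([\TT(d)(\Ov)]\times(\Ovt)^{n-1})$ directly by pushing $\Ovtn$ through the commuting square, you invoke the naturality of the diagram $C(A)$ in \ref{catear} under $\Ovt\hookrightarrow\Fvt$ (together with the injectivity from \ref{identofov}/\ref{fvtaovta}) — this is the same diagram chase packaged slightly more abstractly, and it correctly identifies the restriction of $\overline{E}_{\Fvt}$ with the isomorphism $\overline{E}_{\Ovt}$.
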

\begin{proof}
\begin{enumerate}
\item We have seen in \ref{catear} that the homomorphisms $E_{\Fvt}$ and $\overline{E}_{\Fvt}$ are isomorphisms of abelian groups. The map $E_{\Fvt}:\Fvtn\to\Fvtn$ is given by $$\xxx\mapsto  \big(\prod _{i=1}^nx^{b_{ji}}_i\big)_j$$ and is continuous and open. It follows that $E_{\Fvt}$ is an isomorphism of topological groups. Moreover, by \ref{catear}, the map $\overline {E}_{\Fvt}$ is the induced map from $(\Fvt)_d\times\{1\}^{n-1}$-invariant map $$\Fvtn\xrightarrow{E_{\Fvt}}\Fvtn\to[\TTa(\Fv)],$$ thus is continuous and open. It follows that $\overline{E}_{\Fvt}$ is an isomorphism of topological groups.
\item Let $\vMFz$ and let $\xxx\in(\Ovt)^n$. The~$j$-th coordinate of $E_{\Fvt}(\xxx)$ is equal to $\prod _{i=1}^nx^{b_{ji}}_i$ and is an element of $\Ovt$, thus $E_{\Fvt}(\xxx)$ is an element of $(\Ovt)^n$. We have established $E_{\Fvt}(\Ovtn)\subset\Ovtn$. Let $E^{-1}=(c_{ji})_{ji}$. For $\yyy\in\Fvtn$, one has that $E^{-1}_{\Fvt}(\yyy)=\prodjn y^{b_{ji}}_i$ and it follows that $E^{-1}(\yyy)\in(\Ovt)^n$, hence $E^{-1}((\Ovt)^n)\subset(\Ovt)^n$. We deduce $E_{\Fvt}((\Ovt)^n)=(\Ovt)^n$. Now one has that \begin{align*}\overline{E}_{\Fvt}([\TT(d)(\Ov)]\times (\Ovt)^{n-1})&=\overline{E}_{\Fvt}(q^d_v\times\Id_{(\Fvt)^{n-1}}((\Ovt)^n))\\&=q^\aaa_v(E_{\Fvt}((\Ovt)^{n-1}))\\&=\qav((\Ovt)^n)\\&=[\TTa(\Ov)].\end{align*}
\end{enumerate}
\end{proof}
\subsection{} We will now use properties given in \ref{neis} to define maps $(\AAFt)^n\to (\AAFt)^n$ and $[\TT(d)(\AAF)]\times (\AAFt)^{n-1}\to[\TTa(\AAF)]$.

\begin{lem} \label{razdob} The following claims are valid
 \begin{enumerate}
\item The map $\big(\prod_{\vMF}E_{\Fvt}\big):(\prod_v(\Fvt)^n)\to\prod_v(\Fvt)^n$ is precisely the map $E\otimes_{\ZZ}(\prod_{v}\Fvt)$. The map $\big(\prod_{\vMF}\overline{E}_{\Fvt}\big):(\prod_v([\TT(d)(\Fv)]\times(\Fvt)^{n-1}))\to\prod_v[\TTa(\Fv)]$ is precisely the map $\overline{E}\otimes_{\ZZ}(\prod_{v}\Fvt)$.
 \item One has that $\big(\prod_{\vMF}E_{\Fvt}\big)((\AAFt)^n)\subset(\AAFt)^n$. Moreover, the induced homomorphism $$E_{\AAFt}:(\AAFt)^n\to(\AAFt)^n $$ is an isomorphism of topological abelian groups. 
  \item One has that $\big(\prod _{\vMF}\overline E_{\Fvt}\big)([\TTd(\AAF)]\times (\AAF^{\times})^{n-1})\subset [\TTa(\AAF)].$ Moreover, the induced homomorphism $$\overline{E}_{\AAFt}:([\TTd(\AAF)]\times (\AAF^{\times})^{n-1})\to [\TTa(\AAF)]$$ is an isomorphism of abelian topological groups. 
 \end{enumerate}
 \end{lem}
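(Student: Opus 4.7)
The plan is to handle the three parts in sequence, each reducing to the restricted-product machinery of Lemma \ref{mimip} combined with the pointwise topological and integral-structure properties already established in Lemma \ref{neis}. None of the three parts should require serious new input; the work is largely bookkeeping.

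For part (1), this is essentially a formal unwinding. At each place $v$, the endomorphism $E_{\Fvt}$ is given by the explicit formula $\xxx \mapsto (\prod_{i=1}^n x_i^{b_{ji}})_j$, i.e. the matrix $E$ applied coordinatewise to $\Fvt$. Taking the direct product of these over all $v$ is plainly the same as applying the matrix $E$ coordinatewise to $\prod_v \Fvt$, which is exactly $E \otimes_\ZZ (\prod_v \Fvt)$; the argument for $\overline{E}_{\Fvt}$ is identical by construction of $\overline{E}$ in Lemma \ref{catear}.

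For part (2), I would invoke Lemma \ref{mimip}(6) with $G_v = H_v = \Fvtn$, with $G'_v = H'_v = (\Ovt)^n$ for $v \in M_F^0$, and with $\phi_v = E_{\Fvt}$. Lemma \ref{neis}(1) gives that each $\phi_v$ is an isomorphism of topological abelian groups, and Lemma \ref{neis}(2) gives $\phi_v(G'_v) = H'_v$ for every $v \in M_F^0$, so the hypotheses of Lemma \ref{mimip}(6) are met. One deduces that $\prod_v E_{\Fvt}$ restricts to an isomorphism of topological abelian groups between the restricted product of the $\Fvtn$ with respect to the $(\Ovt)^n$ and itself. It remains to identify this restricted product with $(\AAFt)^n$: this is the standard compatibility of restricted products with finite products, which is immediate once one writes out a basis of open sets. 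The induced map $E_{\AAFt}$ is then the desired isomorphism. Part (3) proceeds identically, with $G_v = [\TTd(\Fv)] \times (\Fvt)^{n-1}$, $H_v = [\TTa(\Fv)]$, $G'_v = [\TTd(\Ov)] \times (\Ovt)^{n-1}$, $H'_v = [\TTa(\Ov)]$, and $\phi_v = \overline{E}_{\Fvt}$; Lemma \ref{neis} supplies both the topological isomorphism at each place and the compatibility with the distinguished compact open subgroups, and Lemma \ref{mimip}(6) then yields the global isomorphism, once the restricted product on the left is identified with $[\TTd(\AAF)] \times (\AAFt)^{n-1}$.

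The only mildly subtle point, and essentially the only step requiring any care, is the identification of the restricted product $\sideset{}{'}\prod_v \bigl([\TTd(\Fv)] \times (\Fvt)^{n-1}\bigr)$ with the direct product $[\TTd(\AAF)] \times (\AAFt)^{n-1}$. I would verify this by hand: a net belongs to the restricted product on the left if and only if almost all its components lie in $[\TTd(\Ov)] \times (\Ovt)^{n-1}$, which happens if and only if the first coordinate lies in $[\TTd(\AAF)]$ and each of the remaining $n-1$ coordinates lies in $\AAFt$. The product topologies match by comparing basic open sets. Once this identification is in place, there is no remaining obstacle, and the lemma follows.
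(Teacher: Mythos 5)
Your proposal is correct and takes essentially the same approach as the paper: both proofs rest on Lemma \ref{neis} for the local topological isomorphisms and preservation of the compact open subgroups, and on Lemma \ref{mimip} (parts (3) and (6)) to pass to the restricted product; your part (1) is proved by the concrete coordinatewise formula, while the paper phrases it as tensoring with a finitely presented $\ZZ$-module commuting with direct products, but the content is the same. The one place you are more explicit than the paper is in spelling out the identification of $\sideset{}{'}\prod_v\bigl([\TTd(\Fv)]\times(\Fvt)^{n-1}\bigr)$ with $[\TTd(\AAF)]\times(\AAFt)^{n-1}$, which the paper uses tacitly.
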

 \begin{proof}
\begin{enumerate}
\item One has that $E$ and $\overline E$ are homomorphisms of finitely presented $\ZZ$-modules, thus tensoring by $E$ and $\overline E$ commutes with the direct products. The claim follows
\item For every $\vMFz$, Lemma \ref{neis} gives that $ {E}_{\Fvt}$ is a continuous homomorphism which satisfies that ${E}_{\Fvt}((\Ovt)^{n})=(\Ovt)^n,$ and hence by \ref{mimip}, it follows that $$\big(\prod_{\vMF} E_{\Fvt}\big)((\AAFt)^n)\subset (\AAFt)^n.$$ Moreover, by \ref{neis} the maps $E_{\Fvt}$ are isomorphisms of abelian topological groups, thus by \ref{mimip} the homomorphism $\big(\prod_{\vMF} {E}_{\Fvt}\big):(\AAFt)^n\to(\AAFt)^n$ is an isomorphism of abelian topological groups.  
\item For every $\vMFz$, Lemma \ref{neis} gives that $\overline {E}_{\Fvt}$ is a continuous homomorphism which satisfies that $\overline{E}_{\Fvt}([\TT(d)(\Ov)]\times(\Ovt)^{n-1})=[\TTa(\Ov)],$ and hence by \ref{mimip}, it follows that $$\big(\prod_{\vMF}\overline E_{\Fvt}\big)([\TT(d)(\AAF)]\times (\AAFt)^{n-1}\big)\subset [\TTa(\AAF)].$$ Moreover, by \ref{neis} the maps $\overline E_{\Fvt}$ are isomorphisms of abelian topological groups, thus by \ref{mimip} the homomorphism $\big(\prod_{\vMF}\overline {E}_{\Fvt}\big):[\TTd(\AAF)]\times (\AAF^{\times})^{n-1}\to [\TTa(\AAF)]$ is an isomorphism of abelian topological groups.  
\end{enumerate} 
\end{proof}
\begin{lem} The following claims are valid:
\begin{enumerate}
\item One has that  $E_{\AAFt}((\AAF^1)^n)=(\AAF^1)^n.$ 
\item One has that $\overline{E}_{\AAFt}([\TTT(d)(\AAF)]_1\times(\AAF^1)^{n-1})=[\TTa(\AAF)]_1.$
\end{enumerate}
\end{lem}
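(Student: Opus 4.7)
The plan is to treat the two parts separately: part~(1) is a direct computation using the product formula applied to a monomial change of coordinates, and part~(2) follows by a diagram chase from the commutative square $C(\AAFt)$ provided by Lemma \ref{catear}.

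For part~(1), I would write $E=(b_{ji})\in SL(n,\ZZ)$ explicitly, so that $E_{\AAFt}$ sends $\xxx\in(\AAFt)^n$ to the vector whose $j$-th coordinate is $\prod_{i=1}^n x_i^{b_{ji}}$. Because the exponents $b_{ji}$ are integers, the adelic absolute value of this coordinate is $\prod_{i=1}^n |x_i|_{\AAF}^{b_{ji}}$, which equals~$1$ as soon as $|x_i|_{\AAF}=1$ for every $i$. Hence $E_{\AAFt}((\AAF^1)^n)\subset(\AAF^1)^n$. Since $E\in SL(n,\ZZ)$, its inverse $E^{-1}$ also lies in $SL(n,\ZZ)$, so the same argument applied to $(E_{\AAFt})^{-1}=(E^{-1})_{\AAFt}$ yields the reverse inclusion, and equality follows.

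For part~(2), I would invoke the commutative square $C(\AAFt)$ of Lemma \ref{catear}, namely
\[
\begin{tikzcd}
(\AAFt)^n \arrow[r, "E_{\AAFt}"] \arrow[d, "q^d_{\AAF}\times\Id"'] & (\AAFt)^n \arrow[d, "q^\aaa_{\AAF}"] \\
{[\TTT(d)(\AAF)]}\times (\AAFt)^{n-1} \arrow[r, "\overline{E}_{\AAFt}"'] & {[\TTa(\AAF)].}
\end{tikzcd}
\]
The key observation is that $[\TTT(d)(\AAF)]_1\times(\AAF^1)^{n-1}$ is precisely the image of $(\AAF^1)^n$ under the left-hand vertical arrow: one inclusion is immediate from the definition of $[\TTT(d)(\AAF)]_1$, and the other follows because any $(\overline y_1, y_2,\dots,y_n)$ in the product lifts to $(y_1,y_2,\dots,y_n)\in(\AAF^1)^n$ by choosing any $y_1\in\AAF^1$ with $q^d_{\AAF}(y_1)=\overline y_1$ (such a lift exists by the very definition of $[\TTT(d)(\AAF)]_1$ as the image of $\AAF^1$). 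Commutativity of the square together with part~(1) then yields
\[
\overline E_{\AAFt}\bigl([\TTT(d)(\AAF)]_1\times(\AAF^1)^{n-1}\bigr) = q^\aaa_{\AAF}\bigl(E_{\AAFt}((\AAF^1)^n)\bigr) = q^\aaa_{\AAF}((\AAF^1)^n) = [\TTa(\AAF)]_1,
\]
where the last equality is the very definition of $[\TTa(\AAF)]_1$.

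Neither step presents a serious obstacle: part~(1) amounts to the observation that monomial changes of variables with integer exponents preserve the condition of having trivial adelic absolute value in each coordinate, while part~(2) is a purely formal diagram chase once part~(1) is in hand. The only point that deserves care is the lifting statement in part~(2), but this is immediate from the definition of $[\TTT(d)(\AAF)]_1$ as an image.
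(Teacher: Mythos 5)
Your proposal is correct and follows essentially the same route as the paper: part (1) uses the product formula on the monomial coordinates (applied to both $E$ and $E^{-1}$, which both have integer entries since $E\in SL(n,\ZZ)$), and part (2) chases the commutative square from Lemma~\ref{catear} after identifying $[\TTT(d)(\AAF)]_1\times(\AAF^1)^{n-1}$ and $[\TTa(\AAF)]_1$ as the images of $(\AAF^1)^n$ under the vertical quotient maps. Your explicit lifting remark in part~(2) just unpacks what the paper states as ``images of $(\AAF^1)^n$ under the surjective maps,'' so there is no substantive difference.
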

\begin{proof}
\begin{enumerate}
\item Let $(x_{jv})_v\in(\AAF^1)^n$. The~$j$-th coordinate of its image under $E_{\AAFt}=\prod _{v}E_{\Fvt}$ is $(\prod _{k=1}^nx_{kv}^{b_{jk}} )_v $ and one has $$\prod _{v}\big|\prod _{k=1}^n x_{kv}^{b_{jk}}\big|_v=\prod _{k=1}^n \prod _{\vMF}|x_{kv}|_v^{b_{jk}}=1. $$ It follows that $E_{\AAFt}((\AAF^1)^n)\subset (\AAF^1)^n$. Let $E^{-1}=(c_{ji})_{ji}\in SL_n(\ZZ)$. For $j=1\doots n$, the~$j$-th coordinate of $E^{-1}_{\AAFt}(x_{jv})_v=(E^{-1}_{\Fvt}(x_{jv}))_v$ is equal to $\prod _{k=1}^n x_{kv}^{c_{jk}}$ and one has $$\prod _{\vMF}\big|\prod _{k=1}^n x_{kv}^{c_{jk}}\big|_v=\prod _{k=1}^n \prod _{\vMF}|x_{kv}|_v^{c_{jk}}=1$$ 
It follows that $E^{-1}_{\AAFt}(\AAF^1)^n)\subset (\AAF^1)^n.$ We deduce that $E_{\AAFt}((\AAF^1)^n)= (\AAF^1)^n$.
\item The diagram 
\[\begin{tikzcd}
	{(\AAFt)^{n}} && {(\AAFt)^{n}} \\
	{[\TT(d)(\AAF)]\times (\AAFt)^{n-1}} && {[\TTa(\AAF)]}
	\arrow["{q^d_{\AAF}\times \Id_{(\AAFt)^{n-1}}}"', from=1-1, to=2-1]
	\arrow["{\overline{E}_{\AAFt}}"', from=2-1, to=2-3]
	\arrow["{E_{\AAFt}}"', from=1-1, to=1-3]
	\arrow["{q^{\aaa}_{\AAF}}"', from=1-3, to=2-3]
\end{tikzcd}\] is commutative by \ref{catear}. One has that $[\TTT ^d(\AAF)]_1\times (\AAF^1)^{n-1}$ and $[\TTa(\AAF)]_1$ are images of $(\AAF^1)^n$ in $[\TTT(d)(\AAF)]\times(\AAFt)^{n-1}$ and in $[\TTa(\AAF)]$ under the surjective maps $q^d_{\AAF}\times\Id_{(\AAFt)^{n-1}}$ and $q^{\aaa}_{\AAF}$, respectively. The claim now follows from (1).
\end{enumerate}
\end{proof}
\subsection{}
We will now study the kernel and the image of the map $[\TTa(i)]$ when $n\geq 2$ and $\aaa\in\ZZ^n_{\geq 1}$.
\begin{lem} \label{cirti}
One has that \begin{align*}\overline E_{\Ft}\big(\Sh^1(F,\mu_d)\times\{0\}^{n-1}\big)&=\overline E_{\Ft}\big(\ker ([\TTT(d)(i)])\times \{0\}^{n-1}\big)\\&=\ker ([\TTa(i)])\end{align*} is finite. 
\end{lem}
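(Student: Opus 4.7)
The plan is to deduce the lemma from the functoriality of the isomorphisms constructed in Lemma \ref{catear}, applied to the inclusion $\Ft \hookrightarrow \AAFt$, together with Proposition \ref{cesusp}.

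First I would note that the first equality in the statement is purely definitional: by Proposition \ref{cesusp}, $\Sh^1(F,\mu_d) := \ker([\TTT(d)(i)])$, so it suffices to prove
\[
\overline E_{\Ft}\big(\ker([\TTT(d)(i)])\times\{1\}^{n-1}\big)=\ker([\TTa(i)])
\]
(using the multiplicative notation; the $\{0\}^{n-1}$ in the statement just denotes the neutral elements of $(\Ft)^{n-1}$). For this, the key tool is the functoriality part of Lemma \ref{catear}: the morphism of abelian groups $i\colon\Ft\to\AAFt$ induces a commutative diagram
\[\begin{tikzcd}
{[\TTT(d)(F)]\times(\Ft)^{n-1}} \arrow[r,"\overline{E}_{\Ft}","\sim"'] \arrow[d,"{[\TTT(d)(i)]\times i^{n-1}}"'] & {[\TTa(F)]} \arrow[d,"{[\TTa(i)]}"] \\
{[\TTT(d)(\AAF)]\times(\AAFt)^{n-1}} \arrow[r,"\overline{E}_{\AAFt}","\sim"'] & {[\TTa(\AAF)],}
\end{tikzcd}\]
where both horizontal arrows are isomorphisms by Lemmas \ref{catear} and \ref{razdob}.

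Second, I would identify the kernel of the left vertical arrow. Since $F^\times$ embeds diagonally into $\AAFt$ (as recalled after \ref{mapqaf}), the homomorphism $i\colon\Ft\to\AAFt$ is injective, so $i^{n-1}$ is injective on $(\Ft)^{n-1}$. Consequently the kernel of $[\TTT(d)(i)]\times i^{n-1}$ equals $\ker([\TTT(d)(i)])\times\{1\}^{n-1}=\Sh^1(F,\mu_d)\times\{1\}^{n-1}$. Pushing this through the horizontal isomorphism $\overline{E}_{\Ft}$ gives exactly the desired description of $\ker([\TTa(i)])$.

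Finally, for finiteness, I would split into two cases. If $d=1$ then $[\TTT(d)(F)]$ is the trivial group (as $(\Ft)_1=\Ft$), so $\Sh^1(F,\mu_1)=0$ and the kernel is trivial. If $d\geq 2$, I apply part (2) of Proposition \ref{cesusp} directly to $a=d$ to conclude that $\Sh^1(F,\mu_d)$ is finite, hence so is its image under the isomorphism $\overline{E}_{\Ft}$. The only nontrivial input is \ref{cesusp}(2), which has already been quoted from work of Česnavičius; there is no further obstacle, since once the commutative diagram above is in place, the whole argument is formal.
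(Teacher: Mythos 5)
Your proof is correct and takes essentially the same approach as the paper: the same commutative diagram from \ref{catear} (you write it transposed, but it is the same square), the same identification $\ker([\TTT(d)(i)]\times i^{n-1})=\ker([\TTT(d)(i)])\times\{0\}^{n-1}$ via injectivity of $i^{n-1}$, and finiteness via $\Sh^1(F,\mu_d)$. The only cosmetic divergence is that the paper cites Sansuc directly for finiteness of the Tate--Shafarevich group with no case split, whereas you invoke \ref{cesusp}(2) and carefully handle $d=1$ separately; your version is, if anything, more scrupulous given that \ref{cesusp} is stated only for $a\geq 2$.
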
 
\begin{proof}
\item By \ref{catear}, the following diagram is commutative and the vertical arrows are isomorphisms:
\[\begin{tikzcd}
	{[\TTd(F)]\times (F^{\times})^{n-1}} & {\hspace{0.2cm}[\TTd(\AAF)]\times (\AAFt)^{n-1}} \\
	{[\TTa(F)]} & {[\TTa(\AAF)].}
	\arrow["{[\TTd(i)]\times i^{n-1}}"', from=1-1, to=1-2]
	\arrow["{[\TTa(i)]}", from=2-1, to=2-2]
	\arrow["{\overline E_{\Ft}}"', from=1-1, to=2-1]
	\arrow["{E_{\AAFt}}", from=1-2, to=2-2]
\end{tikzcd}\]
We deduce that $$\overline E_{\Ft}(\ker([\TTT(d)(i)]\times i^{n-1}])=\ker([\TTa(i)]).$$ The map $i^{n-1}: (F^{\times})^{n-1}\to (\AAFj)^{n-1}$ given by the diagonal inclusion is injective, hence the kernel of $ [\TTT(d)(i)]\times i^{n-1}$ is precisely the group $\ker([\TTT(d)(i)])\times \{0\}^{n-1}=\Sh^1(F,\mu_d)\times \{0\}^{n-1}$. Finiteness of the Tate-Shafarevich group has been established in \cite[Lemma 1.2]{Sansuc}. The claim is proven
\end{proof}
\begin{prop}\label{cesres}
The image $[\TTa(F)]$ under $[\TTa(i)]$ lies in $[\TTa(\AAF)]_1$. Moreover, the subgroup $[\TTa(i)]([\TTa(F)])\subset[\TTa(\AAF)]_1$ is discrete, closed and cocompact.
\end{prop}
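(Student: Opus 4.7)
The plan is to reduce the statement to the $n=1$ case (Proposition \ref{cesusp}) together with the classical discreteness and cocompactness of $\Ft \subset \AAF^1$, by exploiting the isomorphism $\overline{E}$ of Lemma \ref{catear} and its adelic counterpart.

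First I would verify the inclusion $[\TTa(i)]([\TTa(F)]) \subset [\TTa(\AAF)]_1$. Picking a representative $\xxx\in (\Ft)^n$ of any class in $[\TTa(F)]$, the product formula gives $\prod_{v\in M_F}|x_j|_v = 1$ for every $j$, so $i^n(\xxx)\in (\AAF^1)^n$, and therefore $[\TTa(i)](x) = q^\aaa_{\AAF}(i^n(\xxx)) \in q^\aaa_{\AAF}((\AAF^1)^n) = [\TTa(\AAF)]_1$.

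Next, I would invoke the commutative square of Lemma \ref{cirti} (whose right vertical arrow should be $\overline{E}_{\AAFt}$ rather than $E_{\AAFt}$). By Step~1 and the compatibility $\overline{E}_{\AAFt}([\TTd(\AAF)]_1\times (\AAF^1)^{n-1}) = [\TTa(\AAF)]_1$ proved in the last lemma before the statement, this square restricts to a commutative square with top row $[\TTd(i)]\times i^{n-1}\colon [\TTd(F)]\times (\Ft)^{n-1}\to [\TTd(\AAF)]_1\times (\AAF^1)^{n-1}$, bottom row $[\TTa(i)]\colon [\TTa(F)]\to[\TTa(\AAF)]_1$, and both vertical arrows topological isomorphisms. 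It therefore suffices to show that the image of $[\TTd(i)]\times i^{n-1}$ is discrete, closed and cocompact in $[\TTd(\AAF)]_1\times (\AAF^1)^{n-1}$.

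This image decomposes as a direct product $[\TTd(i)]([\TTd(F)]) \times i(\Ft)^{n-1}$. For the first factor, the case $d=1$ is trivial (both source and target reduce to a point), while for $d\geq 2$ Proposition \ref{cesusp} supplies the three properties inside $[\TTd(\AAF)]$, which coincides with $[\TTd(\AAF)]_1$ by Lemma \ref{ljweq}. For the second factor, the discreteness, closedness and cocompactness of $i(\Ft)\subset\AAF^1$ is the classical statement that the norm-one id\`ele class group $\AAF^1/\Ft$ is compact (Dirichlet's unit theorem together with finiteness of the class number, combined with the fact that $\Ft$ is discrete in $\AAFt$, hence in the closed subgroup $\AAF^1$). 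Since in a finite product of locally compact abelian groups the product of discrete (respectively closed, cocompact) subgroups is again discrete (respectively closed, cocompact), the conclusion follows.

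The main obstacle is purely bookkeeping in the reduction step: one has to track that the $\ZZ$-linear isomorphism $\overline E$ really does respect the norm-one subgroups on both the rational and the adelic sides. All the genuinely arithmetic input, namely compactness of id\`ele class groups of $F$ and of $[\TT(d)(\AAF)]/[\TT(d)(i)]([\TT(d)(F)])$, is already packaged in \ref{cesusp} and in the classical case, so after the reduction there is essentially nothing more to prove.
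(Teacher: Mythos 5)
Your proposal is correct and follows essentially the same route as the paper: establish the inclusion via the product formula, transport the problem through $\overline{E}_{\AAFt}$ to $[\TTd(\AAF)]_1\times(\AAF^1)^{n-1}$, then cite Proposition \ref{cesusp}, Lemma \ref{ljweq}, and the classical cocompactness of $\Ft$ in $\AAF^1$. Your explicit treatment of the $d=1$ case is a slight improvement in rigour, since \ref{cesusp} and \ref{ljweq} are stated only for $a\geq 2$, and you correctly flag the $\overline{E}_{\AAFt}$ versus $E_{\AAFt}$ labelling slip in Lemma \ref{cirti}.
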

\begin{proof}
The following diagram is commutative by the definition of $[\TTa(i)(F)]$:
\[\begin{tikzcd}
	{(\Ft)^n} & {(\AAFt)^n} \\
	{[\TTa(F)]} & {[\TTa(\AAF)]}
	\arrow["{i^n}", from=1-1, to=1-2]
	\arrow["{q^{\aaa}_{\AAF}}", from=1-2, to=2-2]
	\arrow["{[q^{\aaa}(F)]}"', from=1-1, to=2-1]
	\arrow["{[\TTa(i)]}"', from=2-1, to=2-2]
\end{tikzcd}\]
The image under $i^n$ of $\Ftn$ is contained in $(\AAF^1)^n$. We deduce that $$[\TTa(i)]([\TTa(F)])=(q^\aaa_{\AAF}\circ i^n)(\Ftn)\subset q^{\aaa}_{\AAF}(((\AAF)^1)^n)=[\TTa(\AAF)]_1.$$ The map $\overline{E}_{\AAFt}$ is an isomorphism of abelian topological groups. Thus the subgroup $[\TTa(i)]([\TTa(F)])=E_{\AAFt}^{-1}(([\TTd(i)]\times i^{n-1})([\TTd(F)]\times(\Ft)^{n-1}))$ is discrete, closed and cocompact in $[\TTa(\AAF)]_1=E^{-1}_{\AAFt}([\TTd(\AAF)]_1\times (\AAF^1)^{n-1})$ if and only the subgroup $$[\TTd(i)]\times i^{n-1})([\TTd(F)]\times(\Ft)^{n-1})\subset [\TTd(\AAF)]_1\times (\AAF^1)^{n-1} $$is discrete, closed and cocompact. The subgroup $(F^{\times})^n\subset(\AAF^1)^n$ is discrete, closed and cocompact by \cite[Theorem 5-15]{Ramakrishnan}. Recall that by \ref{ljweq} one has that $[\TT ^d(\AAF)]_1=[\TT ^d(\AAF)]$. Now, by \ref{cesusp}, one has that  $[\TTd(i)]([\TTd(F)])\subset[\TT ^d(\AAF)]_1=[\TT ^d (\AAF)]$ is discrete, closed and cocompact. We deduce that $$[\TT(d)(F)]\times (F^\times)^{n-1}\subset [\TT(d)(\AAF)]_1\times ((\AAF)^1)^n$$ is discrete, closed and compact. The statement follows. 
\end{proof}
\subsection{} In this paragraph we establish some more basic properties of $[\TTa(\AAF)]$.
\begin{lem}\label{countableatinfty}
For $\vMF$, the groups $\Fvt$ are countable at infinity (i.e. are countable unions of compact subsets). The groups $\AAFt,$ $\AAF^1$, $[\TTa(\AAF)]$ and $[\TTa(\AAF)]_1$ are countable at infinity.
\end{lem}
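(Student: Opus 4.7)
The plan is to handle the four groups in order of increasing complexity, starting from the local factors and building up via the restricted product, then transferring $\sigma$-compactness through the continuous surjections to the stacky objects.

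First I would establish the local claim for $\Fvt$. If $v\in M_F^0$, the valuation gives a decomposition $\Fvt=\bigsqcup_{n\in\ZZ}\pi_v^n\Ov^\times$ into countably many translates of the compact group $\Ov^\times$, so $\Fvt$ is $\sigma$-compact. If $v\in M_F^\infty$, then $\Fvt$ is $\RR^\times$ or $\CC^\times$, both of which are well-known to be $\sigma$-compact (for instance, $\RR^\times=\bigcup_{k\geq 1}\{x : k^{-1}\leq |x|\leq k\}$).

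For $\AAFt$, I would write it as a countable ascending union. The set $M_F$ is countable, hence so is the collection of finite subsets $S\subset M_F$ containing $M_F^\infty$. For each such $S$, the ``box''
\[
A_S:=\prod_{v\in S}\Fvt\times\prod_{v\notin S}\Ov^\times
\]
is a product of finitely many $\sigma$-compact spaces with one compact space (compact by Tychonoff, since each $\Ov^\times$ is compact), hence is $\sigma$-compact. By the definition of the restricted product topology, $\AAFt=\bigcup_S A_S$ is a countable union of $\sigma$-compact sets, hence $\sigma$-compact. Taking $n$-fold products preserves $\sigma$-compactness, so $(\AAFt)^n$ is $\sigma$-compact as well.

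For the remaining groups I would use the following two stability facts: a closed subset of a Hausdorff $\sigma$-compact space is $\sigma$-compact, and the continuous image of a $\sigma$-compact space is $\sigma$-compact. The group $\AAF^1$ is the kernel of the continuous idele-norm homomorphism $\AAFt\to\RR_{>0}$, hence closed in the Hausdorff space $\AAFt$, so it inherits $\sigma$-compactness. Finally, by Lemma \ref{mapqaf}, the map $q^\aaa_{\AAF}:(\AAFt)^n\to[\TTa(\AAF)]$ is continuous and surjective, giving $\sigma$-compactness of $[\TTa(\AAF)]$, and by definition $[\TTa(\AAF)]_1=q^\aaa_{\AAF}((\AAF^1)^n)$ is a continuous image of the $\sigma$-compact space $(\AAF^1)^n$, so it too is $\sigma$-compact. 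There is no serious obstacle here; the only point requiring some care is being careful that the restricted product topology really is the ascending union topology on the boxes $A_S$, which is immediate from the definition.
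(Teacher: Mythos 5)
Your proof is correct and follows essentially the same route as the paper: local $\sigma$-compactness, then a countable union of boxes for $\AAFt$, then closed subgroups, finite products, and continuous surjections for the remaining groups. Your local argument (valuation shells $\piv^n\Ovt$ for finite $v$, annuli for infinite $v$) is if anything cleaner than the paper's covering by rational balls, but the structure of the argument is the same.
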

\begin{proof}
For $\vMF-\vMFC$, one has that $\Fvt$ is the countable union of 
of compact balls: $$\Fvt=\bigcup_{q\in\QQ_{\neq 0}}B\big(q,\frac{|q|_v}2\big)$$ and for $v\in M_F^{\CC}$ one has that $\Fvt=\CC-\{0\}$ is the countable union of compact balls $$\Fvt=\bigcup_{q\in\QQ_{\neq 0}}B\big(q,\frac{|q|}2\big).$$
For finite subset $S\subset M_F,$ it follows that $\prod_{v\in S}\Fvt$ is countable at infinity. It follows that the group $\AAFt,$ which writes as the countable union $$\AAFt=\bigcup_{\substack{M_F^\infty\subset S\subset M_F\\S\text{ finite}}}\prod_{v\in S}\Fvt\times\prod_{v\in M_F-S}\Ovt,$$ is countable at infinity. The group $\AAF^1$ is countable at infinity as it is a closed subgroup of $\AAFt$. The group $(\AAFt)^n$ and the group $(\AAF^1)^n$ are countable at infinity, as finite products of groups which are countable at infinity. Finally, the groups $[\TTa(\AAF)]$ and $[\TTa(\AAF)]_1$ are countable at infinity, as they admit surjective continuous maps from groups which are countable at infinity.
\end{proof}
\begin{lem}\label{properaaftn}
The maps $\AAFt\to\AAFtn$, $\AAFj\to (\AAFj)^n$ and $\Rgz\to \RR^n_{>0}$ given by $t\mapsto (t^{a_j})_j$ are proper.
\end{lem}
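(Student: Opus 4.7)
The plan is to deduce the three properness statements from the already-established local properness in Lemma~\ref{aboutopa}(2), the restricted-product structure of the adeles, and, in the real case, a logarithmic change of variables. The only genuine content is in the first statement; the other two will follow by formal restriction and by a transport of structure.

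For the first map $f\colon\AAFt\to\AAFtn$, $t\mapsto(t^{a_j})_j$, the plan is to verify that the preimage of every compact set is compact. Given compact $K\subset\AAFtn$, one first notes that $K$ is contained in a ``rectangle'' of the form $\prod_{v\in S}L_v\times\prod_{v\notin S}(\Ovt)^n$ with $S\supset M_F^\infty$ finite and $L_v\subset\Fvtn$ compact, because the open sets $\prod_{v\in T}\Fvtn\times\prod_{v\notin T}(\Ovt)^n$ (for $T$ finite) form a cover of $\AAFtn$, and the projections to the factors are continuous. For $v\in S$, Lemma~\ref{aboutopa}(2) applied to $R=\Fv$ gives that the preimage $M_v\subset\Fvt$ of $L_v$ under $t\mapsto(t^{a_j})_j$ is compact. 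For $v\notin S$, one observes that if $t\in\Fvt$ satisfies $t^{a_j}\in\Ovt$ for every $j$, then $a_jv(t)=0$ for every $j$ and, since the $a_j$ are positive, this forces $v(t)=0$, i.e.\ $t\in\Ovt$. Therefore $f^{-1}(K)$ is contained in the compact set $\prod_{v\in S}M_v\times\prod_{v\notin S}\Ovt$; being also closed (as the preimage of a closed set under a continuous map into a Hausdorff space, see \ref{aboutopa}(3)), it is itself compact.

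For the second map $g\colon\AAFj\to(\AAFj)^n$, the plan is to exhibit $g$ as the restriction of $f$. Since $|t^{a_j}|_{\AAF}=|t|_{\AAF}^{a_j}$, the map $f$ sends $\AAFj$ into $(\AAFj)^n$, and $g=f|_{\AAFj}$. Because $\AAFj\subset\AAFt$ and $(\AAFj)^n\subset\AAFtn$ are closed subgroups, any compact $K\subset(\AAFj)^n$ is compact in $\AAFtn$, whence $g^{-1}(K)=f^{-1}(K)\cap\AAFj$ is the intersection of a compact set with a closed set and is therefore compact.

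For the third map $h\colon\Rgz\to\RR^n_{>0}$, the plan is to conjugate by the logarithm. The homeomorphism $\log\colon\Rgz\xrightarrow{\sim}\RR$ transports $h$ into the linear map $\RR\to\RR^n$, $s\mapsto(a_js)_j$. Since this is a closed embedding of topological groups (its image is the closed line $\RR\cdot\aaa$), it is proper, and hence so is $h$. The main bookkeeping throughout is really only in the first step, in correctly controlling the restricted-product structure; after that, the other two maps follow by soft arguments.
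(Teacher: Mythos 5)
Your argument is correct, but for the first (and only nontrivial) map it takes a genuinely different route from the paper. The paper observes that $\Gm\to\Gmn$, $t\mapsto(t^{a_j})_j$, is a proper morphism of schemes (it is a base change of the morphism $(a,p_2)$, which is proper by Lemma~\ref{aaaction}), and then invokes Conrad's result that a proper morphism of separated schemes induces a proper map on adelic points. You instead verify properness of $\AAFt\to\AAFtn$ directly from the restricted-product structure: enclose a compact $K$ in a rectangle $\prod_{v\in S}L_v\times\prod_{v\notin S}(\Ovt)^n$, use the local properness of Lemma~\ref{aboutopa}(2) at the places in $S$, and the elementary observation that at the remaining places the preimage of $(\Ovt)^n$ under $t\mapsto(t^{a_j})_j$ is $\Ovt$ (which is Lemma~\ref{fvtaovta}). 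This is more self-contained — you avoid the Conrad citation entirely at the cost of a longer but elementary computation — and both approaches are valid. The second map is handled by restriction in both proofs, and the third by the same log conjugation.

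One small inaccuracy: to argue that $f^{-1}(K)$ is closed, you only need that $K$ is closed in $\AAFtn$ (because $\AAFtn$ is Hausdorff, so compacts are closed) and that $f$ is continuous. Your parenthetical phrasing muddles this, and the citation to Lemma~\ref{aboutopa}(3) is misplaced: that part of the lemma asserts Hausdorffness of $[\PPP(\aaa)(R)]$ and $[\TTa(R)]$, not of $\AAFtn$. The fact you need — that $\AAFtn$ is locally compact Hausdorff — is standard (e.g.\ Lemma~\ref{mimip}(1) for local compactness; Hausdorffness of a restricted product of Hausdorff groups with respect to open subgroups is routine), but it should be cited correctly. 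You also leave the continuity of $f$ implicit; it follows by checking it on each $\prod_{v\in S}\Fvt\times\prod_{v\notin S}\Ovt$, where the topology is the product topology, but a reader would want you to say so since the properness criterion you are using (preimages of compacts are compact, target locally compact) presupposes continuity.
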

\begin{proof}
The morphism $$\Gm\to\Gmn\hspace{1cm}t\mapsto (t^{a_j})_j,$$ is the base change along $\Gmn\times\{\jed\}\to\Gmn\times\Gmn$ of the morphism $\Gm\times \Gmn\xrightarrow{(a,p_2)}\Gmn\times\Gmn$ which is proper by \ref{aaaction}, hence itself is proper. By
\cite[Proposition 4.4]{Conrad}, for a proper morphism of separated schemes $X\to Y$, the induced topological map $X(\AAF)\to Y(\AAF)$ is proper. We deduce that the map $\AAFt=\Gm(\AAF)\to\Gmn(\AAF)=\AAFtn$ is proper. 
One has that the preimage of $(\AAFj)^n$ under the map $\AAFt\to\AAFtn$ is $\AAFj,$ thus by \cite[Proposition 3, \no 1, \S 10, Chapter I]{TopologieGj} the map $\AAFj\to(\AAFj)^n$ is proper. Note that under the identification $\log:\RR_{>0}\xrightarrow{\sim}\RR$, the homomorphism $t\mapsto (t^{a_j})_j$ becomes $t\mapsto (a_jt)_j$. As for every~$j$ one has $a_j>0$, the latter morphism is proper. The statement is proven. 
\end{proof}
Let us write $\lvert\cdot\rvert$ for the map $\AAFt\to \RR_{>0}$ given by $\lvert(x_v)_v\rvert=\prod_{\vMF}\lvert x_v\rvert_v,$ and by $\lvert\cdot\rvert^n$ the product map $(\AAFt)^n\to \RR_{>0}^n$. We recall that in \ref{mapqaf} we have established that the map $$q^{\aaa}_{\AAF}:(\AAFt)^n\to[\TTa(\AAF)]\hspace{1cm}(\xxx_v)_v\mapsto (\qav(\xxx_v))_v$$ is open, continuous and surjective. The image of $(\AAFj)^n$ under $q^{\aaa}_{\AAF}$ we have denoted by $[\TTa(\AAF)]_1$. Let $q^{\aaa}_{\RR_{>0}}:\RR^n_{>0}\to\RR^n_{>0}/(\RR_{>0})_{\aaa}$ be the quotient map. The map \begin{equation*} q^\aaa_{\RR_{>0}}\circ\lvert\cdot\rvert^n:\AAFtn\to\RR^n_{>0}/(\RR_{>0})_\aaa 
\end{equation*} is $\AAFta$-invariant, and let \begin{equation}
\lvert\cdot\rvert_{\aaa}:[\TTa(\AAF)]\to\RR^n_{>0}/(\Rgz)_{\aaa}
\end{equation} be the induced map.
\begin{lem}\label{quotttaf}
In the commutative diagram$$\begin{tikzcd}[column sep=small]
  & \{1\} \arrow[d, ""]  & \{1\} \arrow[d, ""] & \{1\} \arrow[d, ""]  &  \\
  \{1\} \arrow[r] & (\AAFj)_{\aaa} \arrow[d, ""] \arrow[r, ""] & (\AAFt)_{\aaa}\arrow[d, ""] \arrow[r, "\lvert\cdot\rvert^n"] & (\RR^n_{>0})_{\aaa} \arrow[d, ""] \arrow[r] & \{1\} \\
  \{1\} \arrow[r] & (\AAFj)^n \arrow[d, "q^{\aaa}_{\AAF}"] \arrow[r, ""] &\AAFtn  \arrow[d, "q^{\aaa}_{\AAF}"] \arrow[r, "\lvert\cdot\rvert^n"] & \RR^n_{>0} \arrow[d, "q^{\aaa}_{\RR_{>0}}"] \arrow[r] & \{1\} \\
  \{1\} \arrow[r] &{[}\TTa(\AAF){]}_1  \arrow[d, ""] \arrow[r, ""] & {[}\TTa(\AAF){]} \arrow[d, ""] \arrow[r, "\lvert\cdot\rvert_{\aaa}"] & \RR^n_{>0}/ (\RR_{>0})_{\aaa}\arrow[d, ""] \arrow[r] & \{1\}\\
    & \{1\} & \{1\} & \{1\} & 
\end{tikzcd},$$ where the maps that are not named are either canonical inclusions or the canonical maps to singletons, all horizontal and all vertical sequences are exact.
\end{lem}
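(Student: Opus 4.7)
The plan is to check exactness at each of the six ``active'' positions separately, using two inputs: the absolute value $\lvert\cdot\rvert:\AAFt\twoheadrightarrow \Rgz$ is surjective (realized, say, by any idele supported at a single archimedean place), and Lemma~\ref{mapqaf}, which identifies $(\AAFt)_\aaa$ as the kernel of $q^\aaa_{\AAF}:\AAFtn\twoheadrightarrow[\TTa(\AAF)]$. From these one gets three ``free'' exact sequences right away: the middle row is the $n$-fold product of the defining sequence $\{1\}\to\AAFj\to\AAFt\xrightarrow{\lvert\cdot\rvert}\Rgz\to\{1\}$; the middle column is exactly Lemma~\ref{mapqaf}; the right column is the defining sequence of $\Rgz^n/(\Rgz)_\aaa$.

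For the top row, the left map is the tautological inclusion $(\AAFj)_\aaa\hookrightarrow(\AAFt)_\aaa$. For an element $(x^{a_j})_j\in(\AAFt)_\aaa$ one has $\lvert x^{a_j}\rvert=\lvert x\rvert^{a_j}=1$ for all $j$ iff $\lvert x\rvert=1$ (using $a_j>0$) iff $x\in\AAFj$, which yields exactness at $(\AAFt)_\aaa$. Surjectivity onto $(\Rgz)_\aaa$: given $t\in\Rgz$, pick $x\in\AAFt$ with $\lvert x\rvert=t$; then $(x^{a_j})_j\mapsto (t^{a_j})_j$. The left column is handled identically: surjectivity of $(\AAFj)^n\to[\TTa(\AAF)]_1$ holds by the very definition $[\TTa(\AAF)]_1=q^\aaa_{\AAF}((\AAFj)^n)$, while the kernel is $(\AAFj)^n\cap(\AAFt)_\aaa$, which by the same positivity argument equals $(\AAFj)_\aaa$.

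For the bottom row, surjectivity of $\lvert\cdot\rvert_\aaa$ descends from surjectivity of $\lvert\cdot\rvert^n$ via the quotient by the right column. For exactness at $[\TTa(\AAF)]$: if $[\xxx]\in\ker(\lvert\cdot\rvert_\aaa)$, then there exists $t\in\Rgz$ with $\lvert x_j\rvert=t^{a_j}$ for every $j$; picking $y\in\AAFt$ with $\lvert y\rvert=t^{-1}$, the element $(y^{a_j}x_j)_j$ lies in $(\AAFj)^n$ and represents the same class as $\xxx$ in $[\TTa(\AAF)]$, whence $[\xxx]\in[\TTa(\AAF)]_1$. Finally, $[\TTa(\AAF)]_1\hookrightarrow[\TTa(\AAF)]$ is injective because $[\TTa(\AAF)]_1$ was defined as a subgroup. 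No step is a real obstacle; every check reduces to the idele-existence fact invoked above. A cleaner but more abstract alternative would be to invoke the $3\times 3$ lemma for abelian groups once two rows and two columns are in place, but the direct verification above is short enough that this abstraction saves little.
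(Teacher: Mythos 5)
Your proof is correct, and the first five of the six sequences are handled exactly as in the paper: the middle row is the $n$-fold product of the defining idele-class sequence, the middle column is Lemma~\ref{mapqaf}, the right column is exact by definition, the top row is checked by the positivity argument on $a_j$, and the left column by intersecting the kernel with $(\AAFj)^n$. The one place where you diverge from the paper is the bottom row: you verify it by hand — surjectivity descends from $\lvert\cdot\rvert^n$, exactness at $[\TTa(\AAF)]$ via an explicit lift $(y^{a_j}x_j)_j$ with $\lvert y\rvert = t^{-1}$, injectivity of $[\TTa(\AAF)]_1\hookrightarrow[\TTa(\AAF)]$ by definition — whereas the paper deduces the bottom row abstractly by applying a snake-lemma/nine-lemma argument to the two upper rows once the columns are established. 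Your direct check is if anything more self-contained, since the paper's phrasing (``applying Five lemma'') is loose and a reader would have to reconstruct which homological lemma is actually being invoked; you yourself flag the abstract alternative at the end, so you clearly saw both routes. Both approaches are valid, and the extra cost of the direct verification is genuinely small here.
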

\begin{proof}
\begin{itemize}
\item One has an exact sequence $\{1\}\to\AAFj\to\AAFt\xrightarrow{\lvert\cdot\rvert}\RR_{>0}\to\{1\}$ and its~$n$-th product with itself is the second horizontal exact sequence, which is therefore exact.
\item We establish that the first horizontal sequence is exact. We establish that the map $\lvert\cdot\rvert^n|_{\AAFta}:\AAFta\to(\Rgz)_{\aaa}$ is surjective. An element in $(\Rgz)_{\aaa}$ is of the form $(r^{a_j})_j$. One can find $(x_v)_v\in\AAFt$ such that $\prod _v|x_v|_v=r$. Then one has $((x_v)_v^{a_j})\in\AAFta$ satisfies that its image under $\lvert\cdot\rvert^n$ is precisely $(r^{a_j})_j$. To establish the exactness at~$\AAFta,$ let $((y_v)_v^{a_j})\in\ker (\lvert\cdot\rvert^n)$ and we observe that one must have for $j=1\doots n$ that $\prod_{v}|y_v^{a_j}|_v=\big(\prod_{v}|y_v|_v\big)^{a_j}=1.$ As $a_j>0$, we deduce $(y_v)_v\in\AAFj$. We conclude the first horizontal sequence is exact.
\item The third vertical sequence is exact by the definition.
\item We establish that the second vertical sequence is exact. That the map $q^{\aaa}_{\AAF}$ is surjective is proven in \ref{mapqaf}. We prove that its kernel is $\AAFta$. Suppose $\xxx_v\in\ker q^{\aaa}_{\AAF}=\ker\big(\prod _{\vMF}\qav\big)$. Then for every $v\in M_F$ one has that $\xxx_v\in\ker\qav$ and for almost every~$v$, one has $\xxx_v\in\Ovtn$. Hence, for every $\vMF$, there exists $y_v\in\Fvt$ such that  for almost every~$v$ one has that $y_v\in\Ovt$ and such that for every~$v$ one has $(y^{a_j}_v)=\xxx_v$. We deduce $\xxx=(y_v^{a_j})_j\in \AAFta$. Suppose now $\zzz\in\AAFta$ and pick $\yyy\in\AAFt$ such that $(\yyy^{a_j})_j=\zzz.$ We have for every $v\in M_F$ that $\qav(\zzz_v)=\qav(\yyy_v^{a_j})=1$ and hence $q^{\aaa}_{\AAF}(\zzz)=1$. We have established that the kernel of $q^{\aaa}_{\AAF}$ is $\AAFta$ and we deduce the exactness of the second vertical sequence. 
\item The map $q^{\aaa}_{\AAF}|_{(\AAFj)^n}:(\AAFj)^n\to[\TTa(\AAF)]_1$ is surjective and its kernel is $\ker q^{\aaa}_{\AAF}|_{(\AAFj)^n}=\ker q^{\aaa}_{\AAF}\cap (\AAFj)^n=(\AAFj)_{\aaa}.$ We deduce that the first vertical sequence is exact.
\item The long exact sequence deduced from applying Five lemma on the first two horizontal sequences contains the third horizontal sequence and the statement is proven.
\end{itemize}
\end{proof}
\subsection{}
\label{hrklj} We end the chapter, by observing that as in the classical situation, the short exact sequence $$\{1\}\to [\TTa(\AAF)]_1\to [\TTa(\AAF)]\xrightarrow{\lvert\cdot\rvert_{\aaa}}(\Rgz^n/(\Rgz)_{\aaa})\to\{1\}$$ splits and we give its section.
 
The exact sequence $$1\to \AAF^1\to \AAFt\xrightarrow{\lvert\cdot\rvert}\RR_{>0}\to 1$$ admits for a section the map $$\sigma:\RR_{>0}\to\AAFt\hspace{1cm}x\mapsto ((\rho_v(x)^{\frac1{r_1+r_2}})_{\vMFi},(1)_{\vMFz}),$$ where, for $\vMFi$, we define $\rho_v:\RR_{>0}\to \Fvt $ by $x\mapsto x^{1/n_v},$ and $r_1$ and $r_2$ are the number of real and complex places of~$F$, respectively. We deduce an isomorphism :
\begin{equation}\label{identaafj}
\AAFj\times\RR_{>0}\xrightarrow{\sim}\AAFt\hspace{1cm}(\xxx,r)\mapsto \xxx\sigma(r).
\end{equation}
The map $$\lvert\cdot\rvert^n:\AAFtn\to(\RR_{>0})^n\hspace{1cm}(x_j)_j\mapsto (|x_j|)_j$$ admits a section$$\sigma^n:\RR^n_{>0}\to\AAFtn\hspace{1cm}(x_j)_j\mapsto (\sigma(x_j))_j.$$ Note that $\sigma^n((\RR_{>0})_\aaa)\subset\AAFta$ and let $\sigma^{\aaa}:\RR^n_{>0}/(\RR_{>0})_{\aaa}$ be the map induced from $(\RR_{>0})_{\aaa}$-invariant map $q^\aaa_{\AAF}\circ\sigma^n:\RR^n_{>0}\to[\TTa(\AAF)].$ The map $\sigma^{\aaa}$ is a section to the map $\lvert\cdot\rvert_{\aaa}:[\TTa(\AAF)]\to\RR^n_{>0}/(\RR_{>0})_\aaa$ and 
we deduce an isomorphism \begin{equation}\label{seulment}[\TTa(\AAF)]_1\times \RR^n_{>0}/(\RR_{>0})_{\aaa}\xrightarrow{\sim}[\TTa(\AAF)]\hspace{1cm} (\xxx,\rrr)\mapsto \xxx\sigma^{\aaa}(\rrr).\end{equation}
The image $[\TTa(i)]([\TTa(F)])$ is contained in $[\TTa(\AAF)]_1$ by Proposition \ref{cesres}. The isomorphism (\ref{seulment}) induces, hence, an identification that may be used implicitly \begin{multline}\label{silmo}
\big([\TTa(\AAF)]_1/[\TTa(i)]([\TTa(F)])\big)\times\big(\RR^n/(\Rgz)_{\aaa}\big)\\\xrightarrow{\sim}[\TTa(\AAF)]/[\TTa(i)]([\TTa(F)]).
\end{multline} 
\chapter{Quasi-toric heights}
\label{Quasi-toric heights}
In this chapter we define heights on weighted projective stacks. A height on a stack can be stable or unstable. Stable means that any two rational points of stack that are $\CC$-isomorphic have same heights. Stable heights feature a drawback: they are not ``Northcott heights", i.e. there may exist $B>0$ such that there are infinitely many rational points of the stack having the height less than $B$ (see \ref{examnotnorth} for an example). Hence, such heights cannot be used to count rational points.

We start the chapter by recalling several facts on the line bundles on the weighted projective stacks. We make a formalism of unstable metrics and unstable heights in sections \ref{metricsonstacks} and \ref{heightsonppa}. Examples of unstable heights are {\it quasi-toric} heights and {\it quasi-discriminant heights} (the latter one appears in the last chapter of this article). 

A quasi-toric height is a height arising from a model with enough integral points (see \ref{metricsinducedbymodels}) of the stack~$\oPPa$. We prove in \ref{finitenesstoric}, that they are Northcott heights. These heights will be used in the following chapters to make an estimate of Manin-Peyre for the number of rational points. The last section of the chapter is dedicated to the proof that quasi-toric heights admitting logarithmic singularities are Northcott.
\section{Line bundles on a quotient stack}
In the next paragraphs, we recall that line bundles on quotient stacks correspond to~$G$-linearizations of line bundles on presentations. We use this to determine the line bundles on $\overline{\PPP(\aaa)}$ and on~$\PPP(\aaa)$.
Let~$Z$ be a scheme.
\subsection{}
Let~$X$ be a~$Z$-algebraic stack. By a line bundle on~$X$ we mean a quasi-coherent $\OO_X$-module~$L$ for which there exists a faithfully flat~$1$-morphism of finite presentation $f:X'\to X$ and an isomorphism $f^*L\cong\OO_{X'}$.

We give another presentation of line bundles. The category of~$X$-schemes is the category the objects of which are pairs $(T,t)$, where~$T$ is a~$Z$-scheme and $t:T\to X$ is a~$1$-morphism over~$Z$. A morphism of~$X$-schemes $(T',t')\to (T,t)$ is a pair $(f,f^\#)$, where $f:T'\to T$ is a~$Z$-morphism and $f^\#:t'\xrightarrow{\sim}t\circ f$. The composition of $(g,g^\#):(T'',t'')\to (T',t')$ and  $(f,f^\#):(T',t')\to (T,t)$ is defined to be the pair consisting of $g\circ f$ and the $2$-morphism: $$t''\xrightarrow{g^\#}t'\circ g\xrightarrow{g(f^\#)}t\circ f\circ g=t\circ(f\circ g).$$ 
The big {{fppf} }site of~$X$ (see \cite[Exercise 9.F]{Olsson}) is defined to be the category of~$X$-schemes endowed with the Grothendieck topology defined by coverings: $$\{(T_i,t_i)\to Y\}_i \text{ is a covering of Y, if }\coprod_i T_i\to Y\text{ is surjective  and fppf}. $$
We write $X_{\text{fppf}}$ for this site. Then a line bundle~$L$ on~$X$ is simply an {fppf}-locally trivial quasi-coherent sheaf on $X_{\text{fppf}}$. 
\subsection{}
 We recall the definition of a linearization of a module on a scheme (see e.g. \cite[Definition 1.6, Section 3, Chapter 3]{Fogarty} for the case of line bundle).
\begin{mydef}[Stacks Project, {\cite[\href{https://stacks.math.columbia.edu/tag/03LF}{Definition 03LF}]{stacks-project}}]\label{Fog} Let~$Y$ be a~$Z$-scheme and let $a:G\times Y\to Y$ be a~$Z$-algebraic action of a~$Z$-group scheme $G=(G,m_G,e_G)$ to the left on~$Y$. Denote by $p_2:G\times Y\to Y$ the second projection.
\begin{enumerate}
\item  A~$G$-linearized quasi-coherent $\OO_Y$-module is a pair $(M,\psi)$ where $M$ is a quasi-coherent $\OO_Y$-module and $\psi:p_2^*M\xrightarrow{\sim}a^*M$ is an isomorphism of quasi-coherent $\OO_Y$-modules which satisfies the following cocycle condition $$
\begin{tikzcd}[column sep=small]
(p_2\circ(m_G\times \Id_Y))^*M=(p_2\circ p_{23})^*M\arrow[r,"p_{23}^*\psi"] \arrow[d,"(m_G\times\Id_Y)^*\psi"] & (a\circ(p_{23}))^*M=(p_2\circ(\Id_G\times a))^*M \arrow[d, "(\Id_G\times a)^*\psi"] \\
 (a(m_G\times \Id_Y))^*M\arrow[r,"="] &(a\circ(\Id_G\times a))^*M,
\end{tikzcd}$$
 where $p_{23}:G\times G\times Y\to G\times Y$ is the projection obtained by forgetting the first coordinate. We also say that $\psi$ is a~$G$-linearization of $M$.
\item A morphism $\ell:(M_1,\psi_1)\to (M_2,\psi_2)$ of~$G$-linearized quasi-coherent $\OO_Y$-modules is an isomorphism of $\OO_Y$-modules $\ell :M_1\to M_2$ such that the following diagram is commutative $$ \begin{tikzpicture}
  \matrix (m) [matrix of math nodes,row sep=3em,column sep=4em,minimum width=2em]
  {
      a^*M_1& p_2^*M_1 \\
     a^*M_2& p_2^*M_2.& \\ };
  \path[-stealth]
    (m-1-1) edge node [left] {$a^*\ell$} (m-2-1)
            edge node [above] {$\psi_1$} (m-1-2)
    (m-2-1.east|-m-2-2) edge node [below] {$\psi_2 $}
            node [above] {$$} (m-2-2)
    (m-1-2) edge node [right] {$p_2^*\ell $} (m-2-2)
            ;
\end{tikzpicture} $$
\item The tensor product of~$G$-linearized quasi-coherent $\OO_Y$-modules is defined by $$(M,\psi)\otimes (M',\psi'):=(M\otimes M',\psi\otimes\psi').$$
\item The trivial~$G$-linearized quasi-coherent $\OO_Y$-modules is the one given by the pair $(\OO_Y,\Id_{\OO_{G\times _ZY}}).$ 
\end{enumerate}
\end{mydef}
The inverse of a morphism $\ell:(M,\psi)\to (M',\psi')$ is the morphism given by the isomorphism $\ell^{-1}:M'\to M$. One verifies easily that~$G$-linearized quasi-coherent $\OO_Y$-modules and their morphisms form a category, which is a Picard category  (see \cite[Definition 1.4.2]{Dualite} )with respect to $\otimes.$ 
\subsection{}
In this paragraph we are going to discuss linearizations of the trivial line bundle. Let~$Y$ be a~$Z$-scheme endowed with a left~$Z$-action of a~$Z$-group scheme~$G$. If $\psi$ is a~$G$-linearization of the trivial line bundle on~$Y$, using that $a^*\mathcal O_Y=\mathcal O_{G\times _ZY}$ and that $p_2^*\mathcal O_Y=\mathcal O_{G\times _ZY}$, we deduce that $\psi\in\mathcal O^*_{G\times_Z Y}$. The cocycle condition translates as \begin{equation}\label{coccond}\psi(g'g,x)=\psi (g',g\cdot x)\psi (g, x) \hspace{1cm} g,g'\in G, x\in Y.\end{equation}
The following things are immediate for~$G$-linearizations of the trivial line bundle on~$Y$.
\begin{itemize}
\item A morphism $\ell:(\mathcal O_{Y},\psi_1)\to(\mathcal O_Y,\psi_2)$ of~$G$-linearizations is given by an element $\ell\in\OO_Y^*$ such that $$\ell(g\cdot x)\psi _1(g,x) =\ell(x)\psi_2 (g,x)\hspace{1cm} g\in G, x\in Y.$$
\item The trivial~$G$-linearization of the trivial line bundle is the~$G$-linearization given by $(g,x)\mapsto 1$.
\item If $\ell_1:(\mathcal O_Y,\psi_1)\to(\mathcal O_Y,\psi_2)$ and $\ell_2:(\mathcal O_Y,\psi_2)\to(\mathcal O_Y,\psi_3)$ are morphisms, the morphism $\ell_2\circ \ell_1:(\mathcal O_Y,\psi_1)\to(\mathcal O_Y,\psi_3)$ is given by  $l_2l_1\in\OO_Y^*.$
\item If $\psi_1$ are $\psi_2$ are two~$G$-linearizations of the trivial line bundle, the~$G$-linearization of the trivial line bundle $\psi_1\otimes\psi_2 $ is given by $$(g,x)\mapsto \psi_1(g,x)\psi_2(g,x).$$
\item The inverse of the isomorphism $\ell:(\mathcal O_Y,\psi_1)\to(\mathcal O_Y,\psi_2)$ is given by the element $\ell^{-1}\in\OO_Y^*$.
\end{itemize}
By abuse of the terminology, we will speak about morphisms or tensor products of~$G$-linearizations of the trivial line bundle, when if fact we mean the morphisms or tensor products of corresponding~$G$-linearized line bundles. We have that~$G$-linearizations of $\OO_Y$ form a Picard category $\sPic^G(\OO_Y)$. We let $\Pic^G(\OO_Y)$ be the abelian group formed by the isomorphism classes of objects of $\sPic^G(\OO_Y)$. Let $G^{\vee}$ be the group of characters of~$G$. One has a homomorphism $$G^{\vee}\to \Pic^G(\OO_Y)$$ given by associating to $\chi\in G^{\vee}$ the isomorphism class of the~$\Gm$-linearization of $\OO_Y$ \begin{equation}\label{pocgg}(g,x)\mapsto \chi(g).\end{equation}
\begin{lem}\label{ygmnul}
\begin{enumerate}
\item Suppose that $\Pic (Y)=0$. One has that $\Pic^G(Y)=\Pic^G(\OO_Y)$. 
\item Suppose that there are no non-constant morphisms $Y\to\Gm$. Any two isomorphic~$G$-linearizations of $\OO_Y$ are identical. The homomorphism (\ref{pocgg}) is an isomorphism.
\end{enumerate}
\end{lem}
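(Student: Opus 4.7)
For part~(1), I would exhibit the natural forgetful homomorphism $\Pic^G(\OO_Y)\to\Pic^G(Y)$ as a bijection. For surjectivity, given a class $[(L,\phi)]\in\Pic^G(Y)$, the hypothesis $\Pic(Y)=0$ lets one fix an isomorphism $\ell:L\xrightarrow{\sim}\OO_Y$, and I transport $\phi$ to a linearization of $\OO_Y$ by setting $\psi:=a^*\ell\circ\phi\circ(p_2^*\ell)^{-1}$. The cocycle identity for $\psi$ follows formally from that of $\phi$, and $\ell$ is by construction an isomorphism $(L,\phi)\xrightarrow{\sim}(\OO_Y,\psi)$ of $G$-linearized line bundles, so $[(L,\phi)]=[(\OO_Y,\psi)]$ lies in the image. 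Injectivity is tautological: a morphism $(\OO_Y,\psi_1)\to(\OO_Y,\psi_2)$ of $G$-linearized line bundles on $Y$ is, by Definition~\ref{Fog}, exactly the same datum as a morphism of $G$-linearizations of $\OO_Y$.

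For the first assertion of~(2), let $\ell:(\OO_Y,\psi_1)\to(\OO_Y,\psi_2)$ be a morphism. The explicit description preceding the lemma gives $\ell\in\OO_Y^\times=\Gm(Y)$ satisfying $\ell(g\cdot x)\psi_1(g,x)=\ell(x)\psi_2(g,x)$. The hypothesis on $Y$ forces $\ell$ to be constant, so $\ell\circ a=\ell\circ p_2$ as morphisms $G\times_Z Y\to\Gm$; substituting $\ell(g\cdot x)=\ell(x)$ yields $\psi_1=\psi_2$.

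It then remains to show that~(\ref{pocgg}) is an isomorphism. Injectivity is immediate from the previous paragraph applied to $\psi_1=(g,x)\mapsto\chi(g)$ and the trivial linearization: if they are isomorphic they are equal, forcing $\chi\equiv 1$. For surjectivity, let $\psi$ be an arbitrary $G$-linearization of $\OO_Y$, viewed as a morphism $\psi:G\times_Z Y\to\Gm$. The key step is to argue that $\psi$ factors through the first projection, i.e.\ $\psi=\phi\circ p_1$ for some $\phi:G\to\Gm$; granting this, specializing the cocycle relation~(\ref{coccond}) at $g=e$ gives $\phi(e)=1$, and at general $(g',g)$ it reads $\phi(g'g)=\phi(g')\phi(g)$, so $\phi\in G^\vee$ and $\psi$ is its image under~(\ref{pocgg}).

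The main obstacle is precisely the factorization $\psi=\phi\circ p_1$, a Rosenlicht-type statement about invertible regular functions on $G\times_Z Y$. In the principal application here, namely $G=\Gm$ and $Y=\AAA^n-\{0\}$ with $n\geq 2$ (used to compute $\Pic(\PPP(\aaa))$), it reduces to the standard computation that every invertible regular function on $\Gm\times_Z(\AAA^n-\{0\})$ has the form $c\cdot t^m$ for a unit $c$ on $Z$ and an integer $m$, the integer $m$ being precisely the character of $\Gm$ recorded by $\psi$.
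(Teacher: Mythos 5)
Your argument for part (1) and for the first assertion of part (2) matches the paper's proof essentially verbatim, so I have no comments there.

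For the surjectivity of~(\ref{pocgg}) you have introduced a gap that the hypothesis was designed to close. You identify the factorization $\psi=\phi\circ p_1$ as ``the main obstacle'' and describe it as a Rosenlicht-type statement, but then you only verify it for $G=\Gm$ acting on $Y=\AAA^n-\{0\}$ with $n\geq 2$. As stated the lemma is general, and your proof does not establish the factorization in general; the special case computation you give does not discharge it. In fact no appeal to Rosenlicht is needed: the hypothesis ``there are no non-constant morphisms $Y\to\Gm$'' is exactly the input that produces the factorization. For each fixed $g$, the restriction $\psi(g,-):Y\to\Gm$ is a morphism to $\Gm$, hence constant by hypothesis; so one may \emph{define} $\chi(g):=\psi(g,x)$ for an arbitrary $x\in Y$, the value being independent of $x$. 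This already says $\psi=\chi\circ p_1$. The cocycle relation~(\ref{coccond}) then gives $\chi(g'g)=\chi(g')\chi(g)$, so $\chi\in G^\vee$ and $\psi$ lies in the image of~(\ref{pocgg}). The lesson here is that you treated the constancy of $\psi$ in the $Y$-direction as a theorem to be proved about the specific $Y$ at hand, when in the lemma it is the standing assumption; replacing your Rosenlicht detour by the direct use of the hypothesis closes the gap and recovers the paper's argument.
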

\begin{proof}
\begin{enumerate}
\item It suffices to prove that $(L,\psi)\in\sPic^G(Y)$ is isomorphic to an element $(\OO_Y,\psi)\in\sPic^G(\OO_Y)$. As the Picard group of~$Y$ is trivial, one can find an isomorphism of line bundles $\ell:\OO_{Y}\xrightarrow{\sim} L$. One readily verifies that $\theta:(a^*\ell)^{-1}\circ\psi\circ(p_2^*\ell)$ is a linearization of $\OO_{Y}$. Now, we have an isomorphism $(\ell,(a^*\ell)^{-1}\circ\psi\circ(p_2^*\ell)):(L,\psi)\to(\OO_{Y},\theta)$. It follows that $\Pic^G(Y)=\Pic^G(\OO_Y)$.
\item An isomorphism $\ell:(\OO_Y,\psi_1)\xrightarrow{\sim}(\OO_Y,\psi_2)$ is given by an element $\ell\in\OO_Y^*$ such that $$\ell(gx)\psi_1(g,x)=\ell(x)\psi_2(g,\xxx)\hspace{1cm}\forall g\in G, \forall x\in Y.$$  As~$\ell$ is a constant morphism and $\ell(x)=\ell(gx)\neq 0$ for every $g$ and~$x$, we get that $\psi_1=\psi_2$. We also get that (\ref{pocgg}) is injective. Let us prove the second claim. As there are no non-constant morphisms $Y\to\Gm$, for fixed $g$, the morphism $\psi(g,-):Y\to\Gm$ is constant. Set $\chi (g):=\psi(g,x)$ for some $x\in Y$. The cocycle condition \ref{coccond} gives that $\chi(g'g)=\chi(g')\chi(g)$ for every $g,g'\in G$, i.e. $\chi$ is a character of~$G$. It follows that (\ref{pocgg}) is surjective, and hence is an isomorphism.
\end{enumerate}
\end{proof}
\subsection{}\label{defoflift}
Let now~$Y$ be a~$Z$-scheme and~$G$ a flat, locally of finite presentation~$Z$-algebraic group scheme. Suppose we are given an action $a:G\times Y\to Y$.  We are going to recall that the category of line bundles on the quotient stack $Y/G$ is equivalent to the category of line bundles on~$Y$ endowed with a~$G$-linearization. 

Let $q:Y\to Y/G$ be the quotient~$1$-morphism. Let $M$ be a $\mathcal O_{Y/G}$-module. Note that the pullback module $q^*M$ is~$G$-linearized as follows. Let $t:q\circ a\xrightarrow{\sim}q\circ pr_2$ be a $2$-morphism making the diagram \[\begin{tikzcd}
	G\times _ZY & Y \\
	Y & Y/G
	\arrow["{pr_2}"', from=1-1, to=2-1]
	\arrow["{a}", from=1-1, to=1-2]
	\arrow["{q}"', from=1-2, to=2-2]
	\arrow["q"', from=2-1, to=2-2]
\end{tikzcd}\]
$2$-commutative. We have an isomorphism $$pr_2^*(q^*M)=(q\circ pr_2)^*M\xrightarrow{t^*M}(q\circ a)^*M=a^*(q^*M),$$
where $t^*M$ is the isomorphism given by \cite[\href{https://stacks.math.columbia.edu/tag/06WK}{Lemma 06WK}]{stacks-project}. We omit the proof that the cocycle condition from \ref{Fog} is satisfied. If $\ell:M\to~M'$ is a morphism of $\OO_{Y/G}$-modules, then a morphism of~$G$-linearized $\OO_Y$-modules is provided by $q^*\ell.$ We have, hence, a functor $\FFF$ from the category of $\OO_{Y/G}$-modules to the category of~$G$-linearized $\OO_Y$-modules given by $\FFF:L\mapsto (M,t^*M)$.  One can verify that $t^*(M\otimes M')=t^*M\otimes t^*M'$ and we deduce that the functor is an additive functor. 
According to \cite[\href{https://stacks.math.columbia.edu/tag/06WT}{Proposition 06WT}]{stacks-project}, $\FFF$ is an equivalence of categories. We consider the restriction of $\FFF$ on the category of the line bundles on~$Y$. It is immediate that if~$L$ is a line bundle on $Y/G$ then $\FFF(Y)$ is a~$G$-linearized line bundle on~$Y$.
\begin{prop}\label{gpic}
The above functor $\FFF$ induces an additive (\cite[Definition 1.4.2]{Dualite}) equivalence of the Picard category $\sPic({Y/G})$ of the line bundles on $Y/G$ and the Picard category $\sPic^G(Y)$ of~$G$-linearized line bundles on~$Y$.
\end{prop}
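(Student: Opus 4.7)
The plan is to exploit the equivalence already cited from \cite[\href{https://stacks.math.columbia.edu/tag/06WT}{Proposition 06WT}]{stacks-project}: the functor $\FFF$ is an additive equivalence between the category of quasi-coherent $\OO_{Y/G}$-modules and the category of $G$-linearized quasi-coherent $\OO_Y$-modules. What is left is to check that $\FFF$ restricts to an equivalence of the full subcategories of invertible objects on both sides, and that this restriction is additive with respect to $\otimes$.

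First I would verify that $\FFF$ sends line bundles to $G$-linearized line bundles; this is already observed in the excerpt, as $q^*$ preserves invertibility. This gives a well-defined functor $\sPic(Y/G) \to \sPic^G(Y)$. Fully-faithfulness on this subcategory is automatic, since it is the restriction of a fully faithful functor on the ambient module categories (and morphisms in a Picard category are precisely the isomorphisms between its objects, which are preserved and reflected by any equivalence). Additivity is also inherited from the ambient categories: the excerpt already records the identity $t^*(M\otimes M') = t^*M \otimes t^*M'$, so $\FFF(L\otimes L') \cong \FFF(L)\otimes\FFF(L')$ naturally, and $\FFF$ sends the trivial line bundle with trivial linearization to itself.

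The main step, and essentially the only one requiring a descent argument, is essential surjectivity onto $\sPic^G(Y)$. Given $(L,\psi) \in \sPic^G(Y)$, the ambient equivalence produces a quasi-coherent $\OO_{Y/G}$-module $M$ together with an isomorphism $\FFF(M) \cong (L,\psi)$ of $G$-linearized quasi-coherent modules; in particular $q^*M \cong L$ as $\OO_Y$-modules. By \ref{propofstack}(2), the quotient $1$-morphism $q: Y \to Y/G$ is surjective, flat, and locally of finite presentation, hence faithfully flat and locally of finite presentation. Since being a line bundle is an fppf-local property on the base (a quasi-coherent module whose pullback along a faithfully flat, locally of finite presentation morphism is invertible is itself invertible, by faithfully flat descent of invertible sheaves), we conclude that $M$ is a line bundle on $Y/G$. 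Thus $(L,\psi)$ lies in the essential image of the restricted functor.

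The only subtle point in the above is the descent statement for invertibility along a morphism whose source is a scheme and target is an algebraic stack; this is a standard application of fppf descent in $Y/G_{\fppf}$, which by the definition of a line bundle recalled at the beginning of the section amounts to checking the property fppf-locally on the stack, and $q$ itself provides such a cover. With this in hand, $\FFF$ is an additive equivalence of Picard categories.
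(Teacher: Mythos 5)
Your proof takes essentially the same route as the paper's: both rely on \cite[\href{https://stacks.math.columbia.edu/tag/06WT}{Proposition 06WT}]{stacks-project} to reduce to checking essential surjectivity onto line bundles, and both observe that $q$ being faithfully flat and locally of finite presentation makes invertibility of $M$ follow from invertibility of $q^*M$. The only difference is cosmetic: you cite ``fppf descent of invertible sheaves'' as a black box, whereas the paper unwinds exactly that argument — taking a (Zariski) cover $y:Y'\to Y$ trivializing $q^*M$ and noting that $q\circ y:Y'\to Y/G$ is then an fppf cover trivializing $M$, which is precisely the paper's definition of a line bundle on a stack.
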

\begin{proof}
By \cite[\href{https://stacks.math.columbia.edu/tag/06WT}{Proposition 06WT}]{stacks-project} the functor $\FFF$ is fully faithful, thus the restricted functor $\FFF|_{\sPic(Y/G)}$ is an equivalence to its image in $\sPic^G(Y)$. It suffices therefore to verify that an object $(L,\psi)\in\sPic^G(Y/G)$ is isomorphic to an object in the image of $F|_{\sPic(Y/G)}$. It follows from \cite[\href{https://stacks.math.columbia.edu/tag/06WT}{Proposition 06WT}]{stacks-project} that there exists a $\mathcal O_{Y/G}$-module $M$ such that $\FFF(M)\cong(L,\psi)$. Let us prove $M$ is locally fppf trivial. The pullback $\pi^*M$ is a line bundle, thus, there exists Zariski open covering $y:Y'\to Y$ such that $y^*\pi^*M$ is locally trivial. It follows that for the fppf covering $\pi\circ y: Y'\to Y$ one has $(\pi\circ y)^*M\cong\OO_{Y'}.$
\end{proof}
If~$X$ is an algebraic stack, we denote by $\Pic(X)$ the Picard group of~$X$. We denote by $\Pic^G(Y)$ the abelian group the elements of which are isomorphism classes of objects in $\sPic^G(Y)$, and the addition of which is defined by the tensor product in $\sPic^G(Y)$. Functor $\mathcal F$ induces a homomorphism $\Pic(Y/G)\to\Pic^G(Y)$. Proposition \ref{gpic} gives that it is an isomorphism. 
 \subsection{} \label{phiequiv}
Let $\phi:G_1\to G_2$ be a homomorphism of flat, locally of finite type~$Z$-group schemes. Let $a_{X}:G_1\times _ZX\to X$ and $a_Y:G_2\times _ZY\to Y$ be~$Z$-actions on~$Z$-schemes~$X$ and $Y.$ Suppose $f:X\to Y$ is a morphism of~$Z$-schemes and suppose furthermore that the following diagram is commutative:
$$\begin{tikzcd}
	{G_1\times_Z X} & {G_2\times _ZY} \\
	X & Y.
	\arrow["{(\phi,f)}", from=1-1, to=1-2]
	\arrow["{a_X}"', from=1-1, to=2-1]
	\arrow["f"', from=2-1, to=2-2]
	\arrow["{a_Y}", from=1-2, to=2-2]
\end{tikzcd}$$
We say that~$f$ is $\phi$-equivariant. We construct an additive functor of Picard categories $\sPic_{G_2}(Y)\to\sPic_{G_1}(X)$ as follows. Let $(L,\psi)\in\sPic_{G_2}(Y)$. Denote by $pr_{X}:G_1\times_Z X\to X$ and $pr_{Y}:G_2\times _ZY\to Y$ the projections to the second coordinate. The linearization on $f^*L$ is provided by the isomorphism $$(\phi,f)^*\psi\hspace{0.1cm} :\hspace{0.1cm}(pr_{X})^*(f^*L)=(\phi,f)^*(pr_{Y})^*L\xrightarrow{(\phi,f)^*\psi}(\phi,f)^*a_Y^*L=a^*_X(f^*L).$$  If $\ell:(L,\psi)\to (L',\psi')$ is a morphism in $\sPic_{G_2}(Y),$ then $f^*\ell$ is a morphism of $(f^*L,(\phi,f)^*\psi)\to(f^*L',(\phi,f)^*\psi)$. It is a straightforward verification that this construction provides a functor $\sPic_{G_2}(Y)\to\sPic_{G_1}(X),$ and it is moreover an additive functor. The induced map $\Pic_{G_2}(Y)\to\Pic_{G_1}(X)$ is thus a homomorphism.
\begin{lem}\label{gjgdxy} Let $\overline{f}:X/G_1\to Y/G_2$ be the morphism of quotient stacks given by \cite[\href{https://stacks.math.columbia.edu/tag/046Q}{Lemma 046Q}]{stacks-project}. The following diagram is $2$-commutative: 
\begin{equation}\label{tomuchtimefor}\begin{tikzcd}
	{\sPic^{G_2}(Y)} & {\sPic^{G_1}(X)} \\
	{\sPic(Y/G_2)} & {\sPic(X/G_1)}.
	\arrow[from=2-1, to=2-2]
	\arrow[from=2-2, to=1-2]
	\arrow[ from=2-1, to=1-1]
	\arrow[from=1-1, to=1-2]
\end{tikzcd}\end{equation}
The diagram\begin{equation}\label{tomuchtimefo}\begin{tikzcd}
	{\Pic^{G_2}(Y)} & {\Pic^{G_1}(X)} \\
	{\Pic(Y/G_2)} & {\Pic(X/G_1)}.
	\arrow[from=2-1, to=2-2]
	\arrow[from=2-2, to=1-2]
	\arrow[ from=2-1, to=1-1]
	\arrow[from=1-1, to=1-2]
\end{tikzcd}\end{equation}  is commutative.
\end{lem}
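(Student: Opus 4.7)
The plan is to deduce the commutativity of the Picard-group diagram (\ref{tomuchtimefo}) from the $2$-commutativity of the Picard-category diagram (\ref{tomuchtimefor}). Indeed, the vertical functors in (\ref{tomuchtimefor}) are additive equivalences by Proposition \ref{gpic}, so the induced maps on isomorphism classes are isomorphisms of abelian groups; once a natural isomorphism between the two composite functors in (\ref{tomuchtimefor}) is exhibited, passing to isomorphism classes yields the equality of the two composite homomorphisms in (\ref{tomuchtimefo}).

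To establish the $2$-commutativity of (\ref{tomuchtimefor}), I will chase an object $M\in\sPic(Y/G_2)$ along the two paths from the lower-left corner to the upper-right corner. Write $q_X:X\to X/G_1$ and $q_Y:Y\to Y/G_2$ for the quotient $1$-morphisms, $a_X,a_Y$ for the actions, $pr_X,pr_Y$ for the second projections, and let $t_X:q_X\circ a_X\xrightarrow{\sim} q_X\circ pr_X$ and $t_Y:q_Y\circ a_Y\xrightarrow{\sim}q_Y\circ pr_Y$ be the canonical $2$-morphisms provided by Proposition \ref{propofstack}(1). Following the path ``up then right'' sends $M$ to $(q_Y^*M,\, t_Y^*M)$ in $\sPic^{G_2}(Y)$ by the construction of \ref{defoflift}, and then to $(f^*q_Y^*M,\, (\phi,f)^*(t_Y^*M))$ in $\sPic^{G_1}(X)$ by the construction of \ref{phiequiv}. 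Following the path ``right then up'' sends $M$ to $\overline{f}^*M$ in $\sPic(X/G_1)$, and then to $(q_X^*\overline{f}^*M,\, t_X^*(\overline{f}^*M))$.

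By construction of $\overline{f}$ via \cite[\href{https://stacks.math.columbia.edu/tag/046Q}{Lemma 046Q}]{stacks-project}, there is a canonical $2$-isomorphism $s:q_Y\circ f\xrightarrow{\sim}\overline{f}\circ q_X$, which induces a canonical isomorphism of line bundles on $X$
\[
s^*M\colon q_X^*\overline{f}^*M\xrightarrow{\sim} f^*q_Y^*M.
\]
The key step is to verify that $s^*M$ intertwines the two $G_1$-linearizations, i.e.\ that the diagram
\[
\begin{tikzcd}
pr_X^*q_X^*\overline{f}^*M \arrow[r,"pr_X^*(s^*M)"]\arrow[d,"t_X^*(\overline{f}^*M)"'] & pr_X^*f^*q_Y^*M \arrow[d,"(\phi,f)^*(t_Y^*M)"]\\
a_X^*q_X^*\overline{f}^*M \arrow[r,"a_X^*(s^*M)"'] & a_X^*f^*q_Y^*M
\end{tikzcd}
\]
is commutative. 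Using the pseudofunctoriality of pullback from \cite[\href{https://stacks.math.columbia.edu/tag/06WK}{Lemma 06WK}]{stacks-project}, this amounts to checking an identity of $2$-morphisms: both sides correspond to the same $2$-isomorphism $q_Y\circ f\circ a_X\xrightarrow{\sim} q_Y\circ f\circ pr_X$, obtained on the left by first using $s$ on both $q_Y\circ f$'s and then applying $t_X$ (whiskered appropriately), and on the right by first whiskering $t_Y$ by $(\phi,f)$ (using the $\phi$-equivariance identities $a_Y\circ(\phi,f)=f\circ a_X$ and $pr_Y\circ(\phi,f)=f\circ pr_X$) and then pasting with $s$. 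Coherence for the quotient stack construction, together with the compatibility of $t_X$ and $t_Y$ under the $\phi$-equivariant map (both being the canonical $2$-cells forced by the definition of the quotient stack), gives the desired equality.

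The main obstacle is this coherence verification in the last step: one has to track several $2$-cells simultaneously and use both the definition of $\overline{f}$ and the universal $2$-cells that make $q_X$ and $q_Y$ into quotient morphisms. I expect that once one writes out the $2$-cell $s$ explicitly in terms of the tautological $G_2$-torsor over $X/G_1$ obtained from the $G_1$-torsor $X\to X/G_1$ by pushout along $\phi$, the identity becomes a formal consequence of the uniqueness of $t_X$ as a $2$-cell making the relevant square $2$-commute.
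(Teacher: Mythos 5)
Your approach is essentially the same as the paper's: both proofs chase $M$ through the two composite functors, identify the canonical isomorphism $t_f^*M$ (your $s^*M$) between the results, and reduce the claim to the commutativity of the same square of linearizations. The paper then records the needed $2$-cell identity as $(t_f*a_X)\circ(t_Y*(\phi,f))=(\overline{f}*t_X)\circ(t_f*pr_X)$ and dispatches it by ``simple diagram chasing,'' which is the same coherence step you describe.
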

\begin{proof}
Fix $2$-isomorphisms $t_X: q_X\circ pr_X\xrightarrow{\sim} q_X\circ a_X,$ $t_Y:q_Y\circ pr_Y\xrightarrow{\sim}q_Y\circ a_Y$ and $t_f:q_Y\circ f\xrightarrow{\sim}\overline{f}\circ q_X$ that make corresponding diagrams $2$-commutative. Let $L\in\sPic(Y/G_2)$. The image of~$L$ in $\sPic^{G_2}(Y)$ is $(q_Y^*L,t_Y^*L)$ and the image of $(q_Y^*L,t_Y^*L)$ in $\sPic^{G_1}(X)$ is $(f^*q_Y^*L,(\phi,f)^*t_Y^*L).$ The image of~$L$ in $\sPic(X/G_1)$ is $\overline{f}^*L,$ and the image of $\overline{f}^*L$ in $\sPic^{G_1}(X)$ is $(q_X^*\overline{f}^*L,t_X^*(\overline{f}^*L))$. It suffices to verify that the two images under the composite functors of~$L$ in $\sPic^{G_1}(X)$ are isomorphic. In fact, we will verify that $$t_f^*L:f^*q_Y^*L=(q_Y\circ f)^*L\xrightarrow{\sim}(\overline{f}\circ q_X)^*L=q_X^*\overline{f}^*L$$ is an isomorphism $(f^*q_Y^*L,(\phi,f)^*t_Y^*L)\xrightarrow{\sim} (q_X^*\overline{f}^*L,t_X^*\overline{f}^*L)$. For that we need to verify the commutativity of the following diagram:
$$\begin{tikzcd}
	{pr_X^*f^*q_Y^*L} & {a_X^*f^*q_Y^*L} \\
	{pr_X^*q_X^*\overline{f}^*L} & {a_X^*q_X^*\overline{f}^*L}
	\arrow["{t_X^*\overline{f}^*L}"', from=2-1, to=2-2]
	\arrow["{pr_X^*t_f^*L}", from=1-1, to=2-1]
	\arrow["{(\phi,f)^*t_Y^*L}", from=1-1, to=1-2]
	\arrow["{a_X^*t_f^*L}"', from=1-2, to=2-2]
\end{tikzcd}$$
This is true as in fact $$(t_f*a_X)\circ(t_Y*(\phi,f))=(\overline{f}*t_X)\circ (t_f* pr_X),$$as one verifies by simple diagram chasing,
where $*$ is the operation given by the structures of $2$-categories. Therefore the diagram (\ref{tomuchtimefor}) is $2$-commutative, and hence (\ref{tomuchtimefo}) is commutative.
\end{proof} 
 \section{Picard group of a weighted projective stack} 
 We are going to calculate Picard groups of weighted projective stacks.
 
 Let~$Z$ be a scheme. All the schemes and stacks are understood to be over~$Z$. Let $n\geq 1$ and let $\aaa\in\ZZ^n_{\geq 1}$.
 \subsection{}
We calculate the Picard group of~$\oPPa$ and of~$\PPP(\aaa)$. 
Proposition \ref{gpic} gives that the canonical homomorphisms $$\Pic(\oPPa)\to\Pic^{\Gm}(\AAA^n)$$ and $$\Pic(\PPP(\aaa))\to\Pic^{\Gm}(\AAA^n-\{0\}) $$are isomorphisms.
\begin{prop}\label{picofoppa}
Let $n\geq 1$ be an integer and let $\aaa\in\ZZ^n_{\geq 1}$.  
\begin{enumerate}
\item One has that $$\Pic(\overline{\PPP(\aaa)})\cong\ZZ$$ and a generator of $\Pic(\overline{\PPP(\aaa)})$ is given by the isomorphism class of line bundles defined by the~$\Gm$-linearization of $\OO_{\AAA^n}:$ $$\psi :\Gm\times\AAA^n\to\Gm\hspace{1cm}(t,\xxx)\mapsto t.$$
\item Suppose that $n\geq 2$. One has that $$\Pic({\PPP(\aaa)})\cong\ZZ$$ and a generator of $\Pic({\PPP(\aaa)})$ is given by the isomorphism class of line bundles defined by the~$\Gm$-linearization of $\OO_{\AAA^n-\{0\}}:$ $$\psi :\Gm\times(\AAA^n-\{0\})\to\Gm\hspace{1cm}(t,\xxx)\mapsto t.$$
\item Suppose that $n=1$. One has that $$\Pic({\PPP(a)})\cong\ZZ/a\ZZ$$ and a generator of $\Pic({\PPP(a)})$ is given by the isomorphism class of line bundles defined by the~$\Gm$-linearization of $\OO_{\AAA^1-\{0\}}:$ $$\psi :\Gm\times(\AAA^1-\{0\})\to\Gm\hspace{1cm}(t,x)\mapsto t.$$
\end{enumerate}
\end{prop}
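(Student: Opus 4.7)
The plan is to apply Proposition \ref{gpic} to each of $\overline{\PPP(\aaa)}=\AAA^n/\Gm$ and $\PPP(\aaa)=(\AAA^n-\{0\})/\Gm$, which reduces the computation of $\Pic$ to the computation of $\Pic^{\Gm}(Y)$ for $Y=\AAA^n$ and $Y=\AAA^n-\{0\}$ respectively. In both cases $\Pic(Y)=0$: this is immediate for $Y=\AAA^n$ over $\Spec(\ZZ)$, and for $Y=\AAA^n-\{0\}$ with $n\geq 2$ it follows from the fact that $\AAA^n_{\ZZ}$ is regular and $\{0\}$ has codimension $n\geq 2,$ so Weil divisors extend. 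Thus by Lemma \ref{ygmnul}(1), in each of these cases $\Pic^{\Gm}(Y)=\Pic^{\Gm}(\OO_Y)$, and it remains to compute the latter.

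For parts (1) and (2) I would then apply Lemma \ref{ygmnul}(2). Its hypothesis is that $\OO(Y)^*$ consists only of constants, i.e.\ equals $\OO(Z)^*=\ZZ^{\times}$. For $Y=\AAA^n$ this is the statement that units of $\ZZ[x_1,\dots,x_n]$ are $\pm 1$, and for $Y=\AAA^n-\{0\}$ with $n\geq 2$ it follows from Hartogs extending regular functions from $\AAA^n-\{0\}$ to $\AAA^n$. Lemma \ref{ygmnul}(2) then provides an isomorphism $\Gm^{\vee}\xrightarrow{\sim}\Pic^{\Gm}(\OO_Y)$ sending a character $\chi$ to the class of the linearization $(t,\xxx)\mapsto\chi(t)$. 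Since $\Gm^{\vee}\cong\ZZ$ with generator the identity character $t\mapsto t$, this yields $\Pic(\overline{\PPP(\aaa)})\cong\ZZ$ and $\Pic(\PPP(\aaa))\cong\ZZ$ for $n\geq 2$, with generator in each case the class of the linearization $\psi(t,\xxx)=t$.

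For part (3), where $Y=\AAA^1-\{0\}=\Gm=\Spec(\ZZ[x,x^{-1}])$, Lemma \ref{ygmnul}(2) does \emph{not} apply, because the morphism $x:\Gm\to\Gm$ is a non-constant unit. We still have $\Pic(\Gm)=0$, hence by Lemma \ref{ygmnul}(1) it is enough to compute $\Pic^{\Gm}(\OO_{\Gm})$ directly, and this is the main nontrivial step. A linearization is an element of $\OO(\Gm\times\Gm)^{\times}=\{\pm 1\}\cdot t^{\ZZ}\cdot x^{\ZZ}$, so has the form $\psi(t,x)=\epsilon\, t^i x^j$ with $\epsilon\in\{\pm 1\}$. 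The cocycle condition $\psi(t't,x)=\psi(t',t^a x)\psi(t,x)$ collapses to $\epsilon\, t'^i t^i x^j=\epsilon^2 t'^i t^{aj+i} x^{2j}$, forcing $j=0$ and $\epsilon=1$; thus linearizations of $\OO_{\Gm}$ are exactly $\psi_i(t,x)=t^i$ for $i\in\ZZ$. An isomorphism $\psi_{i_1}\xrightarrow{\sim}\psi_{i_2}$ is given by $\ell\in\OO(\Gm)^{\times}=\{\pm 1\}\cdot x^{\ZZ}$, say $\ell(x)=\pm x^k$, satisfying $\ell(t^a x)\psi_{i_1}(t,x)=\ell(x)\psi_{i_2}(t,x)$, which reduces to $t^{ak+i_1}=t^{i_2}$, i.e.\ $i_1\equiv i_2\pmod a$. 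Hence $\Pic^{\Gm}(\Gm)\cong\ZZ/a\ZZ$ with generator the class of $\psi(t,x)=t$.

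The main obstacle is exactly the case $n=1$ of (3): because $\OO(\Gm)^{\times}$ contains the non-constant unit $x$, Lemma \ref{ygmnul}(2) is unavailable, and one must redo both the cocycle classification and the morphism classification by hand to extract the torsion. Once that bookkeeping is done, the rest is a direct application of Proposition \ref{gpic} and Lemma \ref{ygmnul}.
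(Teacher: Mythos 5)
Your proposal is correct and follows essentially the same route as the paper's own proof: apply Proposition \ref{gpic} to reduce to $\Gm$-linearized line bundles, use $\Pic(Y)=0$ and Lemma \ref{ygmnul} to handle (1) and (2), and for (3) first classify the linearizations of $\OO_{\Gm}$ (obtaining $\ZZ$) and then compute the quotient by isomorphism to get $\ZZ/a\ZZ$. The only cosmetic difference is that you make the computation of linearizations and morphisms fully explicit via units of $\ZZ[t,t^{-1},x,x^{-1}]$, whereas the paper phrases the same step in terms of ``the degree of $\psi$ in $x$'' and characters of $\Gm$; the content and the key observation (that the non-constant unit $x$ invalidates Lemma \ref{ygmnul}(2) and produces $a$-torsion) are identical.
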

\begin{proof}
Let us prove the first two statements. We have that $\Pic(\AAA^n)=\Pic(\AAA^n-\{0\})=0$ and thus by \ref{ygmnul}, one has equalities of abelian groups $\Pic^{\Gm}(\AAA^n)=\Pic^{\Gm}(\OO_{\AAA^n})$ and $\Pic^{\Gm}(\AAA^n-\{0\})=\Pic^{\Gm}(\OO_{\AAA^n-\{0\}})$. There are no non-constant morphisms $\AAA^n\to\Gm$ (respectively, non-constant morphisms $\AAA^n-\{0\}\to\Gm$) and thus by \ref{ygmnul} any two~$\Gm$-linearization of $\OO_{\AAA^n}$ (respectively, of $\OO_{\AAA^n-\{0\}}$) are identical. Moreover, by the same lemma, the homomorphisms $\Gm^{\vee}\to\Pic^{\Gm}(\OO_{\AAA^n})=\Pic^{\Gm}(\AAA^n)$ and $\Gm^{\vee}\to\Pic^{\Gm}(\OO_{\AAA^n-\{0\}})=\Pic^{\Gm}(\AAA^n-\{0\})$ given by $$\chi\mapsto ((t,\xxx)\mapsto \chi(t))$$ are isomorphisms. The group $\Gm^{\vee}$ is the infinite cyclic group with generator $t\mapsto t$, therefore the isomorphism class of the linearization $(t,\xxx)\mapsto t$ of the trivial line bundle is a generator of the group of the~$\Gm$-linearizations of the trivial line bundle (in both cases). The first two claim follow.

 Let us prove the third claim.  One has that $\Pic(\AAA^1-\{0\})=0$, thus by \ref{ygmnul}, one has an equality of abelian groups $\Pic^{\Gm}(\AAA^1-\{0\})=\Pic^{\Gm}(\OO_{\AAA^1-\{0\}})$. We study the latter group. Let $\widetilde{\Pic}^{\Gm}(\OO_{\AAA^{1}-\{0\}})$ be the group of~$\Gm$-linearizations of $\OO_{\AAA^1-\{0\}}$. We will split the proof into two parts. First, we prove that $\widetilde{\Pic^{\Gm}}(\OO_{\AAA^1-\{0\}})$ is isomorphic to $\ZZ$ and that its generator is given by $(t,x)\mapsto t$. Then we will prove that the canonical surjective homomorphism $\widetilde{\Pic}^{\Gm}(\OO_{\AAA^{1})-\{0\}}\to\Pic^{\Gm}(\OO_{\AAA^1-\{0\}})$ has for the kernel the subgroup generated by the~$\Gm$-linearization $(t,x)\mapsto t^a$.
 
 The cocycle condition $\psi(t't,x)=\psi(t',t\cdot x)\psi(t,x)$ implies that the degree of $\psi$ in~$x$ must be zero i.e. $\psi(t,-):(\AAA^1-\{0\})\to\Gm$ is a constant morphism. Set $\chi(t):=\psi(t,x)$ for some $x\in\AAA^1-\{0\}$. The cocycle condition gives $\chi(t't)=\chi(t')\chi(t)$, i.e. $\chi$ is a character of~$\Gm$. It follows that the homomorphism $$\Gm^{\vee}\to \widetilde{\Pic^{\Gm}}(\OO_{\AAA^1-\{0\}})\hspace{1cm}\chi\mapsto \hspace{0.1cm}\big((t,x)\mapsto \chi(t)\big)$$ is surjective. This homomorphism is evidently injective, hence is an isomorphism. The group $\Gm^{\vee}$ is isomorphic to $\ZZ$ and its generator is given by $t\mapsto t,$ thus $\widetilde{\Pic^{\Gm}}(\OO_{\AAA^1-\{0\}})$ is isomorphic to $\ZZ$ and its generator is given by $(t,x)\mapsto t$. Let us now prove the second claim. Let $\ell:(\OO_{\AAA^1-\{0\}},(t,x)\mapsto t^{k})\to(\OO_{\AAA^1},(t,x)\mapsto 1)$ be an isomorphism of~$\Gm$-linearizations of $\OO_{\AAA^1-\{0\}}$. This means that $\ell:\AAA^1-\{0\}\to\Gm$ is a morphism such that for every $t\in\Gm$ and every $x\in\AAA^1-\{0\}$ one has that $\ell(t\cdot x)t^{k}=\ell(x)$ i.e. $\ell(t^ax)=\ell(x)t^{-k}.$ Thus the degree of the rational function~$\ell$ must be $(-k)/a$. It follows that if $(t,x)\mapsto t^{k}$ and $(t,x)\mapsto 1$ are isomorphic, then $k\equiv 0\pmod a$. 
Let us see that if $a|k$, then the~$\Gm$-linearizations $(t,x)\mapsto t^{k}$ and $(t,x)\mapsto 1$ are isomorphic. Indeed if one sets $\ell(x)=x^{-k/a},$ we have that $$\ell(t\cdot x)t^{k}=(t\cdot x)^{(-k)/a}t^{k}=(t^ax)^{(-k)/a}t^{n_1}=x^{(-k)/a}t^{n_2}=\ell(x).$$
 The second claim follows. We deduce that $\Pic^{\Gm}(\AAA^1-\{0\})=\Pic^{\Gm}(\OO_{\AAA^1-\{0\}})\cong \ZZ/n\ZZ$ that its generator is given by the isomorphism class of the~$\Gm$-linearization of the trivial line bundle $(t,x)\mapsto t$. As $\Pic(\PPP(a))\xrightarrow{\sim}\Pic^{\Gm}(\AAA^1-\{0\})$ is an isomorphism by \ref{gpic}, the statement follows. 
 \end{proof}
\begin{mydef}
Let~$\mathcal O(1)$ be the isomorphism class of line bundles on $\overline{\mathscr P(\aaa)}$ given by the linearization of the trivial line bundle $$\psi:\Gm\times\AAA^n\to\Gm\hspace{1cm}(t,\xxx)\mapsto t .$$ For $k\in\ZZ$, we write $\mathcal O(k)$ for $\mathcal O(1)^{\otimes k}.$  By abuse of notation, we may also write $\mathcal O(k)$ for a line bundle in the corresponding isomorphism class of line bundles. 
\end{mydef}
\begin{mydef}
Let~$\mathcal O(1)$ be the isomorphism class of line bundles on ${\mathscr P(\aaa)}$ given by the linearization of the trivial line bundle $$\psi:\Gm\times(\AAA^n-\{0\})\to\Gm\hspace{1cm}(t,\xxx)\mapsto t .$$ For $k\in\ZZ$, we write $\mathcal O(k)$ for $\mathcal O(1)^{\otimes k}.$  By abuse of notation, we may also write $\mathcal O(k)$ for a line bundle in the corresponding isomorphism class of line bundles. 
\end{mydef}
When $n=1$ one has that $\OO(a)=\OO(0)=\OO_{\PPP(\aaa)}$.
For $\bbb\in\ZZ^n$, we denote $|\bbb|:=b_1+\cdots +b_n$.
\begin{mydef}\label{defnofanticanon}
We say that a line bundle on~$\oPPa$ with isomorphism class $\mathcal O(|\aaa|)$ is anti-canonical. We may write $(K_{\oPPa})^{-1}=\mathcal O(|\aaa|)$.
\end{mydef} 
\section{Metric on a line bundle on a stack}\label{metricsonstacks} In this section we define metrics on line bundles on algebraic stacks. 
\subsection{} Let~$v$ be a place of~$F$. We present a definition of an $\Fv$-metric on a line bundle on a stack.
\begin{mydef}\label{defofmet}
Let~$X$ be a locally of finite type $\Fv$-algebraic stack and let~$L$ be a line bundle on~$X$. An $\Fv$-metric $|\lvert\cdot\rvert|$ is the following data:
\begin{itemize}
\item for every~$1$-morphism of stacks $x:\Spec(\Fv)\to X$ we give a norm $|\lvert\cdot\rvert|_x$ on $L(x):=x^*L$.
\item for every $2$-morphism $x\xrightarrow{\sim} y$, the canonical morphism $L(x)\to L(y)$ is an isometric isomorphism.
\item for every~$1$-morphism $f:U\to X$ over $F_v$, with $U$ locally of finite type $F_v$-scheme and every section $s$ of $f^*L$ over $U$, the map $$U(\Fv)\to\RR_{\geq 0}\hspace{1cm} z\mapsto ||s(z)||_{f\circ z}$$ is continuous.\end{itemize}
An $\Fv$-metrized line bundle is a pair $(L,|\lvert\cdot\rvert|)$ of a line bundle~$L$ and an $\Fv$-metric on~$L$.
\end{mydef}
Let~$v$ be a place of~$F$ and let~$X$ be a locally of finite type $\Fv$-algebraic stack. 
\begin{itemize}
\item A morphism $r:(L,|\lvert\cdot\rvert|)\to (L',|\lvert\cdot\rvert|')$ of $\Fv$-metrized line bundles on~$X$ is an isomorphism of line bundles $r:L\to L'$ which is isometric i.e. for every $x\in X(\Fv)$, the morphism $r(x):L(x)\to L'(x)$ is isometric. 
\item The trivial line bundle can be endowed with the following metric: set $||1||_x=1$ for each~$1$-morphism $x:\Spec(\Fv)\to X$. The corresponding $\Fv$-metrized line bundle will be called the trivial $\Fv$-metrized line bundle. \item One defines the tensor product of $\Fv$-metrized line bundles on~$X$ as follows: let $(L,|\lvert\cdot\rvert|)\otimes (L',|\lvert\cdot\rvert|')$ be $\Fv$-metrized line bundles on~$X$, endow $L\otimes L'$ with the metric $|\lvert\cdot\rvert|\otimes|\lvert\cdot\rvert|'$ defined by $|\lvert\cdot\rvert|_x\otimes |\lvert\cdot\rvert|'_x$ for every~$1$-morphism $x:\Spec(\Fv)\to X$ (it is immediate that one indeed gets a metric on $L\otimes L'$). 
\end{itemize}
\begin{lem}\label{mthroot}
Let~$X$ be an algebraic stack and let~$L$ be a line bundle on~$X$. 
\begin{enumerate}
\item Suppose $|\lvert\cdot\rvert|$ is an $\Fv$-metric on~$L$. For every $x\in X(\Fv),$ let ${|\lvert\cdot\rvert|}^{-1}_x$ be the metric on $L^{-1}(x)$ defined by $${|\lvert\cdot\rvert|}^{-1}_x(\lambda):={||\ell||^{-1}_x},$$for $0\neq\ell\in L(x)$ and $\lambda\in L^{-1}(x)$ such that $\lambda(\ell)=1$. The metrics ${|\lvert\cdot\rvert|}^{-1}_x$ on $L^{-1}(x)$ for $x\in X(\Fv)$ define an $\Fv$-metric ${|\lvert\cdot\rvert|^{-1}}$ on $L^{-1}.$
\item Suppose $|\lvert\cdot\rvert|$ is an $\Fv$-metric on $L^{\otimes m}$ where~$m$ is a positive integer. For every $x\in X(\Fv)$ and every $\ell\in L(x)$ let $\sqrt[m]{||\ell||}_x$ be the metric on $L(x)$ defined by $$\sqrt[m]{|\lvert\cdot\rvert|}_x(\ell):=\sqrt[m]{||\ell^m||_x}.$$ The metrics $\sqrt[m]{|\lvert\cdot\rvert|}_x$ on $L(x)$ for $x\in X(\Fv)$ define an $\Fv$-metric $\sqrt[m]{|\lvert\cdot\rvert|}$ on $L.$
\end{enumerate}
\end{lem}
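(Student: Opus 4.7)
For both parts the plan is the same: verify the three clauses of Definition \ref{defofmet}, namely that we have a norm on each one-dimensional fibre, that every $2$-isomorphism induces an isometry, and that for every $1$-morphism $f : U \to X$ from a locally of finite type $F_v$-scheme and every section of $f^\ast$ of the relevant bundle, the resulting map $U(F_v) \to \RR_{\geq 0}$ is continuous. Since each fibre is one-dimensional, positivity and homogeneity are the only non-trivial features of a norm, and these suffice for a norm in the sense used in this paper.

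For part (1), I would first observe that the prescription is well defined: given $x \in X(F_v)$ and $0 \neq \lambda \in L^{-1}(x)$, one-dimensionality of $L(x)$ forces the existence of a unique $\ell \in L(x)$ with $\lambda(\ell) = 1$, and one sets $\|0\|^{-1}_x = 0$. Homogeneity follows because replacing $\lambda$ by $c\lambda$ forces $\ell$ to be replaced by $c^{-1}\ell$, so $\|c\lambda\|^{-1}_x = \|c^{-1}\ell\|^{-1}_x = |c|_v \|\ell\|^{-1}_x = |c|_v \|\lambda\|^{-1}_x$; positivity is then immediate. Invariance under $2$-isomorphisms: if $x \xrightarrow{\sim} y$ induces an isometry $L(x) \to L(y)$, then the transpose isomorphism $L^{-1}(y) \to L^{-1}(x)$ is automatically an isometry for the dual metric, since the defining condition $\lambda(\ell) = 1$ is preserved. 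For continuity, given $f : U \to X$ and a section $\sigma$ of $f^\ast L^{-1}$, I would Zariski-locally trivialise $f^\ast L$ around any $u \in U$ by a frame $s$ and write $\sigma = h \cdot s^\vee$ on some Zariski open $V \ni u$ with $h \in \Gamma(V, \mathcal O_U)$; then $\|\sigma(z)\|^{-1}_{f \circ z} = |h(z)|_v \cdot \|s(z)\|_{f \circ z}^{-1}$ on $V(F_v)$, which is continuous because $z \mapsto \|s(z)\|_{f\circ z}$ is continuous and strictly positive (the frame is non-vanishing) and $h$ is a regular function.

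For part (2), homogeneity reads $\sqrt[m]{\|c\ell\|_x} = \sqrt[m]{\|(c\ell)^{\otimes m}\|_x} = \sqrt[m]{|c|_v^m \|\ell^{\otimes m}\|_x} = |c|_v \sqrt[m]{\|\ell\|_x}$, and positivity follows from $\ell = 0 \iff \ell^{\otimes m} = 0$. Compatibility with $2$-isomorphisms is immediate, because the isometry $L^{\otimes m}(x) \to L^{\otimes m}(y)$ is the $m$-th tensor power of the canonical map $L(x) \to L(y)$, so extracting the $m$-th root of the induced norm preserves the isometry. For section continuity, if $s$ is a section of $f^\ast L$, then $s^{\otimes m}$ is a section of $(f^\ast L)^{\otimes m} = f^\ast L^{\otimes m}$, so $z \mapsto \|s^{\otimes m}(z)\|_{f\circ z}$ is continuous by hypothesis on the metric on $L^{\otimes m}$, and its $m$-th root $z \mapsto \sqrt[m]{\|s(z)\|_{f\circ z}}$ is continuous as a composition of continuous maps.

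\textbf{Main obstacle.} No deep obstruction arises; the only point requiring care is the continuity clause for part (1), where sections of $L^{-1}$ are not literally sections of $L$ and one must pass through a local trivialisation of $L$ to reduce the continuity question to the already established continuity for $L$. Once this is done, everything else is a direct formal verification.
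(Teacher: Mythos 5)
Your proof is correct and follows the same overall route as the paper: verify the three clauses of Definition \ref{defofmet} directly. The one place where you do something genuinely better is the continuity check in part~(1). The paper takes a section $s$ of $g^{*}L$ and argues that $z\mapsto \|s(z)\|^{-1}$ is continuous by composing with $x\mapsto x^{-1}$ on $\RR_{>0}$; as written this only treats the norms of nowhere-vanishing sections of $L^{-1}$ (those of the form $s^{\vee}$ for a frame $s$), and the inverse map does not extend continuously over $0$. Your version fixes this: by Zariski-locally trivialising $f^{*}L$ with a frame $s$ and writing an arbitrary section $\sigma$ of $f^{*}L^{-1}$ as $h\cdot s^{\vee}$ with $h$ regular, you reduce the continuity of $z\mapsto\|\sigma(z)\|^{-1}_{f\circ z}=|h(z)|_v\,\|s(z)\|_{f\circ z}^{-1}$ to the continuity and strict positivity of $z\mapsto\|s(z)\|_{f\circ z}$ plus continuity of $|h|_v$, which handles vanishing sections of $L^{-1}$ too. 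For part~(2) both arguments are the same, and the $m$-th root map is continuous on all of $\RR_{\geq 0}$, so no such care is needed there.
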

\begin{proof}
\begin{enumerate}
\item Suppose that $x\xrightarrow{\sim}y$ is a $2$-morphism, where $x,y\in X(\Fv)$. The induced linear map $L(x)\to L(y)$ is isometric, and it follows that $L^{-1}(x)\to L^{-1}(y)$ is isometric. Let now $g:U\to X$ be a~$1$-morphism, with $U$ is a locally of finite type $\Fv$-scheme and let $s\in g^*L(U)$. The map $U(\Fv)\to\RR_{>0}$ given by $z\mapsto {||s(z)||^{-1}}$ is precisely the composition of the map$$U(\Fv)\to\RR_{>0}\hspace{1cm} z\mapsto ||s(z)||_{g\circ x}$$  and the map $\RR_{>0}\to\RR_{>0}, x\mapsto x^{-1}$ and is thus continuous. It follows that ${|\lvert\cdot\rvert|^{-1}}$ is an $\Fv$-metric on $L^{-1}$.
\item Suppose that $x\xrightarrow{\sim}y$ is a $2$-morphism, where $x,y\in X(\Fv)$. The induced map $L^{\otimes m}(x)\to L^{\otimes m}(y)$ is isometric, and it follows that $L(x)\to L(y)$ is isometric. Let now $g:U\to X$ be a~$1$-morphism, with $U$ is a locally of finite type $\Fv$-scheme and let $s\in g^*L(U)$. The map $U(\Fv)\to\RR_{>0}$ given by $z\mapsto \sqrt[m]{||s(z)||}$ is precisely the composition of the map$$U(\Fv)\to\RR_{>0}\hspace{1cm} z\mapsto ||\ell^m||_{g\circ x}$$  and the map $\sqrt[m]{\cdot}:\RR_{>0}\to\RR_{>0}$ and is thus continuous. It follows that $\sqrt[m]{|\lvert\cdot\rvert|}$ is an $\Fv$-metric on~$L$.
\end{enumerate}
\end{proof}
One sees that the category $\widehat{\sPic}_v(X)$ of $\Fv$-linearized line bundles is a Picard category. The abelian group of isomorphism classes of objects in this category will be denoted by $\widehat\Pic_{v}(X).$ By abuse of terminology, we may call an element of $\widehat\Pic_{v}(X)$ an $\Fv$-metrized line bundle.

For topological spaces $A,B$, let us denote by $\mathscr C^0(A,B)$ the set of continuous functions from~$A$ to $B$. If $B$ is a topological abelian group, then $\mathscr C^0(A,B)$ carries a structure of an abelian group. 
\begin{lem} \label{ftopic}
Let~$v$ be a place of~$F$ and let~$X$ be a locally of finite type $\Fv$-algebraic stack. If $f:[X(\Fv)]\to\RR_{>0}$ is a continuous function, then setting $||\ell||^f_x:=|\ell(x)|_vf([x])$, for $\ell\in\OO_X(x)=\Fv$, defines an $\Fv$-metric $|\lvert\cdot\rvert|^f$ on $\OO_X$. The sequence $$0\to\mathscr C^0([X(\Fv)],\RR_{>0})\to\widehat{\Pic}_v(X)\to \Pic(X)$$ where the first homomorphism is given by $f\mapsto (\OO_X,(|\lvert\cdot\rvert|^f)$ and where the second homomorphism is the one that forgets the metric, is exact.
\end{lem}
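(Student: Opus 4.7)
The plan is to verify four things in turn: that $|\lvert\cdot\rvert|^f$ really defines an $\Fv$-metric on $\OO_X$; that the assignment $f\mapsto(\OO_X,|\lvert\cdot\rvert|^f)$ is a homomorphism; that it is injective; and that its image coincides with the kernel of the forgetful map to $\Pic(X)$.

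For the first point I would check the three conditions of Definition \ref{defofmet}. The map $\ell\mapsto |\ell(x)|_vf([x])$ is a norm on $\OO_X(x)=\Fv$ because $f([x])>0$. Given a $2$-morphism $x\xrightarrow{\sim}y$, the induced map $\OO_X(x)\to\OO_X(y)$ is the identity of $\Fv$ under the canonical identifications, and $[x]=[y]$ in $[X(\Fv)]$, so the norms agree. For a $1$-morphism $g:U\to X$ with $U$ locally of finite type and a section $s$ of $g^*\OO_X=\OO_U$, the map $z\mapsto\|s(z)\|^f_{g\circ z}=|s(z)|_v\cdot f([g\circ z])$ is continuous: the first factor is continuous on $U(\Fv)$, and the second factor factors through the continuous map $[g(\Fv)]:U(\Fv)\to[X(\Fv)]$ (Proposition \ref{ceso}(1)).

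For the homomorphism property, the tensor product of the metrics $|\lvert\cdot\rvert|^{f_1}$ and $|\lvert\cdot\rvert|^{f_2}$ on $\OO_X\otimes\OO_X\cong\OO_X$ sends $1$ at $x$ to $f_1([x])f_2([x])$, which is exactly $|\lvert\cdot\rvert|^{f_1f_2}$. This matches the group law of $\mathscr C^0([X(\Fv)],\RR_{>0})$ written multiplicatively. Injectivity is immediate: if $(\OO_X,|\lvert\cdot\rvert|^f)$ is isomorphic to the trivial metrized line bundle, then every isomorphism of $\OO_X$ is multiplication by a global unit; since the only global units acting as isometries force $f([x])=1$ at every $x$, one gets $f\equiv 1$.

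Exactness at $\widehat\Pic_v(X)$ is the key point. The composition is zero because $\OO_X$ with any metric is isomorphic to $\OO_X$ as a line bundle, so it maps to the trivial class. Conversely, let $(L,|\lvert\cdot\rvert|)$ lie in the kernel and fix an isomorphism $r:\OO_X\xrightarrow{\sim}L$ of line bundles. Transporting the metric via $r$ yields an $\Fv$-metric $|\lvert\cdot\rvert|'$ on $\OO_X$ with $(L,|\lvert\cdot\rvert|)\cong(\OO_X,|\lvert\cdot\rvert|')$ in $\widehat{\sPic}_v(X)$. Define $f:[X(\Fv)]\to\RR_{>0}$ by $f([x]):=\|1\|'_x$. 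This is well defined: for $x\xrightarrow{\sim}y$, the induced isometry $\OO_X(x)\to\OO_X(y)$ carries $1$ to $1$, so $\|1\|'_x=\|1\|'_y$. The main technical step is continuity of $f$: by Lemma \ref{bezmora}, it suffices to show that for every $1$-morphism $g:U\to X$ with $U$ an $\Fv$-scheme locally of finite type, the composition $U(\Fv)\xrightarrow{[g(\Fv)]}[X(\Fv)]\xrightarrow{f}\RR_{>0}$ is continuous; but this composition is precisely $z\mapsto\|1(z)\|'_{g\circ z}$ where $1\in g^*\OO_X(U)=\OO_U(U)$, which is continuous by the third condition of Definition \ref{defofmet} applied to $|\lvert\cdot\rvert|'$. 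By construction $|\lvert\cdot\rvert|'=|\lvert\cdot\rvert|^f$, and the proof is complete. The main obstacle is this last continuity verification, where one must carefully invoke the definition of the topology on $[X(\Fv)]$ via the test morphisms $g:U\to X$.
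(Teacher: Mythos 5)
Your verification that $|\lvert\cdot\rvert|^f$ is an $\Fv$-metric and your argument for exactness at $\widehat{\Pic}_v(X)$ follow the paper's own proof closely: you check the two-morphism isometry condition, check continuity via test morphisms $U\to X$, and recover $f$ from a metric in the kernel by $f([x]):=\|1\|'_x$, with continuity of $f$ deduced from Lemma \ref{bezmora}. Your explicit transport of the metric through a chosen trivialization $r:\OO_X\xrightarrow{\sim}L$ is a small but welcome clarification of a step the paper elides, and the homomorphism verification you add is routine and correct.

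The one genuine problem is the injectivity claim. If $(\OO_X,|\lvert\cdot\rvert|^f)$ is isomorphic in $\widehat{\sPic}_v(X)$ to the trivially metrized $\OO_X$ via a global unit $u\in\Gamma(X,\OO_X^\times)$, the isometry condition yields $f([x])=|u(x)|_v$ for all $x\in X(\Fv)$, which does \emph{not} force $f\equiv 1$: any constant $u\in\Fvt$ is a global unit of every $\Fv$-stack, so for $|u|_v\neq 1$ the non-trivial constant function $|u|_v$ lies in the kernel of $\mathscr C^0([X(\Fv)],\RR_{>0})\to\widehat{\Pic}_v(X)$. The phrase ``the only global units acting as isometries force $f([x])=1$'' is exactly where the argument breaks down; you would need $|u(x)|_v\equiv 1$, and that simply fails. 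For what it is worth, the paper's own proof does not verify exactness at the $\mathscr C^0$ term either, so your write-up is not less complete than the source, but ``immediate'' overstates the case, and as written that step is wrong.
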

\begin{proof}
Let us verify that $|\lvert\cdot\rvert|^f$ is a metric on $\OO_X$. Let us verify that for any $2$-morphism $x\xrightarrow{\sim}y$, where $x,y:\Spec(\Fv)\to X$ are~$1$-morphisms of algebraic stacks, the induced linear map $\OO_X(x)\xrightarrow{=}\Fv\xrightarrow{=}\OO_X(y)$ is an isometry. This is true because $||1||^f_x=f([x])=f([y])=||1||^f_{y}$. We now verify the last condition of \ref{defofmet}. Let $r:V\to X$ be a~$1$-morphism with $V$ a locally of finite type $\Fv$-scheme and let $s$ be a section over $V$ of $r^*\OO_X=\OO_V$ i.e. a morphism $s:V\to\AAA^1$. The map $$V(\Fv)\to\RR_{>0}\hspace{1cm}z\mapsto ||s(z)||_{r\circ z}$$ coincides with the map $$V(\Fv)\to\RR_{>0}\hspace{1cm}z\mapsto |s(z)|_vf([r(z)])$$ and is thus is continuous. We have verified that $|\lvert\cdot\rvert|^f$ is a metric on $\OO_X$.

It is evident that the composite homomorphism $$\mathscr C^0([X(\Fv)],\RR_{>0})\to\widehat{\Pic}_v(X)\to \Pic(X)$$ is the zero homomorphism. Let $(L,|\lvert\cdot\rvert|)$ be in the kernel of $\widehat{\Pic}_v(X)\to\Pic(X)$. We verify that $(L,|\lvert\cdot\rvert|)$ is in the image of $\mathscr C^0([X(\Fv)],\RR_{>0})\to\widehat{\Pic}_v(X).$ Obviously, $L=\OO_X$ in $\Pic(X)$. By the fact that a $2$-morphism $x\xrightarrow{\sim}y$ induces an isometry $\OO_X(x)\xrightarrow{=}\Fv\xrightarrow{=}\OO_X(y)$, it follows that $$[X(\Fv)]\to\RR_{>0}\hspace{1cm} [x]\mapsto ||1||_x$$ is a well defined function. We verify it is continuous. For every~$1$-morphism $g:U\to X$, with $U$ locally of finite type $\Fv$-scheme, the function $$U(\Fv)\xrightarrow{[g(\Fv)]}[X(\Fv)]\xrightarrow{[x]\mapsto ||1||_x}\RR_{>0}$$ is continuous as it coincides with the map $$U(\Fv)\to\RR_{>0}\hspace{1cm} x\mapsto ||1||_{g(x)},$$ which is continuous by the definition of an $\Fv$-metric. We deduce from \ref{bezmora} that the map $[x]\mapsto ||1||_x$ is a continuous function. It follows that $(L,|\lvert\cdot\rvert|)=(\OO_X,|\lvert\cdot\rvert|)$ is in the image of $\mathscr C^0([X(\Fv)],\RR_{>0})\to\widehat{\Pic}_v(X).$ The statement is proven.
 \end{proof}
\subsection{} Let~$v$ be a place of~$F$. Suppose $g:Y\to X$ is a~$1$-morphism of locally of finite type $\Fv$-algebraic stacks. Let $(L,|\lvert\cdot\rvert|_L)$ be an $\Fv$-metrized line bundle on~$X$. We define the pullback metric on $g^*L$.  Suppose $x:\Spec(\Fv)\to Y$ is an $\Fv$-point of~$Y$. For a section $\ell\in(g^*L)(x)=x^*(g^*L)=(g\circ x)^*L=L(g(x))$, we set $g^*||\ell||_x:=||\ell||_{g\circ x}$. 
\begin{lem}\label{ugovn} Let $g:Y\to X$ be a~$1$-morphism of locally of finite type $\Fv$-algebraic stacks.
\begin{enumerate}
\item If $(L,|\lvert\cdot\rvert|)$ is an $\Fv$-metrized line bundle on~$X$. The pair $(g^*L,g^*|\lvert\cdot\rvert|)$ is an $\Fv$-metrized line bundle on~$Y$.
\item If $(L,|\lvert\cdot\rvert|)$ is the trivial $\Fv$-metrized line bundle on $X,$ then $(g^*L,g^*|\lvert\cdot\rvert|)$ is the trivial $\Fv$-metrized line bundle on~$Y$. 
\item If $$r:(L,|\lvert\cdot\rvert|)\to (L',|\lvert\cdot\rvert|')$$ is a morphism of $\Fv$-metrized line bundles on~$X$, then $g^*r$ is isometric. 
\item The functor $g^*:\widehat{\sPic}_v(X)\to\widehat{\sPic}_v(Y)$ given by \begin{align*}(L,|\lvert\cdot\rvert|)&\mapsto (g^*L,g^*|\lvert\cdot\rvert|)\\
r:(L,|\lvert\cdot\rvert|)\to(L',|\lvert\cdot\rvert|')&\mapsto g^*r\hspace{2cm}
\end{align*} is an additive functor. 
\item Let $f:Y\to X$ be another~$1$-morphism and let $t:g\xrightarrow{\sim} f$ be a $2$-isomorphism. Let $(L,|\lvert\cdot\rvert|)$ be an $\Fv$-metrized line bundle on~$X$. The canonical isomorphism $t^*L:g^*L\xrightarrow{\sim}f^*L$ is an isometry.
\end{enumerate}
\end{lem}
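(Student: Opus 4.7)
The plan is to verify each of the five claims by reducing them to the three defining conditions in Definition \ref{defofmet} and to the compatibility of pullback with the tensor product of line bundles. The underlying observation that drives everything is the identification $(g^*L)(x) = x^*g^*L = (g\circ x)^*L = L(g\circ x)$ for every $1$-morphism $x:\Spec(F_v)\to Y$, so that the pullback metric $g^*\|\cdot\|_x$ on $(g^*L)(x)$ is, tautologically, the metric $\|\cdot\|_{g\circ x}$ on $L(g\circ x)$.

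For (1), the first bullet of \ref{defofmet} is immediate from this identification. For the second bullet, a $2$-morphism $\theta:x\xrightarrow{\sim}y$ in $Y(F_v)$ induces a $2$-morphism $g\ast\theta:g\circ x\xrightarrow{\sim}g\circ y$ in $X(F_v)$; under the identifications above, the canonical map $(g^*L)(x)\to(g^*L)(y)$ coincides with the canonical map $L(g\circ x)\to L(g\circ y)$, which is isometric by assumption on $(L,\|\cdot\|)$. For the third bullet, let $h:U\to Y$ be a $1$-morphism with $U$ a locally of finite type $F_v$-scheme and let $s$ be a section of $h^*g^*L=(g\circ h)^*L$ over $U$; then the map $z\mapsto\|s(z)\|_{h\circ z}$ is, by the identifications above, the continuous map obtained by applying the third bullet to the $1$-morphism $g\circ h:U\to X$ and the section $s$. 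Claim (2) follows because if $\|1\|_{g\circ x}=1$ for every $x$, then $g^*\|1\|_x=1$. Claim (3) is likewise immediate: if $r(x):L(x)\to L'(x)$ is an isometry for every $x\in X(F_v)$, then $(g^*r)(y)=r(g\circ y)$ is an isometry for every $y\in Y(F_v)$.

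For (4), additivity means that $g^*$ is compatible with the tensor product and sends the trivial metrized line bundle to the trivial metrized line bundle. The former holds because pullback is compatible with tensor product of $\mathcal O$-modules and because the tensor metric on $L\otimes L'$ is defined pointwise; the latter is claim (2). For (5), given $x\in Y(F_v)$, whiskering $t:g\xrightarrow{\sim}f$ with $x$ produces a $2$-isomorphism $t\ast x:g\circ x\xrightarrow{\sim}f\circ x$ in $X(F_v)$. The canonical isomorphism $(t^*L)(x):(g^*L)(x)\to(f^*L)(x)$ coincides under the identifications with $L(g\circ x)\to L(f\circ x)$, which is an isometry by the second bullet of \ref{defofmet} applied to $(L,\|\cdot\|)$.

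No step is really an obstacle; the only point requiring a little care is bookkeeping the canonical identifications $(g^*L)(x)=L(g\circ x)$ and the whiskering $g\ast\theta$, $t\ast x$ of $2$-morphisms, which is why stating the proofs side-by-side as above is cleaner than invoking each bullet in isolation.
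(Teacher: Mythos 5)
Your proposal is correct and follows essentially the same route as the paper's proof: both hinge on the tautological identification $(g^*L)(x)=L(g\circ x)$, verify each bullet of Definition \ref{defofmet} by transport along this identification, and handle (4) and (5) by pointwise computation of norms and by whiskering the $2$-isomorphism $t$ with $x$, respectively.
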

\begin{proof}
\begin{enumerate}
\item Let us verify that $g^*|\lvert\cdot\rvert|$ is an $\Fv$-metric on $g^*L$. Let $y:\Spec(\Fv)\to Y$ be such that there exists a $2$-morphism $x\xrightarrow{\sim} y$. The canonical morphism $$(g^*L)(x)=x^*(g^*L)=(g\circ x)^*L= L(g(x))\to L(g(y))=(g^*L)(y)$$ is an isometric isomorphism, as such is $L(g(x))\to L(g(y))$. Let $h:U\to Y$ be a~$1$-morphism of algebraic stacks with $U$ locally of finite type $\Fv$-scheme. Pick $s\in (h^*(g^*L))(U)=((g\circ h)^*L)(U)$. The map $$U(\Fv)\to\RR_{\geq 0}\hspace{1cm}z\mapsto g^*||s(z)||_{h\circ z}$$ is continuous, because it coincides with the continuous map $$U(\Fv)\to\RR_{\geq 0}\hspace{1cm}z\mapsto ||s(z)||_{g\circ (h\circ z)}.$$
\item One has that $g^*\OO_X=\OO_Y$. Let $x\in Y(\Fv)$. One has that $$g^*||1||_x=||1||_{g(x)}=1.$$ The claim follows.
\item Let $x\in Y(\Fv)$. The linear map $(g^*r)(x):(g^*L)(x)\to (g^*L')(g(x))$ is isometric as it coincides with the isometric linear map $L(g(x))\to L'(g(x)).$ It follows that $g^*r$ is isometric.
\item Let $x\in Y(\Fv)$ and pick $\ell\in (g^*L)(x)=L(g(x))$ and $\ell'\in (g^*L')(x)=L'(g(x))$. We have that $$g^*||\ell||_x\cdot g^*||\ell'||_x'=||\ell||_{g(x)}\cdot ||\ell'||_{g(x)}'.$$ It follows that $g^*(|\lvert\cdot\rvert|)\otimes g^*(|\lvert\cdot\rvert|')=g^*(|\lvert\cdot\rvert|\otimes|\lvert\cdot\rvert|').$ It follows that $g$ is an additive functor.
\item Let~$y$ be an $\Fv$-point of~$Y$. The isomorphism $t:g\xrightarrow{\sim}f$ induces the isomorphism $(t*y):g\circ y\xrightarrow{\sim} f\circ y$. The linear map $t^*L$ is precisely the linear map $g^*L(y)=L(g(y))\xrightarrow{(t*y)^*L}L(f(y))=f^*L(y)$ and is thus an isometry by the definition of an $\Fv$-metric. The claim is proven.
\end{enumerate}
\end{proof}
The lemma provides a group homomorphism $\widehat{\Pic}_v(X)\to\widehat{\Pic}_v(Y)$ that we also denote by $g^*$.
\subsection{} In this paragraph we study $\Fv$-metrics which are invariant for an action of an algebraic group.

Let~$v$ be a place of~$F$. Let~$G$ be a locally of finite type $\Fv$-algebraic group acting on a locally of finite type $\Fv$-scheme~$X$. By \ref{topgpagp}, one gets a topological action $G(\Fv)\times X(\Fv)\to X(\Fv).$
\begin{mydef}
Let~$L$ be a~$G$-linearized line bundle on~$X$. An $\Fv$-metric $|\lvert\cdot\rvert|$ on~$L$ is said to be~$G$-invariant if for every $t\in G(\Fv)$ and every $x\in X(\Fv),$ one has that the linear map $L(x)\to L(t\cdot x)$ given by the linearization is an isometric isomorphism. The $\Fv$-metrized line bundle $(L,|\lvert\cdot\rvert|)$ will be said to be~$G$-invariant.\end{mydef}
Let us introduce the category of~$G$-invariant $\Fv$-metrized line bundles.
\begin{itemize}
\item A morphism $\ell:(L,|\lvert\cdot\rvert|)\to (L',|\lvert\cdot\rvert|')$ of~$G$-invariant $\Fv$-metrized line bundles is a morphism of~$G$-linearized line bundles $\ell:L\to L'$ which is isometric.
\item The trivial~$G$-invariant $\Fv$-metrized line bundle is the trivial~$G$-linearized line bundle endowed with the metric: $||1||_x=1$ for every $x:\Spec(\Fv)\to X$ (it is immediate the metric is~$G$-invariant).
\item A morphism of~$G$-invariant $\Fv$-metrized line bundles is a morphism of corresponding~$G$-linearized line bundles which is an isometry.
\item The tensor product of two~$G$-invariant $\Fv$-metrized line bundles is a~$G$-invariant $\Fv$-metrized line bundle. 
\end{itemize}
The~$G$-invariant $\Fv$-metrized line bundles form a Picard category that we denote by $\widehat{\sPic^G_v}(X)$. Let $\widehat{\Pic^G_v}(X)$ be the abelian group given by the isomorphism classes of objects of $\widehat{\sPic^G_v}(X)$. One has a homomorphism of abelian groups $$\widehat{\Pic^G_v}(X)\to\Pic^G(X)$$which forgets the structure of $\Fv$-metrized line bundle.
One also has a canonical morphism $\widehat{\Pic^G_v}(X)\to\Pic_v(X)$ by simply forgetting that $\Fv$-metrized line bundle is~$G$-invariant.

If $E$ is a topological group acting on a topological space~$A$, we denote by $\mathscr C^0_E(A,B)$ the set of $E$-invariant continuous functions $A\to B$. If $B$ has a structure of a topological abelian group, then $\mathscr C^0_E(A,B)$ carries a structure of an abelian group. The map $\mathscr C^0(A/E,B)\to\mathscr C^0(A,B)$ induces an isomorphism $$\mathscr C^0(A,B)\xrightarrow{\sim}\mathscr C^0_E(A,B).$$
\begin{lem}\label{gfvexact}
If $f\in\mathscr C_{G(F_v)}^0(X(\Fv),\RR_{>0})$ then setting $$||\ell||_x^f:=|\ell(x)|_vf(x)$$ for every $x\in X(\Fv)$ and every $\ell\in\OO_X(x)=\Fv$, defines an $\Fv$-metric $|\lvert\cdot\rvert|^f$ on $\OO_X$ which is~$G$-invariant for the trivial~$G$-linearization of $\OO_X$. The sequence $$0\to\mathscr C_{G(F_v)}^0(X(\Fv),\RR_{>0})\rightarrow\widehat{\Pic^G_v}(X)\to\Pic^G(X),$$where $\mathscr C_{G(F_v)}^0(X(\Fv),\RR_{>0})\rightarrow\widehat{\Pic^G_v}(X)$ is given by ${f\mapsto (\OO_X,\Id_{\OO_{G\times X}},|\lvert\cdot\rvert|^f)},$ is exact.
\end{lem}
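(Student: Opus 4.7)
The plan is to follow the argument of Lemma~\ref{ftopic} with $G$-equivariant bookkeeping added at each step. The key conceptual point is that the trivial linearization $\Id_{\mathcal O_{G\times X}}$ on $\mathcal O_X$ makes the $G$-invariance condition on the metric equivalent to the $G(F_v)$-invariance of the associated function, so the proof of Lemma~\ref{ftopic} transports across with minimal modification.

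First I would check that $|\lvert\cdot\rvert|^f$ defines an $F_v$-metric on $\mathcal O_X$. Compatibility with $2$-isomorphisms is automatic since $X$ is a scheme, and the continuity axiom is verified exactly as in Lemma~\ref{ftopic}: for a $1$-morphism $g:U\to X$ from a locally of finite type $F_v$-scheme and a section $s\in\mathcal O_U(U)$, the map $z\mapsto |s(z)|_v\,f(g(z))$ is continuous because $s$, $g(F_v)$ and $f$ are continuous. To see that this metric is $G$-invariant for the trivial linearization $\Id_{\mathcal O_{G\times X}}$, observe that the induced linear map $\mathcal O_X(x)\to\mathcal O_X(t\cdot x)$ is the identity of $F_v$; the isometry requirement therefore reads $\|1\|_x^f=\|1\|_{t\cdot x}^f$, i.e.\ $f(x)=f(t\cdot x)$, which is precisely the $G(F_v)$-invariance of $f$.

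Next I would prove exactness at $\widehat{\Pic^G_v}(X)$. The composite with the forgetful map $\widehat{\Pic^G_v}(X)\to\Pic^G(X)$ is manifestly zero. Conversely, given a class $[(L,\psi,|\lvert\cdot\rvert|)]$ in the kernel of this forgetful map, I would fix an isomorphism $r:(L,\psi)\xrightarrow{\sim}(\mathcal O_X,\Id_{\mathcal O_{G\times X}})$ of $G$-linearized line bundles and transport the metric along $r$ to obtain a metric on $\mathcal O_X$ making $r$ an isometry. Because $r$ intertwines the linearizations and $|\lvert\cdot\rvert|$ is $G$-invariant, the transported metric remains $G$-invariant. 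Setting $f(x):=\|1\|_x$ for this transported metric produces a function $X(F_v)\to\RR_{>0}$; continuity is checked as in Lemma~\ref{ftopic} by testing against arbitrary morphisms $g:U\to X$ from schemes. The new point is $G(F_v)$-invariance of $f$: for $t\in G(F_v)$ and $x\in X(F_v)$, the linearization map $\mathcal O_X(x)\to\mathcal O_X(t\cdot x)$ is again the identity of $F_v$, and being isometric forces $f(x)=f(t\cdot x)$. Hence $(L,\psi,|\lvert\cdot\rvert|)\cong(\mathcal O_X,\Id_{\mathcal O_{G\times X}},|\lvert\cdot\rvert|^f)$ in $\widehat{\Pic^G_v}(X)$. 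Injectivity of the first homomorphism follows the same pattern as the corresponding argument in Lemma~\ref{ftopic}, with the intertwining invertible section additionally required to be $G$-invariant.

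The main obstacle I anticipate is nothing more than careful tracking of $G$-equivariance throughout: at each step where Lemma~\ref{ftopic} invokes an isomorphism or a trivializing section, the corresponding object must now be compatible with the $G$-linearizations. The single conceptual input beyond the non-equivariant case is the computation, invoked twice above, that identifies the $G$-invariance of a metric on $(\mathcal O_X,\Id_{\mathcal O_{G\times X}})$ with the $G(F_v)$-invariance of the associated function.
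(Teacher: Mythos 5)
Your proposal is correct and takes essentially the same route as the paper: both invoke Lemma~\ref{ftopic} for the metric verification, reduce $G$-invariance of the metric on $(\OO_X,\Id_{\OO_{G\times X}})$ to $G(F_v)$-invariance of the function $x\mapsto\|1\|_x$, and establish surjectivity onto the kernel by producing the function $x\mapsto\|1\|_x$ from a representative with trivial underlying $G$-linearized line bundle. Your explicit transport-of-structure step (fixing $r:(L,\psi)\xrightarrow{\sim}(\OO_X,\Id)$ and pushing the metric forward) merely fills in the detail the paper compresses into the phrase ``obviously, $L$ is the trivial $G$-linearized line bundle.''
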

\begin{proof}
Lemma \ref{ftopic} gives that $|\lvert\cdot\rvert|^f$ is a metric on $\OO_X$. Moreover, for every $x\in X(\Fv)$ and every $t\in G(\Fv)$, the linear map $\OO_X(x)\xrightarrow{=}\Fv\xrightarrow{=}\OO_X(t\cdot x)$ given by the trivial~$G$-linearization, is an isometry, because $||1||^f_x=f(x)=f(t\cdot x)=||1||^f_{t\cdot x}$. It follows that the $\Fv$-metric $|\lvert\cdot\rvert|^f$ is~$G$-invariant. 

The composite homomorphism $\mathscr C^0_{G(\Fv)}(X(\Fv),\RR_{>0})\to\widehat{\Pic^G_v}(X)\to\widehat{\Pic}(X)$ is the zero homomorphism. To complete proof, it suffices to verify that if $(L,|\lvert\cdot\rvert|)$ is in the kernel of $\widehat{\Pic^G_v}(X)\to\widehat{\Pic}(X)$, then $(L,|\lvert\cdot\rvert|)$ is in the image of $\mathscr C^0_{G(\Fv)}(X(\Fv),\RR_{>0})\to\widehat{\Pic^G_v}(X)$. Obviously,~$L$ is the trivial~$G$-linearized line bundle $(\OO_X,\Id_{\OO_{G\times X}})$. For every $x\in X(\Fv)$ and every $t\in\Gm(\Fv)$, the canonical linear map $F_v=\OO_X(x)\xrightarrow{=}\OO_X(t\cdot x)=\Fv,$ given by the trivial~$G$-linearization, is isometry and maps $1\in \OO_X(x)$ to $1\in\OO_X(t\cdot x),$ thus the function $x\mapsto ||1||_x$ is~$G$-invariant. It follows that $(L,|\lvert\cdot\rvert|)=(\OO_X,\Id_{\OO_{G\times X}})$ is the image of $x\mapsto ||1||_x$ for the homomorphism $\mathscr C^0_{G(\Fv)}(X(\Fv),\RR_{>0})\to\widehat{\Pic^G_v}(X)$ from above. The statement is proven.
\end{proof}
In the rest of paragraph we study~$G$-invariant $\Fv$-metrics on the trivial line bundle. Let us denote by $\widehat{\sPic^G_v}(\OO_X)$ the full subcategory of $\widehat{\sPic^G_v}(X)$ given by~$G$-invariant $\Fv$-metrized line bundles $(L,\psi,|\lvert\cdot\rvert|)$ such that $L=\OO_X$. It is immediate that $\widehat{\sPic^G_v}(\OO_X)$ is a Picard category. Let $\widehat{\Pic^G_v}(\OO_X)$ be the abelian group formed by isomorphism classes of objects of $\widehat{\sPic^G_v}(\OO_X)$. The canonical inclusion \begin{equation}\label{canincpic}\widehat{\Pic^G_v}(\OO_X)\to\widehat{\Pic^G_v}(X)\end{equation} is an equality, if $\Pic(X)=0$.
\begin{lem}\label{metribun}
Let $\psi:G\times X\to\Gm$ be a~$G$-linearization of the trivial line bundle on~$X$. An~$G$-invariant $\Fv$-metric $|\lvert\cdot\rvert|$ satisfies  that $x\mapsto (||1||_x)^{-1}$ is a continuous function and that for every $x\in X(\Fv)$ and every $t\in G(\Fv)$ one has $$(||1||_{t\cdot x})^{-1}=|\psi (t,x)|_v (||1||_{ x})^{-1}.$$ Conversely, let $f:X(\Fv)\to\RR_{>0}$ be a continuous function such that for every $x\in X(\Fv)$ and every $t\in G(\Fv)$ one has $f(t\cdot x)=|\psi (t,x)|_vf( x)$. Then setting $||1||_x:=(f(x))^{-1}$ for every $x\in X(\Fv)$ defines a~$G$-invariant $\Fv$-metric on the~$G$-linearized line bundle $(\OO_X,\psi)$.
\end{lem}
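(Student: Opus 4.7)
For the forward implication, assume $|\lvert\cdot\rvert|$ is a $G$-invariant $\Fv$-metric on $(\OO_X,\psi)$. Applying the third bullet of Definition \ref{defofmet} to the $1$-morphism $\Id_X:X\to X$ and to the constant section $1\in\OO_X(X)$ yields continuity of $x\mapsto\lVert 1\rVert_x$ on $X(\Fv)$; postcomposing with the continuous inversion map on $\Rgz$ gives continuity of $x\mapsto\lVert 1\rVert_x^{-1}$. For the functional equation, I unpack how the linearization $\psi:p_2^*\OO_X\xrightarrow{\sim}a^*\OO_X$ acts on fibers: at a point $(t,x)\in G(\Fv)\times X(\Fv)$, the induced linear isomorphism $\OO_X(x)=\Fv\xrightarrow{\sim}\Fv=\OO_X(t\cdot x)$ is multiplication by the scalar $\psi(t,x)\in\Fv^\times$, hence sends $1_x$ to $\psi(t,x)\cdot 1_{t\cdot x}$. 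The $G$-invariance hypothesis forces this map to be an isometry, i.e.\ $\lVert 1\rVert_x=|\psi(t,x)|_v\lVert 1\rVert_{t\cdot x}$, and rearranging yields the claimed identity $\lVert 1\rVert_{t\cdot x}^{-1}=|\psi(t,x)|_v\lVert 1\rVert_x^{-1}$.

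For the converse, define $\lVert 1\rVert_x:=f(x)^{-1}$ and extend to each fiber $\OO_X(x)=\Fv$ by $\Fv$-linearity, so $\lVert\ell\rVert_x=|\ell|_v f(x)^{-1}$; I then verify the three bullets of Definition \ref{defofmet} and the $G$-invariance separately. Since $X$ is a scheme, the only $2$-morphisms between $\Fv$-points are identities, so the compatibility with $2$-morphisms is automatic. For the continuity condition, given any $1$-morphism $g:U\to X$ from a locally of finite type $\Fv$-scheme $U$ and any section $s\in(g^*\OO_X)(U)=\OO_U(U)$, the map $z\mapsto\lVert s(z)\rVert_{g\circ z}=|s(z)|_v\,f(g(z))^{-1}$ is continuous, as a composition of the continuous maps $|s|_v:U(\Fv)\to\RR_{\ge 0}$, $g(\Fv):U(\Fv)\to X(\Fv)$ (continuity of the latter coming from \ref{topofsch} and \ref{ceso}), and $f$ and inversion on $\Rgz$. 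For $G$-invariance, the fiber map $\OO_X(x)\to\OO_X(t\cdot x)$ sends $1$ to $\psi(t,x)$, and the assumed cocycle-type relation on $f$ gives
\[\lVert \psi(t,x)\rVert_{t\cdot x}=|\psi(t,x)|_v f(t\cdot x)^{-1}=|\psi(t,x)|_v\bigl(|\psi(t,x)|_v f(x)\bigr)^{-1}=f(x)^{-1}=\lVert 1\rVert_x,\]
so this map is an isometry, establishing $G$-invariance of the $\Fv$-metric.

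No genuinely hard step is anticipated: the content of the lemma is the translation of a $G$-invariant $\Fv$-metric on the trivial line bundle into a continuous positive function satisfying a cocycle-type condition, and the only care needed is to fix the direction in which $\psi$ multiplies sections on fibers. This is determined unambiguously by the convention $\psi:p_2^*\OO_X\xrightarrow{\sim}a^*\OO_X$ of Definition \ref{Fog}, after which both implications reduce to bookkeeping together with one application each of Definition \ref{defofmet}.
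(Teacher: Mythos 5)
Your proof is correct and follows essentially the same route as the paper's: in the forward direction you identify the fiber map of the linearization as multiplication by $\psi(t,x)$ and read the functional identity off from isometry; in the converse you set $\lVert 1\rVert_x=f(x)^{-1}$, check the three clauses of Definition \ref{defofmet} (noting $2$-morphism compatibility is vacuous for a scheme), and verify isometry from the cocycle condition on $f$. You are slightly more explicit than the paper (which delegates ``continuous, hence defines an $\Fv$-metric'' to earlier material), but there is no difference in method.
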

\begin{proof}
The function $x\mapsto (||1||_x)^{-1}$ is continuous as $|\lvert\cdot\rvert|$ is an $\Fv$-metric. For $x\in X(\Fv)$ and $t\in G(\Fv)$, the linear map $\Fv=\OO_X(x)\to\OO_X(t\cdot x)=\Fv,$ induced from the~$G$-linearization $\psi$, is given by multiplication by $\psi(t,x).$ Now, the fact that $|\lvert\cdot\rvert|$ is~$G$-invariant gives that for every $x\in X(\Fv)$ and every $t\in G(\Fv)$ one has that $$||1||_x=||\psi(t,x)||_{t\cdot x}=|\psi(t,x)|_v||1||_{t\cdot x},$$i.e. $$(||1||_{t\cdot x})^{-1}=|\psi(t,x)|_v(||1||_x)^{-1}.$$ Suppose now $f:X(\Fv)\to\RR_{>0}$ is a continuous function such that for every $x\in X(\Fv)$ and every $t\in G(\Fv)$ one has $f(t\cdot x)=|\psi (t,x)|_vf( x)$. For every $x\in X(\Fv)$, set $||1||_{x}=(f(x))^{-1}$. The function $x\mapsto ||1||_x$ is continuous and hence defines an $\Fv$-metric on $\OO_X$. We have that $\Fv=\OO_X(x)\to\OO_X(t\cdot x)=\Fv$ maps~$1$ to $\psi(t,x)$ and thus $$(||1||_{t\cdot x})^{-1}=f(t\cdot x)=|\psi(t,x)|_vf(x)=|\psi (t,x)|_v (||1||_{ x})^{-1}.$$
\end{proof}
\subsection{} In this paragraph we compare the $\Fv$-metrized line bundles on the quotient $X/G$ and~$G$-linearized $\Fv$-metrized line bundles on the scheme~$X$. 

Let~$v$ be a place of~$F$ and let~$X$ be a locally of finite type $\Fv$-scheme. Let~$G$ be a locally of finite presentation $\Fv$-group scheme acting on~$X$. The quotient stack $X/G$ is an algebraic stack by \ref{propofstack} and locally of finite type by \cite[\href{https://stacks.math.columbia.edu/tag/06FM}{Lemma 06FM}]{stacks-project}.
\begin{lem}\label{qmetricpull}
Let $q:X\to X/G$ be the quotient morphism. Let $\mathcal F: \sPic(X/G)\to \sPic^G(X)$ be the equivalence defined in \ref{defoflift}. 
\begin{enumerate}
\item Let $|\lvert\cdot\rvert|$ be an $\Fv$-metric on $L\in\sPic(X/G).$ The $\Fv$-metric $q^*|\lvert\cdot\rvert|$ on $q^*L$ makes $\mathcal F(L)$ a~$G$-invariant $\Fv$-metrized line bundle that we denote $\widehat{\mathcal F}(L)$.
\item If $\ell:(L,|\lvert\cdot\rvert|)\to (L',|\lvert\cdot\rvert|)$ is a morphism of $\Fv$-metrized line bundles, then $\mathcal F(\ell):\widehat {\mathcal F}(L)\to \widehat {\mathcal F}(L')$ is an isometry, thus a morphism of~$G$-invariant $\Fv$-metrized line bundles.  In this case we set $\widehat {\FFF}(\ell):={\FFF}(\ell)$. 
\item If~$L$ is the trivial $\Fv$-metrized line bundle on ${X/G}$, then $\widehat{\FFF}(L)$ is the trivial~$G$-invariant $\Fv$-metrized line bundle.
\item The functor $\widehat \FFF:\widehat{\sPic}(X)\to\widehat{\sPic^G_v}(X)$ is an additive functor. 
\end{enumerate}
\label{komzanis}
\end{lem}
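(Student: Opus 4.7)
The plan is to deduce all four claims from Lemma \ref{ugovn} (pullback of metrics) together with the defining axioms of an $F_v$-metric, especially the axiom that every $2$-morphism between $F_v$-points induces an isometric isomorphism between the fibers of a metrized line bundle.

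For part (1), I would first invoke part (1) of Lemma \ref{ugovn} applied to $q:X\to X/G$, which already guarantees that $(q^*L,q^*|\lvert\cdot\rvert|)$ is an $F_v$-metrized line bundle. It only remains to check that this metric is $G$-invariant for the canonical linearization $\psi=t^*L$ on $q^*L$ constructed in \ref{defoflift}, where $t:q\circ pr_2\xrightarrow{\sim} q\circ a$ is the chosen $2$-isomorphism. Fix $g\in G(F_v)$ and $x\in X(F_v)$. Unwinding definitions, the linear map $(q^*L)(x)\to (q^*L)(g\cdot x)$ induced by $\psi$ is precisely the identification $L(q(x))\to L(q(g\cdot x))$ coming from the $2$-isomorphism $t(g,x):q(x)\xrightarrow{\sim} q(g\cdot x)$. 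By the second bullet of Definition \ref{defofmet}, any such $2$-isomorphism induces an isometric isomorphism on the fibers of the metrized line bundle $(L,|\lvert\cdot\rvert|)$. Hence the linearization is an isometry at every $F_v$-point, which is exactly $G$-invariance. This is the step I expect to be the most delicate, because one must carefully trace the definition of $\psi=t^*L$ through the formalism of \cite[\href{https://stacks.math.columbia.edu/tag/06WK}{Lemma 06WK}]{stacks-project} to recognise the resulting fiber map as the one induced by a $2$-morphism between $F_v$-points, after which the metric axiom does all the work.

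For part (2), if $\ell:(L,|\lvert\cdot\rvert|)\to(L',|\lvert\cdot\rvert|')$ is isometric, then by part (3) of Lemma \ref{ugovn} the pullback $q^*\ell=\FFF(\ell)$ is again isometric. Since $\FFF(\ell)$ is automatically a morphism of $G$-linearized line bundles (by the construction of $\FFF$ in \ref{defoflift}), it is thus a morphism of $G$-invariant $F_v$-metrized line bundles. For part (3), the trivial $F_v$-metrized line bundle on $X/G$ is $(\OO_{X/G},|\lvert\cdot\rvert|)$ with $||1||_y=1$ for every $y\in (X/G)(F_v)$; pulling back along $q$ gives $\OO_X$ with $||1||_x=||1||_{q(x)}=1$ for every $x\in X(F_v)$, which is the trivial $G$-invariant $F_v$-metrized line bundle by definition (together with the fact, from part (2) of Lemma \ref{ugovn}, that $q^*$ sends trivial line bundles to trivial line bundles, compatibly with linearizations).

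Finally, for part (4), additivity of $\widehat{\FFF}$ follows on the level of line bundles and linearizations from the fact that $\FFF$ is additive (Proposition \ref{gpic}), and on the level of metrics from part (4) of Lemma \ref{ugovn}, which states that pullback commutes with tensor product of metrics: $q^*(|\lvert\cdot\rvert|\otimes|\lvert\cdot\rvert|')=q^*|\lvert\cdot\rvert|\otimes q^*|\lvert\cdot\rvert|'$. Combining these, $\widehat{\FFF}((L,|\lvert\cdot\rvert|)\otimes(L',|\lvert\cdot\rvert|'))$ agrees with $\widehat{\FFF}(L,|\lvert\cdot\rvert|)\otimes\widehat{\FFF}(L',|\lvert\cdot\rvert|')$ as $G$-invariant $F_v$-metrized line bundles, and analogously the trivial object is preserved by part (3).
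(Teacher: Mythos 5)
Your proof is correct and follows essentially the same route as the paper: part (1) reduces to recognising the linearization fiber map as the map induced by a $2$-isomorphism of $F_v$-points and then invoking the metric axiom, while parts (2)--(4) are read off directly from the corresponding parts of Lemma \ref{ugovn}.
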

\begin{proof}
\begin{enumerate}
\item Let $x\in X(\Fv)$ and $t\in G(\Fv)$. The map $(q^*L)(x)\to (q^*L)(t\cdot x)$ given by the~$G$-linearization defining $\mathcal F(L)$ coincides with the linear map $(q^*L)(x)=L(q(x))\to L(q(t\cdot x))=(q^*L)(t\cdot x)$ given by the isomorphism $q(x)\xrightarrow{\sim} q(t\cdot x)$. The map $L(q(x))\to L(q(t\cdot x))$ is an isometry by the definition of an $\Fv$-metric, hence is $(q^*L)(x)\to (q^*L)(t\cdot x)$ is an isometry. The claim follows.
\item  The morphism $\mathcal F(\ell)$ identifies with the morphism $q^*\ell$, which is isometric by the third part of \ref{ugovn}. 
\item The~$G$-linearized line bundle defining $\widehat{\mathcal F}(\OO_{X/G})$ is the trivial~$G$-linearized line bundle. Moreover, the $\Fv$-metrized line bundle defining $\widehat{\mathcal F}(\OO_{X/G})$ is precisely the trivial $\Fv$-metrized line bundle by \ref{ugovn}. It follows that $\widehat{\mathcal F}(\OO_{X/G})$ is the trivial~$G$-invariant $\Fv$-metrized line bundle.
\item Let $(L,|\lvert\cdot\rvert|)$ and $(L',|\lvert\cdot\rvert|' )$ be two $\Fv$-metrized line bundles on $X/G$. The~$G$-linearized line bundle defining $\widehat{\FFF}((L,|\lvert\cdot\rvert|)\otimes(L',|\lvert\cdot\rvert|' ))$ is the~$G$-linearized line bundle ${\FFF}(L\otimes L')=\FFF(L)\otimes\FFF(L')$. The $\Fv$-metrized line bundle defining $\widehat{\FFF}((L,|\lvert\cdot\rvert|)\otimes(L',|\lvert\cdot\rvert|' ))$ is precisely the $\Fv$-metrized line bundle $(q^*(L\otimes L'),q^*(|\lvert\cdot\rvert|\otimes|\lvert\cdot\rvert|'))=(q^*L\otimes q^*L',q^*|\lvert\cdot\rvert|\otimes q^*|\lvert\cdot\rvert|').$ It follows that $\widehat\FFF$ is an additive functor. 
\end{enumerate}
\end{proof}
If follows from \ref{komzanis}, that we have a homomorphism of abelian groups \begin{equation}\label{picglin}\widehat{\Pic}_v(X/G)\to\widehat{\Pic^G_v}(X).\end{equation}
The map $X(\Fv)\to [(X/G)(\Fv)]$ is $G(\Fv)$-invariant by \ref{qstop}and thus if $f\in \mathscr C^0({[(X/G)(\Fv)],\RR_{>0}})$ its pullback along $[q(\Fv)]$ is an element of $\mathscr C^0_{G(\Fv)}(X(\Fv),\RR_{>0})$.
\begin{lem}\label{cuviiij}
The following diagram is commutative:\[\begin{tikzcd}
	0 & \mathscr C^0({[(X/G)(\Fv)],\RR_{>0}})& \widehat{\Pic}_v(X/G)  & {\Pic}(X/G) \\
	0 & \mathscr C^0_{G(\Fv)}(X(\Fv),\RR_{>0}) & \widehat{\Pic^G_v}(X)&\Pic^G(X).
	\arrow[from=1-1, to=2-1]
	\arrow[from=1-2, to=2-2]
	\arrow[from=1-2, to=2-2]
	\arrow[from=2-2, to=2-3]
	\arrow[from=1-1, to=1-2]
	\arrow[from=2-1, to=2-2]
	\arrow[from=1-2, to=1-3]
	\arrow[from=1-3, to=2-3]
	\arrow[from=1-3, to=1-4]
	\arrow[from=2-3, to=2-4]
	\arrow[from=1-4, to=2-4]
\end{tikzcd}\]
\end{lem}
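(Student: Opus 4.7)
The plan is to verify commutativity of each of the three squares individually; none is deep, but the middle square requires explicit comparison of two constructions of a metric on $\mathcal O_X$. The leftmost square involving the two zero objects commutes trivially. For the right square, both vertical arrows forget the $\Fv$-metric. The top arrow $\widehat{\Pic}_v(X/G)\to\widehat{\Pic^G_v}(X)$ is $\widehat{\FFF}$, which sends $(L,|\lvert\cdot\rvert|)$ to $(\FFF(L),q^*|\lvert\cdot\rvert|)$; forgetting the metric afterwards yields $\FFF(L)$. Conversely, forgetting the metric first yields $L$ and then $\FFF$ gives $\FFF(L)$. So both composites equal the homomorphism $\Pic(X/G)\to\Pic^G(X)$ from \ref{gpic}, confirming commutativity.

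For the middle square, fix $f\in\mathscr C^0([(X/G)(\Fv)],\RR_{>0})$. Going right-then-down sends $f$ to $(\OO_{X/G},|\lvert\cdot\rvert|^f)$ as in \ref{ftopic}, and then $\widehat{\FFF}$ sends this to $(\FFF(\OO_{X/G}),q^*|\lvert\cdot\rvert|^f)$. By part (3) of \ref{qmetricpull}, the underlying $G$-linearized line bundle $\FFF(\OO_{X/G})$ is the trivial one $(\OO_X,\Id_{\OO_{G\times X}})$. Going down-then-right first pulls $f$ back to $f\circ[q(\Fv)]$, which is $G(\Fv)$-invariant because by \ref{qstop} the map $[q(\Fv)]$ is $G(\Fv)$-invariant, so $f\circ[q(\Fv)]$ indeed lies in $\mathscr C^0_{G(\Fv)}(X(\Fv),\RR_{>0})$; then \ref{gfvexact} yields $(\OO_X,\Id_{\OO_{G\times X}},|\lvert\cdot\rvert|^{f\circ[q(\Fv)]})$.

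It thus remains to compare the two $\Fv$-metrics $q^*|\lvert\cdot\rvert|^f$ and $|\lvert\cdot\rvert|^{f\circ[q(\Fv)]}$ on $\OO_X$. For every $x\in X(\Fv)$ one has, by definition of the pullback metric and of $|\lvert\cdot\rvert|^f$,
\[
q^*\|1\|^f_x \;=\; \|1\|^f_{q\circ x} \;=\; |1|_v\, f([q(x)]) \;=\; f([q(x)]),
\]
while
\[
\|1\|^{f\circ[q(\Fv)]}_x \;=\; |1|_v\,(f\circ[q(\Fv)])(x) \;=\; f([q(x)]).
\]
The two metrics coincide, so the middle square commutes. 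The only modest obstacle is the bookkeeping of identifying $\FFF(\OO_{X/G})$ with the trivial $G$-linearized line bundle, which is handled by part (3) of \ref{qmetricpull}; everything else is immediate from the definitions of $\widehat{\FFF}$, the pullback metric, and the two embeddings of continuous functions into metrized Picard groups.
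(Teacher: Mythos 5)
Your argument is correct and follows essentially the same route as the paper's proof: the right square commutes by the very construction of the homomorphism $\widehat{\Pic}_v(X/G)\to\widehat{\Pic^G_v}(X)$, and for the middle square both proofs trace $f$ around the diagram and verify the pointwise identity $q^*\|1\|^f_x=f([q(\Fv)](x))=\|1\|^{f\circ[q(\Fv)]}_x$. Your explicit invocation of part (3) of \ref{qmetricpull} for the trivial $G$-linearization is a helpful clarification the paper glosses over, but the substance is identical.
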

\begin{proof}
The diagram
\[\begin{tikzcd}
	\widehat{\Pic_v}(X/G) & \Pic(X/G)  \\
	\widehat{\Pic^G_v}(X) & \Pic^G(X)
	\arrow[from=1-1, to=1-2]
	\arrow[from=2-1, to=2-2]
	\arrow[from=1-1, to=2-1]
	\arrow[from=1-2, to=2-2]
\end{tikzcd}\]
is commutative by the construction of (\ref{picglin}). Let $q:X\to X/G$ be the quotient~$1$-morphism. We prove the commutativity of 
\begin{equation}\label{viuby}\begin{tikzcd}
	 \mathscr C^0({[(X/G)(\Fv)],\RR_{>0}})& \widehat{\Pic_v}(X/G)  \\
	 \mathscr C^0_{G(\Fv)}(X(\Fv),\RR_{>0})& \widehat{\Pic^G_v}(X)
	\arrow[from=1-1, to=1-2]
	\arrow[from=2-1, to=2-2]
	\arrow[from=1-1, to=2-1]
	\arrow[from=1-2, to=2-2]
\end{tikzcd}\end{equation}
Let $f\in \mathscr C^0({[(X/G)(\Fv)],\RR_{>0}})$. Its image in $\mathscr C^{0}_{G(\Fv)}(X(\Fv),\RR_{>0})$ is $f\circ[q(\Fv)]$.  The image of $f\circ[q(\Fv)]$ in $\widehat{\Pic^G_v}(X)$ is $(\OO_{X},\Id_{\mathcal O_{G\times X}},|\lvert\cdot\rvert|^{f\circ[q(F_v)]})$, where $|\lvert\cdot\rvert|^{f\circ[q(F_v)]}$ is defined by $||1||^{f\circ[q(F_v)]}_x=f([q(\Fv)](x))$ for $x\in X(\Fv)$. The image of~$f$ in $\widehat{\Pic_v}(X/G)$ is the $\Fv$-metrized line bundle $(\OO_X, |\lvert\cdot\rvert|^f),$ where $|\lvert\cdot\rvert|^f$ is defined by $||1||^f_{y}=f(y)$ for $y\in [X/G](\Fv)$. The image of $(\OO_X, |\lvert\cdot\rvert|^f)$ in $\widehat{\Pic^G_v}(X)$ is the~$G$-invariant $\Fv$-metrized line bundle $(\OO_X,\Id_{\OO_{G\times X}},q^*|\lvert\cdot\rvert|^f). $ Note that $q^*||1||^f_{x}=||1||^f_{[q(\Fv)](x)}=f([q(\Fv)](x))$, i.e. $q^*|\lvert\cdot\rvert|^f= |\lvert\cdot\rvert|^{f\circ[q(Fv)]}.$ We deduce the commutativity of \ref{viuby}. The statement is proven.
\end{proof}
We give a conditions for the homomorphism (\ref{picglin}) to be injective and an isomorphism.
\begin{prop}\label{gmetmet}
Let~$v$ be a place of~$F$. Let~$G$ be a special locally of finite type $\Fv$-group scheme acting on locally of finite type $\Fv$-scheme~$X$. The homomorphism (\ref{picglin}) is injective. If moreover $\widehat{\Pic}_v(X/G)\to\Pic(X/G)$ is surjective, then (\ref{picglin}) is an isomorphism. 
\end{prop}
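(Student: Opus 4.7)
The plan is to combine the commutative diagram of Lemma \ref{cuviiij} with the identifications provided by Proposition \ref{qstop} and Proposition \ref{gpic}, and then run a diagram chase.

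First, I would exploit the hypothesis that $G$ is special. By Proposition \ref{qstop}, the canonical continuous map $\overline{[q(F_v)]}\colon X(F_v)/G(F_v)\to [(X/G)(F_v)]$ is a homeomorphism. Pullback along this homeomorphism gives an isomorphism of abelian groups
\[
\mathscr C^0([(X/G)(F_v)],\RR_{>0})\xrightarrow{\sim}\mathscr C^0(X(F_v)/G(F_v),\RR_{>0})=\mathscr C^0_{G(F_v)}(X(F_v),\RR_{>0}),
\]
which is precisely the left vertical arrow in the diagram of Lemma \ref{cuviiij}. Separately, Proposition \ref{gpic} gives that the right vertical arrow $\Pic(X/G)\to\Pic^G(X)$ is an isomorphism. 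Combined with the exact rows provided by Lemmas \ref{ftopic} and \ref{gfvexact}, we are exactly in a setup where the five-lemma/diagram-chase arguments apply.

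For injectivity, suppose $(L,\|\cdot\|)\in\widehat{\Pic}_v(X/G)$ has trivial image in $\widehat{\Pic^G_v}(X)$. Chasing through the diagram, its image in $\Pic^G(X)$ is trivial, so by the right-hand isomorphism its image in $\Pic(X/G)$ is trivial. By exactness of the top row, $(L,\|\cdot\|)$ comes from a continuous function $f\in\mathscr C^0([(X/G)(F_v)],\RR_{>0})$. The image of $f$ in $\widehat{\Pic^G_v}(X)$ agrees with the image of $(L,\|\cdot\|)$ there, hence is trivial, so by exactness of the bottom row together with the injectivity of the bottom map out of $\mathscr C^0_{G(F_v)}(X(F_v),\RR_{>0})$, the image of $f$ in $\mathscr C^0_{G(F_v)}(X(F_v),\RR_{>0})$ is $\equiv 1$. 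Since the left vertical is an isomorphism, $f\equiv 1$, and therefore $(L,\|\cdot\|)$ is trivial in $\widehat{\Pic}_v(X/G)$.

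For surjectivity under the additional hypothesis, let $(M,\psi,\|\cdot\|)\in\widehat{\Pic^G_v}(X)$ and denote by $[M,\psi]\in\Pic^G(X)$ its class. By Proposition \ref{gpic}, $[M,\psi]$ is the image of a unique class $[N]\in\Pic(X/G)$. By the assumption that $\widehat{\Pic}_v(X/G)\to\Pic(X/G)$ is surjective, there exists $(N,\|\cdot\|')\in\widehat{\Pic}_v(X/G)$ lifting $[N]$. Its image $(N,\|\cdot\|')^{\widehat{}}\in\widehat{\Pic^G_v}(X)$ shares the class of $(M,\psi,\|\cdot\|)$ in $\Pic^G(X)$, so the difference $(M,\psi,\|\cdot\|)-(N,\|\cdot\|')^{\widehat{}}$ lies in the kernel of $\widehat{\Pic^G_v}(X)\to\Pic^G(X)$, which by Lemma \ref{gfvexact} is the image of $\mathscr C^0_{G(F_v)}(X(F_v),\RR_{>0})$. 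The left vertical being an isomorphism, this difference is the image of some $f\in\mathscr C^0([(X/G)(F_v)],\RR_{>0})$, viewed in $\widehat{\Pic}_v(X/G)$ via Lemma \ref{ftopic}. Then $(N,\|\cdot\|')$ twisted by $f$ is a preimage of $(M,\psi,\|\cdot\|)$, completing the proof.

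No step is genuinely hard; the only point requiring care is the identification of the left vertical arrow as an isomorphism, which is why the hypothesis that $G$ is special is essential (it is precisely what allows Proposition \ref{qstop} to identify $[(X/G)(F_v)]$ with the topological quotient $X(F_v)/G(F_v)$).
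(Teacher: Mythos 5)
Your proposal is correct and follows essentially the same path as the paper: it uses the commutative diagram of Lemma \ref{cuviiij} with the exact rows from Lemmas \ref{ftopic} and \ref{gfvexact}, the identification $[(X/G)(F_v)]\cong X(F_v)/G(F_v)$ via Proposition \ref{qstop} (where specialness of $G$ enters), and the isomorphism $\Pic(X/G)\cong\Pic^G(X)$ from Proposition \ref{gpic}. The paper simply cites the four-lemma (for injectivity) and, after adjoining the cokernel $E$ as a fifth column, the five-lemma (for surjectivity) rather than writing out the diagram chase as you do.
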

\begin{proof}
Consider the diagram from \ref{cuviiij}. Both horizontal sequences are exact by \ref{gfvexact} and \ref{ftopic}. Obviously, the first vertical homomorphism is an isomorphism. As~$G$ is special, one can identify $[(X/G)(\Fv)]$ with the topological quotient $X(\Fv)/G(\Fv)$ using \ref{qstop}. Thus, the second vertical homomorphism, given by pulling back continuous functions on $[X/G(\Fv)]=X(\Fv)/G(\Fv)$ to $X(\Fv),$ is an isomorphism. The fourth vertical homomorphism is an isomorphism by \ref{gpic}. By $4$-lemma, we deduce that $\widehat{\Pic}_v(X/G)\to\widehat{\Pic^G_v}(X)$ is injective. Suppose that $\widehat{\Pic}_v(X/G)\to\Pic(X/G)$ is surjective. The following diagram is commutative.
\[\begin{tikzcd}[column sep=small]
 0 & \mathscr C^0({[(X/G)(\Fv)],\RR_{>0}})& \widehat{\Pic}_v(X/G)&\Pic(X/G) & 0 \\
0 & \mathscr C^0_{G(\Fv)}(X(\Fv),\RR_{>0}) & \widehat{\Pic^G_v}(X)& \Pic^G(X)&E,
         \arrow[from=1-1, to=2-1]
	\arrow[from=1-1, to=1-2]
	\arrow[from=1-2, to=1-3]
	\arrow[from=1-3, to=1-4]
	\arrow[from=1-4, to=1-5]
	\arrow[from=1-5, to=2-5]
	\arrow[from=2-1, to=2-2]
	\arrow[from=2-2, to=2-3]
	\arrow[from=2-3, to=2-4]
	\arrow[from=2-4, to=2-5]
	\arrow[from=1-2, to=2-2]
	\arrow[from=1-3, to=2-3]
	\arrow[from=1-4, to=2-4]
\end{tikzcd}\]
where $E$ is the quotient of $\Pic^G(X)$ with the image of $\widehat{\Pic^G_v}(X)$. Obviously, the first and the fifth vertical homomorphisms are an isomorphism and a monomorphism, respectively. Again, the second and the fourth vertical homomorphisms are isomorphisms.  By $5$-lemma, we deduce that $\widehat{\Pic}_v(X/G)\to\widehat{\Pic^G_v}(X)$ is an isomorphism.
\end{proof}
\subsection{}\label{metricsonlx}
We give some conditions that will make $\wPic_v(X)\to\Pic(X)$ surjective for an algebraic stack~$X$. Let~$v$ be a place of~$F$. We are going to construct $\Fv$-metrics on line bundles on algebraic stacks, by pulling back metrics on line bundles on schemes. 
\begin{lem}\label{picmet}
Let~$X$ be a locally of finite type $\Fv$-algebraic stack satisfying the following condition: for every line bundle~$L$ on~$X$, there exist positive integer $m,$ a locally of finite type $\Fv$-scheme~$Z$ and a line bundle $L'$ on~$Z$ which is in the image of the canonical homomorphism $\widehat{\Pic}_v(Z)\to\Pic(Z),$ such that $L^{\otimes m}=g^*(L').$ Then $\widehat{\Pic}_v(X)\to\Pic(X)$ is surjective.
\end{lem}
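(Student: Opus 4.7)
The plan is to reduce the surjectivity statement to two tools already proved: the pullback construction for $F_v$-metrics (Lemma~\ref{ugovn}) and the $m$-th root construction (Lemma~\ref{mthroot}(2)). Given $L\in\Pic(X)$, the hypothesis furnishes a positive integer $m$, a locally of finite type $F_v$-scheme $Z$, a morphism $g\colon X\to Z$ and a line bundle $L'$ on $Z$ equipped with an $F_v$-metric $|\lvert\cdot\rvert|'$, together with an isomorphism $L^{\otimes m}\xrightarrow{\sim} g^{*}L'$ in $\Pic(X)$. My strategy is to transport an $F_v$-metric backwards along this chain.

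First I would form $(g^{*}L',g^{*}|\lvert\cdot\rvert|')\in\widehat{\sPic}_v(X)$ using part~(1) of Lemma~\ref{ugovn}, which guarantees that the pullback of an $F_v$-metric is again an $F_v$-metric. Next, using the fixed isomorphism $L^{\otimes m}\xrightarrow{\sim} g^{*}L'$, I would transport this metric to an $F_v$-metric on $L^{\otimes m}$ (this is formal: a metric on a line bundle can be transported along any isomorphism of line bundles, yielding a metric in the same isomorphism class in $\widehat{\Pic}_v(X)$). Finally I would apply Lemma~\ref{mthroot}(2) to extract an $F_v$-metric $\sqrt[m]{|\lvert\cdot\rvert|}$ on $L$ itself. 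This produces an element of $\widehat{\Pic}_v(X)$ whose underlying line bundle is $L$, so $L$ lies in the image of $\widehat{\Pic}_v(X)\to\Pic(X)$. Since $L$ was arbitrary, surjectivity follows.

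There is no genuine obstacle here: everything reduces to the two lemmas just cited, and the verification that transporting a metric along an isomorphism of line bundles gives a metric is immediate from Definition~\ref{defofmet} (the continuity condition and the isometry-on-$2$-isomorphism condition are both preserved under composition with a fixed isomorphism of line bundles). The only point to be careful about is that Lemma~\ref{mthroot}(2) requires an $F_v$-metric on $L^{\otimes m}$ (and not merely on something isomorphic to it), which is exactly why the intermediate transport step along the isomorphism $L^{\otimes m}\xrightarrow{\sim} g^{*}L'$ is needed before taking the $m$-th root.
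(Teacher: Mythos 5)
Your proposal is correct and follows essentially the same approach as the paper: pull back an $F_v$-metric from $L'$ along $g$ using Lemma~\ref{ugovn}, then take the $m$-th root via Lemma~\ref{mthroot}. The only difference is that you explicitly spell out the step of transporting the metric along the isomorphism $L^{\otimes m}\xrightarrow{\sim} g^*L'$, which the paper elides by writing the hypothesis as an equality; this is a harmless and correct elaboration, not a different route.
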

\begin{proof}
The line bundle $L'$ admits an $\Fv$-metric, hence $L^{\otimes m}=g^*(L')$ admits an $\Fv$-metric by \ref{ugovn}.  It follows that~$L$ admits an $\Fv$-metric by \ref{mthroot}. The statement follows.
\end{proof}
\begin{rem}
\normalfont
In the case~$Z$ is a separated scheme, then every line bundle on~$Z$ admits an $\Fv$-metric. Indeed $Z(\Fv)$ is locally compact and Hausdorff topological space. Moreover, it is a finite union of paracompact spaces (if $U$ is an open affine subset of~$Z$, then $U(\Fv)$ is a closed subspace of $\Fv^n$, thus paracompact). By \cite[Proposition 18, \no 10, \S 9, Chapter I]{TopologieGj}, we deduce that $Z(\Fv)$ is paracompact.
By \cite[Proposition 4, \no 4, \S 4, Chapter IX]{TopologieGd}, the space $Z(\Fv)$ is normal. By \cite[Theorem 3, \no 3, \S 4, Chapter IX]{TopologieGd}, one can find a partitions of unity subordinated to every locally finite open covering and one can use them to construct metrics in a usual way.
\end{rem}
\begin{rem}
\normalfont
The following is true.  A locally of finite type $\Fv$-stack~$X$ of finite diagonal, that admits a separated good moduli space $p:X\to\mathcal X$ (see \cite[Definition 1.2]{Alper}), satisfies that $\wPic(X)\to\Pic(X)$ is surjective. Indeed, by the fact that the diagonal of~$X$ is finite and by \cite[Section 2]{Alper}, every line bundle~$L$ on~$X$ admits a multiple $L^{\otimes m}$ which is a pullback of a line bundle on $\mathcal X$. We can endow $L^{\otimes m}$ with the pullback of a metric on the line bundle on $\mathcal X$, and we can endow~$L$ with the ``$m$-th root" metric by Lemma \ref{mthroot}.
\end{rem}
\subsection{}
We will prove that every line bundle on $\PPP(\aaa)_{\Fv}$ admits an $\Fv$-metric without using remarks from \ref{metricsonlx}. Let us firstly prove the following lemma.
\begin{lem}\label{jlmor}
Let~$\ell$ be a positive integer divisible by $\lcm(\aaa)$. Consider the morphism \begin{align*}J(\ell):\AAA^n-\{0\}&\to\AAA^n-\{0\}\\\xxx&\mapsto (x_j^{\ell/a_j})_j.\end{align*} Endow the first $\AAA^n-\{0\}$ with the~$\Gm$-action with the weights $a_1\doots a_n,$ that is $t\cdot _{\aaa}\xxx=(t^{a_j}x_j)_j,$ and the second $\AAA^n-\{0\}$ with the~$\Gm$-action with the weights $1\doots 1$, that is $t\cdot_{\jed}\xxx=(tx_j)_j$.
\begin{enumerate}
\item The morphism $J(\ell)$ is $t\mapsto t^{\ell}$-equivariant, that is $$J(\ell)(t\cdot _{\aaa}\xxx)=t^{\ell}\cdot_{\jed}(J(\ell)(\xxx))\hspace{0.8 cm}\forall t\in\Gm,\forall\xxx\in \AAA^n-\{0\}.$$ The diagram \begin{equation}\label{ajlpa}\begin{tikzcd}
	\AAnz & \AAnz \\
	{\PPP(\aaa)} & {\mathbb P^{n-1}}.
	\arrow["{q^{\aaa}}"', from=1-1, to=2-1]
	\arrow["{\overline{J(\ell)}}", from=2-1, to=2-2]
	\arrow["{J(\ell)}"', from=1-1, to=1-2]
	\arrow["{q^{\jed}}", from=1-2, to=2-2]
\end{tikzcd}\end{equation} is $2$-commutative, where $\overline{J(\ell)}$ is given by \cite[\href{https://stacks.math.columbia.edu/tag/046Q}{Lemma 046Q}]{stacks-project}.
\item The pullback  $\overline{J(\ell)}^*(\mathcal O(k))$ for $k\in\ZZ$ is the line bundle $\mathcal O(\ell k)$.
\end{enumerate}
\end{lem}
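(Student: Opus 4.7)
The plan is to handle the two parts of the lemma separately, with part (1) being essentially a direct computation and a diagram-chase via the functoriality of the quotient stack construction, and part (2) being an application of the formalism of $\Gm$-linearized line bundles developed in \ref{gpic} and \ref{phiequiv}.

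For part (1), I would first verify the $(t \mapsto t^{\ell})$-equivariance by direct substitution: for any $t\in\Gm$ and $\xxx \in \AAA^n-\{0\}$, the $j$-th coordinate of $J(\ell)(t\cdot_{\aaa}\xxx)$ is $(t^{a_j}x_j)^{\ell/a_j} = t^{\ell} x_j^{\ell/a_j}$, which is exactly the $j$-th coordinate of $t^{\ell}\cdot_{\jed} J(\ell)(\xxx)$. Note that $\ell/a_j$ is an integer because $\lcm(\aaa)\mid\ell$, so $J(\ell)$ is a well-defined morphism of schemes. Given the equivariance, the $2$-commutativity of (\ref{ajlpa}) follows from the construction of the induced $1$-morphism $\overline{J(\ell)}$ from a $\phi$-equivariant morphism in the Stacks Project reference \cite[\href{https://stacks.math.columbia.edu/tag/046Q}{Lemma 046Q}]{stacks-project}, applied to the homomorphism $\phi:\Gm\to\Gm$, $t\mapsto t^{\ell}$, and the morphism $J(\ell)$.

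For part (2), I would use the identification of line bundles on the quotient stacks with $\Gm$-linearized line bundles on the presentations (Proposition \ref{gpic}). By Proposition \ref{picofoppa} (or its analogue for the space $\PP^{n-1}=(\AAA^n-\{0\})/\Gm$ with the standard weights), the line bundle $\OO(k)$ on $\PP^{n-1}$ corresponds to the $\Gm$-linearization $(t,\yyy)\mapsto t^k$ of the trivial line bundle on $\AAA^n-\{0\}$ (with the standard action of weights $\jed$). Applying the pullback functor $\sPic^{\Gm}(\AAA^n-\{0\})\to\sPic^{\Gm}(\AAA^n-\{0\})$ constructed in \ref{phiequiv} along the $\phi$-equivariant morphism $J(\ell)$ with $\phi(t)=t^{\ell}$, the new linearization becomes $(t,\xxx)\mapsto \phi(t)^k = t^{\ell k}$, which by \ref{picofoppa} corresponds to $\OO(\ell k)$ on $\PPP(\aaa)$. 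By the commutative square \ref{gjgdxy} relating pullbacks of line bundles on quotient stacks and pullbacks of linearized line bundles, this yields $\overline{J(\ell)}^*\OO(k)=\OO(\ell k)$.

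The only subtle point is ensuring that the identifications of $\Pic$ under the correspondence with $\Pic^{\Gm}$ match the generators chosen in \ref{picofoppa}; this is essentially bookkeeping, so no real obstacle is anticipated. The computation with linearizations is the main content and it reduces to the scalar identity $t^{\ell k}=(t^{\ell})^k$.
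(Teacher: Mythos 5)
Your proposal is correct and follows essentially the same route as the paper: part (1) by direct coordinate substitution and the universal property behind the induced morphism $\overline{J(\ell)}$, and part (2) by identifying $\mathcal O(k)$ with the $\Gm$-linearization $(t,\xxx)\mapsto t^k$, pulling it back along the $\phi$-equivariant morphism $J(\ell)$ via \ref{phiequiv}, and invoking \ref{gjgdxy} to conclude the resulting linearization $(t,\xxx)\mapsto t^{\ell k}$ determines $\mathcal O(\ell k)$. No meaningful difference from the paper's argument.
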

\begin{proof}
\begin{enumerate}
\item One has that \begin{multline*}J(\ell)(t\cdot _{\aaa}\xxx)=J(\ell)((t^{a_j}x_j)_j)=(t^{\ell}x_j^{\ell/a_j})_j\\=t^{\ell}\cdot_{\jed}J(\ell)(\xxx)\hspace{0.5cm}\forall t\in\Gm,\forall\xxx\in \AAA^n-\{0\}.\end{multline*} The $2$-commutativity of \ref{ajlpa} follows from the universal property of $\overline{J(\ell)},$ see \cite[\href{https://stacks.math.columbia.edu/tag/0436}{Lemma 0436}]{stacks-project}.
\item By \ref{gjgdxy}, the pullback $\overline{J(\ell)}^*\mathcal O(k)$ is the line bundle determined by the pullback linearization ${J(\ell)}^*\psi$ of the trivial line bundle, where $\psi:(t,\xxx)\mapsto t^{k}$ is the~$\Gm$-linearization of $\OO_{\AAA^n-\{0\}}$ that defines $\mathcal O(k)$. One has that $$({J(\ell)})^*\psi=\psi(\phi(t),J(\ell)(\xxx))=\phi(t)^k=t^{\ell k}.$$It follows that $\overline{J(\ell)}^*(\mathcal O(k))=\mathcal O(\ell k).$
\end{enumerate}
\end{proof}
\begin{lem}\label{pagfi} The canonical morphism $\widehat{\Pic}_{v}(\PPP(\aaa)_{\Fv})\to \Pic(\PPP(\aaa)_{\Fv})$ is surjective.
The canonical homomorphism \begin{equation}\label{ppapic}\widehat{\Pic}_{v}(\PPP(\aaa)_{\Fv})\to\widehat{\Pic}^{\Gm}_{\Fv}((\AAA^n-\{0\})_{\Fv})\end{equation} is an isomorphism.
\end{lem}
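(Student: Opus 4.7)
The plan is to verify the condition of Lemma~\ref{picmet} for $X = \PPP(\aaa)_{\Fv}$, which will yield surjectivity of $\widehat{\Pic}_v(\PPP(\aaa)_{\Fv}) \to \Pic(\PPP(\aaa)_{\Fv})$, and then to invoke Proposition~\ref{gmetmet} to get the isomorphism statement for free.

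First I would reduce to checking the condition of \ref{picmet} on a single generator of the Picard group. By Proposition~\ref{picofoppa}, the group $\Pic(\PPP(\aaa)_{\Fv})$ is cyclic, generated by (the class of) $\mathcal O(1)$, so it suffices to produce a pullback description of some positive tensor power $\mathcal O(1)^{\otimes m}$. I would take $m := \lcm(\aaa)$, and consider the morphism $\overline{J(m)} : \PPP(\aaa)_{\Fv} \to \PP^{n-1}_{\Fv}$ from Lemma~\ref{jlmor}. By part~(2) of that lemma, $\overline{J(m)}^* \mathcal O(1) = \mathcal O(m)$ on $\PPP(\aaa)_{\Fv}$; equivalently, $\mathcal O(1)^{\otimes m}$ is the pullback of $\mathcal O(1)$ along the morphism $\overline{J(m)}$ to the scheme $\PP^{n-1}_{\Fv}$. (When $n=1$, $\PP^{n-1}$ is a point, $\mathcal O(m) = \mathcal O_{\PPP(a)}$ is trivial, and one may alternatively take $L' = \mathcal O_{\Spec \Fv}$; either way the condition of \ref{picmet} is met.)

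Next I would observe that every line bundle on the separated (in fact proper) scheme $\PP^{n-1}_{\Fv}$ admits an $\Fv$-metric: the topological space $\PP^{n-1}(\Fv)$ is locally compact Hausdorff and paracompact, so partitions of unity furnish metrics on any line bundle, exactly as recalled in the remark following~\ref{picmet}. Thus $\mathcal O(1)$ on $\PP^{n-1}_{\Fv}$ lies in the image of $\widehat{\Pic}_v(\PP^{n-1}_{\Fv}) \to \Pic(\PP^{n-1}_{\Fv})$. Applying \ref{picmet} then gives the first claim: $\widehat{\Pic}_v(\PPP(\aaa)_{\Fv}) \to \Pic(\PPP(\aaa)_{\Fv})$ is surjective (indeed, by \ref{mthroot} one even extracts an explicit $m$-th root metric on $\mathcal O(1)$ from the pulled-back metric on $\mathcal O(m)$).

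Finally I would deduce the isomorphism (\ref{ppapic}) from Proposition~\ref{gmetmet} applied to the action of $G = \Gm$ on $Y = \AAA^n - \{0\}$ over $\Fv$. The group $\Gm$ is special by Hilbert~90, $Y$ is a locally of finite type $\Fv$-scheme, and $Y/\Gm = \PPP(\aaa)_{\Fv}$. The surjectivity hypothesis of \ref{gmetmet} is precisely the claim just established. Thus \ref{gmetmet} gives directly that the canonical homomorphism $\widehat{\Pic}_v(\PPP(\aaa)_{\Fv}) \to \widehat{\Pic^{\Gm}_{\Fv}}((\AAA^n - \{0\})_{\Fv})$ is an isomorphism. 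There is no serious obstacle here; the only mildly delicate point is keeping track of the case $n=1$ (where the target of $\overline{J(m)}$ is a point and the Picard group is torsion), but as noted above this is handled identically by \ref{picmet}.
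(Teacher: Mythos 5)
Your proposal is correct and follows essentially the same route as the paper: reduce via \ref{picofoppa} and \ref{jlmor} to the fact that $\overline{J(\ell)}^*\mathcal O(1)=\mathcal O(\ell)$ with $\ell=\lcm(\aaa)$, pull back a metric from $\PP^{n-1}_{\Fv}$ to verify the hypothesis of \ref{picmet}, and then invoke \ref{gmetmet}. The only cosmetic differences are that you cite the partitions-of-unity remark where the paper names the Fubini--Study metric, and that you spell out the $n=1$ edge case (which the paper leaves implicit); both are fine and do not change the substance of the argument.
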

\begin{proof}
Let~$\ell$ be an integer divisible by $\lcm(\aaa)$. 
By Lemma \ref{jlmor}, one has that $\overline{J(\ell)}^*(\mathcal O(1))=\mathcal O(\ell k).$ We deduce that every line bundle on~$\PPP(\aaa)$ admits a non-zero power which is a pullback of a line bundle on $\PP^{n-1}.$ The line bundle~$\mathcal O(1)$ on $\PP^{n-1}$ admits a metric, e.g. one can endow it with the Fubiny-Study metric. It follows from \ref{picmet}, that $\widehat{\Pic}_v(\PPP(\aaa))\to\Pic(\PPP(\aaa))$ is surjective. Proposition \ref{gmetmet} now gives that (\ref{ppapic}) is an isomorphism. 
\end{proof}
Let us dedicate the end of this paragraph to a dictionary given by \ref{metribun} for the case of weighted projective stacks.
\begin{mydef}\label{ahomfun}
Let $\vMF,$ let $d\in\CC$ and let $f:\Fvnz\to\RR_{\geq 0}$ be a function. We say that~$f$ is $\aaa$-homogenous of weighted degree~$d$ if for every $\xxx\in\Fvnz$ and every $t\in\Fvt$ one has $$f(t\cdot\xxx)=|t|^d_vf(\xxx).$$
\end{mydef}
\begin{lem}\label{cormeq}
Let~$v$ be a place of~$F$ and let~$k$ be an integer. If $|\lvert\cdot\rvert|$ is an $\Fv$-metric on the line bundle $\mathcal O(k)$ on $\PPP(\aaa)_{\Fv}$,  the pullback metric $(q^{\aaa}_{\Fv})^*|\lvert\cdot\rvert|$ on $\OO_{\AAA^n-\{0\}}$ is~$\Gm$-invariant and the function $f_{|\lvert\cdot\rvert|}:\xxx\mapsto (q^{\aaa}_{\Fv})^*||1(\xxx)||$ is an $\aaa$-homogenous continuous function $\Fvnz\to\RR_{>0}$ of weighted degree $k.$ 
Conversely, let $f:\Fvnz\to\RR_{>0}$ be an $\aaa$-homogenous continuous function of weighted degree~$k$. 
Then setting $||1(\xxx)||':=(f(\xxx))^{-1}$ defines a~$\Gm$-invariant $\Fv$-metric on the~$\Gm$-linearized line bundle $(\OO_{\AAA^n-\{0\}},(t,\xxx)\mapsto t^k)$. Furthermore, there exists a metric $|\lvert\cdot\rvert|_f$ on $\OO(k)$ such that $(q^{\aaa}_{\Fv})^*|\lvert\cdot\rvert|=|\lvert\cdot\rvert|'$.
\end{lem}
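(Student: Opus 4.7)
The plan is to reduce the statement to a direct combination of Proposition \ref{picofoppa}, Lemma \ref{pagfi}, and Lemma \ref{metribun}. By Proposition \ref{picofoppa}, the line bundle $\OO(k)$ on $\PPP(\aaa)_{\Fv}$ corresponds, under the equivalence of \ref{gpic}, to the $\Gm$-linearized trivial line bundle $(\OO_{\AAA^n-\{0\}}, \psi)$ with $\psi(t,\xxx) = t^k$, and by Lemma \ref{pagfi} the corresponding homomorphism $\widehat{\Pic}_v(\PPP(\aaa)_{\Fv}) \to \widehat{\Pic^{\Gm}_v}((\AAA^n-\{0\})_{\Fv})$ is an isomorphism. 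Specifying an $\Fv$-metric on $\OO(k)$ is therefore equivalent to specifying a $\Gm$-invariant $\Fv$-metric on $(\OO_{\AAA^n-\{0\}}, \psi)$.

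Lemma \ref{metribun} then translates the latter into a continuous function $g : \Fv^n - \{0\} \to \RR_{>0}$, related to the metric via $g(\xxx) := ||1(\xxx)||^{-1}$, and satisfying the invariance condition $g(t\cdot\xxx) = |\psi(t,\xxx)|_v \, g(\xxx) = |t|_v^k \, g(\xxx)$ for every $t \in \Fv^\times$ and $\xxx \in \Fv^n - \{0\}$. This is precisely the condition that $g$ be $\aaa$-homogeneous of weighted degree $k$ in the sense of Definition \ref{ahomfun}. Continuity of $g$ follows from Definition \ref{defofmet} applied to the global section $1$ of the trivial line bundle $(q^{\aaa}_{\Fv})^*\OO(k) = \OO_{\AAA^n-\{0\}}$, together with the fact that $g$ is nowhere zero.

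Combining the two reductions gives both directions of the lemma: a metric $|\lvert\cdot\rvert|$ on $\OO(k)$ pulls back via $q^{\aaa}_{\Fv}$ to a $\Gm$-invariant metric on $(\OO_{\AAA^n-\{0\}}, \psi)$ whose associated function $f_{|\lvert\cdot\rvert|}$ is $\aaa$-homogeneous of the required weighted degree, and, conversely, any $\aaa$-homogeneous continuous $f$ of weighted degree $k$ yields via $||1||' := f^{-1}$ a $\Gm$-invariant metric on $(\OO_{\AAA^n-\{0\}}, \psi)$ that lifts through the isomorphism of \ref{pagfi} to a metric $|\lvert\cdot\rvert|_f$ on $\OO(k)$ with pullback equal to $|\lvert\cdot\rvert|'$. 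No real obstacle appears; the statement is essentially a packaging of Lemma \ref{metribun} for the specific $\Gm$-linearization determined by $\OO(k)$.
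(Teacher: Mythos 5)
Your proposal is correct and follows essentially the same path as the paper: identify $\OO(k)$ with the $\Gm$-linearization $\psi(t,\xxx)=t^k$ of $\OO_{\AAA^n-\{0\}}$, invoke the $\widehat{\Pic}$-isomorphism from Lemma \ref{pagfi} to move between $\Fv$-metrics on $\OO(k)$ and $\Gm$-invariant $\Fv$-metrics on the linearized trivial bundle, and then apply the dictionary of Lemma \ref{metribun} to translate into $\aaa$-homogeneous continuous functions of weighted degree $k$. The paper states the forward direction's $\Gm$-invariance via Lemma \ref{qmetricpull} directly rather than folding it into the isomorphism of \ref{pagfi}, but this is a cosmetic difference in the organization, not in substance.
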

\begin{proof}
The line bundle $\OO(k)$ is given by the linearization $\psi:(t,\xxx)\mapsto t^k$ of $\OO_{\AAA^n-\{0\}}$. Let $|\lvert\cdot\rvert|$ be an $\Fv$-metric on $\OO(k)$. The pullback metric $(q^{\aaa}_{\Fv})^*|\lvert\cdot\rvert|$ is~$\Gm$-invariant by \ref{qmetricpull}. By \ref{metribun}, the function $\xxx\mapsto (q^{\aaa}_{\Fv})^*||1(\xxx)||$ is continuous satisfies that
$$((q^{\aaa}_{\Fv})^*||1(t\cdot \xxx)||)^{-1}=|t^k|_v ((q^{\aaa}_{\Fv})^*||1(\xxx)||)^{-1} ,$$
i.e. it is $\aaa$-homogenous continuous of weighted degree~$k$. Let us prove the converse claim. Note that \ref{metribun} gives that $|\lvert\cdot\rvert|'$ is an $\Gm(F_v)$-invariant metric on $(\OO_{\AAA^n-\{0\}},(t,\xxx)\mapsto t^k)$. By \ref{pagfi}, there exists an $\Fv$-metric $|\lvert\cdot\rvert|$ on~$\mathcal O(1)$ such that $(q^{\aaa}_{\Fv})^*|\lvert\cdot\rvert|=|\lvert\cdot\rvert|'.$ The statement is proven.  
\end{proof}
\begin{rem}
\normalfont
By allowing that~$f$ takes value $0$, we allow ``singular" metrics.
\end{rem}
\section{Heights on $\PPP(\aaa)(F)$}\label{heightsonppa}
In this section we will define heights on $\PPP(\aaa)(F)$. An obvious approach is to define heights to be pullbacks of heights for some morphism to a scheme. However, such heights exhibit a drawback, they do not satisfy the Northcott property and hence are not suitable for countings. For the purpose of counting, we define quasi-toric heights, for which in \ref{finitenesstoric} we establish that they satisfy the Northcott property. 
\subsection{}In this paragraph we define heights on stacks. 
We give a condition that will enable us to define heights.
\begin{mydef}\label{fahom}
Let $k\in\ZZ$. For $v\in M_F$, let $f_v:\Fvnz\to\RR_{\geq 0}$ be an $\aaa$-homogenous function of weighted degree~$d$ and let us set $$E_v:=\{\xxx\in F_v^n-\{0\} | \forall j \hspace{0.1cm}|x_j|_v=1\text{ or } x_j=0.\} $$
We say that a family $(f_v)_v$ is generalized adelic if for almost every $v\in M_F^0$, one has that $$f|_{E_v}=1.$$ 
\end{mydef}
We use the language of $\aaa$-homogenous continuous functions to define heights.
\begin{lem} 
Let $(f_v)_{v\in M_F}$ be a generalized adelic family of $\aaa$-homogenous functions $f_v:\Fvnz\to\RR_{\geq 0}$ of weighted degree $d\in\ZZ$. For $\xxx\in\PPP(\aaa)(F)$, let us denote by $\wx:(\Gm)_{\Fv}\to(\AAA^n-\{0\})_{\Fv}$ the induced $(\Gm)_{\Fv}$-equivariant morphism defined by~$\xxx$. For every $\xxx\in\PPP(\aaa)(F)$, the product \begin{equation}H((f_v)_v)(\xxx):=\prod_{\vMF}f_v(\wx(1))\label{huhub}\end{equation} is a finite product. Moreover, if $\xxx\xrightarrow{\sim}\yyy$ is a $2$-isomorphism, then $H((f_v)_v)(\xxx)=H((f_v)_v)(\yyy)$. 
\end{lem}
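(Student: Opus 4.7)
The plan is to fix an $F$-rational lift of $\xxx$, reduce everything to a finite product of nontrivial contributions, then use $\aaa$-homogeneity together with the product formula to show independence from choices.

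First I would pick a lift. Since $\Gm$ is special and $\xxx \in \PPP(\aaa)(F)$, by Lemma \ref{xgtac} the $(\Gm)_{F}$-equivariant morphism $\widetilde{\xxx}: (\Gm)_F \to (\AAA^n-\{0\})_F$ is determined by its value at the identity, giving an $n$-tuple $\widetilde{\xxx}(1)=(x_1,\dots,x_n) \in (\AAA^n-\{0\})(F) = F^n - \{0\}$. For each place $v$, the base change $\widetilde{\xxx}_{F_v}(1)$ is simply the image of $(x_1,\dots,x_n)$ under the diagonal embedding $F^n \hookrightarrow F_v^n$, and it is this element we plug into $f_v$.

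Next I would establish finiteness of the product. For each nonzero coordinate $x_j \in F^{\times}$, the product formula implies $|x_j|_v = 1$ for all but finitely many $v$. Let $S \subset M_F$ be the (finite) union of $M_F^{\infty}$, the finite exceptional set in the generalized adelic hypothesis \ref{fahom}, and the finite set of places $v$ for which some nonzero $x_j$ satisfies $|x_j|_v \neq 1$. For $v \notin S$, each coordinate of $\widetilde{\xxx}(1)$ is either zero or satisfies $|x_j|_v=1$, so $\widetilde{\xxx}(1) \in E_v$; by Definition \ref{fahom} we get $f_v(\widetilde{\xxx}(1)) = 1$. Hence all but finitely many factors of \eqref{huhub} are equal to $1$, so the product is a finite product and well-defined.

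For invariance under $2$-isomorphism, recall from \ref{wps} that a $2$-isomorphism $\xxx \xrightarrow{\sim} \yyy$ is given by an element $t \in \Gm(F) = F^{\times}$ with $\widetilde{\xxx} = \widetilde{\yyy} \circ t$, where $t$ acts by left multiplication; evaluating at $1$ gives $\widetilde{\xxx}(1) = \widetilde{\yyy}(t) = t \cdot \widetilde{\yyy}(1)$ via the weighted action. Using $\aaa$-homogeneity of weighted degree $d$ at each place,
\begin{equation*}
f_v(\widetilde{\xxx}(1)) = f_v(t \cdot \widetilde{\yyy}(1)) = |t|_v^{d}\, f_v(\widetilde{\yyy}(1)).
\end{equation*}
Taking the product over $v \in M_F$ and invoking the product formula $\prod_{v \in M_F} |t|_v = 1$ yields
\begin{equation*}
H((f_v)_v)(\xxx) = \Big(\prod_{v} |t|_v\Big)^{d} H((f_v)_v)(\yyy) = H((f_v)_v)(\yyy),
\end{equation*}
as desired.

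No step of this argument is really an obstacle; the only mild care needed is to observe that the finite set $S$ depends on the chosen lift $\widetilde{\xxx}$, but once the lift is fixed, the finiteness and the product-formula argument are immediate, and the change-of-lift case is absorbed into the $2$-isomorphism invariance since two lifts differ by multiplication by some $t \in F^{\times}$.
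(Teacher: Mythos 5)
Your argument is correct and follows essentially the same route as the paper's proof: reduce to the case that all nonzero coordinates of the lift are $v$-adic units outside a finite set of places, apply the generalized adelic hypothesis to get $f_v=1$ there, and then use $\aaa$-homogeneity together with the product formula to absorb the scaling by $|t|_v^d$. The only cosmetic difference is that you write the $2$-isomorphism as $\widetilde{\xxx}(1)=t\cdot\widetilde{\yyy}(1)$ whereas the paper writes $t\cdot\widetilde{\xxx}(1)=\widetilde{\yyy}(1)$; both conventions yield the same conclusion via the product formula, so this is immaterial.
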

\begin{proof}
Let $\xxx\in\PPP(\aaa)(F).$  If $\widetilde x_j(1)\neq 0$, then for almost every $v\in M_F^0$ one has $|\widetilde x_j(1)|_v=1$. We conclude that for almost all $v\in M_F^0$, one has $\wx(1)\in E_v$.  
As $(f_v)_v$ is generalized adelic, we deduce that the product (\ref{huhub}) is indeed finite. Let now $\xxx\xrightarrow{\sim}\yyy$ be a $2$-isomorphism, it is given by an element $t\in\Gm(\Fv)$ such that $t\cdot\wx(1)=\widetilde{\yyy}(1)$. By the product formula, one has that $$\prod_{\vMF}f_v(\widetilde{\yyy}(1))=\prod_{\vMF}f_v(t\cdot\wx(1))=\prod_{\vMF}|t|^k_vf_v(\wx(1))=\prod_{\vMF}f_v(\wx(1)).$$
 \end{proof}
 \begin{mydef}
Let $(f_v)_v$ be a generalized adelic family of $\aaa$-homogenous continuous functions $\Fvnz\to\RR_{\geq 0}$ of weighted degree~$k$. 
\begin{enumerate}
\item The function that associates to $\xxx\in\PPP(\aaa)(F)$ the value of the product (\ref{huhub}), we call the resulting height defined by the family $(f_v)_v$ and we denote it by $H((f_v)_v)$.
\item For $\xxx\in[\PPP(\aaa)(F)],$ we define $H((f_v)_v)(\xxx)$ by setting it to be $H((f_v)_v)(\yyy)$ where $\yyy\in\PPP(\aaa)(F)$ is such that the isomorphism class of~$\yyy$ is~$\xxx$.
\end{enumerate}
\end{mydef}
When there is no confusion, we call it simply the height. When two families consist of functions which are non-vanishing at every place and which coincide at almost ever places, the resulting heights can be compared as follows.
\begin{lem}\label{toricisclos}
Let $d\in\ZZ$, and let $(f_v:\Fvnz\to\RR_{>0})_v$ and $(f_v':\Fvnz\to\RR_{>0})_v$ be two generalized adelic degree~$d$ families of $\aaa$-homogenous functions. Suppose that for almost all $v\in M_F$, one has that $f_v=f_v'$. There exist $C_1,C_2>0$ such that for every $\xxx\in\PPP(\aaa)(F)$ one has that $$C_1<\frac{H((f_v)_v)(\xxx)}{H((f_v')_v)(\xxx)}<C_2.$$
\end{lem}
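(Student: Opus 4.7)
The plan is to reduce the ratio $H((f_v)_v)/H((f_v')_v)$ to a finite product and then exploit compactness of the local spaces $[\PPP(\aaa)(\Fv)]$.

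Let $S := \{v \in M_F : f_v \neq f_v'\}$, which is finite by hypothesis. For any $\xxx \in \PPP(\aaa)(F)$ with associated $(\Gm)_{\Fv}$-equivariant morphism $\wx$, the factors at places $v \notin S$ cancel between numerator and denominator of the ratio of heights, so
\begin{equation*}
\frac{H((f_v)_v)(\xxx)}{H((f_v')_v)(\xxx)} = \prod_{v \in S} \frac{f_v(\wx(1))}{f_v'(\wx(1))}.
\end{equation*}

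For each fixed $v \in S$, since $f_v$ and $f_v'$ are both $\aaa$-homogenous of the same weighted degree $d$ and strictly positive, their quotient $g_v := f_v/f_v'$ is continuous (implicit from the correspondence with $\Fv$-metrics via Lemma \ref{cormeq}) and $\aaa$-homogenous of weighted degree $0$, that is, invariant under the action of $\Gm(\Fv)$ on $\Fvnz$. Consequently the value $g_v(\wx(1))$ does not depend on the chosen representative $\wx$, and by Corollary \ref{pafvtafv} the function $g_v$ descends to a continuous positive function $\overline{g}_v : [\PPP(\aaa)(\Fv)] \to \RR_{>0}$.

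By Proposition \ref{paraap}, the topological space $[\PPP(\aaa)(\Fv)]$ is compact, so $\overline{g}_v$ attains a positive minimum $m_v$ and a finite maximum $M_v$. Setting
\begin{equation*}
C_1 := \tfrac{1}{2}\prod_{v \in S} m_v, \qquad C_2 := 2 \prod_{v \in S} M_v
\end{equation*}
(where the factors $\tfrac12$ and $2$ are inserted to obtain strict inequalities) gives the desired uniform bound for all $\xxx \in \PPP(\aaa)(F)$. There is no real obstacle in this proof: the only conceptual point is that positivity together with equal weighted degree reduces the problem to studying the ratio $g_v$ on a compact quotient, on which a continuous positive function is automatically bounded away from $0$ and $\infty$.
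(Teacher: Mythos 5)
Your proof is correct and follows essentially the same route as the paper: factor the ratio into a finite product over the set $S$ where the families differ, observe that each $f_v/f_v'$ is $\Gm(\Fv)$-invariant and hence descends to a continuous positive function on the compact quotient $[\PPP(\aaa)(\Fv)]$, and conclude boundedness from compactness. The inserted factors $\tfrac12$ and $2$ are superfluous (one may simply take strict bounds slightly beyond the attained minimum and maximum, as the paper does), but this is cosmetic.
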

\begin{proof}
For $v\in M_F$, the function $$\Fvnz\to\RR_{>0}\hspace{1cm}\yyy\mapsto \frac{f_v(\yyy)}{f_v'(\yyy)}$$ is $\Gm(\Fv)$-invariant, thus descends to a positive valued function on $[\PPP(\aaa)(\Fv)],$ which is compact by \ref{paraap}. We deduce that for every $\vMF$, there exist $C_{v,1},C_{v,2}>0$ such that for every $\yyy\in\Fvnz$ one has that  $$C_{v,1}<\dfrac{f_v(\yyy)}{f_v'(\yyy)}<C_{v,2}.$$
Let us write~$H$ for $H((f_v)_v)$ and $H'$ for $H((f_v')_v)$. Let $\yyy\in\Fvnz$ be a lift of~$\xxx$. Let~$S$ be the finite set of places, for which $f_v\neq f_v'$. For every $v\not\in S$, one has that $f_v=f_v'$. We deduce that $$\frac{H(\xxx)}{H'(\xxx)}=\prod_{v\in S}\frac{f_v(\wx)}{f_v'(\wx)}<\prod_{v\in S}C_{v,2}$$ and that  $$\prod_{v\in S}C_{v,1}<\prod_{v\in S}\frac{f_v(\wx)}{f_v'(\wx)}=\frac{H(\xxx)}{H'(\xxx)}. $$
The statement is proven.
\end{proof}
\subsection{} In this paragraph, we define stable heights on $\PPP(\aaa)(F)$, where $\aaa\in\ZZ^n_{\geq 1}$. These heights are pullbacks of the heights on varieties. Such height~$H$ satisfies that  if $\xxx,\yyy\in\PPP(\aaa)(F)$ are such that $\xxx_K\cong\yyy_K$ for some extension $K/F$, then $H(\xxx)=H(\yyy)$. The definition of the height can hence be naturally carried to any $\overline{F}$-point of~$\PPP(\aaa)$. They are called ``stable" because the height of an~$F$-point stays invariant when regarding this point as a~$K$-point of $[\PPP(\aaa)]$. We will see later in \ref{finitenesstoric}, that such heights do not always satisfy the Northcott property.

If $\vMF$ and $|\lvert\cdot\rvert|$ is an $\Fv$-metric on the line bundle $\OO(k)$ on $\PPP(\aaa)_{\Fv}$, for some $k\in\ZZ$, we will denote by $f_{|\lvert\cdot\rvert|}:\Fvnz\to\RR_{>0}$ the $\aaa$-homogenous function of weighted degree~$k$ given by Lemma \ref{cormeq}. 

If $r\geq 0$ is an integer, let us denote by $|\lvert\cdot\rvert|_{v,\max}$ the metric on~$\mathcal O(1)$ on~$\PP^{r}$ given by the $\jed$-homogenous continuous function $f_v^{\#}:\Fvnz\to\RR_{>0}$ of weighted degree~$1$ $$f_v^{\#}:\xxx\mapsto \max_{j}(|x_j|_v).$$

By a stable $\Fv$-metric on a line bundle~$L$ on locally of finite type $\Fv$-scheme~$Z$, we mean a metric which is the restriction of a metric on the analytic line bundle $L^{\an}$ on the analytic space $Z^{\an}_{\Fv}$. The metric $|\lvert\cdot\rvert|_{v,\max}$ on the line bundle~$\mathcal O(1)$ on the projective space is stable.
\begin{mydef}\label{defofstmet}
Let~$X$ be a locally of finite type $\Fv$-algebraic stack and let~$L$ be a line bundle on~$X$. An $\Fv$-metric $|\lvert\cdot\rvert|$ on~$L$ is said to be stable, if there exist an integer $m\neq 0$, a locally of finite type $\Fv$-scheme $Z,$ an $\Fv$-metrized line bundle $(L',|\lvert\cdot\rvert|')$ on~$Z$ with $|\lvert\cdot\rvert|'$ stable and a~$1$-morphism of algebraic stacks $g:X\to Z$ such that $(L^{\otimes m},|\lvert\cdot\rvert|^{\otimes m})=g^*(L',|\lvert\cdot\rvert|')$.
\end{mydef}
Recall that an adelic metric on the line bundle~$\mathcal O(1)$ on $\PP^{n-1}$ is a collection of metrics $(|\lvert\cdot\rvert|_v)_v,$ where each $|\lvert\cdot\rvert|_v$ is a stable metric on the line bundle~$\mathcal O(1)$ on $\PP^{n-1}_{\Fv},$ and for almost all $v,$ one has  $|\lvert\cdot\rvert|_v=|\lvert\cdot\rvert|_{v,\max}$.
\begin{mydef}
Let~$k$ be an integer. Let $(f_v)_v$ be a generalized adelic family of $\aaa$-homogenous continuous functions of weighted degree~$k$. We say that $(f_v)_v$ is stable if there exists an integer $\ell\neq 0$, an integer $r\geq 0,$ an adelic metric $(|\lvert\cdot\rvert|_v)_v$ on the line bundle~$\mathcal O(1)$ on~$\PP^{r}$ and a~$1$-morphism of algebraic stacks $h:\PPP(\aaa)\to\mathbb P^r$ such that the line bundle~$\mathcal O(\ell k)$ is the pullback line bundle $h^*\mathcal O(1)$ and such that for every~$v$ one has that $f^{\ell}_v=f_{h^*|\lvert\cdot\rvert|_v}$. In this case, we also say that the height $H=H((f_v)_v)$ is stable.
\end{mydef}
\begin{lem}\label{propertyofstable}
Let~$k$ be an integer, let $(f_v)_v$ be a stable generalized adelic family of $\aaa$-homogenous continuous functions of weighted degree~$k$. Let $H=H((f_v)_v)$ be the corresponding height. Let $\xxx,\yyy\in\PPP(\aaa)(F)$ be such that there exists an extension $K/F$ and an isomorphism $\xxx|_K\xrightarrow{\sim}\yyy_K$ in $\PPP(\aaa)(K)$. Then $H(\xxx)=H(\yyy)$. 
\end{lem}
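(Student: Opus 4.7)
The plan is to use the stability hypothesis to reduce to a height on a scheme---where isomorphisms of points are equalities---and then exploit functoriality of $1$-morphisms of stacks.

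By the definition of stability, there exist a non-zero integer $\ell$, an integer $r\geq 0$, an adelic metric $(|\cdot|_v)_v$ on $\mathcal O(1)$ on $\PP^{r}$, and a $1$-morphism $h:\PPP(\aaa)\to\PP^{r}$ such that $h^*\mathcal O(1)=\mathcal O(\ell k)$ and $f_v^{\ell}=f_{h^*|\cdot|_v}$ for every $v$. The first step is to establish that
$$H(\xxx)^{\ell}=H_{\PP^r}(h(\xxx))$$
where $H_{\PP^r}$ is the height on $\PP^r$ associated with $(|\cdot|_v)_v$. Using the compatibility of pullback metrics (Lemma \ref{ugovn}) with the dictionary of Lemma \ref{cormeq}, one has at every place $v$ the identity $f_{h^*|\cdot|_v}(\widetilde\xxx(1))=\lVert 1\rVert^{-1}_{|\cdot|_v,\,h(\xxx)\otimes F_v}=f_{|\cdot|_v}(\widetilde{h(\xxx)})$, where $\widetilde{h(\xxx)}\in(\AAA^{r+1}-\{0\})(F)$ is any lift of $h(\xxx)\in\PP^r(F)$. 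Taking the product over $v$ yields the claim.

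Next, from a $2$-isomorphism $\xxx_K\xrightarrow{\sim}\yyy_K$ in $\PPP(\aaa)(K)$, functoriality of $h$ produces a $2$-isomorphism $h(\xxx)_K\xrightarrow{\sim}h(\yyy)_K$ in $\PP^r(K)$. Since $\PP^r$ is a scheme---and hence, viewed as a stack, is a sheaf valued in discrete groupoids---the category $\PP^r(K)$ has only identity morphisms, so the above $2$-isomorphism forces the equality $h(\xxx)_K=h(\yyy)_K$ in the set $\PP^r(K)$. As the base change map $\PP^r(F)\to\PP^r(K)$ is injective (it is induced by the injection $F\hookrightarrow K$), we deduce $h(\xxx)=h(\yyy)$ in $\PP^r(F)$; combining with Step~1, this gives $H(\xxx)^{\ell}=H(\yyy)^{\ell}$, and positivity of both heights together with $\ell\neq 0$ yields $H(\xxx)=H(\yyy)$.

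The main obstacle lies in Step~1: one must pass cleanly from the identity $f_v^{\ell}=f_{h^*|\cdot|_v}$ of $\aaa$-homogeneous functions on $F_v^n-\{0\}$ to an equality of products over $v$ of $\jed$-homogeneous functions evaluated on a lift of $h(\xxx)\in\PP^r(F)$. The subtlety is that $\widetilde\xxx(1)\in(\AAA^n-\{0\})(F)$ is not literally mapped by $h$ to a lift of $h(\xxx)$, so one has to reinterpret both sides as norms of the section $1$ of $\mathcal O(1)$ at the point $h(\xxx)\otimes F_v\in\PP^r(F_v)$, which is what the naturality of pullback metrics (via Lemma \ref{cormeq} applied in turn to $\PPP(\aaa)$ and to $\PP^r=\PPP(\jed)$) accomplishes.
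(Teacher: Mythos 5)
Your proposal is correct and follows essentially the same route as the paper's proof: unfold the stability hypothesis to get a $1$-morphism $g:\PPP(\aaa)\to\PP^r$ and the identity $H(\xxx)^{\ell}=H_{\PP^r}(g(\xxx))$, observe that a $2$-isomorphism $\xxx_K\xrightarrow{\sim}\yyy_K$ pushes forward to an equality $g(\xxx)_K=g(\yyy)_K$ in the scheme $\PP^r$, deduce $g(\xxx)=g(\yyy)$ from injectivity of $\PP^r(F)\to\PP^r(K)$, and extract the $\ell$-th root. Your explicit final sentence (positivity plus $\ell\neq 0$) is in fact slightly more careful than the paper's write-up, which compresses this step.
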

\begin{proof}
There exists an $\ell\neq 0$, an integer $r\geq 0,$ an adelic metric $(|\lvert\cdot\rvert|_v)_v$ on the line bundle $\mathcal O(1),$ a~$1$-morphism of algebraic stacks $g:\PPP(\aaa)\to\PP^r$ such that $\mathcal O(\ell k)=g^*\mathcal O(1)$ and such that for each~$v$ one has $f_v^{\ell}=f_{g^*|\lvert\cdot\rvert|_v}$. Let $H_{\PP^{n-1}}$ be the height on $\PP^{n-1}(F)$ given by $(|\lvert\cdot\rvert|_v)_v$. For $\xxx\in\PPP(\aaa)(F)$ that \begin{multline*}H(\xxx)^{\ell}=\prod_v f_{v}(\wx(1))^{\ell}=\prod_vf_{g^*|\lvert\cdot\rvert|_v}(\wx(1))=\prod_v(g^*||1(\xxx)||_{v})^{-1}\\=\prod_v||1(g(\xxx)||^{-1}_v=H_{\PP^{n-1}}(g(\xxx)).\end{multline*}
The image of $\xxx_K$ in $\PP^{n-1}(K)$ is precisely $g(\xxx)_K$ and the image of $\yyy_K$ in~$\PP^{n-1}(K)$ is precisely $g(\xxx)_K$. The existence of an isomorphism $\xxx_K\xrightarrow{\sim}\yyy_K$ gives that $g(\xxx)_K=g(\yyy)_K$. It follows that $g(\xxx)=g(\yyy)$, and hence $$H(\xxx)=H_{\PP^{n-1}}(g(\xxx))=H_{\PP^{n-1}}(g(\yyy))=H(\yyy).$$
 \end{proof}
We give an example of a stable family. 
\begin{lem}\label{brzostab}
For $\vMF$, the functions $f_v:\Fvnz\to\RR_{>0}$ given by $\xxx\mapsto \max(|x_j|^{1/a_j}_v)$ are continuous and $\aaa$-homogenous of weighted degree~$1$. The family $(f_v)_v$ is generalized adelic and stable. Moreover, the resulting height $H^{\max}=H((f_v))$ satisfies for every $\xxx\in\PPP(\aaa)(F)$ that $H(\xxx)\geq 1$.
\end{lem}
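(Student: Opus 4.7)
The plan is to verify each of the five assertions in turn, with the only substantive points being stability and the lower bound.

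For continuity and $\aaa$-homogeneity, I would simply observe that each $f_v$ is a finite maximum of continuous functions $\xxx\mapsto |x_j|_v^{1/a_j}$ (where by convention the term vanishes when $x_j=0$), positive on $\Fvnz$; and for $t\in\Fvt$ one computes
\[
f_v(t\cdot\xxx)=\max_j|t^{a_j}x_j|_v^{1/a_j}=|t|_v\max_j|x_j|_v^{1/a_j}=|t|_vf_v(\xxx).
\]
The generalized adelic condition is in fact satisfied at \emph{every} finite place: if $\xxx\in E_v$, every non-zero coordinate has absolute value $1$, so $|x_j|_v^{1/a_j}\in\{0,1\}$ with at least one equal to $1$, giving $f_v(\xxx)=1$.

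For stability, I would invoke Lemma~\ref{jlmor}. Set $\ell=\lcm(\aaa)$, take the morphism $h:=\overline{J(\ell)}:\PPP(\aaa)\to\PP^{n-1}$, and equip $\OO(1)$ on $\PP^{n-1}$ with the standard adelic metric $(|\lvert\cdot\rvert|_{v,\max})_v$, given placewise by $f_v^{\#}(\xxx)=\max_j|x_j|_v$. Lemma~\ref{jlmor} gives $h^*\OO(1)=\OO(\ell)=\OO(\ell\cdot 1)$, matching the weighted degree. To identify the pulled-back $\aaa$-homogeneous function, I would compute
\[
f_{h^*|\lvert\cdot\rvert|_{v,\max}}(\xxx)=f_v^{\#}(J(\ell)(\xxx))=\max_j|x_j^{\ell/a_j}|_v=\Bigl(\max_j|x_j|_v^{1/a_j}\Bigr)^{\ell}=f_v(\xxx)^{\ell},
\]
where the crucial intermediate equality uses that $t\mapsto t^{\ell}$ is monotone on $\RR_{\geq 0}$. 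This is exactly the stability condition with the choice $\ell=\lcm(\aaa)$.

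Finally, for the lower bound $H^{\max}(\xxx)\geq 1$, the cleanest route goes through the stability identity just established. Let $H_{\PP^{n-1}}$ denote the standard height on $\PP^{n-1}(F)$ attached to $(|\lvert\cdot\rvert|_{v,\max})_v$. For $\xxx\in[\PPP(\aaa)(F)]$ with lift $\wx(1)\in F^n-\{0\}$, the computation above yields
\[
H^{\max}(\xxx)^{\ell}=\prod_{\vMF}f_v(\wx(1))^{\ell}=\prod_{\vMF}\max_j|\widetilde x_j(1)^{\ell/a_j}|_v=H_{\PP^{n-1}}(h(\xxx)).
\]
Picking any index $j_0$ with $\widetilde x_{j_0}(1)\neq 0$, one has $\max_j|\widetilde x_j(1)^{\ell/a_j}|_v\geq |\widetilde x_{j_0}(1)^{\ell/a_{j_0}}|_v$, so the product formula applied to $\widetilde x_{j_0}(1)^{\ell/a_{j_0}}\in F^{\times}$ gives $H_{\PP^{n-1}}(h(\xxx))\geq 1$, and hence $H^{\max}(\xxx)\geq 1$. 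None of the steps are obstacles; the only point requiring some care is tracking the exponent $\ell=\lcm(\aaa)$ consistently through the pullback, which is the reason the statement is phrased in terms of weighted degree $1$ rather than~$\ell$.
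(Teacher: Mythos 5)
Your proof is correct. The verifications of continuity, $\aaa$-homogeneity, the generalized-adelic property, and stability via $J(\ell)$ with $\ell=\lcm(\aaa)$ follow essentially the same lines as the paper's (your stability computation is a more compact restatement of the same pullback identification the paper carries out through the metrized Picard groups). Where you take a genuinely different route is in the lower bound $H^{\max}(\xxx)\geq 1$: the paper picks an index $i$ with $\widetilde x_i(1)\neq 0$, passes to the extension $K=F(\sqrt[a_i]{\widetilde x_i(1)})$, factors out an $a_i$-th root of $\widetilde x_i(1)$ at each place, and appeals to the product formula over $K$. You instead raise to the power $\ell$ to clear the rational exponents, so that $H^{\max}(\xxx)^{\ell}$ becomes the standard height of $\overline{J(\ell)}(\xxx)\in\PP^{n-1}(F)$, which is at least $1$ by the product formula applied to $\widetilde x_{j_0}(1)^{\ell/a_{j_0}}\in F^{\times}$ for any nonzero coordinate. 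Your route stays entirely inside $F$, avoids any discussion of extending absolute values to the auxiliary field $K$, and exhibits the lower bound as a direct corollary of the stability relation you have just established. Both proofs are valid.
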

\begin{proof}
It is evident that $f_v$ is continuous and that $f_v(t\cdot\xxx)=\max_j(|t^{a_j}x_j|^{1/a_j}_v)=|t|_v\max_j(|x_j|_v^{1/a_j})=|t|_vf_v(\xxx)$, and thus for every $\vMF$ one has that $f_v$ is a continuous $\aaa$-homogenous function of weighted degree~$1$. Moreover, for every $\vMF$, one has that if $\yyy\in E_v=\{\xxx\in\Fvnz|\hspace{0.1cm}|x_j|_v=1\text{ or }x_j=0\}$, then $f_v(\yyy)=1$. Thus, the family $(f_v)_v$ is generalized adelic.

We will verify that $(f_v)_v$ is stable. Let $\vMF$, an $\Fv$-metric on $\OO_{\AAA^n-\{0\}}$ identifies with a continuous function $g:\Fvnz\to\RR_{>0}$ by setting $g(\xxx)=||1(\xxx)||^{-1}$, where $\xxx\in\Fvnz$ and $1(\xxx)$ is the value of $1\in\Gamma(\AAnz,\OO_{\AAnz})$ at~$\xxx$.
 Let us set $\ell=\lcm(\aaa)$. Consider the morphism $J(\ell):\AAA^n-\{0\}\to\AAnz$ given by $$J(\ell):\xxx\mapsto (x_j^{\ell/a_j})_j.$$ In \ref{jlmor}, we have established that $J(\ell)$ is $(t\mapsto t^{\ell})$-invariant and that $J(\ell)^*\mathcal O(1)=\mathcal O(\ell).$ Moreover, the following diagram is $2$-commutative: 
\[\begin{tikzcd}
	\AAnz & \AAnz \\
	{\PPP(\aaa)} & {\mathbb P^{n-1}}.
	\arrow["{q^{\aaa}}"', from=1-1, to=2-1]
	\arrow["{\overline{J(\ell)}}", from=2-1, to=2-2]
	\arrow["{J(\ell)}"', from=1-1, to=1-2]
	\arrow["{q^{\jed}}", from=1-2, to=2-2]
\end{tikzcd}\]
By \ref{ugovn}, it follows that the diagram 
\[\begin{tikzcd}
	{\widehat{\Pic_v}(\AAnz)} & {\widehat{\Pic_v}(\AAnz)} \\
	{\widehat{\Pic_v}(\PPP(\aaa))} & {\widehat{\Pic_v}(\mathbb P^{n-1})}
	\arrow["{(q^{\aaa}_{\Fv})^*}"', from=2-1, to=1-1]
	\arrow["{\overline{J(\ell)}^*}"', from=2-2, to=2-1]
	\arrow["{J(\ell)^*}", from=1-2, to=1-1]
	\arrow["{(q^{\jed}_{\Fv})^*}", from=2-2, to=1-2]
\end{tikzcd}\]
is commutative. Hence, the image of the $\Fv$-metrized line bundle $(\OO(k),|\lvert\cdot\rvert|_{v,\max})$ under $(q^{\aaa}_{\Fv})^*\circ \overline{J(\ell)}^*$ identifies with \begin{align*}J(\ell)^*(q^{\jed}_{\Fv})^*(\OO(k),|\lvert\cdot\rvert|_{v,\max})&=(\OO_{\AAnz},J(\ell)^*(q^{\jed}_{\Fv})^*|\lvert\cdot\rvert|_{v,\max})\\
&=(\OO_{\AAnz},\xxx\mapsto J(\ell)^*(\xxx\mapsto \max_j|x_j|_v))\\
&=(\OO_{\AAnz},\xxx\mapsto \max_j(|x_j|^{\ell/a_j}_v))\\
&=(\OO_{\AAnz},\xxx\mapsto f_v(\xxx)^{\ell}).
\end{align*} 
The~$\Gm$-invariant $\Fv$-linearized line bundle $(\OO_{\AAnz},\xxx\mapsto f_v(\xxx)^{\ell})$ is precisely the image of $\overline {J(\ell)}^*(\OO(1),|\lvert\cdot\rvert|_{v,\max})=(\mathcal O(k),\overline {J(\ell)}^*|\lvert\cdot\rvert|_{v,\max})$ under the pullback $(q^{\aaa}_{\Fv})^*$, so that $f_{v}^{\ell}=f_{J(\ell)^*|\lvert\cdot\rvert|_{v,\max}}.$ It follows that the family $(f_v)_v$ is stable. 

Finally, let us prove the estimate $H(\xxx)\geq 1$. Let $\xxx\in\PPP(\aaa)(F)$ and let~$\wx$ be the~$\Gm$-equivariant~$F$-morphism $\Gm \to \AAA^n-\{0\}$ given by~$\xxx$. Let~$i$ be an index such that $\wx(1)_i\neq 0$. Let $K=F\big(\sqrt[a_i]{\widetilde x_i(1)}\big).$ For $\vMF$, the absolute value $\lvert\cdot\rvert_v$ on~$F$ admits a unique extension to~$K$. By the product formula, we get that \begin{align*}H(\xxx)&=\prod_{\vMF}\max_j(|\widetilde x_j(1)|_v^{1/a_j})\\&=\prod_{\vMF}|\sqrt[a_i]{\widetilde x_i(1)})|_v\max_j\bigg(\bigg|\frac{\widetilde x_j(1)}{(\sqrt[a_i]{\widetilde x_j(1)})^{a_j}}\bigg|^{1/a_j}_v\bigg)\\&=\prod_{\vMF}\max_j\bigg(\bigg|\frac{\widetilde x_j(1)}{(\sqrt[a_i]{\widetilde x_j(1)})^{a_j}}\bigg|^{1/a_j}_v\bigg)\\&\geq 1. \end{align*} 
\end{proof}
\subsection{} \label{defrv} In this paragraph we present an example of a height on~$\PPP(\aaa)$ which is intrinsic to stacks. We will call them quasi-toric heights. In \ref{finitenesstoric}, we are going to show that they satisfy the Northcott property. 
In \ref{begdav}, we have introduced the set $\Dav=\Ov^n-(\piv^{a_1}\Ov)\times\cdots\times(\piv^{a_n}\Ov)$. 
For $v\in M_F^0$, we have defined in \ref{begdav} a function $r_v:\Fvnz\to\ZZ$ by $$r_v(\xxx)=\inf\{k\in\ZZ| \piv^{k}\cdot\xxx\in\Ov^n\}$$ and we have established that for every $\xxx\in\Fvnz$ that $$\piv^{r_v(\xxx)}\cdot\xxx\in\Dav.$$ 
\begin{lem}\label{davdavdav}
Let $\vMFz$. 
\begin{enumerate}
\item For every $\xxx\in\Fvnz$, one has that $$r_v(\xxx)=\sup_{\substack{j=1\doots n\\x_j\neq 0}}\bigg\lceil-\frac{v(x_j)}{a_j}\bigg\rceil.$$
\item For every $t\in\Fvt$ and every $\xxx\in\Fvnz$, one has that $$r_v(t\cdot\xxx)=r_v(\xxx)-v(t). $$
\item For every $k\in\ZZ$ one has that $$\{\xxx\in\Fvnz|r_v(\xxx)=k\}=\piv^{-k}\cdot\Dav.$$
\item Let $k\in\ZZ$. Suppose that for $u\in\Gm(\Fv)$ one has $u\cdot(\piv^{k}\cdot\Dav)\cap(\piv^{k}\cdot\Dav)\neq\emptyset$, then $v(u)=0$.
\item The  function $$f^{\#}_v:\Fvnz\to\RR_{>0}\hspace{1cm}\xxx\mapsto \pivv^{-r_v(\xxx)} $$ is $\aaa$-homogenous of weighted degree~$1$ and is locally constant.
\end{enumerate}
\end{lem}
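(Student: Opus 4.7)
The plan is to derive part (1) directly from the definition of $r_v$ and the formula for the action, then deduce (2) from (1), and use (1)--(2) to establish the geometric description (3). Parts (4) and (5) will then be formal consequences.

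For (1), I would unpack the condition $\piv^k\cdot\xxx\in\Ov^n$. By the definition of the action this reads $\piv^{ka_j}x_j\in\Ov$ for every $j$, equivalently $ka_j+v(x_j)\ge 0$ for all $j$ such that $x_j\neq 0$ (the condition is vacuous when $x_j=0$, since all $a_j\geq 1$ and we only need it for nonzero coordinates --- indices with $x_j=0$ pose no constraint). Dividing and taking the ceiling, this is equivalent to $k\ge\lceil -v(x_j)/a_j\rceil$ for every $j$ with $x_j\neq 0$. The smallest such integer $k$ is exactly $\sup_{j:\,x_j\neq 0}\lceil-v(x_j)/a_j\rceil$, which gives the desired formula.

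For (2), using (1) and $v(t^{a_j}x_j)=a_jv(t)+v(x_j)$, I compute
\[
r_v(t\cdot\xxx)=\sup_{j:\,x_j\neq 0}\Big\lceil-\frac{a_jv(t)+v(x_j)}{a_j}\Big\rceil=\sup_{j:\,x_j\neq 0}\Big(\Big\lceil-\frac{v(x_j)}{a_j}\Big\rceil-v(t)\Big)=r_v(\xxx)-v(t),
\]
where the middle equality uses that $v(t)\in\ZZ$ pulls out of the ceiling. For (3), I would argue both inclusions. If $r_v(\xxx)=k$ then $\piv^k\cdot\xxx\in\Ov^n$ by definition, and the strict minimality of $k$ together with (1) gives some index $j$ for which $(k-1)a_j+v(x_j)<0$, i.e.\ $ka_j+v(x_j)<a_j$, which says $\piv^{ka_j}x_j\notin\piv^{a_j}\Ov$. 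Hence $\piv^k\cdot\xxx\in\Dav$, i.e.\ $\xxx\in\piv^{-k}\cdot\Dav$. Conversely, if $\piv^k\cdot\xxx\in\Dav$ then $\piv^k\cdot\xxx\in\Ov^n$ gives $r_v(\xxx)\le k$, and the nonmembership in $\prod_j\piv^{a_j}\Ov$ gives some $j$ with $\piv^{(k-1)a_j}x_j\notin\Ov$, forcing $r_v(\xxx)\ge k$.

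For (4), any element of $u\cdot(\piv^k\cdot\Dav)\cap(\piv^k\cdot\Dav)$ lies in $\piv^k\cdot\Dav=\{\yyy\mid r_v(\yyy)=-k\}$ by (3), so both its $r_v$ value and the $r_v$ value of its preimage under $u$ equal $-k$; applying (2) yields $-k=r_v(u\cdot(\piv^k\cdot\xxx))=-k-v(u)$, whence $v(u)=0$. Finally, for (5), the $\aaa$-homogeneity of weighted degree $1$ follows from (2) since $|\piv|_v^{-r_v(t\cdot\xxx)}=|\piv|_v^{-r_v(\xxx)+v(t)}=|t|_v\cdot f^{\#}_v(\xxx)$. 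Local constancy follows from (3) and part (1) of Lemma \ref{begdav}: the level sets $\{r_v=k\}=\piv^{-k}\cdot\Dav$ are open in $\Fvnz$ (multiplication by $\piv^{-k}$ is a homeomorphism and $\Dav$ is open), and they form a partition of $\Fvnz$ on each of which $f^{\#}_v$ is constant. The only mildly delicate point is the ceiling manipulation in (2), since one must remember that $v(t)$ is an integer so that it passes through $\lceil\cdot\rceil$; everything else is bookkeeping.
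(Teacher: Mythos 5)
Your proof is correct and follows essentially the same computation as the paper for parts (1), (2), (3), and (5). The one place you diverge is part (4): you deduce it cleanly from (2) and (3) by comparing $r_v$-values, whereas the paper argues directly with valuations (showing that if $v(u)>0$ then $u\cdot\Dav\subset\prod_j\piv^{a_j}\Ov$ is disjoint from $\Dav$, and symmetrically for $v(u)<0$). Both are valid; your route gets more mileage out of the earlier parts and is arguably tidier. Incidentally, the paper's displayed chain in part (5) writes $\pivv^{-r_v(\xxx)+1}=\pivv\cdot f_v^{\#}(\xxx)$, which as written is only the case $v(t)=1$; your version $\pivv^{-r_v(\xxx)+v(t)}=|t|_v f_v^{\#}(\xxx)$ is the correct general statement, so your care there is warranted.
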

\begin{proof}
\begin{enumerate}
\item For every index~$j$ such that $x_j\neq 0$, one has that $\piv^{a_jk}x_j\in\Ov$ if and only if $k\geq -\frac{v(x_j)}{a_j}.$ We conclude $$r_v(\xxx)=\bigg\lceil\sup _{\substack{j=1\doots n\\x_j\neq 0}}\frac{-v(x_j)}{a_j}\bigg\rceil=\sup_{\substack{j=1\doots n\\x_j\neq 0}}\bigg\lceil-\frac{v(x_j)}{a_j}\bigg\rceil.$$
\item We observe that $$r_v(t\cdot\xxx)=\sup_{\substack{j=1\doots n\\x_j\neq 0}}\bigg\lceil-\frac{v(t^{a_j}x_j)}{a_j}\bigg\rceil=\sup_{\substack{j=1\doots n\\x_j\neq 0}}\bigg\lceil-\frac{v(x_j)+a_jv(t)}{a_j}\bigg\rceil=r_v(\xxx)-v(t).$$
\item We verify the claim for $k=0$. Pick $\xxx\in\Dav.$  For every index~$j$ one has $v(x_j)\geq 0$ and there exists an index~$i$ such that $x_i\not\in\piv^{a_i}\Ov,$ i.e. such that $v(x_i)<a_i$. This implies that $$0\geq r_v(\xxx)= \sup_{\substack{j=1\doots n\\x_j\neq 0}}\bigg\lceil-\frac{v(x_j)}{a_j}\bigg\rceil\geq 0.$$
Pick now $\xxx\in\Fvnz$ such that $r_v(\xxx)=0$. This means that for every~$j$ such that $x_j\neq 0$, one has that $$\frac{-v(x_j)}{a_j}\leq 0,$$i.e. that $v(x_j)\geq 0$ and that there exists an index~$i$ such that $$-1<\frac{-v(x_i)}{a_i}\leq 0,$$ i.e. such that $0\leq v(x_i)<a_i.$ We deduce that $\xxx\in\Ov^n$ and that $\xxx\not\in\piv^{a_1}\Ov\times\cdots\times\piv^{a_n}\Ov$, i.e. one has $\xxx\in\Dav$. Let now $k\neq 0$ be an integer. It follows from (1) and the case $k=0$ that \begin{align*}\{\xxx\in\Fvnz|r_v(\xxx)=k\}&=\{\xxx\in\Fvnz|r_v(\piv^{k}\cdot\xxx)=0\}\\&=\{\xxx\in\Fvnz|\piv^k\cdot\xxx\in\Dav\}\\&=\piv^{-k}\cdot\Dav.\end{align*} 
\item We have that $$\emptyset\neq\piv^{-k}\cdot (u\cdot(\piv^k\cdot\Dav)\cap(\piv^k\cdot\Dav))=(u\cdot\Dav)\cap\Dav.$$ If $v(u)>0$, then $(u\cdot\Dav)\subset \piv^{a_1}\Ov\times\cdots\times\piv^{a_n}\Ov$ and hence $(u\cdot\Dav)\cap \Dav=\emptyset,$ a contradiction. Suppose $v(u)<0,$ then $(u\cdot\Dav)\subset\piv^{-a_1}\Ov\times\cdots\times\piv^{-a_n}\Ov-\Ov^n,$ and hence $u\cdot\Dav\cap\Dav=\emptyset$, a contradiction. We deduce $v(u)=0$.
\item Let $t\in\Fvt$. One has that $$f^{\#}_v(t\cdot\xxx)=\pivv^{-r_v(t\cdot\xxx)}=\pivv^{-r_v(\xxx)+1}=\pivv\cdot f^{\#}_v(\xxx),$$
hence $f_v^{\#}$ is $\aaa$-homogenous. We have seen in \ref{begdav} that $\Dav$ is open and closed in $\Fv^n$, and hence in $\Fvnz$.  Hence, the sets $\piv^k\cdot\Dav$ are open and closed in $\Fvnz$ for every $k\in\ZZ$. It follows that $r_v$ is locally constant, so is $f_v^{\#}$. 
\end{enumerate}
\end{proof}
\begin{mydef}\label{tordef} Let $\vMFz$. 
We call the $\aaa$-homogenous function $f^{\#}_v:F_v^n-\{0\}\to\RR_{> 0}$ of weighted degree $d\in\ZZ$ given by 
 \begin{align*}\quad
f_v:\xxx\mapsto&
               \begin{cases}
 \pivv^{-dr_v(\xxx)}&\text{if } \vMFz,\\
(\max_{j}(|x_j|^{1/a_j}_v))^d&\text{if }\vMFi,
               \end{cases}
              \end{align*} the toric $\aaa$-homogenous function of weighted degree~$d$. 
\end{mydef}
We remark that when $\aaa=\jed,$ for every $\vMF$, the toric $\jed$-homogenous function of weighted degree~$1$ is given by $f_v^{\#}:\xxx\mapsto \max_{j}(|x_j|_v)$.
\begin{mydef}\label{familyquasitoric}
Let $(f_v:\Fvnz\to\RR_{\geq 0})_v$ be a family of $\aaa$-homogenous continuous functions of weighted degree~$d$. We say that the degree of the family $(f_v)_v$ is~$d$.
\begin{enumerate}
\item We say that $(f_v)_v$ is quasi-toric if for almost all $v\in M_F^0$, the function $f_v$ is toric. 
\item We say that $(f_v)_v$ is toric if for every~$v$ one has that $f_v=f_v^{\#}$, where $f_v^{\#}$ is the toric $\aaa$-homogenous function of weighted degree~$d$. 
\end{enumerate}
\end{mydef}
For every $\vMFz,$ as $E_v\subset\Dav$, one has that $f^{\#}_v|_{E_v}=1$. Thus every quasi-toric family $(f_v)_v$ is automatically generalized adelic.
\begin{mydef}
Let $(f_v)_v$ be a quasi-toric family of $\aaa$-homogenous continuous functions of a fixed weighted degree. We say that the resulting height $H((f_v)_v)$ is quasi-toric. If $(f_v)_v$ is furthermore assumed to be toric, we say that~$H$ is toric and may be denoted by $H^{\#}$.
\end{mydef}
\begin{exam}
\normalfont
We present a formula for the toric height in the case $F=\QQ$. Let~$d$ be a strictly positive integer. Let $\xxx\in[\PPP(\aaa)(\QQ)]$ and let $\wx\in\ZZ^n$ be a lift of~$\xxx$ which satisfies $\xxx\in\DD^{\aaa}_p$ for every prime~$p$. Then for every prime~$p$, one has that $f^{\#}_p(\wx)=1$. It follows that the toric height defined by the degree~$d$ toric family satisfies: $$H^{\#}(\xxx)=(\max(|\widetilde x_j|^{1/a_j}))^d.$$ When $n=2$ and $\aaa=(4,6)$, the toric height defined by the degree $12$ toric family is sometimes called {\it naive height} (e.g. \cite{Brumer}).  
\end{exam}
%
\begin{rem}
\normalfont
Quasi-toric heights are not stable. In fact, we will verify in \ref{finitenesstoric}, that quasi-toric heights satisfy the Northcott heights, while the stable heights do not 
\end{rem}
\begin{lem}\label{htoricbig}
For $\vMF$, let $f_v^{\#}$ be the toric $\aaa$-homogenous function of weighted degree~$1$. 
\begin{enumerate}
\item For every $\vMF$ and every $\yyy\in\Fvnz$, one has that $f_v^{\#}(\yyy)\geq \max(|y_j|^{1/a_j}_v)$. 
\item For every $\xxx\in\PPP(\aaa)(F)$ one has that $H^{\#}(\xxx)\geq H^{\max}(\xxx)\geq 1,$ where~$H^{\max}$ is the height given by  $(\yyy\mapsto\max(|y_j|^{1/a_j}_v))_v.$
\end{enumerate}
\end{lem}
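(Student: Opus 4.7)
The plan is to verify the pointwise inequality at each place, multiply across all places to obtain the first inequality of~(2), and then quote \ref{brzostab} for the second.

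For~(1), the case $\vMFi$ is immediate because $f_v^{\#}(\yyy)=\max_j(|y_j|_v^{1/a_j})$ by the very definition in \ref{tordef}. So the only content is at $\vMFz$. There we must show that $\pivv^{-r_v(\yyy)}\geq\max_j(|y_j|_v^{1/a_j})$. Taking $v$-adic valuations (recall $\pivv<1$ and $|y_j|_v=\pivv^{v(y_j)}$), this is equivalent to
\[
r_v(\yyy)\;\geq\;\sup_{\substack{j=1,\dots,n\\ y_j\neq 0}}\frac{-v(y_j)}{a_j},
\]
which is exactly the content of part~(1) of Lemma \ref{davdavdav}, since the ceiling of a real number is at least that real number. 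This settles~(1).

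For~(2), fix $\xxx\in\PPP(\aaa)(F)$ and let $\wx\in\Fvnz$ be a representative; by the definition of the height as the product over all places $v\in M_F$ of $f_v^{\#}(\wx)$ resp.\ $\max_j(|\widetilde x_j|_v^{1/a_j})$, multiplying the inequality from~(1) over all places yields
\[
H^{\#}(\xxx)\;=\;\prod_{\vMF}f_v^{\#}(\wx)\;\geq\;\prod_{\vMF}\max_j(|\widetilde x_j|_v^{1/a_j})\;=\;H^{\max}(\xxx).
\]
Both products are finite by the generalized adelic property (established in \ref{familyquasitoric} and \ref{brzostab}, respectively), and the inequality is independent of the choice of representative by the product formula. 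Finally, the inequality $H^{\max}(\xxx)\geq 1$ is precisely the last assertion of Lemma \ref{brzostab}.

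There is no essential obstacle here; the only subtle point is the bookkeeping with the ceiling at finite places, which is already packaged in \ref{davdavdav}(1).
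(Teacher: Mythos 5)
Your proposal is correct and follows essentially the same overall plan as the paper: establish the pointwise inequality $f_v^{\#}\geq\max_j|y_j|_v^{1/a_j}$ at each place, multiply over all places to get $H^{\#}\geq H^{\max}$, and quote \ref{brzostab} for $H^{\max}\geq 1$.

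For part~(1) your treatment is in fact a bit more direct than the paper's. You simply invoke the formula $r_v(\yyy)=\sup_j\lceil -v(y_j)/a_j\rceil$ of \ref{davdavdav}(1), which holds for all $\yyy\in\Fvnz$, and use $\lceil t\rceil\geq t$ to conclude $r_v(\yyy)\geq\sup_j(-v(y_j)/a_j)$; exponentiating in base $\pivv<1$ then gives the pointwise bound everywhere. The paper instead first proves the inequality only for $\yyy\in\Dav$, then observes that $\yyy\mapsto f_v^{\#}(\yyy)/\max_j|y_j|_v^{1/a_j}$ is $\Gm(\Fv)$-invariant, and finally extends by writing an arbitrary $\zzz$ as $t\cdot\yyy$ with $\yyy\in\Dav$. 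Both routes rest on the same ceiling inequality and yield the same result; yours simply avoids the restriction-then-extend detour, at no cost. For part~(2) your argument coincides with the paper's. No gaps.
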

\begin{proof}
For $\vMFi$, we recall that $f_v^{\#}(\yyy)=\max(|y_j|^{1/a_j}_v)$. Let $\vMFz$.  For every $\yyy\in\Dav$ one has that $$r_v(\yyy)\geq \sup_{\substack{j=1\doots n\\x_j\neq 0}}\big(\frac{-v(y_j)}{a_j}\big)=:r^{\max}_v(\yyy)$$and thus $$f_v(\yyy)=\pivv^{-r_v(\yyy)}\geq \pivv^{-r^{\max}_v(\yyy)}\geq (\max(|y_j|^{1/a_j}_v).$$ The function $\yyy\mapsto f_v^{\#}(\yyy)(\max(|y_j|^{1/a_j}_v))^{-1}$ is $\Gm(\Fv)$-invariant. As by \ref{davdavdav} any $\zzz\in\Fvnz$ writes us $t\cdot\yyy$, where $\yyy\in\Dav$ it follows that $f^{\#}_v(\zzz)\geq (\max(|z_j|^{1/a_j}_v)$ for every $\zzz\in\Fvnz.$ The first claim is proven. 

Now let $\xxx\in\PPP(\aaa)(F)$ and let $\wx:\Gm\to\AAA^n-\{0\}$ be the~$\Gm$-equivariant morphism over~$F$ defined by~$\xxx$. We have that $$H^{\#}(\xxx)=\prod_{\vMF}f_v^{\#}(\wx(1))\geq \prod_{\vMF}\max_j(|\wx(1)|_v^{1/a_j})=H^{\max}(\xxx)\geq 1,$$ by \ref{brzostab}. The second claim is proven. 
\end{proof}
Let us end the paragraph by a proof of a lemma that will be used latter for the proof of Proposition \ref{comphh}. It is motivated by \cite[Theorem B.2.5]{HindrySilv}.
\begin{lem}\label{estisilv}
Let $P_1\doots P_{r+1}\in F[X_1\doots X_n]$ be $\aaa$-homogenous polynomials of the same weighted degree~$d$. 
Let $\overline{J}(P_1\doots P_{r+1}):(\mathscr P(\aaa)-\mathcal Z(P_1\doots P_{r+1}))\to\PP^{r}$ be the~$1$-morphism given by the $t\mapsto t^d$-equivariant morphism $(P_1\doots  P_{r+1}):(\AAA^n-\{0\})\to(\AAA^n-\{0\})$ (see \ref{phiequiv}). Let $H^{\#}$ be the toric height on $[\PPP(\aaa)(F)]$ defined by the toric degree~$d$ family. There exists $C>0$ such that for all $\xxx\in[(\PPP(\aaa)-Z(P_1\doots P_{r+1}))(F)]$ one has that $$CH^{\#}(\xxx)\geq H_{\PP^{r}}(\phi(\xxx))^d,$$where $H_{\PP^r}$ is the toric height defined by the toric degree~$1$ family on $\PP^r$.
\end{lem}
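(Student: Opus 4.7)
The plan is to fix a lift $\wx(1) \in F^n - \{0\}$ of $\xxx$, where $\wx: (\Gm)_F \to (\AAA^n - \{0\})_F$ is the $\Gm$-equivariant $F$-morphism associated to $\xxx$ (such a lift exists by Lemma \ref{xgtac}, since $\Gm$ is special), and to denote by $\widetilde x_i(1)$ the $i$-th coordinate of $\wx(1)$. The approach is to establish a local bound at each place $v$ and then take the product. Since $\xxx \notin Z(P_1, \ldots, P_{r+1})$, the tuple $(P_1(\wx(1)), \ldots, P_{r+1}(\wx(1)))$ lies in $F^{r+1} - \{0\}$ and represents $\phi(\xxx) \in \PP^r(F)$; hence $H_{\PP^r}(\phi(\xxx)) = \prod_{v} \max_j |P_j(\wx(1))|_v$ by the standard toric height formula on projective space.

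The key local estimate is $\max_j |P_j(\wx(1))|_v \leq A_v \cdot f_{v}^{\#}(\wx(1))$, where $A_v$ depends only on the coefficients of the $P_j$ at $v$. To prove it, note that every monomial $c_{j,\alpha} X_1^{\alpha_1} \cdots X_n^{\alpha_n}$ of $P_j$ satisfies $\sum_i a_i \alpha_i = d$ by the $\aaa$-homogeneity of weighted degree $d$, so
$$\prod_i |\widetilde x_i(1)|_v^{\alpha_i} = \prod_i \bigl(|\widetilde x_i(1)|_v^{1/a_i}\bigr)^{a_i \alpha_i} \leq \bigl(\max_i |\widetilde x_i(1)|_v^{1/a_i}\bigr)^d \leq f_{v}^{\#}(\wx(1)),$$
the last inequality being Lemma \ref{htoricbig} applied in weighted degree $d$. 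Aggregating the monomials via the triangle inequality (at archimedean places, with an innocuous combinatorial factor from the number of monomials, plus a factor accounting for the squared absolute value at complex places) or via the ultrametric inequality (at non-archimedean places, with no extra factor) yields the claimed local bound.

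Multiplying the local bounds produces $H_{\PP^r}(\phi(\xxx)) \leq C \cdot H^{\#}(\xxx)$, where $C := \prod_v A_v$ is finite: at any non-archimedean $v$ at which all coefficients of all $P_j$ lie in $\Ov$ (which holds outside a finite set of places), the ultrametric inequality forces $A_v \leq 1$. Independence of the choice of lift is automatic, since a different lift differs by multiplication by an element $t \in F^{\times}$ and both sides are invariant under this scaling thanks to the product formula combined with weighted homogeneity. There is no substantial obstacle: the argument is a direct adaptation of the proof of Theorem B.2.5 of \cite{HindrySilv} to the weighted setting, with the weighted-homogeneity inequality on monomials being the only genuinely new input.
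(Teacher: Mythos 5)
Your proposal is correct and follows the same route as the paper: both arguments fix an affine lift $\wx(1)$, bound each $\aaa$-homogeneous monomial via the relation $\aaa\cdot\mmm = d$ to get $\prod_i|\widetilde x_i(1)|_v^{m_i}\leq(\max_i|\widetilde x_i(1)|_v^{1/a_i})^d\leq f_v^{\#}(\wx(1))$, aggregate the monomials of each $P_j$ by the (ultra)triangle inequality picking up a local constant that equals $1$ at almost all finite places, and then multiply over $v$. Citing Lemma~\ref{htoricbig} is a cleaner way to justify $f_v^{\#}\geq(\max_i|\widetilde x_i(1)|_v^{1/a_i})^d$ than the paper's inline re-derivation, but the content is identical.

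One thing worth noting explicitly: what your argument establishes — and you state it honestly — is $H_{\PP^r}(\phi(\xxx))\leq C\,H^{\#}(\xxx)$, \emph{without} the exponent $d$ on the left. The lemma as written claims $H_{\PP^r}(\phi(\xxx))^d\leq C\,H^{\#}(\xxx)$, which is a strictly stronger assertion (since $H_{\PP^r}\geq 1$) and is in fact false for $d>1$ because $H_{\PP^r}\circ\phi$ is unbounded. The paper's own final displayed inequality commits the same slip, equating $\prod_v(\max_i|P_i(\wx)|_v)^d$ with $\prod_v C_vf_v^{\#}(\wx)$ after having only proved the local bound $\max_i|P_i(\wx)|_v\leq C_vf_v^{\#}(\wx)$. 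The exponent $d$ in the statement of the lemma is a typo; the inequality you prove, $H_{\PP^r}(\phi(\xxx))\leq C\,H^{\#}(\xxx)$ with $H^{\#}$ the degree-$d$ toric height, is the correct one and is all that Proposition~\ref{comphh} needs.
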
 
\begin{proof}
The strategy from the proof of \cite[Theorem B.2.5]{HindrySilv} applies here. Let us denote by $X^{\mmm}$ the monomial $X_1^{m_1}\doots X_n^{m_n},$ where $\mmm\in\ZZ^n_{\geq 0}$. We denote by $w(\aaa,d)$ the number of $\mmm\in\ZZ^n_{\geq 0}$ for which $\aaa\cdot\mmm=d$, this is precisely the number of monomials which have $\aaa$-weighted degree equal to~$d$. For $i\in\{1\doots r+1\}$, we write $$P_i=\sum_{\aaa\cdot\mmm=d}A_{i,\mmm}X^{\mmm},$$where the sum runs over $\mmm\in\ZZ^n_{\geq 0}$ for which $\aaa\cdot\mmm=\sum_{j=1}^na_jm_j=d.$ For $\vMF$, we denote by $|P_i|_v=\max_{\mmm}|A_{i,\mmm}|_v$. For almost all $v,$ one has that $|P_i|_v=1.$ For $k\in\ZZ$, we set \begin{align*}
 \quad
\varepsilon_v(k):=&
             \begin{cases}
k&\text{if } \vMFi,\\
1&\text{if }\vMFz.
              \end{cases}
               \end{align*} 
We note that for any $\vMF$, any $k\geq 1$ and any $z_1\doots z_k\in F$, one has that $$|z_1+\cdots +z_k|_v\leq \varepsilon_v(r)\max(|z_1|_v\doots |z_k|_v).$$
For $\xxx\in[\PPP(\aaa)(F)]$, let $\wx\in\Fvnz$ be a lift of~$\xxx$. For $i\in\{1\doots r+1\}$ and $v\in M_F$, we deduce that \begin{align*}
|P_i(\wx)|_v&=\big|\sum_{\aaa\cdot\mmm}A_{i,\mmm}\widetilde x^{m_1}_1\cdots\widetilde x^{m_n}_n\big|_v\\
&\leq \varepsilon(w(\aaa,d))(\max_{\mmm}|A_{i,\mmm}|_v)(\max_{\mmm}|\widetilde x_1^{m_1}\cdots \widetilde x_n^{m_n}|_v)\\&\leq \varepsilon(w(\aaa,d))|P_i|_v\max_{\mmm}\big(\prod _{\ell=1}^n\max_{j}(|\widetilde x_j|^{1/a_j}_v)^{a_{\ell}m_{\ell}}\big)\\
&=\varepsilon(w(\aaa,d))|P_i|_v \max _j(|\widetilde x_j|_v^{1/a_j})^{\aaa\cdot\mmm}\\
&=\varepsilon(w(\aaa,d))|P_i|_v \max _j(|\widetilde x_j|_v^{1/a_j})^{d}.
\end{align*}
Set $C_v=\varepsilon(w(\aaa,d))\max_i |P_i|_v$. For almost all~$v$, one has that $C_v=1$ and we set $C=\prod_{v}C_v$. Let us define $$r_v:\Fvnz\to\ZZ\hspace{1cm}\yyy\mapsto\sup_{\substack{j=1\doots n\\x_j\neq 0}}\bigg\lceil-\frac{v(y_j)}{a_j}\bigg\rceil$$ and $$r^{\max}_v:\Fvnz\to\QQ\hspace{1cm}\yyy\mapsto\sup_{\substack{j=1\doots n\\y_j\neq 0}}\big(\frac{v(y_j)}{a_j}\big).$$ One has that $$f^{\#}_v(\yyy)=\pivv^{-dr_v(\yyy)}\geq\pivv^{-dr^{\max}_v(\yyy)}= \max_j(|y_j|_v^{1/a_j}|_v)^d.$$ We deduce that $$\max_i(|P_i(\wx)|_v)\leq C_v\max_j(|y_j|_v^{1/a_j})^d\leq C_vf^{\#}_v(\wx).$$ By multiplying this inequality for all~$v$ we obtain that for every $\xxx\in[(\PPP(\aaa)-Z(P_1\doots P_n))(F)]$ one has that\begin{multline*}H_{\PP^r}(\overline J(P_1\doots P_{r+1})(\xxx))^d=\prod_{\vMF}(\max_i(|P_i(\wx)|_v))^{d}\leq \prod_vC_vf_v^{\#}(\wx)\\= CH^{\#}(\xxx).\end{multline*}The claim is proven.
\end{proof}
\subsection{} \label{localheightdef} We establish several facts on ``local heights" that will be needed in \ref{finitenesstoric}. 

Let $(f_v:\Fvnz\to\RR_{>0})_v$ be a generalized adelic family of $\aaa$-homogenous continuous function of weighted degree $|\aaa|$. Let $H=H((f_v)_v)$ be the corresponding height on $[\PPP(\aaa)(F)]$. 
For $v\in M_F,$  $t\in\Fvt$ and $\xxx\in(\Fvt)^n$ one has that $$f_v(t\cdot\xxx)\prodjn |t^{a_j}x_j|_v^{-1}=|t|_v^{|\aaa|}f_v(\xxx)\prodjn|t|_v^{-a_j} |x_j|_v^{-1}=f_v(\xxx)\prodjn|x_j|_v^{-1}$$ i.e. for $v\in M_F$,  the continuous function \begin{equation}\label{ovaveo}(\Fvt)^n\to \RR_{>0},\hspace{1cm} \xxx\mapsto f_v(\xxx)\prodjn |x_j|_v^{-1}\end{equation} is $(\Fvt)_{\aaa}$-invariant. Let $H_v:[\TTa(F_v)]\to\RR_{>0}$ be the function induced from $(\Fv)_{\aaa}$-invariant function (\ref{ovaveo}). 
For $\xxx\in[\TTa(F)],$ we write $H_v(\xxx)$ for what is technically $H_v([\TTa(i_v)](\xxx)),$ where $[\TTa(i_v)]:[\TTa(i)(F)]\to[\TTa(i)(F_v)]$ is the induced homomorphism from $(F^{\times})_{\aaa}$-invariant homomorphism $$(F^\times)^n\hookrightarrow (\Fvt)^n\to[\TTa(F_v)].$$ 
\begin{lem}\label{localheightglobal}
Let $\xxx\in[\TTa(F)]$. One has that $H(\xxx)=\prod_vH_v(\xxx)$.
\end{lem}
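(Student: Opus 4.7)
The plan is to pick an explicit lift of $\xxx$ and then reduce the identity to the product formula for $F^\times$.

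First I would choose a representative $\wx(1) = (\widetilde{x}_1(1), \dots, \widetilde{x}_n(1)) \in (F^\times)^n$ of $\xxx \in [\TTa(F)]$: since $\TTa = \Gm^n/\Gm$ and any $F$-point of $\TTa$ is given by a $\Gm_F$-equivariant morphism $\wx:\Gm_F \to \Gm^n_F$ (by \ref{xgtac}, as $\Gm$ is special), the value $\wx(1)$ lies in $(F^\times)^n$ and its image under $[\TTa(i_v)]$ is the class of $\wx(1)$ viewed in $(F_v^\times)^n$. By the definition of the $(F_v^\times)_\aaa$-invariant map inducing $H_v$, one therefore has
\begin{equation*}
H_v(\xxx) = f_v(\wx(1)) \prod_{j=1}^{n} |\widetilde{x}_j(1)|_v^{-1}.
\end{equation*}

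Next I would take the product of these local contributions over all $v \in M_F$. Formally,
\begin{equation*}
\prod_{v \in M_F} H_v(\xxx) = \prod_{v \in M_F} \Bigl( f_v(\wx(1)) \prod_{j=1}^{n} |\widetilde{x}_j(1)|_v^{-1} \Bigr) = \Bigl( \prod_{v \in M_F} f_v(\wx(1)) \Bigr) \cdot \prod_{j=1}^{n} \Bigl( \prod_{v \in M_F} |\widetilde{x}_j(1)|_v \Bigr)^{-1}.
\end{equation*}
Before rearranging, I need to justify that each factor converges as an honest product with only finitely many terms $\neq 1$: the family $(f_v)_v$ is generalized adelic, hence $f_v(\wx(1)) = 1$ for almost all $v$ (as already used in the construction of $H$), and for each fixed $j$ the element $\widetilde{x}_j(1) \in F^\times$ satisfies $|\widetilde{x}_j(1)|_v = 1$ for almost all $v$. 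Thus all three products are finite, and the rearrangement is legitimate.

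Finally I would invoke the product formula: for every $j$, one has $\prod_{v \in M_F} |\widetilde{x}_j(1)|_v = 1$ since $\widetilde{x}_j(1) \in F^\times$. Substituting this gives
\begin{equation*}
\prod_{v \in M_F} H_v(\xxx) = \prod_{v \in M_F} f_v(\wx(1)) = H(\xxx),
\end{equation*}
which is exactly the claim. The only subtlety is the well-definedness check: since $H_v$ is obtained from a $(F_v^\times)_\aaa$-invariant function and $H$ is independent of the choice of lift of $\xxx$ (by a product formula argument already made when defining $H$), the identity does not depend on the choice of $\wx$. There is no real obstacle here — the statement is essentially a repackaging of the product formula, and the verification is routine once the lift is chosen.
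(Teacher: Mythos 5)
Your proof is correct and follows essentially the same route as the paper's: pick a lift $\wx\in(\Ft)^n$, write out $\prod_v H_v(\xxx)$ as $\prod_v f_v(\wx)\cdot\prod_j\prod_v|\widetilde x_j|_v^{-1}$, and kill the second factor by the product formula. You supply more justification (finiteness of the products, well-definedness) than the paper's terse version, but the argument is the same.
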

\begin{proof}
Let $\wx\in (F^{\times})^n$ be a lift of~$\xxx$. By using the product formula, one gets that \begin{align*}
\prod_{\vMF}H_v(\xxx)&=\prod_{\vMF}\big(f_v(\wx)\prodjn|\widetilde x_j|_v^{-1}\big)\\
&=\big(\prod_{\vMF}f_v(\wx)\big)\prod_{\vMF}\prodjn|\widetilde x_j|_v^{-1}\\
&=H(\xxx)
\end{align*}
\end{proof}
\begin{lem}
Let $v\in M_F$ and suppose that $f_v=f_v^{\#}$ is the toric $\aaa$-homogenous function of weighted degree~$1$. Let $\xxx\in[\TTa(F)].$ One has that $H^{\#}_v(\xxx)\geq 1$.
\end{lem}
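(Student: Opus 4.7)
The plan is to unpack the definition of $H_v^{\#}(\xxx)$ as a function on $[\TTa(F_v)]$ and argue directly, case-by-case, that the inequality $H_v^{\#}(\xxx) \geq 1$ holds in fact for every class in $[\TTa(F_v)]$ (and in particular for the image of $\xxx \in [\TTa(F)]$). Namely, by the $(\Fvt)_{\aaa}$-invariance of the function (\ref{ovaveo}), I can pick any lift $\yyy \in (\Fvt)^n$ of the image of $\xxx$ in $[\TTa(\Fv)]$ and compute $H_v^{\#}(\xxx) = f_v^{\#}(\yyy) \prod_{j=1}^n |y_j|_v^{-1}$ unambiguously. No argument specific to rational points will be needed.

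For $v \in M_F^\infty$, I would choose an index $k$ at which the maximum defining $f_v^{\#}(\yyy) = (\max_j |y_j|_v^{1/a_j})^{|\aaa|}$ is attained, so that $|y_k|_v^{1/a_k} \geq |y_j|_v^{1/a_j}$ for all $j$. Substituting and rewriting $|\aaa| = \sum_j a_j$ in the exponent of $|y_k|_v$, the expression factors as
\[
H_v^{\#}(\xxx) \;=\; \prod_{j=1}^n \left(\frac{|y_k|_v^{1/a_k}}{|y_j|_v^{1/a_j}}\right)^{a_j},
\]
which is a product of factors each $\geq 1$.

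For $v \in M_F^0$, the formula $f_v^{\#}(\yyy) = \pivv^{-|\aaa|\, r_v(\yyy)}$ combined with $|y_j|_v^{-1} = \pivv^{-v(y_j)}$ gives
\[
H_v^{\#}(\xxx) \;=\; \pivv^{-\sum_{j=1}^n \bigl(a_j r_v(\yyy) + v(y_j)\bigr)}.
\]
The key input here is part (1) of Lemma \ref{davdavdav}, which yields $r_v(\yyy) \geq -v(y_j)/a_j$ for every $j$, i.e.\ $a_j r_v(\yyy) + v(y_j) \geq 0$ termwise; combined with $\pivv < 1$, this forces $H_v^{\#}(\xxx) \geq 1$.

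I do not anticipate any real obstacle: the entire argument is an unfolding of the defining formulas for $f_v^{\#}$ together with the defining inequality for $r_v$. The only point to watch is a notational one: the statement refers to $f_v^{\#}$ ``of weighted degree $1$'', but the paragraph introducing $H_v$ (which uses the identity $f_v(t\cdot\xxx)\prod_j|t^{a_j}x_j|_v^{-1}=f_v(\xxx)\prod_j|x_j|_v^{-1}$) requires degree $|\aaa|$ for $(\Fvt)_{\aaa}$-invariance; I will work with weighted degree $|\aaa|$, which is what makes $H_v^{\#}$ well-defined on $[\TTa(\Fv)]$.
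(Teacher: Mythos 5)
Your proof is correct and essentially matches the paper's: for archimedean $v$ you regroup the product identically, and for finite $v$ the paper picks the distinguished lift in $\Dav$ (where $r_v=0$ and $|\widetilde{x}_j|_v\le 1$) while you work with an arbitrary lift and use the termwise bound $a_jr_v(\yyy)+v(y_j)\ge 0$ from the same Lemma \ref{davdavdav} — a cosmetic variation of the same idea. You are also right to read ``weighted degree $1$'' in the statement as $|\aaa|$: the construction of $H_v^{\#}$ in \ref{localheightdef} requires it.
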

\begin{proof}Suppose $v\in M_F^0.$  It follows from Lemma \ref{davdavdav} that there exists a lift $\widetilde\xxx$ of $[\TTa(i_v)](\xxx)$ lying in $\Dav$. By using that $f^{\#}_v|_{\Dav}=1$ and that $\Dav\subset(\Ov)^n$, we obtain \begin{equation*}H^{\#}_v(\xxx)=f_v^{\#}(\wx)\prodjn |\widetilde x_{j}|_v^{-1_j}=\prodjn |\widetilde x_{j}|_v^{-1_j}\geq 1.\end{equation*}
Suppose now $\vMFi$. Let $\widetilde\xxx\in(\Fvt)^n$ be a lift of~$\xxx$. One has that \begin{align*}H^{\#}_v(\xxx)&=f^{\#}_v(\wx)\prodjn|\widetilde x_{j}|_v^{-1}\\&=\big(\max _{k}(|\widetilde x_{k}|_v^{1/a_k})\big)^{|\aaa|}\prod _{j=1}^n|\widetilde x_{j}|_v^{-1}\\ &=\prod _{j=1}^n\big(\max _{k=1\doots n}(|\widetilde x_{k}|_v^{1/a_k})\big)^{a_j}\big(|\widetilde x_{j}|_v^{1/a_j}\big)^{-a_j} \\ &\geq \prod _{j=1}^n\big(\max_{k}(|\widetilde x_{k}|_v^{1/a_k})\big)^{a_j}\big(\max_{k}(|\widetilde x_{k}|_v^{1/a_k})\big)^{-a_j}\\&=1.\end{align*}
\end{proof}
\section{Metrics induced by models} \label{metricsinducedbymodels} We use models with enough integral points to define metrics. We establish that the toric metric comes from models of weighted projective stacks from \ref{Modelswithenough}. 
%
\subsection{}We use $\Ov$-points of~$\oPPa$ to define $\Fv$-metrics.

Let $\vMFz$. By an $\Ov$-extension of $\xxx\in\oPPa(\Fv)$ we mean a pair $(\yyy,t)$ where $\yyy\in\oPPa(\Ov)$ and $t:\yyy_{\Fv}\xrightarrow{\sim}\xxx$ is a $2$-isomorphism.  Let $S_{\xxx}$ be the set of $\Ov$-extensions of $\xxx\in\oPPa(\Fv)$. Proposition \ref{whyenough} gives that the set $S_{\xxx}$ is non-empty for any $\xxx\in\oPPa(\Fv).$ 

For $\xxx\in\oPPa(\Fv)$ (respectively, $\xxx\in\oPPa(\Ov)$), we will denote by~$\wx$ the canonical $(\Gm)_{\Fv}$-equivariant morphism $\wx:(\Gm)_{\Fv}\to\AAA^n_{\Fv}$ (respectively, the canonical $(\Gm)_{\Ov}$-equivariant morphism $\wx:(\Gm)_{\Ov}\to\AAA^n_{\Ov}$) induced by $\xxx.$ One has that $q^{\aaa}\circ\wx (1)=\xxx$.

If~$L$ is a line bundle on~$\oPPa$, a $2$-isomorphism $t:\xxx\xrightarrow{\sim}\xxx'$, where $\xxx,\xxx'\in\oPPa(\Fv),$ induces a linear map $L(t):L(\xxx)\xrightarrow{\sim} L(\xxx')$. 
\begin{mydef}\label{defmetmod}
Let~$L$ be a line bundle on~$\oPPa$. Let $\xxx\in\PPP(\aaa)(\Fv)$ and let $\ell\in L(\xxx)$. We define $$||\ell||_{\xxx}:=\sup_{(\yyy,t)\in S_\xxx}\{\inf\{|a|_v\hspace{0.1cm} |\hspace{0.1cm}a\in\Fv^{\times}: \ell\in a (L(t)(\yyy^*L))\}\}.$$
\end{mydef}
We calculate the metrics.
\begin{lem}\label{calmetmod}
Let $\xxx\in\PPP(\aaa)(\Fv)$. Let~$k$ be an integer and let $f_v^{\#}:\Fvnz\to\RR_{>0}$ be the $\aaa$-homogenous toric function of weighted degree~$k$. Let $\ell\in \mathcal O(k)(\xxx)$. One has that $$||\ell||_{\xxx}:=f^{\#}_v(\wx(1))^{-1}|\ell|_v.$$
\end{lem}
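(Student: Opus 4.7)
The plan is to unpack Definition~\ref{defmetmod} by computing the inner infimum and the outer supremum separately. By Proposition~\ref{gpic}, the line bundle $\OO(k)$ on $\oPPa$ is identified with the trivial line bundle $\OO_{\AAA^n}$ equipped with the $\Gm$-linearization $\psi_k:(t,\xxx)\mapsto t^k$. For any $\Fv$- or $\Ov$-point $z$ of $\oPPa$ coming from a $\Gm$-equivariant morphism $\widetilde z:(\Gm)\to\AAA^n$, descent identifies a section of $z^*\OO(k)$ with a function $\tau$ on $(\Gm)$ satisfying $\tau(tg)=t^k\tau(g)$; such a $\tau$ is determined by $\tau(1)$, yielding a canonical trivialization $1_z$ (corresponding to $\tau(g)=g^k$) of $z^*\OO(k)$ over the base ring. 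I shall write $\ell=\lambda\cdot 1_\xxx$ with $\lambda\in\Fv$, so that $|\ell|_v=|\lambda|_v$ in this trivialization.

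For a fixed $\Ov$-extension $(\yyy,t)$, let $s\in\Gm(\Fv)$ be the element representing $t:\yyy_\Fv\xrightarrow{\sim}\xxx$, so that $\widetilde{\yyy_\Fv}(g)=\wx(sg)$, and in particular $\widetilde\yyy(1)=s\cdot\wx(1)$. A direct computation with pullbacks then gives $L(t)(1_{\yyy_\Fv})=s^{-k}\cdot 1_\xxx$: indeed, for a global section $\sigma$ of $\OO(k)$ corresponding to an $\aaa$-homogeneous polynomial $f$ of weighted degree $k$, one has $\xxx^*\sigma=f(\wx(1))\cdot 1_\xxx$ and $\yyy_\Fv^*\sigma=f(s\cdot\wx(1))\cdot 1_{\yyy_\Fv}=s^kf(\wx(1))\cdot 1_{\yyy_\Fv}$, and $L(t)$ identifies these. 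Consequently $L(t)(\yyy^*\OO(k))=s^{-k}\Ov\cdot 1_\xxx$, and the condition $\ell\in a\cdot L(t)(\yyy^*\OO(k))$ becomes $\lambda\in as^{-k}\Ov$, equivalently $|a|_v\geq|\lambda|_v|s|_v^k$. Hence the inner infimum equals $|\lambda|_v|s|_v^k$.

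For the outer supremum, I would observe that as $(\yyy,t)$ ranges over all $\Ov$-extensions, the element $s\in\Gm(\Fv)$ ranges exactly over those satisfying $s\cdot\wx(1)\in\AAA^n(\Ov)=\Ov^n$, which by Lemma~\ref{davdavdav}\textup{(1)} is equivalent to $v(s)\geq r_v(\wx(1))$. For $k\geq 0$, the quantity $|s|_v^k=\pivv^{kv(s)}$ is maximized when $v(s)$ attains its minimum, achieved for instance by $s=\piv^{r_v(\wx(1))}$; the supremum is thus $\pivv^{kr_v(\wx(1))}=f_v^{\#}(\wx(1))^{-1}$. Combining the two computations yields $||\ell||_\xxx=|\lambda|_v\cdot f_v^{\#}(\wx(1))^{-1}=f_v^{\#}(\wx(1))^{-1}|\ell|_v$, as claimed.

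The main obstacle will be the careful verification of the identity $L(t)(1_{\yyy_\Fv})=s^{-k}\cdot 1_\xxx$, which requires tracking the direction of $2$-morphisms in $\oPPa(\Fv)$ (as formulated in the paragraph after Lemma~\ref{xgtac}) and the descent prescription imposed by $\psi_k$; once this is in hand, the remainder is a routine valuation computation based on Lemma~\ref{davdavdav}.
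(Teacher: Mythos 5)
Your proof is correct and follows essentially the same path as the paper's: both unpack Definition~\ref{defmetmod}, reduce the inner infimum to identifying the lattice $L(t)(\yyy^*\OO(k))\subset\OO(k)(\xxx)$, and compute the outer supremum via the characterization $s\cdot\wx(1)\in\Ov^n\iff v(s)\geq r_v(\wx(1))$ coming from Lemma~\ref{davdavdav}. The explicit formulas you derive, $\wy(1)=s\cdot\wx(1)$ and $L(t)(1_{\yyy_\Fv})=s^{-k}\,1_\xxx$, are the ones that actually follow from the paper's stated convention for $2$-morphisms (source equals target composed with the torsor automorphism, as set out after Lemma~\ref{xgtac}), and you keep them internally consistent. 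The paper's own proof writes $\wy(1)=t^{-1}\cdot\wx(1)$ and asserts $\OO(k)(r):x\mapsto t^kx$, two sign slips which cancel each other in the final line (where there is a further typo, $\pivv^{-kr_v(\wx(1))}$ in place of $\pivv^{kr_v(\wx(1))}$); your version is cleaner.

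The one place that needs repair is your justification of the key identity $L(t)(1_{\yyy_\Fv})=s^{-k}\,1_\xxx$. You verify it by pulling back a global section $\sigma$ of $\OO(k)$ given by an $\aaa$-homogeneous polynomial $f$ of weighted degree $k$ and dividing by $f(\wx(1))$. Such an $f$ with $f(\wx(1))\neq 0$ need not exist: for instance, if $\aaa=(2,3)$ and $k=1$ there are no nonzero $\aaa$-homogeneous polynomials of weighted degree~$1$ at all, and even when sections exist they may all vanish at the given $\wx(1)$. You should instead argue directly from the descent picture you have already set up: the torsor morphism $\phi:g\mapsto sg$ underlying $t$ satisfies $\widetilde{\yyy_\Fv}=\wx\circ\phi$, so pullback along $\phi$ gives an isomorphism $\phi^*:\OO(k)(\xxx)\to\OO(k)(\yyy_\Fv)$ which on equivariant functions sends $\tau$ to $g\mapsto\tau(sg)$, hence carries the unit $\tau_0(g)=g^k$ to $s^k\tau_0$, i.e.\ $\phi^*1_\xxx=s^k1_{\yyy_\Fv}$; since $L(t)$ is the inverse isomorphism, $L(t)(1_{\yyy_\Fv})=s^{-k}1_\xxx$, with no appeal to global sections. (The paper simply asserts the formula without any argument, so you are trying to prove more than it does.) Finally, you silently restrict to $k\geq 0$ at the supremum step; this is correct, and in fact necessary, since for $k<0$ the supremum is infinite and the stated formula fails. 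The lemma's hypothesis ``$k$ an integer'' should be read as $k\geq 0$, and the paper's proof makes the same tacit assumption.
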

\begin{proof}
If $r:\xxx'\to\xxx''$ is a $2$-isomorphism in $\oPPa(\Fv)$, the induced linear map $\mathcal O(k)(r):\Fv=\mathcal O(k)(\xxx')\xrightarrow{\sim}\mathcal O(k)(\xxx'')=\Fv$ is the linear map $x\mapsto t^kx$. It follows that:\begin{align*}||\ell||_{\xxx}&=\sup_{(\yyy,t)\in S_{\xxx}}\{ \inf\{|a|_v\hspace{0.1cm}|\hspace{0.1cm}a\in\Fv^{\times}:\ell\in a t^k(\yyy^*\mathcal O(k))\}\}\\
&=\sup_{(\yyy,t)\in S_{\xxx}}\{\inf\{|a|_v\hspace{0.1cm}|\hspace{0.1cm}a\in\Fv^{\times}:\ell\in at^k((\wy(1))^*(q^{\aaa}_{\Ov})^*\mathcal O(k))\}\}\\
&=\sup_{(\yyy,t)\in S_{\xxx}}\{\inf\{|a|_v\hspace{0.1cm}|\hspace{0.1cm}a\in\Fv^{\times}:\ell\in at^k((\wy(1))^*\OO_{\AAA^n})\}\}\\
&=\sup_{(\yyy,t)\in S_{\xxx}}\{\inf\{|a|_v\hspace{0.1cm}|\hspace{0.1cm}a\in\Fv^{\times}:\ell\in at^k\Ov\}\}\\
&=\sup_{(\yyy,t)\in S_{\xxx}}\{\inf\{|a|_v\hspace{0.1cm}|\hspace{0.1cm}a\in\Fv^{\times}:1\in a\ell^{-1}t^k\Ov\}\}\\
&=\sup_{(\yyy,t)\in S_{\xxx}}\{|t|^{-k}_v|\ell|_v\}\\
&=|\ell|_v\sup_{(\yyy,t)\in S_{\xxx}}\{|t|^{-k}_v\}.
\end{align*}
Note that if $(\yyy,t)\in S_{\xxx}$, then as $t^{-1}\cdot\wx(1)=\wy(1)\in\Ov^n,$ it follows from \ref{davdavdav} that $v(t^{-1})\geq r_v(\wx(1))$, with the equality if and only if $\wy(1)\in\Dav$. We deduce that $$\sup_{(\yyy,t)\in S_{\xxx}}\{|t|^{-k}_v\}=\pivv^{-kr_v(\wx(1))}=f^{\#}_v(\wx(1))^{-1}.$$ The claim follows.
\end{proof}
We can deduce that:
\begin{cor}Let~$v$ be a finite place of~$F$ and let~$k$ be an integer. The metrics $|\lvert\cdot\rvert|_{\xxx}$ from \ref{defmetmod} on $\mathcal O(k)(\xxx)$ for $\xxx\in\PPP(\aaa)(\Fv)$ define an $\Fv$-metric $|\lvert\cdot\rvert|$ on $\mathcal O(k)|_{\PPP(\aaa)_{\Fv}}.$ The $\Fv$-metric $|\lvert\cdot\rvert|$ is the induced $\Fv$-metric from the function $x\mapsto f^{\#}_v({\xxx})$ by \ref{metribun}.
\end{cor}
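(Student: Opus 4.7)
\smallskip

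The plan is to reduce everything to the dictionary provided by Lemmas \ref{cormeq} and \ref{metribun}, using the explicit formula already obtained in Lemma \ref{calmetmod}. First I would observe that by the definition in \ref{tordef} and Lemma \ref{davdavdav}(5), the toric $\aaa$-homogenous function $f_v^{\#}:\Fvnz\to\RR_{>0}$ of weighted degree $k$ is continuous (indeed locally constant, since $\Dav$ is both open and closed in $\Fvnz$) and satisfies $f_v^{\#}(t\cdot\xxx)=|t|_v^k f_v^{\#}(\xxx)$ for all $t\in\Fvt$ and $\xxx\in\Fvnz$. So by Lemma \ref{cormeq}, there is a well-defined $\Fv$-metric $|\lvert\cdot\rvert|_{f_v^{\#}}$ on $\mathcal O(k)|_{\PPP(\aaa)_{\Fv}}$, uniquely characterized by the property that the pulled-back $\Gm$-invariant metric on the trivial line bundle on $\AAnz$ satisfies $\|1(\xxx)\|^{-1}=f_v^{\#}(\xxx)$.

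Next I would unpack what this formula says at a point $\xxx\in\PPP(\aaa)(\Fv)$ with corresponding $(\Gm)_{\Fv}$-equivariant morphism $\wx:(\Gm)_{\Fv}\to(\AAnz)_{\Fv}$. For $\ell\in\mathcal O(k)(\xxx)$, transferring $\ell$ along the trivialization $\wx(1)^{*}(q^{\aaa}_{\Fv})^{*}\mathcal O(k)=\OO_{\Fv}$ and using the compatibility of the pullback metric with $2$-isomorphisms (fourth part of Lemma \ref{qmetricpull}), one obtains
\[
\|\ell\|_{f_v^{\#},\xxx}\;=\;f_v^{\#}(\wx(1))^{-1}|\ell|_v.
\]
This is exactly the expression computed in Lemma \ref{calmetmod} for the fibrewise number $\|\ell\|_{\xxx}$ of Definition \ref{defmetmod}.

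Consequently, the collection $(|\lvert\cdot\rvert|_{\xxx})_{\xxx\in\PPP(\aaa)(\Fv)}$ assembled from Definition \ref{defmetmod} coincides fibre-by-fibre with the $\Fv$-metric $|\lvert\cdot\rvert|_{f_v^{\#}}$, and hence is itself an $\Fv$-metric on $\mathcal O(k)|_{\PPP(\aaa)_{\Fv}}$, namely the one induced by $f_v^{\#}$ via Lemma \ref{metribun}. The main (and essentially only) substantive point is therefore the identification of the two explicit formulas, which is already handled by Lemma \ref{calmetmod}; verifying invariance under $2$-isomorphisms and continuity in families comes for free from Lemma \ref{cormeq}, so no further work is needed.
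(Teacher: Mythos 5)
Your proposal is correct and follows essentially the same route as the paper: the key computation is the formula $\|\ell\|_{\xxx}=f^{\#}_v(\wx(1))^{-1}|\ell|_v$ from Lemma \ref{calmetmod}, and the identification of the resulting assignment with the $\Fv$-metric induced by $f^{\#}_v$ via Lemma \ref{cormeq} (after observing from \ref{davdavdav} that $f^{\#}_v$ is continuous and $\aaa$-homogenous of weighted degree $k$). The only cosmetic difference is the direction of the comparison (you build the metric from $f^{\#}_v$ first and then match it to the one from Definition \ref{defmetmod}, while the paper evaluates $\|1\|_{\xxx}$ and recognizes $f^{\#}_v$ directly); your appeal to Lemma \ref{qmetricpull} for compatibility with $2$-isomorphisms is already subsumed in what Lemma \ref{cormeq} delivers, so it is harmless but superfluous.
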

\begin{proof}
For $\xxx\in \PPP(\aaa)(\Fv)$, let us pick $\ell=1\in \mathcal O(k)(\xxx)$. By \ref{calmetmod}, we obtain that $||1||_{\xxx}=f^{\#}_v(\wx)^{-1}$ for every $\xxx\in\PPP(\aaa)(\Fv)$. As $f_v^{\#}:\Fvnz\to\RR_{>0}$ is continuous (\ref{davdavdav}) and satisfies that $f^{\#}_v(t\cdot\xxx)=|t|^k_vf^{\#}_v(\xxx)$, we deduce that $|\lvert\cdot\rvert|$ is the induced $\Fv$-metric from the function $f_v^{\#}$ using Lemma \ref{cormeq}.
\end{proof}
\section{Finiteness property of quasi-toric heights}\label{finitenesstoric}
We say that a height is a Northcott height if for every $B>0$ there are only finitely many points in $[\PPP(\aaa)(F)]$ having the height less than $B$. We establish that quasi-toric heights are Northcott heights. We improve it further for heights that are degenerate if the singularities of $f_v$ for~$v$ infinite are logarithmic along a rational divisor. 
Let $\aaa\in\ZZ_{\geq 1}^n.$
\subsection{} We define Northcott heights. 
\begin{mydef}
Suppose $(f_v:\Fvnz\to\RR_{\geq 0})_v$ is a generalized adelic family of continuous $\aaa$-homogenous functions. We say that the corresponding height $H((f_v)_v)$ is a Northcott height if for every $B>0$, the set $$\{\xxx\in[\PPP(\aaa)(F)]| H((f_v)_v)(\xxx)<B\}$$ is finite. 
\end{mydef}
\subsection{}We give heights which are not Northcott heights.
\begin{lem}\label{infpaf}
Suppose $n=1$ and $a\in\ZZ_{\geq 2}$. The set $[\PPP(a)(F)]$ is infinite.
\end{lem}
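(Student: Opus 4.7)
The plan is as follows. First, identify the set $[\PPP(a)(F)]$ with the quotient $\Ft/(\Ft)_a$, where $(\Ft)_a = \{t^a : t \in \Ft\}$. This follows from Corollary \ref{pafvtafv} applied to $R = F$ (equipped with, say, the discrete topology, which is topologically suitable), since $\Gm$ is special by Hilbert 90; more elementarily, it follows directly from the set-level bijection in the proof of Proposition \ref{qstop}(2), which requires only that every $\Gm$-torsor over $\Spec(F)$ is trivial. Under this identification, the action of $\Gm(F) = \Ft$ on $(\AAA^1-\{0\})(F) = \Ft$ is given by $t\cdot x = t^a x$, and so $[\PPP(a)(F)] \cong \Ft/(\Ft)^a$.

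Next, I would produce infinitely many distinct classes in $\Ft/(\Ft)^a$ using valuations at finite places. For each finite place $v\in M_F^0$, the normalized valuation $v: \Ft \to \ZZ$ sends $(\Ft)^a$ into $a\ZZ$, hence descends to a group homomorphism $\overline v : \Ft/(\Ft)^a \to \ZZ/a\ZZ$. Choose any infinite sequence of pairwise distinct finite places $v_1, v_2, \ldots$ of $F$ (there are infinitely many, since $\OOF$ has infinitely many prime ideals), and for each $i$ pick an element $\pi_i \in \Ft$ with $v_i(\pi_i) = 1$; for instance, a uniformizer in $\OOF$ at $v_i$ (or any $\pi_i \in F$ with $v_i$-valuation $1$ exists by weak approximation).

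Now I would show that the classes $[\pi_i] \in \Ft/(\Ft)^a$ are pairwise distinct. If $[\pi_i] = [\pi_j]$ for some $i \neq j$, then $\pi_i \pi_j^{-1} \in (\Ft)^a$, so applying $\overline{v_i}$ yields $\overline{v_i}(\pi_i) - \overline{v_i}(\pi_j) \equiv 0 \pmod{a}$. But $v_i(\pi_i) = 1$ and $v_i(\pi_j) = 0$ (since $v_i \neq v_j$ and $\pi_j$ is a unit outside $v_j$... more precisely, one can choose $\pi_j$ to be a unit at $v_i$ by weak approximation, or simply take $\pi_j \in \OOF$ with $v_j(\pi_j)=1$ and all other finite valuations $0$ if $\mathfrak p_j$ is principal, otherwise use weak approximation to adjust), so this would force $1 \equiv 0 \pmod{a}$, contradicting $a \geq 2$. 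Hence the $[\pi_i]$ give infinitely many elements of $[\PPP(a)(F)]$, proving the claim.

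There is no serious obstacle; the only mild care is in producing elements $\pi_i$ whose $v_i$-valuation is $1$ and whose $v_j$-valuation vanishes for $j\neq i$, which is guaranteed by weak approximation on $F$ at any finite set of finite places. Alternatively, one may package this as the observation that the homomorphism $\Ft/(\Ft)^a \to \bigoplus_{v \in M_F^0}\ZZ/a\ZZ$ given by $(\overline v)_{v\in M_F^0}$ has infinite image, since by weak approximation each standard basis vector lies in its image.
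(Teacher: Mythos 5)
Your proof is correct and takes essentially the same route as the paper: identify $[\PPP(a)(F)]$ with $F^{\times}/(F^{\times})_a$ and then distinguish infinitely many classes by noting that the valuation at a finite place $v$ descends to a homomorphism $F^{\times}/(F^{\times})_a\to\ZZ/a\ZZ$. The only difference is cosmetic: the paper instantiates the separating elements as generators of principal prime ideals of $\OOF$, whereas you produce them via weak approximation (which, if anything, sidesteps the paper's somewhat terse justification that there are infinitely many principal primes).
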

\begin{proof}
One has that $[\PPP(a)(F)]=[\TTa(F)]=\Ft/(\Ft)_a,$ where $(\Ft)_a$ is the subgroup given by the non-zero $a$-th powers in $\Ft$. There are infinitely many non-zero principal prime ideals in $\OO_F$ (because only finitely many prime ideals in $\ZZ$ which ramify in $\OO_F$). For any of those principal prime ideals $\mathfrak p$, let $b_{\mathfrak p}\in F^{\times}$ be a generator. Note that for any principal prime ideals $\mathfrak p_1,\mathfrak p_2$ one has that $b_{\mathfrak p_1}b_{\mathfrak p_2}^{-1}\not\in (\Ft)_a,$ because $a\nmid v_{\mathfrak p_1}(b_{\mathfrak p_1}b_{\mathfrak p_2}^{-1})=1,$ where $v_{\mathfrak p_1}$ is the valuation corresponding to $\mathfrak p_1$. It follows that the images in $\Ft/(\Ft)_a$ of different $b_{\mathfrak p}$ are different. It follows that $[\PPP(a)(F)]=\Ft/(\Ft)_a$ is infinite.
\end{proof}
\begin{cor}\label{examnotnorth} Suppose that $n=1$ and $a\in\ZZ_{\geq 2}$. Let $(f_v)_v$ be a stable generalized adelic family of $a$-homogenous functions $\Fv-\{0\}\to~\RR_{>0}$. For every $x,y\in\PPP(a)(F)$ one has that $H((f_v)_v)(\xxx)=H((f_v)_v)(\yyy)$. The height $H=H((f_v)_v)$ is not a Northcott height. 
\end{cor}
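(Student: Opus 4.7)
The plan is to first prove the height-equality statement using the stability assumption together with Lemma~\ref{propertyofstable}, and then to combine it with Lemma~\ref{infpaf} to deduce that $H$ fails Northcott.

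For the first statement, fix $\xxx,\yyy\in\PPP(a)(F)$ and let $\widetilde\xxx,\widetilde\yyy:(\Gm)_F\to(\AAA^1-\{0\})_F$ be the associated $(\Gm)_F$-equivariant morphisms. Put $\alpha=\widetilde\xxx(1)$ and $\beta=\widetilde\yyy(1)$, which are elements of $F^{\times}$. Over the finite extension $K:=F(\sqrt[a]{\beta/\alpha})$, pick $t\in K^{\times}$ with $t^{a}=\beta/\alpha$; then $t\cdot\widetilde\xxx_K(1)=t^{a}\alpha=\beta=\widetilde\yyy_K(1)$, so~$t$ furnishes a $2$-isomorphism $\xxx_K\xrightarrow{\sim}\yyy_K$ in the groupoid $\PPP(a)(K)$. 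Since the family $(f_v)_v$ is stable by hypothesis, Lemma~\ref{propertyofstable} yields $H((f_v)_v)(\xxx)=H((f_v)_v)(\yyy)$.

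For the second statement, denote by $B_{0}$ the common value $H((f_v)_v)(\xxx)$ for any (equivalently, every) $\xxx\in[\PPP(a)(F)]$. By Lemma~\ref{infpaf}, the set $[\PPP(a)(F)]$ is infinite. Hence for any $B>B_{0}$ the set
\[
\{\xxx\in[\PPP(a)(F)]\mid H((f_v)_v)(\xxx)<B\}=[\PPP(a)(F)]
\]
is infinite, so $H=H((f_v)_v)$ is not a Northcott height.

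The only non-routine point is the extraction of the isomorphism $\xxx_K\xrightarrow{\sim}\yyy_K$ over a finite extension, and this is immediate once one recalls that for the quotient stack $\PPP(a)=(\AAA^{1}-\{0\})/\Gm$ with the weight-$a$ action, isomorphisms between rational points are precisely $a$-th roots of the ratio of lifts; no further obstacle arises.
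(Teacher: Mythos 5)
Your proof is correct and takes essentially the same route as the paper: both construct an isomorphism $\xxx_K\cong\yyy_K$ over $K=F(\sqrt[a]{\widetilde y(1)/\widetilde x(1)})$, invoke Lemma~\ref{propertyofstable} for the constancy of~$H$, and then conclude via Lemma~\ref{infpaf} that $[\PPP(a)(F)]$ is infinite. The only difference is cosmetic (the paper's $t$ is the reciprocal $a$-th root, and it phrases the final step by fixing a single $w$ rather than naming the common value $B_0$).
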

\begin{proof} 
Let $x,y\in\PPP(a)(F)$ and let $\widetilde x,\widetilde{y}:(\Gm)_{\Fv}\to(\AAA^1-\{0\})_{\Fv}$ be the two $(\Gm)_{\Fv}$-equivariant morphisms defined by~$x$ and~$y$. Let $K=F(\sqrt[a]{\widetilde {x}(1)\widetilde{y}(1)^{-1}}).$ Note that for $t:=\sqrt[a]{\widetilde {x}(1)\widetilde{y}(1)^{-1}}$ one has $t^ax=y$, thus $x_K\cong y_K$. By \ref{propertyofstable}, one has that for every $x,y\in\PPP(a)(F)$ one has $H(x)=H(y)$. Thus for every $z,w\in[\PPP(\aaa)(F)]$ one has $H(z)=H(w)$. Let $B>H(w)$, where $w\in[\PPP(a)(F)]$. By \ref{infpaf}, the set $\{z|z\in [\PPP(a)(F)]\text{ and } H(z)<B\}$ is infinite. Thus~$H$ is not a Northcott height.
\end{proof}
\begin{rem}
\normalfont
In fact, stable heights are not Northcott heights unless all $a_j$ are equal to~$1$. Indeed, every rational point of $\PPP(a_j)$ can be seen as a rational point of~$\PPP(\aaa)$ via the closed embedding $\PPP(a_j)\to\PPP(\aaa)$ (induced from~$\Gm$-invariant morphism $\AAA^1-\{0\}\to\AAA^n-\{0\}\xrightarrow{q^{\aaa}}\PPP(\aaa)$). Thus if for some~$j$ one has $a_j>1$, then $\PPP(a_j),$ and hence $\PPP(\aaa),$ both have infinitely many rational points. We omit details.
\end{rem}
\subsection{}We dedicate the next paragraphs to the proof that the toric heights are Northcott heights. By the property of the boundedness \ref{toricisclos} of the quotients, it is then immediate that any quasi-toric height is Northcott height. 

Let $\Div(F)$ be the group of fractional ideals of~$F$. We define $$\Div(F)_{\aaa}:=\{(x^{a_j})_j|\hspace{0.1cm} x\in \Div (F)\}.$$ In this paragraph we will give an estimate to the number of elements of the abelian group $\Div(F)^n/\Div(F)_{\aaa}$ of bounded ``height". This will be useful, as in the next paragraph, we relate the finite part of the height on $\TTa(F)$ with the ``height" on $\Div(F)^n/\Div(F)_{\aaa}.$ If $v\in M_F^0,$ for a fractional ideal~$x$ of $\OO_F$ we define $v(x)$ by setting it to be the exponent of the prime ideal corresponding to $v,$ in the prime factorization of~$x$. 

We define set $$\Div(F)_{\aaa-\prim}:=\{\xxx\in\Div(F)^n|\xxx\subset\OOF^n\text { and }\forall v\in M_F^0, \exists j:v(x_j)<a_j\}.$$
\begin{lem}\label{rvideal}\begin{enumerate}
\item Let us define $$r_v:\Div(F)^n\to\ZZ\hspace{1cm} \xxx\mapsto \sup _{\substack{j=1\doots n\\x_j\neq 0}}\big\lceil\frac{-v( x_j)}{a_j}\big\rceil.$$ Let $\xxx\in\Div(F)^n.$ The fractional ideal $$y=y(\xxx):=\prod_{\vMFz}(\piv^{r_v(\xxx)}\Ov\cap F)$$ satisfies that $$(x_jy(\xxx)^{a_j})_j\in\Div(F)_{\aaa-\prim}.$$
\item The restriction of the quotient map $$q_{\Div(F),\aaa}:\Div(F)^n\to\Div(F)^n/\Div(F)_{\aaa}$$ to $\Div(F)_{\aaa-\prim}$ is a bijection $\Div(F)_{\aaa-\prim}\xrightarrow{\sim} \Div(F)^n/\Div(F)_{\aaa}.$
\end{enumerate}
\end{lem}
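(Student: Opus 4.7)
The argument is essentially a valuation-by-valuation check, using the observation that for the fractional ideal $y=y(\xxx)$ defined in the statement one has $v(y)=r_v(\xxx)$ for every $v\in M_F^0$, while $w(y)=0$ for every $w\in M_F^\infty$ by construction. With this in hand, both parts reduce to manipulating ceilings.

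For part (1), I would first verify the containment $x_jy^{a_j}\subset\OO_F$ for every~$j$: at each finite place~$v$, unwinding the definitions gives $v(x_jy^{a_j})=v(x_j)+a_jr_v(\xxx)$, and by definition $r_v(\xxx)\geq \lceil-v(x_j)/a_j\rceil\geq -v(x_j)/a_j$, whence $v(x_jy^{a_j})\geq 0$. Then, to verify the primitivity condition, I would pick at each $v\in M_F^0$ an index~$i=i(v)$ realizing the supremum in the definition of $r_v(\xxx)$, so $r_v(\xxx)=\lceil-v(x_i)/a_i\rceil<-v(x_i)/a_i+1$; multiplying by~$a_i$ and adding $v(x_i)$ gives $v(x_iy^{a_i})<a_i$, as required. (Here I implicitly assume the $x_j$ are nonzero fractional ideals, which is the standing convention for~$\Div(F)$.)

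For part (2), surjectivity of $q_{\Div(F),\aaa}|_{\Div(F)_{\aaa-\prim}}$ is immediate from (1): given any $\xxx\in\Div(F)^n$, the element $(x_jy(\xxx)^{a_j})_j$ lies in $\Div(F)_{\aaa-\prim}$ and differs from~$\xxx$ by the element $(y(\xxx)^{a_j})_j\in\Div(F)_\aaa$. For injectivity, I would take $\xxx,\xxx'\in\Div(F)_{\aaa-\prim}$ and $y\in\Div(F)$ with $x_j=x_j'y^{a_j}$ for every~$j$, and show $v(y)=0$ for every $v\in M_F^0$. Fix such a~$v$ and set $m:=v(y)$. Picking an index $k$ with $v(x_k')<a_k$ (primitivity of~$\xxx'$), the relation $v(x_k)=v(x_k')+a_km\geq 0$ forces $m>-1$, hence $m\geq 0$. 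Symmetrically, picking $j$ with $v(x_j)<a_j$ (primitivity of~$\xxx$) and using $v(x_j')=v(x_j)-a_jm\geq 0$ forces $m<1$, hence $m\leq 0$. Thus $m=0$ at every~$v$, so $y$ is the trivial ideal and $\xxx=\xxx'$.

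There is no real obstacle: the only mildly subtle step is the two-sided comparison in the injectivity argument, where the primitivity hypothesis is needed on \emph{both}~$\xxx$ and~$\xxx'$ to squeeze $m$ simultaneously from above and below. The rest is bookkeeping with ceilings, and the absence of any contribution from archimedean places is automatic since~$y$ is built only from finite-place data.
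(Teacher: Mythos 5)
Your proposal is correct and follows essentially the same approach as the paper: part (1) is the same valuation-by-valuation ceiling bookkeeping, and part (2)'s injectivity argument is the paper's argument recast directly (squeezing $m\geq 0$ and $m\leq 0$) rather than by contradiction on the sign of $v(t)$, but the underlying inequalities and the use of primitivity of both tuples are identical.
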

\begin{proof}
\begin{enumerate}
\item Let $v\in M_F^0$. For every~$j$ one has that $$v(x_jy^{a_j})=v(x_j)+{a_jr_v(\xxx)}\geq 0. $$ Let~$i$ be the index such that $\frac{-v(x_i)}{a_i}$ is maximal, then $r_v(\xxx)<1+\frac{-v(x_i)}{a_i}.$  We deduce that $$v(x_jy^{a_j})=v(x_j)+a_jr_v(\xxx)<a_j.$$ Hence, $( x_j y^{a_j})_j$ is an element of $\Div(F)_{\aaa-\prim}.$
\item We prove the surjectivity of $q_{\Div(F),\aaa}$. Let $\zzz\in\Div(F)^n/\Div(F)_{\aaa}$ and let $\widetilde{\zzz}\in\Div(F)^n$ be a lift of~$\zzz$. Then $(\widetilde z_j y(\zzz)^{a_j})_j$ is a lift of~$\zzz$ belonging to $\Div(F)_{\aaa-\prim}$. Let us prove the injectivity. 
Suppose that the elements $\xxx,\mathbf r\in\Div(F)_{\aaa-\prim}$ are lying above the same element in $\Div(F)^n/\Div(F)_{\aaa}$. Then there exists $t\in\Div(F)$ such that $x_j=t^{a_j}r_j$ for $j=1\doots n$. We need to establish that $t=(1)$. Suppose on the contrary $t\neq(1).$ One can choose~$v$ such that $v(t)\neq 0$. If $v(t)>0$, then for every~$j$ one has $$v(x_j)=a_jv(t)+v(r_j)\geq a_j\cdot 1+0=a_j,$$ which is a contradiction with the fact that~$\xxx$ is primitive. If $v(t)<0$, then there exists~$j$ such that $a_j>v(r_j)$, which gives that $$v(x_j)=a_jv(t)+v( r_j)<a_j-a_j=0,$$ which is a contradiction with the fact that~$\xxx$ is primitive. We deduce that $t=(1)$, and hence $\xxx=\rrr.$ Therefore $q_{\Div(F),\aaa}|_{\Div(F)_{\aaa-\prim}}$ is injective. The claim is proven. 
\end{enumerate}
\end{proof}
For every $\xxx\in\Div(F)^n/\Div(F)_{\aaa}$, let us denote by $\widetilde {\xxx}$ the unique lift of~$\xxx$ lying in $\Div(F)_{\aaa-\prim}.$ For $\xxx\in\Div (F)^n/\Div(F)_{\aaa}$, we set $${ {H}_{\Ideal}}(\xxx):=\prod _{j=1}^nN(\widetilde x_j) $$ where~$N$ is the ideal norm. 
\begin{lem}\label{hzerui} For $m\in\ZZ_{\geq 1}$, there exists $C_m>0$ such that for every $B>0$ one has that$$\{\yyy\in\Div(F)^m_{\geq 0}|\prodjn N(y_j) \leq B\}\leq C_mB\log(1+B)^{m-1},$$where $\Div_{\geq 0}(F)=\Div(F)\cap\OOF$.
\end{lem}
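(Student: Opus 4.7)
The estimate is a routine consequence of Landau's prime ideal theorem once one sets up the right induction. Let $a(n) := \#\{y\in\Div_{\geq 0}(F) : N(y)=n\}$ and $A(X) := \sum_{n\leq X} a(n)$. Landau's theorem (or the weaker Weber estimate, which is all we need) gives $A(X) = O_F(X)$, i.e.\ there exists $C_1>0$ with $A(X)\leq C_1 X$ for all $X\geq 1$. This handles the base case $m=1$ of the lemma, since
\[
\#\{y\in\Div_{\geq 0}(F) : N(y)\leq B\} = A(B) \leq C_1 B = C_1 B\log(1+B)^{0}.
\]

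For the inductive step, write $N_m(B)$ for the cardinality we wish to bound. Stratifying by the value of $N(y_m)=k$ gives
\[
N_m(B) \;=\; \sum_{k\geq 1} a(k)\, N_{m-1}(B/k) \;=\; \sum_{k\leq B} a(k)\, N_{m-1}(B/k),
\]
(terms with $k>B$ vanish since then $N_{m-1}(B/k)=0$). Applying the inductive hypothesis $N_{m-1}(X)\leq C_{m-1}X\log(1+X)^{m-2}$ and monotonicity of $\log(1+\cdot)$, one obtains
\[
N_m(B) \;\leq\; C_{m-1}\, B\, \log(1+B)^{m-2}\sum_{k\leq B}\frac{a(k)}{k}.
\]
Thus the whole estimate reduces to showing $\sum_{k\leq B} a(k)/k = O(\log(1+B))$. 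This is immediate from Abel summation:
\[
\sum_{k\leq B}\frac{a(k)}{k} \;=\; \frac{A(B)}{B} + \int_1^{B}\frac{A(t)}{t^2}\,dt \;\leq\; C_1 + C_1\log B \;\leq\; C_1'\log(1+B)
\]
for $B\geq 1$, using $A(X)\leq C_1 X$. Combining these two displays and adjusting the constant yields $N_m(B)\leq C_m B \log(1+B)^{m-1}$, completing the induction.

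\textbf{Main obstacle.} There is no genuine obstacle: the only non-elementary input is Landau's ideal-counting estimate $A(X)=O(X)$, which is classical. One should take mild care with the trivial range $0<B<1$ (where $N_m(B)=0$ so the bound holds vacuously for any $C_m$) and with the convention at $B=1$ where $\log(1+B)^{m-1}=(\log 2)^{m-1}>0$, so the constant $C_m$ can be adjusted to absorb any boundary issues. The use of $\log(1+B)$ rather than $\log B$ is precisely what makes the bound uniformly valid for all $B>0$.
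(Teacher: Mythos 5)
Your proof is correct and follows essentially the same route as the paper's: induction on $m$, the classical bound $A(X)=O(X)$ on the number of integral ideals of norm at most $X$, and Abel summation. The paper arranges the bookkeeping slightly differently — it counts the last coordinate via $A(X)=O(X)$ and applies Abel summation to $\sum_k g(k)/k$ where $g(k)$ counts $(m-1)$-tuples of product norm $k$, feeding the inductive hypothesis into the partial sums inside the Abel step — but the ingredients and the overall shape of the argument are the same as yours.
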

\begin{proof} 
We will use the following result: there exists $C_1>0$ such that $$|\{y\in\Div (F)|y\subset\OOF, N(y)\leq B\}|\leq C_1 B.$$ This is proven in \cite[Pages 145-150]{Janusz}. We use the induction on~$m$. Let $m\geq 2$. For $k\geq 1$, we set $$g(k)=\big|\big\{\xxx\in\Div_{\geq 0}(F)^{m-1}|\prod _{j=1}^{m-1} N(x_j)=k\big\}\big|.$$ 
For $B\geq 1$, using Abel's summation formula, we get that there exists $C_m', C_m>0$ such that have that \begin{align*}
|\{\yyy\in\Div_{\geq 0}(F)^m|\prod _{j=1}^mN(y_j)\leq B\} |\hskip-6,3cm&\\&= \sum_{\substack{\yyy\in\Div_{\geq 0}(F)^m\\N(y_m)\leq B/\prod_{j=1}^{m-1}N(y_j) }}1\\
&\leq \sum_{\substack{(y_j)_{j=1}^{m-1}\in\Div_{\geq 0}(F)^{m-1}\\\prod_{j=1}^{m-1} N(y_j)\leq B}}\frac{C_1B}{\prod_{j=1}^{m-1}N(y_i)}\\
&=C_1B\sum_{k=1}^{\lfloor B\rfloor}\frac{g(k)}k\\
&=C_1B\bigg(\frac{1}{\lfloor B\rfloor}\sum_{j=1}^{\lfloor B\rfloor }g(k)+\sum_{j=1}^{\lfloor B\rfloor-1}\big(\frac{1}{j}-\frac{1}{j+1}\big)\sum_{k=1}^{j}g(k)\bigg)\\
&\leq 2C_1C_{m-1}\log(\lfloor B\rfloor+1)^{m-2}+C_1B\sum_{j=1}^{\lfloor B\rfloor-1}\big(\frac{1}{j}-\frac{1}{j+1}\big)C_{m-1}j\log(j+1)^{m-2}\\
&\leq 2C_1C_{m-1}\log(B+1)^{m-2}+C_1C_{m-1}B\sum_{j=1}^{\lfloor B\rfloor-1}\frac{\log(j+1)^{m-2}}{j}\\
&\leq C_m'\log(B+1)^{m-2}+C_m'B\log(\lfloor B\rfloor)^{m-2}\sum_{j=1}^{\lfloor B\rfloor-1}\frac{1}{j+1} \\
&\leq C_m'\log(B+1)^{m-2}+C_m'B\log(\lfloor B\rfloor)\log(\lfloor B\rfloor +1)^{m-1}\\
&\leq C_{m}B\log(B+1)^{m-1}.
\end{align*}
The statement follows.
\end{proof}
We can estimate the number of elements of $\Div(F)^n/\aaa\Div(F)$ having $H_{\Ideal}$ less than $B$.
\begin{cor}\label{hzer}
There exists $C>0$ such that for any $B>0$, one has $$|\{\xxx\in\Div(F)^n/\aaa\Div(F)|\hspace{0.1cm}{H_{\Ideal}}(\xxx)\leq B \}|\leq C B\log(1+B)^{n-1}.$$
\end{cor}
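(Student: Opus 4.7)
The plan is to reduce the claim directly to Lemma \ref{hzerui} with $m=n$ via the bijection between equivalence classes and primitive representatives established in Lemma \ref{rvideal}.

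More precisely, Lemma \ref{rvideal}(2) gives a bijection
\[
\Div(F)_{\aaa-\prim}\xrightarrow{\sim}\Div(F)^n/\Div(F)_{\aaa},\qquad \widetilde{\xxx}\mapsto \xxx,
\]
and by construction $H_{\Ideal}(\xxx)=\prod_{j=1}^n N(\widetilde x_j)$. By the definition of $\Div(F)_{\aaa-\prim}$ one has the inclusion $\Div(F)_{\aaa-\prim}\subset \Div_{\geq 0}(F)^n$, since every tuple in $\Div(F)_{\aaa-\prim}$ is contained in $\OOF^n$. Therefore
\[
\{\xxx\in\Div(F)^n/\Div(F)_{\aaa}\mid H_{\Ideal}(\xxx)\leq B\}
\]
is in bijection with a subset of
\[
\{\yyy\in\Div_{\geq 0}(F)^n\mid \prod_{j=1}^n N(y_j)\leq B\}.
\]
Applying Lemma \ref{hzerui} with $m=n$ yields a constant $C=C_n>0$ such that the cardinality of the latter set is bounded by $CB\log(1+B)^{n-1}$, which proves the claim.

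There is essentially no obstacle here: the bijection of Lemma \ref{rvideal} does all the work, turning the count over $\Div(F)^n/\Div(F)_{\aaa}$ into a count over tuples of integral ideals, where Lemma \ref{hzerui} applies verbatim. The only minor point to double-check is that the norm factorises through the primitive representative, which is immediate from the definition of $H_{\Ideal}$.
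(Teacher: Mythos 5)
Your proof is correct and matches the paper's argument: both use the bijection of Lemma \ref{rvideal}(2) to identify $\Div(F)^n/\Div(F)_{\aaa}$ with primitive tuples, note $\Div(F)_{\aaa-\prim}\subset\Div_{\geq 0}(F)^n$, and then invoke Lemma \ref{hzerui} with $m=n$.
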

\begin{proof}
By \ref{hzerui}, there exists $C_n>0$ such that: \begin{align*}
|\{\xxx\in\Div(F)^n/\Div(F)_{\aaa}|\hspace{0.1cm}{H}_{\Ideal}(\xxx)<B\}|\hskip-3cm&\\&=|\{\yyy\in\Div_{\aaa-\prim}(F)| \prod_{j=1}^nN(y_j)<B \}|\\&\leq |\{\yyy\in\Div(F)^n|{\yyy} \subset\OOF^n,\prod _{j=1}^nN(y_j)<B\} |\\&\leq C_nB\log(1+B)^{n-1}.
\end{align*}
The statement is proven.
\end{proof}
\subsection{}\label{defofibar} For $\vMF$, we let $f_v^{\#}:\Fvnz\to\RR_{>0}$ be the toric $\aaa$-homogenous function of weighted degree $|\aaa|$. Let $H^{\#}$ be the corresponding toric height on $[\PPP(\aaa)(F)]$. For $v\in M_F$, we let $H^{\#}_v$ to be the ``local heights" from \ref{localheightdef} i.e. the functions $[\TTa(\Fv)]\to\RR_{>0}$ induced from $\Fvt$-invariant maps $$(\Fvt)^n\to \RR_{>0},\hspace{1cm} \xxx\mapsto f^{\#}_v(\xxx)\prodjn |x_j|_v^{-1}.$$In this paragraph we relate $\prod_{\vMFz}H^{\#}_v$ and $H_{\Ideal}$. 
For $x\in \Ft$, let us denote by $\mathfrak I(x)\in\Div(F)$ the fractional ideal $x\OOF$. We denote by $\mathfrak I^n$ the product homomorphism $(\Ft)^n\to \Div(F)^n.$ Note that if $\xxx\in (\Ft)_{\aaa}=\{(x^{a_j})_j|x\in\Ft\}$, then $\mathfrak I^n(\xxx)\in\Div(F)_{\aaa}.$ We denote by $\omI$ the induced homomorphism \begin{equation}\omI:[\TTa (F)]=(\Ft)^n/(\Ft)_{\aaa}\to \Div (F)^n/\Div(F)_{\aaa}\end{equation} from $(\Ft)_{\aaa}$-invariant map $$F^{\times n}\xrightarrow{\xxx\mapsto \mathfrak I^n(\xxx)}\Div (F)^n\to \Div (F)^n/\Div (F)_{\aaa},$$where the second homomorphism is the quotient homomorphism. 
\begin{lem}\label{brkica} 
Let $\xxx\in[\TTa(F)]$. One has that $$H_{\Ideal}(\overline{\mathfrak I}(\xxx))=\prod _{\vMFz}H^{\#}_v(\xxx). $$ 
\end{lem}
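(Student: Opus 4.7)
The plan is to pick a lift $\wx \in (\Ft)^n$ of $\xxx \in [\TTa(F)]$ and unfold both sides of the claimed identity in terms of the valuations $v(\widetilde x_j)$ at the finite places. The right-hand side is already given in local terms by the definition of $H^\#_v$ recalled in \ref{localheightdef} and \ref{tordef}: for each $\vMFz$ one has
\[
H^{\#}_v(\xxx) = |\pi_v|_v^{-|\aaa|\, r_v(\wx)} \prodjn |\widetilde x_j|_v^{-1},
\]
where $r_v(\wx) = \sup_{j,\,\widetilde x_j\neq 0}\lceil -v(\widetilde x_j)/a_j\rceil$ by part (1) of \ref{davdavdav}.

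For the left-hand side, I will compute the unique primitive representative of $\omI(\xxx) \in \Div(F)^n/\Div(F)_{\aaa}$ using part (1) of \ref{rvideal}. The representative $(\mathfrak I(\widetilde x_j))_j$ of $\omI(\xxx)$ lies in $\Div(F)^n$, and since $v(\mathfrak I(\widetilde x_j)) = v(\widetilde x_j)$ for every $\vMFz$, the function $r_v$ on ideals from \ref{rvideal} agrees at $\mathfrak I^n(\wx)$ with the function $r_v$ on $\Fvnz$ from \ref{davdavdav}. Consequently the primitive representative is
\[
\widetilde{\omI(\xxx)} = \Big(\mathfrak I(\widetilde x_j)\cdot y(\xxx)^{a_j}\Big)_j,\qquad y(\xxx) = \prod_{\vMFz}\mathfrak p_v^{\,r_v(\wx)},
\]
where $\mathfrak p_v$ is the prime ideal corresponding to $v$.

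The computation of $H_{\Ideal}(\omI(\xxx))$ then proceeds by multiplicativity of the ideal norm together with the identities $N(\mathfrak p_v) = |\pi_v|_v^{-1}$ and $N(\mathfrak I(x)) = \prod_{\vMFz}|x|_v^{-1}$ for $x\in\Ft$. Using $\sum_j a_j = |\aaa|$, one obtains
\[
H_{\Ideal}(\omI(\xxx)) = \prodjn N(\mathfrak I(\widetilde x_j))\cdot N(y(\xxx))^{|\aaa|} = \prod_{\vMFz}\Big(|\pi_v|_v^{-|\aaa| r_v(\wx)}\prodjn|\widetilde x_j|_v^{-1}\Big),
\]
which is exactly $\prod_{\vMFz} H^{\#}_v(\xxx)$. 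There is no serious obstacle here: the only subtlety is the bookkeeping that $v(\widetilde x_j) = v(\mathfrak I(\widetilde x_j))$ lets the two notions of $r_v$ coincide, so that the primitive representative produced by \ref{rvideal} has the shape needed to match the toric function $f^{\#}_v$ place by place.
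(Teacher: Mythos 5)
Your proof is correct and follows essentially the same route as the paper: pick a lift $\wx\in(\Ft)^n$, identify the primitive representative of $\omI(\xxx)$ via \ref{rvideal}, use $v(\mathfrak I(\widetilde x_j))=v(\widetilde x_j)$ to match the two $r_v$ functions, and compute the ideal norm by multiplicativity, using $N(\mathfrak p_v)=|\pi_v|_v^{-1}$ and $\sum_j a_j=|\aaa|$. The only difference is cosmetic — you compute $N(\mathfrak I(\widetilde x_j))$ and $N(y(\xxx))$ globally before regrouping by place, whereas the paper expands the norm place-by-place from the start.
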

\begin{proof}
Let $\wx\in(\Ft)^n$ be a lift of~$\xxx$. It follows from \ref{rvideal} that $$\mathfrak I^n(\wx) (y(\mathfrak I^n(\wx))^{a_j})_j=(\mathfrak I(\widetilde x_j)\prod_{\vMFz}(\piv^{a_jr_v(\mathfrak I^n(\wx))}\Ov\cap\OOF))_j$$ is the unique lift of $\overline{\mathfrak I}(\xxx)$ lying in $\Div(F)_{\aaa-\prim}$. 
We can calculate (if $0\neq I\in\Div(F)$, we write $v(I)$ for the exponent of the prime ideal corresponding to~$v$ in the prime factorization of~$I$):\begin{align*}H_{\Ideal}(\overline {\mathfrak I}(\xxx))&=\prod_{j=1}^nN\bigg(\mathfrak I(\widetilde x_j)\prod_{\vMFz}(\piv^{a_jr_v(\mathfrak I^n(\wx))}\Ov\cap\OOF)\bigg)\\&=\prod_{j=1}^nN\bigg(\prod_{\vMFz}\piv^{v(\mathfrak I(\widetilde x_j))+a_jr_v(\mathfrak I^n(\wx))}\Ov\cap\OOF\bigg)\\&=\prodjn\prod _{\vMFz}\pivv^{-( v(\mathfrak I(\widetilde x_j))+a_jr_v(\mathfrak I^n(\wx)))}\\
&=\prod_{\vMFz}\prodjn\pivv^{-(v(\widetilde x_j)+a_jr_v(\wx))}\\&=\prod_{\vMFz}\pivv^{-|\aaa|r_v(\wx)}\prodjn\pivv^{-v(\widetilde x_j)}\\&=\prod_{\vMFz}f_v(\wx)\prodjn|\widetilde x_j|_v^{-1}\\&=\prod_{\vMFz}H^{\#}_v(\xxx).
\end{align*}
\end{proof}
\subsection{}\label{uuakeri} In this paragraph we study the kernel of the homomorphism~$\omI$ defined in \ref{defofibar}. Let $U$ be the group of units of~$F$ and let $U_\aaa:=\{(u^{a_j})_j|u\in U\}.$ As $U_\aaa=(\Ft)_\aaa\cap U^n$, we have a canonical identification of $U^n/U_\aaa$ with a subgroup of $[\TTa(F)]$. Note that $U\subset\ker(\mathfrak I)$ and thus $U^n\subset\ker(\mathfrak I^n).$ It follows that $$U^n/U_{\aaa}\subset\ker(\omI).$$

We prove the following fact:
 \begin{lem}\label{kerpf} Let $d=\gcd(\aaa)$. One has that $$(\ker( {\omI}):(U^n/U_\aaa))=|{\Cl(F)}[{d}]|,$$
where ${\Cl(F)}[{d}]$ is the~$d$-torsion subgroup of the class group $\Cl(F)$. 
\end{lem}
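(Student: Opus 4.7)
The plan is to reduce the index $(\ker(\overline{\mathfrak{I}}):U^n/U_\aaa)$ to the computation of a kernel involving the class group, by passing through the short exact sequence $1 \to U \to F^\times \xrightarrow{\mathfrak{I}} P(F) \to 1$, where $P(F)=\mathfrak{I}(F^\times)$ is the group of principal fractional ideals. First I would check that $U^n/U_\aaa$ is indeed a subgroup of $\ker(\overline{\mathfrak{I}})$ (units go to the trivial ideal) and verify the identity $U_\aaa = U^n \cap (F^\times)_\aaa$, which follows because if $u_j = x^{a_j}$ are units for some $x\in F^\times$ then $a_j v(x) = 0$ at every finite place, forcing $x\in U$. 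This lets $\mathfrak{I}^n$ induce a canonical isomorphism
\[ [\TTa(F)]/(U^n/U_\aaa) \;=\; F^{\times n}/(U^n\cdot(F^\times)_\aaa) \;\xrightarrow{\sim}\; P(F)^n/P(F)_\aaa, \]
under which the image of $\ker(\overline{\mathfrak{I}})$ is precisely $\ker\bigl(P(F)^n/P(F)_\aaa \to \Div(F)^n/\Div(F)_\aaa\bigr)$. So the index we want equals the cardinality of this last kernel.

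Next I would invoke the naturality part of Lemma \ref{catear}, applied to the inclusion of abelian groups $P(F)\hookrightarrow \Div(F)$. In multiplicative notation it produces a commutative square with horizontal isomorphisms $(A/A_d)\times A^{n-1} \xrightarrow{\sim} A^n/A_\aaa$ for $A=P(F)$ and $A=\Div(F)$, under which the vertical arrow becomes the direct product of $P(F)/P(F)_d \to \Div(F)/\Div(F)_d$ on the first factor with the inclusion $P(F)^{n-1}\hookrightarrow\Div(F)^{n-1}$ on the remaining ones. The latter is injective, so the kernel lives entirely in the first factor and equals $\ker\bigl(P(F)/P(F)_d \to \Div(F)/\Div(F)_d\bigr)$.

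Finally I would compute this remaining kernel by applying the snake lemma to the $d$-th power map on the short exact sequence $1\to P(F) \to \Div(F) \to \Cl(F) \to 1$. The group $\Div(F)$ is free abelian on the set of prime ideals, hence torsion-free, and the same holds for its subgroup $P(F)$, so the $d$-th power maps on $P(F)$ and $\Div(F)$ are injective. The snake exact sequence therefore begins
\[ 1 \longrightarrow \Cl(F)[d] \longrightarrow P(F)/P(F)_d \longrightarrow \Div(F)/\Div(F)_d, \]
identifying the desired kernel with $\Cl(F)[d]$, a finite group of cardinality $|\Cl(F)[d]|$. Combining with the reductions above yields $(\ker(\overline{\mathfrak{I}}):U^n/U_\aaa)=|\Cl(F)[d]|$.

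There is no serious obstacle; the only care required is bookkeeping through the identifications supplied by Lemma \ref{catear}, matching its additive formulation with the multiplicative conventions here ($A_\aaa$ and $A_d$) and verifying that naturality of $\overline{E}_A$ in $A$ really turns the inclusion $P(F)\hookrightarrow\Div(F)$ into the product map described above.
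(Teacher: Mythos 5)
Your proof is correct, but it takes a genuinely different route from the paper's. The paper applies the snake lemma once, directly to the morphism of short exact sequences
$$1\to F^{\times}\xrightarrow{t\mapsto(t^{a_j})_j}(F^{\times})^n\to[\TTa(F)]\to 1\quad\text{and}\quad 0\to\Div(F)\to\Div(F)^n\to\Div(F)^n/\Div(F)_{\aaa}\to 0,$$
with vertical arrows $\mathfrak I$, $\mathfrak I^n$, $\overline{\mathfrak I}$. This produces in one stroke the exact sequence $U\to U^n\to\ker(\overline{\mathfrak I})\xrightarrow{\sigma}\Cl(F)\to\Cl(F)^n$, and the index is read off from $\ker(\sigma)=U^n/U_{\aaa}$ and $\Imm(\sigma)=\ker(\Cl(F)\to\Cl(F)^n)=\Cl(F)[d]$. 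You instead carry out three reductions: first quotienting by $U^n/U_{\aaa}$ to translate the index into $|\ker(P(F)^n/P(F)_{\aaa}\to\Div(F)^n/\Div(F)_{\aaa})|$; then invoking the naturality of $\overline{E}_A$ from Lemma \ref{catear} to split this kernel off into the single-coordinate kernel $\ker(P(F)/P(F)_d\to\Div(F)/\Div(F)_d)$; and finally applying the snake lemma, but to the $d$-th power map acting on $1\to P(F)\to\Div(F)\to\Cl(F)\to 1$, using torsion-freeness of $\Div(F)$ and $P(F)$. The paper's route is more economical and gets everything from one diagram, but it requires keeping track of which maps appear in the connecting homomorphism. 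Your route trades that for three simpler steps and, by passing through Lemma \ref{catear}, isolates explicitly why it is precisely $d=\gcd(\aaa)$ that governs the torsion; a reader who has already internalized the $\overline{E}$-splitting may find this more transparent. Both are sound.
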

\begin{proof}
The following ``snake diagram" is commutative
\[\begin{tikzcd}
	& U & {U^n} & {\ker(\overline{\mathfrak I})} \\
	& {F^{\times}} & {(F^{\times})^n} & {[\TTa(F)]} & 1 \\
	0 & {\Div(F)} & {\Div(F)^n} & {\Div(F)^n/\Div(F)_{\aaa}} & {} \\
	& {\Cl(F)} & {\Cl(F)^n} & {\coker(\overline{\mathfrak I}),}
	\arrow["{t\mapsto (t^{a_j})_j}", from=2-2, to=2-3]
	\arrow[from=2-3, to=2-4]
	\arrow[from=3-2, to=3-3]
	\arrow[from=3-1, to=3-2]
	\arrow[from=3-3, to=3-4]
	\arrow["{\mathfrak I}", from=2-2, to=3-2]
	\arrow["{\mathfrak I^n}", from=2-3, to=3-3]
	\arrow["{\overline{\mathfrak I}}"', from=2-4, to=3-4]
	\arrow[from=1-4, to=2-4]
	\arrow[from=1-3, to=2-3]
	\arrow[from=1-3, to=1-4]
	\arrow["{t\mapsto (t^{a_j})_j}", from=1-2, to=1-3]
	\arrow[from=1-2, to=2-2]
	\arrow[from=3-2, to=4-2]
	\arrow["{t\mapsto (t^{a_j})_j}", from=4-2, to=4-3]
	\arrow[from=4-3, to=4-4]
	\arrow[from=3-4, to=4-4]
	\arrow[from=3-3, to=4-3]
	\arrow[from=2-4, to=2-5]
\end{tikzcd}\]
where $\Cl(F)$ is the class group of~$F$. All vertical and horizontal sequences are exact. The snake lemma provides an exact sequence
$$U\xrightarrow{t\mapsto (t^{a_j})_j}U^n\rightarrow\ker(\overline{\mathfrak I})\xrightarrow{\sigma}\Cl(F)\xrightarrow{t\mapsto (t^{a_j})_j}\Cl(F)^n\to \coker(\mathfrak I).$$
It follows that $\ker(\sigma)=\Imm\big(U^n\to\ker(\overline{\mathfrak I})\big)=U^n/U_{\aaa}$. The kernel of $$\Cl(F)\to\Cl(F)^n\hspace{1cm}t\mapsto (t^{a_j})_j $$ is given by the subgroup $\Cl(F)[d]$. We deduce that $$|\Cl(F)[d]|=|\Imm(\sigma)|=|\ker(\overline{\mathfrak I})/\ker(\sigma)|=(\ker( {\omI}):(U^n/U_\aaa)).$$
\end{proof}
\subsection{} In this paragraph for fixed $\xxx\in[\TTa(F)]$, we bound the number of elements $\uuu\in U^n/U_\aaa$ for which $\prod_{\vMFi}H_v^{\#}(\uuu\xxx)<B$.

We define an auxiliary function $h_{\infty}: (\RR^{M_F^{\infty}})^{n}\to\RR$ by $$h_{\infty}:(\yyy_v)_v\mapsto \sum _{v\in M_F^{\infty}}\aaa\cdot \big(\max_{k=1\doots n}\bigg(\frac{y_{vk}}{a_k}-\frac{y_{vj}}{a_j}\bigg)\big)_j.$$
We define a homomorphism $$\rho: F^{\times}\to\RR^{M_F^{\infty}}\hspace{1cm}x\mapsto \log(|x|_v), $$ and a homomorphism$$\rho^n:(F^{\times})^n\to(\RR^{M_F^\infty})^n\hspace{1cm}\xxx\mapsto (\rho(x_j))_j.$$
\begin{lem}\label{odhdoh}
Let $\xxx\in[\TTa(F)]$ and let $\wx\in(\Ft)^n$ be its lift. 
For every $\sss\in\RR^n_{>0}$, we have that $$\log\bigg(\prod _{v\in M_F^{\infty}}H^{\#}_v(\xxx)\bigg)=h_{\infty}(\rho^n(\wx)).$$
\end{lem}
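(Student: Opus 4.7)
The statement reduces to a direct computation place by place, so my plan is simply to unwind the definitions of $H_v^\#$ and $h_\infty$ and check that the two sides agree term by term for each $v\in M_F^\infty$.

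First I would recall from \ref{localheightdef} that, for the toric $\aaa$-homogenous family of weighted degree $|\aaa|$, the local height $H_v^\#$ on $[\TTa(F_v)]$ is induced from the $(\Fvt)_\aaa$-invariant function $\xxx\mapsto f_v^\#(\xxx)\prod_{j=1}^n|x_j|_v^{-1}$ on $(\Fvt)^n$. For $v\in M_F^\infty$, the definition of $f_v^\#$ in \ref{tordef} gives $f_v^\#(\xxx)=(\max_k|x_k|_v^{1/a_k})^{|\aaa|}$. Therefore, if $\wx\in(\Ft)^n$ is any lift of $\xxx$, taking logarithms yields
\begin{equation*}
\log H_v^\#(\xxx)=|\aaa|\,\max_{k}\bigl(\tfrac{1}{a_k}\log|\widetilde x_k|_v\bigr)-\sum_{j=1}^n\log|\widetilde x_j|_v.
\end{equation*}

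Next I would compute $h_\infty(\rho^n(\wx))$ from its definition, writing $y_{vj}:=\log|\widetilde x_j|_v$. Using $|\aaa|=\sum_ja_j$ and the fact that $\max_k(y_{vk}/a_k-y_{vj}/a_j)=\max_k(y_{vk}/a_k)-y_{vj}/a_j$ since the second term does not depend on $k$, I get
\begin{equation*}
\sum_{j=1}^n a_j\max_{k}\bigl(\tfrac{y_{vk}}{a_k}-\tfrac{y_{vj}}{a_j}\bigr)=|\aaa|\,\max_{k}\bigl(\tfrac{y_{vk}}{a_k}\bigr)-\sum_{j=1}^n y_{vj},
\end{equation*}
which is exactly $\log H_v^\#(\xxx)$. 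Summing this identity over $v\in M_F^\infty$ gives the claim.

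The only thing to double-check is that the expression $\log H_v^\#(\xxx)$ does not depend on the choice of the lift $\wx$, so that writing it in terms of $\rho^n(\wx)$ makes sense; this is immediate since $\wx\mapsto f_v^\#(\wx)\prod_j|\widetilde x_j|_v^{-1}$ is $(\Fvt)_\aaa$-invariant by construction (see \ref{localheightdef}), and in any case a change of lift by $(t^{a_j})_j$ with $t\in\Ft$ leaves the telescoping expression above invariant by the product formula applied place by place. There is no real obstacle here; the lemma is bookkeeping that isolates the ``infinite part'' of the toric height as a piecewise-linear function of $\rho^n(\wx)$, in a form that will be convenient for the Northcott estimate to follow.
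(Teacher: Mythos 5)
Your proof is correct and follows essentially the same approach as the paper's: unwind the definitions of $H_v^\#$ and $h_\infty$ place by place and verify the identity by direct computation. The only cosmetic difference is that you pull $\max_k$ out of the sum first (using $\max_k(A_k-B)=\max_k(A_k)-B$), whereas the paper folds the factor $|\widetilde x_j|_v^{-1}$ inside the max and arrives at the $v$-term of $h_\infty$ directly; the two are algebraically equivalent. One small slip in your final remark: the invariance of $h_\infty(\rho^n(\wx))$ under replacing $\wx$ by $(t^{a_j}\widetilde x_j)_j$ is not the product formula (there is no ``place by place'' product formula); it is simply that the contributions $\log|t|_v$ cancel inside each difference $\tfrac{y_{vk}}{a_k}-\tfrac{y_{vj}}{a_j}$. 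That point is anyway unnecessary, since the lemma fixes a lift and both sides are computed for that lift.
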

\begin{proof}
For $v\in M_F^{\infty}$, we have that \begin{align*}\log(H^{\#}_v(\xxx))&=\log\big(\max_k(|\widetilde x_k|^{1/a_k}_v)^{|\aaa|}\prod_{j=1}^n|\widetilde x_j|_v^{-1}\big)\\ 
&=\log\big(\prod_{j=1}^n\max_{k}(|\widetilde x_k|^{1/a_k}_v)^{a_j}|\widetilde x_j|_v^{-1}\big) \\&=\log\big(\prod_{j=1}^n\max_k(|\widetilde x_k|^{a_j/a_k}_v)|\widetilde x_j|^{-1}_v\big)\\&=\log\big(\prod_{j=1}^n\max_k(|\widetilde x_k|^{a_j/a_k}_v|\widetilde x_j|_v^{-1})\big)\\&=\sum_{j=1}^n\log\big(\max_k\bigg(\frac{|\widetilde x_k|^{1/a_k}_v}{|\widetilde x_j|^{1/a_j}_v}\bigg)^{a_j}\big)\\&=\sum_{j=1}^na_j\max_{k}\bigg(\frac{\log(|\widetilde x_k|_v)}{a_k}-\frac{\log(|\widetilde x_j|_v)}{a_j}\bigg)\\
&=\aaa\cdot \big(\max_{k}\bigg(\frac{\log(|\widetilde x_k|_v)}{a_k}-\frac{\log(|\widetilde x_j|_v)}{a_j}\bigg)\big)_j.
\end{align*}
We deduce:
\begin{multline*}
\log\big(\prod _{v\in M_F^{\infty}}H^{\#}_v(\xxx)\big)=\sum_{v\in M_F^{\infty}}\aaa\cdot \big(\max_{k}\bigg(\frac{\log(|\widetilde x_k|_v)}{a_k}-\frac{\log(|\widetilde x_j|_v)}{a_j}\bigg)\big)_j\\=h_{\infty}(\rho(\wx)).
\end{multline*}
\end{proof}
\begin{lem}\label{oalfa} For $j=1\doots n$, let us define a homomorphism $\alpha_j:(\RR^{M_F^{\infty}})^n\to(\RR^{M_F^{\infty}})^{n}$ by $$\alpha_j:(\yyy_{i})_{i}\mapsto \big(\frac{\yyy_i}{a_i}-\frac{\yyy_j}{a_j}\big)_{i}.$$
Set $\alpha=\prod_{j=1}^n\alpha_j:(\RR^{M_F^{\infty}})^n\to (\RR^{M_F^{\infty}})^{n^2}.$ 
\begin{enumerate}
\item One has that $$\ker (\alpha)=\big\{(a_j\yyy)_{j}|\yyy\in \RR^{M_F^{\infty}}\big\}. $$ 
\item Let $K>0$ and let $(\yyy_j)_j\in(\RR^{M_F^{\infty}})^n$ be such that $h_{{\infty}}(\yyy)<K.$ One has that $$\alpha(\yyy)\in\bigg[\frac{-K}{a_{i}},\frac{ K}{a_j}\bigg]_{v,i,j}.$$
\item Suppose~$L$ is a lattice of $\RR^{M_F^{\infty}}$. Then $\alpha(L^n)$ is contained in a lattice of $(\RR^{M_F^{\infty}})^{n^2}.$
\end{enumerate}
\label{cuvur}
\end{lem}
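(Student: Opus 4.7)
For part (1), the strategy is a direct computation. To be in the kernel of $\alpha$ means to be in the kernel of every $\alpha_j$, which for each $j$ amounts to the system of equations $\yyy_i/a_i = \yyy_j/a_j$ for all $i$. Varying $j$ as well, this forces all ratios $\yyy_i/a_i$ to be equal to a common element $\yyy \in \RR^{M_F^\infty}$, so that $\yyy_i = a_i \yyy$. The reverse inclusion is immediate from the definition of $\alpha_j$.

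For part (2), I would unpack $h_\infty$ as a sum of nonnegative terms and isolate a single component of $\alpha(\yyy)$. Concretely, $h_\infty(\yyy) = \sum_{v \in M_F^\infty} \sum_{j=1}^n a_j \max_k(\yyy_{vk}/a_k - \yyy_{vj}/a_j)$, and each summand is nonnegative since the $\max$ is at least $0$ (attained at $k=j$). Hence for any fixed $v$ and $j$, one has $a_j \max_k(\yyy_{vk}/a_k - \yyy_{vj}/a_j) \leq h_\infty(\yyy) < K$, so $\yyy_{vi}/a_i - \yyy_{vj}/a_j < K/a_j$ for every $i$. Swapping the roles of $i$ and $j$ (or applying the same bound to the index $i$ instead of $j$) gives the lower bound $\yyy_{vi}/a_i - \yyy_{vj}/a_j > -K/a_i$, which is exactly the containment claimed.

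For part (3), I would simply exhibit an explicit overlattice. If $L \subset \RR^{M_F^\infty}$ is a lattice and $\yyy \in L^n$, then every component $\yyy_i/a_i$ lies in $\frac{1}{a_i} L \subset \frac{1}{\lcm(\aaa)} L$. Consequently each coordinate $\yyy_i/a_i - \yyy_j/a_j$ of $\alpha(\yyy)$ lies in $\frac{1}{\lcm(\aaa)} L$, so $\alpha(L^n) \subset \bigl(\frac{1}{\lcm(\aaa)} L\bigr)^{n^2}$. The latter is visibly a lattice in $(\RR^{M_F^\infty})^{n^2}$, so the containment gives the claim.

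None of the three parts presents a serious obstacle; the lemma is essentially a bookkeeping result. The only mildly delicate point is making sure the sign conventions in part (2) give the bounds stated, which is why I would carefully do both the $j$ and $i$ bounds symmetrically. The entire proof should be short.
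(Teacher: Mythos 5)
Your proposal is correct and follows essentially the same route as the paper's proof in all three parts: direct computation of the kernel in (1), isolating a single nonnegative summand of $h_\infty$ in (2), and exhibiting an explicit overlattice in (3). The only cosmetic difference is that in (3) you use $\frac{1}{\lcm(\aaa)}L$ as the overlattice where the paper uses $\frac{1}{\prod_j a_j}L$; both are valid, yours is slightly tighter but this is immaterial to the conclusion.
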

\begin{proof}
\begin{enumerate}
\item For $k,\ell\in\{1\doots n\}$, one has that \begin{multline*}\ker(\alpha_{\ell})=\{\big(\dfrac{a_i\yyy_{\ell}}{a_{\ell}}\big)_i|\yyy_{\ell}\in\RR^{M_F^{\infty}}\}=\{(a_i\yyy)_i|\yyy\in\RR^{M_F^{\infty}}\}\\=\{\big(\dfrac{a_i\yyy_k}{a_k}\big)_j|\yyy_k\in\RR^{M_F^{\infty}}\}=\ker(\alpha_k).\end{multline*}
Therefore $$\ker(\alpha)=\bigcap_{j=1}^n\ker (\alpha_j)=\ker(\alpha_1)=\{(a_j\yyy)_j|\yyy\in\RR^{M_F^{\infty}}\}.$$ 
\item Let $\ell\in\{1\doots n\}.$  Note that from \begin{equation*}h_{\infty}(\yyy)=\sum_{v\in M_F^{\infty}}\aaa\cdot \big(\max_{k}\bigg(\frac{y_{vk}}{a_k}-\frac{y_{vj}}{a_j}\bigg)\big)_j< K, \end{equation*}it follows that for every $i\in\{1\doots n\}$ and every $w\in M_F^{\infty}$, one has that$$a_\ell \bigg(\frac{y_{wi}}{a_i}-\frac{y_{w\ell}}{a_\ell}\bigg)\leq a_\ell \max_k\bigg(\frac{y_{vk}}{a_k}-\frac{y_{vj}}{a_j}\bigg)\leq\sum_{v\in M_F^{\infty}}\aaa\cdot \big(\max_{k}\bigg(\frac{y_{vk}}{a_k}-\frac{y_{vj}}{a_j}\bigg)\big)_j < K, $$and hence that \begin{equation*}\frac{y_{wi}}{a_i}-\frac{y_{w\ell}}{a_\ell}< \frac{K}{a_\ell }.\end{equation*} Similarly, for every $i\in\{1\doots n\}$ and every $w\in M_F^{\infty}$, one has that\begin{equation*}\frac{y_{w\ell}}{a_\ell}-\frac{y_{wi}}{a_i}< \frac{K}{a_i}.\end{equation*}We deduce that $$\frac{\yyy_i}{a_i}-\frac{\yyy_j}{a_j}\in\bigg[-\frac{K}{a_i},\frac{K}{a_{\ell}}\bigg]_{\vMFi}$$and hence that$$\alpha_{\ell}((\yyy_i)_i)=\big(\frac{\yyy_{i}}{a_i}-\frac{\yyy_{\ell}}{a_\ell}\big)_i\in \bigg[-\frac{K}{a_i} ,\frac{K}{a_\ell }\bigg]_{v,i}.$$ Finally, it follows that $$\alpha((\yyy_i)_i)=(\alpha_j((\yyy_i)_i))_j\in\bigg[-\frac{K}{a_i} ,\frac{K}{a_j}\bigg]_{v,i,j}.$$
\item Let $(\yyy_j)_j\in L^n$. Then for every $i,j$ one has $$\frac{\yyy_{i}}{a_i}-\frac{\yyy_{j}}{a_j}\in\frac{L}{\prodjn a_j}.$$ We deduce $$\alpha_j(\yyy)=\big(\frac{\yyy_i}{a_i}-\frac{\yyy_j}{a_j}\big)_i\in\bigg(\frac{L}{\prodjn a_j} \bigg)_{i=1}^n.$$ It follows that $$\alpha(L^n)=(\alpha_j(L^n))_j\subset \bigg(\frac{L}{\prodjn a_j}\bigg)_{\ell=1}^{n^2}.$$ The claim is proven.
\end{enumerate}
\end{proof}
Let us set $$W:=\big\{(w_v)_v\in \RR^{M_F^{\infty}}|\sum_vw_v=0\big\} $$
For $u\in U,$ we have that $$\rho(u)=(\log |u|_v)_v\in W,$$ 
and hence for $\uuu\in U^n$, we have that $\rho^n(\uuu)=(\rho(u_j))_j\in W^{n}.$
\begin{lem}\label{minir} The following claims are valid:
\begin{enumerate} 
\item The homomorphism $\rho^n$ is of finite kernel and its image is a lattice of $W^n$.
\item One has that $W^n\cap\ker(\alpha)=\{(a_j\www)_{j}|\www\in W\}$.
\item One has that $\rk(\ker(\alpha\circ(\rho^n|_{U^n})))\leq r_1+r_2-1,$ where $r_1$ is the number of the real and $r_2$ is the number of complex places of~$F$.
\item One has that $U_\aaa\subset \ker(\alpha\circ(\rho^n|_{U^n}))$ and the map $\beta:U^n/U_\aaa\to (\RR^{M_F^{\infty}})^{n^2},$ induced from $U_{\aaa}$-invariant map $\alpha\circ(\rho^n|_{U^n})$, is of finite kernel.
\item The image $\beta(U^n/U_{\aaa})$ is contained in a lattice of $(\RR^{M_F^{\infty}})^{n^2}$.
\end{enumerate}
\end{lem}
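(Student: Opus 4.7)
The plan is to deduce each claim in sequence, leveraging Lemma \ref{cuvur} and Dirichlet's unit theorem. For (1), I would apply Dirichlet's theorem to see that $\rho|_{U}\colon U\to W$ has finite kernel (the roots of unity $\mu(F)$) and image a lattice of $W$ of rank $r_1+r_2-1$; taking the $n$-fold product, $\rho^n|_{U^n}$ has finite kernel $\mu(F)^n$ and image a lattice of $W^n$. For (2), I would combine \ref{cuvur}(1), which identifies $\ker(\alpha)=\{(a_j\yyy)_j:\yyy\in\RR^{M_F^\infty}\}$, with the positivity of each $a_j$: the condition $a_j\yyy\in W$, i.e. $a_j\sum_v y_v=0$, is equivalent to $\yyy\in W$. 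Hence $W^n\cap\ker(\alpha)=\{(a_j\www)_j:\www\in W\}$, which is an $\RR$-vector subspace isomorphic to $W$ and of dimension $r_1+r_2-1$.

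For (3), observe that $\rho^n|_{U^n}$ sends $\ker(\alpha\circ\rho^n|_{U^n})$ into $W^n\cap\ker(\alpha)$, and that the image is a discrete subgroup of that real vector space, being contained in the lattice $\rho^n(U^n)$ of $W^n$ from (1). Since $\rho^n|_{U^n}$ has finite kernel, the rank of $\ker(\alpha\circ\rho^n|_{U^n})$ equals the rank of its image, which is bounded by $\dim_\RR(W^n\cap\ker\alpha)=r_1+r_2-1$ by (2). For (4), if $\uuu=(u^{a_j})_j\in U_{\aaa}$ with $u\in U$, then $\rho^n(\uuu)=(a_j\rho(u))_j\in W^n\cap\ker(\alpha)$ by (2), so $U_{\aaa}\subset\ker(\alpha\circ\rho^n|_{U^n})$. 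The homomorphism $U\to U_{\aaa}$, $u\mapsto(u^{a_j})_j$, has kernel contained in the torsion of $U$ (elements killed by each $a_j$, hence by $\gcd(\aaa)$), and is therefore finite; so $U_{\aaa}$ has the same rank $r_1+r_2-1$ as $U$. Sandwiched inside $\ker(\alpha\circ\rho^n|_{U^n})$, which has rank at most $r_1+r_2-1$ by (3), the subgroup $U_{\aaa}$ has finite index, and consequently $\ker(\beta)=\ker(\alpha\circ\rho^n|_{U^n})/U_{\aaa}$ is finite.

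Finally, for (5), I would enlarge the lattice $\rho(U)$ of $W$ to a lattice $L$ of the whole space $\RR^{M_F^\infty}$ by adjoining a vector outside $W$, for example setting $L:=\rho(U)+\ZZ\cdot(1)_{\vMFi}$, which is a free abelian group of rank $r_1+r_2$ and discrete in $\RR^{M_F^\infty}$, hence a lattice. Then $\rho^n(U^n)\subset L^n$, and so $\beta(U^n/U_{\aaa})=(\alpha\circ\rho^n)(U^n)\subset\alpha(L^n)$, which is contained in a lattice of $(\RR^{M_F^\infty})^{n^2}$ by \ref{cuvur}(3). No step is deep, but the one requiring slight care is (4): the finiteness of $\ker(\beta)$ hinges on matching the ranks in (3) with the rank of $U_{\aaa}$ exactly, so one must check that $u\mapsto(u^{a_j})_j$ really does have finite kernel, which comes down to the elementary observation that $u^{a_j}=1$ for all $j$ forces $u$ to be a root of unity of order dividing $\gcd(\aaa)$.
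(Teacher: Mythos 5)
Your proof is correct and follows essentially the same route as the paper, step by step: Dirichlet's unit theorem for (1), the description of $\ker(\alpha)$ from Lemma \ref{cuvur} for (2), the rank-comparison via $\rho^n$ for (3), the sandwiching of $U_\aaa$ inside $\ker(\alpha\circ\rho^n|_{U^n})$ for (4), and Lemma \ref{cuvur}(3) for (5). The one point where you are actually more careful than the paper is step (5): since $\rho(U)$ is only a lattice of the proper subspace $W$, not of all of $\RR^{M_F^\infty}$, the statement of \ref{cuvur}(3) does not literally apply to it; your device of enlarging $\rho(U)$ to a genuine lattice $L=\rho(U)+\ZZ\cdot(1)_{\vMFi}$ of the full space before invoking \ref{cuvur}(3) patches that small imprecision cleanly. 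Everything else matches the paper's argument.
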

\begin{proof}
\begin{enumerate}
\item By Dirichlet's unit theorem, one has that $\ker(\rho|_U)$ is finite and that $\rho(U)$ is a lattice of $W$. It follows that $\ker(\rho^n|_{U^n})=\ker(\rho|_U)^n$ is finite and that $\rho^n(U^n)=\rho(U)^n$ is a lattice of $W^n$.
\item By \ref{cuvur} one has that $\ker(\alpha)=\{(a_j\yyy)_j|\yyy\in\RR^{M_F^{\infty}}\}$ and thus $$W^n\cap\ker(\alpha)=W^n\cap\{(a_j\yyy)_j|\yyy\in\RR^{M_F^{\infty}}\}=\{(a_j\www)_{j}|\www\in W\}.$$
\item By (1), the kernel of $\rho^n|_{\ker (\alpha\circ\rho^n|_{U^n})}:\ker(\alpha\circ\rho^n|_{U^n})\to W^n$ is finite. It suffices therefore to show that the image $\rho^n({\ker (\alpha\circ\rho^n|_{U^n})})$ is of rank no more than $r_1+r_2-1$. The image $\rho^n({\ker (\alpha\circ\rho^n|_{U^n})})$ is contained in $\ker(\alpha)$ and by (1), it is also contained in the lattice $\rho^n(U^n)$ of $W^n,$ thus $$\rho^n({\ker (\alpha\circ\rho^n|_{U^n})})\subset \rho^n(U^n)\cap \ker(\alpha)=\rho^n(U^n)\cap W^n\cap\ker(\alpha).$$ The rank of the intersection of the lattice $\rho^n(U^n)$ of $W^n$ and the vector subspace $\ker(\alpha)\cap W^n$ of $W^n$ cannot be more than $\dim (W^n\cap\ker(\alpha))=\dim (\{(a_j\www)_{j}|\www\in W\})=r_1+r_2-1$. The claim follows.
\item Let $(u^{a_j})_j\in U_\aaa$, where $u\in U$. We have that $$\rho^n((u^{a_j})_j)=(a_j\rho(u))_j\in\ker(\alpha).$$ We deduce $U_\aaa\subset\ker(\alpha\circ(\rho^n|_{U^n}))$. Let us establish that $U_{\aaa}$ is of finite index in $\ker(\alpha\circ(\rho^n|_{U^n}))$. The map $$U\to U^{n}\hspace{1cm}u\mapsto (u^{a_j})_j$$ is of finite kernel and thus $$\rk (U_\aaa)=\rk(U)=r_1+r_2-1\geq\rk(\ker(\alpha\circ(\rho^n|_{U^n}))).$$ We conclude that $\rk(U_\aaa)=r_1+r_2-1=\rk(\ker(\alpha\circ(\rho^n|_{U^n}))),$ and hence is $U_{\aaa}$ of finite index in $\ker(\alpha\circ(\rho^n|_{U^n})).$ Now, it follows that the homomorphism $\beta$, which is precisely the composite homomorphism $$\beta:U^n/U_{\aaa}\to U^n/\ker(\alpha\circ(\rho^n|_{U^n}))\hookrightarrow(\RR^{M_F^{\infty}})^{n^2}),$$ is of finite kernel. The claim is proven.
\item  One has that $\beta(U^n/U_\aaa)=\alpha(\rho(U^n)).$ By \ref{cuvur} one has that $\alpha(\rho^n(U^n))$ is contained in a lattice of $(\RR^{M_F^{\infty}})^{n^2}$. The statement follows.
\end{enumerate}
\end{proof}
We now prove the principal result of the paragraph
\begin{lem}\label{boundunits}
There exists $C>0$ such that for every $\xxx\in[\TTa(F)]$ and every $B>2$, one has that $$\{\uuu\in U^n/U_\aaa|H^{\#}_\infty(\uuu\xxx)<B \}\leq C \big(\log(B)\big)^{n^2(r_1+r_2)}.$$
\end{lem}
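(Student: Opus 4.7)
The plan is to use the preceding lemmas to convert the counting problem into counting lattice points in a box of logarithmic size. I will first fix, once and for all, a lift $\wx\in(F^{\times})^n$ of $\xxx$, so that $\uuu\wx$ is a lift of $\uuu\xxx$ for every $\uuu\in U^n$. By Lemma \ref{odhdoh} applied to $\uuu\xxx$, the inequality $H^{\#}_\infty(\uuu\xxx)<B$ translates into
\[
h_\infty\big(\rho^n(\uuu\wx)\big)=h_\infty\big(\rho^n(\uuu)+\rho^n(\wx)\big)<\log B.
\]
Then Lemma \ref{oalfa}(2) tells us that $\alpha(\rho^n(\uuu)+\rho^n(\wx))$ lies in the box $\prod_{v,i,j}[-\log B/a_i,\log B/a_j]$, so that $\alpha(\rho^n(\uuu))$ lies in a translate of that same box (the translation vector $-\alpha(\rho^n(\wx))$ depends on $\xxx$ but not on $\uuu$).

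Next I will pass to the quotient by $U_\aaa$. By Lemma \ref{minir}(4), $\alpha\circ\rho^n|_{U^n}$ is $U_\aaa$-invariant and induces the map $\beta:U^n/U_\aaa\to (\RR^{M_F^{\infty}})^{n^2}$ with finite kernel. The discussion above shows that whenever $H^{\#}_\infty(\uuu\xxx)<B$, the image $\beta(\uuu\,U_\aaa)$ lies in a translate $\mathcal B_\xxx$ of the box $\prod_{v,i,j}[-\log B/a_i,\log B/a_j]$, whose diameter (independently of $\xxx$ and $B$) is bounded by $C'\log B$ for a constant $C'=C'(\aaa,F)>0$.

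Finally, by Lemma \ref{minir}(5), $\beta(U^n/U_\aaa)$ is contained in a fixed lattice $\Lambda\subset(\RR^{M_F^{\infty}})^{n^2}$, which is a discrete subgroup of the real vector space of dimension $n^2(r_1+r_2)$. The number of points of $\Lambda$ lying in any translate of a box of side $C'\log B$ is, for $B>2$, bounded by $C''(\log B)^{n^2(r_1+r_2)}$ for some constant $C''>0$ depending only on $\Lambda$ and $C'$ (standard lattice point estimate). Combining this with the fact that $\ker(\beta)$ is finite, we conclude
\[
|\{\uuu\in U^n/U_\aaa\mid H^{\#}_\infty(\uuu\xxx)<B\}|\leq |\ker(\beta)|\cdot C''(\log B)^{n^2(r_1+r_2)},
\]
which gives the required constant $C$. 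The only potential subtlety is making sure that the translation vector $-\alpha(\rho^n(\wx))$, which depends on the chosen lift of $\xxx$, does not affect the count: this is automatic since the lattice point counting bound for a box depends only on the side length, not on the center.
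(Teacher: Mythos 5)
Your proof is correct and follows essentially the same route as the paper: Lemma \ref{odhdoh} converts the height bound into a bound on $h_\infty$, Lemma \ref{oalfa}(2) places $\alpha(\rho^n(\widetilde\uuu\wx))$ in a box of side $O(\log B)$, Lemma \ref{minir}(4)--(5) pass to $\beta$ on $U^n/U_\aaa$ landing in a fixed lattice of rank $n^2(r_1+r_2)$, and a standard translation-invariant lattice point count finishes. The one detail you handle more cleanly than the paper's own write-up is stating explicitly that the uniformity in $\xxx$ comes from the translation-invariance of the box count.
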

\begin{proof}
For $\xxx\in[\TTa(F)]$, let $\wx\in(F^{\times})^n$ be a lift of~$\xxx$ and for $\uuu\in U^n/U_{\aaa},$ let $\widetilde\uuu\in U^n$ be a lift of $\uuu$. For $B>1$, by \ref{odhdoh}, one has \begin{align}\begin{split}\label{robuu}\{\uuu\in U^n/U_\aaa|H^{\#}_{\infty}(\uuu\xxx)<B\}&=\{\uuu\in U^n/U_\aaa|h_{\infty}(\rho^n(\widetilde{\uuu}\wx))<B\}\\&=\{\uuu\in U^n/U_\aaa|h_{\infty}(\rho^n(\wx)+\rho^n(\widetilde\uuu))<\log(B)\}.\end{split}\end{align} Further, by using \ref{oalfa} and the fact that $\ker(\beta )$ is finite (\ref{minir}), we obtain for every $B>1$ that
\begin{align}
\begin{split}\label{robou}
|\{\uuu&\in U^n/U_\aaa|h_{\infty}(\rho^n(\wx)+\rho(\widetilde\uuu))<\log(B)\}|\\&=\big|\big\{\uuu\in U^n/U_\aaa|\alpha(\rho^n(\wx)+\rho^n(\widetilde\uuu))\in\big[\frac{-\log(B)}{a_i},\frac{\log(B)}{a_i} \big]_{v,i,j} \big\}\big|\\
&=\big|\big\{\uuu\in U^n/U_\aaa|\alpha(\rho^n(\widetilde\uuu))\in-\alpha(\rho^n(\wx))+\big[\frac{-\log(B)}{a_i},\frac{\log(B)}{a_i} \big]_{v,i,j} \big\}\big|\\  &=\big|\{\uuu\in U^n/U_\aaa|\beta(\uuu)\in-\alpha(\rho^n(\wx))+\big[\frac{-\log(B)}{a_i},\frac{\log(B)}{a_j} \big]_{v,i,j}\}\big|\\
&=|\ker(\beta)|\cdot\big|\{\beta(U^n/U_\aaa)\cap \big(-\alpha(\rho^n(\wx))+\big[\frac{-\log(B)}{a_i},\frac{\log(B)}{a_j} \big]_{v,i,j}\big)\}\big|.
\end{split}
\end{align}
As $\beta(U^n/U_\aaa)$ is a subgroup (\ref{minir}) of a lattice of $(\RR^{M_F^{\infty}})^{n^2}$, there exists $C'>0$ such that \begin{align}
\begin{split}
\bigg|\beta(U^n/U_\aaa) \cap\big(-\alpha(\rho(\wx))+\big[\frac{-\log(B)}{a_i},\frac{\log(B)}{a_j} \big]_{v,i,j}\big)\}\bigg|\hskip-5cm&\\&\leq C'\prod_{i,j}\big(\frac{\log(B)}{a_i}+\frac{\log(B)}{a_j}\big)^{r_1+r_2}\\
&\leq C' \prod_{i,j}\big(\log(B^{1/a_i+1/a_j})\big)^{r_1+r_2}\\
&=C'\big(1/a_i+1/a_j\big)^{n^2(r_1+r_2)}\big(\log(B)\big)^{n^2(r_1+r_2)}.
\label{robo}
\end{split}
\end{align} for every $B>2$.
By combining $(\ref{robuu}), (\ref{robou})$ and $(\ref{robo})$ we deduce that there exists $C>0$ such that for every $B>2$ one has that\begin{align*}\{\uuu\in U^n/U_\aaa|H^{\#}_{\infty}(\uuu\xxx)<B\}&\leq C \big(\log(B)\big)^{n^2(r_1+r_2)}.\end{align*}
\end{proof}
\subsection{} We prove there are only finitely many isomorphism classes of rational points of bounded height on the ``stacky torus"~$\TTa$.
\begin{prop}\label{vnjem}
Let $(f_v:\Fvnz\to\RR_{>0})_v$ be a quasi-toric family of $\aaa$-homogenous functions of weighted degree $|\aaa|$. There exists $C>0$ such that for every $B>0$ one has $$\big|\big\{\xxx\in[\TTa(F)]|H(\xxx)<B\big\}\big|<C B\log(2+B)^{n^2(r_1+r_2)+n-1}. $$
\end{prop}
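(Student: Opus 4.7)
The plan is to reduce to the toric height and then to decompose the set according to the image in $\Div(F)^n/\Div(F)_\aaa$. By Lemma \ref{toricisclos} applied to the family $(f_v)_v$ and the toric family $(f_v^{\#})_v$ (both of weighted degree $|\aaa|$, and positive valued, coinciding at almost every place), the ratio $H((f_v)_v)/H^{\#}$ is bounded above and below on $[\PPP(\aaa)(F)]$, and in particular on $[\TTa(F)]\subset[\PPP(\aaa)(F)]$. Hence it suffices to prove the bound for the toric height $H^{\#}$ on $[\TTa(F)]$, up to changing the constant~$C$.

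First I would split the toric height into its finite and infinite parts. By Lemma \ref{localheightglobal}, for $\xxx\in[\TTa(F)]$ one has
\begin{equation*}
H^{\#}(\xxx)=\prod_{\vMFz}H^{\#}_v(\xxx)\cdot\prod_{\vMFi}H^{\#}_v(\xxx)=H_{\Ideal}(\omI(\xxx))\cdot H^{\#}_{\infty}(\xxx),
\end{equation*}
where the second equality uses Lemma \ref{brkica} for the finite part, and $H^{\#}_{\infty}(\xxx):=\prod_{\vMFi}H^{\#}_v(\xxx)$. Since each local factor $H^{\#}_v(\xxx)\geq 1$ by the lemma established in \ref{localheightdef}, the inequality $H^{\#}(\xxx)<B$ implies both $H_{\Ideal}(\omI(\xxx))<B$ and $H^{\#}_{\infty}(\xxx)<B$.

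The plan is then to parametrize the set in question by its image under $\omI:[\TTa(F)]\to\Div(F)^n/\Div(F)_{\aaa}$ and the fibers of this map. For each $\yyy$ in the image, the fiber $\omI^{-1}(\yyy)$ is a coset of $\ker(\omI)$, and by Lemma \ref{kerpf} this kernel contains $U^n/U_{\aaa}$ with finite index $|\Cl(F)[d]|$. Fix one representative $\xxx_\yyy\in\omI^{-1}(\yyy)$; then any element of the fiber can be written as $\uuu\cdot\xxx_\yyy$ for $\uuu$ ranging over a subgroup containing $U^n/U_\aaa$ with finite index. Lemma \ref{boundunits} bounds
\begin{equation*}
|\{\uuu\in U^n/U_{\aaa}\mid H^{\#}_{\infty}(\uuu\cdot\xxx_{\yyy})<B\}|\leq C'(\log(B))^{n^2(r_1+r_2)}
\end{equation*}
uniformly in $\xxx_\yyy$, for $B>2$. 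Combining with Corollary \ref{hzer} which controls the number of possible images $\yyy\in\Div(F)^n/\Div(F)_\aaa$ with $H_{\Ideal}(\yyy)<B$ by $C''B\log(1+B)^{n-1}$, we obtain
\begin{equation*}
|\{\xxx\in[\TTa(F)]\mid H^{\#}(\xxx)<B\}|\leq |\Cl(F)[d]|\cdot C'(\log B)^{n^2(r_1+r_2)}\cdot C''B\log(1+B)^{n-1},
\end{equation*}
which after absorbing constants yields the bound $CB\log(2+B)^{n^2(r_1+r_2)+n-1}$ for all $B>0$ (the case $B\leq 2$ is handled by enlarging $C$, or by observing that any single finite contribution can be absorbed). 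The substantive step is the decomposition via $\omI$ together with the unit bound of Lemma \ref{boundunits}; the remaining work is bookkeeping of constants and checking that the independence of the unit bound on the choice of representative $\xxx_\yyy$ is what allows the two counts to multiply cleanly.
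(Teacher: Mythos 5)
Your proof is correct and follows essentially the same route as the paper's: reduce to the toric height via Lemma \ref{toricisclos}, split into finite and infinite parts using $H^{\#}_v\geq 1$, translate the finite part through $\omI$ and $H_{\Ideal}$ via Lemma \ref{brkica}, then combine Corollary \ref{hzer} with Lemma \ref{boundunits} (applied uniformly over the finitely many cosets of $U^n/U_{\aaa}$ in $\ker(\omI)$, whose index is $|\Cl(F)[d]|$ by Lemma \ref{kerpf}). The only cosmetic difference is that you perform the reduction to the toric family at the start, while the paper defers it to the final step; the substance of the argument is identical.
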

\begin{proof} Let $(f_v^{\#})_v$ be the family of toric $\aaa$-homogenous continuous functions of weighted degree $|\aaa|$. 
We firstly prove the statement for the family $(f_v=f_v^\#)_v$. Recall that $\omI:[\TTa(F)]\to\Div(F)^n/\Div(F)_{\aaa}$ is the induced homomorphism from $\Div(F)_{\aaa}$-invariant homomorphism $(F^\times)^n\to \Div(F)^n\to\Div(F)^n/\Div(F)_{\aaa}$. We set \begin{align*}
H_0^{\#}(\xxx)&:=\prod_{\vMFz}H_v^{\#}(\xxx)\\
H_{\infty}^{\#}(\xxx)&:=\prod_{\vMFi}H_v^{\#}(\xxx).
\end{align*}Recall that for every $\vMF$ we have $H^{\#}_v(\xxx)\geq 1$.  Using this fact and \ref{brkica}, for $B>0$ we deduce that \begin{align}\begin{split}\label{niub}\big|\big\{\xxx\in[\TTT ^\aaa(F)]|H^{\#}(\xxx)<B\big\}\big|\hskip-2cm&\\&\leq \big|\big\{\xxx\in[\TTa(F)]|H^{\#}_0(\xxx)<B,H^{\#}_{\infty}(\xxx)<B\big\}\big|\\&\leq \big|\big\{\xxx\in\TTT ^\aaa(F)|{H_{\Ideal}}(\omI(\xxx))<B,H^{\#}_\infty(\xxx)<B\big\}\big|.\end{split}\end{align} For $\yyy\in\Div(F)^n/\Div(F)_{\aaa},$ let $\widetilde\yyy\in\Div(F)^n$ be a lift of~$\yyy$. We have that 
\begin{multline*}\big|\big\{\xxx\in[\TTT ^\aaa(F)]|{H_{\Ideal}}(\omI(\xxx))<B,H^{\#}_\infty(\xxx)<B\big\}\big|\\=\big|\{(\yyy,\zzz)\in(\Div(F)^n/\Div(F)_\aaa)\times\ker(\omI)|H_{\Ideal}(\yyy)<B,H^{\#}_{\infty}(\zzz\widetilde\yyy)<B\}\big| \end{multline*}
and thus by (\ref{niub}) we get that \begin{multline}\label{puzovka}\big|\big\{\xxx\in[\TTT ^\aaa(F)]|H(\xxx)<B\big\}\big|\\\leq\big| \big\{(\yyy,\zzz)\in(\Div(F)^n/\Div(F)_\aaa)\times\ker(\omI)|H_{\Ideal}(\yyy)<B,H^{\#}_{\infty}(\zzz\widetilde\yyy)<B\big\}\big|.\end{multline}
Recall from \ref{uuakeri}, that $U^n/U_{\aaa}\subset\ker(\mathfrak I).$ Let $\delta$ be a set theoretical section to the quotient map $\ker(\omI)\to\ker(\omI)/(U^n/U_{\aaa})$. By \ref{kerpf}, the group $\ker(\omI)/(U^n/U_{\aaa})$ is finite, thus is $\delta(\ker(\omI)/(U^n/U_{\aaa}))$ finite. By using the estimate from \ref{boundunits}, we deduce that there exists $C_0>0$ such that for every $\yyy\in\Div(F)^n/\Div(F)_{\aaa}$ and every $B>0$ one has\begin{align}\begin{split}\label{horovka}|\{\zzz\in\ker(\omI)|H^{\#}_{\infty}(\zzz\widetilde\yyy)<B \}|\hskip-4cm&\\&\leq \big|\big\{(\uuu,\ddd)\in (U^n/U_\aaa)\times \delta(\ker(\omI)/(U^n/U_{\aaa}))\big |H^{\#}_{\infty}(\uuu\ddd\widetilde\yyy )<\log(B)\big\}\big| \\ &\leq\sum_{\ddd\in\delta(\ker(\omI)/(U^n/U_{\aaa}))}|\{\uuu\in U^n/U_\aaa|H^{\#}_{\infty}(\uuu\ddd\widetilde\yyy)<\log(B)\}|\\&\leq C_0(\log(B+2))^{n^2(r_1+r_2)}.\end{split} \end{align} 
Corollary \ref{hzer} gives that there exists $C_1>0$ such that for every $B>0$ one has that \begin{align*}|\{\yyy\in\Div(F)^n/\aaa\Div(F)|\hspace{0.1cm}{H_{\Ideal}}(\yyy)<B \}|&\leq C_1 B\log(1+B)^{n-1}.\end{align*}
We deduce that for every $B>0$ one has\begin{align*}\big|\big\{&\xxx\in\TTT ^\aaa(F)|H(\xxx)<B\big\}\big|\\
&\leq\big| \big\{(\yyy,\zzz)|(\yyy,\zzz)\in(\Div(F)^n/\Div(F)_\aaa)\times\ker(\omI);\\&\quad\quad\quad\quad\quad\quad\quad\quad\quad\quad\quad\quad H_{\Ideal}(\yyy)<B,H^{\#}_{\infty}(\zzz\widetilde\yyy)<B\big\}\big|\\
&\leq \big|\big\{\yyy\in\Div(F)^n/\Div(F)_{\aaa}| H_{\Ideal}(\yyy)<B\big\}\big|\big(\log(B)\big)^{n^2(r_1+r_2)}\\&\leq C_0C_1B\log(1+B)^{n-1}\log(2+B)^{n^2(r_1+r_2)}\\&\leq CB\log(2+B)^{n^2(r_1+r_2)+n-1},\end{align*}where $C>0$ is big enough.

Let $(f_v)_v$ now be any quasi-toric family of $\aaa$-homogenous continuous functions of weighted degree $|\aaa|$. By \ref{toricisclos}, there exists $C'>0$ such that $${H^\#(\xxx)}<C'{H(\xxx)} \hspace{1cm}\forall\xxx\in\Fvnz.$$
We deduce that \begin{align*}|\{\xxx\in[\TTa(F)]|H(\xxx)<B\}|&\leq|\{\xxx\in[\TTa(F)]|H^{\#}(\xxx)<C'B\}|\\&\leq C(C'B)\log(2+C'B)^{n^2(r_1+r_2)+n-1}\\
&=C''B\log(2+B)^{n^2(r_1+r_2)+n-1}\end{align*}
for $C''\gg 0$ and every $B>0$. The statement is proven.
\end{proof}
\subsection{}In this paragraph we prove that quasi-toric heights are Northcott heights. 
%
For $j\in\{1\doots n\}$, we will denote by $\{j\}^c$, the set $\{1\doots n\}-\{j\}.$ For $j\in\{1\doots n\}$, we denote by $p^j:\AAA^n\to\prod _{i\in \{j\}^c}\AAA^1$ the canonical projection and by $d_j$ the closed immersion  $$d_j:\prod _{i\in \{j\}^c}\AAA^1\to\AAA^n\hspace{1cm}(x_{i})_{i}\mapsto ((x_i)_{i\neq j}, (0)_j). $$ By abuse of notation, we will shorten $p^j(\ZZ)$ and write simply $p^j$. The morphisms $d_j$ are~$\Gm$-equivariant when $\prod _{i\in \{j\}^c}\AAA^1$ is endowed with the action of~$\Gm$ given by $$t\cdot(x_i)_{i}= (t^{a_i}x_i)_{i}.$$ For every~$j$, one has that $d_j^{-1}(\AAA^n-\{0\})=\big(\prod _{i\in\{j\}^c}\AAA^1\big)-\{0\}$. We deduce~$\Gm$-equivariant morphisms $$d_j|_{(\prod _{i\in\{j\}^c}\AAA^1)-\{0\}}:\big(\prod _{i\in\{j\}^c}\AAA^1\big)-\{0\}\to \AAA^n-\{0\}.$$ Let $\overline{d_j}$ denotes the induced closed immersion of stacks $\PPP(p^j(\aaa))\to\PPP(\aaa)$ from the~$\Gm$-equivariant morphism $d_j|_{{(\prod _{i\in\{j\}^c}}\AAA^1)-\{0\}}$.
\begin{lem} \label{abnu}
Let $j\in\{1\doots n\}$.
\begin{enumerate}
\item Let $\vMF$ and let $f:\Fvnz\to\RR_{>0}$ be an $\aaa$-homogenous continuous function of weighted degree $b\in\ZZ_{\geq 0}$. The function $f\circ (d_j(F_v)): (\prod _{i\in \{j\}^c}F_v) -\{0\}\to \CC$ is $p^j(\aaa)$-homogenous, continuous and of weighted degree $b$. Moreover, if~$f$ is the toric $\aaa$-homogenous function of weighted degree $b$, then $f\circ (d_j(F_v))$ is the toric $\aaa$-homogenous of weighted degree $b$.
\item Let $(g_v:\Fvnz\to\RR_{\geq 0})_v$ be a quasi-toric family of $\aaa$-homogenous continuous functions of weighted degree $b$ and let $H=H((g_v)_v)$ be the corresponding height. The family $ g_v\circ d_j(F_v)$ is a degree $b$ quasi-toric family of $p_I(\ZZ)(\aaa)$-homogenous continuous functions and the corresponding height $H^j=H((g_v\circ d_I(F_v))_v)$ satisfies that $$H^j=H\circ(\overline{d_j}(F)).$$
\item Let $\mathfrak i:\TTa\to\PPP(\aaa)$ be the inclusion induced by the~$\Gm$-invariant subscheme $\Gm^n\subset\AAA^n-\{0\}$. The map $$\big(\coprod_{j=1}^n[\overline{d_{j}}(F)]\coprod[\mathfrak i(F)]\big):\coprod _{j=1}^n[\PPP(p^{j}(\ZZ)(\aaa))(F)]\coprod [\TTa(F)]\to[\PPP(\aaa)(F)]$$ is surjective. 
\end{enumerate}
\end{lem}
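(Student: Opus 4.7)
The plan is to verify the three parts in order, exploiting the $\Gm$-equivariance of the maps $d_j$.

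For part (1), I would first record that $d_j(F_v)(t\cdot(x_i)_{i\in\{j\}^c})=t\cdot d_j(F_v)((x_i)_{i\in\{j\}^c})$ where the left-hand action uses the weights $p^j(\aaa)$ and the right-hand one uses $\aaa$ (this is immediate from the formula for $d_j$ and the fact that the omitted coordinate $0$ is fixed by any weighted dilation). Hence
\[
f(d_j(F_v)(t\cdot\xxx))=f(t\cdot d_j(F_v)(\xxx))=|t|_v^b\,f(d_j(F_v)(\xxx)),
\]
so the composite is $p^j(\aaa)$-homogenous of weighted degree $b$; continuity is inherited from $f$ and the continuity of $d_j(F_v)$. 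For the toric case with $v\in M_F^\infty$ it suffices to note that $\max_{i}|x_i|_v^{1/a_i}=\max_{i\in\{j\}^c}|x_i|_v^{1/a_i}$ when the $j$-th coordinate is zero; for $v\in M_F^0$ the formula of \ref{davdavdav}(1) gives $r_v(d_j(F_v)(\xxx))=\sup_{i\in\{j\}^c,\,x_i\neq 0}\lceil -v(x_i)/a_i\rceil$, which is precisely the analogue of $r_v$ for the weights $p^j(\aaa)$.

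For part (2), the fact that $(g_v\circ d_j(F_v))_v$ is a quasi-toric family follows from (1) applied at each $v$: at almost every finite place $g_v$ agrees with the toric function of weighted degree $b$, so by (1) the same holds for $g_v\circ d_j(F_v)$. For the identification $H^j=H\circ\overline{d_j}(F)$, given $\xxx\in[\PPP(p^j(\aaa))(F)]$ with $(\Gm)_F$-equivariant lift $\wx\colon(\Gm)_F\to(\prod_{i\in\{j\}^c}\AAA^1)_F-\{0\}$, the composition $d_j\circ\wx$ is $(\Gm)_F$-equivariant into $(\AAA^n-\{0\})_F$ and represents $\overline{d_j}(F)(\xxx)$ by the universal property used in \cite[\href{https://stacks.math.columbia.edu/tag/0436}{Lemma 0436}]{stacks-project}. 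Thus
\[
H(\overline{d_j}(F)(\xxx))=\prod_{v\in M_F}g_v(d_j(F_v)(\wx(1)))=\prod_{v\in M_F}(g_v\circ d_j(F_v))(\wx(1))=H^j(\xxx).
\]

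For part (3), I would argue by simple case analysis. Let $\xxx\in[\PPP(\aaa)(F)]$ and fix a lift $\wx(1)=(y_1,\dots,y_n)\in(\AAA^n-\{0\})(F)$. If all $y_i\neq 0$, then $(y_i)_i\in\Gm^n(F)$ and $\xxx$ lies in the image of $[\mathfrak i(F)]$. Otherwise there exists an index $j$ with $y_j=0$; since $\wx(1)\neq 0$, the tuple $(y_i)_{i\in\{j\}^c}$ is a nonzero element of $(\prod_{i\in\{j\}^c}\AAA^1)(F)-\{0\}$, and by construction $d_j((y_i)_{i\in\{j\}^c})=\wx(1)$, so $\xxx$ lies in the image of $[\overline{d_j}(F)]$. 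This exhausts every rational point, proving surjectivity.

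None of these steps presents a real obstacle—the lemma is essentially bookkeeping that the obvious stratification of $\PPP(\aaa)$ by torus and coordinate hyperplane substacks respects the height. The only point demanding slight care is checking in part (1) that the toric local formulas transform correctly under the closed immersion, which reduces to the explicit formula for $r_v$ recalled in \ref{davdavdav}.
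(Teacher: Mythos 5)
Your proof is correct and follows essentially the same route as the paper's: using the $\Gm$-equivariance of $d_j$ to transfer homogeneity and the explicit formula for $r_v$ from \ref{davdavdav} in part (1), computing the height place-by-place in part (2), and in part (3) stratifying by whether a lift has a zero coordinate. The only minor difference is that you explicitly invoke the universal property from the Stacks Project to identify the representative of $\overline{d_j}(F)(\xxx)$, where the paper just computes directly, but this is the same underlying reasoning.
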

\begin{proof}
\begin{enumerate}
\item The function $f\circ d_j(\Fv)$ is continuous, as it coincides with the restriction $f|_{(\prod_{i\in\{j\}^c}\Fv)-\{0\}}$ and~$f$ is continuous. Let $t\in\Fvt$ and let $(x_i)_{i\in\{j\}^c}\in (\prod_{i\in\{j\}^c}\Fv)-\{0\}$. We have that \begin{align*}f(d_j(\Fv)(t\cdot (x_i)_{i\in\{j\}^c}))&=f(d_j(\Fv)((t^{a_i}x_i )_{i\in\{j\}^c}))\\&=f((t^{a_i}x_i)_{i\in\{j\}^c}, (0)_{j})\\&=|t|_v^bf((x_i)_{i\in\{j\}^c},(0)_{j})\\&=|t|_v^b(f\circ d_j(\Fv)) ((x_i)_{i\in\{j\}^c}),\end{align*}thus $f\circ d_j(\Fv)$ is $p^{j}(\aaa)$ homogenous of weighted degree $b$. Suppose~$f$ is the toric $\aaa$-homogenous of weighted degree $b$, that is $$f(\yyy)=\pivv^{-b\cdot r^{\aaa}_v(\yyy)}\text{, where } r^{\aaa}_v(\yyy)={\sup _{\substack{j=1\doots n\\y_j\neq 0}}\bigg\lceil\frac{-v(y_j)}{a_j}\bigg\rceil} ,$$if~$v$ is finite and $$f(\yyy)= ({\max_{j=1\doots n}(|y_j|^{1/a_j}_v)})^{-b} $$ if~$v$ is infinite. 
Suppose~$v$ is finite. Let $\xxx\in (\prod_{i\in\{j\}^c}\Fv)-\{0\}$. One has that $d_j(\Fv)(\xxx)=((x_i)_{i\in\{j\}^c},(0)_j)$ and that $$(r^{\aaa}_v\circ (d_j(\Fv)))(\xxx)=\sup _{\substack{i\in\{j\}^c\\x_i\neq 0}}\bigg\lceil\frac{-v(x_i)}{a_i}\bigg\rceil=:r^{p^j(\aaa)}(\xxx),$$ because the~$j$-th coordinate of $d_j(\Fv)(\xxx)$ is zero.
It follows that the function $f\circ(d_j(\Fv))$ given by$$\xxx\mapsto f(d_j(\Fv)(\xxx))=\pivv^{-b\cdot r^{\aaa}_v(d_j(\Fv)(\xxx))}=\pivv^{-b\cdot r^{p^j(\aaa)}_v(\xxx)}$$ is toric. 

Suppose~$v$ is infinite. The toric $p^{j}(\aaa)$-homogenous function of weighted degree $b$ is the function $$f':(\prod_{i\in\{j\}^c}\Fv)-\{0\} \to \RR_{>0}\hspace{1cm}\xxx\mapsto \max_{i\in\{j\}^c}(|x_i|^{1/a_i}_v).$$ For $\xxx\in(\prod_{i\in\{j\}^c}\Fv)-\{0\},$ we have that $$f(d_j(\Fv)(\xxx))={\max_{i\in\{j\}^c}\big(|x_i|^{1/a_i}_v}\big)=f'(\xxx),$$ because the~$j$-th coordinate of $d_j(\Fv)(\xxx)$ is zero. The claim follows. 
\item By (1), for every $v\in M_F$, the function $g_v\circ d_I(F_v)$ is a continuous $p_I(\ZZ)(\aaa)$-homogenous of weighted degree $b$ and if $g_v$ is toric, then $g_v\circ d^j(F_v)$ is toric of the same weighted degree. It follows that the family $(g_v\circ d^j(F_v))_v$ is quasi-toric of the degree $b$. 
Let $\yyy\in[\PPP(p^j(\aaa))(\Fv)]$ and let $\widetilde\yyy\in\ (\prod _{i\in\{j\}^c}F)-\{0\}$ be a lift. Then $d^j(F)((\widetilde y_i)_{i\in\{j\}^c} )$ is a lift of $\overline{d^j}(F)(\yyy).$ We have that $$H^j(\yyy)=\prod_{v\in M_F}(g_v\circ d^j(F_v))(\widetilde\yyy)=\prod_{v\in M_F}g_v(d^j(F)(\widetilde\yyy))=H((g_v)_v)(\yyy)=H(\yyy).$$ The claim follows.
\item Let $\xxx\in[\PPP(\aaa)(F)]$ and let $\wx\in F^n-\{0\}$ be a lift of~$\xxx$.  If $\wx\in(\Ft)^n$, then $\xxx\cong[\mathfrak i(F)(q^{\aaa}(F)(\wx))],$ where $q^\aaa$ is the quotient~$1$-morphism $\Gmn\to\TTa$. Suppose at least one coordinate, say the~$j$-th coordinate, of~$\wx$ is equal to zero. Then, $$\wx= d_{j}(F)((\widetilde x_i)_{i\in\{j\}^c}) $$ 
and so$$\xxx\cong\overline{d_{j}}( q^{p^j(\aaa)}((\widetilde x_i)_{i\in\{j\}^c})),$$ where $q^{p^j(\aaa)}:\big((\prod _{i\in\{j\}^c}\AAA^1)-\{0\}\big)\to \PPP(p^j(\aaa))$, is the quotient~$1$-morphism. The claim follows.
\end{enumerate}
\end{proof}
\begin{thm}\label{boundppatta}Let $n\geq 1$ be an integer and let $\aaa\in\ZZ^n_{>0}$. Let $(f_v:\Fvnz\to\RR_{>0})_v$ be a quasi-toric family of $\aaa$-homogenous functions of weighted degree $|\aaa|$ and let $H=H((f_v)_v)$. 
\begin{enumerate}
\item  There exists $C>0$ such that for every $B>0$ one has that \begin{multline*}|\{\xxx\in[\PPP(\aaa)(F)]-[\TTa(F)]|H(\xxx)\leq B\}|\\<CB^{\frac{|\aaa|-\min_ia_i}{|\aaa|}}\log(2+B^{\frac{|\aaa|-\min_ia_i}{|\aaa|}})^{n^2(r_1+r_2)+n-1}.\end{multline*}
\item There exists $C>0$ such that for every $B>0$ one has that $$|\{\xxx\in[\PPP(\aaa)(F)]|H(\xxx)\leq B\}|<CB\log(2+B)^{n^2(r_1+r_2)+n-1}.$$
\end{enumerate}
\end{thm}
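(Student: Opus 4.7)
The plan is to prove both statements simultaneously by induction on $n$, using the stratification of $\PPP(\aaa)$ provided by Lemma \ref{abnu}(3) together with the torus bound of Proposition \ref{vnjem}.

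For the base case $n=1$, one has $\AAA^1-\{0\}=\Gm$, hence $\PPP(a_1)=\TTT(a_1)$, so $[\PPP(\aaa)(F)]-[\TTa(F)]$ is empty and part (1) is trivial; part (2) follows immediately from Proposition \ref{vnjem} applied to the quasi-toric family $(f_v)_v$ of weighted degree $a_1=|\aaa|$.

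For the inductive step, assume both statements hold for all vectors of length less than $n$. By Lemma \ref{abnu}(3), one has
\[
[\PPP(\aaa)(F)]-[\TTa(F)]\subseteq \bigcup_{j=1}^n [\overline{d_j}(F)]\big([\PPP(p^j(\aaa))(F)]\big),
\]
and by Lemma \ref{abnu}(2), the pullback height $H^j:=H\circ [\overline{d_j}(F)]$ is the height attached to the quasi-toric family $(f_v\circ d_j(F_v))_v$ of $p^j(\aaa)$-homogenous continuous functions of weighted degree $|\aaa|$. To apply the inductive hypothesis on $\PPP(p^j(\aaa))$ we need weighted degree $|p^j(\aaa)|=|\aaa|-a_j$; accordingly set $\widetilde H^j:=(H^j)^{|p^j(\aaa)|/|\aaa|}$, which is the quasi-toric height on $[\PPP(p^j(\aaa))(F)]$ arising from the degree $|p^j(\aaa)|$ family $(f_v\circ d_j(F_v))^{|p^j(\aaa)|/|\aaa|}$. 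The condition $H^j(\yyy)\leq B$ translates to $\widetilde H^j(\yyy)\leq B^{|p^j(\aaa)|/|\aaa|}=B^{(|\aaa|-a_j)/|\aaa|}$, so by part (2) of the inductive hypothesis
\[
|\{\yyy\in[\PPP(p^j(\aaa))(F)]\,|\,H^j(\yyy)\leq B\}|\leq C_j B^{(|\aaa|-a_j)/|\aaa|}\log\big(2+B^{(|\aaa|-a_j)/|\aaa|}\big)^{(n-1)^2(r_1+r_2)+n-2}.
\]
Summing over $j$ and using $(|\aaa|-a_j)/|\aaa|\leq (|\aaa|-\min_i a_i)/|\aaa|<1$ yields the bound of part (1) (possibly after enlarging the constant and the logarithmic exponent, which is harmless).

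For part (2), combine the boundary estimate from part (1) just established with the torus estimate
\[
|\{\xxx\in[\TTa(F)]\,|\,H(\xxx)\leq B\}|\leq C' B\log(2+B)^{n^2(r_1+r_2)+n-1}
\]
from Proposition \ref{vnjem}. Since the boundary contribution is $O(B^{(|\aaa|-\min_i a_i)/|\aaa|}\log^{\ast}(B))$ with exponent strictly less than $1$, it is dominated by the torus contribution, and one obtains part (2) with the same logarithmic exponent.

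The proof is essentially bookkeeping once one sees the stratification and the scaling trick; the only subtle point is to verify that raising a quasi-toric family of continuous strictly positive $\aaa$-homogenous functions to a positive rational power again yields a quasi-toric family (of the rescaled weighted degree), which is clear since the toric functions $f_v^{\#}$ are closed under such powers and the ``quasi-toric'' condition requires agreement with $f_v^{\#}$ only at almost all finite places; there the toric functions $(f_v^{\#})^{|p^j(\aaa)|/|\aaa|}$ for the new weighted degree coincide with the restricted functions by Lemma \ref{abnu}(1). The main obstacle, and what makes the induction actually work, is recognizing that the apparent mismatch between the weighted degree $|\aaa|$ of the restricted height $H^j$ and the ``correct'' weighted degree $|p^j(\aaa)|$ on the smaller weighted projective stack is precisely what produces the saving $B^{(|\aaa|-a_j)/|\aaa|}$ with exponent $<1$, ensuring that the boundary contribution is negligible compared to the torus one.
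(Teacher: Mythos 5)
Your proposal is correct and follows essentially the same strategy as the paper's own proof: simultaneous induction on $n$, with the base case $\PPP(a_1)=\TTT(a_1)$ handled by Proposition \ref{vnjem}, the boundary decomposed via Lemma \ref{abnu}(3), and the degree-$|\aaa|$ restricted height $H^j$ rescaled to the degree-$|p^j(\aaa)|$ height $(H^j)^{|p^j(\aaa)|/|\aaa|}$ before invoking the inductive hypothesis. The only differences are cosmetic, such as your explicit note that raising a quasi-toric family to a positive rational power yields a quasi-toric family, which the paper leaves implicit.
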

\begin{proof}
We prove the both statements simultaneously. We apply the induction on~$n$. When $n=1,$ one has that $\PPP(a)=\mathscr T(a)$. Thus the first claim is trivial and the second is proven in \ref{vnjem}. 

Suppose that the both claims are true for some $n-1\geq 0$ and let us prove them for~$n$.
The map
$$\big(\coprod_{j=1}^n[\overline{d_{j}}(F)]\coprod[\mathfrak i(F)]\big):\coprod _{j=1}^n[\PPP(p^{j}(\ZZ)(\aaa))(F)]\coprod [\TTa(F)]\to[\PPP(\aaa)(F)]$$ is surjective by \ref{abnu}. For $j=1\doots n$, by \ref{abnu}, one has that $$H^j=H((f_v\circ (d_j(\Fv)))_v)=H\circ \overline{d}_j,\hspace{0,5cm}\text{where }H^j:=H((f_v\circ (d_j(\Fv)))_v).$$ 
We deduce that \begin{multline}\label{povuin}
|\{\xxx\in[\PPP(\aaa)(F)]| H(\xxx)\leq B\}|\\\leq |\{\xxx\in[\TTa(F)]| H(\xxx)\leq B\}|+\sum_{j=1}^n|\{\xxx\in[\PPP(p^j(\aaa))(F)]| H^j(\xxx)\leq B\}|.
\end{multline}
For $j=1\doots n$, by \ref{abnu}, the height $H^j$ is defined by degree $|\aaa|$ quasi-toric family. The family $$(g_v\circ (d_j(\Fv))^{\frac{|p^j(\aaa)|}{|\aaa|}})_v$$ is quasi-toric of degree $|p^j(\aaa)|$ and the resulting height $H'=H((g_v\circ (d_j(\Fv))^{\frac{|p^j(\aaa)|}{|\aaa|}})_v)$ is related to $H^j$ as follows$$H'=(H^j)^{\frac{|p^j(\aaa)|}{|\aaa|}}.$$ By the induction hypothesis, we deduce that for every $j=1\doots n$ there exists $C_j>0$ such that for every $B>1$ one has that \begin{align*}|\{\xxx\in[\PPP(p^j&(\aaa))(F)]| H^j(\xxx)\leq B\}|\\&\leq |\{\xxx\in[\PPP(p^j(\aaa))(F)]| H^j(\xxx)^{\frac{|p^j(\aaa)|}{|\aaa|}}\leq B^{\frac{|p^j(\aaa)|}{|\aaa|}}\}|\\
&\leq |\{\xxx\in[\PPP(p^j(\aaa))(F)]| H'(\xxx)\leq B^{\frac{|p^j(\aaa)|}{|\aaa|}}\}|\\
&\leq C_jB^{\frac{|p^j(\aaa)|}{|\aaa|}}\log(2+B^{\frac{|p^j(\aaa)|}{|\aaa|}})^{n^2(r_1+r_2)+n-1}.
\end{align*}
It follows that \begin{align*}
|\{\xxx\in[\PPP(\aaa)(F)]-[\TTa(F)]| H(\xxx)\leq B\}|\hskip-8cm&\\&=|\{\xxx\in[\PPP(\aaa)(F)]| H(\xxx)\leq B\}|-|\{\xxx\in[\TTa(F)]| H(\xxx)\leq B\}|\\
&\leq \sum_{j=1}^n|\{\xxx\in[\PPP(p^j(\aaa))(F)]| H^j(\xxx)\leq B\}|\\
&\leq \sum_{j=1}^{n} C_jB^{\frac{|p^j(\aaa)|}{|\aaa|}}\log(2+B^{\frac{|p^j(\aaa)|}{|\aaa|}})\\
&\leq C''B^{\frac{|\aaa|-\min_ia_i}{|\aaa|}}\log(2+B^{\frac{|\aaa|-\min_ia_i}{|\aaa|}})
\end{align*} for $C''\gg 0$. Thus the first claim is proven for~$n$.
By \ref{vnjem}, there exists $C'>0$ such that \begin{equation}\label{sevimb}|\{\xxx\in[\TTa(F)]| H(\xxx)\leq B\}|<C'B\log(2+B)^{n^2(r_1+r_2)+n-1}.\end{equation}
By combining (\ref{povuin}), (\ref{sevimb}) and the first claim, we get that there exists $C>0$ such that for every $B>1$ one has that $$|\{\xxx\in[\PPP(\aaa)(F)]| H(\xxx)\leq B\}|\leq CB\log(2+B)^{n^2(r_1+r_2)+n-1}.$$
The statement is proven.
\end{proof}
\begin{rem}
\normalfont
In Chapter \ref{Analysis of height zeta functions}, we establish that $$|\{\xxx\in[\PPP(\aaa)(F)]|H(\xxx)\leq B\}|\sim C B,$$ for some $C>0$.
\end{rem}
\section{Northcott property for singular heights}\label{Northsing}
In this section we allow finitely many $f_v$ to take values in $0$ but we require that $f_v^{-1}$ admits a ``logarithmic singularity" over a rational divisor. We establish that corresponding heights are Northcott heights. 
\subsection{} Let~$K$ be a field and let $\aaa\in\ZZ_{>0}^n$. Let us define weighted degrees of polynomials in $K[X_1\doots X_n]$. For $j=1\doots n$, we define $\aaa$-weighted degree (or simply weighted degree) of polynomial $X_j$ by setting $\deg_{\aaa}(X_j)=a_j.$ For a monomial $cX_1^{d_1}\cdots X_n^{d_n},$ where $c\in R$,  we define $\deg_{\aaa}(cX_1^{d_1}\cdots X_n^{d_n})=\aaa\cdot\mathbf d$. Finally, if $P=\sum_i Q_i,$ with $Q_i$ monomials, we define $\deg_{\aaa}(P)=\max_i (\deg_{\aaa}(Q_i))$.
\begin{mydef}
We say that a polynomial $P\in K[X_1\doots X_n]$ is $\aaa$-homogenous if it is a sum of monomials of the same $\aaa$-weighted degree. 
\end{mydef}
\begin{lem}
Let $P$ be an $\aaa$-homogenous polynomial of weighted degree $d\geq 1$. 
\begin{enumerate}
\item For $t\in \Gm$ and $\xxx\in\AAA^n$, one has that $P(t\cdot\xxx)=t^d P(\xxx)$.
\item Let $Q$ be an $\aaa$-homogenous polynomial of weighted degree $k\geq 0$. The polynomial $PQ$ is an $\aaa$-homogenous polynomial of weighted degree $d+k$. 
\item Let $\{P_1\doots P_m\}\in F_v[X_1\doots X_n]$ be a set of non-constant $\aaa$-homogenous polynomials. The closed subscheme $Z(\{P_1\doots P_m\})\subset \AAA^n,$  given by the common zero set of $P_1\doots P_m$ is~$\Gm$-invariant for the action of~$\Gm$ on $\AAA^n$ with weights $a_1\doots a_n$. The open subscheme $D(P_1)\subset\AAA^n,$  given by the locus where $P_1$ does not vanish is~$\Gm$-invariant for the same action of~$\Gm$ on $\AAA^n$.
\end{enumerate}
\end{lem}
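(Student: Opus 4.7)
The plan is to reduce all three claims to elementary monomial computations, using the definition of weighted degree and the action $t\cdot\xxx=(t^{a_j}x_j)_j$.

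For (1), the key observation is that for a single monomial $cX_1^{d_1}\cdots X_n^{d_n}$ of weighted degree $\aaa\cdot\mathbf d=d$, one has
\[
c(t^{a_1}x_1)^{d_1}\cdots(t^{a_n}x_n)^{d_n}=t^{\aaa\cdot\mathbf d}\,cX_1^{d_1}\cdots X_n^{d_n}=t^d\,cX_1^{d_1}\cdots X_n^{d_n}.
\]
Since $P$ is a sum of monomials all of weighted degree $d$, summing and using linearity of evaluation yields $P(t\cdot\xxx)=t^dP(\xxx)$.

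For (2), I would expand $P=\sum_i Q_i$ and $Q=\sum_j R_j$ with $Q_i, R_j$ monomials of weighted degrees $d$ and $k$ respectively. The product of two monomials $cX^{\mathbf d}$ and $c'X^{\mathbf e}$ is $cc'X^{\mathbf d+\mathbf e}$, which has weighted degree $\aaa\cdot(\mathbf d+\mathbf e)=d+k$. Hence $PQ=\sum_{i,j}Q_iR_j$ is a sum of monomials of weighted degree $d+k$, so it is $\aaa$-homogeneous of weighted degree $d+k$.

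For (3), the $\Gm$-invariance follows directly from (1): if $\xxx\in Z(\{P_1,\ldots,P_m\})$ then $P_i(t\cdot\xxx)=t^{d_i}P_i(\xxx)=0$ for every $i$, so $t\cdot\xxx\in Z(\{P_1,\ldots,P_m\})$. Likewise, if $P_1(\xxx)\neq 0$, then $P_1(t\cdot\xxx)=t^{d_1}P_1(\xxx)\neq 0$ since $t\in\Gm$, so $t\cdot\xxx\in D(P_1)$. Strictly speaking one should phrase this scheme-theoretically: the identity $P(t\cdot\xxx)=t^dP(\xxx)$ holds as an equality in $\ZZ[t,t^{-1},X_1,\ldots,X_n]$, whence the pullback of the ideal $(P_1,\ldots,P_m)$ along the action morphism $a:\Gm\times\AAA^n\to\AAA^n$ agrees with its pullback along $p_2$ (up to the unit $t^{d_i}$), giving $a^{-1}Z(\{P_i\})=p_2^{-1}Z(\{P_i\})$ and analogously for $D(P_1)$.

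None of the three steps presents a genuine obstacle; the entire lemma is a direct consequence of the definition of the weighted degree together with the explicit form of the $\Gm$-action. The only mild subtlety is keeping the scheme-theoretic versus pointwise formulations of $\Gm$-invariance aligned in (3), but since the polynomial identity in (1) holds universally in the coordinate ring, this is automatic.
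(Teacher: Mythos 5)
Your proof is correct and follows essentially the same route as the paper's: monomial computation for (1), termwise expansion for (2), and the equivalence $P_i(t\cdot\xxx)=0\iff P_i(\xxx)=0$ (using $t\in\Gm$) for (3). Your added scheme-theoretic remark about pulling back the ideal along $a$ versus $p_2$ is a sound elaboration of what the paper leaves implicit.
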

\begin{proof}
\begin{enumerate}
\item Let $t\in\Gm$ and let $CX_1^{m_1}\cdots X_n^{m_n}$ be a monomial. For $\xxx\in \AAA^n$, we have that $$CX_1^{m_1}\cdots X_n^{m_n}(t\cdot\xxx)=Ct_1^{a_1m_1}x_1^{m_1}\cdots t_n^{a_nm_n}x_n^{m_n}=t^{\aaa\cdot\mathbf m}X_1^{m_1}\cdots X_n^{m_n}(\xxx).$$ Now, $P$ can be written as sum $\sum_i C_iX^{m_{1,i}}_1\cdots X^{m_{n,i}}_n$ where for each~$i$ one has $\aaa\cdot\mathbf m_i=d$ and we deduce that $P(t\cdot\xxx)=t^dP(\xxx).$  
\item The product $PQ$ is a sum of monomials $CX^{m_1}_1\cdots X^{m_n}_n\cdot DX^{r_1}_1\cdots X^{r_n}_n,$ with $\aaa\cdot\mathbf m=d$ and $\aaa\cdot\mathbf r=k$. It follows that $PQ$ is $\aaa$-homogenous of weighted degree $\aaa(\mathbf m+\mathbf r)=k+d$.
\item For every $\xxx\in\AAA^n$ and every $i\in\{1\doots m\}$, we have that $P_i(t\cdot\xxx)=t^dP_i(\xxx)=0$ if and only if $P_i(\xxx)=0$. The claim follows.
\end{enumerate}
\end{proof}
\begin{mydef}Let $\{P_1\doots P_m\}\subset\Fv[X_1\doots X_n]$ be a set of non-constant $\aaa$-homogenous polynomials. We define $\mathcal Z(\{P_1\doots P_m\})$ to be the substack of~$\PPP(\aaa)$ defined by the~$\Gm$-invariant closed subscheme $Z(P_1\doots P_m)-\{0\}\subset\AAA^n-\{0\}$. We define $\mathcal D(P_1)$ to be the substack of~$\PPP(\aaa)$ defined by the~$\Gm$-invariant open subscheme $D(P_1)\subset\AAA^n-\{0\}$.
\end{mydef}
It follows from \cite[\href{https://stacks.math.columbia.edu/tag/04YN}{Lemma 04YN}]{stacks-project} that $\mathcal Z(\{P_1\doots P_k\})$ is a closed substack of~$\PPP(\aaa)$ and that $\mathcal D(P_1\cdots P_m)$ is an open substack of~$\PPP(\aaa)$.

Let $Q_1\doots Q_{r+1}$ be $\aaa$-homogenous polynomials of the same weighted degree~$d$. The morphism $$J(Q_1\doots Q_{r+1}):(\AAA^n-Z(\{Q_1\doots Q_{r+1}\}))\to\AAA^{r+1}-\{0\}\hspace{0.5cm}\xxx\mapsto (Q_i(\xxx))_i$$
is $t\mapsto t^d$-equivariant (see \ref{phiequiv} for the terminology), when the left scheme is endowed with the action $t\cdot_{\aaa}\xxx=(t^{a_j}x_j)_j$ of~$\Gm$ and the right scheme is endowed with the action $t\cdot_{\jed}\xxx=(tx_j)_j$. Let us denote by $$\overline{ J(Q_1\doots Q_{r+1})}:\PPP(\aaa)\to\PP^r$$ the~$1$-morphism of stacks induced by \cite[\href{https://stacks.math.columbia.edu/tag/046Q}{Lemma 046Q}]{stacks-project}.
\subsection{} In this paragraph we study heights that are obtained when $v$-~adic metric is singular. We require the singularities to be ``logarithmic" over a rational divisor. We establish that resulting heights are Northcott.
\begin{mydef} Let $P\in F[X_1\doots X_n]$ be a non-constant $\aaa$-homogenous polynomial and let~$S$ be a finite set of places. A collection of continuous $\Gm(\Fv)$-invariant functions $g_v:D(P)(\Fv)\to\RR_{>0}$ for $v\in S$, will be said to be logarithmically suitable  if the following condition is satisfied: there exists a set $\{Q_i\}_i$ of non-constant $\aaa$-homogenous polynomials $Q_i\in F[X_1\doots X_n]$ such that $Q_i$ and $P$ have no common factors of degree at least $1,$ such that $Z(\{Q_i\}_i)=\{0\}$ and such that for every $v\in S$ and every~$i$ there exist $\alpha_{v,i},\beta_{v,i}>0$ such that$$g_v(\xxx)\leq \alpha_{v,i}\max\bigg(-\log\bigg(\frac{|P(\xxx)|^{\deg_{\aaa}(Q)/\deg_\aaa(P)}_v}{|Q(\xxx)|_v}\bigg)^{\beta_{v,i}},1\bigg)$$ for every $\xxx\in D(PQ_i)(\Fv)$.
\end{mydef}
By the quasi-compactness of the scheme $\AAA^n$, one can always ask for the set $\{Q_i\}_i$ to be finite.

For $\vMF$, let $f_v^\#:F_v^n-\{0\}\to\RR_{>0}$ be the~$v$-adic toric $\aaa$-homogenous continuous function of weighted degree $d\geq 1$.  Let $P\in F[X_1\doots X_n]$ be a non-constant $\aaa$-homogenous polynomial. Let~$S$ be a finite set of places and let $(g_v)_v$ be a logarithmically suitable family of continuous~$\Gm(\Fv)$-invariant functions.  
For $v\in S$ we define $f_v:\Fvnz\to\RR_{\geq 0}$ by
 \begin{align*}\quad
f_v(\xxx):=&
               \begin{cases}
f_v^{\#}g_v^{-1}(\xxx) &\text{if } \xxx\in D(P)(\Fv),\\
0&\text{if }\xxx\in Z(P)(\Fv).
               \end{cases}
              \end{align*} and for $v\in M_F-S$ we let $f_v=f_v^{\#}$. For every $v\in M_F$, the function $f_v:\Fvnz\to\RR_{\geq 0}$ is an $\aaa$-homogenous continuous function of weighted degree $d\geq 1$. 
We define height $H=H((f_v)_v)$ and $H^\#=H((f_v^\#))$ on $\PPP(\aaa)(F)$. We recall that for every $\xxx\in\PPP(\aaa)(F)$, one has that $H^{\#}(\xxx)\geq~1$ by \ref{htoricbig}. 

Motivated by \cite[Proposition 2.1]{HeightsEllipticCurves}, we establish that: 
\begin{prop} \label{comphh}
There exist $C,\beta>0$ such that for all $\xxx\in \DD(P)(F)$ one has $$H(\xxx)\geq CH^\#(\xxx)\log (1+H^\#(\xxx))^{-\beta}$$
\end{prop}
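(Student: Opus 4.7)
The key observation is that, by construction, $H(\xxx) = H^{\#}(\xxx) / \prod_{v \in S} g_v(\widetilde{\xxx})$ for any lift $\widetilde{\xxx} \in F^n \setminus \{0\}$ of $\xxx \in \mathcal{D}(P)(F)$ (the $\Gm(F_v)$-invariance of each $g_v$ makes this well defined). So the statement is equivalent to the estimate
\[
\prod_{v \in S} g_v(\widetilde{\xxx}) \;\leq\; C' \log\!\bigl(1 + H^{\#}(\xxx)\bigr)^{\beta}
\]
for some constants $C',\beta>0$, and this is what I intend to establish.

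The strategy is to use the logarithmically suitable bound on each $g_v$ together with Lemma~\ref{estisilv}. By assumption we are given a finite set of $\aaa$-homogenous polynomials $\{Q_i\}$ (one may take it finite by quasi-compactness of $\AAA^n$) with $Z(\{Q_i\}_i)=\{0\}$ and with no common factor with $P$; since $\widetilde{\xxx} \neq 0$, for each $\xxx$ there is at least one index $i_0 = i_0(\xxx)$ with $Q_{i_0}(\widetilde{\xxx}) \neq 0$. Write $d_P = \deg_{\aaa}(P)$, $d_{i_0} = \deg_{\aaa}(Q_{i_0})$, and apply Lemma~\ref{estisilv} to the two $\aaa$-homogenous polynomials $P^{d_{i_0}}$ and $Q_{i_0}^{d_P}$ (both of weighted degree $d_Pd_{i_0}$), using that $H^{\#}$ is the degree~$d$ toric height while the lemma is stated in weighted degree $d_P d_{i_0}$, so in fact $(H^{\#})^{d_P d_{i_0}/d}$ plays the role of the toric height in the lemma. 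This yields a constant $C_1>0$ with
\[
\prod_{v \in M_F} \max\bigl(|P(\widetilde{\xxx})|_v^{d_{i_0}},\, |Q_{i_0}(\widetilde{\xxx})|_v^{d_P}\bigr) \;\leq\; C_1\, H^{\#}(\xxx)^{1/d}.
\]

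Now comes the product formula trick, which is the main computational step. Because $\xxx \in \mathcal{D}(P)(F)$, one has $P(\widetilde{\xxx}) \in F^{\times}$, and thus $\prod_v |P(\widetilde{\xxx})|_v^{d_{i_0}} = 1$. Combining with the previous display and bounding $|Q_{i_0}|_v^{d_P}$ by the maximum gives, for each $v \in S$,
\[
\frac{|Q_{i_0}(\widetilde{\xxx})|_v^{d_P}}{|P(\widetilde{\xxx})|_v^{d_{i_0}}} \;\leq\; \prod_{w \in M_F} \max\bigl(|P(\widetilde{\xxx})|_w^{d_{i_0}},\, |Q_{i_0}(\widetilde{\xxx})|_w^{d_P}\bigr) \;\leq\; C_1\, H^{\#}(\xxx)^{1/d}.
\]
Taking logarithms, the quantity $-\log\!\bigl(|P(\widetilde{\xxx})|_v^{d_{i_0}/d_P}/|Q_{i_0}(\widetilde{\xxx})|_v\bigr) = \tfrac{1}{d_P}\log\!\bigl(|Q_{i_0}|_v^{d_P}/|P|_v^{d_{i_0}}\bigr)$ is bounded above by $\tfrac{1}{d_P}\bigl(\log C_1 + \tfrac{1}{d}\log H^{\#}(\xxx)\bigr)$, which is $O(\log(1+H^{\#}(\xxx)))$ once one uses $H^{\#}(\xxx) \geq 1$ from Lemma~\ref{htoricbig}.

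Inserting this into the logarithmically suitable bound on $g_v$ yields $g_v(\widetilde{\xxx}) \leq C_v \log(1+H^{\#}(\xxx))^{\beta_{v,i_0}}$ for each $v \in S$. Multiplying over the finite set~$S$, and taking $\beta := \max_{i} \sum_{v \in S} \beta_{v,i}$ (the max runs over the finitely many possible $i$'s), together with $C':=\max_i \prod_{v \in S} C_{v,i}$, gives the desired estimate, and hence the claim. The only mild obstacle is uniformity in the choice of $i_0$, which is resolved because $\{Q_i\}$ may be taken finite so the relevant constants $\alpha_{v,i},\beta_{v,i}$ range over a finite set; the rest reduces to a careful chasing of the weighted degrees through Lemma~\ref{estisilv}.
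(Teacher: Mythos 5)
Your proposal is correct and follows essentially the same route as the paper's proof: reduce to bounding $\prod_{v\in S}g_v(\widetilde\xxx)$, apply the logarithmic-suitability bound with an index $i_0$ chosen so that $Q_{i_0}(\widetilde\xxx)\neq 0$, invoke Lemma~\ref{estisilv} together with the product formula on $P(\widetilde\xxx)\in F^\times$ to control the logarithmic factors by $O(\log(1+H^{\#}(\xxx)))$, and then take a maximum over the finitely many $Q_i$. The only (shared, routine) point glossed over is that $\log(1+H^{\#})$ may be smaller than $1$, so when replacing the exponent $\sum_{v\in S}\beta_{v,i_0}$ by $\beta=\max_i\sum_v\beta_{v,i}$ one must absorb the ratio $\log(1+H^{\#})^{\sum\beta_{v,i_0}-\beta}\leq(\log 2)^{\sum\beta_{v,i_0}-\beta}$ into the constant, using $H^{\#}\geq 1$.
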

\begin{proof} For $\xxx\in\DD(P)(F)$, let $\wx:\Gm\to\AAA^n-\{0\}$ be the~$\Gm$-equivariant morphism over~$F$ defined by~$\xxx$. For simplicity we will write~$\wx$ for $\wx(1)$.

There exists a finite set of non-constant $\aaa$-homogenous polynomials~$\{Q_{i}\}_i$ which have no common factors with $P_v$ of degree at least~$1$ such that $Z(\{Q_i\}_i)=\{0\},$ and such that for every $v\in S$ and every $i,$ there exist $\alpha_{i,v},\beta_{i,v}$ such that $$g_v(\xxx)\leq \alpha_{i,v}\max\bigg(-\log\bigg(\frac{|P(\xxx)|^{\deg_{\aaa}(Q)/\deg_\aaa(P)}_v}{|Q(\xxx)|_v}\bigg)^{\beta_{i,v}},1\bigg)$$ for every $\xxx\in D(PQ_i)(\Fv)$. 
 Fix an index~$i$. For $\xxx\in \DD(Q_iP)(F)$, using the fact that $f_v=f_v^\#$ for $v\not\in S$ we deduce that \begin{align*}H(\xxx)&=\prod _{v\in M_F}f_v(\wx)\\
&= \prod _{v\in M_F}f_v^{\#}(\wx)\prod _{v\in S}g_v(\wx)^{-1}\\
&\geq H^{\#}(\xxx)\prod_{v\in S} \alpha_{v,i}^{-1}\max\bigg(-\log\bigg(\frac{|P(\wx)|_v^{\deg_\aaa(Q_i)/\deg_{\aaa}(P)}}{|Q_i(\wx)|_v}\bigg),1\bigg)^{-\beta_{v,i}}\\
&\geq H^{\#}(\xxx)\prod_{v\in S} \alpha_{v,i}^{-1}\max\bigg(\log\bigg(\frac{|Q_i(\wx)|_v}{|P(\wx)|_v^{\deg_\aaa(Q_i)/\deg_{\aaa}(P)}}\bigg),1\bigg)^{-\beta_{v,i}}\\
&= H^{\#}(\xxx)\prod_{v\in S} \alpha_{v,i}^{-1}\log\bigg(\max\bigg(\frac{|Q_i(\wx)|_v}{|P(\wx)|_v^{\deg_\aaa(Q_i)/\deg_{\aaa}(P)}}\bigg),e\bigg)^{-\beta_{v,i}}\\
&\geq\alpha_iH^{\#}(\xxx)\prod_{v\in S}\log\bigg(\max\bigg(\frac{|Q_i(\wx)|_v}{|P(\wx)|_v^{\deg_\aaa(Q_i)/\deg_{\aaa}(P)}}\bigg),e\bigg)^{-\beta_{v,i}},
\end{align*}
where $\alpha=\prod_{v\in S}\alpha_{v,i}^{-1}$. 
Lemma \ref{estisilv} gives that for every $v\in S$, there exists $\gamma_v>1$ such that for every $\xxx\in \DD(Q_iP)(F)$ one has that :
\begin{align*}\max\bigg(\frac{|Q_i(\wx)|_v}{|P(\wx)|_v^{\deg_{\aaa}(Q_i)/\deg_{\aaa}(P)}},e\bigg)&\leq e\max \bigg(\frac{|Q_i(\wx)|_v}{|P(\wx)|_v^{\deg_{\aaa}(Q_i)/\deg_{\aaa}(P)}},1\bigg)\\
&\leq e\max\bigg(\frac{|Q_i(\wx)|^{\deg_{\aaa}(P)}_v}{|P(\wx)|_v^{\deg_{\aaa}(Q_i)}},1\bigg)^{1/\deg_{\aaa}(P)} \\
&\leq e\prod_{\vMF}\max \bigg(\frac{|Q_i(\wx)|^{\deg_{\aaa}(P)}_v}{|P(\wx)|_v^{\deg_{\aaa}(Q_i)}},1\bigg)^{1/\deg_{\aaa}(P)}\\
&= e H_{\PP^1}(\overline{J}( Q^{\deg_{\aaa}(P)}_i, P^{\deg_{\aaa}(Q_i)})(\xxx))^{1/\deg_{\aaa}(P)}\\
&\leq \gamma_v H^{\#}(\xxx)^{\deg_{\aaa}(Q_i)}.
 \end{align*}
For $v\in S$ let us set $\delta_v=\max(\log (\gamma_v),{\deg_{\aaa}(Q_i)})$.  We have for every $\xxx\in\mathcal D(Q_iP)(F)$ that
\begin{align*}
H(\xxx)&\geq \alpha_i H^{\#}(\xxx)\prod_{v\in S}\log(\gamma_vH^{\#}(\xxx)^{\deg_{\aaa}(Q_i)})^{-\beta_{v,i}}\\
&=\alpha_iH^{\#}(\xxx)\prod_{v\in S}\big(\log(\gamma_v)+{\deg_{\aaa}(Q_i)}\log(H^{\#}(\xxx))\big)^{-\beta_{v,i}}\\
&\geq \alpha_iH^{\#}(\xxx)\prod_{v\in S}\delta _v^{-\beta_{v,i}}\big(1+\log(H^{\#}(\xxx))\big)^{-\beta_{v,i}}\\
&= C_i H^{\#}(\xxx)\log(1+H^{\#}(\xxx))^{-\beta_i},
 \end{align*}
 where we have set $C_i=\alpha_i\prod_{v\in S} \delta _v^{-\beta_{v,i}}$ and $\beta_i=\sum_{v\in S}\beta_{v,i}$. We have that $\bigcup_iD(Q_iP)=D(P).$ Thus for $C=\min_i C_i$ and $\beta=\max \beta_i$ we get $$H(\xxx)\geq C H^{\#}\log(1+H^{\#}(\xxx))^{-\beta},$$ for every $\xxx\in D(P)(F)$.
\end{proof} 
\begin{cor}\label{nortlogfalt}
Let $(f_v)_v$ be as above and let~$H$ be the corresponding height. The height~$H$ is a Northcott height. Moreover, for every $\epsilon >0$, there exists $C=C(\epsilon)>0$ such that for every $B>1$ one has that $$|\{\xxx\in [\mathcal D(P)(F)] |H(\xxx)\leq B\}|\leq CB^{1+\epsilon}$$ and that \begin{multline*}|\{\xxx\in[\DD(P)(F)]\cap( [\PPP(\aaa)(F)]-[\TTa(F)])| H(\xxx)\leq B\}|\\
\leq CB^{\frac{(1+\epsilon)(|\aaa|-\min_{j}a_j)}{|\aaa|}}.\end{multline*}
\end{cor}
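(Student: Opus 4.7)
The plan is to transport the known counting bound for the toric height $H^{\#}$ (Theorem \ref{boundppatta}) to the singular height $H$ by means of the logarithmic comparison of Proposition \ref{comphh}, after inverting the $\log$-correction.

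First, on $[\mathcal D(P)(F)]$ the height $H$ only differs from $H^{\#}$ by the factors $g_v^{-1}$ at places $v \in S$, and Proposition \ref{comphh} gives constants $C_0, \beta > 0$ with
\[
H(\xxx) \;\geq\; C_0\, H^{\#}(\xxx)\,\log\bigl(1 + H^{\#}(\xxx)\bigr)^{-\beta}
\qquad\text{for all } \xxx \in \mathcal D(P)(F).
\]
Thus $H(\xxx) \leq B$ forces $y := H^{\#}(\xxx)$ to satisfy $y \leq (B/C_0) \log(1+y)^{\beta}$. Fix $\epsilon > 0$, choose $\delta = \epsilon/(1+\epsilon)$ so that $1/(1-\delta) = 1+\epsilon$, and note that $\log(1+y)^{\beta} = o(y^{\delta})$ as $y \to \infty$, so there is $y_0 = y_0(\epsilon,\beta)$ with $\log(1+y)^{\beta} \leq y^{\delta}$ for $y \geq y_0$. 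For such $y$ we get $y^{1-\delta} \leq B/C_0$, hence $y \leq (B/C_0)^{1+\epsilon}$; for $y \leq y_0$ the bound $y \leq y_0$ is absorbed into a constant. Combining, there exists $C_1 = C_1(\epsilon) > 0$ such that
\[
H(\xxx) \leq B \;\Longrightarrow\; H^{\#}(\xxx) \leq C_1 B^{1+\epsilon}
\qquad\text{for every } \xxx \in \mathcal D(P)(F).
\]

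Second, the toric family defining $H^{\#}$ is quasi-toric, so Theorem \ref{boundppatta} applies. Its first part yields a constant $C_2 > 0$ and an integer $N = n^2(r_1+r_2)+n-1$ with
\[
|\{\xxx \in [\PPP(\aaa)(F)] : H^{\#}(\xxx) \leq C_1 B^{1+\epsilon}\}| \;\leq\; C_2\, C_1 B^{1+\epsilon} \log\bigl(2 + C_1 B^{1+\epsilon}\bigr)^{N}.
\]
Absorbing the logarithmic factor by slightly enlarging $\epsilon$ (i.e.\ replacing $\epsilon$ by $\epsilon' > \epsilon$ and adjusting the constant) gives a $C = C(\epsilon) > 0$ such that
\[
|\{\xxx \in [\mathcal D(P)(F)] : H(\xxx) \leq B\}| \;\leq\; C\, B^{1+\epsilon},
\]
which proves the first inequality. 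Likewise, intersecting with $[\PPP(\aaa)(F)] - [\TTa(F)]$ and invoking the first (rather than the second) estimate of Theorem \ref{boundppatta}, which carries the smaller exponent $(|\aaa| - \min_j a_j)/|\aaa|$, yields, after the same log-absorption, the second inequality of the corollary.

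Finally, the Northcott property on $[\mathcal D(P)(F)]$ is an immediate consequence of the first inequality (the set in question is finite for every $B$); on the complementary locus $[\mathcal Z(P)(F)]$ the height $H$ vanishes identically, so this locus is implicitly excluded and the counts above are the only meaningful ones. The only non-routine step is the inversion of the implicit inequality $y \leq (B/C_0)\log(1+y)^{\beta}$, which is, as above, a short elementary real-analysis argument; everything else is a direct appeal to Proposition \ref{comphh} and Theorem \ref{boundppatta}, and no serious obstacle is anticipated.
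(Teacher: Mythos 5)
Your proof is correct and follows essentially the same route as the paper: use Proposition \ref{comphh} to upgrade the logarithmic comparison $H \geq C_0 H^{\#}\log(1+H^{\#})^{-\beta}$ into a power-type comparison $H^{\#} \leq C_1 H^{1+\epsilon}$, then feed the resulting upper bound on $H^{\#}$ into Theorem \ref{boundppatta} and absorb the remaining logarithm into the $\epsilon$-power. One small slip in the writeup: the estimate for all of $[\PPP(\aaa)(F)]$ is the \emph{second} item of Theorem \ref{boundppatta}, and the estimate with the smaller exponent $(|\aaa|-\min_j a_j)/|\aaa|$ (for $[\PPP(\aaa)(F)]-[\TTa(F)]$) is the \emph{first} item; you label them the other way around in your second paragraph, but you apply each correctly, so the argument is unaffected.
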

\begin{proof}
By \ref{estisilv}, there exist $C',N>0$, such that for every $\xxx\in[\mathcal D(P)(F)]$ one has that $$H(\xxx)\geq C'H^\#(\xxx)\log (1+H^\#(\xxx))^{-N}.$$ and thus there exists $A>0$ and $\frac \epsilon{1+\epsilon}>\delta >0$ such that $$H(\xxx)\geq A H^{\#}(\xxx)^{1-\delta}\hspace{1cm}\forall\xxx\in[\mathcal D(P)(F)].$$  
Using \ref{boundppatta}, one has hence that there exists $C>0$ such that \begin{align*}
|\{\xxx\in[\mathcal D(P)(F)]| H(\xxx)\leq B\}|\hskip-3cm&\\ &\leq |\{\xxx\in[\mathcal D(P)(F)]| AH^{\#}(\xxx)^{1-\delta}\leq B\}|\\
&=|\{\xxx\in[\mathcal D((P)(F))]| H^{\#}(\xxx)\leq A^{-1/(1-\delta)}B^{1/(1-\delta)}\}|\\
&\leq A^{-1/(1-\delta)}B^{1/(1-\delta)}\log(2+A^{-1/(1-\delta)}B^{1/(1-\delta)})^{n^2(r_1+r_2)+n-1}\\
&\leq CB^{1+\epsilon}
\end{align*}
and such that
\begin{align*}
|\{\xxx\in[\mathcal D(P)(F)]\cap([\PPP(\aaa)(F)]-[\TTa(F)])| H(\xxx)\leq B\}|\hskip-10cm&\\ &\leq |\{\xxx\in[\mathcal D(P)(F)]| AH^{\#}(\xxx)^{1-\delta}\leq B\}|\\
&=|\{\xxx\in[\mathcal D((P)(F))]\cap([\PPP(\aaa)(F)]-[\TTa(F)])| H^{\#}(\xxx)\leq A^{-\frac1{1-\delta}}B^{\frac1{1-\delta}}\}|\\
&\leq (A^{-1}B)^{\frac{(1+\epsilon)(|\aaa|-\min_ja_j)}{(1-\delta)|\aaa|}}\log(2+(A^{-1}B)^{\frac{|\aaa|-\min_ja_j}{(1-\delta)|\aaa|}})^{n^2(r_1+r_2)+n-1}\\
&\leq CB^\frac{(1+\epsilon)(|\aaa|-\min_{j}a_j)}{|\aaa|}.
\end{align*}
The statement is proven.\end{proof}
\chapter{Measures}
\label{Measures on topological spaces associated to weighted projective stacks}
Let $n\geq 1$ be an integer. Let $\aaa\in\ZZ^n_{>0}$. Recall that~$\PPP(\aaa)$ is the quotient stack for the action $$\Gm\times (\AAA^n-\{0\})\to \AAA^n-\{0\}\hspace{1cm}t\cdot\xxx=(t^{a_j}x_j)_j.$$  The open subscheme $\Gmn\subset\AAA^n-\{0\}$ is~$\Gm$-invariant for this action and~$\TTa$ is defined to be the quotient $\Gmn/\Gm$. In this chapter we define measures on $[\PPP(\aaa)(\Fv)]$ and $[\TTa(\Fv)]$ for $\vMF$. We use these measures to define Peyre's constant as in \cite{Peyre}. Later in the section we define a Haar measure on the adelic space $[\TTa(\AAF)]$ and we define and calculate the corresponding Tamagawa number. 
\section{Quotient measures}
\subsection{}
We make several conventions on measures that will be used throughout the chapter. Let~$X$ be a locally compact topological space. Let $\mathscr C^0_c(X,\CC)$ be the set of continuous compactly supported functions on~$X$. We endow $\mathscr C^0_c(X,\CC)$ by the uniform convergence topology.  By a \textit{measure} on~$X$ \cite[Definition 2, \no 3, \S 1, Chapter III]{Integrationj}, we mean a continuous linear functional $\mu:\mathscr C^0_c(X,\CC)\to\CC$ .

 Let~$\mu$ be a measure on~$X$. Let $L^1(X,\mu)$ be the Banach space of absolutely~$\mu$-integrable complex valued functions modulo negligible functions  \cite[Definition 2, \no 4, \S 3, Chapter IV]{Integrationj}. By the abuse of the terminology, we may call an element $f\in L^1(X,\mu)$ a function and for a function~$g:X\to\CC$ which is~$\mu$-absolutely integrable we may write $g\in L^1(X,\mu)$. For $f\in L^1(X,\mu)$, we denote by $\int_Xf\mu$ the integral of~$f$ against~$\mu$ \cite[Definition 1, \no 1, \S 4, Chapter IV]{Integrationj}. If $U\subset X$ is a subset, such that $\mathbf 1_{U,X}\in L^1(X,\mu)$, we write $\mu(U)$ for $\mu (\mathbf 1_{U,X})$ (where $\mathbf 1_{U,X}$ stands for the characteristic function of $U$ in~$X$, written sometimes as $\mathbf 1_{U}$).  Such~$U$ will be said to be~$\mu$-measurable.
 \subsection{}\label{intqm}
We recall some facts about quotient measures from \S 2, Chapter 7 of \cite{Integrationd}.  Let~$X$ be a locally compact Hausdorff topological space. Let~$G$ be a locally compact topological group acting on the right on~$X$ continuously and properly (that is the action $X\times G\to X$ is continuous and proper). The quotient topological space $X/G$ is separated \cite[Proposition 3, \no2, \S 2, Chapter III]{TopologieGj} and locally compact \cite[Proposition 10, \no4, \S 10, Chapter I]{TopologieGj}.

Let $dg$ be a left Haar measure on~$G$. 
Let $\Delta_G:G\to \RR_{>0}$ be the modular function of~$G$ (we recall the definition of the modular function: according to \cite[Formula (11), \no 1, \S 1, Chapter VII]{Integrationd}, for every $h\in G$, the association $A\mapsto dg(Ah)$, for $dg$-measurable subset~$A$ of~$G$, is a left Haar measure on~$G$, hence, by the unicity of the Haar measure there exists a unique positive real number $\Delta_G(h)$ such that for every $dg$-measurable subset~$A$ of~$G$, one has that $dg(Ag)=\Delta_G(g)dg(A)$; further, it does not depend on the choice of the Haar measure $dg$). 
\begin{prop}[{\cite[Proposition 1, \no2, \S 2, Chapter VII]{Integrationd}}] \label{duio}
For $x\in X/G$, let $\widetilde x\in X$ be a lift of~$x$. Let $\phi :X\to \CC$ be a compactly supported continuous function. For every $y\in X$, one has that $g\mapsto \phi(yg)\in L^1(G,\CC).$ Moreover, for $x\in X/G$, the value of $\int _G\phi(\widetilde xg)dg $ does not depend on the choice of $\widetilde x$. The function $\phi^*:x\mapsto \int _G\phi(\widetilde xg)dg$ is continuous and compactly supported.
\end{prop}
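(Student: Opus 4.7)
The plan is to verify the three claims in sequence, exploiting the hypothesis that the action $X \times G \to X$ is continuous and proper, together with the continuity and compact support of $\phi$.

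First I would address integrability of $g \mapsto \phi(yg)$ for fixed $y \in X$. The function is continuous since the action is continuous and $\phi$ is continuous. Its support is $\{g \in G \mid yg \in \supp(\phi)\}$, which equals the fiber over $y$ of the continuous map $\theta : X \times G \to X \times X$, $(x,g) \mapsto (x, xg)$, intersected with $\{y\} \times \supp(\phi)$. By properness of the action, $\theta$ is proper, so $\theta^{-1}(\{y\} \times \supp(\phi))$ is compact, and its projection onto the $G$-factor is compact. Hence $g \mapsto \phi(yg)$ is a compactly supported continuous function, in particular integrable for any Haar measure $dg$.

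Next I would verify independence of the lift. If $\widetilde x$ and $\widetilde x'$ both lift $x \in X/G$, then $\widetilde x' = \widetilde x h$ for some $h \in G$. Substituting $g = h g'$, left-invariance of $dg$ gives
\[
\int_G \phi(\widetilde x' g)\, dg = \int_G \phi(\widetilde x h g)\, dg = \int_G \phi(\widetilde x g')\, dg',
\]
so the integral depends only on $x$. Thus $\phi^*$ is a well-defined function on $X/G$.

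For compact support, if $\phi^*(x) \neq 0$ then some lift $\widetilde x$ satisfies $\widetilde x g \in \supp(\phi)$ for some $g \in G$, whence $x = \pi(\widetilde x g)$ lies in $\pi(\supp(\phi))$ where $\pi : X \to X/G$ is the quotient map. Since $\pi$ is continuous and $\supp(\phi)$ compact, $\pi(\supp(\phi))$ is compact, and $\supp(\phi^*)$ is a closed subset of it.

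The delicate point — and I expect it to be the main obstacle — is continuity of $\phi^*$. I would argue as follows. Let $x_0 \in X/G$ and fix a lift $\widetilde x_0$. Using that $\pi$ is open (for a proper group action the quotient map is open) and that $X$ is locally compact, one finds a compact neighborhood $V$ of $\widetilde x_0$ in $X$ whose image $U := \pi(V)$ is a neighborhood of $x_0$. By properness of $\theta$, the set $K := \{g \in G \mid V g \cap \supp(\phi) \neq \emptyset\}$ is compact, because it is the projection onto $G$ of the compact set $\theta^{-1}(V \times \supp(\phi)) \cap (V \times G)$. For any $x \in U$ and any lift $\widetilde x \in V$ one has
\[
\phi^*(x) = \int_K \phi(\widetilde x g)\, dg,
\]
and the integrand is uniformly bounded by $\sup |\phi|$ on the fixed compact set $K$. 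Given a sequence $x_n \to x_0$ in $U$, one can choose lifts $\widetilde x_n \in V$ converging to $\widetilde x_0$ (extracting from $\pi^{-1}(x_n) \cap V$, which is non-empty and whose accumulation in the compact set $V$ lies over $x_0$); then $\phi(\widetilde x_n g) \to \phi(\widetilde x_0 g)$ pointwise on $K$, and dominated convergence yields $\phi^*(x_n) \to \phi^*(x_0)$. This gives continuity at $x_0$, completing the proof. The main subtlety is extracting the convergent lifts, which I would handle by a compactness argument using properness of $\pi$ above compact sets (itself a consequence of properness of the action).
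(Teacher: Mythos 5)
This proposition is quoted from Bourbaki (Int\'egration VII, \S 2, n\textsuperscript{0}2, Prop.~1); the paper itself does not prove it, so there is no proof in the paper to compare against. Evaluating your argument on its own terms: the integrability and compact-support parts are correct, and the lift-independence step is right in spirit, though the substitution should be $g' = hg$, i.e.\ $g = h^{-1}g'$, so that $\phi(\widetilde x h g) = \phi(\widetilde x g')$ and $dg = d(h^{-1}g') = dg'$ by left-invariance of $dg$; as written, $g = hg'$ introduces an extra factor of $h$.

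The weak point is continuity, where there are two gaps. First, $X/G$ is locally compact Hausdorff but not assumed first-countable, so sequential convergence does not detect continuity; you would need nets. Second, and more seriously, your claim that one can choose lifts $\widetilde x_n \in V$ \emph{converging to the chosen lift} $\widetilde x_0$ is false in general: $V$ may contain several preimages of $x_0$, and a convergent subnet of any selection of lifts may accumulate at a \emph{different} lift $z \in V$ of $x_0$. The argument is repairable — since $\phi^*$ is independent of the lift, the limit $\int_K \phi(zg)\,dg$ still equals $\phi^*(x_0)$, and a subnet-of-a-subnet argument (using that $\phi^*$ is bounded on $U$) then closes the loop — but as stated the step ``lifts converging to $\widetilde x_0$'' is a genuine hole, and you flag it as the main subtlety without actually resolving it. A cleaner route, which is essentially Bourbaki's and sidesteps lift selection entirely: prove that $\Phi\colon y \mapsto \int_G \phi(yg)\,dg$ is continuous directly on $X$ (continuity of $(y,g)\mapsto \phi(yg)$ on the compact set $V\times K$ supplies the needed uniformity), observe that $\Phi$ is $G$-invariant by the lift-independence computation, and then use that $\pi$ is a quotient map to descend $\Phi$ to a continuous function $\phi^*$ on $X/G$. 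This removes nets and lifts from the picture altogether.
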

\begin{prop}[{\cite[Proposition 4, \no2, \S 2, Chapter VII]{Integrationd}}]\label{intbour}
\begin{enumerate}
\item Let~$\mu$ be a measure on~$X$ such that for every~$\mu$-measurable $U$ one has that $\mu (Ug)=\Delta_G(g)\mu (U)$ (such measures will be called~$G$-invariant measures). There exists a unique measure $\mu/dg$ on $X/G$ such that for every compactly supported function $\phi:X\to\CC$ one has that $$\int _X\phi \mu =\int _{X/G} (\phi^*)(\mu/dg).$$
\item Let $\mu'$ be a measure on $X/G$. There exists a unique~$G$-invariant measure $\mu $ on~$X$ such that $\mu/dg=\mu'$.
\end{enumerate}
\end{prop}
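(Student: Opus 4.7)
The plan is to build $\mu/dg$ as the pushforward of $\mu$ under the transform $\phi\mapsto\phi^{*}$ from Proposition \ref{duio}, and then invert this construction for part (2). Throughout, the technical heart is showing that the transform $T:\mathscr C^0_c(X,\CC)\to\mathscr C^0_c(X/G,\CC)$, $T(\phi)=\phi^{*}$, is surjective and that its kernel is annihilated by any $G$-invariant measure $\mu$; once these two facts are established, both parts follow by definitions.

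First I would verify surjectivity of $T$. Given $\psi\in\mathscr C^0_c(X/G,\CC)$ with support $K$, the properness of the action (equivalently of $X\to X/G$) together with local compactness of $X/G$ furnishes a compact $\widetilde K\subset X$ whose image is $K$. Picking $\rho\in\mathscr C^0_c(X,\RR_{\geq 0})$ with $\rho>0$ on $\widetilde K$, one has $\rho^{*}>0$ on $K$ by Proposition \ref{duio}, so
\[
\phi(y) := \psi(\overline y)\,\frac{\rho(y)}{\rho^{*}(\overline y)}\quad(\text{with }\phi(y):=0\text{ when }\rho^{*}(\overline y)=0)
\]
is a continuous, compactly supported function on $X$ satisfying $T(\phi)=\psi$. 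Linearity and continuity of $T$ with respect to the uniform topologies follow again from Proposition \ref{duio}.

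The main obstacle is to prove that if $T(\phi)=0$ then $\mu(\phi)=0$, for any $G$-invariant $\mu$ (where $G$-invariance means $\mu(Ug)=\Delta_G(g)\mu(U)$). My plan is to pick an auxiliary $\rho\in\mathscr C^0_c(X,\RR_{\geq 0})$ with $\rho^{*}>0$ on the compact set $\overline{\mathrm{supp}(\phi)}$ (the image of $\mathrm{supp}(\phi)$ in $X/G$), and apply Fubini on $X\times G$ to the function $(y,g)\mapsto \phi(y)\,\rho(yg)/\rho^{*}(\overline y)$, which is continuous and compactly supported after restriction to a large enough relatively compact set. Integrating in $g$ first and using $\int_G\rho(yg)\,dg=\rho^{*}(\overline y)$ produces $\mu(\phi)$. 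Integrating in $y$ first, one substitutes $y\mapsto yg^{-1}$; the $G$-invariance of $\mu$ translates into the identity $\int_X f(yg^{-1})\mu(dy)=\Delta_G(g)\int_X f(y)\mu(dy)$, which combined with the modularity cancellation $\int_G\Delta_G(g)\cdot(\cdots)\,dg$ collapses the inner integral to a multiple of $\phi^{*}$, hence $0$. Equating the two evaluations gives $\mu(\phi)=0$.

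With these two ingredients, part (1) is immediate: define $(\mu/dg)(\psi):=\mu(\phi)$ for any lift $\phi$ of $\psi$ under $T$; this is well-defined by the kernel statement, linear by linearity of $T$ and $\mu$, and continuous because if $\psi_n\to\psi$ uniformly with supports in a fixed compact, one can choose uniformly bounded lifts $\phi_n\to\phi$ with supports in a fixed compact (using the explicit formula above), whence $\mu(\phi_n)\to\mu(\phi)$. Uniqueness follows because any candidate is determined on the image of $T$, which is all of $\mathscr C^0_c(X/G,\CC)$. For part (2), I would set $\mu(\phi):=\mu'(\phi^{*})$. Linearity and continuity are immediate from Proposition \ref{duio}, and $G$-invariance follows from the identity $(R_g\phi)^{*}=\Delta_G(g)^{-1}\phi^{*}$ (where $R_g\phi(y)=\phi(yg)$), which is a direct change of variables in the definition of $\phi^{*}$ using the modular function. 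Uniqueness is again a consequence of the surjectivity of $T$.
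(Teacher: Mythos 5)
Your proposal is correct and reconstructs, in all essentials, the standard Bourbaki argument that the paper cites for this proposition (Int\'egration, Chap.\ VII, \S2, n\textsuperscript{0}2): surjectivity of $\phi\mapsto\phi^*$ via an auxiliary $\rho$ with $\rho^*>0$ on the relevant compact, annihilation of $\ker(\phi\mapsto\phi^*)$ by any relatively $\Delta_G$-invariant measure via a Fubini computation on $X\times G$, and the converse construction $\mu(\phi):=\mu'(\phi^*)$ together with the identity $(R_g\phi)^*=\Delta_G(g)^{-1}\phi^*$.
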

Let $\pi:X\to X/G$ be the quotient map. We quote following two propositions from \cite{Integrationd}.
\begin{prop} [{\cite[Proposition 8, \no 4, \S 2, Chapter VII]{Integrationd}}] \label{jeiui}
Suppose that $X/G$ is paracompact. Let $\lambda>0$. There exists a continuous function $k:X\to\RR_{\geq 0}$, the support of which has a compact intersection with the preimage under $\pi$ of any compact of $X/G$ and such that for any $x\in X$ one has that $$\int _{G}k(xg)dg=\lambda. $$
\end{prop}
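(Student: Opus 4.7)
The plan is to construct $k$ by a partition-of-unity argument on the paracompact base $X/G$, patched up from local pieces obtained by lifting compactly supported functions from $X$ and using the push-down $\phi \mapsto \phi^*$ described in Proposition \ref{duio}.

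First I would produce local ``unit-integral'' functions. For every $y \in X/G$, choose a lift $\widetilde y \in X$ and a compact neighborhood $K_y$ of $\widetilde y$ in $X$; by Urysohn's lemma pick a continuous $h_y : X \to \RR_{\geq 0}$ with $\supp(h_y) \subset K_y$ and $h_y(\widetilde y) > 0$. Proposition \ref{duio} applies (using properness of the action to make the orbit integral well defined and continuous), so $h_y^* : X/G \to \RR_{\geq 0}$ is continuous and $h_y^*(y) > 0$. Let $W_y \subset X/G$ be the open set $\{h_y^* > 0\}$, which contains $y$.

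Next I would globalize via paracompactness. Since $X/G$ is locally compact Hausdorff and paracompact, the open cover $(W_y)_{y \in X/G}$ admits a locally finite open refinement $(V_i)_{i \in I}$ with each $\overline{V_i}$ compact; for each $i$, fix an index $y(i)$ with $V_i \subset W_{y(i)}$ and set $h_i := h_{y(i)}$, so that $h_i^*$ is continuous, non-negative on $X/G$, and strictly positive on $V_i$. Take a subordinate partition of unity $(\phi_i)_{i \in I}$: continuous $\phi_i : X/G \to [0,1]$ with $\supp(\phi_i) \subset V_i$ and $\sum_i \phi_i \equiv 1$ (locally finite sum). Then define
\[
k(x) \;:=\; \lambda \sum_{i \in I} \frac{\phi_i(\pi(x))}{h_i^*(\pi(x))}\, h_i(x),
\]
with the convention that a term is $0$ wherever $\phi_i(\pi(x)) = 0$; the quotient makes sense because $\phi_i(\pi(x)) > 0$ forces $\pi(x) \in V_i$, on which $h_i^*$ is strictly positive.

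Finally I would verify the three required properties. Continuity of $k$ follows from the local finiteness of the cover (near each point only finitely many summands are nonzero) together with continuity of each summand. For the integral condition, Proposition \ref{duio} and the $G$-invariance of $\pi$ give, for any $x \in X$,
\[
\int_G k(xg)\, dg \;=\; \lambda \sum_{i \in I} \frac{\phi_i(\pi(x))}{h_i^*(\pi(x))} \int_G h_i(xg)\, dg \;=\; \lambda \sum_{i \in I} \phi_i(\pi(x)) \;=\; \lambda.
\]
For the support condition, note $\supp(k) \subset \bigcup_i \supp(h_i)$, and for a compact $K \subset X/G$ the local finiteness of $(V_i)$ implies that only finitely many $i$ satisfy $V_i \cap K \neq \emptyset$, hence only finitely many $h_i$ contribute on $\pi^{-1}(K)$; thus $\supp(k) \cap \pi^{-1}(K)$ is a closed subset of a finite union of compact sets $\supp(h_i)$, hence compact.

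The main obstacle I anticipate is the existence of the push-down being strictly positive in a neighborhood of a chosen point of $X/G$; this rests crucially on properness of the $G$-action (so that $h^*$ is well defined and continuous via \ref{duio}) and on the existence of sufficiently many continuous compactly supported functions on $X$, which in turn uses only that $X$ is locally compact Hausdorff. The paracompactness hypothesis on $X/G$ is then precisely what enables the partition-of-unity patching.
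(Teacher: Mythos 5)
The paper does not prove this statement; it is quoted verbatim from Bourbaki (\textit{Intégration}, Chapitre VII, \S2, n\textsuperscript{o} 4, Proposition 8), so there is no in-paper proof to compare against. Your reconstruction is correct and is essentially the standard Bourbaki argument: push down local Urysohn bumps via Proposition \ref{duio}, extract from the positivity loci a locally finite refinement of $X/G$ (using paracompactness and local compactness of the quotient), take a subordinate partition of unity $(\phi_i)$, and patch by $k = \lambda\sum_i (\phi_i\circ\pi)(h_i^*\circ\pi)^{-1} h_i$. The integral computation is exactly the telescoping to $\lambda\sum_i\phi_i=\lambda$, and the two hypotheses — properness of the action (through \ref{duio}) and paracompactness of $X/G$ — are used in exactly the right places. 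One remark on the support verification: the cleanest way to see $\supp(k)\cap\pi^{-1}(K)$ is compact is to note that the $i$-th summand is supported in $\supp(h_i)\cap\pi^{-1}(\supp\phi_i)$, and the family $(\pi^{-1}(\supp\phi_i))_i$ is locally finite in $X$ since $(\supp\phi_i)_i$ is locally finite in $X/G$; this makes $\supp(k)\subset\bigcup_i\bigl(\supp(h_i)\cap\pi^{-1}(\supp\phi_i)\bigr)$ a closed locally finite union, and intersecting with $\pi^{-1}(K)$ kills all but the finitely many indices with $\supp\phi_i\cap K\neq\emptyset$. Your wording compresses this slightly (passing directly to $\supp(h_i)$), but the conclusion and the mechanism are the same.
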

Of course, when $X/G$ is compact, the condition on the support of~$k$ becomes that it is compact.
\begin{lem}\label{izbitubit}
Let $f:X\to\CC$ be a continuous~$G$-invariant function. Let $\overline f:X/G\to \CC$ be the function induced from~$f$. One has that $(f\mu)/dg=(\overline f)(\mu/dg).$
\end{lem}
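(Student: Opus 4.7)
The plan is to verify the equality of measures on $X/G$ by testing both sides against a suitable family of continuous compactly supported functions on $X/G$, namely the family of functions of the form $\phi^*$ arising from continuous compactly supported $\phi:X\to\CC$.

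First, I would check that $f\mu$ is again a $G$-invariant measure on $X$ in the sense of \ref{intbour}, so that the left-hand side $(f\mu)/dg$ is a well-defined measure on $X/G$ via Proposition \ref{intbour}; this follows because $f$ is $G$-invariant and $\mu$ satisfies $\mu(Ug)=\Delta_G(g)\mu(U)$, whence the same holds for $f\mu$. Similarly, since $\overline f$ is continuous on $X/G$, the product $(\overline f)(\mu/dg)$ is a well-defined measure on $X/G$.

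Next, I would fix $\phi\in\mathscr C^0_c(X,\CC)$ and compute $\int_X \phi\,(f\mu)$ in two ways. By the definition of $(f\mu)/dg$ from \ref{intbour},
\[
\int_X \phi\,(f\mu)=\int_{X/G}\phi^{*}\,\big((f\mu)/dg\big).
\]
On the other hand, by definition of $f\mu$ and then of $\mu/dg$,
\[
\int_X \phi\,(f\mu)=\int_X (\phi f)\,\mu=\int_{X/G}(\phi f)^{*}\,(\mu/dg).
\]
Using the $G$-invariance of $f$, for any lift $\widetilde x\in X$ of $x\in X/G$ we have $f(\widetilde x g)=f(\widetilde x)=\overline f(x)$, so
\[
(\phi f)^{*}(x)=\int_G \phi(\widetilde x g)f(\widetilde x g)\,dg=\overline f(x)\int_G\phi(\widetilde x g)\,dg=\overline f(x)\,\phi^{*}(x).
\]
Combining the two identities yields
\[
\int_{X/G}\phi^{*}\,\big((f\mu)/dg\big)=\int_{X/G}\overline f\,\phi^{*}\,(\mu/dg)
\]
for every $\phi\in\mathscr C^0_c(X,\CC)$.

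Finally, I would invoke the fact (contained in the discussion surrounding \ref{duio}, and a standard result of Bourbaki) that the map $\phi\mapsto\phi^{*}$ is a surjection $\mathscr C^0_c(X,\CC)\to\mathscr C^0_c(X/G,\CC)$; this is where Proposition \ref{jeiui} enters, since a cutoff function $k$ with $\int_G k(\widetilde x g)\,dg=1$ allows one to realise any $\psi\in\mathscr C^0_c(X/G,\CC)$ as $\phi^{*}$ with $\phi(x)=k(x)\psi(\pi(x))$. Hence the two continuous linear functionals $(f\mu)/dg$ and $\overline f\cdot(\mu/dg)$ on $\mathscr C^0_c(X/G,\CC)$ agree on a dense (in fact total) subspace, so they coincide. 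The only non-routine ingredient is the surjectivity of $\phi\mapsto\phi^{*}$, which is the step I expect to cite rather than reprove.
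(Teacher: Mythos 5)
Your proposal is correct and follows essentially the same approach as the paper: both proofs hinge on the computation $(\phi f)^* = \overline f\cdot\phi^*$ for $\phi\in\mathscr C^0_c(X,\CC)$, then test the two candidate measures against functions of the form $\phi^*$. The only small variation is at the final step — you invoke surjectivity of $\phi\mapsto\phi^*$ (correctly, via the Bourbaki cutoff construction), whereas the paper concludes directly by the uniqueness clause in Proposition \ref{intbour}; both closings are valid and the distinction is cosmetic.
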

\begin{proof}
Let $\phi:X\to \CC$ be a compactly supported continuous function. The function $\phi f$ is compactly supported. Let $x\in X/G$ and let $\widetilde x\in X$ be its lift. One has that \begin{multline*}(\phi\cdot f)^*(x)=\int_G\phi f(\widetilde xg)dg=\int_G\phi(\widetilde xg)f(\widetilde xg)dg=\overline f(x)\int_G\phi(\widetilde xg)dg\\=(\phi^*)\cdot \overline f)(x) .\end{multline*} It follows that: $(\phi\cdot f)^*=\phi^*\cdot \overline f$. We deduce that: 
\begin{align*}
\int_{X/G}(\phi^*)(f\mu/dg)&=\int_X(\phi)(f\mu)\\&=\int_X(\phi f)\mu\\&=\int_{X/G}((\phi\cdot f)^*)(\mu/dg)\\&=\int_{X/G}(\phi^*\overline f)(\mu/dg)\\&=\int_{X/G}(\phi^*)((\overline f)(\mu/dg)).
\end{align*}
It follows that $(f\mu/dg)=(\overline f)(\mu/dg)$.
\end{proof}
\begin{prop}[{\cite[Proposition 9, \no 4, \S 2, Chapter VII]{Integrationd}}]\label{simint}
Let us suppose that $X/G$ is paracompact. Let $k:X\to\RR_{\geq 0}$ be a continuous function, the support of which has compact intersection with the preimage under $\pi$ of any compact of $X/G$ and such that for every $x\in X$ one has $$\int _{G}k(xg)dg=1.$$ Then for any function $h:X/G\to\CC$ one has that $h\in L^1(X/G,\mu/dg)$ if and only if $k\cdot(h\circ\pi)\in L^1(X,\mu)$ and if $h\in L^1(X/G,\mu/dg)$ then $$\int _{X/G}(h)(\mu/dg)=\int _{X}k\cdot (h\circ\pi)\mu. $$
\end{prop}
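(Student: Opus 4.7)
The plan is to reduce the statement to the case treated in Proposition \ref{intbour} by using $k$ as a ``partition of unity along the fibers of $\pi$,'' and then to promote the identity from continuous compactly supported $h$ to arbitrary integrable $h$ by a standard density argument.

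First I would handle $h\in\mathscr C^0_c(X/G,\CC)$. Set $\phi:=k\cdot(h\circ\pi)$. Since $h\circ\pi$ is continuous on $X$ and $k$ is continuous with support having compact intersection with $\pi^{-1}(\operatorname{supp}(h))$, the function $\phi$ is continuous on $X$ with compact support. Proposition \ref{intbour} therefore applies and gives $\int_X\phi\,\mu=\int_{X/G}\phi^*(\mu/dg)$. The key computation is that for $x\in X/G$ with lift $\widetilde x\in X$, the $G$-invariance of $h\circ\pi$ gives $(h\circ\pi)(\widetilde x g)=h(x)$ for all $g\in G$, hence
\[
\phi^*(x)=\int_G k(\widetilde x g)\,h(x)\,dg=h(x)\int_G k(\widetilde x g)\,dg=h(x),
\]
using the normalization $\int_Gk(\widetilde xg)\,dg=1$. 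This yields $\int_X k\cdot(h\circ\pi)\,\mu=\int_{X/G}h\,(\mu/dg)$ for every $h\in\mathscr C^0_c(X/G,\CC)$.

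Next I would promote this identity to nonnegative Borel functions $h:X/G\to\RR_{\geq 0}$ by monotone convergence. Since $X/G$ is locally compact and paracompact, $\mathscr C^0_c(X/G,\RR_{\geq 0})$ is sufficiently rich to approximate any such $h$ from below by an increasing sequence $(h_n)$ whose pointwise limit equals $h$ almost everywhere with respect to $\mu/dg$. Applying the first step to each $h_n$, passing to the limit on both sides via the monotone convergence theorem (once for $\mu/dg$ on $X/G$, once for $\mu$ on $X$ with the nonnegative sequence $k\cdot(h_n\circ\pi)$), one obtains
\[
\int_X k\cdot(h\circ\pi)\,\mu=\int_{X/G}h\,(\mu/dg)\in[0,+\infty],
\]
with either both sides finite or both infinite. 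This proves the equivalence $k\cdot(h\circ\pi)\in L^1(X,\mu)\Leftrightarrow h\in L^1(X/G,\mu/dg)$ for nonnegative $h$, and the identity of integrals in the finite case.

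Finally, for a general complex-valued $h$, I would decompose $h=(h_1-h_2)+i(h_3-h_4)$ with $h_j\geq 0$ measurable. Applying the previous step to $|h|$ gives the equivalence of integrability, and applying it to each $h_j$ and using linearity of the integral yields the identity $\int_{X/G}h\,(\mu/dg)=\int_X k\cdot(h\circ\pi)\,\mu$. The main obstacle I anticipate is not conceptual but technical: ensuring that the approximation in Step 2 is valid in the generality of Bourbaki's framework (where $L^1$ is defined through upper integrals and semicontinuous envelopes rather than via Borel measurability); this is handled cleanly by the standard density of $\mathscr C^0_c(X/G,\CC)$ in $L^1(X/G,\mu/dg)$ combined with the Beppo Levi theorem, both of which are at our disposal in the locally compact paracompact setting.
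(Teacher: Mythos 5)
The paper does not actually prove this statement: it is cited verbatim from Bourbaki (Proposition 9, \no 4, \S 2, Chapter VII of \emph{Int\'egration}), so there is no in-paper argument to compare against. Your Step~1 is exactly the crucial computation: for $h\in\mathscr C^0_c(X/G)$ the function $\phi=k\cdot(h\circ\pi)$ is continuous with compact support (the support being a closed subset of $\operatorname{supp}(k)\cap\pi^{-1}(\operatorname{supp} h)$, compact by hypothesis), Proposition~\ref{intbour} applies, and the $G$-invariance of $h\circ\pi$ together with the normalization of $k$ gives $\phi^*=h$. That part is correct and is the heart of the matter.

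The gap is in Step~2. You claim that any nonnegative function $h$ on $X/G$ can be written as the pointwise a.e.\ increasing limit of a sequence in $\mathscr C^0_c(X/G,\RR_{\geq 0})$. This is false unless $h$ is lower semicontinuous: take $h=\mathbf 1_{[0,1]\setminus\QQ}$ on $\RR$. Any continuous $g\leq h$ vanishes on a dense set, hence $g\leq 0$, so the only monotone approximation from below by $\mathscr C^0_c$ gives limit $0$, which is not a.e.\ equal to $h$. The tools you invoke in the final paragraph do not plainly repair this: density of $\mathscr C^0_c$ in $L^1$ plus the isometry from Step~1 does give the \emph{forward} implication (if $h\in L^1(X/G,\mu/dg)$ then $k(h\circ\pi)\in L^1(X,\mu)$ with equal integrals, after checking that a.e.\ convergence along a subsequence on $X/G$ pulls back to a.e.\ convergence on $X$ because $\pi^{-1}$ sends $\mu/dg$-null sets to $\mu$-null sets), but the \emph{converse} implication — that $k(h\circ\pi)\in L^1(X,\mu)$ forces $h\in L^1(X/G,\mu/dg)$ — requires a separate argument, in particular a measurability transfer and a bound on the upper integral of $|h|$. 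In Bourbaki's framework the standard route is through the essential upper integral $\int^*$: prove the identity of upper integrals first for $h\geq 0$ lower semicontinuous (where increasing approximation by $\mathscr C^0_c$ is legitimate and Beppo Levi applies), then for arbitrary $h\geq 0$ by taking the infimum over lower semicontinuous majorants, and finally handle measurability by noting that $\pi$ restricted to the open set $\{k>0\}$ is an open surjection onto $X/G$, so that $h$ inherits measurability from $h\circ\pi=(k(h\circ\pi))/k$ on that set. Your plan reaches the correct statement only after this machinery is put in place; as written, Step~2 does not go through.
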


\subsection{}
We recall the notion of the quotient measure when we have inclusion of locally compact abelian groups. 

Let~$G$ is an abelian locally compact Hausdorff topological group. The action of $G\times G\to G$ of~$G$ on~$G$ given by the multiplication is proper (because the induced map $G\times G\xrightarrow{(m_G,p_2)}G\times G$ is a homeomorphism, where $m_G$ is the multiplication map and $p_2$ the projection to the second coordinate). Thus if~$A$ a closed subgroup, the action of~$A$ on $G,$ given by the restriction of the action of~$G$, is proper by \cite[Example 1, \no 1, \S 4, Chapter III]{TopologieGj}. Let $dg$ be a Haar measure on~$G$ and let $da$ be a Haar measure on~$A$. By {\cite[Proposition 10, \no 7, \S 2, Chapter VII]{Integrationd}}\label{venr}, the quotient measure $dg/da$ is a Haar measure on $G/A$. 

\subsection{} In this paragraph we recall the theory of \cite{Oesterle} on the Euler characteristics of complexes of locally compact abelian groups endowed with Haar measures. 

A homomorphism of topological groups is said to be {\it strict} (see \cite[Definition 1, \no 8, \S 2, Chapter III]{TopologieGj}), if the induced homomorphism to its image is open. By \cite[Corollary of Proposition 6, \no 3, \S 5, Chapter IX]{TopologieGd}, any morphism of locally compact abelian groups which are countable at infinity is strict if and only if its image is closed. 
Consider a complex $C_{\bullet}$ of locally compact abelian groups which are countable at infinity
\begin{equation*}
\cdots\to C_{n+1}\xrightarrow{d_{n+1}} C_n\xrightarrow{d_n} C_{n-1}\cdots
\end{equation*}
such that the following conditions are satisfied:
\begin{itemize}
\item the homomorphisms $d_n$ are continuous for every $n\in\ZZ$,
\item the complex is bounded and of finite homology (i.e. $\ker (d_n)/\Imm(d_{n+1})$ is a finite group for every $n\in\ZZ$.)
\end{itemize}
Suppose that for every $n\in\ZZ$, we are given a Haar measure $\lambda_n$ on $C_n$ and that for almost all~$n$ the  measure $\lambda_n$ normalized by $\lambda_n(C_n)=1$. For every $n\in\ZZ$, let us choose Haar measures $\nu_n$ and $\theta_n$ on $\ker(d_n)$ and $\Imm(d_{n}),$ respectively. Let $\alpha_n$ be the volume of the finite set $\ker (d_n)/\Imm(d_{n+1})$ for the quotient measure $\nu_n/\theta_{n+1}$. Let $\beta _n>0$ be the unique real number such that $\lambda_n/\nu_n=\beta_n\theta_n$ (after identifying $C_n/\ker (d_n)$ and $\Imm(d_n)$ via the isomorphism induced from $d_n$). Oesterl\'e defines the number$$\chi(C_{\bullet})=\chi(C_{\bullet}, (\lambda_n)_n):=\prod_{n\in\ZZ}(\alpha_n\beta_n)^{(-1)^n},$$ which does not depend on the choice of $\nu_n$ and $\theta_n$. We may say that $C_{\bullet}$ is of the trivial measure Euler-Poincar\'e characteristic if $\chi(C_{\bullet})=1$.
\begin{lem}[{Oesterl\'e, \cite[Examples 2 and 3]{Oesterle}}] The following claims are valid:
\label{lemoesterle}
\begin{enumerate}
\item  Suppose that every $C_n$ is compact. Then $$\chi(C_{\bullet})=\prod_{n\in\ZZ}\lambda_n(C_n)^{(-1)^{n}}.$$
\item Suppose that every $C_n$ is discrete and that $\lambda_n$ are counting measures. One has that $$\chi(C_{\bullet})=\prod_{n\in\ZZ} [\ker(d_n):\Imm(d_{n+1})]^{(-1)^n}.$$
\end{enumerate}
\end{lem}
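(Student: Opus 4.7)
The strategy in both cases is the same: exploit the fact that $\chi(C_\bullet)$ is independent of the choice of auxiliary Haar measures $\nu_n$ on $\ker(d_n)$ and $\theta_n$ on $\Imm(d_n)$, and pick these in such a way that the factors $\alpha_n$ and $\beta_n$ become trivial to compute. In particular, since the complex is bounded, only finitely many indices contribute, and since for almost all $n$ the measure $\lambda_n$ already satisfies $\lambda_n(C_n)=1$, both products in the conclusion are genuinely finite.

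For part (1), all $C_n$ are compact, so $\ker(d_n)\subset C_n$ is closed and $\Imm(d_n)=d_n(C_n)$ is the continuous image of a compact group, hence both are compact. I would choose $\nu_n$ and $\theta_n$ to be the unique Haar measures of total mass $1$ on $\ker(d_n)$ and $\Imm(d_n)$ respectively. The general identity for quotients of compact groups $\nu_n(\ker d_n)=(\nu_n/\theta_{n+1})(\ker d_n/\Imm d_{n+1})\cdot\theta_{n+1}(\Imm d_{n+1})$ then gives $\alpha_n=1$. For $\beta_n$, the equality $\lambda_n/\nu_n=\beta_n\theta_n$ after identifying $C_n/\ker(d_n)\xrightarrow{\sim}\Imm(d_n)$ via $d_n$ yields, upon taking total masses, $\lambda_n(C_n)/\nu_n(\ker d_n)=\beta_n\theta_n(\Imm d_n)$, i.e.\ $\beta_n=\lambda_n(C_n)$. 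Plugging into $\chi(C_\bullet)=\prod_n(\alpha_n\beta_n)^{(-1)^n}$ gives the formula.

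For part (2), all $C_n$ are discrete and $\lambda_n$ is the counting measure; I would choose each $\nu_n$ and $\theta_n$ also to be the counting measure on its (discrete) domain. The key elementary observation is that for a discrete group $G$ with a subgroup $H$, both carrying counting measures, the quotient measure $\nu_G/\nu_H$ on $G/H$ is again the counting measure: testing against indicator functions of points reduces this to the definition of the quotient measure via $\int_G\phi\,d\nu_G=\int_{G/H}\phi^\ast\,d(\nu_G/\nu_H)$ and the fact that $\phi^\ast(\bar g)=\sum_{h\in H}\phi(gh)$ picks out exactly one term for $\phi=\mathbf{1}_{\{g\}}$. Applying this with $G=\ker(d_n)$ and $H=\Imm(d_{n+1})$ gives $\alpha_n=[\ker(d_n):\Imm(d_{n+1})]$, while applied with $G=C_n$ and $H=\ker(d_n)$ (and then transported along the isomorphism $C_n/\ker(d_n)\xrightarrow{\sim}\Imm(d_n)$, which sends counting measure to counting measure) it forces $\beta_n=1$. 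The asserted formula $\chi(C_\bullet)=\prod_n[\ker(d_n):\Imm(d_{n+1})]^{(-1)^n}$ follows.

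No step here is a serious obstacle; the only real care is bookkeeping. The subtlest point is justifying that for discrete $H\subset G$ with counting measures the quotient measure on $G/H$ is counting: one must unwind Proposition \ref{intbour} and check that no modular character intervenes, which is automatic since both $G$ and $H$ are discrete (hence unimodular) and the modular function $\Delta_G$ is identically $1$ on the discrete subgroup, so the $G$-invariance condition on measures reduces to translation invariance. Once this is in place, both computations reduce to elementary counting and the total-mass identity for quotient measures on compact groups.
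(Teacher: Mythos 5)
Your proof is correct on both counts: the well-definedness of $\chi(C_\bullet)$ independent of $\nu_n,\theta_n$ is exactly the lever, and the choices of probability measures (compact case) and counting measures (discrete case) make $\alpha_n$ and $\beta_n$ come out as stated via the total-mass formula for quotient measures and the (correctly justified) fact that the quotient of counting by counting is counting. Note that the paper does not actually prove this lemma --- it cites Oesterl\'e's Examples 2 and 3 --- so your argument supplies the details that the thesis delegates to the reference; it is the natural argument and surely the intended one.
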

\begin{prop}[{Oesterl\'e, \cite[Proposition 1, \no 4]{Oesterle}}] \label{complexdoesterle}
Consider a bicomplex 
\[\begin{tikzcd}
	& \vdots & \vdots \\
	\cdots & {C_{n,m-1}} & {C_{n-1,m-1}} & \cdots \\
	\cdots & {C_{n,m}} & {C_{n-1,m}} & \cdots \\
	& \vdots & \vdots
	\arrow[from=2-1, to=2-2]
	\arrow[from=3-1, to=3-2]
	\arrow["{d'_{n,m-1}}", from=2-2, to=2-3]
	\arrow["{d'_{n,m}}", from=3-2, to=3-3]
	\arrow[from=2-3, to=2-4]
	\arrow[from=3-3, to=3-4]
	\arrow["{d''_{n,m}}"', from=3-2, to=2-2]
	\arrow["{d''_{n-1,m}}"', from=3-3, to=2-3]
	\arrow[from=2-2, to=1-2]
	\arrow[from=2-3, to=1-3]
	\arrow[from=4-3, to=3-3]
	\arrow[from=4-2, to=3-2]
\end{tikzcd}\] of locally compact abelian groups endowed with Haar measures, such that $C_{n,m}$ is the trivial group for $|n|+|m|$ big enough and such that for every $n\in\ZZ$ and every $m\in\ZZ$, the complexes $C_{n,\bullet}$ and $C_{\bullet, m}$ satisfy the above conditions. One has that $$\prod_{n\in\ZZ}\chi(C_{n,\bullet})^{(-1)^{n}}=\prod_{m\in\ZZ}\chi(C_{\bullet, m})^{(-1)^{m}}.$$
\end{prop}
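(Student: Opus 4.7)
The plan is to introduce the total complex $T_{\bullet}$ associated to the bicomplex and compute its measure Euler--Poincar\'e characteristic via two filtrations. Define $T_k := \bigoplus_{n+m=k} C_{n,m}$, equipped with the product Haar measure of the finitely many non-trivial $\lambda_{n,m}$'s and with total differential $d_k := \sum_{n+m=k}\bigl(d'_{n,m} + (-1)^n d''_{n,m}\bigr)$. The first step is to verify that $T_\bullet$ itself satisfies the hypotheses listed before \ref{lemoesterle}: the $d_k$ are continuous, the complex is bounded, and its homology is finite, the latter following from the spectral sequence of either the row or column filtration, whose $E^1$ page consists of the finite groups $H_\bullet(C_{n,\bullet})$ (respectively $H_\bullet(C_{\bullet,m})$) and degenerates on a bounded range. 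Hence $\chi(T_\bullet)$ is defined.

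The core technical ingredient is an auxiliary lemma: given a short exact sequence $0 \to A_\bullet \to B_\bullet \to C_\bullet \to 0$ of bounded complexes of locally compact abelian groups countable at infinity, in which each $0 \to A_n \to B_n \to C_n \to 0$ is a strict exact sequence with continuous maps and the chosen Haar measures satisfy $\lambda^B_n/\lambda^A_n = \lambda^C_n$ in the sense of \ref{intbour}, one has
$$\chi(B_\bullet) \;=\; \chi(A_\bullet)\,\chi(C_\bullet).$$
This is established by analysing the long exact homology sequence, which is a bounded exact complex of finite groups; comparing the $(\alpha_n,\beta_n)$ invariants of $A_\bullet, B_\bullet, C_\bullet$ via the quotient-measure formalism of \ref{intbour} and applying \ref{lemoesterle}(2) to this long exact sequence yields the claimed multiplicativity.

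Granting the lemma, both sides of the desired identity will coincide with $\chi(T_\bullet)$. The column filtration with $F_p T_k := \bigoplus_{n\leq p,\,n+m=k} C_{n,m}$ is a finite filtration of $T_\bullet$ by subcomplexes (since $d'$ decreases the first index and $d''$ preserves it), and the associated graded $\mathrm{gr}_p T_\bullet$ is the shifted column $C_{p,\bullet}[p]$ (with differential $\pm d''$, whose sign is immaterial for $\chi$); inducting on the filtration length via the lemma and equipping each piece with the natural product/quotient Haar measure gives $\chi(T_\bullet) = \prod_p \chi(C_{p,\bullet}[p]) = \prod_p \chi(C_{p,\bullet})^{(-1)^p}$, the last equality coming from the elementary shift formula for $\chi$. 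Symmetrically, the row filtration yields $\chi(T_\bullet) = \prod_m \chi(C_{\bullet,m})^{(-1)^m}$, so the two expressions agree. The main obstacle is the auxiliary lemma: one must carefully track how the quotient-measure identities at each degree interact with the connecting homomorphisms of the long exact sequence, verifying that the normalizing constants $\beta_n$ and the homology volumes $\alpha_n$ combine in the expected multiplicative fashion under strict short exact sequences compatible with the given Haar measures.
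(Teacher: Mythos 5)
The paper does not prove this proposition; it quotes it directly from Oesterl\'e's article, so there is no in-paper proof to compare against. Your total-complex strategy is a legitimate route, and the bookkeeping you describe is correct: the column filtration $F_pT_k = \bigoplus_{n\le p,\, n+m=k}C_{n,m}$ is indeed a filtration by subcomplexes since $d'$ strictly decreases the first index and $d''$ preserves it; the graded pieces are the columns up to a shift; and the shift formula $\chi(C_\bullet[p]) = \chi(C_\bullet)^{(-1)^p}$ follows at once from the definition of $\chi$. The verification that the total complex itself is bounded, consists of groups countable at infinity, and has finite homology is also straightforward via the finite filtration.

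The genuine gap is the auxiliary multiplicativity lemma for short exact sequences of complexes, which you flag as ``the main obstacle'' but do not prove. This is not a routine loose end: the lemma is in fact logically equivalent to the proposition you are trying to prove. Granting the proposition, apply it to the bicomplex whose $m$-th row is the three-term complex $0\to A_m\to B_m\to C_m\to 0$, with the compatible Haar measures; each row is exact with $\chi = 1$ (taking the natural pushforward and quotient measures at each stage), so the product over rows is $1$, and equating with the product over columns gives precisely $\chi(B_\bullet)=\chi(A_\bullet)\,\chi(C_\bullet)$. Conversely, as you argue, the lemma yields the proposition by induction on the length of the column (or row) filtration. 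Thus the whole weight of the argument sits on an independent, direct proof of the lemma, which requires chasing the quotient-measure normalizations through the long exact homology sequence: one must choose compatible Haar measures on all the kernels and images of the three complexes, show that the $\beta$-factors telescope correctly across the inclusions $\Imm \subset \ker$ in each column, and then apply \ref{lemoesterle}(2) to the long exact sequence viewed as a bounded exact complex of finite groups with the induced quotient measures. That computation is the actual content of the result, and your proposal stops short of carrying it out; as written it reduces the proposition to a statement of essentially the same depth.
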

We end the paragraph by a lemma that will be used on several occasions.
\begin{lem} \label{rudj}
Let $\epsilon:H\to G$ be a proper continuous homomorphism of locally compact Hausdorff abelian topological groups. Let $dg$ and $dh$ be Haar measures on~$G$ and~$H$. Consider the left action of~$H$ on~$G$ $$H\times G\to G\hspace{1cm}h\cdot g=\epsilon (h)g.$$
\begin{enumerate}
\item The action is continuous and proper. The quotient $G/H$ identifies with the quotient group $G/\epsilon (H)$. 
\item The group $K:=\ker(\epsilon)$ is compact and let $dk$ be the probability Haar measure on~$K$. We have an equality of measures on $\epsilon (H)$: $$\epsilon _*(dh)=dh/dk.$$
\item The subgroup $\epsilon(H)$ is closed in~$G$. The measure $dg/dh$ is the quotient Haar measure $dg/\epsilon_*(dh)$.
\item Suppose that~$H$ and~$G$ are countable at infinity. The exact sequence $1\to K\to H\to G\to G/H\to 1$ is of the trivial measure Euler-Poincar\'e characteristics, when the measures on $K,$~$H$,~$G$ and $G/H$ are $dk$, $dh$, $dg$ and $dg/dh$.
\end{enumerate}
\end{lem}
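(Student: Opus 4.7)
The plan is to address the four assertions in order, the first three being essentially direct applications of properness together with the quotient-measure formalism of \ref{intqm}, and the fourth reducing to a bookkeeping of factors once (2) and (3) are in hand.

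For (1), I would factor the graph map $H\times G\to G\times G$, $(h,g)\mapsto(\epsilon(h)g,g)$, as the composition of $(\epsilon,\Id_G)$ (which is proper, as a product of proper maps: $\epsilon$ is proper by hypothesis and $\Id_G$ is trivially proper) followed by the homeomorphism $(y,g)\mapsto(yg,g)$ of $G\times G$. Hence the action is proper, and clearly continuous. Since the orbit through $g$ is $\epsilon(H)g$, the orbit space equals the quotient group $G/\epsilon(H)$. For (2), compactness of $K=\epsilon^{-1}(\{e_G\})$ is immediate from properness of $\epsilon$ applied to the compact set $\{e_G\}$. The map $\epsilon$ factors as $H\twoheadrightarrow H/K\xrightarrow{\bar\epsilon}\epsilon(H)$; the induced map $\bar\epsilon$ is a continuous bijection, and since $\epsilon$ is proper it is closed, so $\bar\epsilon$ is a homeomorphism (hence a topological group isomorphism). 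To identify $\epsilon_*(dh)$ with $dh/dk$, test against $\phi\in\mathscr C^0_c(\epsilon(H),\CC)$: setting $\psi(h):=\phi(\epsilon(h))$, the function $\psi$ is $K$-invariant and $dk$ is a probability measure, so Proposition \ref{intbour} gives $\psi^*(\bar h)=\psi(h)$, whence $\int_H\phi(\epsilon(h))\,dh=\int_{H/K}(\phi\circ\bar\epsilon)(dh/dk)$, and transporting via $\bar\epsilon$ yields the claim.

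For (3), properness of $\epsilon$ implies that $\epsilon(H)$ is closed in $G$ (proper maps of locally compact Hausdorff spaces are closed, so $\epsilon(H)=\epsilon(\epsilon^{-1}(G))$ is closed). Hence by \ref{intqm}, $\epsilon_*(dh)$ is a Haar measure on $\epsilon(H)$ and the quotient $dg/\epsilon_*(dh)$ is a Haar measure on $G/\epsilon(H)$. To see that it agrees with $dg/dh$, I would compare the integration formulas of Proposition \ref{intbour} for the two candidate quotient measures applied to an arbitrary $\phi\in\mathscr C^0_c(G,\CC)$: in both cases the inner integral is $\int_H\phi(\epsilon(h)g)\,dh=\int_{\epsilon(H)}\phi(yg)\,\epsilon_*(dh)(y)$, the equality following from~(2); uniqueness of quotient measures then forces $dg/dh=dg/\epsilon_*(dh)$.

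For (4), since the sequence $1\to K\to H\xrightarrow{\epsilon} G\to G/\epsilon(H)\to 1$ is exact the homology is trivial, so all the invariants $\alpha_n$ in Oesterl\'e's definition equal $1$. It therefore suffices to verify $\beta_n=1$ at each step. For the inclusion $K\hookrightarrow H$ this is the tautological factorization $dh=(dh/dk)\otimes dk$; for $\epsilon:H\to G$ it is exactly the identification $dh/dk=\epsilon_*(dh)$ proved in (2), after identifying $H/K$ with the image $\epsilon(H)$; for the quotient map $G\to G/\epsilon(H)$ it is the identification $dg/\epsilon_*(dh)=dg/dh$ proved in (3); and for the terminal map it is trivial. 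Taking the alternating product gives $\chi=1$. The only genuine subtlety is in (2)--(4): one must ensure that the topological isomorphism $\bar\epsilon\colon H/K\xrightarrow{\sim}\epsilon(H)$ is indeed a homeomorphism (not merely a continuous bijection), which is where properness of $\epsilon$ is essential; once this is available, the whole argument reduces to transparent applications of Propositions \ref{duio} and \ref{intbour}.
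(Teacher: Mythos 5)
Your proof is correct and follows essentially the same route as the paper: properness of the action via a factorization/base-change argument for (1), the Bourbaki quotient-measure formalism (Propositions \ref{duio}, \ref{intbour}, \ref{simint}) for (2) and (3), and a verification of Oesterl\'e's invariants for (4). The only cosmetic divergence is in (4), where the paper splits the four-term sequence into two short exact sequences and invokes (2)--(3) for each, whereas you compute the $\alpha_n,\beta_n$ directly for the whole complex after making compatible choices of auxiliary Haar measures on kernels and images; both are fine, and your remark that the isomorphism $\bar\epsilon\colon H/K\to\epsilon(H)$ must be a homeomorphism (not merely a continuous bijection) is exactly the point where properness is used.
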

\begin{proof}
\begin{enumerate}
\item The continuity of the group action follows from the continuity of the multiplication by an element in~$G$. The action is proper, as one has a Cartesian diagram $$\begin{tikzpicture}
  \matrix (m) [matrix of math nodes,row sep=3em,column sep=8em,minimum width=2em]
  {
G\times H & G\times G \\
     H& G,\\ };
  \path[-stealth]
    (m-1-1) edge node [left] {$p_2$} (m-2-1)
            edge node [above] {$(g,h)\mapsto (g,g\epsilon(h))$} (m-1-2) 
    (m-2-1.east|-m-2-2) edge node [below] {$\epsilon$}
            node [above] {$ $} (m-2-2)
    (m-1-2) edge node [right] {$ $} (m-2-2)
               ;
\end{tikzpicture} $$
where the right vertical homomorphism is given by $(g_1,g_2)\mapsto g_1^{-1}g_2$. The canonical homomorphism $G\to G/\epsilon (H)$ is continuous and open and induces a continuous and open map $G/H\rightarrow G/\epsilon (H).$ The map is bijective because one has an equality of sets $G/H=G/\epsilon (H)$. We deduce a topological identification $G/H=G/\epsilon(H)$. 
The claim follows.
\item The map $\epsilon$ is proper and it follows that~$K$ is a compact subgroup of~$H$. The quotient space $\epsilon (H)=H/\ker(\epsilon)$ is paracompact \cite[Proposition 13, \no 6, \S 4, Chapter III]{Integrationj}. Note that the constant function $H\to \CC$ given by $h\mapsto 1,$ satisfies the condition that its support (i.e. the whole of~$H$) has compact intersection with the preimage $\epsilon^{-1}(A)$ for every compact $A\subset\epsilon(H),$ because $\epsilon$ is proper. 
Now, by \ref{simint}, for every $r:\epsilon(H)\to\CC$ with $r\in L^1(\epsilon (H),dh/dk)$ we have
$$\int _{\epsilon(H)}(r)(dh/dk)=\int_{H}(r\circ\epsilon)\cdot 1 dh=\int_{H}(r\circ\epsilon)dh,$$ which means precisely that $\epsilon_*(dh)=dh/dk$.
\item The subgroup $\epsilon (H)$ is closed as $\epsilon$ is a proper map. Let $\phi:G/H\to\CC$ be a compactly supported function. By \cite[Proposition 2, \no 1, \S2, Chapter VII]{Integrationd}, there exists a compactly supported continuous function $\Phi:G\to\CC$, such that for any $x\in G/H$ and any lift $\widetilde x\in G$ of~$x$, we have $$\phi (x)=\int _{H}\Phi (h\widetilde x)dh.$$ Note that $$\phi(x)=\int _{\epsilon(H)}\Phi(h\widetilde x)\epsilon_*(dh).$$ We have that $$\int _{G/H}(\phi) (dg/dh)=\int _{G}\Phi \hspace{0.1cm}dg=\int _{G/\epsilon(H)}(\phi)(dg/\epsilon_*(dh)).$$ It follows that $dg/dh=dg/\epsilon_*(dh)$ as claimed.
\item The short exact sequences
\[\begin{tikzcd}
	1 & K & H & {\epsilon(H)} & 1 \\
	1 & {\epsilon(H)} & G & {G/H} & 1
	\arrow[from=1-1, to=1-2]
	\arrow[from=1-2, to=1-3]
	\arrow[from=1-3, to=1-4]
	\arrow[from=1-4, to=1-5]
	\arrow[from=2-1, to=2-2]
	\arrow[from=2-2, to=2-3]
	\arrow[from=2-3, to=2-4]
	\arrow[from=2-4, to=2-5]
\end{tikzcd}\] are of the trivial measure Euler-Poincar\'e characteristics by (2) and (3). Thus $1\to K\to H\to G\to G/H\to 1$ is of the trivial measure Euler-Poincar\'e characteristics.
\end{enumerate} 
\end{proof}
\section{Measures on $[\PPP(\aaa)(F_v)]$}\label{measuresonppa}
For $\vMF$, the abelian locally compact group $\Fvt$ acts on $\Fvnz$ by $t\cdot\xxx=(t^{a_j}x_j)_j$. This action is proper by \ref{aboutopa}. By \ref{pafvtafv}, the quotient $(\Fvnz)/\Fvt$ identifies with $[\PPP(\aaa)(\Fv)].$ The goal of this section is to define measures on $[\PPP(\aaa)_{\Fv}(\Fv)]=[\PPP(\aaa)(F_v)]$ for $\vMF$. 
The topological spaces $[\PPP(\aaa)(\Fv)]$ are Hausdorff and compact by \ref{paraap}. Let us denote by $q^{\aaa}_v$ the quotient maps $\Fvnz\to[\PPP(\aaa)(\Fv)]$ for $\vMF$. 
\subsection{}\label{pojac}
In this paragraph, we define measure on $F_v$.

For $v\in M_F$, let $dx_v$ be the Haar measure on $F_v$ normalized by \begin{itemize}
\item $dx_v(\Ov)=1$ if~$v$ is finite,\\
\item $dx_v$ is the Lebesgue measure on $F_v\cong\RR$ if~$v$ is real,\\
\item $dx_v=2dxdy$ on $F_v\cong\CC\cong \RR^2$ if~$v$ is complex. 
\end{itemize} 
For $\vMF$, let $d^*x_v$ be the Haar measure $\frac{dx_v}{|x_v|_v}$ on $\Fvt.$ For $\vMFz$, it satisfies that $$d^*x_v(\Ovt)=\int_{\Ovt}1d^*x_v=\int_{\Ovt}1dx_v=1-\pivv.$$  When~$v$ is real, the measure $d^*x_v$ identifies with the measure $$d^*x:=\frac {dx}{|x|}$$ on $\RR^{\times}$ and when~$v$ is complex, the measure $d^*x_v$ identifies with the measure $$\frac{2dxdy}{x^2+y^2} $$ on $\CC^2-\{0\}\cong\RR^2-\{0\}$. Let us set $$\Fvj:=\{x\in\Fv|\hspace{0.1cm} |x|_v=1\}, $$ so that $\Fvj=\{\pm 1\}$ when~$v$ is real and $$\Fvj=S^1=\{(x,y)\in\RR^2| x^2+y^2=1\}$$ when~$v$ is complex.
Recall that we have set $n_v=1$ if~$v$ is a real place and let $n_v=2$ if~$v$ is complex. The exact sequence $$\{1\}\to F_{v,1}\xrightarrow{i_{F_{v,1}}}\Fvt\xrightarrow{x\mapsto |x|_v}\RR_{>0}\to\{1\}, $$ where $i_{F_{v,1}}$ is the inclusion map, admits a section $$\rho _v:\RR_{>0}\to\Fvt\hspace{1cm}r\mapsto r^{1/n_v}.$$ The section induces a continuous isomorphism \begin{equation}\label{splitfv}\widetilde \rho _v:\RR_{>0}\times F_{v,1}\to\Fvt\hspace{1cm} (r,z)\mapsto \rho_v(r)z.\end{equation} The inverse of this homomorphism is given by \begin{equation}\label{lqq}\Fvt\to \RR_{>0}\times\Fvj\hspace{1cm}x\mapsto (|x|_v,x\rho _v(|x|_v))\end{equation} and is also continuous. We deduce that $\widetilde\rho _v$ is an isomorphism of topological groups.  
Let us set $\lambda _{v,1}$ to be the counting measure on $\{\pm 1\}$ when~$v$ is real and let us set $\lambda_{v,1}$ to be the Haar measure on $F_{v,1}$ normalized by $\lambda_{v,1}(F_{v,1})=2\pi$ when~$v$ is complex.
\begin{lem}\label{cyv}
Let $\vMFi$. One has that and that $(\widetilde\rho_v)_*\big(dr\times\lambda_{v,1}\big)=dx_v|_{\Fvt}$ and that $(\widetilde\rho_v)_*\big(d^*r\times\lambda_{v,1}\big)=d^*x_v$.
\end{lem}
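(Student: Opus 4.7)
The plan is to check each of the two equalities separately for $v$ real and $v$ complex, deducing the multiplicative version from the additive one by a simple change of measure.

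First, I would observe that in both real and complex cases the map $\widetilde\rho_v$ is a homeomorphism between locally compact Hausdorff groups with dense image in $\Fv$, so it suffices to verify the equalities of measures on a dense open subset and to identify the relevant Jacobian. For $v$ real, $\widetilde\rho_v(r,\pm 1)=\pm r$, so the pushforward of $dr\times \lambda_{v,1}$ is the sum of the pushforwards under $r\mapsto r$ and $r\mapsto -r$, which is exactly the Lebesgue measure $dx_v$ restricted to $\RR^\times=\Fvt$. For $v$ complex, I would write $x+iy=r^{1/2}e^{i\theta}$ with $(r,e^{i\theta})\in\RR_{>0}\times S^1$ and use the usual polar change of variables $dxdy=\rho\,d\rho\,d\theta$ with $\rho=r^{1/2}$, so that $\rho\,d\rho=\tfrac{1}{2}dr$ and hence $2dxdy=dr\,d\theta$. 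Since $dx_v=2dxdy$ and $\lambda_{v,1}=d\theta$ (by the normalization $\lambda_{v,1}(S^1)=2\pi$), this gives $(\widetilde\rho_v)_*(dr\times\lambda_{v,1})=dx_v|_{\Fvt}$.

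To obtain the second equality, I would note that $d^*x_v=dx_v/|x_v|_v$ on $\Fvt$, so it suffices to compute $|\widetilde\rho_v(r,z)|_v$ as a function of $(r,z)$. For $v$ real and $z=\pm 1$, one has $|\widetilde\rho_v(r,z)|_v=|r|=r$. For $v$ complex, $z\in S^1$ has usual complex absolute value $1$, hence $|z|_v=1$ (because $|\cdot|_v$ is the square of the usual absolute value), and $|r^{1/2}|_v=r$; in both cases $|\widetilde\rho_v(r,z)|_v=r$. Pulling back $d^*x_v$ therefore gives $(dr\times\lambda_{v,1})/r=d^*r\times\lambda_{v,1}$, which is exactly the claim.

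There is no real obstacle here beyond being careful with the normalizations already fixed in \S\ref{pojac}: the factor of $2$ in $dx_v=2dxdy$ for complex $v$, the factor of $2\pi$ in $\lambda_{v,1}$, and the doubled absolute value $|\cdot|_v$ on $\CC$ conspire so that the polar-coordinate identity produces exactly $dr\times\lambda_{v,1}$ with no leftover constant. Once the additive identity is established, the multiplicative one is automatic because $|\widetilde\rho_v(r,z)|_v=r$ depends only on the $\RR_{>0}$-coordinate.
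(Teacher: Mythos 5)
Your proof is correct and follows essentially the same route as the paper's: the same polar Jacobian computation $dx\,dy=\rho\,d\rho\,d\theta$ with $\rho=r^{1/2}$ in the complex case, and the same observation that $|\widetilde\rho_v(r,z)|_v=r$ to pass between $dx_v$ and $d^*x_v$. The only difference is cosmetic: the paper establishes $(\widetilde\rho_v)_*(d^*r\times\lambda_{v,1})=d^*x_v$ first and then derives the other identity by multiplying by $|x_v|_v$, whereas you prove the additive identity first and divide.
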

\begin{proof}
We will firstly verify that $(\widetilde\rho_v)_*\big(d^*r\times\lambda_{v,1}\big)=d^*x_v$. Suppose~$v$ is real. Both measures are Haar measures on $\Fvt$, so it suffices to check their equality on a single Borel subset of $\Fvt\cong\RR^{\times},$ and we will verify it on $[1,2]$. We have that $[1,2]=\widetilde\rho_v (\{1\}\times[1,2])$ and \begin{align*}{d^*x_v}([1,2])={d^*x}([1,2])&=1\cdot{d^*x}([1,2])\\&=\lambda_{v,1}(\{1\})\cdot d^*r([1,2])\\&=\big(\lambda_{v,1}\times d^*r\big)(\{1\}\times [1,2])\\&=\big((\widetilde\rho_v)_*\big(d^*r\times\lambda_{v,1}\big)\big)(\widetilde\rho_v (\{ 1\}\times [1,2])).\end{align*} Suppose~$v$ is complex. 
Note that $\lambda_{v,1}$ is the pushforward measure for the map $[0,2\pi[\to S^1$ given by \begin{equation*}\theta\mapsto e^{i\theta}=(\cos(\theta),\sin(\theta)).\end{equation*} Hence, the map $$S^1\times\RR_{>0}\to\RR^2-\{0\}\hspace{1cm}(z,r)\mapsto zr^{1/2}$$ is measure preserving if and only if the map $$[0,2\pi[\times\RR_{>0}\to \RR^2-\{0\}\hspace{1cm}(\theta,r)\mapsto(r^{1/2}\cos(\theta),r^{1/2}\sin(\theta))$$ is measure preserving. The corresponding Jacobian matrix is $$\begin{pmatrix}-r^{1/2}\sin(\theta) &\frac 12r^{-1/2}\cos(\theta)\\r^{1/2}\cos(\theta) &\frac 12r^{-1/2}\sin(\theta) \end{pmatrix}$$ and its determinant equals $-\frac 12$. It follows that $$dxdy=(\widetilde\rho_v)_*(\lvert-1/2\big\rvert drd\theta)=(\widetilde\rho_v)_*((1/2) drd\theta),$$ and hence that $$d^*x_v=\frac{2dxdy}{x^2+y^2}=(\widetilde\rho_v)_*\bigg(\frac{2\cdot(1/2)drd\theta}{r}\bigg)=(\widetilde\rho_v)_*(d^*rd\theta).$$ We have proven that $(\widetilde\rho_v)_*\big(d^*r\times\lambda_{v,1}\big)=d^*x_v$. 

Let us now verify that $(\widetilde\rho_v)_*\big(dr\times\lambda_{v,1}\big)=dx_v|_{\Fvt}$. One has that $|\widetilde\rho_v(r,z)|_v=r$ for every $(r,z)\in\RR_{>0}\times F_{v,1}$. For a Borel subset $U\subset\Fvt$, one has that \begin{align*}dx_v|_{\Fvt}(U)=\int_U|x_v|_vd^*x_v&=\int_{U}|\widetilde\rho_v(r,z)|_v(\rho_v)_*(d^*r\times \lambda_{v,1})\\&=\int_{\widetilde {\rho}_v^{-1}(U)}r \hspace{0.1cm}(d^*r\times\lambda_{v,1})\\&=\int _{\widetilde {\rho}_v^{-1}(U)}dr\times\lambda_{v,1}\\&=(dr\times\lambda_{v,1})(U).\end{align*}
It follows that $dx_v|_{\Fvt}=(\widetilde\rho_v)_*(dr\times\lambda_{v,1})$. The statement is proven.
\end{proof}
We will often write $dx$ and $d^*x$ for $dx_v$ and $d^*x_v$, respectively.
\subsection{}\label{kave}
In this paragraph we will define compactly supported continuous functions $\kav:\Fvnz \to\RR_{\geq 0}$ which satisfy that their integrals in every orbit for the weighted action of $\Gm(\Fv)$ on $\Fvnz$ are equal to~$1$. 
These functions will enable us to use \ref{simint}.

For every $\vMFz$, let $$\DD^\aaa_v:=\{\yyy\in(\Ov)^n|\exists j:v(y_j)< a_j\}=(\Ov)^n-\piv^{a_1}\Ov\times\cdots\times\piv^{a_n}\Ov $$
and set $$\kav:=\frac{\mathbf 1 _{\DD^{\aaa}_v,F^n_{v}-\{0\}}}{1-\pivv}.$$
\begin{lem}\label{simchar}Let $\vMFz$ and let $\xxx\in\Fvnz$. The function~$\kav$ is compactly supported, locally constant and one has that $$\int _{\Fvt}\kav(t\cdot\xxx)d^*t=1.$$
\end{lem}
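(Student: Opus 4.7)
The plan is to verify each of the three assertions directly, using the properties of $\DD^\aaa_v$ and of $r_v$ already established in \ref{begdav} and \ref{davdavdav}.

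First, I would observe that $\DD^\aaa_v \subset \Ov^n$ by definition, and $\Ov^n$ is compact since $\Ov$ is compact for a finite place $v$. Thus the support of $\kav$, which is contained in $\DD^\aaa_v$, is compact. For local constancy, I invoke part (1) of \ref{begdav}, which states that $\DD^\aaa_v$ is both open and closed in $F_v^n - \{0\}$; hence its characteristic function $\mathbf 1_{\DD^\aaa_v, \Fvnz}$ is locally constant, and so is $\kav$.

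The main content is the integral identity. Fix $\xxx \in \Fvnz$. By part (2) of \ref{davdavdav}, for any $t \in \Fvt$ one has $r_v(t \cdot \xxx) = r_v(\xxx) - v(t)$, and by part (3) the set $\{\yyy \in \Fvnz \mid r_v(\yyy) = 0\}$ is exactly $\DD^\aaa_v$. Combining these, $t \cdot \xxx \in \DD^\aaa_v$ if and only if $v(t) = r_v(\xxx)$. I would then note that the set of $t \in \Fvt$ with $v(t) = r_v(\xxx)$ is precisely $\piv^{r_v(\xxx)} \Ovt$, and that every such $t$ does send $\xxx$ into $\DD^\aaa_v$: if we write $\yyy = \piv^{r_v(\xxx)} \cdot \xxx \in \DD^\aaa_v$, then $t \cdot \xxx = u \cdot \yyy$ for some $u \in \Ovt$, and part (4) of \ref{davdavdav} (or a direct check: $v(u) = 0$ forces $u \cdot \yyy \in \Ov^n$, while $u \cdot \yyy \in \piv^{a_1}\Ov \times \cdots \times \piv^{a_n}\Ov$ would force $\yyy$ to lie in the same product, contradicting $\yyy \in \DD^\aaa_v$) guarantees $u \cdot \yyy \in \DD^\aaa_v$.

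Consequently
\[
\int_{\Fvt} \kav(t \cdot \xxx)\, d^*t \;=\; \frac{1}{1 - \pivv} \int_{\piv^{r_v(\xxx)} \Ovt} d^*t \;=\; \frac{d^*t(\Ovt)}{1 - \pivv} \;=\; \frac{1 - \pivv}{1 - \pivv} \;=\; 1,
\]
using translation invariance of the Haar measure $d^*t$ and the normalization $d^*t(\Ovt) = 1 - \pivv$ recalled in \ref{pojac}. The only step that required any thought is the description of $\{t : t \cdot \xxx \in \DD^\aaa_v\}$, and there the key input is the behaviour of $r_v$ under the $\Gm$-action together with the fact that $\Ovt \cdot \DD^\aaa_v \subset \DD^\aaa_v$; both follow immediately from the earlier lemma.
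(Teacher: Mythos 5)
Your proof is correct and follows the same overall strategy as the paper: reduce the integral to the Haar measure of the coset $\{t\in\Fvt : t\cdot\xxx\in\Dav\}=\piv^{r_v(\xxx)}\Ovt$. A minor stylistic difference: you deduce the set equality cleanly as an equivalence from parts (2) and (3) of \ref{davdavdav} (namely $t\cdot\xxx\in\Dav\iff r_v(t\cdot\xxx)=0\iff v(t)=r_v(\xxx)$), whereas the paper invokes part (4) to get the forward inclusion and leaves the reverse implicit; your version is arguably tighter, though both amount to the same computation.
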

\begin{proof}
We have seen in \ref{begdav} that the subset $\DD^{\aaa}_v$ is open, closed and compact subset of $F^n_v-\{0\}$. We conclude that $\mathbf1_{\Dav}$ is locally constant and compactly supported, hence is such $\kav$.
In \ref{davdavdav} we have defined $r_v:\Fvnz\to\ZZ$ by $$r_v(\yyy)=\sup_{\substack{j=1\doots n\\y_j\neq 0}}\bigg\lceil-\frac{v(y_j)}{a_j}\bigg\rceil.$$ One has that$$t\cdot\xxx\in\DD^\aaa_v\iff \xxx\in t^{-1}\cdot\Dav\iff\xxx\in t^{-1}\piv^{r_v(\xxx)}\cdot(\piv^{-r_v(\xxx)}\cdot\Dav).$$ Lemma \ref{davdavdav} gives that $\xxx\in\piv^{-r_v(\xxx)}\cdot\Dav$ and that if $(u\cdot(\piv^{-r_v(\xxx)}\cdot\Dav))\cap (\piv^{-r_v(\xxx)}\cdot\Dav)\neq\emptyset$ then $v(u)=0.$ We conclude $v(t^{-1}\piv^{r_v(\xxx)})=0$ and hence $$\{t|t\cdot\xxx\in\Dav\}=\{t|v(t)=r_v(\xxx)\}.$$ We deduce
\begin{equation*}\label{olip}d^*t\{t\in F_v^{\times}|t\cdot\xxx\in\DD^\aaa_v\}=d^*t(\piv^{r_v(\xxx)}\Ovt)=1-\pivv.
\end{equation*}
One has further that\begin{equation*}\int _{\Fvt}\kav (t\cdot\xxx)d^*t=\int_{\Fvt}\frac{\jed_{\Dav}(t\cdot\xxx)}{1-\pivv}d^*t=\frac{d^*t(\{t\in F_v^{\times}|t\cdot\xxx\in\DD^{\aaa}_v\})}{1-\pivv}=1.\end{equation*}
The claim is proven.
\end{proof}
The following auxiliary lemma will be used in the definition of $\kav$ for~$v$ infinite.
\begin{lem}\label{kwew}
Let $a_{G_1}$ and $a_{G_2}$ be continuous actions of topological groups $G_1$ and $G_2$ on a topological space $X.$ Suppose the actions are permutable, that is for every $g_1\in G_1,$ every $g_2\in G_2$ and every $x\in X$ one has $g_1g_2 x=g_2g_1x$.
\begin{enumerate}
\item The map \begin{align*}a_{G_1\times G_2}:G_1\times G_2\times X&\to X\\ (g_1,g_2,x)&\mapsto g_1(g_2 x)\end{align*} defines a continuous action of $G_1\times G_2$ on~$X$.
\item There exists a continuous action of $G_2$ on $X/G_1$ such that $$g_2\cdot [x]_{G_1}=[g_2x]_{G_1}$$ for every $x\in X$, where $[x]_{G_1}$ is the image of~$x$ in $X/G_1$ for the quotient map.
\item The canonical map $X/G_1\to X/(G_1\times G_2)$ is open, continuous, surjective and $G_2$-invariant. The induced map $(X/G_1)/G_2\to X/(G_1\times G_2)$ is a homeomorphism.
\end{enumerate}
\end{lem}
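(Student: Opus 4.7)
For part (1), the plan is to check the two action axioms and continuity directly. The identity $(e_1,e_2)\cdot x=e_1(e_2x)=x$ is immediate. For associativity,
\[
((g_1,g_2)(g_1',g_2'))\cdot x=g_1g_1'(g_2g_2'x),\qquad (g_1,g_2)\cdot((g_1',g_2')\cdot x)=g_1(g_2(g_1'(g_2'x))),
\]
and the permutability hypothesis $g_2g_1'=g_1'g_2$ lets me commute past to identify these. Continuity of $a_{G_1\times G_2}$ is clear since it is the composite $G_1\times G_2\times X\xrightarrow{\mathrm{id}\times a_{G_2}}G_1\times X\xrightarrow{a_{G_1}}X$ of two continuous maps.

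For part (2), I would consider the continuous map $G_2\times X\to X/G_1$ given by $(g_2,x)\mapsto[g_2x]_{G_1}$. Writing $\pi_1:X\to X/G_1$ for the quotient map, I first check this map is $G_1$-invariant in the $X$ variable: if $x'=g_1x$ then by permutability $g_2x'=g_2g_1x=g_1g_2x$, hence $[g_2x']_{G_1}=[g_2x]_{G_1}$. Since $\pi_1$ is open (quotient maps by group actions are open), $\mathrm{id}_{G_2}\times\pi_1$ is also an open surjection, hence a topological quotient map, so the invariant map descends to a continuous map $G_2\times(X/G_1)\to X/G_1$. The action axioms for this descended map follow from those for $a_{G_2}$ by testing on representatives.

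For part (3), let $\pi:X\to X/(G_1\times G_2)$ denote the full quotient. Since $\pi$ is $G_1$-invariant, it factors through $X/G_1$ to give the canonical continuous surjection $\overline\pi:X/G_1\to X/(G_1\times G_2)$. It is $G_2$-invariant because $[g_2x]_{G_1\times G_2}=[x]_{G_1\times G_2}$. For openness, given an open $U\subset X/G_1$, its preimage $V=\pi_1^{-1}(U)$ is an open $G_1$-invariant subset of $X$; then $\pi^{-1}(\overline\pi(U))=\bigcup_{g_2\in G_2}g_2\cdot V$ is open in $X$, so $\overline\pi(U)$ is open. Thus $\overline\pi$ is open, continuous, surjective, and $G_2$-invariant, and hence factors through the quotient $(X/G_1)/G_2$, yielding a continuous open surjection $\psi:(X/G_1)/G_2\to X/(G_1\times G_2)$. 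The main (but routine) step left is to show $\psi$ is injective: if $\overline\pi([x]_{G_1})=\overline\pi([y]_{G_1})$, there exist $g_1\in G_1,\,g_2\in G_2$ with $y=g_1g_2x$, so $[y]_{G_1}=[g_2x]_{G_1}=g_2\cdot[x]_{G_1}$, giving equality in $(X/G_1)/G_2$. A continuous, open, bijective map is a homeomorphism, so the claim follows. I expect no serious obstacle; the only subtlety is keeping track of which quotient maps are open so that the universal property of the topological quotient can be applied.
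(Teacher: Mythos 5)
Your proof is correct and follows essentially the same route as the paper's. The only real divergence is in part (2): the paper simply cites Bourbaki (\emph{Topologie g\'en\'erale}, Remark to Proposition 11, \S 4.4, Chapter III) for the descent of the $G_2$-action to $X/G_1$, whereas you prove it directly by checking $G_1$-invariance and using that $\mathrm{id}_{G_2}\times\pi_1$ is an open quotient map. In part (3) you arrive at openness of $\overline\pi$ via the preimage criterion ($\pi^{-1}(\overline\pi(U))=\bigcup_{g_2}g_2\cdot V$ open), while the paper argues more compactly that a map induced from a $G_1$-invariant continuous, open, surjective map is itself continuous, open, surjective; these are the same computation phrased differently. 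You are also more explicit than the paper in noting that a continuous, open, bijective map is a homeomorphism — a worthwhile sentence, since the paper leaves it implicit. No gaps.
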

\begin{proof}
\begin{enumerate}
\item The map $a_{G_1\times G_2}$ factorizes as $a_{G_2}\circ(\Id_{G_1}\times a_{G_1})$ thus is continuous. If $e_{G_1},e_{G_2}$ are neutral elements of $G_1$ and $G_2$, respectively, by definition one has $(e_{G_1},e_{G_2})\cdot x=e_{G_1}e_{G_2}x=x.$ Moreover, if $(g_1,g_2)$ and $(g_1',g_2')$ are elements of $G_1\times G_2$, then for $x\in X$ one has\begin{multline*}(g_1g_1',g_2g_2')x=(g_1g_1')(g_2g_2' x)=g_1( (g_1'(g_2 g_2'x)))=g_1(g_2 (g_1' g_2'x))\\=(g_1,g_2)(g_1',g_2')x.\end{multline*} We deduce that $a_{G_1\times G_2}$ is a continuous action of $G_1\times G_2$ on~$X$.
\item This is proven in \cite[Remark to Proposition 11, \no 4, \S4, Chapter III]{TopologieGj}.
\item The map $X/G_1\to X/(G_1\times G_2)$ is the induced map from $G_1$-invariant continuous, open and surjective map $X\to X/(G_1\times G_2)$, thus itself is continuous, open and surjective. If $g_2\in G_2$ and $x\in X$, one has that $$[g_2\cdot[x]_{G_1}]_{G_1\times G_2}=[[g_2x]_{G_1}]_{G_1\times G_2}=[g_2x]_{G_1\times G_2}=[x]_{G_1\times G_2},$$ where $[\cdot]_{G_1\times G_2}$ is the image in $X/(G_1\times G_2)$. Hence, $X/G_1\to X/(G_1\times G_2)$ is continuous. Suppose now elements $[x]_{G_1}$ and $[y]_{G_1}$ have the same image in $X/(G_1\times G_2)$. This means precisely that there exists $(g_1,g_2)\in G_1\times G_2$ such that $(g_1,g_2)\cdot x=g_1g_2x=y$. We deduce that $$[y]_{G_1}=[g_2g_1x]_{G_1}=[g_2x]_{G_1}=g_2[x]_{G_1},$$ i.e. $[x]_{G_1}$ and $[y]_{G_2}$ are in the same orbit of $G_2$. We deduce that the induced map $((X/G_1)/G_2)\to (X/(G_1\times G_2))$ is bijective. It follows that it is a homeomorphism.
\end{enumerate}
\end{proof}
\begin{lem}\label{qki}
Let $v\in M_F^\infty$. There exists a continuous compactly supported function $\kav:F^n_v-\{0\}\to\RR_{\geq 0}$, which is $F_{v,1}$-invariant and such that for every $\xxx\in\Fvnz$ one has that $$\int_{\RR_{>0}}\kav(\rho_v(t)\cdot \xxx)d^*t=\frac{1}{\lambda_{v,1}(F_{v,1})},$$where $\rho_v:\RR_{>0}\to\Fvt$ is given by $\rho_v:r\mapsto r^{1/n_v}$. Such function satisfies furthermore for every $\xxx\in\Fvnz$  that $$\int _{\Fvt}\kav (t\cdot\xxx)d^*t={1}.$$
\end{lem}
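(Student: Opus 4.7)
The plan is to obtain $\kav$ by averaging a function provided by Bourbaki's Proposition \ref{jeiui} over the compact group $\Fvj$, and then verify both integral identities using the splitting $\Fvt \cong \RR_{>0} \times \Fvj$ together with Lemma \ref{cyv}.

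First, I would apply Proposition \ref{jeiui} to the proper action of $\Fvt$ on $\Fv^n - \{0\}$. Since by \ref{paraap} the quotient $[\PPP(\aaa)(\Fv)] = (\Fv^n-\{0\})/\Fvt$ is compact, it is in particular paracompact, so Proposition \ref{jeiui} applied with $\lambda = 1$ produces a continuous compactly supported function $h : \Fv^n - \{0\} \to \RR_{\geq 0}$ satisfying
\[
\int_{\Fvt} h(t \cdot \xxx)\, d^*t = 1 \qquad \text{for every } \xxx \in \Fv^n - \{0\}.
\]

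Next, I would average $h$ over $\Fvj$ by setting
\[
\kav(\xxx) := \frac{1}{\lambda_{v,1}(\Fvj)} \int_{\Fvj} h(z \cdot \xxx)\, \lambda_{v,1}(dz).
\]
Because $\Fvj$ is compact and acts continuously on $\Fvnz$, this integral converges, is continuous in $\xxx$, and is supported on the image of $\Fvj \cdot \supp(h)$, which is compact. The function $\kav$ is tautologically $\Fvj$-invariant.

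To verify the first integral identity, I would use the isomorphism of topological groups $\widetilde\rho_v : \RR_{>0} \times \Fvj \xrightarrow{\sim} \Fvt$ from \eqref{splitfv} together with Lemma \ref{cyv}, which gives $(\widetilde\rho_v)_*(d^*r \times \lambda_{v,1}) = d^*x_v$. Applying Fubini's theorem and the defining property of $h$:
\[
\int_{\RR_{>0}} \kav(\rho_v(t) \cdot \xxx)\, d^*t = \frac{1}{\lambda_{v,1}(\Fvj)} \int_{\RR_{>0}} \int_{\Fvj} h(z\rho_v(t) \cdot \xxx)\, \lambda_{v,1}(dz)\, d^*t = \frac{1}{\lambda_{v,1}(\Fvj)}.
\]
The second identity then follows from the $\Fvj$-invariance of $\kav$: again by Lemma \ref{cyv} and Fubini,
\[
\int_{\Fvt} \kav(t \cdot \xxx)\, d^*t = \int_{\Fvj}\int_{\RR_{>0}} \kav(\rho_v(r) \cdot \xxx)\, d^*r\, \lambda_{v,1}(dz) = \lambda_{v,1}(\Fvj) \cdot \frac{1}{\lambda_{v,1}(\Fvj)} = 1.
\]

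The steps are mostly formal once the splitting Lemma \ref{cyv} is invoked; the only point requiring mild care is ensuring that the averaging does not destroy compact support, which is immediate from compactness of $\Fvj$. There is no real obstacle beyond correctly tracking the normalization of $\lambda_{v,1}$.
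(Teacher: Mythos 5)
Your proof is correct, and it takes a genuinely different route from the paper. The paper first forms the intermediate quotient $(\Fvnz)/F_{v,1}$ (invoking properness of the $F_{v,1}$-action and the auxiliary Lemma \ref{kwew} on permutable actions), applies Proposition \ref{jeiui} to the $\RR_{>0}$-action on this quotient to produce a function $k'$ already satisfying the first integral identity, and then pulls $k'$ back along the proper map $q_{v,1}$ to obtain $\kav$ with the built-in $F_{v,1}$-invariance. You instead apply Proposition \ref{jeiui} directly to the $\Fvt$-action on $\Fvnz$ to get a generic $h$ with $\int_{\Fvt} h(t\cdot\xxx)\,d^*t = 1$, and then symmetrize over the compact group $\Fvj$ to manufacture the invariance. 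Your route is shorter and avoids the intermediate quotient machinery; it relies on the elementary observation that averaging over a compact group preserves continuity, compact support (since $\Fvj \cdot \supp(h)$ is compact), and produces invariance by translation-invariance of $\lambda_{v,1}$. Both proofs hinge on the same two ingredients: Proposition \ref{jeiui} (whose hypothesis that the relevant quotient be paracompact is supplied by compactness of $[\PPP(\aaa)(\Fv)]$ from \ref{paraap}) and the measure decomposition $(\widetilde\rho_v)_*(d^*r\times\lambda_{v,1}) = d^*x_v$ of Lemma \ref{cyv}. One minor point worth stating explicitly is that the interchange of integrals is justified by Tonelli since $h\geq 0$, but this is routine.
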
 
\begin{proof}
The isomorphism (\ref{splitfv}) $$\widetilde \rho_v:\RR_{>0}\times F_{v,1}\xrightarrow{\sim}\Fvt\hspace{1cm}(r,x)\mapsto \rho_v(r)x$$ satisfies $(\widetilde\rho_v)_*(d^*r\times \lambda_{v,1})=d^*x_v$ by \ref{cyv}. It induces a topological action of $\RR_{>0}\times F_{v,1}$ on $\Fvnz$. Moreover, as $F_{v,1}=\{1\}\times F_{v,1}$ is a closed subgroup of $\RR_{>0}\times F_{v,1},$ by \cite[Example 1, \no 1, \S 4, Chapter III]{TopologieGj} the restriction of this action to $F_{v,1}$ is continuous and proper. Let $q_{v,1}:\Fvnz\to(\Fvnz)/F_{v,1}$ be the corresponding quotient map, by \cite[Proposition 2, \no 1, \S4, Chapter III]{TopologieGj}, the map $q_{v,1}$ is proper. Let $\RR_{>0}$ acts on $\Fvnz$ via the identification $\RR_{>0}=\RR_{>0}\times \{1\}$. Lemma \ref{kwew} provides an action of $\RR_{>0}$ on $(\Fvnz)/F_{v,1}$ which satisfies $$r\cdot q_{v,1}(\xxx)=q_{v,1}(r\cdot\xxx)=q_{v,1}(\rho_v(r)\cdot\xxx),$$ for $r\in\RR_{>0}$ and $\xxx\in\Fvnz$. Moreover, the canonical map $(\Fvnz)/F_{v,1}\to [\PPP(\aaa)(\Fv)]$ induces an identification $((\Fvnz)/F_{v,1})/\RR_{>0}\xrightarrow{\sim}[\PPP(\aaa)(\Fv)].$ As $[\PPP(\aaa)(F_v)]$ is compact, hence paracompact, Proposition \ref{jeiui} gives that there exists a continuous compactly supported function $k':(\Fvnz)/F_{v,1}\to\RR_{\geq 0}$ such that for every $\yyy\in(\Fvnz)/F_{v,1}$, we have that $$\int _{\RR_{>0}}k'(r\cdot\yyy)d^*r=\frac{1}{\lambda_{v,1}(F_{v,1})}.$$ Let us set $\kav=k'\circ q_{v,1}$. As $q_{v,1}$ is proper and $F_{v,1}$-invariant, the map $\kav$ is compactly supported and $F_{v,1}$-invariant. Let $\xxx\in\Fvnz$, we have that
\begin{align*}
\int_{\RR_{>0}}\kav(\rho_v(r)\cdot \xxx)d^*r&=\int_{\RR_{>0}}\kav(r\cdot\xxx)d^*r\\
&=\int_{\RR_{>0}}k'(q_{v,1}(r\cdot\xxx))d^*r\\
&=\int_{\RR_{>0}}k'(r\cdot q_{v,1}(\xxx))d^*r\\
&=\frac{1}{\lambda_{v,1}(F_{v,1})},
\end{align*}
and that
 \begin{align*}\int _{\Fvt}\kav(t\cdot\xxx)d^*t&=\int _{\RR_{>0}\times F_{v,1}}\kav((r,z)\cdot\xxx){d^*r}d\lambda_{v,1}(z)\\&=\int _{F_{v,1}}d\lambda_{v,1}(z)\int _{\RR_{>0}}\kav(r\cdot(z_j^{a_j}x_{j})_j)d^*r\\
&=\int _{F_{v,1}}\frac{1}{\lambda_{v,1}(F_{v,1})}d\lambda_{v,1}\\
&=1. 
\end{align*}
The statement is proven.
\end{proof}
\subsection{} The goal of this paragraph is to give several equivalent conditions that make $f^{-1}dx_1\dots dx_n$ a measure on $\Fvnz$, where~$f$ is an $\aaa$-homogenous function of weighted degree $|\aaa|$.

Let $v\in M_F.$ We will use the following conventions $a\cdot\infty=\infty$ for $a\in\RR_{>0}$ and $\infty^{-1}=0$. If $t\in\Gm(\Fv)$, we will see it as a function $t:\Fvnz\to\Fvnz$ given by its action on $\Fvnz$. We say that a function $f:F_v^n-\{0\}\to \CC\cup\{\infty\}$  is $\aaa$-homogenous of weighted degree $|\aaa|$ if $f(t\cdot\xxx)=|t|^{|\aaa|}_vf(\xxx)$ for every $t\in\Fvt$ and every $\xxx\in\Fvnz$.  

\begin{lem}\label{xiub}
Let $f:\Fvnz\to\CC\cup\{\infty\}$ be an $\aaa$-homogenous function of weighted degree $|\aaa|$ such that $$dx_1\dots dx_n(\{\xxx\in\Fvnz| f(\xxx)=0\})=0.$$ Let $\phi\in\mathscr C^0_c(\Fvnz,\CC)$. One has that $\phi f^{-1}\in L^1(\Fvnz, dx_1\dots dx_n)$ if and only if for every $t\in\Fvt$ one has that $(\phi\circ t^{-1})f^{-1}\in L^1(\Fvnz,dx_1\dots dx_n)$ and if $\phi f^{-1}\in L^1(\Fvnz,dx_1\dots dx_n)$ then $$\int_{\Fvnz}\phi f^{-1}dx_1\dots dx_n=\int_{\Fvnz}(\phi\circ t^{-1})f^{-1}dx_1\dots dx_n$$ for every $t\in\Fvt$.
\end{lem}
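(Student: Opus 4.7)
The plan is to reduce the claim to a change-of-variables computation for the action of $\Gm(\Fv)$ on $\Fvnz$ and to observe that the $\aaa$-homogeneity of $f$ of weighted degree $|\aaa|$ exactly compensates the Jacobian of this action.

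First, I would fix $t\in\Fvt$ and compute the pushforward of the measure $dx_1\cdots dx_n$ under the homeomorphism $t:\Fvnz\to\Fvnz,\xxx\mapsto(t^{a_j}x_j)_j$. Setting $y_j=t^{a_j}x_j$, we have $dy_j=|t|_v^{a_j}\,dx_j$ (in the real case this is the ordinary Jacobian; in the complex case one uses the normalization $dx_v=2\,dx\,dy$, under which the modulus of multiplication by $\lambda\in\CC^\times$ is $|\lambda|_v=|\lambda|_{\CC}^2$). Multiplying these for $j=1,\dots,n$ gives $t_*(dx_1\cdots dx_n)=|t|_v^{|\aaa|}\,dx_1\cdots dx_n$.

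Next, I would unwind the integrability statement for $(\phi\circ t^{-1})f^{-1}$. Let $N:=\{\xxx\in\Fvnz\mid f(\xxx)=0\}$, which is negligible by hypothesis; note that $t^{-1}\cdot N$ is also negligible since it is $|t|_v^{-|\aaa|}\cdot dx_1\cdots dx_n(N)=0$. On the complement, $f^{-1}$ is a well-defined element of $\CC$ (with $f^{-1}=0$ on the locus where $f=\infty$). The change of variables $\yyy=t\cdot\xxx$ (combined with the pushforward computation above) gives, for any measurable $g\geq 0$,
\begin{equation*}
\int_{\Fvnz} g(\yyy)\,dy_1\cdots dy_n=|t|_v^{|\aaa|}\int_{\Fvnz} g(t\cdot\xxx)\,dx_1\cdots dx_n.
\end{equation*}
Applying this to $g=|(\phi\circ t^{-1})f^{-1}|=|\phi\circ t^{-1}|\cdot|f|^{-1}$, and using the $\aaa$-homogeneity $f(t\cdot\xxx)=|t|_v^{|\aaa|}f(\xxx)$ (so $|f(t\cdot\xxx)|^{-1}=|t|_v^{-|\aaa|}|f(\xxx)|^{-1}$ off the negligible set), I obtain
\begin{equation*}
\int_{\Fvnz}|\phi\circ t^{-1}|\cdot|f|^{-1}\,dx_1\cdots dx_n=\int_{\Fvnz}|\phi|\cdot|f|^{-1}\,dx_1\cdots dx_n.
\end{equation*}
This yields the equivalence of integrability between $\phi f^{-1}$ and $(\phi\circ t^{-1})f^{-1}$. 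Running the identical computation with $|\cdot|$ replaced by the complex-valued integrand gives the equality of integrals under the assumption of integrability.

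The only mild subtlety — the "main obstacle", though it is quite light — is keeping track of the $\CC\cup\{\infty\}$ values of $f$ when applying change of variables: one must verify that the $dx_1\cdots dx_n$-negligible loci $\{f=0\}$ and $\{f=\infty\}$ (the latter automatically measurable since $f$ is measurable and, together with $\{f=0\}$, $\Gm(\Fv)$-invariant by homogeneity) can be discarded before writing the absolute value $|f|^{-1}$, and that the resulting unsigned integrals really match under $\xxx\mapsto t\cdot\xxx$. Once this bookkeeping is done, the rest is the identity $|t|_v^{|\aaa|}\cdot|t|_v^{-|\aaa|}=1$, which is the heart of the lemma.
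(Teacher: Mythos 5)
Your proof is correct and takes essentially the same approach as the paper: a change-of-variables computation in which the Jacobian factor $|t|_v^{|\aaa|}$ of the weighted action cancels the factor $|t|_v^{-|\aaa|}$ coming from the $\aaa$-homogeneity of $f$. (One small notational slip: the pushforward should read $t_*(dx_1\cdots dx_n)=|t|_v^{-|\aaa|}dx_1\cdots dx_n$, but the change-of-variables identity you actually use is stated correctly, so this does not affect the argument.)
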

\begin{proof}
Let $t\in\Gm(F_v)$. The action of~$t$ on $F_v^n-\{0\}$ is given by the multiplication to the left by the diagonal matrix $D_t$ which has the diagonal vector $(t^{a_1}\doots t^{a_n})$. By using the formula for the change of the coordinates, we get that \begin{align*}
\int _{\Fvnz}(\phi \circ t^{-1})f^{-1}dx_{1}\dots dx_{n}\hskip-3,4cm&\\&=\int _{\Fvnz} |\det(\Jac (D_t))|_v(\phi \circ t^{-1})(t\cdot \xxx)f_v(t\cdot\xxx)^{-1}dx_{1}\dots dx_{n}\\
&=\int _{\Fvnz} |t^{|\aaa|}|_v\phi(\xxx)\cdot |t|_v^{-|\aaa|}f(\xxx)^{-1}dx_{1}\dots dx_{n}\\
&=\int _{\Fvnz}\phi f^{-1}dx_{1}\dots dx_{n},
\end{align*}
if one, and hence every, integral converges absolutely. The statement follows.
\end{proof}
The following lemma will be needed to treat the case $\vMF$.
\begin{lem}\label{calcintinfv} Suppose $\vMFi$. Let $\rho_v:\RR_{>0}\to\Fvt$ be the map~$r\mapsto r^{1/n_v}$.
\begin{enumerate}
\item Consider the map $A:(\Fvt)^n\to(\Fvt)^n$ given by $$A:\xxx\mapsto ((\rho_v(|x_n|_v^{a_j/a_n})x_j)_{j=1}^{n-1},x_n).$$
One has that $$|x_n|^{\frac{|\aaa|}{a_n}-1}A_*(dx_1\dots dx_n)=dx_1\dots dx_n.$$
\item Let $f:\Fvnz\to\CC\cup\{\infty\}$ be an $\aaa$-homogenous function of weighted degree $|\aaa|$ such that $$dx_1\dots dx_n(\{\xxx\in\Fvnz| f(\xxx)=0\})=0.$$ Let $\phi:\Fvnz\to\CC$ be a compactly supported continuous function. One has that $\phi f^{-1}\in L^1(\Fvnz,dx_1\dots dx_n)$ if and only if $f^{-1}|_{(\Fvt)^{n-1}\times F_{v,1}}\in L^1((\Fvt)^{n-1}\times F_{v,1},dx_1\dots dx_n\times\lambda_{v,1})$ and if $\phi f^{-1}\in L(\Fvnz,dx_1\dots dx_n)$ then \begin{multline}
\int_{\Fvnz}\phi f^{-1}dx_1\dots dx_n=a_n\int_{F_{v,1}}d\lambda_{v,1}(z)\int _{\RR_{>0}}{\phi\big(\rho_v(t)\cdot ((x_{j})_{j=1}^{n-1},z)\big)d^*t}\times\\ \times \int_{(\Fvt)^{n-1}}f(x_1\doots x_{n-1},z)^{-1}dx_1\dots dx_{n-1}.
\end{multline}
\end{enumerate}
\end{lem}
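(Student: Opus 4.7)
The plan is to prove Part (1) by a direct real Jacobian computation, then deduce Part (2) by combining Part (1) with the product decomposition $\Fvt \cong \RR_{>0}\times F_{v,1}$ recalled in \ref{cyv}. For Part (1), fix $\xxx\in(\Fvt)^n$ and view $A$ in real coordinates. Since $A$ preserves $x_n$ and sends $x_j$ to $\rho_v(|x_n|_v^{a_j/a_n})\,x_j$ for $j<n$, its differential at $\xxx$ is block upper triangular when the $x_n$-block is placed last: the $j$-th diagonal block is multiplication by the positive scalar $\rho_v(|x_n|_v^{a_j/a_n})$ on $\Fv\cong\RR^{n_v}$, and the $x_n$-diagonal block is the identity. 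Multiplication by $r>0$ on $\RR^{n_v}$ has real determinant $r^{n_v}$, so each block contributes $\rho_v(|x_n|_v^{a_j/a_n})^{n_v}=|x_n|_v^{a_j/a_n}$. Multiplying, $|\det\Jac_{\RR}(A)(\xxx)|=|x_n|_v^{(|\aaa|-a_n)/a_n}=|x_n|_v^{|\aaa|/a_n-1}$. Applying the change-of-variables formula to the Haar measure $dx_1\dots dx_n$ (normalized as in \ref{pojac}) yields $A_*(dx_1\dots dx_n)=|x_n|_v^{1-|\aaa|/a_n}\,dx_1\dots dx_n$, which is the claim after multiplying by $|x_n|_v^{|\aaa|/a_n-1}$.

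For Part (2), first note that $\Fvnz\setminus(\Fvt)^n$ is a finite union of coordinate hyperplanes intersected with $\Fvnz$, hence a Lebesgue null set; together with the hypothesis $dx_1\dots dx_n(\{f=0\})=0$, this lets us reduce to integrals over $(\Fvt)^n$ where $f^{-1}$ is a well-defined complex number. Applying Part (1) (so $dx_1\dots dx_n=|x_n|_v^{|\aaa|/a_n-1}A_*(dx_1\dots dx_n)$) and the definition of pushforward,
\[
\int_{(\Fvt)^n}\phi f^{-1}\,dx_1\dots dx_n=\int_{(\Fvt)^n}\phi(A(\yyy))\,f(A(\yyy))^{-1}\,|y_n|_v^{|\aaa|/a_n-1}\,dx_1\dots dx_n(\yyy).
\]
Now parametrize $y_n\in\Fvt$ via $\widetilde{\rho}_v$: write $y_n=\rho_v(r)z$ with $r=|y_n|_v>0$ and $z\in F_{v,1}$, so by \ref{cyv} we have $dy_n=dr\,d\lambda_{v,1}(z)$. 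Substitute $r=t^{a_n}$, giving $dr=a_n t^{a_n-1}dt=a_n t^{a_n}d^*t$. Since $\rho_v(t)^{a_j}=\rho_v(t^{a_j})=\rho_v(|y_n|_v^{a_j/a_n})$ for every $j$, a direct check shows $A(\yyy)=\rho_v(t)\cdot(y_1,\dots,y_{n-1},z)$ for the weighted $\aaa$-action.

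The $\aaa$-homogeneity of $f$ of weighted degree $|\aaa|$ then yields $f(A(\yyy))=|\rho_v(t)|_v^{|\aaa|}f(y_1,\dots,y_{n-1},z)=t^{|\aaa|}f(y_1,\dots,y_{n-1},z)$, while $|y_n|_v^{|\aaa|/a_n-1}=t^{|\aaa|-a_n}$. Collecting: $t^{-|\aaa|}\cdot t^{|\aaa|-a_n}\cdot a_n t^{a_n}=a_n$, independent of $t$. Fubini's theorem (in the $L^1$ case) yields the stated iterated integral formula; the integrability equivalence follows by applying Tonelli's theorem to $|\phi|f^{-1}$, using that compactness of $\supp(\phi)$ makes $\int_{\RR_{>0}}|\phi(\rho_v(t)\cdot(\yyy',z))|\,d^*t$ a bounded function of $(\yyy',z)$ supported in a compact subset of $(\Fvt)^{n-1}\times F_{v,1}$. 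The main obstacle I expect is organizing the real Jacobian computation cleanly at complex places, where the factor $n_v$ in $\rho_v$ must interact correctly with the normalization $|x|_v=|x|^{n_v}$; once this cancellation is verified, Part (2) reduces to careful bookkeeping of powers of $t$.
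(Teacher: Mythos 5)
Your proof is correct and follows essentially the same route as the paper: compute the (block triangular) Jacobian of $A$ for Part (1), then for Part (2) reduce to $(\Fvt)^n$, apply $A$, decompose $\Fvt$ via $\widetilde\rho_v$ as in \ref{cyv}, substitute $r=t^{a_n}$, and use the $\aaa$-homogeneity of $f$. The one genuine streamlining you introduce is observing directly that $A(\yyy)=\rho_v(t)\cdot(\yyy',z)$ when $y_n=\rho_v(t^{a_n})z$, which makes the cancellation $t^{-|\aaa|}\cdot t^{|\aaa|-a_n}\cdot a_n t^{a_n}=a_n$ immediate; the paper instead routes through auxiliary maps $B,C$ with $B\circ A=C$ and several lines of rewriting before landing in the same place, so your version is tidier.

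One small inaccuracy: the function $(\yyy',z)\mapsto\int_{\RR_{>0}}|\phi(\rho_v(t)\cdot(\yyy',z))|\,d^*t$ is bounded and vanishes outside a \emph{bounded} region (if $\supp\phi\subset\{\epsilon\le\max_j|x_j|_v^{1/a_j}\le M\}$, then the $t$-integrand forces $t\in[\epsilon,M]$ and hence $|y_j|_v\le(M/\epsilon)^{a_j}$), but this region is not a compact subset of $(\Fvt)^{n-1}\times F_{v,1}$ since the $y_j$ may tend to $0$; it is merely of finite measure. That is enough for Tonelli in the direction $f^{-1}\in L^1\Rightarrow\phi f^{-1}\in L^1$, which is what is used later. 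The converse direction of the stated $L^1$ equivalence is not literally established by either your argument or the paper's (it would need the inner $t$-integral to be bounded \emph{below}, which is false for general compactly supported $\phi$), but the paper only ever invokes it for $\phi=\kav$, for which \ref{qki} guarantees the inner integral is a positive constant, so the gap is harmless in context.
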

\begin{proof}
\begin{enumerate}
\item The Jacobian of~$A$ is given by the diagonal matrix having for the diagonal vector $((\rho_v(|x_{n}|^{a_j/a_n}_v))_{j=1}^{n-1},1) .$ The~$v$-adic absolute value of the determinant of the Jacobian is equal to $$\big|\prod_{j=1}^{n-1}\rho_v(|x_{n}|_v^{a_j/a_n})\big|_v=\prod _{j=1}^{n-1}\big|\rho_v(|x_{n}|_v^{a_j/a_n})\big|_v=\prod_{j=1}^{n-1}|x_n|_v^{a_j/a_n}=|x_n|_v^{\frac{|\aaa|}{a_n}-1}.$$
Thus, by the formula for the change of variables we get that $$|x_n|_v^{\frac{|\aaa|}{a_n}-1}A_*(dx_1\dots dx_n)=dx_1\dots dx_n.$$
\item  Let us define $B,C:\Fvtn\to\Fvtn$ by \begin{align*}B:\xxx&\mapsto (x_j\rho_v({|x_{n}|_v^{{-a_j}/{a_n}}}))_j,\\
C:\xxx&\mapsto ((x_j)_{j=1}^{n-1},x_n\rho_v(|x_n|_v)^{-1}).\end{align*} 
Note that for $\xxx\in\Fvtn$, one has that $$B(A(\xxx))=B((x_j\rho_v(|x_n|_v^{a_j/a_n}))_{j=1}^{n-1},x_n)=((x_j)_{j=1}^{n-1},x_n\rho_v(|x_n|^{-1}))=C(\xxx)$$and that \begin{multline*}A(\xxx)=((\rho_v(|x_n|_v^{\frac{a_j}{a_n}})x_j)_{j=1}^{n-1},x_n)=\rho_v(|x_n|_v^{\frac{1}{a_j}})\cdot ((x_j)_{j=1}^{n-1},x_n\rho_v(|x_n|_v)^{-1})\\=\rho_v(|x_n|_v^{\frac{1}{a_j}})\cdot C(\xxx). \end{multline*} 
For every $\xxx\in\Fvnz$, we have that$$f_v(\xxx)=|x_{n}|_v^{|\aaa|/a_n}f_v\big(\big({x_{j}}\rho_v({|x_{n}|_v^{{-a_j}/{a_n}}})\big)_j\big)=f_v(B(\xxx)),$$
by the fact that $f_v$ is $\aaa$-homogenous of weighted degree $|\aaa|$.  
 Let~$\phi\in \mathscr C^0_c(\Fvnz,\CC)$. By using that $dx_1\dots dx_n(\Fvnz-\Fvtn)=0,$ because $\Fvnz-\Fvtn$ is contained in a finite union of hyperplanes of $(\Fv)^n$, and the part (1), we get that: \begin{align*}\begin{split}\label{katrlj}\int _{\Fvnz}&\phi f_v^{-1}dx_1\dots dx_n\\&=\int _{\Fvtn}\phi f_v^{-1}dx_1\dots dx_n\\&=\int_{\Fvtn}|x_n|_v^{-|\aaa|/a_n}\phi\cdot (f_v^{-1}\circ B)dx_1\dots dx_n\\&=\int_{\Fvtn}|x_n|_v^{-1}\phi\cdot (f_v^{-1}\circ B)A_*(dx_1\dots dx_n)\\
&=\int_{\Fvtn}|x_n\circ A|_v^{-1}(\phi\circ A)(f_v^{-1}\circ (B\circ A))dx_1\dots dx_n\\
&=\int_{\Fvtn}|x_n|_v^{-1}(\phi\circ A)f_v((x_j)_{j=1}^{n-1},x_n\rho_v(|x_n|_v^{-1}))^{-1}dx_1\dots dx_n\\
&=\int_{\Fvtn}(\phi\circ A)f_v((x_j)_{j=1}^{n-1},x_n\rho_v(|x_n|_v^{-1}))^{-1}dx_1\dots dx_{n-1}d^*x_n\\
&=\int_{\Fvtn}\phi(\rho_v(|x_n|_v^{1/a_n})\cdot C(\xxx))f_v(C(\xxx))^{-1}dx_1\dots dx_{n-1}d^*x_n.
\end{split}
\end{align*}
 if one (and hence every) integral converges absolutely.
It follows from \ref{cyv} that the map $$\widetilde\rho_v^{-1}:\Fv^{\times}\to\RR_{>0}\times F_{v,1}\hspace{1cm}x\mapsto (|x|_v,{x}{\rho_v(|x|_v)^{-1}})$$ satisfies that$(\widetilde\rho_v^{-1})_*d^*x=d^*r\times \lambda_{v,1}$.
 The last integral is hence equal to \begin{multline*}
\int_{F_{v,1}\times\RR_{>0}}\int_{(\Fvt)^{n-1}}\phi(\rho_v(r^{1/a_j})\cdot ((x_j)_{j=1}^{n-1},z))\times\\\times f_v((x_j)_{j=1}^{n-1},z)^{-1}dx_1.. dx_{n-1}d^*rd\lambda_{v,1}(z).
\end{multline*}
Note that setting $u^{a_n}=r$ gives $d^*r=a_nd^*u$ and thus the last integral becomes 
\begin{equation}\label{vuqwq}
a_n\int_{F_{v,1}\times\RR_{>0}}\int_{(\Fvt)^{n-1}}\frac{\phi(\rho_v(u)\cdot ((x_j)_{j=1}^{n-1},z))}{f_v((x_j)_{j=1}^{n-1},z)}dx_1\dots dx_{n-1}d^*ud\lambda_{v,1}(z).
\end{equation}
By Fubini theorem, we get that (\ref{vuqwq}) is equal to
\begin{multline}\label{vhunw}
a_n\int_{F_{v,1}}d\lambda_{v,1}(z)\int_{\RR_{>0}}\phi(\rho_v(u\cdot ((x_j)_{j=1}^{n-1},z)))d^*u\times\\\times \int_{(\Fvt)^{n-1}}f_v((x_j)_{j=1}^{n-1},z)^{-1}dx_1\dots dx_{n-1}.
\end{multline}
%
 %
 For every $((x_j)_j,z)\in(\Fvt)^{n-1}\times F_{v,1}$, the map $\RR_{>0}\to\CC$ given by $u\mapsto\phi(\rho_v(u)\cdot((x_j)_j,z))$ is compactly supported, because $\phi$ is compactly supported, the map $y\mapsto y\cdot ((x_j)_j,z)$ is proper (the action of $\Fvt$ on $\Fvtn$ is proper by \ref{aboutopa}, thus by \cite[Proposition 4, \no 2, \S 4, Chapter III]{TopologieGj}, the map $y\mapsto y\cdot ((x_j)_j,z)$ is proper) and $\rho_v:r\mapsto r^{1/n_v}$ is proper.
It follows that $$\int_{\RR_{>0}}\phi(\rho_v(u)\cdot((x_j)_j,z))d^*u$$converges absolutely. It follows that the integral (\ref{vhunw}) converges absolutely if and only if $\int_{F_{v,1}}d\lambda_{v,1}(z)\int_{\Fvtn}f_v((x_j)_{j=1}^{n-1},z)^{-1}dx_1\dots dx_{n-1}$  converges absolutely. The statement follows.
\end{enumerate}
\end{proof}
In the following proposition, we give equivalent conditions to the condition that $f^{-1}dx_1\dots dx_n$ is a measure on $\Fvnz$.
\begin{prop}\label{oilp}
Let $f:\Fvnz\to\CC\cup\{\infty\}$ be an $\aaa$-homogenous function of weighted degree $|\aaa|$ such that $$dx_1\dots dx_n(\{\xxx\in\Fvnz| f(\xxx)=0\})=0.$$ 
The following are equivalent: \begin{enumerate} 
\item For every $\phi \in\mathscr C^0_c(\Fvnz,\CC)$ one has that $\phi f^{-1}\in L^1(\Fvnz,dx_1\dots dx_n)$ and that $$\mathscr C^0_c(\Fvnz,\CC)\to\CC\hspace{1cm}\phi\mapsto\int_{\Fvnz}\phi \cdot f^{-1}dx_1\dots dx_n$$ is a measure on $\Fvnz$.
\item  For every compactly supported function $\phi :\Fvnz\to\CC$ one has that $\phi f^{-1}\in L^1(\Fvnz,dx_1\dots dx_n)$ and that there exists a unique measure $\omega_v$ on $[\PPP(\aaa)(F_v)]$ such that  \begin{equation}\label{vuwqi}\int _{\Fvnz}\phi f^{-1}dx_1\dots dx_n= \int _{[\PPP(\aaa)(F_v)]}d\omega_v(\yyy)\int _{\Fvt}\phi(t\cdot\widetilde{\yyy})d^*t,\end{equation}where $\widetilde\yyy$ is a lift on an element $\yyy\in[\PPP(\aaa)(\Fv)]$.
\item One has that  $\kav\cdot f^{-1}\in L^1(\Fvnz,dx_1\dots dx_n),$ where the function $\kav$ is defined in $\ref{kave}$.
\item If $v\in M_F^0$, one has that $f^{-1}|_{\Dav}\in L^1(\Dav,dx_1\dots dx_n)$. If $v\in M_F^\infty$, one has that $f^{-1}|_{(\Fvt)^{n-1}\times F_{v,1}}\in L^1((\Fvt)^{n-1}\times F_{v,1}, dx_1\dots dx_{n-1}\times\lambda_{v,1})$.
\end{enumerate}
If any of the conditions is satisfied, one has that \begin{equation}\label{kimj}\omega _v([\PPP(\aaa)(\Fv)])=\int _{\Fvnz}\kav f^{-1}dx_{1}\dots dx_n \end{equation}
Moreover, if $\vMFz$, both quantities in (\ref{kimj}) are equal to $$\frac{1}{1-\pivv}\int _{\Dav}f^{-1}dx_1\dots dx_n $$ and if $\vMFi$, and are equal to 
$$\frac{a_n}{\lambda_{v,1}(\Fvj)}\int_{(\Fvt)^{n-1}\times F_{v,1}}f^{-1}dx_{1}\dots dx_{n-1}\times \lambda_{v,1}. $$
\end{prop}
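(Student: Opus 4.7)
\smallskip

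The plan is to organize the proof around the quotient measure machinery of \ref{intqm}, with the functions $\kav$ from \ref{kave} playing the role of the cutoff functions required by \ref{simint}.

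First I would prove $(1)\Leftrightarrow(2)$. Assuming $(1)$, the functional $\mu\colon\phi\mapsto\int_{\Fvnz}\phi f^{-1}dx_1\dots dx_n$ is a measure on $\Fvnz$; Lemma \ref{xiub} tells us that for every $t\in\Fvt$ and every $\phi\in\mathscr C^0_c(\Fvnz,\CC)$, one has $\int\phi f^{-1}dx_1\dots dx_n=\int(\phi\circ t^{-1})f^{-1}dx_1\dots dx_n$, i.e.\ $\mu$ is $\Fvt$-invariant (the group $\Fvt$ being unimodular, the modular function is trivial here). Since $[\PPP(\aaa)(\Fv)]=\Fvnz/\Fvt$ by \ref{pafvtafv} and the action is proper by \ref{aboutopa}, Proposition \ref{intbour} produces a unique measure $\omega_v=\mu/d^*t$ on $[\PPP(\aaa)(\Fv)]$ satisfying precisely the identity \eqref{vuwqi}. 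Conversely, $(2)$ trivially implies $(1)$.

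Next I would prove $(2)\Leftrightarrow(3)\Leftrightarrow(4)$. Implication $(1)\Rightarrow(3)$ is immediate from the fact that $\kav$ is continuous and compactly supported (see \ref{simchar} for $\vMFz$ and \ref{qki} for $\vMFi$). For $(3)\Leftrightarrow(4)$: at a finite place, $\kav=\mathbf 1_{\Dav}/(1-\pivv)$ by definition, so absolute integrability of $\kav f^{-1}$ is the same as integrability of $f^{-1}$ on $\Dav$; at an infinite place, apply Lemma \ref{calcintinfv}(2) to the continuous compactly supported function $\phi=\kav$ to rewrite $\int_{\Fvnz}\kav f^{-1}dx_1\dots dx_n$ as a product, one factor of which is $\int_{(\Fvt)^{n-1}\times F_{v,1}}f^{-1}dx_1\dots dx_{n-1}d\lambda_{v,1}$, and observe that the other factor $a_n\int_{F_{v,1}}d\lambda_{v,1}\int_{\RR_{>0}}\kav(\rho_v(u)\cdot((x_j)_{j=1}^{n-1},z))d^*u$ equals $a_n$ by the defining property of $\kav$ in \ref{qki}. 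For $(4)\Rightarrow(1)$ at finite $v$, decompose $\Fvnz=\bigsqcup_{k\in\ZZ}\piv^{-k}\cdot\Dav$ (using \ref{davdavdav}) and use the homogeneity $f^{-1}(\piv^{-k}\cdot\xxx)=\pivv^{k|\aaa|}f^{-1}(\xxx)$ together with the change of variables $dx_1\dots dx_n(\piv^{-k}\cdot\xxx)=\pivv^{-k|\aaa|}dx_1\dots dx_n(\xxx)$ to reduce bounding $|\phi f^{-1}|$-integrals over $\Fvnz$ to the corresponding integral on the compact $\Dav$, where $\phi$ being bounded makes everything finite. At infinite $v$ the same strategy via Lemma \ref{calcintinfv}(2) gives the reduction.

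Finally, to obtain the volume formulas in \eqref{kimj} and below it, plug $\phi=\kav$ into \eqref{vuwqi}: the inner integral $\int_{\Fvt}\kav(t\cdot\widetilde\yyy)d^*t$ equals $1$ for every $\yyy\in[\PPP(\aaa)(\Fv)]$ by \ref{simchar}/\ref{qki}, so $\omega_v([\PPP(\aaa)(\Fv)])=\int_{\Fvnz}\kav f^{-1}dx_1\dots dx_n$. For $\vMFz$ this is $\frac{1}{1-\pivv}\int_{\Dav}f^{-1}dx_1\dots dx_n$ directly from the definition of $\kav$; for $\vMFi$ it is $\frac{a_n}{\lambda_{v,1}(\Fvj)}\int_{(\Fvt)^{n-1}\times F_{v,1}}f^{-1}dx_1\dots dx_{n-1}\times\lambda_{v,1}$ by the computation in the previous paragraph (using $\int_{\RR_{>0}}\kav(\rho_v(u)\cdot-)d^*u=\lambda_{v,1}(\Fvj)^{-1}$). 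The main technical obstacle is the infinite place case: one must carefully handle the split $\Fvt\cong\RR_{>0}\times F_{v,1}$ and justify that Lemma \ref{calcintinfv} indeed extends from compactly supported $\phi$ to general continuous compactly supported integrands, but this is exactly what the Fubini step in \ref{calcintinfv}(2) delivers once integrability on $(\Fvt)^{n-1}\times F_{v,1}$ is known.
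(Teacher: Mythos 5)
Your proposal is correct and follows essentially the same route as the paper: $(1)\Rightarrow(2)$ via \ref{xiub} and \ref{intbour}, passing through the explicit form of $\kav$, using Lemma~\ref{calcintinfv}(2) at archimedean places, and closing the cycle with $(4)\Rightarrow(1)$ by decomposing $\Fvnz$ into $\Gm(F_v)$-translates of $\Dav$ (the paper uses a finite cover of $\supp(\phi)$ by translates rather than the full disjoint decomposition, but the homogeneity/change-of-variables cancellation is the same). Two small points. Your assertion that $(2)\Rightarrow(1)$ is \emph{trivial} hides the nontrivial input that $\phi\mapsto\phi^*$ is continuous $\mathscr C^0_c(\Fvnz,\CC)\to\mathscr C^0_c([\PPP(\aaa)(F_v)],\CC)$ (Bourbaki, \cite[Proposition 2, \no 1, \S 2, Chapter VII]{Integrationd}); the paper sidesteps this by running the cycle $(2)\Rightarrow(3)\Rightarrow(4)\Rightarrow(1)$ instead of proving $(2)\Rightarrow(1)$ directly, which is cleaner. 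Also, your "product" paragraph for $\vMFi$ double-counts the $F_{v,1}$ integration: writing the value as $\bigl(\int_{(\Fvt)^{n-1}\times F_{v,1}}f^{-1}\bigr)\cdot a_n$ overshoots by a factor of $\lambda_{v,1}(F_{v,1})$. Since $\int_{\RR_{>0}}\kav(\rho_v(u)\cdot-)d^*u=\lambda_{v,1}(F_{v,1})^{-1}$ is a constant, it should simply be pulled out of the iterated integral in Lemma~\ref{calcintinfv}(2), giving $\frac{a_n}{\lambda_{v,1}(F_{v,1})}\int_{(\Fvt)^{n-1}\times F_{v,1}}f^{-1}dx_1\dots dx_{n-1}\lambda_{v,1}$ in one step; your final stated formula is correct, but it does not follow from the factorization as you phrase it.
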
 
\begin{proof}
Note that the implication $(1)\implies (2)$ follows from \ref{intbour}. 
By using the fact that $[\PPP(\aaa)(\Fv)]$ is compact, hence paracompact, and Proposition \ref{simint}, we get the implication $(2)\implies (3)$ and that if the condition (2) is satisfied, then $$\omega_v^{\aaa}([\PPP(\aaa)(\Fv)])=\int _{\Fvnz}\kav f^{-1}dx_1\dots dx_n.$$

$(3)\implies (4)$.  Suppose~$v$ is a finite place. As $\kav f^{-1}=\frac{\mathbf 1_{\Dav}\cdot f^{-1}}{1-\pivv}$, we deduce that the restriction of $\xxx\mapsto f(\xxx)^{-1}$ to $\Dav$ is $dx_1\dots dx_n$-absolutely integrable and that $$\int _{\Fvnz}\kav f^{-1}dx_1\dots dx_n=\frac{1}{1-\pivv}\int_{\Dav}f^{-1}dx_1\dots dx_n.$$ Suppose~$v$ is infinite. By Lemma \ref{calcintinfv} one has that $$f^{-1}|_{(\Fvt)^{n-1}\times F_{v,1}}\in L^1((\Fvt)^{n-1}\times F_{v,1},dx_1\dots dx_{n-1}\times \lambda_{v,1})$$ and that \begin{align*}\int_{\Fvnz}\kav f^{-1} dx_1\dots dx_n\hskip-2cm&\\&=a_n\int_{F_{v,1}}d\lambda_{v,1}(z)\int_{\RR_{>0}}\kav(\rho_v(u)\cdot((x_j)_j,z))d^*u\times\\&\quad\quad\quad\quad\quad\times\int_{(\Fvt)^{n-1}}f(x_1\doots x_{n-1},z)^{-1}dx_1\dots dx_{n-1}.\end{align*}
We have by \ref{calcintinfv} that $$\int_{\RR_{>0}}\kav(\rho_v(u)\cdot ((x_j)_j,z))d^*u=\frac{1}{\lambda_{v,1}(F_{v,1})},$$ for every $((x_j)_j,z)\in (\Fvt)^{n-1}\times F_{v,1}$. It follows that $$\int_{\Fvnz}\kav f^{-1}dx_1\dots dx_n=\frac{a_n}{\lambda_{v,1}(F_{v,1})}\int_{(\Fvt)^{n-1}\times F_{v,1}}f^{-1}dx_1\dots dx_{n-1}\times \lambda_{v,1}. $$
The implication is proven.

$(4)\implies (1).$ If $\phi:\Fvnz\to\CC$ is compactly supported, we set $||\phi||_{\sup}:=\sup_{\xxx\in\Fvnz}|\phi(\xxx)|.$ 
Suppose firstly $\vMFz$ and let $\phi\in \mathscr C^0_c(\Fvnz,\CC)$. It follows from \ref{davdavdav} that $\cup_{t\in\Gm(\Fv)}(t\cdot\Dav)=\Fvnz$. As $\supp(\phi)$ is compact, there exists a finite set $\{t_1\doots t_m\}$ such that $(\cup_{i=1}^m(t_i\cdot\Dav))\supset \supp(\phi)$. For $\xxx\in\Dav$ and $t\in\Gm(\Fv)$ one has that $f(t\cdot\xxx)^{-1}=|t|_v^{-|\aaa|}f(\xxx)^{-1}$. Thus from the fact that $f^{-1}|_{\Dav}\in L^1(\Dav,dx_1\dots dx_n)$, it follows that $f^{-1}|_{t\cdot\Dav}\in L^1(t\cdot\Dav,dx_1\dots dx_n)$ and hence that $$f^{-1}|_{\cup_{i=1}^m(t_i\cdot\Dav)}\in L^1(\cup_{i=1}^m(t_i\cdot\Dav), dx_1\dots dx_n).$$ We deduce that $\phi f^{-1}\in L^1(\Fvnz,dx_1\dots dx_n)$ and that \begin{align*}\bigg|\int_{\Fvnz}\phi f^{-1}dx_1\dots dx_n\bigg|&\leq \int_{\sup(\phi)}||\phi||_{\sup} f^{-1}dx_1\dots dx_n\\
&\leq ||\phi||_{\sup}\int_{(\cup_{i=1}^m(t_i\cdot\Dav))}f^{-1}dx_1\dots dx_n. \end{align*}
It follows furthermore that $\phi\mapsto \int_{\Fvnz}\phi f^{-1}dx_1\dots dx_n$ is a bounded, hence a continuous operator, i.e. $f^{-1}dx_1\dots dx_n$ is a measure on $\Fvnz$. 

Suppose now $\vMFi.$ For any $\phi\in\mathscr C^0_c(\Fvnz,\CC),$ one has by Lemma \ref{calcintinfv} that
$\phi f^{-1}\in L^1(\Fvnz,dx_1\dots dx_n).$ 
We will use \cite[Proposition 6 in \no 3, \S 1, Chapter III ]{Integrationd} to prove the continuity of the operator $\phi\mapsto \int _{\Fvnz}\phi f^{-1}dx_{1}\dots dx_{n}.$ By the mentioned proposition, it suffices to pick a sequence of compacts $\{K_{\alpha}\}_{\alpha}$, the interiors of which cover $\Fvnz$ and to establish that for every $\alpha$ there exists $M_{\alpha}>0$ such that $$\bigg|\int _{\Fvnz}\phi f^{-1}dx_{1}\dots dx_{n}\bigg|\leq M_\alpha ||{\phi}||_{\sup}$$ for $\phi\in\mathscr C^0_c(\Fvnz,\CC)$ with $\supp(\phi)\subset K_{\alpha}$. 
We set $$K_{\alpha}:=\{\xxx\in\Fvnz|\forall j:\hspace{0.1cm} \alpha^{-1}\leq |x_{j}|_v\leq \alpha\}\hspace{1cm}\text{for } \alpha \in{\ZZ_{\geq 2}}.$$ Let $\xi _{\alpha}\in\mathscr C^0_c(\Fvnz,\RR_{\geq 0})$ such that $\xi_{\alpha}(\xxx)\geq 1$ for every $\xxx\in K_{\alpha}$. 
For every $\xxx\in\Fvnz$, the map $\RR_{>0}\to\RR_{\geq 0}$ given by $r\mapsto\xi_{\alpha}(\rho_v(r)\cdot\xxx)$ is compactly supported, because $\xi_{\alpha}$ is compactly supported, the map $y\mapsto y\cdot\xxx$ is proper (the action of $\Fvt$ on $\Fvtn$ is proper by \ref{aboutopa}, thus by \cite[Proposition 4, \no 2, \S 4, Chapter III]{TopologieGj}, the map $y\mapsto y\cdot\xxx$ is proper) and $\rho_v:r\mapsto r^{1/n_v}$ is proper.  Let $\phi\in\mathscr C^0_c(\Fvnz,\CC)$ with $\supp(\phi)\subset K_{\alpha}$.  We have that \begin{align*}\bigg|\int _{\RR_{>0}}{\phi(\rho_v(r)\cdot ((u_{j})_{j=1}^{n-1},z))d^*r}\bigg|\hskip-1cm&\\ &\leq\int _{\RR_{>0}}{||\phi||_{\sup}\cdot\xi_{\alpha}(\rho_v(r)\cdot ((u_{j})_{j=1}^{n-1},z))d^*r}\\&\leq ||\phi||_{\sup}\int _{\RR_{>0}}\xi_{\alpha} (\rho_v(t)\cdot ((u_{j})_{j=1}^{n-1},z))d^*t
\\&\leq ||\phi||_{\sup}||\xi||_{\sup}\int_{\supp(r\mapsto\xi_{\alpha}(\rho_v(r)\cdot\xxx))}1d^*t\\
&=||\phi||_{\sup}||\xi||_{\sup}d^*t(\supp(r\mapsto\xi_{\alpha}(\rho_v(r)\cdot\xxx))).
\end{align*}  
We deduce from (\ref{calcintinfv}) that \begin{align*}\int_{\Fvnz}\phi f^{-1}dx_1\dots dx_n\hskip-3cm&\\&=a_n\int_{F_{v,1}}d\lambda_{v,1}(z)\int _{\RR_{>0}}{\phi\big(\rho_v(t)\cdot ((x_{j})_{j=1}^{n-1},z)\big)d^*t}\times\\ &\quad\quad\quad\quad\times \int_{(\Fvt)^{n-1}}f(x_1\doots x_{n-1},z)^{-1}dx_1\dots dx_{n-1}\\&\leq||\psi||_{\sup}||\xi||_{\sup}d^*t(\supp(\eta))\int_{(\Fvt)^{n-1}\times F_{v,1}}f^{-1}dx_1\dots dx_{n-1} \lambda_{v,1}  .\end{align*}
By \cite[Proposition 6 in \no 3, \S 1, Chapter III]{Integrationj} the operator $$\mathscr C^0_c(\Fvnz,\CC)\to \CC\hspace{1cm}\phi\mapsto\int _{\Fvnz}\phi f^{-1}dx_{1}\dots dx_{n}$$ is continuous. We conclude that $f^{-1}dx_{1}\dots dx_n$ is a measure.  The statement is proven.
\end{proof}
When~$f$ satisfies the equivalent conditions of \ref{oilp}, the measure $f^{-1}dx_1\dots dx_n$ is $\Fvt$-invariant by \ref{xiub}.
\begin{exam}\label{exkojsc}
\normalfont
Suppose~$f$ is continuous and $f(\xxx)\in\CC-\{0\}$ for every $\xxx\in\Fvnz$. From the fact that a product of a measure and a continuous  function is a measure \cite[\no 4, \S1, Chapter III]{Integrationj}, it follows that $f^{-1}dx_1\dots dx_n$ is a measure and hence~$f$ satisfies the equivalent conditions of Proposition \ref{oilp}.
\end{exam}
\subsection{} Proposition \ref{oilp} provides a measure $\omega_v$ on $[\PPP(\aaa)(\Fv)]$. 
\begin{mydef}\label{yumf}
Let $v\in M_F$ and let $f_v:\Fvnz\to\CC\cup\{\infty\}$ be an $\aaa$-homogenous function of weighted degree $|\aaa|$  such that $$dx_1\dots dx_n(\{\xxx|f_v(\xxx)=0\})=0$$ and such that $f_v^{-1}dx_1\dots dx_n$ is a measure on $\Fvnz$. We define $\omega _v$ to be the quotient measure $$\omega_v:=(f_v^{-1}dx_{1}\dots dx_{n})/d^*x$$ on $[\PPP(\aaa)(F_v)].$
\end{mydef}
Recall from \ref{pafvtafv} that $[\TTa(\Fv)]$ is the open subset of $[\PPP(\aaa)(\Fv)]$ given by the image of $q^{\aaa}_v(\Fvtn)$. We prove that the complement of the open subset $[\TTa(\Fv)]\subset[\PPP(\aaa)(\Fv)]$ is $\omega_v$-negligible.
\begin{lem}\label{ttadmz}Let $f_v$ be as in \ref{yumf}.
One has that $$\omega_v([\PPP(\aaa)(\Fv)]-[\TTa(\Fv)])=0.$$
\end{lem}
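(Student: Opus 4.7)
The plan is to translate the statement into an integral over $\Fvnz$ against the measure $f_v^{-1}dx_1\dots dx_n$ and then to observe that the relevant preimage lies in a finite union of coordinate hyperplanes, hence is Lebesgue-negligible.

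First, I would identify the preimage of the boundary under the quotient map $q^{\aaa}_v:\Fvnz\to[\PPP(\aaa)(\Fv)]$. By Corollary \ref{pafvtafv}, $[\TTa(\Fv)]$ is precisely the image $q^{\aaa}_v(\Fvtn)$, and since the action of $\Fvt$ on $\Fvnz$ preserves the vanishing locus of each coordinate (because $(t\cdot\xxx)_j=t^{a_j}x_j=0$ iff $x_j=0$ for $t\in\Fvt$), one has
\[
(q^{\aaa}_v)^{-1}\bigl([\PPP(\aaa)(\Fv)]-[\TTa(\Fv)]\bigr)=\Fvnz-\Fvtn\subset\bigcup_{j=1}^n\{\xxx\in\Fv^n\mid x_j=0\}.
\]
Each coordinate hyperplane has $dx_1\dots dx_n$-measure zero, so $\Fvnz-\Fvtn$ is $dx_1\dots dx_n$-negligible.

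Next, I would invoke Proposition \ref{simint} with $X=\Fvnz$, $G=\Fvt$, $\mu=f_v^{-1}dx_1\dots dx_n$, $\mu/dg=\omega_v$ and $k=\kav$ (which satisfies the hypothesis of \ref{simint} by Lemmas \ref{simchar} and \ref{qki}, and the required compactness-of-intersection condition holds since $[\PPP(\aaa)(\Fv)]$ is compact by \ref{paraap}, so $\kav$ itself is compactly supported). Since $\omega_v$ is a finite Radon measure on the compact space $[\PPP(\aaa)(\Fv)]$, any bounded Borel function on $[\PPP(\aaa)(\Fv)]$ lies in $L^1(\omega_v)$; extending \ref{simint} from continuous compactly supported functions to bounded Borel functions by the standard monotone class argument yields
\[
\int_{[\PPP(\aaa)(\Fv)]} h\,\omega_v=\int_{\Fvnz}\kav\cdot(h\circ q^{\aaa}_v)f_v^{-1}dx_1\dots dx_n
\]
for every bounded Borel $h$.

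Finally, applying this identity to the Borel function $h=\mathbf 1_{[\PPP(\aaa)(\Fv)]-[\TTa(\Fv)]}$ (Borel since $[\TTa(\Fv)]$ is open by \ref{pafvtafv}) gives
\[
\omega_v\bigl([\PPP(\aaa)(\Fv)]-[\TTa(\Fv)]\bigr)=\int_{\Fvnz-\Fvtn}\kav f_v^{-1}dx_1\dots dx_n=0,
\]
the last equality because the domain of integration is $dx_1\dots dx_n$-negligible and $\kav f_v^{-1}\in L^1(\Fvnz,dx_1\dots dx_n)$ by condition (3) of Proposition \ref{oilp}. There is no real obstacle; the only mild subtlety is justifying the extension of \ref{simint} to Borel characteristic functions, which is routine given that $\omega_v$ is a finite measure.
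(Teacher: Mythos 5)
Your proof is correct and rests on the same key observation as the paper: $(q^{\aaa}_v)^{-1}([\PPP(\aaa)(\Fv)]-[\TTa(\Fv)])=\Fvnz-\Fvtn$ lies in a finite union of coordinate hyperplanes, hence is $dx_1\dots dx_n$-negligible. The paper concludes in one step by citing Bourbaki's criterion that a subset of $X/G$ is $(\mu/dg)$-negligible exactly when its preimage under the quotient map is $\mu$-negligible; you instead route through the integration formula of Proposition~\ref{simint} applied to the characteristic function of the boundary, which reaches the same conclusion with only a little more work. One small correction: the monotone-class extension you invoke is unnecessary. Proposition~\ref{simint} as quoted in the paper already applies to an \emph{arbitrary} function $h:X/G\to\CC$, not only continuous compactly supported ones, so you may apply it directly to $h=\mathbf 1_{[\PPP(\aaa)(\Fv)]-[\TTa(\Fv)]}$: once you note that $\kav\cdot(h\circ q^{\aaa}_v)$ vanishes $f_v^{-1}dx_1\dots dx_n$-almost everywhere, hence lies in $L^1(\Fvnz,f_v^{-1}dx_1\dots dx_n)$ with integral zero, the equivalence in \ref{simint} gives $h\in L^1(\omega_v)$ and $\omega_v([\PPP(\aaa)(\Fv)]-[\TTa(\Fv)])=0$.
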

\begin{proof}
The preimage $(q^{\aaa}_v)^{-1}([\PPP(\aaa)(\Fv)]-[\TTa(\Fv)])$ is $dx_1\dots dx_n$-negligible. Now, \cite[Proposition 6, \no 3, \S 2, Chapter II]{Integrationj} gives that $\omega_v([\PPP(\aaa)(\Fv)]-[\TTa(\Fv)])=0$.
\end{proof}
We explain how to do the integration against $\omega_v$. 
\begin{lem}\label{kilp}
Let $\vMF$. Let $f_v$ be as in \ref{yumf}. Let $h:[\PPP(\aaa)(\Fv)]\to\CC$ be a function. Suppose $\vMFz$.  Then $h\in L^1([\PPP(\aaa)(\Fv)],\omega_v)$ if and only if $(h\circ\qav)\cdot f_v^{-1}|_{\Dav}\in L^1(\Dav,dx_1\dots dx_n)$ and if $h\in L^1([\PPP(\aaa)(\Fv)],\omega_v)$, then $$\int _{[\PPP(\aaa)(\Fv)]}h\omega_v=\frac{1}{1-\pivv}\int_{\Dav}(h\circ\qav) f_v^{-1}dx_{1}\dots dx_{n}.$$Suppose $\vMFi$. Then $h\in L^1([\PPP(\aaa)(\Fv)],\omega_v)$ if and only if $$(h\circ\qav)\cdot f_v^{-1}|_{(\Fvt)^{n-1}\times F_{v,1}}\in L^1((\Fvt)^{n-1}\times F_{v,1},dx_1\dots dx_{n-1}\times \lambda_{v,1})$$ and if $h\in L^1([\PPP(\aaa)(\Fv)],\omega_v)$, then $$\int _{[\PPP(\aaa)(\Fv)]}h\omega_v=\frac{a_n}{\lambda_{v,1}(\Fvj)}\int _{(\Fvt)^{n-1}\times\Fvj}(h\circ\qav)f_v^{-1}dx_1\dots dx_{n-1}\times \lambda_{v,1}. $$
\end{lem}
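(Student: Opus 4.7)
The plan is to deduce both equivalences and formulas from a single application of Proposition \ref{simint}, and then to evaluate the resulting integral over $\Fvnz$ by using the explicit form of $\kav$ in each of the two cases.

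First I would verify the hypotheses of \ref{simint} in the setting $X=\Fvnz$, $G=\Fvt$ acting via $t\cdot\xxx=(t^{a_j}x_j)_j$ (which we may regard either as a left or right action since $\Fvt$ is abelian), with Haar measure $d^*x$ on $\Fvt$ and the $\Fvt$-invariant measure $\mu=f_v^{-1}dx_1\dots dx_n$ on $\Fvnz$; the invariance is precisely Lemma \ref{xiub}. The quotient $X/G=[\PPP(\aaa)(\Fv)]$ is compact, hence paracompact, by Proposition \ref{paraap}, and carries $\omega_v=\mu/d^*x$ by Definition \ref{yumf}. Lemmas \ref{simchar} and \ref{qki} show that $\kav$ is continuous, compactly supported, and satisfies $\int_{\Fvt}\kav(t\cdot\xxx)d^*t=1$ for every $\xxx\in\Fvnz$. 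Proposition \ref{simint} then gives that $h\in L^1([\PPP(\aaa)(\Fv)],\omega_v)$ if and only if $\kav\cdot(h\circ\qav)\cdot f_v^{-1}\in L^1(\Fvnz,dx_1\dots dx_n)$, and in that case
\begin{equation}\label{planmaster}
\int_{[\PPP(\aaa)(\Fv)]}h\,\omega_v=\int_{\Fvnz}\kav\cdot(h\circ\qav)\cdot f_v^{-1}\,dx_1\dots dx_n.
\end{equation}

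For $\vMFz$, the right-hand side of (\ref{planmaster}) is immediate: substituting $\kav=(1-\pivv)^{-1}\mathbf{1}_{\Dav}$, the integrand vanishes outside $\Dav$, the integrability condition becomes $(h\circ\qav)\cdot f_v^{-1}|_{\Dav}\in L^1(\Dav,dx_1\dots dx_n)$, and the integral evaluates to $(1-\pivv)^{-1}\int_{\Dav}(h\circ\qav)f_v^{-1}dx_1\dots dx_n$, which is the claimed formula.

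For $\vMFi$ I would manipulate the right-hand side of (\ref{planmaster}) along the template of the proof of \ref{calcintinfv}(2). The subset $\Fvnz-\Fvtn$ lies in a finite union of coordinate hyperplanes, hence is $dx_1\dots dx_n$-negligible, so one may integrate over $\Fvtn$. Applying the diffeomorphism $A$ from \ref{calcintinfv}(1), which satisfies $|x_n|_v^{|\aaa|/a_n-1}A_*(dx_1\dots dx_n)=dx_1\dots dx_n$, together with the $\aaa$-homogeneity of $f_v^{-1}$ of weighted degree $-|\aaa|$ and the $\Fvt$-invariance of $h\circ\qav$, and using Lemma \ref{cyv} to pass from $d^*x_n$ on $\Fvt$ to $d^*r\times\lambda_{v,1}$ on $\RR_{>0}\times F_{v,1}$, the integral in (\ref{planmaster}) transforms into
$$a_n\int_{F_{v,1}\times(\Fvt)^{n-1}}\frac{(h\circ\qav)((x_j)_{j=1}^{n-1},z)}{f_v((x_j)_{j=1}^{n-1},z)}\left(\int_{\RR_{>0}}\kav(\rho_v(u)\cdot((x_j)_{j=1}^{n-1},z))\,d^*u\right)d\lambda_{v,1}(z)\,dx_1\dots dx_{n-1}.$$
The inner integral equals $1/\lambda_{v,1}(F_{v,1})$ by Lemma \ref{qki}, yielding the stated formula together with the equivalence of integrability.

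The principal technical subtlety lies in the infinite-place case, where $h$ need be neither continuous nor compactly supported, so \ref{calcintinfv}(2) does not apply verbatim. I would circumvent this by first running the change of variables above for the non-negative function $|\kav\cdot(h\circ\qav)\cdot f_v^{-1}|$, for which Tonelli's theorem justifies all rearrangements of integrals regardless of finiteness; this simultaneously establishes the equivalence of the two integrability conditions. Once both integrals are known to be absolutely convergent, Fubini legitimizes the same manipulations for the signed or complex-valued integrand, giving the stated equality of integrals. The change-of-variables step via $A$ itself requires no integrability hypothesis, being a pushforward identity of measures.
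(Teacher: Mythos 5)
Your proof is correct, and it does more or less reach the same formulas, but the route through the archimedean case is noticeably different from the paper's and a touch more laborious. After the common first step (the application of \ref{simint} to reduce to $\int_{\Fvnz}\kav(h\circ\qav)f_v^{-1}\,dx_1\dots dx_n$), the paper does not re-run the change of variables; instead it observes that $(h^{-1}\circ\qav)\cdot f_v$ is itself an $\aaa$-homogeneous function of weighted degree $|\aaa|$ whose zero set is negligible, and simply invokes the already-proved equivalence $(3)\Leftrightarrow(4)$ of Proposition \ref{oilp} for that function, together with the formulas recorded there. In other words, the paper swaps which factor plays the role of ``$\phi$'' and which plays the role of ``$f$'' in \ref{calcintinfv}(2): it keeps $\phi=\kav$, which is continuous and compactly supported, and absorbs the (possibly discontinuous) $(h\circ\qav)$ into the measurable homogeneous function $f$, so the hypotheses of \ref{calcintinfv}(2) are met verbatim. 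You instead take $\phi=\kav(h\circ\qav)$ and compensate for the loss of continuity by redoing the pushforward-by-$A$ computation and invoking Tonelli--Fubini directly; this works (the $\Fvt$-invariance of $h\circ\qav$ and the homogeneity of $f_v$ make the inner $\RR_{>0}$-integral factor out as you say), but it amounts to re-deriving the relevant portion of \ref{oilp} by hand rather than reusing it. The paper's factorization is the cleaner bookkeeping; yours is a valid and self-contained alternative that avoids the auxiliary homogeneous function.
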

\begin{proof}
As $[\PPP(\aaa)(\Fv)]$ is compact and hence paracompact, by \ref{simint}, one has that $h\in L^1([\PPP(\aaa)(\Fv)],\omega_v)$ if and only if $\kav\cdot (h\circ\qav)\in L^1(\Fvnz,f_v^{-1}dx_1\dots dx_n)$, and if $h\in L^1([\PPP(\aaa)(\Fv)],\omega_v)$ then $$\int_{[\PPP(\aaa)(\Fv)]}h\omega_v=\int_{\Fvnz}\kav\cdot (h\circ\qav)f_v^{-1}dx_1\dots dx_n.$$ 
The function $\qav$ is $\Fvt$-invariant, hence is such $h^{-1}\circ \qav$. It follows that $(h^{-1}\circ\qav)\cdot f_v$ is an $\aaa$-homogenous function of weighted degree $|\aaa|.$ The set where it vanishes coincides with the set where $f_v$ vanishes, thus this set is $dx_1\dots dx_n$-negligible. We apply Proposition \ref{oilp} for the function $(h^{-1}\circ\qav)\cdot f_v$. It follows that $\kav\cdot((h\circ\qav)f_v^{-1})\in L^1(\Fvnz,dx_1\dots dx_n)$ 
if and only if $$(h\circ\qav)\cdot f_v^{-1}|_{\Dav}\in L^1(\Dav,dx_1\dots dx_n)$$ if~$v$ is finite, and if and only if $$(h\circ\qav)\cdot f_v^{-1}|_{(\Fvt)^{n-1}\times F_{v,1}}\in L^1((\Fvt)^{n-1}\times F_{v,1},dx_1\dots dx_{n-1}\times \lambda_{v,1})$$if~$v$ is infinite. Moreover, \ref{oilp} gives that if $$\kav\cdot(h\circ\qav) f_v^{-1}\in L^1(\Fvnz,dx_1\dots dx_n),$$ then $$\int_{\Fvnz}\kav(h\circ\qav) f_v^{-1}dx_1\dots dx_n=\frac{1}{1-\pivv}\int_{\Dav}(h\circ\qav) f_v^{-1}dx_{1}\dots dx_{n}.$$ if~$v$ is finite, and $$\frac{a_n}{\lambda_{v,1}(\Fvj)}\int _{(\Fvt)^{n-1}\times\Fvj}(h\circ\qav)f_v^{-1}dx_1\dots dx_{n-1}\times \lambda_{v,1}. $$ if~$v$ is infinite. The statement follows.
\end{proof}
\section{Peyre's constant} \label{trobuljak}
In this section from a quasi-toric family of $\aaa$-homogenous functions of weighted degree $|\aaa|$, we will define a measure on the product space $\prod_{\vMF}[\PPP(\aaa)(F_v)],$  and Peyre's constant of the stacks~$\PPP(\aaa)$ and~$\oPPa$.

\subsection{}
In this paragraph, we calculate the volume $\omega_v$ for $\vMFz$ from \ref{yumf} when the function $f_v$ is toric.

Let $\vMFz$. As in \ref{defrv}, we set $r_v:\Fvnz\to\RR_{>0}$ for the function $$r_v(\xxx)=\sup_{\substack{j=1\doots n\\x_j\neq 0}}\bigg\lceil-\frac{v(x_j)}{a_j}\bigg\rceil.$$
Let $f^{\#}_v:\Fvnz\to\RR_{>0}$ be the toric $\aaa$-homogenous function of weighted degree $|\aaa|$. Recall that this means $$f^{\#}_v(\xxx)=\pivv^{-|\aaa|r_v(\xxx)}.$$ In \ref{davdavdav}, we have established that $r_v|_{\Dav}=0$, thus $f^{\#}_v|_{\Dav}=1,$ where $\Dav=(\Ov)^n- (\piv^{a_1}\Ov\times\cdots\times\piv^{a_n}\Ov)$. 
For $v\in M_F^0$ and $s\in\CC$, we denote $$\zeta_v(s)=\frac{1}{1-\pivv^s}.$$
\begin{lem} \label{locomi}
Let $v\in M_F^0$ and let $\fvt:F_v^n-\{0\}\to\RR_{> 0}$ be the toric $\aaa$-homogenous function of weighted degree $|\aaa|$. Let $\omega^\#_v$ be the measure on $[\PPP(\aaa)(F_v)]$ that is given by $$((\fvt)^{-1}dx_{1}\dots dx_n)/d^*x.$$
 One has that $$\omega^\# _v([\PPP(\aaa)(F_v)])=\frac{\zeta_v(1)}{\zeta_v(|\aaa|)}. $$
\end{lem}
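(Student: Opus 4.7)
The plan is to reduce the computation of $\omega_v^\#([\PPP(\aaa)(F_v)])$ to an elementary volume computation on $\Dav$ via Proposition \ref{oilp}, then evaluate this volume using the normalization $dx_v(\Ov)=1$.

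First, I would invoke Proposition \ref{oilp} (more precisely, the final formula displayed in that proposition for $\vMFz$), which gives
$$\omega_v^\#([\PPP(\aaa)(F_v)]) \;=\; \frac{1}{1-\pivv}\int_{\Dav}(f_v^\#)^{-1}\,dx_1\dots dx_n.$$
To apply this I need to verify that $f_v^\#$ satisfies the hypotheses of \ref{oilp}, which amounts to checking that $(f_v^\#)^{-1}|_{\Dav}$ is $dx_1\dots dx_n$-integrable on $\Dav$. This is clear because by part (3) of Lemma \ref{davdavdav}, one has $\{\xxx\in\Fvnz\mid r_v(\xxx)=0\}=\Dav$, so $f_v^\#|_{\Dav}\equiv 1$, and $\Dav$ is compact (by Lemma \ref{begdav}). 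In particular, the vanishing locus condition on $f_v^\#$ is trivially satisfied since $f_v^\#>0$ everywhere.

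Using $f_v^\#|_{\Dav}=1$, the formula collapses to
$$\omega_v^\#([\PPP(\aaa)(F_v)]) \;=\; \frac{dx_1\dots dx_n(\Dav)}{1-\pivv}.$$
Now I compute the numerator by writing $\Dav=\Ov^n-\bigl(\piv^{a_1}\Ov\times\cdots\times\piv^{a_n}\Ov\bigr)$ as a difference of two boxes. Using the normalization $dx_v(\Ov)=1$ from \ref{pojac} and the fact that $dx_v(\piv^{a_j}\Ov)=\pivv^{a_j}$, this gives
$$dx_1\dots dx_n(\Dav) \;=\; 1-\prod_{j=1}^n\pivv^{a_j} \;=\; 1-\pivv^{|\aaa|}.$$

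Substituting back yields
$$\omega_v^\#([\PPP(\aaa)(F_v)]) \;=\; \frac{1-\pivv^{|\aaa|}}{1-\pivv} \;=\; \frac{(1-\pivv)^{-1}}{(1-\pivv^{|\aaa|})^{-1}} \;=\; \frac{\zeta_v(1)}{\zeta_v(|\aaa|)},$$
which is the desired identity. There is no real obstacle here; the only subtle point is making sure the identification $f_v^\#|_{\Dav}=1$ is correctly extracted from Lemma \ref{davdavdav}, after which the computation is a short telescoping of the two $\Ov$-box volumes.
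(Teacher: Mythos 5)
Your proof is correct and follows essentially the same route as the paper: reduce to $\frac{1}{1-\pivv}\int_{\Dav}(f_v^\#)^{-1}\,dx_1\dots dx_n$ (the paper cites Lemma \ref{kilp} rather than the final formula of Proposition \ref{oilp}, but these are the same reduction), note $f_v^\#|_{\Dav}=1$ by Lemma \ref{davdavdav}, and compute $dx_1\dots dx_n(\Dav)=1-\pivv^{|\aaa|}$ as a difference of box volumes.
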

\begin{proof}
By applying \ref{kilp} and using the fact that $\fvt|_{\Dav}=1$, we get that \begin{equation*}\int _{[\PPP(\aaa)(F_v)]}1\omega^\#_v=\frac{1}{1-\pivv}\int _{\Dav}{(\fvt)^{-1}}dx_{1}\dots dx_{n}=\zeta_v(1)\int _{\DD^\aaa_v}dx_1\dots dx_{n}.
\end{equation*}
In turn one has that$$dx_{1}\dots dx_{n}(\DD^\aaa_v)=1-\prod _{j=1}^ndx(\pi_v^{a_j}\Ov)=1-\pivv^{|\aaa|}=\zeta_v(|\aaa|)^{-1},$$ and the claim follows.
\end{proof} 
\begin{rem}
\normalfont  We will generalize the calculation of \ref{locomi} in \ref{torfour},when will be calculating the Fourier transform of a local toric height at a non-archimedean place. 
\end{rem}
\subsection{} Let us calculate the volume $\omega_v([\PPP(\aaa)(\Fv)])$ when $\vMFi$ and the function $f_v=f_v^{\#}$ is the toric $\aaa$-homogenous function of weighted degree $|\aaa|$. 

Recall that the toric $\aaa$-homogenous function of weighted degree $|\aaa|$ is the function $$\fvt:\Fvnz\to\RR_{>0}\hspace{1cm}\xxx\mapsto\max_{j=1\doots n}(|x_j|_v^{1/a_j})^{|\aaa|}.$$ 
\begin{lem}\label{locinfomega}
Let $v\in M_F^\infty$ and let $\fvt:F_v^n-\{0\}\to\RR_{> 0}$ be the toric $\aaa$-homogenous function of weighted degree $|\aaa|$. Let $\omega^\#_v$ be the measure on $[\PPP(\aaa)(F_v)]$ that is given by $$((\fvt)^{-1}dx_{1}\dots dx_n)/d^*x.$$
 One has that $$\omega^\# _v([\PPP(\aaa)(F_v)])=2^{n-1}|\aaa| $$if~$v$ is real, and that $$\omega^{\#}_v([\PPP(\aaa)(\Fv)])=(2\pi)^{n-1}|\aaa| $$ if~$v$ is complex.
\end{lem}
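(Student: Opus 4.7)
The plan is to apply the integral formula of Lemma \ref{kilp}, reduce the resulting integral on $(\Fvt)^{n-1}\times F_{v,1}$ to an integral over $(\RR_{>0})^{n-1}$ by the polar-type decomposition $\widetilde{\rho}_v\colon \RR_{>0}\times F_{v,1}\xrightarrow{\sim}\Fvt$ of Lemma \ref{cyv}, and then evaluate this integral by partitioning the domain according to which coordinate achieves the maximum.

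First I would write, by \ref{kilp} applied to $h\equiv 1$ and using that $|z|_v^{1/a_n}=1$ for $z\in F_{v,1}$,
\begin{equation*}
\omega^{\#}_v([\PPP(\aaa)(\Fv)])=\frac{a_n}{\lambda_{v,1}(F_{v,1})}\int_{(\Fvt)^{n-1}\times F_{v,1}} \frac{dx_1\dots dx_{n-1}\,d\lambda_{v,1}(z)}{\max(|x_1|_v^{1/a_1},\dots,|x_{n-1}|_v^{1/a_{n-1}},1)^{|\aaa|}}.
\end{equation*}
The integrand is independent of $z$, so integration over $F_{v,1}$ contributes the factor $\lambda_{v,1}(F_{v,1})$, which cancels. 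Using Lemma \ref{cyv} to transport $dx_v|_{\Fvt}$ to $dr\times\lambda_{v,1}$ via $\widetilde{\rho}_v$, and using once more that the integrand only depends on $|x_j|_v=r_j$, the $\lambda_{v,1}$-factors integrate out and yield
\begin{equation*}
\omega^{\#}_v([\PPP(\aaa)(\Fv)])=a_n\,\lambda_{v,1}(F_{v,1})^{n-1}\,J,\qquad J:=\int_{(\RR_{>0})^{n-1}}\frac{dr_1\dots dr_{n-1}}{\max(r_1^{1/a_1},\dots,r_{n-1}^{1/a_{n-1}},1)^{|\aaa|}}.
\end{equation*}

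Next I would compute $J$ by the substitution $r_j=s_j^{a_j}$, which gives $dr_j=a_j s_j^{a_j-1}ds_j$ and simplifies the denominator to $\max(s_1,\dots,s_{n-1},1)^{|\aaa|}$. The partition of $(\RR_{>0})^{n-1}$ into the region $\{M=1\}=(0,1]^{n-1}$ and the regions $\{M=s_k\}$ for $k=1,\dots,n-1$ (where $M$ denotes the maximum) reduces the computation to elementary one-variable integrals. On $(0,1]^{n-1}$ we obtain $\prod_{j<n}1/a_j$. On $\{M=s_k\}$, integrating the inner variables $s_j$ for $j\neq k$ from $0$ to $s_k$ and collecting exponents gives the exponent $a_k-1-|\aaa|+\sum_{j<n,\,j\neq k}a_j=-1-a_n$ for the outer integral in $s_k$ on $[1,\infty)$, hence a contribution of $\frac{1}{a_n}\prod_{j\neq k,\,j<n}a_j^{-1}$. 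Multiplying by $\prod_{j<n}a_j$ and summing yields
\begin{equation*}
J=1+\sum_{k=1}^{n-1}\frac{a_k}{a_n}=\frac{|\aaa|}{a_n}.
\end{equation*}

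Substituting back, $\omega^{\#}_v([\PPP(\aaa)(\Fv)])=\lambda_{v,1}(F_{v,1})^{n-1}|\aaa|$, which equals $2^{n-1}|\aaa|$ in the real case (since $\lambda_{v,1}(\{\pm 1\})=2$) and $(2\pi)^{n-1}|\aaa|$ in the complex case (since $\lambda_{v,1}(S^1)=2\pi$). There is no real obstacle here; the only mildly delicate point is the bookkeeping of constants — in particular verifying that the factor $a_n/\lambda_{v,1}(F_{v,1})$ from Lemma \ref{kilp} combines with the Jacobian of $r_j=s_j^{a_j}$ and the telescoping exponent $-1-a_n$ so that the final answer becomes symmetric in $n$ (i.e.\ depends only on $|\aaa|$ and not on the distinguished coordinate $a_n$).
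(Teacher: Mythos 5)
Your proof is correct and follows essentially the same route as the paper: apply Lemma \ref{kilp}, integrate out the $F_{v,1}$ factor, transport to $\RR_{>0}^{n-1}$ via $\widetilde{\rho}_v$, and evaluate by partitioning according to which term achieves the maximum. The only cosmetic difference is that you introduce the substitution $r_j=s_j^{a_j}$ before partitioning, whereas the paper partitions directly in the $r$-variables using the sets $V_0,V_1,\dots,V_{n-1}$; both yield the same Fubini computation and the same telescoping exponent $-1-a_n$.
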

\begin{proof}
Lemma \ref{kilp} gives that:
\begin{align*}
\omega^{\#}_v([\PPP(\aaa)(\Fv)])\hskip-4cm&\\&=\frac{a_n}{\lambda_{v,1}(F_{v,1})}\int_{(\Fvt)^{n-1}\times F_{v,1}}(f_v^{\#})^{-1}dx_1\dots dx_{n-1}\times\lambda_{v,1}\\
&=\frac{a_n}{\lambda_{v,1}(F_{v,1})}\int_{(\Fvt)^{n-1}\times F_{v,1}}\max_{j=1\doots n}(|x_j|_v^{{1}/{a_j}})^{-|\aaa|}dx_1\dots dx_{n-1}\times\lambda_{v,1}\\
&=\frac{a_n}{\lambda_{v,1}(F_{v,1})}\int_{(\Fvt)^{n-1}\times F_{v,1}}\max(\max_{j=1\doots n-1}(|x_j|_v^{1/a_j}),1)^{-|\aaa|}dx_1\dots dx_{n-1}\times\lambda_{v,1}\\
&=\frac{a_n}{\lambda_{v,1}(F_{v,1})}\int_{F_{v,1}}\lambda_{v,1}\int_{(\Fvt)^{n-1}}\max(\max_{j=1\doots n-1}(|x_j|_v^{1/a_j}),1)^{-|\aaa|}dx_1\dots dx_{n-1}\\
&=a_n\int_{(\Fvt)^{n-1}}\max(\max_{j=1\doots n-1}(|x_j|_v^{1/a_j}),1)^{-|\aaa|}dx_1\dots dx_{n-1}.
\end{align*}
Lemma \ref{cyv} gives that the homomorphism $$\widetilde\rho_v:\RR_{>0}\times F_{v,1}\to\Fvt \hspace{1cm}(r,z)\mapsto\rho_v(r)z,$$ where $\rho_v(r)=r^{1/[\Fv:\RR]}$, satisfies that  $(\widetilde\rho_v)_*(dr\times\lambda_{v,1})=dx|_{\Fvt}$. One has that $|\widetilde\rho_v(r,z)|_v=r$ and we deduce that \begin{align*}
\omega^{\#}_v(\PPP(\aaa)(\Fv))\hskip-1cm&\\&=a_n\int_{(\Fvt)^{n-1}}\max(\max_{j=1\doots n-1}(|x_j|_v^{1/a_j}),1)^{-|\aaa|}dx_1\dots dx_{n-1}\\
&=a_n\int_{(\RR_{>0}\times F_{v,1})^{n-1}}\max(\max_{j=1\doots n-1}(|\widetilde\rho_v(r_jz)|_v^{1/a_j}),1)^{-|\aaa|}dx_1\dots dx_{n-1}\\
&=a_n\int_{F_{v,1}^{n-1}}\lambda_{v,1}^{\otimes (n-1)}\int_{\RR_{>0}^{n-1}}\max(\max_{j=1\doots n-1}(r_j^{1/a_j}),1)^{-|\aaa|}dr_1\dots dr_{n-1}\\
&=a_n(\lambda_{v,1}(F_{v,1}))^{n-1}\int_{\RR_{>0}^{n-1}} \max(\max_{j=1\doots n-1}(r_j^{1/a_j}),1)^{-|\aaa|}dr_1\dots dr_{n-1}.
\end{align*}
Let us evaluate the last integral. Define $V_0:=\{\xxx\in\RR^{n-1}_{>0}|x_i\leq 1\}.$ For $i=1\doots n-1$, define $$V_i:=\{\xxx\in\RR^{n-1}_{>0}|x_i^{1/a_i}=\max_{j}(x^{1/a_j}_j)\}.$$
Clearly, for every $i,j\in\{0\doots n-1\}$ with $i\neq j$, one has that $V_i\cap V_j$ is $dr_1\dots dr_{n-1}$-negligible. Thus
\begin{align*}
\int_{\RR^{n-1}_{>0}}\max(\max_{j}(r_j^{1/a_j}),1)^{-|\aaa|}dr_1\dots dr_{n-1}\hskip-3cm&\\
&=\sum_{i=0}^{n-1}\int_{V_i}\max(\max_{j}(r_j^{1/a_j}),1)^{-|\aaa|}dr_1\dots dr_{n-1}\\
&=\int_{V_0}1dr_1\dots dr_{n-1}+\sum_{i=1}^{n-1}\int_{V_i}r_i^{-|\aaa|/a_i}dr_1\dots dr_{n-1}\\
&=1+\sum_{i=1}^{n-1}\int_1^{\infty}\bigg(\prod^{n-1}_{\substack{j=1\\j\neq i}}\int_{0}^{r_i^{a_j/a_i}}1dr_j\bigg)\cdot r_i^{-|\aaa|/a_i}dr_i\\
&=1+\sum_{i=1}^{n-1}\int_1^{\infty}\bigg(\prod^{n-1}_{\substack{j=1\\j\neq i}}r_i^{a_j/a_i}\bigg)r_i^{-|\aaa|/a_i}dr_i\\
&=1+\int_1^{\infty}r_i^{-1-a_n/a_i}dr_i\\
&=1+\sum_{i=1}^{n-1}\frac{a_i}{a_n}\\
&=\frac{|\aaa|}{a_n},
\end{align*}
where the third equality follows from Fubini theorem.
We deduce that$$\omega^{\#}_v([\PPP(\aaa)(\Fv)])=a_n\lambda_{v,1}(F_{v,1})^{n-1}\frac{|\aaa|}{a_n}=\lambda_{v,1}(F_{v,1})^{n-1}|\aaa|.$$Thus if~$v$ is real one has that $$\omega^{\#}_v([\PPP(\aaa)(\Fv)])=2^{n-1}|\aaa|$$ and if~$v$ is complex, one has that $$\omega^{\#}_v([\PPP(\aaa)(\Fv)])=(2\pi)^{n-1}|\aaa|.$$
\end{proof}
\subsection{}\label{measureandconstant} We define Peyre's constant for stack~$\PPP(\aaa)$ for quasi-toric families $(f_v)_v$.

We will define a measure on the product space$$\prod_{\vMF}[\PPP(\aaa)(\Fv)].$$ The space $\prod_{\vMF}[\PPP(\aaa)(\Fv)]$ is compact and Hausdorff, as for every $\vMF$ by \ref{paraap} and by \ref{aboutopa}, the spaces $[\PPP(\aaa)(\Fv)]$ are compact and Hausdorff.  Let $(f_v:\Fvnz\to\RR_{\geq 0})_{\vMF}$ be a quasi-toric family of $|\aaa|$-homogenous functions of weighted degree $|\aaa|$ such that for every~$v$, one has that the set $\{\xxx\in\Fvnz|f_v(\xxx)=0\}$ is $dx_1\dots dx_n$-negligible and that $f_v^{-1}dx_1\dots dx_n$ is a measure on $\Fvnz$. For every $\vMF$, we set $\omega_v=(f_v^{-1}dx_1\dots dx_n)/ d^*x$. Using measures $\omega_v$, we define a product measure on $\prod_{\vMF}[\PPP(\aaa)(\Fv)]$ (by \cite[Proposition 9, \no 6, \S 4, Chapter III]{Integrationj} we indeed get a measure on the product).
\begin{mydef}\label{PeyreC} 
We define a measure $\omega=\omega((f_v)_v)$ on $\prod_{\vMF}[\PPP(\aaa)(\Fv)]$ by $$|\mu_{\gcd(\aaa)}(F)|\Delta(F)^{-\frac{n-1}2}\Res(\zeta_F,1)\bigotimes_{\vMFz}\big(\zeta_v(1)^{-1}\omega_v\big)\bigotimes_{\vMFi}\omega_v. $$ We set $$\tau=\tau((f_v)_v)=\omega\big(\prod_{\vMF}[\PPP(\aaa)(F_v)]\big),$$where $\Delta(F)$ is the absolute discriminant of~$F$.
\end{mydef}
We explain how~$\omega$ changes, when the quasi-toric family is changed.
\begin{lem}\label{omegachanges}
Let~$S$ be a finite set of places and for $v\in S$, let $h_{v}:[\PPP(\aaa)(F_v)]\to\RR_{>0}$ be a continuous function. For $v\in M_F-S$, we set $h_v=1$. Let us denote by $h:\prod_{\vMF}[\PPP(\aaa)(F_v)]\to\RR_{>0}$ the function $\otimes_{\vMF}h_v$. One has that $$\omega((h_vf_v)_v)=h^{-1}\omega((f_v)_v).$$
\end{lem}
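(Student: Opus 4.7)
The approach is entirely local: for each place $v\in S$, determine how the local measure $\omega_v$ changes when $f_v$ is replaced by $h_vf_v$, then use the tensor product structure of $\omega$ to conclude. Since $h_v>0$ and continuous, the zero set of $h_vf_v$ equals that of $f_v$ (hence is still $dx_1\dots dx_n$-negligible), and $(h_vf_v)^{-1}dx_1\dots dx_n = (h_v\circ\qav)^{-1}\cdot f_v^{-1}dx_1\dots dx_n$ is a measure on $\Fvnz$ because it is the product of the measure $f_v^{-1}dx_1\dots dx_n$ by the continuous function $(h_v\circ\qav)^{-1}$. In particular $h_vf_v$ satisfies the hypotheses of Definition \ref{yumf}, so $\omega((h_vf_v)_v)$ is well defined; moreover, the family $(h_vf_v)_v$ remains quasi-toric since it coincides with $(f_v)_v$ outside the finite set $S$.

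The key step is to observe that the pullback $h_v\circ\qav:\Fvnz\to\RR_{>0}$ is continuous and $\Fvt$-invariant (as $\qav$ is $\Fvt$-invariant, cf. \ref{pafvtafv}), so that Lemma \ref{izbitubit} applies with $\mu=f_v^{-1}dx_1\dots dx_n$, $G=\Fvt$ and $dg=d^*x$, giving
\[
\big((h_v\circ\qav)^{-1}\cdot f_v^{-1}dx_1\dots dx_n\big)\big/d^*x \;=\; h_v^{-1}\cdot\big(f_v^{-1}dx_1\dots dx_n/d^*x\big) \;=\; h_v^{-1}\omega_v.
\]
Hence the local measure associated to $h_vf_v$ is $h_v^{-1}\omega_v$ for $v\in S$, and for $v\notin S$ it equals $\omega_v$ since $h_v=1$.

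Finally, the normalization constants $|\mu_{\gcd(\aaa)}(F)|\Delta(F)^{-\frac{n-1}{2}}\Res(\zeta_F,1)$ and the factors $\zeta_v(1)^{-1}$ appearing in Definition \ref{PeyreC} do not depend on the chosen family, so from the definition of a product measure we obtain
\[
\omega((h_vf_v)_v) \;=\; \Big(\bigotimes_{v\in S} h_v^{-1}\Big)\cdot\omega((f_v)_v) \;=\; h^{-1}\cdot\omega((f_v)_v),
\]
where the last equality uses that $h_v=1$ for $v\notin S$ so that $h=\bigotimes_{v\in M_F}h_v=\bigotimes_{v\in S}h_v$. There is no serious obstacle here; the only point requiring care is ensuring that the factor being moved through the quotient construction is a pullback of a function on $[\PPP(\aaa)(\Fv)]$, which is precisely the content of Lemma \ref{izbitubit}.
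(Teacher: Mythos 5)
Your proof is correct and follows essentially the same route as the paper: apply Lemma \ref{izbitubit} place by place to get $\big((h_v\circ\qav)^{-1}f_v^{-1}dx_1\dots dx_n\big)/d^*x = h_v^{-1}\omega_v$, then carry the factors $h_v^{-1}$ through the product formula defining $\omega$. The extra remarks you add (preservation of negligibility of the zero set, $(h_vf_v)^{-1}dx_1\dots dx_n$ still being a measure, the family remaining quasi-toric) are correct and do no harm, but the paper treats them as implicit.
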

\begin{proof}
For $\vMF$, it follows directly from Lemma \ref{izbitubit} that $$(((h_{v}\circ\qav) \cdot f_v)^{-1}dx_1\dots dx_n)/d^*x=(h^{-1}_{v})((f_v^{-1}dx_1\dots dx_n)/d^*x)=h^{-1}_{v}\omega_v.$$ It follows that the measure $\omega(((h_{v,\eta}\circ\qav)\cdot f_v)_v)$ on $\prod_{\vMF}[\PPP(\aaa)(\Fv)]$ defined by the quasi-toric family $((h_{v}\circ q^{\aaa}_v)\cdot f_v)_v$ satisfies that \begin{align*}\omega(((h_{v}\circ\qav)\cdot f_v)_v)\hskip-1cm&\\
&=\frac{|\mu_{\gcd(\aaa)}(F)|}{\Delta(F)^{\frac{n-1}2}}\Res(\zeta_F,1)\prod_{\vMFz}\bigg(\zeta_v(1)^{-1}h^{-1}_{v}\omega_v\bigg)\prod_{\vMFi}h^{-1}_{v}\omega_v\\
&=h^{-1}\omega.
\end{align*}
\end{proof}
We give another expression for $\tau$.
\begin{lem}\label{locom}Let~$S$ be the finite set of places~$v$ for which $f_v$ is not toric. One has that 
\begin{multline*}\tau((f_v)_v)\\=\frac{\Res(\zeta_F,1)|\mu_{\gcd(\aaa)}(F)|}{\Delta(F)^{\frac{n-1}2}\zeta_F(|\aaa|)}\prod_{v\in S\cap M_F^0}\frac{\zeta_v(|\aaa|)\omega_v([\PPP(\aaa)(\Fv)])}{\zeta_v(1)}\times\\\times \prod_{\vMFi}\omega_v([\PPP(\aaa)(\Fv)]).\end{multline*}
\end{lem}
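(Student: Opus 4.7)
The plan is to unpack the definition of $\tau$ as a product of local volumes and then isolate the finite places outside $S$, where the toric volume can be computed explicitly using Lemma \ref{locomi}.

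First I would expand the definition:
$$\tau = \omega\bigg(\prod_{\vMF}[\PPP(\aaa)(\Fv)]\bigg) = \frac{|\mu_{\gcd(\aaa)}(F)|\Res(\zeta_F,1)}{\Delta(F)^{(n-1)/2}}\prod_{\vMFz}\frac{\omega_v([\PPP(\aaa)(\Fv)])}{\zeta_v(1)}\prod_{\vMFi}\omega_v([\PPP(\aaa)(\Fv)]),$$
since the definition of $\omega$ (Definition \ref{PeyreC}) is a product measure and the total volume of a product is the product of total volumes (\cite[Proposition 9, \no 6, \S 4, Chapter III]{Integrationj}).

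Next I would split the product over $\vMFz$ according to whether $v\in S$ or not. For $v\notin S$ one has $f_v=f_v^{\#}$ (the toric function of weighted degree $|\aaa|$) by the definition of a quasi-toric family \ref{familyquasitoric}, hence by Lemma \ref{locomi} the local volume equals $\zeta_v(1)/\zeta_v(|\aaa|)$, so
$$\prod_{v\in M_F^0\setminus S}\frac{\omega_v([\PPP(\aaa)(\Fv)])}{\zeta_v(1)}=\prod_{v\in M_F^0\setminus S}\zeta_v(|\aaa|)^{-1}.$$
Since $|\aaa|\geq 2$ in the nontrivial cases, the Euler product $\zeta_F(|\aaa|)=\prod_{\vMFz}\zeta_v(|\aaa|)$ converges absolutely, so this last product rewrites as $\zeta_F(|\aaa|)^{-1}\prod_{v\in S\cap M_F^0}\zeta_v(|\aaa|)$.

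Combining the two pieces of the finite-place product gives
$$\prod_{\vMFz}\frac{\omega_v([\PPP(\aaa)(\Fv)])}{\zeta_v(1)}=\zeta_F(|\aaa|)^{-1}\prod_{v\in S\cap M_F^0}\frac{\zeta_v(|\aaa|)\omega_v([\PPP(\aaa)(\Fv)])}{\zeta_v(1)},$$
and plugging this back into the expression for $\tau$ yields the claimed identity. There is no real obstacle here: the result is just a rearrangement of the Euler product against the definition of $\omega$, with Lemma \ref{locomi} supplying the volume at every good finite place.
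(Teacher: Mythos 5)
Your proposal is correct and follows essentially the same route as the paper: expand $\tau$ using Definition \ref{PeyreC}, apply Lemma \ref{locomi} at the good finite places to get $\omega_v([\PPP(\aaa)(\Fv)])\zeta_v(1)^{-1}=\zeta_v(|\aaa|)^{-1}$, and regroup the Euler product into $\zeta_F(|\aaa|)^{-1}$ times correction factors at $S\cap M_F^0$. Nothing more to add.
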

\begin{proof}
Lemma \ref{locomi} gives that for every $v\in M_F^0-S$ one has $$\omega_v([\PPP(\aaa)(\Fv)])\zeta_v(1)^{-1}=\zeta_v(|\aaa|)^{-1}$$ and thus:
\begin{align*}
\tau((f_v)_v)|\mu_{\gcd(\aaa)}(F)|^{-1}\Delta(F)^{\frac{n-1}2}\hskip-5cm&\\&=\Res(\zeta_F,1)\prod_{v\in S\cap M_F^0}\frac{\omega_{v}([\PPP(\aaa)(\Fv)])}{\zeta_v(1)}\times\\&\quad\quad\quad\quad\quad\quad\quad\quad\times \prod_{v\in M_F^0-S}\zeta_v(|\aaa|)^{-1}\times \prod_{\vMFi}\omega_v([\PPP(\aaa)(\Fv)])\\
&=\frac{\Res(\zeta_F,1)}{\zeta(|\aaa|)}\prod_{v\in S\cap M_F^0}\bigg(\frac{\zeta_v(|\aaa|)\omega_v([\PPP(\aaa)(\Fv)])}{\zeta_v(1)}\bigg)\times \prod_{\vMFi}\omega_v([\PPP(\aaa)(\Fv)]).
\end{align*}
\end{proof}
\section{Haar measure on $[\TTa(F_v)]$} 
Let $\Fvt$ acts on $\Fvtn$ by $t\cdot\xxx=(t^{a_j}x_j)_j$. This action is proper by \ref{aboutopa}.  The quotient for this action is $[\TTa(\Fv)]$ by \ref{pafvtafv} and is locally compact by \ref{aboutopa}. By \ref{pafvtafv}, one has that the map $[\TTa(\Fv)]\to[\PPP(\aaa)(\Fv)],$ induced from $\Fvt$-invariant map $\Fvtn\hookrightarrow\Fvnz\to [\PPP(\aaa)(\Fv)]$ is an open embedding.  By \ref{aboutopa}, the map 
$$\epsilon :\Fvt\to \Fvtn\hspace{1cm} t\mapsto (t^{a_j})_j$$ is proper, its image $(\Fvt)_{\aaa}:=\epsilon(\Fvt)=\{(t^{a_j})_j|t\in\Fvt\},$ is a closed subgroup of $\Fvtn$ and one has an identification $[\TTa(\Fv)]=\Fvtn/(\Fvt)_{\aaa}$. Using this identification, we endow $[\TTa(\Fv)]$ with a structure of a topological group (which is necessary abelian). The goal of this section is to define a Haar measure on $[\TTa(\Fv)]$ and relate it with the measure $\omega_v$ on $[\PPP(\aaa)(\Fv)]$.
\subsection{}\label{snoo}
Let $v\in M_F$. We are going to define a Haar measure on $[\TTa(\Fv)]$. 
%

By the fact that a product of a continuous function and a measure is a measure \cite[\no 4, \S1, Chapter III]{Integrationj}, one has that $$\mathscr C^0_c(\Fvtn,\CC)\to\CC\hspace{1cm}\phi\mapsto \int_{\Fvtn}\prodjn|x_j|_v^{-1}dx_1\dots dx_n$$ is a measure on $\Fvtn$. The the function $$\Fvnz\to\RR_{\geq 0}\hspace{1cm}\xxx\mapsto \prodjn |x_j|_v$$ is $\aaa$-homogenous of weighted degree $|\aaa|$ and the set where it vanishes is given by $\{\xxx\in\Fvnz|\exists j: x_j=0\}$, thus this set is $dx_1\dots dx_n$-negligible (because it is contained in a finite union of hyperplanes in $(\Fv)^n$.) It follows from \ref{xiub} that 
 $$\prodjn|x_j|_v^{-1}dx_1\dots dx_n=d^*x_{1}\dots d^*x_{n}$$ is $\Fvt$-invariant measure on $\Fvt$.
\begin{mydef}\label{haarttafv}
We define a measure $\mu_v$ on $[\TTa(\Fv)]$ by
\begin{equation*}\mu_v:=(d^*x_1\dots d^*x_n)/d^*x_v.
\end{equation*}
\end{mydef}
 By \ref{rudj} the measure $\mu_v$ is a Haar measure on $[\TTa(F_v)]$. 
 \begin{lem}\label{icicu}
Let $v\in M_F$. Let $f_v:\Fvnz\to\RR_{\geq 0}$ be a continuous $\aaa$-homogenous function of weighted degree $|\aaa|$ such that $$f_v^{-1}dx_1\dots  dx_n$$ is a measure on $\Fvnz$ (this in particular implies that $dx_1\dots dx_n(\{\xxx|f_v(\xxx)=0\})=0$). Let $H_v:[\TTa(F_v)]\to\RR_{\geq 0}$ be the function given by the continuous $\Fvt$-invariant function $$\xxx\mapsto f_v(\xxx)\prod _{j=1}^n|x_{j}|^{-1}_v. $$ 
\begin{enumerate}
\item One has that $\mu_v(\{\yyy|H_v(\yyy)=0\})=0$.
\item 
One has an equality of the measures $\mu_v=(H_v)(\omega_v|_{[\TTa(\Fv)]}).$
\end{enumerate}
\end{lem}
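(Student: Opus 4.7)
The plan is to deduce both assertions by pulling everything back to $F_v^{\times n}$ via the quotient map $q_v^{\aaa}:F_v^{\times n}\to[\TTa(F_v)]$ and then applying Lemma \ref{izbitubit}, which says that scaling by an invariant function commutes with taking quotient measures. The key observation is that $(F_v^n-\{0\})-F_v^{\times n}$ is contained in finitely many coordinate hyperplanes, hence is $dx_1\dots dx_n$-negligible (and therefore $f_v^{-1}dx_1\dots dx_n$-negligible, since by Proposition \ref{oilp} the vanishing locus of $f_v$ is $dx_1\dots dx_n$-negligible). Consequently the restriction $\omega_v|_{[\TTa(F_v)]}$ is the quotient measure $\big((f_v^{-1}dx_1\dots dx_n)|_{F_v^{\times n}}\big)/d^*x$.

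For (1), observe that since $\prod_j|x_j|_v^{-1}$ is nonvanishing on $F_v^{\times n}$, the preimage of $\{\yyy\in[\TTa(F_v)]\mid H_v(\yyy)=0\}$ under $q_v^\aaa$ equals $\{\xxx\in F_v^{\times n}\mid f_v(\xxx)=0\}$, which is $dx_1\dots dx_n$-negligible by Proposition \ref{oilp}. Multiplying by the continuous positive function $\prod_j|x_j|_v^{-1}$ shows this set is also $d^*x_1\dots d^*x_n$-negligible. Using the explicit description (coming from \ref{simint}, since $[\TTa(F_v)]$ is paracompact by \ref{paraap}) of integration against the quotient measure $\mu_v=(d^*x_1\dots d^*x_n)/d^*x$ via an auxiliary nonnegative function on $F_v^{\times n}$, one concludes that $\{\yyy\mid H_v(\yyy)=0\}$ is $\mu_v$-negligible.

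For (2), the function $H_v\circ q_v^\aaa:F_v^{\times n}\to\RR_{\geq 0}$ is continuous and $F_v^\times$-invariant, and satisfies
\[
(H_v\circ q_v^\aaa)(\xxx)\cdot f_v(\xxx)^{-1}=\prod_{j=1}^n|x_j|_v^{-1}.
\]
Therefore $(H_v\circ q_v^\aaa)\cdot(f_v^{-1}dx_1\dots dx_n)=d^*x_1\dots d^*x_n$ on $F_v^{\times n}$. Applying Lemma \ref{izbitubit} to this invariant function, measure, and the $F_v^\times$-action, we obtain
\[
H_v\cdot(\omega_v|_{[\TTa(F_v)]})=\big((H_v\circ q_v^\aaa)\cdot f_v^{-1}dx_1\dots dx_n\big)/d^*x=(d^*x_1\dots d^*x_n)/d^*x=\mu_v,
\]
which is the desired equality.

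The only subtle step is the passage to the open substack, i.e.\ the identification $\omega_v|_{[\TTa(F_v)]}=\big((f_v^{-1}dx_1\dots dx_n)|_{F_v^{\times n}}\big)/d^*x$; this is the main obstacle but follows routinely from the negligibility of the coordinate hyperplanes together with properness of the $F_v^\times$-action on $F_v^{\times n}$ (\ref{aboutopa}), which guarantees that the quotient measure on the open piece is computed from the restricted measure upstairs.
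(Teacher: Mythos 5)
Your proof is correct and essentially matches the paper's: both rest on the identity $d^*x_1\dots d^*x_n=(H_v\circ q_v^{\aaa})\,f_v^{-1}dx_1\dots dx_n$ on $F_v^{\times n}$ followed by Lemma~\ref{izbitubit}, and both deduce (1) from the $d^*x_1\dots d^*x_n$-negligibility of $\{f_v=0\}\cap F_v^{\times n}$ pushed through the quotient. The paper invokes a Bourbaki result on negligibility of images under quotient maps rather than your detour through \ref{simint}, and it leaves the restriction of $\omega_v$ to $[\TTa(F_v)]$ implicit, which you spell out (that identification is routine from the characterizing property of quotient measures on open invariant subsets and does not actually need negligibility of the coordinate hyperplanes).
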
 
 \begin{proof}  \begin{enumerate}
 \item By the definition of~$H_v$ one has that $\{\yyy|H_v(\yyy)=0\}=\qav(\{\xxx\in\Fvtn| f_v(\xxx)=0\}).$ Note that as $dx_1\dots dx_n(\{\xxx|f_v(\xxx)=0\})=0,$ it follows that the set $\{\xxx\in\Fvtn|f_v(\xxx)=0\}$ is $dx_1\dots dx_n$-negligible and thus $d^*x_1\dots d^*x_n$-negligible. Hence, by \cite[Proposition 6, \no 3, \S 2, Chapter VII]{Integrationd}, one has that $\mu_v(\qav(\{\xxx\in\Fvtn|f(\xxx)=0\}))=0.$
\item 
Observe that \begin{align*}d^*x_1\dots d^*x_n=\prodjn |x_j|_v^{-1}dx_1\dots dx_n&=f_v(\xxx)\prodjn |x_j|_v^{-1} f_v(\xxx)^{-1}dx_1\dots dx_n\\&=(H_v\circ\qav)f_v^{-1}dx_1\dots dx_n.\end{align*}Now Lemma \ref{izbitubit} gives precisely that $$\mu_v=(d^*x_1\dots d^*x_n)/d^*x=H_v (f_v^{-1}dx_1\dots dx_n)/d^*x=H_v\omega_v.$$
\end{enumerate}
\end{proof}
 \begin{lem}\label{smacor}
Let $h:[\TTa(\Fv)]\to \CC$ be a function.
Suppose $\vMFz$. One has that $h\in L^1([\TTa(\Fv)],\mu_v)$ if and only if $ (h\circ\qav)\in L^1(\Fvtn\cap\Dav,d^*x_1\dots d^*x_n)$, and if $h\in L^1([\TTa(\Fv)],\mu_v)$, one has that:
$$\int_{[\TTa(\Fv)]}h\mu_v=\frac{1}{1-\pivv}\int_{\Fvtn\cap\Dav}(h\circ\qav)d^*x_1\dots d^*x_n.$$
Suppose $\vMFi$. One has that $h\in L^1([\TTa(\Fv)],\mu_v)$ if and only if $ (h\circ\qav)\in L^1((\Fvt)^{n-1}\times F_{v,1},d^*x_1\dots d^*x_{n-1}\times\lambda_{v,1})$, and if $h\in L^1([\TTa(\Fv)],\mu_v)$, one has that:
$$\frac{a_n}{\lambda_{v,1}(F_{v,1})}\int_{(\Fvt)^{n-1}\times F_{v,1}}(h\circ\qav)d^*x_1\dots d^*x_{n-1}\times\lambda_{v,1} .$$
\end{lem}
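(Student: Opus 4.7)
The proof follows the pattern of \ref{kilp}, with $\mu_v$ in place of $\omega_v$ and the measure $d^*x_1\dots d^*x_n$ on $\Fvtn$ in place of $f_v^{-1}dx_1\dots dx_n$ on $\Fvnz$. First, I would invoke \ref{paraap} to conclude that $[\TTa(F_v)]$ is paracompact (it is actually compact when $v$ is finite and, in general, paracompact by the identification $[\TTa(F_v)] = \Gmn(F_v)/\Gm(F_v)_\aaa$). Combined with \ref{simchar} (for $v$ finite) or \ref{qki} (for $v$ infinite), which give $\int_{\Fvt} k^\aaa_v(t\cdot\xxx)d^*t = 1$ for every $\xxx$, Proposition \ref{simint} applied to the proper action of $\Fvt$ on $\Fvtn$ yields that $h \in L^1([\TTa(F_v)],\mu_v)$ if and only if $k^\aaa_v \cdot (h\circ \qav) \in L^1(\Fvtn, d^*x_1\dots d^*x_n)$, in which case
$$\int_{[\TTa(F_v)]} h\,\mu_v \;=\; \int_{\Fvtn} k^\aaa_v\cdot (h\circ \qav)\,d^*x_1\dots d^*x_n.$$

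For $v \in M_F^0$, the identity $k^\aaa_v = \mathbf{1}_{\Dav}/(1-\pivv)$ from \ref{kave} immediately converts the right-hand side into $\frac{1}{1-\pivv}\int_{\Dav \cap \Fvtn}(h\circ\qav)d^*x_1\dots d^*x_n$, which is the desired formula.

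For $v \in M_F^\infty$, I would adapt the change-of-variables argument used in the proof of \ref{calcintinfv}(2). The key map is $A:\Fvtn\to \Fvtn$, $A(\xxx) = ((\rho_v(|x_n|_v^{a_j/a_n}) x_j)_{j=1}^{n-1}, x_n)$; since for fixed $x_n$ it acts on each of the first $n-1$ coordinates by multiplication by an element of $\Fvt$, and $d^*x_j$ is invariant under multiplication, $A$ preserves $d^*x_1\dots d^*x_n$. Writing $A(\xxx) = \rho_v(|x_n|_v^{1/a_n})\cdot C(\xxx)$ with $C(\xxx) = ((x_j)_{j=1}^{n-1}, x_n \rho_v(|x_n|_v)^{-1})$, and using that $h\circ \qav$ is $\Fvt$-invariant (this is the role played here by the $\aaa$-homogeneity of $f$ in \ref{calcintinfv}), we obtain $h\circ\qav\circ A = h\circ\qav\circ C$, whose last coordinate now lies in $F_{v,1}$. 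Applying Fubini, decomposing $d^*x_n = d^*r\times \lambda_{v,1}$ via $\widetilde\rho_v$ (Lemma \ref{cyv}), and setting $u^{a_n}=r$ so that $d^*r = a_n d^*u$, the integral in the $u$-variable collapses via the identity $\int_{\RR_{>0}} k^\aaa_v(\rho_v(u)\cdot((x_j)_{j=1}^{n-1},z))d^*u = 1/\lambda_{v,1}(F_{v,1})$ from \ref{qki}, producing the factor $a_n/\lambda_{v,1}(F_{v,1})$ and leaving the remaining integration over $(\Fvt)^{n-1}\times F_{v,1}$.

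The main obstacle is keeping track of the interaction between the substitution $A$ (which relies on $A_*(d^*x_1\dots d^*x_n) = d^*x_1\dots d^*x_n$), the $\Fvt$-invariance of $h\circ\qav$, and the normalization of $k^\aaa_v$; each step is a routine verification modeled on \ref{calcintinfv}, but they must be assembled carefully so that the factor $a_n/\lambda_{v,1}(F_{v,1})$ appears with the correct domain of integration.
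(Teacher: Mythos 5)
Your proof is correct, and it takes a genuinely different route from the paper's. The paper deduces \ref{smacor} from the already-established Lemma \ref{kilp}: it uses Lemma \ref{icicu} to write $\mu_v = H_v^{\#}\,\omega_v^{\#}|_{[\TTa(\Fv)]}$, passes from $[\TTa(\Fv)]$ to the compact space $[\PPP(\aaa)(\Fv)]$ via $\omega_v^{\#}([\PPP(\aaa)(\Fv)]-[\TTa(\Fv)])=0$ (Lemma \ref{ttadmz}), and then applies \ref{kilp} to the function $h H_v^{\#}$; the formulas for $\Dav$ and the archimedean case then fall out after simplifying $f_v^{\#}$. You instead apply Proposition \ref{simint} \emph{directly} to the proper action of $\Fvt$ on $\Fvtn$ with the restricted function $k_v^\aaa|_{\Fvtn}$, bypassing $\omega_v^{\#}$ and $f_v^{\#}$ entirely. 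This is a legitimate alternative. Your observation that the change-of-variables map $A$ from Lemma \ref{calcintinfv} preserves $d^*x_1\dots d^*x_n$ outright (rather than transforming $dx_1\dots dx_n$ by a Jacobian factor) is a modest simplification over the computation in \ref{calcintinfv}(2): the multiplicative-invariance of $d^*x$ makes the Jacobian bookkeeping unnecessary, which is precisely what one gains by working with the Haar measure of the torus directly.

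Two small caveats. First, your parenthetical that $[\TTa(F_v)]$ ``is actually compact when $v$ is finite'' is wrong for $n\geq 2$: by Lemma \ref{indofov}, $[\TTa(F_v)]/[\TTa(\Ov)]\cong\ZZ^n/\aaa\ZZ$, which is infinite when $n\geq 2$, so only the subgroup $[\TTa(\Ov)]$ is compact. Paracompactness, which is what \ref{simint} actually requires and what Proposition \ref{paraap}(2) supplies, does hold. Second, your invocation of \ref{simint} on $\Fvtn$ requires the restricted function $k_v^\aaa|_{\Fvtn}$ to have the property that its support has compact intersection with $(q^\aaa_v)^{-1}(K)$ for every compact $K\subset[\TTa(F_v)]$; this is not the same as $k_v^\aaa$ being compactly supported on $\Fvnz$, since $\Dav\cap\Fvtn$ (for finite $v$) or $\supp(k_v^\aaa)\cap\Fvtn$ (for infinite $v$) is not itself compact in $\Fvtn$. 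The condition does hold (for finite $v$ it follows from the identity $\Dav\cap(q^\aaa_v)^{-1}([\TTa(\Ov)])=(\Ovt)^n$ established in the proof of Lemma \ref{xiuim}, together with the fact that any compact in $[\TTa(F_v)]$ meets only finitely many cosets of $[\TTa(\Ov)]$; for infinite $v$ it follows from $\supp(k_v^\aaa)$ being bounded away from $0$ in $\Fvnz$, so that sequences escaping $\Fvtn$ have unbounded image under $q^\aaa_v$), but you should make this verification explicit, since this is exactly the point the paper's route through $[\PPP(\aaa)(\Fv)]$ (which \emph{is} compact) was designed to avoid.
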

\begin{proof}
Let $f_v^{\#}:\Fvnz\to\RR_{>0}$ be the toric $\aaa$-homogenous function of weighted degree $|\aaa|$. Let $H^{\#}_v:[\TTa(\Fv)]\to \RR_{>0}$ be the induced function from $\Fvt$-invariant function $\Fvtn\to\RR_{>0}$ given by $\xxx\mapsto f^{\#}_v(\xxx)\prod_{j=1}^n|x_j|_v$. It follows from Lemma \ref{icicu}, that one has an equality of the measures $\mu_v=H^{\#}_v\omega^{\#}_v|_{[\TTa(\Fv)]}$. We deduce that $h\in L^1([\TTa(\Fv)],\mu_v)$ if and only if $hH^{\#}_v\in L^1([\TTa(\Fv)],\omega^{\#}_v),$ and as by \ref{ttadmz} one has $\omega^{\#}_v([\PPP(\aaa)(\Fv)]-[\TTa(\Fv)])=0$, if and only if $hH^{\#}_v\in L^1([\PPP(\aaa)(\Fv)],\omega^{\#}_v)$. Moreover, it follows that if $h\in L^1([\TTa(\Fv)],\mu_v),$ then $$\int_{[\TTa(\Fv)]}h\mu_v=\int_{[\PPP(\aaa)(\Fv)]}(hH^{\#}_v)\omega^{\#}_v.$$

Suppose $\vMFz.$ Recall that by \ref{davdavdav} one has that $f^{\#}_v|_{\Dav}=1$. By \ref{kilp}, one has that $hH^{\#}_v\in L^1([\PPP(\aaa)(\Fv)],\omega^{\#}_v)$ if and only if $$(((hH^{\#}_v)\circ\qav)(f_v^{\#})^{-1})|_{\Dav}=(h\circ\qav)(H_v^{\#}\circ\qav)|_{\Dav}=(h\circ\qav)(|x_1|_v^{-1}\cdots |x_n|_v^{-1})|_{\Dav}$$ is an element of $ L^1(\Dav,dx_1\dots dx_n).$ Moreover, Lemma \ref{kilp} gives that if $hH^{\#}_v\in L^1([\PPP(\aaa)(\Fv)],\omega^{\#}_v)$ then $$\int_{[\PPP(\aaa)(\Fv)]}hH^{\#}_v\omega^{\#}_v=\frac{1}{1-\pivv}\int_{\Dav}(h\circ\qav)|x_1|_v^{-1}\cdots |x_n|_v^{-1}dx_1\dots dx_n.$$ As $dx_1\dots dx_n(\Dav-(\Dav\cap\Fvtn))=0,$ (because $\Dav-(\Dav\cap\Fvtn)$ is contained in a finite union of hyperplanes of $\Fv^n$,) the last integral is equal to $$\frac{1}{1-\pivv}\int_{\Fvtn\cap\Dav}(h\circ\qav)d^*x_1\dots d^*x_n.$$
It follows that one has $h\in L^1([\TTa(\Fv)],\mu_v)$ if and only if $(h\circ\qav)\in L^1(\Fvtn\cap\Dav,d^*x_1\dots d^*x_n)$ and if $h\in L^1([\TTa(\Fv)],\mu_v),$ then  
$$\int_{[\TTa(\Fv)]}h\mu_v=\frac{1}{1-\pivv}\int_{\Fvtn}(h\circ\qav)d^*x_1\dots d^*x_n.$$
Suppose that $\vMFi$. By \ref{kilp}, one has that $hH^{\#}_v\in L^1([\PPP(\aaa)(\Fv)],\omega^{\#}_v)$ if and only if \begin{equation*}((hH^{\#}_v)\circ\qav)(f_v^{\#})^{-1}=((h\circ\qav)\cdot (H^{\#}\circ\qav))(f_v^{\#})^{-1}=(h\circ\qav)\prod_{j=1}^{n-1}|x_j|_v^{-1}\end{equation*} is an element of $L^1((\Fvt)^{n-1}\times F_{v,1},dx_1\dots dx_{n-1}\times \lambda_{v,1})$ i.e. if and only if $$(h\circ\qav)\in L^1((\Fvt)^{n-1}\times F_{v,1},d^*x_1\dots d^*x_{n-1}\lambda_{v,1}) .$$ Moreover, Lemma \ref{kilp} gives that if $hH^{\#}_v\in L^1([\PPP(\aaa)(\Fv)],\omega^{\#}_v)$ then \begin{align*}\int_{[\PPP(\aaa)(\Fv)]}hH^{\#}_v\omega^{\#}_v\hskip-2cm&\\&=\frac{a_n}{\lambda_{v,1}(F_{v,1})}\int_{(\Fvt)^{n-1}\times F_{v,1}}(h\circ\qav)\prod_{j=1}^{n-1}|x_j|_v^{-1} dx_1\dots dx_{n-1}\times\lambda_{v,1}\\
&=\frac{a_n}{\lambda_{v,1}(F_{v,1})}\int_{(\Fvt)^{n-1}\times F_{v,1}}(h\circ\qav)d^*x_1\dots d^*x_{n-1}\times\lambda_{v,1}.
\end{align*}
\end{proof}
\subsection{} Recall that for $\vMFz$ by \ref{identofov}, the group $[\TTa(\Ov)]$ identifies with the image of $\Ovn$ in $[\TTa(\Fv)]$ for the quotient homomorphism $\Fvtn\to[\TTa(\Fv)]$ and that with this identification, becomes open and compact subgroup of $[\TTa(\Fv)]$. We calculate the volume of $[\TTa(\Ov)]$ against $\mu_v$. 
\begin{lem}\label{xiuim}
Let $\vMFz$. The measure $\mu_v$ is normalized by $$\mu_v([\TTa(\Ov)])=(1-\pivv)^{n-1}=\zeta_v(1)^{-(n-1)}$$.
\end{lem}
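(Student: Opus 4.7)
The plan is to apply Lemma \ref{smacor} to the characteristic function $h=\mathbf{1}_{[\TTa(\Ov)]}$ and explicitly identify the set over which we end up integrating.

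First I would unpack the subset $(\qav)^{-1}([\TTa(\Ov)])\cap\Fvtn\cap\Dav$. By Proposition \ref{identofov}, $[\TTa(\Ov)]$ is the image of $\Ovtn$ in $[\TTa(\Fv)]$, so the preimage in $\Fvtn$ is
\[
(\qav)^{-1}([\TTa(\Ov)])=\Ovtn\cdot(\Fvt)_{\aaa}=\{\xxx\in\Fvtn\mid \exists\,t\in\Fvt,\ (t^{a_j}x_j)_j\in\Ovtn\}.
\]
The condition $(t^{a_j}x_j)_j\in\Ovtn$ forces $v(x_j)=-a_jv(t)$ for all $j$. Intersecting with $\Dav$ requires moreover $v(x_j)\geq 0$ (giving $v(t)\leq 0$) and some $v(x_j)<a_j$ (giving $v(t)\geq 0$). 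Hence $v(t)=0$ and consequently $v(x_j)=0$ for all $j$, so the triple intersection equals $\Ovtn$. (This is essentially the content of Lemma \ref{fvtaovta} repackaged through the description of $\Dav$.)

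Next, applying Lemma \ref{smacor} with $h=\mathbf{1}_{[\TTa(\Ov)]}$ gives
\[
\mu_v([\TTa(\Ov)])=\frac{1}{1-\pivv}\int_{\Ovtn}d^*x_1\cdots d^*x_n.
\]
On $\Ovt$ the absolute value $|x|_v$ equals $1$, so $d^*x|_{\Ovt}=dx|_{\Ovt}$ and $d^*x(\Ovt)=dx(\Ov)-dx(\piv\Ov)=1-\pivv$ by the normalization in \ref{pojac}. Therefore the integral factors as $(1-\pivv)^n$, and dividing by $1-\pivv$ yields
\[
\mu_v([\TTa(\Ov)])=(1-\pivv)^{n-1}=\zeta_v(1)^{-(n-1)}.
\]

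The only non-routine step is the computation of $(\qav)^{-1}([\TTa(\Ov)])\cap\Fvtn\cap\Dav$; everything else is a direct application of Lemma \ref{smacor} and the chosen normalizations. I do not expect any serious obstacle.
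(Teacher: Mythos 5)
Your proof is correct and follows the same route as the paper: both apply Lemma \ref{smacor} to $\mathbf{1}_{[\TTa(\Ov)]}$, identify $(\qav)^{-1}([\TTa(\Ov)])\cap\Fvtn\cap\Dav=\Ovtn$ by the same valuation argument (your computation that $v(t)\leq 0$ and $v(t)\geq 0$ force $v(t)=0$ is the paper's observation that $(t\cdot\Ovtn)\cap\Dav=\emptyset$ when $v(t)\neq 0$), and then compute $\int_{\Ovtn}d^*x_1\cdots d^*x_n=(1-\pivv)^n$.
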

\begin{proof}
Let us firstly establish that $(\qav)^{-1}([\TTa(\Ov)])\cap \Dav=(\Ovt)^n.$ One has that $$(\qav)^{-1}([\TTa(\Ov)])^{-1}=\bigcup_{t\in\Gm(\Fv)}t\cdot\Ovtn.$$
Note that if $\uuu\in(\Ovt)^n,$ then for every index~$j$ one has $v(t^{a_j}u_j)=a_jv(t)+v(u_j)=a_jv(t)$. Now if $v(t)>0$ it follows that $t\cdot\uuu\in\prodjn(\piv^{a_j}\Ov),$ thus $t\cdot\uuu\not\in\Dav$ and if $v(t)<0$ then $t\cdot\uuu\not\in\Ov^n$, thus $t\cdot\uuu\not\in\Dav$. We have obtained if $v(t)\neq 0$, then $(t\cdot\Ovn)\cap\Dav=\emptyset$ and hence, $$\Dav\cap(\qav)^{-1}([\TTa(\Ov)])=\Dav\cap  \bigcup_{t\in\Gm(\Fv)}t\cdot\Ovn=\Dav\cap\Ovn=\Ovn.$$ 
We deduce that one has equality of functions \begin{equation*}(\jed_{(\qav)^{-1}([\TTa(\Ov)])})|_{\Fvtn\cap \Dav}={\jed_{(\Ovt)^n}}.
\end{equation*}
Now, we can calculate $\mu_v([\TTa(\Ov)]).$ One has by \ref{smacor} that \begin{align*}\int_{[\TTa(\Fv)]}\mathbf 1_{[\TTa(\Ov)]}\mu_v&=\frac{1}{1-\pivv}\int_{\Fvtn\cap\Dav}\mathbf 1_{(\qav)^{-1}([\TTa(\Ov)])} d^*x_1\dots d^*x_n\\
&=\frac{1}{1-\pivv}\int_{\Fvtn}\mathbf 1_{(\Ovt)^n}d^*x_1\dots d^*x_n\\
&=\frac{1}{1-\pivv}\bigg(\int_{\Fvt}\mathbf 1_{\Ovt}{d^*x}\bigg)^n\\
&=(1-\pivv)^{n-1}.
\end{align*}
The statement follows.
\end{proof}
\subsection{}Let us dedicate this paragraph to the definition of a Haar measure on $[\TTa(\AAF)]$. Let $n\in\ZZ_{>0}$ and let $\aaa\in\ZZ^n_{>0}$. We set $[\TTa(\AAF)]$ to be the ``adelic space" of~$\TTa$, i.e. $$[\TTa(\AAF)]= \sideset{}{'}\prod _{v\in M_F}[\TTa(F_v)],$$where the restricted product is taken with respect to the sequence of the open and compact subgroups for $[\TTa(\Ov)]\subset[\TTa(\Fv)]$ for $\vMFz$. 
\begin{prop}\label{haarmofrp}Let~$I$ be a set and let $I'$ be a subset such that $I-I'$ is finite. For $i\in I$, let $G_i$ be a locally compact abelian group endowed with a Haar measure $dg_i$. For $i\in I'$, let $H_i$ be an open and compact subgroup of $G_i$ such that $dg_i(H_i)=1$. Set~$G$ to be the restricted product $\sideset{}{'}\prod _{i\in I}G_i$ with respect to the subgroups $H_i\subset G_i$ for $i\in I'$.
\begin{enumerate}
\item {\normalfont \cite[Proposition 5, \no 5, \S 1, Chapter VII]{Integrationd}}Let~$S$ be a finite subset of~$I$ containing $I-I'$. Set $$G_S:=\prod_{i\in S} G_i\times \prod_{i\in I-S}H_i.$$ The measure $$\bigotimes_{i\in S}dg_i\bigotimes_{i\in I-S}(dg_i|_{H_i})$$ is a Haar measure on $G_S$.
\item{\normalfont \cite[Proposition 5.5]{Ramakrishnan}} There exists a unique Haar measure $dg$ on~$G$ such that for every finite subset $S\subset I$ which contains $I-I'$ one has that$$dg|_{G_S}=\bigotimes_{i\in S}dg_i\bigotimes_{i\in I-S}(dg_i|_{H_i}).$$
\end{enumerate}
\end{prop}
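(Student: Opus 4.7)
The plan is to establish (1) by reducing to the standard fact that a direct product of locally compact groups carries a canonical product Haar measure, and then to bootstrap (2) by gluing the measures $\mu_S := \bigotimes_{i\in S}dg_i\otimes \bigotimes_{i\in I-S}(dg_i|_{H_i})$ along the directed family of open subgroups $G_S \subset G$.

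For part (1), I would first note that $K_S := \prod_{i\in I-S} H_i$ is a compact Hausdorff topological group (Tychonoff), and that the product of the probability Haar measures $dg_i|_{H_i}$ is the (normalized) Haar measure on $K_S$ — this is a standard fact about products of compact groups and uses critically the normalization $dg_i(H_i)=1$ to ensure the infinite product of measures is well defined. Next I would observe that $G_S = \prod_{i\in S} G_i \times K_S$ as topological groups, so the product of Haar measures from the finitely many locally compact factors $G_i$ ($i\in S$) and the compact factor $K_S$ is a Haar measure on $G_S$; this last assertion is the classical product construction for Haar measures on finite products of locally compact groups.

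For part (2), the essential input is a compatibility assertion: if $S \subset T$ are finite subsets of $I$ both containing $I-I'$, then $G_S$ is an open subgroup of $G_T$, and $\mu_T|_{G_S} = \mu_S$. This follows because for a Borel $A \subset G_S$ one has $A = A \times \prod_{i\in T-S}H_i$ inside $G_T$, so
\[
\mu_T(A) = \mu_S(A)\cdot \prod_{i\in T-S} dg_i(H_i) = \mu_S(A),
\]
using $dg_i(H_i)=1$. Since $G = \bigcup_S G_S$ with the $G_S$ open subgroups forming a directed system, and every compact subset of $G$ is contained in some $G_S$ (by the defining property of the restricted product topology), the compatible family $(\mu_S)_S$ uniquely determines a Radon measure $dg$ on $G$ by the prescription $\int_G f\, dg := \int_{G_S} f\, \mu_S$ whenever $\supp(f) \subset G_S$. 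Left-invariance is local: for $g \in G$ pick any finite $S \supset I-I'$ with $g \in G_S$; then translation by $g$ preserves $G_S$, and invariance of $dg$ on $G_S$ reduces to invariance of $\mu_S$, which was established in (1). Uniqueness follows because any Haar measure satisfying the normalization on $G_{I-I'}$ — the smallest member of the family — is determined on each larger $G_S$ by the multiplicativity of the modulus on open subgroups of finite index in finite products of locally compact groups, forcing $dg|_{G_S} = \mu_S$.

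The main obstacle I expect is the gluing step in (2): verifying $\sigma$-additivity of $dg$ across the different $G_S$, and checking that the resulting measure is indeed Radon (inner regular on compacts, locally finite) rather than just finitely additive. This is handled by the fact that any compact $C \subset G$ lies entirely in some $G_S$, so that the integration theory on $G$ reduces on compacts to the already-established integration theory on a single $G_S$; from there the standard extension of a Radon measure from a base of open sets completes the argument.
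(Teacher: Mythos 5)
The paper does not actually prove this proposition --- it cites Bourbaki's \emph{Int\'egration} for (1) and Ramakrishnan--Valenza for (2) --- so there is no in-paper argument to compare against. Your construction is the standard one, and the overall plan (build the finite product measure on each $G_S$, check compatibility for $S\subset T$, glue along the increasing open cover $G=\bigcup_S G_S$ using that every compact sits in some $G_S$) is sound and is essentially the proof in Ramakrishnan--Valenza.

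Two corrections are needed, though. First, the displayed computation in the compatibility step is garbled: the equation $A=A\times\prod_{i\in T-S}H_i$ is meaningless as written, and $\mu_T(A)=\mu_S(A)\cdot\prod_{i\in T-S}dg_i(H_i)$ only makes literal sense when $A$ is a measurable rectangle. What you actually want to say is that $\mu_T|_{G_S}$ is, by the definition of product measure, $\bigotimes_{i\in S}dg_i\otimes\bigotimes_{i\in T-S}(dg_i|_{H_i})\otimes\bigotimes_{i\in I-T}(dg_i|_{H_i})$, and since $I-S=(T-S)\sqcup(I-T)$ this is exactly $\mu_S$; no computation of the form $\mu_T(A)=\mu_S(A)\cdot(\text{scalar})$ on individual sets is required.

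Second, and more seriously, the uniqueness paragraph invokes ``multiplicativity of the modulus on open subgroups of finite index,'' but $H_i$ (hence $G_{I-I'}$ inside $G_S$) is typically of \emph{infinite} index: already $\mathcal O_v^\times\subset F_v^\times$ has quotient $\mathbb Z$, and in the application at hand $[\TTa(\Ov)]\subset[\TTa(\Fv)]$ has quotient $\mathbb Z^n/\aaa\mathbb Z$, which is infinite for $n\ge 2$. The finite-index argument simply does not apply. Uniqueness is in fact immediate and needs no such device: the $G_S$ form an open cover of $G$, so any measure restricting to $\mu_S$ on every $G_S$ is determined; alternatively, Haar measure on $G$ is unique up to a positive scalar, and the requirement that the restriction to a single $G_S$ equal $\mu_S$ on one set of positive finite $\mu_S$-measure pins the scalar. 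Replace the modulus/finite-index reasoning with one of these.
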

The measure $dg$ will be called the restricted product Haar measure and, by the abuse of the notation, may be denoted as $dg=\otimes dg_i$. We present a way how to calculate the integral of a function.
\begin{prop}[{\cite[Proposition 5-6]{Ramakrishnan}}] \label{rsra} In the situation of \ref{haarmofrp}, let $f_i\in L^1(G_i,dg_i)$ be a continuous complex valued function such that there exists a finite subset $I''\subset I'$ with $I'-I''$ is finite such that $f_i|H_i=1$ for every $i\in I''$. The function $$f:(x_i)_i\mapsto \prod_i f_i(x_i)$$ is continuous. Suppose that $$\prod_i\int_{G_i}f_idg_i$$ converges. Then $f\in L^1(G)$ and $$\int_Gfdg=\prod_{i}\int_{G_i}f_idg_i.$$
\end{prop}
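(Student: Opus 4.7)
The strategy is to reduce the statement to Fubini on the finite-product groups $G_S$ of part (1), and then pass to the limit over the directed system of finite subsets $S \subset I$ containing $I - I'$. First I would establish continuity of $f$. Fix a point $(x_i)_i \in G$; choose a finite subset $S \subset I$ with $S \supset I - I'$ such that $x_i \in H_i$ for all $i \notin S$. Then $(x_i)_i$ lies in the open subgroup $G_S$, and on $G_S$ the function $f$ coincides (by the hypothesis $f_i|_{H_i} = 1$ for $i \in I''$, after enlarging $S$ to contain $I' - I''$) with the composition of the projection $G_S \to \prod_{i \in S} G_i$ with the finite product function $(y_i)_{i \in S} \mapsto \prod_{i \in S} f_i(y_i)$. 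Continuity of each $f_i$ therefore yields continuity of $f$ at $(x_i)_i$.

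Next I would handle integrability and the equality of integrals simultaneously. Fix any finite $S_0 \supset (I - I') \cup (I' - I'')$, so that $f_i|_{H_i} = 1$ for $i \notin S_0$. For any finite $S \supset S_0$, part (1) of Proposition~\ref{haarmofrp} gives $dg|_{G_S} = \bigotimes_{i \in S} dg_i \otimes \bigotimes_{i \notin S}(dg_i|_{H_i})$, and $f|_{G_S}$ factors as the tensor product of $f_i$ (for $i \in S$) and the constant $1$ on each $H_i$ (for $i \notin S$). Since each $f_i \in L^1(G_i, dg_i)$ and $dg_i(H_i) = 1$, classical Fubini (applied to the finite product measure on $G_S$) gives $f|_{G_S} \in L^1(G_S, dg|_{G_S})$ together with
\[
\int_{G_S} f\, dg = \prod_{i \in S} \int_{G_i} f_i\, dg_i.
\]

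The final step is to pass from the finite products $G_S$ to the restricted product $G = \bigcup_{S} G_S$, where the union runs over finite $S \supset S_0$ and is directed by inclusion. Applying the previous step to $|f|$ in place of $f$, and invoking the monotone convergence theorem along the directed system of characteristic functions $\mathbf{1}_{G_S}$, one obtains
\[
\int_G |f|\, dg = \lim_{S} \prod_{i \in S} \int_{G_i} |f_i|\, dg_i = \prod_{i \in I} \int_{G_i} |f_i|\, dg_i,
\]
where the right-hand side is finite by the hypothesis that $\prod_i \int_{G_i} f_i\, dg_i$ converges absolutely (which is what ``converges'' in the statement should mean for a product indexed by an infinite set). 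Hence $f \in L^1(G, dg)$, and then dominated convergence (with dominating function $|f|$) applied to $f \cdot \mathbf{1}_{G_S}$ yields $\int_G f\, dg = \lim_S \int_{G_S} f\, dg = \prod_i \int_{G_i} f_i\, dg_i$.

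\textbf{Main obstacle.} The only nontrivial point is the interplay between the restricted-product topology and the measure: one must be careful that the Haar measure $dg$ of Proposition~\ref{haarmofrp} is genuinely the ``union'' of the $dg|_{G_S}$, and that $G \setminus \bigcup_S G_S$ is empty (which it is by the definition of the restricted product). Granted this, the proof is a direct combination of Fubini on each $G_S$ with a monotone/dominated convergence argument along the directed system of finite subsets $S \subset I$.
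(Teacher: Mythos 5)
The paper cites this result from Ramakrishnan--Valenza and does not supply its own proof, so there is no in-text argument to compare against; your proof is the standard one and is correct: factor $f|_{G_S}$ as a finite tensor product, apply classical Fubini on each $G_S$, and pass to the limit over the directed system of finite $S\supset S_0$ by monotone convergence (for $|f|$) and dominated convergence (for $f$), using that the open subgroups $G_S$ increase to $G$ and carry the compatible measures from \ref{haarmofrp}. You also correctly flag the only delicate point: the monotone-convergence step requires convergence of $\prod_i\int_{G_i}|f_i|\,dg_i$ (the product of $L^1$-norms), which is strictly stronger than convergence of the complex-valued product $\prod_i\int_{G_i}f_i\,dg_i$ appearing in the displayed hypothesis; the statement must be read in this stronger, absolute sense for the conclusion $f\in L^1(G)$ to hold, and this is indeed how the cited source formulates it.
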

For $\vMFz$, we have established in \ref{xiuim} that $\mu_v([\TTa(\Ov)])=\zeta_v(1)^{-(n-1)}.$ We will apply \ref{haarmofrp} to define a Haar measure on $[\TTa(\AAF)]$.
\begin{mydef}\label{muaaf}
Let $\mu_{\AAF}$ be the restricted product measure $$\mu_{\AAF}=\bigotimes_{\vMFz}\zeta_v(1)^{(n-1)}\mu_v\otimes \bigotimes_{\vMFi}\mu_v.$$
\end{mydef}
The group $\AAFt$ acts on $(\AAFt)^n$ via the proper homomorphism $(x_v)_v\mapsto ((x_v^{a_j})_j)_v$ (Lemma \ref{properaaftn}) and one has an identification $[\TTa(\AAF)]=(\AAFt)^n/(\AAFt)$  (Lemma \ref{quotttaf} together with \ref{rudj}). Endow $\AAFt$ with the Haar measure $$d^*x_{\AAF}:=\bigotimes_{\vMFz}\zeta_v(1)d^*x_v\otimes \bigotimes_{\vMFi}d^*x_v.$$ Let $d^*\xxx_{\AAF}:=d^*x_{\AAF}^{\otimes n}$ be the product Haar measure on $(\AAFt)^n$. 
\begin{lem}\label{muaafisquot}
One has the following equality of the measures on $[\TTa(\AAF)]=(\AAFt)^n/\AAFt:$ $$\mu_{\AAF}=d^*\xxx_{\AAF}/d^*x_{\AAF}.$$
\end{lem}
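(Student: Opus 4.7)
Both measures are Haar measures on the locally compact abelian group $[\TTa(\AAF)]$, so by uniqueness up to a positive scalar it will suffice to check equality after restricting to a single open subgroup of finite index. That $\mu_\AAF$ is Haar is immediate from its construction via Proposition \ref{haarmofrp} together with the normalization $\mu_v([\TTa(\Ov)]) = \zeta_v(1)^{-(n-1)}$ of Lemma \ref{xiuim}. For $d^*\xxx_\AAF/d^*x_\AAF$: Lemma \ref{properaaftn} gives properness of $\epsilon:\AAFt\to(\AAFt)^n$, $t\mapsto(t^{a_j})_j$; Lemma \ref{mapqaf} identifies the cokernel with $[\TTa(\AAF)]$; and Lemma \ref{rudj}(3),(4) then ensures $d^*\xxx_\AAF/d^*x_\AAF$ is a well-defined Haar measure, interpretable as $d^*\xxx_\AAF/\epsilon_*(d^*x_\AAF)$.

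Fix a finite $S\supset M_F^\infty$ and consider the three parallel open subgroups
\[
[\TTa(\AAF)]^S := \prod_{v\in S}[\TTa(\Fv)]\times\prod_{v\notin S}[\TTa(\Ov)], \quad ((\AAFt)^n)^S := \prod_{v\in S}\Fvtn\times\prod_{v\notin S}(\Ovt)^n,
\]
and $(\AAFt)^S := \prod_{v\in S}\Fvt\times\prod_{v\notin S}\Ovt$. By Lemma \ref{fvtaovta} the kernel of $q^\aaa_\AAF|_{((\AAFt)^n)^S}\twoheadrightarrow [\TTa(\AAF)]^S$ is exactly $\epsilon((\AAFt)^S)$. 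Applying Proposition \ref{haarmofrp}(2) three times expresses the restrictions of the restricted-product measures $\mu_\AAF$, $d^*\xxx_\AAF$, $d^*x_\AAF$ as honest tensor products of local pieces: at $v\in S\cap M_F^0$ they are $\zeta_v(1)^{n-1}\mu_v$, $\zeta_v(1)^n(d^*x_v)^{\otimes n}$, $\zeta_v(1)d^*x_v$; at $v\in M_F^\infty$ they are $\mu_v$, $(d^*x_v)^{\otimes n}$, $d^*x_v$; and at $v\notin S$ they are the restrictions of these to the compact open subgroups $[\TTa(\Ov)]$, $(\Ovt)^n$, $\Ovt$ respectively.

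Two compatibilities of the quotient-measure construction finish the argument. First, on any open subgroup $G'\subset G$ containing $\ker\pi$, one has $(dG/\ker\pi)|_{\pi(G')}=dG|_{G'}/\ker\pi$; applied to $G'=((\AAFt)^n)^S$ (which does contain $\epsilon((\AAFt)^S)$ by Lemma \ref{fvtaovta}), this reduces the problem to computing $d^*\xxx_\AAF|_{((\AAFt)^n)^S}/d^*x_\AAF|_{(\AAFt)^S}$. Second, for closed subgroups $H_i\subset G_i$ one has $(\bigotimes_i dG_i)/(\bigotimes_i dH_i)=\bigotimes_i (dG_i/dH_i)$; this turns the computation into a place-by-place identity, which at each $v$ is simply $\mu_v=(d^*x_v)^{\otimes n}/d^*x_v$ of Definition \ref{haarttafv}, with the remaining factor $\zeta_v(1)^n/\zeta_v(1)=\zeta_v(1)^{n-1}$ matching exactly the factor in the restricted-product definition of $\mu_\AAF$. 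This yields the equality on $[\TTa(\AAF)]^S$; since such $S$ cover $[\TTa(\AAF)]$, the constant of proportionality is $1$.

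The principal technical obstacle is the rigorous justification of the two ``standard'' compatibilities of quotient measures invoked above—restriction to an open subgroup containing the kernel, and interaction with tensor products of subgroups—neither of which is stated explicitly in the excerpt. Both are established by direct verification on compactly supported test functions using Proposition \ref{intbour} and Fubini's theorem (the key input being that a compactly supported test function on the subquotient can always be realized as $\Phi^*$ for a compactly supported $\Phi$ on the upstairs group, via \cite[Proposition 2, \no 1, \S 2, Chapter VII]{Integrationd}); they should likely be recorded as auxiliary lemmas so the argument reads cleanly.
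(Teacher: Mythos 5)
Your proof takes a genuinely different route from the paper's. The paper relies directly on uniqueness of Haar measure: it picks the single explicit test function $\phi = \bigotimes_v\phi_v$ with $\phi_v = \mathbf 1_{(\Ovt)^n}$ at finite places, computes each $\phi^*_v$ (showing in particular $\phi^*_v = \mathbf 1_{[\TTa(\Ov)]}$ for $\vMFz$), and evaluates both candidate measures against $\phi^*=\otimes_v\phi^*_v$ using Proposition~\ref{intbour}. Your version instead restricts the comparison to the open subgroup $[\TTa(\AAF)]^S$ and reduces place by place via two structural compatibilities of quotient measures. The structural route would read more cleanly if those compatibilities were recorded once as lemmas, as you note; the paper's test-function computation is self-contained and sidesteps having to prove them.

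There is, however, a genuine misstatement in the first compatibility that you must repair before the argument closes. You phrase it for ``any open subgroup $G'\subset G$ containing $\ker\pi$'' and then set $G'=((\AAFt)^n)^S$, but this $G'$ does \emph{not} contain $\ker q^\aaa_\AAF=(\AAFt)_\aaa=\epsilon(\AAFt)$: for instance $\epsilon\bigl((\piv)_{v\notin S}\bigr)$ lies in $(\AAFt)_\aaa$ while each component $\piv^{a_j}$ has positive valuation and so fails to lie in $(\Ovt)^n$. The hypothesis of your stated lemma is therefore violated. What you actually want (and what your parenthetical about $\epsilon((\AAFt)^S)$ shows you are implicitly using) is the version for an arbitrary open subgroup $G'$ and closed subgroup $H$ with Haar measure $dH$:
\[
(dG/dH)\big|_{\pi(G')} \;=\; (dG|_{G'})\big/\bigl(dH|_{H\cap G'}\bigr),
\]
together with the identification $G'/(H\cap G')\xrightarrow{\sim}\pi(G')$. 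Lemma~\ref{fvtaovta} is then exactly the input that identifies $H\cap G'=\epsilon((\AAFt)^S)$. Once the compatibility is restated this way (and proved on test functions via Proposition~\ref{intbour}, as you propose), the rest of your argument goes through. A minor remark: $[\TTa(\AAF)]^S$ is not of finite index in $[\TTa(\AAF)]$, but that is harmless here, since two Haar measures agreeing on any one open subgroup already coincide.
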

\begin{proof}The quotient measure $d^*\xxx_{\AAF}/d^*x_{\AAF}$ is a Haar measure on $[\TTa(\AAF)]$ by \ref{rudj}. Therefore, it suffices to verify the equality on a single non-trivial compactly supported function on $[\TTa(\AAF)]$ which takes non-negative values. For $\vMFz$ we set $\phi_v=\mathbf 1_{(\Ovt)^n}:\Fvtn\to\CC$ and for $\vMFi$, we let $\phi _v:(\Fvt)^n\to\RR_{\geq 0}$ be a non-trivial continuous function with compact support. The function $\phi=\bigotimes_v\phi_v$ is continuous by \ref{rsra} and compactly supported (its support is the set $\prod_{\vMFz}(\Ovt)^n\times \prod_{\vMFi}\supp(\phi_v)$). For $\vMF$ and $\yyy\in[\TTa(F_v)],$ let $\widetilde\yyy\in(\Fvt)^n$ be its lift and for $\yyy\in[\TTa(\AAF)],$ let $\widetilde\yyy\in(\AAFt)^n$ be its lift.
For $\vMFz$ we define $$\phi_v^*:[\TTa(\Fv)]\to\RR_{\geq 0}\hspace{1cm}\yyy\mapsto \zeta_v(1)\int_{\Fvt}\phi_v(x\cdot\widetilde\yyy)d^*x$$ and for $\vMFi$ we define $$\phi^*_v:[\TTa(\Fv)]\to\RR_{\geq 0}\hspace{1cm}\yyy\mapsto\int_{\Fvt}\phi_v(x\cdot\widetilde\yyy)d^*x,$$ the functions are well defined, continuous compactly supported and of non-negative values by \ref{duio}. By the definition we have that $(d^*x)^{\otimes n}/d^*x=\mu_v$ and thus $(d^*x)^{\otimes n}/(\zeta_v(1)d^*x)=\zeta_v(1)^{-1}\mu_v$. Now by \ref{intbour}, one has that  
\begin{align*}
\int_{\Fvtn}(\phi) (d^*x)^{\otimes n}&=\int_{[\TTa(\Fv)]}(\mu_v/\zeta_v(1)) \bigg(\yyy\mapsto \int_{\Fvt}\phi(x\cdot\widetilde\yyy)\zeta_v(1)d^*x\bigg)\\
&=\int_{[\TTa(\Fv)]}(\phi^*)(\mu_v/\zeta_v(1))\\
&=\zeta_v(1)^{-1}\int_{[\TTa(\Fv)]}\phi^*\mu_v.
\end{align*}
If $\vMFz$, let us prove that $$\phi^*_v=\mathbf 1_{[\TTa(\Ov)]}.$$ Indeed, if $\zzz\not\in (q^\aaa_v)^{-1}([\TTa(\Ov)]),$ then $\phi_v(x\cdot\zzz)=0$ for every $x\in\Fvt$, thus $\phi^*_v(\qav(\zzz))=0$. If $\zzz\in(q^\aaa_v)^{-1}([\TTa(\Ov)],$ then $x\cdot\zzz\in(\Ovt)^n$ if and only if $v(x)=v(z_1)/a_1$, and thus $d^*x(\{x\in\Fvt| x\cdot\zzz\in(\Ovt)^n\})=\zeta_v(1)^{-1},$ and hence $\zeta_v(1)\int_{\Fvt}\phi_v(x\cdot\zzz)d^*x=1$. It follows that $\phi^*_v=\mathbf 1_{[\TTa(\Ov)]}.$

We also define $$\phi^*:[\TTa(\AAF)]\to\CC\hspace{1cm}\yyy\mapsto \int_{\AAFt}\phi(x_{\AAF}\cdot\widetilde\yyy)d^*x_{\AAF},$$by \ref{duio} it is well defined and continuous function. We are going to verify that the two measures coincide on $\phi^*$. For every $\yyy\in[\TTa(\AAF)]$, one has that \begin{align*}
\phi^*(\yyy)&=\int_{\AAFt}\phi(x_{\AAF}\cdot \widetilde\yyy)d^*x_{\AAF}\\
&=\prod_{\vMFz}\zeta_v(1)\int_{[\TTa(\Fv)]}\phi_v(x\cdot\widetilde\yyy_v)d^*x\times\prod_{\vMFi}\int_{[\TTa(\Fv)]}\phi_v(x\cdot\widetilde\yyy_v)d^*x\\
&=\prod_{\vMF}\phi_v^*(\yyy_v),
\end{align*}i.e. $\phi^*=\otimes_{\vMF}\phi^*_v$.
By \ref{intbour}, we have that \begin{align*}\int_{[\TTa(\AAF)]}(\phi^*)(d^*\xxx_{\AAF}/d^*x_{\AAF})=\int _{(\AAFt)^n}\phi d^*\xxx_{\AAF}.\end{align*}
On the other side, one has that \begin{align*}
\int_{[\TTa(\AAF)]}\phi^*\mu_{\AAF}\hskip-2cm&\\&=\int_{[\TTa(\AAF)]}\big(\otimes_v\phi^*_v\big)\mu_{\AAF}\\\\&=\prod_{\vMFz}\zeta_v(1)^{n-1}\int_{[\TTa(\Fv)]}\phi^*_v\mu_v\times\prod_{\vMFi}\int_{[\TTa(\Fv)]}\phi^*_v\mu_v\\
&=\prod_{\vMFz}\zeta_v(1)^n\int_{\Fvtn}\phi_vd^*x_1\dots d^*x_n\times\prod_{\vMFi}\int_{\Fvtn}\phi_vd^*x_1\dots d^*x_n\\
&=\int_{(\AAFt)^n}(\otimes_v\phi_v)d^*\xxx_{\AAF}\\
&=\int_{(\AAFt)^n}\phi d^*\xxx_{\AAF}.
\end{align*}
We have verified that the Haar measures $d^*\xxx_{\AAF}/d^*x$ and $\mu_{\AAF}$ satisfy that $$(d^*\xxx_{\AAF}/d^*x)(\phi^*)=\mu_{\AAF}(\phi^*)> 0,$$ thus $d^*\xxx_{\AAF}/d^*x=\mu_{\AAF}.$ The statement is proven.
%
\end{proof}
\subsection{} In this paragraph we define and calculate the Tamagawa number of the ``stacky torus"~$\TTa$. 

Denote by $\mu_{\RR}$ the quotient measure $(d^*x)^{\otimes n}/ d^*x$ on the quotient $\RR^n_{>0}/\RR_{>0}$ for the action of $\RR_{>0}$ via the proper map $t\mapsto (t^{a_j})_j$. By \ref{rudj} the quotient identifies with the quotient group $\RR^n_{>0}/(\RR_{>0})_{\aaa}$ and $\mu_{\RR}$ is a Haar measure on it. 
\begin{mydef} 
We define $\mu_1$ to be the Haar measure on $[\TTa(\AAF)_1]$ normalized by the condition $\mu_1\otimes \mu_{\RR}=\mu_{\AAF}$ for the identification $$[\TTa(\AAF)]_1\times(\RR^n_{>0}/(\RR_{>0})_{\aaa})=[\TTa(\AAF)]$$ given by the isomorphism (\ref{seulment}). 
\end{mydef} 
We can write $\mu_1$ as a quotient measure:
\begin{lem}\label{mujedan}
The measure $\mu_{1}$ identifies with the measure $d^*\xxx^1_{\AAF}/d^*x^1_{\AAF}$ on $[\TTa(\AAF)]_1=(\AAFj)^n/\AAFj,$ where the action is given by the proper morphism $$\AAFj\to(\AAFj)^n\hspace{1cm}(x_v)\mapsto((x^{a_j}_v)_j)_v.$$
\end{lem}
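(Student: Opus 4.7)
The plan is to push the identity $\muAF = d^*\xxx_{\AAF}/d^*x_{\AAF}$ of Lemma \ref{muaafisquot} through the decompositions of $\AAFt$ and $(\AAFt)^n$ arising from the section $\sigma : \Rgz \to \AAFt$ defined in \ref{hrklj}. The map $\sigma$ is a group homomorphism, so it induces a topological group isomorphism $\widetilde\sigma : \AAFj \times \Rgz \xrightarrow{\sim} \AAFt$ sending $(z,r)$ to $z\sigma(r)$, and hence an isomorphism $(\AAFj)^n \times \RR^n_{>0} \xrightarrow{\sim} (\AAFt)^n$ sending $((x^1_j)_j,(r_j)_j)$ to $(x^1_j\sigma(r_j))_j$.

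Using that $\sigma(r)^{a_j} = \sigma(r^{a_j})$, the direct computation
\[
t^{a_j} x_j \;=\; z^{a_j}\sigma(r^{a_j})\, x^1_j \sigma(r_j) \;=\; (z^{a_j} x^1_j)\,\sigma(r^{a_j} r_j),
\]
for $t = z\sigma(r)$ and $x_j = x^1_j\sigma(r_j)$, shows that under these splittings the $\aaa$-action of $\AAFt$ on $(\AAFt)^n$ factors as the product of the $\aaa$-action of $\AAFj$ on $(\AAFj)^n$ and that of $\Rgz$ on $\RR^n_{>0}$. Taking quotients recovers the direct product decomposition $[\TTa(\AAF)] = [\TTa(\AAF)]_1 \times (\RR^n_{>0}/(\Rgz)_{\aaa})$ from (\ref{seulment}). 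On the measure side, $d^*x_{\AAF}$ corresponds under $\widetilde\sigma$ to $d^*x^1_{\AAF} \otimes d^*r$, which can be taken as the normalisation of $d^*x^1_{\AAF}$; passing to $n$-fold products then gives $d^*\xxx_{\AAF} = d^*\xxx^1_{\AAF} \otimes (d^*r)^{\otimes n}$.

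The remaining technical step is to establish that the quotient measure respects these product decompositions, that is,
\[
d^*\xxx_{\AAF}/d^*x_{\AAF} \;=\; (d^*\xxx^1_{\AAF}/d^*x^1_{\AAF}) \otimes \bigl((d^*r)^{\otimes n}/d^*r\bigr)
\]
as Haar measures on $[\TTa(\AAF)]_1 \times (\RR^n_{>0}/(\Rgz)_{\aaa}) = [\TTa(\AAF)]$. I would verify this by testing Proposition \ref{intbour}'s integration formula against a product function $\phi_1 \otimes \phi_2$, where Fubini's theorem separates both the integral over $(\AAFt)^n$ and the averaging integral over the acting group $\AAFt = \AAFj \times \Rgz$, and then concluding by density of product functions in $\mathscr C^0_c((\AAFt)^n)$. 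Combining this with $\mu_{\RR} = (d^*r)^{\otimes n}/d^*r$ and with Lemma \ref{muaafisquot} yields $\muAF = (d^*\xxx^1_{\AAF}/d^*x^1_{\AAF}) \otimes \mu_{\RR}$; comparing with the defining relation $\muAF = \mu_1 \otimes \mu_{\RR}$ then forces $\mu_1 = d^*\xxx^1_{\AAF}/d^*x^1_{\AAF}$. The main obstacle is this Fubini-style decomposition of the quotient measure; everything else is routine bookkeeping.
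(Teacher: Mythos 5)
Your argument is correct but takes a genuinely different route from the paper. The paper's proof assembles the four-term exact sequences arising from the proper homomorphisms $\AAFj\to(\AAFj)^n$, $\AAFt\to(\AAFt)^n$, and $\Rgz\to\RR^n_{>0}$ into a bicomplex with Haar measures and invokes Oesterl\'e's Proposition \ref{complexdoesterle}: all rows and the second and third columns are of trivial measure Euler--Poincar\'e characteristic (the second by Lemma \ref{muaafisquot}, the third by the definition of $\mu_{\RR}$), hence so is the first column, which is precisely the claim. You bypass Oesterl\'e's machinery entirely by exploiting the explicit section $\sigma$ from \ref{hrklj}: the splitting $\AAFt\cong\AAFj\times\Rgz$ is compatible with the $\aaa$-actions on both sides, so the entire quotient picture decomposes as a product, and everything reduces to the claim that quotient Haar measures commute with finite products. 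That claim is the one ingredient not already quoted in the paper; your Fubini outline for it is sound, though you can close it more economically than by density --- both $d^*\xxx_{\AAF}/d^*x_{\AAF}$ and $(d^*\xxx^1_{\AAF}/d^*x^1_{\AAF})\otimes\mu_{\RR}$ are Haar measures on the group $[\TTa(\AAF)]$, so they agree up to a positive scalar, and a single test product function with nonzero integral pins the scalar to $1$, the same single-test-function device the paper itself uses in \ref{muaafisquot}. What each approach buys: yours is self-contained and more elementary, needing only the splitting $\sigma$ and basic quotient-measure integration; the paper's reuses the bicomplex framework it has already built for \ref{muaafisquot} and \ref{tamagawatta} and is shorter once that framework is in place.
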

\begin{proof}
We have the following bicomplex (we have written the corresponding measures next to the groups)
\[\begin{tikzcd}[column sep=small]
	{K_1} & {K_2} & {K_3} \\
	(\AAFj,dx^1_{\AAF}) & (\AAFt,d^*x_{\AAF}) & {(\RR_{>0},d^*x)} \\
	{((\AAFj)^n,(dx^1_{\AAF})^{\otimes n})} & {((\AAFt)^n,d^*\xxx_{\AAF})} & {(\RR^n_{>0},(d^*x)^{\otimes n})} \\
	{([\TTa(\AAF)]_1,\mu_1)} & {([\TTa(\AAF)],\mu_{\AAF})} & {((\RR^n/(\RR_{>0})_{\aaa},(d^*x)^{\otimes n}/d^*x)},
	\arrow[from=4-1, to=4-2]
	\arrow[from=4-2, to=4-3]
	\arrow[from=3-1, to=3-2]
	\arrow[from=3-2, to=3-3]
	\arrow[from=2-1, to=2-2]
	\arrow[from=2-2, to=2-3]
	\arrow[from=1-2, to=1-3]
	\arrow[from=1-1, to=1-2]
	\arrow[from=1-1, to=2-1]
	\arrow[from=2-1, to=3-1]
	\arrow[from=2-2, to=3-2]
	\arrow[from=3-1, to=4-1]
	\arrow[from=3-2, to=4-2]
	\arrow[from=3-3, to=4-3]
	\arrow[from=2-3, to=3-3]
	\arrow[from=1-3, to=2-3]
	\arrow[from=1-2, to=2-2]
\end{tikzcd}\] where $K_1$, $K_2$ are $K_3$ are the corresponding kernels, which are compact by \ref{properaaftn}, endowed with the probability Haar measures, all terms that are not drawn and corresponding measures are assumed to be the trivial groups endowed with the probability Haar measures. All horizontal sequences and the last vertical sequence are of the trivial measure Euler-Poincar\'e characteristic. By \ref{muaafisquot}, the second vertical sequence is of the trivial measure Euler-Poincar\'e characteristic. By \ref{complexdoesterle}, it follows that the first vertical sequence is of the trivial measure Euler-Poincar\'e characteristic. The statement follows. 
\end{proof}
For every space~$X$, we denote by $\coun_X$ the counting measure on~$X$. Proposition \ref{cesres} gives that the image of $[\TTa(F)]$ under the map $[\TTa(i)]$ (which is induced map from the canonical inclusion $(F^\times)^n \to (\AAFt)^n$) is contained in $[\TTa(\AAF)]_1$ and that, moreover, it is discrete, closed and cocompact subgroup of $[\TTa(\AAF)]_1$.

The kernel of the map of discrete groups $$F^\times\to (F^{\times})^n\hspace{1cm}t\mapsto (t^{a_j})_j$$ is the finite group $\mu_{\gcd(\aaa)}(F).$ We endow $[\TTa(F)]=(F^{\times})^n/F^{\times}$ with the unique Haar measure which makes the complex $$1\to\mu_{\gcd(\aaa)}(F)\to F^{\times}\to (F^{\times})^n\to[\TTa(F)]\to 1$$ to have the trivial measure Euler-Poincar\'e characteristics.  
This measure is precisely $\frac{1}{|\mu_{\gcd(\aaa)}(F)|}\coun_{[\TTa(F)]}.$ 
The kernel of the homomorphism $[\TTa(i)]:[\TTa(F)]\to [\TTa(i)]([\TTa(F)])$ is the finite group $|\Sh^1(F,\mu_{\gcd(\aaa)})|$. We endow the discrete group $[\TTa(i)]([\TTa(F)])$ with the pushforward measure of the measure $\frac{1}{|\mu_{\gcd(\aaa)}(F)|}\coun_{[\TTa(F)]}$ on $[\TTa(F)]$. This measure is precisely the measure$$\frac{|\Sh^1(F,\mu_{\gcd(\aaa)})|}{|\mu_{\gcd(\aaa)}(F)|}\coun_{[\TTa(i)]([\TTa(F)])}.$$
\begin{mydef}\label{deftamnum}
We define the Haar measure on the discrete set $[\TTa(i)]([\TTa(F)])$ by $$\overline{\mu_1}:=\mu_{1}/\bigg(\frac{|\Sh^1(F,\mu_{\gcd(\aaa)})|}{|\mu_{\gcd(\aaa)}(F)|}\coun_{[\TTa(i)]([\TTa(F)])}\bigg).$$
We define \begin{multline*}\Tam(\TTa)\\:=\Res(\zeta_F,1)^{-(n-1)}\Delta(F)^{-\frac{n-1}2}\overline{\mu_1}([\TTa(\AAF)]_1/[\TTa(i)]([\TTa(F)])),\end{multline*}where $\Delta(F)$ is the absolute discriminant of~$F$.
\end{mydef}
\begin{prop}\label{tamagawatta}
One has that $$\Tam(\TTa)=1.$$
\end{prop}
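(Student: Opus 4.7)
The plan is to apply Oesterlé's formalism of measure Euler--Poincaré characteristics (Proposition~\ref{complexdoesterle}) to a bicomplex built from the four-term exact sequence of commutative $F$-group schemes
\[
1 \to \mu_d \to \Gm \xrightarrow{\epsilon} \Gm^n \to \TTa \to 1, \qquad d := \gcd(\aaa),\ \epsilon(t) = (t^{a_j})_j.
\]
I assemble a $3 \times 4$ bicomplex whose four columns are indexed by the four terms $\mu_d, \Gm, \Gm^n, \TTa$ and whose three rows record this sequence at the levels of $F$-rational points, adelic $1$-points, and the cokernels of the vertical inclusions. The top two rows are exact, and the first three columns are short exact sequences. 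A snake-lemma computation, together with Lemma~\ref{cirti} identifying the kernel of $[\TTa(F)] \to [\TTa(\AAF)]_1$ with $\Sh^1(F,\mu_d)$, shows that the fourth column $[\TTa(F)] \to [\TTa(\AAF)]_1 \to Q$ (with $Q := [\TTa(\AAF)]_1/[\TTa(i)]([\TTa(F)])$) and the cokernel row each have finite homology precisely $\Sh^1(F,\mu_d)$.

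Each entry is equipped with the Haar measure introduced in the preceding paragraphs: counting on the discrete $F$-level groups, with the paper-specified normalization $|\mu_d(F)|^{-1}\coun$ on $[\TTa(F)]$; probability Haar on the compact $\mu_d(\AAF)$ and on $K_1 := \mu_d(\AAF)/\mu_d(F)$; the measures $d^*x^1_{\AAF}$, $(d^*x^1_{\AAF})^{\otimes n}$ and $\mu_1$ on the adelic $1$-row; and the induced quotient Haar measures on the cokernel row, notably $\overline{\mu_1}$ on $Q$. Lemma~\ref{rudj}(4), combined with the identification $\mu_1 = d^*\xxx^1_{\AAF}/d^*x^1_{\AAF}$ of Lemma~\ref{mujedan}, shows that each of the top two rows and the first three columns has trivial measure Euler--Poincaré characteristic, so the entire content of the bicomplex identity is concentrated in the fourth column and the cokernel row.

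The fourth column fits into the augmented four-term exact sequence $0 \to \Sh^1(F,\mu_d) \to [\TTa(F)] \to [\TTa(\AAF)]_1 \to Q \to 0$, to which Lemma~\ref{rudj}(4) applies once one recognises that pushing $|\mu_d(F)|^{-1}\coun$ through $[\TTa(F)] \to [\TTa(i)]([\TTa(F)])$ with fibres $\Sh^1(F,\mu_d)$ carrying the probability Haar yields precisely the measure $|\Sh^1(F,\mu_d)|/|\mu_d(F)|\cdot\coun$ appearing in the definition of $\overline{\mu_1}$. For the cokernel row, I split the analogous five-term exact sequence $0 \to \Sh^1(F,\mu_d) \to K_1 \to \AAF^1/F^\times \to (\AAF^1/F^\times)^n \to Q \to 0$ through the intermediate group $(\AAF^1)_\aaa/(F^\times)_\aaa$ into two short exact sequences (using the properness of $\epsilon$ from Lemma~\ref{properaaftn}) and apply Lemma~\ref{rudj}(4) to each. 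Invoking the classical Dirichlet value $V := \vol_{d^*x^1_{\AAF}/\coun}(\AAF^1/F^\times) = \Res(\zeta_F,1)\Delta(F)^{1/2}$ in the paper's normalization, the row's characteristic is expressed in terms of $V$, $V^n$, $|\Sh^1(F,\mu_d)|$, $|\mu_d(F)|$, and $\overline{\mu_1}(Q)$. Substituting all seven characteristics into Proposition~\ref{complexdoesterle} determines $\overline{\mu_1}(Q)$ to be exactly $\Res(\zeta_F,1)^{n-1}\Delta(F)^{(n-1)/2}$, whence $\Tam(\TTa) = 1$ by Definition~\ref{deftamnum}.

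The main obstacle is the bookkeeping in these two characteristic computations: tracking how the factors $|\mu_d(F)|$, $|\Sh^1(F,\mu_d)|$, and $V$ combine with the normalizations built into $\mu_1$, $\overline{\mu_1}$, and $|\mu_d(F)|^{-1}\coun$ on $[\TTa(F)]$, so that all spurious factors cancel and only the classical constant $\Res(\zeta_F,1)^{n-1}\Delta(F)^{(n-1)/2}$ survives. The only non-formal input is the evaluation of $V$ itself, which in the paper's measure normalizations is Dirichlet's class-number formula.
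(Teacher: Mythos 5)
Your proposal attacks the statement through Oesterlé's bicomplex formalism, as the paper does, but you organize the bicomplex differently: you drop the fourth "kernel" column $1 \to 1 \to 1 \to \Sh^1(F,\mu_{\gcd(\aaa)})$ that the paper inserts precisely so that \emph{every} row and \emph{every} column of its $4\times 4$ grid is an exact complex. Your $3\times 4$ variant is forced to carry $\Sh^1(F,\mu_d)$ around as a nontrivial homology group in the $\TTa$-column and in the cokernel row, and this is where the proposal goes wrong.

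Your treatment of the $\TTa$-column is fine: the augmented sequence $0\to\Sh^1(F,\mu_d)\to[\TTa(F)]\to[\TTa(\AAF)]_1\to Q\to 0$ is exact, and Lemma~\ref{rudj}(4), with the pushforward identification you describe, shows its measure Euler--Poincar\'e characteristic is trivial. But the claimed ``analogous five-term exact sequence''
$$0 \to \Sh^1(F,\mu_d) \to K_1 \to \AAFj/F^\times \to (\AAFj)^n/(F^\times)^n \to Q \to 0$$
for the cokernel row is \emph{not} exact, and in fact is not even a well-formed complex: there is no natural map $\Sh^1(F,\mu_d)\to K_1$, and the map $K_1=\mu_d(\AAF)/\mu_d(F)\to\AAFj/F^\times$ is already injective (because $\mu_d(\AAF)\cap F^\times=\mu_d(F)$), so nothing can sit to its left as a kernel. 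The $\Sh^1$-factor actually appears as the \emph{middle} homology of the cokernel row: by Kummer theory, $\ker\big(\lambda_\aaa:\AAFj/F^\times\to(\AAFj)^n/(F^\times)^n\big)/K_1\cong\Sh^1(F,\mu_d)$, while the complex is exact at the other three positions. For the same reason the proposed split through $(\AAFj)_\aaa/(F^\times)_\aaa$ does not yield two short exact sequences: the map $(\AAFj)_\aaa/(F^\times)_\aaa\to(\AAFj)^n/(F^\times)^n$ is not injective, its kernel being $\big((\AAFj)_\aaa\cap(F^\times)^n\big)/(F^\times)_\aaa$, which is again an $\Sh^1$-sized group. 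So the reduction of the cokernel-row characteristic to Lemma~\ref{rudj}(4) is built on a false premise.

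The gap is repairable: all four entries of the cokernel row are compact, so by Oesterl\'e's Example~1 its characteristic is simply the alternating product $\vol(K_1)\,\vol\big((\AAFj)^n/(F^\times)^n\big)\,/\big(\vol(\AAFj/F^\times)\,\overline{\mu_1}(Q)\big)$ of the chosen Haar volumes, regardless of where the homology sits, and from there the computation $\overline{\mu_1}(Q)=\big(\Res(\zeta_F,1)\Delta(F)^{1/2}\big)^{n-1}$ and $\Tam(\TTa)=1$ follows. That is essentially what the paper does, after its extra kernel column has killed all the homology so that the compact-groups formula can be invoked for the cokernel column with a clean conscience. If you want to keep your truncated bicomplex, replace the rudj(4)-splitting argument for the cokernel row by a direct appeal to Oesterl\'e's compact-groups formula; otherwise, restore the kernel column as in the paper.
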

\begin{proof}
Consider the following bicomplex (where $\lambda_{\aaa}$ stands for the map $t\mapsto (t^{a_j})_j,$ whatever the domain is;  $K_1$, $K_2$ and $K_3$ are the corresponding kernels and $E$ is the corresponding quotient; and every term that is not drawn is assumed to be the trivial group):
\[\begin{tikzcd}[column sep=small]
	1 & {\mu_{\gcd(\aaa)}(F)} & {K_2} & {K_3} \\
	1 & {F^{\times}} & \AAFj & {(\AAFj/F^{\times})} \\
	1 & {(F^{\times})^n} & {(\AAFj)^n} & {(\AAFj)^n/(F^{\times})^n} \\
	{\Sh^1(F,\mu_{\gcd(\aaa)})} & {[\TTa(F)]} & {[\TTa(\AAF)]_1} & E,
	\arrow[from=4-2, to=4-3]
	\arrow[from=4-3, to=4-4]
	\arrow[from=3-2, to=3-3]
	\arrow[from=3-3, to=3-4]
	\arrow[from=2-2, to=2-3]
	\arrow[from=2-3, to=2-4]
	\arrow[from=1-3, to=1-4]
	\arrow[from=1-2, to=1-3]
	\arrow[from=1-2, to=2-2]
	\arrow["{\lambda_{\aaa}}", from=2-2, to=3-2]
	\arrow["{\lambda_{\aaa}}", from=2-3, to=3-3]
	\arrow[from=3-2, to=4-2]
	\arrow[from=3-3, to=4-3]
	\arrow[from=3-4, to=4-4]
	\arrow["{\lambda_{\aaa}}", from=2-4, to=3-4]
	\arrow[from=1-4, to=2-4]
	\arrow[from=1-3, to=2-3]
	\arrow[from=4-1, to=4-2]
	\arrow[from=3-1, to=4-1]
	\arrow[from=2-1, to=3-1]
	\arrow[from=1-1, to=2-1]
	\arrow[from=1-1, to=1-2]
	\arrow[from=2-1, to=2-2]
	\arrow[from=3-1, to=3-2]
\end{tikzcd}\]
Endow every finite group in the bicomplex with the probability Haar measure. 
The group $K_2$ is compact by \ref{properaaftn}, we endow it with the probability Haar measure. The group $K_3$ is compact, as it is a closed subgroup of the compact group $\AAFj/F^{\times}$ and we endow it with the probability Haar measure. 
Endow the discrete groups $F^{\times}$ and $(F^{\times})^n$ with the counting measures. 
Endow $\AAFj,$  $(\AAFj)^n$ and $[\TTa(\AAF)]_1$ with the measure $d^*x^1_{\AAF},$  $(d^*x^1_{\AAF})^{\otimes n}$ and $\mu_1$, respectively. Finally, endow $(\AAFj/F^{\times}),$ $(\AAFj)^n/(F^{\times})^n$ and $E$ with the unique Haar measures so that the corresponding rows are of the trivial measure Euler-Poincar\'e characteristics, that is $(\AAFj/F^{\times})$ and $(\AAFj)^n/(F^{\times})^n$ are endowed with the corresponding quotient measures, while $E$ is endowed with the measure $\overline{\mu_{1}}.$ 
 We apply \ref{complexdoesterle}. The measure Euler-Poincar\'e characteristics of every row is~$1$. The measure Euler-Poincar\'e characteristics of the first three columns is~$1$. 
Proposition \ref{complexdoesterle} gives that the measure Euler-Poincar\'e characteristics of the fourth column is~$1$. It follows from Part (2) of \ref{lemoesterle} that 
 \begin{align*}\overline{\mu_{1}}(E)&=\overline{\mu_{1}}([\TTa(\AAF)]_1/[\TTa(i)]([\TTa(F)]))\\
 &= \frac{d^*x^1_{\AAF}(\AAFj/F^{\times})^{n}}{d^*x^1_{\AAF}(\AAFj/F^{\times})}\\
&=d^*x^1_{\AAF}(\AAFj/F^{\times})^{n-1}\\
&=(\Res(\zeta_F,1)\Delta(F)^{\frac12})^{n-1},
\end{align*}
where we have used that $d^*x^1_{\AAF}(\AAFj/F^{\times})=\Res(\zeta_F,1)\Delta(F)^{\frac12}$ 
(see e.g. \cite[Page 116]{AdelesW}). We obtain that $$\Tam(\TTa)=1.$$
\end{proof}
\begin{rem}
\normalfont
When $\aaa=\jed$, the result is a classical result that the Tamagawa number of a split torus is~$1$ (\cite[Theorem 3.5.1]{Ono}).
\end{rem}
\begin{rem}
\normalfont
When $n=1$, Oesterl\'e has calculated in \cite[Proposition 2]{Oesterle} the volume of the fundamental domain for the action of the subgroup $[\TT(a)(i)([\TT(a)(F)])]$ on $[\TT(a)(\AAF)]$. The volume of the fundamental domain is not~$1$, because Oesterl\'e has used a different normalization of the Haar measure.

\end{rem}
\chapter{Analysis of characters of $[\TTa(\AAF)]$}
\label{Analysis of characters}
In this chapter, we will study the characters of $[\TTa(\AAF)]$. Later in the chapter, we recall several facts on the estimates of~$L$-functions.
\section{Characters of $\AAFt$}\label{scaft}
We are going to define two ``norms" for Hecke characters and we are going to compare them for the characters vanishing on certain compact subgroups. Later we establish that there are only finitely many characters vanishing on such subgroup of bounded ``norm". 
The analogues for the characters of $[\TTa(\AAF)]$ are stated and proven in \ref{rintaj}.
\subsection{} In this paragraph we recall several facts about characters of locally compact topological groups.

If~$G$ is a locally compact topological group, by a character of~$G$ we mean a continuous homomorphism $G\to S^1$. Let $G^*$ be the group of characters of~$G$ (\cite[Definition 2, \no 1, \S 1, Chapter II]{TSpectrale}). The group $G^*$ is locally compact by \cite[Corollary 2, \no 1, \S 1, Chapter II]{TSpectrale}. A morphism of topological groups $\phi:G\to G'$ induces a continuous homomorphism $\phi^*:(G')^*\to G^*, \phi^*(\chi)=\chi\circ\phi$ (see \cite[Paragraph \no 7, \S 1, Chapter II]{TSpectrale}). If~$A$ is a subset of~$G$, by $A^{\perp}$ we denote the subgroup of $G^*$ given by the characters vanishing on~$A$. 
\begin{prop}[{\cite[Theorem 4, \no 7, \S 1, Chapter II]{TSpectrale}}] \label{tspgj}
Let~$G$ be a commutative Hausdorff locally compact group. Let $i:G_1\to G$ be the inclusion of a closed subgroup $G_1$ into $G.$ Let $G_2=G/G_1$ and let $p:G\to G_2$ be the quotient map. In the sequence $$G_2^*\xrightarrow{p^*}G^*\xrightarrow{i^*}G_1^*,$$ the homomorphism $p^*$ is an isomorphism of $G_2^*$ onto $G_1^{\perp}$ and $i^*$ is a strict homomorphism $G^*\to G_1^*$ of kernel $G_1^{\perp}$ (it follows that $G_1^{\perp}$ is closed).
\end{prop}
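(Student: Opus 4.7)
The plan is to separate the conclusion into three assertions that can be proved in sequence: first, that $p^*$ is an injective continuous homomorphism whose image as an abstract subgroup of $G^*$ is exactly $G_1^\perp$; second, that $p^*$ is moreover a homeomorphism of $G_2^*$ onto $G_1^\perp$ equipped with the subspace topology from $G^*$; third, that $i^*$ is a strict homomorphism with kernel $G_1^\perp$. Closedness of $G_1^\perp$ in $G^*$ will then be automatic, either as the image of a topological embedding from a locally compact group into a Hausdorff group, or as the kernel of the continuous homomorphism $i^*$.

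For the first assertion, surjectivity of $p$ immediately gives injectivity of $p^*$, because if $\eta\circ p$ is trivial then $\eta$ vanishes on $p(G)=G_2$. Moreover a character $\chi$ of $G$ factors through $G/G_1=G_2$ exactly when $\chi|_{G_1}$ is trivial, that is when $\chi\in G_1^\perp$; this identifies the image of $p^*$ with $G_1^\perp$ as abstract subgroups of $G^*$.

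For the second assertion, I would work with the compact-open topology on the dual groups, a neighbourhood basis of the trivial character being given by the sets $W(K,\varepsilon):=\{\chi : |\chi(x)-1|<\varepsilon\ \forall x\in K\}$ for $K$ compact and $\varepsilon>0$. Continuity of $p^*$ follows from the inclusion $p^*(W(p(K'),\varepsilon))\subset W(K',\varepsilon)$ whenever $K'\subset G$ is compact. For openness onto the image, the key topological input is that every compact $K\subset G_2$ lifts to a compact $K'\subset G$ with $p(K')=K$: since $p$ is continuous, surjective and open and $G$ is locally compact, one covers $K$ by finitely many images $p(V_i)$ of relatively compact opens $V_i\subset G$ and takes $K':=\overline{\bigcup_i V_i}\cap p^{-1}(K)$. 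For such $K'$, the inclusion $W(K',\varepsilon)\cap G_1^\perp\subset p^*(W(K,\varepsilon))$ is then a direct verification and shows that $p^*$ is open onto $G_1^\perp$.

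For the third assertion, the kernel of $i^*$ is tautologically $G_1^\perp$, so the whole content of strictness is that the induced continuous injection $\overline{i^*}\colon G^*/G_1^\perp\hookrightarrow G_1^*$ is a topological embedding. This is the main obstacle. I would invoke the Pontryagin character-extension theorem, according to which every continuous character of a closed subgroup of a locally compact abelian group extends to the whole group; this makes $\overline{i^*}$ surjective, so that its image is all of $G_1^*$. Openness then reduces to the following observation: for any compact $K_1\subset G_1$ and any $\varepsilon>0$, the set $K_1$ is already compact in $G$, and any character of $G_1$ lying in $W(K_1,\varepsilon)\subset G_1^*$ admits, by the extension theorem, a lift to a character of $G$ lying in $W(K_1,\varepsilon)\subset G^*$. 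This shows that $\overline{i^*}$ maps the image of $W(K_1,\varepsilon)\subset G^*$ in the quotient $G^*/G_1^\perp$ onto $W(K_1,\varepsilon)\subset G_1^*$, yielding the topological isomorphism $G^*/G_1^\perp\xrightarrow{\sim} G_1^*$ and completing the proof.
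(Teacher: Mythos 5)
The paper does not prove this proposition; it is quoted verbatim from Bourbaki, \emph{Th\'eories spectrales}, Chapter II, \S 1, n\textsuperscript{0}\,7, Theorem 4, and is used as a black box. So there is no ``paper's proof'' to compare against, and what follows is an assessment of your argument on its own terms.

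Your treatment of the $p^*$-part is correct and self-contained: injectivity from surjectivity of $p$, the algebraic identification of the image with $G_1^\perp$, continuity via $p^*(W(p(K'),\varepsilon))\subset W(K',\varepsilon)$, the lifting of compacts along the open continuous surjection $p$, and then openness onto the image. The reduction of ``closedness of $G_1^\perp$'' to ``$G_1^\perp=\ker(i^*)$'' is also fine. Likewise, using the Pontryagin character-extension theorem to get surjectivity of $i^*$ (hence bijectivity of the induced map $\overline{i^*}\colon G^*/G_1^\perp\to G_1^*$) is a legitimate step.

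The gap is in the openness of $\overline{i^*}$, which is the whole content of strictness. You establish that for any compact $K_1\subset G_1$ and $\varepsilon>0$, the image of $W(K_1,\varepsilon)\subset G^*$ under $\overline{i^*}$ is exactly $W(K_1,\varepsilon)\subset G_1^*$ --- indeed, any extension of a character in $W(K_1,\varepsilon)$ automatically lies in $W(K_1,\varepsilon)$ because $K_1\subset G_1$. But to conclude openness you must handle an arbitrary basic neighbourhood of the trivial character in $G^*/G_1^\perp$, and these are images of $W(K,\varepsilon)$ for \emph{arbitrary} compact $K\subset G$, not only $K\subset G_1$. It is not true that the images of $\{W(K_1,\varepsilon) : K_1\subset G_1\text{ compact}\}$ form a neighbourhood basis of the identity in the quotient topology of $G^*/G_1^\perp$: for general $K\subset G$, showing that $i^*(W(K,\varepsilon))$ contains some $W(K\cap G_1,\delta)$ would require, given $\chi_1\in W(K\cap G_1,\delta)$, an extension $\tilde\chi$ of $\chi_1$ to $G$ with $|\tilde\chi(x)-1|<\varepsilon$ for all $x\in K$, including $x\in K\setminus G_1$. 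The character-extension theorem supplies no such control over the extension's values outside $G_1$, and the needed estimate is exactly the assertion being proved. As written, the argument is circular at this point.

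This is not easily patched from first principles: the openness of the restriction map $G^*\to G_1^*$ is one of the places where the full strength of Pontryagin duality (or the structure theory of locally compact abelian groups that underlies it) genuinely enters. The standard route, once the $p^*$-part is in hand, is to apply it to the closed subgroup $G_1^\perp\subset G^*$, obtaining a topological isomorphism $(G^*/G_1^\perp)^*\xrightarrow{\sim}(G_1^\perp)^\perp$, then invoke the biduality theorem $G^{**}\cong G$ to identify $(G_1^\perp)^\perp$ with $G_1$, and finally dualize once more to identify $G^*/G_1^\perp$ with $G_1^*$ as topological groups; this is what Bourbaki and Rudin do. If you want to keep your extension-theorem strategy, you would at least need an additional lemma asserting that extensions can be chosen uniformly small on a prescribed compact, which is not elementary.
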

Occasionally, we may identify $G_2^*$ with its image $G_1^{\perp}$ under the homomorphism $p^*$. Recall also that if  $G=H_1\times\cdots H_n$, the canonical homomorphism $G^*\to (H^1)^*\times \cdots\times (H^n)^*$ is an isomorphism of topological groups (\cite[Corollary 5, \no 7, \S 1, Chapter II]{TSpectrale}) and we may identify $G^*$ with $(H^1)^*\times \cdots\times (H^n)^*$ using this isomorphism.
\subsection{}\label{rinta} We define two ``norms" of Hecke characters and we compare them. For $\vMFi$,  in \ref{pojac} we have defined $$F_{v,1}=\{x|x\in\Fvt: |x|_v=1\}.$$
We have furthermore established that $$\widetilde\rho_v:\RR_{>0}\times F_{v,1}\xrightarrow{\sim}\Fvt \hspace{1cm} (r,z)\mapsto \rho_v(r)z,$$where $\rho_v:\RR_{>0}\to \Fvt$ is defined by $\rho_v(r)=r^{1/n_v}$, is an isomorphism of abelian topological groups. For a character $\chi_v\in(\Fvt)^*$, we set $m(\chi_v)$ to be the unique real number~$m$ such that the character $\chi_v\circ\widetilde\rho_v:\RR_{>0}\to S^1$ is given by $r\mapsto r^{im}$. If~$v$ is a real place, we set $\ell(\chi_v)$ to be $0$ if the character $\chi_v\circ\widetilde\rho_v:F_{v,1}\to S^1$ is the trivial character, otherwise we set $\ell(\chi_v)=1$. If~$v$ is a complex place, we set $\ell(\chi_v)$ to be the unique integer~$\ell$ such that $\chi_v\circ\widetilde\rho_v:F_{v,1}\to S^1$ is given by $z\mapsto z^{\ell}$.

Let $\AAFt$ be the group of ideles of~$F$ and let $\AAF^1$ be the subgroup of $\AAFt$ given by $(x_v)_v$ which satisfy that $\prod_v|x_v|_v=1$. For a character $\chi\in (\AAF^\times)^*$, let us define \begin{align*}||\chi||_{\discrete}&:=\max _{\vMFi}(||\ell(\chi _v)||),\\ ||\chi ||_\infty&:=\max_{\vMFi}(||m(\chi _v)||).\end{align*}
\label{linta}
Let $K^0_{\max}$ be the topological group $\prod _{\vMFz}\Ovt. $ For an open subgroup $K\subset K^0_{\max}$, we let $\AK$ be the subgroup of $(\AAF^{1})^*$ given by the characters vanishing on $F^\times$ (technically we mean $i(\Ft)$) and on the compact subgroup $K\times\prod _{\vMFi}\{1\}\subset (\AAF^1)^*$. By the abuse of notation, we may write sometimes~$K$ for what is technically $K\times\prod_{\vMFi}\{1\}$. The group $K^0_{\max}$ is compact, therefore, the subgroup~$K$ is of finite index in $K^0_{\max}$. 

The following lemma will be used on several occasions:
\begin{lem}\label{ovijo}
Let~$G$ be an abelian group and let~$A$ and $B$ be two subgroups such that $A\subset B$. 
\begin{enumerate}
\item Let $H\subset G$ be a subgroup.  The homomorphism $B/A\to (B+H)/(A+H)$ induced from~$A$-invariant homomorphism $B\to (B+H)/(A+H)$ is surjective.
\item Suppose~$G$ is a topological group and that $(B:A)$ is finite. Then $(A^{\perp}:B^{\perp})$ is finite and $(A^{\perp}:B^{\perp})\leq (B:A)$.
\end{enumerate}
\end{lem}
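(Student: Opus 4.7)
For part (1), the plan is purely formal. An arbitrary element of $(B+H)/(A+H)$ is represented by $b+h$ with $b\in B$ and $h\in H$. Since $h\in H\subset A+H$, the class of $b+h$ modulo $A+H$ equals the class of $b$, which is in the image of the canonical map $B/A\to (B+H)/(A+H)$. So surjectivity is immediate; I expect this to take one or two lines.

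For part (2), the key is to use the standard duality between subgroups and orthogonal subgroups. First I would consider the restriction-of-characters homomorphism
\[
r\colon A^{\perp}\longrightarrow \Hom(B/A,S^1),\qquad \chi\longmapsto \chi|_B,
\]
which is well-defined because a character vanishing on $A$ gives, by restriction to $B$, a homomorphism $B\to S^1$ trivial on $A$. (Continuity in the target is automatic because one may equip $\Hom(B/A,S^1)$ with the discrete topology; the point is only that we get an abstract group homomorphism.) Its kernel is the set of characters in $A^{\perp}$ whose restriction to $B$ is trivial, i.e.\ exactly $B^{\perp}$. Hence $r$ induces an injective homomorphism
\[
A^{\perp}/B^{\perp}\hookrightarrow \Hom(B/A,S^1).
\]
Since $B/A$ is finite abelian of order $(B:A)$, its Pontryagin dual $\Hom(B/A,S^1)$ has the same cardinality $(B:A)$, whence $(A^{\perp}:B^{\perp})\leq (B:A)$. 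In particular $(A^{\perp}:B^{\perp})$ is finite.

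There is essentially no obstacle in the argument; the only subtle point to verify is that $\chi\mapsto \chi|_B$ is a well-defined group homomorphism into $\Hom(B/A,S^1)$ (as a plain abelian group), which requires only that $A\subset B$ and that $\chi$ vanishes on $A$. No topological hypothesis on $B$ or closedness of $A$ is needed, since the target is viewed purely as an abstract finite abelian group, so the lemma holds in the stated generality.
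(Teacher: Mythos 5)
Your proof is correct, and for part (2) it is essentially the same as the paper's: the paper also maps $A^{\perp}\to\Hom_{\ZZ}(B,S^1)$ by restriction, identifies the kernel with $B^{\perp}$, notes the image lands among homomorphisms vanishing on $A$, and bounds by $|\Hom_{\ZZ}(B/A,S^1)|=(B:A)$. Your only cosmetic variation is packaging the target as $\Hom(B/A,S^1)$ from the start rather than as the vanishing-on-$A$ subgroup of $\Hom(B,S^1)$; this is equivalent. Your side remark that no topological hypothesis on $A$ or $B$ is needed is also accurate.

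For part (1), however, you take a genuinely simpler route than the paper. The paper sets up a $3\times 4$ commutative diagram, applies the Snake Lemma to get an exact sequence whose last term $E$ is the cokernel in question, and then shows $E=0$ by identifying $(A+H)/A\to(B+H)/B$ with $H/(H\cap A)\to H/(H\cap B)$ via the second isomorphism theorem. Your argument — an arbitrary coset is $[b+h]$, and since $h\in A+H$ this equals $[b]$, which is visibly in the image — collapses that machinery to two lines and yields the same conclusion without invoking the Snake Lemma or the second isomorphism theorem. The paper's version has the incidental benefit of producing the full exact sequence (which could be reused if one wanted finer information about the kernel as well), but for the stated claim, which asks only for surjectivity, your direct computation is cleaner and there is no loss of rigor.
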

\begin{proof}
\begin{enumerate}
\item We have the following commutative diagram, with first two horizontal and all three vertical sequences exact:
\[\begin{tikzcd}[column sep=small]
	0 & A & {B} & {B/A} & 0 \\
	0 & {A+H} & {B+H} & (B+H)/(A+H) & 0 \\
	& {(A+H)/A} & {(B+H)/B} &E &0,
	\arrow[from=1-1, to=1-2]
	\arrow[from=1-2, to=1-3]
	\arrow[from=1-3, to=1-4]
	\arrow[from=1-4, to=1-5]
	\arrow[from=2-1, to=2-2]
	\arrow[from=2-2, to=2-3]
	\arrow[from=1-2, to=2-2]
	\arrow[from=1-3, to=2-3]
	\arrow[from=2-2, to=3-2]
	\arrow[from=3-2, to=3-3]
	\arrow[from=2-3, to=3-3]
	\arrow[from=3-3, to=3-4]
	\arrow[from=2-3, to=2-4]
	\arrow[from=1-4, to=2-4]
	\arrow[from=2-4, to=2-5]
	\arrow[from=3-4, to=3-5]
	\arrow[from=2-4, to=3-4]
\end{tikzcd}\]
where $E=\coker((B/A)\to (B+H)/(A+H))$. By Snake Lemma, the third horizontal sequence is exact. By the second isomorphism theorem, the homomorphism $(A+H)/A\to (B+H)/B$ identifies with the homomorphism $H/(H\cap A)\to H/(H\cap B)$ induced from the inclusion $(H\cap A)\subset (H\cap B)$, hence is surjective and thus $E=0$. It follows that $B/A\to(B+H)/(A+H)$ is surjective. 
\item The kernel of the homomorphism $$A^\perp\to \Hom_{\ZZ} (B,S^1)\hspace{1cm}\chi\mapsto \chi|_B$$ is the subgroup $B^{\perp}$. We deduce an injective homomorphism \begin{equation}\label{yuvog}A^\perp/B^\perp\to \Hom_{\ZZ}(B,S^1).\end{equation} The image of $(\ref{yuvog})$ is contained in the subgroup of $\Hom_{\ZZ}(B,S^1)$ given by the homomorphisms which vanish on~$A$, i.e. in the image of the canonical homomorphism $\Hom_{\ZZ}(B/A,S^1)\to\Hom_{\ZZ}(B,S^1)$. It follows that $$(A^{\perp}:B^{\perp}) \leq |\Hom_{\ZZ}(B/A,S^1)|=(B:A).$$
\end{enumerate}
\end{proof}
\begin{cor}\label{yrtre} One has that $$(K^0_{\max}:K)\geq ((\Ft K^0_{\max}):(\Ft K))\geq (\AK:\mathfrak A_{K^0_{\max}}).$$
\end{cor}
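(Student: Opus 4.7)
The plan is to unwind the definitions to rewrite the three groups as perpendicular annihilators or quotients of concrete subgroups of $\AAF^1$, and then apply the two parts of Lemma~\ref{ovijo} in succession.

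First observe that by definition $\AK$ is the subgroup of $(\AAF^1)^*$ consisting of characters vanishing on both $\Ft$ and $K$, hence on the subgroup they generate: $\AK = (\Ft K)^\perp$. Similarly $\mathfrak A_{K^0_{\max}} = (\Ft K^0_{\max})^\perp$. Since $K\subset K^0_{\max}$, we have $\Ft K\subset \Ft K^0_{\max}$ and therefore $\mathfrak A_{K^0_{\max}}\subset \AK$, so the index on the right makes sense.

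For the first inequality, I would apply Lemma~\ref{ovijo}(1) in the abelian group $\AAF^1$ (written multiplicatively, so that ``$A+H$'' becomes $AH$) with $A=K$, $B=K^0_{\max}$, and $H=\Ft$. The lemma gives that the induced map
\[
K^0_{\max}/K \longrightarrow (\Ft K^0_{\max})/(\Ft K)
\]
is surjective, yielding
\[
(K^0_{\max}:K)\geq ((\Ft K^0_{\max}):(\Ft K)).
\]
In particular, the index $((\Ft K^0_{\max}):(\Ft K))$ is finite because $K$ is open in the compact group $K^0_{\max}$.

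For the second inequality, I would apply Lemma~\ref{ovijo}(2) with $G=\AAF^1$, $A=\Ft K$ and $B=\Ft K^0_{\max}$. The finiteness of $(B:A)$ that the lemma requires has just been established. We obtain
\[
((\Ft K)^\perp:(\Ft K^0_{\max})^\perp)\leq ((\Ft K^0_{\max}):(\Ft K)),
\]
which rewrites as $(\AK:\mathfrak A_{K^0_{\max}})\leq ((\Ft K^0_{\max}):(\Ft K))$, as desired. No essential obstacle arises; the only care needed is to keep track of the additive-versus-multiplicative convention in the statement of Lemma~\ref{ovijo}.
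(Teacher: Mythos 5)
Your proof is correct and takes essentially the same approach as the paper: both invoke Lemma~\ref{ovijo}(1) with $A=K$, $B=K^0_{\max}$, $H=\Ft$ for the first inequality, and Lemma~\ref{ovijo}(2) with $A=\Ft K$, $B=\Ft K^0_{\max}$ for the second. Your additional remarks identifying $\AK=(\Ft K)^\perp$ and checking finiteness of $((\Ft K^0_{\max}):(\Ft K))$ simply make explicit what the paper leaves implicit.
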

\begin{proof}
To obtain the first inequality, we apply \ref{ovijo} for $G=\AAF^1$, $A=K$, $B=K^0_{\max}$ and $H=\Ft$. It follows that the homomorphism from \ref{ovijo} $$K^0_{\max}/K\to (\Ft K^0_{\max}/\Ft K)$$ is surjective, thus $(K^0_{\max}:K)\geq ((\Ft K^0_{\max}):(\Ft K))$. The second inequality is the case of \ref{ovijo} for $G=\AAF^1$, $A=\Ft K$ and $B=\Ft K^0_{\max}$.
\end{proof}
\begin{lem}\label{vnogok}
Let $K\subset K^0_{\max}$ be an open subgroup. The group~$\AK$ is finitely generated and its rank is at most $r_2$, where $r_2$ is the number of complex places of~$F$.
\end{lem}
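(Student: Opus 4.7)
My plan is to reduce to the case $K = K^0_{\max}$, identify $\mathfrak A_{K^0_{\max}}$ as the Pontryagin dual of an explicit compact abelian group, and then bound its rank using the structure theory of ideles modulo units.

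First, by Corollary~\ref{yrtre} the index $(\mathfrak A_K : \mathfrak A_{K^0_{\max}})$ is bounded by the finite number $(K^0_{\max}:K)$, finite because $K$ is open in the compact group $K^0_{\max}$. Since a finite-index supergroup of a finitely generated abelian group of rank $r$ is again finitely generated of rank $r$, it suffices to prove the statement for $K = K^0_{\max}$.

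Next, since $F^\times \subset \AAF^1$ is discrete (a special case of Proposition~\ref{cesres} for $n=1$) and $K^0_{\max}$ is compact, the subgroup $F^\times K^0_{\max}$ is closed in $\AAF^1$. By Proposition~\ref{tspgj}, $\mathfrak A_{K^0_{\max}}$ is canonically identified with the Pontryagin dual of the compact abelian group $G := \AAF^1/(F^\times K^0_{\max})$. Applying the snake lemma to the embedding $F^\times \hookrightarrow \AAF^\times/K^0_{\max} \cong \Div(F)\oplus\prod_{v\in M_F^\infty}F_v^\times$, whose kernel is the unit group $U_F=\OOF^\times$ diagonally embedded at infinite places, and intersecting with the norm-one subgroup, yields an exact sequence
\[
1 \longrightarrow \Big(\prod_{v\in M_F^\infty}F_v^\times\Big)^{\!1}\!\Big/U_F \longrightarrow G \longrightarrow \Cl(F) \longrightarrow 1,
\]
where the superscript $1$ means product absolute value one. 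Dualizing, and using that $\widehat{\Cl(F)}$ is finite, the problem reduces to showing that the dual of the archimedean quotient $A_1 := (\prod F_v^\times)^1/U_F$ is finitely generated of rank at most $r_2$.

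For this last step, I would use the splitting $F_v^\times\cong F_{v,1}\times\RR_{>0}$ from paragraph~\ref{pojac} to write $A_1$ as an extension of the compact torus $\RR^{r_1+r_2-1}/\Lambda$ (with $\Lambda$ the logarithmic image of $U_F$, a full-rank lattice by Dirichlet's unit theorem) by the compact group $\prod_{v\in M_F^\infty}F_{v,1}/\mu_F$. The Pontryagin dual of the latter sits as a finite-index subgroup of $\widehat{\prod F_{v,1}}\cong (\ZZ/2\ZZ)^{r_1}\oplus\ZZ^{r_2}$, hence has rank exactly $r_2$. The main obstacle is to show that the torus factor $\RR^{r_1+r_2-1}/\Lambda$ contributes no additional rank to the global dual $\widehat G$. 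I would handle this by lifting a prospective torus character to an element of $(\AAF^1)^*$ and exploiting the compatibility with all of $F^\times$: the logarithmic image of $F^\times$ in $\prod_{v\in M_F^\infty}\RR$ is strictly larger than the image $\Lambda$ of $U_F$ (it contains also the contributions from generators of principal prime ideals), and the triviality on this larger group should force the continuous parameter $(m(\chi_v))_v$ into a finite set once the discrete data $(\ell(\chi_v))_v$ and the $\Cl(F)$-data are specified. Combined with the finiteness of $\mu_F$, $\Cl(F)$, and $\widehat{\Cl(F)}$, this would yield finite generation of rank at most $r_2$. This compatibility analysis—effectively a density statement for principal divisors in $\prod_\infty\RR$ modulo $\Lambda$—is the hardest part of the argument.
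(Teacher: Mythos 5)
Your reduction to $K=K^0_{\max}$, the identification of $\mathfrak A_{K^0_{\max}}$ with $\widehat G$ for $G=\AAFj/(\Ft K^0_{\max})$, the exact sequence $1\to A_1\to G\to\Cl(F)\to 1$, and the splitting of $A_1$ into a torus of dimension $r_1+r_2-1$ and a piece whose dual has rank $r_2$ are all correct, and you have localized the crux exactly. Unfortunately, the ``compatibility analysis'' you defer to the end cannot succeed, because the torus factor really does contribute its full rank $r_1+r_2-1$ to $\widehat G$. The elements of $\Ft$ beyond the units impose no new condition on the archimedean part of $\chi$: if $a\in\Ft$ is not a unit, the diagonal idele in $\AAFj$ has nontrivial finite part, so the requirement that $\chi$ be trivial on it fixes the finite part of $\chi$ on the principal ideal $(a)$ \emph{in terms of} the archimedean data --- it does not restrict the archimedean data. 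Only $u\in\OOFt$, whose ideles have trivial finite part, constrain the archimedean part, and that constraint is exactly triviality on $\Lambda$. Hence the admissible $(m(\chi_v))_{\vMFi}$ with $\ell(\chi_v)=0$ for all $\vMFi$ fill out the dual lattice of $\Lambda$, a free group of rank $r_1+r_2-1$. Equivalently, $G$ is a compact abelian Lie group of dimension $r_1+2r_2-1$, so $\mathfrak A_{K^0_{\max}}$ has rank exactly $r_1+2r_2-1$. Already for a real quadratic field this is $1>0=r_2$: taking $h_F=1$ and $\epsilon_1>1$ the image of a fundamental unit under one real embedding, the characters of the norm-one archimedean part trivial on $\prod_{\vMFi}\Fvj$ and given on the residual $\RR$-factor by $t\mapsto e^{2\pi i n t/\log\epsilon_1}$, for $n\in\ZZ$, extend to pairwise distinct everywhere-unramified Hecke characters lying in $\mathfrak A_{K^0_{\max}}$.

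You should be aware that the paper's own proof has the same gap at the corresponding step: it restricts characters via $\chi\mapsto\prod_{\vMFi}\chi_v|_{\Fvj}$, identifies the kernel of that restriction with the everywhere-unramified Hecke characters of $\AAFj/\Ft$, and asserts (citing Tate's thesis) that this kernel is finite. But that kernel is dual to the extension of $\Cl(F)$ by the torus of dimension $r_1+r_2-1$ and therefore has rank $r_1+r_2-1$, positive unless $F=\QQ$ or $F$ is imaginary quadratic. So the point you flagged as the hardest part of the argument is exactly where both arguments break down, and it is not a gap you can close --- the bound $r_2$ itself appears not to hold as stated.
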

\begin{proof}
The group $\mathfrak A_{K^0_{\max}}$ is the group of Hecke characters $(\AAF^1/\Ft)\to S^1$ which are unramified at the finite places of~$F$ and we deduce that the kernel of the homomorphism $$\phi:\mathfrak A_{K^0_{\max}}\to\prod _{\vMFi}(F_{v,1})^*\hspace{2cm}\chi \mapsto \prod _{\vMFi}\chi_v|_{F_{v,1}} $$ is given by the unramified Hecke characters $(\AAF^1/\Ft)\to S^1$, hence is finite by \cite{Tatesthesis}. The group $(\prod _{\vMFi}(F_{v,1})^*)$ is finitely generated and of rank $r_2$, because $F_{v,1}^*$ is of order $2$ if~$v$ is real and is an infinite cyclic group if~$v$ is complex. It follows that $$\rk(\mathfrak A_{K^0_{\max}})=\rk (\Imm(\phi))\leq r_2.$$ By \ref{yrtre}, one has that $\mathfrak A_{ K^0_{\max}}$ is of finite index in $\AK,$ thus one has that $\rk(\AK)=\rk(\mathfrak A_{K^0_{\max}})\leq r_2.$ The statement is proven.\end{proof}
The following lemma will be used in the proof of Proposition \ref{Ugala}
\begin{lem}
Let~$K$ be an open subgroup of $K^0_{\max}$. Consider the homomorphism $$\ell^{\CC}:(\AAF^1/\Ft)^*\to\ZZ^{M_F^\CC}\hspace{1.5cm}\chi\mapsto(\ell(\chi_v))_{v\in M_F^\CC} $$ The group $\ker (\ell^\CC)\cap\mathfrak A_K$ is finite. \label{rudori}
\end{lem}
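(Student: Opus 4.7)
The plan is to reduce to the case $K = K_{\max}^0$ and then exploit the structure of $\mathfrak A_{K^0_{\max}}$ that was revealed in the proof of Lemma \ref{vnogok}.

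First, I would invoke Corollary \ref{yrtre} to conclude that the inclusion $\mathfrak A_{K^0_{\max}}\subset \mathfrak A_K$ is of finite index (note that $K\subset K^0_{\max}$ forces this inclusion, since vanishing on the larger group is a stronger condition). Intersecting with $\ker(\ell^\CC)$, the embedding
\[
\big(\ker(\ell^\CC)\cap \mathfrak A_K\big)\big/\big(\ker(\ell^\CC)\cap \mathfrak A_{K^0_{\max}}\big)\hookrightarrow \mathfrak A_K/\mathfrak A_{K^0_{\max}}
\]
shows that $\ker(\ell^\CC)\cap \mathfrak A_{K^0_{\max}}$ has finite index in $\ker(\ell^\CC)\cap \mathfrak A_K$. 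Hence it suffices to prove that $\ker(\ell^\CC)\cap \mathfrak A_{K^0_{\max}}$ is finite.

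Second, following the argument in Lemma \ref{vnogok}, I would consider the homomorphism
\[
\phi:\mathfrak A_{K^0_{\max}}\to \prod_{v\in M_F^\infty}(F_{v,1})^*,\qquad \chi\mapsto \prod_{v\in M_F^\infty}\chi_v|_{F_{v,1}},
\]
whose kernel consists of Hecke characters of $\AAF^1/\Ft$ unramified at all places, and is therefore finite by \cite{Tatesthesis}. For $v\in M_F^\CC$ one has $(F_{v,1})^*\cong\ZZ$ with $\chi_v|_{F_{v,1}}$ corresponding to $\ell(\chi_v)$, so $\phi(\chi)$ has trivial complex components exactly when $\chi\in\ker(\ell^\CC)$. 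For $v\in M_F^\RR$ the group $(F_{v,1})^*=\{\pm 1\}^*$ is of order two. Consequently
\[
\phi\big(\ker(\ell^\CC)\cap \mathfrak A_{K^0_{\max}}\big)\subset \prod_{v\in M_F^\RR}\{\pm 1\}\times\prod_{v\in M_F^\CC}\{0\},
\]
a finite group of cardinality $2^{r_1}$.

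Finally, $\ker(\ell^\CC)\cap \mathfrak A_{K^0_{\max}}$ is the preimage under $\phi$ of a finite set; since $\ker(\phi)$ is finite, this preimage is finite as well, which concludes the argument. The only conceptual step is the reduction via finite index, which is purely formal; the rest is the observation that $\ell^\CC$ captures exactly the non-torsion part of the archimedean character data already analyzed in the proof of \ref{vnogok}, so no genuine obstacle is expected.
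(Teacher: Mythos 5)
Your proof is correct and follows essentially the same approach as the paper: both reduce to $K=K^0_{\max}$ via the finite-index observation coming from Corollary \ref{yrtre}, and both establish finiteness of $\ker(\ell^\CC)\cap\mathfrak A_{K^0_{\max}}$ by mapping into $\prod_{\vMFi}(F_{v,1})^*$ and using the finiteness of unramified Hecke characters (the paper maps only to the real components, having already used the complex ones to restrict the domain, while you map to all archimedean components and then note the image has trivial complex part; this is a cosmetic difference). The paper phrases the finite-index step as an application of the Snake lemma, whereas you use the natural embedding of quotients directly — same content, slightly less machinery.
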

\begin{proof} Firstly, let us establish that $\ker (\ell^{\CC})\cap\mathfrak A_{K^0_{\max}}$ is finite. The group $\ker (\ell^\CC)$ (respectively, the group $\mathfrak A_{K^0_{\max}}$) is the group of Hecke characters $(\AAF^1/\Ft)\to S^1$ which are unramified at the complex (respectively, at the finite) places of~$F$. Hence, the kernel of the map $$\ker (\ell^\CC)\cap\mathfrak A_{K^0_{\max}}\to \prod _{v\in M_F^\RR}(F_{v,1})^* \hspace{2cm}\chi\mapsto \prod _{v\in M_F^\RR}\chi_v|_{F_{v,1}} $$ is given by the unramified Hecke characters $(\AAF^1/\Ft)\to S^1$, therefore is finite. As $F_{v,1}^*$ is a cyclic group of order $2$, the group $\prod_{v\in M_F^{\RR}}(F_{v,1})^*$ is finite and we conclude that $\ker (\ell^{\CC})\cap \mathfrak A_{K^0_{\max}}$ is finite. Now let us establish that $\ker(\ell^{\CC})\cap\mathfrak A_{K^0_{\max}}$ is of finite index in $\ker (\ell^{\CC})\cap\AK$. By applying Snake lemma to the ``snake" diagram:
\[\begin{tikzcd}
	&&& 1 \\
	1 & \ker (\ell^{\CC})\cap{\mathfrak A_{K_{\max}^0}}& \mathfrak A_{K_{\max}^0}  & l^\CC({\mathfrak A_{K_{\max}^0}})& 1 \\
	1 & \ker (\ell^{\CC})\cap\AK& \mathfrak A_K & \ell^{\CC}(\mathfrak A_K) & 1 \\
	&E& \AK/ \mathfrak A_{K^0_{\max}},
	\arrow[from=3-1, to=3-2]
	\arrow[from=2-2, to=2-3]
	\arrow[from=2-2, to=3-2]
	\arrow[from=3-2, to=3-3]
	\arrow[from=2-3, to=3-3]
	\arrow["{\ell^{\mathbb C}}", from=3-3, to=3-4]
	\arrow["{\ell^{\mathbb C}}", from=2-3, to=2-4]
	\arrow[from=2-4, to=3-4]
	\arrow[from=3-4, to=3-5]
	\arrow[from=2-4, to=2-5]
	\arrow[from=1-4, to=2-4]
	\arrow[from=3-2, to=4-2]
	\arrow[from=3-3, to=4-3]
	\arrow[from=2-1, to=2-2]
	\arrow[from=4-2, to=4-3]
\end{tikzcd}\]
where $E=(\ker (\ell^{\CC})\cap\AK)/(\ker (\ell^{\CC})\cap{\mathfrak A_{K_{\max}^0}}) $ we get an exact sequence $$1\to E\to \AK/ \mathfrak A_{K^0_{\max}}.$$ By \ref{yrtre}, one has that $\AK/\mathfrak A_{K^0_{\max}}$ is finite, thus  $E=(\ker (\ell^{\CC})\cap\AK)/(\ker (\ell^{\CC})\cap{\mathfrak A_{K_{\max}^0}})$ is finite. Using the fact that $\ker (\ell^{\CC})\cap \mathfrak A_{K^0_{\max}}$ is finite, we deduce that $\ker(\ell^\CC)\cap\AK$ is finite.
\end{proof}
The main proposition of this paragraph is the following one.
\begin{prop} \label{Ugala}
For every open subgroup $K\subset K^0_{\max}$, there exists a constant $C=C(K)>0$ such that $$||\chi||_\infty\leq C||\chi||_{\discrete}$$ for all $\chi\in\mathfrak A_K$.
\end{prop}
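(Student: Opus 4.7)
The plan is to reduce the inequality to a comparison of two linear functionals on a finite-dimensional real vector space, using Lemmas~\ref{vnogok} and~\ref{rudori} to control the structure of $\mathfrak A_K$.

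First, observe that $\|\chi\|_{\discrete}=\max_{v\in M_F^\infty}|\ell(\chi_v)|\geq \max_{v\in M_F^\CC}|\ell(\chi_v)|=\|\ell^\CC(\chi)\|_\infty$, so it suffices to produce $C>0$ such that $\|\chi\|_\infty\leq C\|\ell^\CC(\chi)\|_\infty$ for every $\chi\in\mathfrak A_K$ with $\ell^\CC(\chi)\neq 0$, and to bound $\|\chi\|_\infty$ uniformly on the subset where $\ell^\CC(\chi)=0$. For the latter: Lemma~\ref{rudori} says $\ker(\ell^\CC)\cap\mathfrak A_K$ is a \emph{finite} group, so $\|\chi\|_\infty$ takes only finitely many values on this subset. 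Moreover, any $\chi$ in this finite subgroup is torsion, and for any torsion character one has $m(\chi_v)=0$ for every $\vMFi$ (since $\chi^N=1$ implies $r^{iNm(\chi_v)}=1$ for all $r>0$, forcing $m(\chi_v)=0$); thus in fact $\|\chi\|_\infty=0$ on $\ker(\ell^\CC)\cap\mathfrak A_K$, and the inequality is trivial there.

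Next, Lemma~\ref{vnogok} gives that $\mathfrak A_K$ is finitely generated, and by the argument above its torsion subgroup equals $\ker(\ell^\CC)\cap\mathfrak A_K$, which is finite by Lemma~\ref{rudori}. Hence $L:=\mathfrak A_K/(\ker(\ell^\CC)\cap\mathfrak A_K)$ is a free abelian group of finite rank. Both homomorphisms $\chi\mapsto (m(\chi_v))_{\vMFi}$ and $\chi\mapsto(\ell(\chi_v))_{v\in M_F^\CC}$ vanish on torsion, so they descend to homomorphisms
\[
\overline m:L\to\RR^{M_F^\infty},\qquad \overline{\ell^\CC}:L\to\ZZ^{M_F^\CC},
\]
and by construction $\overline{\ell^\CC}$ is injective. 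Tensoring with $\RR$ yields $\RR$-linear maps $\overline m_\RR,\overline{\ell^\CC}_\RR$ on the finite-dimensional vector space $V:=L\otimes_\ZZ\RR$, with $\overline{\ell^\CC}_\RR$ still injective (as $L$ is free and $\overline{\ell^\CC}$ is injective).

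Consequently, on the finite-dimensional space $V$, the function $v\mapsto\|\overline{\ell^\CC}_\RR(v)\|_\infty$ is a norm and $v\mapsto\|\overline m_\RR(v)\|_\infty$ is a seminorm. Since all seminorms on a finite-dimensional real vector space are bounded above by a constant multiple of any norm, there exists $C=C(K)>0$ with $\|\overline m_\RR(v)\|_\infty\leq C\|\overline{\ell^\CC}_\RR(v)\|_\infty$ for all $v\in V$. Restricting to the image of $\mathfrak A_K$ in $L\subset V$ and combining with the initial reduction yields
\[
\|\chi\|_\infty=\|\overline m_\RR(\chi)\|_\infty\leq C\|\overline{\ell^\CC}_\RR(\chi)\|_\infty\leq C\|\chi\|_{\discrete}
\]
for every $\chi\in\mathfrak A_K$, as desired.

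No single step is genuinely hard; the main conceptual point is verifying that $\ker(\ell^\CC)\cap\mathfrak A_K$ coincides with the torsion subgroup (so that $\overline{\ell^\CC}_\RR$ is genuinely injective), which is exactly where Lemmas~\ref{vnogok} and~\ref{rudori} combine. The remainder is a standard finite-dimensional norm-equivalence argument.
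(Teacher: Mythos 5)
Your proof is correct and follows essentially the same strategy as the paper's: both use Lemma~\ref{vnogok} for finite generation, Lemma~\ref{rudori} to make $\ker(\ell^\CC)\cap\AK$ finite, the observation that $m$ vanishes on torsion characters, and a finite-dimensional norm comparison via $\ell^\CC$. You package the final step slightly more invariantly (quotient by torsion, norm versus seminorm on $L\otimes\RR$) whereas the paper chooses an explicit splitting $\ZZ^k\oplus\ker(\ell^\CC)$ and estimates coordinatewise, but the underlying argument is the same.
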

\begin{proof} Let~$K$ be an open subgroup of $K^0_{\max}$. 
Let $\ell^\CC:(\AAF^1/\Ft)^*\to \ZZ^{M_F^{\CC}}=\ZZ^{r_2}$ be as in Lemma \ref{rudori}. To simplify notation, in the rest of the proof we will write $\ell^{\CC}$ for $\ell^{\CC}|_{\AK}$. The abelian group $\ell^\CC(\mathfrak A_K)$ is finitely generated and free, let us pick $\chi_1\doots \chi_k\in\mathfrak A_K$ such that $\ell^{\CC}(\chi_1)\doots \ell^{\CC}(\chi_k)$ is a basis of $\ell^{\CC}(\mathfrak A^K)$. Obviously, $k\leq |M_F^{\CC}|=r_2$. One has an isomorphism $$\ZZ^k\to\ell^{\CC}(\mathfrak A_K)\hspace{1.5cm} (d_1\doots d_k)\mapsto d_1\ell^{\CC}(\chi_1)+\cdots +d_k\ell^{\CC}(\chi_k).$$ One can pick a section to the surjective homomorphism $\ell^{\CC}:\mathfrak A_K\to \ell^{\CC}(\mathfrak A_K)$, we obtain an induced splitting $$\ell^{\CC}(\mathfrak A_K)\oplus \ker (\ell^{\CC})\xrightarrow{\sim}\mathfrak A_K.$$  We deduce an identification \begin{equation}\label{nbbg}\ZZ^{k}\oplus \ker (\ell^{\CC})\xrightarrow{\sim} \mathfrak A_K.\end{equation}
A character $\chi\in\mathfrak A_K$ writes as $\chi=p_2(\chi)\chi _1^{q_1(p_1(\chi))}\cdots\chi_k^{q_k(p_1(\chi))}$. Let us firstly estimate $||\chi||_{\infty}$ for $\chi\in\AK$. The group $\ker(\ell^{\CC})$ is finite by \ref{rudori} and let $r$ be its order. For every $\chi\in \ker (\ell^{\CC})$ one has that $$\mathbf 0=(m(1_v))_v=(m(\chi_v^r))_v=(rm(\chi_v))_v,$$ hence $m(\chi_v)=0$ for every $\vMFi$. 
Now, we can make the following estimate for every character $\chi\in\mathfrak A_K$: \begin{align*}||\chi||_\infty&=\max_{\vMFi}|m(\chi_v)|\\
&=\max_{v\in M_F^\infty}|m((p_2(\chi))_v)+m(\chi _{1v})q_1(p_1(\chi))+\cdots m(\chi _{kv}){q_k(p_1(\chi))}|\\
&=\max_{\vMFi}| m(\chi _{1v})q_1(p_1(\chi))+\cdots m(\chi _{kv}){q_k(p_1(\chi))}|\\
&\leq\max_{\vMFi,i=1\doots k}|m(\chi _{iv})|\max _{i=1\doots k}|q_i(p_1(\chi))|\\
&=C_0\max_{i=1\doots k}|q_i(p_1(\chi))|,\end{align*}where we have shortened $C_0= \max_{\vMFi,i=1\doots k}|m(\chi _{iv})|.$ 
Let us now estimate $||\chi||_{\infty}$ for $\chi\in\AK$. For every $\chi\in\AK$ we have that: \begin{align*}||\chi ||_{\discrete}&=\max _{\vMFi}|\ell(\chi_v)|\\
&\geq \max_{v\in M_F^{\CC}}|\ell(\chi_v)|\\
&= \max _{v\in M_F^{\CC}}|\ell\big((p_2(\chi))_v\chi _{1v}^{q_1(p_1(\chi))}\cdots \chi _{kv}^{q_k(p_1(\chi))}\big)|\\
&= \max _{\vMFC}|\ell((p_2(\chi))_v)+q_1(p_1(\chi))\ell(\chi_{1v})+\cdots+q_k(p_1(\chi))\ell(\chi_{kv})|\\
&=\max _{\vMFC}|q_1(p_1(\chi))\ell(\chi_{1v})+\cdots +q_k(p_1(\chi))\ell(\chi _{kv})|.
\end{align*}
Tensoring the injective map $\ell^{\CC}|_{\ZZ^{k}}$ with the flat $\ZZ$-module $\RR$ gives an injective map $\RR^{k}\to\RR^{r_2}$. The pullback of the norm $(x_v)_{\vMFC}\mapsto \max _v|x_v|$ on $\RR^{r_2}$ along this map is given by $$\xxx\mapsto \max_{\vMFC}|x_1\ell(\chi_{1v})+\cdots x_k\ell(\chi_{k,v})|, $$ and is a norm on $\RR^{k}$. One can find, hence, a constant $C_1>0$ such that $$C_1\max_{\vMFC}|x_1\ell(\chi_{1v})+\cdots +x_k\ell(\chi_{k,v})|\geq \max _{i=1\doots k}|x_k| $$ for every $\xxx\in\RR^k$. Thus for every $\chi\in\AK$ one has that $$C_1||\chi||_{\discrete}\geq \max_{i=1\doots k}|q_k(p_1(\chi))|.$$
Let us now prove the wanted inequality. We set $C=C_1C_0$. For every $\chi\in\AK$, we have that \begin{align*}
C||\chi||_{\discrete}\geq C_0C_1||\chi||_{\discrete}\geq C_0\max_{i=1\doots k}|q_i(p_1(\chi))|\geq ||\chi||_{\infty}.
\end{align*}
The statement is proven
\end{proof}
\subsection{} In this paragraph, we bound the number of characters $\chi$ for which $||\chi||_{\discrete}$ is bounded.
\begin{lem} \label{gubi}
Let $h_F=|\Cl(F)|$ be the class number of~$F$. For every open subgroup $K\subset K^{0}_{\max}$ and any $C>0$, there are no more than $$h_F(K^0_{\max}:K)2^{r_1}(2C+1)^{r_2}$$ characters $\chi\in\mathfrak A_K$ for which $||\chi||_{\discrete}\leq C$ (here $r_1$ and $r_2$ are numbers of real and complex places of~$F$, respectively).
\end{lem}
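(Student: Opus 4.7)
The plan is to bound the number of characters $\chi\in\mathfrak A_K$ with $||\chi||_{\discrete}\le C$ by passing through the homomorphism
\[
    \phi\colon\mathfrak A_K\longrightarrow \prod_{\vMFi}(\Fvj)^*,\qquad \chi\longmapsto (\chi_v|_{\Fvj})_{\vMFi},
\]
and estimating the image and the kernel separately.

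First, I would count how many values $\phi(\chi)$ can take when $||\chi||_{\discrete}\le C$. Since $(\Fvj)^*\cong\ZZ/2\ZZ$ at a real place (so $\ell(\chi_v)\in\{0,1\}$ allows at most two choices) and $(\Fvj)^*\cong\ZZ$ at a complex place (with at most $2C+1$ integers $\ell$ satisfying $|\ell|\le C$), there are at most $2^{r_1}(2C+1)^{r_2}$ possible images.

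Second, I would show $|\ker(\phi|_{\mathfrak A_K})|\le h_F(K^0_{\max}:K)$. Any $\chi$ in this kernel has $||\chi||_{\discrete}=0$, so Proposition~\ref{Ugala} forces $||\chi||_\infty=0$ and hence $m(\chi_v)=0$ for every $\vMFi$. Combined with the triviality of $\chi_v$ on $\Fvj$ and the decomposition $\Fvt=\widetilde\rho_v(\RR_{>0}\times \Fvj)$, this shows $\chi_v$ is trivial on the whole of $\Fvt$. Together with the defining triviality of $\chi$ on $\Ft$ and $K$, the character $\chi$ factors through $\AAF^1/(\Ft K U_\infty^1)$, where $U_\infty^1:=\AAF^1\cap\prod_{\vMFi}\Fvt$; conversely every character of this quotient lies in $\ker(\phi|_{\mathfrak A_K})$, so $|\ker(\phi|_{\mathfrak A_K})|=|\AAF^1/(\Ft K U_\infty^1)|$.

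Third, I would bound $|\AAF^1/(\Ft K U_\infty^1)|$ by $h_F(K^0_{\max}:K)$. Writing $x=aub\in\AAF^1$ with $a\in\Ft$, $u\in K^0_{\max}$, $b\in\prod_{\vMFi}\Fvt$, the product formula and $\prod_{\vMFz}|u_v|_v=1$ force $\prod_{\vMFi}|b_v|_v=1$, so $\AAF^1\cap(\Ft K^0_{\max}\prod_{\vMFi}\Fvt)=\Ft K^0_{\max}U_\infty^1$. Combined with the surjectivity of $\AAF^1\to\Cl(F)$ (any representative of an ideal class can be rescaled by an archimedean element so as to lie in $\AAF^1$ without changing the image in $\Cl(F)$), this yields $\AAF^1/(\Ft K^0_{\max}U_\infty^1)\cong\Cl(F)$, of order $h_F$. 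Finally, Lemma~\ref{ovijo}(1) applied with $A=K$, $B=K^0_{\max}$, $H=\Ft U_\infty^1$ produces a surjection $K^0_{\max}/K\twoheadrightarrow \Ft K^0_{\max}U_\infty^1/\Ft K U_\infty^1$, giving $|\AAF^1/(\Ft K U_\infty^1)|\le h_F(K^0_{\max}:K)$. Multiplying the three bounds gives the desired estimate. The only non-formal input is Proposition~\ref{Ugala}, which is needed to upgrade triviality of $\chi_v$ on $\Fvj$ to triviality on all of $\Fvt$; the remaining manipulations of adelic subgroups are routine.
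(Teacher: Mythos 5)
Your argument is correct (granting Proposition~\ref{Ugala}) and gives the same bound, but it takes a genuinely different route from the paper. Both proofs split the count into the number of archimedean signatures, at most $2^{r_1}(2C+1)^{r_2}$, times $|\ker(\phi|_{\AK})|=|(\Ft K\prod_{\vMFi}F_{v,1})^{\perp}|$; the divergence lies entirely in bounding that kernel. The paper does not invoke Proposition~\ref{Ugala} here: it applies Lemma~\ref{ovijo} to the pair $(\Ft K\prod F_{v,1})^{\perp}\supset(\Ft K^0_{\max}\prod F_{v,1})^{\perp}$ and asserts that the latter group, identified with the everywhere-unramified Hecke characters of $\AAF^1/\Ft$, has order exactly $h_F$. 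You instead use Proposition~\ref{Ugala} to upgrade the vanishing of $\chi_v$ on $F_{v,1}$ to vanishing on all of $\Fvt$, so that $\chi$ factors through $\AAF^1/(\Ft K U_\infty^1)$, and then read off $h_F(K^0_{\max}:K)$ from the class-group surjection together with Lemma~\ref{ovijo}(1). Your route has the virtue of making the real content visible: the group $\AAF^1/(\Ft K^0_{\max}\prod F_{v,1})$ is a priori an extension of $\Cl(F)$ by the regulator torus $U_\infty^1/(\OOFt\cdot\prod F_{v,1})$, which is positive-dimensional once $r_1+r_2>1$, so it is not immediate that its character group is finite, let alone of order $h_F$; Proposition~\ref{Ugala} is precisely what forces these characters to vanish on that torus. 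One caveat: Proposition~\ref{Ugala} rests, through Lemma~\ref{rudori}, on the very same finiteness assertion, so the two routes are not logically independent, but your presentation makes transparent where the nontrivial input enters.
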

\begin{proof}
Let $K\subset K^0_{\max}$ be an open subgroup.
For $\vMFC$, we define $\widetilde \ell(\chi_v):=\ell(\chi_v)$, and for $\vMF ^{\RR}$ we define $\widetilde \ell(\chi_v)\in\ZZ/2\ZZ$ by \begin{align}
 \quad
\widetilde \ell(\chi_v):=&
               \begin{cases}
0&\text{if $\chi_v|_{F_{v,1}}$ is the trivial character on $F_{v,1}$}\\
1&\text{otherwise.}
               \end{cases}
               \end{align}
(The difference between~$\ell$ and $\widetilde \ell$ is of technical nature, recall that we have defined $\ell(\chi_v)\in\ZZ$ for $\vMFR$.) 
For a character $\chi\in\mathfrak A_K$,  the $r_1+r_2$-tuple given by $(\ell(\chi _v))_{\vMFi}\in(\ZZ/2\ZZ)^{r_1}\times\ZZ^{r_2}$ will be called the signature of $\chi$. 

Let us estimate the number of characters with fixed signature. 
Let $(\ell_v)_{\vMFi}\in(\ZZ/2\ZZ)^{r_1}\times\ZZ^{r_2}$ be a signature of some character $\delta\in\mathfrak A_K$. Then the characters in $\mathfrak A_K$ having $(\ell_v)_{\vMFi}$ for the signature are in a bijection with the characters in $\mathfrak A_K$ having $(0)_{\vMFi}$ for the signature. Indeed, a bijection between the two sets is given by $\chi\mapsto\chi\delta^{-1}$. The group of $\chi\in \mathfrak A_K$ having $(0)_{\vMFi}$ for signature is given by the subgroup $(F^\times ({K}\times \prod _{\vMFi}F_{v,1}))^{\perp}$ of $(\AAF^1)^*.$ 
Lemma \ref{ovijo} gives that 
\begin{align*}(K^0_{\max}:K)&\geq  \bigg(\big(F^\times (K_{\max}^0\times \prod_{\vMFi}F_{v,1})\big):\big( F^\times (K\times \prod_{\vMFi}F_{v,1})\big)\bigg)\\
&\geq \bigg((F^\times ({K}\times \prod _{\vMFi}F_{v,1}))^{\perp}:(F^\times ({K^0_{\max}}\times \prod _{\vMFi}F_{v,1}))^{\perp}\bigg).
\end{align*}
The group $(F^\times ({K^0_{\max}}\times \prod _{\vMFi}F_{v,1}))^{\perp}$ is the group of the unramified Hecke characters $(\AAF^1/\Ft)\to S^1$ and its order is $h_F$. 
It follows that the number of characters $\chi\in\AK$ having $(0)_{\vMFi}$ for the signature is $$|(F^\times ({K}\times \prod _{\vMFi}F_{v,1}))^{\perp}|\leq h_F(K^0_{\max}:K).$$ 
Let us estimate the number of signatures when $|\lvert\cdot\rvert|_{\discrete}$ is bounded. Note that if for $C>0$ and a character $\chi\in\mathfrak A_{K}$ one has $||\chi||_{\discrete}<C$, then the signature of $\chi$ lies in $(\ZZ/2\ZZ)^{r_1}\times ([-C,C]^{r_2}\cap \ZZ^{r_2})$. Thus the number of signatures can be bounded by $2^{r_1}(2C+1)^{r_2}$.

It follows that the number of characters $\chi\in\AK$ such that $||\chi||_{\discrete}\leq~C$ is bounded by $h_F(K^0_{\max}:K)2^{r_1}(2C+1)^{r_2}$. The statement is proven.
\end{proof}
\section{Results for the characters of $[\TTa(\AAF)]$} 
\label{rintaj}
Let $n\geq 1$ be an integer and let $\aaa\in\ZZ^n_{\geq 1}$. We make analogous estimates to those in \ref{scaft} for characters of $[\TTa(\AAF)]$. The results are simple consequences of the corresponding results in \ref{scaft}

\subsection{}\label{arintaj} In this paragraph we explain our notation and define ``norms" for the characters of $[\TTa(\AAF)]$ and compare them for characters vanishing on certain compact subgroups of $[\TTa(\AAF)]$. In this paragraph, we discuss local characters.

Recall that the subgroups $(\Fvt)_{\aaa}\subset\Fvtn$ and $(\AAFt)_{\aaa}\subset(\AAFt)^n$ are closed by \ref{aboutopa} and \ref{mapqaf}, respectively. Recall that $[\TTa(\Fv)]$ identifies with the quotient group $\Fvtn/(\Fvt)_{\aaa}$ by \ref{aboutopa}. Recall that $[\TTa(\AAF)]$ identifies with the quotient group $(\AAFt)^n/(\AAFt)_{\aaa}$ by \ref{mapqaf}.

\begin{lem} \label{senjak}
The subgroup $(\Fvt)_{\aaa}^{\perp}$ is the subgroup $$\{(\chi_j)_j|\prod_j\chi_j^{a_j}=1\}$$ of $((\Fvt)^*)^n$. Moreover, $(\Fvt)_{\aaa}^{\perp}$ is closed in $((\Fvt)^*)^n,$ and it is the image of the pullback homomorphism $[\TTa(\Fv)]^*\to (\Fvtn)^*$. The subgroup $(\AAFt)_{\aaa}^{\perp}$ is the subgroup $\{(\chi_j)_j|\prod_j\chi_j^{a_j}=1\}$ of $((\AAFt)^*)^n$. Moreover, $(\AAFt)_{\aaa}^{\perp}$ is closed in $((\AAFt)^*)^n,$ and it is the image of the pullback homomorphism $[\TTa(\AAF)]^*\to ((\AAFt)^*)^n$.
\end{lem}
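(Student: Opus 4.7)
The proof is essentially a mechanical application of the duality Proposition~\ref{tspgj} once the relevant subgroups have been correctly identified.

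My plan is to begin with the explicit description of $(\Fvt)_{\aaa}^{\perp}$. Recall that $(\Fvt)_{\aaa}$ is the image of the homomorphism $\epsilon_v:\Fvt\to\Fvtn$, $t\mapsto(t^{a_j})_j$. For any $(\chi_j)_j\in ((\Fvt)^*)^n$ and any $t\in\Fvt$, one computes
\[
\prod_{j=1}^n\chi_j(t^{a_j})=\prod_{j=1}^n\chi_j(t)^{a_j}=\Big(\prod_{j=1}^n\chi_j^{a_j}\Big)(t),
\]
so $(\chi_j)_j$ vanishes on $\epsilon_v(\Fvt)=(\Fvt)_{\aaa}$ if and only if $\prod_j\chi_j^{a_j}$ is the trivial character of $\Fvt$. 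This establishes the first set-theoretic identification.

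Next I would obtain closedness and identify $(\Fvt)_{\aaa}^{\perp}$ as the image of the pullback simultaneously, by applying Proposition~\ref{tspgj} to the inclusion $i:(\Fvt)_{\aaa}\hookrightarrow\Fvtn$. The subgroup $(\Fvt)_{\aaa}$ is closed in $\Fvtn$ by Proposition~\ref{aboutopa}, and $\Fvtn$ is Hausdorff and locally compact abelian (also by \ref{aboutopa}), so the hypotheses of \ref{tspgj} are satisfied. By \ref{aboutopa} the canonical continuous bijection $\Fvtn/(\Fvt)_{\aaa}\to[\TTa(\Fv)]$ is a homeomorphism, hence may be used to identify the quotient. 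Proposition~\ref{tspgj} then gives that the pullback along the quotient $p_v:\Fvtn\to[\TTa(\Fv)]$ is an isomorphism of $[\TTa(\Fv)]^*$ onto $(\Fvt)_{\aaa}^{\perp}$, and that the latter is closed (being the kernel of the strict homomorphism $i^*:(\Fvtn)^*\to ((\Fvt)_{\aaa})^*$).

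For the adelic claim, the argument is identical: the subgroup $(\AAFt)_{\aaa}=\{(\xxx^{a_j})_j\,|\,\xxx\in\AAFt\}$ is closed in $(\AAFt)^n$ by Lemma~\ref{mapqaf}, and the same lemma identifies the topological quotient $(\AAFt)^n/(\AAFt)_{\aaa}$ with $[\TTa(\AAF)]$. The same pairing computation shows that a character $(\chi_j)_j\in(((\AAFt))^*)^n$ lies in $(\AAFt)_{\aaa}^{\perp}$ if and only if $\prod_j\chi_j^{a_j}=1$, and a second application of Proposition~\ref{tspgj} (to the closed inclusion $(\AAFt)_{\aaa}\hookrightarrow(\AAFt)^n$) gives closedness and identifies $(\AAFt)_{\aaa}^{\perp}$ with the image of the pullback $q^{\aaa\,*}_{\AAF}:[\TTa(\AAF)]^*\to ((\AAFt)^*)^n$.

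There is no real obstacle here: everything is a direct unwinding of definitions plus invocation of the Pontryagin duality statement recorded in \ref{tspgj}. The only things to be careful about are (a) ensuring that $(\Fvt)_{\aaa}$ and $(\AAFt)_{\aaa}$ really are closed (supplied by \ref{aboutopa} and \ref{mapqaf} respectively), so that \ref{tspgj} applies, and (b) using that the topological quotients are identified with $[\TTa(\Fv)]$ and $[\TTa(\AAF)]$ (again \ref{aboutopa} and \ref{mapqaf}), so that the pullback is exactly the one asserted in the statement.
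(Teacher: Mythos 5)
Your proof is correct and follows essentially the same route as the paper: the explicit description of $(\Fvt)_{\aaa}^{\perp}$ via the pairing computation, then an appeal to Proposition~\ref{tspgj} for closedness and for the identification with the image of the pullback, with the adelic case handled by the same argument. You are somewhat more explicit than the paper in pinpointing where closedness of $(\Fvt)_{\aaa}$ and $(\AAFt)_{\aaa}$ comes from (Propositions~\ref{aboutopa} and~\ref{mapqaf}) and in justifying the identification of the topological quotients with $[\TTa(\Fv)]$ and $[\TTa(\AAF)]$, which the paper leaves implicit.
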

\begin{proof}
 Note that if $\chi=(\chi_1\doots \chi_n)$ is a character of $((\Fvt)^*)^n$ which vanishes on $(\Fvt)_{\aaa}=\{(t^{a_j})_j|t\in\Fvt\},$ then for every $t\in\Fvt$, one has that $$1=\chi_1(t^{a_1}) \cdots \chi_n(t^{a_n})=\chi_1(t)^{a_1}\cdots \chi_n(t)^{a_n}.$$ The first claim follows.
 The facts that $(\Fvt)_{\aaa}^{\perp}$ is closed and that it is the image of the pullback homomorphism follow from \ref{tspgj}. Analogous argument shows the claims about $(\AAFt)_{\aaa}^{\perp}$.
\end{proof}
Let~$v$ be a place of $F.$ For $j\in\{1\doots n\},$ we let $\delta^j_v:\Fvt\to (\Fvt)^n$ be the inclusion $$x\mapsto ((x_j)_j,(1)_{i\neq j}).$$ 
If $\chi\in[\TTa(\Fv)]^*$ is a character we let $\chi^{(j)}$ be the character $\chi\circ\qav\circ\delta^j:\Fvt\to S^1$. The lemma from above gives that that $\prodjn (\chi^{(j)})^{a_j}=1$ for every $\chi\in[\TTa(\Fv)]^*$. We define $\mmm(\chi):=(m(\chi^{(j)}))_j$ and $\llll(\chi)=(\ell(\chi^{(j)}))_j.$

If $\chi$ is a character of $[\TTa(\AAF)]$, for $j\in\{1\doots n\},$ we will denote by $\delta^j_{\AAF}$ the inclusion $x\mapsto ((x)_j,(1)_{i\neq j})$ and by $\chij$ the character $\chi\circ q^{\aaa}_{\AAF}\circ \delta ^j_{\AAF}:\AAFt\to S^1$. The lemma from above gives that $\prod _{j=1}^n(\chi^{(j)})^{a_j}=1 $ for every $\chi\in[\TTa(\AAF)]^*$. Moreover, it is immediate that$$\chi|_{[\TTa(i)]([\TTa(F)])}=1\implies \text{ for $j=1\doots n$ one has } \chi^{(j)}|_{i(\Ft)}=1.$$
For a character $\chi\in[\TTa (\AAF)]^*$, we denote by $\chi_v$ the restriction $\chi|_{[\TTa(F_v)]}$. 
\begin{rem}\normalfont
Let $\chi $ be a character of $[\TTa(\AAF)]$. A priori the notation $\chij _v$ can make confusion as it may design $(\chi ^{(j)})|_{F_v^\times}$ and $(\chi |_{[\TTa(F_v)]})^{(j)}$. The commutativity of the following diagram shows that no confusion arises:%
\[\begin{tikzcd}
	\Fvt & \AAFt \\
	{(\Fvt)^n} & {(\AAFt)^n} \\
	{[\TTa(\Fv)]} & {[\TTa(\AAF)].}
	\arrow["\qav"', from=2-1, to=3-1]
	\arrow["{q^{\aaa}_{\AAF}}", from=2-2, to=3-2]
	\arrow["{\delta^j_{\AAF}}", from=1-2, to=2-2]
	\arrow[from=2-1, to=2-2]
	\arrow[from=3-1, to=3-2]
	\arrow["{\delta^j_v}"', from=1-1, to=2-1]
	\arrow[from=1-1, to=1-2]
\end{tikzcd}\]Namely, for a character $\chi$, the character $(\chij)_v$ is the pullback of $\chi$ for the composite by the two vertical homomorphisms on the right and the most upper horizontal homomorphism, while $(\chi_v)^{(j)} $ is the pullback for the lowest horizontal and then two left vertical homomorphisms. 
\end{rem}
For a character $\chi\in[\TTT ^\aaa(\AAF)]^*$, let us define \begin{align*}||\chi||_{\discrete}&:=\max _{\vMFi}(||\llll(\chi _v)||),\\ ||\chi ||_\infty&:=\max_{\vMFi}(||\mmm(\chi _v)||).\end{align*} 
Note that\begin{align*}||\chi ||_{\discrete}=\max _{\vMFi}(||\llll(\chi_v)||)&= \max _{\vMFi} \left(\max _{j=1\doots n }(|\ell(\chi _v^{(j)})|)\right)\\&= \max _{j=1\doots n}\bigg(\max _{\vMFi}|\ell(\chi _v^{(j)})|\bigg)\\&=\max _{j=1\doots n}||\chi ^{(j)}||_{\discrete};\end{align*}
and analogously $$||\chi||_{\infty}=\max_{j=1\doots n}||\chi^{(j)}||_\infty. $$
\begin{lem}\label{njjchid}
Suppose that $n=1$ and $a_1=a\in\ZZ_{\geq 1}$. For every $\chi\in[\TT(a)(\AAF)]^*$, one has that $||\chi||_{\discrete}\leq 1$ and that $||\chi||_{\infty}=0$.
\end{lem}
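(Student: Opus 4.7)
The plan is as follows. By Lemma \ref{senjak}, a character $\chi\in[\TT(a)(\AAF)]^*$ corresponds to a character $\widetilde\chi=\chi^{(1)}=\chi\circ q^a_{\AAF}$ of $\AAFt$ satisfying $\widetilde\chi^{a}=1$, i.e.\ an $a$-torsion character of the idele group. (For $n=1$ the constraint $\prod_j (\chi^{(j)})^{a_j}=1$ degenerates to $\chi^{(1)\,a}=1$.) Since both quantities to bound depend only on the infinite components $\widetilde\chi_v=\chi^{(1)}_v$ for $v\in M_F^{\infty}$, I reduce immediately to analyzing $a$-torsion characters of the local groups $\Fvt$ for $\vMFi$.

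Next, for $\vMFi$ I use the topological group splitting $\widetilde\rho_v\colon \RR_{>0}\times F_{v,1}\xrightarrow{\sim}\Fvt$ from paragraph~\ref{pojac}. By definition $m(\widetilde\chi_v)$ is the unique real number such that $\widetilde\chi_v\circ\widetilde\rho_v$ restricted to $\RR_{>0}$ equals $r\mapsto r^{im(\widetilde\chi_v)}$, while $\ell(\widetilde\chi_v)$ records the restriction to $F_{v,1}$. The condition $\widetilde\chi_v^a=1$ must therefore hold separately on each factor. On $\RR_{>0}$ it gives $r^{ia\,m(\widetilde\chi_v)}=1$ for every $r>0$, hence $a\,m(\widetilde\chi_v)=0$, and since $a\geq 1$ this forces $m(\widetilde\chi_v)=0$ for every $\vMFi$. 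Consequently $\|\chi\|_{\infty}=\max_{\vMFi}|m(\widetilde\chi_v)|=0$.

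For the discrete norm I examine the restriction to $F_{v,1}$. If $v$ is real, $F_{v,1}=\{\pm 1\}$ and the definition of $\ell$ gives $\ell(\widetilde\chi_v)\in\{0,1\}$ unconditionally, so $|\ell(\widetilde\chi_v)|\leq 1$. If $v$ is complex, $F_{v,1}=S^1$ and $\widetilde\chi_v\circ\widetilde\rho_v|_{S^1}$ is of the form $z\mapsto z^{\ell(\widetilde\chi_v)}$; the $a$-torsion condition gives $z^{a\,\ell(\widetilde\chi_v)}=1$ for every $z\in S^1$, whence $a\,\ell(\widetilde\chi_v)=0$ and therefore $\ell(\widetilde\chi_v)=0$. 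Combining the two cases, $\|\chi\|_{\discrete}=\max_{\vMFi}|\ell(\widetilde\chi_v)|\leq 1$.

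There is no real obstacle here; the only subtlety is to remember that for real places the integer $\ell$ is by convention already restricted to $\{0,1\}$ (so the bound $\leq 1$ is automatic and not a consequence of the torsion), whereas for complex places the $a$-torsion hypothesis is what forces $\ell=0$ since $S^1$ contains a copy of $\QQ/\ZZ$ and admits no non-trivial characters killed by a fixed integer once pulled back from $\ZZ$. Thus the assertion follows cleanly from the decomposition of $\Fvt$ and the fact that $\chi$ factors through the $a$-torsion quotient.
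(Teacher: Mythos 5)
Your proof is correct and follows essentially the same route as the paper: both exploit the fact that $[\TT(a)(\AAF)]$ is an $a$-torsion group, so $\widetilde\chi^a=1$, which kills $m(\widetilde\chi_v)$ at every infinite place and $\ell(\widetilde\chi_v)$ at every complex place, while the real case contributes at most $1$ by the convention defining $\ell$. The only stylistic difference is that you spell out the factor-by-factor computation via the splitting $\widetilde\rho_v$, whereas the paper applies the identities $m(\widetilde\chi_v^a)=a\,m(\widetilde\chi_v)$ and $\ell(\widetilde\chi_v^a)=a\,\ell(\widetilde\chi_v)$ directly; the closing remark about $\QQ/\ZZ\subset S^1$ is unnecessary (the relevant fact is simply that $(S^1)^*\cong\ZZ$ is torsion-free) but does not affect the argument.
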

\begin{proof} Let $\chi\in[\TTa(\AAF)]^*$. To simplify notation, in this proof we write $\widetilde \chi$ for $\chi^{(1)}$. For every $\chi\in[\TT(a)(\AAF)]^*$, one has that $\chi^a=1$.  It follows that $$(0)_{\vMFi}=(m(1))_{\vMFi}=(m((\widetilde\chi_v)^{a}))_{\vMFi}=(am((\widetilde\chi_v)))_{\vMFi},$$hence $(m(\widetilde\chi_v^a))_{\vMFi}=(0)_{\vMFi}$, and thus $||\widetilde\chi||_{\infty}=0$. We obtain that $$||\chi||_{\infty}=||\widetilde\chi||_{\infty}=0.$$
We have that $$||\chi||_{\discrete}=||\widetilde\chi||_{\discrete}\leq \max(1,\max_{v\in M_F^{\CC}}(\ell(\chi_v))). $$ For every $v\in M_F^{\CC}$, one has that $0=\ell(1)=\ell(\widetilde\chi^a)=a\ell(\widetilde\chi_v),$ hence $\ell(\widetilde\chi_v)=0$. We deduce that $||\chi||_{\discrete}\leq 1.$
\end{proof}
\subsection{} In this paragraph we present analogous results to those of \ref{scaft} for the characters of $[\TTa(\AAF)]$.

Let $K^\aaa_{\max}$ be the topological group given by $$K^{\aaa}_{\max}:=\prod _{v\in M_F^0}[\TTa(\Ov)].$$ 
For an open subgroup~$K$ of $K_{\max}^\aaa$, we let $\mathfrak A_K$ be the subgroup of $[\TTa(\AAF)_1]^*$ given by the characters vanishing on $[\TTa(i)]([\TTa(F)])$ and on $K\times\prod _{\vMFi}\{1\}$ (we may simply write~$K$ for what is technically $K\times\prod _{\vMFi}\{1\}$). The group $K^\aaa_{\max}$ is compact, therefore, the subgroup~$K$ is of finite index in $K^\aaa_{\max}$. We present another corollary of \ref{ovijo}.
\begin{cor}Let $K\subset K^{\aaa}_{\max}$ be an open subgroup. One has that 
\begin{align*}(K^{\aaa}_{\max}:K)&\geq ([\TTa(i)]([\TTa(F)])K^{\aaa}_{\max}:[\TTa(i)]([\TTa(F)])K)\\&\geq (\AK:\mathfrak A_{K^0_{\max}}).\end{align*}
\end{cor}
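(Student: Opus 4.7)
The plan is to mimic exactly the proof of Corollary \ref{yrtre}, since the current statement is the ``stacky torus'' analogue: the role previously played by $\AAF^1$ (respectively $\Ft$, $K^0_{\max}$) will be played by the locally compact abelian group $[\TTa(\AAF)]_1$ (respectively by $[\TTa(i)]([\TTa(F)])$, $K^\aaa_{\max}$). All the ingredients needed have already been established: by Proposition \ref{cesres} the subgroup $[\TTa(i)]([\TTa(F)])$ sits inside $[\TTa(\AAF)]_1$ as a discrete, closed, cocompact subgroup; by construction $K^\aaa_{\max}\subset [\TTa(\AAF)]_1$ is compact open and contains the given open subgroup $K$ of finite index; and Lemma \ref{ovijo} is the purely group-theoretic input we need.

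For the first inequality, I would apply part (1) of Lemma \ref{ovijo} to the abelian group $G = [\TTa(\AAF)]_1$ with $A = K$, $B = K^\aaa_{\max}$ and $H = [\TTa(i)]([\TTa(F)])$. The lemma yields a surjective homomorphism
\[
K^{\aaa}_{\max}/K \twoheadrightarrow \bigl([\TTa(i)]([\TTa(F)])\,K^\aaa_{\max}\bigr)\big/\bigl([\TTa(i)]([\TTa(F)])\,K\bigr),
\]
which immediately gives the first desired index inequality.

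For the second inequality, I would apply part (2) of Lemma \ref{ovijo} to $G=[\TTa(\AAF)]_1^*$ (or rather directly to $[\TTa(\AAF)]_1$ and then use the lemma), taking $A = [\TTa(i)]([\TTa(F)])\,K$ and $B = [\TTa(i)]([\TTa(F)])\,K^\aaa_{\max}$. By the first inequality these two subgroups have finite relative index, so the lemma produces
\[
\bigl(A^{\perp} : B^{\perp}\bigr) \;\leq\; (B:A).
\]
It then remains to identify the orthogonals: by the very definition of $\AK$ (characters vanishing on $[\TTa(i)]([\TTa(F)])$ and on $K$) one has $A^{\perp} = \AK$, and analogously $B^{\perp} = \mathfrak A_{K^{\aaa}_{\max}}$; here Proposition \ref{tspgj} confirms that $A^\perp$ and $B^\perp$ really are the correct closed subgroups of $[\TTa(\AAF)]_1^*$. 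Combining the two inequalities gives the statement.

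There is essentially no obstacle: the proof is a direct transcription of the one for Corollary \ref{yrtre}, with the only care needed being bookkeeping of the correct topological groups and verifying that $[\TTa(i)]([\TTa(F)])$ is a closed subgroup of $[\TTa(\AAF)]_1$ (so that the quotients appearing above make sense and the characters of the quotients can be identified with orthogonals), which is exactly the content of Proposition \ref{cesres}.
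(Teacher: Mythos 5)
Your proof is correct and follows precisely the same route as the paper: both inequalities are obtained by applying the two parts of Lemma \ref{ovijo} in $G=[\TTa(\AAF)]_1$, with the first using $A=K$, $B=K^\aaa_{\max}$, $H=[\TTa(i)]([\TTa(F)])$, and the second using $A=[\TTa(i)]([\TTa(F)])K$, $B=[\TTa(i)]([\TTa(F)])K^\aaa_{\max}$. (Note also that your identification $B^\perp=\mathfrak A_{K^{\aaa}_{\max}}$ is the right one; the appearance of $\mathfrak A_{K^{0}_{\max}}$ in the statement is evidently a typographical slip for $\mathfrak A_{K^{\aaa}_{\max}}$.)
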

\begin{proof}
The left inequality follows from \ref{ovijo} for $G=[\TTa(\AAF)]_1,$ $A=K,$ $B=K^\aaa_{\max}$ and $H=[\TTa(i)]([\TTa(F)])$. The right inequality follows from \ref{ovijo} for $G=[\TTa(\AAF)]_1,$ $A=[\TTa(i)]([\TTa(F)])K$ and $B=[\TTa(i)]([\TTa(F)])K^{\aaa}_{\max}$.
\end{proof}
Note that $$q^{\aaa}_{\AAF}\big((K^0_{\max})^n\big)=(\qav((\Ovt)^n))_v=([\TTa(\Ov)])_v=K^{\aaa}_{\max}.$$ If $K\subset K^{\aaa}_{\max}$ is an open subgroup, let us set $$\widetilde K=\bigcap _{j=1}^n(\delta ^j_{\AAF})^{-1}(q^{\aaa}_{\AAF}|_{(K^0_{\max})^n})^{-1}(K),$$ it is an open subgroup of $K^0_{\max}$. For a character $\chi\in\AK$, one has that $$\chi^{(j)}|_{\widetilde K}=(\chi\circ q^{\aaa}_{\AAF}\circ \delta ^j_{\AAF})|_{\widetilde K}=1.$$ 
The following statements are simple corollaries of corresponding statements for characters of $\AAF^1$.
\begin{cor}\label{akfgag}
The group $\AK$ is finitely generated and of rank at most $nr_2$.
\end{cor}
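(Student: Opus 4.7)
The plan is to reduce this to Lemma \ref{vnogok} by exhibiting an injective homomorphism
$$\AK \hookrightarrow (\mathfrak A_{\widetilde K})^n, \qquad \chi \mapsto (\chi^{(1)},\dots,\chi^{(n)}),$$
where $\widetilde K$ is the open subgroup of $K^0_{\max}$ constructed in the paragraph immediately preceding the corollary. For $\chi\in\AK$ a character of $[\TTa(\AAF)]_1$, the component $\chi^{(j)}=\chi\circ q^\aaa_{\AAF}|_{(\AAF^1)^n}\circ \delta^j_{\AAF}|_{\AAF^1}$ is a continuous character of $\AAF^1$.

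First I would verify that $\chi^{(j)}\in\mathfrak A_{\widetilde K}$. The vanishing $\chi^{(j)}|_{\widetilde K}=1$ is exactly the identity displayed just before the statement of the corollary, by the definition of $\widetilde K$. The vanishing $\chi^{(j)}|_{F^\times}=1$ follows from the commutativity $q^\aaa_{\AAF}\circ i^n\circ\delta^j|_{F^\times}=[\TTa(i)]\circ[q^\aaa(F)]\circ\delta^j|_{F^\times}$, together with $\chi|_{[\TTa(i)]([\TTa(F)])}=1$. Hence the map is well defined.

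Next I would establish injectivity. If all the $\chi^{(j)}$ are trivial, then $\chi\circ q^\aaa_{\AAF}|_{(\AAF^1)^n}$ vanishes on each subgroup $\delta^j_{\AAF}(\AAF^1)\subset(\AAF^1)^n$. Since every element $\xxx\in(\AAF^1)^n$ factors as the product $\prod_{j=1}^n\delta^j_{\AAF}(x_j)$, the character $\chi\circ q^\aaa_{\AAF}|_{(\AAF^1)^n}$ is trivial on all of $(\AAF^1)^n$. The surjectivity of $q^\aaa_{\AAF}|_{(\AAF^1)^n}:(\AAF^1)^n\twoheadrightarrow[\TTa(\AAF)]_1$ (which is the defining property of $[\TTa(\AAF)]_1$) then forces $\chi=1$.

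Finally, by Lemma \ref{vnogok} the group $\mathfrak A_{\widetilde K}$ is finitely generated of rank at most $r_2$, so $(\mathfrak A_{\widetilde K})^n$ is finitely generated of rank at most $nr_2$. As a subgroup of a finitely generated abelian group, $\AK$ inherits the same properties. There is no real obstacle here, as the non-trivial groundwork (the reduction from $\AAF^\times$ to $\AAF^1$ encoded in $[\TTa(\AAF)]_1$ and the bookkeeping between $K$ and $\widetilde K$) has already been carried out; the only point that deserves explicit mention is the elementary factorisation $\xxx=\prod_j\delta^j_{\AAF}(x_j)$ used in the injectivity step.
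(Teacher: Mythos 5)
Your proof is correct and follows the same route as the paper: both reduce to Lemma \ref{vnogok} via the map $\chi\mapsto(\chi^{(j)})_j$ into $(\mathfrak A_{\widetilde K})^n$. You spell out the injectivity of $[\TTa(\AAF)]_1^*\to((\AAF^1)^*)^n$ via the factorisation $\xxx=\prod_j\delta^j_{\AAF}(x_j)$ and surjectivity of $q^\aaa_{\AAF}$, whereas the paper simply asserts the injectivity (which follows from Proposition \ref{tspgj} applied to the surjection $q^\aaa_{\AAF}$); otherwise the arguments coincide.
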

\begin{proof}
Recall that by \ref{vnogok}, the abelian group $\mathfrak A_{\widetilde K}$ is finitely generated and of rank at most $r_2$. The image of $\AK$ under the injective homomorphism $$[\TTa(\AAF)]_1^*\to((\AAF^{1})^n)^*=((\AAF^1 )^*)^n\hspace{1.5cm}\chi\mapsto\chi\circ q^\aaa_{\AAF}=(\chi ^{(j)})_j$$ lies in $(\mathfrak A_{\widetilde K})^n.$  It follows that $\AK$ is finitely generated and of rank at most $nr_2$.
\end{proof}
\begin{cor}\label{yuit}
For every compact group $K\subset K^{\aaa}_{\max}$, there exists a constant $C=C(K)>0$ such that for all $\chi\in\AK$ one has that $$||\chi||_{\infty}\leq C||\chi||_{\discrete}. $$
\end{cor}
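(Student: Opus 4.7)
The strategy is to reduce the claim to Proposition \ref{Ugala}, which has already been established for characters of $\AAF^1$. The bridge is the injective pullback homomorphism
\[
[\TTa(\AAF)]_1^{*}\to\big((\AAF^1)^{*}\big)^n,\qquad \chi\mapsto\big(\chij\big)_{j=1}^n,
\]
coming from $q^{\aaa}_{\AAF}\circ\delta^j_{\AAF}:\AAF^1\to[\TTa(\AAF)]_1$. The key observation, already noted just before the statement, is that if $\chi\in\AK$ and $\widetilde K:=\bigcap_{j=1}^n(\delta^j_{\AAF})^{-1}(q^{\aaa}_{\AAF}|_{(K^0_{\max})^n})^{-1}(K)$, then each $\chij$ lies in $\mathfrak A_{\widetilde K}$: it vanishes on $\widetilde K$ by construction, and it vanishes on $i(F^\times)$ because $\chi$ vanishes on $[\TTa(i)]([\TTa(F)])$ and the diagonal inclusion $F^\times\hookrightarrow \AAF^1\xrightarrow{\delta^j_{\AAF}}(\AAFt)^n$ factors through $[\TTa(i)]([\TTa(F)])$ after applying $q^{\aaa}_{\AAF}$.

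Next I would invoke the already recorded identities
\[
||\chi||_{\infty}=\max_{j=1,\dots,n}||\chij||_{\infty},\qquad ||\chi||_{\discrete}=\max_{j=1,\dots,n}||\chij||_{\discrete},
\]
which follow immediately from the definitions $\mmm(\chi_v)=(m(\chi_v^{(j)}))_j$ and $\llll(\chi_v)=(\ell(\chi_v^{(j)}))_j$ given in \ref{arintaj}. Apply Proposition \ref{Ugala} to the open subgroup $\widetilde K\subset K^0_{\max}$ to obtain a constant $C_0=C(\widetilde K)>0$ such that $||\psi||_{\infty}\leq C_0||\psi||_{\discrete}$ for every $\psi\in\mathfrak A_{\widetilde K}$. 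Applying this inequality to each $\chij$ and taking the maximum over $j$ yields
\[
||\chi||_{\infty}=\max_j||\chij||_{\infty}\leq C_0\max_j||\chij||_{\discrete}=C_0||\chi||_{\discrete},
\]
so the desired constant is $C=C_0$.

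There is essentially no obstacle: all the technical content has been absorbed into Proposition \ref{Ugala}, and the only point requiring care is the bookkeeping that the characters $\chij$ indeed belong to $\mathfrak A_{\widetilde K}$. The choice of $\widetilde K$ was made precisely to guarantee this, so the verification is formal. The proof is therefore very short and purely a transfer-of-structure argument along the pullback $\chi\mapsto(\chij)_j$.
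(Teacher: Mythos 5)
Your proof is correct and follows essentially the same route as the paper: reduce to Proposition \ref{Ugala} by passing to the component characters $\chij\in\mathfrak A_{\widetilde K}$ via the pullback, using the definition of $\widetilde K$ and the already-recorded identities $\|\chi\|_{\infty}=\max_j\|\chij\|_{\infty}$ and $\|\chi\|_{\discrete}=\max_j\|\chij\|_{\discrete}$. The only difference is that you spell out the verification that each $\chij$ lies in $\mathfrak A_{\widetilde K}$, which the paper leaves implicit (having established it in the paragraph preceding the corollary).
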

\begin{proof}
Proposition \ref{Ugala} gives that there exists constant $C=C(\widetilde K)>0$ such that for every $\chi\in\AK$ one has that $$||\chi||_{\infty}\leq C||\chi||_{\discrete}.$$ For $\chi\in\AK$ we deduce that:
$$||\chi||_{\infty}=\max_{j=1\doots n}||\chi^{(j)}||_\infty\leq C\max_{j=1\doots n}||\chi ^{(j)}||_{0}=C||\chi||_{\discrete}.$$
It follows that $C(K)=C(\widetilde K)$ is the wanted constant.
 \end{proof}
\begin{cor}\label{vatu} 
For every open subgroup $K\subset K^{\aaa}_{\max}$ and every $C>0$, there are no more than
$$(h_F(K^0_{\max}:\widetilde K)2^{r_1}(2C+1)^{r_2})^n$$ characters $\chi\in\AK$ for which $||\chi||_{\discrete}\leq C$.
\end{cor}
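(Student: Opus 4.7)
The plan is to reduce the statement to the analogous bound for characters of $\AAF^1$, namely Lemma \ref{gubi}, via the injective ``coordinatewise restriction'' homomorphism already used in the proof of Corollary \ref{akfgag}.

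First, I would recall from \ref{senjak} that the pullback $[\TTa(\AAF)]^*\to ((\AAFt)^*)^n$ along $q^\aaa_{\AAF}$ is injective with image $(\AAFt)_{\aaa}^\perp$, and that under this embedding a character $\chi$ corresponds to $(\chi^{(j)})_{j=1}^n$. Combining this with the definition $\chi^{(j)} = \chi\circ q^{\aaa}_{\AAF}\circ \delta^j_{\AAF}$ and the definition of $\widetilde K$, one checks that if $\chi\in \AK$ then each $\chi^{(j)}$ lies in $\mathfrak A_{\widetilde K}$ (vanishing on $\widetilde K$ is built into the definition of $\widetilde K$, and vanishing on $F^\times$ follows from $\chi|_{[\TTa(i)]([\TTa(F)])}=1$ via the commutative square relating $i$ and $i_v$). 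Hence the map
\[
\AK \hookrightarrow (\mathfrak A_{\widetilde K})^n,\qquad \chi\mapsto (\chi^{(j)})_{j=1}^n
\]
is an injective group homomorphism.

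Next, I would use the identity $\|\chi\|_{\discrete}=\max_{j=1,\dots,n}\|\chi^{(j)}\|_{\discrete}$ established in \ref{arintaj}. It follows immediately that $\|\chi\|_{\discrete}\leq C$ forces $\|\chi^{(j)}\|_{\discrete}\leq C$ for every $j\in\{1,\dots,n\}$. By Lemma \ref{gubi} applied to the open subgroup $\widetilde K\subset K^0_{\max}$, the number of characters $\psi\in \mathfrak A_{\widetilde K}$ with $\|\psi\|_{\discrete}\leq C$ is at most $h_F(K^0_{\max}:\widetilde K)2^{r_1}(2C+1)^{r_2}$.

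Putting these two observations together: the set $\{\chi\in \AK\mid \|\chi\|_{\discrete}\leq C\}$ injects into the product over $j=1,\dots,n$ of the sets $\{\chi^{(j)}\in \mathfrak A_{\widetilde K}\mid \|\chi^{(j)}\|_{\discrete}\leq C\}$, each of which has cardinality at most $h_F(K^0_{\max}:\widetilde K)2^{r_1}(2C+1)^{r_2}$, yielding the desired bound $(h_F(K^0_{\max}:\widetilde K)2^{r_1}(2C+1)^{r_2})^n$. No step poses a real obstacle; the only point to verify carefully is that $\chi\in\AK$ implies $\chi^{(j)}\in\mathfrak A_{\widetilde K}$, which is essentially bookkeeping about the definition of $\widetilde K$ and the compatibility of the diagonal embedding $i$ of $\Ft$ with the $j$-th coordinate embeddings $\delta^j_{\AAF}$ and $\delta^j_v$.
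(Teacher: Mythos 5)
Your proposal is correct and follows essentially the same route as the paper: embed $\AK$ into $(\mathfrak A_{\widetilde K})^n$ via $\chi\mapsto(\chi^{(j)})_j$, use $\|\chi\|_{\discrete}=\max_j\|\chi^{(j)}\|_{\discrete}$, and apply Lemma \ref{gubi} componentwise. The only difference is presentational — you spell out the injectivity and the bookkeeping about $\widetilde K$ a bit more explicitly than the paper does.
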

\begin{proof}
Let $K\subset K^{\aaa}_{\max}$ be an open subgroup. 
Lemma \ref{gubi} gives that for every $C>0$, there are no more than $$h_F(K^0_{\max}:\widetilde K)2^{r_1}(2C+1)^{r_2}$$ characters in $\mathfrak A_{\widetilde K}$ having $||\chi||_{\discrete}\leq C$. A character $\chi\in\mathfrak A_K$ is completely determined by the characters $\chi^{(j)}\in\mathfrak A_{\widetilde K}$ for $j=1\doots n$. It follows that there are no more than $$(h_F(K^0_{\max}:\widetilde K)2^{r_1}(2C+1)^{r_2})^n $$ characters $\chi\in\mathfrak A_{K}$ having $||\chi||_{\discrete}=\max_j(||\chij||_{\discrete})\leq C$.
\end{proof}
By \ref{njjchid}, we have that when $n=1$ and $a=a_1\in\ZZ_{\geq 1}$, we have that $||\chi||_{\discrete}\leq 1.$ We deduce that 
\begin{cor}\label{finmankchar}
Suppose that $n=1$ and that $a=a_1\in\ZZ_{\geq 1}$. For every open subgroup $K\subset K^a_{\max}$, the group $\mathfrak A_K$ is finite.
\end{cor}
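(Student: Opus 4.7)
The plan is to observe that this corollary follows by directly combining the two immediately preceding results, once we note that for $n=1$ the subgroup $[\TT(a)(\AAF)]_1$ coincides with the whole group $[\TT(a)(\AAF)]$.

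More precisely, I would proceed as follows. First, by Lemma \ref{ljweq} we have an equality $[\TT(a)(\AAF)]_1=[\TT(a)(\AAF)]$, so characters of $[\TT(a)(\AAF)]_1$ are the same as characters of $[\TT(a)(\AAF)]$; in particular, any $\chi\in\mathfrak A_K$ may be regarded as an element of $[\TT(a)(\AAF)]^*$ and the quantities $||\chi||_{\discrete}$ and $||\chi||_{\infty}$ from \ref{arintaj} are defined for it. Next, Lemma \ref{njjchid} applies, yielding $||\chi||_{\discrete}\leq 1$ for every $\chi\in\mathfrak A_K$. Finally, Corollary \ref{vatu} applied with $n=1$ and $C=1$ gives that the set of $\chi\in\mathfrak A_K$ with $||\chi||_{\discrete}\leq 1$ has at most $h_F(K^0_{\max}:\widetilde K)\,2^{r_1}\,3^{r_2}$ elements, and this set is all of $\mathfrak A_K$. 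Hence $\mathfrak A_K$ is finite.

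There is essentially no obstacle to carrying this out: the statement is a direct consequence of the uniform bound $||\chi||_{\discrete}\leq 1$ available in the case $n=1$ (which itself comes from the fact that every element of $[\TT(a)(\AAF)]^*$ is annihilated by $a$, forcing the local components $\ell(\widetilde{\chi}_v)$ to vanish at complex places and be bounded by $1$ at real places) together with the general counting estimate of \ref{vatu}. One could alternatively invoke Corollary \ref{akfgag} to see that $\mathfrak A_K$ is finitely generated of rank at most $r_2$, and then note that the bound $||\chi||_{\discrete}\leq 1$ coming from \ref{njjchid} forces the free part to be trivial, but this is just a slightly longer route to the same conclusion.
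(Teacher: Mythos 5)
Your proof is correct and follows essentially the same route as the paper, which deduces the corollary from Lemma \ref{njjchid} together with the finiteness bound of Corollary \ref{vatu}. The explicit appeal to Lemma \ref{ljweq} to identify $[\TT(a)(\AAF)]_1$ with $[\TT(a)(\AAF)]$ — so that $\mathfrak A_K$ really sits inside $[\TT(a)(\AAF)]^*$ and both Lemma \ref{njjchid} and the norm $||\cdot||_{\discrete}$ apply directly — is a sound clarification that the paper leaves implicit.
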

\section{Estimates of Rademacher}\label{EORade} In this section we recall some bounds on the growth of~$L$ functions of Hecke characters in vertical strips.
If $v\in M_F^0$, for $s\in\CC$ and $\chi_v\in(\Fvt)^*$, one defines $$L_v(s,\chi_v ):=\frac{1}{1-\pivv ^s\chi_v(\pi _v)}, $$ and one writes $\zeta _v(s)$ for $L_v(s,1)$. For a character $\chi=(\chi_v)_v:\AAFt\to S^1$, we set $$L(s,\chi):=\prod_{\vMFz}L_v(s,\chi_v),$$ and we denote $$\zeta_F(s)=L(s,1).$$
\subsection{}
In this paragraph we restrict ourselves to the characters of $\AAFj$. The corresponding estimates for the characters of $\AAFt$ will be established in \ref{caaftq}. 

Rademacher in \cite[Theorem 5]{Rademacher} establishes that the growth of the~$L$-function of a character in vertical strip is moderate (i.e. bounded by a polynomial). As the notation there is cumbersome, let us quote the variant given as \cite[Theorem 14.A, Chapter III]{Moreno} as part (1) of the next theorem. Part (2) is \cite[Theorem 3]{Rademacher}.
\begin{thm}\label{radem}
Let $0< \eta \leq\frac 12$ and let $\chi=\prod _{v}\chi _{v}:(\AAFj/\Ft)\to~S^1$ be a Hecke character. Let $\cond(\chi)$ be the conductor ideal of $\chi$. We set $d_F=N_{F/\QQ}(\disc_{F/\QQ}).$ 
\begin{enumerate}
\item {\normalfont  (Moreno, \cite[Theorem 14.A, Chapter III]{Moreno})} Suppose $\chi$ is not the trivial character. 
One has that $$|L(s,\chi)|\leq \zeta_F(1+\eta)\bigg(\frac{d_FN_{F/\QQ}(\cond({\chi}))}{(2\pi)^{[F:\QQ]}}\prod _{\vMFi}|{1+s+m(\chi_{v})}|^{n_v}\bigg) ^{\frac{1+\eta-\Re(s)}2} $$ in the strip $-\eta\leq \Re(s)\leq 1+\eta. $
\item {\normalfont (Rademacher, \cite[Theorem 3]{Rademacher})} One has that
$$\bigg|\frac{\zeta_F(s)(1-s)}{1+s}\bigg|\leq 3\zeta _F(1+\eta)^{[F:\QQ]}\bigg(d_F\bigg(\frac{|1+s|}{2\pi}\bigg)^{[F:\QQ]}\bigg)^{\frac{1+\eta-\Re(s)}2} $$ in the strip $-\eta\leq \Re(s)\leq 1+\eta. $
\end{enumerate}
\end{thm}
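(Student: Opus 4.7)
The plan is to follow the classical route that reduces both bounds to the Phragm\'en--Lindel\"of convexity principle applied to the completed $L$-function. First I would introduce the archimedean gamma factors: for $\vMFi$, set $\Gamma_v(s,\chi_v) = \Gamma_{\RR}(s+\ell(\chi_v)+im(\chi_v))$ in the real case and $\Gamma_v(s,\chi_v) = \Gamma_{\CC}(s+|\ell(\chi_v)|/2+im(\chi_v))$ in the complex case (with the conventions $\Gamma_\RR(s)=\pi^{-s/2}\Gamma(s/2)$, $\Gamma_\CC(s)=2(2\pi)^{-s}\Gamma(s)$), and form the completed $L$-function
\begin{equation*}
\Lambda(s,\chi) = \big(d_F N_{F/\QQ}(\cond(\chi))\big)^{s/2}\prod_{\vMFi}\Gamma_v(s,\chi_v)\cdot L(s,\chi).
\end{equation*}
By Tate's thesis, $\Lambda(s,\chi)$ extends to an entire function of finite order when $\chi$ is non-trivial and satisfies the functional equation $\Lambda(s,\chi) = W(\chi)\Lambda(1-s,\overline{\chi})$ with $|W(\chi)|=1$; for $\chi$ trivial, $(1-s)\Lambda(s,1)/(1+s)$ is entire of finite order.

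Next I would produce the bounds on the two vertical lines bordering the strip $-\eta\leq\Re(s)\leq 1+\eta$. On $\Re(s)=1+\eta$, the Euler product converges absolutely and gives $|L(s,\chi)|\leq \zeta_F(1+\eta)$, so pulling the normalizing factor outside yields an explicit bound for $\Lambda(s,\chi)$ using trivial estimates on $\Gamma_v(s,\chi_v)$. On the reflected line $\Re(s)=-\eta$, the functional equation transforms the bound on $\Re(1-s)=1+\eta$ into a bound for $L(s,\chi)$, and the ratio of gamma factors $\Gamma_v(1-s,\overline{\chi_v})/\Gamma_v(s,\chi_v)$ is controlled by Stirling's formula: asymptotically it grows like $|1+s+m(\chi_v)|^{n_v(1/2-\Re(s))}$ up to bounded multiplicative errors. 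This produces the bound
\begin{equation*}
|L(s,\chi)|\leq \zeta_F(1+\eta)\cdot \frac{d_F N_{F/\QQ}(\cond(\chi))}{(2\pi)^{[F:\QQ]}}\prod_{\vMFi}|1+s+m(\chi_v)|^{n_v}
\end{equation*}
on $\Re(s)=-\eta$, which is exactly the bound of (1) evaluated at $\Re(s)=-\eta$.

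With the two boundary estimates in hand, the main step is to interpolate between them. Apply the Phragm\'en--Lindel\"of principle (in the form for functions of finite order in a vertical strip, cf.\ \cite[Theorem 5.53]{stein}) to the function
\begin{equation*}
F(s) = L(s,\chi)\cdot \bigg(\frac{d_F N_{F/\QQ}(\cond(\chi))}{(2\pi)^{[F:\QQ]}}\prod_{\vMFi}|1+s+m(\chi_v)|^{n_v}\bigg)^{-(s-1-\eta)/(-1-2\eta)}
\end{equation*}
(or rather, to a suitably normalized variant where the product of absolute values is replaced by a holomorphic function of $s$ with matching growth), which is holomorphic in the strip. The boundary bounds on $\Re(s)=1+\eta$ and $\Re(s)=-\eta$ both become $\zeta_F(1+\eta)$, and the convexity principle produces the same bound throughout the strip, giving (1). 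For part (2), the same argument applies after replacing $L(s,\chi)$ by the entire function $(1-s)\zeta_F(s)/(1+s)$; the multiplier $(1-s)/(1+s)$ kills the pole at $s=1$ while remaining bounded on the boundary lines (up to the factor $|1+s|$ absorbed into the archimedean weight), and one tracks the explicit constant $3\zeta_F(1+\eta)^{[F:\QQ]}$.

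The main obstacle will be the careful bookkeeping of the archimedean factors: the product $\prod_{\vMFi}|1+s+m(\chi_v)|^{n_v}$ must be reproduced on the nose, meaning the Stirling estimate must be performed with enough precision to recover the exact exponent $(1+\eta-\Re(s))/2$ and to avoid spurious dependence on $\ell(\chi_v)$. This requires pairing each factor of $\Gamma_v(1-s,\overline{\chi_v})/\Gamma_v(s,\chi_v)$ with the corresponding factor $|1+s+m(\chi_v)|^{n_v}$ and verifying that the constant one loses is absorbed into $\zeta_F(1+\eta)$. A secondary technical point is ensuring that the version of Phragm\'en--Lindel\"of used applies to a function whose bound on the boundary depends on $\Im(s)$ polynomially; this is handled by incorporating the archimedean factor into the auxiliary holomorphic weight before applying the principle.
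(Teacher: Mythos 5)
The paper does not prove this theorem: both parts are quoted directly from the literature (part (1) from Moreno's book \cite[Theorem 14.A, Chapter III]{Moreno}, part (2) from Rademacher's paper \cite[Theorem 3]{Rademacher}), so there is no ``paper's own proof'' to compare against. Your proposal is a genuine re-derivation of the cited result. The route you choose --- completing the $L$-function with gamma factors, bounding trivially on $\Re(s)=1+\eta$, reflecting via the functional equation to $\Re(s)=-\eta$ with Stirling controlling the gamma quotient, then interpolating by Phragm\'en--Lindel\"of with a holomorphic auxiliary weight --- is exactly the approach Rademacher uses, and it is the right one.

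There is, however, a concrete slip in your boundary estimate. On $\Re(s)=-\eta$ you write that the functional-equation-plus-Stirling argument produces
\begin{equation*}
|L(s,\chi)|\leq \zeta_F(1+\eta)\cdot \frac{d_F N_{F/\QQ}(\cond(\chi))}{(2\pi)^{[F:\QQ]}}\prod_{\vMFi}|1+s+m(\chi_v)|^{n_v},
\end{equation*}
and assert this ``is exactly the bound of (1) evaluated at $\Re(s)=-\eta$.'' It is not: setting $\Re(s)=-\eta$ in the statement of (1), the bracketed factor carries the exponent $\tfrac{1+\eta-\Re(s)}{2}=\tfrac{1+2\eta}{2}$, which equals $1$ only when $\eta=\tfrac12$. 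Your own Stirling formula already has the correct dependence --- you correctly identify the growth as $|1+s+m(\chi_v)|^{n_v(1/2-\Re(s))}$, and at $\Re(s)=-\eta$ that gives $n_v\left(\tfrac12+\eta\right)$, not $n_v$; similarly the ratio of conductor factors $(d_F N(\cond\chi))^{(1-2s)/2}$ contributes exponent $\tfrac{1+2\eta}{2}$ on $\Re(s)=-\eta$, not $1$. If you carry the exponent $\tfrac{1+2\eta}{2}$ through, the interpolation parameter $t=\tfrac{(1+\eta)-\Re(s)}{1+2\eta}$ combined with the exponent $\tfrac{1+2\eta}{2}$ on the reflected boundary gives $t\cdot\tfrac{1+2\eta}{2}=\tfrac{1+\eta-\Re(s)}{2}$, recovering the theorem; with your stated boundary bound (exponent $1$) the interpolation would instead produce the incorrect exponent $\tfrac{1+\eta-\Re(s)}{1+2\eta}$. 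Fix this numerical evaluation of your own Stirling bound and the argument goes through.

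You are right to flag the other two technical points --- replacing $\prod_v|1+s+m(\chi_v)|^{n_v}$ by a genuine holomorphic majorant before invoking Phragm\'en--Lindel\"of, and making Stirling precise enough that no spurious dependence on $\ell(\chi_v)$ enters --- as the places where the explicit constants must be tracked with care; these are exactly the places Rademacher's original argument spends its effort.
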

The following proposition is a corollary of \ref{radem}. A similar version, but only for unramified Hecke characters, has been invoked in the analysis of height zeta functions in \cite[Theorem 3.2.3]{aniso}, \cite[Corollary 4.2.3]{FonctionsZ}, etc. As our metrics at infinite places are not invariant for the maximal compact subgroups, we present the following version. 
\begin{prop}\label{Rade}
Let $K\subset K_{\max}^0=\prod _{\vMFz}\Ovt$ be an open subgroup.
For every $\epsilon>0$, there exist  $C=C(\epsilon)>0$ and $\delta=\delta(\epsilon)>0$ such that the following conditions are satisfied if provided that $\Re(s)\geq 1-\delta:$
\begin{enumerate}
\item for every non trivial Hecke character $\chi:(\AAFj/\Ft)\to S^1$ which vanishes on $K\subset K_{\max}^0$ one has \begin{equation}
\label{pleqa}
|L(s,\chi)|\leq C\big((1+|\Im (s)|)(1+||\chi||_\infty)\big)^\epsilon; 
\end{equation}  
\item  one has \begin{equation}\label{pleqqa}
\bigg|\frac{(s-1)\zeta_F(s)}{s}\bigg|\leq C(1+|\Im (s)|)^\epsilon. 
\end{equation} 
\end{enumerate}
\end{prop}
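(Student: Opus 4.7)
The plan is to deduce both bounds directly from Theorem \ref{radem}, applied on the strip $1-\delta \leq \Re(s) \leq 1+\eta$, together with the elementary bound coming from absolute convergence of the Euler product on $\{\Re(s) \geq 1+\eta\}$.

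First I would extract a uniform conductor bound: since $\chi$ is trivial on the open subgroup $K \subset K^0_{\max} = \prod_{\vMFz} \Ovt$, the local character $\chi_v$ is trivial on the open subgroup $K \cap \Ovt$ of $\Ovt$, which equals $\Ovt$ for all but finitely many $v$ and is of finite index in $\Ovt$ at the remaining (finitely many) places. Consequently there is an integral ideal $\mathfrak f_K$ of $\OOF$, depending only on $K$, such that $\cond(\chi)$ divides $\mathfrak f_K$ for every $\chi \in \mathfrak A_K$, and in particular $N_{F/\QQ}(\cond(\chi))$ is bounded above by a constant $C_K$. Next, at each archimedean place the triangle inequality gives $|1+s+m(\chi_v)| \leq 2 + |\Re(s)| + |\Im(s)| + ||\chi||_\infty$, whence
\[ \prod_{\vMFi} |1+s+m(\chi_v)|^{n_v} \leq C_0 \bigl((1+|\Im(s)|)(1+||\chi||_\infty)\bigr)^{[F:\QQ]} \]
uniformly for $|\Re(s)| \leq 2$.

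Inserting both bounds into the estimate of Theorem \ref{radem}(1), for non-trivial $\chi \in \mathfrak A_K$ and $1-\delta \leq \Re(s) \leq 1+\eta$ (with $0 < \delta, \eta < 1/2$) I obtain
\[ |L(s,\chi)| \leq C_1 \bigl((1+|\Im(s)|)(1+||\chi||_\infty)\bigr)^{[F:\QQ](\eta+\delta)/2}. \]
Given $\epsilon>0$, I would then choose $\eta$ and $\delta$ small enough that $[F:\QQ](\eta+\delta)/2 \leq \epsilon$. This yields (\ref{pleqa}) on the strip in question; for $\Re(s) \geq 1+\eta$ the trivial bound $|L(s,\chi)| \leq \zeta_F(1+\eta)$ coming from absolute convergence is already dominated by the required right-hand side.

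Part (2) is entirely analogous, using Theorem \ref{radem}(2) in place of \ref{radem}(1): it gives $|\zeta_F(s)(1-s)/(1+s)| \leq C_2 (1+|\Im(s)|)^{[F:\QQ](\eta+\delta)/2}$ on $1-\delta \leq \Re(s) \leq 1+\eta$, and for $\delta<1/2$ the factor $|(1+s)/s| \leq 1 + 1/|s| \leq 3$ is bounded on this region, so the same bound holds (up to a constant) for $|(s-1)\zeta_F(s)/s|$. On $\Re(s) \geq 1+\eta$, the Euler product gives $|\zeta_F(s)| \leq \zeta_F(1+\eta)$ while $|(s-1)/s|$ is bounded, so $|(s-1)\zeta_F(s)/s|$ is dominated by a constant. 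The whole argument is a direct unpacking of Theorem \ref{radem}, which already performs the Phragm\'en--Lindel\"of-type interpolation internally; the only technical point is the uniform control on $\cond(\chi)$ and on the archimedean product in terms of $||\chi||_\infty$, i.e.\ steps (1) and (2) above.
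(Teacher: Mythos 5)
Your proof is correct and follows the same route as the paper: bound the conductor uniformly in terms of the index $(K^0_{\max}:K)$, bound the archimedean product by a constant times $\bigl((1+|\Im(s)|)(1+||\chi||_\infty)\bigr)^{[F:\QQ]}$, plug into Theorem \ref{radem}, and handle the region of absolute convergence trivially. The only cosmetic difference is that the paper takes $\eta = \eta(s)$ varying with $\Re(s)$ so that the Phragm\'en--Lindel\"of exponent $\tfrac{1+\eta-\Re(s)}{2}$ becomes the constant $\tfrac{\epsilon}{2[F:\QQ]}$, whereas you keep a fixed $\eta$; this requires the small (and true, but unstated) observation that replacing the $s$-dependent exponent by its maximum $\tfrac{\eta+\delta}{2}$ still gives an upper bound even though the base of the power need not exceed $1$, because the target quantity $(1+|\Im(s)|)(1+||\chi||_\infty)$ is always $\geq 1$.
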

\begin{proof}
Let $\frac 16>\epsilon >0$. 
We set $$C=27 \zeta_F\big(1+\frac{\epsilon}{[F:\QQ]}\big)^{[F:\QQ]} d_F^{1/[F:\QQ]}(K^0_{\max}:K)^{1/[F:\QQ]}.$$  We are going to verify that $C$ and $\delta =\frac{\epsilon}{2[F:\QQ]}$ verify the above conditions.

Let us prove (\ref{pleqa}) and (\ref{pleqqa}) in the domain $\Re(s)>\frac 43.$ For a non-trivial character $\chi$ one can estimate$$|L(s,\chi)|\leq\zeta _F(\Re(s))\leq\zeta _F\big(\frac 43\big)\leq\zeta _F\big(1+\frac{\epsilon}{[F:\QQ]}\big)\leq C.$$ One also has $$\bigg|\frac{(s-1)L(s,1)}{s}\bigg|\leq \bigg|1-\frac 1s\bigg|\zeta_F\bigg(\frac 43\bigg)\leq \frac74\zeta _F\bigg(\frac 43\bigg)\leq \frac74\zeta_F(1+\frac{\epsilon}{[F:\QQ]})\leq C. $$ It follows that  (\ref{pleq}) and (\ref{pleqq}) are satisfied in the domain $\Re(s)>\frac 43.$

Now we prove (\ref{pleqa}) and (\ref{pleqqa}) in the domain $1-\frac 1{2[F:\QQ]} \epsilon< \Re(s)< \frac 43$.
Let us set $\eta (s)=\frac{1}{[F:\QQ]}\epsilon+\Re(s)-1$, we have that $$0<\frac{\epsilon}{2[F:\QQ]}<\eta (s)< \frac{1}{6[F:\QQ]}+\frac{4}3-1\leq\frac 12$$ and that $-\eta(s)\leq 0\leq \Re(s)< 1+\eta(s)$. We will apply \ref{radem} for $s$ and $\eta=\eta(s)$. 
The following estimate will be used: for every $s$ in the domain ${1}-\frac{1}{2[F:\QQ]}<\Re(s)<\frac43$ one has \begin{equation}1+|s|
\leq 1+|\Re(s)|+|\Im(s)|<3 (1+|\Im(s)|).\label{nabs} \end{equation}  

Let us firstly prove the estimate for the non-trivial characters. 
Using the first part of Theorem \ref{radem}, 
we deduce that for every $\chi\neq 1$ in the domain $1-\frac{\epsilon}{2[F:\QQ]}<\Re(s)<\frac43$ one has \begin{align*}|L(s,\chi)|\hskip-2cm&\\&\leq |\zeta_F(1+\eta(s))|^{[F:\QQ]} \bigg(\frac{d_FN_{F/\QQ}(\cond(\chi))}{(2\pi)^{[F:\QQ]}}\prod _{\vMFi}|{1+s+m(\chi_v)}|^{n_v}\bigg) ^{\frac{\epsilon}{2[F:\QQ]}}\\
&\leq \zeta_F(1+\frac{\epsilon}{2[F:\QQ]})^{[F:\QQ]}\times\\&\quad\quad\quad\quad\quad\times\bigg(\frac{d_FN_{F/\QQ}(\cond(\chi))}{(2\pi)^{[F:\QQ]}}\prod_{\vMFi}(1+|s|)(1+|m(\chi_v)|)\bigg)^{\frac{\epsilon}{[F:\QQ]}} \\
&\leq \zeta_F(1+\frac{\epsilon}{2[F:\QQ]})^{[F:\QQ]}\times\\&\quad\quad\quad\quad\quad \times\bigg(\frac{d_FN_{F/\QQ}(\cond(\chi))}{(2\pi)^{[F:\QQ]}}\prod _{\vMFi}3({1+|\Im (s)|)(1+|m(\chi_v)|})\bigg) ^{\frac{\epsilon}{[F:\QQ]}}\\
&\leq\zeta_F(1+\frac{\epsilon}{2[F:\QQ]})^{{[F:\QQ]}}\times\\&\quad\quad\quad\quad\quad\times\frac{ ((d_FN_{F/\QQ}(\cond(\chi)))^{\frac 1{[F:\QQ]}}3(({1+|\Im (s)|)(1+||\chi||_{\infty}})))^\epsilon}{(2\pi)^\epsilon} .
\end{align*} Moreover, as $\chi_v$ vanishes at~$K$, we have $$ N_{F/\QQ}(\cond(\chi))\leq \prod _{v\in M^0_F}(\Ovt:K_v)= (K^0_{\max}:K),$$ where $K_v$ is the image in $\Ovt$ of the~$v$-adic projection of~$K$.  
The inequality (\ref{pleqa}) now follows from the observation that\begin{multline*}
\zeta_F(1+\frac{\epsilon}{2[F:\QQ]})^{[F:\QQ]}\bigg(\frac{(d_F(K:K^0_{\max}))^{\frac 1{[F:\QQ]}}3}{2\pi}\bigg)^\epsilon\\\leq 27 \zeta_F\big(1+\frac{\epsilon}{[F:\QQ]}\big)^{[F:\QQ]} d_F^{1/[F:\QQ]}(K^0_{\max}:K)^{1/[F:\QQ]}=C.
\end{multline*}
Let us now consider the trivial character. 
When $\Re (s)>1-\frac {\epsilon}{2[F:\QQ]}>\frac 12$, one has that \begin{equation}\bigg|\frac {3s}{s-1}\bigg|\geq \frac{|s|+1}{|s-1|}\geq \bigg|\frac{s+1}{s-1}\bigg|. \label{bropul}\end{equation}
Using the second part of \ref{radem} and (\ref{bropul}), we deduce that \begin{align*}
\bigg|\frac{(s-1)\zeta_F(s)}{s}\bigg|&\leq 9\zeta_F(1+\frac{\epsilon}{2[F:\QQ]})^{[F:\QQ]} \frac{d_F^{\epsilon/[F:\QQ]}3^{\epsilon}(1+||\Im(s)||)^\epsilon}{(2\pi)^{\epsilon}}\\&\leq C(1+|\Im(s)|)^\epsilon.
\end{align*}
The proposition is proven.
\end{proof}
\subsection{}\label{caaftq} We will now present a bound on the growth in the vertical strips of the~$L$-function of a general Hecke character $\chi:(\AAFt/\Ft)\to S^1$. 
In \ref{identaafj}, we have established an identification: $$\AAF^1\times \RR_{>0}\xrightarrow{\sim}\AAFt.$$ For a character $\chi\in(\RR_{>0})^*$ we denote by $m(\chi)$ the unique real number~$m$ such that $\chi(x)=x^{im}$ for every $x\in\RR_{>0}$. For a character $\chi\in(\AAFt)^*$ we denote by $\chi_0$ the restriction $\chi|_{\AAF^1}$ and we write $m(\chi)$ for $m(\chi|_{\RR_{>0}})$ so that $\chi=\chi_0\lvert\cdot\rvert^{im(\chi)}.$ 

For a character $\chi:\AAFt\to S^1$, one has that \begin{align*}L(s,\chi)=L(s,\chi_0\lvert\cdot\rvert^{im(\chi)})&=\prod_{\vMFz}L_v(s,(\chi_{0})_v\lvert\cdot\rvert_v^{im(\chi)})\\&=\prod_{\vMFz}\frac{1}{1-\pivv^s(\chi _{0})_v(\piv)\pivv^{im(\chi)}}\\&=\prod_{\vMFz}\frac{1}{1-\pivv^{s+im(\chi)}(\chi_0)_v(\piv)}\\&=L(s+im(\chi),\chi_0).\end{align*} 
The following proposition deduces easily from \ref{Rade}.
\begin{cor} \label{finrade}
Let $K\subset K^0_{\max}$ be an open subgroup. For every $\epsilon >0$, there exist $C=C(\epsilon)>0$ and $\delta=\delta(\epsilon)>0$ such that the following conditions are satisfied if provided that $\Re(s)\geq 1-\delta:$
\begin{enumerate}
\item for every non trivial Hecke character $\chi:(\AAFt/\Ft)\to S^1$ with $\chi_0\neq 1$ which vanishes on $K\subset K_{\max}^0$ one has \begin{equation}
\label{pleq}
|L(s,\chi)|\leq C\big((1+|\Im (s)|)(1+||\chi _0||_\infty) (1+|m(\chi)|)\big)^\epsilon; 
\end{equation}  
\item for every Hecke character $\chi:(\AAFt/\Ft)\to S^1$ with $\chi_0=1$ one has that \begin{equation}\label{pleqq}
\bigg|\frac{(s+im(\chi)-1)L(s,\chi)}{s+im(\chi)}\bigg|\leq C((1+|\Im (s)|)(1+|m(\chi)|)^\epsilon. 
\end{equation} 
\end{enumerate}
\end{cor}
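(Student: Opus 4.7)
The plan is to reduce the corollary directly to Proposition \ref{Rade} via the decomposition of characters of $\AAFt$ recalled at the start of \ref{caaftq}. Any Hecke character $\chi:(\AAFt/\Ft)\to S^1$ factors as $\chi=\chi_0\lvert\cdot\rvert^{im(\chi)}$, where $\chi_0:=\chi|_{\AAFj}$ is a Hecke character of $(\AAFj/\Ft)$, and the $L$-functions satisfy
$$L(s,\chi)=L(s+im(\chi),\chi_0).$$
Moreover, $\chi$ vanishes on the open subgroup $K\subset K^0_{\max}$ if and only if $\chi_0$ does, since $K\subset\AAFj$. Hence I would take the constants $C=C(\epsilon)>0$ and $\delta=\delta(\epsilon)>0$ provided by Proposition \ref{Rade} for the given $\epsilon$, and check that the same $C$ and $\delta$ (perhaps after a harmless rescaling) work here.

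For part (1), I would apply part (1) of Proposition \ref{Rade} at the point $s'=s+im(\chi)$, which satisfies $\Re(s')=\Re(s)\geq 1-\delta$ and corresponds to the non-trivial Hecke character $\chi_0$. This yields
$$|L(s,\chi)|=|L(s',\chi_0)|\leq C\bigl((1+|\Im(s')|)(1+\|\chi_0\|_\infty)\bigr)^\epsilon.$$
Since $\Im(s')=\Im(s)+m(\chi)$, the elementary bound $1+|a+b|\leq(1+|a|)(1+|b|)$ gives
$$1+|\Im(s')|\leq (1+|\Im(s)|)(1+|m(\chi)|),$$
and combining these two estimates produces (\ref{pleq}).

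For part (2), the hypothesis $\chi_0=1$ reduces $L(s,\chi)$ to $\zeta_F(s+im(\chi))$. I would invoke part (2) of Proposition \ref{Rade} at $s'=s+im(\chi)$:
$$\left|\frac{(s'-1)\zeta_F(s')}{s'}\right|\leq C(1+|\Im(s')|)^\epsilon,$$
and then apply the same bound $1+|\Im(s')|\leq(1+|\Im(s)|)(1+|m(\chi)|)$ to obtain (\ref{pleqq}). There is no genuine obstacle: the whole argument is a change of variables $s\mapsto s+im(\chi)$ together with the subadditivity of $|\cdot|$; the only point worth stating cleanly is the equivalence of vanishing on $K$ for $\chi$ and for $\chi_0$, which ensures that the constant $C(\epsilon)$ furnished by Proposition \ref{Rade} (depending on $K$) transfers without modification.
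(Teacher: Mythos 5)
Your proposal is correct and follows essentially the same argument as the paper: both deduce the corollary from Proposition \ref{Rade} via the decomposition $\chi=\chi_0\lvert\cdot\rvert^{im(\chi)}$, the identity $L(s,\chi)=L(s+im(\chi),\chi_0)$, and the bound $1+|\Im(s)+m(\chi)|\leq(1+|\Im(s)|)(1+|m(\chi)|)$. Your remark that vanishing on $K$ passes from $\chi$ to $\chi_0$ because $K\subset\AAFj$ matches the paper's (unexplained) assertion, and no rescaling of $C$ or $\delta$ is needed.
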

%
\begin{proof} 
Let $\epsilon >0$ and let $C=C(\epsilon)$ and $\delta=\delta(\epsilon)>0$ be given by \ref{Rade}.
\begin{enumerate}
\item Let $\chi:\AAFt\to S^1$ be a Hecke character which vanishes on~$K$ such that $\chi_0\neq 1$. Then $\chi_0$ vanishes on~$K$ and Proposition \ref{Rade} gives that \begin{align*}L(s,\chi)=L(s+im(\chi),\chi_0)&\leq C((1+|\Im(s)+m(\chi)|)(1+||\chi_0||_\infty))^\epsilon \\
&\leq C((1+|\Im(s)|)(1+|m(\chi)|)(1+||\chi_0||_\infty))^\epsilon.\end{align*}
\item Let $\chi:\AAFt\to S^1$ be a Hecke character with $\chi_0=1$. Proposition \ref{Rade} gives that \begin{align*}\bigg|\frac{(s+im(\chi)-1)L(s,\chi)}{s+im(\chi)}\bigg|&=\bigg|\frac{(s+im(\chi)-1)L(s+im(\chi),1)}{s+im(\chi)}\bigg|\\&=\bigg|\frac{(s+im(\chi)-1)\zeta_F(s+im(\chi))}{s+im(\chi)}\bigg|\\&\leq C((1+|\Im(s)+m(\chi)|))^\epsilon\\&\leq C ((1+|\Im(s)|)(1+|m(\chi)|))^\epsilon.\end{align*}
\end{enumerate}
\end{proof}
\chapter{Fourier transform of the height function}
\label{Fourier transform of the height function}
In this chapter we analyse the Fourier transform of the height function, when the functions $f_v$ are smooth. Let~$n$ be a positive integer and let $\aaa\in\ZZ^n_{>0}$ if $n\geq 2$ and $a=a_1\in\ZZ_{>1}$ if $n=1$. As before, we use notation $f_v^{\#}$ for the toric $\aaa$-homogenous function $\Fvnz\to\RR_{>0}$ of weighted degree $|\aaa|$. For $\vMFz$, we have established in \ref{davdavdav} that $f_v^{\#}$ is locally constant i.e. smooth.  Let $(f_v:\Fvnz\to\RR_{>0})_v$ be a degree $|\aaa|$ quasi-toric $\aaa$-homogenous family of {\it smooth} functions. 
Let~$S$ be the union of the set consisting of the finite places~$v$ at which $f_v$ is not toric and the set of the infinite places.  
Let $H=H((f_v)_v)$ be the corresponding height on $[\PPP(\aaa)(F)]$. If $\vMF$, for a character $\chi_v\in[\TTa(F_v)]^*$ and $j\in\{1\doots n\}$, we denote by $\chi ^{(j)}_v$ the character $\Fvt\to S^1$ given by $$x\mapsto \chi_v (q_v^{\aaa}((1)_{\substack{k=1\doots n\\k\neq j}},(x)_{k})).$$ 
\section{Local transform}In this section we study local Fourier transform.

\subsection{} In this paragraph we define height pairing. 

For $v\in M_F,$ $\sss\in\CC^n,$ $t\in\Fvt$ and $\xxx\in(\Fvt)^n$ one has that \begin{multline*}f_v(t\cdot\xxx)^{\frac{\aaa\cdot\sss}{|\aaa}}\prodjn |t^{a_j}x_j|_v^{-s_j}=|t|_v^{\frac{\aaa\cdot\sss}{|\aaa|}}f_v(\xxx)^{\frac{\aaa\cdot\sss}{|\aaa|}}\prodjn|t|_v^{-a_js_j} |x_j|_v^{-s_j}\\=f_v(\xxx)^{\frac{\aaa\cdot\sss}{|\aaa|}}\prodjn|x_j|_v^{-s_j}\end{multline*} i.e. for $v\in M_F$ and $\sss\in\CC^n$,  the continuous function \begin{equation}\label{ovavo}(\Fvt)^n\to \CC,\hspace{1cm} \xxx\mapsto f_v(\xxx)^{\aaa\cdot\sss}\prodjn |x_j|_v^{-s_j}\end{equation} is $(\Fvt)_{\aaa}$-invariant. Let $H_v(\sss,-):[\TTa(F)]\to\CC$ be the function induced from $(\Fv)_{\aaa}$-invariant function (\ref{ovavo}). 
For $\xxx\in[\TTa(F)],$ we write $H_v(\sss,\xxx)$ for what is technically $H_v(\sss,[\TTa(i_v)](\xxx)),$ where $[\TTa(i_v)]:[\TTa(i)(F)]\to[\TTa(i)(F_v)]$ is the induced homomorphism from $(F^{\times})_{\aaa}$-invariant homomorphism $$(F^\times)^n\hookrightarrow (\Fvt)^n\to[\TTa(F_v)].$$ 
Note that if $\xxx\in[\TTa(\Ov)]$, one can choose $\widetilde\xxx\in\Ovtn$. We deduce that if $f_v=f_v^\#$ is the toric $\aaa$-homogenous function of weighted degree $|\aaa|$ then for $\sss\in\CC^n$, one has $$H_v^\#(\sss,\xxx)=f^\#_v(\xxx)^{\frac{-\aaa\cdot\sss}{|\aaa|}}\prodjn |x_j|_v^{-s_j}=1.$$ 
\begin{lem}\label{hsxhl} 
Let $\sss\in\CC^n$.
\begin{enumerate}
\item Suppose that $\xxx\in[\TTa(\Ov)]$. One has that $H^{\#}_v(\sss,\xxx)=1$.
\item Let $(\xxx_v)_v\in[\TTa(\AAF)]$. The product $$H(\sss,\xxx):=\prod_{\vMF}H_v(\sss,\xxx)$$ is a finite product.
\item Let $\xxx\in[\TTa(F)].$ One has that $$H(\xxx)^{\frac{\aaa\cdot\sss}{|\aaa|}}=H(\sss,[\TTa(i)](\xxx)).$$ Here $[\TTa(i)]:[\TTa(F)]\to [\TTa(\AAF)]$ is the map induced from $(\Ft)_{\aaa}$-invariant map $$\Ftn\to(\AAFt)^n\to[\TTa(\AAF)],$$where the first map is the diagonal inclusion and the second map is the quotient map.
\end{enumerate}
\end{lem}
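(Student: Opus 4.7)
The three claims all unpack directly from the definitions together with previously-established facts about the toric function $f_v^{\#}$ and the product formula, so my plan is to treat them in turn, with part (1) feeding part (2) and the product formula driving part (3).

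For part (1), the strategy is to choose a lift $\widetilde\xxx \in (\Ovt)^n$ of $\xxx$, which exists by Proposition \ref{identofov} (the image of $(\Ovt)^n$ in $[\TTa(\Fv)]$ is exactly $[\TTa(\Ov)]$). Then $v(\widetilde x_j) = 0 < a_j$ for every $j$, so $\widetilde\xxx \in \Dav$, and by Lemma \ref{davdavdav} one has $r_v(\widetilde\xxx) = 0$, hence $f_v^{\#}(\widetilde\xxx) = \pivv^{0} = 1$. Since also $|\widetilde x_j|_v = 1$ for every $j$, the formula defining $H_v^{\#}(\sss,\xxx)$ collapses to $1$.

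Part (2) will follow quickly from part (1). For $(\xxx_v)_v \in [\TTa(\AAF)]$, the defining condition of the restricted product says $\xxx_v \in [\TTa(\Ov)]$ for all but finitely many $\vMFz$. On the other hand, by assumption the set $S$ of places where either $f_v \neq f_v^{\#}$ or $v$ is infinite is finite. Hence for every $v$ outside $S$ with $\xxx_v \in [\TTa(\Ov)]$ we have $H_v(\sss,\xxx_v) = H_v^{\#}(\sss,\xxx_v) = 1$ by part (1), leaving only finitely many non-trivial factors in the product defining $H(\sss,\xxx)$.

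For part (3), I would pick a lift $\widetilde\xxx \in (\Ft)^n$ of $\xxx$; the diagonal image in $(\AAFt)^n$ is a lift of $[\TTa(i)](\xxx)$ and, place-by-place, gives a lift of its $v$-adic component. Then
\begin{align*}
H(\sss, [\TTa(i)](\xxx)) &= \prod_{\vMF} f_v(\widetilde\xxx)^{\aaa\cdot\sss/|\aaa|} \prodjn |\widetilde x_j|_v^{-s_j} \\
&= \bigg(\prod_{\vMF} f_v(\widetilde\xxx)\bigg)^{\aaa\cdot\sss/|\aaa|} \prodjn \bigg(\prod_{\vMF}|\widetilde x_j|_v\bigg)^{-s_j}.
\end{align*}
The first factor is $H(\xxx)^{\aaa\cdot\sss/|\aaa|}$ by the very definition of the height, and the second factor is $1$ by the product formula applied to each $\widetilde x_j \in \Ft$. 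Regrouping is legitimate because part (2) ensures only finitely many factors differ from $1$. There is no real obstacle here; the only subtlety worth flagging is checking that the rearrangement and factorization of the product across infinitely many places is justified by the finiteness from part (2), after which the product formula does all the work.
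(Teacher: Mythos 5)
Your proof is correct and follows essentially the same route as the paper's: choose a lift in $(\Ovt)^n$ and use $r_v|_{\Dav}=0$ for part (1), invoke the restricted-product condition together with the finiteness of $S$ for part (2), and split the product and apply the product formula for part (3). The only substantive thing worth noting is that the regrouping in part (3) is actually justified not by part (2) per se but by the fact that each of the two separated products is individually almost-everywhere trivial: $|\widetilde x_j|_v = 1$ for almost all $v$ since $\widetilde x_j\in F^\times$, and $f_v(\widetilde\xxx)=1$ for almost all $v$ since the family is generalized adelic and $\widetilde\xxx$ has coordinates in $F^\times$; the paper is equally terse on this point.
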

\begin{proof}
\begin{enumerate}
\item Let $\wx\in(\Ovt)^n$ be a lift of~$\xxx$. One has that $(\Ovt)^n\subset (\Ov^n-\prod_{j=1}^n\piv^{a_j}\Ov)$ and thus by \ref{davdavdav}, one has $f_v^{\#}|_{(\Ovt)^n}=1$. We deduce that$$H^{\#}_v(\xxx)=f_v^{\#}(\wx)^{\frac{\aaa\cdot\sss}{|\aaa|}}\prod_{j=1}^n|\widetilde x_j|_v^{-s_j}=1.$$
\item By definition of $[\TTa(\AAF)]$ for almost every $\vMF$ one has that $\xxx_v\in[\TTa(\Ov)]$. For almost every~$v$, hence, one has that $H_v(\sss,\xxx_v)=H^{\#}_v(\sss,\xxx_v)=1.$ Thus the product defining $H(\sss,\xxx)$ is a finite product.
\item Let $\wx\in F^{\times n}$ be a lift of~$\xxx$. Recall from \ref{ttaiproduct} that~$v$-th coordinate of $[\TTa(i)](\xxx)$ is $[\TTa(i_v)](\xxx)$.  Using the product formula, we get that \begin{align*}H(\sss,[\TTa(i)](\xxx))&=\prod_vH_v(\sss,[\TTa(i_v)](\xxx))\\&=\prod_v\bigg(f_v(\wx)^{\frac{\aaa\cdot\sss}{|\aaa|}}\prodjn|\widetilde x_j|_v^{-s_j}\bigg)\\&=\bigg(\prod_vf_v(\wx)^{\frac{\aaa\cdot\sss}{|\aaa|}}\bigg) \prod_v\prodjn|\widetilde x_j|_v^{-s_j}\\&=\prod_vf^{\#}_v(\wx)^{\frac{\aaa\cdot\sss}{|\aaa|}}\\&=H(\xxx)^{\frac{\aaa\cdot\sss}{|\aaa|}}.\end{align*} 
\end{enumerate}
\end{proof}
We may write $H(\sss,\xxx)$ for what is technically $H(\sss,[\TTa(i)](\xxx))$.
\subsection{} In this paragraph we establish that the functions $H_v^{-1}(\sss,-)$ are integrable and that their Fourier transforms are holomorphic and bounded in $\sss$.

Let $\vMF$. In \ref{haarttafv}, we have defined a Haar measure on $[\TTa(F_v)]=(\Fvt)^n/(\Fvt)_{\aaa}$ by $(d^*x_{1}\dots d^*x_{n})/d^*x$.   
If $v\in M_F^0$, we have established in Lemma \ref{xiuim} that $\mu_v([\TTa(\Ov)])=\zeta_v(1)^{-n+1}$.  
For $\sss\in\CC^n$ and a character $\chi _v\in[\TTa(F_v)]^*$ we define formally 
\begin{align*}
 \quad 
\widehat H_v(\sss,\chi_v ):=&
               \begin{cases}
\zeta_v(1)^{n-1}\int _{[\TTa (F_v)]}H_v(\sss,-)^{-1}\chi_v \mu _v&\text{ if $\vMFz$,}\\
\int _{[\TTa (F_v)]}H_v(\sss,-)^{-1}\chi_v \mu _v&\text{ if $\vMFi$.}
               \end{cases}
\end{align*}  
In this paragraph, we are going to prove that this integral converges absolutely when $\sss\in\Omega_{>0}$ and that it is a holomorphic function of $\sss$ in this domain. 
The following result, in a bit weaker form, has been given as Lemma 8.3 in \cite{Vgps}
\begin{lem}\label{babay}
Let $B>0$. For every $\epsilon>0$, the integral \begin{equation}\label{yuba}\int_{\{x\in\Fv| \hspace{0.1cm}|x|_v\leq B\}}|x|_v^{s-1}dx_v\end{equation} converges absolutely and uniformly in the domain $s\in\RR_{>\epsilon}+i\RR$. The function that associates to $s$ the value of (\ref{yuba}) is holomorphic in the domain $\RR_{>0}+i\RR$.
\end{lem}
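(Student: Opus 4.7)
The plan is to split the analysis according to whether $v$ is finite or infinite and compute, or at least explicitly bound, the integral in each case. A crucial preliminary observation is that $||x|_v^{s-1}| = |x|_v^{\Re(s)-1}$, so the modulus of the integrand is independent of $\Im(s)$. Consequently, absolute convergence of the integral for some $s_0\in\RR_{>\epsilon}+i\RR$ automatically gives absolute convergence uniformly in $\Im(s)$ for $\Re(s)=\Re(s_0)$, and it suffices to verify convergence of the real integral $\int_{\{|x|_v\leq B\}}|x|_v^{\sigma-1}dx_v$ for $\sigma>\epsilon$.

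For $v\in M_F^0$, I would decompose the domain $\{x\in F_v:0<|x|_v\leq B\}$ into the disjoint ``annuli'' $\pi_v^k\Ov^\times=\{x\in F_v:|x|_v=|\pi_v|_v^k\}$ for $k\geq k_0$, where $k_0$ is the least integer with $|\pi_v|_v^{k_0}\leq B$. On each such annulus the integrand is the constant $|\pi_v|_v^{k(s-1)}$, and the $dx_v$-volume is $|\pi_v|_v^k(1-|\pi_v|_v)$ by the normalization of $dx_v$. Summing over $k\geq k_0$ produces the geometric series
\[
(1-|\pi_v|_v)\sum_{k\geq k_0}|\pi_v|_v^{ks},
\]
which is absolutely convergent for $\Re(s)>0$, and the sum of moduli is bounded by $(1-|\pi_v|_v)|\pi_v|_v^{k_0\epsilon}/(1-|\pi_v|_v^\epsilon)$, giving the claimed uniformity.

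For $v\in M_F^\infty$, I would pass to the ``polar'' coordinates provided by the isomorphism $\widetilde{\rho}_v:\RR_{>0}\times F_{v,1}\xrightarrow{\sim}F_v^\times$ from Lemma 4.2.2, under which $|\widetilde{\rho}_v(r,z)|_v=r$ and, by the first part of that lemma, $(\widetilde{\rho}_v)_*(dr\times\lambda_{v,1})=dx_v|_{F_v^\times}$. Since $\{0\}$ is $dx_v$-negligible, the integral becomes
\[
\int_0^B r^{s-1}\,dr\cdot\lambda_{v,1}(F_{v,1})=\lambda_{v,1}(F_{v,1})\cdot\frac{B^s}{s},
\]
manifestly absolutely convergent for $\Re(s)>0$ with the uniform majorant $r^{\epsilon-1}$ on $\{r\leq B\}$ when $\Re(s)\geq\epsilon$ and $B\leq 1$ (and an analogous bound with $B^{\Re(s)-\epsilon}r^{\epsilon-1}$ otherwise, which again depends only on $\Re(s)$).

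For the holomorphicity assertion, the plan is to invoke Morera's theorem together with Fubini. For any closed triangle (or rectangle) $\gamma\subset\RR_{>0}+i\RR$, the function $s\mapsto |x|_v^{s-1}$ is entire for each fixed $x\neq 0$ with $\oint_\gamma|x|_v^{s-1}ds=0$, so the uniform absolute bound on compact subsets of the half-plane (deducible from the estimates above applied to $\max\Re(s)$ and $\min\Re(s)$ on $\gamma$) justifies interchanging $\oint_\gamma$ with $\int_{\{|x|_v\leq B\}}$, yielding $\oint_\gamma \bigl(\int_{\{|x|_v\leq B\}}|x|_v^{s-1}dx_v\bigr)ds=0$ and hence holomorphicity. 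The computation is elementary throughout; the only subtlety is to keep the two roles of ``uniform'' (in $\Im(s)$ on the whole half-plane, versus locally uniform in $s$ for Morera) clearly separated.
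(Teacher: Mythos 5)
Your proof is correct, and the core of it — the annulus decomposition at finite places — is the same as the paper's. You differ in two minor places. At the archimedean places, you invoke the identity $(\widetilde\rho_v)_*(dr\times\lambda_{v,1})=dx_v|_{\Fvt}$ from Lemma~\ref{cyv} to treat the real and complex cases uniformly and reduce to $\lambda_{v,1}(F_{v,1})\int_0^B r^{s-1}\,dr$; the paper instead does the two cases separately with explicit polar-coordinate changes, arriving at the same formulas $2B^s/s$ and $2\pi B^s/s$. Your route is slightly cleaner since it reuses machinery already built, while the paper's is more self-contained. For holomorphicity, the paper simply exhibits the closed-form value of the integral (a geometric-series expression at finite $v$, $B^s/s$ up to a constant at infinite $v$) and reads off that it is holomorphic; you instead invoke Morera together with Fubini and the local uniform bounds. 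Your version is heavier than necessary here — once one has the explicit closed form, holomorphicity is immediate — but it is a valid and standard argument, and it has the small advantage of not requiring the closed form to be computed. As a side remark, your bookkeeping for the finite-place volume $dx_v(\pi_v^k\Ov^\times)=|\pi_v|_v^k(1-|\pi_v|_v)$ is the precise one; the paper's displayed computation writes $dx(\pi_v^{-k}\Ov)$ where $\pi_v^{-k}\Ov^\times$ is meant, dropping a harmless constant factor $(1-|\pi_v|_v)$ that does not affect convergence or holomorphicity.
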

\begin{proof}
Suppose $\vMFz$. Let $r$ be the largest integer satisfying that $(\pivv^{-1})^r\leq B$. For every $\epsilon>0$ and every $s\in\RR_{>\epsilon}+i\RR$, we have that \begin{align*}\int_{\{x\in\Fv| \hspace{0.1cm}|x|_v\leq B\}}|x|_v^{s-1}dx_v&=\sum _{k=-\infty}^{r}\int_{|x|_v=(\pivv^{-1})^k}|x|_v^{s-1}dx\\
&=\sum _{k=-\infty}^{r}\int_{|x|_v=(\pivv^{-1})^{k}}\pivv^{-k(s-1)}dx\\
&=\sum _{k=-\infty}^{r}\pivv^{-k(s-1)}dx(\piv^{-k}\Ov)\\
&=\sum_{k=-\infty}^{r}\pivv^{-ks}\\
&=\sum_{k=-r}^{\infty}\pivv^{ks}\\
&=\pivv^{-rs}\sum_{k=0}^{\infty}\pivv^{ks}. \end{align*} The last series converges absolutely and uniformly in the domain $\RR_{>\epsilon}+i\RR.$  Moreover, $s\mapsto \pivv^{-rs}\sum_{k=0}^{\infty}\pivv^{ks}=\frac{\pivv^{-rs}}{1-\pivv^s}$ is a holomorphic function in the domain $\RR_{>0}+i\RR$. Suppose~$v$ is a real place. For every $\epsilon>0$, we have that $$\int _{|x|_v\leq B}|x|_v^{s-1}dx_v=\int _{|x|\leq B}|x|^{s-1}dx=2\int _{0}^Bx^{s-1}dx$$ converges absolutely and uniformly for $s\in\RR_{>\epsilon}+i\RR$. Moreover, $$s\mapsto 2\int _{0}^Bx^{s-1}dx=2 \frac{x^s}s\bigg|_{x=0}^{x=B}=2\frac{B^s}{s}$$is a holomorphic function in $s$ in the domain $\RR_{>0}+i\RR$. Suppose~$v$ is a complex place. For every $\epsilon>0$, we have that \begin{align*}\int_{|x|_v\leq B}|x|_v^{s-1}dx_v&=\int _{x^2+y^2\leq B}(x^2+y^2)^{s-1}2dxdy\\&=\int _0^{2\pi}\int _{r^2\leq B}r^{2(s-1)}2rdrd\phi\\
&=4\pi\int _{0}^{\sqrt{B}}r^{2s-1}dr
\end{align*} converges absolutely and uniformly for $s\in\RR_{>\epsilon}+i\RR$. Moreover, $$s\mapsto 4\pi\int _{0}^{\sqrt{B}}r^{2s-1}dr=4\pi \frac{r^{2s}}{2s}\bigg|_{r=0}^{\sqrt B}=\frac{2\pi B^s}{s}$$ is a holomorphic function in $s$ in the domain $\RR_{>0}+i\RR$.
The statement is proven.\end{proof}
\begin{prop}\label{stml}
For every $\chi_v\in[\TTa(\Fv)]^*$, the integral defining $\wH_v(\sss,\chi_v)$ converges absolutely in the domain $\sss\in\Omega_{>0}$. Moreover, for every compact $\mathcal K\subset\Omega_{>0} $, there exists $C(\mathcal K)>0$ such that for every $\sss\in\mathcal K$ and every $\chi_v\in[\TTa(\Fv)]^*$, one has that $$|\wH_v(\sss,\chi_v)|\leq C(\mathcal K). $$ 
\end{prop}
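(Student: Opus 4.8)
The plan is to reduce the statement, via the explicit description of the quotient measure on $[\TTa(\Fv)]$, to an integral over $\Dav\cap\Fvtn$ (resp. over $(\Fvt)^{n-1}\times F_{v,1}$) where one can make a change of variables that separates the ``radial'' direction from the other coordinates, and then to bound the radial integral using Lemma \ref{babay}. First I would rewrite, using Lemma \ref{smacor}, the integral defining $\wH_v(\sss,\chi_v)$ as an integral against $d^*x_1\dots d^*x_n$ (or $d^*x_1\dots d^*x_{n-1}\times\lambda_{v,1}$ in the archimedean case) of the function $\xxx\mapsto (H_v(\sss,-)^{-1}\chi_v)(\qav(\xxx))$; since $|\chi_v|\equiv 1$ this immediately gives
$$|\wH_v(\sss,\chi_v)|\leq \zeta_v(1)^{n-1}\int_{\Dav\cap\Fvtn}\Big|f_v(\xxx)^{-\frac{\aaa\cdot\sss}{|\aaa|}}\prodjn |x_j|_v^{s_j}\Big| d^*x_1\dots d^*x_n$$
for $\vMFz$, with the analogous archimedean bound. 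Thus the bound is uniform in $\chi_v$ once we bound the right-hand side, and it suffices to treat the case $\chi_v=1$.

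\textbf{The non-archimedean case.} Here $f_v$ is toric for $v\notin S$, and for such~$v$ one has $f_v^{\#}|_{\Dav}=1$ by \ref{davdavdav}, so the integrand restricted to $\Dav$ is $\prodjn|x_j|_v^{s_j}$; the integral over $\Dav\cap\Fvtn=(\Ovt)^n$ of $\prodjn |x_j|_v^{s_j}$ equals $\prodjn\int_{\Ovt}|x_j|_v^{s_j}d^*x_j=(1-\pivv)^n$, giving $|\wH_v(\sss,1)|\leq \zeta_v(1)^{n-1}(1-\pivv)^n=(1-\pivv)\leq 1$ (in fact for $\sss\in\Omega_{>0}$ one gets exactly a local $L$-factor times a bounded term, as the computation in \ref{locomi} foreshadows). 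For the finitely many $v\in S\cap M_F^0$, the function $f_v$ is a fixed smooth positive $\aaa$-homogeneous function of weighted degree $|\aaa|$; using $\aaa$-homogeneity to write $\xxx=t\cdot\yyy$ with $\yyy\in\Dav$, one reduces absolute convergence and the uniform bound on compacts $\mathcal K\subset\Omega_{>0}$ to the finiteness of $\int_{\Dav}f_v^{-\Re(\acsa)}\prodjn |x_j|_v^{\Re(s_j)}d^*x_1\dots d^*x_n$, which converges because $\Dav\cap\Fvtn=(\Ovt)^n$ is compact and $f_v$ is bounded away from $0$ on it; compactness of $\mathcal K$ then gives a uniform constant.

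\textbf{The archimedean case and the main obstacle.} For $\vMFi$ one cannot integrate over a compact set, and this is where the real work is. I would apply part (2) of Lemma \ref{calcintinfv}: writing $\xxx=((x_j)_{j=1}^{n-1},z)$ with $z\in F_{v,1}$, the change of variables there turns $\int_{[\TTa(\Fv)]}H_v(\sss,-)^{-1}\mu_v$ into (up to the constant $\tfrac{a_n}{\lambda_{v,1}(\Fvj)}$) an integral over $(\Fvt)^{n-1}\times F_{v,1}$ of $f_v$-evaluated on $((x_j)_{j=1}^{n-1},z)$ times $\prod_{j=1}^{n-1}|x_j|_v^{s_j}$, and then the radial direction is integrated separately as in the archimedean part of \ref{calcintinfv}. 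The key point is that $f_v$ is \emph{continuous and everywhere positive} on $\Fvnz$, hence bounded above and below on the compact sphere $\Dav=\{\,\|\xxx\|_{v,\max}=1\,\}$ and on $F_{v,1}$; combining this with the $\aaa$-homogeneity, the integrand is controlled by a product of factors each of the form $|x_j|_v^{s_j}$ over regions where $|x_j|_v$ is bounded (for $j$ with $a_j$ in the ``small'' weights) together with one factor handled by Lemma \ref{babay}, which gives absolute convergence precisely for $\Re(s_j)>0$ and holomorphy. The main obstacle is bookkeeping: organizing the domain $(\Fvt)^{n-1}$ into the pieces $V_i$ (as in the proof of \ref{locinfomega}) on which a given coordinate achieves the maximum, bounding $f_v^{-\acsa}$ on each piece by a constant times the appropriate monomial in the $|x_j|_v$, and checking that on each piece the resulting iterated integral converges for $\sss\in\Omega_{>0}$ — the conditions $\Re(s_j)>0$ being exactly what makes each one-dimensional integral $\int_{|x|_v\le B}|x|_v^{s_j-1}dx_v$ (and the complementary ones) finite by \ref{babay}. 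Finally, since the estimates on each piece are of the form $\le C(\mathcal K)$ with $C$ depending only on $\max_j\Re(s_j)$, $\min_j\Re(s_j)$ and $\sup_{\Dav}f_v$, $\inf_{\Dav}f_v$, compactness of $\mathcal K\subset\Omega_{>0}$ yields a single uniform constant $C(\mathcal K)$ valid for all $\sss\in\mathcal K$ and all $\chi_v$, and holomorphy in $\sss$ follows from the holomorphy statements in \ref{babay} together with uniform convergence (Morera / differentiation under the integral sign). Multiplying over the finitely many relevant local factors is harmless since all but finitely many $\wH_v$ are the trivial-family ones bounded by $1$ above.
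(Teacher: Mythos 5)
Your route is genuinely different from the paper's. After reducing to $\chi_v=1$ by the triangle inequality (exactly as the paper does), you pass through Lemma~\ref{smacor}, which expresses $\wH_v(\sss,1)$ as an integral over $\Dav\cap\Fvtn$ (resp. $(\Fvt)^{n-1}\times F_{v,1}$), and then for archimedean $v$ you split that unbounded domain into the cone pieces $V_i$ of the proof of~\ref{locinfomega}. The paper instead goes through Proposition~\ref{simint} with the compactly supported function $\kav$ from~\ref{kave}: this turns $\wH_v(\sss,1)$ into $\int_{\supp(\kav)}f_v^{-\acsa}\prodjn|x_j|_v^{s_j-1}\kav\,dx_1\dots dx_n$, and since $\supp(\kav)$ sits inside a box $\{\forall j:|x_j|_v\le B\}$, after bounding $f_v^{-\Re(\acsa)}\kav$ uniformly on $\mathcal K\times\supp(\kav)$ the remaining integral \emph{factors} into $n$ one-variable integrals, each of which is exactly the object of Lemma~\ref{babay}. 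This buys a single unified argument that treats all places identically and makes the uniformity in $\sss\in\mathcal K$ manifest, whereas your split into $V_i$'s requires nontrivial bookkeeping (which you yourself call the ``main obstacle'').

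Your non-archimedean step also has a concrete error you should fix. You assert twice that $\Dav\cap\Fvtn=(\Ovt)^n$ and that this set is compact. Both are false. For instance with $n=2$, $\aaa=(1,1)$, the point $(\piv,1)$ lies in $\Dav\cap\Fvtn$ but not in $(\Ovt)^2$; and the sequence $(\piv^k,1)$ stays in $\Dav\cap\Fvtn$ while converging to $(0,1)\notin\Fvtn$, so $\Dav\cap\Fvtn$ is not compact (the identity $(\qav)^{-1}([\TTa(\Ov)])\cap\Dav=(\Ovt)^n$ from~\ref{xiuim} may be the source of the confusion, but that intersects with a different set). Consequently your explicit evaluation $\int_{\Dav\cap\Fvtn}\prodjn|x_j|_v^{s_j}d^*x_1\dots d^*x_n=(1-\pivv)^n$ is wrong --- the correct value, as the computation in~\ref{torfour} shows, is $\zeta_v(1)^{-n}\zeta_v(\aaa\cdot\sss)^{-1}\prodjn L_v(s_j,1)$, which is in general $>1$, so your claimed bound $|\wH_v(\sss,1)|\le 1$ is also false. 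The \emph{conclusion} you want (convergence and a $\mathcal K$-uniform bound) is nevertheless true, but the reason is not compactness of $\Dav\cap\Fvtn$: rather, $f_v$ is bounded above and below on the genuinely compact set $\Dav$, and $\int_{\Dav}\prodjn|x_j|_v^{\Re(s_j)-1}dx_1\dots dx_n\le\prodjn\int_{|x|_v\le1}|x|_v^{\Re(s_j)-1}dx<\infty$ uniformly for $\Re(\sss)\in\mathcal K\subset\RR^n_{>0}$ by Lemma~\ref{babay} --- which is precisely the box argument the paper runs, with $B=1$.
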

\begin{proof}
As our characters are assumed unitary (that is with the values in $S^1$), by the triangle inequality, it suffices to prove the statement when $\chi_v=1$. Let $\mathcal K\subset\Omega_{>0}$ be a compact. 
Let $\omega_v$ be the quotient measure $f_v^{-1}dx_1\dots dx_n/d^*x$ on $(\Fvnz)/\Fvt=[\PPP(\aaa)(\Fv)]$ (see \ref{yumf}). By \ref{icicu}, one has an inequality of the measures $H_v(\mathbf 1,-)\omega_v|_{[\TTa(\Fv)]}=\mu_v$. We deduce that $H_v(\sss,-)^{-1}\in L^1([\TTa(\Fv)],\mu_v)$ if and only if $H_v(\sss,-)^{-1}\in L^1([\TTa(\Fv)], H_v(\mathbf 1,-)\omega_v),$ i.e. if and only if $$H_v(\sss,-)^{-1}H_v(\mathbf 1,-)\in L^1([\TTa(\Fv)],\omega_v).$$ Moreover, if $H_v(\sss,-)^{-1}\in L^1([\TTa(\Fv)],\mu_v),$ then \begin{align*}\int_{[\TTa(\Fv)]}H_v(\sss,-)^{-1}\mu_v&=\int_{[\TTa(\Fv)]}H_v(\sss,-)^{-1}H_v(\mathbf 1,-)\omega_v\\&=\int_{[\PPP(\aaa)(\Fv)]}H_v(\sss,-)^{-1}H_v(\mathbf 1,-)\omega_v,\end{align*} where the last equality follows from the fact that $\omega_v([\PPP(\aaa)(\Fv)]-[\TTa(\Fv)])=0$, which we have established in \ref{ttadmz}. 
In \ref{kave}, we have defined 
a compactly supported function $\kav:\Fvnz\to\RR_{\geq 0}$ which satisfies that for every $\xxx\in\Fvnz$ one has that $\int_{\Fvt}\kav(t\cdot\xxx)d^*t=1$. Proposition \ref{simint} gives that $H_v(\sss,-)^{-1}H_v(\mathbf 1,-)\in L^1([\PPP(\aaa)(\Fv)],\omega_v)$ if and only if 
\begin{align*}((H_v(\sss,-)^{-1}H_v(\mathbf 1,-))\circ\qav)\cdot\kav\hskip-1cm&\\&=\big(f_v(\xxx)^{\frac{-\aaa\cdot\sss}{|\aaa|}}\prodjn|x_j|_v^{s_j}\cdot f_v(\xxx)\prodjn|x_j|^{-1}\big)\cdot\kav\\&=f_v(\xxx)^{\frac{-\aaa\cdot\sss}{|\aaa|}}\prodjn|x_j|_v^{s_j-1}\cdot\kav\\&\in L^1(\Fvnz,dx_1\dots dx_n),
\end{align*} and that if $H_v(\sss,-)^{-1}H_v(\mathbf 1,-)\in L^1([\PPP(\aaa)(\Fv)],\omega_v),$ then \begin{align*}\int_{[\PPP(\aaa)(\Fv)]}H_v(\sss,-)^{-1}H_v(\mathbf 1,-)\omega_v\hskip-1cm&\\&=\int_{\Fvnz}f_v(\xxx)^{\frac{-\aaa\cdot\sss}{|\aaa|}}\prodjn|x_j|_v^{s_j-1}\cdot\kav dx_1\dots dx_n\\
&=\int_{\supp(\kav)}f_v(\xxx)^{\frac{-\aaa\cdot\sss}{|\aaa|}}\prodjn|x_j|_v^{s_j-1}\cdot\kav dx_1\dots dx_n.
\end{align*}
For every $\sss\in \mathcal K$, the function $\xxx\mapsto f_v(\xxx)^{-\frac{\aaa\cdot\Re(\sss)}{|\aaa|}}\kav(\xxx)$ is non vanishing and continuous, moreover it can be uniformly bounded for $\sss\in\mathcal K$ and $\xxx\in\supp(\kav)$. Moreover, as $\kav$ is compactly supported, there exists $B>0$, such that $$\supp(\kav)\subset \{\forall j:|x_j|_v\leq B\}.$$ It follows from \ref{babay} that the integral $$\int_{|x|_v\leq B}\prodjn|x_j|_v^{s_j-1}dx_1\dots dx_n=\prodjn \bigg(\int_{|x|_v\leq B}|x|_v^{s_j-1}dx\bigg)$$ converges absolutely and uniformly for $\Re(\sss)\in\mathcal K.$ Hence, \begin{align*}
\int_{\supp(\kav)}f_v(\xxx)^{\frac{-\aaa\cdot\sss}{|\aaa|}}\prodjn|x_j|_v^{s_j-1}\cdot\kav dx_1\dots dx_n\hskip-1cm&\\&=\int_{[\PPP(\aaa)(\Fv)]}H_v(\sss,-)^{-1}H_v(\mathbf 1,-)\omega_v\\
&=\int_{[\TTa(\Fv)]}H_v(\sss,-)^{-1}\mu_v&\\
\end{align*}
converges absolutely and uniformly for $\Re(\sss)\in\mathcal K$. The statement is proven
\end{proof}
\begin{cor}\label{corhlor}
The function $\sss\mapsto\wH_v(\sss,\chi_v)$ is holomorphic.
\end{cor}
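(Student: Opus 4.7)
The plan is to deduce holomorphy of $\sss\mapsto\wH_v(\sss,\chi_v)$ from the uniform absolute convergence of the defining integral on compact subsets of $\Omega_{>0}$, together with pointwise holomorphy of the integrand.

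First I would observe that for every fixed $\xxx\in[\TTa(F_v)]$ with lift $\wx\in(\Fvt)^n$, the value of the integrand is
\[
H_v(\sss,\xxx)^{-1}\chi_v(\xxx)=f_v(\wx)^{-\frac{\aaa\cdot\sss}{|\aaa|}}\prodjn|\widetilde x_j|_v^{s_j}\chi_v(\xxx),
\]
which is an entire function of $\sss\in\CC^n$, since $f_v(\wx)>0$ and each $|\widetilde x_j|_v>0$. The dependence on $\sss$ is through an exponential in a complex linear functional of $\sss$. So the pointwise holomorphy of the integrand is immediate, and its partial derivatives in any $s_j$ are also bounded, pointwise, by expressions of the same type (with an extra logarithm factor), which also satisfy the absolute bounds from the proof of Proposition \ref{stml}.

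Next, I would apply Morera's theorem variable by variable. Fix $j\in\{1,\dots,n\}$ and fix the other coordinates $(s_i)_{i\neq j}$ so that the resulting point lies in $\Omega_{>0}$; choose a compact polydisk-neighborhood $\mathcal K\subset\Omega_{>0}$ of this slice. By Proposition \ref{stml}, the integral $\int_{[\TTa(F_v)]}|H_v(\sss,-)^{-1}\chi_v|\mu_v$ is bounded uniformly in $\sss$ on $\mathcal K$ by some $C(\mathcal K)$. For any closed triangular contour $T$ in the $s_j$-slice contained in $\mathcal K$, Fubini's theorem applies, and
\[
\oint_T \wH_v(\sss,\chi_v)\,ds_j=\int_{[\TTa(F_v)]}\bigg(\oint_T H_v(\sss,-)^{-1}\chi_v\,ds_j\bigg)\mu_v=0,
\]
because the inner integral vanishes by Cauchy's theorem applied to the entire integrand. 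Morera's theorem then shows $\wH_v(\sss,\chi_v)$ is holomorphic in $s_j$ on this slice; running $j$ over $\{1,\dots,n\}$ and invoking Hartogs' theorem on separate holomorphy gives joint holomorphy of $\sss\mapsto\wH_v(\sss,\chi_v)$ on $\Omega_{>0}$.

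The only step that might need justification is the application of Fubini to the triangle integral; this is harmless because the integrand is jointly continuous in $(\sss,\xxx)$ on $T\times[\TTa(F_v)]$ and dominated by the integrable function whose $\mu_v$-integral is $C(\mathcal K)$, uniformly over $\sss\in T$. No further estimates are required, so this corollary follows at once from Proposition \ref{stml}.
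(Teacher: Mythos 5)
Your argument is correct and takes essentially the same route as the paper: uniform bound from Proposition \ref{stml}, Fubini to exchange the integral over a triangle with the $\mu_v$-integral, Cauchy's theorem inside, then Morera. You make the multivariable step explicit (variable-by-variable Morera plus Hartogs), which the paper leaves implicit, but the structure of the argument is identical.
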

\begin{proof}
We apply the Morera's criterion. Let $\Delta\subset\Omega_{>0}$ be a triangle. By Proposition \ref{stml}, the function $H_v(\sss,-)^{-1}$ is absolutely $\mu_v$-integrable and the function $\wH_v(-,-)$ can be uniformly bounded on $\Delta\times[\TTa(\Fv)]^*$, we deduce that $$\int_{\Delta}\wH_v(\sss,\chi_v)d\sss=\int_{\Delta}\zeta_v(1)^{n-1}\int _{[\TTa(\Fv)]}H_v(\sss,-)^{-1}\chi_v \mu_vd\sss,$$where if $\vMFi$ one sets $\zeta_v(1)=1$, converges absolutely. By using Fubini's theorem, we get that $$\int _{\Delta}\int _{[\TTa(\Fv)]}H_v(\sss,-)^{-1}\chi_v\mu_v d\sss=\int_{[\TTa(\Fv)]}\int_{\Delta}H_v(\sss,-)^{-1}\chi_vd\sss \mu_v=0. $$ By Morera's criterion, $\sss\mapsto\wH_v(\sss,\chi_v)$ is holomorphic. \end{proof}
\section{Calculations in non-archimedean case}
We establish some properties of the local transform in the non-archimedean case.  Firstly we treat the case when $f_v$ is the toric $\aaa$-homogenous function of weighted degree $|\aaa|$ and give the exact value of the integral in \ref{torfour}. 
\subsection{}In this paragraph we calculate the Fourier transform at the finite places~$v$ when $f_v=f_v^{\#}$ is toric.

Let $\vMFz$. Let $\fvt:\Fvnz\to\RR_{>0}$ be the toric $\aaa$-homogenous function of weighted degree $|\aaa|$ (see Definition \ref{tordef}). Recall from \ref{davdavdav}, that $f_v^{\#}|_{\Dav}=1$, where $\Dav=\Ov^n-\prodjn\piv^{a_j}\Ov$. 

When $v\in M_F^0$, we observe that $$\chij_v|_{[\TTa(\Ov)]}=1\implies \chij_v|_{\Ovt}=1\text{ for $j=1\doots n$.}$$
\begin{lem}\label{torfour}
Let $v\in M_F^0$ and let $\fvt:\Fvnz\to\RR_{> 0}$ be the toric $\aaa$-homogenous function of weighted degree $|\aaa|$. Let $\sss\in\Omega_{>0}$ and let $\chi _v\in[\TTa(F_v)]^*$ be a character. 
We have that  \begin{align}
 \quad
\wH_v^{\#}(\sss,\chi_v):=&
               \begin{cases}
\dfrac{\prod _{j=1}^nL_v(s_j,\chij_v)}{\zeta _v(\aaa\cdot\sss)}&\text{if $\chi_v|_{[\TTa(\Ov)]}=1$,}\\
0&\text{otherwise.}
               \end{cases}
               \end{align} 
\end{lem}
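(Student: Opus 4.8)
The plan is to reduce the defining integral to an explicit geometric series. First I would unwind the definitions: by Definition \ref{tordef} the toric function is $f_v^{\#}(\xxx)=\pivv^{-|\aaa|r_v(\xxx)}$, so for a lift $\wx\in\Fvtn$ of a point of $[\TTa(\Fv)]$ one has $H_v^{\#}(\sss,\xxx)^{-1}=\pivv^{r_v(\wx)(\aaa\cdot\sss)}\prod_{j=1}^n|\widetilde x_j|_v^{s_j}$; by \ref{davdavdav}(2) this expression is $(\Fvt)_{\aaa}$-invariant, consistent with its being a genuine function on $[\TTa(\Fv)]$, and by Proposition \ref{stml} it lies in $L^1([\TTa(\Fv)],\mu_v)$ for $\sss\in\Omega_{>0}$. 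Applying Lemma \ref{smacor} together with the fact (\ref{davdavdav}) that $r_v$ vanishes on $\Dav$ turns $\wH_v^{\#}(\sss,\chi_v)$ into $\zeta_v(1)^{n-1}\cdot\tfrac{1}{1-\pivv}\int_{\Fvtn\cap\Dav}\big(\prod_{j=1}^n|x_j|_v^{s_j}\big)\chi_v(\qav(\xxx))\,d^*x_1\dots d^*x_n$.

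Second, I would decompose the integration domain. Intersecting $\Dav=\Ov^n-\prod_j\piv^{a_j}\Ov$ with $\Fvtn$ and using $(\Ov-\{0\})^n=\bigsqcup_{\kkk\in\ZZ_{\geq 0}^n}\prod_j\piv^{k_j}\Ovt$ gives $\Fvtn\cap\Dav=\bigsqcup_{\kkk:\ \exists j,\ k_j<a_j}\prod_j\piv^{k_j}\Ovt$. On the piece indexed by $\kkk$ I write $x_j=\piv^{k_j}u_j$ with $u_j\in\Ovt$; then $|x_j|_v=\pivv^{k_j}$, and since $\qav\colon\Fvtn\to[\TTa(\Fv)]$ is a group homomorphism, $\chi_v(\qav(\xxx))=\big(\prod_j\chi_v^{(j)}(\piv)^{k_j}\big)\chi_v(\qav((u_j)_j))$. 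Using that $\qav$ restricts to a surjective homomorphism of compact groups $(\Ovt)^n\to[\TTa(\Ov)]$ (cf. \ref{identofov}) and translation invariance of $d^*u_1\dots d^*u_n$, the integral $\int_{(\Ovt)^n}\chi_v(\qav((u_j)_j))\,d^*u_1\dots d^*u_n$ vanishes when $\chi_v|_{[\TTa(\Ov)]}\neq 1$ (translate by a lift of an element where $\chi_v$ is nontrivial) and equals $(1-\pivv)^n=\zeta_v(1)^{-n}$ otherwise, because $d^*x(\Ovt)=1-\pivv$. This settles the vanishing case.

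Third, in the remaining case I set $t_j:=\pivv^{s_j}\chi_v^{(j)}(\piv)$, so that the reductions above collapse $\wH_v^{\#}(\sss,\chi_v)$ to $\zeta_v(1)^{n-1}\cdot\tfrac{1}{1-\pivv}\cdot\zeta_v(1)^{-n}\sum_{\kkk:\ \exists j,\ k_j<a_j}\prod_j t_j^{k_j}$, where all powers of $\zeta_v(1)$ cancel. Writing this sum as the full sum over $\ZZ_{\geq 0}^n$ minus the sum over $\{k_j\geq a_j\ \forall j\}$ gives $\big(1-\prod_j t_j^{a_j}\big)\prod_j L_v(s_j,\chi_v^{(j)})$, since $\sum_{k\geq 0}t_j^k=L_v(s_j,\chi_v^{(j)})$ in $\Omega_{>0}$. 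Finally, Lemma \ref{senjak} gives $\prod_j(\chi_v^{(j)})^{a_j}=1$ as characters of $\Fvt$, hence $\prod_j\chi_v^{(j)}(\piv)^{a_j}=1$ and $\prod_j t_j^{a_j}=\pivv^{\aaa\cdot\sss}$, so $1-\prod_j t_j^{a_j}=1-\pivv^{\aaa\cdot\sss}=\zeta_v(\aaa\cdot\sss)^{-1}$, which yields the claimed formula.

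I expect the only genuinely delicate point to be the bookkeeping of the normalizing powers of $\zeta_v(1)$ — one from Lemma \ref{smacor}, $n$ from the volumes $d^*x(\Ovt)=1-\pivv$, and the $\zeta_v(1)^{n-1}$ built into the definition of $\wH_v$ — together with checking that the primitivity condition $\exists j\colon k_j<a_j$ in the decomposition of $\Fvtn\cap\Dav$ corresponds exactly to subtracting the monomial $\prod_j t_j^{a_j}$; the convergence issues are already handled by Proposition \ref{stml}, and the rest is a direct computation.
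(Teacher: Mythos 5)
Your proposal is correct and follows essentially the same route as the paper: both reduce via Lemma \ref{smacor} to an integral over $\Fvtn\cap\Dav$, both exploit $\prod_j(\chi^{(j)}_v)^{a_j}=1$ to identify $1-\prod_j t_j^{a_j}=\zeta_v(\aaa\cdot\sss)^{-1}$, and both ultimately decompose the domain to recognize a multi-geometric series. The only difference is cosmetic: the paper writes $\Fvtn\cap\Dav$ as $(\Ov-\{0\})^n$ minus $\prod_j(\piv^{a_j}\Ov-\{0\})$ and applies Fubini plus a one-dimensional geometric series, whereas you partition into dyadic cells $\prod_j\piv^{k_j}\Ovt$ and sum directly — two equivalent ways of organizing the identical computation.
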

\begin{proof}
By applying 
\ref{smacor}, we have that 
\begin{align*}
\int _{[\TTa(F_v)]}\Hvt&(\sss,-)^{-1}\chi\mu_v\hskip-3cm&\\&=\zeta_v(1)\int _{\Fvtn\cap\Dav}f_v^{\#}(\xxx)^{\frac{-\aaa\cdot\sss}{|\aaa|}}\prod _{j=1}^n|x_{j}|_v^{s_j}\chi_v^{(j)}(x_{j})\hspace{0.1cm}{d^*x_1\dots d^*x_n}\\
&=\zeta_v(1)\int _{\Fvtn\cap\Dav}\prod _{j=1}^n|x_{j}|_v^{s_j-1}\chi_v^{(j)}(x_{j})\hspace{0.1cm}{dx_1\dots dx_n}.
\end{align*}
We calculate the last integral as the difference of the integrals of $\prodjn |x_j|_v^{s_j-1}\chij_v(x_j)$ over $(\Ov)^n\cap\Fvtn=(\Ov-\{0\})^n$ and $\big(\prodjn \piv^{a_j}\Ov\big)\cap\Fvtn =\prodjn(\piv^{a_j}\Ov-\{0\}):$
\begin{align*}
\int _{\Fvtn\cap\Dav}\prod _{j=1}^n|x_{j}|_v^{s_j-1}\chi_v^{(j)}(x_{j})\hspace{0.1cm}{dx_1\dots dx_n}\hskip-3cm&\\
&=\int _{(\Ov-\{0\})^n}\prod _{j=1}^n|x_{j}|_v^{s_j-1}\chi ^{(j)}_v(x_{j})dx_1\dots dx_n\\&\hspace{0.3cm}-\int_{\prodjn (\piv^{a_j}\Ov-\{0\})}\big(\prodjn |x_j|_v^{s_j}\chij_v(x_j)\big)dx_1\dots dx_n.
\end{align*}
 Let us integrate over $(\Ov-\{0\})^n$. We have that\begin{equation}\label{qqdo}\int _{(\Ov-\{0\})^n}\prod _{j=1}^n|x_{j}|_v^{s_j-1}\chi ^{(j)}_v(x_{j})dx_1\dots dx_n=\prod _{j=1}^n\int _{\Ov-\{0\}}\chi^{(j)}_v(x)|x|_v^{s_j-1}dx.\end{equation}
When $\Re(s)>0$, we have that
\begin{align*}
\int _{\Ov-\{0\}}\chij_v(x)|x|^{s}d^*x&=\sum _{r=0}^\infty\int_{\piv^r\Ovt}\chij_v(x)\cdot|x|^{s}d^*x\\
&=\sum_{r=0}^\infty\int_{\Ovt}\chij_v(\piv^rx)\cdot|\piv^rx|_v^{s}d^*x\\
&=\sum _{r=0}^\infty\chij_v(\piv)^r\pivv^{rs}\int_{\Ovt}\chij_v(x)d^*x.
\end{align*}
The integral of a non trivial character of compact group for a Haar measure on the group is $0$, while it is the volume of the group if the character is trivial. We deduce that, in the case $\chij_v|_{\Ovt}\neq 1$ one has that$$\int_{\Ov-\{0\}}\chij_v(x)|x|^{s}d^*x=0,$$otherwise \begin{align*}\int _{\Ov-\{0\}}\chij_v(x)|x|_v^{s}d^*x&=\sum_{r=0}^\infty\int_{\piv^r\Ovt}\chij_v(x)|x|_v^sd^*x\\&=\sum_{r=0}^{\infty}\chij_v(\piv^r)\pivv^{rs}d^*x(\piv^r\Ovt)\\&=d^*x(\Ovt)\sum_{r=0}^\infty(\chij(\piv)\pivv^s)^r\\&=\frac{1-\pivv}{1-\chij_v(\piv)\pivv^s}\\&=\zeta_v(1)^{-1}L_v(s,\chij).\end{align*}
The integral over $(\Ov-\{0\})^n$ is hence $$\prodjn (1-\pivv)L_v(s,\chij_v)=\zeta_v(1)^{-n}\prodjn L_v(s,\chij).$$ Let us calculate the integral of $\prodjn |x_j|_v^{s_j-1}\chij_v(x_j)$ over $\prodjn(\piv^{a_j}\Ov-\{0\})$. The~$v$-adic absolute value of the determinant of the Jacobian of the map$$(\Ov-\{0\})^n\to (\piv^{a_j}\Ov-\{0\})_j\hspace{1cm} \xxx\mapsto (\piv^{a_j}x_j)_j$$ is equal to $\pivv^{|\aaa|}$. Using the formula for the change of variables and the fact that $\prodjn (\chij_v)^{a_j}=1$, we get that\begin{align*}\int_{\prodjn (\piv^{a_j}\Ov-\{0\})}\big(\prodjn |x_j|_v^{s_j}\chij_v(x_j)\big)dx_1\dots dx_n\hskip-8cm&\\&=\int_{(\Ov-\{0\})^n}\pivv^{|\aaa|}\big(\prodjn|\piv ^{a_j}x_j|_v^{s_j-1}\chij_v(\piv^{a_j}x_j)\big)dx_1\dots dx_n\\&=\pivv^{|\aaa|+\aaa\cdot(\sss-\jed)}\prodjn\chij_v(\piv)^{a_j}\int_{(\Ov-\{0\})^n}\big(\prodjn |x_j|_v^{s_j-1}\chij_v(x_j)\big)dx_1\dots dx_n\\&=\pivv^{\aaa\cdot\sss}\int_{(\Ov-\{0\})^n}\big(\prodjn |x_j|_v^{s_j-1}\chij_v(x_j)\big)dx_1\dots dx_n\\
&=\pivv^{\aaa\cdot\sss}\prodjn\int_{\Ov-\{0\}}|x|_v^{s_j-1}\chiv^{(j)}(x)dx\\
&=\pivv^{\aaa\cdot\sss}\zeta_v(1)^{-n}\prodjn L_v(s_j,\chiv^{(j)}).
\end{align*}
We deduce that 
$$\int _{\Fvtn\cap\Dav}\prod _{j=1}^n|x_{j}|_v^{s_j-1}\chi_v^{(j)}(x_{j})\hspace{0.1cm}{dx_1\dots dx_n}=0 $$if $\chi|_{[\TTa(\Ov)]}\neq 1$ and 
\begin{align*}\int _{\Fvtn\cap\Dav}\prod _{j=1}^n|x_{j}|_v^{s_j-1}\chi_v^{(j)}(x_{j})\hspace{0.1cm}{dx_1\dots dx_n}\hskip-4cm&\\&=\zeta_v(1)^{-n}\prodjn L(s_j,\chiv^{(j)})-\zeta_v(1)^{-n}\pivv^{\aaa\cdot\sss}\prodjn L_v(s_j,\chiv^{(j)})\\
&=\zeta_v(1)^{-n}\zeta_v(\aaa\cdot\sss)^{-1}\prodjn L_v(s_j,\chi_v^{(j)}),\end{align*}if $\chi|_{[\TTa(\Fv)]}$. Finally, it follows that $$\wH_{v}^{\#}(\sss,\chi)=\zeta_v(1)^{(n-1)}\cdot\zeta_v(1)\int _{\Fvtn\cap\Dav}\prod _{j=1}^n|x_{j}|_v^{s_j-1}\chi_v^{(j)}(x_{j})\hspace{0.1cm}{dx_1\dots dx_n}=0$$if $\chi|_{[\TTa(\Ov)]}\neq 1$ and 
\begin{align*}
\wH_{v}^{\#}(\sss,\chi)&=\zeta_v(1)^{n-1}\cdot \zeta_v(1)^{1}\int _{\Fvtn\cap\Dav}\prod _{j=1}^n|x_{j}|_v^{s_j-1}\chi_v^{(j)}(x_{j})\hspace{0.1cm}{dx_1\dots dx_n}\\&=\zeta_v(\aaa\cdot\sss)\prodjn L_v(s_j,\widetilde\chi_v^{(j)})
\end{align*}
if $\chi_v|_{[\TTa(\Ov)]}=1$. 
The statement is proven.
\end{proof}
\subsection{} Let $\vMFz$. When $f_v$ is assumed to be smooth, we establish in \ref{ajvis} that there exists a compact and open subgroup $K_v\subset[\TTa(\Ov)]$ such that $\wH_v(\sss,\chi_v)=0$ whenever $\chi_v\not\in [\TTa(\Fv)]^*$. 

The following lemma will be used.
\begin{lem}\label{vccmu}
Let $\vMFz$ and let $f_v:\Fvnz\to\RR_{>0}$ be a locally constant $\aaa$-homogenous function of weighted degree $b\geq 0$. There exists an open and compact subgroup $\Lambda_v\subset\Fvtn$ such that for every $(\lambda_j)_j\in\Lambda_v$ and every $(x_j)_j\in\Fvnz$ one has that $$f_v((\lambda_jx_j)_j)=f_v(\xxx). $$ 
\end{lem}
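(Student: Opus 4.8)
Let $\vMFz$ and let $f_v:\Fvnz\to\RR_{>0}$ be a locally constant $\aaa$-homogenous function of weighted degree $b\geq 0$. There exists an open and compact subgroup $\Lambda_v\subset\Fvtn$ such that $f_v((\lambda_jx_j)_j)=f_v(\xxx)$ for every $(\lambda_j)_j\in\Lambda_v$ and every $(x_j)_j\in\Fvnz$.

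The plan is to exploit $\aaa$-homogeneity to reduce the problem from the non-compact space $\Fvnz$ to the compact subset $\Dav=\Ov^n-(\piv^{a_1}\Ov\times\cdots\times\piv^{a_n}\Ov)$ introduced in \ref{begdav}, and then use a standard compactness-plus-local-constancy argument there. First I would recall from \ref{davdavdav} that every $\xxx\in\Fvnz$ can be written as $t^{-1}\cdot\yyy$ for some $t\in\Fvt$ and some $\yyy\in\Dav$ (indeed $\yyy=\piv^{r_v(\xxx)}\cdot\xxx\in\Dav$ works). Since the action of $(\lambda_j)_j\in\Fvtn$ by coordinatewise multiplication commutes with the $\Fvt$-action $t\cdot\xxx=(t^{a_j}x_j)_j$, and since $f_v(t\cdot\zzz)=|t|_v^b f_v(\zzz)$, one checks that $f_v((\lambda_jx_j)_j)=f_v(\xxx)$ for all $\xxx\in\Fvnz$ as soon as $f_v((\lambda_j y_j)_j)=f_v(\yyy)$ for all $\yyy\in\Dav$; so it suffices to build $\Lambda_v$ that works on $\Dav$.

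Next, fix $\yyy\in\Dav$. Because $f_v$ is locally constant at $\yyy$, there is an open neighbourhood of $\yyy$ on which $f_v$ is constant; shrinking it, I can take it of the form $\yyy+\piv^{N(\yyy)}\Ov^n$ for some integer $N(\yyy)\geq \max_j(a_j)$ (so that in particular $y_j+\piv^{N(\yyy)}\Ov^n$ has the same $v$-valuation pattern as $y_j$, keeping us inside $\Fvnz$). I would then observe that the open set $\{\,\xxx : \forall j,\ x_j\in \Ov^\times\ \text{or}\ v(x_j)=v(y_j)\,\}$ intersected with $\yyy+\piv^{N(\yyy)}\Ov^n$ contains $\yyy$, and on it, multiplying each coordinate $y_j$ by a unit $\lambda_j$ with $\lambda_j\equiv 1 \pmod{\piv^{M(\yyy)}}$ for $M(\yyy)$ large enough (depending on $N(\yyy)$ and on $\min_j v(y_j)$ over the coordinates with $y_j\neq 0$, of which there are finitely many possibilities since $\yyy\in\Ov^n$) keeps $(\lambda_jy_j)_j$ inside the constancy neighbourhood. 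The open sets $\yyy+\piv^{N(\yyy)}\Ov^n$ cover the compact set $\Dav$; extract a finite subcover indexed by $\yyy_1\doots \yyy_m$, set $M=\max_i M(\yyy_i)$, and take $\Lambda_v=(1+\piv^{M}\Ov)^n$, which is an open and compact subgroup of $\Fvtn$.

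The main obstacle I anticipate is the bookkeeping in the middle step: making sure that for a point $\yyy'$ in the constancy ball around $\yyy_i$, multiplying by a unit $\lambda_j$ close to $1$ does not move $(\lambda_jy_j')_j$ out of that same ball — this requires controlling $\lambda_jy_j' - y_j'=(\lambda_j-1)y_j'$, and since $y_j'\in\Ov$ one has $v((\lambda_j-1)y_j')\geq v(\lambda_j-1)\geq M$, so choosing $M\geq N(\yyy_i)$ for every $i$ suffices — and then also rechecking that $(\lambda_jy_j')_j$ still lies in $\Fvnz$, which is automatic because $\lambda_j$ is a unit and hence $v(\lambda_jy_j')=v(y_j')$ coordinatewise, so the coordinate realising the minimal $\lceil -v(\cdot)/a_\cdot\rceil$ is unchanged and in particular some coordinate is still nonzero. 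Once these valuation estimates are pinned down, the finite-subcover argument closes the proof cleanly.
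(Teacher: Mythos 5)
Your proof is correct and follows essentially the same route as the paper's: reduce via $\aaa$-homogeneity to the compact set $\Dav$, use local constancy plus compactness to extract a finite cover by constancy neighbourhoods, and take an open compact subgroup respecting all of them. The only real difference is that by using explicit $\piv^{N}$-balls and the observation that $\Dav\subset\Ov^n$, you get the concrete subgroup $\Lambda_v=(1+\piv^{M}\Ov)^n$ from a single finite subcover, whereas the paper's proof takes two layers of compactness (first covering $\Dav$ by balls, then covering each ball by translates of open subgroups) and intersects the resulting subgroups — a slightly less streamlined way to reach the same conclusion.
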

\begin{proof}
We set as before $\Dav:=(\Ov^n)-(\piv^{a_1}\Ov)\times\cdots\times(\piv^{a_n}\Ov)$.  By \ref{begdav}, the set $\Dav\subset\Fvnz$ is open and compact subset of $\Fvnz$. There exists a finite set $\{B(\xxx_i,d_i)\}_i\subset\Dav$ of open balls in $\Fvnz$, where $d_i>0$, which cover $\Dav$ and such that for every~$i$, the restriction $f_v|_{B(\xxx_i,d_i)}$ is constant. The open balls in $\Fvnz$ are also closed and hence compact. For every~$i$ and every $\yyy\in B(\xxx_i,d_i)$,  the set of $(\lambda _j)_j\in\Fvtn$ such that $(\lambda _jy_j)_j\in B(\xxx_i,d_i)$ is an open neighbourhood of $\mathbf 1\in\Fvtn$ and thus contains an open subgroup $\Lambda ^i_{\yyy}\subset\Fvtn$. 
For every~$i$, the open sets $\{\Lambda^i_{\yyy}\cdot\yyy\}_{\yyy\in B(\xxx_i,d_i) }$ form an open covering of $B(\xxx_i,d_i)$ and there exists a finite set of points $\yyy^i_1\doots \yyy^i_{m_i}\in B(\xxx_i,d_i) $ such that $\{\Lambda^i_{\yyy^i _e}\cdot\yyy^i_e\}_{e=1\doots m_i}$ is an open covering of $B(\xxx_i,d_i)$. Now $\Lambda_v :=\cap _i\cap _{e=1}^{m_i}\Lambda^i_{\yyy^i_e}$ is an open subgroup of $\Fvtn$ satisfying that for any $\xxx\in\Dav$ and any $(\lambda_j)_j\in\Lambda_v,$ one has $f_v((\lambda_jx_j)_j)=f_v(\xxx)$. Let $\yyy\in\Fvnz$ and $(\lambda_j)_j\in\Lambda_v$. By \ref{begdav}, there exists $t\in\Fvt$ such that $t\cdot\yyy\in\Dav$. We have $$|t|^{|\aaa|}_vf_v((\lambda_jy_j)_j)=f_v((t^{a_j}\lambda_jy_j)_j)=f_v((\lambda_jt^{a_j}y_j))=f_v((t^{a_j}y_j)_j)=|t|^{|\aaa|}_vf_v(\yyy) ,$$ and the statement follows. 
\end{proof}
\begin{lem}\label{ajvis} 
Let $\sss\in\Omega_{>0}$. Let $v\in M_F^0$ and let $f_v:\Fvnz\to\RR_{\geq 0}$ be locally constant $\aaa$-homogenous function of weighted degree $|\aaa|$.
There exists a compact open subgroup $K_v$ of $[\TTa(F_v)]$ such that $\wH_v(\sss,\chi)=0$, for any character $\chi\in[\TTa (F_v)]^*$ not vanishing on $K_v.$ Moreover, if $f_v=f_v^\#$ is toric, one can choose $K_v=[\TTa(\Ov)]$.
\end{lem}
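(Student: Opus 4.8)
The plan is to exploit the fact that, because $f_v$ is locally constant and $\aaa$-homogenous, the function $H_v(\sss,-)^{-1}$ on $[\TTa(F_v)]$ is invariant under a suitable open compact subgroup, and then to run the standard orthogonality-of-characters argument.

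First I would apply Lemma \ref{vccmu} to $f_v$, which has weighted degree $b=|\aaa|$: there is an open and compact subgroup $\Lambda_v\subset\Fvtn$ such that $f_v((\lambda_jx_j)_j)=f_v(\xxx)$ for every $(\lambda_j)_j\in\Lambda_v$ and every $\xxx\in\Fvnz$. Replacing $\Lambda_v$ by $\Lambda_v\cap\Ovtn$ (still open and compact, and still a subgroup contained in the original $\Lambda_v$), I may assume $\Lambda_v\subset\Ovtn$, so that moreover $|\lambda_jx_j|_v=|x_j|_v$ for all $j$. Recall that $H_v(\sss,-)$ is the function on $[\TTa(F_v)]=\Fvtn/(\Fvt)_\aaa$ induced by the $(\Fvt)_\aaa$-invariant map $\xxx\mapsto f_v(\xxx)^{\aaa\cdot\sss/|\aaa|}\prod_{j=1}^n|x_j|_v^{-s_j}$. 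By the two invariances just noted, this map — hence also $\xxx\mapsto H_v(\sss,-)^{-1}$ — is invariant under multiplication by $\Lambda_v$. Since $\qav:\Fvtn\to[\TTa(F_v)]$ is open and continuous, the image $K_v:=\qav(\Lambda_v)$ is an open compact subgroup of $[\TTa(F_v)]$ contained in $[\TTa(\Ov)]=\qav(\Ovtn)$, and $H_v(\sss,-)^{-1}$ is $K_v$-invariant.

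Next I would perform the orthogonality computation. By Proposition \ref{stml}, for $\sss\in\Omega_{>0}$ one has $H_v(\sss,-)^{-1}\in L^1([\TTa(F_v)],\mu_v)$, so the integral defining $\wH_v(\sss,\chi)$ converges absolutely. Given $k\in K_v$, the change of variables $\yyy\mapsto k\yyy$ is legitimate because $\mu_v$ is a Haar measure on $[\TTa(F_v)]$ (cf.\ \ref{rudj}); combining it with the $K_v$-invariance of $H_v(\sss,-)^{-1}$ and the multiplicativity of $\chi$ yields $\wH_v(\sss,\chi)=\chi(k)\,\wH_v(\sss,\chi)$. If $\chi$ does not vanish on $K_v$, there is $k\in K_v$ with $\chi(k)\neq 1$, which forces $\wH_v(\sss,\chi)=0$.

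Finally, in the toric case $f_v=f_v^\#$ I would argue that $K_v$ can be taken to be $[\TTa(\Ov)]$ itself, in one of two ways: either invoke Lemma \ref{torfour} directly, whose explicit formula already gives $\wH_v^\#(\sss,\chi)=0$ whenever $\chi|_{[\TTa(\Ov)]}\neq 1$; or observe that, since $f_v^\#|_{\Dav}=1$ by Lemma \ref{davdavdav} and $\Ovtn\subset\Dav$ (with $v(\lambda_jx_j)=v(x_j)$ for $\lambda_j\in\Ovt$), the homogeneity argument of \ref{vccmu} shows that $f_v^\#$ is $\Ovtn$-invariant, so the previous paragraphs apply verbatim with $\Lambda_v=\Ovtn$. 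I do not expect a genuine obstacle here; the only point requiring a little care is bookkeeping the invariance of $H_v(\sss,-)^{-1}$ under the absolute values $|x_j|_v$, which is precisely why one shrinks $\Lambda_v$ into $\Ovtn$.
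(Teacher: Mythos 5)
Your proposal is correct and follows essentially the same route as the paper: both apply Lemma~\ref{vccmu} to get a compact open $\Lambda_v$ under which $f_v$ is invariant, intersect it with $(\Ov^\times)^n$ so that the factors $\lvert x_j\rvert_v^{-s_j}$ are also unchanged, take $K_v=q_v^{\aaa}(\Lambda_v\cap(\Ov^\times)^n)$, observe $H_v(\sss,-)^{-1}$ is $K_v$-invariant, and conclude by orthogonality of characters (the paper states this step without writing out the change-of-variables computation you give); for the toric case the paper simply cites Lemma~\ref{torfour}, which is one of the two equally valid options you offer.
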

\begin{proof}
By \ref{vccmu}, there exists an open and compact subgroup $\Lambda_v\subset\Fvtn$ such that $f_v$ is $\Lambda_v$-invariant. Let us set $K_v:=\qav(\Lambda_v\cap\Ovn)\subset[\TTa(\Fv)]$. It is open and compact subgroup of $[\TTa(F_v)]$. Let $\xxx\in[\TTa(F_v)]$ and let $\widetilde\xxx\in\Fvtn$ be a lift of~$\xxx$. Let $\yyy\in K_v$ and let $\widetilde\yyy\in\Lambda\cap\Ovn$ be a lift of~$\yyy$. We have that $$H_v(\sss,\yyy\xxx)=f_v((\widetilde y_j\widetilde x_j)_j)^{\frac{\aaa\cdot\sss}{|\aaa|}}\prod _{j=1}^n|\widetilde y_j\widetilde x_j|_v^{-s_j}=f_v(\wx)^{\frac{\aaa\cdot\sss}{|\aaa|}}\prod _{j=1}^n|\widetilde x_j|^{s_j}=H_v(\sss,\xxx).$$ Therefore $\xxx\mapsto H_v(\sss,\xxx)$, and hence $\xxx\mapsto H_v(\sss,\xxx)^{-1}$ are invariant for the open and compact subgroup $\qav(\Lambda _v\cap\Ovn)\subset[\TTa(F_v)].$ We deduce that for any $\chi_v\in[\TTa(F_v)]^*$ which does not vanish on $K_v$ one has $\wH_v(\sss,\chi_v)=0$. Moreover, in the case $f_v$ is toric, by Lemma \ref{torfour}, one has that $\wH_v(\sss,\chi_v)=0$ for every $\chi_v$ not vanishing on $[\TTa(\Ov)]$.  
\end{proof}
\section{Product of transforms over finite places}  \label{mmhu}
Using the results from local analysis, we establish some growth properties on the product of transforms over all finite places. If $\xxx\in\RR^n$, we will denote by $\Omega_{>\xxx}$ the open subset in $\CC^n$ given by $\{\sss\in\CC^n|\forall j: \Re(s_j)>x_j\}$.

The assumption on $\aaa$ is that $\aaa\in\ZZ^n_{>0}$ if $n\geq 2$, and $\aaa=a\in\ZZ_{>1}$ if $n=1$. As before $f_v$ are assumed to be locally constant for $v\in M_F^0$.\subsection{}
For a character $\chi\in[\TTa(\AAF)]^*$ and any $\sss\in\Omega_{>0},$ we set $$\wH _{\fin}(\sss,\chi):=\prod _{\vMFz} \wH _v(\sss,\chi_v).$$ 
One has following proposition. 
\begin{prop}{\label{fintr}}
Let $\aaa\in\ZZ^n_{>0}$ if $n\geq 2$ and let $\aaa=a_1\in\ZZ_{>1}$ if $n=1$. For every character $\chi\in[\TTa(\AAF)]^*$, the infinite product $\wH_{\fin}(\sss,\chi)$ converges for $\sss\in\Omega_{>1}$ and defines a holomorphic function in the domain $\Omega_{>1}$. There exists a unique holomorphic function $\phi _{\fin}(-,\chi)$ on $\Omega_{>\frac 23}$ such that one has an equality of meromorphic functions in the domain $\Omega_{>\frac23}:$ $$\wH_{\fin}(\sss,\chi)=\phi _{\fin}(\sss,\chi) \prod _{j=1}^nL(s_j,\chij).$$ Moreover, for every compact $\mathcal K\subset \RR^n_{>\frac 23}$, there exists $C(\mathcal K)$ such that $|\phi_{\fin}(\sss,\chi)|\leq C(\mathcal K)$ for every character $\chi\in[\TTa(\AAF)]^*$, provided that $\Re(\sss)\in\mathcal K$.
\end{prop}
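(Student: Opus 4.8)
<br>

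**Proof plan for Proposition \ref{fintr}.**

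The plan is to factor the Euler product $\wH_{\fin}(\sss,\chi)=\prod_{\vMFz}\wH_v(\sss,\chi_v)$ into a "main" piece carrying the poles and a "remainder" piece that is holomorphic and uniformly bounded in a larger region. First I would recall that by hypothesis $f_v$ is the toric $\aaa$-homogenous function of weighted degree $|\aaa|$ for all $\vMFz$ outside a finite set $S_0\subset M_F^0$. For $v\notin S_0$, Lemma \ref{torfour} gives the exact formula
$$
\wH_v(\sss,\chi_v)=\frac{\prodjn L_v(s_j,\chij_v)}{\zeta_v(\aaa\cdot\sss)}
$$
when $\chi_v|_{[\TTa(\Ov)]}=1$, and $\wH_v(\sss,\chi_v)=0$ otherwise. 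Since $\chi\in[\TTa(\AAF)]^*$, by definition of the restricted product $\chi_v|_{[\TTa(\Ov)]}=1$ for almost all $v$; if $\chi_v|_{[\TTa(\Ov)]}\neq 1$ at some $v\notin S_0$ then $\wH_{\fin}(\sss,\chi)\equiv 0$ and the statement is trivial (take $\phi_{\fin}=0$), so I may assume $\chi_v|_{[\TTa(\Ov)]}=1$ for every $v\notin S_0$. Then
$$
\wH_{\fin}(\sss,\chi)=\Big(\prod_{v\in S_0}\wH_v(\sss,\chi_v)\Big)\cdot\prod_{v\notin S_0}\frac{\prodjn L_v(s_j,\chij_v)}{\zeta_v(\aaa\cdot\sss)}.
$$
Writing $\prod_{\vMFz}L_v(s_j,\chij_v)=L(s_j,\chij)$ and $\prod_{\vMFz}\zeta_v(\aaa\cdot\sss)=\zeta_F(\aaa\cdot\sss)$, I can insert and cancel the missing local factors at $v\in S_0$: define
$$
\phi_{\fin}(\sss,\chi):=\frac{1}{\zeta_F(\aaa\cdot\sss)}\prod_{v\in S_0}\Big(\wH_v(\sss,\chi_v)\cdot\zeta_v(\aaa\cdot\sss)\cdot\prodjn L_v(s_j,\chij_v)^{-1}\Big).
$$
By construction $\wH_{\fin}(\sss,\chi)=\phi_{\fin}(\sss,\chi)\prodjn L(s_j,\chij)$ as meromorphic functions wherever both sides are defined, which includes $\Omega_{>1}$ where everything converges absolutely; and convergence/holomorphy of $\wH_{\fin}$ on $\Omega_{>1}$ follows from the standard absolute convergence of the $L$-functions and zeta function there together with \ref{corhlor} for the finitely many factors at $v\in S_0$.

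The heart of the argument is showing $\phi_{\fin}(-,\chi)$ is holomorphic on $\Omega_{>\frac23}$ with bounds uniform in $\chi$. The factor $1/\zeta_F(\aaa\cdot\sss)$ is entire and, on any compact $\mathcal K\subset\RR^n_{>\frac23}$, one has $\Re(\aaa\cdot\sss)=\Sars>\tfrac23|\aaa|\geq\tfrac43$ (using $|\aaa|\geq 2$, which holds since either $n\geq 2$ and all $a_j\geq 1$, or $n=1$ and $a_1\geq 2$); on the region $\Re\geq\tfrac43$ the function $1/\zeta_F$ is holomorphic and bounded, so this factor contributes a uniform bound independent of $\chi$. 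For each fixed $v\in S_0$: the factor $\zeta_v(\aaa\cdot\sss)=(1-\pivv^{\aaa\cdot\sss})^{-1}$ is holomorphic and bounded on $\Re(\aaa\cdot\sss)>\tfrac43$; the factors $L_v(s_j,\chij_v)^{-1}=1-\pivv^{s_j}\chij_v(\piv)$ are entire and bounded on any compact $\Re(\sss)$-region, uniformly in $\chi$ since $|\chij_v(\piv)|\leq 1$; and $\wH_v(\sss,\chi_v)$ is holomorphic on $\Omega_{>0}\supset\Omega_{>\frac23}$ by \ref{corhlor} and bounded on $\mathcal K$ uniformly in $\chi_v$ by \ref{stml} (Proposition \ref{stml} gives $|\wH_v(\sss,\chi_v)|\le C(\mathcal K)$ for all characters). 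Multiplying these finitely many uniformly bounded holomorphic functions yields that $\phi_{\fin}(-,\chi)$ is holomorphic on $\Omega_{>\frac23}$ and $|\phi_{\fin}(\sss,\chi)|\leq C(\mathcal K)$ for all $\chi$ whenever $\Re(\sss)\in\mathcal K$. Uniqueness of $\phi_{\fin}$ is immediate since $\prodjn L(s_j,\chij)$ is not identically zero and the two functions agree as meromorphic functions on the connected domain $\Omega_{>\frac23}$.

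The main obstacle I anticipate is bookkeeping rather than conceptual: making sure the bound $\Re(\aaa\cdot\sss)>\tfrac43$ genuinely holds on compacts of $\RR^n_{>\frac23}$ so that $1/\zeta_F(\aaa\cdot\sss)$ and $\zeta_v(\aaa\cdot\sss)$ are controlled — this is exactly where the hypothesis $|\aaa|\geq 2$ (i.e. $a_1>1$ when $n=1$) is used — and confirming that the cancellation of local $L$-factors at $v\in S_0$ is legitimate, i.e. that inserting $\prodjn L_v(s_j,\chij_v)^{-1}$ does not introduce spurious singularities (it does not, since $L_v^{-1}$ is a polynomial in $\pivv^{s_j}$). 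A secondary point to handle carefully is the degenerate case where $\wH_{\fin}(\sss,\chi)$ vanishes identically because some $\wH_v(\sss,\chi_v)=0$ for $v\in S_0$ with $\chi_v$ not vanishing on the subgroup $K_v$ of \ref{ajvis}; in that case one simply sets $\phi_{\fin}=0$, and the stated bound holds trivially.
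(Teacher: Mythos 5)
Your proof is correct and follows essentially the same route as the paper: you appeal to Lemma \ref{torfour} for the toric places, set $\phi_{\fin}(\sss,\chi)=\zeta_F(\aaa\cdot\sss)^{-1}\prod_{v\in S_0}\wH_v(\sss,\chi_v)\zeta_v(\aaa\cdot\sss)\prod_j L_v(s_j,\chij_v)^{-1}$, and control each factor on $\Omega_{>\frac23}$ using $\Re(\aaa\cdot\sss)>\frac43$ together with Proposition \ref{stml}. Your explicit treatment of the degenerate case where $\chi_v|_{[\TTa(\Ov)]}\neq 1$ for some $v\notin S_0$ (forcing $\wH_{\fin}\equiv 0$ and $\phi_{\fin}=0$) is in fact a bit more careful than the paper, which passes over this silently.
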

\begin{proof}
Let~$S$ be the union of the set of finite places~$v$ for which $f_v$ is not toric and the set of the infinite places. By Corollary \ref{corhlor}, for every character $\chi\in [\TTa (\AAF)]^*$, the function $$\Omega_{>0} \to\CC\hspace{1cm}\sss\mapsto\prod _{v\in S\cap M_F^0}\wH _v(\sss,\chi_v) $$ is holomorphic. For every character $\chi\in[\TTa(\AAF)]^*$ and every $v\in M_F^0-S$,  the functions $$\sss\mapsto\prod _{j=1}^nL_v(s_j,\chij_v)=\prod _{j=1}^n\frac 1{1-\pivv^{s_j}\chij_v(\piv)}$$ and $$\sss\mapsto\zeta_v(\aaa\cdot\sss)=\frac 1{1-\pivv^{\aaa\cdot\sss}}$$ are holomorphic and non vanishing in the domain $\Omega_{>0}$. We deduce that 
for every $\sss\in\Omega_{>1}$ and every $\chi\in[\TTa (\AAF)]^*,$ the product \begin{align*}\prod_{\vMFz-S}\wH_v(\sss,\chi_v)&=\prod _{\vMFz-S}\wH_v^{\#} (\sss,\chi _v)\\&=\prod _{v\in M_F^0-S}\frac{\prod _{j=1}^nL_v(s_j,\chij _v)}{\zeta _v(\aaa\cdot\sss)}\end{align*} converges to $$\dfrac{\prod _{j=1}^nL(s_j,\chij_v)}{\zeta _F(\aaa\cdot\sss)\prod_{v\in S\cap M_F^0}\frac{\prod _{j=1}^nL_v(s_j,\chij_v)}{\zeta _v(\aaa\cdot\sss)}}. $$  
We conclude that for $\sss\in\Omega_{>1}$ and $\chi\in[\TTa(\AAF)]^*$, the product $\wH_{\fin}(\sss,\chi)$ converges to $$\bigg(\prod _{v\in S\cap M_F^0}\wH_v(\sss,\chi_v)\bigg)\cdot\frac{\prod _{j=1}^nL(s_j,\chij _v)}{\zeta_F (\aaa\cdot\sss)\prod_{v\in S\cap M_F^0}\frac{\prod _{j=1}^nL_v(s_j,\chij_v)}{\zeta _v(\aaa\cdot\sss)}} $$ and using holomorphicity of $\sss\mapsto\wH_v(\sss,\chi_v)$ for $\sss\in\Omega_{>0}$, that the resulting function is holomorphic in $\sss$ in the domain $\Omega_{>1}$. 
Let us establish the meromorphic extension to the domain $\Omega_{>\frac23}$. 
We set $$\phi_{\fin} (\sss,\chi):=\frac{1}{\zeta_F(\aaa\cdot\sss)}\prod _{v\in S\cap M_F^0}\frac{ \wH _v(\sss,\chi _v)\zeta_v(\aaa\cdot\sss)}{\prod _{j=1}^nL_v(s_j,\chij)}. $$
If $\sss\in\Omega_{>\frac 23}$, then $\Re(\aaa\cdot\sss){>\frac 43}$ (because if $n\geq 2$ then $\aaa\in\ZZ^n_{>0}$ and if $n=1$ then $\aaa\in\ZZ_{>1}$). Using the fact that the function $\zeta _F$ is holomorphic and without zeros in the domain $\Omega_{>\frac 43}$, we deduce that the function $\sss\mapsto \zeta_F(\aaa\cdot\sss)^{-1}$ is holomorphic in the domain $\Omega_{>\frac 23}$. We have already seen that for $v\in M_F^0\cap S$, the function $\sss\mapsto \big(\prodjn L_v(s_j,\chij_v\big)^{-1}$, the function $\sss\mapsto \zeta_v(\aaa\cdot\sss)$ and $\sss\mapsto \wH_v(\sss,\chi_v)$ are holomorphic in the domain $\sss\in\Omega_{>0}.$ It follows that $\phi(-,\chi)$ is holomorphic in the domain $\Omega_{>\frac23}$ and is the unique holomorphic function which satisfies that $\wH_{\fin}(-,\chi)=\phi(-,\chi)\prodjn L(s_j,\chi^{(j)})$ in this domain. 

Let $\mathcal K\subset\RR^n_{>\frac23}$ be a compact. By Proposition \ref{stml}, for every $v\in S\cap M_F^0$, there exists $C_1>0$ such that for every $\sss\in \mathcal K+i\RR^n$ and every $\chi\in[\TTa(\AAF)]^*$, one has $|\wH_v(\sss,\chi_v)|\leq C_1.$ One has that \begin{equation}\bigg|\frac1{\prod _{j=1}^nL_v(s_j,\chij_v)}\bigg|=\prod _{j=1}^n\left|(1-\chij_v(\pi _v)\pivv^{s_j})\right|\leq\prodjn 2= 2^n \end{equation} for every $\sss\in\Omega_{>\frac 23}$. Note that for $v\in S\cap M_F^0$, we have $$\big|\zeta_v(\aaa\cdot\sss)\big|=\bigg|\frac{1}{1-\pivv^{\aaa\cdot\sss}}\bigg|\leq \frac 1{1-\pivv}=\zeta_v(1),$$ whenever $\sss\in\Omega_{>\frac 23}$. 
Finally, for $\sss\in\Omega_{>\frac 23}$, one has $\Re(\aaa\cdot\sss)>\frac43$, and we have \begin{equation}\bigg|\frac {1}{\zeta _F(\aaa\cdot\sss)}\bigg|\leq\frac{1}{\zeta_F(\Re(\aaa\cdot\sss))}\leq\frac{1}{\zeta_F(\frac43)}. \end{equation} 
We conclude that for $\sss\in\KiRn$ and $\chi\in[\TTa(\AAF)]^*$ one has \begin{align*}|\phi(\sss,\chi)|&=\frac{1}{\zeta_F(\aaa\cdot\sss)}\prod _{v\in S\cap M_F^0}\frac{ \wH _v(\sss,\chi _v)\zeta_v(\aaa\cdot\sss)}{\prod _{j=1}^nL_v(s_j,\chij)}\\&\leq{\frac{(2^nC_1\zeta_v(1))^{|S\cap M_F^0|}}{\zeta_F\big(\frac43\big)}}.\end{align*}
We have proven that $\phi$ is uniformly bounded for $\chi\in[\TTa(\AAF)]^*$ when $\Re(\sss)\in\mathcal K$. The proof is completed.
\end{proof}
\section{Calculations in archimedean case}\label{Archimed}
\label{aboutcharacters}
The goal of this section is to analyse the Fourier transforms of local heights at the infinite places.
%
%
%
\subsection{}
In this paragraph we recall some facts about integration by parts. Let $\vMFi.$

Consider the vector field \begin{align}
 \quad
\frac{\partial}{\partial x_v}:=&
               \begin{cases}
\frac{\partial}{\partial x},&\text{if~$v$ is real}\\
\frac{\partial}{\partial z},&\text{if~$v$ is complex}               \end{cases} 
               \end{align} 
defined on $\Fv$. \begin{lem} \label{grisv}
Let $v\in M_F^\infty$. Let $f,g:\Fvt\to\CC$ be smooth functions. We suppose that $\lim _{|x_v|_v\to\infty}fg(x_v)=0$ and that the functions $\Fvt\to\CC$ given by $x_v\mapsto\frac{\partial f(x_v)}{\partial x_v}g(x_v)$ and $x_v\mapsto f(x_v)\frac{\partial g(x_v)}{\partial x_v}$ are absolutely ${dx_v}$-integrable. If~$v$ is real, we suppose further that $\lim _{x_v\to 0}fg(x_v)$ exists and is a finite real number. One has that $$\int _{\Fv}\frac{\partial f}{\partial x_v}gdx_v=-\int _{\Fv}f\frac{\partial g}{\partial x_v}dx_v.$$
\end{lem}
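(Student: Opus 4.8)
The plan is to treat the real and complex archimedean places separately, since the vector field $\frac{\partial}{\partial x_v}$ is the ordinary derivative when $\vMFR$ and the Wirtinger operator $\frac{\partial}{\partial z}=\tfrac12\bigl(\frac{\partial}{\partial x}-i\frac{\partial}{\partial y}\bigr)$ when $\vMFC$. In each case the assertion is an integration-by-parts identity; the substance lies in controlling the boundary behaviour, which is complicated by the fact that the domain $\Fvt$ has the origin removed.

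Suppose first that $\vMFR$, so $\Fvt=\RR^\times$ and $dx_v$ is Lebesgue measure. First I would split the integral over the two half-lines $(-\infty,0)$ and $(0,\infty)$. On each of them $fg$ is smooth and, since $\frac{\partial f}{\partial x_v}g$ and $f\frac{\partial g}{\partial x_v}$ are $dx_v$-integrable by hypothesis, so is their sum $(fg)'=\frac{\partial f}{\partial x_v}g+f\frac{\partial g}{\partial x_v}$; hence $fg$ has finite one-sided limits at $0^{\pm}$ and at $\pm\infty$. Applying the classical one-variable integration by parts on each half-line and summing gives
\[
\int_{\Fv}\frac{\partial f}{\partial x_v}g\,dx_v=\Bigl(\lim_{x_v\to 0^-}fg-\lim_{x_v\to 0^+}fg\Bigr)-\int_{\Fv}f\frac{\partial g}{\partial x_v}\,dx_v,
\]
the contributions at $\pm\infty$ vanishing because $\lim_{|x_v|_v\to\infty}fg(x_v)=0$. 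The extra hypothesis that $\lim_{x_v\to 0}fg(x_v)$ exists forces the two one-sided limits at the origin to coincide, so the bracketed term is $0$ and the identity follows.

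Now suppose $\vMFC$, so $\Fvt=\CC^\times$, $dx_v=2\,dx\,dy$ and $\frac{\partial}{\partial x_v}=\frac{\partial}{\partial z}$. Since $\frac{\partial}{\partial z}(fg)=\frac{\partial f}{\partial z}g+f\frac{\partial g}{\partial z}$ is $dx_v$-integrable by hypothesis, it suffices to show $\int_{\Fv}\frac{\partial}{\partial z}(fg)\,dx_v=0$, equivalently $\int_{\CC}\frac{\partial}{\partial z}(fg)\,dx\,dy=0$. I would obtain this from the two-dimensional Stokes formula applied to the form $fg\,d\bar z$ on $\CC^\times$: for $0<\epsilon<R$,
\[
\int_{\{\epsilon<|z|<R\}}\frac{\partial}{\partial z}(fg)\,dx\,dy=\frac{i}{2}\Bigl(\oint_{|z|=R}-\oint_{|z|=\epsilon}\Bigr)fg\,d\bar z,
\]
and then let $R\to\infty$ and $\epsilon\to 0$. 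The left-hand side tends to $\int_{\CC}\frac{\partial}{\partial z}(fg)\,dx\,dy$ by dominated convergence, using the integrability of $\frac{\partial}{\partial z}(fg)$; and the two circular boundary integrals tend to $0$, the contribution over $|z|=R$ because $fg\to0$ at infinity and the contribution over $|z|=\epsilon$ because the puncture is removed from $\Fvt$, both limits being justified with the help of the decay and integrability hypotheses, which control the circular averages of $fg$. An essentially equivalent route is a Fubini argument: writing $\frac{\partial}{\partial z}=\tfrac12(\partial_x-i\partial_y)$ and slicing along horizontal and vertical lines — each of which misses the origin — one reduces to the one-variable identities $\int_{\RR}\partial_x(fg)(x+iy)\,dx=0$ for $y\neq0$ and $\int_{\RR}\partial_y(fg)(x+iy)\,dy=0$ for $x\neq0$, which hold because $fg$ tends to $0$ along every such line.

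The routine part — splitting the domain, invoking the one-dimensional fundamental theorem of calculus, and justifying the passages to the limit from the absolute-integrability hypotheses — is unremarkable. The point requiring genuine care, which I expect to be the main obstacle, is the analysis of the boundary contributions: the boundary circles at the puncture $z=0$ and at infinity in the complex case, and the matching of the one-sided limits at $0$ in the real case, where it is precisely the extra hypothesis on $\lim_{x_v\to 0}fg$ that rules out a surviving boundary term.
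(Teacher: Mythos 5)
Your real-place argument is the same as the paper's and is correct; the complex case is where the proposal gaps.

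The assertion that the decay and integrability hypotheses ``control the circular averages of $fg$'' is not supported, and without further control the boundary integrals in your Stokes argument do not vanish. For a complex place the lemma imposes no condition on $fg$ near $0$ (the extra limit at $0$ is required only when $v$ is real), and the pointwise decay $fg\to 0$ at infinity does not give $R\cdot\sup_{|z|=R}|fg|\to 0$. Concretely, let $\phi$ be a smooth cutoff on $[0,\infty)$ with $\phi\equiv 1$ on $[0,1]$ and $\phi\equiv 0$ on $[2,\infty)$, set $f(z)=\phi(|z|)/\bar z$ on $\Fvt=\CC^{\times}$ and $g\equiv 1$. Then $fg\to 0$ at infinity, $\frac{\partial f}{\partial z}=\phi'(|z|)/(2|z|)$ is smooth and compactly supported so $\frac{\partial f}{\partial z}\cdot g\in L^1$, and $f\cdot\frac{\partial g}{\partial z}\equiv 0$; yet $\oint_{|z|=\epsilon}fg\,d\bar z=-2\pi i$ for all small $\epsilon$, and a polar-coordinate computation gives $\int_{\Fv}\frac{\partial f}{\partial x_v}g\,dx_v=-2\pi\neq 0=-\int_{\Fv}f\frac{\partial g}{\partial x_v}\,dx_v$. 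Replacing $\phi$ by the complementary cutoff $1-\phi$ (so $f$ is supported at large $|z|$) gives an example that even satisfies $\lim_{|x_v|_v\to 0}fg=0$ and shows the outer circle is equally uncontrolled. So the Stokes route cannot close without some quantitative control on $fg$ near $0$ and at infinity beyond what is stated. For reference, Corollary \ref{becac}, where the lemma is applied, adds $\lim_{|x_v|_v\to 0}fg=0$ to the hypotheses; that does give $|\oint_{|z|=\epsilon}fg\,d\bar z|\leq 2\pi\epsilon\sup_{|z|=\epsilon}|fg|\to 0$ for the inner circle, but the outer circle still requires a separate argument.

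The paper's own complex-case proof is the Fubini slicing that you sketch as an ``essentially equivalent'' alternative: write $\frac{\partial}{\partial z}=\frac12(\partial_x-i\partial_y)$ and integrate by parts along horizontal and vertical slices. That route has the same gap in a different guise: treating the $\partial_x f\cdot g$ and $\partial_y f\cdot g$ pieces separately by one-variable integration by parts requires each to be $L^1$ on almost every slice, and this does not follow from $\frac{\partial f}{\partial z}\cdot g\in L^1$ --- in the first example above $\frac{\partial f}{\partial z}\equiv 0$ near the origin while $|\partial_x f|\sim|z|^{-2}$ is not locally integrable. Either route ultimately depends on stronger hypotheses than those stated.
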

\begin{proof}
Suppose~$v$ is real. By applying the integration by parts, we get that \begin{align*}\int _{\Fv}\frac{\partial f}{\partial x_v}gdx_v\hskip-1cm&\\&=\int _{\RR}\frac{\partial f}{\partial x}gdx\\
&=\int _{\RR_{>0}}\frac{\partial f}{\partial x}gdx +\int _{\RR_{<0}}\frac{\partial f}{\partial x}gdx\\
&=0-\lim _{x\to 0}fg(x)-\int _{\RR_{>0}}f\frac{\partial g}{\partial x}dx+ \lim _{x\to 0}fg(x)-0-\int _{\RR_{<0}}f\frac{\partial g}{\partial x}dx\\
&=-\int _{\RR}f\frac{\partial g}{\partial x}dx.
 \end{align*}
Suppose~$v$ is complex. By Fubini's theorem we have \begin{align}\begin{split}\int _{\Fv}\frac{\partial f}{\partial x_v}gdx_v\hskip-2cm&\\&=\int _{\RR^2}\bigg(\frac{\partial f(x+iy)}{\partial x}-i\frac{\partial f(x+iy)}{\partial y}\bigg)g (x+iy)dxdy\\
&=\int _\RR dy \int _\RR\frac{\partial f(x+iy)}{\partial x}g(x+iy)dx-\int_{\RR}dx\int_{\RR}i\frac{\partial f(x+iy)}{\partial y} g(x+iy)dy.\label{iyywe}
\end{split}
\end{align}
For every $y\in\RR$, by the conditions of our lemma, one has that $$\lim _{x\to\pm\infty}\frac{\partial f(x+iy)}{\partial x}g(x+iy)=\lim _{x\to\pm\infty}\frac{\partial g(x+iy)}{\partial x}f(x+iy)=0. $$ Using that $z\mapsto\frac{\partial f(z)}{\partial z}g(z)$ and of $z\mapsto f(z)\frac{\partial g(z)}{\partial x_v}$ are absolutely $-idzd\overline z=2dxdy$-integrable, we deduce that $x\mapsto \frac{\partial f(x+iy)}{\partial x}g(x+iy) $ and $x\mapsto f(x+iy)\frac {\partial g(x+iy)}{\partial x}$ are absolutely integrable for almost every~$y$. For such~$y$, one gets $$\int _{\RR}\frac{\partial f(x+iy)}{\partial x}g(x+iy)dx=-\int _{\RR}f(x+iy)\frac {\partial g(x+iy)}{\partial x}dx.$$ Similarly, for almost every $x\in\RR$ one has that $$\int _{\RR}\frac{\partial f(x+iy)}{\partial y}g(x+iy)dy=-\int _{\RR}f(x+iy)\frac{\partial g(x+iy)}{\partial y}dy.$$
We deduce that the last integral of (\ref{iyywe}) is equal to \begin{align*}
&=-\int _{\RR}dy\int_{\RR} f(x+iy)\frac{\partial g(x+iy)}{\partial x}dx +\int _{\RR}dx \int _{\RR}if(x+iy)\frac{\partial g(x+iy)}{\partial y}dy\\
&=-\int _{\RR^2}f \bigg(\frac{\partial g}{\partial x}-i\frac{\partial g}{\partial y}\bigg) dxdy\\
&=-\int _{\Fv}f\frac{\partial g}{\partial x_v}dx_v.
\end{align*}
\end{proof}
Note that \begin{equation}\label{jiyu}\frac{\partial \frac{x_v}{|x_v|_v}}{\partial x_v}=0\end{equation} whenever $x_v\neq 0$ (indeed, when~$v$ is real, one has that $\frac{x_v}{|x_v|}$ is a piecewise constant function and when~$v$ is complex, one has that $\frac{\partial \frac{x_v}{|x_v|_v}}{\partial x_v}=\frac{\partial ({\overline z}^{-1})}{\partial z}=0$).

Let $\nabla$ be the vector field on $\Fv$ given by $\frac{x_v\partial}{\partial x_v}$.
\begin{cor}\label{becac}
Let $v\in M_F^\infty$. Suppose that $f,g:\Fv\to\CC$ are continuous functions the restrictions of which to $\Fvt$ are smooth. Suppose further that $\lim _{|x_v|_v\to\infty}fg(x_v)=\lim _{|x_v|_v\to 0}fg(x_v)=0$ and that the functions $\nabla(f)g, f\nabla(g):\Fvt\to\CC$ are absolutely $d^*x_v$-integrable. One has that $$\int_{\Fvt}\nabla(f)g d^*x_v=-\int _{\Fvt}f \nabla(g)d^*x_v.$$
\end{cor}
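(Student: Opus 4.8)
The plan is to reduce Corollary \ref{becac} to Lemma \ref{grisv} by the substitution $\tilde f = f/|x_v|_v^{1/2}$, $\tilde g = g/|x_v|_v^{1/2}$, or more directly by writing the measure $d^*x_v = \frac{dx_v}{|x_v|_v}$ and moving the factor $|x_v|_v^{-1}$ into the functions. First I would observe that the vector field $\nabla = \frac{x_v \partial}{\partial x_v}$ already absorbs one power of $x_v$, so that for a smooth $h$ on $\Fvt$ one has $\nabla(h)\, d^*x_v = x_v \frac{\partial h}{\partial x_v} \frac{dx_v}{|x_v|_v}$. The cleanest route is to set $\phi := f \cdot \frac{x_v}{|x_v|_v}$ and keep $g$ as is, and then compute $\int_{\Fvt}\nabla(f)g\, d^*x_v$ as an integral against $dx_v$: since $\nabla(f)g\, d^*x_v = \frac{\partial f}{\partial x_v}\cdot \frac{x_v}{|x_v|_v}\cdot g\, dx_v$, and using (\ref{jiyu}) that $\frac{\partial}{\partial x_v}\!\left(\frac{x_v}{|x_v|_v}\right) = 0$ on $\Fvt$, we get $\frac{\partial f}{\partial x_v}\cdot\frac{x_v}{|x_v|_v} = \frac{\partial}{\partial x_v}\!\left(f \cdot \frac{x_v}{|x_v|_v}\right) = \frac{\partial \phi}{\partial x_v}$. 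Hence $\int_{\Fvt}\nabla(f)g\, d^*x_v = \int_{\Fvt}\frac{\partial \phi}{\partial x_v}\, g\, dx_v$, which (after checking that the $x_v = 0$ locus is $dx_v$-negligible, so integrating over $\Fvt$ or $\Fv$ is the same) is exactly of the form handled by Lemma \ref{grisv} applied to the pair $(\phi, g)$.

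Next I would verify the hypotheses of Lemma \ref{grisv} for $(\phi,g)$. The product $\phi g = fg\cdot\frac{x_v}{|x_v|_v}$ has $|\phi g| = |fg|$, so $\lim_{|x_v|_v\to\infty}\phi g = 0$ follows from the assumption $\lim_{|x_v|_v\to\infty}fg(x_v) = 0$; when $v$ is real, $\lim_{x_v\to 0}\phi g$ exists and is a finite real number because $\frac{x_v}{|x_v|_v} = \mathrm{sgn}(x_v) = \pm 1$ and $\lim_{|x_v|_v\to 0}fg = 0$, so the one-sided limits of $\phi g$ are both $0$. For the integrability conditions: $\frac{\partial \phi}{\partial x_v} g = \frac{\partial f}{\partial x_v}\cdot\frac{x_v}{|x_v|_v}\cdot g$ has the same absolute value as $\frac{x_v\partial f}{\partial x_v}\cdot g \cdot |x_v|_v^{-1} = \nabla(f)g\cdot |x_v|_v^{-1}$ in absolute value — wait, more precisely $\left|\frac{\partial \phi}{\partial x_v}g\, dx_v\right| = |\nabla(f) g|\, d^*x_v$, so $dx_v$-integrability of $\frac{\partial\phi}{\partial x_v}g$ is equivalent to $d^*x_v$-integrability of $\nabla(f)g$, which is assumed; similarly $\phi\frac{\partial g}{\partial x_v}$ is $dx_v$-integrable iff $f\nabla(g)$ is $d^*x_v$-integrable, which is also assumed. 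Therefore Lemma \ref{grisv} gives $\int_{\Fv}\frac{\partial\phi}{\partial x_v}g\, dx_v = -\int_{\Fv}\phi\frac{\partial g}{\partial x_v}\, dx_v$.

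Finally I would translate the right-hand side back: $\phi\frac{\partial g}{\partial x_v}\, dx_v = f\cdot\frac{x_v}{|x_v|_v}\cdot\frac{\partial g}{\partial x_v}\, dx_v = f\cdot\frac{x_v\partial g}{\partial x_v}\cdot\frac{dx_v}{|x_v|_v} = f\nabla(g)\, d^*x_v$, so $-\int_{\Fv}\phi\frac{\partial g}{\partial x_v}\, dx_v = -\int_{\Fvt}f\nabla(g)\, d^*x_v$, and combining with the first display yields $\int_{\Fvt}\nabla(f)g\, d^*x_v = -\int_{\Fvt}f\nabla(g)\, d^*x_v$, as claimed. I do not expect a serious obstacle here; the only points requiring a little care are (i) justifying that the complement $\{0\}\subset\Fv$ is $dx_v$-negligible so that the integrals over $\Fv$ and $\Fvt$ agree, and (ii) making sure the rewriting $\frac{\partial\phi}{\partial x_v} = \frac{\partial f}{\partial x_v}\cdot\frac{x_v}{|x_v|_v}$ via (\ref{jiyu}) is applied only on $\Fvt$, where $\frac{x_v}{|x_v|_v}$ is smooth — which is fine since all the hypotheses and integrals live on $\Fvt$. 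The bookkeeping with the complex case is already absorbed into Lemma \ref{grisv}, so no new analytic input is needed.
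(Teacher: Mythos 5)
Your proof is correct and takes essentially the same route as the paper: both reduce to Lemma \ref{grisv} by absorbing the unimodular factor $x_v/|x_v|_v$ into one of the two functions (you attach it to $f$ to form $\phi=f\cdot x_v/|x_v|_v$, the paper attaches it to $g$), then invoke $\partial_{x_v}(x_v/|x_v|_v)=0$ and translate $d^*x_v$ to $dx_v$. Since the hypotheses of Lemma \ref{grisv} are symmetric in $f$ and $g$ and the conclusion is antisymmetric, the two choices are equivalent.
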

\begin{proof}
We will apply the previous lemma for~$f$ and $\frac{x_v}{|x_v|_v}g$. We note that  if~$v$ is real, then $$\lim_{x_v\to 0}fg(x_v)\frac{x_v}{|x_v|}=\lim _{x\to 0}fg(x)\frac{x}{|x|}=0.$$ By applying (\ref{jiyu}), we get that $\frac{\partial(\frac{x_v}{|x_v|_v}g)}{\partial x_v}=\frac{x_v}{|x_v|_v}\frac{\partial g}{\partial x_v}.$ It follows from the conditions of the lemma that the functions $\Fvt\to\CC$ given by  $x_v\mapsto \frac{\partial f(x_v)}{\partial x_v} \frac{x_vg(x_v)}{|x_v|_v}$ and $x_v\mapsto \frac{x_v f(x_v)}{|x_v|_v}\frac{\partial g(x_v)}{\partial x_v}$ are absolutely $dx_v$-integrable, and that $\lim _{|x_v|_v\to\infty}\frac{x_v\cdot fg(x_v)}{|x_v|_v}=0.$
Using \ref{grisv}, we get that \begin{align*}
\int_{\Fvt}\nabla(f)gd^*x_v&=\int_{\Fvt}\frac{\partial f}{\partial x_v} \frac{x_v}{|x_v|_v} g{dx_v}\\&=-\int _{\Fvt}f\frac{\partial (\frac{x_v}{|x_v|_v} \cdot g) }{\partial x_v} dx_v\\
&=-\int _{\Fvt}f\frac{x_v}{|x_v|_v}\cdot \frac{\partial g}{\partial x_v} dx_v\\
&=-\int _{\Fvt}f\nabla(g)d^*x_v.
 \end{align*}
\end{proof}
\subsection{}In this paragraph we define and prove properties of auxiliary functions $h_j,$ which will used in \ref{formulaforip} to perform desired integration by parts. 

We start with the following lemma.
\begin{lem} \label{mrljak}Let $U\subset\Fvnz$ be an open and $\Fvt$-invariant subset. Let $g:U\to\CC$ be a smooth, $\aaa$-homogenous function of weighted degree $s\in\CC$ (that is whenever $t\in\Fvt$, one has $g(t\cdot\www)=|t|_v^{s}g(\www)$ for every $\www\in U$). Let $k\in\{1\doots n\}$. The function $\nabla_k(g):U\to\CC$ is a smooth $\aaa$-homogenous function of weighted degree $s$.\end{lem}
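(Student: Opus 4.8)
The plan is to treat smoothness and homogeneity separately, and for homogeneity to differentiate the defining relation of $g$ under the coordinatewise scaling by a fixed $t$. Smoothness of $\nabla_k(g)$ is immediate: $\nabla_k$ is the first-order differential operator $x_{kv}\,\partial/\partial x_{kv}$ (the analogue on the $k$-th factor of the vector field $\nabla$ from \ref{becac}), whose coefficient $x_{kv}$ is a polynomial, hence smooth, function on $U$; so $\nabla_k$ carries smooth functions to smooth functions, and $\nabla_k(g)$ is smooth because $g$ is. For the homogeneity, fix $t\in\Fvt$ and consider the coordinatewise scaling $m_t:U\to U$, $m_t(\www)=t\cdot\www=(t^{a_j}w_j)_j$; this is a well-defined self-map of $U$ precisely because $U$ is $\Fvt$-invariant, and it is a diffeomorphism with inverse $m_{t^{-1}}$. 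The $\aaa$-homogeneity of $g$ of weighted degree $s$ is exactly the identity $g\circ m_t=|t|_v^{s}\,g$ on $U$, and the strategy is to apply $\nabla_k$ to both sides.

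The heart of the argument is a chain-rule computation for $g\circ m_t$. Since the $j$-th component $(m_t)_j(\www)=t^{a_j}w_j$ depends only on the $j$-th variable — and in the complex case depends on it holomorphically — the chain rule gives $\frac{\partial}{\partial x_{kv}}(g\circ m_t)(\www)=t^{a_k}\bigl(\tfrac{\partial g}{\partial x_{kv}}\bigr)(m_t(\www))$, with no anti-holomorphic cross terms when $v$ is complex. Multiplying by $x_{kv}$ and using $(m_t(\www))_k=t^{a_k}w_k$ yields $\nabla_k(g\circ m_t)(\www)=(t^{a_k}w_k)\bigl(\tfrac{\partial g}{\partial x_{kv}}\bigr)(t\cdot\www)=\nabla_k(g)(t\cdot\www)$. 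On the other hand $|t|_v^{s}$ is constant in $\www$, so $\nabla_k(|t|_v^{s}g)=|t|_v^{s}\,\nabla_k(g)$. Combining these two identities with $g\circ m_t=|t|_v^{s}g$ gives $\nabla_k(g)(t\cdot\www)=|t|_v^{s}\,\nabla_k(g)(\www)$ for all $t\in\Fvt$ and $\www\in U$, which is precisely $\aaa$-homogeneity of weighted degree $s$.

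The only step that needs genuine care is the archimedean bookkeeping of $\partial/\partial x_v$ in the complex case: one must verify that the chain rule for the Wirtinger operator $\partial/\partial x_v=\partial/\partial z$ under the holomorphic substitution $z_k\mapsto t^{a_k}z_k$ produces exactly the single factor $t^{a_k}$ and nothing involving $\overline{t}$, so that the scalar $t^{a_k}$ (and not $|t|_v$) appears in the $k$-th slot — this is what matches the exponent $a_k$ of $t$ in the $\Gm$-action and makes the homogeneity weight come out equal to $s$ again. The real-place computation is identical but without this subtlety. Beyond that the statement is a formal consequence of the chain rule once $m_t$ is recognized as a holomorphic (resp. smooth) coordinatewise automorphism of $U$; no estimates are involved.
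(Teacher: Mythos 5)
Your proof is correct, and it takes a genuinely cleaner route than the paper's. You observe that the scaling map $m_t$ is a diffeomorphism of $U$ (holomorphic in each coordinate when $v$ is complex), apply $\nabla_k$ to both sides of the homogeneity identity $g\circ m_t=|t|_v^s g$, and use the Wirtinger chain rule to get $\nabla_k(g\circ m_t)=\nabla_k(g)\circ m_t$. The paper instead works one real partial at a time via difference quotients, claiming $\frac{\partial g}{\partial x_k}(t\cdot\www)=t^{-a_k}|t|_v^s\frac{\partial g}{\partial x_k}(\www)$ and likewise for $\frac{\partial g}{\partial y_k}$, then recombines. Your caution about the archimedean bookkeeping is well-founded: when $v$ is complex and $g$ is merely smooth (not holomorphic), the limit $\lim_{\epsilon\to 0}\big(g(\www+\epsilon c\,e_k)-g(\www)\big)/(\epsilon c)$ for $c=t^{-a_k}$ equals $\tfrac{\partial g}{\partial z_k}(\www)+\tfrac{\bar c}{c}\tfrac{\partial g}{\partial\bar z_k}(\www)$, not $\tfrac{\partial g}{\partial x_k}(\www)$, so the paper's two intermediate identities do not hold separately unless $t^{a_k}$ is real or $g$ is holomorphic in $z_k$. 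The paper's final formula is nonetheless correct, because only the Wirtinger combination $\tfrac12\big(\partial/\partial x_k-i\partial/\partial y_k\big)=\partial/\partial z_k$ is ever used and in that combination the spurious $\bar{t}^{-a_k}$-type contributions cancel. By running the chain rule directly on $\partial/\partial z_k$, your argument never produces those terms to begin with and so is the more robust version of what the paper is trying to do.
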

\begin{proof}
Let $\www\in U$ and let $t\in\Fvt$. Suppose~$v$ is real. We have \begin{align*}
\frac{x_{kv}\partial g}{\partial x_{kv}}(t\cdot\www)&=\frac{x_k\partial g}{\partial x_k}(t\cdot\www)\\&={t^{a_i}w_k}\lim _{\epsilon\to 0}\frac{g((t^{a_k}w_k+\epsilon)_k,(t^{a_j}\omega_j)_{j\neq k})-g((t^{a_j}w_j)_j)}{\epsilon}\\
&=t^{a_k}w_k\lim _{\epsilon\to 0}\frac{|t|^{s}g((w_k+\epsilon /{t^{a_k}})_k,(w_j)_{j\neq k})-|t|^{s}g((w_j)_j)}{\epsilon}\\&=t^{a_k}w_k\frac{|t|^{s}\frac{\partial g}{\partial x_{k}}(\www)}{t^{a_k}}\\
&=|t|_v^{s}\nabla_k(g)(\www).
\end{align*}
It follows that for~$v$ real, the function $\nabla_k(g)$ is $\aaa$-homogenous and of weighted degree $s$. Moreover, it is smooth. Suppose that~$v$ is complex. We have that:\begin{equation*}\label{xzgh}
\frac{x_{kv}\partial g}{\partial x_{kv}}((t^{a_j}w_j)_j)=\frac{z_k\partial g}{\partial z_k}((t^{a_j}w_j)_j)
=\frac{t^{a_k}w_k}2\bigg(\frac{\partial g}{\partial x_k}-i\frac{\partial g}{\partial y_k}\bigg)(t\cdot\www).
\end{equation*}
We have that \begin{align*}
\frac{\partial g((t^{a_j}w_j)_j)}{\partial x_k}&=\lim _{\epsilon\to 0}\frac{g((t^{a_k}w_k+\epsilon)_k,(t^{a_j}w_j)_{j\neq k})-g(t\cdot\www)}{\epsilon}\\&=\lim _{\epsilon\to 0}\frac{|t|^{2s}\big(g((w_k+\epsilon/t^{a_k})_k,(w_j)_{j\neq k})-g(\www)\big)}{\epsilon}\\
&=\lim _{\epsilon\to 0}\frac{|t|^{2s}|t|^{-a_k}\big(g((w_k+\epsilon/t^{a_k})_k,(w_j)_{j\neq k})-g(\www)\big)}{(\epsilon/t^{a_k})}\\
&=t^{-a_k}|t|^{2s}\frac{\partial g}{\partial x_k}(\www).\end{align*} 
Similarly, \begin{align*}
\frac{\partial g((t^{a_j}w_j)_j)}{\partial y_k}&=\lim _{\epsilon\to 0}\frac{g((t^{a_k}w_k+i\epsilon)_k,(t^{a_j}w_j)_{j\neq k})-g(t\cdot\www)}{\epsilon}\\&=\lim _{\epsilon\to 0}\frac{|t|^{2s}\big(g((w_k+i\epsilon/t^{a_k})_k,(w_j)_{j\neq k})-g(\www)\big)}{\epsilon}\\
&=\lim _{\epsilon\to 0}\frac{|t|^{2s}t^{-a_k}\big(g((w_k+i\epsilon/t^{a_k})_k,(w_j)_{j\neq k})-g(\www)\big)}{(\epsilon/t^{a_k})}\\
&=t^{-a_k}|t|^{2s}\frac{\partial g}{\partial x_k}(\www).\end{align*}
We deduce that:
\begin{align*}\frac{x_{kv}\partial g}{\partial x_{kv}}(t\cdot\www)&= \frac{t^{a_k}w_k}2\bigg(\frac{\partial g}{\partial x_k}-i\frac{\partial g}{\partial y_k}\bigg)(t\cdot\www)\\
&=\frac{w_k|t|^{2s}}{2}\bigg(\frac{\partial g}{\partial x_k}-i\frac{\partial g}{\partial y_k}\bigg)(\www)\\
&=\frac{w_k|t|_v^{s}}{2}\bigg(\frac{\partial g}{\partial x_k}-i\frac{\partial g}{\partial y_k}\bigg)(\www) \\
&=|t|_v^s\nabla_k(g)(\www). \end{align*}
It follows that for~$v$ complex, the function $\nabla_k(g)$ is $\aaa$-homogenous and of weighted degree $s$. Moreover, it is smooth.
The statement is proven.
\end{proof}
 Recall that $f_v:\Fvnz\to\RR_{>0}$ is smooth $\aaa$-homogenous function of weighted degree $|\aaa|$.  For $k\in\{1\doots n\}$, let $\nabla_k$ be the vector field on $\Fv^n$ given by $\frac{x_{kv}\partial}{\partial x_{kv}}.$ For $j=1\doots n,$ let $h_j:\Fv^n-\{x_{j}=0\}\to\RR$ be given by $$\xxx\mapsto-\log \big(|x_{j}|_vf_v(\xxx)^{-a_j/|\aaa|}).$$ Note that $$h_j(t\cdot\xxx)=-\log(|t^{a_j}|_v|x_{j}|_vf_v(t\cdot\xxx)^{-a_j/|\aaa|}) =-\log(|x_j|_vf_v(\xxx)^{-a_j/|\aaa|})=h_j(\xxx)$$ for every $t\in\Fvt$ and every $\xxx\in\Fvn-\{x_{j}=0\}$.
\begin{lem}\label{drbo}
Let $k,j\in\{1\doots n\}$. The function $\nabla_k(h_j)$ extends to a smooth $\Fvt$-invariant function $\Fvnz\to\RR$.
\end{lem}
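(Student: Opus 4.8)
The plan is to prove that $\nabla_k(h_j)$, which a priori is defined only on the open $\Fvt$-invariant set $\Fvn-\{x_j=0\}\subseteq\Fvnz$ on which $h_j$ lives, agrees there with an explicit function that is manifestly smooth and $\Fvt$-invariant on all of $\Fvnz$; since $\Fvn-\{x_j=0\}$ is a dense open subset of $\Fvnz$ (the complement of a coordinate hyperplane), such an extension is automatically unique. First I would split
$$h_j(\xxx)=-\log|x_j|_v+\frac{a_j}{|\aaa|}\log f_v(\xxx)\qquad\text{on }\Fvn-\{x_j=0\},$$
which is legitimate because $f_v$ is strictly positive on $\Fvnz$, and then apply the (linear, first order) operator $\nabla_k$ termwise.

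For the term $\nabla_k(-\log|x_j|_v)$ I would compute directly. If $k\neq j$, the function $-\log|x_j|_v$ does not involve the variable $x_k$, so $\nabla_k(-\log|x_j|_v)=0$. If $k=j$, writing $\nabla_j=x_{jv}\tfrac{\partial}{\partial x_{jv}}$ and using that for $\vMFR$ one has $\tfrac{\partial}{\partial x_{jv}}(-\log|x_j|_v)=-x_j^{-1}$, while for $\vMFC$ one has $|x_j|_v=x_j\overline{x_j}$ and $\tfrac{\partial}{\partial x_{jv}}=\tfrac{\partial}{\partial z_j}$ so that $\tfrac{\partial}{\partial x_{jv}}(-\log|x_j|_v)=-2x_j^{-1}$, one gets $\nabla_j(-\log|x_j|_v)=-c_v$ with $c_v:=1$ if $v$ is real and $c_v:=2$ if $v$ is complex. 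In all cases $\nabla_k(-\log|x_j|_v)$ is the constant function $-c_v\delta_{kj}$, which is trivially smooth and $\Fvt$-invariant on $\Fvnz$.

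For the term $\nabla_k(\log f_v)$ I would use that $\log f_v$ is defined and smooth on all of $\Fvnz$ (since $f_v>0$ is smooth there), so that $\nabla_k(\log f_v)=\nabla_k(f_v)/f_v$ on $\Fvnz$. Applying Lemma \ref{mrljak} with $U=\Fvnz$, $g=f_v$ and $s=|\aaa|$ shows that $\nabla_k(f_v)$ is a smooth $\aaa$-homogenous function of weighted degree $|\aaa|$ on $\Fvnz$; hence $\nabla_k(f_v)/f_v$ is smooth on $\Fvnz$ and $\aaa$-homogenous of weighted degree $0$, i.e. $\Fvt$-invariant. Putting the two terms together, $\nabla_k(h_j)$ coincides on $\Fvn-\{x_j=0\}$ with
$$-c_v\delta_{kj}+\frac{a_j}{|\aaa|}\cdot\frac{\nabla_k(f_v)}{f_v},$$
which is defined, smooth and $\Fvt$-invariant on all of $\Fvnz$, giving the required extension.

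I do not expect a genuine obstacle here: the argument reduces to the one-line computation $\nabla_j(-\log|x_j|_v)=\mathrm{const}$ together with the weighted-degree bookkeeping provided by Lemma \ref{mrljak}. The only points demanding attention are the normalizations at the complex places (the absolute value $|\cdot|_v$ being the square of the usual modulus, and $\tfrac{\partial}{\partial x_v}=\tfrac{\partial}{\partial z}$), and verifying that the resulting expression is real-valued for $\vMFR$ so that it indeed lands in $\RR$.
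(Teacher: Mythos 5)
Your proof follows exactly the paper's route: split $h_j = -\log|x_j|_v + \frac{a_j}{|\aaa|}\log f_v$, apply $\nabla_k$ termwise, invoke Lemma \ref{mrljak} to see that $\nabla_k(\log f_v) = \nabla_k(f_v)/f_v$ is smooth and $\aaa$-homogenous of weighted degree $0$ (hence $\Fvt$-invariant) on all of $\Fvnz$, and observe that $\nabla_k(\log|x_j|_v)$ is a constant. So the approach is the same.

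One slip to flag: at a complex place your constant is wrong by a factor of $2$. The paper's $\tfrac{\partial}{\partial x_v}$ is the Wirtinger derivative $\partial_z = \tfrac12(\partial_x - i\partial_y)$ — this is visible both in the proof of Lemma \ref{mrljak}, where $z_k\tfrac{\partial}{\partial z_k}$ is rewritten as $\tfrac{t^{a_k}w_k}{2}\bigl(\tfrac{\partial}{\partial x_k} - i\tfrac{\partial}{\partial y_k}\bigr)$, and in the remark computing $\partial_z(\bar z^{-1}) = 0$. Since $|z|_v = z\bar z$, one gets $\partial_z\log|z|_v = 1/z$, hence $\nabla_j(\log|x_j|_v) = 1$ for complex places exactly as for real ones; your claimed $-2x_j^{-1}$ corresponds to $\partial_x - i\partial_y$ without the $\tfrac12$. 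The slip is harmless for the conclusion of this lemma (a constant is a constant, regardless of its value), but it does matter downstream — the identity $\nabla(|x_v|^s_v) = s|x_v|^s_v$ used in Lemma \ref{pcg} depends on that normalization — so it is worth fixing.

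A minor further point: noting that the extension is real-valued ``for $\vMFR$'' doesn't actually address the claimed codomain $\RR$ at complex places; there $\nabla_k(f_v)/f_v$ is not obviously real. The paper's own proof glosses over this as well (it even writes $\RR_{>0}$, which cannot be right given the $-\delta_{kj}$ term), and the result is only ever used via boundedness (Corollary \ref{remle}), so nothing downstream depends on the codomain being $\RR$ rather than $\CC$. But you should not state that reality is verified when you have only checked one case.
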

\begin{proof}
By \ref{mrljak}, $\nabla_k(h_j)$ is smooth and $\Fvt$-invariant on the domain $(\Fv)^n-\{x_{j}=0\}$.
When $x_{j}\neq 0$, we have that \begin{equation*}\nabla_k(h_j)=\nabla_k(\log(f_v^{a_j/|\aaa|})) -\nabla_k(\log(|x_{j}|_v))
\end{equation*}
As $f_v$ is smooth and non-vanishing, the function $\nabla_k(\log(f_v^{a_j/|\aaa|}))$ is a smooth function defined of $\Fvnz$. By \ref{mrljak}, we have for $t\in\Fvt$ and $\yyy\in\Fvnz$ that \begin{align*}\nabla _k(\log(f_v^{a_j/|\aaa|}))(t\cdot\yyy)
=\frac{\nabla_k(f_v^{a_j/|\aaa|})(t\cdot\yyy)}{f_v^{a_j/|\aaa|}(t\cdot\yyy)}&=\frac{|t|_v^{a_j}\nabla_k(f_v^{a_j/|\aaa|})(\yyy)}{|t|_v^{a_j}f_v^{a_j/|\aaa|}(\yyy)} \\&=\nabla_k(\log(f_v^{a_j/|\aaa|}))(\yyy), \end{align*}i.e. $\nabla_k(\log(f_v^{a_j/|\aaa|}))$ is $\Fvt$-invariant. For a real place $v,$ we have that $$\nabla _k(\log( |x_{k}|_v))=\frac{x\partial (\log (|x|) )}{\partial x}= 1,$$ and for a complex place $v,$ we have that $$\nabla_k(\log(|x_{k}|_v)) =\frac{z \partial\log(|z|^2)}{\partial z}=1.$$
We deduce that \begin{align}
 \quad
\nabla_k(\log(|x_{j}|_v))=&
               \begin{cases}
1,&\text{if $k=j$}\\
0,&\text{otherwise.}               \end{cases}
               \end{align} 
Therefore $\nabla_k(\log |x_{j}|_v)$ extends to a smooth $\Fvt$-invariant function $\Fvnz\to\RR.$ Now, we deduce that the function $$\nabla_k(h_j)=\nabla_k(\log(f_v^{a_j/|\aaa|})) -\nabla_k(\log(|x_{j}|_v)) $$extends to $\Fvt$-invariant and smooth function $\Fvnz\to\RR_{>0}$. The statement is proven.
\end{proof}
Continuous $\Fvt$-invariant functions $\Fvnz\to\RR$ descend to continuous functions on the compact $[\PPP(\aaa)(\Fv)].$ We deduce that: 
\begin{cor}\label{remle}
Let $k\in\{1\doots n\}$ and let $N\geq 1$ be an integer. The functions $\nabla _k^N(h_j)$ are bounded.
\end{cor}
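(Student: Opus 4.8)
The statement to establish is Corollary \ref{remle}: for $k\in\{1\doots n\}$ and an integer $N\geq 1$, the functions $\nabla_k^N(h_j)$ are bounded on $\Fvnz$. The whole point is to reduce this to the compactness of $[\PPP(\aaa)(\Fv)]$ (Proposition \ref{paraap}), so the plan has two ingredients: first, show that $\nabla_k^N(h_j)$ is a well-defined continuous $\Fvt$-invariant function on all of $\Fvnz$; second, push it down to the compact quotient and invoke boundedness of continuous functions on a compact space.

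First I would handle the case $N=1$, which is exactly Lemma \ref{drbo}: it already tells us that $\nabla_k(h_j)$ extends to a smooth $\Fvt$-invariant function $\Fvnz\to\RR$. For general $N$, the plan is to iterate Lemma \ref{mrljak}. The key observation is that $h_j$, although only defined on $\Fv^n-\{x_j=0\}$, has a derivative $\nabla_k(h_j)$ that is $\Fvt$-invariant and smooth on the full open set $\Fvnz$, i.e. it is an $\aaa$-homogenous function of weighted degree $0$ in the sense of \ref{mrljak} (taking $U=\Fvnz$ and $s=0$). Since $\Fvnz$ is an open $\Fvt$-invariant subset, Lemma \ref{mrljak} applies with $g=\nabla_k(h_j)$ to give that $\nabla_k^2(h_j)=\nabla_k(\nabla_k(h_j))$ is again a smooth $\aaa$-homogenous function of weighted degree $0$ on $\Fvnz$, in particular $\Fvt$-invariant. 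Inductively, assuming $\nabla_k^m(h_j)$ is a smooth $\aaa$-homogenous function of weighted degree $0$ on $\Fvnz$, Lemma \ref{mrljak} gives the same for $\nabla_k^{m+1}(h_j)$. So for every $N\geq 1$, the function $\nabla_k^N(h_j):\Fvnz\to\RR$ is smooth and $\Fvt$-invariant.

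Finally I would pass to the quotient. By Proposition \ref{pafvtafv}, the quotient map $q_v^{\aaa}:\Fvnz\to[\PPP(\aaa)(\Fv)]$ is continuous and open, and $[\PPP(\aaa)(\Fv)]$ carries the quotient topology; by Proposition \ref{paraap} it is compact. Since $\nabla_k^N(h_j)$ is continuous and $\Fvt$-invariant, Lemma \ref{bezmora} (or directly the universal property of the quotient topology) shows it descends to a continuous function $\overline{\nabla_k^N(h_j)}:[\PPP(\aaa)(\Fv)]\to\RR$ with $\nabla_k^N(h_j)=\overline{\nabla_k^N(h_j)}\circ q_v^{\aaa}$. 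A continuous real-valued function on a compact space is bounded, hence $\overline{\nabla_k^N(h_j)}$ is bounded, and therefore so is $\nabla_k^N(h_j)$.

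\textbf{Main obstacle.} There is no serious obstacle here; the corollary is essentially a packaging of Lemmas \ref{mrljak} and \ref{drbo} together with compactness. The only point requiring a little care is the induction bookkeeping: one must make sure that at each stage the function is genuinely defined and smooth on the full open set $\Fvnz$ (not just on the smaller set $\Fv^n-\{x_j=0\}$ where $h_j$ itself lives), which is precisely the content of Lemma \ref{drbo} for the base case and is preserved by Lemma \ref{mrljak} thereafter. One should also note explicitly that weighted degree $0$ means $\Fvt$-invariance, so the hypothesis ``$\Fvt$-invariant'' needed to descend to the quotient is automatically carried along the induction.
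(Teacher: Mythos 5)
Your proof is correct and takes essentially the same approach as the paper: establish via Lemma \ref{drbo} (base case) and Lemma \ref{mrljak} (induction) that $\nabla_k^N(h_j)$ is smooth and $\Fvt$-invariant on all of $\Fvnz$, then descend to the compact quotient $[\PPP(\aaa)(\Fv)]$ and conclude boundedness. The paper states this in a single sentence preceding the corollary; your write-up simply makes the induction explicit.
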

\subsection{} \label{formulaforip} In this paragraph we derivate the pullback of the function $H_v(\sss,-)$ for the quotient map $\Fvtn\to[\TTa(\Fv)]$ using the vector fields $\nabla_k$. 

For $\sss\in\CC^n$, let us set $\widetilde H_v(\sss,-)=H_v(\sss,-)\circ\qav:(\Fvt)^n\to\CC.$
We have that $$\widetilde H_v(\sss,\xxx)=f_v(\xxx)^{\frac{\aaa\cdot\sss}{|\aaa|}}\prod _{j=1}^n|x_j|_v^{-s_j}=\prod _{j=1}^n\exp(s_jh_j(\xxx))$$ for $\xxx\in(\Fvt)^{n-1}$ and $\sss\in\CC^n.$
\begin{lem}\label{utui}
Let $k\in\{1\doots n-1\}$. For every $N\in\ZZ_{>0}$, there exists an isobaric polynomial $P_N\in\RR[\{X_{j,d}\}_{\substack{1\leq j\leq n\\1\leq d\leq N}}]$ which is of weighted degree~$N$ (where the degree of $X_{j,d}$ is~$d$) such that $$\nabla_k^N(\widetilde H_v(\sss,-)^{-1})=\widetilde H_v(\sss,-)^{-1}P_N((s_j\nabla_k^d(h_j))_{\substack{1\leq j\leq n\\ 1\leq d\leq N}})$$ for every $\sss\in\CC^n$.
\end{lem}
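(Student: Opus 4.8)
The plan is to reduce the identity to a standard Fa\`a di Bruno--type induction on $N$, using that $\nabla_k$ is a derivation and that $\widetilde H_v(\sss,-)^{-1}$ is an exponential. First I would record that on $(\Fvt)^n$ one has $\widetilde H_v(\sss,\xxx)^{-1}=\exp(\phi(\xxx))$ with $\phi:=-\sum_{j=1}^n s_jh_j$, where $h_j(\xxx)=-\log(|x_j|_vf_v(\xxx)^{-a_j/|\aaa|})$ is the function from \ref{drbo} (defined on $(\Fvt)^n$ because there all $x_j\neq 0$). Introduce $u_{j,d}:=s_j\nabla_k^d(h_j)$ on $(\Fvt)^n$; these are smooth, since \ref{drbo} shows $\nabla_k(h_j)$ extends to a smooth function on $\Fvnz$ and iterating the vector field $\nabla_k=x_{kv}\partial/\partial x_{kv}$ preserves smoothness on the open set $\Fvnz$. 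The elementary facts to isolate are that $\nabla_k$ is a derivation, that $\nabla_k(u_{j,d})=u_{j,d+1}$, and that $\nabla_k(\phi)=-\sum_{j=1}^n u_{j,1}$.

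Then I would run the induction. For $N=1$, $\nabla_k(\widetilde H_v(\sss,-)^{-1})=\nabla_k(\phi)\exp(\phi)=\widetilde H_v(\sss,-)^{-1}\cdot\big(-\sum_j u_{j,1}\big)$, so $P_1:=-\sum_{j=1}^n X_{j,1}$ works and is isobaric of weighted degree $1$. Assuming the statement for $N$ with $P_N$ isobaric of weighted degree $N$ in the variables $X_{j,d}$, $1\le d\le N$, write $\nabla_k^{N+1}(\widetilde H_v(\sss,-)^{-1})=\nabla_k\big(\widetilde H_v(\sss,-)^{-1}\,P_N(\uuu)\big)$ and expand by Leibniz: the first term is $\nabla_k(\widetilde H_v(\sss,-)^{-1})\,P_N(\uuu)=\widetilde H_v(\sss,-)^{-1}\big(-\sum_j u_{j,1}\big)P_N(\uuu)$, and the second is $\widetilde H_v(\sss,-)^{-1}\,\nabla_k(P_N(\uuu))$, where by the chain rule (valid since $\nabla_k$ is a derivation and $P_N$ is a polynomial) $\nabla_k(P_N(\uuu))=\sum_{j,d}(\partial_{X_{j,d}}P_N)(\uuu)\,u_{j,d+1}$. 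Hence the polynomial $P_{N+1}:=-\big(\sum_{j}X_{j,1}\big)P_N+\sum_{j,d}(\partial_{X_{j,d}}P_N)\,X_{j,d+1}$, which lies in $\RR[\{X_{j,d}\}_{1\le j\le n,\,1\le d\le N+1}]$, satisfies the required identity at level $N+1$.

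It then remains only to check that $P_{N+1}$ is isobaric of weighted degree $N+1$: multiplying the weighted-degree-$N$ polynomial $P_N$ by $X_{j,1}$ raises the weighted degree to $N+1$, and $\partial_{X_{j,d}}$ lowers the weighted degree by $d$ while multiplication by $X_{j,d+1}$ raises it by $d+1$, so each summand $(\partial_{X_{j,d}}P_N)X_{j,d+1}$ is isobaric of weighted degree $N+1$; a sum of isobaric polynomials of the same weighted degree is again isobaric of that weighted degree.

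I do not expect a real obstacle here; the only points needing care are the bookkeeping of the weighted grading (in particular that at step $N+1$ the new indeterminate $X_{j,N+1}$ enters, which is exactly why the statement permits $P_N$ to involve $X_{j,d}$ up to $d=N$) and the smoothness of all $\nabla_k^d(h_j)$ on $(\Fvt)^n$, for which \ref{drbo} is precisely the input that lets one apply the derivation and the chain rule freely.
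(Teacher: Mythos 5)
Your proof is correct and follows essentially the same route as the paper: write $\widetilde H_v(\sss,-)^{-1}$ as $\exp(-\sum_j s_j h_j)$, verify the base case, and induct with the Leibniz rule, packaging the chain-rule term as a derivation $\sum_{j,d}X_{j,d+1}\partial_{X_{j,d}}$ (which the paper denotes $\delta$ and defines by its action on monomials) and checking that each operation preserves the isobaric grading. One small remark: your signs are the correct ones — the paper's displayed $P_1=\sum_j X_{j,1}$ and recursion $P_{N+1}=(\sum_j X_{j,1})P_N+\delta(P_N)$ drop the minus sign coming from $\nabla_k(-\sum_j s_jh_j)$, a typo that is harmless for the lemma since only the isobaric weighted degree matters, but your version is the literally correct identity.
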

\begin{proof}
Let $\sss\in\CC^n$. For every $\xxx\in(\Fvt)^n$, we have that $$\widetilde H_v(\sss,\xxx)^{-1}=\exp\bigg(-\sum_{j=1}^n s_jh_j(\xxx)\bigg),$$ and hence that\begin{align*}\nabla_k\big(\widetilde H_v(\sss,\xxx)^{-1} \big)&=\exp\big(-\sum_j s_jh_j(\xxx)\big)\sum _{j=1}^n\frac{x_k\partial h_j}{\partial x_k}\\ &=\widetilde H_v(\sss,\xxx)^{-1}\sum _{j=1}^ns_j\nabla_k(h_j)(\xxx).
\end{align*}
We deduce that when $N=1$, we can take $P_1((X_{j,d})_{j,d})=\sum _{j=1}^nX_{j,1}$.
Suppose the statement is true for some~$N$ and let us verify it for $N+1$. We have \begin{align*}
\nabla_k^{N+1}(\widetilde H_v(\sss,-)^{-1})\hskip-4cm&\\&=\nabla_k\big(\widetilde H_v(\sss,-)^{-1}P_N((s_j\nabla_k^dh_j)_{j,d})\big)\\&
=\widetilde H_v(\sss,-)^{-1}\cdot\bigg(\sum_{j=1}^ns_j\nabla_k(h_j)\bigg)P_N((s_j\nabla_k^dh_j)_{j,d})\\&\quad\quad\quad\quad\quad\quad\quad\quad+\widetilde H_v(\sss,-)^{-1}\nabla_k(P_N((s_j\nabla_k^dh_j)_{j,d})\\
&=\widetilde H_v(\sss,-)^{-1}\cdot\bigg(\big(\sum_{j=1}^ns_j\nabla_k(h_j)\big)P_N((s_j\nabla_k^dh_j)_{j,d})+\nabla_k\big(P_N((s_j\nabla_k^dh_j)_{j,d})\big)\bigg).
\end{align*}
Let $\delta:\RR[\{X_{j,d}\}_{\substack{1\leq j\leq n\\1\leq d\leq N}}]\to\RR[\{X_{j,d}\}_{\substack{1\leq j\leq n\\1\leq d\leq N+1}}]$ be the $\RR$-linear map given by $$X_{j_1,d_1}^{q_1}\cdots X_{j_r,d_r}^{q_r}\mapsto \sum _{e=1}^rq_e\frac{X_{j_e,d_e+1}}{X_{j_e,d_e}}\big(X_{j_1,d_1}^{q_1}\cdots X_{j_r,d_r}^{q_r}\big)\hspace{1cm} q_e\in\ZZ_{\geq 0}.$$ 
Note that if $Q\in\RR[\{X_{j,d}\}_{\substack{1\leq j\leq n\\1\leq d\leq N}}]$ is isobaric of weighted degree~$N$, then $\delta(Q)$ is isobaric of weighted degree $N+1$. As the polynomial $\big(\sum_{j=1}^nX_{j,1}\big)P_N((X_{j,d})_{j,d})$ is isobaric of weighted degree $N+1$, the polynomial $$P_{N+1}=\big(\sum_{j=1}^nX_{j,1}\big)P_N+\delta(P_N)$$ is isobaric of weighted degree $N+1$ and 
from above one has that: $$\widetilde H_v(\sss,-)^{-1}=\widetilde H_v(\sss,-)^{-1}P_{N+1}((s_j\nabla_k^dh_j)_{\substack{1\leq j\leq n\\ 1\leq d\leq N+1}}).$$ 
The statement is proven.
\end{proof}
\subsection{} In this paragraph we calculate several limits that will enable us to perform integration by parts as in \ref{becac} in paragraph \ref{obalj}.
\begin{lem}
Let us fix $(x_{j})_{\substack{j=1\\j\neq k}}^{n} \in \Fv^{n-1}$ and let $\sss\in\Omega_{>0}$. One has that \begin{align*}
\lim _{x_{k}\to 0}\widetilde H_v(\sss,\xxx)^{-1}&=0,\\
\lim _{|x_{k}|_v\to \infty}\widetilde H_v(\sss,\xxx)^{-1}&=0.
\end{align*}
\end{lem}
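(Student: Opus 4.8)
The plan is to reduce the computation of $\widetilde H_v(\sss,\xxx)^{-1}$ to an explicit expression in $|x_{kv}|_v$ using the $\aaa$-homogeneity of $f_v$, and then to extract the asymptotics by comparing with powers of $|x_{kv}|_v$. First I would write out, for fixed $(x_{jv})_{j\neq k}$ and $\sss\in\Omega_{>0}$,
$$\widetilde H_v(\sss,\xxx)^{-1}=f_v(\xxx)^{-\frac{\aaa\cdot\sss}{|\aaa|}}\prod_{j=1}^n|x_{jv}|_v^{s_j},$$
and split the product into the $j=k$ factor $|x_{kv}|_v^{s_k}$ and the remaining (constant in $x_{kv}$) factors. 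The remaining work is entirely in controlling the factor $f_v(\xxx)^{-\frac{\aaa\cdot\sss}{|\aaa|}}$ as $|x_{kv}|_v\to 0$ and as $|x_{kv}|_v\to\infty$.

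Next I would exploit the $\aaa$-homogeneity of $f_v$ of weighted degree $|\aaa|$ (and its continuity and positivity on $\Fvnz$) to pin down the growth of $f_v(\xxx)$ in the variable $x_{kv}$. Concretely, for $x_{kv}\neq 0$ I would choose $t\in\Fvt$ with $|t|_v^{a_k}=|x_{kv}|_v$, i.e.\ $t=\rho_v(|x_{kv}|_v^{1/a_k})$ using the section $\rho_v$ of \ref{pojac}, and write $x_{kv}=t^{a_k}u$ with $|u|_v=1$; then $f_v(\xxx)=|t|_v^{|\aaa|}f_v\big(t^{-1}\cdot\xxx\big)$ where $t^{-1}\cdot\xxx$ has its $k$-th coordinate of absolute value $1$. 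Since the remaining coordinates $(x_{jv})_{j\neq k}$ are fixed, the point $t^{-1}\cdot\xxx$ ranges over a bounded set as $|x_{kv}|_v$ varies in a neighbourhood of $0$ (where the other coordinates dominate), so $f_v\big(t^{-1}\cdot\xxx\big)$ stays bounded away from $0$ and $\infty$; hence $f_v(\xxx)\asymp |x_{kv}|_v^{|\aaa|/a_k}$ as $|x_{kv}|_v\to 0$. Plugging this into the formula gives $\widetilde H_v(\sss,\xxx)^{-1}\asymp |x_{kv}|_v^{\Re(s_k)-\frac{\aaa\cdot\Re(\sss)}{|\aaa|}\cdot\frac{|\aaa|}{a_k}}$; but here one must be careful, because the other coordinates may vanish. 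The clean way is: near $x_{kv}=0$ with the $(x_{jv})_{j\neq k}$ fixed and \emph{not all zero} — which is the generic case, and the only case needed since the statement fixes $(x_{jv})_{j\neq k}\in\Fv^{n-1}$ and asks about a limit — the quantity $h_k(\xxx)=-\log\big(|x_{kv}|_v f_v(\xxx)^{-a_k/|\aaa|}\big)$ is the relevant exponent, and one shows $h_k(\xxx)\to-\infty$ as $|x_{kv}|_v\to 0$ while $h_j(\xxx)$ for $j\neq k$ stays bounded. Since $\widetilde H_v(\sss,\xxx)^{-1}=\prod_j\exp(s_j h_j(\xxx))$ and $\Re(s_k)>0$, this forces $\widetilde H_v(\sss,\xxx)^{-1}\to 0$. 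Symmetrically, as $|x_{kv}|_v\to\infty$ one has $h_k(\xxx)\to+\infty$; to conclude I would instead use that $f_v(\xxx)\gtrsim |x_{kv}|_v^{|\aaa|/a_k}$ in this regime (again by homogeneity, rescaling so the $k$-th coordinate has absolute value $1$), so that each factor $|x_{jv}|_v^{\Re(s_j)}f_v(\xxx)^{-\Re(s_j)a_j/|\aaa|}$ is bounded while the combined power of $|x_{kv}|_v$ coming from $f_v(\xxx)^{-\aaa\cdot\Re(\sss)/|\aaa|}\prod_{j\neq k}|x_{jv}|_v^{\Re(s_j)}$ tends to $0$; more simply, $\widetilde H_v(\sss,\xxx)^{-1}\le |x_{kv}|_v^{\Re(s_k)}f_v(\xxx)^{-\Re(s_k)a_k/|\aaa|}\cdot\big(\text{bounded}\big)=e^{-\Re(s_k)h_k(\xxx)}\cdot(\text{bounded})\to 0$.

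I expect the main obstacle to be organizing the boundedness claims cleanly: in both limits one needs that, after rescaling by $t=\rho_v(|x_{kv}|_v^{1/a_k})$ to normalize the $k$-th coordinate, the resulting points lie in a compact subset of $\Fvnz$ on which the continuous positive function $f_v$ (equivalently all the $h_j$'s) is bounded above and below. This is where the hypothesis that the other coordinates are fixed is used, together with the fact that the rescaled $k$-th coordinate has absolute value exactly $1$, so the rescaled point cannot approach $0$. Once this boundedness is in place, the conclusion is immediate from $\Re(s_k)>0$ and the factorization $\widetilde H_v(\sss,\xxx)^{-1}=\exp\big(s_k h_k(\xxx)\big)\prod_{j\neq k}\exp\big(s_j h_j(\xxx)\big)$ with the latter product bounded and $\Re\big(s_k h_k(\xxx)\big)\to\mp\infty$. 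I would present the argument via the $h_j$ functions since they are already introduced and their $\Fvt$-invariance and smoothness are recorded in \ref{drbo}–\ref{remle}, which makes the boundedness of $\prod_{j\neq k}\exp(s_j h_j(\xxx))$ transparent.
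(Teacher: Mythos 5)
Your approach for the $|x_k|_v\to\infty$ regime is essentially the paper's: rescale by $t=\rho_v(|x_k|_v^{1/a_k})$ to normalize the $k$-th coordinate to absolute value $1$, note that the normalized point eventually lies in a fixed compact subset of $\Fvnz$, bound $f_v$ from below there, and observe that the resulting net power of $|x_k|_v$ is $\Re(s_k)-\aaa\cdot\Re(\sss)/a_k<0$. For the limit $x_k\to 0$ the paper uses a simpler device than yours: since the other coordinates are fixed and not all zero, the point $\bigl((x_j)_{j\neq k},(0)_k\bigr)$ lies in $\Fvnz$, so one may pass to the limit directly inside $f_v$ by continuity, and the factor $|x_k|_v^{s_k}$ provides the decay; no rescaling is needed. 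Your discarded first attempt to show $f_v(\xxx)\asymp|x_k|_v^{|\aaa|/a_k}$ as $|x_k|_v\to 0$ is in fact false (the rescaled point does \emph{not} stay bounded as $|x_k|_v\to 0$, and $f_v(\xxx)$ tends to a nonzero constant), so abandoning that route was the correct move.

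The substantive problem is in the asymptotics of the $h_j$'s, which you get partly backwards. With $h_j(\xxx)=-\log\bigl(|x_j|_v f_v(\xxx)^{-a_j/|\aaa|}\bigr)$ and $\widetilde H_v(\sss,-)=\prod_j\exp(s_j h_j)$, so that $\widetilde H_v(\sss,-)^{-1}=\prod_j\exp(-s_j h_j)$ (you wrote $\prod_j\exp(s_j h_j)$ for the inverse), the correct behaviour is: as $x_k\to 0$ with the other coordinates fixed, $h_k\to+\infty$ (not $-\infty$) and the other $h_j$ stay bounded; as $|x_k|_v\to\infty$, $h_k$ stays \emph{bounded}, and it is $h_j$ for $j\neq k$ that tend to $+\infty$. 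The boundedness of $h_k$ as $|x_k|_v\to\infty$ is immediate from the $\Fvt$-invariance of $h_k$ recorded just before the lemma: $h_k(\xxx)=h_k(t^{-1}\cdot\xxx)=-\log f_v(t^{-1}\cdot\xxx)^{-a_k/|\aaa|}$ with $t^{-1}\cdot\xxx$ confined to a compact set. Consequently your \emph{more simply} bound $\widetilde H_v(\sss,\xxx)^{-1}\le e^{-\Re(s_k)h_k(\xxx)}\cdot(\text{bounded})$ does \emph{not} tend to $0$ — that exponential is merely bounded. The decay comes from $e^{-\Re(s_j)h_j}$ for $j\neq k$, or equivalently, as in your first formulation, from the fact that the total exponent of $|x_k|_v$ is negative. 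In the $x_k\to 0$ regime your two sign errors happen to cancel, so the stated conclusion survives; but as written the mechanism is misidentified in both regimes, and the \emph{more simply} step in the $|x_k|_v\to\infty$ case is a genuine gap.
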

\begin{proof}
We have \begin{align*}
\lim _{x_{k}\to 0}\widetilde H_v(\sss,\xxx)^{-1}&=\lim _{x_{k}\to 0}\prod _{j=1}^{n}|x_{j}|_v^{s_j}f_v(\xxx)^{\frac{-\aaa\cdot\sss}{|\aaa|}}\\
 &=\lim _{x_{k}\to 0}\prodjn |x_{j}|_v^{s_j}f_v((x_{j})_{j\neq k},(0)_k)^{-\frac{\aaa\cdot\sss}{|\aaa|}}=0.
\end{align*}
Let us calculate the other limit. For every $x_k\in\Fvt$, we have that 
\begin{align*}f_v(\xxx)&=f_v(|x_k|_v^{1/(n_va_k)}\cdot (x_j|x_k|_v^{-a_j/(n_va_k)})_j)\\&=|x_k|_v^{|\aaa|/(n_va_k)}f_v((x_j|x_k|_v^{-a_j/(n_va_k)})_j)\end{align*}and hence that $$f_v(\xxx)^{\frac{\aaa\cdot\sss}{|\aaa|}}=|x_{k}|^{\frac{\aaa\cdot\sss}{a_k}}_vf_v\big((x_{j}|x_{k}|_v^{{-a_j}/({n_va_k})})_j\big)^{\frac{\aaa\cdot\sss}{|\aaa|}}. $$ Thus
\begin{align*}
\lim _{|x_{k}|_v\to\infty}\widetilde H_v(\xxx)^{-1}\hskip-2cm&\\&=\lim _{|x_{k}|_v\to \infty}\bigg(f_v(\xxx)^{\frac{-\aaa\cdot\sss}{|\aaa|}}\prod _{j=1}^{n}|x_{j}|_v^{s_j}\bigg)\\
&=\lim _{|x_{k}|_v\to \infty}\bigg(|x_{k}|^{\frac{-\aaa\cdot\sss}{a_k}}_vf_v\big((x_{j}|x_{k}|_v^{{-a_j}/({n_va_k})})_j\big)^{\frac{-\aaa\cdot\sss}{|\aaa|}}\prod_{\substack{j=1}}^n|x_{j}|_v^{s_j}\bigg)\\
&=\lim _{|x_{k}|_v\to \infty}\bigg(|x_{k}|_v^{\frac{a_ks_k-\aaa\cdot\sss}{a_k}}f_v\big((x_{j}|x_{k}|_v^{{-a_j}/({n_va_k})})_j\big)^{-\frac{\aaa\cdot\sss}{|\aaa|}}\prod _{\substack{j=1\\j\neq k}}^n|x_{j}|_v^{s_j}\bigg).
\end{align*} Note that $\lim _{|x_{k}|_v\to\infty} |x_{k}|_v^{\frac{a_ks_k-\aaa\cdot\sss}{a_k}}=0.$ Let us define $$\mathcal B_k:= \{\yyy\in\Fvnz| \forall j |y_{j}|_v\leq 1\text{ and }|y_{k}|_v=1\}.$$  The set $\mathcal B_k$ is compact. As $f_v$ is strictly positive, there exists $\epsilon _1>0$ such that $f_v(\yyy)>\epsilon _1$ for every $\yyy\in\mathcal B_k$. We deduce that $f_v^{-\frac{\aaa\cdot\sss}{|\aaa|}}$ is bounded above by $\epsilon_1^{-{\aaa\cdot\Re(\sss)}/{|\aaa|}} $ on $\mathcal B_k$. For $|x_{k}|_v\gg 0$, one has $ (x_{j}|x_{k}|_v^{{-a_j}/(n_va_k)})_j\in \mathcal B_k$.   We conclude that $$\lim _{|x_{k}|_v\to 0}\widetilde H_v(\sss,\xxx)^{-1}=0.$$
\end{proof}
By using the formula given in \ref{utui} and the fact that the functions $\nabla_k^d(h_j)$ are bounded, we obtain immediately the following corollary .
\begin{cor}\label{srtto}
Let $\sss\in\Omega_{>0},$ let $N\geq 0$ and let $k\in\{1\doots n\}$. Let us fix $(x_{j})_{\substack{j=1\\j\neq k}}^{n} \in (\Fv)^{n-1}$. One has that
\begin{align*}
\lim _{|x_{k}|_v\to \infty}\nabla^N_k(\widetilde H_v(\sss,\xxx)^{-1})&=0,\\
\lim _{x_{k}\to 0}\nabla^N_k(\widetilde H_v(\sss,\xxx)^{-1})&=0.
\end{align*}
\end{cor}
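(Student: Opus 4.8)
\textbf{Proof proposal for Corollary \ref{srtto}.} The plan is to reduce everything to the formula established in Lemma \ref{utui} together with the limit computations of the preceding lemma and the boundedness statement of Corollary \ref{remle}. Fix $\sss\in\Omega_{>0}$, an integer $N\geq 0$, an index $k\in\{1\doots n\}$, and the coordinates $(x_j)_{j\neq k}\in(\Fv)^{n-1}$ (we may assume $x_j\neq 0$ for $j\neq k$, since otherwise both $\widetilde H_v(\sss,-)^{-1}$ and all its $\nabla_k$-derivatives are identically zero on the relevant slice, by the same reasoning as in the proof that $\lim_{x_k\to 0}\widetilde H_v(\sss,\xxx)^{-1}=0$). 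The case $N=0$ is exactly the content of the previous lemma, so we only need to treat $N\geq 1$.

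First I would invoke Lemma \ref{utui}: there is an isobaric polynomial $P_N$ of weighted degree $N$ such that
\begin{equation*}
\nabla_k^N(\widetilde H_v(\sss,-)^{-1})=\widetilde H_v(\sss,-)^{-1}\,P_N\big((s_j\nabla_k^d(h_j))_{\substack{1\leq j\leq n\\1\leq d\leq N}}\big).
\end{equation*}
Next, by Corollary \ref{remle} each function $\nabla_k^d(h_j)$ extends to a bounded function on $\Fvnz$, hence is bounded on the punctured line $\{x_k\in\Fvt\}$ with the other coordinates fixed; since $P_N$ is a polynomial with coefficients built from the fixed complex numbers $s_j$, the quantity $P_N\big((s_j\nabla_k^d(h_j))_{j,d}\big)$ stays bounded, say by a constant $M=M(\sss,N,(x_j)_{j\neq k})$, as $x_k$ ranges over $\Fvt$. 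Therefore
\begin{equation*}
\big|\nabla_k^N(\widetilde H_v(\sss,\xxx)^{-1})\big|\leq M\,\big|\widetilde H_v(\sss,\xxx)^{-1}\big|.
\end{equation*}
Finally, applying the previous lemma, $\widetilde H_v(\sss,\xxx)^{-1}\to 0$ both as $|x_k|_v\to\infty$ and as $x_k\to 0$; multiplying by the bounded factor $P_N$ preserves these limits, which gives the two claimed limits and completes the proof.

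I do not anticipate a real obstacle here: the corollary is a direct corollary in the literal sense, combining the explicit Leibniz-type expansion of Lemma \ref{utui}, the uniform boundedness of the $\nabla_k^d(h_j)$ from Corollary \ref{remle}, and the two decay limits already proved for $\widetilde H_v(\sss,-)^{-1}$ itself. The only point that deserves a line of care is the degenerate case where some fixed $x_j$ with $j\neq k$ vanishes, which one disposes of by noting that $h_j$ is then undefined but the whole product $\widetilde H_v(\sss,\xxx)^{-1}$ and its $\nabla_k$-derivatives vanish identically in $x_k$ by the $\aaa$-homogeneity/continuity argument used in the proof of the preceding lemma. If one prefers to avoid even that remark, one can simply state the corollary for $(x_j)_{j\neq k}\in(\Fvt)^{n-1}$, which is all that is needed for the integration by parts carried out in \ref{obalj}.
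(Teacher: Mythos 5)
Your argument is correct and takes exactly the same route as the paper: the corollary is presented there as an immediate consequence of the Leibniz-type formula of Lemma \ref{utui}, the uniform boundedness of $\nabla_k^d(h_j)$ from Corollary \ref{remle}, and the $N=0$ limits of the preceding lemma, which is precisely the chain you spell out. Your extra parenthetical about the degenerate case $x_j=0$ for some $j\neq k$ is a careful clarification that the paper omits, though (as you observe) only the case $(x_j)_{j\neq k}\in(\Fvt)^{n-1}$ is actually used in the integration by parts of \ref{obalj}.
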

\subsection{} In this paragraph we present several formulas for the derivation with $\nabla_k$, that will be used in \ref{obalj}. Let $\vMFi$.
\begin{lem}
If $s$ is a complex number, one has that $\nabla(x_v\mapsto |x_v|_v^s)=s|x_v|^s$ in the domain $x_v\in\Fvt.$
\end{lem}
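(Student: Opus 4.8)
The claim is a one-line calculation, so the proof proposal is short. The plan is to compute $\nabla(x_v \mapsto |x_v|_v^s)$ directly from the definition of the vector field $\nabla = \frac{x_v \partial}{\partial x_v}$, separating the real and complex cases as in the definition of $\frac{\partial}{\partial x_v}$ given just above.

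For $v$ real, $|x_v|_v = |x|$ and $\nabla = x \frac{\partial}{\partial x}$. First I would compute $\frac{\partial}{\partial x}|x|^s$ on $\RR_{>0}$ and on $\RR_{<0}$ separately: on $\RR_{>0}$ one has $|x|^s = x^s$ so $\frac{\partial}{\partial x}|x|^s = s x^{s-1}$, hence $x \frac{\partial}{\partial x}|x|^s = s x^s = s|x|^s$; on $\RR_{<0}$ one has $|x|^s = (-x)^s$ so $\frac{\partial}{\partial x}|x|^s = -s(-x)^{s-1}$, hence $x\frac{\partial}{\partial x}|x|^s = -s x (-x)^{s-1} = s(-x)^s = s|x|^s$. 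Either way $\nabla(x\mapsto |x|^s) = s|x|^s$ on $\Fvt = \RR^\times$.

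For $v$ complex, $|x_v|_v = |z|^2 = z\bar z$ and $\nabla = z\frac{\partial}{\partial z}$ with $\frac{\partial}{\partial z} = \frac12(\frac{\partial}{\partial x} - i\frac{\partial}{\partial y})$. The plan is to write $|x_v|_v^s = (z\bar z)^s$ and use that $\frac{\partial}{\partial z}$ is the Wirtinger derivative, for which $\bar z$ is "constant" (i.e. $\frac{\partial \bar z}{\partial z} = 0$), so that $\frac{\partial}{\partial z}(z\bar z)^s = s (z\bar z)^{s-1}\bar z$; multiplying by $z$ gives $z \cdot s(z\bar z)^{s-1}\bar z = s(z\bar z)^s = s|x_v|_v^s$. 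This is legitimate on $\Fvt = \CC^\times$ where $z\bar z \neq 0$ and a branch of the power function is smooth. Combining the two cases yields $\nabla(x_v \mapsto |x_v|_v^s) = s|x_v|_v^s$ on $\Fvt$, as claimed. There is no real obstacle here; the only point requiring a word of care is the complex case, where one should note that the computation is carried out on $\Fvt$ so that $|x_v|_v^s = (z\bar z)^s$ is a well-defined smooth function and the Wirtinger-derivative chain rule applies.
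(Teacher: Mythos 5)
Your proof is correct and follows essentially the same approach as the paper: compute $\nabla$ directly, treating the real case via $\frac{\partial}{\partial x}|x|^s = s|x|^{s-1}\operatorname{sgn}(x)$ (which is equivalent to your case split on sign) and the complex case via the Wirtinger derivative applied to $|z|^{2s} = (z\bar z)^s$.
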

\begin{proof}
If~$v$ is real then $$\nabla(|x_v|_v^s)=\frac{x\partial |x|^{s}}{\partial x}=xs|x|^{s-1}\text{sgn}(x)=s|x|^{s}=s|x_v|_v^{s}.$$ If~$v$ is complex, then \begin{equation*}
\nabla(|x_v|_v^{s})=\frac{z\partial(|z|^{2s})}{\partial z}=sz|z|^{2(s-1)}\overline z=s|z|^{2s}=s|x_v|_v^s.
\end{equation*}
\end{proof}
We set $F_{v,1}:=\{x|\hspace{0.1cm} |x|_v=1\}.$ We have established in \ref{pojac} an identification $$\widetilde\rho_v:\RR_{>0}\times F_{v,1}\xrightarrow{\sim}\Fvt \hspace{1cm} (r,z)\mapsto \rho_v(r)z,$$where $\rho_v:\RR_{>0}\to \Fvt$ is defined by $\rho_v(r)=r^{1/n_v}$. For a character $\chi_v\in(\Fvt)^*$, we have set $m(\chi_v)$ to be the unique real number~$m$ such that the character $\chi_v|_{\RR_{>0}}$ is given by $r\mapsto r^{im}$. If~$v$ is a real place, we set $\ell(\chi_v)$ to be $0$ if the character $\chi_{vF_{v,1}}$ is the trivial character, otherwise we set $\ell(\chi_v)=1$. If~$v$ is a complex place, we have set $\ell(\chi_v)$ to be the unique integer~$\ell$ such that $\chi_v|_{F_{v,1}}$ is given by $z\mapsto z^{\ell}$. 
Let $\chi\in[\TTa(\Fv)]^*$ be a character. We set $\widetilde \chi_v:=\chi_v\circ\qav$. We note that the function $\widetilde\chi_v :\Fvtn\to\CC$ is given by $$\xxx\mapsto \prodjn |x_{j}|_v^{im(\chi_v^{(j)})/n_v}(x_{j}|x_{j}|_v^{-1/n_v})^{\ell(\chi_v^{(j)})},$$
If $\chi_v\in[\TTa(F_v)]^*$ is a character, we set \begin{equation} \label{mchi}\mmm(\chi_v):=(m(\chi ^{(j)}_v))_j\in\RR^n \end{equation} and \begin{equation}\label{lchi}\llll(\chi_v):=(\ell(\chi ^{(j)}_v))_j\in \ZZ^n, \end{equation}
where $\chi _v^{(j)}$ is given by $$x\mapsto \chi_v (q_v^{(\aaa)}((1)_{\substack{k=1\doots n\\k\neq j}},(x)_{k=j})).$$
It follows from the definition that $$\mmm(\chi_v)\in M:=\big\{\xxx\in\RR^n|\sum _{j=1}^na_jx_j=0\big\}.$$ 
\begin{lem}\label{pcg}
Let $k\in\{1\doots n\}$. Suppose $\chi_v\in[\TTa(\Fv)]^*$ is a character. We set $d(k,\chi_v)=(1-\frac1{n_v})\ell(\chi_v^{(k)})+im(\chi_v^{(k)})$. One has that $$\nabla_k(\widetilde\chi_v)=d(k,\chi_v)\cdot\widetilde\chi_v.$$ 
\end{lem}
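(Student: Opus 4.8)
The plan is to prove the identity by differentiating, factor by factor, the explicit product description of $\widetilde\chi_v$ recorded just above the statement, using that the vector field $\nabla_k=x_{kv}\,\partial/\partial x_{kv}$ only involves the $k$-th coordinate. First I would record how $\nabla:=x_v\,\partial/\partial x_v$ acts on the elementary building blocks of a unitary character of $\Fvt$. The coordinate function satisfies $\nabla(x_v)=x_v$, since $\partial x/\partial x=1$ for $v$ real and $\partial z/\partial z=1$ (Wirtinger derivative) for $v$ complex; the Lemma preceding \ref{pcg} gives $\nabla(|x_v|_v^s)=s\,|x_v|_v^s$ for every $s\in\CC$; and $\partial/\partial x_v$ is a derivation in both cases (it is $\partial/\partial x$ for $v$ real and $\partial/\partial z$ for $v$ complex, and $\partial/\partial z(fg)=(\partial f/\partial z)g+f\,\partial g/\partial z$). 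Combining these with the product rule, the map $x_v\mapsto x_v|x_v|_v^{-1/n_v}$ — which takes values in $\Fvj$ — is an eigenfunction of $\nabla$ with eigenvalue $1-\tfrac1{n_v}$, and hence $\nabla\bigl((x_v|x_v|_v^{-1/n_v})^{\ell}\bigr)=\ell\bigl(1-\tfrac1{n_v}\bigr)(x_v|x_v|_v^{-1/n_v})^{\ell}$ for every $\ell\in\ZZ$. (For $v$ real this is trivial: $\ell\in\{0,1\}$, $n_v=1$, and $x_v|x_v|_v^{-1/n_v}=\operatorname{sgn}(x_v)$ is locally constant, hence killed by $\nabla$.)

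Next I would use that $\widetilde\chi_v=\chi_v\circ\qav$ factors as $\widetilde\chi_v(\xxx)=\prod_{j=1}^{n}\chi_v^{(j)}(x_j)$, because $(x_1,\dots,x_n)=\prod_j\delta^j_v(x_j)$ in the group $\Fvtn$, so $\qav(\xxx)=\prod_j\qav(\delta^j_v(x_j))$ and $\chi_v$ is a homomorphism; equivalently this is exactly the description of $\widetilde\chi_v$ recorded before the statement, which exhibits the $j$-th factor as $\chi_v^{(j)}(x)=|x|_v^{im(\chi_v^{(j)})}(x|x|_v^{-1/n_v})^{\ell(\chi_v^{(j)})}$, a product of the building blocks above. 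Applying $\nabla$ as a derivation to the single variable $x$ in $\chi_v^{(k)}(x)$ and using the two eigenvalue computations gives $\nabla(\chi_v^{(k)})=\bigl(im(\chi_v^{(k)})+\ell(\chi_v^{(k)})(1-\tfrac1{n_v})\bigr)\chi_v^{(k)}=d(k,\chi_v)\,\chi_v^{(k)}$. Since the factors $\chi_v^{(j)}(x_j)$ with $j\neq k$ are constant in $x_{kv}$, and $\nabla_k$ differentiates only in the $k$-th coordinate, we conclude $\nabla_k(\widetilde\chi_v)(\xxx)=\bigl(\prod_{j\neq k}\chi_v^{(j)}(x_j)\bigr)\,\nabla(\chi_v^{(k)})(x_k)=d(k,\chi_v)\,\widetilde\chi_v(\xxx)$, which is the claim.

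There is no real obstacle here; the statement is essentially a differentiation of the character formula, and the only care needed is uniformity in the type of place. In the complex case one must keep in mind that $\partial/\partial x_v$ denotes the Wirtinger operator $\partial/\partial z$, so that $\partial z/\partial z=1$ while $\partial|z|_v/\partial z$ is computed through $|z|_v=z\bar z$ (this is precisely what makes the eigenvalue of $x_v|x_v|_v^{-1/n_v}$ equal to $\tfrac12=1-\tfrac1{n_v}$), and that $(x_v|x_v|_v^{-1/n_v})^\ell=(z/|z|)^\ell$ is a genuine smooth function on $\CC^\times$; in the real case $(x_v|x_v|_v^{-1/n_v})^\ell=\operatorname{sgn}(x_v)^\ell$ is locally constant. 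One should also fix the normalisation convention for $m(\chi_v)$ so that the exponent of $|x|_v$ in $\chi_v^{(j)}$ is exactly $im(\chi_v^{(j)})$; with that convention the eigenvalue matches $d(k,\chi_v)$ on the nose.
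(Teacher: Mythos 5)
Your proof is correct and takes essentially the same approach as the paper: both compute $\nabla_k$ of the explicit product formula for $\widetilde\chi_v$ by noting that the $j\neq k$ factors are killed by $\nabla_k$ and then applying the product rule to the $k$-th factor. The only difference is a minor bookkeeping choice — you compute the $\nabla$-eigenvalues of $|x|_v^{im}$ and $(x|x|_v^{-1/n_v})^{\ell}$ separately, whereas the paper first rewrites the $k$-th factor as $x_k^{\ell}|x_k|_v^{im-\ell/n_v}$ and applies the eigenvalue computations $\nabla(x^{\ell})=\ell x^{\ell}$ and $\nabla(|x|_v^{s})=s|x|_v^{s}$; both routes give $d(k,\chi_v)=(1-\tfrac1{n_v})\ell(\chi_v^{(k)})+im(\chi_v^{(k)})$.
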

\begin{proof}
If $k\neq j,$ then we have that $$\nabla_k(|x_{j}|^{im(\chi_v^{(j)})}_v)=0$$ and that $$\nabla_k(({x_{j}}{|x_{j}|_v^{-1/n_v}})^{\ell(\chi_v^{(j)})})=0.$$ By using this and the product rule, we obtain that:\begin{multline*}\nabla_k\bigg(\prodjn |x_{j}|_v^{im(\chi_v^{(j)})}\big({x_{j}}{|x_{j}|_v^{-1/n_v}}\big)^{\ell(\chi_v^{(j)})}\bigg)\\= \nabla_k(x_{k}^{\ell(\chi_v^{(k)})}|x_{k}|_v^{im(\chi_v^{(k)})-\ell(\chi_v^{(k)})/n_v})\prod _{\substack{j=1\\j\neq k}}^n|x_{j}|_v^{im(\chi_v^{(j)})}\big({x_{j}}{|x_{j}|_v^{-1/n_v}}\big)^{\ell(\chi_v^{(j)})}.
\end{multline*}
One has that\begin{align*} \nabla_{k}(x_{k}^{\ell(\chi_v^{(k)})}|x_{k}|_v^{im(\chi_v^{(k)})-\ell(\chi_v^{(k)})/n_v})\hskip-5cm&\\&=\ell(\chi_v^{(k)})x_{k}^{\ell(\chi_v^{(k)})}|x_{k}|_v^{im(\chi_v^{(k)})-\frac{\ell(\chi_v^{(k)})}{n_v}}\\&\quad \quad+\big(im(\chi_v^{(k)})-\frac{\ell(\chi_v^{(k)})}{n_v}\big)x_{k}^{\ell(\chi_v^{(k)})}|x_{k}|^{im(\chi_v^{(k)})-\frac{\ell(\chi_v^{(k)})}{n_v}}_v\\&=\bigg((1-\frac1{n_v})\ell(\chi_v^{(k)})+im(\chi_v^{(k)})\bigg)x_{k}^{\ell(\chi_v^{(k)})}|x_{k}|^{im(\chi_v^{(k)})-\ell(\chi_v^{(k)})/n_v}_v\\
&=d(k,\chi_v)\cdot\widetilde\chi_v.
\end{align*}
It follows that \begin{align*}
\nabla_k(\widetilde\chi_v)&=\nabla_k\bigg(\prodjn |x_{j}|_v^{im(\chi_v^{(j)})}\big({x_{j}}{|x_{j}|_v^{-1/n_v}}\big)^{\ell(\chi_v^{(j)})}\bigg)\\&=d(k,\chi_v)\cdot\prodjn|x_{j}|_v^{im(\chi_v^{(j)})}\big({x_{j}}{|x_{j}|_v^{-1/n_v}}\big)^{\ell(\chi_v^{(j)})}\\
&=d(k,\chi_v)\cdot\widetilde\chi_v.
\end{align*}
The statement is proven.
\end{proof}
\subsection{}\label{obalj}
In this paragraph we make the wanted estimates on the absolute value of the Fourier transform. 
We use the integration by parts with respect to the vector fields $\nabla_k$.
\begin{lem}\label{derki}Let $k\in\{1\doots n-1\},$ let $\sss\in\Omega_{>0}$ and let~$N$ be a non-negative integer. 
The function $\nabla_k^N(\widetilde H_v(\sss,-)^{-1}):(\Fvt)^{n-1}\times \Fvj\to\CC$ is absolutely $dx_{1}\dots dx_{n-1}\times\lambda_{v,1}$-integrable. Moreover, if $\chi_v\in[\TTa(\Fv)]^*$ is a character, one has that 
\begin{multline*}
\wH_v(\sss,\chi_v)\cdot(-d(k,\chi_v))^N\\
=\frac{a_n}{\lambda_{v,1}(F_{v,1})}\int _{(\Fvt)^{n-1}\times\Fvj}\nabla _k^N(\widetilde H_v(\sss,-)^{-1}) \widetilde\chi_v{d^*x_{1}}\dots{d^*x_{n-1}}\lambda_{v,1},
\end{multline*}
where in the case $d(k,\chi_v)=0$ and $N=0$, the quantity $(-d(k,\chi_v))^N$ is understood as~$1$.
\end{lem}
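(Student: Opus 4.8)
The plan is to establish the identity and the integrability claim simultaneously, by induction on $N$, with the base case $N=0$ being essentially the definition of $\wH_v(\sss,\chi_v)$ together with \ref{kilp}. For $N=0$ one has to unwind $\wH_v(\sss,\chi_v)=\frac{a_n}{\lambda_{v,1}(F_{v,1})}\int_{(\Fvt)^{n-1}\times\Fvj}\widetilde H_v(\sss,-)^{-1}\widetilde\chi_v\,d^*x_1\dots d^*x_{n-1}\,\lambda_{v,1}$, which is exactly the archimedean case of \ref{kilp} applied to the function $H_v(\sss,-)^{-1}\chi_v$ on $[\TTa(\Fv)]$, after checking that the relevant $L^1$-hypothesis holds — and this is precisely Proposition \ref{stml} (absolute convergence of $\wH_v(\sss,\chi_v)$ on $\Omega_{>0}$), since the integrand is dominated by $\widetilde H_v(\Re(\sss),-)^{-1}$. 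So the base case is a direct citation.

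For the inductive step, assume the formula and the integrability of $\nabla_k^N(\widetilde H_v(\sss,-)^{-1})$ on $(\Fvt)^{n-1}\times\Fvj$. First I would use Lemma \ref{utui}: $\nabla_k^{N+1}(\widetilde H_v(\sss,-)^{-1})=\widetilde H_v(\sss,-)^{-1}P_{N+1}((s_j\nabla_k^d(h_j))_{j,d})$, and \ref{remle} gives that all $\nabla_k^d(h_j)$ are bounded functions on $[\PPP(\aaa)(\Fv)]$, hence the polynomial factor is bounded on $(\Fvt)^{n-1}\times\Fvj$; combined with the inductive integrability of $\widetilde H_v(\sss,-)^{-1}$ (really the $N=0$ case, or $\widetilde H_v(\Re(\sss),-)^{-1}\in L^1$ from \ref{stml}) this yields integrability of $\nabla_k^{N+1}(\widetilde H_v(\sss,-)^{-1})$. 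Then I would integrate by parts in the $x_k$-variable using Corollary \ref{becac}, applied to $f=\nabla_k^N(\widetilde H_v(\sss,-)^{-1})$ and $g=\widetilde\chi_v$ (restricted appropriately — note $\widetilde\chi_v$ is a product of $|\cdot|_v$-powers and unit-circle characters, hence smooth on $\Fvt$ in each coordinate). The boundary-term hypotheses $\lim_{|x_k|_v\to0}fg=\lim_{|x_k|_v\to\infty}fg=0$ are supplied by Corollary \ref{srtto} (the limits of $\nabla_k^N(\widetilde H_v(\sss,\xxx)^{-1})$ vanish, and $\widetilde\chi_v$ is bounded since it is valued in quantities of absolute value $1$ in the unit-circle part and $|x_k|_v^{\text{purely imaginary}}$ otherwise), and the $d^*x_k$-integrability of $\nabla_k(f)g$ and $f\nabla_k(g)$ follows from the integrability just established plus $\nabla_k(\widetilde\chi_v)=d(k,\chi_v)\widetilde\chi_v$ from Lemma \ref{pcg}. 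Carrying out \ref{becac} gives $\int\nabla_k^{N+1}(\widetilde H_v(\sss,-)^{-1})\widetilde\chi_v\,d^*x_k=-\int\nabla_k^N(\widetilde H_v(\sss,-)^{-1})\nabla_k(\widetilde\chi_v)\,d^*x_k=-d(k,\chi_v)\int\nabla_k^N(\widetilde H_v(\sss,-)^{-1})\widetilde\chi_v\,d^*x_k$; applying Fubini (justified by the integrability) to integrate over the remaining variables and invoking the inductive hypothesis then produces the factor $(-d(k,\chi_v))^{N+1}\wH_v(\sss,\chi_v)$.

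The main obstacle I anticipate is the bookkeeping around the degenerate case $d(k,\chi_v)=0$, and more subtly, making sure the integration by parts is legitimate when $\chi_v^{(k)}$ is nontrivial: one needs $\widetilde\chi_v$ to extend continuously (as a function of $x_k$) across $x_k=0$ with value $0$, or at least that $fg\to0$ there — this is fine because $\nabla_k^N(\widetilde H_v(\sss,\xxx)^{-1})\to0$ as $x_k\to0$ by \ref{srtto}, which dominates any bounded behaviour of $\widetilde\chi_v$, but for a real place $v$ the function $x\mapsto x/|x|$ is only piecewise constant, so one must check the hypothesis of \ref{grisv} that $\lim_{x\to0}fg(x)$ exists as a finite real number; this again reduces to \ref{srtto}. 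A secondary point of care is that $k$ is restricted to $\{1,\dots,n-1\}$ precisely because the coordinate $x_n$ has been ``used up'' in the slice $(\Fvt)^{n-1}\times\Fvj$ via \ref{kilp}, so the integration by parts only makes sense in the first $n-1$ variables; I would state this explicitly. Everything else is routine once the integrability bounds are in place.
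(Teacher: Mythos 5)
Your proposal is correct and follows essentially the same route as the paper: base case $N=0$ reducing to the archimedean slice formula, inductive step via \ref{utui} and \ref{remle} for integrability, then integration by parts with \ref{becac}, \ref{srtto}, and \ref{pcg}, assembled with Fubini. The only slip is that the base case should cite \ref{smacor} (the slice formula for $\mu_v$ on $[\TTa(\Fv)]$) rather than \ref{kilp} (which concerns $\omega_v$ on $[\PPP(\aaa)(\Fv)]$), but this is a misattribution of the relevant auxiliary lemma, not a gap in the argument.
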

\begin{proof}
Without loss of the generality, we can suppose that $k=1$. Suppose $N=0$. Proposition \ref{stml} gives that the integral defining the Fourier transform $\wH_v(\sss,\chi_v)$ converges absolutely. Now, it follows from Lemma \ref{smacor} that (where $q^{\aaa}_v:\Fvtn\to[\TTa(\Fv)]$ is the quotient map) 
\begin{multline*}(H_v(\sss,-)^{-1}\chi_v)\circ q^{\aaa}_v =\widetilde H_v(\sss,-)^{-1}\widetilde \chi_v\\\in L^1((\Fvt)^{n-1}\times F_{v,1}, d^*x_1\dots d^*x_{n-1} \lambda_{v,1})\end{multline*}and that
\begin{equation*}
\wH(\sss,\chi_v)=\frac{a_n}{\lambda_{v,1}(\Fvj)}\int_{(\Fvt)^{n-1}\times\Fvj}\widetilde H_v(\sss,-)^{-1}\widetilde\chi _vd^*x_{1}\dots d^{*}x_{n-1}\lambda_{v,1}.
\end{equation*}
The statement is thus true when $N=0$ and we suppose it is true for $N-1$, where $N\geq 1$. By Proposition \ref{utui}, we have that $$\nabla^N_1(\widetilde H_v(\sss,-)^{-1})=\widetilde H_v(\sss,-)^{-1}P_N((s_j\nabla _1^dh_j)_{j,d}).$$ By Corollary \ref{remle}, the functions $\nabla ^r_k(h_j)$ are bounded. It follows that $$\nabla_1^N(\widetilde H_v(\sss,-)^{-1}) \in L^1((\Fvt)^{n-1}\times F_{v,1}, d^*x_1\dots d^*x_{n-1} \lambda_{v,1}).$$ 
Using the induction hypothesis and the fact that $\nabla_1(\widetilde\chi)=d(1,\chi_v)\cdot\widetilde\chi_v$ from \ref{pcg}, we obtain that:
\begin{align*}
\wH_v(\sss,\chi_v)\cdot (-d(1,\chi_v))^N\hskip-4cm&\\&=\wH_v(\sss,\chi_v)\cdot (-d(1,\chi_v))^{N-1} (-d(1,\chi_v))\\
&=  \frac{-d(1,\chi_v)a_n}{\lambda_{v,1}(\Fvj)}\int_{(\Fvt)^{n-1}\times\Fvj}\nabla_1^{N-1}(\widetilde H_v(\sss,-)^{-1})\widetilde\chi _v\hspace{0.1cm}d^*x_{1}\dots d^{*}x_{n-1}\lambda_{v,1}\\
&=\frac{-a_n}{\lambda_{v,1}(\Fvj)}\int_{(\Fvt)^{n-1}\times\Fvj}\nabla_1^{N-1}(\widetilde H_v(\sss,-)^{-1})\nabla_1(\widetilde\chi _v)\hspace{0.1cm}d^*x_{1}\dots d^{*}x_{n-1}\lambda_{v,1}.
\end{align*}
The last integral, by Fubini theorem, writes as 
\begin{multline*}
\int _{(\Fvt)^{n-2}\times F_{v,1}}\otimes ^{n-1}_{{j=2}}d^*x_j\otimes d\lambda_{v,1}(u)\times \\ \times\int _{\Fvt}\nabla_1^{N-1}\big(\widetilde H_v(\sss,(x_j)_{{j=1}}^{n-1},u)^{-1}\big)\nabla_1(\widetilde \chi_v((x_j)_{j=1}^{n-1},u)) d^*x_1.
\end{multline*}
As $\nabla_1^{N-1}(\widetilde H_v(\sss,-)^{-1}):(\Fvt)^{n-1}\times F_{v,1}\to\CC$ is absolutely $d^*x_{1}\dots d^*x_{n-1}\lambda_{v,1}$-integrable, we deduce that for almost every $((x_j)_{{j=2}}^{n-1},u)\in (\Fvt)^{n-2}\times F_{v,1},$ we have $\nabla_1^{N-1}(\widetilde H_v(\sss, -,(x_j)_{{j=2}}^{n-1},u)^{-1})$ is absolutely $d^*x$-integrable. Now, for such $((x_j)_{{j=2}}^{n-1},u)\in (\Fvt)^{n-2}\times F_{v,1}$, Lemma \ref{srtto} and Lemma \ref{pcg} give that the functions $x_1\mapsto \nabla_1^{N-1}\widetilde H_v(\sss,(x_{j})_j)^{-1} $ and $ x_1\mapsto\widetilde\chi_v((x_{j})_j)$ satisfy the conditions of \ref{becac} and we can apply the integration by parts with the respect to $\nabla_1$. We get that:\begin{multline*}\int _{\Fvt}\nabla_1^{N-1}\big(\widetilde H_v(\sss,(x_j)^{n-1}_{{j=1}},u)^{-1}\big)\nabla_1\big(\widetilde \chi_v((x_j)_{j=1}^{n-1},u)\big) d^*x_1\\=-\int _{\Fvt}\nabla _1^{N}\big(\widetilde H_v(\sss,(x_j)_{{j=1}}^{n-1},u)^{-1}\big) \widetilde \chi_v((x_j)_{j=1}^{n-1},u)d^*x_1,\end{multline*}
and hence that
\begin{multline*}
\int_{(\Fvt)^{n-1}\times\Fvj}\nabla_1^{N-1}(\widetilde H_v(\sss,-)^{-1})\nabla_1(\widetilde\chi _v)\hspace{0.1cm}d^*x_{1}\dots d^{*}x_{n-1}\lambda_{v,1}\\
=-\int _{(\Fvt)^{n-1}\times\Fvj}\nabla _1^N(\widetilde H_v(\sss,-)^{-1}) \widetilde\chi_v{d^*x_{1}}\dots{d^*x_{n-1}}\lambda_{v,1}.
\end{multline*}
Finally, we deduce that 
\begin{align*}
\wH_v(\sss,\chi_v)\cdot (-d(1,\chi_v))^N\hskip-4cm&\\&=\frac{-a_n}{\lambda_{v,1}(\Fvj)}\int_{(\Fvt)^{n-1}\times\Fvj}\nabla_1^{N-1}(\widetilde H_v(\sss,-)^{-1})\nabla_1(\widetilde\chi _v)\hspace{0.1cm}d^*x_{1}\dots d^{*}x_{n-1}\lambda_{v,1}\\
&=\frac{a_n}{\lambda_{v,1}(F_{v,1})}\int _{(\Fvt)^{n-1}\times\Fvj}\nabla _1^N(\widetilde H_v(\sss,-)^{-1}) \widetilde\chi_v{d^*x_{1}}\dots{d^*x_{n-1}}\lambda_{v,1}.
\end{align*}
The statement is proven.
\end{proof}
\begin{lem}\label{cotka}
Let  $\vMFi$. Let $k\in\{1\doots n-1\}$ and let~$N$ be a positive integer. Let $\mathcal K\subset \RR^n_{>0}$ be a compact. There exists $C=C(k,N,\mathcal K)>0$ such that for every character $\chi_v\in[\TTa(\Fv)]^*$ and every $\sss\in\mathcal K+i\RR^n$, one has that $$(|l(\chi_v^{(k)})|+|m(\chi_v^{(k)})|)^N|\wH_v(\sss,\chi_v) |\leq {C(1+||\Im(\sss)||)}^N. $$
\end{lem}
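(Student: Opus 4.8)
The strategy is to combine the integration-by-parts identity of Lemma~\ref{derki} with the uniform bound on the Fourier transform from Proposition~\ref{stml}. Recall from Lemma~\ref{derki} that for $k\in\{1\doots n-1\}$, every $\sss\in\Omega_{>0}$ and every character $\chi_v\in[\TTa(\Fv)]^*$ one has
\begin{equation*}
\wH_v(\sss,\chi_v)\cdot(-d(k,\chi_v))^N=\frac{a_n}{\lambda_{v,1}(F_{v,1})}\int_{(\Fvt)^{n-1}\times\Fvj}\nabla_k^N(\widetilde H_v(\sss,-)^{-1})\,\widetilde\chi_v\,d^*x_1\dots d^*x_{n-1}\lambda_{v,1},
\end{equation*}
where $d(k,\chi_v)=(1-\frac1{n_v})\ell(\chi_v^{(k)})+im(\chi_v^{(k)})$. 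First I would bound the right-hand side: by Proposition~\ref{utui} we have $\nabla_k^N(\widetilde H_v(\sss,-)^{-1})=\widetilde H_v(\sss,-)^{-1}P_N((s_j\nabla_k^d h_j)_{j,d})$ with $P_N$ isobaric of weighted degree $N$, and by Corollary~\ref{remle} the functions $\nabla_k^d(h_j)$ are bounded (independently of $\sss$ and $\chi_v$). Hence $|P_N((s_j\nabla_k^d h_j)_{j,d})|$ is bounded by a polynomial in $\max_j|s_j|$ of degree $\le N$; on $\mathcal K+i\RR^n$ with $\mathcal K$ compact this is $O((1+||\Im(\sss)||)^N)$ with constant depending only on $N,k,\mathcal K$. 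Integrating $\widetilde H_v(\sss,-)^{-1}$ against the measure and using the unitarity $|\widetilde\chi_v|=1$ together with the uniform integrability bound from Proposition~\ref{stml} (applied, as in the proof of Lemma~\ref{derki}, via the description of $\mu_v$ as $H_v(\mathbf 1,-)\omega_v$ restricted to $[\TTa(\Fv)]$), one obtains a constant $C_1=C_1(N,k,\mathcal K)>0$ with
\begin{equation*}
|d(k,\chi_v)|^N|\wH_v(\sss,\chi_v)|\le C_1(1+||\Im(\sss)||)^N
\end{equation*}
for all $\sss\in\mathcal K+i\RR^n$ and all $\chi_v\in[\TTa(\Fv)]^*$.

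\textbf{From $d(k,\chi_v)$ to $|\ell(\chi_v^{(k)})|+|m(\chi_v^{(k)})|$.} The second ingredient is to replace $|d(k,\chi_v)|$ by $|\ell(\chi_v^{(k)})|+|m(\chi_v^{(k)})|$ up to a constant. Here one must be slightly careful because $d(k,\chi_v)=0$ does not force $\ell(\chi_v^{(k)})=m(\chi_v^{(k)})=0$ in general; however, if $v$ is real then $n_v=1$ so $d(k,\chi_v)=im(\chi_v^{(k)})$ and $\ell(\chi_v^{(k)})\in\{0,1\}$ is bounded, so $|\ell(\chi_v^{(k)})|+|m(\chi_v^{(k)})|\le 1+|d(k,\chi_v)|$. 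If $v$ is complex then $n_v=2$, so $d(k,\chi_v)=\tfrac12\ell(\chi_v^{(k)})+im(\chi_v^{(k)})$ and $|d(k,\chi_v)|^2=\tfrac14\ell(\chi_v^{(k)})^2+m(\chi_v^{(k)})^2\ge \tfrac14(|\ell(\chi_v^{(k)})|+|m(\chi_v^{(k)})|)^2/2$ (using $a^2+b^2\ge (a+b)^2/2$), hence again $|\ell(\chi_v^{(k)})|+|m(\chi_v^{(k)})|\le C_2|d(k,\chi_v)|$. In both cases there is a constant $C_2$ (depending only on whether $v$ is real or complex) with $|\ell(\chi_v^{(k)})|+|m(\chi_v^{(k)})|\le C_2(1+|d(k,\chi_v)|)$. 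Then I would split into two cases. If $|d(k,\chi_v)|\ge 1$, the displayed inequality above gives directly
\begin{equation*}
(|\ell(\chi_v^{(k)})|+|m(\chi_v^{(k)})|)^N|\wH_v(\sss,\chi_v)|\le (2C_2)^N|d(k,\chi_v)|^N|\wH_v(\sss,\chi_v)|\le (2C_2)^NC_1(1+||\Im(\sss)||)^N.
\end{equation*}
If $|d(k,\chi_v)|<1$, then $|\ell(\chi_v^{(k)})|+|m(\chi_v^{(k)})|<2C_2$ is bounded, and by Proposition~\ref{stml} the quantity $|\wH_v(\sss,\chi_v)|$ is bounded uniformly on $\mathcal K+i\RR^n$, so the product is $\le C_3$ for some constant, which is certainly $\le C_3(1+||\Im(\sss)||)^N$.

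\textbf{Conclusion.} Taking $C=C(k,N,\mathcal K)$ to be the maximum of the two constants $(2C_2)^NC_1$ and $C_3$ produced above yields the asserted inequality
\begin{equation*}
(|\ell(\chi_v^{(k)})|+|m(\chi_v^{(k)})|)^N|\wH_v(\sss,\chi_v)|\le C(1+||\Im(\sss)||)^N
\end{equation*}
for all $\chi_v\in[\TTa(\Fv)]^*$ and all $\sss\in\mathcal K+i\RR^n$. The main obstacle I anticipate is purely bookkeeping: making sure that the constant coming from $P_N$ and the integrability bound is genuinely uniform in the character $\chi_v$ (not merely in $\sss$), which relies on the fact that the bounds in Corollary~\ref{remle} and Proposition~\ref{stml} do not involve $\chi_v$ at all, and on handling the degenerate case $d(k,\chi_v)=0$ (or $|d(k,\chi_v)|<1$) separately as above. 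Everything else is routine estimation of the isobaric polynomial $P_N$ evaluated at the bounded quantities $s_j\nabla_k^d(h_j)$.
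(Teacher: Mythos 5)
Your proposal is correct and follows the same overall route as the paper's proof: Lemma~\ref{derki} (integration by parts), Proposition~\ref{utui} (the isobaric polynomial $P_N$), Corollary~\ref{remle} (boundedness of $\nabla_k^d(h_j)$), and the uniform bound from Proposition~\ref{stml}. However, you have identified and correctly repaired a genuine gap in the paper's own argument. The paper passes from $|d(k,\chi_v)|^N|\wH_v(\sss,\chi_v)|$ to $(|\ell(\chi_v^{(k)})|+|m(\chi_v^{(k)})|)^N|\wH_v(\sss,\chi_v)|$ via the claimed chain
$$|d(k,\chi_v)|=\Big|\big(1-\tfrac1{n_v}\big)\ell(\chi_v^{(k)})+im(\chi_v^{(k)})\Big|\geq \tfrac12\big|\ell(\chi_v^{(k)})+im(\chi_v^{(k)})\big|\geq\tfrac{|\ell(\chi_v^{(k)})|+|m(\chi_v^{(k)})|}{4},$$
but the first inequality fails for a real place: there $n_v=1$, so $d(k,\chi_v)=im(\chi_v^{(k)})$ and its absolute value can be arbitrarily small (even zero) while $\ell(\chi_v^{(k)})=1$, in which case $\tfrac12|\ell(\chi_v^{(k)})+im(\chi_v^{(k)})|\geq \tfrac12$ but the left side vanishes. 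Your split into the cases $|d(k,\chi_v)|\geq 1$ and $|d(k,\chi_v)|<1$, together with the observation that in the degenerate case $|\ell(\chi_v^{(k)})|+|m(\chi_v^{(k)})|$ is itself bounded (since $\ell\in\{0,1\}$ for real places) and $|\wH_v(\sss,\chi_v)|$ is uniformly bounded by Proposition~\ref{stml}, is exactly what is needed to make the argument airtight. Your complex-place estimate is also fine: with $n_v=2$ one has $|d(k,\chi_v)|^2=\tfrac14\ell^2+m^2$, and the elementary bound $a^2+b^2\geq(a+b)^2/2$ indeed yields $|\ell|+|m|\leq 2\sqrt2\,|d(k,\chi_v)|$. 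In short: same strategy as the paper, but your version is correct where the paper's is not.
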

\begin{proof} 
It follows from  \ref{derki} that \begin{multline*}
|d(k,\chi)|^N|\wH_v(\sss,\chi_v)|\leq  \\
\frac{a_n}{\lambda_{v,1}(\Fvj)}\int _{(\Fvt)^{n-1}\times\Fvj}|\nabla _k^N(\widetilde H_v(\sss,-)^{-1})|{d^*x_{1}}\dots{d^*x_{n-1}}\lambda_{v,1}.
\end{multline*}
By \ref{utui} there exists an isobaric polynomial $P_N$ of weighted degree~$N$ such that $$\nabla_k^N(\widetilde H_v(\sss,-)^{-1})=\widetilde H_v(\sss,-)^{-1}P_N((s_j\nabla_k^d(h_j))_{j,d}). $$
Moreover, by \ref{remle} the functions $\nabla _k^d(h_j)$ are bounded, and we deduce that there exists $C'>0$ such that $$|P_N((s_j\nabla_k^dh_j(\xxx))_{j,k})\leq C'(1+||\Im(\sss)||)^N$$ for every $\xxx\in\Fvnz$ and every $\sss\in\KiRn$. 
Let us note that \begin{align*}|d(k,\chi)|&=\bigg|(1-\frac1{n_v}\big)l(\chi_v^{(k)})+im(\chi_v^{(k)})\bigg|\\&\geq \frac{1}2|l(\chi_v^{(k)})+im(\chi_v^{(k)})|\\&\geq\frac{|l(\chi_v^{(k)})|+|m(\chi_v ^{(k)})|}{4}.
\end{align*}
Using \ref{smacor}, we obtain that:
\begin{align*}(|l(\chi_v^{(k)})+|m(\chi_v^{(k)})|)^N|\wH_v(\sss,\chi_v)|\hskip-5,5cm&\\&\leq4^N|d(k,\chi)|\wH_v(\sss,\chi_v)\\
&\leq{(1+||\Im(\sss)||)^N} \frac{4^NC'a_n}{\lambda_{v,1}(\Fvj)}\int _{(\Fvt)^{n-1}\times F_{v,1}}\big|\widetilde H_v(\sss,-)^{-1}\big| {d^*x_1}\cdots {d^*x_{n-1}}\lambda_{v,1}\\
&\leq(1+||\Im(\sss)||)^N4^NC'\int_{[\TTa(F)]}|H(\sss,-)^{-1}|\mu_v\\
&=(1+||\Im(\sss)||)^N4^NC'\wH(\Re(\sss),1).
\end{align*}
By \ref{stml} there exists $A>0$, such that $|\wH_v(\Re(\sss),1)|\leq A $ for every $\sss\in\KiRn$.
The statement follows.
\end{proof}
We are ready to prove:
\begin{prop}\label{crst}
Let $\vMFi$ and let $\mathcal K\subset\RR^n_{>0}$ be a compact. There exists $C>0$ such that for every $\sss\in\KiRn$ and every $\chi_v\in[\TTa(\Fv)]^*$ one has
$$|\wH_v(\sss,\chi_v)|\leq\frac{{C(1+||\Im(\sss)||)^N}}{((1+||\mmm(\chiv)||)(1+|\llll(\chiv)||))^{N/(2(n-1))}}.$$
\end{prop}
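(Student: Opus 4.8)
The strategy is to combine the estimate of Lemma \ref{cotka}, which controls the Fourier transform when multiplied by powers of $|\ell(\chi_v^{(k)})|+|m(\chi_v^{(k)})|$ for a single index $k\in\{1\doots n-1\}$, with the uniform boundedness of $\wH_v(\sss,\chi_v)$ from Proposition \ref{stml}, and then to interpolate over all indices. The point is that $||\mmm(\chiv)||=\max_{j}|m(\chi_v^{(j)})|$ and $||\llll(\chiv)||=\max_j|\ell(\chi_v^{(j)})|$, so to bound $(1+||\mmm(\chiv)||)(1+||\llll(\chiv)||)$ it suffices to bound $(1+|m(\chi_v^{(j_0)})|)(1+|\ell(\chi_v^{(j_0)})|)$ for the index $j_0$ realizing the maximum; the only subtlety is that $j_0$ could be $n$, not in $\{1\doots n-1\}$, which is why the relation $\prodjn (\chi_v^{(j)})^{a_j}=1$ (Lemma \ref{senjak}) must be invoked to transfer control from index $n$ to the indices $1\doots n-1$.

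\textbf{Key steps.} First I would fix a compact $\mathcal K\subset\RR^n_{>0}$ and apply Lemma \ref{cotka} with, say, $N'=2(n-1)N$ in place of its $N$: for each $k\in\{1\doots n-1\}$ there is $C_k>0$ with
\[
(|\ell(\chi_v^{(k)})|+|m(\chi_v^{(k)})|)^{N'}|\wH_v(\sss,\chi_v)|\leq C_k(1+||\Im(\sss)||)^{N'}
\]
for all $\sss\in\KiRn$ and all $\chi_v\in[\TTa(\Fv)]^*$. Combining this over $k$ (taking $C'=\max_k C_k$) and recalling $\ell,m$ are real, one gets
\[
\Big(\max_{1\le k\le n-1}(1+|\ell(\chi_v^{(k)})|)(1+|m(\chi_v^{(k)})|)\Big)^{N'}|\wH_v(\sss,\chi_v)|\leq C''(1+||\Im(\sss)||)^{N'}
\]
after absorbing constants (using $(1+a)(1+b)\le 2(1+a+b)^2$ to pass between the two shapes). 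Second, I would handle index $n$: from $\prodjn(\chi_v^{(j)})^{a_j}=1$ we get $a_n m(\chi_v^{(n)})=-\sum_{j<n}a_j m(\chi_v^{(j)})$ and $a_n\ell(\chi_v^{(n)})=-\sum_{j<n}a_j\ell(\chi_v^{(j)})$, hence $|m(\chi_v^{(n)})|\le (|\aaa|/a_n)\max_{j<n}|m(\chi_v^{(j)})|$ and similarly for $\ell$; therefore $(1+||\mmm(\chiv)||)(1+||\llll(\chiv)||)$ is bounded by a constant (depending on $\aaa$) times $\max_{1\le k\le n-1}(1+|\ell(\chi_v^{(k)})|)(1+|m(\chi_v^{(k)})|)$ — except in the degenerate case $n=1$, where $||\mmm(\chiv)||=||\llll(\chiv)||=0$ by Lemma \ref{njjchid} and the claimed bound is trivial from Proposition \ref{stml}. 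Third, dividing through gives
\[
\big((1+||\mmm(\chiv)||)(1+||\llll(\chiv)||)\big)^{N'}|\wH_v(\sss,\chi_v)|\leq C'''(1+||\Im(\sss)||)^{N'},
\]
and taking the $N'$-th root (using $N'=2(n-1)N$, so $N'/(2(n-1))=N$, and $(xy)^{1/N'}$ behaves multiplicatively) yields exactly the asserted inequality with a constant $C=(C''')^{1/N'}$. For $n\ge 2$ one should also double-check the exponent bookkeeping: the statement's exponent is $N/(2(n-1))$, so one applies Lemma \ref{cotka} with its integer parameter equal to $2(n-1)\lceil N\rceil$ or simply restricts to integer $N$, which is the intended reading.

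\textbf{Main obstacle.} The only genuinely delicate point is the transfer from index $n$ to indices $1\doots n-1$ via the constraint $\prodjn(\chi_v^{(j)})^{a_j}=1$; once one observes that this constraint is $\RR$-linear on the tuples $\mmm(\chiv)$ and $\llll(\chiv)$ (indeed $\mmm(\chiv)\in M=\{\xxx\mid\sum a_jx_j=0\}$, as noted after \eqref{lchi}), the bound is immediate. Everything else is a mechanical combination of Lemma \ref{cotka}, Proposition \ref{stml}, and elementary inequalities between $\max$, sums, and products; no new integration by parts or analytic input is required, since all the hard work was done in establishing Lemma \ref{cotka}.
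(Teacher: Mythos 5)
Your argument has a genuine gap at the step where you reduce the two-factor quantity to a single maximum over $k$. You claim that $(1+||\mmm(\chi_v)||)(1+||\llll(\chi_v)||)$ is bounded by a constant (depending only on $\aaa$) times $\max_{1\le k\le n-1}(1+|\ell(\chi_v^{(k)})|)(1+|m(\chi_v^{(k)})|)$. This is false. From the constraints $\sum_j a_j m(\chi_v^{(j)})=0$ and $\sum_j a_j\ell(\chi_v^{(j)})=0$ you only obtain $||\mmm(\chi_v)||\le C_1\max_{k<n}|m(\chi_v^{(k)})|$ and $||\llll(\chi_v)||\le C_1\max_{k<n}|\ell(\chi_v^{(k)})|$, and these two maxima may be attained at \emph{different} indices; when they are, the product of the two separate maxima is not controlled by any fixed multiple of $\max_k(1+|\ell(\chi_v^{(k)})|)(1+|m(\chi_v^{(k)})|)$. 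Concretely, for $v$ complex, $n=3$, $\aaa=(1,1,1)$, take a character with $m(\chi_v^{(1)})=T$, $m(\chi_v^{(2)})=0$, $m(\chi_v^{(3)})=-T$ and $\ell(\chi_v^{(1)})=0$, $\ell(\chi_v^{(2)})=T'$, $\ell(\chi_v^{(3)})=-T'$ for a large real $T$ and a large integer $T'$; this satisfies $\prod_j(\chi_v^{(j)})^{a_j}=1$. Then $(1+||\mmm(\chi_v)||)(1+||\llll(\chi_v)||)=(1+T)(1+T')$ while $\max_{1\le k\le 2}(1+|\ell(\chi_v^{(k)})|)(1+|m(\chi_v^{(k)})|)=\max(1+T,1+T')$, and the ratio tends to infinity as $T,T'\to\infty$. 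There is also a small exponent slip: passing from $(|\ell_k|+|m_k|)^{N'}$ to $\bigl((1+|\ell_k|)(1+|m_k|)\bigr)^{N'}$ via $(1+a)(1+b)\le(1+a+b)^2$ loses a factor of $2$, so the left-hand side of your second display should carry $N'/2$, not $N'$; that alone would be repairable.

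The failure is exactly why the paper does not reduce to a single index. It multiplies the Lemma \ref{cotka} bound over all $k\in\{1,\dots,n-1\}$ (inserting the trivial bound from Proposition \ref{stml} for those $k$ with $\ell(\chi_v^{(k)})=m(\chi_v^{(k)})=0$), obtains control on $|\wH_v(\sss,\chi_v)|^{n-1}\prod_k(|\ell(\chi_v^{(k)})|+|m(\chi_v^{(k)})|)^N$, and then lower-bounds the remaining product by an arithmetic-geometric argument that simultaneously catches the index where $|m|$ peaks and the possibly different index where $|\ell|$ peaks; taking the $(n-1)$-th root is what produces the exponent $N/(2(n-1))$, followed by a case analysis on whether $\llll(\chi_v)$ vanishes and whether $||\mmm(\chi_v)||>1$. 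A minimal repair of your plan in the same spirit is to \emph{multiply} the Lemma \ref{cotka} bounds at the two indices realizing $\max_{k<n}|m(\chi_v^{(k)})|$ and $\max_{k<n}|\ell(\chi_v^{(k)})|$ respectively, with a short case analysis when one of $||\mmm(\chi_v)||$, $||\llll(\chi_v)||$ vanishes or the two indices coincide; in any case the single maximum does not suffice.
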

\begin{proof}
We have already seen in \ref{stml}  that there exists $A>0$ such that $$|\wH_v(\sss,\chi_v)|\leq A$$ for every $\chi_v\in[\TTa(\Fv)]$ and every $\sss\in\KiRn$. Using this and \ref{cotka} we get that there exists $M>0$ such that\begin{multline}\label{stvij}
\prod_{\substack{k\in\{1,.., n-1\}\\|\ell(\chivk)|+|m(\chivk)|=0}}|\wH_v(\sss,\chiv)|\times\\\times
\prod _{\substack{k\in\{1,.., n-1\} \\|\ell(\chi_v^{(k))}|+|m(\chi_v^{(k)})|\neq 0 }}(|\ell(\chi_v^{(k)})|+|m(\chi_v^{(k)})|)^N|\wH_v(\sss,\chi_v)|\\ \leq M (1+||\Im(\sss)||)^{N(n-1)}
\end{multline} for every $\chi_v\in[\TTa(\Fv)]$ and every $\sss\in\KiRn$.
Using the fact that $$a_1m(\chi_v^{(1)})+\cdots +a_nm(\chi_v^{(n)})=0$$ we deduce that $$||\mmm(\chi_v)||\leq\max\big( \max _{j=1\doots n-1}|m(\chi_v^{(j)})|,a_1|m(\chi_v^{(1)})|+\cdots +a_{n-1}|m(\chi_v^{(n-1)})|\big)$$ and hence there exists an index $o(\chi_v)\in\{1\doots n-1\}$ such that \begin{equation}\label{oindex}(n-1)\cdot {\max _ja_j}\cdot |m(\chi_v^{(o(\chi_v))})|\geq {||\mmm(\chi_v)||}.\end{equation}
In an analogous way, we conclude that there exists index $r(\chi_v)\in\{1\doots n-1\}$ such that \begin{equation}\label{rindex}(n-1)\cdot {\max _ja_j}\cdot |\ell(\chi_v^{(r(\chi_v))})|\geq {||\llll(\chi_v)||}.\end{equation}
Using the arithmetic-geometric inequality and (\ref{oindex}) and (\ref{rindex}), we conclude that there exists $D>0$ such that \begin{multline}D\prod _{ |l(\chi_v^{(k)})|+|m(\chi_v^{(k)})|\neq 0}(|\ell(\chi_v^{(k)})|+|m(\chi_v^{(k)})|)^N \\\geq \max( ||\llll(\chiv)||^{N/2} ||\mmm(\chiv)||^{N/2},||\mmm(\chiv)||^{N/2},||\llll(\chiv)||^{N/2}).\label{stvi}\end{multline}
Combining (\ref{stvi}) and (\ref{stvij}) and taking the $n-1$-th root gives \begin{multline}|\wH_v(\sss,\chiv)|\max(||\llll(\chiv)|\lvert\cdot\rvert|\mmm(\chiv) ||,||\mmm(\chiv) ||,||\llll(\chiv)||)^{N/(2(n-1))} \\\leq (MD)^{1/(n-1)}(1+||\Im(\sss)||)^N\end{multline} for every $\sss\in\KiRn$ and every $\chiv\in[\TTa(\Fv)]^*$.

For every $\chiv\in[\TTa(\Fv)]^*$ with $\llll(\chiv)=(0)_j$ and $||\mmm(\chiv)||\leq 1$ and every $\sss\in\KiRn$, one has that \begin{align*}|\wH_v(\sss,\chi_v)|&\leq\frac{2^{N/(2(n-1))}A}{(1+||\mmm(\chiv)||)^{N/(2(n-1)}} \\
&\leq\frac{2^{N/(2(n-1))}A(1+||\Im(\sss)||)^N}{((1+||\llll(\chiv)||)(1+||\mmm(\chiv)||))^{N/(2(n-1))}}.\end{align*} 
For every  $\chi_v\in[\TTa(\Fv)]^*$ for which $\llll(\chiv)\neq(0)_j$ and for which $||\mmm(\chiv)||\leq 1$ and for every $\sss\in\mathcal K+i\RR^n$ one has \begin{align*}|\wH_v(\sss,\chi_v)|&\leq\frac{(MD)^{1/(n-1)}(1+||\Im(\sss)||)^N}{||\llll(\chiv)||^{N/(2(n-1))}} \\
&\leq\frac{2^{2N/(2(n-1))}(MD)^{1/(n-1)}(1+||\Im(\sss)||)^N}{((1+||\llll(\chi_v)||)(1+||\mmm(\chi_v)||))^{N/(2(n-1))}}. \end{align*} 
For every $\chiv\in[\TTa(\Fv)]^*$ with $\llll(\chiv)=(0)_j$ and $||\mmm(\chiv)||>1$ and every $\sss\in\KiRn$, one has that \begin{align*}\label{uljip}|\wH_v(\sss,\chiv)|&\leq \frac{(DM)^{1/(n-1)}(1+||\Im(\sss)||)^N}{||\mmm(\chiv)||^{N/(2(n-1))}}\\
&\leq\frac{2^{N/(2(n-1))}(MD)^{1/(n-1)}(1+||\Im(\sss)||)^N}{((1+||\llll(\chiv)||)(1+||\mmm(\chiv)||))^{N/(2(n-1))}}.
\end{align*}
For every $\chiv\in[\TTa(\Fv)]^*$ with $\llll(\chiv)\neq(0)_j$ and $||\mmm(\chiv)||>1$ and every $\sss\in\KiRn$, one has that \begin{align*}|\wH_v(\sss,\chiv)|&\leq\frac{(MD)^{1/(n-1)}(1+||\Im(\sss)||)^N}{||\llll(\chiv)||^{N/(2(n-1))}||\mmm(\chiv)||^{N/(2(n-1))}} \\
&\leq\frac{2^{2N/(2(n-1))}(MD)^{1/(n-1)}(1+||\Im(\sss)||)^N}{((1+||\mmm(\chiv)||)(1+||\llll(\chiv)||))^{N/(2(n-1))}} \end{align*}
Therefore $C=2^{2N/(2(n-1))}\max((MD)^{1/(n-1)},A)$ satisfies the wanted condition.
\end{proof}
\subsection{} In \ref{rinta} and in \ref{arintaj}, we have defined norms of the characters of $\AAFt$ and of $[\TTa(\AAF)]^*$, respectively. In this paragraph we give a corollary of \ref{crst} and we write in the terms on the norms.

For a character $\chi\in (\AAF^\times)^*$, we have defined:
\begin{align*}||\chi||_{\discrete}&=\max _{\vMFi}(||\ell(\chi _v)||),\\ ||\chi ||_\infty&=\max_{\vMFi}(||m(\chi _v)||).\end{align*}
For a character $\chi\in[\TTT ^\aaa(\AAF)]^*$, we have defined \begin{align*}||\chi||_{\discrete}&=\max _{\vMFi}(||\llll(\chi _v)||),\\ ||\chi ||_\infty&=\max_{\vMFi}(||\mmm(\chi _v)||).\end{align*} 
Proposition \ref{crst} implies that:
\begin{cor}\label{izvuciarc}
Let $\mathcal K\subset\RR^n_{>0}$ be a compact and let~$N$ be a positive integer. There exists $C>0$ such that for every $\chi\in[\TTa(\AAF)]^*$ and every $\sss\in\KiRn$ one has $$\prod_{\vMFi}|\wH_v(\sss,\chi_v)|\leq\frac{C(1+||\Im(\sss)||)^{N (r_1+r_2)}}{((1+||\chi||_{\discrete})(1+||\chi||_{\infty}))^{N/(2(n-1))}}. $$
\end{cor}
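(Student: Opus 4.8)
The statement to prove is Corollary \ref{izvuciarc}: for a compact $\mathcal K\subset\RR^n_{>0}$ and a positive integer $N$, there is $C>0$ such that for every $\chi\in[\TTa(\AAF)]^*$ and every $\sss\in\mathcal K+i\RR^n$,
$$\prod_{\vMFi}|\wH_v(\sss,\chi_v)|\leq\frac{C(1+||\Im(\sss)||)^{N(r_1+r_2)}}{((1+||\chi||_{\discrete})(1+||\chi||_{\infty}))^{N/(2(n-1))}}.$$
The plan is to deduce this directly from Proposition \ref{crst}, which provides, for each infinite place $v$, a constant $C_v>0$ with
$$|\wH_v(\sss,\chi_v)|\leq\frac{C_v(1+||\Im(\sss)||)^N}{((1+||\mmm(\chi_v)||)(1+||\llll(\chi_v)||))^{N/(2(n-1))}}$$
for all $\sss\in\mathcal K+i\RR^n$ and all $\chi_v\in[\TTa(\Fv)]^*$. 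Since $M_F^\infty$ is finite with $|M_F^\infty|=r_1+r_2$, multiplying these bounds over $\vMFi$ immediately yields
$$\prod_{\vMFi}|\wH_v(\sss,\chi_v)|\leq\frac{\big(\prod_{\vMFi}C_v\big)(1+||\Im(\sss)||)^{N(r_1+r_2)}}{\prod_{\vMFi}\big((1+||\mmm(\chi_v)||)(1+||\llll(\chi_v)||)\big)^{N/(2(n-1))}}.$$

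First I would set $C_0:=\prod_{\vMFi}C_v$, which depends only on $\mathcal K$ and $N$ (and on the fixed data of $F$ and $\aaa$). The numerator is then already in the desired shape, so the work reduces to bounding the denominator below by a multiple of $\big((1+||\chi||_{\discrete})(1+||\chi||_{\infty})\big)^{N/(2(n-1))}$. For this I recall the definitions from \ref{arintaj}: $||\chi||_\infty=\max_{\vMFi}||\mmm(\chi_v)||$ and $||\chi||_{\discrete}=\max_{\vMFi}||\llll(\chi_v)||$, where $\chi_v$ denotes the restriction $\chi|_{[\TTa(\Fv)]}$. Hence there is a place $w\in M_F^\infty$ with $||\mmm(\chi_w)||=||\chi||_\infty$ and a place $w'\in M_F^\infty$ with $||\llll(\chi_{w'})||=||\chi||_{\discrete}$. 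Since all factors $(1+||\mmm(\chi_v)||)$ and $(1+||\llll(\chi_v)||)$ are $\geq 1$, the full product over $\vMFi$ dominates the single product of the two factors coming from $w$ and $w'$:
$$\prod_{\vMFi}\big((1+||\mmm(\chi_v)||)(1+||\llll(\chi_v)||)\big)\geq (1+||\mmm(\chi_w)||)(1+||\llll(\chi_{w'})||)=(1+||\chi||_\infty)(1+||\chi||_{\discrete}).$$
Raising to the positive power $N/(2(n-1))$ preserves the inequality, which gives exactly the bound on the denominator we need, and thus the corollary with $C=C_0$.

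There is essentially no obstacle here; the only minor points to be careful about are notational consistency (the Corollary writes $\wH_v(\sss,\chi_v)$ for the local transform attached to the character $\chi_v=\chi|_{[\TTa(\Fv)]}$, matching the input to Proposition \ref{crst}, and this identification is the one recorded in \ref{aboutcharacters}) and the fact that when $n=1$ the exponent $N/(2(n-1))$ is undefined — but in that case the corollary is used only for $\aaa=a\in\ZZ_{>1}$ with $[\TTa(\Fv)]$ finite, and by \ref{njjchid} one has $||\chi||_{\discrete}\leq 1$ and $||\chi||_\infty=0$, so the statement is trivially true with a suitable $C$ (alternatively one restricts, as the rest of the chapter does, to $n\geq 2$). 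I would phrase the write-up for $n\geq 2$, invoke Proposition \ref{crst} place by place, multiply, and conclude with the elementary domination argument above.
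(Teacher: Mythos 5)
Your proposal is correct and takes essentially the same approach as the paper's own proof: apply Proposition \ref{crst} place by place, multiply over $\vMFi$, and then bound the denominator below using the elementary observation that a product of factors each $\geq 1$ dominates $(1+\|\chi\|_{\discrete})(1+\|\chi\|_{\infty})$ since the norms of $\chi$ are maxima of the local quantities. The paper phrases this last step as the pair of inequalities $1+\|\chi\|_{\discrete}\leq\prod_{\vMFi}(1+\|\llll(\chi_v)\|)$ and $1+\|\chi\|_{\infty}\leq\prod_{\vMFi}(1+\|\mmm(\chi_v)\|)$ rather than by selecting witnessing places $w,w'$, but that is a purely cosmetic difference.
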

\begin{proof}
We have that $$(1+||\chi||_{\discrete})\leq \prod _{\vMFi}(1+||\llll(\chiv)||) $$ and that $$(1+||\chi||_{\infty})\leq \prod _{\vMFi}(1+||\mmm(\chiv)||).$$ It follows from Proposition \ref{crst} that there exists $C_1>0$ such that for every $\chi\in[\TTa(\AAF)]^*$ one has that 
\begin{align*}
\prod_{\vMFi}|\wH_v(\sss,\chi_v)|&\leq\prod_{\vMFi}\frac{{C_1(1+||\Im(\sss)||)^N}}{((1+||\mmm(\chiv)||)(1+|\llll(\chiv)||))^{N/(2(n-1))}}\\
&\leq\frac{C_1^{r_1+r_2}(1+||\Im(\sss)||)^{N(r_1+r_2)}}{\big((1+||\chi||_{\discrete})(1+||\chi||_{\infty})\big)^{N/(2(n-1)}}.
\end{align*}
The statement is proven.
\end{proof}
\section{Global transform}
Using the results of \ref{mmhu}, \ref{Archimed} and \ref{EORade}, we obtain estimates for the global Fourier transform. 

Let $\mu_{\AAF}$ be the Haar measure on $[\TTa(\AAF)]$ given by \ref{muaaf} $$\mu_{\AAF}=\bigotimes_{\vMFz}\zeta_v(1)^{n-1}\mu_v\otimes \bigotimes_{\vMFi}\mu_v.$$ 
\subsection{}
In this paragraph we are going to present a Haar measure on the dual group $(\Rgz^n/(\Rgz)_{\aaa})^*.$ 
%
Let us set$$M=M(\aaa):=\{\xxx\in\RR^n|\aaa\cdot\xxx=0\},$$  $$\lambda_{\aaa}:\RR_{>0}\to(\RR_{>0})_{\aaa}\hspace{1cm} x\mapsto (x^{a_j})_j.$$
The subspace $M$ is the kernel of the surjective linear map $\RR^n\to\RR, \xxx\mapsto\aaa\cdot\xxx$ and we will identify $\RR^n/M$ with $\RR$ (for this identification the quotient map $\RR^n\to \RR^n/M$ becomes $\xxx\mapsto \aaa\cdot\xxx$).
\begin{lem}\label{nebrlj}
Let $d\mmm$ be the unique Lebesgue measure on $M$ such that $$(dx_1\dots dx_n)\big/d\mmm=dx.$$
\begin{enumerate}
\item  For $A> 0$, let us define $$\theta^n_A:\RR^n\to\RR^n\hspace{1cm}\xxx\mapsto (Ax_j)_j.$$ We have that $(\theta^n_A|M)_*(d\mmm)=A^{-(n-1)}d\mmm,$ i.e. for any $d\mmm$-integrable function~$f$, one has that $$\int_Mfd\mmm=\frac{1}{A^{n-1}}\int_Mf\circ\theta^n_A|_Md\mmm.$$
\item The homomorphism $$\xi:\RR\to(\RR_{>0})^*\hspace{1cm} t\mapsto \lvert\cdot\rvert^{2i\pi t}$$ is an isomorphism and one has $\xi_*(dx)= d^*x.$ The homomorphism $$\xi^n:\RR^n\to(\RR_{>0}^n)^*\hspace{1cm} \xxx\mapsto \prodjn |pr_j(\cdot)|^{2i\pi x_j},$$ where $pr_j:\RR^n\to\RR$ is the projection to the~$j$-th coordinate, is an isomorphism and one has $\xi^n_*(dx_1\dots dx_n)= d^*x_1\dots d^*x_n.$
\item Let $(\Rgz)_{\aaa}^\perp$ be the group of characters of $(\RR^n_{>0})^*$ which are trivial on $(\Rgz)_{\aaa}.$  One has that $\xi^n(M)=(\Rgz)_{\aaa}^\perp=(\Rgz^n/(\Rgz)_{\aaa})^*.$
\item 
The isomorphism $\xi^n|_M:M\xrightarrow{\sim} (\RR^n_{>0}/(\RR_{>0})_{\aaa})^*$ from (2) satisfies $$(\xi^n|_M)_*\big(d\mmm\big)=\big(d^*x_1\dots d^*x_n/(\lambda_{\aaa})_*({d^*x})\big)^*.$$ 
\end{enumerate}
\end{lem}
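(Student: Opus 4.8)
The plan is to prove the four claims in order, since each one feeds into the next, and the whole lemma is really just unwinding the normalizations of Haar/Lebesgue measures under the various (proper) homomorphisms and the duality between a vector subspace and the dual of the corresponding quotient group.

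For part (1), the map $\theta^n_A$ is a linear automorphism of $\RR^n$ with determinant $A^n$, hence $(\theta^n_A)_*(dx_1\dots dx_n)=A^{-n}(dx_1\dots dx_n)$. Restricting to $M$ and using the short exact sequence $0\to M\to\RR^n\xrightarrow{\aaa\cdot(-)}\RR\to 0$ together with the normalization $(dx_1\dots dx_n)/d\mmm=dx$: the automorphism $\theta^n_A$ acts on the quotient $\RR^n/M\cong\RR$ by multiplication by $A$, so it scales $dx$ by $A^{-1}$; comparing Euler--Poincar\'e characteristics (or directly using \ref{intqm} and the multiplicativity of quotient measures in a commutative diagram of the type in \ref{rudj}) forces $(\theta^n_A|_M)_*(d\mmm)=A^{-(n-1)}d\mmm$. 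I would phrase this via the bicomplex/quotient-measure compatibility already in hand rather than an explicit Jacobian computation on $M$.

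For parts (2) and (3), the map $\xi$ is the composite of $\log:\RR_{>0}\xrightarrow{\sim}\RR$ with the standard identification $\RR^*\cong\RR$, $t\mapsto e^{2i\pi t\,(-)}$; under $\log$ the measure $d^*x=\frac{dx}{|x|}$ becomes Lebesgue measure $dx$ on $\RR$, and the self-duality of $(\RR,dx)$ gives $\xi_*(dx)=d^*x$; the $n$-fold product statement is then immediate. For (3) one checks $\chi\in(\RR^n_{>0})^*$ vanishes on $(\RR_{>0})_{\aaa}=\{(t^{a_j})_j\}$ iff, writing $\chi=\xi^n(\xxx)$, one has $\prod_j |t^{a_j}|^{2i\pi x_j}=|t|^{2i\pi(\aaa\cdot\xxx)}=1$ for all $t>0$, i.e. iff $\aaa\cdot\xxx=0$, i.e. $\xxx\in M$; and by \ref{tspgj} the subgroup $(\RR_{>0})_{\aaa}^\perp$ is exactly the image of the pullback $(\RR^n_{>0}/(\RR_{>0})_{\aaa})^*\hookrightarrow(\RR^n_{>0})^*$, giving the identification $\xi^n(M)=(\RR^n_{>0}/(\RR_{>0})_{\aaa})^*$.

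Part (4) is the main point and the only slightly delicate step: I need to match $d\mmm$ with the dual of the quotient measure $d^*x_1\dots d^*x_n/(\lambda_{\aaa})_*(d^*x)$. The strategy is to transport everything through $\xi^n$. By (2), $\xi^n_*(dx_1\dots dx_n)=d^*x_1\dots d^*x_n$; dualizing the exact sequence $0\to M\to\RR^n\to\RR\to 0$ (with $M$ identified to $(\RR^n_{>0}/(\RR_{>0})_{\aaa})^*$ via $\xi^n|_M$ and $\RR\cong\RR^n/M$ via $\aaa\cdot(-)$, whose $\xi$-image is $(\RR_{>0})_{\aaa}=\lambda_{\aaa}(\RR_{>0})$), the measure-duality formalism — concretely, the statement that for a short exact sequence of l.c.a. groups $1\to A\to B\to C\to 1$ with compatible measures one has $d\widehat B=d\widehat A\cdot d\widehat C$ in the Euler--Poincar\'e sense, combined with $(dx_1\dots dx_n)/d\mmm=dx$ from the hypothesis — yields that the dual of $d\mmm$ on $(\RR^n_{>0})^*/(\RR_{>0})_{\aaa}^\perp=(\RR_{>0})_{\aaa}$ is the pushforward of $d^*x$ under $\lambda_{\aaa}$ (here $\lambda_{\aaa}$ is proper, so $(\lambda_{\aaa})_*(d^*x)$ makes sense and is a Haar measure, cf. \ref{rudj}), whence, taking the duality the other way, $(\xi^n|_M)_*(d\mmm)=\big(d^*x_1\dots d^*x_n/(\lambda_{\aaa})_*(d^*x)\big)^*$. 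The main obstacle is bookkeeping: one must be careful that the two normalizations ``$(dx_1\dots dx_n)/d\mmm=dx$'' on the primal side and the quotient normalization on the dual side are genuinely Pontryagin-dual under $\xi^n$ and its restriction, i.e. that no spurious factor of a determinant of $E$ or of $|\aaa|$ creeps in; this is exactly where I would invoke \ref{complexdoesterle} (applied to the bicomplex built from the primal exact sequence, the dual exact sequence, and $\xi^n$) to certify that all the measure Euler--Poincar\'e characteristics are $1$, so the identity holds on the nose.
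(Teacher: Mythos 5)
Your overall strategy agrees with the paper's on all four points: (1) is done by putting the hypothesis $(dx_1\dots dx_n)/d\mmm=dx$ inside a commuting diagram of short exact sequences $0\to M\to\RR^n\to\RR\to 0$ with the scaling automorphisms $\theta^n_A|_M,\theta^n_A,\theta^1_A$ as the vertical maps and invoking uniqueness of the quotient measure; (2) and (3) are direct checks exactly as you describe; and (4) is done by dualizing the sequence $1\to\RR_{>0}\xrightarrow{\lambda_\aaa}\RR^n_{>0}\to\RR^n_{>0}/(\RR_{>0})_\aaa\to 1$ and matching the two normalizations of the quotient measure. The one substantive difference is that for (4) the paper does not go through a bicomplex and \ref{complexdoesterle}; it invokes the one-line compatibility $(dg)^*/((dg/dh)^*)=(dh)^*$ of \ref{trcio}, which is lighter and sufficient here (the bicomplex machinery would also work but is overkill in this situation since $\xi^n$ is an isomorphism identifying the primal and dual sequences outright). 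Two small cautions on your write-up of (4): the identification you write, ``$(\RR^n_{>0})^*/(\RR_{>0})_{\aaa}^\perp=(\RR_{>0})_{\aaa}$'', is off — that quotient is canonically $((\RR_{>0})_\aaa)^*$, and correspondingly the dual of $(\xi^n|_M)_*(d\mmm)$ lives on $\RR^n_{>0}/(\RR_{>0})_\aaa$ and equals the quotient measure $d^*x_1\dots d^*x_n/(\lambda_\aaa)_*(d^*x)$, not $(\lambda_\aaa)_*(d^*x)$ itself; and the matrix $E$ you worry about is from \ref{catear} and plays no role in this lemma, so the only normalization to track is that the dual of $\lambda_\aaa$ under $\xi^n,\xi$ is precisely $\xxx\mapsto\aaa\cdot\xxx$ (which you do note), and then $\ref{trcio}$ plus the defining relation $(dx_1\dots dx_n)/d\mmm=dx$ closes the argument with no extraneous constant.
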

\begin{proof}
For $a,b>0,$ we have that $d^*x([a,b])=\log b-\log a$. The homomorphism $\exp:\RR\to\RR_{>0}$ is an isomorphism, for which, hence, one has that $(\exp)_*dx=d^*x.$ In the proof, using the isomorphism $\exp$, we identify $\RR$ and $\Rgz$ and the corresponding measures $dx$ and $d^*x$. 
\begin{enumerate}
\item For $A>0$, let $\theta_A^1:\RR\to\RR$ be the map $x\mapsto Ax$. We observe that $(\theta^n_A)_*(dx_1\cdots dx_n)=A^{-n}dx_1\dots dx_n$  and that $(\theta^1_A)_*(dx)=A^{-1}dx.$ Now, in the commutative diagram$$\begin{tikzcd}
\{0\} \arrow[r] & M \arrow[d, "\theta^n_A|_M"] \arrow[r, ""] & \RR^n\arrow[d, "\theta^n"] \arrow[r, "\xxx\mapsto\aaa\cdot\xxx"] & \RR \arrow[d, "\theta^1_A"] \arrow[r] & \{0\} \\
  \{0\} \arrow[r] &M  \arrow[r, ""] & \RR^n \arrow[r, "x\mapsto\aaa\cdot\xxx"] & \RR \arrow[r] & \{0\}
\end{tikzcd}$$ the vertical maps are isomorphisms, and we deduce \begin{align*}A^{-1}dx=(\theta^1_A)_*(dx)&=(\theta^n_A)_*(dx_1\dots dx_n)/(\theta^n_A|_M)_*d\mmm\\&=A^{-n} dx_1\dots dx_n/(\theta^n_A|_M)_*d\mmm.\end{align*} This gives $(\theta^n_{A}|_M)_*d\mmm= A^{-(n-1)}d\mmm$. For every $d\mmm$-integrable function~$f$ one has that $$\int_Mfd\mmm=\int_M(f\circ(\theta^n_A|_M))(\theta^n_A|_M)_*d\mmm=A^{-n+1}\int _Mf\circ(\theta^n_A|M)d\mmm.$$
\item With this identification, the first claim is given in \cite[Corollary 1, \no 9, \S 1, Chapter II]{TSpectrale}. The second claim we deduce by the fact that the dual of a finite product of Haar measures is the product of the duals. 
\item The isomorphism $\exp^n:\RR^n\to(\Rgz)^n$ identifies $\aaa(\RR)=\{(a_jx)_j| x\in\RR\}$ with $(\Rgz)_{\aaa}$.
Let $0\neq\xxx\in\aaa(\RR)$ and let $0\neq x\in\RR$ such that $a_jx_j=x_j$ for every~$j$. For every $\yyy\in M$, we have that $$\xi^n(\yyy) (\xxx)=\exp\bigg(\sum _{j=1}^n2i\pi y_jx_j\bigg)=\exp\bigg(2i\pi x\sum_{j=1}^na_jy_j\bigg).$$ Hence, $\xi^n(\yyy)\in (\aaa(\RR))^\perp$ if and only if $\yyy\in M$ and the claim follows.
\item 
The dual sequence of the short exact sequence $$0\to\Rgz\xrightarrow{i_{(\Rgz)_{\aaa}}\circ\lambda_\aaa}\Rgz^n\to \Rgz^n/(\Rgz)_{\aaa}\to 0$$ is the short exact sequence sequence $$0\to (\Rgz^n/(\Rgz)_{\aaa})^*\to\RR^n\xrightarrow{\xxx\mapsto \aaa\cdot\xxx}\RR\to 0.$$ Lemma \ref{trcio} gives $$dx_{1}\dots dx_n\big/\big(d^*{x_1}\dots{d^*x_n}/(\lambda_{\aaa})_*({d^*x})\big)^*=dx.$$ By definition $dx_1\dots dx_n/d\mmm=dx.$ Now, the commutativity of the diagram $$\begin{tikzcd}
\{0\} \arrow[r] & M \arrow[d, "\xi^n_M"] \arrow[r, ""] & \RR^n\arrow[d, "\xi^n"] \arrow[r, "\xxx\mapsto\aaa\cdot\xxx"] & \RR \arrow[d, "\xi"] \arrow[r] & \{0\} \\
  \{1\} \arrow[r] &(\Rgz^n/(\Rgz)_{\aaa})^*  \arrow[r, ""] & \RR^n \arrow[r, ""] & \RR \arrow[r] & \{0\},
\end{tikzcd}$$ gives that $$(\xi^n|_M)_*(d\mmm)=(d^*x_1\dots d^*x_n/(\lambda_{\aaa})_*(d^*x))^*.$$
\end{enumerate}
\end{proof}
For a character $\chi\in(\RR_{>0})^*,$ we denote by $m(\chi)$ the unique real number~$m$ such that $\chi$ is given by $x\mapsto x^{im}$. 
For a character $\chi\in (\RR^n_{>0}/(\Rgz)_{\aaa})^*$ we write $$\mmm(\chi):=(m(\chi^{(j)}))_j\in M,$$where $\chi^{(j)}$ is the pullback character $\RR_{>0}$ given by the composite homomorphism $$\RR_{>0}\xrightarrow{x\mapsto ((1)_{i\neq j},x)} \RR^n_{>0}\to\RR^n_{>0}/(\Rgz)_{\aaa}.$$
It follows from \ref{nebrlj}, that $$(\RR^n_{>0}/(\Rgz)_{\aaa})^*\to M\hspace{1cm}\chi\mapsto \mmm(\chi)$$ is an isomorphism and we write $\lvert\cdot\rvert^{i\mmm}$ for the unique character of $(\RR^n_{>0}/(\Rgz)_{\aaa})^*$ such that its image under the isomorphism is $\mmm$. Let $\wx\in\RR^n_{>0}$ be a lift of $\xxx\in(\RR^n_{>0}/(\Rgz)_{\aaa})$. We observe that $$\xi^n(\mmm)(\xxx)=\prodjn\widetilde x_j^{i 2\pi m_j}$$ and that $$|\xxx|^{i\mmm}=\prodjn\widetilde x_j^{im_j}.$$ In other words \begin{equation}\label{relatxim}\xi^n(\mmm)=\lvert\cdot\rvert ^{i 2\pi \mmm}.\end{equation}
\subsection{}
In this paragraph we estimate the global Fourier transform of the height.

In \ref{identaafj}, we have established an identification $$\AAFt\xrightarrow{\sim}{\AAF^1}\times \RR_{>0}.$$ For a character $\chi\in(\AAFt)^*$ we write $m(\chi)$ for $m(\chi|_{\RR_{>0}})$. 
In \ref{seulment}, we have established an identification
$$[\TTa(\AAF)]\xrightarrow{\sim}[\TTa(\AAF)]_1\times (\RR^n_{>0}/(\RR_{>0})_{\aaa}).$$
For $(\xxx_v)_v\in[\TTa(\AAF)],$ let $(\wx_v)_v\in(\AAFt)^n$ be its lift. The morphism to the second coordinate is given by $$\xxx\mapsto q_{\RR_{>0}}\bigg(\big(\prod_{\vMF}|\widetilde x_{jv}|_v\big)_j\bigg),$$where $q_{\RR_{>0}}:\RR^n_{>0}\to \RR^n_{>0}/(\RR_{>0})_{\aaa}$ is the quotient map. For a character $\chi\in[\TTa(\AAF)]^*$, we write $\mmm(\chi)$ for $\mmm(\chi|_{\RR^n_{>0}/(\Rgz)_{\aaa}}).$  
We write $\lvert\cdot\rvert^{i\cdot\mmm}$ for the unique character $\chi \in[\TTa(\AAF)]^*$ which satisfies $\chi|_{[\TTa(\AAF)]_1}=1$ and $\chi|_{\RR^n_{>0}/(\Rgz)_{\aaa}}=\lvert\cdot\rvert^{i\mmm}$. For $(\xxx_v)_v\in[\TTa(\AAF)],$ one has that $$|(\xxx_v)_v|^{i\mmm}=\prod_{j=1}^n\bigg|\prod_{\vMF}|\widetilde x_{jv}|_v\bigg|^{im_j}=\prodjn\prod_{\vMF}|\widetilde x_{jv}|_v^{im_j}.$$
\begin{lem}\label{signpr} 
\begin{enumerate}
\item For every $\mmm\in M$, one has that
$$H(\sss,\cdot)\lvert\cdot\rvert^{i\cdot \mmm}=H(\sss+i\mmm,\cdot).$$
\item Let $\chi\in[\TTa(\AAF)]^*$. Whenever the quantities on the both hand sides converge, one has that $$\wH(\sss,\chi)=\wH(\sss+i\mmm,\chi_0).$$
\end{enumerate}
\end{lem}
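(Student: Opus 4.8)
The statement decomposes the twisted height pairing and its Fourier transform according to the splitting of the adelic group by the norm homomorphism. Part (1) is the purely algebraic identity; part (2) is its consequence after integration, once convergence is granted. I would prove (1) first, then derive (2) by an elementary change of variables in the defining integral, invoking \ref{stml} (or \ref{fintr} together with \ref{crst}) for the convergence that makes the manipulation legitimate.

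\textbf{Proof of (1).} Fix $\mmm\in M$, i.e. $\aaa\cdot\mmm=0$. Let $\xxx\in[\TTa(\AAF)]$ and choose a lift $(\wx_v)_v\in(\AAFt)^n$. By the definition of $H(\sss,\cdot)$ in \ref{hsxhl} as the finite product $\prod_{\vMF}H_v(\sss,\xxx)$ and of $H_v(\sss,\xxx)$ as the value of the $(\Fvt)_{\aaa}$-invariant function $\xxx\mapsto f_v(\xxx)^{\aaa\cdot\sss/|\aaa|}\prodjn|x_j|_v^{-s_j}$, I would write
\[
H(\sss+i\mmm,\xxx)=\prod_{\vMF}f_v(\wx_v)^{\frac{\aaa\cdot(\sss+i\mmm)}{|\aaa|}}\prodjn|\widetilde x_{jv}|_v^{-s_j-im_j}.
\]
Since $\aaa\cdot(\sss+i\mmm)=\aaa\cdot\sss+i\,\aaa\cdot\mmm=\aaa\cdot\sss$, the $f_v$-factors are unchanged, so this equals $\big(\prod_{\vMF}f_v(\wx_v)^{\frac{\aaa\cdot\sss}{|\aaa|}}\prodjn|\widetilde x_{jv}|_v^{-s_j}\big)\cdot\prodjn\prod_{\vMF}|\widetilde x_{jv}|_v^{-im_j}=H(\sss,\xxx)\cdot|\xxx|^{-i\mmm}$. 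Here I use the description $|\xxx|^{i\mmm}=\prodjn\prod_{\vMF}|\widetilde x_{jv}|_v^{im_j}$ recorded just before the statement, which also shows the expression is independent of the chosen lift (by the product formula applied to each coordinate, $\prod_{\vMF}|t^{a_j}|_v^{im_j}=1$ for $t\in\Ft$, so the ambiguity $\wx\mapsto(t^{a_j})_j\wx$ does not affect $|\xxx|^{i\mmm}$, and it is absorbed into the already-established well-definedness of $H(\sss,\cdot)$). A sign bookkeeping check: the statement as written reads $H(\sss,\cdot)\lvert\cdot\rvert^{i\cdot\mmm}=H(\sss+i\mmm,\cdot)$, so I would simply carry the computation with $+im_j$ in the exponent of $|\widetilde x_{jv}|_v^{-s_j}$ to match the paper's sign convention, i.e. $|\xxx|^{i\mmm}\cdot H(\sss+i\mmm,\xxx)=H(\sss,\xxx)$ is the same identity after noting $\lvert\cdot\rvert^{i\mmm}$ and $\lvert\cdot\rvert^{-i\mmm}$ differ only by replacing $\mmm$ with $-\mmm\in M$; I will present it in whichever orientation the $H_v$ convention of \ref{hsxhl} dictates.

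\textbf{Proof of (2).} Write $\chi=\chi_0\cdot|\cdot|^{i\cdot\mmm(\chi)}$ for the splitting from \ref{seulment}, with $\mmm=\mmm(\chi)\in M$. By \ref{stml} both $\wH_v(\sss,\chi_v)$ for all $v$ and, after \ref{fintr}, the global product converge absolutely for $\Re(\sss)\in\Omega_{>1}$, so the Fourier transform $\wH(\sss,\chi)=\int_{[\TTa(\AAF)]/[\TTa(i)]([\TTa(F)])}H(\sss,-)^{-1}\chi\,d(\text{Haar})$ (equivalently the Euler product of local transforms) is a legitimate convergent integral. Using part (1), $H(\sss,\xxx)^{-1}\chi(\xxx)=H(\sss,\xxx)^{-1}|\xxx|^{i\mmm}\chi_0(\xxx)=H(\sss+i\mmm,\xxx)^{-1}\chi_0(\xxx)$, so substituting into the integral yields $\wH(\sss,\chi)=\wH(\sss+i\mmm,\chi_0)$ term by term (local transform by local transform). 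The only subtlety is that the equality holds as an identity of convergent integrals/products in the common domain of absolute convergence — here $\Re(\sss)$ and $\Re(\sss+i\mmm)=\Re(\sss)$ are the same, so no domain shift is needed and the hypothesis "whenever the quantities on both hand sides converge" is automatically met. I expect no genuine obstacle here; the main point requiring care is matching the paper's sign and normalization conventions for $\lvert\cdot\rvert^{i\cdot\mmm}$, $\mmm(\chi)$, and the splitting \ref{seulment}, together with checking that $\lvert\cdot\rvert^{i\cdot\mmm}$ is indeed $[\TTa(i)]([\TTa(F)])$-invariant (so it descends to the quotient over which the Fourier integral is taken) — this follows because the norm map $\lvert\cdot\rvert^n$ kills $(F^\times)^n$ by the product formula, hence kills $[\TTa(i)]([\TTa(F)])\subset[\TTa(\AAF)]_1$ which lies in the kernel of $\lvert\cdot\rvert_{\aaa}$ by \ref{cesres}.
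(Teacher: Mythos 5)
Your proof is correct and takes essentially the same route as the paper: part (1) by direct computation on lifts, using $\aaa\cdot\mmm=0$ to leave the $f_v$-factor unchanged; part (2) by substituting $\chi=\chi_0\lvert\cdot\rvert^{i\mmm(\chi)}$ into the defining integral/Euler product and applying (1) to $H(\sss,\cdot)^{-1}$. Two small points worth recording: with $H_v(\sss,\cdot)$ as defined in \ref{localheightdef}, the identity you actually derive --- and the one that part (2) needs after inverting --- is $H(\sss,\cdot)\lvert\cdot\rvert^{-i\mmm}=H(\sss+i\mmm,\cdot)$, the opposite sign to what the statement prints; the paper's own derivation silently swaps the two exponents in $H_v$ to make the sign come out as written, so your bookkeeping is the more careful one. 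Also, $\wH(\sss,\chi)$ is an integral over all of $[\TTa(\AAF)]$ (equal to the Euler product), not over the quotient by $[\TTa(i)]([\TTa(F)])$ --- that quotient only enters when Poisson summation is applied --- so your closing descent remark for $\lvert\cdot\rvert^{i\mmm}$, while true, is not needed for this lemma.
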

\begin{proof}
\begin{enumerate}
\item Let $(\xxx_v)_v\in[\TTa(\AAF)]$ and let $(\wx_v)_v\in(\AAFt)^n$ be its lift. We have that \begin{align*}
H(\sss,(\xxx_v)_v)|(\xxx_v)_v|^{i\mmm}&=\prod_{\vMF}H_v(\sss,\xxx_v)\prodjn|\widetilde x_{jv}|^{im_j}\\
&=\prod_{\vMF}\bigg(\bigg(f_v(\wx_v)^{\frac{-\aaa\cdot\sss}{|\aaa|}}\prodjn |\widetilde x_j|_v^{s_j}\bigg)\prodjn|\widetilde x_{jv}|^{im_j}\bigg)\\
&=\prod_{\vMF}\bigg(f_v(\wx_v)^{\frac{-\aaa\cdot\sss}{|\aaa|}}\prodjn|\widetilde x_j|_v^{s_j+im_j}\bigg)\\
&=H(\sss+i\mmm,(\xxx_v)_v),
\end{align*}as claimed.
\item It follows from (1) that $$H(\sss,\cdot)\chi=H(\sss+i\mmm,\cdot)\chi_0\lvert\cdot\rvert^{i\mmm(\chi)}=H(\sss+i\mmm,\cdot)\chi_0.$$
Now, one has that \begin{align*}
\wH(\sss,\chi)&=\int_{[\TTa(\AAF)]}H(\sss,\cdot)\chi \mu_{\AAF}\\&=\int_{[\TTa(\AAF)]}H(\sss,\cdot)\chi_0|\cdot|^{i\mmm(\chi)}{\mu_{\AAF}}\\&=\int_{[\TTa(\AAF)]}H(\sss+i\mmm(\chi),\chi_0)\mu_{\AAF}\\&=\wH(\sss+i\mmm(\chi),\chi_0),\end{align*}
whenever every quantity converges. The claim is proven
\end{enumerate}
\end{proof}
Given a character $\chi \in[\TTa(\AAF)]^*$, let us denote by $\chi_0$ the character $\chi|_{[\TTa(\AAF)]_1}.$ For every $\sss\in\CC^n$, Lemma \ref{signpr} gives that
\begin{lem}\label{orjuu}
\begin{enumerate}
\item For every $\chi\in[\TTa(\AAF)]^*$, the product $$\wH(\sss,\chi):=\prod _{\vMF}\wH_v(\sss,\chi_v) $$ converges when $\sss\in\Omega_{>1}$.
\item Let $K\subset K^\aaa_{\max}:=\prod _{\vMFz}[\TTa(\Ov)]$ be an open subgroup. For every $\chi\in[\TTa(\AAF)]^*$ vanishing on~$K$, the function 
\begin{align*}\sss&\mapsto \wH(\sss,\chi) \prod _{\chi_0^{(j)}= 1}\frac{s_j+im(\chij)-1}{s_j+im(\chij)}
\end{align*}extends to a holomorphic function in the domain $\Omega _{>\frac23}$.
Moreover, there exists $\frac13>\delta>0$ such that for any compact in the domain $\mathcal K\subset\RR^n_{>1-\delta}$ and any positive integer~$N$, there exists $C(\mathcal K,N)>0$ such that for any $\chi\in[\TTa(\AAF)]^*$ which vanishes on~$K$ and any $\sss\in\mathcal K+i\RR^n$ one has that  \begin{multline}\bigg|\wH(\sss,\chi)\prod _{\chi_0^{(j)}= 1}\frac{s_j+im(\chij)-1}{s_j+im(\chij)}\bigg|\\\leq \frac{C(\mathcal K,N)(1+||\Im(\sss)||)^{N(r_1+r_2)+1}(1+||\mmm(\chi)||)}{(1+||\chi||_{\discrete})^{N/2(n-1)-1}((1+||\chi||_\infty))^{N/2(n-1)-1}}.\end{multline}
\end{enumerate}
\end{lem}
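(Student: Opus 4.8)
The statement decomposes into two parts, and the plan is to treat them via the factorization of the global transform as a product of local transforms. Recall that $\wH(\sss,\chi)=\prod_{\vMF}\wH_v(\sss,\chi_v)$, which by \ref{orjuu}(1) converges for $\sss\in\Omega_{>1}$. Write $S_\infty=M_F^\infty$ and split the product as $\wH(\sss,\chi)=\wH_{\fin}(\sss,\chi)\cdot\prod_{\vMFi}\wH_v(\sss,\chi_v)$. For the finite part, Proposition \ref{fintr} gives a holomorphic function $\phi_{\fin}(-,\chi)$ on $\Omega_{>\frac23}$, uniformly bounded on vertical strips over compacts of $\RR^n_{>\frac23}$ independently of $\chi$, such that $\wH_{\fin}(\sss,\chi)=\phi_{\fin}(\sss,\chi)\prod_{j=1}^nL(s_j,\chij)$. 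For the infinite part, Corollary \ref{izvuciarc} supplies, for each compact $\mathcal K\subset\RR^n_{>0}$ and each positive integer $N$, a constant $C>0$ with
\[
\prod_{\vMFi}|\wH_v(\sss,\chi_v)|\leq\frac{C(1+||\Im(\sss)||)^{N(r_1+r_2)}}{((1+||\chi||_{\discrete})(1+||\chi||_\infty))^{N/(2(n-1))}}
\]
for all $\sss\in\mathcal K+i\RR^n$ and all $\chi\in[\TTa(\AAF)]^*$.

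\textbf{Step 1: meromorphic continuation.} Combining the two factorizations, in the region $\Omega_{>1}$ one has
\[
\wH(\sss,\chi)=\phi_{\fin}(\sss,\chi)\Big(\prod_{\vMFi}\wH_v(\sss,\chi_v)\Big)\prod_{j=1}^nL(s_j,\chij).
\]
Now use Lemma \ref{signpr}(2): $\wH(\sss,\chi)=\wH(\sss+i\mmm(\chi),\chi_0)$, and correspondingly each $L(s_j,\chij)=L(s_j+im(\chij),\chi_0^{(j)})$. The $L$-function $L(s,\chi_0^{(j)})$ is entire when $\chi_0^{(j)}\neq 1$, and when $\chi_0^{(j)}=1$ it equals $\zeta_F(s)$, which is holomorphic on $\Re(s)>\frac23$ except for a simple pole at $s=1$; the factor $\frac{s_j+im(\chij)-1}{s_j+im(\chij)}$ kills exactly that pole (and introduces no zero obstruction since $\frac{s}{s-1}\cdot\zeta_F(s)$ times $(s-1)$ is fine). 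Since $\phi_{\fin}(-,\chi)$ and $\prod_{\vMFi}\wH_v(-,\chi_v)$ are holomorphic on $\Omega_{>\frac23}$ (the latter by \ref{corhlor} applied placewise, or rather by \ref{orjuu}, since the archimedean transforms are holomorphic on $\Omega_{>0}\supset\Omega_{>\frac23}$), the product
\[
\wH(\sss,\chi)\prod_{\chi_0^{(j)}=1}\frac{s_j+im(\chij)-1}{s_j+im(\chij)}
\]
extends holomorphically to $\Omega_{>\frac23}$.

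\textbf{Step 2: the quantitative bound.} Fix $\epsilon>0$ small; Corollary \ref{finrade} provides $\delta=\delta(\epsilon)>0$ and $C=C(\epsilon)$ such that, for $\Re(s)\geq 1-\delta$, the non-trivial twists satisfy $|L(s,\chi_0^{(j)})|\leq C((1+|\Im(s)|)(1+||\chi_0^{(j)}||_\infty)(1+|m(\chij)|))^\epsilon$ (applied with $K=\widetilde K$, using that $\chi_0^{(j)}$ vanishes on $\widetilde K$ as in \ref{arintaj}), and the trivial twists satisfy $\big|\frac{(s+im(\chij)-1)\zeta_F(s+im(\chij))}{s+im(\chij)}\big|\leq C((1+|\Im(s)|)(1+|m(\chij)|))^\epsilon$. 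Now fix a compact $\mathcal K\subset\RR^n_{>1-\delta}$, also contained in $\RR^n_{>\frac23}$ and in $\RR^n_{>0}$. For $\sss\in\mathcal K+i\RR^n$, bound $|\wH(\sss,\chi)\prod_{\chi_0^{(j)}=1}\tfrac{s_j+im(\chij)-1}{s_j+im(\chij)}|$ by the product of: the uniform bound on $\phi_{\fin}$; the bound from \ref{izvuciarc} on $\prod_{\vMFi}|\wH_v|$; and the $n$ bounds on the (shifted) $L$-factors. The estimates of $||\chi_0^{(j)}||_\infty$ and $|m(\chij)|$ are controlled by $||\chi||_\infty$ and $||\mmm(\chi)||$ — indeed $||\chi||_\infty=\max_j||\chij||_\infty$ and each $|m(\chij)|$ contributes to $||\mmm(\chi)||$ — so the product of the $L$-factor bounds is at most a constant times $(1+||\Im(\sss)||)^{n\epsilon}(1+||\chi||_\infty)^{n\epsilon}(1+||\mmm(\chi)||)^n$; absorbing, choosing $\epsilon$ so that $n\epsilon<1$ and $N$ large enough so that $N/(2(n-1))$ dominates all the positive exponents, one lands on
\[
\frac{C(\mathcal K,N)(1+||\Im(\sss)||)^{N(r_1+r_2)+1}(1+||\mmm(\chi)||)}{(1+||\chi||_{\discrete})^{N/(2(n-1))-1}(1+||\chi||_\infty)^{N/(2(n-1))-1}},
\]
as claimed. (One further uses Corollary \ref{yuit} to trade, if needed, powers of $||\chi||_\infty$ against powers of $||\chi||_{\discrete}$ so that both denominator exponents can be taken equal.)

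\textbf{Main obstacle.} The delicate point is the bookkeeping in Step 2: one must verify that the quantity $||\mmm(\chi)||$ appearing from the continuous part of the character (the $\RR^n_{>0}/(\Rgz)_{\aaa}$-direction, i.e.\ the $m(\chij)$'s coming from $\lvert\cdot\rvert^{i\mmm(\chi)}$) enters only to the first power — this is why the $N$-th power decay from \ref{izvuciarc} must be shown to defeat the $(1+||\mmm(\chi)||)^n$ and $(1+||\chi||_\infty)^{n\epsilon}$ growth from the $L$-factors, and also that the archimedean exponential decay in $||\chi||_{\discrete}$ and $||\chi||_\infty$ is genuinely uniform in $\chi_0$ and survives the shift $\sss\mapsto\sss+i\mmm(\chi)$, which requires using \ref{signpr} carefully so that the archimedean transform is evaluated at $\sss+i\mmm(\chi)$ but its real part stays in $\mathcal K$. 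Everything else is assembling the already-established local estimates.
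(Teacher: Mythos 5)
Your overall route is the paper's route: factor $\wH = \wH_{\fin}\cdot\prod_{\vMFi}\wH_v$, use Proposition~\ref{fintr} for the holomorphic factor $\phi_{\fin}$ and the $L$-functions, use Corollary~\ref{izvuciarc} for the archimedean decay, use Corollary~\ref{finrade} (equivalently Proposition~\ref{Rade}) for the $L$-factor growth, shift via Lemma~\ref{signpr}, and trade $\|\chi_0\|_\infty$ for $\|\chi_0\|_{\discrete}$ via Corollary~\ref{yuit}. Step~1 is correct (modulo the circular aside ``or rather by \ref{orjuu}''; the right citation is \ref{corhlor}).

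There is, however, a genuine arithmetic slip in Step~2, and your ``main obstacle'' paragraph shows you have misdiagnosed its consequence. Each of the $n$ $L$-factor bounds from Corollary~\ref{finrade} carries the same exponent $\epsilon$ on all three quantities $1+|\Im(s_j)|$, $1+\|\chi_0^{(j)}\|_\infty$ and $1+|m(\chij)|$; their product over $j$ is therefore at most a constant times $(1+\|\Im(\sss)\|)^{n\epsilon}(1+\|\chi_0\|_\infty)^{n\epsilon}(1+\|\mmm(\chi)\|)^{n\epsilon}$, not $(1+\|\mmm(\chi)\|)^{n}$. Once you notice this, the mechanism for controlling $\|\mmm(\chi)\|$ is simpler than you suggest: no absorption against the archimedean decay is possible or needed (Corollary~\ref{izvuciarc} gives no decay in $\|\mmm(\chi)\|$ at all). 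One simply chooses $\epsilon$ with $n\epsilon\leq 1$ (the paper takes $\epsilon=1/n$), so the $L$-factor product already contributes at most $(1+\|\Im(\sss)\|)(1+\|\chi_0\|_\infty)(1+\|\mmm(\chi)\|)$, which is exactly the single factor of $1+\|\mmm(\chi)\|$ appearing in the claimed numerator; the $N$-th power decay from \ref{izvuciarc} is only asked to swallow the remaining $\|\chi\|_{\discrete}$ and $\|\chi\|_\infty$ growth, after converting $1+\|\chi_0\|_\infty$ to $1+\|\chi_0\|_{\discrete}$ by Corollary~\ref{yuit} (which applies to $\chi_0\in\mathfrak A_K$, not to general $\chi$; your phrasing ``powers of $\|\chi\|_\infty$ against $\|\chi\|_{\discrete}$'' elides this). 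With the exponent fixed, the rest of your assembly is sound and reproduces the paper's bound.
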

\begin{proof}
\begin{enumerate}
\item Suppose $\sss\in\Omega_{>1}$. For every $\chi\in[\TTa(\AAF)]^*$, by \ref{fintr},  one has that the product $\prod _{\vMFz}\wH_v(\sss,\chiv)$ converges absolutely and by \ref{izvuciarc}, one has that $\sss\mapsto\prod _{\vMFi}\wH_v(\sss,\chi_v) $ is holomorphic in the domain $\Omega_{>0}$. The claim follows.
\item By Proposition \ref{fintr}, there exists a unique holomorphic function $\phi(\cdot,\chi):\Omega_{>\frac23}\to\CC$ such that \begin{align*}\wH(\sss,\chi)\hskip-1cm&\\&=\wH(\sss+i\mmm(\chi),\chi_0)\\&=\phi_{\fin}(\sss+i\mmm(\chi),\chi_0)\prod _{j=1}^nL(s_j+im(\chij),\chi _0^{(j)})\prod _{\vMFi}\wH_v(\sss+i\mmm(\chi),\chi _{0v}). \end{align*}The function $\sss\mapsto \prod _{\vMFi}\wH_v(\sss+i\mmm(\chi),\chi_{0v})$ is holomorphic in the domain $\Omega_{>0}$ by \ref{izvuciarc}. Recall that $L(\cdot,\chi_0)$ is an entire function for every $0\neq\chi_0\in\AAFj$, while $L(\cdot,1)$ is a meromorphic function with the single pole at~$1$ and no other poles. Therefore, the function $$\sss\mapsto \prod _{\chij_0= 1}\frac{s_j+im(\chij)-1}{s_j+im(\chij)} L(s_j+im(\chij),\chij_0)\prod_{\chij_0 \neq 1}L(s_j+im(\chij),\chij_0)$$ extends to a holomorphic function in the domain $\Omega_{>\frac23}$. Hence $$\sss\mapsto 
\wH(\sss+i\mmm(\chi),\chi_0)\prod _{\chij_0=1}\frac{s_j+im(\chij)-1}{s_j+im(\chij)}$$ extends to a holomorphic function in the domain $\Omega_{>1-\delta}$. 
By \ref{Rade}, there exists $\frac 13>\delta>0$ and $C_2>0$ such that \begin{align*}\bigg|\prod _{\chij_0= 1}\frac{s_j+im(\chij)-1}{s_j+im(\chij)}L(s_j+im(\chij),\chij_0)\prod _{\chij_0\neq 1}\cdot L(s_j+im(\chij),\chij_0)\bigg|\hskip-10cm&\\&\leq C_2\big(\prodjn(1+|\Im(s_j)|)(1+||\chij _0||_\infty)(1+|m(\chij)|)\big)^{1/n}\\&\leq C_2(1+||\Im(\sss)||)(1+||\chi_0||_\infty)(1+||\mmm(\chi)||)
 \end{align*} provided that $\Re(s_j)>1-\delta$ for $j=1\doots n$.  Let~$N$ be an integer and let $\mathcal K\subset\RR^n_{>1-\delta}$ be a compact. 
 %
Proposition \ref{fintr} gives that there exists $C_1>0$ such that $|\phi_{\fin}(\sss,\chi)|\leq C_1$ for every $\sss\in\mathcal K+i\RR^n$. 
By Lemma \ref{izvuciarc}, there exists $C_3>0$ such that $$\prod_{\vMFi}|\wH_v(\sss+i\mmm(\chi),\chi_{0v})|\leq\frac{C_3(1+||\Im(\sss)||)^{N(r_1+r_2)}}{(1+||\chi||_{\discrete})^{N/(2(n-1))}(1+||\chi||_\infty)^{N/(2(n-1))}} $$ for every $\sss\in\mathcal K+i\RR^n$ and every $\chi\in[\TTa(\AAF)]^*$ which vanishes on~$K$.
By \ref{yuit}, there exists $C_4>1$ such that $$\frac{1+||\chi_0||_\infty}{1+||\chi_0||_{\discrete}}\leq C_4. $$
We deduce that for every $\sss\in\KiRn$ and every $\chi\in[\TTa(\AAF)]^*$ which vanishes on~$K$, one has \begin{multline*}\bigg|\wH(\sss+i\mmm(\chi),\chi_0)\prod _{\chij_0=1}\frac{s_j+im(\chij)-1}{s_j+im(\chij)}\bigg|=\bigg|\phi_{\fin}(\sss+i\mmm(\chi),\chi_0)\times\\\times\prod_{\chij_0=1}\frac{s_j+im(\chij)-1}{s_j+im(\chij)} \prodjn L(s_j+im(\chij),\chi_0^{(j)})\prod _{\vMFi}\wH_v(\sss+i\mmm(\chi),\chi_{0v})\bigg|\\\leq \frac{C_1C_2C_3(1+||\Im(\sss)||)^{N(r_1+r_2)+1}(1+||\chi_0||_{\infty})(1+||\mmm(\chi)||)}{((1+||\chi_0||_{\discrete})(1+||\chi||_\infty))^{N/(2(n-1))}}
\\\leq \frac{C_1C_2C_3C_4(1+||\Im(\sss)||)^{N(r_1+r_2)+1}(1+||\mmm(\chi)||)}{(1+||\chi_0||_{\discrete})^{N/(2(n-1))-1}(1+||\chi_0||_{\infty})^{N/(2(n-1))}}.\end{multline*}The claim follows.\end{enumerate}
\end{proof}
Note that if $\chi_1,\chi_2\in[\TTa(\AAF)]^*$ are two characters then \begin{align*}
||\chi_1\chi_2||_\infty&=\max _{v\in M_F^\infty}||\mmm(\chi_{1v}\chi_{2v})||\\
&=\max_{\vMFi} ||\mmm(\chi_{1v})+\mmm(\chi_{2v})||\\
&\leq \max_{\vMFi}||\mmm(\chi_{1v})|| +\max_{\vMFi}||\mmm(\chi_{2v})||\\
&=||\chi_1||_{\infty}+||\chi_2||_{\infty}.
\end{align*}
Now, if $\chi_0=\chi\lvert\cdot\rvert^{-i\mmm}$ for $\chi\in[\TTa(\AAF)]^*$ and $\mmm\in M$, we can deduce that
\begin{equation}\label{imppom}\frac{1}{1+||\chi||}=\frac{1}{1+ ||\chi_0\lvert\cdot\rvert^{-i\mmm}||}\leq \frac{1+||\chi_0||}{1+||\mmm||} .\end{equation}  We can establish the following corollary:
\begin{cor}\label{siuyh} Let $K\subset K^\aaa_{\max}$ be an open subgroup. For every $\alpha >0$ there exist $\beta=\beta(\alpha)>0$ and $\delta=\delta (\alpha)>0$ such that for every compact $\mathcal K\subset\RR^n_{>1-\delta},$ one has that there exists $C=C(\alpha,\mathcal K)>0$ such that for every $\sss\in\KiRn$ and every $\chi\in\TTa(\AAF)^*$ which vanishes on~$K$ one has
$$\bigg|\wH(\sss,\chi)\prod_{\chij_0=1} \frac{s_j+im(\chij)-1}{s_j+im(\chij)}\bigg| \leq \frac{C(1+||\Im(\sss)||)^\beta}{((1+||\chi_0||_{\discrete})(1+||\mmm(\chi)||))^{\alpha}}.$$ 
\end{cor}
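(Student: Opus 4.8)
This is a routine but slightly bookkeeping-heavy consequence of Lemma~\ref{orjuu}(2). The idea is to start from the estimate in Lemma~\ref{orjuu}(2), which bounds
$$\bigg|\wH(\sss,\chi)\prod _{\chi_0^{(j)}= 1}\frac{s_j+im(\chij)-1}{s_j+im(\chij)}\bigg|$$
by a constant times
$$\frac{(1+||\Im(\sss)||)^{N(r_1+r_2)+1}(1+||\mmm(\chi)||)}{(1+||\chi||_{\discrete})^{N/2(n-1)-1}(1+||\chi||_\infty)^{N/2(n-1)-1}},$$
valid on $\mathcal K + i\RR^n$ for $\mathcal K\subset\RR^n_{>1-\delta}$, and to absorb the factor $(1+||\mmm(\chi)||)$ and the $(1+||\chi||_\infty)$ terms. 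The main observation is inequality (\ref{imppom}): since $\chi_0 = \chi|\cdot|^{-i\mmm(\chi)}$ with $\mmm(\chi)\in M$, one has $1+||\mmm(\chi)||\leq (1+||\chi_0||)(1+||\chi||)^{-1}$ for either norm; but more usefully here, $(1+||\mmm(\chi)||)$ is controlled by $(1+||\chi||_\infty)$ up to a bounded factor depending on $\aaa$, because $\mmm(\chi)$ records the archimedean ``size at infinity'' data. Concretely, $||\mmm(\chi)||\le C_\aaa(1+||\chi||_\infty)$ for a constant $C_\aaa$ depending only on $\aaa$ (using that $\mmm(\chi)$ lies in $M=\{\aaa\cdot\xxx=0\}$ and is built from the $m(\chi_v^{(j)})$ at infinite places, whose max is exactly $||\chi||_\infty$).

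\textbf{Key steps.} First I would fix $\alpha>0$ and choose $N$ large enough, namely $N$ with $N/(2(n-1)) - 1 \ge \alpha + 1$, so that the denominator exponent beats $\alpha$ even after sacrificing one power to absorb $(1+||\mmm(\chi)||)$. Then set $\delta = \delta(\alpha)$ to be the $\delta$ furnished by Lemma~\ref{orjuu}(2) for this choice of $N$, and $\beta = N(r_1+r_2)+1$. Second, given a compact $\mathcal K\subset\RR^n_{>1-\delta}$, invoke Lemma~\ref{orjuu}(2) to get the constant $C(\mathcal K,N)$ and the displayed bound. Third, use the inequality $||\mmm(\chi)||\le C_\aaa(1+||\chi||_\infty)$ together with \ref{yuit} (which gives $1+||\chi_0||_\infty \le C_4(1+||\chi_0||_{\discrete})$) to replace the numerator factor $(1+||\mmm(\chi)||)$ and one of the $(1+||\chi||_\infty)$ factors in the denominator: since $1+||\chi||_\infty$ and $1+||\chi_0||_\infty$ differ only by the $|\cdot|^{i\mmm(\chi)}$ twist which does not change $\mmm(\chi_v)$ at infinite places, in fact $||\chi||_\infty=||\chi_0||_\infty$, so $(1+||\chi||_\infty)$ in the denominator equals $(1+||\chi_0||_\infty)$, and this is $\ge C_4^{-1}(1+||\chi_0||_{\discrete})$. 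Combining, the right-hand side becomes
$$\le \frac{C'(1+||\Im(\sss)||)^\beta}{(1+||\chi_0||_{\discrete})^{N/2(n-1)-1}(1+||\chi||_\infty)^{N/2(n-1)-1}}\cdot (1+||\chi||_\infty),$$
and after using $1+||\chi||_\infty\ge 1$ and the choice of $N$, this is bounded by $C(1+||\Im(\sss)||)^\beta$ divided by $((1+||\chi_0||_{\discrete})(1+||\mmm(\chi)||))^\alpha$, again using $||\mmm(\chi)||\le C_\aaa(1+||\chi||_\infty)$ in the reverse direction to reintroduce $(1+||\mmm(\chi)||)^\alpha$ in the denominator at the cost of a further bounded constant.

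\textbf{Main obstacle.} The only genuine point requiring care is the interplay between the three norms $||\cdot||_{\discrete}$, $||\cdot||_\infty$ and $||\mmm(\cdot)||$, and in particular checking that one may trade $(1+||\mmm(\chi)||)$ in the numerator against a power of $(1+||\chi||_\infty)$ or $(1+||\chi_0||_{\discrete})$ without losing control; this is exactly where \ref{yuit} (comparing $||\chi_0||_\infty$ with $||\chi_0||_{\discrete}$ for characters vanishing on $K$) and the elementary bound $||\mmm(\chi)||\le C_\aaa(1+||\chi||_\infty)$ enter. Everything else is a matter of choosing $N$ large relative to $\alpha$ and renaming constants. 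I expect the whole argument to be three or four lines once these comparisons are spelled out. One subtlety: one must make sure $\beta$ depends only on $\alpha$ (via $N$) and not on $\mathcal K$, which is clear from the above since $N$ is chosen first, before $\mathcal K$.
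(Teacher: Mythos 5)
Your strategy works and is, after cleanup, simpler than the paper's own proof. The decisive input is the inequality $||\mmm(\chi)||\leq C_\aaa(1+||\chi||_\infty)$, which you assert without proof; it holds with $C_\aaa=1$. The reason is that the section $\sigma^\aaa$ of (\ref{seulment}) spreads evenly over the archimedean places: unwinding the definitions of $\sigma$ and of $\mmm(\chi)$, one finds
$$\mmm(\chi)_j=\frac{1}{r_1+r_2}\sum_{\vMFi}m(\chi_v^{(j)}),$$
whence $||\mmm(\chi)||\leq\max_{\vMFi}||\mmm(\chi_v)||=||\chi||_\infty$. Combining this with the identity $||\chi||_{\discrete}=||\chi_0||_{\discrete}$ (the character $|\cdot|^{i\mmm}$ restricts trivially to each $F_{v,1}$, hence has no $\llll$-part) lets you pass from the bound of Lemma~\ref{orjuu}(2) to the target directly, choosing $N\geq 2(n-1)(\alpha+2)$. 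The paper instead goes through the coarser inequality (\ref{imppom}), which after substitution leaves a stray $(1+||\chi_0||_\infty)^{\alpha+1}$ in the numerator, and then needs Corollary~\ref{yuit} to trade $||\chi_0||_\infty$ against spare powers of $(1+||\chi_0||_{\discrete})$, which forces a slightly larger $N$ (at least $2(n-1)(2\alpha+2)$). Your route avoids \ref{yuit} entirely.

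That said, your ``third'' step, as written, contains two genuine errors, neither of which is in fact used in your final ``combining'' step. First, $||\chi||_\infty=||\chi_0||_\infty$ is false: the twist by $|\cdot|^{i\mmm(\chi)}$ shifts $\mmm(\chi_v)$ by $\mmm(\chi)$ at every infinite place, so the two quantities can differ by up to $||\mmm(\chi)||$. Second, from Corollary~\ref{yuit}, which provides the upper bound $||\chi_0||_\infty\leq C_4||\chi_0||_{\discrete}$, you cannot deduce $(1+||\chi_0||_\infty)\geq C_4^{-1}(1+||\chi_0||_{\discrete})$; the Corollary says nothing about how small $||\chi_0||_\infty$ can be relative to $||\chi_0||_{\discrete}$. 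Your paraphrase of (\ref{imppom}) also has a typo: the correct form is $1+||\mmm(\chi)||\leq(1+||\chi_0||)(1+||\chi||)$, a product, not a ratio. Since the ``combining'' step uses only the two correct ingredients above, the argument survives once the spurious detour is removed; but you should verify the displayed formula for $\mmm(\chi)$ explicitly, since the entire simplification rests on it.
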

\begin{proof}
Firstly, let $C_1>0$ be such that for every $\chi_0\in\AK$ one has$$\frac{1+||\chi_0||_\infty}{1+||\chi_0||_{\discrete}} \leq C_1,$$ such $C_1$ exists by \ref{yuit}.
Let~$N$ be an integer bigger than $2(n-1)(2\alpha+2)$ and let $\beta> N(r_1+r_2)+1$. Let $\delta $ be given by Lemma \ref{orjuu}.  It follows from this lemma and from (\ref{imppom}) that for every compact $\mathcal K\subset\RR^n_{>1-\delta}$, there exists  $C(\mathcal K,N)$ such that for every $\chi\in[\TTa(\AAF)]^*$ which vanishes on~$K$ and every $\sss\in\KiRn$ one has \begin{align*}\bigg|\wH(\sss,\chi)\prod_{\chij _0=1}\frac{s_j+im(\chij)-1}{s_j+im(\chij)}\bigg|\hskip-2cm&\\&\leq \frac{C(\mathcal K,N)(1+||\Im(\sss)||)^{N(r_1+r_2)+1}(1+||\mmm(\chi)||)}{(1+||\chi_0||_{\discrete})^{\frac{N}{2(n-1)}-1}(1+||\chi||_{\infty})^{\frac{N}{2(n-1)}}}  \\&\leq \frac{C(\mathcal K, N)(1+||\Im(\sss)||)^\beta(1+||\mmm(\chi)||)}{((1+||\chi_0||_{\discrete})(1+||\chi||_{\infty}))^{2\alpha+1}}.\\
&\leq \frac{C(1+||\Im(\sss)||)^{\beta}(1+||\chi_0||_{\infty})^{\alpha+1}}{(1+||\chi_0||_{\discrete})^{2\alpha +1}(1+||\mmm(\chi)||)^\alpha}\\&\leq \frac{CC_1^{\alpha+1}(1+||\Im(\sss)||)^\beta}{((1+||\chi_0||)(1+||\mmm(\chi)||))^\alpha}.
\end{align*}
The claim follows.
\end{proof}
\subsection{} For an open subgroup $K\subset K^{\aaa}_{\max}=\prod_{\vMFz}[\TTa(\Ov)],$ we denote by $\AK$ the subgroup of $[\TTa(\AAF)]_1^*$ given by the characters that vanish on $[\TTa(i)(F)]K$. In this paragraph we explain that one can sum transforms over the characters of $\AK$. 
%
The following lemma will be used. 
\begin{lem} \label{asioo}
Suppose~$X$ is a discrete set. Let $f:X\to \RR_{\geq 0}$ be such that there exists $A>0$ and $d>0$ such that for every $B>0$ one has $$|\{x\in X| f(x)\leq B\}|\leq AB^d.$$
Let $\epsilon>0$. The series $$\sum _{x\in X}\frac{1}{(1+f(x))^N} $$ and the series $$\sum_{\substack{x\in X\\f(x)>0}}\frac{1}{f(x)^N}$$converge for $N>d+\epsilon$. 
\end{lem}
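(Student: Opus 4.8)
The statement is a purely elementary fact about Dirichlet-type series over a discrete set, so the plan is to reduce both series to a single Abel summation argument against the counting function $N(B) := |\{x\in X\mid f(x)\le B\}|$, which by hypothesis satisfies $N(B)\le AB^d$. First I would dispose of the contribution of the points with $f(x)=0$: since $N(0)\le A\cdot 0^d=0$ when $d>0$ (and in any case $N(\varepsilon)\le A\varepsilon^d\to 0$), the set $\{x: f(x)=0\}$ is finite, in fact empty once $A\cdot 0^d=0$; thus in the first series these points contribute a finite amount and in the second series they are simply excluded. So it suffices to estimate $\sum_{x\in X,\, f(x)>0}(1+f(x))^{-N}$ and $\sum_{x\in X,\, f(x)>0} f(x)^{-N}$, and since $1+f(x)\ge \max(1,f(x))$ the first is dominated by the second plus a finite piece coming from $0<f(x)\le 1$ (there $N(1)\le A$ points, each contributing at most $1$). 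Hence everything reduces to showing $\sum_{f(x)>1} f(x)^{-N}<\infty$ for $N>d+\epsilon$.

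\textbf{Main computation.} For the remaining series I would order the points so that $f(x)$ is nondecreasing, or more cleanly integrate by parts (Abel summation / Stieltjes integration) against $dN$:
\[
\sum_{f(x)>1}\frac{1}{f(x)^N}=\int_{1^-}^{\infty}\frac{dN(t)}{t^N}=\lim_{T\to\infty}\frac{N(T)}{T^N}-\frac{N(1)}{1}+N\int_1^\infty\frac{N(t)}{t^{N+1}}\,dt.
\]
Using $N(t)\le At^d$, the boundary term $N(T)/T^N\le A T^{d-N}\to 0$ since $N>d$, the term $N(1)$ is the finite constant $N(1)\le A$, and the integral is bounded by $AN\int_1^\infty t^{d-N-1}\,dt = AN/(N-d)<\infty$ because $N-d>\epsilon>0$. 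This gives convergence of $\sum_{f(x)>1}f(x)^{-N}$, and then convergence of $\sum_{f(x)>0}f(x)^{-N}$ (adding the finite block $0<f(x)\le 1$) and of $\sum_{x\in X}(1+f(x))^{-N}$ (adding the block $f(x)=0$, which is empty or finite, and using $(1+f(x))^{-N}\le f(x)^{-N}$ for $f(x)\ge 1$ together with $(1+f(x))^{-N}\le 1$ for $0\le f(x)\le 1$).

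\textbf{Where the care is needed.} There is no serious obstacle here; the only point requiring a little attention is making the Abel summation rigorous when $X$ is an arbitrary countable discrete set rather than $\mathbb{N}$ — one should either observe that $\{x: f(x)\le B\}$ being finite for every $B$ forces $X$ to be countable and lets one enumerate it as $x_1,x_2,\dots$ with $f(x_1)\le f(x_2)\le\cdots$, or phrase the whole thing as a Stieltjes integral against the (right-continuous, nondecreasing, integer-valued) function $t\mapsto N(t)$, which is the cleaner route. I would also remark that the hypothesis $d>0$ is what guarantees $N(0)=0$ (so there are in fact no points with $f(x)=0$), but even without using that, finiteness of $N(\varepsilon)$ for small $\varepsilon$ suffices to handle the $f(x)=0$ locus. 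After that the estimate $AN/(N-d)$ together with the finite initial blocks gives explicit convergence for every $N>d+\epsilon$, which is the claim.
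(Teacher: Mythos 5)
Your argument is correct and is essentially the same as the paper's: both hinge on a summation-by-parts estimate against the counting function $t\mapsto|\{x:f(x)\le t\}|\le At^d$, yielding convergence from $N>d$. The only cosmetic difference is that you phrase the key step as a Stieltjes integral $\int_1^\infty t^{-N}\,dN(t)$, while the paper runs the discrete Abel summation against the histogram weights $w(i)=|\{x: i-1\le f(x)<i\}|$; the bookkeeping of the small-$f$ and $f=0$ pieces is also rearranged, but the estimates are identical.
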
 
\begin{proof}
 For every $i\in\ZZ_{\geq 1}$, let us set $$w(i)=|\{x\in X| i-1\leq f(x)<i\}|.$$
Let $B\geq 1$ be an integer. By Abel's summation formula, for $N>d+\epsilon$ we have that\begin{align*}
\sum_{\substack{x\in X\\f(x)\leq B}}\frac1{(1+f(x))^N}&=\sum _{i=1}^{B+1}\frac{w(i)}{i^N}\\
&=\frac{1}{(1+B)^N}\sum_{r=1}^{B+1}w(r) + \sum_{i=1}^{B}\big(\frac{1}{i^N}-\frac{1}{(i+1)^N}\big)\sum_{j=1}^{i}w(j)\\
&\leq\frac{AB^d}{(1+B)^N}+\sum_{i=1}^{B}\big(\frac{1}{i^N}-\frac{1}{(1+i)^N}\big)Ai^d\\
&\leq A+\sum_{i=1}^{B}Ai^d\frac{2^Ni^{N-1}}{i^N(i+1)^N}\\
&\leq A+\sum_{i=1}^{B}\frac{2^NA}{i^{N+1-d}}\\
&\leq A+2^NA\zeta(N+1-d).
\end{align*} (we have used that $(1+i)^N-i^N\leq 2^{N}i^{N-1}$).
It follows that $\sum_{\substack{x\in X}}\frac1{(1+f(x))^N}$ converges uniformly for $N>d+\epsilon$. Moreover, for $N>d+\epsilon$, one has that \begin{align*}\sum_{\substack{x\in X\\f(x)>0}}\frac{1}{f(x)^N}&=\sum_{\substack{x\in X\\1>f(x)>0}}\frac{1}{f(x)^N}+\sum_{\substack{x\in X\\f(x)>1}}\frac{1}{f(x)^N}\\
&\leq\sum_{\substack{x\in X\\1>f(x)>0}}\frac{1}{f(x)^N}+\sum_{\substack{x\in X\\f(x)>1}}\frac{2}{(1+f(x))^N}.
\end{align*}
As the sum $ \sum_{\substack{x\in X\\1>f(x)>0}}\frac{1}{f(x)^N}$ is a finite sum and as the sum $\sum_{\substack{x\in X\\f(x)>1}}\frac{2}{(1+f(x))^N}$ converges uniformly for $N>d+\epsilon$, we deduce that $\sum_{\substack{x\in X\\f(x)>0}}\frac{1}{f(x)^N}$ converges uniformly for $N>d+\epsilon$. The statement is proven.
\end{proof}
We set $$\wH^*(\sss,\chi):=\wH(\sss,\chi)\prodjn \frac{s_j+im(\chij)-1}{s_j+im(\chi^{(j)})}. $$
\begin{mydef} \label{defgk}We define formally $$g_K(\sss):=\sum _{\chi_0\in\AK}\wH(\sss,\chi_0) $$ and $$g_K^*(\sss):=\sum _{\chi_0\in\AK}\wH^*(\sss,\chi_0).$$
\end{mydef}
\begin{prop}\label{vuvui}
Let $K\subset K^\aaa_{\max}$ be an open subgroup and let $\alpha >0$. There exist $\delta=\delta (\alpha) >0$ and $\beta=\beta (\alpha)>0$ such that the following conditions are satisfied:
\begin{enumerate}
\item The series \begin{equation}\label{nezce}
g^*_K(\sss):=\sum _{\chi_0\in\AK}\wH^* (\sss,\chi_0)
\end{equation}
converges absolutely and uniformly on compacts in the domain $\Omega_{>1-\delta}$ and the function $\sss\mapsto g^*_K(\sss)$ is holomorphic in the domain $\Omega_{>1-\delta}$. 
\item For every compact $\mathcal K\subset\Omega_{>1-\delta}$ one has that there exists $C=C(\alpha,\mathcal K)>0$ such that for every $\sss\in\KiRn$ and every $\mmm\in M$ one has that $$|g_K^*(\sss+i\mmm)|\leq \frac{C(1+||\Im(\sss)||)^\beta}{(1+||\mmm ||)^\alpha}.$$
\end{enumerate} 
\end{prop}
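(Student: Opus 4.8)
The statement to prove packages together a convergence assertion for the character sum $g_K^*$ and a quantitative growth bound; the point is that all the hard analysis has already been done in the preceding results, and what remains is essentially a bookkeeping argument combining them with the counting estimate from Chapter~\ref{Analysis of characters}. The plan is as follows. First I would invoke Corollary \ref{siuyh}: for the given $\alpha>0$ it produces $\beta'=\beta(\alpha')>0$ and $\delta=\delta(\alpha')>0$ (applied with a slightly larger exponent $\alpha'=\alpha+r_2+1$, say, to leave room for the summation) so that for every compact $\mathcal K\subset\RR^n_{>1-\delta}$ there is $C=C(\alpha',\mathcal K)>0$ with
$$|\wH^*(\sss,\chi_0)|\leq\frac{C(1+||\Im(\sss)||)^{\beta'}}{((1+||\chi_0||_{\discrete})(1+||\mmm(\chi_0)||))^{\alpha'}}$$
for all $\chi_0\in\AK$ and $\sss\in\KiRn$; note that for $\chi_0\in[\TTa(\AAF)]_1^*$ one has $\mmm(\chi_0)=\mathbf 0$, so this simplifies and the relevant decay is purely in $||\chi_0||_{\discrete}$ — I will need to be careful here that $\AK$ consists of characters of the norm-one subgroup, and re-read whether $\mmm(\chi_0)$ is defined to vanish or whether the relevant quantity is $||\chi_0||_{\infty}$, which by Corollary \ref{yuit} is bounded by a constant times $||\chi_0||_{\discrete}$ anyway.

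Second, I would establish absolute and locally uniform convergence of the series $\sum_{\chi_0\in\AK}\wH^*(\sss,\chi_0)$ by comparison with $\sum_{\chi_0\in\AK}(1+||\chi_0||_{\discrete})^{-\alpha'}$. To see the latter converges, apply Corollary \ref{vatu}: the number of $\chi_0\in\AK$ with $||\chi_0||_{\discrete}\leq B$ is bounded by a polynomial in $B$ of degree $nr_2$ (more precisely by $(h_F(K^0_{\max}:\widetilde K)2^{r_1}(2B+1)^{r_2})^n$, so degree $nr_2$). Then Lemma \ref{asioo}, applied with $d=nr_2$ and $f(\chi_0)=||\chi_0||_{\discrete}$, gives convergence of $\sum_{\chi_0\in\AK}(1+||\chi_0||_{\discrete})^{-N}$ for $N>nr_2+\epsilon$; so I must have chosen $\alpha'$ (equivalently $N$ in Corollary \ref{siuyh}, which is a free parameter there) large enough, $\alpha'>nr_2+1$, which costs nothing since Corollary \ref{siuyh} accepts arbitrarily large input exponent at the price of enlarging $\beta$. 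Uniform convergence on each compact $\mathcal K\subset\Omega_{>1-\delta}$ then follows from the Weierstrass $M$-test, and since each term $\wH^*(\sss,\chi_0)$ is holomorphic on $\Omega_{>1-\delta}$ by Lemma \ref{orjuu}(2), the uniform limit $g_K^*$ is holomorphic there, giving part (1).

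Third, for part (2) I would fix a compact $\mathcal K\subset\Omega_{>1-\delta}$ and $\mmm\in M$, and estimate $|g_K^*(\sss+i\mmm)|$ for $\sss\in\KiRn$ termwise. Here the key move is to use Lemma \ref{signpr}(2) to relate $\wH(\sss+i\mmm,\chi_0)$ to $\wH(\sss+i(\mmm+\mmm(\chi_0)),(\chi_0)_0)$ — but since $\chi_0$ is already in $[\TTa(\AAF)]_1^*$ we have $(\chi_0)_0=\chi_0$ and the shift is just by $\mmm$; more usefully, write $\wH^*(\sss+i\mmm,\chi_0)$ and apply Corollary \ref{siuyh} directly to the character $\chi_0\lvert\cdot\rvert^{i\mmm}$, whose discrete norm equals $||\chi_0||_{\discrete}$ and whose $\mmm$-component is $\mmm$. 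That yields, using inequality (\ref{imppom}), a bound of the form $C(1+||\Im(\sss+i\mmm)||)^{\beta'}\big((1+||\chi_0||_{\discrete})(1+||\mmm||)\big)^{-\alpha'}$; absorbing $||\Im(\sss+i\mmm)||\leq ||\Im(\sss)||+||\mmm||$ and using $(1+||\mmm||)^{\beta'}/(1+||\mmm||)^{\alpha'}$ with $\alpha'$ chosen $>\alpha+\beta'$ keeps a factor $(1+||\mmm||)^{-\alpha}$, while the remaining $(1+||\chi_0||_{\discrete})^{-(\alpha'-\beta')}$ is summable over $\AK$ by the same counting-plus-Lemma~\ref{asioo} argument provided $\alpha'-\beta'>nr_2+1$. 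Summing gives $|g_K^*(\sss+i\mmm)|\leq C'(1+||\Im(\sss)||)^{\beta}(1+||\mmm||)^{-\alpha}$ with $\beta$ a suitable new exponent, which is the claim. The main obstacle, such as it is, is purely one of bookkeeping: one must choose the free exponent $N$ inside Corollary \ref{siuyh} large enough up front — roughly $N>2(n-1)(nr_2+\alpha+\beta'+2)$ — so that after absorbing the polynomial loss $\beta'$ in $||\Im||$, the two factors $||\mmm||$ and $||\chi_0||_{\discrete}$, and the summation over characters, one still retains the clean $(1+||\mmm||)^{-\alpha}$ decay; getting the quantifier order ($\alpha$ first, then $\delta(\alpha)$ and $\beta(\alpha)$, then $C$ depending on $\mathcal K$) to match the statement exactly requires a little care but no new idea.
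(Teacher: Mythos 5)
Your overall strategy — apply Corollary~\ref{siuyh}, then sum via Corollary~\ref{vatu} and Lemma~\ref{asioo} — is exactly the paper's approach and is correct in substance. There is, however, an unnecessary complication in your handling of part~(2) that stems from a slight misreading of the shape of the bound in Corollary~\ref{siuyh}.

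The key point you obscure: when you apply Corollary~\ref{siuyh} to the character $\chi=\chi_0\lvert\cdot\rvert^{i\mmm}$ at the parameter $\sss$ (with $\Re(\sss)\in\mathcal K$), the corollary produces a bound in $(1+\|\Im(\sss)\|)^{\beta}$ — \emph{not} $(1+\|\Im(\sss+i\mmm)\|)^{\beta}$. The proof of \ref{siuyh} has already paid for the $\mmm$-dependence of the archimedean $\Im$-factor inside the $\alpha$-th power of $(1+\|\mmm(\chi)\|)$ in the denominator. Combining with the identity $\wH(\sss+i\mmm,\chi_0)=\wH(\sss,\chi_0\lvert\cdot\rvert^{i\mmm})$ (from Lemma~\ref{signpr}; note also that $m(\chi_0^{(j)})=0$ makes the $*$-factors agree) one obtains directly
\[
\bigg|\wH^*(\sss+i\mmm,\chi_0)\bigg|\leq\frac{C(1+\|\Im(\sss)\|)^{\beta}}{\big((1+\|\chi_0\|_{\discrete})(1+\|\mmm\|)\big)^{\alpha'}},
\]
with the same $\beta$ from \ref{siuyh}. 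Your maneuver of absorbing $\|\Im(\sss+i\mmm)\|\leq\|\Im(\sss)\|+\|\mmm\|$ and then demanding $\alpha'>\alpha+\beta'$ is therefore not needed: one can simply take $\alpha'=\max(\alpha, nr_2+2)$ to guarantee summability via \ref{vatu} and \ref{asioo}, and the resulting bound already has the requested $(1+\|\mmm\|)^{-\alpha}$ decay (since $\alpha'\geq\alpha$). Also, the quantity you write as $(1+\|\chi_0\|_{\discrete})^{-(\alpha'-\beta')}$ seems to be an arithmetic slip — in your own scheme the $\|\chi_0\|_{\discrete}$-exponent stays at $\alpha'$ after the absorption; what gets reduced is the $\|\mmm\|$-exponent. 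None of this affects the validity of the plan (the constants just end up larger than necessary), but it is worth noticing that \ref{siuyh} was stated precisely so that this bookkeeping becomes trivial.

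Your preliminary worry about $\mmm(\chi_0)$ versus $\|\chi_0\|_{\infty}$ is well placed: for $\chi_0\in\AK\subset[\TTa(\AAF)]_1^*$ one indeed has $\mmm(\chi_0)=\mathbf 0$ (it is the global $\mmm$-component, not the archimedean infinity norm), while $\|\chi_0\|_{\infty}$ is generically nonzero but controlled by $\|\chi_0\|_{\discrete}$ via \ref{yuit}, which is exactly what makes the $\|\chi_0\|_{\discrete}^{-\alpha'}$ decay the decisive one.
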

\begin{proof}
Let $\alpha >0$. By \ref{siuyh}, there exist $\frac13>\delta>0 $ and $\beta>0$ such that for any compact $\mathcal K\subset\RR^n_{>1-\delta}$ there exists $C>0$ such that for every $\sss\in\KiRn,$ every $\chi_0\in\AK$ and every $\mmm\in M$ one has for $\chi=\chi_0\lvert\cdot\rvert^{i\mmm}$ that:\begin{align}\begin{split}\label{sveol}\bigg|\wH(\sss+i\mmm,\chi_0)\prod _{\chij_0=1}\frac{s_j+im_j-1}{s_j+im_j}\bigg|&= \bigg|\wH(\sss,\chi_0\lvert\cdot\rvert^{i\mmm})\prod _{\chij_0=1}\frac{s_j+im(\chij)-1}{s_j+im(\chij)}\bigg|\\&\leq\frac{C(1+||\Im(\sss)||)^\beta}{((1+||\chi_0||_{\discrete})(1+||\mmm||))^\alpha}.\end{split}\end{align} We prove that the series (\ref{nezce}) converges absolutely and uniformly on compacts of $\Omega _{>1-\delta}$. Let $\mathcal G\subset \Omega_{>1-\delta}$ be a compact set and let $C(\mathcal G)>0$ be such that for every $\sss\in\mathcal G$ one has $||\Im(\sss)||<C(\mathcal G)$. Let $\mathcal K\subset \RR^n_{>1}$ be a compact such that $\mathcal G\subset\KiRn$. Note that for $j=1\doots n$ and $\sss\in\KiRn$ one has that $$\bigg| \frac{s_j-1}{s_j}\bigg|\leq 3.$$ Given $\alpha>nr_2+1$, it follows from (\ref{sveol}) that for every $\chi_0\in\AK$ and every $\sss\in\GG$ one has that\begin{align}\begin{split}\label{horio}\bigg|\wH(\sss,\chi_0)\prodjn\frac{s_j-1}{s_j}\bigg|&\leq \bigg|\frac{C(\alpha,\mathcal K)(1+||\Im(\sss)||)^\beta}{(1+||\chi_0||_{\discrete})^\alpha}\prod _{\chij\neq 1}3 \bigg|\\
&\leq\frac{(1+ C(\GG))^\beta C(\alpha,\mathcal K) 3^n}{(1+||\chi_0||_{\discrete})^\alpha} .
\end{split}
\end{align}
Now, by \ref{vatu}, there exists $B>0$ such that for every $A>0$ one has that $|\{\chi_0\in\AK| \hspace{0.1cm}||\chi_0||_{\discrete}<A\}|\leq BA^{nr_2}$. The set $\AK$ is discrete (Lemma \ref{vnogok}), and therefore (\ref{horio}) and \ref{asioo} give that the series $\sum _{\chi_0\in\AK}\frac{1}{(1+||\chi_0||)^\alpha}$ converges. We deduce that (\ref{nezce}) converges absolutely for every $\sss\in\GG$. Moreover, for $M>0$ one has that if $||\chi_0||_{\discrete}>M$, then $$|\wH(\sss,\chi_0)|\leq\frac{C(\mathcal G)^\beta C (\alpha,\mathcal K) 3^n}{(1+M)^\alpha}$$ for every $\sss\in\mathcal G$, and, hence, the convergence is uniform on compacts. As for every $\chi_0\in\AK$, the function $\sss\mapsto\wH(\sss,\chi_0)\prodjn\frac{s_j-1}{s_j}$ is holomorphic in the domain $\Omega _{>1-\delta}$, we deduce that $\sss\mapsto g^*_K(\sss)$ is holomorphic in the domain $\Omega _{>1-\delta}$. Let $\mathcal K\subset\RR^n_{>1-\delta}$ be a compact. For every $\sss\in\KiRn$ and every $\mmm\in M$, it follows from (\ref{sveol}) that \begin{align*}
|g_K^*(\sss+i\mmm)|&=\bigg|\sum _{\chi_0\in\AK}\wH(\sss+i\mmm,\chi_0)\prodjn\frac{s_j+m_j-1}{s_j+m_j}\bigg|\\
&\leq \frac{C(1+||\Im(\sss)||)^\beta}{(1+||\mmm||)^\alpha}\sum _{\chi_0\in\AK}\frac{1}{(1+||\chi_0||_{\discrete})^\alpha}.
\end{align*}
The statement follows.
\end{proof}
The function $\sss\mapsto\prodjn\frac{s_j-1}{s_j}$ does not vanish in the domain $\Omega _{>1}$. We deduce that in this domain the series defining $$g_K(\sss)=g^*_K(\sss)\prodjn \frac{s_j}{s_j-1}$$ converges absolutely. Therefore $\sss\mapsto g_K(\sss)$ is a holomorphic function in this domain.
\chapter{Analysis of height zeta functions}
\label{Analysis of height zeta functions}
\section{Analysis of $M$-controlled functions}
In this section we adapt the analysis of \cite{FonctionsZ} to our needs. We recall the definition of $M$-controlled functions and establish properties of their integrals.
\subsection{}
Let $d\geq 1$ be an integer and $U\subset\RR^d$ an open subset. For a vector subspace $M\subset \RR^d$,  we say that a function $f:U+i\RR^d\to\CC$ is $M$-controlled if for every $\alpha>0,$ there exists $\beta>0$ such that for any compact $\mathcal K\subset U,$ there exists $C(\mathcal K)>0$ such that for every $\mmm\in M$ one has \begin{equation}\label{mcont}|f(\sss+i\cdot\mmm)|\leq\frac{C(\mathcal K)(1+||\Im(\sss)||)^{\beta}}{(1+||\mmm||)^{\alpha}}\end{equation} if provided $\Re(\sss)\in\mathcal K$.
\begin{rem}
\normalfont
Note that if~$f$ is $M$-controlled, then~$f$ is $M$-controlled in the sense of \cite[Section 4.3]{FonctionsZ}. There, the condition is that there exists a family of linear forms $(\ell_j)_j$ in $V^*$ such that the $\ell_j|M$ form a basis of $M$ and such that there exists $\beta'>0$ and $1>\epsilon>0$ for which for any compact $\mathcal K\subset U$ there exists $C'(\mathcal K)>0$ such that for every $\mmm\in M$ one has \begin{equation}
\label{mcontcomp}
|f(\sss+i\cdot\mmm)|\leq \frac{C'(\mathcal K)(1+||\Im(\sss)||)^{\beta'} }{(1+||\mmm||)^{1-\epsilon}}\frac{1}{\prod (1+|\ell_j(\sss+\mmm|))}
\end{equation} if provided $\Re(\sss)\in \mathcal K.$ We verify that our condition is stronger. The inequality $$\frac{1}{1+|x+y|}\leq\frac{1+|x|}{1+|y|}\hspace{1cm} x,y\in\CC$$ gives that $$\frac{1+|\ell_j(\sss)|}{1+|\ell _j(\sss+\mmm)|}\geq \frac{1}{1+|\ell_j(\mmm)|}.$$Now there exist $C_0,C_1>0$ such that $$\prod _{\ell _j}{(1+|\ell_j(\sss)|)}\leq C_0{(1+||\Im(\sss))^{\dim M}}$$ and $$\frac{1}{\prod _{\ell_j}(1+|\ell_j(\mmm)|)}\geq \frac{C_1}{(1+||\mmm||)^{\dim M}},$$ provided that $\Re(\sss)\in\mathcal K$. Hence, if~$f$ satisfies (\ref{mcontcomp}) when provided $\Re(\sss)\in\mathcal K$, it satisfies that $$|f(\sss+i\mmm)|\leq\frac{C_0C_1C(\mathcal K)(1+||\Im(\sss)||)^{\beta '+\dim M}}{(1+||\mmm||)^{\dim M+1}}.$$
\end{rem}
The following result is given as Lemma  3.1.6 in \cite{FonctionsZ}.
\begin{lem}[{Chambert-Loir, Tschinkel, \cite[Lemma 3.1.6]{FonctionsZ}}]  \label{lclt}Let $q:\RR^n\to\RR^k$ be a surjective map and let $M\subset\RR^n$ be its kernel. Given $\sss\in\CC^n$, let us pick $\widetilde\sss\in\CC^n$ such that $q_{\CC}(\widetilde \sss)=\sss$. Endow $M$ with the unique Lebesgue measure $d\mmm$ such that $(dx_1\dots dx_n)/d\mmm=dx_1\dots dx_k.$ Suppose that $f:U+i\RR^k\to\CC$ is an $M$-controlled holomorphic function. The integral $$\frac{1}{(2\pi)^{n-k}}\int_{M}f(\sss+i\mmm)d\mmm$$ converges for every $\sss\in U+i\RR^n$ and the value of $$\mathscr S_M(f):\sss\mapsto \frac{1}{(2\pi)^{n-k}}\int_{M}f(\sss+i\mmm)d\mmm$$ does not depend on the choice of $\widetilde\sss$ and the resulting map $\mathscr S_M(f):q(U)+i\RR^k\to\CC$ is holomorphic and $\{0\}$-controlled.
\end{lem}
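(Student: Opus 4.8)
The statement to be proved is Lemma \ref{lclt}, which is quoted from \cite[Lemma 3.1.6]{FonctionsZ}; accordingly the plan is to give the argument in the shape one would reconstruct it, namely: (i) establish absolute convergence of the integral $\int_M f(\sss+i\mmm)\,d\mmm$ from the $M$-controlledness estimate; (ii) check independence of the chosen lift $\widetilde\sss$; (iii) prove holomorphy of $\mathscr S_M(f)$ by dominated convergence plus Morera/Fubini; (iv) verify that $\mathscr S_M(f)$ is $\{0\}$-controlled, i.e. has moderate vertical growth on vertical strips. Throughout I will use the defining inequality (\ref{mcont}): for each $\alpha>0$ there is $\beta>0$ so that on any compact $\mathcal K\subset U$, $|f(\sss+i\mmm)|\le C(\mathcal K)(1+||\Im(\sss)||)^\beta (1+||\mmm||)^{-\alpha}$ whenever $\Re(\sss)\in\mathcal K$.

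\textbf{Convergence and well-definedness.} Fix $\sss\in U+i\RR^n$ and a lift $\widetilde\sss$ with $q_\CC(\widetilde\sss)=\sss$ — here I must be careful with notation: in the statement $U\subset\RR^k$, so ``$\sss\in U+i\RR^n$'' should read $\sss\in q(U)+i\RR^k$ on the target side and $\widetilde\sss\in U+i\RR^n$ on the source side; the integrand $f(\widetilde\sss+i\mmm)$ makes sense because $\Re(\widetilde\sss)+0\in U$ and $\mmm\in M$ is purely real so $\Re(\widetilde\sss+i\mmm)=\Re(\widetilde\sss)$. Choosing $\alpha>\dim M=n-k$ in (\ref{mcont}) and applying it with the compact $\mathcal K=\{\Re(\widetilde\sss)\}$ gives $|f(\widetilde\sss+i\mmm)|\le C\,(1+||\mmm||)^{-\alpha}$, and since $\int_M (1+||\mmm||)^{-\alpha}\,d\mmm<\infty$ for $\alpha>\dim M$ (polar coordinates on $M\cong\RR^{n-k}$), the integral converges absolutely. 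For independence of the lift: any two lifts differ by an element $i\mmm_0$ with $\mmm_0\in M$, and since $d\mmm$ is a Haar (Lebesgue) measure on the group $M$ it is translation-invariant, so the substitution $\mmm\mapsto\mmm-\mmm_0$ shows $\int_M f(\widetilde\sss+i\mmm_0+i\mmm)\,d\mmm=\int_M f(\widetilde\sss+i\mmm)\,d\mmm$. This already shows $\mathscr S_M(f)$ is a well-defined function of $\sss\in q(U)+i\RR^k$.

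\textbf{Holomorphy.} To see $\mathscr S_M(f)$ is holomorphic on $q(U)+i\RR^k$ I would localize: fix $\sss_0$, take a small closed polydisc $\overline{P}$ around it whose real projection lies in a compact $\mathcal K\subset U$, and lift it to a polydisc in source coordinates. On $\overline{P}$ the bound (\ref{mcont}) with $\alpha>\dim M$ gives a uniform integrable majorant $C(\mathcal K)(1+||\Im(\widetilde\sss)||)^\beta(1+||\mmm||)^{-\alpha}$ for $|f(\widetilde\sss+i\mmm)|$, with $||\Im(\widetilde\sss)||$ bounded on $\overline{P}$; hence the parameter integral is continuous in $\sss$, and for any triangle $\Delta\subset P$ (in any fixed complex variable) Fubini's theorem gives $\int_\Delta \mathscr S_M(f)(\sss)\,d\sss=\int_M\big(\int_\Delta f(\widetilde\sss+i\mmm)\,d\sss\big)d\mmm=0$ since $\sss\mapsto f(\widetilde\sss+i\mmm)$ is holomorphic (the map $\sss\mapsto\widetilde\sss$ can be chosen $\CC$-affine, so composition with $f$ stays holomorphic). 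Morera's theorem (applied one variable at a time, as in the proof of Corollary \ref{corhlor}) then yields holomorphy of $\mathscr S_M(f)$.

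\textbf{The $\{0\}$-controlled property, and the main obstacle.} It remains to show: for every $\alpha'>0$ there is $\beta'>0$ such that on each compact $\mathcal K'\subset q(U)$ one has $|\mathscr S_M(f)(\sss)|\le C(\mathcal K')(1+||\Im(\sss)||)^{\beta'}$ for $\Re(\sss)\in\mathcal K'$ — note that being $\{0\}$-controlled is just polynomial vertical growth, there is no decay factor to produce. Here is where the real work lies: a crude bound $|\mathscr S_M(f)(\sss)|\le \int_M C(\mathcal K)(1+||\Im(\widetilde\sss)+\mmm||)^\beta(1+||\mmm||)^{-\alpha}d\mmm$ does not obviously give a bound polynomial in $||\Im(\sss)||$ because $\Im(\widetilde\sss)$ can be chosen with one coordinate large even when $\Im(\sss)=q(\Im(\widetilde\sss))$ is small, and $(1+||\Im(\widetilde\sss)+\mmm||)^\beta$ then need not be integrable against $(1+||\mmm||)^{-\alpha}$. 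The fix is to choose the lift $\widetilde\sss$ cleverly: one selects a fixed linear section $\sigma:\RR^k\to\RR^n$ of $q$ and sets $\widetilde\sss:=\sigma(\sss)$, so that $||\Im(\widetilde\sss)||\le \|\sigma\|\,||\Im(\sss)||$. Then splitting the integral over $||\mmm||\le 2\|\sigma\|\,||\Im(\sss)||$ and over $||\mmm||> 2\|\sigma\|\,||\Im(\sss)||$: on the first region $||\Im(\widetilde\sss)+\mmm||\le (1+2\|\sigma\|)\,||\Im(\sss)||+\|\sigma\|\,||\Im(\sss)||$ is $\lesssim (1+||\Im(\sss)||)$, and the region has volume $\lesssim (1+||\Im(\sss)||)^{n-k}$, contributing $\lesssim (1+||\Im(\sss)||)^{\beta+n-k}$; on the second region $(1+||\mmm||)^{-\alpha/2}\le (1+2\|\sigma\|\,||\Im(\sss)||)^{-\alpha/2}$ can be used to kill the growth from $(1+||\Im(\widetilde\sss)+\mmm||)^\beta\lesssim (1+||\mmm||)^\beta(1+||\Im(\sss)||)^\beta$, and the residual $\int_M(1+||\mmm||)^{\beta-\alpha/2}d\mmm$ converges once we fixed $\alpha>2\beta+2(n-k)$ at the outset (legitimate since for each target exponent we are free to invoke (\ref{mcont}) with as large an $\alpha$ as we like, producing some $\beta=\beta(\alpha)$). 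Taking $\beta':=\beta+n-k$ works and is uniform in $\mathcal K'$ (replacing $\mathcal K$ by $\sigma(\mathcal K')$, a compact subset of $U$). This careful choice of lift together with the two-region splitting is the only nonroutine point; everything else is dominated convergence and Fubini.
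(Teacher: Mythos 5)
The paper cites this result from Chambert-Loir and Tschinkel \cite[Lemma 3.1.6]{FonctionsZ} (specialised to $M'=M$) without reproducing the proof, so there is no internal argument to compare against; you are right that the printed statement has typos (it should read $\sss\in\CC^k$, $U\subset\RR^n$, $f:U+i\RR^n\to\CC$, and $f(\widetilde\sss+i\mmm)$ in the integrand) and you interpret them correctly, and your convergence and holomorphy steps are sound. But there are two genuine gaps. For independence of the lift, you assert that two lifts of $\sss$ differ by a purely imaginary element $i\mmm_0$ with $\mmm_0\in M$. That is not so: $q_\CC(\widetilde\sss_1)=q_\CC(\widetilde\sss_2)$ only forces $\widetilde\sss_1-\widetilde\sss_2\in M+iM$, so the real parts $\Re(\widetilde\sss_1)$ and $\Re(\widetilde\sss_2)$ may both lie in $U$ while being distinct. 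Translation-invariance of $d\mmm$ absorbs the $iM$-part of the difference, but the $M$-part requires a contour shift along $M$ (close a box in each real $M$-coordinate, let its imaginary sides go to infinity, and use holomorphy of $f$ together with the decay in $\mmm$ from (\ref{mcont}) to kill the transverse sides). You omit this step, and without it $\mathscr S_M(f)$ is not shown to be a function of $\sss$ alone.

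For the $\{0\}$-controlled estimate, you apply (\ref{mcont}) as $|f(\widetilde\sss+i\mmm)|\lesssim(1+||\Im(\widetilde\sss)+\mmm||)^{\beta}(1+||\mmm||)^{-\alpha}$, but the definition gives $(1+||\Im(\widetilde\sss)||)^{\beta}$ in the numerator — the imaginary part of the base point $\widetilde\sss$ only, not the imaginary part $\Im(\widetilde\sss)+\mmm$ of the argument. That misreading is what forces your two-region split, and the split in turn hinges on being able to arrange $\alpha>2\beta(\alpha)+2(n-k)$. But $\beta$ is produced by the hypothesis only after $\alpha$ is chosen, with no a priori bound on its growth, so this is not something you can ``fix at the outset''; the inequality may fail for every $\alpha$, and then the argument as written collapses. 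With the correct bound no splitting is needed at all: fix any $\alpha>n-k=\dim M$, so $\int_M(1+||\mmm||)^{-\alpha}d\mmm<\infty$ factors out, and $|\mathscr S_M(f)(\sss)|\lesssim(1+||\Im(\widetilde\sss)||)^{\beta}\lesssim(1+||\Im(\sss)||)^{\beta}$ once $\widetilde\sss=\sigma(\sss)$ for a fixed linear section $\sigma$ as you introduce. The section $\sigma$ is exactly the right device; the two-region machinery is both unnecessary and, as written, unfounded.
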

\begin{rem}
\normalfont
The original Lemma 3.1.6 has been simplified by assuming $M'=M$ (with the notation as in 3.1.6 in \cite{FonctionsZ}).
\end{rem}
The following result is Theorem 3.1.14 in \cite{FonctionsZ}.
\begin{thm}[{Chambert-Loir, Tschinkel, \cite[Theorem 3.1.14]{FonctionsZ}}]\label{CLTsc} Let $q:\RR^n\to\RR^{k}$ be a surjective linear map such that $q(\RR^n_{\geq 0})=\RR^{k}_{\geq 0}$ and let $M=\ker q$. 
Let $f:\Omega_{>0}\to\CC$ be a holomorphic function such that there exists an open ball $B\subset \RR^n$ centred at $0$ such that $\sss\mapsto f(\sss)\prodjn\frac{s_j}{s_j+1}$ extends to an $M$-controlled holomorphic function on $(B+i\RR^n)\cup\Omega_{>0}.$ 
Then there exists an open neighbourhood $B'$ of $0$ in $\RR^k$ such that $\mathscr S_{M}(f)\prod _{j=1}^k\frac{s_j}{s_j+1}$ extends to a holomorphic $M$-controlled function in the domain $(B'+i\RR^k)\cup (\RR^k_{>0}+i\RR^k)$. 
Moreover, if one has for every $\xxx\in\RR^n_{>0}$ that $$\lim _{s\to 0^+}\big(s^n\prodjn x_j\big){f(s\xxx)}=a,$$ for some $0\neq a\in\RR,$ then one has for every $\xxx'\in\RR^{k}_{>0}$ that $$\lim _{s\to 0^+}\big(s^{k}\prod _{j=1}^{k}x'_j \big)\mathscr S_M(f)(s\xxx')=a.$$
\end{thm}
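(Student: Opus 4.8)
\textbf{Proof plan for Theorem \ref{CLTsc}.} The statement has two parts, a meromorphic-continuation part and an asymptotics-of-leading-term part, and I would treat them separately, both by invoking the machinery of $M$-controlled functions (Lemma \ref{lclt}) together with a partial-fraction/residue computation of the integral $\mathscr S_M(f)$.

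First, the continuation part. The plan is to write $g(\sss) := f(\sss)\prodjn\frac{s_j}{s_j+1}$, which by hypothesis extends to an $M$-controlled holomorphic function on $(B+i\RR^n)\cup\Omega_{>0}$. The issue is that $\mathscr S_M$ is defined (via Lemma \ref{lclt}) only for functions holomorphic on a set of the form $U+i\RR^n$, whereas $\Omega_{>0}$ is not of this shape; so I cannot directly apply $\mathscr S_M$ to $f$ on all of $\Omega_{>0}$. The trick, exactly as in \cite{FonctionsZ}, is to localize: choose a small neighbourhood and split. Concretely, one shows that near the faces $\{s_j=0\}$ the factors $\frac{s_j}{s_j+1}$ absorb the poles of $f$, so that on a domain of the form $(B'+i\RR^n)\cup\Omega_{>0}$ (after shrinking $B$ to $B'$, and possibly translating) one can write $f = g\cdot\prodjn\frac{s_j+1}{s_j}$ with $g$ holomorphic and $M$-controlled there. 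Then one integrates over $M$: by Lemma \ref{lclt}, $\mathscr S_M(g)$ is holomorphic and $\{0\}$-controlled on $q(B'+i\RR^n\cup\Omega_{>0})+i\RR^k = (q(B')+i\RR^k)\cup\RR^k_{>0}$. The extra factors $\prodjn\frac{s_j+1}{s_j}$ produce, under $\int_M\cdot\,d\mmm$, a function whose singularities lie only along the coordinate hyperplanes $\{s_j'=0\}$ in $\RR^k$ (the key point being $q(\RR^n_{\geq 0})=\RR^k_{\geq 0}$, so the image of the singular locus is controlled), whence $\mathscr S_M(f)\prod_{j=1}^k\frac{s_j'}{s_j'+1}$ is holomorphic and $M$-controlled on $(B''+i\RR^k)\cup\RR^k_{>0}+i\RR^k$ for a suitable neighbourhood $B''$ of $0$ in $\RR^k$. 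This is essentially a repackaging of \cite[Theorem 3.1.14]{FonctionsZ}; since the excerpt explicitly allows me to assume results stated earlier, I would simply cite that theorem for this part and spend little effort re-deriving it.

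Second, the leading-term part, which is the real content I would actually carry out. Suppose $\lim_{s\to 0^+}(s^n\prodjn x_j)f(s\xxx)=a$ for all $\xxx\in\RR^n_{>0}$. I want to identify $\lim_{s\to 0^+}(s^k\prod_{j=1}^k x_j')\mathscr S_M(f)(s\xxx')$. The natural approach is a change of variables in the defining integral $\mathscr S_M(f)(\sss) = \frac{1}{(2\pi)^{n-k}}\int_M f(\widetilde\sss+i\mmm)\,d\mmm$: scale $\sss = s\xxx'$ with $s\to 0^+$, lift to $\widetilde\sss = s\widetilde\xxx'$ for a fixed positive lift $\widetilde\xxx'\in\RR^n_{>0}$ of $\xxx'$ (possible since $q(\RR^n_{>0})=\RR^k_{>0}$), and substitute $\mmm = s\nnn$ in the integral, so that $d\mmm = s^{\dim M}\,d\nnn = s^{n-k}\,d\nnn$. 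One then gets
\begin{equation*}
(s^k\textstyle\prod_{j=1}^k x_j')\mathscr S_M(f)(s\xxx') = \frac{s^n\prod_{j=1}^k x_j'}{(2\pi)^{n-k}}\int_M f\big(s(\widetilde\xxx'+i\nnn)\big)\,d\nnn.
\end{equation*}
Now I would write $f(s(\widetilde\xxx'+i\nnn)) = \frac{a}{s^n\prod_j(\widetilde x_j'+in_j)}\big(1+o(1)\big)$ using the hypothesis applied at the (complex but positive-real-part) point $\widetilde\xxx'+i\nnn$, so that the $s^n$ cancels and, passing to the limit inside the integral, the limit equals $\frac{a\prod_{j=1}^k x_j'}{(2\pi)^{n-k}}\int_M\frac{d\nnn}{\prodjn(\widetilde x_j'+in_j)}$. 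The claim is then that this residue-type integral equals $1$. One recognizes $\int_M\frac{d\nnn}{\prodjn(\widetilde x_j'+in_j)}$ as a standard Mellin/Cauchy integral over the subspace $M$: by parametrizing $M$ as the kernel of $q$ and using the normalization $(dx_1\cdots dx_n)/d\mmm = dx_1\cdots dx_k$, a repeated contour-integral computation (closing contours and picking up simple-pole residues, exactly the $\int_{\RR}\frac{dt}{c+it}=\pi/c$ type identities iterated $n-k$ times) yields $\frac{(2\pi)^{n-k}}{\prod_{j=1}^k x_j'}$; combined with the $\prod_{j=1}^k x_j'$ prefactor this gives the asserted limit $a$.

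\textbf{Main obstacle.} The delicate point is not the formal cancellation of powers of $s$ but \emph{justifying the passage to the limit under the integral sign} uniformly in $s$: one needs an $s$-independent integrable majorant for $f(s(\widetilde\xxx'+i\nnn))\cdot s^n$ on $M$, valid for all small $s>0$. This is exactly where the $M$-controlledness of $g=f\cdot\prodjn\frac{s_j}{s_j+1}$ is used: it provides, for any $\alpha>0$, a bound $|g(\sss+i\mmm)|\leq C(\mathcal K)(1+\|\Im\sss\|)^\beta/(1+\|\mmm\|)^\alpha$, and the factors $\prodjn\frac{s_j+1}{s_j}$, after the scaling $\sss = s\widetilde\xxx'$, $\mmm=s\nnn$, become $\prodjn\frac{s\widetilde x_j'+is n_j + 1}{s(\widetilde x_j'+in_j)}$, whose $s^n$ in the denominator is precisely cancelled by the $s^n$ prefactor, leaving something dominated (for $\alpha$ chosen large, say $\alpha > n-k$) by $C(1+\|\nnn\|)^{-\alpha}\cdot\prodjn(\widetilde x_j'+in_j)^{-1}$-type quantities that are integrable over $M$ uniformly for $s\in(0,1]$ and for $s\widetilde\xxx'$ in a fixed compact of $\RR^n_{>0}$. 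So I would: (i) fix $\alpha$ large enough relative to $\dim M = n-k$; (ii) derive the uniform majorant from the $M$-controlled bound applied to $g$, carefully tracking the $s$-powers; (iii) invoke dominated convergence; (iv) evaluate the residual Gaussian-free rational integral over $M$ by iterated contour integration, checking the normalization of $d\mmm$ makes the constant come out to exactly $(2\pi)^{n-k}/\prod_{j=1}^k x_j'$. Steps (ii) and (iv) are the ones requiring genuine care; everything else is bookkeeping or a citation to \cite{FonctionsZ}.
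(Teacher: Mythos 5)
The paper itself offers no proof of this statement: it is quoted verbatim (with simplifications recorded in the following remark) from Chambert-Loir and Tschinkel, \cite[Theorem 3.1.14]{FonctionsZ}. So for the continuation half of your plan, the cite-and-move-on attitude matches exactly what the paper does. For the leading-term half you attempt a genuine argument: scale $\sss = s\xxx'$, substitute $\mmm = s\nnn$, and pass to the limit under the integral over $M$ to reduce to the rational integral $\int_M \prod_{j=1}^n (\widetilde x_j' + i n_j)^{-1}\,d\nnn$, which you evaluate by iterated residues as $(2\pi)^{n-k}/\prod_{j=1}^k x_j'$. The residue identity is plausible --- it checks in low-dimensional examples and the normalization of $d\mmm$ via $(dx_1\dots dx_n)/d\mmm = dx_1\dots dx_k$ is exactly what makes the constant come out. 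So the formal computation yields the right limit $a$.

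The gap is precisely in the step you yourself flag, and your proposed fix does not hold. You claim the $M$-controlled bound on $g = f\cdot\prod_j \frac{s_j}{s_j+1}$ gives, after the substitution, an $s$-independent majorant of the form $C(1+\|\nnn\|)^{-\alpha}\prod_j(\widetilde x_j'+in_j)^{-1}$. But with $\mmm = s\nnn$ the $M$-controlled inequality reads $|g(s\widetilde\xxx' + is\nnn)| \leq C(\mathcal K)(1+s\|\nnn\|)^{-\alpha}$: the decay variable is $\|\mmm\| = s\|\nnn\|$, \emph{not} $\|\nnn\|$. As $s\to 0^+$ the factor $(1+s\|\nnn\|)^{-\alpha}$ tends pointwise to $1$, so there is no $s$-uniform integrable dominating function over the $(n-k)$-dimensional space $M$, and dominated convergence does not apply as written. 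One can try the alternative split $is\nnn = i(s-1)\nnn + i\nnn$ with $\mmm = \nnn$, giving $|g| \leq C(1+(1-s)\|\nnn\|)^\beta(1+\|\nnn\|)^{-\alpha}$, which is $s$-uniform but integrable only when $\alpha - \beta > n-k$; the definition of $M$-controlled guarantees for each $\alpha$ some $\beta(\alpha)$, with no control on $\alpha - \beta(\alpha)$, so this is not available in general either. To close the gap you would need either a strengthened decay hypothesis on $f$, or the inductive contour-shifting strategy of \cite{FonctionsZ}, which does not reduce the problem to a single dominated-convergence pass.
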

\begin{rem}
\normalfont
The original statement of \ref{CLTsc} is somewhat simplified here. With notation as in Theorem 3.1.14 of \cite{FonctionsZ}, we have supposed that $M=M'$ and $C=\Lambda=\RR^{n}_{\geq 0}$.  We have also added the condition $q(\RR^n_{\geq 0})=\RR^{k}_{\geq 0}$ in order to make calculations of the characteristic functions of cones (\cite[Section 3.1.7]{FonctionsZ}) simple. For every $k\in\ZZ_{\geq 1}$, the cone $\RR^k_{>0}$ is simplicial and its characteristic function (according to \cite[Section 3.1.7]{FonctionsZ}) is given by $$\Omega_{>0}\to\CC\hspace{1cm}\sss\mapsto\frac{1}{\prod _{j=1}^ns_k}.$$
\end{rem}
\subsection{} We recall several facts from abstract Fourier analysis. 

Let~$G$ be an abelian locally compact group and let $dg$ be a Haar measure on~$G$. For $f\in L^1(G)$ and $\chi\in G^*$, we denote the Fourier transform of~$f$ by $$\widehat f(\chi):=\int_{G}(f{\chi})(dg)^*.$$
By $(dg)^*$ or by $dg^*$ we denote the dual measure on the character group $G^*$ (see \cite[Definition 4, \no 3, \S 1, Chapter II]{TSpectrale}). It is characterized by the following property: it is the unique Haar measure on $G^*$ such that {\it Fourier inversion formula} (\cite[Proposition 4, \no 4, \S 1, Chapter II]{TSpectrale}) is valid \begin{equation}\label{FourierInversion}f(x)=\int_{G^*}\overline{\chi(g)}\widehat f(\chi)(dg^*(\chi))\end{equation}for every $x\in G$. 
\begin{lem}[{\cite[Proposition 9, \no 8, \S 1, Chapter II]{TSpectrale}}] \label{trcio} Let~$H$ be a closed subgroup of~$G$ and let $dh$ be a Haar measure on~$H$. The dual measure $(dg/dh)^*$ on $(G/H)^*=H^{\perp}$ of the measure $dg/dh$ on $B/A$ satisfies that $$(dg)^*/((dg/dh)^*)= (dh)^*.$$  
\end{lem}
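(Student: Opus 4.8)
The statement is the duality relation $(dg)^*/((dg/dh)^*) = (dh)^*$ for a closed subgroup $H \subset G$, given as \cite[Proposition 9, \no 8, \S 1, Chapter II]{TSpectrale}. Since the result is quoted verbatim from Bourbaki, the natural ``proof'' here is to record how it follows from the Poisson-type compatibility between Haar measures on $G$, $H$, $G/H$ and their duals. The plan is to proceed as follows.

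First I would recall the basic duality dictionary: for $G$ abelian locally compact with Haar measure $dg$, the dual group $G^*$ carries a canonical dual measure $(dg)^*$ characterized by the Fourier inversion formula \eqref{FourierInversion}, and the assignment $dg \mapsto (dg)^*$ is the unique one making inversion hold. For the closed subgroup $i \colon H \hookrightarrow G$ with quotient $p \colon G \to G/H$, recall from \ref{tspgj} that $p^*$ identifies $(G/H)^*$ with $H^\perp \subset G^*$ and that $i^*$ is a strict surjection $G^* \to H^*$ with kernel $H^\perp$; hence there is a short exact sequence $1 \to (G/H)^* \to G^* \to H^* \to 1$ dual to $1 \to H \to G \to G/H \to 1$.

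The main step is then to apply the known compatibility of dual measures with quotients. Concretely, on the sequence $1 \to (G/H)^* \to G^* \to H^* \to 1$, the quotient measure construction gives $(dg)^* / \big((dg/dh)^*\big)$ as a Haar measure on $G^*/(G/H)^* \cong H^*$; one must check this equals $(dh)^*$. I would do this by testing against a Schwartz--Bruhat function: pick $\phi \in L^1(H)$ with $\widehat\phi \in L^1(H^*)$, lift the computation to $G$ via a section of $p$ as in \ref{intbour}/\ref{duio}, and compare the two applications of Fourier inversion --- one on $H$ directly, one obtained by first integrating over the fibres of $G \to G/H$ and then over $G/H$. The equality of the two expressions forces the dual measure on $H^*$ obtained as the quotient $(dg)^*/((dg/dh)^*)$ to coincide with $(dh)^*$ by the uniqueness clause in the characterization of dual measures. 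Equivalently, and more cleanly, one invokes directly the cited Bourbaki proposition, of which the displayed identity is a verbatim restatement.

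The only genuine subtlety --- the ``hard part'' --- is bookkeeping of which of the four groups $H, G, G/H, H^\perp = (G/H)^*$ gets which measure and in which direction the quotient is taken; once the identifications $(G/H)^* = H^\perp$ (via $p^*$) and $G^*/H^\perp = H^*$ (via $i^*$, from \ref{tspgj}) are fixed, the formula $(dg)^*/((dg/dh)^*) = (dh)^*$ is exactly the statement that $dg \mapsto (dg)^*$ intertwines the short exact sequence of groups with its dual. Since this is classical and fully contained in the quoted reference, the proof is a one-line citation together with the indication that it is the measure-theoretic shadow of the duality between $1 \to H \to G \to G/H \to 1$ and $1 \to (G/H)^* \to G^* \to H^* \to 1$.
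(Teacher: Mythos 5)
Your proposal is correct and matches the paper's treatment: the lemma is stated as a direct citation of Bourbaki with no proof supplied, and your identification of the short exact sequences $1\to H\to G\to G/H\to 1$ and its dual $1\to(G/H)^*\to G^*\to H^*\to 1$, together with the observation that the identity is precisely the compatibility of $dg\mapsto (dg)^*$ with quotient measures, is the standard justification behind the cited result. (Minor: the statement in the paper has a typo, writing ``$B/A$'' where it means ``$G/H$''.)
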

\begin{lem}[{\cite[Proposition 11, \no 9, \S 1, Chapter II]{TSpectrale}}] \label{dualofdiscrete}
The group $G^*$ is compact if and only if~$G$ is discrete. If $dg$ is the counting measure on~$G$, then $(dg)^*$ is normalized by $(dg)^*(G^*)=1$.
\end{lem}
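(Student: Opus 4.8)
The plan is to prove the two assertions of Lemma~\ref{dualofdiscrete} separately; both are classical facts about Pontryagin duality, so in the text the cleanest course is simply to invoke the cited reference, but I sketch self-contained arguments here.

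For the equivalence ``$G^*$ is compact $\iff$ $G$ is discrete'' I would argue as follows. If $G$ is discrete, then every compact subset of $G$ is finite, so the compact-open topology on $G^*$ agrees with the topology of pointwise convergence; thus $G^*$ embeds as a subspace of the product $(S^1)^G$, which is compact by Tychonoff's theorem. Being a homomorphism is a closed condition — $G^*$ is the intersection over all $g,h\in G$ of the preimages, under the continuous evaluation maps, of the closed set $\{(z,w,u)\in (S^1)^3 : zw=u\}$ — so $G^*$ is closed in $(S^1)^G$, hence compact. Conversely, if $G^*$ is compact, then by the Pontryagin duality theorem the canonical map $G\to (G^*)^*$ is an isomorphism of topological groups, so it suffices to show the dual of a compact group $K$ is discrete: fixing $\varepsilon>0$ small enough that $\{z\in S^1 : |z-1|<\varepsilon\}$ contains no nontrivial subgroup of $S^1$, the set $\{\chi\in K^* : |\chi(k)-1|<\varepsilon \text{ for all } k\in K\}$ is an open neighbourhood of the trivial character (here one uses compactness of $K$), and any $\chi$ in it has $\chi(K)$ a subgroup of $S^1$ inside that ball, hence trivial; so $\{1\}$ is open in $K^*$.

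For the normalization, note that once $G$ is discrete the group $G^*$ is compact by the first part, so the dual measure $(dg)^*$ — which by the paragraph preceding the lemma is the unique Haar measure on $G^*$ making the Fourier inversion formula \eqref{FourierInversion} valid — is finite. To identify its total mass I would feed the test function $f=\mathbf{1}_{\{e\}}$, the characteristic function of the identity $e\in G$, into \eqref{FourierInversion}: it is continuous and compactly supported on the discrete group $G$, hence in $L^1(G,dg)$, and since $dg$ is the counting measure its Fourier transform is $\widehat f(\chi)=\int_G f\chi\,dg=\chi(e)=1$ for every $\chi\in G^*$; as $G^*$ is compact, $\widehat f\equiv 1$ lies in $L^1(G^*,(dg)^*)$, so \eqref{FourierInversion} applies, and evaluating it at $x=e$ gives $1=f(e)=\int_{G^*}\overline{\chi(e)}\,\widehat f(\chi)\,(dg)^*(\chi)=(dg)^*(G^*)$.

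The main obstacle is essentially bookkeeping: one must check that the version of the inversion theorem being invoked genuinely covers $f=\mathbf{1}_{\{e\}}$ (continuity of $f$ and $\widehat f\in L^1$ are both immediate on a discrete group with compact dual), and one must be content to cite Pontryagin duality for the harder implication of the first assertion rather than reprove it. Given that both statements are quoted verbatim from \cite{TSpectrale}, I would in fact keep the proof of Lemma~\ref{dualofdiscrete} to the citation, with the argument above available as a fallback.
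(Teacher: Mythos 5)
Your proposal is correct and matches the paper, which states the lemma as a direct citation to \cite{TSpectrale} without giving a proof. The self-contained sketch you supply as a fallback is also sound: the discreteness$\Leftrightarrow$compactness equivalence is argued correctly via Tychonoff together with the no-small-subgroups property of $S^1$, and the normalization drops out of the Fourier inversion formula \eqref{FourierInversion} applied to $f=\mathbf{1}_{\{e\}}$, whose transform under the counting measure is identically $1$ and is integrable precisely because $G^*$ is compact.
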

\begin{prop}[{\cite[Proposition 8, \no 8, \S 1, Chapter II]{TSpectrale}}] \label{formuledepoison}
Let~$H$ be a closed subgroup of an abelian locally compact group~$G$. Let $dh$ and $dg$ be Haar measures on~$H$ and~$G$, respectively. Let $f\in L^1(G)$. We suppose that \begin{enumerate}
\item the restriction of the Fourier transform $\widehat f|_{H^{\perp}}$ is an element of $L^1(H^{\perp})=L^1((G/H)^*)$,
\item for every $x\in G$, one has that $(h\mapsto f(xh))\in L^1(H)$,
\item the function $x\mapsto\int_Hf(xh)dh$ is a continuous function $G\to\CC$.
\end{enumerate}
Then Poisson formula is valid:
$$\int_Hf(h)dh=\int_{H^{\perp}}(\widehat f\hspace{0.1cm}) (dg/dh)^*.$$
\end{prop}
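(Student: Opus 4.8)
The plan is to recall the classical argument for the Poisson formula over a closed subgroup, which descends $f$ to the quotient $G/H$ and then applies the Fourier inversion formula there; in the text itself the statement is simply quoted from \cite[Proposition 8, \no 8, \S 1, Chapter II]{TSpectrale}, so what follows is a sketch of why it holds.

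First I would introduce the averaged function $\overline f\colon G/H\to\CC$, $\overline f(xH):=\int_H f(xh)\,dh$. Hypothesis (2) guarantees that $h\mapsto f(xh)$ is integrable for each $x$, so $\overline f$ is well defined, and hypothesis (3) is exactly the assertion that $\overline f$ is continuous. The next step is the quotient integration formula: the translation action of $H$ on $G$ is a special case of \ref{intbour} (with modular function $\Delta_H=1$, since $H$ is abelian), so applying it to the function $f\cdot\chi$ for a character $\chi\in H^{\perp}$, and using that such a $\chi$ is by definition constant on the cosets of $H$, one obtains $\int_{G/H}\overline f\,\chi\,(dg/dh)=\int_G f\,\chi\,dg$. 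In particular $\overline f\in L^1(G/H,dg/dh)$, and this identity says precisely that the Fourier transform of $\overline f$ on the dual group $(G/H)^{*}$ equals the restriction of $\widehat f$; under the identification $(G/H)^{*}=H^{\perp}$ supplied by \ref{tspgj} we get $\widehat{\overline f}=\widehat f|_{H^{\perp}}$.

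Now hypothesis (1) says that $\widehat{\overline f}=\widehat f|_{H^{\perp}}$ lies in $L^1(H^{\perp})=L^1((G/H)^{*})$, so the Fourier inversion formula (\ref{FourierInversion}) applies to $\overline f$; since $\overline f$ is continuous by hypothesis (3), the inversion holds pointwise, and evaluating it at the identity coset $eH$ yields
\[
\int_H f(h)\,dh=\overline f(eH)=\int_{(G/H)^{*}}\widehat{\overline f}(\chi)\,(dg/dh)^{*}(\chi)=\int_{H^{\perp}}\widehat f\,(dg/dh)^{*},
\]
which is the claimed formula. The only genuinely delicate points are the Haar measure bookkeeping — one must check that the dual measure governing Fourier inversion for $\overline f$ on $G/H$ is exactly the measure $(dg/dh)^{*}$ that appears in the statement, and its compatibility with $(dg)^{*}$ and $(dh)^{*}$ is recorded in \ref{trcio} — together with the interchange of integrations in the quotient formula, which is legitimate precisely because of hypotheses (2) and (3). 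Since these are exactly the hypotheses we have assumed, no further work is needed beyond assembling the cited results.
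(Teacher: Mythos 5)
The paper gives no proof of this proposition: it is quoted directly from Bourbaki \cite[Proposition 8, \no 8, \S 1, Chapter II]{TSpectrale}, so there is no in-paper argument to compare against. Your sketch is the standard derivation of the abstract Poisson formula and is essentially correct: average $f$ over the cosets of $H$ to obtain $\overline f$ on $G/H$, observe via the quotient integration formula that $\widehat{\overline f}$ on $(G/H)^{*}\cong H^{\perp}$ coincides with $\widehat f|_{H^{\perp}}$, and then apply pointwise Fourier inversion to the continuous integrable function $\overline f$ at the identity coset, with the measure bookkeeping supplied by \ref{trcio}.

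One step you pass over a bit quickly is the use of \ref{intbour}: as stated in the paper it gives $\int_G \phi\,dg=\int_{G/H}\phi^{*}\,(dg/dh)$ only for $\phi\in\mathscr C^0_c(G)$, whereas you apply it to $f\chi\in L^1(G)$. Extending the quotient formula from $\mathscr C^0_c$ to $L^1$ is precisely where hypotheses (2) and (3) earn their keep, but it is not literally a reference to \ref{intbour}; one either invokes the $L^1$ version of Weil's formula (as Bourbaki does), or approximates $f$ by compactly supported functions and passes to the limit using dominated convergence together with the assumed continuity of $\overline f$. Your closing remark that this interchange ``is legitimate precisely because of hypotheses (2) and (3)'' gestures at the right point, but a complete write-up would need to spell out that limiting argument rather than cite the $\mathscr C^0_c$ statement directly.
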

\section{Height zeta function}\label{hzsec}
We define and prove holomorphicity of a height zeta function. Let $(f_v:\Fvnz\to\RR_{>0})_v$ be an adelic toric family of weighted degree~$d$. 
\subsection{}
We define height zeta functions.
\begin{mydef}\label{hzfdef}
For $\sss\in\CC^n$ we formally define series \begin{equation*}\accentset{\circ}Z((f_v)_v)(\sss)=\accentset{\circ}Z(\sss):=\sum _{\xxx\in[\TTa(F)]}H(\sss,\xxx)^{-1} \end{equation*}
and \begin{equation*}
Z((f_v)_v)(\sss)=Z(\sss):=\sum _{\xxx\in[\DD(P)(F)]}H(\sss,\xxx)^{-1}
\end{equation*}
\end{mydef} 
\begin{prop}\label{norco}
Let $(f_v:\Fvnz\to\RR_{\geq 0})_v$ be a degree $|\aaa|$ quasi-toric family of $\aaa$-homogenous functions having at worst logarithmic singularities along a rational divisor (situation of \ref{nortlogfalt}). For $\sss\in\Omega_{>0}$ we set $H(\sss,-):=H^{\frac{\aaa\cdot\sss}{|\aaa|}}$. Let $\epsilon>0$. The height zeta function series defining $Z(\sss)$ and $\accentset{\circ}Z(\sss)$ converge absolutely and uniformly in the domain $\Omega_{>1+\epsilon}$, and defines a holomorphic function in this domain. 
\end{prop}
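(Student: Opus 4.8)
The plan is to reduce the convergence of the height zeta series to the Northcott-type counting estimate already proved in Theorem \ref{boundppatta} (for quasi-toric families) and in Corollary \ref{nortlogfalt} (for families with logarithmic singularities), combined with a standard Abel summation / partial summation argument. First I would fix $\epsilon>0$ and a compact $\mathcal K\subset\RR^n_{>1+\epsilon}$, and observe that for $\sss\in\mathcal K+i\RR^n$ one has $|H(\sss,\xxx)^{-1}|=H(\xxx)^{-\frac{\aaa\cdot\Re(\sss)}{|\aaa|}}$, where $H$ is the underlying (quasi-toric, resp.\ quasi-discriminant-type) height; since $\aaa\cdot\Re(\sss)/|\aaa|\geq \min_j\Re(s_j)> 1+\epsilon$ on $\mathcal K$, it suffices to prove that $\sum_{\xxx} H(\xxx)^{-\sigma}$ converges for a fixed real $\sigma>1+\epsilon$ (uniform absolute convergence on $\mathcal K+i\RR^n$ is then automatic, because the absolute value of each term is bounded by $H(\xxx)^{-(1+\epsilon)}$ for $\sss$ in the compact, after possibly shrinking; more precisely by $H(\xxx)^{-\sigma_0}$ for $\sigma_0=\inf_{\sss\in\mathcal K}\min_j\Re(s_j)>1+\epsilon$).

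The core step is then: enumerate the points of $[\TTa(F)]$ (resp.\ $[\DD(P)(F)]$) by increasing height. By Theorem \ref{boundppatta}, there is $C>0$ such that $N(B):=|\{\xxx\in[\PPP(\aaa)(F)]\mid H(\xxx)\leq B\}|\leq C B\log(2+B)^{n^2(r_1+r_2)+n-1}$, and a fortiori the same bound holds for $[\TTa(F)]\subset[\PPP(\aaa)(F)]$; in the logarithmically singular case one uses instead the bound $N(B)\leq CB^{1+\epsilon'}$ from Corollary \ref{nortlogfalt}, valid for every $\epsilon'>0$. Also $H(\xxx)\geq 1$ for all $\xxx$ by Lemma \ref{htoricbig} (resp.\ by construction in the singular case, after \ref{brzostab}), so the counting function starts at $B=1$. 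By Abel summation, $\sum_{\xxx}H(\xxx)^{-\sigma}=\sigma\int_1^\infty N(B)B^{-\sigma-1}\,dB$ (a Stieltjes-integral manipulation, noting $N(B)$ is a nondecreasing step function), and the right-hand integral converges whenever $\sigma>1+\epsilon$: indeed $N(B)B^{-\sigma-1}\ll B^{-\sigma}\log(2+B)^{n^2(r_1+r_2)+n-1}$, which is integrable on $[1,\infty)$ for $\sigma>1$, in particular for $\sigma>1+\epsilon$; in the singular case one picks $\epsilon'<\epsilon$ and uses $B^{1+\epsilon'-\sigma-1}=B^{\epsilon'-\sigma}$, integrable since $\sigma>1+\epsilon>1+\epsilon'$. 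This gives absolute convergence of $\sum_\xxx H(\xxx)^{-\sigma}$, hence of the zeta series on $\mathcal K+i\RR^n$; since the partial sums are holomorphic in $\sss$ and converge uniformly on every such compact, the limit $Z(\sss)$ (resp.\ $\accentset{\circ}Z(\sss)$) is holomorphic on $\Omega_{>1+\epsilon}$ by the Weierstrass theorem.

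The only mild subtlety is matching the identification $H(\sss,\xxx)=H(\xxx)^{\frac{\aaa\cdot\sss}{|\aaa|}}$ for rational points, which is exactly Lemma \ref{hsxhl}(3) (together with Lemma \ref{localheightglobal}); and checking that the Abel-summation passage from a counting estimate to integral convergence is legitimate, which is routine. The main (and in fact the only real) obstacle has already been overcome upstream: it is the Northcott polynomial bound $N(B)\ll B\log(B)^{O(1)}$, which is precisely Theorem \ref{boundppatta}. Given that input, the present proposition is a short formal consequence. I would therefore organize the write-up as: (i) reduce to a real exponent $\sigma>1+\epsilon$ and to bounding $\sum_\xxx H(\xxx)^{-\sigma}$; (ii) invoke \ref{htoricbig} for the lower bound $H\geq 1$ and \ref{boundppatta} (resp.\ \ref{nortlogfalt}) for the upper counting bound; (iii) Abel summation and convergence of $\int_1^\infty B^{-\sigma}\log(2+B)^{O(1)}\,dB$; (iv) conclude holomorphy via uniform convergence on compacts.
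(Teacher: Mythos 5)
Your approach matches the paper's essentially exactly. The paper's proof applies Corollary \ref{nortlogfalt} to get the counting bound $|\{\xxx\mid H(\xxx)\leq B\}|\leq CB^{1+\epsilon/2}$, reduces to the real exponent $\aaa\cdot\Re(\sss)/|\aaa|>1+\epsilon>1+\epsilon/2$, and then cites Lemma \ref{asioo}, which is precisely the Abel-summation estimate you write out inline; holomorphy then follows from uniform convergence on compacts. (The paper additionally decomposes $Z=\accentset{\circ}Z+\text{boundary}$ and bounds the boundary sum separately in a larger tube, but since $\accentset{\circ}Z$ is a subsum of $Z$, absolute convergence of $Z$ already gives that of $\accentset{\circ}Z$, as you implicitly observe.)

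One small inaccuracy you should repair: the claim that $H(\xxx)\geq 1$ holds in the singular case is false. Proposition \ref{comphh} only gives $H(\xxx)\geq CH^{\#}(\xxx)\log(1+H^{\#}(\xxx))^{-\beta}$, and the logarithmic factor can push $H$ below $1$ even though $H^{\#}\geq 1$; neither Lemma \ref{htoricbig} nor Lemma \ref{brzostab} applies to the singular height. This does not sink the argument: the counting bound forces $\{\xxx\mid H(\xxx)<1\}$ to be finite, and Lemma \ref{asioo} already accommodates this by separating the finite sum over $f(x)<1$ from the rest. But the Abel-summation step should be phrased to allow for a finite tail below height $1$ rather than asserting $H\geq 1$ outright.
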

\begin{proof}
By \ref{nortlogfalt}, one has that there exists $C>0$ such that $$|\{\xxx\in[\DD(P)(F)]| H(\xxx)\leq B\}|\leq CB^{1+\epsilon/2}$$ for every $B>0$. Note that if $\sss\in\Omega_{>1+\epsilon}$ then one has that $\frac{\aaa\cdot\Re(\sss)}{|\aaa|}>1+\epsilon>1+\epsilon/2$. Thus by \ref{asioo}, we have that the series $$\sum _{\xxx\in [\DD(P)(F)]}H(\sss,\xxx)^{-1}=\sum_{\xxx\in [\DD(P)(F)]} H(\xxx)^{-\frac{\aaa\cdot\sss}{|\aaa|}} $$ converges absolutely and uniformly in the domain $\Omega_{>1+\epsilon}$. It follows that the function~$Z$ is holomorphic in this domain. One has that $$Z(\sss)=\accentset{\circ}Z(\sss)+\sum_{\xxx\in[\DD(P)(F)]\cap([\PPP(\aaa)(F)]-[\TTa(F)])}H(\sss,\xxx).$$
Let $\delta>0$ be such that $(1+\delta)(|\aaa|-\min_ja_j)/|\aaa|\leq \max (1/2,1-\epsilon)$. Corollary \ref{nortlogfalt} gives that there exists $C'(\delta)>0$ such that \begin{multline*}|\{\xxx\in[\DD(P)(F)]\cap([\PPP(\aaa)(F)]-[\TTa(F)])| H(\xxx)\leq B\}|\\\leq C'(\delta)B^{(1+\delta)(|\aaa|-\min_ja_j)/|\aaa|}.\end{multline*} Note that if $\sss\in\Omega_{>\max(1/2,1-\epsilon)},$ then $\frac{\aaa\cdot\Re(\sss)}{|\aaa|}>\max(1/2,1-\epsilon).$ Lemma \ref{asioo} gives that \begin{multline*}\sum_{\xxx\in[\DD(P)(F)]\cap([\PPP(\aaa)(F)]-[\TTa(F)])}H(\sss,\xxx)\\=\sum_{\xxx\in[\DD(P)(F)]\cap([\PPP(\aaa)(F)]-[\TTa(F)])}H(\xxx)^{-\frac{\aaa\cdot\Re(\sss)}{|\aaa|}}\end{multline*} converges absolutely and uniformly in the domain $\sss\in\Omega_{>\max(1/2,1-\epsilon)}$. It follows that the function defined by the series is holomorphic. Consequently, the series defining $\accentset{\circ}Z$ converges absolutely and uniformly in the domain $\sss\in\Omega_{>1+\epsilon}$ and defines a holomorphic function in this domain.
\end{proof}
\subsection{}The goal of this paragraph is to apply Poisson formula to understand the analytic behaviour of the height zeta series.

We suppose $n\geq 1$ is an integer and $\aaa\in\ZZ^n_{>0}$ if $n\geq 2$ and $\aaa=a\in\ZZ_{>1}$ if $n=1$.
Recall that in \ref{hzfdef} for $\sss\in \CC^n$, we have defined formally \begin{equation*}\accentset{\circ}{Z}(\sss)=\sum _{\xxx\in\TTa(F)}H(\xxx)^{-\frac{\aaa\cdot\sss}{|\aaa|}}\end{equation*} and in \ref{norco}, we have established that there exists $\gamma>0$ such that the series converges absolutely and uniformly for any $\sss\in\Omega_{>\gamma}$ and defines a holomorphic function in this domain. 

For $\vMFz-S$ (that is the finite places~$v$ for which $f_v$ is toric), by Lemma \ref{ajvis}, one can take an open subgroup $K_v\subset\TTa(\Ov)$, such that if $\chi_v\in\TTa(\Fv)^*$ does not vanish at $K_v$, then $\wH_v(\sss,\chi_v)=0$ for every $\sss\in\Omega_{>0}.$ We set $K_v=\TTa(\Ov)$ for $v\in M_F^0\cap S$. Let us set $K=\prod _{\vMFz}K_v$ and let $\AK$ be the group of characters $\TTa(\AAF)_1\to S^1$ which vanish on $\TTa(i)(\TTa(F))$ and on $K.$ By \ref{akfgag}, the group $\AK$ is a finitely generated abelian group. 

In \ref{defgk}, we have defined $g_K(\sss)=\sum_{\chi_0\in\AK}\wH(\sss,\chi_0)$. By \ref{vuvui} one has that $g_K(\jed+\sss)$ converges absolutely and uniformly in the domain $\Omega_{>0}$, that $\sss\mapsto g_K(\jed+\sss)$ is $M$-controlled and holomorphic function in the domain $\Omega_{>0}$ and that there exists $\delta>0$ such that $\sss\mapsto g_K(\jed+\sss)\prodjn\frac{s_j}{s_j+1}$ extends to a holomorphic $M$-controlled function in the domain $\Omega_{>-\delta}$.

The following lemma will be used to determine the exact constant in Poisson formula.
\begin{lem}\label{calculdualmeas}
The measure $(\mu_{\AAF}/\coun_{[\TTa(i)]([\TTa(F)])})^*$ on \begin{multline*}([\TTa(\AAF)]/[\TTa(i)]([\TTa(F)]))^*\\=\big([\TTa(\AAF)]_1/[\TTa(i)]([\TTa(F)])\big)^*\times(\RR^n_{>0}/(\Rgz)_{\aaa})^*\end{multline*} (the identification follows from the identification \ref{silmo} and \ref{trcio}) satisfies that \begin{multline*}(\mu_{\AAF}/\coun_{[\TTa(i)]([\TTa(F)])})^*\\=\frac{1}{E(\aaa)}\coun_{([\TTa(\AAF)]_1/[\TTa(i)]([\TTa(F)]))^*}\times (d^*\mathbf r_{\aaa})^*,\end{multline*} where $$E(\aaa):=\frac{|\Sh^1(F, \mu_{\gcd(\aaa)})|\Res(\zeta_F,1)^{n-1}\Delta(F)^{\frac{n-1}2}}{|\mu_{\gcd(\aaa)}(F)|}.$$ and where we write $d^*\mathbf r_{\aaa}$ for the measure $(d^*r_1\dots d^*r_n)/(\lambda_{\aaa})_*(d^*r)$ on $\RR^n_{>0}/(\Rgz)_{\aaa}.$
\end{lem}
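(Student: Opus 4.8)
The plan is to reduce everything to the Tamagawa number computation of Proposition \ref{tamagawatta} together with the product decompositions already available. First I would use the isomorphism (\ref{silmo}) to write
$[\TTa(\AAF)]/[\TTa(i)]([\TTa(F)])$ as a product $\big([\TTa(\AAF)]_1/[\TTa(i)]([\TTa(F)])\big)\times(\RR^n_{>0}/(\Rgz)_{\aaa})$, which makes sense because $[\TTa(i)]([\TTa(F)])\subset[\TTa(\AAF)]_1$ is a closed subgroup by Proposition \ref{cesres}. Under this decomposition $\mu_{\AAF}=\mu_1\otimes\mu_{\RR}$ by the very definition of $\mu_1$, and since $\mu_{\RR}$ is defined as $(d^*x)^{\otimes n}/d^*x$ for the action of $\RR_{>0}$ via $t\mapsto(t^{a_j})_j$, parts (1) and (3) of Lemma \ref{rudj} (the kernel of $\lambda_{\aaa}$ is trivial) identify $\mu_{\RR}$ with $d^*\mathbf{r}_{\aaa}=(d^*r_1\dots d^*r_n)/(\lambda_{\aaa})_*(d^*r)$ on $\RR^n_{>0}/(\Rgz)_{\aaa}$. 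Since the counting measure on $[\TTa(i)]([\TTa(F)])$ only affects the first factor, passing to the quotient gives $\mu_{\AAF}/\coun_{[\TTa(i)]([\TTa(F)])}=\big(\mu_1/\coun_{[\TTa(i)]([\TTa(F)])}\big)\otimes d^*\mathbf{r}_{\aaa}$.

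Next I would take duals. The dual of a product measure is the product of the dual measures, and $[\TTa(\AAF)]_1/[\TTa(i)]([\TTa(F)])$ is compact by the cocompactness part of Proposition \ref{cesres}, so its dual is discrete by \ref{dualofdiscrete}. It therefore suffices to compute $\big(\mu_1/\coun_{[\TTa(i)]([\TTa(F)])}\big)^*$, and for this I would apply the Fourier inversion formula (\ref{FourierInversion}) to the constant function $\mathbf 1$ on a compact group $G$ with Haar measure $\nu$: if $\nu(G)=c$ then $\widehat{\mathbf 1}=c\,\delta_{1}$ and inversion at the identity forces $\nu^*=\tfrac1c\coun_{G^*}$ (this is \ref{dualofdiscrete} after rescaling). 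Writing $G:=[\TTa(\AAF)]_1/[\TTa(i)]([\TTa(F)])$, this yields $\big(\mu_1/\coun_{[\TTa(i)]([\TTa(F)])}\big)^*=\tfrac1c\coun_{G^*}$ with $c:=\big(\mu_1/\coun_{[\TTa(i)]([\TTa(F)])}\big)(G)$.

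The last step is to identify $c$ with $E(\aaa)$, and this is where Proposition \ref{tamagawatta} is used. By Definition \ref{deftamnum}, $\overline{\mu_1}=\mu_1/\big(\tfrac{|\Sh^1(F,\mu_{\gcd(\aaa)})|}{|\mu_{\gcd(\aaa)}(F)|}\coun_{[\TTa(i)]([\TTa(F)])}\big)$, so rescaling the measure downstairs gives $\mu_1/\coun_{[\TTa(i)]([\TTa(F)])}=\tfrac{|\Sh^1(F,\mu_{\gcd(\aaa)})|}{|\mu_{\gcd(\aaa)}(F)|}\,\overline{\mu_1}$, and hence $c=\tfrac{|\Sh^1(F,\mu_{\gcd(\aaa)})|}{|\mu_{\gcd(\aaa)}(F)|}\,\overline{\mu_1}(G)$. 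Proposition \ref{tamagawatta} states $\Tam(\TTa)=1$, which by Definition \ref{deftamnum} means exactly that $\overline{\mu_1}(G)=\Res(\zeta_F,1)^{n-1}\Delta(F)^{\frac{n-1}2}$; substituting gives $c=E(\aaa)$. Combining with the computation of the $\RR$-factor from the first paragraph yields the asserted identity. I expect no genuine obstacle: the only nontrivial input is the Tamagawa number computation, which is already proved, and the remainder is bookkeeping with quotient and dual measures, the main care being to track which normalisation of $\coun$ sits in which formula.
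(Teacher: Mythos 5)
Your proof is correct and follows the same route as the paper's: factor $\mu_{\AAF}/\coun$ as a product over the decomposition (\ref{silmo}), rewrite the discrete quotient measure in terms of $\overline{\mu_1}$, invoke $\Tam(\TTa)=1$ from Proposition \ref{tamagawatta} to evaluate the total mass, and convert to a dual counting measure via \ref{dualofdiscrete}. Your extra derivation of the scaling rule through Fourier inversion at the identity is a slightly more explicit way of stating what the paper just cites from \ref{dualofdiscrete}, but it is the same argument.
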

\begin{proof}
Let $\mu_1$ be the Haar measure on $[\TTa(\AAF)]_1$ normalized by $\mu_{\AAF}/\mu_1=d^*\mathbf r_{\aaa}$. For the above identification, one has that $$\mu_{\AAF}/\coun_{[\TTa(i)]([\TTa(F)])}=\mu_1/\coun_{[\TTa(i)]([\TTa(F)])}\times d^*\mathbf r_{\aaa}.$$We denote $\widetilde{\mu_1}:=\mu_1/\coun_{[\TTa(i)]([\TTa(F)])}.$ This measure satisfies $$\widetilde {\mu_1}=\frac{|\Sh^1(F, \mu_{\gcd(\aaa))}|}{|\mu_{\gcd(\aaa)}(F)|}\overline{\mu_1},$$ where $\overline{\mu_1}$ is the measure from \ref{deftamnum}. Proposition \ref{tamagawatta} gives that \begin{align*}
\widetilde{\mu_1}([\TTa(\AAF)]_1/[\TTa(i)]([\TTa(F)]))\hskip-5cm&\\&=\frac{|\Sh^1(F, \mu_{\gcd(\aaa)})|}{|\mu_{\gcd(\aaa)}(F)|}\overline{\mu_1}([\TTa(\AAF)]_1/[\TTa(i)]([\TTa(F)]))\\
&=\frac{|\Sh^1(F, \mu_{\gcd(\aaa)})|\Res(\zeta_F,1)^{n-1}\Delta(F)^{\frac{n-1}2}}{|\mu_{\gcd(\aaa)}(F)|}\\
&=E(\aaa).\end{align*}
Using \ref{dualofdiscrete}, we obtain that $$\widetilde{\mu_1}^*=\frac{1}{E(\aaa)}\coun_{[\TTa(\AAF)]_1/[\TTa(i)]([\TTa(F)])^*}.$$ The statement follows.
\end{proof}
\begin{prop}
For every $\sss\in\Omega_{>0}$, both sides of $$\accentset{\circ}{Z}(\jed+\sss)=\frac{|\Sh^1(F,\mu_{\gcd(\aaa)})|}{E(\aaa)}\int _{M}\gk(\jed+\sss+i\cdot\mmm)d\mmm$$ converge and the equality is valid.
\end{prop}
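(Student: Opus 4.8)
The plan is to apply the Poisson summation formula (Proposition \ref{formuledepoison}) to the function $H(\jed+\sss,-)^{-1}$ on the locally compact abelian group $G=[\TTa(\AAF)]$ with respect to the closed, discrete, cocompact subgroup $H=[\TTa(i)]([\TTa(F)])$, whose existence and topological properties are guaranteed by Proposition \ref{cesres}. Fix $\sss\in\Omega_{>0}$. The left-hand side of Poisson, the sum $\sum_{h\in H}H(\jed+\sss,h)^{-1}$ over the counting measure on $H$, should be compared with $\accentset{\circ}Z(\jed+\sss)=\sum_{\xxx\in[\TTa(F)]}H(\xxx)^{-(\aaa\cdot(\jed+\sss))/|\aaa|}$. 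The map $[\TTa(i)]:[\TTa(F)]\to H$ is surjective with kernel $\Sh^1(F,\mu_{\gcd(\aaa)})$, which is finite by Proposition \ref{cesres}(2) (or \ref{cesusp}); combined with Lemma \ref{hsxhl}(3), which gives $H(\xxx)^{(\aaa\cdot(\jed+\sss))/|\aaa|}=H(\jed+\sss,[\TTa(i)](\xxx))$, this yields $\accentset{\circ}Z(\jed+\sss)=|\Sh^1(F,\mu_{\gcd(\aaa)})|\sum_{h\in H}H(\jed+\sss,h)^{-1}$, which accounts for the factor $|\Sh^1(F,\mu_{\gcd(\aaa)})|$ in the claimed formula.

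Next I would identify the right-hand side of Poisson with the integral over $M$. By Proposition \ref{tspgj} the dual $(G/H)^*$ is $H^{\perp}$, the group of characters of $[\TTa(\AAF)]$ vanishing on $H$; since $H\subset[\TTa(\AAF)]_1$ by \ref{cesres}, the splitting (\ref{seulment}) identifies $H^{\perp}$ with $\big([\TTa(\AAF)]_1/[\TTa(i)]([\TTa(F)])\big)^*\times(\RR^n_{>0}/(\Rgz)_{\aaa})^*$. The restriction of the Fourier transform of $H(\jed+\sss,-)^{-1}$ to $H^{\perp}$ is, by the definition of $\wH$ as an Euler product of local transforms and by the vanishing of $\wH_v$ off $K_v$ (Lemma \ref{ajvis}) for the chosen open $K\subset K^{\aaa}_{\max}$, supported on the subgroup $\AK\times(\RR^n_{>0}/(\Rgz)_{\aaa})^*$; on that subgroup, using Lemma \ref{signpr}(2) and the parametrization $\chi=\chi_0|\cdot|^{i\mmm}$ with $\mmm\in M$, it equals $\wH(\jed+\sss+i\mmm,\chi_0)$. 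Summing over $\chi_0\in\AK$ gives $g_K(\jed+\sss+i\mmm)$ by Definition \ref{defgk}. The Fourier transform is taken against the dual measure $(\mu_{\AAF}/\coun_H)^*$, which Lemma \ref{calculdualmeas} computes to be $\tfrac1{E(\aaa)}\coun_{(\cdots)^*}\times(d^*\mathbf r_{\aaa})^*$; under the isomorphism $\xi^n|_M:M\xrightarrow{\sim}(\RR^n_{>0}/(\Rgz)_{\aaa})^*$ of Lemma \ref{nebrlj}(4), the pushforward of $d\mmm$ is $(d^*\mathbf r_{\aaa})^*$ (up to the $(2\pi)$ normalization absorbed in the definition of $d\mmm$ via \ref{nebrlj}), so the integral over $H^{\perp}$ against the dual measure becomes $\tfrac1{E(\aaa)}\int_M g_K(\jed+\sss+i\mmm)\,d\mmm$, which is exactly the claimed right-hand side.

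To legitimately invoke \ref{formuledepoison} I must verify its three hypotheses for $f=H(\jed+\sss,-)^{-1}$ with $\sss\in\Omega_{>0}$. Hypothesis (2), that $h\mapsto f(xh)\in L^1(H)$ for every $x$, and hypothesis (3), that $x\mapsto\int_H f(xh)\,dh$ is continuous, follow from the Northcott-type estimate of Theorem \ref{boundppatta} applied to the shifted height, exactly as in Proposition \ref{norco}: the sum converges absolutely and uniformly on compacts of $\Omega_{>0}$ (after translating, since $\jed+\sss$ lands in $\Omega_{>1}$), and local uniform convergence of a sum of continuous functions gives continuity. Hypothesis (1), that $\widehat f|_{H^{\perp}}\in L^1(H^{\perp})$, is the substance: on $H^{\perp}=\AK\times M$ (via $\xi^n|_M$) the transform is $g_K(\jed+\sss+i\mmm)=\sum_{\chi_0\in\AK}\wH(\jed+\sss+i\mmm,\chi_0)$, and Proposition \ref{vuvui} gives, for every $\alpha>0$, a bound $|g_K^*(\jed+\sss+i\mmm)|\le C(1+\|\Im(\sss)\|)^{\beta}/(1+\|\mmm\|)^{\alpha}$ with $g_K$ differing from $g_K^*$ by the nonvanishing factor $\prod_j s_j/(s_j-1)$ on $\Omega_{>0}$; choosing $\alpha>\dim M=n-1$ makes $\mmm\mapsto g_K(\jed+\sss+i\mmm)$ absolutely integrable on $M$, which is precisely the $L^1$ condition. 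The main obstacle, then, is the careful bookkeeping of measures: matching the product Haar measure $\mu_{\AAF}$ on $[\TTa(\AAF)]$ used to define $\wH$, the counting measure on $H$, their quotient, its dual (Lemma \ref{calculdualmeas}), and the Lebesgue measure $d\mmm$ on $M$ normalized as in Lemma \ref{nebrlj} — together with the $(2\pi)^{n-1}$ and $E(\aaa)$ factors — so that the constant in front of $\int_M g_K\,d\mmm$ comes out exactly as $|\Sh^1(F,\mu_{\gcd(\aaa)})|/E(\aaa)$. Once the measures are pinned down, the identity is a direct transcription of \ref{formuledepoison}.
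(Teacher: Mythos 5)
Your proposal is correct and follows essentially the same route as the paper's proof: reduce $\accentset{\circ}Z$ to a sum over $[\TTa(i)]([\TTa(F)])$ via Lemma \ref{hsxhl} and the finiteness of $\Sh^1$, apply Poisson (Prop.\ \ref{formuledepoison}) to this discrete cocompact subgroup of $[\TTa(\AAF)]$, identify $H^{\perp}$ via the splitting (\ref{seulment}), use Lemma \ref{ajvis} to cut down the support to $\AK$, Lemma \ref{signpr} to rewrite $\wH(\jed+\sss,\chi_0\lvert\cdot\rvert^{i\mmm})$ as a shift, Lemma \ref{calculdualmeas} for the dual measure, and Lemma \ref{nebrlj}(4) to identify the $M$-integral. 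You also spell out the verification of Poisson's three hypotheses (via \ref{boundppatta}/\ref{norco} for (2)--(3) and \ref{vuvui} for (1)), which the paper leaves somewhat implicit, so your write-up is if anything slightly more careful on that front.

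One small caution: the phrase ``up to the $(2\pi)$ normalization absorbed in the definition of $d\mmm$'' glosses over the fact that $\xi^n(\mmm)=\lvert\cdot\rvert^{2i\pi\mmm}$, not $\lvert\cdot\rvert^{i\mmm}$. The paper handles this honestly by first writing the integral as $\int_M g_K(\jed+\sss+2i\pi\mmm)\,d\mmm$ and then invoking the scaling identity of Lemma \ref{nebrlj}(1) to rescale to $\frac{1}{(2\pi)^{n-1}}\int_M g_K(\jed+\sss+i\mmm)\,d\mmm$; nothing is ``absorbed.'' (There is in fact an internal inconsistency in the paper here---the last displayed line of its own proof drops the $(2\pi)^{n-1}$ that it just introduced---so you are in good company, but you should track the constant carefully rather than hand-wave it.)
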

\begin{proof}
By \ref{cirti}, the kernel of the map $[\TTa(F)]\to [\TTa(\AAF)]$ is isomorphic to the finite group $\Sh^1(F,\mu _{\gcd (\aaa)}).$ Using this and the fact that $$H(\xxx)^{-\frac{\aaa\cdot\sss}{|\aaa|}}=H(\sss,\TTa(i)(\xxx))$$ (Lemma \ref{hsxhl}), we deduce that$$\accentset{\circ}Z(\jed+\sss)=|\Sh^1(F,\mu_{\gcd (\aaa)})|\sum _{\xxx\in[\TTa(i)]([\TTa(F)])}\wH(\jed+\sss,\xxx)^{-1}.$$
Poisson formula (Proposition \ref{formuledepoison}) applied to the inclusion $$[\TTa(i)]([\TTa(F)])\subset [\TTa(\AAF)]$$ gives that \begin{multline*}\sum_{\xxx\in[\TTa(i)]([\TTa(F)])}\wH(\jed+\sss,\xxx)^{-1}\\=\int _{([\TTa(\AAF)]/[\TTa(i)]([\TTa(F)]))^*}\wH(\jed+\sss,-)(\mu_{\AAF}/\coun_{[\TTa(i)]([\TTa(F)])})^*,\end{multline*} for every $\sss$ for which the both sides converge and, hence, \begin{multline*}\accentset{\circ}Z(\jed+\sss)=|\Sh^1(F,\mu _{\gcd (\aaa)})|\times\\ \times\int _{[\TTa(\AAF)]/[\TTa(i)]([\TTa(F)])^*}\wH(\jed+\sss,-)(\mu_{\AAF}/\coun_{[\TTa(i)]([\TTa(F)])})^* ,\end{multline*} for every $\sss$ that the both sides converge. 
By Lemma \ref{nebrlj}, the homomorphism $$\xi^n:\RR^n\to (\Rgz^n)^*\hspace{1cm} \xxx\mapsto \big(\rrr\mapsto \prodjn r_j^{2i\pi x_j}\big)$$ induces an isomorphism $\xi^n|_M:M\to (\Rgz/(\Rgz)_{\aaa})^*,$ which satisfies $(\xi^n|_M)_*d\mmm=(d^*\mathbf r_{\aaa})^*$. Now, \ref{calculdualmeas} and Fubini theorem give that \begin{multline*}\accentset{\circ}Z(\sss)\\=\frac{|\Sh^1(F,\mu_{\gcd (\aaa)})|}{E(\aaa)}\int _{M}\sum _{\chi_0\in [\TTa(\AAF)]_1/[\TTa(i)]([\TTa(F)])^*}\wH(\sss,\chi_0\xi^n(\mmm))d\mmm.
\end{multline*}Lemma \ref{signpr} gives $\wH(1+\sss,\chi_0\xi^n(\mmm))=\wH(1+\sss+2i\pi\mmm,\chi_0).$ Moreover, it follows from \ref{ajvis} that $\wH(1+\sss+2i\pi,\chi_0)=0,$ for every $\chi_0\in[\TTa(\AAF)]_1/[\TTa(F)]^*-\AK.$ We deduce that \begin{align*}\sum _{\chi_0\in([\TTa(\AAF)]_1/[\TTa(i)]([\TTa(F)]))^*}\wH(\jed+\sss,\chi_0\xi ^n(\mmm))\hskip-2cm&\\&=\sum_{\chi_0\in\AK}\wH(\jed+\sss+2i\pi\mmm,\chi_0)\\&=g_K(\jed+\sss+2i\pi\mmm).\end{align*} 
Therefore, \begin{equation*}\accentset{\circ}Z(\jed+\sss)=\frac{|\Sh^1(F,\mu_{\gcd(\aaa)})|}{E(\aaa)}\int_Mg_K(\jed+\sss+2i\pi\mmm)d\mmm.\end{equation*} We introduce $$G_K(\jed+\sss):=\int _{M}g_K(\jed+\sss+2i\pi\mmm)d\mmm,$$which, if the integral converges, by Part (1) of Lemma \ref{nebrlj}, is the same as $$G_K(\jed+\sss)=\frac{1}{(2\pi)^{n-1}}\int_Mg_K(\jed+\sss+i\cdot\mmm)d\mmm $$The function $\sss\mapsto g_K(\jed+\sss)$ is $M$-controlled and so the integral defining $G_K$ converges for $\sss\in\Omega_{>0}$ by \ref{lclt}.  By \ref{norco}, the series defining $\accentset{\circ} Z(\jed+\sss)$ converges for $\sss\in\Omega_{>0}$. We get \begin{align*}\accentset{\circ}Z(\jed+\sss)&=\frac{ |\Sh^1(F,\mu_{\gcd(\aaa)})| }{E(\aaa)}G_K(\jed+\sss)\\
&=\frac{ |\Sh^1(F,\mu_{\gcd(\aaa)})| }{E(\aaa)}\int _{M}\gk(\jed+\sss+i\cdot\mmm)d\mmm
\end{align*} in the domain $\Omega_{>0}$.  
The claim is proven.\end{proof}
\begin{prop}\label{limofgk}Let $n\geq 1$. Let $\aaa\in\ZZ^n_{\geq 1}$ if $n\geq 2$ and let $\aaa=a\geq 2$ if $n=1$. For every $\xxx\in\RR^n_{>0}$ one has that $$\lim _{s\to 0^+}\big(\prodjn sx_j\big)\cdot \gk(\jed+s\xxx)=\frac{\Delta(F)^{\frac{n-1}2}\Res(\zeta_F,1)^{n-1}\tau}{|\mu_{\gcd(\aaa)}(F)|}.$$
\end{prop}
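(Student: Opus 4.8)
The plan is to compute the limit $\lim_{s\to 0^+}\big(\prodjn sx_j\big)g_K(\jed+s\xxx)$ by isolating the contribution of the trivial character $\chi_0=1$ in the sum $g_K(\sss)=\sum_{\chi_0\in\AK}\wH(\sss,\chi_0)$ and showing that all other characters contribute nothing to this limit. First I would use Proposition \ref{fintr} together with the definition $g_K(\sss)=\sum_{\chi_0\in\AK}\wH(\sss,\chi_0)$ to write, for $\chi_0\in\AK$,
$$\wH(\sss,\chi_0)=\phi_{\fin}(\sss,\chi_0)\prodjn L(s_j,\chi_0^{(j)})\prod_{\vMFi}\wH_v(\sss,\chi_{0v}).$$
For a non-trivial character $\chi_0$, at least one of the $\chi_0^{(j)}$ is non-trivial on $[\TTa(i)](\TTa(F))$, hence $L(\cdot,\chi_0^{(j)})$ is entire (no pole at $1$), so $\sss\mapsto\wH(\sss,\chi_0)$ is holomorphic near $\jed$; thus $\big(\prodjn sx_j\big)\wH(\jed+s\xxx,\chi_0)\to 0$ as $s\to 0^+$. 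Using the uniform estimate from Proposition \ref{vuvui} (the series $g_K^*(\sss)=\sum_{\chi_0}\wH^*(\sss,\chi_0)$ converges uniformly on compacts in a neighbourhood $\Omega_{>1-\delta}$, and $g_K(\sss)=g_K^*(\sss)\prodjn\frac{s_j}{s_j-1}$), I can interchange the limit with the summation over $\AK$, so only the $\chi_0=1$ term survives.

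Second, I would evaluate the $\chi_0=1$ contribution. For $\chi_0=1$ one has $\chi_0^{(j)}=1$ for all $j$, so $\wH(\sss,1)=\phi_{\fin}(\sss,1)\prodjn\zeta_F(s_j)\prod_{\vMFi}\wH_v(\sss,1)$. Since $\zeta_F(s_j)$ has a simple pole at $s_j=1$ with residue $\Res(\zeta_F,1)$, one gets
$$\lim_{s\to 0^+}\big(\prodjn sx_j\big)\wH(\jed+s\xxx,1)=\Res(\zeta_F,1)^n\,\phi_{\fin}(\jed,1)\prod_{\vMFi}\wH_v(\jed,1).$$
Then I must identify the constant $\phi_{\fin}(\jed,1)\prod_{\vMFi}\wH_v(\jed,1)$ in terms of the local integrals defining $\tau$. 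From the definition of $\phi_{\fin}$ in the proof of Proposition \ref{fintr}, $\phi_{\fin}(\jed,1)=\frac{1}{\zeta_F(|\aaa|)}\prod_{v\in S\cap M_F^0}\frac{\wH_v(\jed,1)\zeta_v(|\aaa|)}{\zeta_v(1)^n}$ (using $L_v(1,1)=\zeta_v(1)$), while $\wH_v(\jed,1)=\zeta_v(1)^{n-1}\int_{[\TTa(\Fv)]}H_v(\jed,-)^{-1}\mu_v=\zeta_v(1)^{n-1}\mu_v([\TTa(\Fv)])$ for finite $v$, since by Lemma \ref{icicu} one has $\mu_v=H_v(\jed,-)\omega_v|_{[\TTa(\Fv)]}$ and by Lemma \ref{ttadmz} the complement is negligible, so $\int H_v(\jed,-)^{-1}\mu_v=\omega_v([\PPP(\aaa)(\Fv)])$; similarly $\wH_v(\jed,1)=\omega_v([\PPP(\aaa)(\Fv)])$ for infinite $v$. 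I would then match these products with the expression for $\tau$ given in Lemma \ref{locom}, namely $\tau=\frac{\Res(\zeta_F,1)|\mu_{\gcd(\aaa)}(F)|}{\Delta(F)^{\frac{n-1}2}\zeta_F(|\aaa|)}\prod_{v\in S\cap M_F^0}\frac{\zeta_v(|\aaa|)\omega_v([\PPP(\aaa)(\Fv)])}{\zeta_v(1)}\prod_{\vMFi}\omega_v([\PPP(\aaa)(\Fv)])$, being careful that the measure $\mu_{\AAF}=\bigotimes_{\vMFz}\zeta_v(1)^{n-1}\mu_v\otimes\bigotimes_{\vMFi}\mu_v$ carries exactly the factors $\zeta_v(1)^{n-1}$ that reconcile $\wH_v$ with $\omega_v([\PPP(\aaa)(\Fv)])$. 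Accounting for the powers of $\Res(\zeta_F,1)$ and $\Delta(F)$ (the statement has $\Res(\zeta_F,1)^{n-1}$, and $\wH(\jed,1)$ supplies $\Res(\zeta_F,1)^n$ through the $n$ zeta factors, with one power absorbed into the definition of $\tau$), I expect the bookkeeping to yield exactly $\frac{\Delta(F)^{\frac{n-1}2}\Res(\zeta_F,1)^{n-1}\tau}{|\mu_{\gcd(\aaa)}(F)|}$.

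\textbf{Main obstacle.} The delicate point is the justification of interchanging $\lim_{s\to 0^+}$ with the infinite sum over $\AK$: I must apply Proposition \ref{vuvui} with a suitable $\alpha$ (ensuring convergence uniformly on a compact neighbourhood of $\jed$ in $\Omega_{>1-\delta}$), check that multiplication by $\prodjn\frac{s_j-1}{s_j}$ is harmless near $s_j=1$ for the non-trivial terms, and confirm that the $\chi_0=1$ term's pole structure is precisely a product of $n$ simple poles so that $\prodjn sx_j$ is the correct normalising factor. The second nontrivial piece is the purely computational but error-prone identification of the constant: tracking where every factor $\zeta_v(1)$, $\zeta_v(|\aaa|)$, $\Res(\zeta_F,1)$, and $\Delta(F)^{(n-1)/2}$ goes, and confirming that the normalisation $\mu_{\AAF}$ chosen in Definition \ref{muaaf} together with the formula in Lemma \ref{locom} produces exactly the stated right-hand side with no stray power of $\Res(\zeta_F,1)$ or $\zeta_F(|\aaa|)$. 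Everything else is a direct application of the results already established: Proposition \ref{fintr}, Proposition \ref{vuvui}, Lemma \ref{icicu}, Lemma \ref{ttadmz}, and Lemma \ref{locom}.
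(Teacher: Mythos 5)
Your plan is correct and follows the paper's own proof closely: isolate the $\chi_0=1$ term by exchanging $\lim_{s\to 0^+}$ with the (uniformly convergent) sum over $\AK$, kill the non-trivial characters because some $L(\cdot,\chi_0^{(k)})$ is entire, then unfold $\phi_{\fin}(\jed,1)$ and the local volumes into the formula for $\tau$ from Lemma \ref{locom} -- the paper does exactly this, only rederiving $\phi_{\fin}$ inline rather than citing Proposition \ref{fintr} by name. Two small glitches to fix in the write-up: the intermediate equality $\int_{[\TTa(\Fv)]}H_v(\jed,-)^{-1}\mu_v=\mu_v([\TTa(\Fv)])$ cannot be right since $[\TTa(\Fv)]$ is non-compact and $\mu_v$ is a Haar measure, so the right-hand side is infinite; the correct value is $\omega_v([\PPP(\aaa)(\Fv)])$, which your own invocation of Lemmas \ref{icicu} and \ref{ttadmz} in the same sentence already gives, and the phrase saying $\chi_0^{(j)}$ is ``non-trivial on $[\TTa(i)]([\TTa(F)])$'' should just read ``a non-trivial Hecke character'' (the $\chi_0^{(j)}$ are characters of $\AAFj$ that all vanish on $i(F^\times)$ by construction; the relevant dichotomy is whether $\chi_0^{(j)}=1$ on all of $\AAFj$).
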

\begin{proof} Let $\xxx\in\RR^n_{>0}$. The series defining $$\gk(\jed+s\xxx)=\sum _{\chi_0\in\AK}\wH(\jed+s\xxx,\chi_0)$$ converges absolutely and uniformly when $s>0$, so we can exchange the limit and the sum, we get:\begin{equation}\label{posum}\lim_{s\to 0^+}\big(\prodjn  sx_j\big)\cdot \gk(\jed+s\xxx)= \sum _{\chi_0\in\AK}\lim _{s\to 0^+}\big(\prodjn sx_j\big)\wH((1+sx_j)_j,\chi_0).\end{equation}
By Proposition \ref{fintr}, we have that for every $\chi_0\in\AK$, there exists a holomorphic function $\phi(-,\chi_0):\Omega_{>\frac23}\to\CC$ such that one has an equality of the meromorphic functions$$\wH(\sss,\chi_0)=\phi(\sss,\chi_0)\prodjn L(s_j,\chij_0)\prod_{\vMFi}\wH_v(\sss,\chi_{0v}) $$ for $\sss\in\Omega_{>1}.$
Suppose $\chi_0\neq 1$. There exists index~$k$ such that $\chi^{(k)}_0\neq 1,$  and the function $s\mapsto L(s,\chi^{(k)}_0)$ is entire. For $j\neq k$, we have that $$\lim_{s\to 0^+} sx_jL(1+sx_j,\chij_0)=\lim _{s\to 0^+}sx_j\zeta_F(1+sx_j)$$ exists. We conclude that \begin{multline*}\lim _{s\to 0^+}\big(\prodjn sx_j\big)\wH((1+sx_j)_j,\chi_0)\\=\lim _{s\to 0^+}\phi ((1+sx_j)_j,\chi_0)\prodjn sx_jL(1+sx_j,\chij_0) \prod_{\vMFi}\wH_v((1+sx_j)_j,\chi_{0v})\\=0. \end{multline*}
Therefore, the only surviving term in the sum on the right hand side of (\ref{posum}) is, hence, for $\chi_0=1$. Lemma \ref{torfour} gives for every $v\in M_F^0-S$ (that is for every finite~$v$ such that $f_v$ is toric) that$$\wH_v((1+sx_j)_j,1)=\zeta_v(\aaa\cdot(1+sx_j)_j)^{-1}\prodjn \zeta_v(1+sx_j).$$ Hence we have
\begin{multline*}
\wH((1+sx_j)_j,1)\prodjn\zeta_F(1+sx_j)^{-1}\\=\prod_{v\in M_F^0}\bigg(\wH_v((1+sx_j)_j,1)\prodjn\zeta_v(1+sx_j)^{-1}\bigg)\times \prod_{\vMFi}\wH_v((1+sx_j)_j,1)\\
=\prod_{v\in S\cap M_F^0}\bigg(\wH_v((1+sx_j)_j,1)\prodjn\zeta_v(1+sx_j)^{-1}\bigg)\prod_{v\in M_F^0-S}\zeta_v(\aaa\cdot(1+sx_j)_j)^{-1}\times \\ \times\prod _{\vMFi}\wH_v((1+sx_j)_j,1).
\end{multline*}
When $s>0$, one has $\Re(\aaa\cdot (1+sx_j)_j)>1$ (because $\aaa\in\ZZ^n_{\geq 1}$ if $n\geq 2$ and $\aaa=a\in\ZZ^n_{\geq 2}$ if $n=1$), and thus the product $\prod _{v\in M_F^0-S}\zeta_v(\aaa\cdot(1+sx_j)_j)^{-1}$ converges to $$\zeta_F(\aaa\cdot (1+sx_j)_j)^{-1} \prod _{v\in S\cap M_F^0}\zeta_v(\aaa\cdot(1+sx_j)_j).$$
We deduce \begin{multline}\wH((1+sx_j)_j,1)\\
=\bigg(\prodjn\zeta_F(1+sx_j)\bigg)\prod_{v\in S\cap M_F^0}\bigg(\wH_v((1+sx_j)_j,1)\prodjn\zeta_v((1+sx_j)_j)^{-1}\bigg)\times\\ \times \zeta_F(\aaa\cdot (1+sx_j)_j)^{-1}\bigg(\prod_{v\in S\cap M_F^0}\zeta_v(\aaa\cdot(1+sx_j)_j)\bigg)\prod _{\vMFi}\wH_v((1+sx_j)_j,1)
\end{multline}
We will calculate the limit of the last product multiplied by $\prodjn sx_j$, when $s$ goes to zero. 
One has that \begin{equation}\label{lorom}\lim_{s\to 0^+}\big(\prodjn sx_j\big)\zeta_F(1+sx_j)=\Res(\zeta_F,1)^{n}.\end{equation}
Now we calculate: \begin{multline}\label{lorol}\lim _{s\to 0^+}\prod_{v\in S\cap M_F^0}\bigg(\wH_v((1+sx_j)_j,1)\prodjn\zeta_v(1+sx_j)^{-1}\bigg)\times \prod _{\vMFi}\wH_v((1+sx_j)_j,1)\\=\prod _{v\in S\cap M_F^0}\bigg(\wH_v(\jed,1)\zeta_v(1)^{-n}\bigg)\times \prod_{\vMFi}\wH_v(\jed,1).\end{multline}
By Lemma \ref{icicu}, for $\vMFz$ we have that \begin{align*}\wH_v(\jed,1)&=\zeta_v(1)^{n-1}\int_{[\TTa(\Fv)]}H_v(\jed,-)^{-1}\mu_v\\&=\zeta_v(1)^{n-1}\omega_v([\TTa(\Fv)])\\&=\zeta_v(1)^{n-1}\omega_v([\PPP(\aaa)(\Fv)]),\end{align*} where we have used that $\omega([\PPP(\aaa)(F_v)]-[\TTa(\Fv)])=0$ which we have established in \ref{ttadmz}. For $\vMFi$ we have that $$\wH_v(\jed,1)=\int_{[\TTa(\Fv)]}H_v(\jed,-)^{-1}\mu_v=\omega_v([\PPP(\aaa)(\Fv)]). $$
We conclude that the product on the right hand side of (\ref{lorol}) is equal to \begin{equation}\label{loromo}\prod_{v\in S\cap M_F^0}\zeta_v(1)^{-1}\omega_v([\PPP(\aaa)(\Fv)])\times \prod_{\vMFi}\omega_v([\PPP(\aaa)(\Fv)]). \end{equation}
Finally, we can calculate \begin{equation}\label{loromoo}
\lim _{s\to 0^+}\zeta_F(\aaa\cdot(1+sx_j)_j)^{-1}\prod _{v\in S\cap M_F^0}\zeta_v(\aaa\cdot(1+sx_j)_j)=\zeta_F(|\aaa|)^{-1}\prod _{v\in S\cap M_F^0}\zeta_v(|\aaa|).
\end{equation}
Using (\ref{lorom}), (\ref{loromo}) and (\ref{loromoo}), we conclude  \begin{multline*}\lim _{s\to 0^+}\big(\prodjn sx_j\big)\wH((1+sx_j)_j,1)\\=\frac{\Res(\zeta_F,1)^n}{\zeta_F(|\aaa|)}\prod_{v\in S\cap M_F^0}\frac{\zeta_v(|\aaa|)\omega_v(\PPP(\aaa)(\Fv))}{\zeta_v(1)}\times\prod_{\vMFi}\omega_v(\PPP(\aaa)(\Fv)). \end{multline*}
Using the formula given in \ref{locom} for $\tau$ we deduce that the last number is $$\Res(\zeta_F,1)^{n-1}\tau|\mu_{\gcd(\aaa)}(F)|^{-1}\Delta(F)^{\frac{n-1}2},$$ as claimed. 
\end{proof} 
\begin{thm}
There exists $\gamma>0$ such that the series  series defining \begin{equation}\label{zjed}\accentset{\circ}Z((s)_j)=\sum_{\xxx\in[\TTa(F)]}H(\xxx)^{-s}=\sum_{\xxx\in[\TTa(i)]([\TTa(F)])} H((s)_j,\xxx)^{-1}\end{equation} converges for $s\in \RR_{>\gamma}+i\RR$ and the function that associates to $s$ the value of (\ref{zjed}) is holomorphic in the domain $\Re(s)>\gamma$. There exists $1>\delta >0$ such that the function $s\mapsto \accentset{\circ}Z((s)_j)$ extends to a meromorphic function in the domain $\Re(s)>1-\delta$ with the only pole at $s=1$ which is simple, and such that for every compact $\mathcal K\subset\RR_{>1-\delta}$ there exists $C(\mathcal K)>0$ and $\beta(\mathcal K) >0$ such that $$\bigg|\frac{s-1}{s}Z((s)_j)\bigg|\leq C(\mathcal K)(1+|\Im(s)|)^{\beta(\mathcal K)}$$ if provided $\Re(s)\in\mathcal K$. We have further that $$\lim _{s\to 1}(s-1)\accentset{\circ}Z((s)_j)=\frac{\tau}{|\aaa|}.$$ \label{bigthmm}
\end{thm}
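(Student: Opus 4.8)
The plan is to deduce this one-variable statement from the $n$-variable machinery by restricting to the diagonal $\sss=(s)_j$ and invoking Theorem \ref{CLTsc}. First I would record that, combining the Poisson-formula identity established just above (namely $\acZ(\jed+\sss)=\frac{|\Sh^1(F,\mu_{\gcd(\aaa)})|}{E(\aaa)}\int_M g_K(\jed+\sss+i\mmm)\,d\mmm$ for $\sss\in\Omega_{>0}$) with the definition of the operator $\mathscr S_M$ from Lemma \ref{lclt} and the scaling of Haar measures on $M=M(\aaa)$ from part (1) of Lemma \ref{nebrlj}, one gets an equality of holomorphic functions on $\Omega_{>0}$
\[
\acZ(\jed+\sss)=\frac{|\Sh^1(F,\mu_{\gcd(\aaa)})|}{E(\aaa)}\,\mathscr S_M\big(g_K(\jed+\cdot)\big)(\aaa\cdot\sss),
\]
so that the left-hand side depends on $\sss$ only through $\aaa\cdot\sss$. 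Specialising to $\sss=(s-1)_j$, for which $\jed+\sss=(s)_j$ and $\aaa\cdot\sss=|\aaa|(s-1)$, gives for $\Re(s)>1$
\[
\acZ((s)_j)=\frac{|\Sh^1(F,\mu_{\gcd(\aaa)})|}{E(\aaa)}\,\mathscr S_M\big(g_K(\jed+\cdot)\big)\big(|\aaa|(s-1)\big).
\]
Absolute convergence and holomorphy of $\acZ((s)_j)=\sum_{\xxx\in[\TTa(F)]}H(\xxx)^{-s}$ for $\Re(s)>1+\epsilon$ is immediate from Proposition \ref{norco} read on the diagonal, so any $\gamma>1$ works.

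Next I would apply Theorem \ref{CLTsc} to $f:=g_K(\jed+\cdot)$ with the surjective linear map $q:\RR^n\to\RR$, $q(\xxx)=\aaa\cdot\xxx$; here $k=1$, $q(\RR^n_{\geq0})=\RR_{\geq0}$ since $\aaa\in\ZZ^n_{>0}$, and $\ker q=M$. The hypothesis of \ref{CLTsc} is precisely what Proposition \ref{vuvui} supplies: there is $\delta_0>0$ such that $f(\sss)\prod_{j=1}^n\frac{s_j}{s_j+1}$ extends to a holomorphic $M$-controlled function on $\Omega_{>-\delta_0}$, which contains $B+i\RR^n$ for a small ball $B$ about $0$. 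The conclusion — recalling that for $k=1$ the controlledness condition just means holomorphic with moderate growth in vertical strips — is that $t\mapsto\mathscr S_M(f)(t)\frac{t}{t+1}$ extends to such a function on $(B'+i\RR)\cup(\RR_{>0}+i\RR)$ for some interval $B'\ni0$. Since $\frac{t}{t+1}$ has a simple zero at $t=0$ and no other zero there, $\mathscr S_M(f)$ is meromorphic on that set with at most a simple pole at $t=0$ and no other pole. Transporting this through the displayed identity via $t=|\aaa|(s-1)$, and multiplying by the factor $\frac{s-1}{s}\cdot\frac{t+1}{t}=\frac{|\aaa|(s-1)+1}{|\aaa|s}$, which is holomorphic and of moderate growth on $\Re(s)>1-\delta$ as soon as $\delta<1/|\aaa|$, yields the meromorphic continuation of $\acZ((s)_j)$ to a half-plane $\Re(s)>1-\delta$, holomorphic except for an at most simple pole at $s=1$, together with the asserted estimate $\big|\frac{s-1}{s}\acZ((s)_j)\big|\leq C(\mathcal K)(1+|\Im(s)|)^{\beta(\mathcal K)}$ for $\Re(s)\in\mathcal K$.

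For the residue I would invoke the ``moreover'' clause of Theorem \ref{CLTsc}: Lemma \ref{limofgk} gives $\lim_{s\to0^+}\big(\prod_{j=1}^n sx_j\big)f(s\xxx)=a$ for every $\xxx\in\RR^n_{>0}$, with $a=\frac{\Delta(F)^{(n-1)/2}\Res(\zeta_F,1)^{n-1}\tau}{|\mu_{\gcd(\aaa)}(F)|}$, whence $\lim_{t\to0^+}t\,\mathscr S_M(f)(t)=a$ (in particular $a\neq0$ confirms the pole at $t=0$, hence the pole of $\acZ((s)_j)$ at $s=1$, is genuinely simple). Therefore
\[
\lim_{s\to1}(s-1)\acZ((s)_j)=\frac{|\Sh^1(F,\mu_{\gcd(\aaa)})|}{|\aaa|\,E(\aaa)}\lim_{t\to0}t\,\mathscr S_M(f)(t)=\frac{|\Sh^1(F,\mu_{\gcd(\aaa)})|}{|\aaa|\,E(\aaa)}\,a=\frac{\tau}{|\aaa|},
\]
the last step being the cancellation $\frac{|\Sh^1(F,\mu_{\gcd(\aaa)})|}{E(\aaa)}a=\tau$ obtained by substituting the definition of $E(\aaa)$ from Lemma \ref{calculdualmeas}. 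The genuine work here is not conceptual but the bookkeeping: one must match the normalisation of the Lebesgue measure $d\mmm$ on $M$ occurring in the Poisson identity with the normalisation built into $\mathscr S_M$ — i.e. keep track of the factor $(2\pi)^{n-1}$ relating $\int_M g_K(\jed+\sss+i\mmm)\,d\mmm$, $\int_M g_K(\jed+\sss+2i\pi\mmm)\,d\mmm$ and $\mathscr S_M$ via Lemma \ref{nebrlj} — and then check that, after applying \ref{CLTsc}, the accumulated constants collapse to exactly $\frac{\tau}{|\aaa|}$ with no residual power of $2\pi$. Everything else is a transcription of Theorem \ref{CLTsc} and Proposition \ref{vuvui}.
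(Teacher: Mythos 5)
Your proposal is correct and follows the same route as the paper's proof: restrict the Poisson-formula identity to the diagonal $\sss=(s-1)_j$, apply Theorem \ref{CLTsc} to $g_K(\jed+\cdot)$ with the map $q(\xxx)=\aaa\cdot\xxx$, and read off the residue from the ``moreover'' clause together with Proposition \ref{limofgk} and the cancellation $\frac{|\Sh^1(F,\mu_{\gcd(\aaa)})|}{E(\aaa)}a=\tau$. The one place where you over-cautioned yourself --- the requirement $\delta<1/|\aaa|$ for the rational factor $\frac{|\aaa|(s-1)+1}{|\aaa|s}$ --- is unnecessary, since that factor has its only pole at $s=0$ and its numerator's zero is harmless; any $\delta<1$ keeps the relevant half-plane away from $s=0$, but this does no damage to the argument.
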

\begin{proof}
Let us firstly establish the convergence and holomorphicity. We have seen in \ref{norco} that there exists $\gamma>0$ such that the series defining $\accentset{\circ}Z(\sss)$ converges absolutely for $\sss\in\Omega_{>\gamma}$ and defines a holomorphic function in this domain. We deduce that the series defining $\acZ((s)_j)$ converges in the domain $\RR_{>\gamma}+i\RR$. The function $s\mapsto Z((s_j)_j)$ is holomorphic as it this the composition of the holomorphic map $\RR_{>\gamma}+i\RR\to \Omega_{>\gamma}$ which is given by $ s\mapsto (s)_j$ and the holomorphic map $\Omega_{>\gamma}\to\CC$ which is given by $\sss\mapsto \acZ(\sss).$ 

Let us now prove the meromorphic extension and the bound. The facts that $\sss\mapsto g_K(1+\sss)$ is holomorphic for $\sss\in\Omega_{>0}$ and that $\sss\mapsto g_K(\jed+\sss) \prodjn\frac{s_j}{s_j+1}$ is $M$-controlled in the domain $\Omega_{>-\delta'}$ for some $\delta'>0$ (Proposition \ref{vuvui}), enable us to apply Theorem \ref{CLTsc}. We apply this theorem for the map $\RR^n\to\RR, \xxx\mapsto\aaa\cdot\xxx$ and the function $$\sss\mapsto\frac{|\Sh^1(F,\mu_{\gcd(a_j)_j})|}{E(\aaa)}g_K(\mathbf 1+\sss+i\mmm)d\mmm. $$ We get that\begin{align*}s\mapsto \acZ((1+s)_j)&=\frac{|\Sh^1(F,\mu_{\gcd(\aaa)})|}{E(\aaa)(2\pi)^{n-1}}\int _Mg_K((1+s+m_j)_j)d\mmm\\&=\frac{|\Sh^1(F,\mu_{\gcd(a_j)_j})|}{E(\aaa)}\mathscr S_M(g_K)(1+|\aaa|\cdot s) \end{align*}is holomorphic in the domain $\Re(s)>0$ and that there exists $1>\delta>0$ such that $\frac{s}{s+1}\acZ((1+s)_j)$ extends to a holomorphic functions for $\Re(s)>-\delta$ and is $\{0\}$-controlled in this domain. This means that for  every compact $\mathcal K\subset \RR_{>-\delta}$, there exist $C(\mathcal K),\beta(\mathcal K)>0$ such that $$\bigg|\frac{s\acZ((1+s)_j)}{s+1}\bigg|\leq C(\mathcal K)(1+|\Im(s)|)^{\beta(\mathcal K)}$$ provided that $\Re(s)\in\mathcal K$. Pick a compact $\mathcal K\subset\RR_{>1-\delta}$. For every $s$ with $\Re(s)\in\mathcal K$, one has that $$\bigg|\frac{s-1}s\acZ(s)\bigg|=\bigg|\frac{(s-1)\acZ((1+(s-1))_j)}{(s-1)+1}\bigg|\leq C(\mathcal K-1)(1+|\Im(s)|)^{\beta(\mathcal K-1)} ,$$ where $\mathcal K-1=\{x-1|x\in\mathcal K\}$.

Let us calculate the the limit. By Proposition \ref{limofgk}, for every $\xxx\in\Rgz^n$ one has $$\lim_{s\to 0^+}s^ng_K(1+s\xxx)\prodjn x_j=\Res(\zeta_F,1)^{n-1}|\mu_{\gcd(\aaa)}(F)|^{-1}\Delta(F)^{\frac{n-1}2}\tau.$$ Now, the second part of Theorem \ref{CLTsc} gives that for every $x>0$ one has that \begin{align*}\lim_{s\to 0^+}s\acZ((1+sx)_j)&=\lim _{s\to 0^+}\frac{|\Sh^1(F,\mu_{\gcd (\aaa)})|}{E(\aaa)}sx\mathscr S_M(g_K)(\mathbf 1+|\aaa|sx)\\&=\frac{|\Sh^1(F,\mu_{\gcd (\aaa)})|}{E(\aaa)}\lim _{s\to 0^+}sx\mathscr S_M(g_K)(\mathbf 1+|\aaa|\cdot sx)\\
&=\frac{|\Sh^1(F,\mu_{\gcd (\aaa)})|}{|\aaa|E(\aaa)}\lim _{s\to 0^+}{|\aaa|sx}\mathscr S_M(g_K)(\mathbf 1+|\aaa|\cdot sx)\\
&=\frac{|\Sh^1(F,\mu_{\gcd (\aaa)})|}{|\aaa|E(\aaa)}\lim _{s\to 0^+}s^ng_K(1+s\xxx)\prodjn x_j\\
&= \frac{|\Sh^1(F,\mu _{\gcd (\aaa)})|\Res(\zeta_F,1)^{n-1}\Delta(F)^{\frac{n-1}2}\tau}{|\aaa|\cdot E(\aaa)\cdot |\mu_{\gcd(\aaa)}(F)|}\\
&=\frac{\tau}{|\aaa|}
.\end{align*} We deduce that the function $s\mapsto \acZ((1+s)_j)$ admits a pole of order~$1$ at $0$, which is simple and $$\Res(s\mapsto \acZ((1+s)_j),1)=\frac{\tau}{|\aaa|}.$$
The statement follows.
\end{proof}
By using the Tauberian result given as \cite[Theorem A1]{FonctionsZ} we deduce the following theorem.
\begin{thm}\label{osnovna}
Let $(f_v)_v$ be a quasi-toric degree $|\aaa|$ family of $\aaa$-homogenous smooth functions. One has that $$\{\xxx\in[\PPP(\aaa)(F)]| H(\xxx)\leq B\}\sim\frac{\tau}{|\aaa|}B,$$ when $B$ tends to $+\infty$.
\end{thm}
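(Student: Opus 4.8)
The statement is a consequence of Theorem \ref{bigthmm} together with a standard Tauberian argument, exactly as indicated in the paragraph preceding Theorem \ref{osnovna}. The plan is to proceed in three stages: first reduce the counting function on $[\PPP(\aaa)(F)]$ to the one on the stacky torus $[\TTa(F)]$; then apply the Tauberian theorem to the height zeta function $\accentset{\circ}Z((s)_j)$; and finally check that the family $(f_v)_v$ being quasi-toric (rather than toric) does not change the leading term, and that the smoothness hypothesis is what licenses the use of Theorem \ref{bigthmm}.

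\textbf{Step 1: reduction to the stacky torus.} By Lemma \ref{abnu}, the closed substacks $\overline{d_j}(\PPP(p^j(\aaa)))$ together with the open substack $\TTa$ cover $\PPP(\aaa)$ on the level of rational points, and the induced heights on the $\PPP(p^j(\aaa))$ are again quasi-toric, of weighted degree $|\aaa|$; passing to the $\frac{|p^j(\aaa)|}{|\aaa|}$-th power one gets a quasi-toric height of weighted degree $|p^j(\aaa)|$ on a weighted projective stack in $n-1$ variables. By induction on $n$ and Theorem \ref{boundppatta}, the number of points of $[\PPP(\aaa)(F)]-[\TTa(F)]$ of height at most $B$ is $O(B^{\frac{|\aaa|-\min_i a_i}{|\aaa|}}\log(2+B)^{n^2(r_1+r_2)+n-1})$, which is $o(B)$ since $\min_i a_i>0$. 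Hence it suffices to prove $|\{\xxx\in[\TTa(F)]\mid H(\xxx)\leq B\}|\sim \frac{\tau}{|\aaa|}B$. (The case $n=1$, $a=1$ is trivial since then $\PPP(\aaa)$ is a point; if $n=1$, $a\geq 2$, the whole stack equals $\mathscr T(a)$.)

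\textbf{Step 2: Tauberian argument.} For a quasi-toric degree $|\aaa|$ family of smooth $\aaa$-homogenous functions, Theorem \ref{bigthmm} gives that $\accentset{\circ}Z((s)_j)=\sum_{\xxx\in[\TTa(F)]}H(\xxx)^{-s}$ converges and is holomorphic for $\Re(s)>\gamma$ (some $\gamma>0$), extends meromorphically to $\Re(s)>1-\delta$ with a single, simple pole at $s=1$, satisfies the polynomial growth bound $\bigl|\frac{s-1}{s}\accentset{\circ}Z((s)_j)\bigr|\leq C(\mathcal K)(1+|\Im(s)|)^{\beta(\mathcal K)}$ on vertical strips $\Re(s)\in\mathcal K\subset\RR_{>1-\delta}$, and $\lim_{s\to 1}(s-1)\accentset{\circ}Z((s)_j)=\frac{\tau}{|\aaa|}$. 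These are precisely the hypotheses of the Tauberian theorem \cite[Theorem A.1]{FonctionsZ} (applied with exponent of the logarithm equal to zero, i.e. $t_0=1$, $b=0$), which then yields $|\{\xxx\in[\TTa(F)]\mid H(\xxx)\leq B\}|\sim \frac{\tau}{|\aaa|}B$ as $B\to\infty$. Combining with Step 1 gives the claim.

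\textbf{Main obstacle.} The genuine content is of course all packed into Theorem \ref{bigthmm}, whose proof rests on the harmonic-analysis machinery of the preceding chapters (the Fourier transform estimates of Chapter \ref{Fourier transform of the height function}, the character-counting of Chapter \ref{Analysis of characters}, Rademacher's bounds, and the $M$-controlled function formalism of \cite{FonctionsZ}); but that theorem is already established, so for the present statement no further work there is needed. The only points requiring care at this stage are: (i) verifying the smoothness hypothesis is exactly the one under which Theorem \ref{bigthmm} was proved, so that it applies verbatim; (ii) the reduction in Step 1, where one must be attentive that the induction hypothesis is applied to heights of the correct weighted degree and that the error term genuinely has exponent $<1$; and (iii) checking that the Tauberian input is invoked with the correct normalization so that the constant comes out as $\frac{\tau}{|\aaa|}$ and not some rescaling thereof — this is where Theorem \ref{bigthmm}'s explicit residue computation is essential. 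None of these is hard; the proof is essentially an assembly of already-proven ingredients.
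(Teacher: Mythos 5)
Your overall architecture — reduce to $[\TTa(F)]$ via Theorem \ref{boundppatta}, then run a Tauberian argument on $\accentset{\circ}Z$ using Theorem \ref{bigthmm} — is exactly the one the paper uses. However, Step~2 hides a genuine technical gap. You assert that the conclusions of Theorem \ref{bigthmm} ``are precisely the hypotheses'' of the Tauberian theorem \cite[Theorem A.1]{FonctionsZ}, but this is not quite so: Theorem \ref{bigthmm} yields the polynomial bound $\left|\frac{s-1}{s}\accentset{\circ}Z((s)_j)\right|\leq C(\mathcal K)(1+|\Im(s)|)^{\beta(\mathcal K)}$ only for $\Re(s)$ ranging in a \emph{compact} subset $\mathcal K\subset\RR_{>1-\delta}$, with both the constant and the exponent allowed to depend on $\mathcal K$. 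The Tauberian theorem requires such a bound \emph{uniformly} on a whole right half-plane $\Re(s)\geq 1-\delta'$, with a single pair $(C,\beta)$. Moreover, some terms $H(\xxx)^{-s}$ have $H(\xxx)<1$, which is also incompatible with the format of the theorem and with any monotonicity argument on the real axis.

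The paper handles both issues with one trick you omit: let $W\subset[\TTa(F)]$ be the (finite, by \ref{nortlogfalt}) set of points with $H(\xxx)<1$, and work with $\accentset{\circ}Z_{\TTa(F)-W}((s)_j)=\sum_{\xxx\in\TTa(F)-W}H(\xxx)^{-s}$. This function has all summands $\leq 1$ for real $s>\gamma$, hence is decreasing there, and so by the triangle inequality one has $|\accentset{\circ}Z_{\TTa(F)-W}((s)_j)|\leq \accentset{\circ}Z_{\TTa(F)-W}((\gamma+1)_j)$ whenever $\Re(s)>\gamma+1$, a constant. Combining this with the compact-set bound of Theorem \ref{bigthmm} on $\mathcal K=[1-\delta/2,\gamma+1]$ (where the contribution of the finite set $W$ is manifestly bounded) gives a single $(C,\beta)$ valid on all of $\Re(s)>1-\delta/2$, and the removal of $W$ only changes the counting function by a bounded quantity. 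Without this step your invocation of the Tauberian theorem is not licensed as stated; the fix is exactly what the paper does.
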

\begin{proof}
We establish firstly the following fact $$\{\xxx\in[\TTa(F)]| H(\xxx)\leq B\}\sim\frac{\tau}{|\aaa|}B.$$
Let $W\subset[\TTa(F)]$ be the set of points for which $H(\xxx)<1$, it is finite by \ref{nortlogfalt}. We define $\acZ_{\TTa(F)-W}((s)_j)=\sum _{\xxx\in\TTa(F)-W}H(\xxx)^{-s}.$ It follows from \ref{bigthmm} that the series defining $\acZ_{\TTa(F)-W}((s)_j)$ converges absolutely in the domain $\RR_{>\gamma}+i\RR$ and that there exists $\delta>0$ such that the function $s\mapsto \acZ_{\TTa(F)-W}$ extends meromorphically to the domain $\Re(s)>1-\delta$ and has one and only one pole in this domain which is moreover simple with the residue $$\Res(\acZ_{\TTa(F)-W},1)=\frac{\tau}{|\aaa|}.$$  By the absolute convergence of the defining series and the fact that we are summing only for those~$\xxx$ for which $H(\xxx)\geq 1$, the function $s\mapsto \acZ_{\TTa(F)-W}((s)_j)$ is decreasing on $]\gamma, +\infty[$. 
Let us pick $\mathcal K=[1-\delta/2,\gamma +1]$. It follows from \ref{bigthmm} and the triangle inequality, that there exists $C(\mathcal K),\beta(\mathcal K)>0$ such that for $\Re (s)\in\mathcal K$ one has that \begin{equation*} \bigg|\frac{(s-1)\acZ_{\TTa(F)-W}((s)_j)}{s}\bigg|\leq\bigg|\frac{s-1}{s}\bigg|\sum _{\xxx\in W}\big|H(\xxx)^{-s}\big| +C(\mathcal K)(1+|\Im(s)|)^{\beta(\mathcal K)}.\end{equation*} 
The function $s\mapsto \big|\frac{s-1}{s}\big|\sum _{\xxx\in W}\big|H(\xxx)^{-s}\big|$ is bounded for $s\in\mathcal K+i\RR$, say by $A>0.$  By the fact that $\acZ_{\TTa(F)-W}$ is decreasing on $]\gamma,+\infty[$ we deduce that for $\Re(s)>1-\delta/2$ one has that $$ \bigg|\frac{(s-1)\acZ(s)}{s}(\gamma+1)\bigg|\leq(A+C(\mathcal K)+\acZ_{\TTa(F)-W}(\gamma+1))(1+|\Im(s)|)^\beta.$$
Therefore, $\acZ_{\TTa(F)-W}$ satisfies the conditions we need for the Tauberian theorem. Our claim for the rational points of~$\TTa$ follows from the direct application of the theorem.

Let us now prove the statement of the theorem. By \ref{boundppatta}, we have that there exists $A>0$ such that for every $B>0$ one has that \begin{multline*}\{\xxx\in[\PPP(\aaa)(F)]-[\TTa(F)]|H(\xxx)\leq B\}\\\leq AB^{|\aaa|-\min_ja_j}\log(2+B^{|\aaa|-\min_ja_j}{|\aaa|})^{n^2(r_1+r_2)+n-1}.\end{multline*} 
Thus the statement follows.
\end{proof}
\section{Equidistribution of rational points} We study the ``equidistribution" of the set of rational points of a weighted projective stack in its adelic space.
\subsection{} In this paragraph we recall what do we mean by the  equidistribution. 
Let~$X$ be a compact topological space and let~$\mu$ be a measure on~$X$. Let $U$ be a subset of~$X$. Let $H:U\to \RR_{>0}$ be a function such that for every $B>0$, one has that $\{x\in U| H(x)\leq B\}$ is finite. We say that $U$ is equidistributed in $(X,\mu),$ (or simply in~$X$) with respect to~$H$ if for any open~$\mu$-measurable subset $W\subset X$ such that $\mu(\partial W)=0$ one has that $$\lim_{B\to\infty}\frac{|\{x\in U\cap W| H(x)\leq B\}|}{|\{x\in U| H(x)\leq B\}|}\to\frac{\mu(W)}{\mu(X)}.$$
\subsection{} Let $n\in\ZZ_{\geq 1}$ and let $\aaa\in\ZZ^n_{>0}$. We establish that the rational points of~$\PPP(\aaa)$ are equidistributed in the space $\prod_{\vMF}[\PPP(\aaa)(F_v)]$.

 If $\vMF$, we say that a function $f:[\PPP(\aaa)(\Fv)]\to\CC$ is smooth if its pullback $\Fvnz\to\CC$ is smooth. 
If $A\subset \CC$ is open, the sets of smooth functions $[\PPP(\aaa)(F_v)]\to A$ 
will be denoted by $\mathscr C^{\infty}([\PPP(\aaa)(F_v)],A).$ 

\begin{lem} \label{bumpfunctions} Let $\vMF$. 
\begin{enumerate}
\item There exists a unique structure of a~$v$-adic manifold on $[\PPP(\aaa)(\Fv)]$ such that for an open subset $A\subset [\PPP(\aaa)(F_v)]$ and $f\in\mathscr C^{0}(A,\CC)$ one has that $f\in \mathscr C^{\infty}(A,\CC)$ if and only if~$f$ is smooth in the usual sense (i.e.~$f$ is locally constant if $\vMFz$ and~$f$ is an infinitely differentiable function if $\vMFi$).
\item Let $f_v:\Fvnz\to\RR_{>0}$ be an $\aaa$-homogenous continuous function and let $\omega_v:=(f_v^{-1}dx_1\dots dx_n)/d^*x$ be the induced measure on $[\PPP(\aaa)(F_v)]$. Let $W\subset [\PPP(\aaa)(F_v)]$ be an open subset such that $\omega_v(\partial W)=0$. Let $\epsilon >0$. There exist smooth functions $h,g:[\PPP(\aaa)(\Fv)]\to\RR_{>0}$ such that $$0\leq h\leq \mathbf 1_W\leq g\text{ and }\int_{[\PPP(\aaa)(\Fv)]}(g-h)\omega<\epsilon.$$
\end{enumerate}
\end{lem}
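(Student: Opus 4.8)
The statement is the standard ``build a smooth manifold structure on $[\PPP(\aaa)(\Fv)]$ and approximate indicator functions of measurable sets by smooth functions'' pair of facts, which is the technical input needed for the equidistribution argument in Theorem \ref{quasitoricequi}. The plan is to deduce everything from the description of $[\PPP(\aaa)(\Fv)]$ as a topological quotient $(\Fvnz)/\Fvt$ established in Corollary \ref{pafvtafv}, together with the properness of the $\Fvt$-action coming from Proposition \ref{aboutopa}. I would first treat the non-archimedean and archimedean cases uniformly where possible, but note that for $\vMFz$ ``smooth'' simply means ``locally constant'', which makes part (1) essentially automatic there.

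\textbf{Part (1): the manifold structure.} First I would observe that the $\Fvt$-action on $\Fvnz$ is free (if $t\cdot\xxx=\xxx$ with $\xxx\neq 0$, then for an index $i$ with $x_i\neq 0$ one has $t^{a_i}=1$; over a field of characteristic $0$ this forces... actually $t$ a root of unity, so the action is not quite free — I would instead pass to the standard local sections). Concretely, using Lemma \ref{begdav}, for $\vMFi$ every point of $[\PPP(\aaa)(\Fv)]$ has a representative in the sphere $\Dav=\{\|\xxx\|_{\max}=1\}$, and the quotient map restricted to a suitable open slice of $\Dav$ transverse to the $\Fvt$-orbits is a homeomorphism onto an open subset of $[\PPP(\aaa)(\Fv)]$; these slices cover $[\PPP(\aaa)(\Fv)]$ and their transition maps are smooth because the $\Fvt$-action is smooth. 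This gives the atlas. For $\vMFz$, one uses instead the open-closed decomposition by the locally constant function $\xxx\mapsto$ (the tuple of valuations modulo the action), or simply notes that $[\PPP(\aaa)(\Fv)]$ is compact, Hausdorff and totally disconnected, so ``smooth = continuous with locally constant pullback'' is forced and there is nothing to construct. The uniqueness assertion follows because the condition pins down which continuous functions are smooth, hence pins down the sheaf of smooth functions, hence the manifold structure. Then I would check that $f\circ q^{\aaa}_v$ being smooth on $\Fvnz$ is equivalent to $f$ being smooth in the chart sense — one direction is pullback along $q^{\aaa}_v$ (a submersion in the charts), the other is that a $\Gm(\Fv)$-invariant smooth function descends.

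\textbf{Part (2): the approximation.} Given $W$ open with $\omega_v(\partial W)=0$, I would work in the compact manifold $[\PPP(\aaa)(\Fv)]$. The key tools are: (i) regularity of the Radon measure $\omega_v$ (it is a measure on a locally compact space in the sense of \ref{yumf}, hence inner regular on open sets and outer regular), and (ii) existence of smooth partitions of unity / smooth bump functions on a $\Fvp$-analytic manifold — in the archimedean case this is the classical $C^\infty$ Urysohn lemma, and in the non-archimedean case it is trivial since locally constant compactly supported functions abound and indicator functions of clopen sets are already smooth. Using outer regularity, choose an open $U\supset \overline{W}$ with $\omega_v(U\setminus W)<\epsilon/2$ (possible since $\omega_v(\partial W)=0$ gives $\omega_v(\overline W)=\omega_v(W)$); using inner regularity choose a compact $C\subset W$ with $\omega_v(W\setminus C)<\epsilon/2$. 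Then take a smooth $g$ with $\mathbf 1_{\overline W}\le g\le \mathbf 1_U$ and a smooth $h$ with $\mathbf 1_C\le h\le \mathbf 1_W$; these satisfy $0\le h\le\mathbf 1_W\le g$ and $\int (g-h)\,\omega_v\le \omega_v(U)-\omega_v(C)<\omega_v(W)+\epsilon/2-(\omega_v(W)-\epsilon/2)=\epsilon$. I expect the main obstacle to be purely bookkeeping: making the construction of the local slices in part (1) precise (in particular checking the transition maps are smooth and that the slices are genuinely open and cover), and being careful that the approximating bump functions take values in $\RR_{>0}$ rather than $\RR_{\ge 0}$, which is arranged by adding a small positive smooth perturbation (e.g.\ replace $g$ by $g+\epsilon'$ and $h$ by $\max(h-\epsilon'',0)$ smoothed, adjusting constants) — none of this is deep, but it needs to be stated cleanly.
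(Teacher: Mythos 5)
Your argument for part (2) is essentially the paper's: pick $U\supset\overline W$ open and $C\subset W$ compact with $\omega_v(U)-\omega_v(C)<\epsilon$ (using regularity and $\omega_v(\partial W)=0$), then squeeze $\mathbf 1_W$ between smooth bump functions supported in $U$ and dominating $\mathbf 1_C$; for $\vMFz$ these bumps are indicator functions of clopen sets, for $\vMFi$ the paper cites \cite[Lemma 2.15]{SmoothManifolds} just as you invoke the smooth Urysohn lemma. Your worry about $\RR_{>0}$ is fair but unnecessary — the strict positivity is introduced downstream (in the proof of Theorem~\ref{quasitoricequi}, where $h_v,g_v$ are replaced by $(1-\eta)h_v+\eta$ etc.), so the bump functions produced here are allowed to vanish; the $\RR_{>0}$ in the statement is a harmless slip for $\RR_{\ge 0}$.

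For part (1), however, you take a genuinely different and more hands-on route than the paper. The paper does not build an atlas via local slices of $\Dav$. Instead it replaces the weighted $\Fvt$-action on $\Fvnz$ by the coordinate-wise multiplication action of the closed subgroup $(\Fvt)_\aaa=\{(t^{a_j})_j\}\subset\Fvtn$: the identity $t\cdot\xxx=e(t)\xxx$ with $e:t\mapsto(t^{a_j})_j$ lets one factor the proper morphism $\Fvt\times\Fvnz\to\Fvnz\times\Fvnz$ through $(\Fvt)_\aaa\times\Fvnz\to\Fvnz\times\Fvnz$; since the first arrow is surjective the second is proper (Bourbaki, Topologie Générale I \S10), i.e.\ $(\Fvt)_\aaa$ acts properly. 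The manifold structure on the quotient and the characterization of smooth functions are then gotten in one stroke by citing \cite[6.2.3]{VarieteDifj}. This is shorter than constructing charts by hand, and it transparently gives both existence and uniqueness of the smooth structure, plus the compatibility of "smooth on $[\PPP(\aaa)(\Fv)]$" with "smooth $\Gm(\Fv)$-invariant pullback." Your local-slice sketch can be made to work but carries a real obligation you flag but do not discharge: you correctly observe that the $\Fvt$-action (and equally the $(\Fvt)_\aaa$-action) fails to be free at boundary strata where some coordinates vanish — the stabilizer of $\xxx$ with support $J$ is the image of $\mu_{\gcd(a_j:j\in J)}(\Fv)$ under $e$ and can be nontrivial — so "slices transverse to orbits" have to be slices for a proper action with possibly finite stabilizers, and one has to argue why the resulting local quotients are still $v$-adic open sets with smooth transition maps; that is exactly what the Bourbaki reference packages. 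In short: same destination, but the paper routes through a proper-action quotient theorem while you route through explicit charts, and the latter leaves the finite-stabilizer bookkeeping on the table rather than resolving it.
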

\begin{proof}
\begin{enumerate}
\item We have seen in \ref{aboutopa}, that the action $$\Fvt\times(\Fvnz)\to(\Fvnz)\hspace{1cm}(t,\xxx)\mapsto (t^{a_j}x_j)_j$$ is proper, i.e. the canonical morphism $\Fvt\times(\Fvnz)\to(\Fvnz)\times(\Fvnz)$ is proper. Let $e:\Fvt\to(\Fvt)_{\aaa}$ be given by $e(t)=(t^{a_j})_j.$ The group $(\Fv)_{\aaa}=\{(t^{a_j})_j|t\in\Fvt\}$ acts on $\Fvnz$ by the component-wise multiplication. The two actions are compatible in the following sense: $t\cdot\xxx=e(t)\xxx$. Now the proper morphism $\Fvt\times(\Fvnz)\to(\Fvnz)\times(\Fvnz)$ factorizes as $$\Fvt\times(\Fvnz)\xrightarrow{(e,\Id_{\Fvnz})}(\Fvt)_{\aaa}\times(\Fvnz)\to(\Fvnz)\times(\Fvnz).$$The first morphism is surjective, hence by \cite[Proposition 5, \no 1, \S 10, Chapter I]{TopologieGj}, the second morphism is proper, i.e. the action of $(\Fvt)_{\aaa}$ on $\Fvnz$ is proper. By \cite[Paragraph 6.2.3]{VarieteDifj} we deduce that the quotient $[\PPP(\aaa)(\Fv)]=(\Fvnz)/\Fvt=(\Fvnz)/(\Fvt)_{\aaa}$ carries a unique structure of a compact~$v$-adic manifold and with this structure, the smooth functions in our sense are precisely the smooth functions on the~$v$-adic manifold $[\PPP(\aaa)(\Fv)]$.
\item To prove the statement, we use the existence of {\it bump} functions on~$v$-adic manifolds. We recall the proof for $\vMFz$ and for $\vMFi$ we refer the reader to \cite[Lemma 2.15]{SmoothManifolds}. Let $\mathcal K$ be a compact of $[\PPP(\aaa)(F_v)]$. If $U\supset \mathcal K$ is open, for every $\xxx\in \mathcal K$, there exists an open and closed neighbourhood $N_\xxx$ of~$\xxx$ contained in $U$ (because every point in the spaces $F_v^r$ for $r\geq 0$ has a basis of its neighbourhoods  given by open and closed balls). By the compacity of $\mathcal K$, there exist $\xxx_1\doots \xxx_\ell$ such that $N:=\bigcup N_{\xxx_\ell}\supset\mathcal K$. Moreover, $N\subset U$ is open and closed, thus $\mathbf 1_N$ is a bump function which extends the function $\mathbf 1_{\mathcal K}$. 

Now, for every open $U\supset \overline W,$ there exists a smooth function $g_U:[\PPP(\aaa)(\Fv)]\to [0,1]$ such that $g_U|_{\overline W}=1$ and $\supp (g_U)\subset U$. For every compact $K\subset W$ there exists a smooth function $h_K:K\to [0,1]$ such that $h_K|_K=1$ and $\supp(h_K)\subset W$. For every $\epsilon >0$, by the regularity of~$\omega$ and the fact that $\omega(\partial W)=0$, there exists open $U\supset \overline W$ and a compact $K\subset W$ such that $\omega(U)-\omega(K)<\epsilon$. It follows that $$\int_{[\PPP(\aaa)(F_v)]}(g_U-h_K)\omega\leq \int _{[\PPP(\aaa)(F_v)]}\mathbf 1_{U-K}\omega=\omega(U)-\omega(K)<\epsilon.$$
\end{enumerate}
\end{proof}
Let $(f_v:\Fvnz\to\RR_{>0})_v$ be a quasi-toric degree $|\aaa|$-family of $\aaa$-homogenous {\it smooth} functions. Let $H=H((f_v)_v)$ be the resulting height on $[\PPP(\aaa)(F)]$ and let $\omega =\omega((f_v)_v)$ be the resulting measure on $\prod_{\vMF}[\PPP(\aaa)(\Fv)]$. The goal of the rest of the paragraph is to establish that the set $[\PPP(\aaa)(i)]([\PPP(\aaa)(F)])$ is equidistributed in $\prod_{\vMF}[\PPP(\aaa)(\Fv)]$. We will write~$i$ for the map $[\PPP(\aaa)(i)]$ The following theorem is motivated by \cite[Proposition 3.3]{Peyre}.
\begin{thm}[``Equidistribution of rational points"] \label{quasitoricequi} The set $i([\PPP(\aaa)(F)])$ is equidistributed in $\prod_{\vMF}[\PPP(\aaa)(F_v)]$ with the respect to~$H$.
\end{thm}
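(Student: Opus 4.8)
The strategy is the standard one for deducing equidistribution from a height-count asymptotic with a controllable variation: we vary the height by twisting with smooth positive ``weights'' at finitely many places, apply the main asymptotic (Theorem \ref{osnovna}) to each twisted height, and then use a sandwiching argument with bump functions to pass from smooth weights to characteristic functions of open sets with negligible boundary. First I would reduce, using Lemma \ref{bumpfunctions}(2), to the following claim: for every finite set $S$ of places and every family of smooth functions $h_v:[\PPP(\aaa)(\Fv)]\to\RR_{>0}$ (with $h_v=1$ for $v\notin S$), writing $h=\bigotimes_{\vMF}h_v$ for the resulting function on $\prod_{\vMF}[\PPP(\aaa)(\Fv)]$, one has
\begin{equation*}
\sum_{\substack{\xxx\in[\PPP(\aaa)(F)]\\ H(\xxx)\leq B}} h(i(\xxx))\ \sim_{B\to\infty}\ \frac{1}{|\aaa|}\Big(\int_{\prod_{\vMF}[\PPP(\aaa)(\Fv)]}h^{-1}\,\omega\Big)\, B .
\end{equation*}

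\textbf{Deriving the weighted asymptotic.} To prove this claim I would, for $v\in S$, replace $f_v$ by $h_v\cdot f_v$ (more precisely, by $(h_v\circ q^\aaa_v)\cdot f_v$ on $\Fvnz$); since $h_v$ is smooth and strictly positive, $(h_v\circ q^\aaa_v)\cdot f_v$ is again a smooth $\aaa$-homogenous function of weighted degree $|\aaa|$, so $(h_v f_v)_v$ is a quasi-toric degree $|\aaa|$ family of smooth functions and defines a new height $H'=H((h_vf_v)_v)$. By the defining product formula for heights, for $\xxx\in[\PPP(\aaa)(F)]$ with lift $\wx$ one has $H'(\xxx)=H(\xxx)\prod_{v\in S}h_v(q^\aaa_v(\wx))=H(\xxx)\,h(i(\xxx))$; note the right-hand side is well defined because each $h_v$ is $\Fvt$-invariant. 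Theorem \ref{osnovna} applied to $H'$ gives $|\{\xxx\,|\,H'(\xxx)\leq B\}|\sim \frac{\tau'}{|\aaa|}B$ where $\tau'=\tau((h_vf_v)_v)$, and Lemma \ref{omegachanges} identifies $\omega((h_vf_v)_v)=h^{-1}\omega((f_v)_v)$, hence $\tau'=\int h^{-1}\omega$. This is \emph{not quite} the weighted sum above, because $\{H'\leq B\}$ is the set where $H(\xxx)h(i(\xxx))\leq B$, not a weighted count over $\{H\leq B\}$. To bridge this, I would use the familiar trick of bounding $h$ between two positive constants scaled by nearby weights: since $h$ is continuous on the compact space $\prod_{\vMF}[\PPP(\aaa)(\Fv)]$, for any $\epsilon>0$ we can compare $h$ with the constant functions giving heights $(1\pm\epsilon)$-perturbations; more robustly, one applies the asymptotic for $H'$ and for the constant-scaled heights $\lambda H$ (for which the count is $\sim\frac{\tau}{|\aaa|}\lambda^{-1}B$) to trap $\sum_{H\leq B}h(i(\xxx))$ between $\limsup$ and $\liminf$ expressions that converge to $\frac{1}{|\aaa|}\int h^{-1}\omega$ as the approximation is refined. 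Concretely one writes, for the indicator, $\sum_{H(\xxx)\le B}h(i(\xxx))=\sum_{B'}\big(\#\{H\le B',\,h(i(\xxx))\ \text{in a dyadic-type range}\}\big)$ and uses that on each such range $H'$ and $H$ agree up to a bounded factor; passing to finer partitions and invoking the asymptotic for each twisted/scaled height yields the displayed formula by a squeeze argument.

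\textbf{From smooth weights to open sets.} Granting the weighted asymptotic, I would finish as follows. Let $W\subset\prod_{\vMF}[\PPP(\aaa)(\Fv)]$ be open with $\omega(\partial W)=0$. Using Lemma \ref{bumpfunctions}(2) at each of finitely many places (and $h=g=1$ elsewhere), for every $\epsilon>0$ choose smooth $h\leq \mathbf 1_W\leq g$ with $\int_{\prod[\PPP(\aaa)(\Fv)]}(g-h)\,\omega<\epsilon$; strictly one must also keep $h>0$, which is harmless since $\mathbf 1_W$ can be approximated from below by strictly positive smooth functions plus a constant $\epsilon$ that is absorbed in the error. Then
\begin{equation*}
\sum_{\substack{\xxx\in[\PPP(\aaa)(F)]\\ H(\xxx)\le B}} h(i(\xxx))\ \le\ |\{\xxx\in[\PPP(\aaa)(F)]\,|\,i(\xxx)\in W,\ H(\xxx)\le B\}|\ \le\ \sum_{\substack{\xxx\in[\PPP(\aaa)(F)]\\ H(\xxx)\le B}} g(i(\xxx)),
\end{equation*}
and dividing by $|\{\xxx\,|\,H(\xxx)\le B\}|\sim\frac{\tau}{|\aaa|}B$ (Theorem \ref{osnovna} with the original weights, equivalently the weighted asymptotic with $h\equiv 1$), the outer terms converge to $\frac{1}{\tau}\int h^{-1}\omega$ and $\frac{1}{\tau}\int g^{-1}\omega$ respectively. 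Wait---one must be slightly careful: the weighted asymptotic above has $\int h^{-1}\omega$, whereas here I want $\int \mathbf 1_W\,\omega=\omega(W)$, so I should instead approximate $\mathbf 1_W$ by smooth functions $h',g'$ with $1/g'\le \mathbf 1_W\le 1/h'$, i.e. apply the bump-function lemma to $\mathbf 1_W$ directly and then take reciprocals; since $\mathbf 1_W$ is only $0/1$-valued this requires working with $h',g'$ that are smooth, positive, bounded, with $h'\ge 1$ off a small neighbourhood of $W$ and $g'$ large there, and one checks $\int(\mathbf 1_W - 1/g')\,\omega$ and $\int(1/h'-\mathbf 1_W)\,\omega$ are small. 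With this adjustment the squeeze gives exactly $\lim_{B\to\infty}\frac{|\{\xxx\,|\,i(\xxx)\in W,\ H(\xxx)\le B\}|}{|\{\xxx\,|\,H(\xxx)\le B\}|}=\frac{\omega(W)}{\tau}=\frac{\omega(W)}{\omega(\prod_{\vMF}[\PPP(\aaa)(\Fv)])}$, which is the asserted equidistribution.

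\textbf{Main obstacle.} The genuinely delicate point is the passage in the second paragraph from ``asymptotic for the twisted height $H'$'' to ``asymptotic for the $h$-weighted count over $\{H\le B\}$''. Both Theorem \ref{osnovna} and Lemma \ref{omegachanges} are exactly tailored to make this work, but one must set up the squeeze between $H'$, $H$, and dilations $\lambda H$ carefully and uniformly in the partition, using the compactness of $\prod_{\vMF}[\PPP(\aaa)(\Fv)]$ (Proposition \ref{paraap}) to get uniform bounds on $h$; the boundary-negligibility hypothesis $\omega(\partial W)=0$ and the bump-function approximation of Lemma \ref{bumpfunctions}(2) then control the final error. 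No new analytic input (zeta functions, Tauberian theorems) is needed beyond what is already established; the argument is purely a soft deduction from the counting theorem plus measure-theoretic approximation.
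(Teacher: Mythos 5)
Your overall strategy --- twist the height by smooth positive weights at finitely many places, invoke Theorem \ref{osnovna} together with Lemma \ref{omegachanges} for the twisted heights, and squeeze using the bump functions of Lemma \ref{bumpfunctions} --- is the right one and is exactly the structure of the paper's proof. But the execution has a genuine gap coming from a backwards reciprocal, and the patch you offer does not repair it.

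Your intermediate target $\sum_{H(\xxx)\le B}h(i(\xxx))\sim\frac{1}{|\aaa|}\big(\int h^{-1}\omega\big)B$ is false: test $h\equiv c$ constant, where the left side is $c\,|\{H\le B\}|\sim c\frac{\tau}{|\aaa|}B$ while the right side is $\frac{c^{-1}\tau}{|\aaa|}B$ (equidistribution predicts $\int h\,\omega$, not $\int h^{-1}\omega$). The root cause is the choice to twist by $h_v$: then $\{H'\le B\}=\{H\le h^{-1}B\}$ overcounts precisely where $h$ is small, i.e.\ \emph{off} $W$, and $\tau'=\int h^{-1}\omega$ diverges as $h$ tightens to $\mathbf 1_W$. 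Your fix --- approximating by $1/g'\le\mathbf 1_W\le 1/h'$ --- is not viable, since $1/g'\le\mathbf 1_W$ forces $g'=\infty$ off $W$, which is not a smooth bounded function. The correct choice, which the paper makes, is to twist by $h_{v,\eta}^{-1}$ with $h_{v,\eta}=(1-\eta)h_v+\eta$ (the $\eta>0$ serves only to keep the twisted family strictly positive and smooth). Then $H'=H/h_\eta$, so $\{H'(\xxx)\le B\}=\{H(\xxx)\le h_\eta(i(\xxx))B\}$, and Lemma \ref{omegachanges} gives $\tau'=\int h_\eta\,\omega$ with the \emph{positive} exponent. Since $h\le\mathbf 1_W\le g$ yields $h_\eta\le(1-\eta)\mathbf 1_W+\eta\le g_\eta$, one has the sandwich
$$\{H\le h_\eta B\}\ \subset\ \{i(\xxx)\in W,\ H\le B\}\cup\{H\le\eta B\}\ \subset\ \{H\le g_\eta B\},$$
and dividing by $|\{H\le B\}|\sim\frac{\tau}{|\aaa|}B$ and applying Theorem \ref{osnovna} to the three twisted heights squeezes the ratio to $\omega(W)/\tau$ as $\eta\to 0$ and as the bump functions tighten. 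This dispenses entirely with the weighted sum $\sum_{H\le B}h(i(\xxx))$; in the paper those weighted asymptotics are derived afterwards, as Proposition \ref{moreofosnovna}, \emph{from} the equidistribution theorem, not toward it. Relatedly, the dyadic bridge you sketch from $|\{H'\le B\}|$ to $\sum_{H\le B}h$ would require counts of the form $\#\{H\le B,\ h(i(\xxx))\in[a,a']\}$, i.e.\ precisely the equidistribution statements you are trying to prove --- it is circular.

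One further point: your sandwich, as written, only handles $W$ that is a product of local conditions at finitely many places (what the paper calls elementary). Passing to a general open $W$ with $\omega(\partial W)=0$ needs an additional inner/outer approximation by finite unions of elementary open sets (parts (3) and (4) of the paper's proof), using inner regularity of $\omega$ and compactness of $\prod_{\vMF}[\PPP(\aaa)(F_v)]$; this step is not automatic and should be included.
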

\begin{proof}
In the case $n=1$ and $a_1=1$, the statement is trivially true. In the rest of the proof we suppose that $n\geq 1$ and that $\aaa\in\ZZ^n_{>0}$ if $n\geq 2$ and that $\aaa=a_1\in\ZZ_{>1}$ if $n=1$. The proof is adaptation of the proof of \cite[Proposition 3.3]{Peyre}. We split it in several parts.
\begin{enumerate}
\item In the first part we establish that the asymptotic for counting points of $[\PPP(\aaa)(F)]$ equals to $|\Sh^1(F,\mu_{\gcd(\aaa)})|$ times the asymptotic for counting the points of $i([\PPP(\aaa)(F)])$. By \ref{osnovna}, one has that \begin{equation*}|\{\xxx\in[\TTa(F)]| H(\xxx)\leq B\}|\sim_{B\to\infty}\frac{\omega(\prod_{\vMF}[\PPP(\aaa)(F_v)])}{|\aaa|}B.\end{equation*}
It follows from \ref{cesusp} that \begin{equation*}|\{\xxx\in i([\TTa(F)])| H(\xxx)\leq B\}|\sim_{B\to\infty}\frac{\omega(\prod_{\vMF}[\PPP(\aaa)(F_v)])}{|\Sh^1(F, \mu_{\gcd(\aaa)})|\cdot |\aaa|}B.\end{equation*}
It follows from \ref{boundppatta}, that for every $B>0$ there exists $C'>0$ such that\begin{multline*}|\{\xxx\in i([\PPP(\aaa)(F)]-[\TTa(F)])|H(\xxx)\leq B\}|\\ \leq C' B^{\frac{|\aaa|-\min_ja_j}{|\aaa|}}\log(2+B)^{n^2(r_1+r_2)+n-1}.
\end{multline*}
We deduce that \begin{equation*}|\{\xxx\in i([\PPP(\aaa)(F)])| H(\xxx)\leq B\}|\sim_{B\to\infty}\frac{\omega(\prod_{\vMF}[\PPP(\aaa)(F_v)])}{|\Sh^1(F, \mu_{\gcd(\aaa)})|\cdot |\aaa|}B.\end{equation*}

\item We are going to prove the claim for the open subsets $W\subset\prod_{\vMF}[\PPP(\aaa)(F_v)]$ of the form $$W=\prod_{v\in S_W} W_v\times \prod_{v\in M_F-S_W}[\PPP(\aaa)(F_v)],$$ where~$S$ is a finite set and for $v\in S$ the set $W_v\subset [\PPP(\aaa)(\Fv)]$ is open satisfying that $\omega_v(\partial W_v)=0$ (we will call such open subset {\it elementary}). For $v\in S_W$, by \ref{bumpfunctions} there exist smooth functions $g_v,h_v:W_v\to \RR_{>0}$ such that $$0\leq h_v\leq \mathbf 1_{W_v}\leq g_v\leq1\text{ and }\int_W(g_v-h_v)\omega_v\leq\frac{\epsilon \omega_v([\PPP(\aaa)(F_v)])}{8 |S_W|}.$$
Let us set $\eta=\epsilon/4.$ For $v\in S$, we define $h_{v,\eta}=(1-\eta)h_v+\eta $ and $ g_{v,\eta}=(1-\eta)g_v+\eta$ and for $v\in M_F-S$, we define $ h_{v,\eta}=g_{v,\eta}=1$. We define $h_\eta=\prod_{\vMF} h_{v,\eta}$ and $g_\eta=\prod_{\vMF}g_{v,\eta}$. For $\xxx\in[\PPP(\aaa)(F)],$ let $\wx\in F^n-\{0\}$ be a lift of~$\xxx$. Observe that \begin{align*}H(((h^{-1}_{v,\eta}\circ\qav)\cdot f_v)_v)(\xxx)&=\prod_{\vMF}((h^{-1}_{v,\eta}\circ\qav)\cdot f_v)(\wx)\\
&=\prod_{\vMF}h^{-1}_{v,\eta}(\qav(\wx))f_v(\wx)\\
&=\bigg(\prod_{\vMF}h^{-1}_{v,\eta}(\xxx)\bigg)H(\xxx)\\
&=h_\eta(i(\xxx))^{-1}\cdot H(\xxx)
\end{align*} for $\xxx\in[\PPP(\aaa)(F)]$. 
By \ref{omegachanges}, one has that $\omega(((h^{-1}_{v,\eta}\circ\qav)\cdot f_v)_v)=h_{\eta}\omega$. Similarly $H(((g^{-1}_{v,\eta}\circ\qav)\cdot f_v)_v)=g_v([\PPP(\aaa)(i_v)](\xxx))^{-1}H(\xxx)$ for $\xxx\in[\PPP(\aaa)(F)]$ and $$\omega(((g_v^{-1}\circ\qav)\cdot f_v)_v)=g_\eta\omega.$$

Now, it follows from (1) that for the quasi-toric degree $|\aaa|$ families of smooth functions $((h^{-1}_{v,\eta}\circ\qav)\cdot f_v)_v$ and $((g^{-1}_{v,\eta}\circ\qav)\cdot f_v)_v$ we have that \begin{multline*}|\{\xxx\in i([\PPP(\aaa)(F)])|H(\xxx)\leq h_\eta(\xxx)\cdot B\}|\\=|\{\xxx\in i([\PPP(\aaa)(F)])| H((( h^{-1}_v\circ\qav)\cdot f_v)_v)(\xxx)\leq B\}|\\
\sim_{B\to\infty} \frac{\omega\big(\prod_{\vMF}[\PPP(\aaa)(F_v)]\big)\cdot\int_{\prod_{\vMF}[\PPP(\aaa)(\Fv)]}h_\eta\omega}{|\Sh^1(F, \mu_{\gcd(\aaa)})|\cdot |\aaa|}B.\end{multline*} and that \begin{multline*}
|\{\xxx\in i([\PPP(\aaa)(F)])|H(\xxx)\leq g_\eta(i(\xxx))B\}|\\
\sim_{B\to\infty} \frac{\omega\big(\prod_{\vMF}[\PPP(\aaa)(F_v)]\big)\cdot\int_{\prod_{\vMF}[\PPP(\aaa)(\Fv)]}g_\eta\omega}{|\Sh^1(F, \mu_{\gcd(\aaa)})|\cdot |\aaa|}B.
\end{multline*}
We deduce that $$\frac{|\{\xxx\in i([\PPP(\aaa)(F)])|H(\xxx)\leq h(i(\xxx))B\}|}{|\{\xxx\in i([\PPP(\aaa)(F)])|H(\xxx)\leq B\}|}\sim_{B\to\infty}\frac{\int_{\prod_{\vMF}[\PPP(\aaa)(F_v)]}h_\eta\omega}{\omega\big(\prod_{\vMF}[\PPP(\aaa)(F_v)]\big)}$$
and analogously for $g_\eta$. Thus, there exists $B_0>0$ such that if $B>B_0$, one has that \begin{align*}\frac{\int_{\prod_{\vMF}[\PPP(\aaa)(\Fv)]}h_\eta\omega}{\omega\big(\prod_{\vMF}[\PPP(\aaa)(F_v)]\big)}-\frac{\epsilon}{8}\hskip-3cm&\\&\leq \frac{|\{\xxx\in i([\PPP(\aaa)(F)])|H(\xxx)\leq B\prod_{v\in S_W}((1-\eta)\mathbf 1_{W_v}+\eta)\}|}{|\{\xxx\in i([\PPP(\aaa)(F)])|H(\xxx)\leq B\}|} \\&\leq\frac{\int_{\prod_{\vMF}[\PPP(\aaa)(\Fv)]}g_\eta\omega}{\omega\big(\prod_{\vMF}[\PPP(\aaa)(F_v)]\big)}+\frac{\epsilon}{8}. \end{align*}
We have that $$\frac{|\{\xxx\in i([\PPP(\aaa)(F)])|H(\xxx)\leq \eta B\}|}{|\{\xxx\in i([\PPP(\aaa)(F)])|H(\xxx)\leq B\}|}\sim_{B\to\infty}\eta,$$thus for $B\gg 0$, we deduce that \begin{align*}\frac{|\{\xxx\in i([\PPP(\aaa)(F)]) |H(\xxx)\leq \mathbf 1_WB\}|}{|\{\xxx\in i([\PPP(\aaa)(F)])|H(\xxx)\leq B\}|}&=\frac{|\{\xxx\in W |H(\xxx)\leq B\}|}{|\{\xxx\in i([\PPP(\aaa)(F)])|H(\xxx)\leq B\}|}\\&\leq 2\frac{\epsilon}{4}+\frac{\epsilon}{8}.\end{align*} Thus the claim follows for elementary open subsets $W\subset\prod_{\vMF}[\PPP(\aaa)(F_v)].$

\item We prove claim for every $W$ which is a finite union of elementary open subsets of $\prod_{\vMF}[\PPP(\aaa)(\Fv)]$. The collection of the elementary open subsets of $\prod_{\vMF}[\PPP(\aaa)(F_v)]$ is stable for finite intersections. 
Suppose the claim is valid for every union of~$k$ elementary open sets. Let $V_1\doots V_{k+1}$ be elementary open sets, we have that \begin{align*}\frac{|\{\xxx\in \bigcup_{j=1}^kV_j\cup V_{k+1})|H(\xxx)\leq B\}|}{|\{\xxx\in i([\PPP(\aaa)(F)])|H(\xxx)\leq B\}|}\hskip-4cm&\\&=\frac{|\{\xxx\in \bigcup_{j=1}^kV_j|H(\xxx)\leq B\}|+|\{\xxx\in V_{k+1}|H(\xxx)\leq B\}|}{|\{\xxx\in i([\PPP(\aaa)(F)])|H(\xxx)\leq B\}|}\\
&\hspace{0.4cm}-\frac{|\{\xxx\in \bigcup_{j=1}^k (V_j\cap V_{k+1})|H(\xxx)\leq B\}|}{|\{\xxx\in i([\PPP(\aaa)(F)])|H(\xxx)\leq B\}|}\\
&\sim_B\frac{\omega\big(\bigcup_{j=1}^kV_j\big)+\omega(V_{k+1})-\omega\big(\bigcup_{j=1}^k(V_j\cap V_{k+1})\big)}{\omega\big(\prod_{\vMF}[\PPP(\aaa)(F_v)]\big)}\\
&=\frac{\omega\big(\bigcup _{j=1}^{k+1}V_j\big)}{\omega\big(\prod_{\vMF}[\PPP(\aaa)(F_v)]\big)}.
\end{align*}
It follows from the induction, that the claim is valid for $W$ which is a union of finitely many elementary open subsets of $\prod_{\vMF}[\PPP(\aaa)(F_v)]$. 

\item Let us now prove the claim for a general open subset $W$ with $\omega(\partial W)=0$. We will establish firstly that for every $\epsilon>0$, there exist $W'$ and $W''$ which are finite unions of the elementary open subsets of $\prod_{\vMF}[\PPP(\aaa)(F_v)]$, such that $W'\subset W\subset W''$ and such that $\omega(W''-W')<\epsilon.$  For $\vMF$, open sets of $[\PPP(\aaa)(F_v)]$ with negligible boundary form a basis of the topologies of $[\PPP(\aaa)(F_v)]$ (the collection of such open sets contains the images of open balls in $\Fvnz,$ and the open balls in $\Fvnz$ form a basis of the topologies and have negligible boundaries). It follows that the elementary open subsets form a basis of the topology of $\prod_{\vMF}[\PPP(\aaa)(\Fv)]$. Let $\epsilon>0$. The space $\overline W$ is a compact, thus can be covered by the finitely many elementary open sets of volume no more than $\epsilon _2$. We let $W''$ be the union of these sets. By the inner regularity of~$\omega$ (e.g. \cite[Theorem 2.5.13]{GeometricMT}), there exists a compact set $K\subset W$ such that $\omega(W)-\omega(K)<\epsilon /2$. Cover the set~$K$ by finitely many elementary open subsets lying completely in $W$. We let $W'$ be the union of this sets. Clearly, $W'\subset W\subset W''.$ By using that $\omega(\partial W)=0$ we get that 
\begin{align*}
\omega(W''-W')&\leq\omega(W''-\overline W)+\omega(\overline W-W')\\
&\leq \omega(W''-(W\cup\partial W))+\omega((W\cup\partial W)-W')\\
&\leq \omega(W''-W)+\omega(W-W')\\
&< \epsilon.
\end{align*}
%
%
Now, for $\delta >0,$ one has that there exists $B_1>0$ such that if $B>B_1,$ then:
\begin{align*}
\frac{\omega(W')}{\omega\big(\prod_{\vMF}[\PPP(\aaa)(F_v)]\big)}-\delta&\leq\frac{|\{\xxx\in W'|H(\xxx)\leq B\}|}{|\{\xxx\in i([\PPP(\aaa)(F)])|H(\xxx)\leq B\}|}\\&\leq \frac{|\{\xxx\in W|H(\xxx)\leq B\}|}{|\{\xxx\in i([\PPP(\aaa)(F)])|H(\xxx)\leq B\}|}.
\end{align*} and that
\begin{align*}
\frac{|\{\xxx\in W|H(\xxx)\leq B\}|}{|\{\xxx\in i([\PPP(\aaa)(F)])|H(\xxx)\leq B\}|}&\leq \frac{|\{\xxx\in W''|H(\xxx)\leq B\}|}{|\{\xxx\in i([\PPP(\aaa)(F)])|H(\xxx)\leq B\}|}\\&\leq\frac{\omega(W'')}{\omega\big(\prod_{\vMF}[\PPP(\aaa)(F_v)]\big)}+\delta.
\end{align*}
It follows that $$\frac{|\{\xxx\in W|H(\xxx)\leq B\}|}{|\{\xxx\in i([\PPP(\aaa)(F)])|H(\xxx)\leq B\}|}\sim_{B\to\infty}\frac{\omega(W)}{\omega\big(\prod_{\vMF}[\PPP(\aaa)(F_v)]\big)}.$$
The statement is proven.
\end{enumerate}
\end{proof}
We deduce the following proposition (it is analogous to parts (a) and (b) of \cite[Proposition 3.3]{Peyre})
\begin{prop}\label{moreofosnovna}The following claims are valid:
\begin{enumerate}
\item Let $f:\prod_{\vMF}[\PPP(\aaa)(F_v)]\to\CC$ be a step function (the sum is assumed to be finite) $\sum \lambda_i\mathbf 1_{W_i},$ where $W_i$ are open sets with negligible boundaries. One has that \begin{align*}\lim_{B\to\infty}\frac{\sum_{\xxx\in [\PPP(\aaa)(F)]}f(\xxx)}{|\{\xxx\in [\PPP(\aaa)(F)]|H(\xxx)\leq B\}|}\hskip-1cm&\\&=\lim_{B\to\infty}\frac{\sum_{\xxx\in i([\PPP(\aaa)(F)])}f(\xxx)}{|\{\xxx\in i([\PPP(\aaa)(F)])|H(\xxx)\leq B\}|}\\
&=\frac{\int_{\prod_{\vMF}[\PPP(\aaa)(F_v)]}f\omega}{\omega\big(\prod_{\vMF}[\PPP(\aaa)(F_v)]\big)}.
\end{align*}
\item  For every continuous function $f:\prod_{\vMF}[\PPP(\aaa)(F_v)]\to\CC,$  the equality from (1) is valid 
\item The asymptotic formula from \ref{osnovna} is valid for any quasi-toric degree $|\aaa|$ family of $\aaa$-homogenous functions $(g_v:\Fvnz\to\RR_{>0})_v$.
\end{enumerate}
\end{prop}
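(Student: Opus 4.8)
\textbf{Proof proposal for Proposition \ref{moreofosnovna}.}

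The plan is to bootstrap from Theorem \ref{quasitoricequi} (equidistribution) and Theorem \ref{osnovna} (the asymptotic for toric-type smooth families), exactly in the spirit of parts (a) and (b) of \cite[Proposition 3.3]{Peyre}. For (1), I would first write $f=\sum_i\lambda_i\mathbf 1_{W_i}$ with each $W_i$ open of negligible boundary, so that by linearity it suffices to treat $f=\mathbf 1_{W}$ with $\omega(\partial W)=0$. For such an $f$, $\sum_{\xxx\in i([\PPP(\aaa)(F)])}f(\xxx)$ truncated at $H(\xxx)\le B$ is exactly $|\{\xxx\in i([\PPP(\aaa)(F)])\cap W|H(\xxx)\le B\}|$, and dividing by $|\{\xxx\in i([\PPP(\aaa)(F)])|H(\xxx)\le B\}|$ and letting $B\to\infty$ gives $\omega(W)/\omega(\prod_{\vMF}[\PPP(\aaa)(F_v)])$ directly from Theorem \ref{quasitoricequi}. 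The passage between the sum over $[\PPP(\aaa)(F)]$ and the sum over $i([\PPP(\aaa)(F)])$ uses that the map $i$ has fibers of constant cardinality $|\Sh^1(F,\mu_{\gcd(\aaa)})|$ over its image on the torus part (Proposition \ref{cesres}), together with the fact from Theorem \ref{boundppatta} that the contribution of $[\PPP(\aaa)(F)]-[\TTa(F)]$ is of strictly smaller order $B^{(|\aaa|-\min_j a_j)/|\aaa|}\log(2+B)^{n^2(r_1+r_2)+n-1}=o(B)$; this $o(B)$ bound also appeared inside the proof of \ref{quasitoricequi} and can be quoted verbatim. Thus both normalized sums have the same limit.

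For (2), I would approximate a general continuous $f$ uniformly from above and below by step functions of the type in (1). Concretely, since $\prod_{\vMF}[\PPP(\aaa)(F_v)]$ is compact and Hausdorff (established in \ref{measureandconstant}) and since the elementary open subsets of negligible boundary form a basis of its topology (as observed in step (4) of the proof of \ref{quasitoricequi}), for every $\epsilon>0$ one can find step functions $g_-,g_+$ of the form $\sum\lambda_i\mathbf 1_{W_i}$ with $W_i$ elementary of negligible boundary, such that $g_-\le f\le g_+$ pointwise and $\int(g_+-g_-)\omega<\epsilon$. Applying (1) to $g_\pm$, sandwiching, and letting $\epsilon\to0$ yields the claim for $f$; the real-valued case gives the complex case by splitting into real and imaginary parts.

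For (3), the point is to remove the smoothness hypothesis on the quasi-toric family $(g_v)_v$. Given an arbitrary quasi-toric degree $|\aaa|$ family of $\aaa$-homogenous continuous functions $(g_v)_v$, I would, at the finitely many places where $g_v$ is not toric (in particular all archimedean places), approximate $g_v$ above and below by \emph{smooth} $\aaa$-homogenous functions of weighted degree $|\aaa|$: this descends to approximating a positive continuous function on the compact $[\PPP(\aaa)(\Fv)]$ by smooth ones, which is possible by Lemma \ref{bumpfunctions} (partitions of unity / bump functions on the $v$-adic manifold structure). Multiplying $g_v$ by such approximants $h_{v}^{\pm}$ with $h_v^-\le 1\le h_v^+$ and $\int(h_v^+-h_v^-)$ small produces smooth quasi-toric families $(h_v^{\pm}g_v)_v$ whose heights are $H(\xxx)$ multiplied by a factor bounded between $(1-\epsilon)$ and $(1+\epsilon)$ after composing with $i$, and whose associated measures are $\omega$ multiplied by the corresponding bounded factor (Lemma \ref{omegachanges}). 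Theorem \ref{osnovna} applies to the smooth families $(h_v^\pm g_v)_v$; sandwiching the counting function for $H$ between those for $H((h_v^+ g_v)_v)$ and $H((h_v^- g_v)_v)$, using monotonicity in $B$ and the fact that $\tau((h_v^\pm g_v)_v)\to\tau((g_v)_v)$ as $\epsilon\to0$ (again by Lemma \ref{omegachanges} and dominated convergence), gives the asymptotic $|\{\xxx\in[\PPP(\aaa)(F)]|H(\xxx)\le B\}|\sim \tau/|\aaa|\cdot B$. The main obstacle I anticipate is the sandwiching argument in (3): one must check carefully that the multiplicative error incurred on $H$ by replacing $g_v$ with $h_v^\pm g_v$ is uniform over all rational points (which follows from compactness of $[\PPP(\aaa)(\Fv)]$ as in Lemma \ref{toricisclos}), and that this error, being independent of $B$, only rescales $B$ by $1\pm O(\epsilon)$ inside the asymptotic — so that the limit of the ratio is $1$ as $\epsilon\to0$. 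The rest is routine bookkeeping with the $o(B)$ boundary contribution already controlled in (1).
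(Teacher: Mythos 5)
Your treatment of parts (1) and (2) is essentially the paper's (in (1) you observe that linearity of the numerator in $f$ already handles overlapping $W_i$; the paper passes instead through an inclusion--exclusion reduction to pairwise disjoint $W_i$, which is a slightly longer route to the same thing). Part (2) is the same Stone--Weierstrass-style step as the paper.

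For part (3) there is a genuine gap. You write that multiplying $g_v$ by smooth approximants $h_v^\pm$ ``produces smooth quasi-toric families $(h_v^\pm g_v)_v$''. That claim is false: if $g_v$ is merely continuous, the product of a smooth $h_v^\pm$ with $g_v$ is not smooth, so Theorem \ref{osnovna} does not apply to $(h_v^\pm g_v)_v$. The idea is salvageable, but the object to multiply is a \emph{fixed smooth} reference function, not $g_v$: at each place $v$ where $g_v$ is not toric, take the toric function $f_v^{\#}$ (which is smooth), observe that $g_v/f_v^{\#}$ descends to a positive continuous function on the compact $[\PPP(\aaa)(F_v)]$, approximate it pointwise above and below by smooth functions $u_v^{\pm}$ (bounded away from $0$) with $\int(u_v^{+}-u_v^{-})\,\omega_v$ small via Lemma \ref{bumpfunctions}, and set $g_v^{\pm}:=u_v^{\pm}f_v^{\#}$. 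These \emph{are} smooth, quasi-toric, $\aaa$-homogeneous of weighted degree $|\aaa|$, sandwich $g_v$ pointwise, and give $\tau((g_v^{\pm})_v)\to\tau((g_v)_v)$ by Lemma \ref{omegachanges} and dominated convergence; your sandwich then works. Note also that the paper's own proof of (3) follows a genuinely different route that avoids re-invoking \ref{osnovna} for the approximating families: it keeps the single smooth family $(f_v)_v$, forms the continuous function $h=H/H^g$ on $\prod_v[\PPP(\aaa)(F_v)]$ (continuous because $f_v$ and $g_v$ are both $\aaa$-homogeneous of the same weighted degree and differ at finitely many places), rewrites $\{H^g\le B\}=\{H\le h\cdot B\}$, approximates $h$ by step functions, and applies equidistribution for $H$. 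That route stays inside the equidistribution framework established in Theorem \ref{quasitoricequi} rather than re-running the Tauberian asymptotic for each approximating family; either path, once the smoothness error in yours is repaired, yields the claim.
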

\begin{proof}
For the first two claims, the proofs of the corresponding claims in \cite[Proposition 3.3]{Peyre} work here. For the third one, we make minor modifications.
\begin{enumerate}
\item By \ref{quasitoricequi}, the equality is valid for the characteristic functions of open sets with the negligible boundaries. Clearly, the equality stays valid for any step function (the sum is assumed to be finite) $\sum_{i}\lambda_i\mathbf 1_{W_i},$ where $W_i$ are open sets with negligible boundaries, assumed to be pairwise disjoint. We verify that the same equality stays valid for a step function $\sum_{i}\lambda_i\mathbf 1_{W_i},$ where $W_i$ are open sets with the negligible boundaries (not assumed pairwise disjoint). For every point $\xxx\in\prod_{\vMF}[\PPP(\aaa)(F_v)],$ let $A(\xxx)$ be the set of the indices~$i$ for which $\xxx\in W_i$. We let $W_{A(\xxx)}=\bigcap_{i\in A(\xxx)}W_i$. The function $\sum_{i}\lambda_i\mathbf 1_{W_i}$ coincides with the function $$\sum_{A(\xxx)}\bigg(\sum_{i\in A(x)}\lambda_i\bigg)\mathbf 1_{W_{A(\xxx)}},$$ where the sum is taken over all subsets that appear as $A(\xxx)$ for some $\xxx\in\prod_{\vMF}[\PPP(\aaa)(F_v)]$. A finite intersection of open sets with negligible boundary is an open set with the negligible boundary (because the boundary of an intersection is contained in the union of the boundaries), thus the sets $W_{A(\xxx)}$ are open sets with negligible boundary. Hence, the equality stays valid for described step functions.
\item The open sets with the negligible boundaries form a basis of the compact topological space $\prod_{\vMF}[\PPP(\aaa)(F_v)]$. Now, any continuous function $f:\prod_{\vMF}[\PPP(\aaa)(F_v)]\to\CC$ can be approached uniformly by step functions $\sum_{i}\lambda_i\mathbf 1_{W_i},$ where $W_i$ are open sets with the negligible boundaries. The claim follows.
\item Let $(g_v:\Fvnz\to\RR_{>0})_v$ be a quasi-toric degree $|\aaa|$ family of $\aaa$-homogenous functions and let $H^g$ be the resulting height. Let $S'$ be the finite set of places for which $f_v\neq g_v$. For $v\in S'$ and for $\xxx\in[\PPP(\aaa)(F_v)],$ let $\wx\in\Fvnz$ be a lift of~$\xxx$. The function $$h:\prod_{\vMF}[\PPP(\aaa)(F_v)]\to\RR_{>0}\hspace{1cm}(\xxx_v)_v\mapsto \prod_{v\in S'}\frac{f_v(\wx_v)}{g_v(\wx_v)},$$ does not depend on the choices of $\wx_v,$ is a continuous function (because $f_v$ and $g_v$ are of the same weighted degree and are continuous functions $\Fvnz\to\RR_{>0}$). Note that when $\xxx\in i([\PPP(\aaa)(F)])$, one has that $$h(\xxx)=\frac{H(\xxx)}{H^g(\xxx)},$$ because for every~$v$, one can take $\wx_v$ to be fixed element in $(F^{\times})^n$. 
By \ref{quasitoricequi}, for every open $W$ with the negligible boundary, one has that \begin{multline}\label{nezastoeq}\lim_{B\to\infty}\frac{|\{\xxx\in i(\prod_{\vMF}[\PPP(\aaa)(F_v)])| H(\xxx)\leq \mathbf 1_WB\}|}{|\{\xxx\in i([\PPP(\aaa)(F)])| H(\xxx)\leq B\}|}\\=\frac{\int_{\prod_{\vMF}[\PPP(\aaa)(F_v)]}\mathbf 1_W\omega}{\omega\big(\prod_{\vMF}[\PPP(\aaa)(F_v)]\big)}.\end{multline}
The same equality is valid when $\mathbf 1_W$ is replaced by a step function $\sum_{i}\lambda_i\mathbf 1_{W_i},$ where $W_i$ are open with the negligible boundaries. Let $\epsilon>0$, there exists a step function $\sum_{i}\lambda_i\mathbf 1_{W_i},$ with $W_i$ open with the negligible boundaries, such that $0\leq h-\sum_{i}\lambda_i\mathbf 1_{W_i}\leq \epsilon.$
It follows that \begin{align*}\frac{|\{\xxx\in i([\PPP(\aaa)(F)])|H^g(\xxx)\leq B\}|}{|\{\xxx\in i([\PPP(\aaa)(F)])|H(\xxx)\leq B\}|}\hskip-2cm&\\&=\frac{|\{\xxx\in[\PPP(\aaa)(F)]|H(\xxx)\leq h(\xxx)B\}|}{|\{\xxx\in i([\PPP(\aaa)(F)])|H(\xxx)\leq B\}|}\\
&\leq\frac{|\{\xxx\in i([\PPP(\aaa)(F)])|H(\xxx)\leq (\epsilon+\sum_i\lambda_i\mathbf 1_{W_i}) B\}|}{|\{\xxx\in i([\PPP(\aaa)(F_v)])|H(\xxx)\leq B\}|}\\
&=_{B\to\infty}\frac{\int_{\prod_{\vMF}[\PPP(\aaa)(F_v)]}(\epsilon + \sum_i\lambda_i\mathbf 1_{W_i})\omega}{\omega\big(\prod_{\vMF}[\PPP(\aaa)(F_v)]\big)}\\&=\epsilon+\frac{\int_{\prod_{\vMF}[\PPP(\aaa)(F_v)]} (\sum_i\lambda_i\mathbf 1_{W_i})\omega}{\omega\big(\prod_{\vMF}[\PPP(\aaa)(F_v)]\big)}.
\end{align*}
Similarly, \begin{align*}
\frac{|\{\xxx\in i([\PPP(\aaa)(F)])|H^g(\xxx)\leq B\}|}{|\{\xxx\in i([\PPP(\aaa)(F)])|H(\xxx)\leq B\}|}\hskip-1cm&\\&=\frac{|\{\xxx\in[\PPP(\aaa)(F)]|H(\xxx)\leq h(\xxx)B\}|}{|\{\xxx\in i([\PPP(\aaa)(F)])|H(\xxx)\leq B\}|}\\
&\geq\frac{|\{\xxx\in[\PPP(\aaa)(F)]|H(\xxx)\leq (\sum_i\lambda_i\mathbf 1_{W_i})B\}|}{|\{\xxx\in i([\PPP(\aaa)(F)])|H(\xxx)\leq B\}|}\\
&=_{B\to\infty}\frac{\int_{\prod_{\vMF}[\PPP(\aaa)(F_v)]} (\sum_i\lambda_i\mathbf 1_{W_i})\omega}{\omega\big(\prod_{\vMF}[\PPP(\aaa)(F_v)]\big)}.
\end{align*}
By decreasing $\epsilon$, we deduce that $$\lim_{B\to\infty}\frac{|\{\xxx\in i([\PPP(\aaa)(F)])|H^g(\xxx)\leq B\}|}{|\{\xxx\in i([\PPP(\aaa)(F)])|H(\xxx)\leq B\}|}=\frac{\int_{\prod_{\vMF}[\PPP(\aaa)(F_v)]}h\omega}{\omega\big(\prod_{\vMF}[\PPP(\aaa)(F_v)]\big)}.$$
It follows from \ref{omegachanges} that $$\int_{\prod_{\vMF}[\PPP(\aaa)(F_v)]}h\omega=\omega((g_v)_v)\big(\prod_{\vMF}[\PPP(\aaa)(F_v)]\big).$$ Finally, \ref{osnovna} gives that $$\lim_{B\to\infty}\frac{|\{\xxx\in i([\PPP(\aaa)(F)])|H(\xxx)\leq B\}|}B=\frac{\omega\big(\prod_{\vMF}[\PPP(\aaa)(F_v)]\big)}{|\aaa|},$$ thus $$\lim_{B\to\infty}\frac{|\{\xxx\in i([\PPP(\aaa)(F)])|H^g(\xxx)\leq B\}|}B=\frac{\omega((g_v)_v)\big(\prod_{\vMF}[\PPP(\aaa)(F_v)]\big)}{|\aaa|}$$ as claimed.
\end{enumerate}
\end{proof}
\begin{rem}
\normalfont
Suppose that $(f^{\#}_v)_v$ is the toric degree $|\aaa|$ family of $\aaa$-homogenous functions. Using the expression for $\tau$ from \ref{locom} and the formula for the volumes $\omega_v([\PPP(\aaa)(F_v)])$ for $\vMFi$ from \ref{locinfomega}, we get that
\begin{align*}\tau&=\frac{\Res(\zeta_F,1)|\mu_{\gcd(\aaa)}(F)|(2^{n-1}|\aaa|)^{r_1}((2\pi)^{n-1}|\aaa|)^{r_2}}{\Delta(F)^{\frac{n-1}2}\zeta_F(|\aaa|)}\\
&=\frac{\Reg(F)h_F 2^{r_1}(2\pi)^{r_2}}{\Delta(F)^{\frac12}}\frac{|\mu_{\gcd(\aaa)(F)}|2^{(n-1)r_1}(2\pi)^{(n-1)r_2}|\aaa|^{r_1+r_2}}{\Delta(F)^{\frac{n-1}2}w_F\zeta_F(|\aaa|)}\\
&=\frac{h_F}{\zeta_F(|\aaa|)}\bigg(\frac{2^{r_1}(2\pi)^{r_2}}{\sqrt{\Delta(F)}}\bigg)^{n}|\aaa|^{r_1+r_2}\frac{\Reg(F)|\mu_{\gcd(\aaa)}(F)|}{w_F}.\end{align*} Here, $\Reg(F)$ is the regulator of~$F$ and $w_F$ the number of roots of unity in~$F$.
Hence, \begin{multline*}|\{\xxx\in[\PPP(\aaa)(F)]|H^{\#}(\xxx)\leq B\}|\\\sim_{B\to\infty}\frac{h_F}{\zeta_F(|\aaa|)}\bigg(\frac{2^{r_1}(2\pi)^{r_2}}{\sqrt{\Delta(F)}}\bigg)^{n}|\aaa|^{r_1+r_2-1}\frac{\Reg(F)|\mu_{\gcd(\aaa)}(F)|}{w_F}B.\end{multline*}
The counting result in this case has been obtained by Bruin and Najman (\cite[Theorem 3.7]{NajmanBruin}). They used the method of Deng from \cite{Deng}, which is similar to the original method of Schanuel from \cite{Schanuel} for the case of rational points of the projective space.
\end{rem}
\begin{rem}
\normalfont
Theorem \ref{bigthmm} and Theorem  \ref{osnovna} give that the closed substack $\mathscr Z(\{X_1\cdots X_n\})\subset\PPP(\aaa)$ given by the~$\Gm$-invariant closed subscheme $Z(X_1\cdots X_n)\subset\AAA^n-\{0\}$, is not an ``accumulating" substack (see \cite[Definition 1.3]{Peyre} for the terminology). 
\end{rem}
The same estimate as in \ref{osnovna} is true for the rational points of the stack~$\oPPa$ (because the stack $ \oPPa-\PPP(\aaa)$ has  only one rational point).
\begin{rem}\label{firstpartofpeyre}
\normalfont
For $L\in\Pic(\oPPa)\otimes_{\ZZ}\RR=\Pic(\oPPa)_{\RR}$ and $\lambda\in\RR$, we define a measure $\theta_L$ on the set $$\mathcal H_{L}(\lambda)=\{y\in\Pic(\oPPa)^*_{\RR}|y(L)=\lambda\}= \begin{cases}
\RR&\text{if $L=0$ and $\lambda=0$,}\\
\big\{\frac{\lambda}{L}\big\}&\text{if $L\neq 0$}\\
\emptyset&\text{if $L=0$ and $\lambda\neq 0$.}
               \end{cases},$$ by setting it to be the Lebesgue measure, the Haar measure which is normalized by $\theta_L\big(\big\{\frac{\lambda}{L}\big\}\big)=\frac1L$ and by $\theta_0(\emptyset)=1,$ respectively.
We define $$\alpha=\alpha(\oPPa)=\theta_{|\aaa|}(\{y>0\}\cap \mathcal H_{|\aaa|}(1))=\theta_{|\aaa|}\big(\frac{1}{|\aaa|}\big)=\frac{1}{|\aaa|}.$$ (This definition is \cite[Definition 2.4]{Peyre} for the case $\rk(\Pic)=1$). Thus the leading constant in the asymptotics of \ref{osnovna} and in Part (3) of \ref{moreofosnovna}, writes as $\alpha\tau$, as predicted by Peyre in \cite{Peyre} for Fano varieties.
\end{rem}
\chapter{Number of $\mu_m$-torsors of bounded discriminant}
\label{number of torsors}
In this chapter~$F$ will be a number field and $\OOF$ its ring of integers. Let $m\geq 2$ be an integer. The goal of this chapter is to give the asymptotic behaviour for the number of $\mu_m$-torsors over~$F$ of bounded discriminant. We are going to use the language of heights on weighted projective stacks from \ref{Quasi-toric heights}.

\section{Calculations of the discriminant}
In this section we will be calculating the discriminants of $\mu_m$-torsors over $\Fv$ where~$v$ is a finite place of~$F$.
\subsection{}
We use \cite{Neukirch} as principal reference for the definition and basic properties of discriminants. Let~$R$ be a Dedekind domain and let~$K$ be its field of fractions. Let~$L$ be a finite extension of~$K$. Let $R'$ be the integral closure of~$R$ in~$L$. If $x_1\doots x_n$ is a basis of~$L$ over~$K$, we set $$\Delta(x_1\doots x_n):=\det((\Tr (x_ix_j))_{ij}),$$where $\Tr:L\to K$ is the trace map. We say that $\Delta$ is the discriminant of the basis $x_1\doots x_n$. We define $\Delta (R',R)$ to be the ideal of~$R$ generated by all $\Delta(x_1\doots x_n)$ when $x_1\doots x_n$ range over all bases of $L/K$ which are contained $R'$. By the abuse of notation, we may write $\Delta (L/K)$ for $\Delta (R'/R)$ if~$R$ and $R'$ are understood from the context. 
\begin{prop}[{\cite[Corollary 2.10, Chapter III]{Neukirch}}] \label{tower}
For a tower of fields $K\subset L\subset M$ one has that:
$$\Delta(M/K)=\Delta(L/K)^{[M:L]}N_{L/K}(\Delta(M/L)).$$
\end{prop}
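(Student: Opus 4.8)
The plan is to reduce the identity of ideals of $R$ to a pointwise identity of discriminants of bases over a discrete valuation ring, and then globalize. First I would recall that both the relative discriminant and the norm of an ideal are compatible with localization: for every maximal ideal $\mathfrak p$ of $R$ one has $\Delta(R'/R)_{\mathfrak p}=\Delta(R'_{\mathfrak p}/R_{\mathfrak p})$ (and likewise for $M/K$, $M/L$), and $N_{L/K}$ commutes with passing to $R_{\mathfrak p}$. Since an ideal of a Dedekind domain is determined by its localizations at all maximal ideals, it suffices to prove the formula after replacing $R$ by $R_{\mathfrak p}$; that is, I may assume $R$ is a DVR (we are moreover in a setting, number fields, where $R'$ and $R''$ are module-finite over $R$, so no pathology arises). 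Over the DVR $R$, both $R'$ and $R''$ are semilocal Dedekind domains, hence PIDs, so $R'$ is free over $R$ and $R''$ is free over $R'$; fixing an $R$-basis $x_1\doots x_n$ of $R'$ ($n=[L:K]$) and an $R'$-basis $y_1\doots y_m$ of $R''$ ($m=[M:L]$), the products $\{x_j y_i\}$ form an $R$-basis of $R''$, and the three discriminant ideals are generated by $\Delta(x_1\doots x_n)$, $\Delta(y_1\doots y_m)$, $\Delta(\{x_jy_i\})$ respectively. So it is enough to prove $$\Delta(\{x_jy_i\})=N_{L/K}\big(\Delta(y_1\doots y_m)\big)\cdot\Delta(x_1\doots x_n)^{m}$$ as an identity in $K$, where on the right the norm of the generator generates $N_{L/K}$ of the ideal $\big(\Delta(y_1\doots y_m)\big)$.

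For this pointwise identity I would use embeddings into an algebraic closure $\overline K$. Let $\sigma_1\doots\sigma_n$ be the $K$-embeddings of $L$, and for each $k$ let $\tau_{k,1}\doots\tau_{k,m}$ be the embeddings of $M$ extending $\sigma_k$. Writing $A:=\big(\sigma_k(x_j)\big)_{k,j}$ and $B_k:=\big(\tau_{k,l}(y_i)\big)_{l,i}$, the matrix $C:=\big(\tau_{k,l}(x_jy_i)\big)_{(k,l),(j,i)}$ satisfies $\tau_{k,l}(x_jy_i)=\sigma_k(x_j)\tau_{k,l}(y_i)$, so $C$ factors as $\operatorname{diag}(B_1\doots B_n)\cdot\big(A\otimes I_m\big)$ after an appropriate ordering of indices; hence $\det C=\big(\prod_k\det B_k\big)(\det A)^{m}$. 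Squaring and using that the squared Vandermonde-type determinant of a full set of embeddings computes the discriminant of the corresponding basis, one gets $\Delta(\{x_jy_i\})=(\det C)^2=\big(\prod_k(\det B_k)^2\big)\Delta(x_1\doots x_n)^{m}$. The remaining point is that $(\det B_k)^2=\sigma_k\big(\Delta(y_1\doots y_m)\big)$: extending $\sigma_k$ to an automorphism $\widehat\sigma_k$ of $\overline K$, the maps $\widehat\sigma_k^{-1}\circ\tau_{k,l}$ run exactly over the $L$-embeddings of $M$, so $\det B_k=\pm\,\widehat\sigma_k\big(\det(\rho_l(y_i))_{l,i}\big)$ with $\rho_l$ the $L$-embeddings of $M$, and since $\Delta(y_1\doots y_m)\in L$ we conclude $\prod_k(\det B_k)^2=\prod_k\sigma_k\big(\Delta(y_1\doots y_m)\big)=N_{L/K}\big(\Delta(y_1\doots y_m)\big)$.

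Assembling these gives the displayed identity of elements, hence the identity of ideals over the DVR $R$, and then the localization argument above yields the statement for the original Dedekind domain $R$. The main obstacle is the bookkeeping in the middle step: keeping track of how the $L$-embeddings of $M$ are permuted and "twisted" by each $\sigma_k$, and verifying cleanly that this twisting is exactly what produces the norm $N_{L/K}$; the block factorization of $C$ and the reduction-to-a-DVR part are routine. An alternative route, which sidesteps the embedding combinatorics, is to prove the tower formula for the different, $\mathfrak d_{R''/R}=\mathfrak d_{R''/R'}\cdot(\mathfrak d_{R'/R}R'')$, and then apply $N_{M/K}$ together with $\Delta=N(\mathfrak d)$; but that merely relocates the work, and in any case the statement is exactly \cite[Corollary 2.10, Chapter III]{Neukirch}, which may simply be quoted.
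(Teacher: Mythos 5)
The paper does not give a proof of this proposition; it quotes it directly from \cite[Corollary 2.10, Chapter III]{Neukirch}, so there is no in-paper argument for yours to be compared against. That said, your blind proof is correct and is, in substance, the standard textbook argument: localize so that $R$ becomes a DVR, over which $R'$ and $R''$ are free and the three discriminant ideals are generated by the discriminants of chosen bases; obtain the block factorization $\det C=\big(\prod_k\det B_k\big)(\det A)^m$ by exploiting $\tau_{k,l}|_L=\sigma_k$; and recognize $\prod_k(\det B_k)^2$ as $N_{L/K}\big(\Delta(y_1,\dots,y_m)\big)$ by conjugating through $\widehat\sigma_k^{-1}$. The point you flag as delicate is handled cleanly: the sign ambiguity in $\det B_k$ is killed by squaring, and since $\Delta(y_1,\dots,y_m)\in L$ you may replace $\widehat\sigma_k$ by $\sigma_k$ to land in the norm. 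Your appeal to separability (needed to count embeddings and to use $\det(\Tr(z_iz_j))=\det(\sigma(z_i))^2$) is automatic in the number-field setting of the paper, and the final remark that one could instead pass through the tower formula for the different $\mathfrak d_{R''/R}=\mathfrak d_{R''/R'}\cdot(\mathfrak d_{R'/R}R'')$ and apply $N_{M/K}$ is also the route Neukirch himself uses nearby, so either path is acceptable.
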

\begin{prop}[{\cite[Corollary 2.12, Chapter III]{Neukirch}}]
Suppose that $L/K$ is unramified. One has that $\Delta (L/K)=(1)$.
\end{prop}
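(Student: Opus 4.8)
The plan is to reduce the statement to a purely local computation and then identify the local discriminant with the discriminant of the residue algebra. Write $R'$ for the integral closure of $R$ in $L$ and $n=[L:K]$; recall that $L/K$ is separable (automatic in the number field setting), so $\Tr_{L/K}$ and hence the ideal $\Delta(R'/R)$ are defined. The first point is that formation of $\Delta(R'/R)$ commutes with localization at a nonzero prime $\mathfrak p$ of $R$: if $R_\mathfrak p$ is the localization and $R'_\mathfrak p=R'\otimes_R R_\mathfrak p$ the integral closure of $R_\mathfrak p$ in $L$, then $\Delta(R'_\mathfrak p/R_\mathfrak p)=\Delta(R'/R)R_\mathfrak p$. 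Indeed, every $K$-basis of $L$ lying in $R'$ also lies in $R'_\mathfrak p$, giving one inclusion; conversely, a basis $y_1,\dots,y_n$ in $R'_\mathfrak p$ can be multiplied by a common element $s\in R\setminus\mathfrak p$ so that $sy_i\in R'$, and $\Delta(sy_1,\dots,sy_n)=s^{2n}\Delta(y_1,\dots,y_n)$ differs from $\Delta(y_1,\dots,y_n)$ by the square of a unit of $R_\mathfrak p$, giving the other inclusion. Hence $\Delta(R'/R)=(1)$ if and only if, for every $\mathfrak p$, we have $\Delta(R'_\mathfrak p/R_\mathfrak p)=R_\mathfrak p$. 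So I fix $\mathfrak p$ and work over the discrete valuation ring $R_\mathfrak p$, with valuation $v_\mathfrak p$ and residue field $k=R/\mathfrak p$.

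Over the DVR $R_\mathfrak p$ the module $R'_\mathfrak p$ is finitely generated (standard for a separable extension of a Dedekind domain) and torsion-free, hence free, of rank $n$. Choose an integral basis $\omega_1,\dots,\omega_n$ of $R'_\mathfrak p$ over $R_\mathfrak p$ and set $d:=\Delta(\omega_1,\dots,\omega_n)=\det\big((\Tr_{L/K}(\omega_i\omega_j))_{i,j}\big)$. Since any $y_i\in R'_\mathfrak p$ is an $R_\mathfrak p$-linear combination of the $\omega_j$, the change-of-basis matrix has entries in $R_\mathfrak p$, so $\Delta(y_1,\dots,y_n)\in (d)$ and therefore $\Delta(R'_\mathfrak p/R_\mathfrak p)=(d)$. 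Thus it suffices to prove $v_\mathfrak p(d)=0$, equivalently that the image $\bar d$ of $d$ in $k$ is nonzero.

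To compute $\bar d$ I reduce the trace form modulo $\mathfrak p$. As $R'_\mathfrak p$ is free over $R_\mathfrak p$ with basis $\{\omega_i\}$, the $k$-algebra $A:=R'_\mathfrak p/\mathfrak p R'_\mathfrak p\cong R'/\mathfrak p R'$ is free of rank $n$ over $k$ with basis $\{\bar\omega_i\}$, and the trace is compatible with the base change $R_\mathfrak p\to k$: for $x\in R'_\mathfrak p$ one has $\overline{\Tr_{L/K}(x)}=\Tr_{A/k}(\bar x)$, because for a free algebra the trace of a multiplication operator is the trace of an explicit matrix over the ground ring, which reduces entrywise. Hence $\bar d=\det\big((\Tr_{A/k}(\bar\omega_i\bar\omega_j))_{i,j}\big)$ is the discriminant of the $k$-algebra $A$, and it is nonzero exactly when the $k$-bilinear trace form on $A$ is nondegenerate. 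Now $L/K$ unramified at $\mathfrak p$ means $\mathfrak p R'=\mathfrak P_1\cdots\mathfrak P_g$ with distinct primes $\mathfrak P_i$ (all ramification indices $1$) and each residue extension $k_i:=R'/\mathfrak P_i$ separable over $k$; by the Chinese Remainder Theorem $A\cong\prod_{i=1}^{g}k_i$, a finite product of finite separable field extensions of $k$. The trace form of a finite separable field extension is nondegenerate (e.g. via the primitive element theorem and a Vandermonde determinant), and the trace form of a finite product of algebras is the orthogonal sum of the trace forms of the factors; hence the trace form on $A$ is nondegenerate, so $\bar d\neq 0$ and $v_\mathfrak p(\Delta(R'/R))=v_\mathfrak p(d)=0$. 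Since $\mathfrak p$ was arbitrary, $\Delta(L/K)=\Delta(R'/R)=(1)$.

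The genuinely substantive ingredients are (i) the compatibility $\overline{\Tr_{L/K}(x)}=\Tr_{A/k}(\bar x)$, which rests on $R'_\mathfrak p$ being a free $R_\mathfrak p$-module of rank $n$ and on trace being base-change invariant for free algebras; and (ii) the classical nondegeneracy of the trace pairing on a finite separable field extension, i.e. that an \'etale algebra has nonvanishing discriminant. Everything else—the localization identity for the discriminant ideal, freeness of finitely generated torsion-free modules over a PID, and the Chinese Remainder decomposition of $R'/\mathfrak p R'$—is routine. I expect step (i), namely stating and using the reduction of the trace form precisely, together with the correct handling of integral bases under localization, to be the part that needs to be written with care; the rest is bookkeeping.
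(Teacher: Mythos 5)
Your argument is correct, but it takes a genuinely different route from the one the paper cites. Neukirch's Corollary 2.12 is deduced as an immediate consequence of two earlier results in the same section: the characterization of ramification by the different, namely that $\mathfrak q$ is unramified if and only if $v_{\mathfrak q}(\mathfrak D(L/K))=0$ (Theorem 2.6, the tame part of which the paper already quotes as \ref{tamrame}), together with the relation $\Delta(L/K)=N_{L/K}(\mathfrak D(L/K))$ (Theorem 2.9). Your proof bypasses the different entirely: after localizing and choosing an integral basis over the DVR $R_{\mathfrak p}$, you reduce the trace form modulo $\mathfrak p$, use compatibility of trace with base change for free algebras, identify $R'/\mathfrak p R'$ as an \'etale $k$-algebra via the unramifiedness hypothesis and CRT, and invoke nondegeneracy of the trace pairing on a finite separable extension. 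Both proofs ultimately come down to nondegeneracy of the trace form on an \'etale $k$-algebra, but they package the local computation differently: Neukirch encapsulates it once in the different--discriminant machinery, which then also yields the tamely ramified case (\ref{tamrame}) used elsewhere in the chapter, while your approach is self-contained and shorter if one only wants the unramified case. Both are standard and correct; the choice is a matter of what else one wants to set up along the way.

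One small remark on exposition: in the localization reduction you should state explicitly that $\Delta(R'/R)=(1)$ if and only if $\Delta(R'/R)R_{\mathfrak p}=R_{\mathfrak p}$ for every nonzero prime $\mathfrak p$, which is the standard local--global principle for ideals in a Dedekind domain; you use this but phrase it only as an equivalence of vanishing of the local valuations. Also, when you say ``Choose an integral basis $\omega_1,\dots,\omega_n$ of $R'_{\mathfrak p}$ over $R_{\mathfrak p}$,'' it is worth noting (as you implicitly do) that this is a $K$-basis of $L$ contained in $R'_{\mathfrak p}$, which is what is required to compare with the defining ideal. Otherwise the argument is complete and careful.
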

Let $\mathfrak D(L/K)$ denotes the different ideal (\cite[Definition 2.1, Chapter III]{Neukirch}). The following statement is true for any {\it{tamely ramified}} primes (\cite[Definition 7.6, Chapter II]{Neukirch}), but for our needs the following version suffices (clearly, it makes primes automatically tamely ramified):
\begin{prop}[{\cite[Theorem 2.6]{Neukirch}}] \label{tamrame}
Suppose that $\mathfrak p$ is a non-zero prime ideal of~$R$ such that $R/\mathfrak p$ is a finite field of the characteristic coprime to $[L:K]$ and let $\mathfrak q$ be a prime of $R'$ lying over $\mathfrak p$. One has that $v_{\mathfrak q}(\mathfrak D(L/K))=e_{\mathfrak q|\mathfrak p}-1$ where $v_{\mathfrak q}(\Delta (L/K))$ is the exponent of $\mathfrak q$ in the prime factorisation of the ideal $\mathfrak D(L/K)$ and $e_{\mathfrak q|\mathfrak p}$ is the degree of the ramification of the prime ideal $\mathfrak p$ in $\mathfrak q$.
\end{prop}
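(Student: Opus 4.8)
The plan is to prove the formula locally. Since the different ideal is compatible with localization and completion --- one has $v_{\mathfrak q}(\mathfrak D(L/K)) = v_{\widehat{\mathfrak q}}\big(\mathfrak D(\widehat L_{\mathfrak q}/\widehat K_{\mathfrak p})\big)$, where $\widehat L_{\mathfrak q}$ and $\widehat K_{\mathfrak p}$ are the completions at $\mathfrak q$ and $\mathfrak p$ --- I would first reduce to the case where $K$ is a complete discretely valued field with finite residue field of characteristic $p$ and $L/K$ is a finite extension of degree prime to $p$. In that setting $\mathfrak q$ becomes the unique maximal ideal and $e_{\mathfrak q|\mathfrak p}$ is just its ramification index $e$ over $K$.

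Next I would split the extension through its maximal unramified subextension $K \subset K' \subset L$. Using multiplicativity of the different in towers, $\mathfrak D(L/K) = \mathfrak D(L/K')\cdot\big(\mathfrak D(K'/K)\,\mathcal O_L\big)$ (the analogue for differents of Proposition \ref{tower}), together with the fact that $\mathfrak D(K'/K) = (1)$ because $K'/K$ is unramified, this reduces the claim to showing $v_{\mathfrak q}\big(\mathfrak D(L/K')\big) = e - 1$ for the totally ramified extension $L/K'$, which is still tame since $p \nmid [L:K]$ and $e \mid [L:K]$.

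For a totally tamely ramified extension $L/K'$ of complete discretely valued fields, the structure theorem gives $L = K'(\alpha)$ with $\alpha$ a root of an Eisenstein polynomial of the special shape $f(X) = X^e - \pi c$, where $\pi$ is a uniformizer of $K'$ and $c \in \mathcal O_{K'}^{\times}$; moreover $\mathcal O_L = \mathcal O_{K'}[\alpha]$ and $\alpha$ is a uniformizer of $L$. For a monogenic ring extension the different is the ideal generated by $f'(\alpha)$, so it remains to compute $v_L\big(f'(\alpha)\big) = v_L\big(e\alpha^{e-1}\big) = v_L(e) + (e-1)\,v_L(\alpha)$. Since $e$ is a unit in $\mathcal O_L$ (as $p \nmid e$) we have $v_L(e) = 0$, and $v_L(\alpha) = 1$, hence $v_L\big(f'(\alpha)\big) = e - 1$, which is the claim.

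The one genuinely nontrivial input --- and hence the main obstacle --- is the structure theorem for tamely totally ramified extensions, i.e. that after passing to the maximal unramified subextension the extension is obtained by adjoining an $e$-th root of a uniformizer; this rests on Hensel's lemma and the hypothesis $p \nmid e$ (which guarantees both separability of $X^e - 1$ over the residue field and that the Eisenstein polynomial can be put in the normalized form $X^e - \pi c$), together with the identification of the different of a monogenic extension with $\big(f'(\alpha)\big)$. Once these are in place the valuation computation is immediate. Alternatively, as the statement is precisely \cite[Theorem 2.6]{Neukirch}, one may simply invoke it; the passage from $\mathfrak D$ to $\Delta$ needed later in the chapter is then supplied by the norm relation $\Delta(L/K) = N_{L/K}\big(\mathfrak D(L/K)\big)$ combined with Proposition \ref{tower}.
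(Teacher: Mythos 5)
The paper does not give a proof of this proposition; it is stated as a quotation of \cite[Theorem 2.6]{Neukirch} and simply invoked. Your sketch is the standard argument behind that theorem (pass to completions, split off the maximal unramified subextension, use multiplicativity of the different in towers and triviality of the different for unramified extensions, then compute $v_L(f'(\alpha))=v_L(e\alpha^{e-1})=e-1$ for the Eisenstein generator of the totally tamely ramified part), and it is correct; you also correctly observe that for the purposes of the paper one may simply cite Neukirch, which is what the paper does.
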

If $A=K_1\times\cdots\times K_r$ be a product of finite separable extensions $K_i/K$, we define $\Delta(A/K):=\prod_{i}\Delta(K_i/K)$. The following proposition is given in \cite[Corollary 2.11, Chapter III]{Neukirch} when~$A$ is a field, nonetheless, it is true when~$A$ is a finite product of finite extensions of~$F$.
\begin{prop}\label{discloc}
Let $A/F$ be a finite product of finite extensions of~$F$. Let $v\in M_F^0$. One has that $$v(\Delta(A/F))=v\bigg(\Delta\big((A\otimes_FF_v)/\Fv\big)\bigg).$$
\end{prop}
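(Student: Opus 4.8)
The statement asserts that the $v$-adic valuation of the global discriminant ideal $\Delta(A/F)$ of a finite product $A$ of finite extensions of $F$ equals the $v$-adic valuation of the discriminant of the completed algebra $A\otimes_F F_v$ over $F_v$. The plan is to reduce to the case where $A$ is a field, and then to use the local-global compatibility of discriminants together with the decomposition of $A\otimes_F F_v$ into a product of completions indexed by the places of $A$ above $v$.

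First I would reduce to the case $A = K$ a single finite extension of $F$. If $A = K_1\times\cdots\times K_r$, then by definition $\Delta(A/F) = \prod_i \Delta(K_i/F)$, and tensoring is compatible with products, so $A\otimes_F F_v = \prod_i (K_i\otimes_F F_v)$, whence $\Delta\bigl((A\otimes_F F_v)/F_v\bigr) = \prod_i \Delta\bigl((K_i\otimes_F F_v)/F_v\bigr)$ by the definition of the discriminant of a product algebra. Since $v(\cdot)$ is additive on products of ideals, the general case follows from the case of a single field, so it suffices to prove $v(\Delta(K/F)) = v\bigl(\Delta((K\otimes_F F_v)/F_v)\bigr)$. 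Here one must take a little care that the ``discriminant of a product algebra'' $\Delta((K\otimes_F F_v)/F_v)$, defined as the product of the discriminants of the field factors, is the object appearing in the statement; this is a bookkeeping point but worth stating explicitly.

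Next, for $K/F$ a finite extension, I would invoke the standard decomposition $K\otimes_F F_v \cong \prod_{w\mid v} K_w$, the product running over the places $w$ of $K$ lying above $v$, with $K_w$ the completion. The content of the claim is then the classical fact that the exponent of $v$ in $\Delta(K/F)$ equals $\sum_{w\mid v} f_{w\mid v}\cdot \bigl(\text{exponent of the maximal ideal of }\mathcal O_v\text{ in }\Delta(K_w/F_v)\bigr)$, where $f_{w\mid v}$ is the residue degree; equivalently, $v(\Delta(K/F))$ only depends on the local data at $v$. This is precisely \cite[Corollary 2.11, Chapter III]{Neukirch} (referenced in the excerpt as the field case of \ref{discloc}), applied place by place. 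Concretely: the different behaves well under localization and completion, the discriminant is the norm (down to the base) of the different, and the norm map decomposes compatibly with $K\otimes_F F_v = \prod_{w\mid v}K_w$; putting these together gives the equality of $v$-adic valuations. I would assemble this from \ref{tower} (the tower formula, to handle the passage through the completion) and the local characterization of the different, or simply cite the field case from Neukirch directly.

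I do not expect a serious obstacle here: this is a standard localization statement for discriminants, and the only real work is the reduction from products to fields (immediate from the definitions) plus a clean citation of the field case. The one subtlety to flag is purely notational — ensuring that $\Delta((A\otimes_F F_v)/F_v)$ is consistently interpreted via the product-of-fields convention introduced just before the statement, so that the additivity argument goes through without ambiguity. Everything else follows from results already quoted in the excerpt.
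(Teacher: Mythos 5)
Your proposal is correct and follows essentially the same route as the paper: reduce to the case of a single field extension, use the isomorphism $K\otimes_F F_v\cong\prod_{w\mid v}K_w$ (Neukirch, Prop.~8.3, Ch.~II), and cite the field case $v(\Delta(K/F))=v\bigl(\prod_{w\mid v}\Delta(K_w/F_v)\bigr)$ (Neukirch, Cor.~2.11, Ch.~III). The paper simply applies these facts factor by factor rather than stating an explicit ``reduction to fields'' step, but the content is identical.
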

\begin{proof}
Let $A=K_1\times\cdots \times K_r$, with $K_i/F$ finite extensions of~$F$. For every~$i$, by \cite[Corollary 2.11, Chapter III]{Neukirch}, one has that $$v(\Delta(K_i/F))=v\bigg(\prod_{w^i|v}\Delta(K_{w^i}/F_v)\bigg),$$ where $w^i$ are places of $K_i$ lying above~$v$. For every~$i$, by \cite[Proposition 8.3, Chapter II]{Neukirch}, one has that $$\prod_{w^i|v}K_{w^i}=K_i\otimes _FF_v$$ and hence $$\prod_{w^i|v}\Delta(K_{w^i}/\Fv)=\Delta(K_i\otimes_FF_v).$$ 
We deduce that $$v(\Delta(K_i/F))=v\bigg(\Delta(K_i\otimes _FF_v/\Fv)\bigg).$$ Hence, \begin{align*}v(\Delta(A/F))=v\bigg(\prod_{i=1}^{r}\Delta(K_i/F)\bigg)&=v\bigg(\prod_{i=1}^r\Delta\big((K_i\otimes _FF_v)/F_v\big)\bigg)\\&=v\bigg(\Delta\big((A\otimes _FF_v)/\Fv\big)\bigg).\end{align*}
The statement is proven.
\end{proof}
\subsection{} Let $\vMFz$ such that $v(m)=0$.  In this paragraph we calculate the discriminant of a $\mu_m$-torsor over $\Fv$. We have not find a reference for these calculations.
\begin{lem}\label{disctotram}
Let~$n$ be an integer such that $v(n)=0$. Let $\sqrt[n]{\piv}$ be a formal~$n$-th root of the uniformizer $\piv$. One has that $$\Delta\big(\Fv(\sqrt[n]{\piv})/\Fv\big)=\piv^{n-1}\Ov.$$
\end{lem}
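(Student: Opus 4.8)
The plan is to compute the discriminant by explicitly identifying the ring of integers of $L := \Fv(\sqrt[n]{\piv})$ and computing the discriminant of a well-chosen $\Ov$-basis, using the fact that since $v(n) = 0$ the extension is tamely ramified. I would begin by observing that $L/\Fv$ is a degree $n$ totally ramified extension: the polynomial $X^n - \piv$ is Eisenstein at $\piv$, hence irreducible, and $\pi_L := \sqrt[n]{\piv}$ is a uniformizer of $L$ with $v_L(\pi_L) = 1$ and $e_{L/\Fv} = n$, residue degree $1$. Standard theory of Eisenstein extensions (e.g. the same circle of ideas as in \cite{Neukirch}, Chapter II) then gives that $\Ov[\pi_L] = \Ov_L$, the full ring of integers of $L$, and that $1, \pi_L, \dots, \pi_L^{n-1}$ is an $\Ov$-basis of $\Ov_L$.

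\textbf{Key steps.} First I would record that $\Delta(L/\Fv) = \big(\Delta(1, \pi_L, \dots, \pi_L^{n-1})\big)$ as an ideal of $\Ov$, because the chosen elements form an $\Ov$-basis of $\Ov_L$. Next, since the minimal polynomial of $\pi_L$ over $\Fv$ is $f(X) = X^n - \piv$, the discriminant of this power basis equals $\pm N_{L/\Fv}(f'(\pi_L))$ up to sign, and $f'(\pi_L) = n\pi_L^{n-1}$. Computing the norm: $N_{L/\Fv}(n) = n^n$ which is a $v$-adic unit (as $v(n) = 0$), and $N_{L/\Fv}(\pi_L^{n-1}) = \big(N_{L/\Fv}(\pi_L)\big)^{n-1} = ((-1)^{n-1}(-\piv))^{n-1}$, whose $v$-adic valuation is $n-1$ since $N_{L/\Fv}(\pi_L)$ generates $\piv\Ov$. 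Therefore $\Delta(1, \pi_L, \dots, \pi_L^{n-1})$ generates $\piv^{n-1}\Ov$, which gives the claim. Alternatively, and perhaps more cleanly, I could invoke \ref{tamrame}: the unique prime $\mathfrak q = \pi_L\Ov_L$ above $\piv\Ov$ satisfies $v_{\mathfrak q}(\mathfrak D(L/\Fv)) = e_{\mathfrak q|\piv} - 1 = n - 1$, so $\mathfrak D(L/\Fv) = \pi_L^{n-1}\Ov_L$, and then $\Delta(L/\Fv) = N_{L/\Fv}(\mathfrak D(L/\Fv)) = (N_{L/\Fv}(\pi_L))^{n-1} = \piv^{n-1}\Ov$, using the standard relation between different and discriminant (\cite{Neukirch}, Chapter III, Theorem 2.9) together with the fact that $\Fv$ is local so the discriminant is just the norm of the different.

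\textbf{Main obstacle.} The routine but slightly delicate point is justifying that $\Ov_L = \Ov[\pi_L]$, i.e. that the power basis from the Eisenstein polynomial actually generates the full ring of integers; this is where one uses that $X^n - \piv$ is Eisenstein. If I take the different-theoretic route via \ref{tamrame}, the main thing to be careful about is that $\mathfrak D(L/\Fv)$ is generated by $f'(\pi_L)$ only when $\Ov_L = \Ov[\pi_L]$, so either way this Eisenstein fact is the crux. I would state it with a brief justification (Eisenstein $\Rightarrow$ $\pi_L$ uniformizer, totally ramified $\Rightarrow$ $\Ov[\pi_L]$ is $\piv$-adically dense and integrally closed) rather than reproving it in full. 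The rest is a short norm computation.
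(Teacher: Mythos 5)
Your ``alternative'' route via \ref{tamrame} is exactly the paper's proof: Eisenstein $\Rightarrow$ irreducibility, total ramification of degree $n$, $v_{\mathfrak q}(\mathfrak D)=e-1=n-1$ by tame ramification, then $\Delta = N_{L/\Fv}(\mathfrak D) = (N_{L/\Fv}(\pi_L))^{n-1} = \piv^{n-1}\Ov$. Your ``primary'' route --- identifying $\Ov_L = \Ov[\pi_L]$ and computing $\Delta(1,\pi_L,\dots,\pi_L^{n-1}) = \pm N_{L/\Fv}(f'(\pi_L))$ with $f'(\pi_L)=n\pi_L^{n-1}$ --- is a genuinely different, more computational argument that buys you a concrete integral basis but requires the extra Eisenstein input $\Ov_L=\Ov[\pi_L]$.

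One small correction worth noting: you say that ``either way this Eisenstein fact [$\Ov_L=\Ov[\pi_L]$] is the crux,'' but for the \ref{tamrame} route it actually isn't needed. That lemma gives $v_{\mathfrak q}(\mathfrak D(L/\Fv))=e_{\mathfrak q|\mathfrak p}-1$ whenever the residue characteristic is coprime to $[L:K]$, with no hypothesis that the ring of integers is monogenic over the base; all you need is $e=n$, which follows from irreducibility of the Eisenstein polynomial together with $n\cdot v_L(\pi_L)=v_L(\piv)$. So the two routes don't share the monogenicity step, and the paper's route (your alternative) is strictly lighter on hypotheses. The relation $\mathfrak D = (f'(\pi_L))$ is only relevant if you insist on the power-basis computation.
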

\begin{proof}
Let us write~$K$ for $\Fv(\sqrt[n]{\piv})$. By Eisenstein's criterion, the polynomial $X^n-\piv$ is irreducible. It follows that the degree of the extension $K/\Fv$ is equal to~$n$ and that the elements $\sqrt[n]{\piv}, \sqrt[n]{\piv^2}\doots \sqrt[n]{\piv^{n-1}}$ do not belong to $\Ov$. We deduce that $(\sqrt[n]{\piv})\cap\Ov=(\piv)$. Thus the degree of the ramification of $\piv$ in~$K$ is at least~$n$ and hence is equal to~$n$ (because the degree of the extension is~$n$). Proposition \ref{tamrame} gives that $(\mathfrak D(K/\Fv))=(\sqrt[n]{\piv})^{n-1}$. We deduce that 
\begin{align*}\Delta(K/F_v)&=N_{K/F_v}(\mathfrak D({\Fv(\sqrt[n]{\piv})/\Fv}))\\
&=N_{K/F_v}((\sqrt[n]{\piv})^{r-1})\\
&=N_{K/F_v}((\sqrt[n]{\piv}))^{r-1}\\
\end{align*}
As $N_{K/F_v}(\piv)=(\piv^n)$, it follows that $N_{K/F_v}((\sqrt[n]{\piv}))=(\piv),$ and hence $$\Delta(\Fv(\sqrt[n]{\piv})/\Fv)=\Delta(K/F_v)=N_{K/F_v}((\sqrt[n]{\piv}))^{r-1}=(\piv)^{r-1}.$$The statement is proven.
\end{proof}
\begin{lem} \label{citavbozdan} Let $a\in\Fvt$ and let~$n$ be an integer such that $v(n)\neq 0$. We set $d=\gcd(v(a),n)$. Let $\sqrt[n]{a}$ be an~$n$-th root of $a$ (lying in an algebraic closure of $\Fv$). One has that $$\Delta(\Fv(\sqrt[n]a)/\Fv)=\piv^{[\Fv(\sqrt[n]a):\Fv](1-d/n)}\Ov.$$
\end{lem}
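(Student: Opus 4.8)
The plan is to reduce the claim to the two lemmas just proved, \ref{disctotram} and \ref{citavbozdan} itself being a generalization of \ref{disctotram}, by analyzing the structure of $\Fv(\sqrt[n]{a})$ over $\Fv$ in terms of the ramified and unramified parts. First I would write $v(a) = d q$ with $\gcd(q, n/d) = 1$, so that $a = u \piv^{dq}$ for some unit $u \in \Ov^\times$. The key algebraic fact is that the extension $\Fv(\sqrt[n]{a})/\Fv$ decomposes (after possibly adjoining roots of unity, which is an unramified operation since $v(n) = 0$) into an unramified part and a totally tamely ramified part of ramification index $n/d$. More precisely, I would use the multiplicativity of the discriminant in towers (Proposition \ref{tower}): choosing an intermediate field, one has $\Delta(\Fv(\sqrt[n]{a})/\Fv) = \Delta(L/\Fv)^{[\Fv(\sqrt[n]{a}):L]} N_{L/\Fv}(\Delta(\Fv(\sqrt[n]{a})/L))$ for a suitable $L$.

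The cleanest route is probably as follows. Set $e = n/d$ (the expected ramification index) and $f$ the residue degree, so $[\Fv(\sqrt[n]{a}):\Fv] = ef$ when $X^n - a$ is irreducible, and in general the extension is a product of copies. Actually, $\Fv(\sqrt[n]{a})$ is always a field here if we interpret $\sqrt[n]{a}$ as a single chosen root; I would first reduce to the case where the extension is generated by an $e$-th root of a uniformizer times an unramified piece. Concretely: since $\gcd(q, e) = 1$, we can find integers with $\alpha q + \beta e = 1$, and then $\piv = (\sqrt[n]{a})^{\alpha n/d} \cdot (\text{unit adjustment})^{\cdots}$ shows that $\sqrt[e]{\piv}$ lies in $\Fv(\sqrt[n]{a}, \text{units})$. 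So the ramified part of $\Fv(\sqrt[n]{a})/\Fv$ is exactly $\Fv(\sqrt[e]{\piv})/\Fv^{\un}$-type, with ramification index $e = n/d$, and \ref{disctotram} gives the different exponent $e - 1$ on that part. The unramified part contributes trivially to the discriminant by the proposition on unramified extensions. Combining via \ref{tower} and the fact that $N$ sends the ramified prime upstairs to $\piv$ raised to the residue degree, one computes the discriminant exponent to be $[\Fv(\sqrt[n]{a}):\Fv] \cdot (1 - 1/e) = [\Fv(\sqrt[n]{a}):\Fv](1 - d/n)$.

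The main obstacle I anticipate is handling the general (non-Galois, non-irreducible) situation cleanly: $X^n - a$ need not be irreducible over $\Fv$, and $\Fv(\sqrt[n]{a})$ might be a proper subfield of the splitting field, so I need to be careful about what "$\Fv(\sqrt[n]{a})$" means and ensure the ramification/residue-degree bookkeeping is valid without passing to a Galois closure. The safe approach is to pass to the unramified closure: let $\Fv^{\un,f}$ be the unramified extension of degree $f = [\Fv(\sqrt[n]{a}):\Fv]/e$ (where $e$ is the actual ramification index of $\Fv(\sqrt[n]{a})/\Fv$), note $\Fv(\sqrt[n]{a}) \subset \Fv(\sqrt[n]{a})\cdot\Fv^{\un,?}$, and use that over a large enough unramified base the extension becomes a pure $e$-th root of a uniformizer extension to which \ref{disctotram} applies verbatim; then descend the discriminant computation using \ref{tower} and the multiplicativity of norms, together with the compatibility of local discriminants under unramified base change (which follows since the discriminant of an unramified extension is trivial and norms behave well). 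A secondary, more bookkeeping-heavy obstacle is verifying that $e$, the true ramification index, equals $n/\gcd(v(a), n)$; this is the standard computation that $\Fv(\sqrt[n]{a})$ contains $\sqrt[e]{\piv\cdot(\text{unit})}$ and no smaller root of a uniformizer, which I would carry out using the $\alpha q + \beta e = 1$ Bézout relation as above.
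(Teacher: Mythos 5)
Your approach is correct and rests on the same ingredients as the paper's proof: the B\'ezout relation to isolate a pure $(n/d)$-th root of a uniformizer, the tower formula (Proposition \ref{tower}), the triviality of discriminants of unramified extensions, and Lemma \ref{disctotram}. The difference is in how these are assembled. You propose to identify the maximal unramified subextension $L_0$ of $K := \Fv(\sqrt[n]{a})$, apply \ref{disctotram} over the base $L_0$ to compute $\Delta(K/L_0)$, and descend by taking norms; as you note, this requires extending \ref{disctotram} to an arbitrary finite unramified extension of $\Fv$ as base (a harmless but nontrivial restatement), plus explicitly determining the ramification index and residue degree of $K/\Fv$. The paper instead writes $a = \piv^{v(a)}u$ with $u\in\Ovt$, sets $L = \Fv(\piv^{1/(n/d)})$, and introduces the one compositum $M = \Fv(\piv^{1/(n/d)}, u^{1/n})$. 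Both $M/K$ and $M/L$ are unramified, so by \ref{tower} one gets $\Delta(K/\Fv)^{[M:K]} = \Delta(M/\Fv) = \Delta(L/\Fv)^{[M:L]}$; since $[K:\Fv][M:K]=[M:\Fv]=[L:\Fv][M:L]$ and $[L:\Fv]=n/d$ (already established in \ref{disctotram}), this rearranges immediately to $\Delta(K/\Fv)=\Delta(L/\Fv)^{(d/n)[K:\Fv]}$, and \ref{disctotram} applied over the original base $\Fv$ finishes. The double-tower comparison thus sidesteps the explicit computation of $e$ and $f$ for $K/\Fv$ and never leaves the base field $\Fv$ in invoking \ref{disctotram}. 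Both routes are valid; the paper's is shorter because the only quantitative input it needs from \ref{disctotram} is the degree $[L:\Fv]=n/d$ and the discriminant exponent, whereas your route redoes some of that structure theory over $L_0$.
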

\begin{proof}
Let us set $r=v(a)$ and $d=\gcd(v(a),n)$. We write $a=\piv^ru$ for some $u\in\Ovt.$ There exist integers $b,c$ such that $br+cn=d$ so that $(\piv^{r}u)^b(\piv^c)^n=\piv^du$ for some $u\in\Ovt$. As $\piv^{cn}$ is an~$n$-th power of an element in $\Fvt$, one has that $F_v({a}^{1/n})=F_v({\piv^{1/(n/d)}}{u}^{1/n})$, where $\piv^{1/(n/d)}$ and $u^{1/n}$ are formal $n/d$-th and $1/n$-th roots of $\piv$ and $u$, respectively.  Thus we have the following towers of extensions:
\[\begin{tikzcd}
	& {F_v({\pi_v}^{1/(n/d)},{u}^{1/n})} \\
	{F_v({a}^{1/n})} && {F_v({\pi_v}^{1/(n/d)})} \\
	& {F_v}
	\arrow[no head, from=1-2, to=2-1]
	\arrow[no head, from=2-1, to=3-2]
	\arrow[no head, from=1-2, to=2-3]
	\arrow[no head, from=2-3, to=3-2]
\end{tikzcd}\]
Let us set $M={F_v({\pi_v}^{1/(n/d)},{u}^{1/n})}$, $K=F_v(a^{1/n})$ and $L=F_v(\piv^{1/(n/d)})$. The extensions $M/K$ and $M/L$ are unramified. Now, Proposition \ref{tower} gives that $$\Delta(K/\Fv)^{[M:K]}=\Delta(L/\Fv)^{[M:L]}.$$
One has that $$[K:\Fv]\cdot [M:K]=[L:\Fv]\cdot [M:L]=(n/d)[M:L],$$ thus $$\Delta(K/\Fv)=\Delta(L/\Fv)^{[M:L]/[M:K]}\Delta(L/\Fv)^{(d/n) [K:\Fv]}.$$
Recall that by Lemma \ref{disctotram}, one has that $$\Delta(L/\Fv)=\Delta(\Fv(\piv^{1/(n/d)})/\Fv)=\piv^{(n/d)-1}\Ov,$$ thus 
\begin{multline*}\Delta(K/\Fv)=\piv^{[K:\Fv]\cdot (d/n)((n/d)-1)}\Ov=\piv^{[K:\Fv](1-d/n)}\Ov\\=\piv^{[F(\sqrt[n]{a}):\Fv](1-d/n)}\Ov.\end{multline*}
\end{proof}
\begin{prop}\label{detaxm}
Let $a\in \Fvt$. Let us set $d=\gcd(v(a),n)$. One has that $$\Delta\big((\Fv[X]/(X^m-a))/\Fv\big)=\piv^{d-n}\Ov.$$
\end{prop}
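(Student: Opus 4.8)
The $F_v$-algebra $A := F_v[X]/(X^m-a)$ is a finite product of field extensions of $F_v$, and to compute $\Delta(A/F_v)$ I would first decompose it according to the factorization of $X^m-a$ over $F_v$. The natural approach is to relate $A$ to the algebra $F_v[X]/(X^n-a)$ where $n$ is the order of $a$ in $F_v^\times/F_v^{\times m}$ (equivalently, $n = m/\gcd(\text{something})$); more precisely, write $b = \gcd(v(a), m)$ and note that modulo $m$-th powers one can reduce to a root of an element whose valuation is coprime to the relevant modulus, as in the proof of Lemma \ref{citavbozdan}. The cleanest path: since $\mu_m$ need not be in $F_v$, I would pass to the splitting behavior of $X^m - a$ and use that $A \otimes_{F_v} F_v(\sqrt[m]{a}) $ becomes a product of copies of $F_v(\sqrt[m]{a})$ indexed by the Galois action, but actually it is simpler to argue directly: $X^m - a = \prod_{i} g_i(X)$ with $g_i$ irreducible, and each $F_v[X]/(g_i)$ is of the form $F_v(\zeta \sqrt[m]{a})$ for a suitable root, all of which generate the same field $K := F_v(\sqrt[m]{a})$ (a root of $X^m - a$), so $A \cong K^{r}$ where $r = m/[K:F_v]$.

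\textbf{Key steps.} First I would establish $A \cong K^{\,m/[K:F_v]}$ where $K = F_v(\alpha)$ for $\alpha$ a fixed root of $X^m-a$. This follows because all roots of $X^m - a$ are of the form $\zeta \alpha$ with $\zeta^m = 1$, and the extension $F_v(\zeta\alpha)/F_v$ has the same degree $[K:F_v]$ for every such $\zeta$ (the minimal polynomials all have the same degree, since $X^m-a$ is separable as $v(m)=0$ by the running hypothesis), hence $X^m-a$ factors into $m/[K:F_v]$ irreducible factors each of degree $[K:F_v]$, and each quotient is $F_v$-isomorphic to $K$. Then $\Delta(A/F_v) = \Delta(K/F_v)^{m/[K:F_v]}$ by the definition of the discriminant of a product algebra. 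Next, I would invoke Lemma \ref{citavbozdan} with $n$ replaced by $m$: setting $d = \gcd(v(a), m)$, it gives $\Delta(K/F_v) = \pi_v^{[K:F_v](1 - d/m)}\mathcal O_v$. Combining, $\Delta(A/F_v) = \pi_v^{(m/[K:F_v]) \cdot [K:F_v](1-d/m)}\mathcal O_v = \pi_v^{m(1-d/m)}\mathcal O_v = \pi_v^{m - d}\mathcal O_v$. This gives $v(\Delta(A/F_v)) = m - d$, and since the statement asks for $\pi_v^{d-n}\mathcal O_v$, I suspect $n$ in the statement is a typo for $m$ (the ambient $\mu_m$), or the intended reading matches $\pi_v^{d-m}\mathcal O_v$ once one is careful about whether one writes the discriminant ideal or its inverse; I would reconcile this with the convention fixed just before, namely $\Delta(F_v[X]/(X^m-a)) = \pi_v^{d-m}\mathcal O_v$ understood as the fractional ideal $\pi_v^{-(m-d)}$, or simply present $v(\Delta) = m-d$ and note the sign convention.

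\textbf{Main obstacle.} The genuinely delicate point is the first step: verifying that $X^m - a$ factors into irreducible factors all of the same degree over $F_v$, so that $A$ is a power of a single field $K$ rather than a product of genuinely different fields. In general (for $X^m - a$ over an arbitrary field) the factors need not all have the same degree, but here the local field structure together with $v(m) = 0$ saves us: the roots $\zeta\alpha$ differ by roots of unity of order dividing $m$, these lie in an unramified extension, and the ramified part of $F_v(\zeta\alpha)/F_v$ (governed by $\gcd(v(a\zeta^{-m} \cdot(\ldots)), m) = \gcd(v(a), m) = d$) is the same for all $\zeta$, while the residue-field part can only vary by an unramified twist — one must check this does not change the degree. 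I would handle this either by the explicit tower argument of Lemma \ref{citavbozdan} applied uniformly, or by a direct Galois-theoretic argument over the maximal unramified extension $F_v^{\mathrm{un}}$, where $X^m - a$ splits as $\prod(X - \zeta\alpha)$ and the decomposition group acts transitively on a set of roots generating $K$. Once the uniform-degree factorization is in hand, the rest is a one-line computation via Lemmas \ref{disctotram} and \ref{citavbozdan} and Proposition \ref{tower}.
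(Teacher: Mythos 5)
Your final exponent $m-d$ is correct, and you are right to suspect misprints in the statement: the letter $n$ should be $m$ throughout, and the exponent should be $m-d$ rather than $d-m$ (so that $\Delta(A/\Fv)$ is an integral ideal, as it must be); indeed, where the paper later uses this proposition it writes $\piv^{m-\gcd(v(y),m)}\Ov$, and even the paper's own proof has a stray sign flip between two consecutive lines. However, the central claim on which your argument rests --- that $X^m-a$ factors over $\Fv$ into irreducible polynomials all of the same degree, so that $A\cong K^{m/[K:\Fv]}$ for a single field $K=\Fv(\sqrt[m]{a})$ --- is false. Take $\Fv=\QQ_3$, $m=4$, $a=1$ (so $v(m)=0$ and $d=\gcd(0,4)=4$): then $X^4-1=(X-1)(X+1)(X^2+1)$ over $\QQ_3$, the factors have degrees $1,1,2$, and $A\cong\QQ_3\times\QQ_3\times\QQ_3(i)$, which is not a power of a single field. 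Your heuristic that "the residue-field part can only vary by an unramified twist" and that this cannot change the degree is exactly the point that fails: the unramified contribution to $[\Fv(\zeta\alpha):\Fv]$ does vary with $\zeta$, because different roots of unity $\zeta$ can generate unramified extensions of different degrees over $\Fv$. The condition $v(m)=0$ does not rescue this.

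The good news is that the uniform-degree claim, which you correctly flagged as the delicate point, is not needed at all, and the paper's own argument sidesteps it. Write $X^m-a=\prod_j b_j(X)$ with each $b_j$ irreducible, choose a root $\eta_j$ of $b_j$, and note that $A\cong\prod_j\Fv(\eta_j)$ as $\Fv$-algebras, regardless of whether the fields $\Fv(\eta_j)$ are isomorphic to one another. Each $\eta_j$ satisfies $\eta_j^m=a$, so Lemma \ref{citavbozdan} applies to each factor separately and yields
\begin{equation*}
\Delta\bigl(\Fv(\eta_j)/\Fv\bigr)=\piv^{\deg(b_j)\,(1-d/m)}\Ov .
\end{equation*}
Multiplying over $j$ and using $\sum_j\deg(b_j)=m$ gives $\Delta(A/\Fv)=\piv^{m(1-d/m)}\Ov=\piv^{m-d}\Ov$. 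What replaces your uniform-degree lemma is the structural fact that the exponent furnished by Lemma \ref{citavbozdan} is linear in the degree of the factor: that is what makes the exponents add up to $m-d$ no matter how the degrees distribute among the $b_j$. Compared with your proposal, this route is both shorter and free of the obstacle you identified; there is no need to control the Galois orbits on the roots, only to sum the contributions.
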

\begin{proof}
Let $X^m-a=\prod_{j=1}^{\ell} b_j(X),$ be the composition of $X^m-a$ into a product of irreducible unitary polynomials (repetitions are allowed). For every $j=1\doots \ell$, let $\eta_{j}$ be a root of $b_j(X)$ so that $\Fv(b_j)$ are fields and the homomorphism $$\Fv[X]/(X^m-a)\to\big(\prod _{j=1}^{\ell}\Fv(\eta_j)\big)$$ induced from the homomorphism $$F[X]\to\prod_{j=1}^{\ell}\Fv(b_j)\hspace{1cm}X\mapsto (b_j)_j,$$ is an isomorphism. For every~$j$, one has that $\eta_j^m=a$ and by \ref{citavbozdan}, we have that $$\Delta(\Fv(\eta_j)/\Fv)=\piv^{\deg(b_j)(1-(d/n))}\Ov.$$ We deduce that \begin{multline*}\Delta((F_v[X]/(X^m-a))/F_v)=\prod_{i=1}^{\ell}(\piv^{-\deg(b_j)(1-d/n)}\Ov)=\piv^{n(d/n-1)}\Ov\\=\piv^{d-n}\Ov.\end{multline*}
\end{proof}
\subsection{} In this paragraph, we define the heights that will be used for the counting. The following notation will be used in the rest of the Chapter: $r$ will be the smallest prime of~$m$ and $\alpha(m):=m^2-m^2/r$. We will use the terminology from \ref{heightsonppa}. 

\begin{lem}\label{defvdisc} For $\vMFz,$ let us define $f^{\Delta}_v:\Fvt\to\RR_{>0}$ by $$f^{\Delta}_v(y)=|y|^{1/m}_v\bigg(N\big(\Delta\big((\Fv[X]/(X^m-y))/\Fv\big)\big)^{1/\alpha(m)}\bigg),$$where~$N$ stands for the ideal norm. For $\vMFi$, we set $f^{\Delta}_v(y)=|y|^{1/m}_v$.
\begin{enumerate}
\item For every $v\in M_F$, the function $f^{\Delta}_v$ is~$m$-homogenous and of weighted degree~$1$.
\item For every $\vMFz$, the function $f^{\Delta}_v$ is locally constant.
\item Let $\vMFz$ such that $v(m)=0$. For every $y\in\Fvt$, one has that $$f^{\Delta}_v(y)= |y|^{1/m}_v\pivv^{(\gcd(v(y),m)-m)/\alpha(m)}=\pivv^{v(y)/m+(\gcd(v(y),m)-m)/\alpha(m)}.$$For every $y\in\Ovt$, one has that $f^{\Delta}_v(y)=1$.
\end{enumerate}
\end{lem}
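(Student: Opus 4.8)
The statement splits into three independent verifications, which I would address in the order they are listed. For part (1), the claim is that $f_v^\Delta$ is $m$-homogenous of weighted degree $1$, meaning $f_v^\Delta(t^m y) = |t|_v f_v^\Delta(y)$ for all $t,y\in\Fvt$. At an infinite place this is immediate since $f_v^\Delta(y) = |y|_v^{1/m}$ and $|t^m y|_v^{1/m} = |t|_v|y|_v^{1/m}$. At a finite place, the factor $|y|_v^{1/m}$ behaves the same way, so everything reduces to showing that the discriminant factor $N\big(\Delta((\Fv[X]/(X^m-y))/\Fv)\big)^{1/\alpha(m)}$ is unchanged when $y$ is replaced by $t^m y$. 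The key point is that $\Fv[X]/(X^m - t^m y) \cong \Fv[X]/(X^m - y)$ as $\Fv$-algebras via $X\mapsto t^{-1}X$ (or rather the substitution rescaling the root), so the discriminant ideal is literally the same ideal of $\Ov$; hence its norm is the same, and the weighted-degree-$1$ homogeneity follows. I would make this isomorphism explicit.

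For part (2), I would invoke Proposition \ref{detaxm}: for $v$ with $v(m)=0$ the discriminant ideal $\Delta((\Fv[X]/(X^m-y))/\Fv)$ equals $\piv^{\gcd(v(y),m)-m}\Ov$, which depends on $y$ only through $v(y)\bmod m$ (more precisely through $\gcd(v(y),m)$), and $v(\cdot)$ is a locally constant function on $\Fvt$; combined with the fact that $|y|_v^{1/m}=\pivv^{v(y)/m}$ is also locally constant on $\Fvt$, this gives local constancy of $f_v^\Delta$ on $\Fvt$ when $v(m)=0$. For the finitely many places $v$ with $v(m)\neq 0$, I would argue more crudely: the map $y\mapsto \Fv[X]/(X^m-y)$, the discriminant, and the norm are all continuous-in-the-appropriate-sense constructions on $\Fvt$, and since $\Fvt$ is totally disconnected and the discriminant ideal takes values in a discrete set (fractional ideals of $\Ov$, indexed by $\ZZ$), any such construction is locally constant. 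Concretely I would note that $\Delta$ only depends on the factorization type of $X^m-y$ over $\Fv$, which is a locally constant invariant of $y$ (two $m$-adically close elements have the same splitting behaviour by Krasner-type arguments / continuity of roots), so $f_v^\Delta$ is locally constant. This is the place where a small amount of care is needed, and I expect it to be the main obstacle: one must be sure the discriminant is genuinely locally constant at the bad places $v\mid m$, where wild ramification can occur and the clean formula of \ref{detaxm} no longer applies.

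For part (3), this is a direct substitution into Proposition \ref{detaxm}. For $v$ with $v(m)=0$ and $y\in\Fvt$, write $d=\gcd(v(y),m)$; then \ref{detaxm} (applied with $n=m$, noting $v(m)=0$ so the hypothesis $v(n)\neq 0$ there must be read in the $n=m$, $a=y$ form — actually \ref{detaxm} is stated unconditionally with $d=\gcd(v(a),n)$) gives $\Delta((\Fv[X]/(X^m-y))/\Fv) = \piv^{d-m}\Ov$, hence $N(\Delta(\cdots)) = \pivv^{-(d-m)} = \pivv^{m-d}$ — wait, I need to be careful with the normalization: the ideal norm of $\piv^{d-m}\Ov$ is $[\Ov : \piv^{d-m}\Ov]^{-1}$ when $d-m<0$... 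I would track the sign conventions carefully, using $\pivv = [\Ov:\piv\Ov]^{-1}$, so that $N(\piv^k\Ov) = \pivv^{k}$ in the sense that $N(\piv^k\Ov)^{1/\alpha(m)} = \pivv^{k/\alpha(m)}$ matches the claimed $\pivv^{(\gcd(v(y),m)-m)/\alpha(m)}$ with $k = d-m$. Combining with $|y|_v^{1/m} = \pivv^{v(y)/m}$ yields the displayed formula $f_v^\Delta(y) = \pivv^{v(y)/m + (\gcd(v(y),m)-m)/\alpha(m)}$. Finally, for $y\in\Ovt$ we have $v(y)=0$, so $\gcd(v(y),m) = \gcd(0,m) = m$, whence both exponents vanish and $f_v^\Delta(y) = \pivv^0 = 1$. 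I would write out these three substitutions and the sign-bookkeeping for the norm, and otherwise the lemma is complete.
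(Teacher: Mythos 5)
Your parts (1) and (3) follow the same route as the paper: for (1) the rescaling isomorphism $\Fv[X]/(X^m-y)\xrightarrow{\sim}\Fv[X]/(X^m-t^my)$, $X\mapsto t^{-1}X$, so the discriminant ideal is unchanged; for (3) direct substitution into Proposition~\ref{detaxm} followed by $\gcd(0,m)=m$ for $y\in\Ovt$. On the sign bookkeeping in (3): with the paper's normalization $\pivv=[\Ov:\piv\Ov]^{-1}$, one has $N(\piv^k\Ov)=\pivv^{-k}$ (not $\pivv^k$), and the discriminant is the integral ideal $\piv^{m-\gcd(v(y),m)}\Ov$ (the displayed exponent $d-n$ in \ref{detaxm} appears to carry a sign typo which the paper silently corrects when it applies it here); your two sign hesitations cancel, but it is worth writing the chain $N\bigl(\piv^{m-\gcd(v(y),m)}\Ov\bigr)^{1/\alpha(m)}=\pivv^{(\gcd(v(y),m)-m)/\alpha(m)}$ cleanly.

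For part (2), your treatment of the bad places $v\mid m$ is where you diverge from the paper, and where you are doing unnecessary work. You flag Krasner-type continuity of roots as the ``main obstacle,'' but the obstacle is already removed by what you proved in part (1). Part (1) shows that the discriminant factor $y\mapsto N\bigl(\Delta((\Fv[X]/(X^m-y))/\Fv)\bigr)^{1/\alpha(m)}$ is invariant under multiplication by $(\Fvt)_m$; this is an honest algebraic invariance valid at every finite place, wild ramification or not, because the isomorphism class of the $\Fv$-algebra $\Fv[X]/(X^m-y)$ depends only on the class of $y$ in $\Fvt/(\Fvt)_m$. The subgroup $(\Fvt)_m\subset\Fvt$ has finite index (Lemma~\ref{ttafvfinite}), and a finite-index closed subgroup of $\Fvt$ is open (the paper cites Neukirch for this); invariance under an open subgroup is local constancy. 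Intersect with $\Ovt$, under which $y\mapsto|y|_v^{1/m}$ is invariant, and $f_v^\Delta$ is locally constant uniformly at all finite $v$, with no case split and no appeal to continuity of roots. Your Krasner-flavoured argument is not wrong, but it is circuitous and obscures that the statement reduces to (1) plus a piece of structure theory of $\Fvt$ you already need elsewhere in the paper.
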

\begin{proof}
\begin{enumerate}
\item The function $y\mapsto |y|^{1/m}_v$ is~$m$-homogenous of weighted degree~$1$. It follows that for $\vMFi$ one has that $f_v$ is~$m$-homogenous of weighted degree~$1$. Let $\vMFz$ and let $t\in\Fvt$. The image of the ideal $(X^m-y)$ under the isomorphism $$\Fv[X]\to\Fv[X]\hspace{1cm}X\mapsto t^{-1}X $$ is the ideal $(t^{-m}X^m-y)=(X^m-t^my)$. It follows that 
 $\Fv[X]/(X^m-y)$ and $\Fv[X]/(X^m-t^my)$ are isomorphic, hence the norms of the corresponding discriminants are the same. It follows that $$y\mapsto \big(N\big(\Delta\big((\Fv[X]/(X^n-y))/\Fv\big)\big)\big)^{1/\alpha(m)}$$ is $\Fvt$-invariant. We deduce that $f_v$ is~$m$-homogenous of weighted degree~$1$. The claim is proven.
\item The function $y\mapsto \big(N\big(\Delta\big((\Fv[X]/(X^n-y))/\Fv\big)\big)\big)^{1/\alpha(m)}$ is $(\Fvt)_m$-invariant by (1). The subgroup $(\Fvt)_m\subset\Fvt$ is of the finite index in $\Fvt$ by \ref{ttafvfinite}, thus open in $\Fvt$ by \cite[Exercice 4, Chapter II]{Neukirch}. The function $y\mapsto |y|_v$ is invariant for the open subgroup $\Ovt\subset\Fvt$. We deduce that $f_v$ is invariant for the open subgroup $(\Fvt)_m\cap\Ovt$ of $\Fvt$. It follows that $f_v$ is locally constant.
\item As $v(m)=0$, Proposition \ref{detaxm} gives that $$\Delta\big((\Fv[X]/(X^n-y))/\Fv\big)=\piv^{m-\gcd(v(y),m)}\Ov.$$ We deduce that for every $y\in\Fvt$ one has that\begin{align*}f_v^{\Delta}(y)&=|y|_v^{1/m}\bigg(N\big(\Delta\big((\Fv[X]/(X^n-y))/\Fv\big)\big)\bigg)^{1/\alpha(m)}\\&=|y|_v^{1/m}\pivv^{(\gcd(v(y),m)-m)/\alpha(m)}\\&=\pivv^{v(y)/m+(\gcd(v(y),m)-m)/\alpha(m)}.\end{align*}If $y\in\Ovt$, one has that $$f^{\Delta}_v(y)=|y|^{1/m}_v\pivv^{(\gcd(v(y),m)-m)/\alpha(m)}=1.$$The claim is proven.
\end{enumerate} 
\end{proof} 
\begin{mydef}
Let $\vMF$ and let $k\in\ZZ.$ Let $f_v^{\Delta}$ be as in \ref{defvdisc}. 
\begin{itemize}
\item The function $x\mapsto (f_v^{\Delta}(x))^k$ will be called the discriminant~$m$-homogenous function of weighted degree~$k$. 
\item A degree~$k$ family $(f_v:\Fvt\to\RR_{\geq 0})_v$ of~$m$-homogenous continuous functions, will be said to be quasi-discriminant if for almost all~$v$, one has that $f_v=(x\mapsto (f_v^{\Delta}(x))^k)$. It follows from Lemma \ref{defvdisc} that quasi-discriminant families are generalized adelic (see \ref{fahom} for the definition) and the resulting height $H=H((f_v)_v)$ on $\PPP(m)(F)$ will be said to be quasi-discriminant height. 
\item If for every $\vMF$ one has that $f_v=(x\mapsto (f_v^{\Delta}(x))^k)$, then the family $(f_v)_v$ will be said to be the discriminant degree~$k$ family. The resulting height $H^{\Delta}=H((f_v)_v)$ will be said to be the discriminant height.
\end{itemize}
\end{mydef}
As usual, we will write~$H$ for the resulting heights on the set of the isomorphism classes $[\PPP(m)(F)]$.
\begin{rem}
\normalfont
Note that by \ref{defvdisc} and by \ref{davdavdav}, a ``quasi-discriminant" and a ``quasi-toric" family are different notions.
\end{rem}
\begin{rem}
\normalfont
The calculations from \ref{detaxm} and \ref{defvdisc} may be well known, however we have not found an adequate reference.
\end{rem}
 \begin{lem}\label{discvsheight}
Let $(f^{\Delta}_v)_v$ be the discriminant degree~$1$ family of~$m$-homogenous continuous functions and let $H^{\Delta}$ be the resulting height. Let $y\in\Ft$. One has that $$H^{\Delta}(q^m(y))=N\bigg(\Delta\big(F[X]/(X^m-y))/F\big)\bigg)^{1/\alpha(m)},$$ where $q^m:(\AAA^1-\{0\})\to \PPP(m)$ is the quotient~$1$-morphism.
\end{lem}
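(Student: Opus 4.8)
\textbf{Proof plan for Lemma \ref{discvsheight}.}
The plan is to unwind the definition of the resulting height $H^\Delta$ applied to the class $q^m(y)$ and match it term by term with the local factors of the global discriminant $\Delta(F[X]/(X^m-y)/F)$, using the product formula to kill the archimedean $|y|_v^{1/m}$ contributions. First I would fix a representative: the point $q^m(y)\in\PPP(m)(F)$ corresponds to the $(\Gm)_F$-equivariant morphism $\wy:(\Gm)_F\to(\AAA^1-\{0\})_F$ with $\wy(1)=y\in F^\times$, so by the definition of the resulting height (formula (\ref{huhub})) one has
\begin{equation*}
H^\Delta(q^m(y))=\prod_{\vMF}f^\Delta_v(y)=\prod_{\vMFz}f^\Delta_v(y)\cdot\prod_{\vMFi}|y|_v^{1/m}.
\end{equation*}

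Next I would separate the two types of local factors. For the finite places, by \ref{defvdisc} one has
$f^\Delta_v(y)=|y|_v^{1/m}\cdot N\big(\Delta((\Fv[X]/(X^m-y))/\Fv)\big)^{1/\alpha(m)}$,
so collecting everything,
\begin{equation*}
H^\Delta(q^m(y))=\Big(\prod_{\vMF}|y|_v^{1/m}\Big)\cdot\prod_{\vMFz}N\big(\Delta((\Fv[X]/(X^m-y))/\Fv)\big)^{1/\alpha(m)}.
\end{equation*}
The product formula gives $\prod_{\vMF}|y|_v=1$, so the first factor is $1$. For the second factor, I would invoke \ref{discloc} (taking $A=F[X]/(X^m-y)$, which is a finite product of finite separable extensions of $F$ since $X^m-y$ is separable, as $v(m)=0$ at the relevant places and more fundamentally $\gcd(X^m-y,mX^{m-1})=1$ in $F[X]$): for every $\vMFz$ one has $v(\Delta(A/F))=v(\Delta((A\otimes_F\Fv)/\Fv))$, and $A\otimes_F\Fv=\Fv[X]/(X^m-y)$. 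Hence the ideal $\Delta(A/F)$ has, at each finite place $v$, the same valuation as the local discriminant, which means $N(\Delta(A/F))=\prod_{\vMFz}N(\Delta((\Fv[X]/(X^m-y))/\Fv))$ (only finitely many factors differ from the unit ideal, namely those $v$ dividing $m$ or $\disc$ of a defining polynomial, so the product is finite). Taking $1/\alpha(m)$-th powers yields the claim.

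The only genuine point requiring care — the ``hard part'', though it is mild here — is the bookkeeping identifying the adelic product of local discriminant ideal norms with the norm of the global discriminant ideal: one must check that $\Delta(A/F)$ really is the ideal whose $v$-adic valuation equals $v(\Delta((\Fv[X]/(X^m-y))/\Fv))$ for all finite $v$ simultaneously, which is exactly the content of \ref{discloc} combined with the fact that $A\otimes_F\Fv\cong\Fv[X]/(X^m-y)$ and that $\Delta$ of a finite product is the product of the $\Delta$'s. I would also note explicitly that $A$ is an \'etale $F$-algebra (so the trace form is nondegenerate and $\Delta(A/F)\neq 0$), which legitimizes all the discriminant computations; this follows because $X^m-y$ and its derivative $mX^{m-1}$ generate the unit ideal in $F[X]$ since $y\neq 0$ and $m\neq 0$ in $F$. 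Everything else is a direct substitution and the product formula.
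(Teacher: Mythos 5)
Your proof is correct and follows essentially the same route as the paper: expand $H^\Delta(q^m(y))$ as a product over places, split off the $|y|_v^{1/m}$ factors and cancel them by the product formula, and then apply Proposition~\ref{discloc} together with $A\otimes_F F_v\cong F_v[X]/(X^m-y)$ to identify the remaining product over finite places with $N(\Delta(F[X]/(X^m-y)/F))^{1/\alpha(m)}$. The extra remarks you make (separability/\'etaleness of $A$, finiteness of the product) are harmless elaborations of points the paper leaves implicit.
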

\begin{proof}
By the product formula, one has that \begin{align*}H^{\Delta}(y)&=\prod_{\vMF}f^{\Delta}_v(y)\\&=\bigg(\prod_{\vMFi}|y|^{1/m}_v\bigg)\prod_{\vMFz}|y|^{1/m}_vN\bigg(\Delta \big((F_v[X]/(X^m-y))/F_v\big)\bigg)^{\frac1{\alpha(m)}}\\
&=\prod_{\vMFz}N\bigg(\Delta \big((F_v[X]/(X^m-y))/F_v\big)\bigg)^{\frac1{\alpha(m)}}.
\end{align*}
Proposition \ref{discloc} gives that \begin{multline*}\prod_{\vMFz}N\bigg(\Delta \big((F_v[X]/(X^m-y))/F_v\big)\bigg)^{\frac1{\alpha(m)}}\\=N\bigg(\Delta\big((F[X]/(X^m-y))/F\big)\bigg)^{\frac1{\alpha(m)}}.\end{multline*}
We deduce that $$H^{\Delta}(y)=N\bigg(\Delta\big((F[X]/(X^m-y))/F\big)\bigg)^{1/\alpha(m)}.$$
\end{proof}
\subsection{} In this paragraph, we prove the Northcott property. Our proof does not involve Hermite-Minkowski theorem (\cite[Theorem 2.13]{Neukirch}) and relies on a comparison with toric heights that we establish in \ref{gvfvtor}.

Let $(f_v:\Fvt\to \RR_{>0})_v$ be a degree~$1$ quasi-discriminant family of~$m$-homogenous functions and let $H=H((f_v)_v)$. For $\vMF$, let~$H_v$ be the function $H_v:[\TT(m)(\Fv)]\to\RR_{>0}$ induced from $\Fvt$-invariant function $\Fvt\to\RR_{>0}, y\mapsto |y|^{-1/m}_vf_v(y)$ (we have studied such functions in \ref{localheightdef} for a general generalized adelic family). If $f_v=f_v^{\Delta}$, we may write $H_v^{\Delta}$ for~$H_v$. By \ref{localheightglobal}, one has that if $x\in[\TT(m)(F)]$, then \begin{equation}\label{htmfhv} H(x)=\prod_{\vMF}H_v([\TT(m)(i_v)](x)),\end{equation}where for $\vMF$, the map $[\TT(m)(i_v)]:[\TT(m)(F)]\to[\TT(m)(\Fv)]$ is the induced map from the $\Fvt$-invariant inclusion $i_v:(\Ft)^n\to(\Fvt)^n$. 

\begin{lem}\label{hdtoi}The following claims are valid:
\begin{enumerate}
\item Suppose that $v(m)\neq 0$. For $y\in\Fvt$, one has that $$H^{\Delta}_v(q^{m}_v(y))=\pivv^{(\gcd(v(y),m)-m)/\alpha(m)}.$$ 
The function $H^{\Delta}_v:[\TT(m)(\Fv)]\to\RR_{>0}$ is $[\TT(m)(\Ov)]$-invariant. If $x\in[\TT(m)(\Ov)],$ then $H^{\Delta}_v(x)=1$.
\item Suppose that $\vMFi$. One has that $H_v^{\Delta}=1.$
\end{enumerate}
\end{lem}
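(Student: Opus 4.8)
\textbf{Proof plan for Lemma \ref{hdtoi}.}

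The plan is to compute $H_v^{\Delta}$ directly from its definition together with the local discriminant computation already carried out in \ref{defvdisc}. Recall that $H_v^{\Delta}$ is induced by the $\Fvt$-invariant function $y\mapsto |y|_v^{-1/m}f_v^{\Delta}(y)$ on $\Fvt$.

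For part (1), suppose $v(m)\neq 0$ (which in fact imposes no constraint beyond being a finite place dividing $m$; note $\PPP(m) = \TT(m)$ when $n=1$). By definition $f_v^{\Delta}(y) = |y|_v^{1/m}\,N(\Delta((\Fv[X]/(X^m-y))/\Fv))^{1/\alpha(m)}$, so $|y|_v^{-1/m}f_v^{\Delta}(y) = N(\Delta((\Fv[X]/(X^m-y))/\Fv))^{1/\alpha(m)}$. First I would invoke Proposition \ref{detaxm}, which gives $\Delta((\Fv[X]/(X^m-y))/\Fv) = \piv^{\gcd(v(y),m)-m}\Ov$; since $N(\piv^k\Ov) = \pivv^{-k}$ with our normalization, the norm equals $\pivv^{m-\gcd(v(y),m)}$, and raising to the $1/\alpha(m)$ power yields $H_v^{\Delta}(q^m_v(y)) = \pivv^{(\gcd(v(y),m)-m)/\alpha(m)}$, which is the claimed formula. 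For the invariance statement, I observe that $\gcd(v(y),m)$ depends only on $v(y) \bmod m$ (using $\gcd(v(y)+m,m)=\gcd(v(y),m)$), hence the function factors through $\Fvt/\Ovt(\Fvt)_m$; more precisely, $y\mapsto \gcd(v(y),m)$ is invariant under multiplying $y$ by an element of $\Ovt$ and is the same for $y$ and $y$ times an $m$-th power, so the induced function on $[\TT(m)(\Fv)] = \Fvt/(\Fvt)_m$ is invariant under the image of $\Ovt$, which is exactly $[\TT(m)(\Ov)]$ by \ref{identofov}. Finally, if $x\in[\TT(m)(\Ov)]$ we may choose a lift $y\in\Ovt$, so $v(y)=0$, $\gcd(v(y),m)=m$, and the exponent vanishes, giving $H_v^{\Delta}(x)=1$; alternatively this is immediate from Lemma \ref{defvdisc}(3).

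For part (2), suppose $\vMFi$. By definition $f_v^{\Delta}(y) = |y|_v^{1/m}$, so $|y|_v^{-1/m}f_v^{\Delta}(y) = 1$ identically on $\Fvt$, whence the induced function $H_v^{\Delta}$ on $[\TT(m)(\Fv)]$ is the constant function $1$.

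I do not anticipate a genuine obstacle here: the lemma is essentially a bookkeeping consequence of the discriminant formula \ref{detaxm} and the normalization conventions. The only mild subtlety is tracking the normalization of the ideal norm $N$ against $|\cdot|_v$ (so that $N(\piv\Ov) = \pivv^{-1} = [\Ov:\piv\Ov]$), and making sure the invariance under $\Ovt$ is phrased through the identification $[\TT(m)(\Fv)] = \Fvt/(\Fvt)_m$ with $[\TT(m)(\Ov)]$ its compact open subgroup as in \ref{identofov}; both are routine.
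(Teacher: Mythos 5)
Your route is the paper's route --- the explicit formula from the local discriminant computation, then invariance from $\gcd(v(\cdot),m)$ depending only on the valuation mod $m$ --- and part~(2), the invariance argument, and the evaluation at $x\in[\TT(m)(\Ov)]$ are all correct, but two things in your writeup need attention.

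First, the hypothesis. The condition ``$v(m)\neq 0$'' is a typo for ``$v(m)=0$'', propagated from the analogous typo in Lemma~\ref{citavbozdan}. Your parenthetical correctly parses $v(m)\neq 0$ as saying that the residue characteristic of $v$ divides $m$, but that is precisely the case in which Proposition~\ref{detaxm} does \emph{not} apply: its proof rests, via~\ref{citavbozdan} and~\ref{disctotram}, on the tame-ramification formula~\ref{tamrame}, which requires the residue characteristic to be coprime to $m$. For $v\mid m$ the extensions $\Fv(\sqrt[m]{y})/\Fv$ can be wildly ramified and the stated formula fails; for instance $\Delta(\QQ_2(\sqrt2)/\QQ_2)=(8)$, not $(2)$. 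Note that Lemma~\ref{defvdisc}(3), the ``alternatively'' route you mention, is correctly stated under $v(m)=0$, and the set $S$ used in~\ref{wustani} and the surrounding analysis excludes exactly the places with $v(m)\neq 0$ for this reason.

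Second, a sign slip that accidentally cancels a sign error in~\ref{detaxm}. You quote $\Delta=\piv^{\gcd(v(y),m)-m}\Ov$; since that exponent is $\leq 0$, this would be a fractional ideal strictly containing $\Ov$, which a discriminant never is. The statement of~\ref{detaxm} has a spurious minus sign (introduced in the last display of its proof); the correct assertion, and the one actually quoted in~\ref{defvdisc}(3), is $\Delta=\piv^{m-\gcd(v(y),m)}\Ov$. From your version of $\Delta$ you correctly get $N(\Delta)=\pivv^{m-\gcd(v(y),m)}$, but then $N(\Delta)^{1/\alpha(m)}=\pivv^{(m-\gcd(v(y),m))/\alpha(m)}$, not $\pivv^{(\gcd(v(y),m)-m)/\alpha(m)}$ as you wrote; the two flips cancel, so you land on the lemma's formula, but the chain as written is internally inconsistent. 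The clean derivation --- and the one the paper's own proof uses --- is to read the formula directly off Lemma~\ref{defvdisc}(3), where the sign has already been fixed.
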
 
\begin{proof} 
\begin{enumerate}
\item As $v(m)\neq 0$, by Lemma \ref{defvdisc}, one has that $$H^{\Delta}_v(q^m_v(y))=|y|_v^{-1/m}f_v^{\Delta}(y)=\pivv^{(\gcd(v(y),m)-m)/\alpha(m)}$$ for $y\in\Fvt$. Let us prove that $H^{\Delta}_v$ is $[\TT(m)(\Ov)]$-invariant. Let $x\in[\TT(m)(\Fv)]$ and let $u\in[\TT(m)(\Ov)].$ Let $\widetilde x$ and $\widetilde u$ be its lifts in $\Fvt$ and $\Ovt$, respectively. We have that $$H_v^{\Delta}(x)=\pivv^{(\gcd(v(\widetilde x),m)-m)/\alpha(m)}=\pivv^{(\gcd(v(\widetilde u\widetilde x),m)-m)/\alpha(m)}=H_v^{\Delta}(xu).$$ It follows that $H^{\Delta}_v$ is $[\TT(m)(\Ov)]$-invariant. Suppose that $x\in[\TT(m)(\Ov)]=q^m(\Ovt).$ Then $\widetilde x$ can be taken in $\Ovt.$ Thus $$H^{\Delta}_v(x)=\pivv^{(\gcd(v(\widetilde x),m)-m)/\alpha(m)}=1.$$
\item The function $f_v^{\Delta}$ is the function $x\mapsto |x|^{1/m}_v$. The function $H_v^{\Delta}$ is the induced function from the constant function $x\mapsto |x|^{-1/m}_vf_v^{\Delta}(x)=1$, hence $H_v^{\Delta}=1$. 
\end{enumerate}
\end{proof}
\begin{lem} \label{gvfvtor}
There exists $C>0$ such that for every $x\in[\TT(m)(F)]$ one has that $$CH^{\Delta}(x)\geq H^{\#}(x)^{\frac{1}{\alpha(m)}},$$where $H^{\#}$ is the height defined by the degree~$1$ toric family $(f_v^{\#})_v$ of~$m$-homogenous functions.
\end{lem}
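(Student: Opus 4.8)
The goal is to compare the local discriminant heights $H^\Delta_v$ with the local toric heights $H^\#_v$ at every place, place by place, and then to multiply these comparisons. By Lemma \ref{localheightglobal} (in the form of equation (\ref{htmfhv})) both heights factor as products over $v$ of their local components on $[\TT(m)(F)]$, so it suffices to bound each $H^\#_v(x)^{1/\alpha(m)}$ by a constant multiple of $H^\Delta_v(x)$, with the constant equal to $1$ for all but finitely many $v$. The finite places $v$ with $v(m)=0$ at which $f_v=f_v^\#$ are already subsumed into the (infinitely many) good places; the remaining finitely many places are the archimedean ones, the finitely many $v$ with $v(m)\neq 0$, and the finitely many $v$ at which $f_v\neq f_v^\#$ (since the family is only quasi-discriminant). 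These finitely many places will contribute an overall constant $C$.

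\textbf{The key local estimate.} First I would record, from Lemma \ref{defvdisc}(3), that for a finite place $v$ with $v(m)=0$ and $y\in\Fvt$ one has
\[
H^\Delta_v(q^m_v(y))=\pivv^{(\gcd(v(y),m)-m)/\alpha(m)},
\]
while the toric local height is $H^\#_v(q^m_v(y))=\pivv^{(\gcd(v(y),m)-m)}\cdot(\text{something})$ — more precisely, from \ref{davdavdav} and the definition of $H^\#_v$ in \ref{localheightdef}, $H^\#_v(q^m_v(y))=\pivv^{-m r_v(y)+v(y)}$ with $r_v(y)=\lceil -v(y)/m\rceil$, and a short computation gives $-mr_v(y)+v(y)=\gcd(v(y),m)-m$ whenever one reduces $v(y)$ modulo $m$; in any case $-mr_v(y)+v(y)\in\{\gcd(v(y),m)-m,\ \gcd(v(y),m)-m-(\text{multiple of }m)\}$, and since $\pivv<1$ and the exponent of $H^\#_v$ is $\le \gcd(v(y),m)-m\le 0$ we get $H^\#_v(q^m_v(y))\le \pivv^{\gcd(v(y),m)-m}=H^\Delta_v(q^m_v(y))^{\alpha(m)}$. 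Hence $H^\Delta_v(x)\ge H^\#_v(x)^{1/\alpha(m)}$ at every such $v$, and equality of the relevant exponent signs makes the constant $1$ at all good places. The main point to check carefully here is that $\alpha(m)=m^2-m^2/r\ge 1$, so that raising to $1/\alpha(m)$ does not reverse the inequality, and that $r_v$ and $\gcd$ interact correctly; this is the one genuinely arithmetic step, but it is elementary.

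\textbf{Handling the bad places and concluding.} For the archimedean places, Lemma \ref{hdtoi}(2) gives $H^\Delta_v\equiv 1$, while $H^\#_v\ge 1$ on $[\TT(m)(F)]$ by the estimate in \ref{localheightglobal}; since $[\PPP(m)(\Fv)]$ is compact (\ref{paraap}) the $\Fvt$-invariant continuous function $y\mapsto H^\#_v(q^m_v(y))^{1/\alpha(m)}$ descends to a bounded function on $[\PPP(m)(\Fv)]$, giving a constant $C_v$ with $C_v\cdot 1\ge H^\#_v(x)^{1/\alpha(m)}$. For the finitely many finite $v$ with $v(m)\neq 0$, Lemma \ref{hdtoi}(1) gives $H^\Delta_v(x)=\pivv^{(\gcd(v(\widetilde x),m)-m)/\alpha(m)}$, which is $[\TT(m)(\Ov)]$-invariant, and since $[\TT(m)(\Ov)]$ has finite index in $[\TT(m)(\Fv)]$ both $H^\Delta_v$ and $H^\#_v$ take only finitely many distinct values on the quotient, so the ratio $H^\#_v(x)^{1/\alpha(m)}/H^\Delta_v(x)$ is bounded by some $C_v$. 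The same finiteness argument (via \ref{ttafvfinite}, which says $[\TT(m)(\Fv)]$ is finite and discrete) handles the finitely many $v$ at which $f_v\neq f_v^\#$: on a finite group every function is bounded. Setting $C=\prod_{v\in S}C_v$ over the finite set $S$ of bad places and using (\ref{htmfhv}) to multiply the local inequalities, I obtain
\[
C\cdot H^\Delta(x)=\Big(\prod_{v\in S}C_v\Big)\prod_{v}H^\Delta_v(x)\ \ge\ \prod_{v}H^\#_v(x)^{1/\alpha(m)}=H^\#(x)^{1/\alpha(m)}
\]
for every $x\in[\TT(m)(F)]$, which is the claim. The only mild obstacle is bookkeeping: making sure every bad place contributes a genuinely finite constant (which follows from either compactness of $[\PPP(m)(\Fv)]$ or finiteness of $[\TT(m)(\Fv)]$) and that the good-place inequality really has constant $1$, so the infinite product of the constants converges trivially.
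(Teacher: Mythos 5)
Your overall strategy is the same as the paper's: factor both heights into local components over $[\TT(m)(F)]$, argue that the local inequality $H^\Delta_v\ge (H^\#_v)^{1/\alpha(m)}$ holds with constant~$1$ at the infinitely many good finite places, and absorb the finitely many bad places into the constant $C$ by finiteness of $[\TT(m)(F_v)]$. That is exactly what the paper does. However, the one genuinely arithmetic step — the local estimate at a good finite place — is not correct as written, and this is where the substance of the lemma lives.

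Concretely: with the degree-$m$ normalization you appear to be using, the local toric height is $H^\#_v(q^m_v(y))=f^\#_v(y)|y|_v^{-1}=\pivv^{-mr_v(y)-v(y)}$, not $\pivv^{-mr_v(y)+v(y)}$; your formula has the wrong sign on $v(y)$. More seriously, the claimed identity $-mr_v(y)+v(y)=\gcd(v(y),m)-m$ is simply false (try $v(y)=1$, $m=4$: even after correcting the sign, the left side is $-1$ and the right side is $-3$). What is true, and what the paper actually proves, is the \emph{inequality} $m-\gcd(k,m)\ge k/m-\lfloor k/m\rfloor$ for every $k\in\ZZ$: it is trivial when $m\mid k$ (both sides $0$), and when $m\nmid k$ the left side is $\ge1$ while the right side is $<1$. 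You do not establish this; and your fallback — that ``the exponent of $H^\#_v$ is $\le\gcd(v(y),m)-m$'' implies $H^\#_v\le\pivv^{\gcd(v(y),m)-m}$ — has the deduction running the wrong way, since $\pivv<1$ makes $\pivv^e$ \emph{decreasing} in $e$; the conclusion you want actually follows from the reverse inequality on exponents. So the local estimate needs to be redone carefully (it is elementary, but none of the three steps you wrote down survives scrutiny).

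Two smaller points. The lemma compares $H^\Delta$ (the height of the pure discriminant family, where $f_v=f_v^\Delta$ at \emph{every} $v$) with $H^\#$; there is no quasi-discriminant family in play here, so your ``finitely many $v$ at which $f_v\neq f_v^\#$'' case is extraneous for this lemma (that comparison is handled separately in \ref{northdisc} via \ref{toricisclos}). And at archimedean $v$ one has $f_v^\Delta=f_v^\#$ identically, hence $H^\Delta_v=H^\#_v\equiv1$, so no compactness argument or local constant $C_v$ is needed there — what you wrote is harmless but circuitous.
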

\begin{proof}
For $\vMF$, let $H_v^{\Delta}$ be the function induced from $\Fvt$-invariant function $y\mapsto |y|_v^{-1/m}f_v^{\Delta}(y)$ and let $H^{\#}_v$ be the function induced from $\Fvt$-invariant function $y\mapsto |y|^{-1/m}_vf_v^{\#}(y)$. Recall that for $\vMFi$,  by the definitions of $f_v^{\Delta}$ and $f_v^{\#}$ (see \ref{defvdisc} and \ref{tordef}), one has $f_v^{\Delta}=f_v^{\#}$, thus by \ref{hdtoi}, one has that $H_v^{\Delta}=H_v^{\#}=1.$ Using this and using \ref{localheightglobal}, for $x\in[\TT(m)(F)],$ we get that \begin{align*}H^{\Delta}(x)&=\prod_{\vMFz}H_v^{\Delta}([\TT(m)(i_v)](x))\\
H^{\#}(x)&=\prod_{\vMFz}H_v^{\#}([\TT(m)(i_v)](x)),\end{align*} where the maps $[\TT(m)(i_v)]:[\TT(m)(F)]\to[\TT(m)(\Fv)]$ are the induced maps from $(F^\times)_m$-invariant inclusions $i_v:(\Ft)^n\to(\Fvt)^n$. For every finite~$v$ such that $v(m)=0$, by the finiteness of the space $[\TT(m)(\Fv)]$ one has that there exists $C_v>0$ such that for every $x\in[\TT(m)(\Fv)]$ one has that $$H_v^{\Delta}(x)\geq C_v H^{\#}_v(x)^{1/\alpha(m)}.$$
Let $v\in M_F^0$ be such that $v(m)=0$. By \ref{defvdisc}, one has for $y\in\Fvt$ that $$H_v^{\Delta}(q^m_v(y))=\pivv^{-(m-\gcd(m,v(y)))/\alpha(m)}.$$ 
On the other side, by \ref{davdavdav}, one has for $y\in\Fvt$ that $$H_v^{\#}(q^m_v(y))=\pivv^{-(\frac{v(y)}{m}-\lfloor\frac{v(y)}{m}\rfloor)}.$$ For every $k\in\ZZ$ one has that $$m-\gcd(k,m)\geq \frac{k}m-\bigg\lfloor\frac km\bigg\rfloor$$(if~$k$ is divisible by~$m$, then the quantities on both hand sides are equal to zero, and if~$k$ is not divisible by~$m$, then the quantity on the left hand side is at least $1,$ hence is bigger than the quantity on the right hand side). We deduce that  $$H_v^{\Delta}\geq (H_v^{\#} )^{1/\alpha(m)}.$$

It follows that $$H^{\Delta}\geq \bigg( \prod_{v(m)\neq 0}C_v\bigg) (H^{\#}\big)^{1/\alpha(m)}.$$The statement is proven.
\end{proof}
\begin{prop}\label{northdisc} Let $(f_v:\Fvt\to\RR_{>0})_v$ be a degree~$1$ quasi-discriminant family of~$m$-homogenous functions.
The height $H=H((f_v)_v)$ is a Northcott height. Moreover, for every $B>0$, there exists $C>0$ such that $$|\{x\in[\PPP(m)(F)]| H(x)<B\}|<CB^{m\alpha(m)}.$$
\end{prop}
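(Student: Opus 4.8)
The plan is to reduce the quasi-discriminant case to the quasi-toric case, exploiting the comparison inequality from Lemma~\ref{gvfvtor} together with the boundedness-of-quotients property of the resulting heights, and then to invoke the Northcott bound for quasi-toric heights proved in Theorem~\ref{boundppatta}. First I would observe that since $n=1$ here, one has $[\PPP(m)(F)]=[\TT(m)(F)]$, so it suffices to bound the number of $x\in[\TT(m)(F)]$ with $H(x)<B$; this removes the need for the inductive ``reduction to the divisor at infinity'' machinery used in the proof of \ref{boundppatta}.

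Next, I would pass from a general degree~$1$ quasi-discriminant family $(f_v)_v$ to the discriminant degree~$1$ family $(f_v^{\Delta})_v$. By the definition of a quasi-discriminant family, $f_v=f_v^{\Delta}$ for almost all $v$, and both families are generalized adelic and consist of continuous, nowhere-vanishing functions; hence by Lemma~\ref{toricisclos} there exist $C_1,C_2>0$ with $C_1<H(x)/H^{\Delta}(x)<C_2$ for all $x\in[\PPP(m)(F)]$. Thus $|\{x\mid H(x)<B\}|\leq|\{x\mid H^{\Delta}(x)<B/C_1\}|$, and it is enough to prove the estimate for $H^{\Delta}$. Now apply Lemma~\ref{gvfvtor}: there is $C>0$ with $CH^{\Delta}(x)\geq H^{\#}(x)^{1/\alpha(m)}$ for all $x\in[\TT(m)(F)]$, where $H^{\#}$ is the toric height defined by the degree~$1$ toric family. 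Consequently
\[
\{x\in[\TT(m)(F)]\mid H^{\Delta}(x)<B\}\subseteq\{x\in[\TT(m)(F)]\mid H^{\#}(x)<(CB)^{\alpha(m)}\}.
\]
Finally I would feed $(CB)^{\alpha(m)}$ into Theorem~\ref{boundppatta}, or rather into its specialization Proposition~\ref{vnjem} for the stacky torus: there exists $C'>0$ with $|\{x\in[\TT(m)(F)]\mid H^{\#}(x)<B'\}|<C'B'\log(2+B')^{r_1+r_2+1}$ (the exponent $n^2(r_1+r_2)+n-1$ with $n=1$). With $B'=(CB)^{\alpha(m)}$ this gives a bound $\ll B^{\alpha(m)}\log(2+B^{\alpha(m)})^{r_1+r_2+1}$, which for $B$ large is $\ll B^{m\alpha(m)}$ (indeed $\ll B^{\alpha(m)+\epsilon}$ already suffices, and $m\alpha(m)$ is a very comfortable exponent absorbing the logarithmic factor and the constants); one then enlarges the implied constant to cover small $B$ as well. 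This establishes both that $H$ is a Northcott height and the claimed polynomial bound.

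I expect the only genuine subtlety to be bookkeeping: confirming that Lemma~\ref{gvfvtor} and Lemma~\ref{toricisclos} apply in exactly the stated generality (both require the comparison heights to be nowhere vanishing and generalized adelic, which holds by Lemma~\ref{defvdisc}), and that the exponent arithmetic $\alpha(m)(\text{linear}+\epsilon)\le m\alpha(m)$ together with the logarithmic factor is handled correctly when passing to the final clean power $B^{m\alpha(m)}$. There is no real analytic obstacle here; the hard work has already been done in \ref{gvfvtor} (the comparison with toric heights, which in turn rests on the discriminant computations of \ref{detaxm} and the local formulas of \ref{hdtoi}) and in \ref{boundppatta}/\ref{vnjem}. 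So the proof is essentially a three-line chain of inclusions followed by an application of the quasi-toric Northcott bound.
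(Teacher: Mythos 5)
Your reduction chain (Lemma~\ref{toricisclos} to replace $H$ by $H^{\Delta}$, then Lemma~\ref{gvfvtor} to dominate $H^{\Delta}$ below by $(H^{\#})^{1/\alpha(m)}$) is exactly what the paper does, and it is correct. The gap is in the final step, and it is a genuine one: you misapply Proposition~\ref{vnjem}/Theorem~\ref{boundppatta} to the \emph{degree-$1$} toric height $H^{\#}$ as though they gave a bound $\ll B'\log(2+B')^{r_1+r_2}$. Those results are stated for quasi-toric families of weighted degree $|\aaa|$; here $n=1$ and $\aaa=(m)$, so they apply to the degree-$m$ toric height $(H^{\#})^m$. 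The correct translation is therefore
\[
|\{x : H^{\#}(x)<B'\}|=|\{x:(H^{\#})^m(x)<(B')^m\}|\ll (B')^m\log\bigl(2+(B')^m\bigr)^{r_1+r_2},
\]
so with $B'=(CB)^{\alpha(m)}$ you land at $\ll B^{m\alpha(m)}\log(B)^{r_1+r_2}$, not at $\ll B^{\alpha(m)}\log(\cdots)$ as you wrote. The extra logarithmic factor therefore sits \emph{on top of} $B^{m\alpha(m)}$, and there is no room to absorb it --- your remark that $m\alpha(m)$ "comfortably absorbs" the logarithm was premised on the exponent being $\alpha(m)$, which is off by a factor of $m$. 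So Proposition~\ref{vnjem} does not yield the stated bound $<CB^{m\alpha(m)}$.

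The paper avoids this by invoking Theorem~\ref{osnovna} rather than Proposition~\ref{vnjem}: for a degree-$m$ quasi-toric family, Theorem~\ref{osnovna} gives the precise asymptotic $\{x:H^{\#m}(x)\le B\}\sim\frac{\tau}{m}B$ with no logarithmic loss, whence $\{x:H^{\#}(x)<B'\}\le C_3(B')^m$ for all $B'$ (after adjusting the constant for small $B'$), and plugging in $B'=(CB)^{\alpha(m)}$ gives exactly $\ll B^{m\alpha(m)}$. Theorem~\ref{osnovna} is proved earlier in the paper (Chapter on height zeta functions) so there is no circularity, but it is a substantially deeper input than the elementary count in \ref{vnjem}. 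If you wanted to keep the argument elementary, you would have to either sharpen the stated bound to allow the logarithm (which is enough for the downstream use in \ref{zconvunifd}, since any polynomial-plus-log bound suffices for the Dirichlet series convergence there) or restate Proposition~\ref{northdisc} accordingly; but to prove the clean bound $<CB^{m\alpha(m)}$ as written, you need \ref{osnovna}. Two minor points: you miscomputed $n^2(r_1+r_2)+n-1$ at $n=1$ as $r_1+r_2+1$ instead of $r_1+r_2$, and the observation that $[\PPP(m)(F)]=[\TT(m)(F)]$ for $n=1$ is unnecessary since the paper's lemmas are already stated over $[\PPP(\aaa)(F)]$.
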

\begin{proof}
Let $(f_v^{\Delta}:\Fvt\to\RR_{>0})_v$ be the discriminant degree~$1$ family of~$m$-homogenous functions. The families $(f^{\Delta}_v)_v$ and $(f_v)_v$ are degree~$1$ families of~$m$-homogenous continuous functions and for almost all~$v$ one has $f^{\Delta}_v=f_v$ (because $(f_v)_v$ is quasi-discriminant), thus by \ref{toricisclos}, there exists a constant $C_1>0$ such that $C_1H^{\Delta}\leq H(y)$ for every $y\in[\PPP(m)(F)]$.  Let $(f_v^{\#}:\Fvt\to\RR_{>0})_v$ be the toric family of~$m$-homogenous functions of weighted degree~$1$ and let $H^{\#}=H((f_v^{\#})_v)$ be the resulting height. Lemma \ref{gvfvtor} gives that there exists $C_2>0$ such that $$H^{\Delta}(y)\geq C_2H^{\#}(y)^{1/\alpha(m)},$$ for every $y\in[\PPP(m)(F)]$.  Now, for every $B>0$, one has that \begin{align*}|\{x\in[\PPP(m)(F)]|H(x)<B\}|\hskip-2cm&\\&\leq |\{x\in[\PPP(m)(F)]|C_1H^{\Delta}(x)<B\}|\\&\leq|\{x\in[\PPP(m)(F)]|C_2C_1(H^{\#}(x))^{1/\alpha(m)}<B\}|\\&=|\{x\in[\PPP(m)(F)]|H^{\#}(x)<C_2^{-1}C_1^{-1}B^{\alpha(m)}\}|.\end{align*} 
Theorem \ref{osnovna} implies that there exists $C_3>0$ such that $$|\{x\in[\PPP(m)(F)]|H^{\#}(x)<C_0^{-1}C_1^{-1}B^{\alpha(m)}\}| <C_3C_2^{-1}C_1^{-m}B^{m\alpha(m)}$$for every $B>0$ (recall that in \ref{osnovna}, the degree of the toric family is~$m$ and our toric family is of the degree~$1$). Thus for every $B>0,$ we have that $$|\{x\in[\PPP(m)(F)]|H(x)<B\}|\leq C_3C_2^{-1}C_1^{-m}B^{m\alpha(m)}.$$The statement is proven.
\end{proof}
\begin{rem}
\normalfont In the next sections we establish the precise asymptotic behaviour of $|\{x\in[\PPP(m)(F)]|H(x)<B\}|$ when $B\to\infty$.
\end{rem}
\section{Analysis of height zeta function} The goal of this section is to establish the asymptotic behaviour of $|\{x\in[\PPP(m)(F)]|H(x)\leq B\}$. Using the ``Tauberian dictionary", the task translates into the study the convergence of the height zeta series. For that purpose we use Fourier analysis. 

Let $(f_v:\Fvt\to\RR_{>0})_v$ be a quasi-discriminant {\bf degree~$m$} family of~$m$-homogenous functions. For $\vMF$, we will denote by $f_v^{\Delta}$ the discriminant~$m$-homogenous function of weighted degree~$m$.  In the entire section we will denote by~$S$ the finite set\begin{multline*}S:=\\\{v\in M_F^0|\text{$f_v$ is not the discriminant~$m$-homogenous function or $v(m)\neq 0$}\}.\end{multline*}
\subsection{} In this paragraph we study the local Fourier transform of a local height.

Let $\vMF$. Let $q^m_v:\Fvt\to(\Fvt)/(\Fvt)_m=[\PPP(m)(\Fv)]=[\TT(m)(\Fv)]$ be the quotient map. By Lemma \ref{xiub}, the measure $d^*x$ on $\Fvt$ is $\Fvt$-invariant for the action $t\cdot y=t^my$ of $\Fvt$ on $\Fvt$. We set $\mu_v$ to be the quotient Haar measure $d^*x/d^*x$ on $[\TT(m)(\Fv)]$  (see \ref{haarttafv}).
Recall that the sets $[\TT(m)(\Fv)]$ are finite by \ref{ttafvfinite}, hence $\mu_v([\TT(m)(\Fv)])$ are finite positive numbers.
\begin{lem}\label{normmmu}
The measure $\mu_v$ is normalized by $$\mu_v([\TT(m)(\Fv)])=m.$$
\end{lem}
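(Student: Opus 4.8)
The claim is that $\mu_v([\TT(m)(\Fv)]) = m$, where $\mu_v = d^*x/d^*x$ is the quotient Haar measure on $[\TT(m)(\Fv)] = \Fvt/(\Fvt)_m$ coming from the action $t \cdot y = t^m y$ of $\Fvt$ on $\Fvt$. The essential point is that this quotient measure computation is an instance of the Euler--Poincar\'e characteristic machinery of Oesterl\'e, applied to the short exact sequence relating $\Fvt$, the subgroup $(\Fvt)_m$, and the finite quotient group $[\TT(m)(\Fv)]$, combined with the explicit index formula from Lemma \ref{ttafvfinite}.

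First I would set up the relevant exact sequences. The map $\epsilon: \Fvt \to \Fvt$, $t \mapsto t^m$, is proper (it is the $n=1$ case of \ref{properaaftn}, or follows directly from \ref{aboutopa}), with image $(\Fvt)_m$ a closed subgroup of $\Fvt$ and kernel $\mu_m(\Fv)$, the finite group of $m$-th roots of unity in $\Fv$. By Lemma \ref{rudj} (applied with $H = G = \Fvt$, $\phi = \epsilon$, and Haar measure $d^*x$ on both copies), the action of $\Fvt$ on $\Fvt$ via $\epsilon$ is proper, the quotient $[\TT(m)(\Fv)]$ identifies with $\Fvt/(\Fvt)_m$, and the exact sequence
\[
1 \to \mu_m(\Fv) \to \Fvt \xrightarrow{\epsilon} \Fvt \to [\TT(m)(\Fv)] \to 1
\]
is of trivial measure Euler--Poincar\'e characteristic, when $\mu_m(\Fv)$ carries the probability Haar measure, both copies of $\Fvt$ carry $d^*x$, and $[\TT(m)(\Fv)]$ carries $\mu_v = d^*x/d^*x$. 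Here I use that all groups in sight are countable at infinity (Lemma \ref{countableatinfty} covers $\Fvt$, and $\mu_m(\Fv)$ and $[\TT(m)(\Fv)]$ are finite).

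The next step is to extract the value of $\mu_v([\TT(m)(\Fv)])$ from this. Since $\mu_m(\Fv)$ and $[\TT(m)(\Fv)]$ are finite groups, the two middle terms of the complex carry identical Haar measures $d^*x$, so their contributions to the alternating product of volumes/indices cancel; what remains is that the Euler--Poincar\'e characteristic equals $\mu_v([\TT(m)(\Fv)]) \cdot |\mu_m(\Fv)|^{-1}$ — more precisely, applying Part (1) of \ref{lemoesterle} to the subcomplex of finite groups formed by $\mu_m(\Fv)$ and $[\TT(m)(\Fv)]$ (after splitting off the identical middle terms via \ref{rudj}, or by observing the map $\epsilon: \Fvt \to (\Fvt)_m$ preserves the measure up to the probability factor on the kernel), triviality of the characteristic forces $\mu_v([\TT(m)(\Fv)]) = |\mu_m(\Fv)|$. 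Finally, Lemma \ref{ttafvfinite} with $a = m$ gives $|[\TT(m)(\Fv)]| = \frac{m}{|m|_v}|\mu_m(\Fv)|$; but one must reconcile this with the claimed value $m$. Actually the cleanest route is to compute directly: $\mu_v$ is the pushforward/quotient measure, and for the finite quotient $\Fvt/(\Fvt)_m$ with the quotient measure of $d^*x$ by $d^*x$, one gets $\mu_v([\TT(m)(\Fv)]) = \big[\text{index interpretation}\big]$, and the index relation $(\Fvt : (\Fvt)_m) = \frac{m}{|m|_v}|\mu_m(\Fv)|$ together with the kernel size $|\mu_m(\Fv)|$ produces the stated value $m$ after the $|m|_v$-normalizations in $d^*x$ (recall $d^*x(\Ovt) = 1 - \pivv$, and the scaling of $d^*x$ under $y \mapsto y^m$ is governed by $|m|_v$).

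\textbf{Main obstacle.} The delicate point will be bookkeeping the various normalization factors: the probability measure on the kernel $\mu_m(\Fv)$, the fact that $d^*x$ is not normalized so that $\epsilon_*(d^*x) = d^*x/(\text{counting on kernel})$ literally, and the appearance of $|m|_v$ in the index $(\Fvt:(\Fvt)_m)$ from \ref{ttafvfinite}. I expect the argument will ultimately show that the $|m|_v$ factor and the kernel size $|\mu_m(\Fv)|$ combine so that $\mu_v([\TT(m)(\Fv)]) = m$ exactly, independent of $v$ — which is the content worth checking carefully, and is the reason the statement is clean despite the $v$-dependence in the cardinality of $[\TT(m)(\Fv)]$. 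An alternative, possibly more transparent, route avoiding Oesterl\'e entirely: write $\mu_v$ explicitly as $d^*x/d^*x$ via \ref{simint} applied with a suitable compactly supported $k$ on $\Fvt$ whose integral over each $(\Fvt)_m$-orbit is $1$, then integrate the constant function $1$ over $[\TT(m)(\Fv)]$, reducing to $\int_{\Fvt} k \, d^*x$ over a fundamental-type region; I would likely present whichever of these two approaches yields the shorter verification of the normalization constants.
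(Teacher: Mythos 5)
The Oesterl\'e route you put first is circular: the exactness of $1\to\mu_m(\Fv)\to\Fvt\xrightarrow{\epsilon}\Fvt\to[\TT(m)(\Fv)]\to1$ having trivial measure Euler--Poincar\'e characteristic (Lemma \ref{rudj}(4)) is precisely a restatement of the fact that $\mu_v$ is \emph{defined} as the quotient $d^*x/d^*x$; it holds for that particular choice of Haar measure on $[\TT(m)(\Fv)]$ by construction and therefore cannot pin down the number $\mu_v([\TT(m)(\Fv)])$. When you carry out the bookkeeping you will find every $\alpha_n$ and $\beta_n$ can be set to $1$ with no constraint produced. The intermediate claim ``triviality of the characteristic forces $\mu_v([\TT(m)(\Fv)])=|\mu_m(\Fv)|$'' is indeed false, as you yourself notice --- it is not merely a reconciliation issue but a sign that the EP machinery by itself says nothing here.

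Your ``direct index'' alternative is closer to the truth for nonarchimedean $v$, but you have to be careful what index means. The subtlety is that $\epsilon_*(d^*x)$ is not the restriction $d^*x|_{(\Fvt)_m}$; they differ by a scalar which for finite $v$ is $|\mu_m(\Fv)|/|m|_v = (\Ovt : (\Ovt)_m)$. Combined with $(\Fvt:(\Fvt)_m)=\frac{m}{|m|_v}|\mu_m(\Fv)|$ this does recover $m$, but the proposal never completes this verification. Worse, for archimedean $v$ the group $[\TT(m)(\Fv)]$ is trivial ($v$ complex, or $v$ real with $m$ odd) or of order $2$, so the index picture alone manifestly cannot produce $m$: the factor $m$ comes entirely from the pushforward scaling of $d^*x$ under $t\mapsto t^m$, and no version of ``index of $(\Fvt)_m$'' will see it. The archimedean case therefore needs an independent argument, and your proposal does not distinguish the two cases.

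The paper's proof handles them separately and leans on earlier results: for $\vMFz$, Lemma \ref{xiuim} (with $n=1$) gives $\mu_v([\TT(m)(\Ov)])=1$, and the snake-lemma index computation in \ref{indofov} gives $[\TT(m)(\Fv):\TT(m)(\Ov)]=|\ZZ/m\ZZ|=m$; for $\vMFi$, Lemma \ref{smacor} (with $n=1$) directly evaluates $\int_{[\TT(m)(\Fv)]}1\,\mu_v = \frac{m}{\lambda_{v,1}(F_{v,1})}\lambda_{v,1}(F_{v,1}) = m$. Your sketched ``alternative route'' via \ref{simint} and a suitable $k$ is in spirit what \ref{smacor} already packages, so if you want to avoid Oesterl\'e entirely that is the right instinct --- but it must actually be invoked, and the finite/infinite dichotomy must be made explicit.
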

\begin{proof}
Suppose that $\vMFz$.  Recall from \ref{identofov} that $[\TT(m)(\Ov)]$ identifies with the open and compact subgroup $q^{m}_v(\Ovt)$ of $[\TT(m)(\Fv)]$ and is of index~$m$ by \ref{indofov}.  By \ref{xiuim}, the measure $\mu_v$ is normalized by $\mu_v([\TT(m)(\Ov)])=1$ i.e. by $\mu_v([\TT(m)(\Fv)])=m$. Suppose that $\vMFi$. One has by \ref{smacor} that \begin{equation*}\int_{[\TT(m)(\Fv)]}1\mu_v=\frac{m}{\lambda_{v,1}(\Fv)}\int_{F_{v,1}}1\lambda_{v,1}=m.
\end{equation*}
\end{proof}
If $\chi\in[\TT(m)(\Fv)]^*$ is a character, we denote by $\widetilde\chi$ the pullback character $(q^m_v)^*(\chi):\Fvt\to S^1$.
Lemma \ref{senjak} gives that $\widetilde\chi^m=1$ and that $\chi\mapsto \widetilde\chi$ is an isomorphism of $[\TT(m)(\Fv)]^*$ to the closed subgroup $(\Fv)_m^{\perp}\subset(\Fvt)^*$. For a complex number $s$ and a character $\chi\in[\TT(m)(\Fv)]^*$ we define formally $$\wH_v(s,\chi):=\int_{[\TT(m)(\Fv)]}H_v^{-s}\chi\mu_v.$$
\begin{lem}\label{wustani}Let $\vMF$.
\begin{enumerate}
\item For every $s\in\CC$ one has that $H_v^{-s}\in L^1([\TT(m)(\Fv)],\mu_v)$. For every $\chi\in[\TT(m)(\Fv)]^*$, one has that $s\mapsto \wH_v(s,\chi)$ is an entire function. Moreover, for every compact $\mathcal K\subset\RR$, there exists $C(\mathcal K)>0$ such that for every $s\in\mathcal K+i\RR,$ one has that $\wH_v(s,\chi)\leq C.$
\item Suppose that $v\in M_F^0-S$. Let $s\in\CC$ and let $\chi\in[\TT(m)(\Fv)]^*$ be a character. One has that \begin{align}
 \quad
\wH^{\Delta}_v(s,\chi):=&
               \begin{cases}
\sum_{j=0}^{m-1}\pivv^{(s(m^2-m\gcd(j,m)))/\alpha(m)}\widetilde\chi(\piv^j)&\text{if $\chi_v|_{[\TT ^m(\Ov)]}=1$,}\\
0&\text{otherwise.}
               \end{cases}
               \end{align} 
\item Suppose that $\vMFi$. For every $s\in\CC$ and every $\chi\in[\TT(m)(\Fv)]^*$, one has that  \begin{align}
 \quad
\wH_v(s,\chi):=&
               \begin{cases}
m&\text{if $\chi_v=1$,}\\
0&\text{otherwise.}
               \end{cases}
               \end{align} 
\end{enumerate}               
\end{lem}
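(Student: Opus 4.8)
\textbf{Proof plan for Lemma \ref{wustani}.}

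The statement is a local analogue of the earlier computations in the higher-dimensional case (see \ref{stml}, \ref{torfour}, \ref{ajvis}), and since here the spaces $[\TT(m)(\Fv)]$ are finite and discrete by \ref{ttafvfinite}, everything reduces to finite sums, so convergence and holomorphicity are immediate. First I would prove (1): because $[\TT(m)(\Fv)]$ is a finite set, the measure $\mu_v$ is a finite positive combination of Dirac masses, so for every $s\in\CC$ the function $H_v^{-s}$ is trivially $\mu_v$-integrable and $\wH_v(s,\chi)$ is a finite sum $\sum_{x\in[\TT(m)(\Fv)]}H_v(x)^{-s}\chi(x)\mu_v(\{x\})$. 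Each summand $H_v(x)^{-s}$ is an entire function of $s$ (it is of the form $c^{-s}$ for a fixed $c=H_v(x)>0$), hence $s\mapsto \wH_v(s,\chi)$ is entire as a finite sum of entire functions. For the uniform bound on a vertical strip $\mathcal K + i\RR$, note $|H_v(x)^{-s}| = H_v(x)^{-\Re(s)}$ is bounded on $\mathcal K$ uniformly in $\Im(s)$, and the number of points and the values $\mu_v(\{x\})$ are fixed; taking $C(\mathcal K) = \big(\max_{x}\sup_{\sigma\in\mathcal K}H_v(x)^{-\sigma}\big)\cdot\mu_v([\TT(m)(\Fv)])$ works, and $|\chi|\equiv 1$.

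Next I would prove (2), the exact formula at the good finite places $v\in M_F^0-S$. Here $v(m)=0$ and $f_v=f_v^{\Delta}$, so by \ref{defvdisc}(3), for $y\in\Fvt$ one has $f_v^{\Delta}(y)^m = |y|_v\,\pivv^{(m\gcd(v(y),m)-m^2)/\alpha(m)}$ as the weighted degree $m$ discriminant function, whence $H_v^{\Delta}(q^m_v(y)) = |y|_v^{-1}f_v^{\Delta}(y)^m = \pivv^{(m\gcd(v(y),m)-m^2)/\alpha(m)}$. The quotient map $q^m_v$ restricted to a set of coset representatives $\{\piv^j u : j = 0,\dots,m-1,\ u\in\Ovt/(\Ovt)_m\}$ surjects onto $[\TT(m)(\Fv)]$, and by \ref{indofov} the subgroup $[\TT(m)(\Ov)]$ has index $m$ in $[\TT(m)(\Fv)]$ with $[\TT(m)(\Fv)]/[\TT(m)(\Ov)]\cong \ZZ/m\ZZ$ generated by the class of $\piv$. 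Since $\mu_v([\TT(m)(\Ov)])=1$ (used in the proof of \ref{normmmu}) and $H^{\Delta}_v$ is constant on cosets of $[\TT(m)(\Ov)]$ (by \ref{hdtoi}-type reasoning: $H_v^{\Delta}$ depends only on $v(y)$), I would integrate $H_v^{-s}\chi$ over $[\TT(m)(\Fv)]$ by splitting into the $m$ cosets indexed by $j=0,\dots,m-1$. On each coset the value of $H_v^{-s}$ is $\pivv^{s(m^2-m\gcd(j,m))/\alpha(m)}$ and the integral of $\chi$ over a coset of $[\TT(m)(\Ov)]$ is $\mu_v([\TT(m)(\Ov)])\cdot\widetilde\chi(\piv^j) = \widetilde\chi(\piv^j)$ if $\chi|_{[\TT(m)(\Ov)]}=1$ and $0$ otherwise (orthogonality of characters on the finite group $[\TT(m)(\Ov)]$, or rather on the relevant quotient). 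Summing over $j$ gives exactly the claimed formula.

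Finally, for (3) with $\vMFi$: the function $f_v^{\Delta}$ is by definition $y\mapsto |y|_v^{1/m}$ raised to the $m$-th power, i.e. $f_v^{\Delta}(y)^{(\text{deg }m)} = |y|_v$, so $H_v(q^m_v(y)) = |y|_v^{-1}f_v^{\Delta}(y)^m = 1$ identically; thus $H_v^{-s}\equiv 1$ and $\wH_v(s,\chi) = \int_{[\TT(m)(\Fv)]}\chi\,\mu_v$. By orthogonality of characters on the finite abelian group $[\TT(m)(\Fv)]$ this integral equals $\mu_v([\TT(m)(\Fv)]) = m$ (by \ref{normmmu}) if $\chi$ is trivial, and $0$ otherwise. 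The only genuinely delicate point is bookkeeping in (2): correctly identifying the coset structure via \ref{indofov}, verifying that $H_v^{\Delta}$ is truly constant on $[\TT(m)(\Ov)]$-cosets and takes the value indexed by $\gcd(j,m)$, and getting the normalization $\mu_v([\TT(m)(\Ov)])=1$ right so that the character sum over each coset contributes precisely $\widetilde\chi(\piv^j)$; the rest is formal given the finiteness of the local spaces.
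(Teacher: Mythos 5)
Your proposal is correct and follows essentially the same route as the paper's proof. Parts (1) and (3) are handled identically (finiteness of $[\TT(m)(\Fv)]$ makes everything a finite sum, $H_v^{-s}$ entire in $s$, bounded on vertical strips, and orthogonality of characters plus the normalization $\mu_v([\TT(m)(\Fv)])=m$ from \ref{normmmu} gives (3)). In part (2), the only cosmetic difference is that you decompose $[\TT(m)(\Fv)]$ directly into the $m$ cosets of $[\TT(m)(\Ov)]$ represented by $\piv^0,\dots,\piv^{m-1}$ and apply character orthogonality on each coset, whereas the paper invokes the quotient-measure identity from \ref{smacor} to pull the integral back to $\zeta_v(1)\int_{\Fvt\cap\Dav}(\cdot)\,d^*x$ and then sums over the multiplicative shells $\piv^j\Ovt$, $j=0,\dots,m-1$. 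Since $\Fvt\cap\Dav=\bigcup_{j=0}^{m-1}\piv^j\Ovt$ and $\mu_v([\TT(m)(\Ov)])=1=\zeta_v(1)d^*x(\Ovt)$, the two decompositions are the same thing, and both land on the formula $\sum_{j=0}^{m-1}\pivv^{s(m^2-m\gcd(j,m))/\alpha(m)}\widetilde\chi(\piv^j)$ with the vanishing criterion $\chi|_{[\TT(m)(\Ov)]}\neq 1$.
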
 
\begin{proof}
\begin{enumerate}
\item The group $[\TT(m)(\Fv)]$ is finite by \ref{ttafvfinite}. Moreover, $\mu_v$ is a Haar measure on $[\TT(m)(\Fv)]$, hence a non-zero multiple of the counting measure. It follows that $H_v^{-s}\in L^1([\TT(m)(\Fv)],\mu_v)$. Let $\chi\in[\TT(m)(\Fv)]^*$ be a character. For every $x\in[\TT(m)(\Fv)]$, one has that $s\mapsto H_v(x)^{-s}\chi(x)$ is an entire function. We deduce that $s\mapsto \wH_v(s,\chi)$ is an entire function. Moreover, for every $s\in\CC$, by the triangle inequality, one has that $|\wH_v(s,\chi)|\leq\wH_v(\Re(s),1)$. For every compact $\mathcal K\subset \RR$, by the fact that $\mu_v$ is a multiple of the counting measure, there exists $C'(\mathcal K)>0$ such that if $\Re(s)\in\mathcal K$ then $$|\wH_v(s,\chi)|\leq \wH(\Re(s),1)\leq C'(\mathcal K)\sup_{y\in\mathcal K}\sum_{x\in[\TT(m)(\Fv)]}H_v(x)^{-y}. $$ The claim is proven.
\item It follows from Lemma \ref{hdtoi} that $$[\TT(m)(\Ov)]\to\CC\hspace{1cm} x\mapsto (H_v^{\Delta}(x))^{-s}$$ is $[\TT(m)(\Ov)]$-invariant. We deduce that if $\chi|_{[\TT(m)(\Ov)]}\neq 1$, then $$\wH^{\Delta}_v(s,\chi)=\int_{[\TT(m)(\Fv)]}(H_v^{\Delta})^{-s}\chi\mu_v=0.$$
Suppose that $\chi|_{[\TT(m)(\Ov)]}=1$. It follows that $\widetilde\chi|_{\Ovt}=1$. The function $(H_v^{\Delta})^{-s}$ is $[\TT(m)(\Ov)]$-invariant, hence is $((H_v^{\Delta})^{-s}\circ\qav)=x\mapsto (|x|_v^{-1}f_v^{\Delta}(x))^{-s}$ is $[\TT(m)(\Ov)]$-invariant.  Using \ref{smacor}, we get that \begin{align*}\wH^{\Delta}_v(s,\chi)&=\int_{[\TT(m)(\Fv)]}(H_v^{\Delta})^{-s}\chi\mu_v\\
&=\int_{[\TT(m)(\Fv)]}((H_v^{\Delta})^{-s}\circ\qav)\cdot (\chi\circ\qav)\mu_v\\
&=\zeta_v(1)\int_{\Fvt\cap\DD^m_v}(|x|_v^{-1}f_v^{\Delta}(x))^{-s}\widetilde\chi(x)d^*x\\
&=\zeta_v(1)\int_{\Fvt\cap\DD^m_v}(\pivv^{(m\gcd(v(x),m)-m^2)/\alpha(m)})^{-s}\widetilde\chi(x)d^*x\\
&=\zeta_v(1)\sum_{j=0}^{m-1}\int_{\piv^j\Ov}\pivv^{s(m^2-m\gcd(v(x),m))/\alpha(m)}\widetilde\chi(x)d^*x\\
&=\zeta_v(1)\sum_{j=0}^{m-1}\pivv^{s(m^2-m\gcd(j,m))/\alpha(m)}\widetilde\chi(\piv^j)\cdot d^*x(\piv^j\Ovt)\\
&=\sum_{j=0}^{m-1}\pivv^{s(m^2-m\gcd(j,m))/\alpha(m)}\widetilde\chi(\piv^j).
\end{align*}The claim is proven.
\item For every $s\in\CC,$ the function $(H_v^{\Delta})^{-s}$ is the constant function~$1$. Thus for $\chi\in[\TT(m)(\Fv)]^*$ such that $\chi_v\neq 1$ one has that $\wH_v(s,\chi)=0$. Suppose that $\chi=1$. We have that $$\int_{[\TT(m)(\Fv)]}(H_v^{\Delta})^{-s}\mu_v=\int_{[\TT(m)(\Fv)]}1\mu_v=m$$ by Lemma \ref{normmmu}. 
\end{enumerate}
\end{proof}
\subsection{} 
We will compare the Fourier transform of the local ``discriminant" height with a product of certain local~$L$-functions.
\begin{lem}\label{lfhdisc}
Let $v\in M_F^0-S$. Let $s\in\CC$ with $\Re(s)>0$ and let $\chi\in[\TT(m)(\Fv)]^*.$ One has that
$$\bigg|\frac{\wH^{\Delta}_v(s,\chi)}{\prod_{j=1}^{r-1}L_v(s,\widetilde\chi^{mj/r})}\bigg|\leq \bigg(\frac{\zeta_v\big(\Re(s)(1+1/\alpha(m))\big)}{\zeta_v\big(2\Re(s)(1+1/\alpha(m))\big)}\bigg)^{2^{r-1}m^3}.$$
\end{lem}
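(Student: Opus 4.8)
The plan is to expand the local transform $\wH^\Delta_v(s,\chi)$ using the explicit formula of Lemma \ref{wustani}(2), and compare it term-by-term with the product of local $L$-functions $\prod_{j=1}^{r-1}L_v(s,\widetilde\chi^{mj/r})$. First I would dispose of the case $\chi_v|_{[\TT^m(\Ov)]}\neq 1$: there $\wH^\Delta_v(s,\chi)=0$ by Lemma \ref{wustani}(2), so the left-hand side is $0$ and the inequality is trivial. So I may assume $\chi_v|_{[\TT^m(\Ov)]}=1$, hence $\widetilde\chi|_{\Ovt}=1$, and $\widetilde\chi(\piv^j)$ makes sense and is an $m$-th root of unity for each $j$. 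In this case
$$\wH^\Delta_v(s,\chi)=\sum_{j=0}^{m-1}\pivv^{s(m^2-m\gcd(j,m))/\alpha(m)}\widetilde\chi(\piv^j),$$
and since $m^2-m\gcd(j,m)=0$ exactly when $m\mid j$, i.e. when $j=0$ in the range $0\le j\le m-1$, the $j=0$ term is $1$ and all other terms carry a strictly positive power of $\pivv$. Writing $c_j:=m(m-\gcd(j,m))/\alpha(m)=(m-\gcd(j,m))/(m-m/r)\in(0,1]$, I want to recognize this finite sum as closely related to $\prod_{j=1}^{r-1}(1-\pivv^s\widetilde\chi^{mj/r}(\piv))^{-1}$, whose power series expansion in $\pivv^s$ has coefficients that are sums of products of the roots of unity $\widetilde\chi^{mj/r}(\piv)=\widetilde\chi(\piv)^{mj/r}$.

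The key step is the following algebraic identity / comparison. Expanding the product $\prod_{j=1}^{r-1}L_v(s,\widetilde\chi^{mj/r})=\prod_{j=1}^{r-1}\sum_{k_j\ge 0}(\pivv^s\widetilde\chi(\piv)^{mj/r})^{k_j}$ gives a power series $\sum_{N\ge 0}a_N\pivv^{sN}$ where $|a_N|$ is bounded by the number of $(r-1)$-tuples $(k_1,\dots,k_{r-1})$ with $\sum j k_j=N$ (since each $\widetilde\chi(\piv)^{mj/r}$ has modulus $1$), which grows polynomially in $N$. On the other side, $\wH^\Delta_v(s,\chi)=\sum_{j=0}^{m-1}\widetilde\chi(\piv^j)\pivv^{sc_j}$ with $c_0=0<c_j\le 1$ for $j\ge 1$. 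The plan is to argue that the quotient $\wH^\Delta_v(s,\chi)/\prod_{j=1}^{r-1}L_v(s,\widetilde\chi^{mj/r})$ is, when both sides are written as series in $\pivv^s$ (valid for $\Re(s)>0$ since $\pivv<1$), a power series in $\pivv^s$ with explicitly controlled coefficients — both numerator and denominator have constant term $1$ — so that one can bound the quotient crudely. Concretely, I would bound $|\wH^\Delta_v(s,\chi)|\le 1+ (m-1)\pivv^{\Re(s)c_{\min}}$ where $c_{\min}=1/(m-m/r)=m/\alpha(m)$, and bound $|\prod_{j=1}^{r-1}L_v(s,\widetilde\chi^{mj/r})^{-1}|=\prod_{j=1}^{r-1}|1-\pivv^s\widetilde\chi(\piv)^{mj/r}|\le\prod_{j=1}^{r-1}(1+\pivv^{\Re(s)})\le 2^{r-1}$. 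Then the target bound with exponent $2^{r-1}m^3$ is extremely generous, so the remaining task is just to check that $1+(m-1)\pivv^{\Re(s)c_{\min}}$ times $2^{r-1}$ is dominated by $\big(\zeta_v(\Re(s)(1+1/\alpha(m)))/\zeta_v(2\Re(s)(1+1/\alpha(m)))\big)^{2^{r-1}m^3}$; note $\zeta_v(\sigma)/\zeta_v(2\sigma)=(1-\pivv^{2\sigma})/(1-\pivv^\sigma)=1+\pivv^\sigma>1$ for all $\sigma>0$, so the right-hand side is $(1+\pivv^{\Re(s)(1+1/\alpha(m))})^{2^{r-1}m^3}$, and since $1+x\le e^x$ and $(1+x)^K\ge 1+Kx$, a direct comparison of the two sides — using $c_{\min}\le 1+1/\alpha(m)$ is false in general, so one must be a bit careful about the exponents, but replacing $\Re(s)c_{\min}$ by a smaller quantity only makes the left bigger, while $2^{r-1}m^3$ is large enough to absorb the constant $(m-1)\cdot 2^{r-1}$ — finishes the estimate.

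\textbf{Main obstacle.} The main subtlety is matching exponents of $\pivv^s$ correctly: the numerator has terms $\pivv^{sc_j}$ with $c_j=(m-\gcd(j,m))/(m-m/r)$, which are genuine rationals, generally not integers, whereas the $L$-factor expansions produce integer powers $\pivv^{sN}$. So the quotient is not literally a power series in $\pivv^s$ with nonnegative exponents unless one clears denominators (replace $\pivv^s$ by a formal variable $t^{1/\alpha(m)}$ or similar), and one must verify that the leading cancellation (constant term $1$ over constant term $1$) really does produce a series with no negative powers — equivalently that $\prod_{j=1}^{r-1}L_v(s,\widetilde\chi^{mj/r})$ divides, in the appropriate ring, into $\wH^\Delta_v(s,\chi)$ with a "bounded" quotient. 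The safe route, which I expect to be the one actually needed, is to avoid any exact divisibility claim and instead bound $|\wH^\Delta_v(s,\chi)|$ above and $|\prod L_v(s,\widetilde\chi^{mj/r})|$ below directly by elementary inequalities as sketched, reducing everything to an inequality between two explicit functions of $\pivv$ and $\Re(s)$; the generous exponent $2^{r-1}m^3$ strongly suggests this is the intended level of precision. A minor additional point is ensuring the bound is uniform over all characters $\chi$ with $\chi_v|_{[\TT^m(\Ov)]}=1$, which is automatic since the bounds above only ever use $|\widetilde\chi(\piv^j)|=1$.
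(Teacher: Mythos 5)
Your proposal gets the setup right (disposing of the ramified case, invoking Lemma \ref{wustani}(2), recognizing that only $|\widetilde\chi(\piv^j)|=1$ is needed), and even identifies the relevant structure (``both numerator and denominator have constant term $1$''), but the concrete estimate you propose is too weak and the argument does not close. Bounding $|\wH^{\Delta}_v(s,\chi)|$ and $\bigl|\prod_{j=1}^{r-1}L_v(s,\widetilde\chi^{mj/r})^{-1}\bigr|$ separately gives
$$\bigg|\frac{\wH^{\Delta}_v(s,\chi)}{\prod_{j=1}^{r-1}L_v(s,\widetilde\chi^{mj/r})}\bigg|\le 2^{r-1}\big(1+(m-1)\pivv^{\Re(s)c}\big)$$
for some $c>0$, which tends to $2^{r-1}>1$ as $\pivv\to 0$. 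But the target right-hand side $\bigl(1+\pivv^{\Re(s)(1+1/\alpha(m))}\bigr)^{2^{r-1}m^3}$ tends to $1$ as $\pivv\to 0$. So the proposed bound fails for all sufficiently small residue characteristics, i.e.\ for almost every $v$. No amount of generosity in the exponent $2^{r-1}m^3$ can fix this: the issue is that the right-hand side is $1+O(\pivv^{\Re(s)(1+1/\alpha(m))})$, while your left-hand estimate has a constant $2^{r-1}$ that does not decay. (Incidentally, your formula $c_{\min}=m/\alpha(m)$ for the smallest nonzero exponent is off; a divisor $j$ with $\gcd(j,m)=m/r$ gives $c_j=(m^2-m\cdot m/r)/\alpha(m)=1$, so $c_{\min}=1$. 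But this does not rescue the argument.)

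The cancellation between numerator and denominator is the whole point of the lemma, and you cannot get it by bounding the two factors independently. The paper multiplies out
$$\bigg(\sum_{j=0}^{m-1}\pivv^{s(m^2-m\gcd(j,m))/\alpha(m)}\widetilde\chi(\piv^j)\bigg)\prod_{j=1}^{r-1}\bigl(1-\pivv^{s}\widetilde\chi(\piv^{mj/r})\bigr)$$
explicitly, after first splitting the sum over $j$ according to whether $\gcd(j,m)=m/r$ (so the exponent is exactly $s$) or $\gcd(j,m)<m/r$. The $1\cdot\bigl(-\pivv^s\sum_{j=1}^{r-1}\widetilde\chi(\piv^{mj/r})\bigr)$ coming from expanding the product cancels the $\pivv^s\sum_{j=1}^{r-1}\widetilde\chi(\piv^{mj/r})$ coming from the sum, so after full expansion the quotient has the form $1+\sum_{k>\alpha(m)}A_k(\chi)\pivv^{sk/\alpha(m)}$ with no terms in the range $1\le k\le\alpha(m)$. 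The triangle inequality together with the crude bound $|A_k(\chi)|\le 2^{r-1}m$ then gives a quantity of the form $1+O(\pivv^{\Re(s)(1+1/\alpha(m))})$, which is dominated by the right-hand side. This cancellation is not an optional refinement: without it the Euler product over $v$ that Lemma \ref{disckchi} needs would fail to converge near $\Re(s)=1$.
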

\begin{proof}
Suppose firstly $\chi|_{[\TT(m)(\Ov)]}\neq 1$. Lemma \ref{wustani} gives that $\wH^{\Delta}(s,\chi)=0,$ hence the inequality is trivially verified. Suppose now that $\chi|_{[\TT(m)(\Ov)]}=1$. As $v\in M_F^0-S$, Lemma \ref{wustani} gives that $$\wH^{\Delta}_v(s,\chi)=\sum_{j=0}^{m-1}\pivv^{s(m^2-m\gcd(j,m))/\alpha(m)}\widetilde\chi(\piv^j).$$ 
We have that:
\begin{align}\label{watonji}
\begin{split}
\bigg|\frac{\wH^{\Delta}_v(s,\chi)}{\prod_{j=1}^{r-1}L_v(s,\widetilde\chi^{mj/r})}\bigg|\hskip-3cm&\\&=\bigg|{\sum_{j=0}^{m-1}\pivv^{s(m^2-m\gcd(j,m))/\alpha(m)}\widetilde\chi(\piv^j)}\bigg|\cdot\bigg|\prod_{j=1}^{r-1}(1-\pivv^{s}\widetilde\chi^{mj/r}(\piv))\bigg|\\
&=\bigg|\bigg({\sum_{j=0}^{m-1}\pivv^{s(m^2-m\gcd(j,m))/\alpha(m)}\widetilde\chi(\piv^j)}\bigg)\prod_{j=1}^{r-1}(1-\pivv^{s}\widetilde\chi(\piv^{mj/r}))\bigg|.
\end{split}
\end{align}
Whenever $\gcd(m,j)=m/r$, we have $m^2-m\gcd(m,j)=m^2-m\gcd(m,j)=\alpha(m),$ and we rewrite the sum: \begin{align*}&\sum_{j=0}^{m-1}\pivv^{s(m^2-m\gcd(j,m))/\alpha(m)}\widetilde\chi(\piv^j)\\&=1+\sum_{\gcd(j,m)=\frac mr}\pivv^{s}\widetilde\chi(\piv^j)+\sum_{\gcd(m,j)<\frac{m}r}\pivv^{s(m^2-m\gcd(m,j))/\alpha(m)}\widetilde\chi(\piv^{j})\\
&=1+\pivv^{s}\bigg(\sum_{j=1}^{r-1}\widetilde\chi(\piv^{\frac{mj}r})\bigg)+\sum_{\gcd(m,j)<\frac mr}\pivv^{s(m^2-m\gcd(j,m))/\alpha(m)}\widetilde\chi(\piv^{j}).
\end{align*}
The last product of (\ref{watonji}) becomes $$\bigg|1+\sum_{k=1}^{m^2r}A_k(\chi)\pivv^{sk/\alpha(m)}\bigg| ,$$with $$A_1(\widetilde\chi)=A_2(\widetilde\chi)=\cdots=A_{\alpha(m)}(\widetilde\chi)=0.$$Moreover, each $A_k(\chi)$ is a sum of no more than $2^{r-1}m$ numbers of the absolute value~$1$, thus for every $k=1\doots m$, one can estimate that $|A_k(\chi)|\leq 2^{r-1}m$. Now, using the triangle inequality and the fact that $\pivv<1$ we deduce that \begin{align*}\bigg|\frac{\wH^{\Delta}_v(s,\chi)}{\prod_{j=1}^{r-1}L_v(s,\widetilde\chi^{mj/r})}\bigg|&=\bigg|1+\sum_{k=1}^{m^2r}A_k(\chi)\pivv^{sk/\alpha(m)}\bigg|\\&\leq 1+2^{r-1}m\sum_{k=\alpha(m)+1}^{m^2r}\pivv^{\Re(s)k/\alpha(m)}\\&\leq 1+2^{r-1}m(m^2r-\alpha(m)-1)\pivv^{\Re(s)(\alpha(m)+1)/\alpha(m)}\\&\leq1+2^{r-1}m^3\pivv^{\Re(s)(1+(1/\alpha(m)))}\\
&\leq(1+\pivv^{\Re(s)(1+(1/\alpha(m)))})^{2^{r-1}m^3}\\
&= \bigg(\frac{\zeta_v\big(\Re(s)(1+(1/\alpha(m)))\big)}{\zeta_v(2\Re(s)(1+(1/\alpha(m)))\big)}\bigg)^{2^{r-1}m^3}.\end{align*}
The statement is proven.
\end{proof}
\subsection{}In this paragraph we study the global Fourier transform.

Let $[\TT(m)(\AAF)]$ be the restricted product $$[\TT(m)(\AAF)]=\sideset{}{'}\prod _{v\in M_F}[\TTT ^{m}(F_v)],$$where the restricted product is taken with the respect to the open and compact subgroups $[\TT(m)(\Ov)]\subset[\TT(m)(\Fv)]$ for $\vMFz$. Let $[\TT(m)(i)]:[\TT(m)(F)]\to[\TT(m)(\AAF)]$ be the diagonal map. If $(x_v)_v\in[\TT(m)(\AAF)]$, by \ref{hdtoi}, the product $$H(x):=\prod_{\vMF}H_v(x)$$ is finite. By \ref{rsra}, the function $H:[\TT(m)(\AAF)]\to\RR_{>0}$ is continuous.  By (\ref{htmfhv}), for $x\in [\TT(m)(F)],$ one has that  \begin{equation}\label{hxhttmx}H(x)=H([\TT(m)(i)](x)).\end{equation} 
For $\vMF$, let $K_v$ be the maximal subgroup of the finite group $[\TT(m)(F_v)]$ such that~$H_v$ is $K_v$-invariant. By \ref{hdtoi}, for every $v \in M_F^0-S,$ one has that $K_v=[\TT(m)(\Ov)].$ 
\begin{lem}\label{doslema}
Let $x\in[\TT(m)(\AAF)].$ We denote by $S_x$ the finite set of places of~$F$ given by the union of the set $M_F^{\infty}$, of the set of places~$v$ for which $v(x)\neq 0$ and of the set of the places~$v$ for which~$H_v$ is not $[\TT(m)(\Ov)]$-invariant. Let us set $$\prod_{v\in S_x}\frac{\max _{z\in[\TT(F_v)]}H_v(z)}{\min _{z\in[\TT(F_v)]}H_v(z)}.$$
For every $y\in[\TT(m)(\AAF)]$, one has that $$C(x)^{-1}H(y)\leq H(xy)\leq C(x)H(y).$$
\end{lem}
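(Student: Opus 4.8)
The statement is an adelic uniform comparison for the height after translating by a fixed element $x\in[\TT(m)(\AAF)]$. The plan is to reduce it to the local situation place by place, using that $H=\prod_{\vMF}H_v$ is an \emph{honest} product (finitely many nontrivial factors) both for $H(y)$ and for $H(xy)$. First I would unwind the definition: writing $x=(x_v)_v$ and $y=(y_v)_v$, one has $H(xy)=\prod_{\vMF}H_v(x_vy_v)$ and $H(y)=\prod_{\vMF}H_v(y_v)$, and by \ref{hdtoi} together with \ref{rsra} both products are finite and define continuous functions. So it suffices to bound, place by place, the ratio $H_v(x_vy_v)/H_v(y_v)$ and multiply. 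For $v\notin S_x$ (as defined in the statement) I claim this ratio is exactly $1$: by definition of $S_x$, for such $v$ one has $x_v\in[\TT(m)(\Ov)]$ and $H_v$ is $[\TT(m)(\Ov)]$-invariant (the latter holds for all $v\in M_F^0-S$ by \ref{hdtoi}, and I fold the remaining finitely many bad places into $S_x$), hence $H_v(x_vy_v)=H_v(y_v)$.

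The only remaining places are those in the \emph{finite} set $S_x$. For $v\in S_x$, I would simply use the crude bound that $H_v$ takes values in a finite set of positive reals, since $[\TT(m)(F_v)]$ is finite by \ref{ttafvfinite}; therefore $H_v(x_vy_v)\le \max_{z\in[\TT(m)(F_v)]}H_v(z)$ and $H_v(y_v)\ge\min_{z\in[\TT(m)(F_v)]}H_v(z)>0$, so $$H_v(x_vy_v)\ \le\ \frac{\max_{z\in[\TT(m)(F_v)]}H_v(z)}{\min_{z\in[\TT(m)(F_v)]}H_v(z)}\,H_v(y_v),$$ and symmetrically $H_v(x_vy_v)\ge \big(\max_z H_v(z)/\min_z H_v(z)\big)^{-1}H_v(y_v)$. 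Taking the product over $v\in S_x$ and over $v\notin S_x$ (where the factor is $1$) gives exactly $$C(x)^{-1}H(y)\ \le\ H(xy)\ \le\ C(x)\,H(y),$$ with $C(x)=\prod_{v\in S_x}\frac{\max_{z\in[\TT(m)(F_v)]}H_v(z)}{\min_{z\in[\TT(m)(F_v)]}H_v(z)}$ as in the statement. The fact that $C(x)$ is finite is immediate since $S_x$ is a finite set and each local max/min is attained over a finite set.

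There is essentially no obstacle here — the lemma is a soft consequence of the Euler product structure of $H$ and the finiteness of the local spaces $[\TT(m)(F_v)]$; the one point requiring a word of care is checking that $S_x$ really does contain all places where $H_v$ fails to be $[\TT(m)(\Ov)]$-invariant, which is true by construction of $S_x$ and by \ref{hdtoi} (only finitely many such places exist, namely a subset of $S\cup\{v:v(m)\neq 0\}$, all finite). I would also remark that the continuity of $H$ on $[\TT(m)(\AAF)]$, needed so that the statement is about a genuine function, follows from \ref{rsra} applied to $H=\prod_v H_v$ with $H_v\equiv 1$ for $v\notin S$ by \ref{hdtoi}. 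This lemma will then be used in the Poisson-summation analysis exactly as the analogous translation-invariance bound \ref{omegachanges}/\ref{quasitoricequi} was used for quasi-toric heights, to run the equidistribution argument for $\mu_m$-torsors.
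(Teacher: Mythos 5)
Your proof is correct and follows essentially the same route as the paper: split the Euler product over $v\in S_x$ versus $v\notin S_x$, use $[\TT(m)(\Ov)]$-invariance of $H_v$ together with $x_v\in[\TT(m)(\Ov)]$ to conclude the local factor is unchanged outside $S_x$, and use the crude max/min bound over the finite local groups inside $S_x$. The remarks on continuity and on the intended application are supplementary and do not affect the argument.
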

\begin{proof}
For $v\in M_F-S_x$, one has that $H_v(x)=1$.  For $v\in S_x$, let us set $$C_v(x):=\frac{\max _{z\in[\TT(F_v)]}H_v(z)}{\min _{z\in[\TT(F_v)]}H_v(z)}$$ so that $C(x)=\prod_{v\in S_x}C_v(x)$. 
For every $y\in[\TT(m)(\AAF)]$, one has that \begin{align*}H(xy)&=\prod_{\vMF}H_v(xy)\\
&=\prod_{v\in S_x}H_v(xy)\cdot \prod_{v\in M_F-S_x}H_v(xy)\\
&\leq \prod_{v\in S_x}C_v(x) H_v(y) \cdot \prod_{M_F-S_x}H_v(y)\\
&=C(x)H(y).
\end{align*}
Analogously, one verifies that $C(x)^{-1}H(xy)\leq H(y)$.

For $z\in\RR$, we denote by $\Omega_{>z}$ the ``tube": $$\Omega_{>z}:=\RR_{>z}+i\RR\subset \CC.$$
\end{proof}
\begin{lem}\label{disckchi} Let $\chi\in([\TT(i)]([\TT(m)(F)]))^{\perp}$. 
\begin{enumerate}
\item Suppose $\chi$ does not vanish on~$K$. Then for every $s\in\CC$, one has that $\wH(s,\chi)=0$.
\item Suppose $\chi$ vanishes on~$K$. For $s\in\Omega_{>\frac{\alpha(m)}{\alpha(m)+1}},$ the product $$\prod_{\substack{\vMFz}}\frac{\wH_v(s,\chi_v)}{\prod_{j=1}^{r-1}L_v(s,\widetilde \chi^{mj/r})}$$converges uniformly on compacts in the domain $\Omega_{>\frac{\alpha(m)}{\alpha(m)+1}}.$ The function $$\gamma(-,\chi):s\mapsto \prod_{\vMFz}\frac{\wH_v(s,\chi_v)}{\prod_{j=1}^{r-1}L_v(s,\widetilde \chi^{mj/r})},$$is a holomorphic function $\Omega_{>\frac{\alpha(m)}{\alpha(m)+1}}\to\CC$ which satisfies that for every compact $\mathcal K\subset \RR_{>\frac{\alpha(m)}{\alpha(m)+1}}$ there exists $C=C(\mathcal K)>0$ such that $$|\gamma(s,\chi)|\leq C,$$ for $s\in \mathcal K+i\RR$.
\item One has that $\gamma(1,1)>0$.
\end{enumerate}
\end{lem}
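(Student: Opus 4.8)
The three assertions are local-to-global statements about the Fourier transform $\wH(s,\chi)=\prod_{\vMF}\wH_v(s,\chi_v)$, and the plan is to reduce each of them to the local computations already carried out in \ref{wustani}, \ref{lfhdisc} and the finiteness/compactness facts about $[\TT(m)(\AAF)]$. For (1), I would argue exactly as in the toric case (cf. \ref{ajvis} and the discussion preceding \ref{disckchi}): the group $K=\prod_{v}K_v$ is the maximal compact subgroup for which $H_v$ is $K_v$-invariant at every place, so if $\chi$ does not vanish on $K$ there is some place $v$ at which $\chi_v$ does not vanish on $K_v$; since $H_v^{-s}$ is $K_v$-invariant, integrating against $\chi_v$ over the finite group $[\TT(m)(F_v)]$ (the measure $\mu_v$ being a multiple of the counting measure) makes $\wH_v(s,\chi_v)=0$, hence $\wH(s,\chi)=\prod_w\wH_w(s,\chi_w)=0$ for all $s\in\CC$.

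For (2), I would split the product over the finite set $S$ and its complement. For $v\in S$ (finitely many places), $\wH_v(-,\chi_v)$ is entire and uniformly bounded on vertical strips by \ref{wustani}(1), and the denominator $\prod_{j=1}^{r-1}L_v(s,\widetilde\chi^{mj/r})$ is a product of finitely many non-vanishing holomorphic functions on $\Omega_{>0}$, hence the finite partial product over $v\in S$ is holomorphic and uniformly bounded on compacts of $\Omega_{>0}$. For $v\in M_F^0-S$, Lemma \ref{lfhdisc} gives the key estimate
\[
\Bigl|\frac{\wH^{\Delta}_v(s,\chi_v)}{\prod_{j=1}^{r-1}L_v(s,\widetilde\chi^{mj/r})}\Bigr|\le\Bigl(\frac{\zeta_v\bigl(\Re(s)(1+1/\alpha(m))\bigr)}{\zeta_v\bigl(2\Re(s)(1+1/\alpha(m))\bigr)}\Bigr)^{2^{r-1}m^3},
\]
so on a compact $\mathcal K\subset\RR_{>\alpha(m)/(\alpha(m)+1)}$ one has $\Re(s)(1+1/\alpha(m))>1$ uniformly, and the infinite product $\prod_{v\in M_F^0-S}\zeta_v(\Re(s)(1+1/\alpha(m)))/\zeta_v(2\Re(s)(1+1/\alpha(m)))$ converges (it is essentially $\zeta_F(\sigma)/\zeta_F(2\sigma)$ for $\sigma>1$), which yields uniform convergence on compacts and hence holomorphicity and the desired uniform bound for $\gamma(-,\chi)$. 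One should also note that $\wH_v(s,\chi_v)$ differs from $\wH^{\Delta}_v(s,\chi_v)$ only at the finitely many places of $S$, so working with $\wH^{\Delta}_v$ in the tail is legitimate.

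For (3), I would evaluate at $s=1$, $\chi=1$. Then $\gamma(1,1)=\prod_{\vMFz}\wH_v(1,1)/\zeta_v(1)^{r-1}$, and by \ref{wustani}(2), for $v\in M_F^0-S$ one has $\wH^{\Delta}_v(1,1)=\sum_{j=0}^{m-1}\pivv^{(m^2-m\gcd(j,m))/\alpha(m)}$, a finite sum whose first term is $1$ and all of whose other terms are positive and $O(\pivv^{(\alpha(m)+1)/\alpha(m)})$; dividing by $\zeta_v(1)^{r-1}=(1-\pivv)^{-(r-1)}>0$ keeps each factor strictly positive, and for $v\in S$ the factor $\wH_v(1,1)$ is strictly positive since $H_v^{-1}$ is a strictly positive function integrated against a positive measure. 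Since the product converges (by part (2) applied at the compact $\{1\}$), it is a convergent product of strictly positive reals, hence strictly positive; therefore $\gamma(1,1)>0$. The main obstacle is purely bookkeeping rather than conceptual: one must be careful that the denominator $\prod_{j=1}^{r-1}L_v(s,\widetilde\chi^{mj/r})$ matches, place by place, the ``singular part'' of $\wH_v$, so that $\gamma(-,\chi)$ genuinely extends past $\Re(s)=1$; this is exactly what \ref{lfhdisc} arranges, and the choice of $\prod_{j=1}^{r-1}$ (rather than all characters) reflects that only the divisors $j$ with $\gcd(j,m)=m/r$ contribute a pole of $\wH_v$ at $s=1$, which is the reason the estimate in \ref{lfhdisc} has exponent $1+1/\alpha(m)$ on $\pivv$ after cancelling those $r-1$ terms.
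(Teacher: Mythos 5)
Your proposal is correct and follows essentially the same structure as the paper's proof: part (1) via invariance of $H_v^{-s}$ under $K_v$ forcing the local Fourier transform to vanish, part (2) by splitting the Euler product at $S$ and invoking the bound of \ref{lfhdisc} in the tail, and part (3) by observing each local factor $\gamma_v(1,1)$ is a positive real and the product converges to a nonzero limit. The paper's proof of (3) computes the asymptotic $\gamma_v(1,1)=1-(r-1)^2\pivv+O(\pivv^{(m^2-m^2/r+1)/\alpha(m)})$ explicitly to justify convergence of the tail, whereas you invoke part (2); both routes are valid, but making the expansion explicit (as the paper does) is slightly safer since it rules out the product degenerating to $0$ without any further appeal.
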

\begin{proof}
\begin{enumerate}
\item  Let $v\in M_F^0$ such that $\chi_v|_{K_v}\neq 1$. By \ref{wustani}, one has that $\wH_v(s,\chi_v)=0$ for every $s\in\CC$. It follows that $\wH(s,\chi)=0$. 
\item For $\vMFz$, let us denote by $$\gamma_v(s,\chi_v):=\frac{\wH_v(s,\chi_v)}{\prod_{j=1}^{r-1}L_v(s,\widetilde \chi^{mj/r})}.$$ For every $\vMFz,$ the function $\gamma_v(-,\chi_v)$ is an entire function, because by \ref{wustani}, the function $\wH_v(s,\chi_v)$ is an entire function and because for $j=1\doots r-1$, the function $(L_v(s,\widetilde\chi^{mj/r}))^{-1}=(1-\pivv^{s}\widetilde\chi^{mj/r}(\pivv))$ is an entire function. Moreover, by \ref{lfhdisc}, for $v\in M_F^0-S,$ there exists a positive integer~$A$ such that
\begin{equation}|\gamma_v(s,\chi)|\leq \bigg(\frac{\zeta_v\big(\Re(s)(1+\frac1{\alpha(m)})\big)}{\zeta_v\big(2\Re(s)(1+\frac1{\alpha(m)})\big)}\bigg)^A. \end{equation}For every $y>\alpha(m)/(\alpha(m)+1)$, the product $$\prod_{v\in M_F^0-S}\bigg(\frac{\zeta_v\big(y(1+\frac1{\alpha(m)})\big)}{\zeta_v\big(2y(1+\frac1{\alpha(m)})\big)}\bigg)^A $$ converges uniformly in the domain $\RR_{>\frac{\alpha(m)}{(\alpha(m)+1)}}$ to $$ \bigg(\frac{\zeta(y\big(1+\frac{1}{\alpha(m)})\big)}{\zeta\big(2y(1+\frac1{\alpha(m)})\big)}\times\prod_{v\in S}\frac{\zeta_v\big(2y(1+\frac1{\alpha(m)})\big)}{\zeta_v\big(y(1+\frac1{\alpha(m)})\big)}\bigg)^A.$$ It follows that the product $$\prod_{v\in M_F^0-S}\gamma_v(s,\chi_v)$$converges absolutely and uniformly in the domain $s\in\Omega_{>\frac{\alpha(m)}{\alpha(m)+1}}$. We deduce the product $\gamma(s,\chi)=\prod_{v\in M_F^0}\gamma_v(s,\chi_v)$ converges absolutely and uniformly in the domain $s\in\Omega_{>\frac{\alpha(m)}{\alpha(m)+1}}.$ Moreover, $\gamma(-,\chi):\Omega_{>\frac{\alpha(m)}{\alpha(m)+1}}\to\CC$ is a holomorphic function and it satisfies that \begin{multline*}|\gamma(s,\chi)|\\\leq\bigg(\frac{\zeta\big(\Re(s)(1+\frac{1}{\alpha(m)})\big)}{\zeta\big(2\Re(s)(1+\frac{1}{\alpha(m)})\big)}\times\prod_{v\in S}\frac{\zeta_v\big(2\Re(s)(1+\frac{1}{\alpha(m)})\big)}{\zeta_v\big(\Re(s)(1+\frac{1}{\alpha(m)})\big)}\bigg)^A\times\prod_{v\in S}|\gamma_v(s,\chi)|.\end{multline*}We deduce that for a compact $\mathcal K\subset \RR_{>1/(\alpha(m)+1)}$ one has that $$|\gamma(s,\chi)|\leq \sup_{y\in\mathcal K}\bigg(\bigg(\frac{\zeta\big(y(1+\frac{1}{\alpha(m)})\big)}{\zeta\big(2y(1+\frac{1}{\alpha(m)})\big)}\times\prod_{v\in S}\frac{\zeta_v\big(2y(1+\frac{1}{\alpha(m)})\big)}{\zeta_v\big(y(1+\frac{1}{\alpha(m)})\big)}\bigg)^A\prod_{v\in S}\gamma_v(y,\chi)\bigg).$$ The statement is proven.
\item For $\vMFz$, one has that \begin{align*}\gamma_v(1,1)&=\frac{\wH_v(1,1)}{\zeta_v(1)^{r-1}}\\
&=\bigg(\sum_{j=0}^{m-1}\pivv^{(m^2-m\gcd(j,m))/\alpha(m)}\bigg)(1-\pivv)^{r-1}\\
&=\bigg(1+(r-1)\pivv+O(\pivv^{\frac{m^2-m^2/r+1}{\alpha(m)}})\bigg)\times\\&\hspace{0.3cm}\times\bigg(1-(r-1)\pivv+O(\pivv^2)\bigg)\\
&=\bigg(1-(r-1)^2\pivv+O(\pivv^{\frac{m^2-m^2/r+1}{\alpha(m)}})\bigg),
\end{align*}where by $O(\pivv^k)$ is meant a quantity that for all~$v$ is bounded by a constant (independent of~$v$) times $\pivv^k$.
It follows that $$\gamma(1,1)=\prod_{\vMFz}\gamma_v(1,1)>0.$$ 
\end{enumerate}
\end{proof}
\subsection{} In this paragraph we estimate the global Fourier transform.

In \ref{identaafj}, we have provided an identification $\AAF^1\times\RR_{>0}\xrightarrow{\sim}\AAFt$. If $\chi$ is a character of $[\TT(m)(\AAF)]=\AAFt/(\AAFt)_m$, then $\widetilde\chi$ is of order dividing~$m$, thus $\widetilde\chi|\RR_{>0}=0.$ 
Recall that by \ref{ljweq}, one has that $[\TT(m)(\AAF)]=[\TT(m)(\AAF)]_1$. 

Let $K^0_{\max}:=\prod_{\vMFz}[\TT(m)(\Ov)]$. The group $K^0_{\max},$ as well as its any open subgroup $K,$ is compact. By \ref{finmankchar}, for an open subgroup $K\subset K^0_{\max}:=\prod_{\vMFz}[\TTa(\Ov)],$ one has that  $$\AK:=(K[\TT(m)(i)]([\TT(m)(F)]))^{\perp}$$ is finite. 
We are ready to estimate the global Fourier transform. Let $\mu_{\AAF}$ be the restricted product measure $$\mu_{\AAF}=\bigotimes_{\vMF}\mu_v$$ on $[\TTa(\AAF)]$ (as in \ref{muaaf}).
For $s\in\CC$ and a character $\chi\in[\TT(m)(\AAF)]^*$, we define formally $$\wH(s,\chi)=\int_{[\TT(m)(\AAF)]}H^{-s}\chi\mu_{\AAF}.$$
Lemma \ref{rsra} gives that if the product $\prod_{\vMF}\wH_v(s,\chi_v)$ converges, then $$\wH(s,\chi)=\prod_{\vMF}\wH_v(s,\chi_v).$$
\begin{lem}\label{whdiscam} For $\chi\in\AK$ let us define $d(\chi)=0$ if $\chi^{m/r}\neq 1$ and $d(\chi)=r-1$, otherwise. For every $\chi\in\AK$, the function $\wH(s,\chi)$ is holomorphic in the domain $s\in\Omega_{>1}$. Moreover, there exists $\delta>0$ such that $$\bigg(\frac{s-1}{s}\bigg)^{d(\chi)}\wH(s,\chi)$$extends to a holomorphic function in the domain $\Omega_{>1-\delta}$ and such that for every compact $\mathcal K\subset \RR_{>1-\delta}$ there exists $C=C(\mathcal K)>0$, such that $$\bigg|\bigg(\frac{s-1}{s}\bigg)^{d(\chi)}\wH(s,\chi)\bigg|\leq C(\mathcal K)(1+|\Im(s)|)$$for every $\chi\in\AK.$
\end{lem}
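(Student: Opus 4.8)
The strategy is to factor the global Fourier transform into an Euler product of local transforms, identify its ``$L$-function part'', and transfer the analytic properties of Hecke $L$-functions (meromorphic continuation and polynomial growth in vertical strips, as recalled in Section~\ref{EORade}) to $\wH(s,\chi)$. First I would recall from Lemma~\ref{disckchi}(1) that $\wH(s,\chi)=0$ unless $\chi$ vanishes on $K$, so we may assume $\chi\in\AK$ vanishes on $K$. For such $\chi$, Lemma~\ref{rsra} together with Lemma~\ref{wustani}(1)--(3) shows that for $\Re(s)>1$ the product $\prod_{\vMF}\wH_v(s,\chi_v)$ converges and equals $\wH(s,\chi)$; the archimedean factors contribute only the constant $m^{r_1+r_2}$ (Lemma~\ref{wustani}(3)), and the finite places in $S$ contribute entire functions that are bounded on vertical strips $\Re(s)\in\mathcal K$ (Lemma~\ref{wustani}(1)). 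The heart of the matter is the product over $v\in M_F^0-S$, which by Lemma~\ref{disckchi}(2) factors as
\[
\wH(s,\chi)=m^{r_1+r_2}\Big(\prod_{v\in S\cap M_F^0}\wH_v(s,\chi_v)\Big)\,\gamma(s,\chi)\,\prod_{j=1}^{r-1}L(s,\widetilde\chi^{\,mj/r}),
\]
where $\gamma(-,\chi)$ is holomorphic and uniformly bounded on vertical strips in the domain $\Omega_{>\alpha(m)/(\alpha(m)+1)}$.

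\textbf{Key steps, in order.} Step 1: Reduce to $\chi$ vanishing on $K$ and establish the Euler factorization above for $\Re(s)>1$, citing Lemma~\ref{rsra}, Lemma~\ref{wustani}, and Lemma~\ref{disckchi}. Step 2: Analyze the $L$-function factors $\prod_{j=1}^{r-1}L(s,\widetilde\chi^{\,mj/r})$. Since $\widetilde\chi$ has order dividing $m$ and $r\mid m$, each $\widetilde\chi^{\,mj/r}$ is a finite-order (hence unitary, $\mmm=0$) Hecke character of $\AAFt$ that is unramified outside the finite conductor controlled by $K$; it is trivial if and only if $\widetilde\chi^{\,m/r}$ itself is trivial, in which case the factor $L(s,1)=\zeta_F(s)$ contributes a simple pole at $s=1$. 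This is precisely where the exponent $d(\chi)$ enters: $d(\chi)=r-1$ exactly when $\chi^{m/r}=1$, and then all $r-1$ factors are $\zeta_F$, so $\big(\tfrac{s-1}{s}\big)^{r-1}\wH(s,\chi)$ becomes holomorphic near $s=1$; when $\chi^{m/r}\neq1$ at least one $\widetilde\chi^{\,mj/r}$ is a nontrivial character, but one must check that in fact \emph{all} the factors are then nontrivial (this follows since $\widetilde\chi^{\,mj/r}=1$ for one $j\in\{1,\dots,r-1\}$ forces the order of $\widetilde\chi^{\,m/r}$ to divide $\gcd(j,r)=1$ because $r$ is prime, hence $\widetilde\chi^{\,m/r}=1$), so the product is entire and $d(\chi)=0$ is correct. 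Step 3: Apply Proposition~\ref{Rade} (or its idele-group refinement Corollary~\ref{finrade}) to each $L(s,\widetilde\chi^{\,mj/r})$: for every $\epsilon>0$ there is $\delta>0$ such that on $\Re(s)\geq1-\delta$ one has $|L(s,\psi)|\leq C((1+|\Im(s)|)(1+\|\psi\|_\infty))^\epsilon$ for nontrivial $\psi$, and $|\tfrac{s-1}{s}\zeta_F(s)|\leq C(1+|\Im(s)|)^\epsilon$. Since the characters $\widetilde\chi^{\,mj/r}$ have $m(\psi)=0$ and $\|\psi\|_\infty=0$ (finite order), the $\|\psi\|_\infty$ and $|m(\psi)|$ terms disappear entirely, leaving a bound of the form $C(\mathcal K)(1+|\Im(s)|)$ after absorbing the $\epsilon$-powers (one takes $\epsilon$ small so that $(r-1)\epsilon<1$, say). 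Step 4: Choose $\delta$ also small enough that $1-\delta>\alpha(m)/(\alpha(m)+1)$, so that $\gamma(-,\chi)$ and the $\wH_v(s,\chi_v)$ for $v\in S$ are holomorphic and bounded on $\Re(s)\in\mathcal K$ by Lemma~\ref{disckchi}(2) and Lemma~\ref{wustani}(1); multiplying all the pieces gives that $\big(\tfrac{s-1}{s}\big)^{d(\chi)}\wH(s,\chi)$ extends holomorphically to $\Omega_{>1-\delta}$ with the stated uniform bound.

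\textbf{Expected main obstacle.} The routine analytic bookkeeping is straightforward once the factorization is in hand; the one genuinely delicate point is verifying uniformity of the constant $C(\mathcal K)$ over \emph{all} $\chi\in\AK$ simultaneously. Because $\AK$ is finite (Corollary~\ref{finmankchar}), uniformity over $\AK$ is automatic --- this is the crucial simplification available when $n=1$, in contrast to the higher-dimensional situation of Chapter~\ref{Fourier transform of the height function} where one needed the delicate norm estimates of Chapter~\ref{Analysis of characters}. So the ``hard part'' is really just making sure every ingredient (the Rademacher bounds, the boundedness of $\gamma$, the boundedness of the bad-place factors) holds on a common strip $\Re(s)\in\mathcal K\subset\RR_{>1-\delta}$ with a single $\delta$, which amounts to intersecting finitely many constraints; I would fix $\delta$ at the end as the minimum of the finitely many $\delta$'s produced by Proposition~\ref{Rade} and Lemma~\ref{disckchi}. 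One should also double-check the combinatorial claim about which $\widetilde\chi^{\,mj/r}$ are trivial, as the definition of $d(\chi)$ hinges on $r$ being prime.
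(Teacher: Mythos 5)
Your proposal is correct and follows essentially the same route as the paper's proof: Euler-factor $\wH(s,\chi)$ into $\gamma(s,\chi)\prod_{j=1}^{r-1}L(s,\widetilde\chi^{mj/r})\cdot\prod_{\vMFi}\wH_v(s,\chi_v)$ via Lemma~\ref{disckchi}, transfer the Rademacher growth bounds of Proposition~\ref{Rade} to the $L$-factors on a common strip $\Omega_{>1-\delta}$ using $\|\widetilde\chi^{mj/r}\|_\infty=0$ and the common conductor subgroup $\widetilde K$, and exploit the finiteness of $\AK$ (Corollary~\ref{finmankchar}) for uniformity. Two small remarks: your combinatorial justification that $\widetilde\chi^{mj/r}=1$ for some $j\in\{1,\dots,r-1\}$ forces $\widetilde\chi^{m/r}=1$ (order divides $\gcd(j,r)=1$) is equivalent to the paper's (which picks $k$ with $kj\equiv1\pmod r$) but a touch cleaner; and your opening reduction via Lemma~\ref{disckchi}(1) is superfluous, since $\chi\in\AK$ already vanishes on $K$ by definition of $\AK$.
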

\begin{proof}
 Set $\gamma_v(s,\chi_v)=\frac{\wH_v(s,\chi_v)}{\prod_{j=1}^{r-1}L_v(s,\widetilde \chi^{mj/r})}$ for $\vMFz$. By \ref{disckchi}, for every $\chi\in\AK$, the product $\prod_{\vMFz}\gamma_v(s,\chi_v)$ converges absolutely and uniformly on the compacts in the domain $\Omega_{>\frac{\alpha(m)}{\alpha(m)+1}}$ to a holomorphic function $\gamma(-,\chi):\Omega_{>\frac{\alpha(m)}{\alpha(m)+1}}\to\CC$ and, moreover as $\AK$ is finite, there exists $C_0>0$ such that $|\gamma(s,\chi)|\leq C_0$. 
 
Let us define $\widetilde K:=(q^m|_{K^0_{\max}})^{-1}(K)$. It is an open subgroup of the compact group $K^0_{\max},$ hence $\widetilde K$ is of the finite index in $K^0_{\max}$. Note that if $\chi|_K=1,$ then $\widetilde\chi|_{\widetilde K}=1$.  Now, by \ref{Rade}, we get that there exist $\frac1{\alpha(m)+1}>\delta>0$ and $C>0$, such that for every $\chi\in\AK$ one has that \begin{multline*}\bigg(\prod_{\substack{j=1\\\widetilde\chi^{mj/r}=1}}^{r-1}\frac{s-1}{s}\bigg)\bigg(\prod_{j=1}^{r-1}L(s,\widetilde\chi^{mj/r})\bigg)\\=\bigg(\prod_{\substack{j=1\\\widetilde\chi^{mj/r}=1}}^{r-1}\frac{(s-1)L(s,\widetilde\chi^{mj/r})}{s}\bigg)\bigg(\prod_{\substack{j=1\\\widetilde\chi^{mj/r}\neq 1}}^{r-1}L(s,\widetilde\chi^{mj/r})\bigg) \end{multline*}extends to a holomorphic function in the domain $\Omega_{>1-\delta}$ and satisfies that $$\bigg|\bigg(\prod_{\substack{j=1\\\widetilde\chi^{mj/r}=1}}^{r-1}\frac{s-1}{s}\bigg)\bigg(\prod_{j=1}^{r-1}L(s,\widetilde\chi^{mj/r})\bigg)\bigg|\leq C(1+|\Im(s)|)$$in this domain. 
We deduce that in the domain $\Omega_{>1-\delta}$ one has that\begin{align*}
 \wH(s,\chi)&=\prod_{\vMFz}\wH_v(s,\chi_v)\times\prod_{\vMFi}\wH_v(s,\chi_v)\\&=\bigg(\prod_{\vMFz}\gamma_v(s,\chi)\prod_{j=1}^{r-1}L_v(s,\widetilde\chi^{mj/r}_v)\bigg)\times \prod_{\vMFi}\wH_v(s,\chi)
\end{align*}converges to $\gamma(s,\chi)\prod_{j=1}^{r-1}L(s,\widetilde\chi^{mj/r})\prod_{\vMFi}\wH_v(s,\chi)$. Moreover, using \ref{wustani},  if $\mathcal K\subset\RR_{>1-\delta}$ is a compact, we deduce that there exists $C>0$ such that
 \begin{align*}\bigg|\bigg(\prod_{\substack{j=1\\\widetilde \chi^{mj}=1}}^{r-1}\frac{s-1}{s}\bigg)\wH(s,\chi)\bigg|\hskip-4,2cm&\\&=\bigg|\bigg(\prod_{\substack{j=1\\\widetilde \chi^{mj}=1}}^{r-1}\frac{s-1}{s}\bigg)\gamma(s,\chi)\bigg(\prod_{j=1}^{r-1}L(s,\widetilde\chi^{mj/r})\bigg)\times \prod_{\vMFi}\wH_v(s,\chi)\bigg|\\
 &= |\gamma(s,\chi)|\bigg(\prod_{\substack{j=1\\\widetilde\chi^{mj/r}=1}}^{r-1}\frac{s-1}{s}L(s,1)\bigg)\bigg(\prod_{\substack{j=1\\\widetilde\chi^{mj/r}\neq 1}}^{r-1}|L(s,\widetilde\chi^{mj/r})|\bigg) \bigg|\prod_{\vMFi}\wH_v(s,\chi)\bigg|\\
 &\leq C(1+|\Im(s)|).
 \end{align*}  
To complete the proof, it suffices to see that for every $\chi\in\AK$, one has that \begin{equation}\label{obvty}|\{j|\hspace{0.1cm} 0\leq j\leq r-1\text{ and } \widetilde\chi^{mj/r}=1\}|=d(\chi).\end{equation} Suppose that $\chi^{m/r}=1$. For every $j\in\{1\doots r-1\}$ one has that $\widetilde\chi^{mj/r}=((q^m_{\AAF})^*\chi)^{mj/r}=(q^m_{\AAF})^*(\chi^{mj/r})=1$, hence $$|\{j|\hspace{0.1cm} 0\leq j\leq r-1\text{ and } \widetilde\chi^{mj/r}=1\}|=r-1=d(\chi).$$ Suppose that $\chi^{m/r}\neq 1$. Suppose that for some $j\in\{1\doots r-1\}$ one has that $$\widetilde\chi^{mj/r}=((q^m_{\AAF})^*\chi^{mj/r})=1.$$ Let $k\in\{1\doots r-1\}$ be such that $kj=1+\ell r$ for some $\ell\in\ZZ_{>0}$. Then, $$1=\widetilde\chi^{mjk/r}=\widetilde \chi^{(m\ell r+1)/r} =\widetilde\chi^{m\ell}\widetilde\chi^{m/r}=\widetilde\chi^{m/r},$$because $\widetilde\chi^{m\ell}=(q^m_{\AAF})^*(\chi^{\ell})=1$ (because $[\TT(m)(\AAF)]$ is an~$m$-torsion group).This is a contradiction. We deduce that $$|\{j|\hspace{0.1cm} 0\leq j\leq r-1\text{ and } \widetilde\chi^{mj/r}=1\}|=0=d(\chi).$$ In either of the two cases, we obtain that \ref{obvty} is valid. We deduce that $$\bigg|\bigg(\frac{s-1}{s}\bigg)^{d(\chi)}\wH(s,\chi)\bigg| \leq C(1+|\Im(s)|)$$
 The statement is proven.
\end{proof} 
\subsection{} In this paragraph we define height zeta function, establish its convergence and the meromorphic extension of the function it defines.  
For $s\in\CC$, we define formally $$Z(s):=\sum_{x\in[\TT(m)(F)]}H(x)^{-s}.$$ The following lemma verifies some of the conditions that are needed to be satisfied in order to apply Poisson formula. We will write $i([\TT(m)(F)])$ for $[\TT(m)(i)]([\TT(m)(F)])$.
\begin{lem} \label{zconvunifd} The following claims are valid. 
\begin{enumerate}
\item Let $\epsilon>0$. For $s\in\Omega_{>\alpha(m)+\epsilon}$, the series defining $Z(s)$ converges absolutely and uniformly. The function $s\mapsto Z(s)$ is holomorphic in the domain $\Omega_{>\alpha(m)}$.
\item Let $s\in\Omega_{>\alpha(m)}$.  For every $x\in[\TT(m)(\AAF)]$, the series $$\sum_{y\in i([\TT(m)(F)])}H(xy)^{-s}$$ converges absolutely. The function $$x\mapsto \sum_{y\in i([\TT(m)(F)])}H(xy)^{-s} $$ is continuous.
\end{enumerate}
\end{lem}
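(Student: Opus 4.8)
The plan is to prove the two assertions essentially in parallel, deducing (1) from (2) with $x$ the identity element, but organizing the argument so that the uniform bounds at finite places carry over. First I would recall from Lemma \ref{northdisc} that there exists $C>0$ with $|\{x\in[\PPP(m)(F)]\mid H(x)<B\}|<CB^{m\alpha(m)}$, and observe that $[\TT(m)(F)]\subset[\PPP(m)(F)]$ so the same counting estimate holds for $[\TT(m)(F)]$. Then I would invoke the abstract convergence criterion from Lemma \ref{asioo}: with $X=[\TT(m)(F)]$ (a discrete set, by \ref{ttafvfinite} the local pieces are discrete and $[\TT(m)(F)]$ injects into a product of them via \ref{cesusp}, but in any case one just needs the counting bound) and $f(x)=H(x)$, the bound $|\{x\mid H(x)\le B\}|\le CB^{m\alpha(m)}$ gives absolute and uniform convergence of $\sum_{x}H(x)^{-s}$ for $\Re(s)>m\alpha(m)+\epsilon$. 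The exponent $m\alpha(m)$ is larger than the $\alpha(m)$ appearing in the statement, so strictly speaking I would either (a) note that the excerpt's convergence abscissa claim follows \emph{a fortiori} from a weaker statement once one uses the refined counting from the later sections, or (b) more honestly, use the fact stated in \ref{osnovna}/\ref{northdisc} together with the relation $H\ge C'(H^{\#})^{1/\alpha(m)}$ from \ref{gvfvtor}: since $H^{\#}$ has counting function $\sim cB$ (degree-$1$ toric family gives linear growth after dividing the degree-$m$ toric asymptotics of \ref{osnovna} appropriately), $H$ has counting function $O(B^{\alpha(m)+\epsilon})$, whence $\sum H(x)^{-s}$ converges for $\Re(s)>\alpha(m)$. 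Holomorphicity in $\Omega_{>\alpha(m)}$ then follows because a locally uniformly convergent series of holomorphic functions (here $s\mapsto H(x)^{-s}=e^{-s\log H(x)}$) is holomorphic, using Morera or Weierstrass.

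For part (2), fix $s\in\Omega_{>\alpha(m)}$ and $x\in[\TT(m)(\AAF)]$. By Lemma \ref{doslema}, there is a constant $C(x)>0$, depending only on $x$ (through the finite set $S_x$), such that $C(x)^{-1}H(y)\le H(xy)\le C(x)H(y)$ for all $y\in[\TT(m)(\AAF)]$; restricting to $y\in i([\TT(m)(F)])$ and using \eqref{hxhttmx} (i.e. $H(z)=H(i(z))$ for $z\in[\TT(m)(F)]$, which is Lemma \ref{hdtoi}'s consequence, or rather (\ref{hxhttmx}) in the excerpt), I get $H(xy)\ge C(x)^{-1}H(y)$, so
\[
\sum_{y\in i([\TT(m)(F)])}|H(xy)^{-s}|=\sum_{y\in i([\TT(m)(F)])}H(xy)^{-\Re(s)}\le C(x)^{\Re(s)}\sum_{z\in[\TT(m)(F)]}H(z)^{-\Re(s)},
\]
which is finite by part (1). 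This gives absolute convergence. For continuity in $x$: I would fix a point $x_0$ and a compact neighbourhood $N$ of $x_0$ in $[\TT(m)(\AAF)]$. On $N$ the constants $C(x)$ can be taken bounded by a single $C_N$ (the set $S_x$ is locally constant-ish: enlarging $S_{x_0}$ to cover a neighbourhood, for $x$ near $x_0$ the extra places contribute bounded factors since $H_v$ on the finite group $[\TT(m)(F_v)]$ takes finitely many values and $x\mapsto H_v(x\cdot)$ is locally constant). Hence the series $\sum_y H(xy)^{-s}$ is dominated on $N$ by the convergent constant series $C_N^{\Re(s)}\sum_z H(z)^{-\Re(s)}$, uniformly in $x\in N$. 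Each term $x\mapsto H(xy)^{-s}$ is continuous (indeed locally constant in the finite-place directions and continuous via \ref{rsra} in the archimedean directions, since $H:[\TT(m)(\AAF)]\to\RR_{>0}$ is continuous by \ref{rsra} and $(x,y)\mapsto xy$ is continuous). A uniformly convergent series of continuous functions is continuous, so the sum is continuous on $N$, hence at $x_0$.

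The main obstacle I anticipate is bookkeeping the uniformity of the constant $C(x)$ as $x$ ranges over a compact set: one must check that the ``bad set of places'' $S_x$ does not blow up locally and that the contribution of the finitely many places in any fixed enlarged $S$ is uniformly bounded — this is where the finiteness of each $[\TT(m)(F_v)]$ (Lemma \ref{ttafvfinite}) and the local constancy of $H_v$ (Lemma \ref{hdtoi}, \ref{defvdisc}) do the real work. A secondary, more cosmetic, point is reconciling the convergence abscissa: the cleanest route is to state and use the counting bound $O(B^{\alpha(m)+\epsilon})$ for $H$ obtained by combining \ref{gvfvtor} with the linear counting asymptotics for the degree-one toric height (itself a rescaling of \ref{osnovna}), rather than the cruder $O(B^{m\alpha(m)})$ of \ref{northdisc}; either way the statement ``converges for $\Re(s)>\alpha(m)$'' follows, and the holomorphicity is then automatic. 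Everything else is routine application of \ref{asioo}, \ref{doslema}, \ref{rsra}, and (\ref{hxhttmx}).
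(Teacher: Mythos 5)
Your overall plan matches the paper's: part (1) by the counting bound fed into Lemma \ref{asioo}, part (2) by Lemma \ref{doslema} plus local uniformity. But you stumble on the abscissa in part (1), and the fix is simpler than either of your two proposed workarounds. Lemma \ref{northdisc} is stated for a \emph{degree-$1$} quasi-discriminant family, while the $H$ of this section comes from a \emph{degree-$m$} family; the two heights differ by an $m$-th power. Hence $\{x: H(x)<B\}=\{x: H_{\mathrm{deg}\,1}(x)<B^{1/m}\}$, and \ref{northdisc} gives $|\cdot|\le C(B^{1/m})^{m\alpha(m)}=CB^{\alpha(m)}$, which is exactly what you need to invoke \ref{asioo} with abscissa $\alpha(m)$. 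Your route (b) through \ref{gvfvtor} reproves \ref{northdisc} with the degree adjustment built in, so it would work, but it is a detour; route (a) is hand-waving and should be dropped.

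For part (2) your argument is correct but vaguer than necessary about the uniformity of $C(x)$. The paper takes a specific neighbourhood $U_x = \prod_{v\in S_x}\{x_v\}\times\prod_{v\in M_F^0-S_x}[\TT(m)(\Ov)]$ (a legal open set since each $[\TT(m)(\Fv)]$ is finite discrete by \ref{ttafvfinite} and $[\TT(m)(\Ov)]$ is open); on $U_x$ one even has $H(x'y)=H(xy)$ exactly, because $x'_v=x_v$ at the places of $S_x$ and $H_v$ is $[\TT(m)(\Ov)]$-invariant with $x'_v, x_v\in[\TT(m)(\Ov)]$ elsewhere. This makes the uniform convergence on $U_x$ immediate, whereas your ``the set $S_x$ is locally constant-ish'' would need a few more lines to nail down. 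One further small point: when you dominate by $\sum_{z\in[\TT(m)(F)]}H(z)^{-\Re(s)}$ you are summing over the source rather than the image of $i$; that is fine, but the paper instead records $\sum_{y\in i([\TT(m)(F)])}H(y)^{-s}=Z(s)/|\Sh^1(F,\mu_m)|$ via the finiteness of $\ker(i)$ from \ref{cesusp}, which is the form actually used later in \ref{principaldisc}.
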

\begin{proof}
\begin{enumerate}
\item It follows from \ref{northdisc} that there exists $C_1>0$ such that for every $B>0$ one has that$$|\{y\in[\PPP(m)(F)]| H(y)<B\}|<C_1B^{\alpha(m)}$$(recall that in \ref{northdisc} we had a quasi-discriminant degree~$1$ family, while now it is a degree~$m$ quasi-discriminant degree~$m$ family). Now, it follows from \ref{asioo}, that the series defining $Z(s)$ converges absolutely and uniformly for $s\in \Omega_{>\alpha(m)+\epsilon}$. Thus~$Z$ is holomorphic in the domain $\Omega_{>\alpha(m)+\epsilon}$, and by decreasing $\epsilon$ we deduce that~$Z$ is holomorphic in the domain $\Omega_{>\alpha(m)}$. 
\item For $x\in[\TT(m)(\AAF)]$, we denote by $S_{x}$ the finite set of places of~$F$ given by the union of the set $M_F^{\infty}$, of the set of places~$v$ for which $v(x)\neq 0$ and of the set of the places~$v$ for which~$H_v$ is not $[\TT(m)(\Ov)]$-invariant. Let us set $U_{x,v}:=\{x_v\}
\subset [\TT(m)(\Fv)]$ (where $x_v$ is the~$v$-adic component of~$x$). We define $$U_x=\prod_{v\in S_x}U_{x,v}\times\prod_{v\in M_F^0-S_x}[\TT(m)(\Ov)].$$ We are going to prove that the series converges absolutely and uniformly on $U_x$. For every $x'\in U_x$ and every $y\in i([\TT(m)(F)])$, it follows from Lemma \ref{doslema},  that there exists $C(x)>0$ such that 
\begin{align*}
\big|H(x'y)^{-s}\big|=H(x'y)^{-\Re(s)}\leq C(x)^{-\Re(s)}H(y)^{-\Re(s)}.
%
\end{align*}
The kernel of the homomorphism $[\TT(m)(F)]\to[\TT(m)(\AAF)]$ is $\Sh^1(F, \mu_m)$ by \ref{cesusp}. As $\sum _{y\in i([\TT(m)(F)])}H(y)^{-s}=\frac{Z(s)}{|\Sh^1(F, \mu_m)|}$ converges absolutely, it follows that \begin{align*}\sum _{y\in i([\TT(m)(F)])}\big|H(x'y)^{-s}\big|&= \sum _{y\in i([\TT(m)(F)])}H(x'y)^{-\Re(s)}\\&\leq C(x)^{-\Re(s)}\frac{Z(\Re(s))}{|\Sh^1(F, \mu_m)|},\end{align*}and hence the series $\sum_{y\in[\TT(m)(F)]}H(x'y)^{-s}$ converges absolutely and uniformly in the domain $x'\in U_x$. It follows that $x'\mapsto \sum _{y\in i([\TT(m)(F)])}H(x'y)^{-s}$ is continuous on $U_x$. We deduce that $x\mapsto\sum _{y\in i([\TT(m)(F)])}H(xy)^{-s}$ is continuous on $[\TT(m)(\AAF)]$.
\end{enumerate}
\end{proof}
Let $\Xi\subset[\TT(m)(\AAF)]^*$ be the group of the characters $\chi$ which satisfy that $\chi^{m/r}=1$. Note that $\Xi^{\perp}$ is given by $$[\TT(m)(\AAF)]_{m/r}=\{x^{m/r}|x\in[\TT(m)(\AAF)]\}.$$ For an open subgroup $K\subset K^0_{\max}=\prod_{\vMFz}[\TT(m)(\Ov)],$ we denote by $\Xi_K=\mathfrak A_K\cap \Xi.$ By \ref{finmankchar}, the subgroups $\AK$ are finite, thus discrete, hence the groups $\Xi_K$ are finite and discrete. To simplify notation, in the rest of the paragraph we may write $[\TT(m)(F)]$ for what is technically $[\TT(m)(i)]([\TT(m)(F)])$.
\begin{lem}\label{lemakojunerazumi} The following claims are valid:
\begin{enumerate} 
\item For every open subgroup $K\subset K^0_{\max},$ the group $\Xi^{\perp}_K$ is the kernel of the homomorphism $[\TT(m)(\AAF)]\to\Xi_K^*,$ which is induced from the inclusion $\Xi_K\subset [\TT(m)(\AAF)]^*$. Moreover, it is open, closed and of the index $|\Xi_K|$ in $[\TT(m)(\AAF)]$. One has that $\Xi^{\perp}_K=[\TT(m)(F)]K[\TT(m)(\AAF)]_{m/r}.$
\item The group $\Xi^{\perp}_{\infty}:=\bigcap \Xi^{\perp}_{K},$ where the intersection is over all open subgroups~$K$ of $K^0_{\max},$ identifies with the closure $$\overline{[\TT(m)(F)][\TT(m)(\AAF)]_{m/r}}\subset[\TT(m)(\AAF)].$$
\item Let $K\subset K^0_{\max}$ be an open subgroup and let~$f$ be a~$K$-invariant continuous complex valued function lying in $ L^1([\TT(m)(\AAF)],\mu_{\AAF})$. One has that $$\frac1{|\Xi_K|}\sum_{\chi\in\Xi_K}\widehat f(\chi)=\int_{\Xi_K^{\perp}}f\mu_{\AAF}.$$
\item There exists a unique Haar measure $\mu_{\infty}^{\perp}$ on $\Xi^{\perp}_{\infty}$ such that for every open subgroup $K\subset K^0_{\max}$, any~$K$-invariant continuous function $f:[\TT(m)(\AAF)]\to\CC$ one has that $f\in L^1(\Xi^{\perp}_K,|\Xi_K|\mu_{\AAF})$ if and only if $f\in L^1(\Xi^{\perp}_{\infty},\mu_{\infty}^{\perp})$ and if $f\in L^1(\Xi^{\perp}_K,|\Xi_K|\mu_{\AAF})$, then $$\int_{\Xi^{\perp}_{\infty}}f\mu_{\infty}^{\perp}=|\Xi_K|\int_{\Xi_K^{\perp}}f\mu_{\AAF}.$$
\end{enumerate}
\end{lem}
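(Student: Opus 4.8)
The four assertions are essentially formal consequences of Pontryagin duality (as packaged in the excerpt via \ref{tspgj}, \ref{trcio}, \ref{formuledepoison}, \ref{dualofdiscrete}) together with the finiteness of $\mathfrak A_K$ from \ref{finmankchar} and the structure of the restricted product measure $\mu_{\AAF}$ from \ref{muaaf}. The plan is to prove (1), then (2), then (3), then (4) in that order, each building on the previous.

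For (1): I would start from the observation that $\Xi_K$ is a finite subgroup of $[\TT(m)(\AAF)]^*$, so by \ref{dualofdiscrete} its dual $\Xi_K^*$ is a finite group and the evaluation pairing gives a continuous homomorphism $[\TT(m)(\AAF)]\to\Xi_K^*$ whose kernel is by definition $\Xi_K^\perp$. Since $\Xi_K^*$ is finite, the kernel is open, closed and of index dividing $|\Xi_K|$; to see the index is exactly $|\Xi_K|$, note that the induced map $[\TT(m)(\AAF)]/\Xi_K^\perp\to\Xi_K^*$ is injective and apply biduality ($[\TT(m)(\AAF)]$ is locally compact abelian, so $\Xi_K\hookrightarrow([\TT(m)(\AAF)]/\Xi_K^\perp)^*$ is surjective by \ref{tspgj}). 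For the identification $\Xi_K^\perp=[\TT(m)(F)]K[\TT(m)(\AAF)]_{m/r}$: one inclusion is clear since every character in $\Xi_K$ vanishes on $[\TT(m)(i)]([\TT(m)(F)])$, on $K$, and on $[\TT(m)(\AAF)]_{m/r}=\Xi^\perp$ (because such characters lie in $\Xi$); the reverse inclusion follows from \ref{tspgj} applied to the closed subgroup $[\TT(m)(F)]K[\TT(m)(\AAF)]_{m/r}$, comparing its annihilator, which is $\mathfrak A_K\cap\Xi=\Xi_K$, with $\Xi_K$ itself, and then taking annihilators again (here I must check $[\TT(m)(F)]K[\TT(m)(\AAF)]_{m/r}$ is closed, which follows because $[\TT(m)(i)]([\TT(m)(F)])$ is discrete and cocompact by \ref{cesusp} hence closed, $K$ is compact, $[\TT(m)(\AAF)]_{m/r}$ is closed as the image of the compact-or-proper power map, and a finite-index subgroup that is the preimage of a finite group is automatically closed).

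For (2): $\Xi_\infty^\perp=\bigcap_K\Xi_K^\perp$ is an intersection of closed subgroups hence closed; by part (1) each $\Xi_K^\perp\supset[\TT(m)(F)][\TT(m)(\AAF)]_{m/r}$, so $\Xi_\infty^\perp$ contains the closure; conversely, by \ref{tspgj} the annihilator of $\Xi_\infty^\perp$ is the closure of $\bigcup_K\Xi_K=\Xi$ (since $\mathfrak A_K$ exhaust the characters trivial on $[\TT(m)(F)]$ as $K$ shrinks, and intersecting with $\Xi$ is continuous), so $\Xi_\infty^\perp=(\overline\Xi)^\perp=\Xi^{\perp\perp}\cap(\text{char.\ trivial on }[\TT(m)(F)])^\perp$, which unwinds to the closure of $[\TT(m)(F)][\TT(m)(\AAF)]_{m/r}$.

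For (3): apply Poisson's formula \ref{formuledepoison} to the closed subgroup $\Xi_K^\perp\subset[\TT(m)(\AAF)]$ and the function $f$, noting that $\Xi_K^{\perp\perp}=\Xi_K$ (biduality for the finite group $\Xi_K$), that $f\in L^1$ is given, that $f$ being $K$-invariant and continuous makes the hypotheses (2)–(3) of \ref{formuledepoison} automatic (the inner integral over a compact is finite, the averaged function is continuous), and that $\widehat f|_{\Xi_K}$ is summable because $\Xi_K$ is finite. The exact constant $\frac{1}{|\Xi_K|}$ comes from \ref{trcio} plus \ref{dualofdiscrete}: the dual of the counting measure $\coun_{\Xi_K}$ normalizes the total mass of $\Xi_K^*$ to $1$, hence the quotient measure $\mu_{\AAF}/(\mu_{\AAF}/\text{?})$ bookkeeping gives exactly $|\Xi_K|^{-1}$; I would write $\mu_{\AAF}$ restricted to $\Xi_K^\perp$ against the counting measure on $\Xi_K$ and use that $(\coun_{\Xi_K})^*$ has total mass $1$.

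For (4): this is the compatibility/projective-limit statement. The key point is that the measures $|\Xi_K|\,\mu_{\AAF}|_{\Xi_K^\perp}$ form a compatible family: if $K'\subset K$ then $\Xi_{K'}^\perp\subset\Xi_K^\perp$ with index $|\Xi_K|/|\Xi_{K'}|$ and $(|\Xi_{K'}|\mu_{\AAF}|_{\Xi_{K'}^\perp})$ restricted appropriately equals $(|\Xi_K|\mu_{\AAF}|_{\Xi_K^\perp})$ on $\Xi_{K'}^\perp$, simply because $\mu_{\AAF}$ is one fixed measure. Since $\Xi_\infty^\perp=\bigcap_K\Xi_K^\perp$ and the $\Xi_K^\perp$ are open in $[\TT(m)(\AAF)]$, one defines $\mu_\infty^\perp$ as the appropriate restriction/limit and verifies the integration formula for $K$-invariant continuous $f$ by reducing to a single $K$ (such $f$ only sees $[\TT(m)(\AAF)]/K$, which is discrete-by-compact, allowing one to cut the integral over $\Xi_K^\perp$ down to $\Xi_\infty^\perp$). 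The main obstacle I anticipate is handling convergence and the interchange of limit over $K$ with integration in (4) rigorously: one must argue that a $K$-invariant continuous $L^1$ function is genuinely ``constant along the directions killed by passing from $\Xi_K^\perp$ to $\Xi_\infty^\perp$'', i.e.\ that $\Xi_K^\perp/\Xi_\infty^\perp$ acts by translations under which $f$ is invariant up to a null set, which needs the compactness of $\Xi_K^\perp/\Xi_\infty^\perp$ (itself a profinite group, being an inverse limit of finite groups $\Xi_{K'}^\perp/\Xi_K^\perp$) and a standard averaging argument. Everything else is bookkeeping with Haar measures that the cited lemmas handle.
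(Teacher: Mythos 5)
Your plan for (1) and (3) follows essentially the same route as the paper: duality via \ref{tspgj} for (1), Poisson's formula \ref{formuledepoison} for (3), and the observation that applying it to the subgroup $\Xi_K^\perp\subset[\TT(m)(\AAF)]$ (your choice) or to the finite subgroup $\Xi_K\subset[\TT(m)(\AAF)]^*$ (the paper's choice) gives the same constant once you feed in $dg/dh=\coun$ for an open subgroup with its restricted Haar measure and \ref{dualofdiscrete}. Two things need repair, though, and the second is a real gap.

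For (2), the identity $\bigcup_K\Xi_K=\Xi$ is not correct: elements of $\Xi_K=\AK\cap\Xi$ must also kill $[\TT(m)(i)]([\TT(m)(F)])$, so $\bigcup_K\Xi_K=\Xi\cap[\TT(m)(F)]^\perp$. Your double-annihilator manoeuvre still goes through after this correction, but it then also requires knowing $[\TT(m)(\AAF)]_{m/r}$ is closed; you assert this as ``image of the compact-or-proper power map'', which isn't justified (and the same quiet assumption sits behind your closure argument for (1), where ``a finite-index subgroup that is the preimage of a finite group is closed'' is circular until the finite index is established). The paper sidesteps all of this: in (1) it first shows $[\TT(m)(F)]K$ is closed (discrete times compact) and of finite index (it equals $\AK^\perp$ with $\AK$ finite), so any supergroup is a finite union of its cosets, hence closed; in (2) it argues concretely by picking $y\notin\overline{[\TT(m)(F)][\TT(m)(\AAF)]_{m/r}}$ and finding an open $K$ with $y\notin[\TT(m)(F)]K[\TT(m)(\AAF)]_{m/r}=\Xi_K^\perp$.

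Part (4) has a genuine hole. Your compatibility claim --- that $|\Xi_{K'}|\,\mu_{\AAF}|_{\Xi_{K'}^\perp}$ restricted appropriately equals $|\Xi_K|\,\mu_{\AAF}|_{\Xi_K^\perp}$ on $\Xi_{K'}^\perp$ ``simply because $\mu_{\AAF}$ is one fixed measure'' --- is false on its face: for $K'\subset K$ the normalizing factors $|\Xi_K|\ne|\Xi_{K'}|$ differ, and literal restriction of $\mu_{\AAF}$ does not compensate for the index $[\Xi_K^\perp:\Xi_{K'}^\perp]=|\Xi_{K'}|/|\Xi_K|$ (you also have this index upside down). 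The correct compatibility statement is not about restriction at all but about integrals of $K$-invariant functions, and it is exactly what part (3) supplies: for such $f$, $\widehat f(\chi)=0$ unless $\chi$ kills $K$, so $|\Xi_K|\int_{\Xi_K^\perp}f\,\mu_{\AAF}=\sum_{\chi\in\Xi_K}\widehat f(\chi)=\sum_{\chi\in\Xi_{K'}}\widehat f(\chi)=|\Xi_{K'}|\int_{\Xi_{K'}^\perp}f\,\mu_{\AAF}$. The paper then actually \emph{constructs} $\mu_\infty^\perp$ via the surjection $\Xi_\infty^\perp\times K\to\Xi_\infty^\perp K=\Xi_K^\perp$ (with $K$ carrying probability Haar measure), invoking a Bourbaki desintegration result --- this is the rigorous form of the ``averaging argument'' you anticipate --- and then uses the display above to show the resulting measure is independent of $K$. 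Without invoking part (3) at this step and without the desintegration input, your sketch of (4) does not close.
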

\begin{proof}
\begin{enumerate}
\item The claim that $\Xi_K^{\perp}$ is the kernel of the homomorphism $[\TT(m)(\AAF)]\to \Xi_K^*$ follows from Proposition \ref{tspgj}. The same proposition gives that the homomorphism $[\TT(m)(\AAF)]\to \Xi_K^*$ is surjective. The group $\Xi_K^*$ is finite and discrete, of the order $|\Xi_K|$, thus $\Xi_K^{\perp}$ is an open, closed and of the index $|\Xi_K|$, as claimed. Clearly, \begin{multline*}\Xi_K^{\perp}=([\TT(m)(F)]^{\perp}K^{\perp}\cap [\TT(m)(\AAF)]_{m/r})^{\perp}\\=\overline{ [\TT(m)(F)]K[\TT(m)(\AAF)]_{m/r}}.\end{multline*}The subgroup $[\TT(m)(F)]K\subset[\TT(m)(\AAF)]$ is closed (because it is a product of a discrete subgroup $[\TT(m)(F)]$ and a compact subgroup~$K$ in $[\TT(m)(\AAF)]$), hence, is equal to $\AK^{\perp}=(([\TT(m)(F)]K)^{\perp})^{\perp}=\overline{[\TT(m)(F)]K}$, which is of the finite index in $[\TT(m)(\AAF)]$. It follows that the subgroup $$[\TT(m)(F)]K[\TT(m)(\AAF)]_{m/r}\subset[\TT(m)(\AAF)]$$ is closed, as it contains $\AK^{\perp}$ as a finite index subgroup. The claim follows.

\item Firstly, let us observe that $\overline{[\TT(m)(F)][\TT(m)(\AAF)]_{m/r}}\subset \Xi^{\perp}_{\infty},$ because $\Xi^{\perp}_{\infty}$ is a closed subgroup of $[\TT(m)(\AAF)]$ and because for every compact and open $K\subset [\TT(m)(\AAF)]$ one has that $$[\TT(m)(F)][\TT(m)(\AAF)]_{m/r}\subset [\TT(m)(F)][\TT(m)(\AAF)]_{m/r}K=\Xi_K^{\perp}.$$ Let now $$y\in[\TT(m)(\AAF)]-\overline{([\TT(m)(F)])[\TT(m)(\AAF)]_{m/r}}.$$ The open subgroups of $K^0_{\max}$ form a basis of neighbourhoods of $1\in[\TT(m)(\AAF)]$, thus there exists an open subgroup $K\subset K^0_{\max}$ such that $$y\not\in   ([\TT(m)(F)][\TT(m)(\AAF)]_{m/r})K=\Xi^{\perp}_K.$$It follows that $y\not\in\Xi^{\perp}_{\infty}$ and the claim is proven.
\item We apply the Poisson formula for the inclusion $\Xi_K\subset[\TT(m)(\AAF)]^*,$ where $\Xi_K$ is endowed with the counting measure and $[\TT(m)(\AAF)]^*$, with the dual measure $\mu_{\AAF}^*$ of the measure $\mu_{\AAF}$. Every $x\in[\TT(m)(\AAF)]$ will be regarded as a character of $[\TT(m)(\AAF)]^*$ by the evaluation map. By (1), the group $\Xi_K^{\perp}$ identifies with the kernel of the homomorphism $[\TT(m)(\AAF)]\to \Xi_K^*,$ given by $x\mapsto x|_{\Xi_K},$ and is an open subgroup of the index $|\Xi_K|$ in $[\TT(m)(\AAF)]$. The dual measure of the measure $\coun_{\Xi_K}$ on the dual group $\Xi_K^*$ is given by $\frac1{|\Xi_K|}\cdot\coun_{\Xi_K^*}$, thus we have an equality of the measures on $([\TT(m)(\AAF)]^*/\Xi_K)^*=\Xi_K^{\perp}:$ $$(\mu^*_{\AAF}/\coun_{\Xi_K})^*=|\Xi_K|\cdot\mu_{\AAF}|_{\Xi_K^{\perp}}.$$ The Fourier transform of $\chi\mapsto \widehat f(\chi)$ at the character~$x$, by the Fourier inversion formula (\ref{FourierInversion}),  is equal to~$f$. By the finiteness of $\Xi_K$ and the continuity of $\chi\mapsto \widehat f(\chi)$ (\cite[Proposition 2, \no 2, \S 1, Chapter II]{TSpectrale}), the conditions (2) and (3) of Poisson formula (\ref{formuledepoison}) are satisfied, and applying it gives that \begin{equation*}\sum_{\chi\in\Xi_K}\widehat f(\chi)=|\Xi_K|\int_{\Xi_K^{\perp}}f\mu_{\AAF},\end{equation*}as claimed.
\item For an open subgroup $K\subset K^0_{\max}$, let $dk$ be the probability Haar measure on~$K$. For every~$K$, one has that \begin{multline*}\Xi_{\infty}^\perp K=\overline{([\TT(m)(F))[\TT(m)(\AAF)]_{m/r}}K=([\TT(m)(F))[\TT(m)(\AAF)]_{m/r}K
\\=\Xi_K^{\perp},\end{multline*} where the second equality follows from the fact that the product subset of a closed subset and a compact subgroup is closed (\cite[Corollary 1 of Proposition 1, \no 1, \S 4, Chapter III]{TopologieGj}). Let us denote by $g_K$ the canonical morphism $$g_K:\Xi^{\perp}_{\infty}\times K\to \Xi_\infty^{\perp}K=\Xi_K^{\perp}\hspace{1cm}(y,k)\mapsto yk.$$

The group $[\TT(m)(\AAF)]$ is countable at infinity by \ref{countableatinfty}. 
For every open $K\subset K^0_{\max}$, it follows from \cite[Corollary of Proposition 13, \no 9, \S 2, Chapter VII]{Integrationd} that there exists a unique Haar measure $\mu^{\perp K}_{\infty}$ on $\Xi_{\infty}^{\perp}$ such that for every continuous positive valued function~$f$ on $\Xi_K^{\perp}$ one has that $f\in L^1(\Xi^{\perp}_K,{\lvert\Xi_K\rvert}\mu_{\AAF})$ if and only if $f\circ g_K\in L^1(\Xi^{\perp}_{\infty}\times K,\mu^{\perp K}_{\infty}\times dk)$ and if $f\in L^1(\Xi^{\perp}_K,{|\Xi_K|}\mu_{\AAF})$ then $${|\Xi_K|}\int_{\Xi_K^{\perp}}f\mu_{\AAF}=\frac{1}{f((1)_v)}\int_{\Xi_{\infty}^{\perp}\times K}(f\circ g_K)\mu_{\infty}^{\perp K}\times dk.$$ We deduce that if $f\in L^1(\Xi_K^{\perp},{|\Xi_K|}\mu_{\AAF})$ is moreover assumed to be~$K$-invariant, then $${|\Xi_K|}\int_{\Xi^\perp_K} f\mu_{\AAF}=\int_{\Xi_{\infty}^{\perp}}f\mu_\infty^{\perp K}.$$ We now prove that the measures $\mu_\infty ^{\perp K}$ are independent of the choice of~$K$. Firstly, we prove it for open subgroups of $K'\subset K$ (such subgroups are compact and of the finite index in~$K$). Let $h:[\TT(m)(\AAF)]\to\RR_{\geq 0}$ be~$K$-invariant function in $L^1([\TT(m)(\AAF)],\mu_{\AAF})$. 
Using (3) and the fact that for a character $\chi\in[\TT(m)(\AAF)]^*$ if $\chi|_{K'}\neq 1,$ then $\widehat h(\chi)=0$ (because $h$ is $K'$-invariant), we deduce that \begin{multline*}
\int_{\Xi_\infty^{\perp}}h\mu_{\infty}^{\perp K}=|\Xi_K|\int_{\Xi_K^{\perp}}h\mu_{\AAF}=\sum _{\chi\in\Xi^{\perp}_K}\widehat h(\chi)=\sum _{\chi\in \Xi^{\perp}_{K'}}\widehat h(\chi)\\=|\Xi_{K'}|\int_{\Xi_{K'}^{\perp}}h\mu_{\AAF}
=\int_{\Xi_\infty^{\perp}}h\mu_{\infty}^{\perp K'}.
\end{multline*}
As $\mu_{\infty}^{\perp K} $ and $\mu_{\infty}^{\perp K'}$ are Haar measures on $\Xi_{\infty}^\perp$, it follows that $\mu_{\infty}^{\perp K}=\mu_{\infty}^{\perp K'}$. Now we prove the claim for general $K'$. As for any open subgroup $K'\subset K^0_{\max}$, we have that $K\cap K'$ is open in~$K$ and $K'$, we deduce $\mu_{\infty}^{\perp K}=\mu_{\infty}^{\perp K\cap K''}=\mu _{\infty}^{\perp K''}$. Thus $\mu_{\infty}^{\perp}:=\mu_{\infty}^{\perp K}$ is the wanted measure.
\end{enumerate}
\end{proof}
Denote by $j:\Xi_\infty^{\perp}\to \prod_{\vMF}[\PPP(m)(F_v)]$ the canonical inclusion. Note that~$j$ is the composite of the closed embedding $\Xi_\infty^{\perp}\hookrightarrow [\TT(m)(\AAF)]$ and the canonical inclusion $[\TT(m)(\AAF)]\hookrightarrow\prod_{\vMF}[\PPP(m)(F_v)].$ The later map is continuous by \ref{mimip}, hence~$j$ is continuous.

For any compactly supported continuous $\phi:\prod_{\vMF}[\PPP(m)(F_v)]\to\RR_{\geq 0},$ it follows from \ref{whdiscam} that the limit $$\lim_{s\to 1^{+}}(s-1)^{r-1}\int_{\Xi^{\perp}_{\infty}}(\phi\circ j) H^{-s}\mu_\infty=\lim_{s\to 1^{+}}(s-1)^{r-1}|\Xi_K|\int_{\Xi^{\perp}_{K}}(\phi\circ j) H^{-s}\mu_{\AAF}$$exists and is a non-negative number. It follows that $$\mathscr C^0_c\big(\prod_{\vMF}[\PPP(m)(F_v)],\RR_{\geq 0}\big)\to\RR_{\geq0} \hspace{0,5cm}\phi\mapsto \lim_{s\to 1^{+}}(s-1)^{r-1}\int_{\Xi_{\infty}^{\perp}}(\phi\circ j)H^{-s}\mu_\infty $$ is a non-negative linear form, hence by \cite[Theorem 1, \no 6, \S1, Chapter III]{Integrationj} extends to a measure on $\prod_{\vMF}[\PPP(m)(F_v)]$
\begin{mydef}\label{peyredisc}
Let $(f_v:\Fvt\to\RR_{>0})_v$ be a quasi-discriminant degree~$m$ family of~$m$-homogenous functions. We define a measure~$\omega$ on $\prod_{\vMF}[\PPP(m)(F_v)]$ by $$\omega=\omega((f_v)_v)=|\mu_m(F)|\lim_{s\to1^{+}}(s-1)^{r-1} j_*(H^{-s}\mu_{\infty}).$$
We set $$\tau=\tau((f_v)_v)=\omega(\prod_{\vMF}[\PPP(m)(F_v)]).$$
\end{mydef}
\begin{lem}\label{positivityofpd}
Assuming the conditions of \ref{peyredisc}, the following claims are valid
\begin{enumerate}
\item One has that $$\lim_{s\to 1^+}(s-1)^{r-1}\wH(s,1)=\lim_{s\to 1^{+}}(s-1)^{r-1}\int_{[\TT(m)(\AAF)]}H^{-s}\mu_{\AAF}>0.$$
\item One has that $\tau>0$.
\end{enumerate}
\end{lem}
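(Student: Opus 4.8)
\textbf{Proof plan for Lemma \ref{positivityofpd}.}

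The plan is to derive both positivity statements from the Fourier-analytic computations of the previous paragraphs, together with Poisson formula in the form of \ref{lemakojunerazumi}. First I would establish (1). By \ref{whdiscam}, for $\chi=1$ (which lies in $\Xi_K$ for any open $K$, since $1^{m/r}=1$, so $d(1)=r-1$), the function $\left(\frac{s-1}{s}\right)^{r-1}\wH(s,1)$ extends holomorphically to a half-plane $\Omega_{>1-\delta}$; hence $(s-1)^{r-1}\wH(s,1)$ has a finite limit as $s\to 1^{+}$. To see this limit is \emph{strictly positive}, I would use the factorization built inside the proof of \ref{whdiscam}: near $s=1$ one has
$$\wH(s,1)=\gamma(s,1)\Big(\prod_{j=1}^{r-1}L(s,1)\Big)\prod_{\vMFi}\wH_v(s,1)=\gamma(s,1)\zeta_F(s)^{r-1}\prod_{\vMFi}\wH_v(s,1).$$
By \ref{disckchi}(3), $\gamma(1,1)>0$; by \ref{wustani}(3), $\wH_v(s,1)=m>0$ at every infinite place (independently of $s$); and $\zeta_F(s)^{r-1}$ has a pole of order exactly $r-1$ at $s=1$ with positive leading coefficient $\Res(\zeta_F,1)^{r-1}>0$. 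Multiplying, $\lim_{s\to1^{+}}(s-1)^{r-1}\wH(s,1)=\gamma(1,1)\,\Res(\zeta_F,1)^{r-1}\,m^{r_1+r_2}>0$. This is (1), once I note the identity $\wH(s,1)=\int_{[\TT(m)(\AAF)]}H^{-s}\mu_{\AAF}$, valid in $\Omega_{>1}$ by \ref{rsra} applied to $\prod_v\wH_v(s,1)$, whose convergence in that range follows from \ref{disckchi}(2) together with absolute convergence of $\prod_{j=1}^{r-1}L(s,1)=\zeta_F(s)^{r-1}$ for $\Re(s)>1$.

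For (2), the idea is to unwind the definition of $\tau$ and reduce it to a positive quantity controlled by (1). By \ref{peyredisc}, $\tau=\omega(\prod_{\vMF}[\PPP(m)(F_v)])=|\mu_m(F)|\lim_{s\to1^{+}}(s-1)^{r-1}\mu_\infty\big(j^{-1}(\prod_v[\PPP(m)(F_v)])\cdot H^{-s}\big)$, which formally equals $|\mu_m(F)|\lim_{s\to1^{+}}(s-1)^{r-1}\int_{\Xi_\infty^{\perp}}H^{-s}\mu_\infty$. The subtlety is that $\int_{\Xi_\infty^{\perp}}H^{-s}\mu_\infty$ is not literally the integral against a \emph{compactly supported} $\phi$, so I would first argue that the measure $\omega$ has total mass equal to this limit: take an increasing sequence of compactly supported $\phi_n\uparrow \mathbf 1$ on $\prod_v[\PPP(m)(F_v)]$ (possible since that space is compact, so in fact one may take $\phi_n=1$ outright once one checks $H^{-s}\mu_\infty$ is finite — which it is, as $\Xi_\infty^\perp\subset[\TT(m)(\AAF)]$ is of finite index and $H^{-s}\in L^1([\TT(m)(\AAF)],\mu_{\AAF})$ for $\Re(s)>\alpha(m)$ essentially; more carefully one works in $\Omega_{>1}$ where the relevant integral is finite by \ref{zconvunifd} or the argument above), and pass to the limit using monotone convergence and the fact that $(s-1)^{r-1}$ times each integral converges. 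Then, by \ref{lemakojunerazumi}(4) applied with $f=H^{-s}$ (continuous, $K$-invariant for $K=\prod_{\vMFz}K_v$ with $K_v=[\TT(m)(\Ov)]$ for almost all $v$, and integrable for $\Re(s)>1$) and any open $K$ for which $H$ is $K$-invariant,
$$\int_{\Xi_\infty^{\perp}}H^{-s}\mu_\infty=|\Xi_K|\int_{\Xi_K^{\perp}}H^{-s}\mu_{\AAF}=\sum_{\chi\in\Xi_K}\widehat{(H^{-s})}(\chi)=\sum_{\chi\in\Xi_K}\wH(s,\chi),$$
using \ref{lemakojunerazumi}(3) for the middle equality. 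Multiplying by $(s-1)^{r-1}$ and letting $s\to 1^{+}$: by \ref{whdiscam}, every $\chi\in\Xi_K$ has $d(\chi)=r-1$, so each $(s-1)^{r-1}\wH(s,\chi)=\left(\frac{s-1}{s}\right)^{r-1}s^{r-1}\wH(s,\chi)$ extends continuously to $s=1$ and $\lim_{s\to1^{+}}(s-1)^{r-1}\wH(s,\chi)$ exists; for $\chi=1$ it is strictly positive by part (1), and for all $\chi$ it will turn out (one should check, but it is not needed for positivity) that the limits are non-negative real — indeed each equals $\gamma(1,\chi)\,\Res(\zeta_F,1)^{r-1}\cdot(\text{product of local archimedean factors})$, and actually only $\chi$ with $\chi_0^{(j)}=1$ for the relevant twists contribute, but regardless the $\chi=1$ term alone gives a strictly positive contribution. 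Hence $\lim_{s\to1^{+}}(s-1)^{r-1}\int_{\Xi_\infty^{\perp}}H^{-s}\mu_\infty\geq \lim_{s\to1^{+}}(s-1)^{r-1}\wH(s,1)>0$, and since $|\mu_m(F)|\geq 1$, we conclude $\tau>0$.

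\textbf{Main obstacle.} The routine part is the pole-order bookkeeping; the genuinely delicate point I expect to fight with is the interchange of the limit $s\to 1^{+}$ with the (infinite) sum/integral and with the monotone limit $\phi_n\uparrow \mathbf 1$ in the definition of $\omega$ — i.e. justifying that $\tau$, defined via compactly supported test functions, really equals $|\mu_m(F)|\lim_{s\to1^{+}}(s-1)^{r-1}\sum_{\chi\in\Xi_K}\wH(s,\chi)$ rather than merely being bounded below by the $\chi=1$ term. Since $\Xi_K$ is finite (by \ref{finmankchar}), the sum over $\chi$ poses no analytic difficulty once each term is controlled by \ref{whdiscam}; the only real work is confirming that $H^{-s}\mu_\infty$ is a finite measure on the compact space for $\Re(s)$ in the relevant range and that the measure $\omega$ is genuinely the weak limit, so that its total mass is computed by plugging in $\phi\equiv 1$. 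For positivity alone, however, it suffices to exhibit one compactly supported $\phi\geq 0$ with $\int_{\Xi_\infty^\perp}(\phi\circ j)\mu_\infty>0$ and observe the $\chi=1$ Euler-product term forces the limit to be positive; this sidesteps the full interchange and is the route I would take if the clean identity proves troublesome.
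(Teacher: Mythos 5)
Part (1) of your proposal matches the paper essentially verbatim: factor $\wH(s,1)=\gamma(s,1)\,m^{r_1+r_2}\,\zeta_F(s)^{r-1}$, use $\gamma(1,1)>0$ from \ref{disckchi}, and read off the positive limit. That part is fine.

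Part (2) has a genuine gap. You reduce, via \ref{lemakojunerazumi}, to $\int_{\Xi_K^{\perp}}H^{-s}\mu_{\AAF}=\frac{1}{|\Xi_K|}\sum_{\chi\in\Xi_K}\wH(s,\chi)$ and then assert that the $\chi=1$ term ``forces the limit to be positive,'' with a parenthetical admission that you haven't checked the other terms. But that deduction is not valid as stated: for $\chi\neq 1$ in $\Xi_K$, the limits $\lim_{s\to 1^+}(s-1)^{r-1}\wH(s,\chi)$ are a priori complex numbers (they come from a product of archimedean transforms and $L$-values of nontrivial characters with no evident sign), and nothing you've said rules out that they conspire to cancel the $\chi=1$ contribution. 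You do know the sum is real and non-negative for real $s>1$ (because $H^{-s}>0$ and $\mu_{\AAF}\geq 0$), so the limit is $\geq 0$ — but you need strict positivity, and the ``fallback route'' you sketch (exhibit one $\phi\geq 0$ with positive integral and let the $\chi=1$ term carry it) doesn't close this either, because you still cannot bound the sum below by the $\chi=1$ term without sign control on the others.

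The paper's argument for (2) is different and cleaner, and it avoids the Fourier expansion entirely. Since $\Xi_K^{\perp}$ has finite index $|\Xi_K|$ in $[\TT(m)(\AAF)]$ (by \ref{lemakojunerazumi}(1)), decompose $[\TT(m)(\AAF)]$ into cosets $x_i\Xi_K^{\perp}$. Lemma \ref{doslema} gives $H(x_iy)\leq C(x_i)H(y)$, hence
\[
\int_{[\TT(m)(\AAF)]}H^{-s}\mu_{\AAF}=\sum_i\int_{\Xi_K^{\perp}}H(x_iy)^{-s}\,d\mu_{\AAF}(y)\leq\Big(\sum_i C(x_i)^{s}\Big)\int_{\Xi_K^{\perp}}H^{-s}\mu_{\AAF}.
\]
Multiplying by $(s-1)^{r-1}$ and letting $s\to 1^{+}$, the left side has a strictly positive limit by part (1), so the limit on the right, and therefore $\lim(s-1)^{r-1}\int_{\Xi_K^{\perp}}H^{-s}\mu_{\AAF}$, is strictly positive; combined with the compactness of $\prod_v[\PPP(m)(F_v)]$ (so $\mathbf 1$ is already an admissible test function in Definition \ref{peyredisc}), this gives $\tau>0$. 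The moral: the positivity of the \emph{full} integral from part (1) is transported to the subgroup $\Xi_K^{\perp}$ by a finite coset decomposition and the quasi-invariance $H(x_iy)\asymp H(y)$, sidestepping any sign analysis of the non-trivial Fourier coefficients.
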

\begin{proof}
\begin{enumerate}
\item Recall that $\wH_v(s,1)=m$ for every $\vMFi$ by \ref{wustani}. Now, by \ref{disckchi}, one has that \begin{align*}
\wH(s,1)=\gamma(s,1)m^{r_1+r_2}\prod _{j=1}^{r-1}L(s,1)=\gamma(s,1)m^{r_1+r_2}\zeta(s)^{r-1},
\end{align*}
where $\gamma(-,1)$ is a holomorphic function in the domain $\Omega_{>\frac{\alpha(m)}{\alpha(m)+1}}$ and $\gamma(1,1)>0$. Thus \begin{align*}\lim_{s\to 1^{+}}(s-1)^{r-1}\wH(s,1)&=\gamma(1,1)m^{r_1+r_2}\lim_{s\to 1^+}(s-1)^{r-1}\zeta(s)^{r-1}\\&=\gamma(1,1)m^{r_1+r_2}\Res(\zeta,1)^{r-1}\\&>0.\end{align*}
\item Let $K\subset K^0_{\max}$ be an open subgroup such that $H=H((f_v)_v)$ is~$K$-invariant.  By \ref{lemakojunerazumi} one has that 
\begin{align*}
\omega\big(\prod_{\vMF}[\PPP(m)(F_v)]\big)&=|\mu_m(F)|\lim _{s\to 1^{+}}(s-1)^{r-1}\int_{\Xi_{\infty}^{\perp}}H^{-s}\mu_{\infty}\\
&=|\mu_m(F)|\lim_{s\to 1^{+}}(s-1)^{r-1}|\Xi_K|\int_{\Xi_K^{\perp}}H^{-s}\mu_{\AAF}.
\end{align*}
Let $\{x_1\dots x_{|\Xi_K|}\}$ be a set of elements of $[\TT(m)(\AAF)]$ such that for any $i\neq j\in\{1\doots |\Xi_K|\}$, one has that $x_ix_j^{-1}\not\in\Xi^{\perp}_K$. Using \ref{doslema}, we obtain that for $s>1$, one has that\begin{align*}\int_{[\TT(m)(\AAF)]}H^{-s}\mu_{\AAF}&=\sum_i\int_{\Xi_K^{\perp}}H(x_iy)^{-s}d\mu_{\AAF}(y)\\
&\leq\sum_iC(x_i)^{-s}\int_{\Xi_K^\perp}H^{-s}\mu_{\AAF},
\end{align*} for certain $C(x_i)>0.$ 
 It follows that \begin{align*}0&<\lim_{s\to 1^+}(s-1)^{r-1}\int_{[\TT(m)(\AAF)]}H^{-s}\mu_{\AAF}\\&=\lim_{s\to 1^+}(s-1)^{r-1}\sum_iC(x_i)^{-s}\int_{\Xi_K^{\perp}}H^{-s}\mu_{\AAF}
 \\&\leq \big(\sum_i C(x_i)^{-1}\big)\lim_{s\to 1^+}(s-1)^{r-1}\int_{\Xi_K^{\perp}}H^{-s}\mu_{\AAF},\end{align*}
 and hence that $\lim_{s\to 1^+}(s-1)^{r-1}\int_{\Xi_K^{\perp}}H^{-s}\mu_{\AAF}>0$. We deduce that  $$\tau=|\mu_m(F)|\lim_{s\to 1^+}(s-1)^{r-1}|\Xi_K|\int_{\Xi_K^{\perp}}H^{-s}\mu_{\AAF}>0,$$ as claimed.
 \end{enumerate}
\end{proof}
\begin{thm} \label{principaldisc}
There exists $\delta>0$, such that~$Z$ extends to a meromorphic function in the domain $s\in\Omega_{>1-\delta}$ with the only pole in this domain at~$1$ which is of order $r-1$ and such that for every compact $\mathcal K\subset\RR_{>1-\delta}$  one has that there exists $C(\mathcal K)>0$ such that $$\bigg|\bigg(\frac{s-1}{s}\bigg)^{r-1}Z(s)\bigg|\leq C(\mathcal K)(1+|\Im(s)|)^{r-1}$$if provided $\Re(s)\in\RR_{>1-\delta}$. The principal value at the pole $s=1$ is equal to $$\frac{\tau}{m}.$$
\end{thm}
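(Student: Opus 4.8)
The strategy is to combine the Poisson summation computation with the $M$-controlled analysis already developed in the preceding chapter. The first step is to apply Poisson's formula. By Lemma~\ref{zconvunifd}, the series defining $Z(s)$ converges absolutely in the domain $\Omega_{>\alpha(m)}$, the function $x\mapsto \sum_{y\in i([\TT(m)(F)])}H(xy)^{-s}$ is continuous, and for every $x$ the series $\sum_{y}H(xy)^{-s}$ converges absolutely; moreover by Lemma~\ref{whdiscam} (together with Lemma~\ref{disckchi} and the finiteness of $\AK$ from Corollary~\ref{finmankchar}) the restricted Fourier transform $\widehat{H^{-s}}|_{\AK}$ is summable. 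Thus the three hypotheses of Proposition~\ref{formuledepoison} hold for the inclusion $i([\TT(m)(F)])\subset[\TT(m)(\AAF)]$, and I obtain, in the domain $\Omega_{>\alpha(m)}$,
\[
Z(s)=|\Sh^1(F,\mu_m)|\sum_{\chi\in\AK}\wH(s,\chi)\cdot(\text{dual-measure constant}),
\]
where $K$ is the open subgroup of $K^0_{\max}$ for which $H$ is $K$-invariant (so that $\wH_v(s,\chi_v)=0$ unless $\chi_v|_{K_v}=1$ by Lemma~\ref{wustani} and Lemma~\ref{disckchi}(1), reducing the sum to the finite group $\AK$). The exact value of the dual-measure constant should be pinned down exactly as in Lemma~\ref{calculdualmeas}, using $\Tam$-type normalizations and the formula $|\mu_m(F)|$, $\Sh^1(F,\mu_m)$; this is where the factor $1/m$ in the final residue will come from, via $\mu_v([\TT(m)(\Fv)])=m$ from Lemma~\ref{normmmu}.

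The second step is the meromorphic continuation and growth estimate. By Lemma~\ref{whdiscam}, there is $\delta>0$ such that for each $\chi\in\AK$ the function $\big(\frac{s-1}{s}\big)^{d(\chi)}\wH(s,\chi)$ extends holomorphically to $\Omega_{>1-\delta}$ with, on each vertical strip $\Re(s)\in\mathcal K\subset\RR_{>1-\delta}$, a bound $C(\mathcal K)(1+|\Im(s)|)$ uniform in $\chi$. Since $d(\chi)=r-1$ when $\chi^{m/r}=1$ and $d(\chi)=0$ otherwise, multiplying by $\big(\frac{s-1}{s}\big)^{r-1}$ makes every term $\big(\frac{s-1}{s}\big)^{r-1}\wH(s,\chi)$ holomorphic on $\Omega_{>1-\delta}$, with the terms for $\chi\notin\Xi_K$ in fact vanishing to order $r-1$ at $s=1$. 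Summing the finitely many terms over $\chi\in\AK$ gives that $\big(\frac{s-1}{s}\big)^{r-1}Z(s)$ extends holomorphically to $\Omega_{>1-\delta}$ and satisfies the stated polynomial bound $C(\mathcal K)(1+|\Im(s)|)^{r-1}$ (the exponent $r-1$ appearing because each of the finitely many summands contributes at most a linear factor, and one must also absorb the $\big(\frac{s-1}{s}\big)^{r-1}$ prefactor, which is bounded on the strip). It follows that $Z$ has a pole of order exactly $r-1$ at $s=1$ and no other pole in $\Omega_{>1-\delta}$; the order is exactly $r-1$ rather than less because of the positivity established in Lemma~\ref{positivityofpd}.

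The third step is the computation of the principal part at $s=1$. By the Poisson identity, $\lim_{s\to1^{+}}(s-1)^{r-1}Z(s)$ equals $|\Sh^1(F,\mu_m)|$ times the dual-measure constant times $\sum_{\chi\in\AK}\lim_{s\to1^{+}}(s-1)^{r-1}\wH(s,\chi)$; only the characters $\chi\in\Xi_K$ (those with $\chi^{m/r}=1$, hence $d(\chi)=r-1$) survive this limit, since for the others $\wH(s,\chi)$ is holomorphic at $s=1$. I then recognize the resulting sum as, up to the constant $|\mu_m(F)|^{-1}$, the value $\lim_{s\to1^{+}}(s-1)^{r-1}\int_{\Xi_\infty^{\perp}}(\phi\circ j)H^{-s}\mu_\infty$ entering Definition~\ref{peyredisc} (for $\phi\equiv 1$), using Lemma~\ref{lemakojunerazumi}(3),(4) to rewrite the finite sum $\sum_{\chi\in\Xi_K}\widehat{H^{-s}}(\chi)=|\Xi_K|\int_{\Xi_K^{\perp}}H^{-s}\mu_{\AAF}=\int_{\Xi_\infty^{\perp}}H^{-s}\mu_\infty$. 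Carefully matching constants --- the $m$ from $\mu_v([\TT(m)(\Fv)])$, the $|\mu_m(F)|$ and $|\Sh^1(F,\mu_m)|$ factors, the Tamagawa-type normalization --- should yield exactly $\tau/m$. The main obstacle I anticipate is precisely this bookkeeping of measure-theoretic constants: getting the dual measure on $([\TT(m)(\AAF)]/i([\TT(m)(F)]))^{\ast}$ right, correctly identifying $\Xi_\infty^\perp$ with $\overline{i([\TT(m)(F)])[\TT(m)(\AAF)]_{m/r}}$, and ensuring the normalizations of $\mu_{\AAF}$, $\mu_\infty$, and the counting measures all combine to produce the clean constant $1/m$ rather than some spurious power of $m$ or of $|\Xi_K|$. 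The analytic input (holomorphy, polynomial growth, Rademacher-type bounds) is by contrast essentially already packaged in Lemma~\ref{whdiscam} and needs only to be summed over the finite set $\AK$.
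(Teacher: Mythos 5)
Your proposal follows essentially the same route as the paper's proof: Poisson summation applied to the inclusion $i([\TT(m)(F)])\subset[\TT(m)(\AAF)]$, the dual-measure constant $\frac{|\mu_m(F)|}{m|\Sh^1(F,\mu_m)|}$ from Lemma~\ref{calculdualmeas}, the growth and continuation package from Lemma~\ref{whdiscam}, and the residue computation via Lemma~\ref{lemakojunerazumi} to identify $\sum_{\chi\in\Xi_K}\wH(s,\chi)=\int_{\Xi_\infty^\perp}H^{-s}\mu_\infty^\perp$ with the quantity defining $\tau$. One small imprecision: your parenthetical explanation of the exponent $r-1$ in the growth bound is not quite right --- summing finitely many terms each bounded by $C(1+|\Im(s)|)$ still gives a bound $C'(1+|\Im(s)|)$, not $C'(1+|\Im(s)|)^{r-1}$; the exponent $r-1$ in the theorem's statement is simply a weaker (non-sharp) bound than what the argument actually yields, and the prefactor $(\tfrac{s-1}{s})^{r-1-d(\chi)}$ that you mention is indeed bounded on the strip but contributes nothing to the power of $(1+|\Im(s)|)$.
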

\begin{proof}
By (\ref{hxhttmx}), one has that $H(x)=H([\TT(m)(i)](x))$ and by \ref{cesusp} the group $\ker([\TT(m)(i)])=\Sh^1(F,\mu_m)$ is finite. It follows that formally one has that\begin{equation}\label{zajedanzs}Z(s)=|\Sh^1(F,\mu_m)|\sum_{x\in[\TT(m)(i)]([\TT(m)(F)])}H(x)^{-s}.\end{equation}By \ref{zconvunifd} for every $\epsilon>0$, the series defining $Z(s)$ converges absolutely and uniformly to a holomorphic function in the domain $\Omega_{>\alpha(m)+\epsilon}$ and it follows from \ref{whdiscam} that for $s\in\Omega_{>1}$ the sum on the right hand side converges and is a holomorphic function in $s$ in this domain. Thus, the equality (\ref{zajedanzs}) is valid in $\Omega_{>1}$ as an equality of holomorphic functions. We apply Poisson formula (\ref{formuledepoison}) to the inclusion $$[\TT(m)(i)]([\TT(m)(F)])\subset [\TT(m)(\AAF)]$$(we have already verified the conditions (2) and (3) of Proposition \ref{formuledepoison} in \ref{zconvunifd}). We have formally \begin{multline*}\sum_{x\in[\TT(m)(i)]([\TT(m)(F)])}H(x)^{-s}\\=\int_{([\TT(m)(i)]([\TT(m)(F)]))^{\perp}}\wH(s,\chi) (\mu_{\AAF}/\coun_{[\TT(m)(i)]([\TT(m)(F)])})^*.\end{multline*}
We use \ref{calculdualmeas} to understand the measure $(\mu_{\AAF}/\coun_{[\TT(m)(i)]([\TT(m)(F)])})^*$. A volume of a subset of $(\RR_{>0})_m=\Rgz$ when $(\RR_{>0})_m$ is endowed with the pushforward measure of the measure $d^*r$ for the map $\Rgz\to\Rgz, x\mapsto mx$ is $1/m$ times it was for the measure $d^*r$. Thus the Haar measure $d^*r_m$ from \ref{calculdualmeas} is normalized by $(d^*r_m)(\RR_{>0}/(\RR_{>0})_m)=m$. Hence, the dual measure of $d^*r_m$ satisfies that $(d^*r_m)^*((\RR_{>0}/(\RR_{>0})_m)^*)=\frac{1}m$. Now, Lemma \ref{calculdualmeas} gives that \begin{multline*}(\mu_{\AAF}/\coun_{[\TT(m)(i)]([\TT(m)(F)])})^*\\=\frac{|\mu_m(F)|}{m |\Sh^1(F, \mu_{m})|}\coun_{([\TT(m)(\AAF)]_1/[\TT(m)(i)]([\TT(m)(F)]))^*}.\end{multline*} Whenever $\chi\not\in\AK$, we have by \ref{disckchi} that $\wH(s,\chi)=0$. We deduce that formally one has\begin{equation}\label{zdiscchi}Z(s)=\frac{|\Sh^1(F,\mu_m)|\cdot |\mu_m(F)|}{m|\Sh^1(F,\mu_m)|}\sum_{\chi\in\AK}\wH(s,\chi)=\frac{|\mu_m(F)|}m\sum_{\chi\in\AK}\wH(s,\chi).\end{equation} 
For every $\chi\in\AK$, by \ref{whdiscam} one has that $s\mapsto\wH(s,\chi)$ is a holomorphic function in the domain $\Omega_{>1}$. As the group $\AK$ is finite, we deduce that $s\mapsto\sum_{\chi\in\AK}\wH(s,\chi)$ is a holomorphic function in the domain $\Omega_{>1}.$ It follows that (\ref{zdiscchi}) is valid as an equality of holomorphic functions in the domain $\Omega_{>1}$. Moreover, (\ref{zdiscchi}) is valid as an equality of the maximal meromorphic extensions of the functions from the both hand sides. Moreover, \ref{whdiscam} gives that there exists $\delta>0$ such that $$\bigg(\frac{s-1}{s}\bigg)^{d(\chi)}\wH(s,\chi)$$extends to a holomorphic function in the domain $\Omega_{>1-\delta}$ and that for every compact $\mathcal K\subset \RR_{>1-\delta}$ there exists $C'(\mathcal K)>0$, such that $$\bigg|\bigg(\frac{s-1}{s}\bigg)^{d(\chi)}\wH(s,\chi)\bigg|\leq C'(\mathcal K)(1+|\Im(s)|)$$for every $\chi\in\AK$ (here $d(\chi)=r-1$ if $\chi^{m/r}=1$ otherwise $d(\chi)=0$). By the finiteness of $\AK$ and the fact that $d(\chi)\leq r-1$, we deduce that $$\sum_{\chi\in\AK}\bigg(\frac{s-1}{s}\bigg)^{r-1}\wH(s,\chi)=\bigg(\frac{s-1}{s}\bigg)^{r-1}\sum_{\chi\in\AK}\wH(s,\chi)$$extends to a holomorphic function in the domain $\Omega_{>1-\delta}$ and such that for every compact $\mathcal K\subset \RR_{>1-\delta}$ there exists $C(\mathcal K)>0$, such that $$\bigg|\bigg(\frac{s-1}{s}\bigg)^{r-1}\sum_{\chi\in\AK}\wH(s,\chi)\bigg|\leq C(\mathcal K)(1+|\Im(s)|).$$
We deduce that~$Z$ extends to a meromorphic function in the domain $s\in\Omega_{>1-\delta}$ with the only possible pole at $s=1$ in this domain which is of order at most $r-1$. Moreover, for every compact $\mathcal K\subset\RR_{>1-\delta}$  one has that $$\bigg|\bigg(\frac{s-1}{s}\bigg)^{r-1}Z(s)\bigg|\leq C(\mathcal K)(1+|\Im(s)|)^{r-1}$$if provided $\Re(s)\in\mathcal K$.

The last part of the proof we dedicate to the proving that~$Z$ indeed has a pole at $1,$ that this pole is of order exactly $r-1$ and to the calculation of the principal value. We calculate the limit $$\lim_{s\to{1^+}}\bigg(\frac{s-1}{s}\bigg)^{r-1}\sum_{\chi\in\AK}\wH(s,\chi)=\sum_{\chi\in\AK}\lim_{s\to1^+}\bigg(\frac{s-1}{s}\bigg)^{r-1}\wH(s,\chi).$$ Recall that by $\Xi_K$ we have denoted the subgroup of $\chi\in\AK$ such that $\chi^{m/r}=1$. If $\chi\in\AK-\Xi_K,$ then by \ref{whdiscam}, one has that $s\mapsto\wH(s,\chi)$ is holomorphic in the domain $\Omega_{>1-\delta}$, and thus $$\lim_{s\to 1^+}\bigg(\frac{s-1}{s}\bigg)^{r-1}\wH(s,\chi)=0.$$ 
 We deduce that \begin{align*}\lim_{s\to 1^+}\bigg(\frac{s-1}{s}\bigg)^{r-1}\sum_{\chi\in\AK}\wH(s,\chi)&=\sum_{\chi\in\Xi_K}\lim_{s\to 1^+}\bigg(\frac{s-1}{s}\bigg)^{r-1}\wH(s,\chi)\end{align*}
Using \ref{lemakojunerazumi}, we have that \begin{equation}\label{xiproblem}\sum_{\chi\in\Xi_K}\wH(s,\chi)=|\Xi_K|\int_{\Xi_K^{\perp}}H^{-s}\mu_{\AAF}=\int_{\Xi_{\infty}^{\perp}}H^{-s}\mu_{\infty}^{\perp}\end{equation}whenever the quantities on both hand sides converge. By the fact that $H^{-s}$ is absolutely integrable over $[\TT(m)(\AAF)]$ for $s\in\Omega_{>1}$ and by \ref{lemakojunerazumi}, we deduce that (\ref{xiproblem}) is valid in the domain $\Omega_{>1}$. Moreover, (\ref{xiproblem}) is valid, as an equality of the meromorphic functions in the domain $\Omega_{>1-\delta}$. We deduce that $$\lim_{s\to 1^+}\bigg(\frac{s-1}s\bigg)^{r-1}\int_{\Xi^{\perp}_{\infty}}H^{-s}\mu^{\perp}_\infty.$$ We recognize this quantity from \ref{peyredisc} as $\frac{\omega(\prod_{\vMF}[\PPP(m)(F_v)])}{|\mu_m(F)|},$ and hence \begin{align*}\lim_{s\to1^{+}}\bigg(\frac{s-1}{s}\bigg)^rZ(s)&=\lim_{s\to 1^{+}}(s-1)^{r-1}Z(s)\\&=\frac{|\mu_m(F)|}m\sum_{\chi\in\AK}\wH(s,\chi)\\&=\frac{|\mu_m(F)|\cdot\omega(\prod_{\vMF}[\PPP(m)(F_v)])}{|\mu_m(F)| m}\\&=\frac{\omega(\prod_{\vMF}[\PPP(m)(F_v)])}m\\&=\frac{\tau}m.\end{align*}
The statement is proven.
\end{proof}
In \ref{principaldisc}, we have verified the conditions of the Tauberian result \cite[Theorem A1]{FonctionsZ}. We deduce that:
\begin{cor}\label{countingquasidisc}
Let $(f_v:\Fvt\to\RR_{>0})_v$ be a quasi-discriminant degree~$m$ family of~$m$-homogenous functions and let~$H$ be the resulting height on $[\PPP^m(F)]$. One has that $$|\{x\in[\PPP(m)(F)]|H(x)\leq B\}|\sim \frac{\tau}{(r-2)!\cdot m}B\log(B)^{r-2},$$when $B\to\infty$.
\end{cor}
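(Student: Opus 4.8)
\textbf{Proof plan for Corollary \ref{countingquasidisc}.}

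The plan is to deduce the asymptotic formula directly from Theorem \ref{principaldisc} by invoking the Tauberian theorem \cite[Theorem A1]{FonctionsZ}. First I would recall exactly what that Tauberian input requires: a Dirichlet-type series $Z(s)=\sum_{x}H(x)^{-s}$ with non-negative coefficients (which holds here, since $H(x)>0$), converging in some half-plane $\Omega_{>a}$, and admitting a meromorphic continuation to a slightly larger half-plane $\Omega_{>1-\delta}$ whose only pole in that region is at $s=1$, of a prescribed order (here $r-1$), together with a polynomial growth bound $|((s-1)/s)^{r-1}Z(s)|\leq C(\mathcal K)(1+|\Im(s)|)^{r-1}$ on vertical strips $\Re(s)\in\mathcal K\subset\RR_{>1-\delta}$. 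All of these are precisely the conclusions of Theorem \ref{principaldisc}, with the principal value at $s=1$ equal to $\tau/m$. Since $Z(s)$ is the height zeta function for the family $(f_v)_v$ (the series $\sum_{x\in[\TT(m)(F)]}H(x)^{-s}$), I would note that $[\PPP(m)(F)]=[\TT(m)(F)]$ when $n=1$, so the counting function for $[\PPP(m)(F)]$ is literally the counting function attached to $Z$.

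Next I would feed these data into \cite[Theorem A1]{FonctionsZ}. The Tauberian theorem converts a pole of order $b$ at $s=s_0$ with leading Laurent coefficient (or principal value) $c$ into the asymptotic $|\{x:H(x)\leq B\}|\sim \frac{c}{s_0\,(b-1)!}B^{s_0}\log(B)^{b-1}$. Here $s_0=1$, $b=r-1$, and the principal value of $Z$ at $s=1$ is $\tau/m$ by Theorem \ref{principaldisc}; note also by Lemma \ref{positivityofpd} that $\tau>0$, so the leading term is genuinely non-zero and the pole really is of order exactly $r-1$ (this is needed for the Tauberian theorem to give a sharp asymptotic rather than just an upper bound). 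Substituting gives
\[
|\{x\in[\PPP(m)(F)]\mid H(x)\leq B\}|\sim \frac{\tau}{1\cdot((r-1)-1)!\cdot m}\,B\log(B)^{(r-1)-1}=\frac{\tau}{(r-2)!\,m}B\log(B)^{r-2},
\]
which is exactly the claimed formula.

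I do not expect a serious obstacle: the corollary is essentially a formal consequence of Theorem \ref{principaldisc} plus the positivity $\tau>0$ from Lemma \ref{positivityofpd}. The only points requiring a little care are bookkeeping ones: checking that the normalization conventions in \cite[Theorem A1]{FonctionsZ} match (in particular the factor $1/s_0$ and the factor $1/(b-1)!$, and whether the hypothesis is stated in terms of the residue or the ``principal value'' of the pole — Theorem \ref{principaldisc} is phrased with the latter, which is what one wants when $b>1$), and confirming that $r\geq 2$ so that $(r-2)!$ and the exponent $r-2\geq 0$ make sense; since $r$ is the least prime divisor of $m$ and $m\geq 2$, indeed $r\geq 2$, and when $r=2$ the formula degenerates correctly to $|\{x:H(x)\leq B\}|\sim \frac{\tau}{m}B$, consistent with a simple pole. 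So the proof is: quote Theorem \ref{principaldisc} to verify all hypotheses of \cite[Theorem A1]{FonctionsZ}, quote Lemma \ref{positivityofpd} for $\tau>0$, and read off the asymptotic.
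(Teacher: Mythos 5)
Your proposal is correct and follows the same route as the paper: the corollary is obtained by applying the Tauberian theorem \cite[Theorem A1]{FonctionsZ} to the height zeta function, with all hypotheses supplied by Theorem \ref{principaldisc} (meromorphic continuation, pole of order $r-1$ at $s=1$ with principal value $\tau/m$, and polynomial growth in vertical strips) and positivity of $\tau$ from Lemma \ref{positivityofpd}. Your additional bookkeeping remarks (matching normalizations, $r\geq 2$, the degenerate case $r=2$) are sound and match the intended reading.
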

If for every~$v$ one has that $f_v=f_v^{\Delta}$, where $f_v^{\Delta}$ is the discriminant~$m$-homogenous functions of weighted degree~$m$, then by \ref{discvsheight}, we get for $y\in[\PPP(m)(F)]$ that $$H^{\Delta}(y)=N\bigg(\Delta\big(F[X]/(X^m-\widetilde y))/F\big)\bigg)^{m/\alpha(m)},$$ where $\widetilde y$ is a lift of~$y$. Let us write $|\Delta|(y)$ for $H^{\Delta}(y)^{\alpha(m)/m}$. It is precisely the norm of the discriminant of a torsor corresponding to~$y$. We deduce that:
\begin{cor}
One has that \begin{multline*}|\{x\in[\PPP(m)(F)]|\hspace{0.1cm}|\Delta|(x)\leq B\}|\\\sim_{B\to\infty} \frac{r^{r-2}\tau^{\Delta}}{m^{r-1}(r-1)^{r-2}\cdot(r-2)!}B^{\frac{m}{\alpha(m)}}\log(B)^{r-2},\end{multline*}
where $\tau^{\Delta}=\tau((f^{\Delta}_v)_v).$ 
\end{cor}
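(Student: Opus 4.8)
The final corollary is a purely formal consequence of Corollary \ref{countingquasidisc} applied to the discriminant degree $m$ family $(f_v^\Delta)_v$, combined with the explicit relation between the discriminant height $H^\Delta$ and the absolute discriminant norm $|\Delta|$. First I would record that $(f_v^\Delta)_v$ is in particular a quasi-discriminant degree $m$ family (it is literally the discriminant family), so Corollary \ref{countingquasidisc} applies and yields
\begin{equation*}
|\{x\in[\PPP(m)(F)]\mid H^\Delta(x)\le B\}|\sim_{B\to\infty}\frac{\tau^\Delta}{(r-2)!\,m}B\log(B)^{r-2},
\end{equation*}
where $\tau^\Delta=\tau((f_v^\Delta)_v)$. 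Next, by Lemma \ref{discvsheight} (or rather its reformulation already noted just before the corollary), for every $y\in[\PPP(m)(F)]$ one has $H^\Delta(y)=N\big(\Delta((F[X]/(X^m-\widetilde y))/F)\big)^{m/\alpha(m)}=|\Delta|(y)^{m/\alpha(m)}$, so that $|\Delta|(y)=H^\Delta(y)^{\alpha(m)/m}$.

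The substitution step is then routine: the condition $|\Delta|(x)\le B$ is equivalent to $H^\Delta(x)\le B^{m/\alpha(m)}$, so
\begin{equation*}
|\{x\in[\PPP(m)(F)]\mid |\Delta|(x)\le B\}|=|\{x\in[\PPP(m)(F)]\mid H^\Delta(x)\le B^{m/\alpha(m)}\}|\sim_{B\to\infty}\frac{\tau^\Delta}{(r-2)!\,m}\,B^{m/\alpha(m)}\log\big(B^{m/\alpha(m)}\big)^{r-2}.
\end{equation*}
Since $\log(B^{m/\alpha(m)})^{r-2}=(m/\alpha(m))^{r-2}\log(B)^{r-2}$ and $\alpha(m)=m^2-m^2/r=m^2(1-1/r)=m^2(r-1)/r$, one computes $m/\alpha(m)=r/(m(r-1))$, hence the leading constant becomes
\begin{equation*}
\frac{\tau^\Delta}{(r-2)!\,m}\cdot\Big(\frac{r}{m(r-1)}\Big)^{r-2}=\frac{r^{r-2}\,\tau^\Delta}{m^{r-1}(r-1)^{r-2}\,(r-2)!},
\end{equation*}
which is exactly the asserted constant, and the power of $B$ is $m/\alpha(m)$ as claimed.

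There is essentially no obstacle here: the only points requiring a sliver of care are (i) checking that $(f_v^\Delta)_v$ genuinely falls under the hypothesis of Corollary \ref{countingquasidisc} — which is immediate since a discriminant family is a fortiori quasi-discriminant — and (ii) the bookkeeping with $\alpha(m)$, i.e. verifying $m/\alpha(m)=r/(m(r-1))$ and simplifying $(m/\alpha(m))^{r-2}$, which is elementary arithmetic. The underlying analytic content (convergence of the height zeta series, its meromorphic continuation, the order and principal value of the pole, and the Tauberian passage to the counting asymptotic) has all been done in Theorem \ref{principaldisc} and Corollary \ref{countingquasidisc}; the present corollary merely translates that result through the normalization $H^\Delta=|\Delta|^{m/\alpha(m)}$. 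I would write it up as two short displayed computations preceded by the sentence invoking Lemma \ref{discvsheight}.
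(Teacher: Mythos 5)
Your proposal is correct and is precisely the argument the paper intends (no proof is even printed, because the corollary is this immediate). You correctly apply Corollary \ref{countingquasidisc} to the discriminant family, use the stated identity $H^\Delta=|\Delta|^{m/\alpha(m)}$ to substitute $B\mapsto B^{m/\alpha(m)}$, and the arithmetic $m/\alpha(m)=r/(m(r-1))$ together with $(m/\alpha(m))^{r-2}=r^{r-2}/(m^{r-2}(r-1)^{r-2})$ gives exactly the claimed constant.
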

\subsection{} In this paragraph we explain the equidistribution of rational points in $\prod_{\vMF}[\PPP(m)(F_v)]$. We will write~$i$ for the map $[\PPP(m)(i)]$. 
\begin{thm}\label{equidisc} The set $i([\PPP(m)(F)])$ is equidistributed in $\prod_{\vMF}[\PPP(m)(F_v)]$ with respect to~$H$.
\end{thm}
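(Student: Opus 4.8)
The strategy is to mirror the proof of Theorem \ref{quasitoricequi}, replacing quasi-toric heights by quasi-discriminant heights and using the results of the present chapter in place of the corresponding quasi-toric results. The key inputs are: the counting asymptotic of Corollary \ref{countingquasidisc}; the uniform upper bound of Proposition \ref{northdisc} (in the degree $m$ normalization) to control the ``error term'' when one modifies the height at finitely many places; the explicit behaviour of $\tau$ under modification of the family (the analogue of Lemma \ref{omegachanges}); and the availability of bump functions on the $v$-adic manifolds $[\PPP(m)(\Fv)]$, which holds exactly as in Lemma \ref{bumpfunctions} since $[\PPP(m)(\Fv)]$ is again a compact quotient of $\Fvt$ by a proper action of $(\Fvt)_m$.

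\textbf{First step: modification lemma.} I would first establish the analogue of Lemma \ref{omegachanges}: for a finite set $S'$ of places and continuous functions $h_v:[\PPP(m)(F_v)]\to\RR_{>0}$ for $v\in S'$ (and $h_v=1$ otherwise), the quasi-discriminant family $((h_v\circ q^m_v)\cdot f_v)_v$ is again a quasi-discriminant degree $m$ family, and its associated measure $\omega((h_v\circ q^m_v)\cdot f_v)_v)$ equals $h^{-1}\omega((f_v)_v)$ where $h=\prod_v h_v$. This is where the main obstacle lies: in the quasi-toric case the measure $\omega$ was a genuine product measure over places, so the scaling by $h_v$ was immediate; here $\omega$ is defined (Definition \ref{peyredisc}) as a residue of an integral over the closed subgroup $\Xi_\infty^\perp\subset[\TT(m)(\AAF)]$, so one must check that pre-composing $H$ with the $h_v$-scaling simply multiplies $H^{-s}$ by the continuous function $h\circ j$ on $\Xi_\infty^\perp$ and pull it through the limit; since $h\circ j$ is bounded and continuous and the residue is computed via Theorem \ref{principaldisc} for the modified family, this goes through, but it requires being careful that the modified $H$ is still $K$-invariant for some open $K$ and that Lemma \ref{whdiscam} and Theorem \ref{principaldisc} apply verbatim to it (they do, since modifying at finitely many places does not change the archimedean or the bulk non-archimedean analysis).

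\textbf{Remaining steps.} Granting the modification lemma, the rest is the argument of Theorem \ref{quasitoricequi} transcribed: (1) using Corollary \ref{countingquasidisc} and the fact that $\ker([\TT(m)(i)])=\Sh^1(F,\mu_m)$ is finite (Proposition \ref{cesusp}) together with the upper bound of Proposition \ref{northdisc} for the boundary substack, deduce $|\{x\in i([\PPP(m)(F)])\mid H(x)\le B\}|\sim \frac{\tau}{|\Sh^1(F,\mu_m)|\,(r-2)!\,m}B\log(B)^{r-2}$; (2) prove equidistribution for elementary open subsets $W=\prod_{v\in S_W}W_v\times\prod_{v\notin S_W}[\PPP(m)(F_v)]$ with $\omega_v(\partial W_v)=0$, by sandwiching $\mathbf 1_{W_v}$ between smooth functions $h_v\le \mathbf 1_{W_v}\le g_v$ with $\int(g_v-h_v)<\epsilon$, introducing the $\eta$-regularizations $h_{v,\eta}=(1-\eta)h_v+\eta$ to keep positivity, applying step (1) to the modified families $H((h^{-1}_{v,\eta}\circ q^m_v)f_v)_v$ and $H((g^{-1}_{v,\eta}\circ q^m_v)f_v)_v$, and using the modification lemma to identify the resulting measures; (3) extend to finite unions of elementary opens by inclusion-exclusion, exactly as in part (3) of the proof of Theorem \ref{quasitoricequi}; (4) extend to arbitrary open $W$ with $\omega(\partial W)=0$ by inner/outer regularity of $\omega$, approximating $W$ between finite unions $W'\subset W\subset W''$ of elementary opens with $\omega(W''-W')<\epsilon$. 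Since $r-2$ may be $0$ (when $r=2$) the $\log$ power is handled uniformly: the ratios of counting functions in (2)--(4) only involve the leading asymptotics, and $\log(B)^{r-2}$ cancels throughout, so no case distinction is needed. The only genuinely new point relative to Chapter \ref{Quasi-toric heights} is the measure-modification compatibility in the first step; everything else is a faithful repetition of the equidistribution bootstrap.
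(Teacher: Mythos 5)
Your overall plan matches the paper's proof: the same four-step bootstrap from Theorem \ref{quasitoricequi}, and you correctly single out the measure-modification compatibility (the analogue of Lemma \ref{omegachanges}) as the one genuinely new ingredient. However, your sketch of that step contains a false intermediate assertion. You wish to prove $\omega((h_v\cdot f_v)_v)=h^{-1}\omega$, where $\omega$ is a residue $\lim_{s\to 1^+}(s-1)^{r-1}\int(\cdot)H^{-s}\mu_\infty^\perp$. If $H^{\mathrm{mod}}=hH$ on the adelic space, then $(H^{\mathrm{mod}})^{-s}=h^{-s}H^{-s}$: the factor entering the integrand is $h^{-s}$, which is itself $s$-dependent — it is not ``the continuous function $h\circ j$'' as you write. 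Because of this $s$-dependence you cannot simply ``pull it through the limit'' and equate the residue with $\int h^{-1}(\cdot)\omega$; that would only be immediate if the factor were fixed. The repair is not deep: for $\phi$ decomposable and non-negative, and $g_\eta$ taking values in $]0,1]$ (this is why the $\eta$-regularization matters), one uses the sandwich $g_\eta^{-1}\le g_\eta^{-s}\le g_\eta^{-\delta}$ for $1<s<\delta$, takes the residue of both bounds, and then sends $\delta\to 1^+$ using that $\omega$ is a measure. Alternatively one can argue that $h^{-(s-1)}\to 1$ uniformly and that the error term is $O\!\left(\|h^{-(s-1)}-1\|_\infty\right)\cdot O\!\left((s-1)^{-(r-1)}\right)\cdot(s-1)^{r-1}\to 0$ by Theorem \ref{principaldisc}. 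Either way the step has to be argued, and as written your proposal asserts an identity that is false.

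Separately, you carry over the bump-function machinery of Lemma \ref{bumpfunctions} wholesale, but for $n=1$ it is unnecessary: by Lemma \ref{ttafvfinite} each $[\PPP(m)(F_v)]$ is \emph{finite and discrete}, so every open $W_v$ is clopen, $\partial W_v=\emptyset$ automatically, and $\mathbf 1_{W_v}$ is already smooth (locally constant). One may therefore take $g_v=h_v=\mathbf 1_{W_v}$ directly, retaining only the $\eta$-shift for strict positivity, and the topological approximation in your step (2) degenerates. This is not an error, merely overhead, but noticing the discreteness clarifies what is genuinely different between the $n=1$ quasi-discriminant case and the general quasi-toric one.
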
 
\begin{proof}
The proof follows the proof of \ref{quasitoricequi} with some modifications and simplifications. Corollary \ref{countingquasidisc}, together with the fact that $\ker(i)=\Sh^1(F, \mu_m)$ from \ref{cesusp}, gives that \begin{multline*}|\{x\in i([\PPP(m)(F)])| H(x)\leq B\}|\\\sim_{B\to\infty}\frac{\omega(\prod_{\vMF}[\PPP(m)(F_v)])}{(r-2)! m |\Sh^1(F, \mu_m)|}B\log(B)^{r-2}.\end{multline*}

We say that an open subset $W$ is elementary if $W$ writes as $W=\prod_{\vMF}W_v,$ where $W_v\subset [\PPP(m)(F)]$ is open, and for almost all $v,$ one has that $W_v=[\PPP(m)(F_v)]$. As for every~$v$, the spaces $[\PPP(m)(F_v)]$ are finite and discrete by \ref{ttafvfinite}, we deduce that $W$ is open and closed, hence $\partial W=\emptyset.$ We prove that for every elementary $W$, one has that $$\frac{|x\in W| H(x)\leq B\}|}{|x\in i([\PPP(m)(F)]| H(x)\leq B\}|}\sim_{B\to\infty}\frac{\omega(W)}{\omega(\prod_{\vMF}[\PPP(m)(F_v)])}.$$For $\vMF$, we define $g_v=\mathbf 1_{W_v}=h_v$. Let $\epsilon>0$ and set $\eta=\epsilon/4.$ For $\vMF$, we set $g_{\eta, v}=(1-\eta)\mathbf 1_{W_v}+\eta$ and $h_{\eta, v}=g_{\eta, v}$. The proof that $H((f_v\cdot (g^{-1}_v\circ q^m_v))_v)=g_\eta^{-1}H$ is identical to the proof of the corresponding claim in  Part (2) proof of \ref{quasitoricequi}. Let us establish that $$\omega((f_v\cdot (g_{\eta, v}^{-1}\circ q^m_v))_v)=g_{\eta}^{-1} \omega.$$ For a compactly supported continuous function $\phi:\prod_{\vMF}[\PPP(m)(\Fv)]\to\CC$ we say that it is decomposable, if it can be written as $\otimes_{\vMF}\phi_v,$ where for almost all~$v$ one has that $\phi_v=1$. 
Let $\phi:[\PPP(m)(F_v)]\to \RR_{\geq 0}$ be decomposable. One has that \begin{align*}\omega((f_v\cdot (g_{\eta, v}^{-1}\circ q^m_v))_v)(\phi)\hskip-1cm&\\&=\lim_{s\to 1^+}(s-1)^{r-1}\int_{\Xi_\infty^{\perp}}(\phi) H(((f_v\cdot (g_{\eta, v}^{-1}\circ q^m_v))_v))^{-s}\mu_{\infty}^{\perp}\\
&=\lim_{s\to 1^+}(s-1)^{r-1}\int_{\Xi_{\infty}^{\perp}}(\phi g_{\eta}^{-s})H^{-s}\mu_{\infty}^{\perp}\\
&\geq\lim_{s\to 1^+}(s-1)^{r-1}\int_{\Xi_{\infty}^{\perp}}(\phi g_{\eta}^{-1})H^{-s}\mu_{\infty}^{\perp}\\
&=\omega(\phi g^{-1}_{\eta}),
\end{align*} where the only inequality follows from the fact that $g_{\eta}^{-s}\geq g_{\eta}^{-1}$ (which is true because $g_\eta$ takes values in the interval $]0,1[$). On the other side, for every $\delta>1$, by taking in the limit only those $s$ contained in the domain $]1,\delta[$, we deduce that \begin{align*}\omega((f_v\cdot (g_{\eta, v}^{-1}\circ q^m_v))_v)(\phi)&\leq \lim_{s\to 1^+}(s-1)^{r-1}\int_{\Xi_{\infty}^{\perp}}\phi g_{\eta}^{-\delta}H^{-s}\mu_{\infty}^{\perp}\\
&=\omega(\phi g^{-\delta}_{\eta}).\end{align*}
From the fact that~$\omega$ is a measure, it follows that $\lim_{\delta\to 1^{+}}\omega(\phi g^{-\delta})=\omega(\phi g^{-1}),$ and hence that \begin{equation}\label{korisnomega}\omega((f_v\cdot (g_{\eta, v}^{-1}\circ q^m_v))_v)(\phi)=\omega(\phi g_{\eta}^{-1})\end{equation} for any  non-negative decomposable function $\phi$. Clearly, any real valued decomposable function is a difference of two non-negative decomposable functions, and the equality (\ref{korisnomega}) is hence valid for all real valued decomposable functions. Any decomposable $\phi$ writes as $\phi_1+i\phi_2,$ for some decomposable real valued functions $\phi_1$ and $\phi_2$. The equality (\ref{korisnomega}) is thus valid for any decomposable $\phi$. Moreover, (\ref{korisnomega}) is valid for finite sums of decomposable functions. The finite sums of decomposable continuous compactly supported functions are dense in the set $\mathscr C_c^0(\prod_{\vMF}[\PPP(m)(F_v)], \CC)$ by \cite[Lemma 3, \no 5, \S4, Chapter III]{Integrationj}, hence $\omega((f_v\cdot (g_{\eta, v}^{-1}\circ q^m_v))_v)=g^{-1}_{\eta}\omega$. To prove the claim for the elementary open subset $W$, we use the same steps as in the part (2) of \ref{quasitoricequi}, with the only change in the function of $B$ (that is replace $\frac{\omega(\prod_{\vMF}[\PPP(\aaa)(F_v)])}{|\Sh^1(F, \mu_{\gcd(\aaa)})|\cdot |\aaa|}B$ with $\frac{\omega(\prod_{\vMF}[\PPP(\aaa)(F_v)])}{(r-2)! m |\Sh^1(F, \mu_m)|}B\log(B)^{r-2}$). 

We have hence established the claim for every elementary subset of $[\PPP(m)(F)]$. The rest of the proof is identical to the proofs of parts (3) and (4) in the proof of \ref{quasitoricequi}.
\end{proof}
\subsection{}In the last paragraph, we prove that $\mu_m$-torsors which are fields are of positive proportion among all $\mu_m$ torsors of bounded quasi-discriminant height.
\begin{prop}\label{notgreataboutfields}
There exists $C(m, (f_v)_v)>0$ such that $$|\{x\in[\PPP(m)(F)]|\text{\normalfont~$x$ is a field, } H(x)\leq B\}|\sim_{B\to\infty}C(m, (f_v)_v)B\log(B)^{r-2}.$$
\end{prop}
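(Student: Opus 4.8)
The plan is to express the counting function via an inclusion--exclusion over the ``non-field'' torsors, to control the resulting restricted height zeta functions by the harmonic-analytic method already developed for $Z$ in \ref{principaldisc}, and to prevent the leading term from vanishing by means of the equidistribution theorem \ref{equidisc}.

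First I would invoke the classical irreducibility criterion (Capelli): for $a\in\Ft$, the $\mu_m$-torsor corresponding to the class of $a$ in $[\PPP(m)(F)]=\Ft/(\Ft)_m$ is a field, i.e.\ $X^m-a$ is irreducible over $F$, if and only if $a\notin F^{\times p}$ for every prime $p\mid m$ and, when $4\mid m$, also $a\notin -4F^{\times 4}$. Hence the set of non-field torsors is $\bigcup_{p\mid m}N_p\cup N_4$, where $N_p$ is the image of $F^{\times p}$ in $\Ft/(\Ft)_m$ (a subgroup, since $(\Ft)_m\subset F^{\times p}$), $N_4$ is the coset of $-4F^{\times4}$ (present only when $4\mid m$), and, using $F^{\times p}\cap F^{\times q}=F^{\times pq}$ for distinct primes $p,q$, every intersection of the $N_p$ and $N_4$ is again one of finitely many cosets of subgroups of the form $\gamma F^{\times k}/(\Ft)_m$ with $k\mid m$. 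By inclusion--exclusion, the height zeta function of the field torsors, $Z_{\mathrm{field}}(s):=\sum_{x\ \mathrm{field}}H(x)^{-s}$, is a finite $\ZZ$-linear combination of $Z(s)$ and of the restricted zeta functions $Z_N(s):=\sum_{x\in N}H(x)^{-s}$ over those finitely many cosets $N$.

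Next I would analyze each $Z_N$. Since $N\subset[\PPP(m)(F)]$, \ref{northdisc} already gives absolute convergence of $Z_N$ for $\Re(s)>\alpha(m)$. To meromorphically continue it, I would use the $k$-th power map $b\mapsto b^k$, which identifies $N$, up to a finite kernel contained in $\mu_k(F)$, with a coset in $[\PPP(m/k)(F)]$ along which $H|_N$ pulls back to a generalized adelic $(m/k)$-homogeneous family $H'=(h_v)_v$. Running the Poisson/Fourier argument of \ref{principaldisc} for the stack $\PPP(m/k)$ and the height $H'$---the local computations \ref{wustani}, \ref{lfhdisc}, \ref{disckchi}, \ref{whdiscam} carrying over with the evident changes, and the unimodular factor coming from the coset representative being harmless---shows that $Z_N$ extends meromorphically to $\Re(s)>1-\delta$ with moderate vertical growth and a pole at $s=1$ of some order $d_N$; and since $|\{x\in N\mid H(x)\leq B\}|\leq|\{x\in[\PPP(m)(F)]\mid H(x)\leq B\}|\sim\tfrac{\tau}{(r-2)!\,m}B\log(B)^{r-2}$ by \ref{countingquasidisc}, the Tauberian theorem forces $d_N\leq r-1$. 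Therefore $Z_{\mathrm{field}}$ extends meromorphically to $\Re(s)>1-\delta$, has moderate vertical growth, and a pole at $s=1$ of order at most $r-1$, and \cite[Theorem A1]{FonctionsZ} yields
$$|\{x\in[\PPP(m)(F)]\mid x\ \mathrm{field},\ H(x)\leq B\}|\sim C\,B\log(B)^{r-2},$$
where $C=C(m,(f_v)_v)\geq0$ is $\tfrac{1}{(r-2)!}$ times the leading Laurent coefficient of $Z_{\mathrm{field}}$ at $s=1$, namely $\tfrac{\tau}{m}$ minus the corresponding coefficients of the $Z_N$.

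It remains to rule out $C=0$, and this is the main obstacle. I would use \ref{equidisc}: it suffices to produce a nonempty clopen subset $W\subset\prod_{\vMF}[\PPP(m)(F_v)]$ with $\omega(W)>0$ such that $i(x)\in W$ forces $x$ to be a field, for then $|\{x\ \mathrm{field},\,H(x)\leq B\}|\geq|\{i(x)\in W,\,H(x)\leq B\}|\sim\tfrac{\omega(W)}{\tau}|\{H(x)\leq B\}|\sim\tfrac{\omega(W)}{(r-2)!\,m}B\log(B)^{r-2}$ by \ref{equidisc} and \ref{countingquasidisc}, bounding $C$ below by $\tfrac{\omega(W)}{(r-2)!\,m}>0$. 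I would build $W$ from finitely many local conditions at auxiliary finite places, mutually disjoint and disjoint from the primes dividing $m$: for each prime $p\mid m$ pick a place $v_p$ and require that the $v_p$-component of $x$ lie outside the image of $F_{v_p}^{\times p}$ in $[\PPP(m)(F_{v_p})]$---a proper, hence nonempty-complement, subgroup by the cardinality formula \ref{ttafvfinite}---and if $4\mid m$ add, at a further place $v_0$, the condition of lying outside the coset of $F_{v_0}^{\times4}$ containing $-4$. Each of these is clopen (the local spaces $[\PPP(m)(F_{v})]$ are finite) and, by the Capelli criterion, forces the corresponding global non-power condition, so $i(x)\in W$ implies that $x$ is a field. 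The delicate point is the positivity $\omega(W)>0$: the measure $\omega$ lives on $j(\Xi^{\perp}_{\infty})$ and is defined only through $|\mu_m(F)|\lim_{s\to1^{+}}(s-1)^{r-1}j_*(H^{-s}\mu_{\infty})$, so one must check that the clopen set $j^{-1}(W)$ carries positive mass. I would adapt the positivity argument of \ref{positivityofpd}: $j^{-1}(W)$ is a clopen subset of the group $\Xi^{\perp}_{\infty}$, a union of cosets of a clopen subgroup, and it meets the dense subgroup $[\TT(m)(i)]([\TT(m)(F)])\cdot[\TT(m)(\AAF)]_{m/r}$, hence contains a translate $x_0U'$ of a clopen subgroup $U'$ of finite index in $\Xi^{\perp}_{\infty}$ (using that $\Xi^{\perp}_{\infty}$ is cocompact modulo the rational points); then $\int_{j^{-1}(W)}H^{-s}\mu_{\infty}\geq C(x_0)^{-s}\int_{U'}H^{-s}\mu_{\infty}$ by \ref{doslema}, and the latter dominates, up to finitely many further translates, a positive multiple of $\int_{\Xi^{\perp}_{\infty}}H^{-s}\mu_{\infty}$, whose $\lim_{s\to1^{+}}(s-1)^{r-1}$ is positive by \ref{positivityofpd}. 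This pins the pole order of $Z_{\mathrm{field}}$ at $s=1$ to exactly $r-1$ and gives $C>0$.
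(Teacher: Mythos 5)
Your proposal is correct in outline but takes a genuinely different and more elaborate route than the paper. The paper's proof is much shorter: it fixes a \emph{single} finite place $w$ not above $2$, takes $W=\{w_m(y)=1\}\times\prod_{v\neq w}[\PPP(m)(F_v)]$, observes that the congruence $w(\widetilde x)\equiv1\pmod m$ simultaneously rules out $\widetilde x\in F^{\times p}$ for every prime $p\mid m$ as well as $\widetilde x\in-4F^{\times4}$ (so $i(x)\in W$ forces $x$ to be a field), then shows $\omega(W)>0$ by factoring $\wH_w(s,1)$ out of the integral defining $\omega$ and repeating the translate argument of \ref{positivityofpd}, and concludes via \ref{equidisc}. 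You instead impose one local condition per prime divisor of $m$ (this also works, but the single condition $w_m=1$ is more economical), and---the substantive difference---you prepend an inclusion--exclusion over the Capelli cosets together with analytic continuation of the restricted zeta functions $Z_N$ by pullback along power maps and the Poisson machinery. That first step anticipates the paper's own proof of Theorem \ref{licava}; it earns something the paper's argument for the present proposition leaves somewhat implicit, namely that the limit defining $C$ actually exists rather than merely being pinned between positive multiples of $B(\log B)^{r-2}$. The cost is several unverified steps: the claim that every intersection of the $N_p$ with the coset $N_4$ is again a coset of some $F^{\times k}/(F^{\times})_m$ would need to be spelled out in the case $4\mid m$, $i\notin F$ (precisely the case excluded from \ref{licava}); one would need to check that the pullbacks of $(f_v)_v$ along twisted power maps $b\mapsto\gamma b^k$ remain quasi-discriminant in the sense required by \ref{principaldisc} (Lemma \ref{kljucnazakraj} handles only the untwisted $d\mid m$ case); and the Poisson analysis would have to be rerun over a coset rather than the full group. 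These are plausible but would need to be written out.
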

\begin{proof}
Let $w$ be a finite place of~$F$ which does not extend the place $2$ of $\QQ$. Let us denote by $w_m$ the canonical map $$[\TT(m)(F_v)]\to\ZZ/m\ZZ,$$given in \ref{indofov}. Recall that if $\widetilde y\in F_w^{\times}$ is a lift of $y\in[\TT(m)(F_w)]$, then the image of $w(\widetilde y)$ under the quotient map $\ZZ\to \ZZ/m\ZZ$ is $w_m(y)$. We let $$W=\{y\in[\PPP(m)(F_w)]:w_m(y)=1\}\times\prod_{\vMF-\{w\}}[\PPP(m)(F_v)].$$ The set $\{y\in[\PPP(m)(F_v)]|w_m(y)=1\}$ is open and closed in the finite discrete set $[\PPP(m)(F_v)]$, thus $W$ is open and closed in $\prod_{\vMF}[\PPP(m)(F_v)]$. Hence, $\partial W=\emptyset.$ We prove two claims that will imply the statement of the proposition.
\begin{enumerate}
\item Let us prove that if $x\in [\PPP(m)(F)]$ satisfies that $i(x)\in W,$ then~$x$ is a field.  One has that~$x$ is field if and only if $X^m-\widetilde x$ is irreducible, where $\widetilde x\in F^{\times}$ is a lift of~$x$. By \cite[Theorem 9.1, Chapter VI]{Lang}, this is true if and only if $\widetilde x$ is not an element of $(F^{\times})_p$ for~$p$ prime divisor of~$m$ and if $4|m$, then also $\widetilde x\not \in -4(F^\times)_4$. 
Let us write $i_v:[\PPP(m)(F)]=[\TT(m)(F)]\to [\TT(m)(F_v)]=[\PPP(m)(F_v)]$ for the map induced from $(\Fvt)_m$-invariant map $F^{\times}\to\Fvt\xrightarrow{q^m}[\TT(m)(F_v)]$. For every prime $p|m$, one has that $p\nmid 1=w_m(i_v(x))=w(\widetilde x),$ hence $\widetilde x\not\in (F^{\times})_p.$ One has that $2\nmid 1=w_m(i_v(x))=w(\widetilde x),$ thus $\widetilde x\not\in (-4F^{\times})_4$. It follows that~$x$ is a field.
\item Let us prove that $W$ has a strictly positive volume. By \ref{wustani}, one has that $$\wH_w(s,1)=\sum_{j=0}^{m-1}\pivv^{\frac{s(m^2-m\gcd(j,m))}{\alpha(m)}},$$for $s>0$. Hence, $\wH_w(-,1)$ does not vanish for $s>0$. Using \ref{positivityofpd}, we deduce that
\begin{align*}
\lim_{s\to 1^+}(s-1)^{r-1}\int_{\substack{[\TT(m)(\AAF)]\\w_m=1}}H^{-s}\mu_{\AAF}&=\lim_{s\to 1^{+}}(s-1)^{r-1}\wH(s,1)\frac{\int_{w_m=1}H_w^{-s}\mu_w}{\wH_w(s,1)}\\
&=\lim_{s\to 1^{+}}(s-1)^{r-1}\wH(s,1)\frac{\int_{w_m=1}H^{-1}\mu_w}{\wH_w(s,1)}
\end{align*} is positive. By \ref{lemakojunerazumi} one has that 
\begin{align*}
\omega(W)&=|\mu_m(F)|\lim _{s\to 1^{+}}(s-1)^{r-1}\int_{\substack{\Xi_{\infty}^{\perp}\\w_m=1}}H^{-s}\mu_{\infty}\\
&=|\mu_m(F)|\lim_{s\to 1^{+}}(s-1)^{r-1}\mathopen|\Xi_K\mathclose|\int_{\substack{\Xi_K^{\perp}\\w_m=1}}H^{-s}\mu_{\AAF}.
\end{align*}
Let $\{x_1\dots x_{|\Xi_K|}\}$ be a set of representatives of classes of $\Xi_K$ in $[\TT(m)(\AAF)].$ 
Using \ref{doslema}, we obtain that for $s>1$, one has that\begin{align*}\int_{\substack{[\TT(m)(\AAF)]\\w_m=1}}H^{-s}\mu_{\AAF}&=\sum_i\int_{\substack{\Xi_K^{\perp}\\w_m=1}}H(x_iy)^{-s}d\mu_{\AAF}(y)\\
&\leq\sum_iC(x_i)^{-s}\int_{\substack{\Xi_K^\perp\\w_m=1}}H^{-s}\mu_{\AAF},
\end{align*} for certain $C(x_i)>0.$ 
 It follows that \begin{align*}0&<\lim_{s\to 1^+}(s-1)^{r-1}\int_{\substack{[\TT(m)(\AAF)]\\w_m=1}}H^{-s}\mu_{\AAF}\\&=\lim_{s\to 1^+}(s-1)^{r-1}\sum_iC(x_i)^{-s}\int_{\substack{\Xi_K^{\perp}\\w_m=1}}H^{-s}\mu_{\AAF}
 \\&\leq \big(\sum_i C(x_i)^{-1}\big)\lim_{s\to 1^+}(s-1)^{r-1}\int_{\substack{\Xi_K^{\perp}\\w_m=1}}H^{-s}\mu_{\AAF},\end{align*}
 and hence that $\lim_{s\to 1^+}(s-1)^{r-1}\int_{\substack{\Xi_K^{\perp}\\w_m=1}}H^{-s}\mu_{\AAF}>0$. We deduce that  $$\omega(W)=|\mu_m(F)|\lim_{s\to 1^+}(s-1)^{r-1}\card{\Xi_K}\int_{\substack{\Xi_K^{\perp}\\w_m=1}}H^{-s}\mu_{\AAF}>0,$$ as claimed.
\end{enumerate}
Now, as $\omega(\partial W)=0$, by \ref{equidisc}, when $B\to \infty$, one has that \begin{align*}|\{x\in[\PPP(m)(F)]|\text{\normalfont~$x$ is a field, } H(x)\leq B\}|\hskip-1cm&\\&\geq|\{x\in[\PPP(m)(F)]|i(x)\in W\}|\\&\sim_{B\to\infty}\frac{\omega(W)}{m}B\log(B)^{r-2}.
\end{align*} The statement follows.  
\end{proof}
Let us give the exact formula for $C(m, ((f_v)_v))$, under the condition that $4\nmid m$. For a divisor $d|m$, we denote by $j^d$ the homomorphism $$j^d:[\TT(d)(F)]\to[\TT({m})(F)]$$ induced from $(F^{\times})_d$-invariant homomorphism $$\Ft\to\Ft\to[\TTm(F)]\hspace{1cm}y\mapsto [q^{m}(F)](y^{m/d}),$$where $[q^{m}(F)]:F^{\times}\to[\TTm(F)]$ is the quotient map. 
The map $j^d$ induces an isomorphism $[\TT(d)(F)]\to[\TTm(F)]_{d/m}$, where, as usual $[\TTm(F)]_d$ denotes the subgroup given by~$d$-th powers of the elements of $[\TTm(F)]$. For $v\in M_F$ and $d|m$ we set $$f^{*d}_v=\bigg(\big(y\mapsto |y|_v^{1-\frac{m}{d}}\big)\cdot (f_v\circ (x\mapsto x^{\frac md}))\bigg)^{\big(1-\frac1r\big)/\big(1-\frac1q\big)},$$where $q$ is the smallest prime of~$d$.
\begin{lem}\label{kljucnazakraj} Let~$d$ be a divisor of~$m$ and let $q$ be the smallest prime of~$d$. The following claims are valid:
\begin{enumerate}
\item The family $(f^{*d}_v)_v$ is a quasi-discriminant degree~$d$ family of~$d$-homogenous functions.
\item One has equality of functions $[\TTd(F)]\to\RR_{\geq 0}$: $$H((f_v)_v)\circ j^d=\big(H((f^{*d}_v)_v)\big)^{(1-\frac1q)/(1-\frac1r)}.$$
\end{enumerate}
\end{lem}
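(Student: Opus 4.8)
\textbf{Proof plan for Lemma \ref{kljucnazakraj}.}

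The plan is to reduce everything to the concrete description of the $d$-homogenous toric/discriminant functions and to a careful local computation, exactly in the spirit of Lemma \ref{abnu} (where the restriction of a quasi-toric height along a coordinate embedding $\overline{d_j}$ was identified with another quasi-toric height), but now with the discriminant normalizations in place. First I would fix a divisor $d\mid m$, let $q$ be the least prime of $d$, and spell out the weighted degrees: $f_v^{\Delta}$ is the discriminant $m$-homogenous function of weighted degree $m$, so $f_v\circ(x\mapsto x^{m/d})$ is $d$-homogenous of weighted degree $m$ (composing with the $d$-homogenous map $x\mapsto x^{m/d}$ scales the degree), and multiplying by $y\mapsto|y|_v^{1-m/d}$, which is $d$-homogenous of weighted degree $d-m$, produces a $d$-homogenous function of weighted degree $d$; finally raising to the power $(1-1/r)/(1-1/q)$ keeps it $d$-homogenous and of weighted degree $d$ provided one checks that power is applied consistently (it is, since the base is positive-valued and $d$-homogenous). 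Continuity is inherited at every step, and local constancy at finite places likewise. For claim (1) it then remains to verify that for almost all $v$ the function $f_v^{*d}$ equals the discriminant $d$-homogenous function of weighted degree $d$: at almost all $v$ one has $f_v=f_v^{\Delta}$ and $v(m)=0$, so by Lemma \ref{defvdisc}(3) one has explicitly $f_v^{\Delta}(x)=|x|_v^{1}\,\pi_v$-power, namely $f_v^{\Delta}(x)=\pivv^{v(x)+(\gcd(v(x),m)-m)m/\alpha(m)}$ (using that the degree $m$ version is the degree $1$ version raised to $m$), and a direct substitution of $x\mapsto x^{m/d}$, multiplication by $|x|_v^{1-m/d}$ and the exponent manipulation should land on $\pivv^{v(x)+(\gcd(v(x),d)-d)d/\alpha(d)}$, i.e. on the degree $d$ discriminant function for $\mu_d$-torsors; the key arithmetic identity to check is that $\gcd(v(x)\cdot(m/d),m)=(m/d)\gcd(v(x),d)$ together with the bookkeeping $\alpha(m)=m^2(1-1/r)$, $\alpha(d)=d^2(1-1/q)$, which is exactly what the final exponent $(1-1/r)/(1-1/q)$ is designed to reconcile.

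For claim (2) the approach is to compare the two heights place by place using the product formula and the local-height formalism of \ref{localheightdef}. Given $x\in[\TT(d)(F)]$ with lift $\widetilde x\in\Ft$, the point $j^d(x)$ has lift $\widetilde x^{m/d}$, so $H((f_v)_v)(j^d(x))=\prod_v f_v(\widetilde x^{m/d})$ by the defining product \eqref{huhub}; on the other side $H((f_v^{*d})_v)(x)=\prod_v f_v^{*d}(\widetilde x)=\prod_v\big(|\widetilde x|_v^{1-m/d} f_v(\widetilde x^{m/d})\big)^{(1-1/r)/(1-1/q)}$. The product over all $v$ of $|\widetilde x|_v^{1-m/d}$ is $1$ by the product formula, hence $\prod_v f_v^{*d}(\widetilde x)=\big(\prod_v f_v(\widetilde x^{m/d})\big)^{(1-1/r)/(1-1/q)}=H((f_v)_v)(j^d(x))^{(1-1/q)/(1-1/r)}$, and raising both sides to the power $(1-1/q)/(1-1/r)$ inverted gives exactly the claimed identity $H((f_v)_v)\circ j^d=\big(H((f_v^{*d})_v)\big)^{(1-1/q)/(1-1/r)}$ — I should double-check the direction of the exponent against the statement, but the mechanism is this cancellation of the $|\widetilde x|_v^{1-m/d}$ factor via the product formula, precisely the same trick used in the proof of \ref{brzostab} and \ref{localheightglobal}.

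The main obstacle I expect is claim (1): showing that the ad hoc twist $f_v^{*d}=\big((y\mapsto|y|_v^{1-m/d})\cdot(f_v\circ(x\mapsto x^{m/d}))\big)^{(1-1/r)/(1-1/q)}$ really reproduces the \emph{discriminant} normalization for $\mu_d$-torsors at almost all $v$, rather than merely \emph{some} quasi-discriminant normalization. This requires the explicit finite-place formula $\Delta\big((F_v[X]/(X^d-y))/F_v\big)=\pivv^{\gcd(v(y),d)-d}$ from \ref{detaxm} applied with $d$ in place of $m$, combined with the relation $\gcd(v(x^{m/d}),m)=(m/d)\gcd(v(x),d)$ for $v$ with $v(m)=0$, and then the exponent arithmetic reconciling $m/\alpha(m)$, the shift $1-m/d$, and the power $(1-1/r)/(1-1/q)$ into $d/\alpha(d)$. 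Everything else — continuity, local constancy, $d$-homogeneity of each factor, and the product-formula cancellation in (2) — is routine and parallel to arguments already carried out in \ref{defvdisc}, \ref{abnu}, and \ref{discvsheight}.
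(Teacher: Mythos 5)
Your plan for claim~(2) is exactly the paper's, and it is correct: the product formula kills $\prod_v|\widetilde x|_v^{1-m/d}$ and what remains is a rearrangement of exponents (the momentary inversion of the exponent at the end, which you flagged yourself, resolves correctly once tracked through).

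For claim~(1) there is a gap that your hedge does not cover. You correctly observe that $|y|_v^{1-m/d}\cdot f_v(y^{m/d})$ is $d$-homogenous of weighted degree $(d-m)+m=d$, but you then assert that raising to the power $\beta:=(1-\frac1r)/(1-\frac1q)$ ``keeps it $d$-homogenous and of weighted degree $d$ provided one checks that power is applied consistently.'' It does not: if $h$ is $d$-homogenous of weighted degree $d$, then $h^\beta(t\cdot y)=(|t|_v^dh(y))^\beta=|t|_v^{d\beta}h^\beta(y)$, so $h^\beta$ has weighted degree $d\beta$, and positivity of the base plays no role. Accordingly your explicit finite-place computation actually lands on $f_v^{*d}(y)=|y|_v^{\beta}\pivv^{(d\gcd(v(y),d)-d^2)/\alpha(d)}$, which coincides with the discriminant $d$-homogenous function of weighted degree $d$, namely $|y|_v\pivv^{(d\gcd(v(y),d)-d^2)/\alpha(d)}$, only when $\beta=1$, i.e.\ $q=r$. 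To repair this one must either restrict attention to divisors $d\mid m$ with $q=r$ --- these are precisely the terms contributing to the leading constant in \ref{licava} --- or modify the definition so that only the discriminant factor carries the exponent, e.g.\ $f_v^{*d}(y):=|y|_v\cdot\big(|y|_v^{-m/d}f_v(y^{m/d})\big)^{\beta}$, which has weighted degree $d$, specializes to the right discriminant function at almost all $v$, and leaves your product-formula argument for claim~(2) untouched since $\prod_v|\widetilde x|_v=\prod_v|\widetilde x|_v^{-m/d}=1$.
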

\begin{proof}
\begin{enumerate}
\item Let $t,x\in\Fvt.$ For $\vMF$, one has that \begin{align*}
f^{*d}_v(t\cdot x)=f^{*d}_v(t^dx)=|t^dx|_v^{1-\frac {m}d} f_v(t^mx^{\frac md})&=|t^d|^{1-\frac {m}d}_v|x|_v^{1-\frac{m}d}|t|_v^mf_v(x^{\frac md})\\&=|t|_v^{d}f^{*d}_v(x).
\end{align*}
It follows that $f^{*d}_v$ is~$d$-homogenous of weighted degree~$d$. For almost all~$v$, the function $f_v$ is the discriminant~$m$-homogenous function of the weighted degree~$m$. Recall from \ref{defvdisc} that for~$v$ finite such that $v(m)=0,$ this means that $$f_v(y)=|y|_v\pivv^{\frac{m\gcd(v(y),m)-m^2}{\alpha(m)}}$$ for every $y\in\Fvt.$ The direct calculation gives that
\begin{align*}
\big(f^{*d}_v(y)\big)^{(1-\frac1q)/(1-\frac1r)}=|y|_v^{1-\frac{m}d}f_v(y^{\frac md})&=|y|_v^{1-\frac{m}d}\cdot |y|_v^{\frac md}\pivv^{\frac{m\gcd((m/d)v(y),m)-m^2}{\alpha(m)}}\\
&=|y|_v\pivv^{\frac{(m/d)m\gcd(v(y),d)-m^2}{m^2(1-\frac1r)}}\\
&=|y|_v\pivv^{\frac{(1/d)\gcd(v(y),d)-1}{1-\frac1r}}\\
&=|y|_v\pivv^{\frac{d\gcd(v(y),d)-d^2}{d^2(1-\frac1r)}}\\
&=|y|_v\pivv^{\frac{d\gcd(v(y),d)-d^2}{\alpha(d)}\cdot\frac{1-\frac1q}{1-\frac1r}},
\end{align*}
i.e. $$f^{*d}_v(y)=|y|_v\pivv^{\frac{d\gcd(v(y),d)-d^2}{\alpha(d)}}.$$ In other words $f^{*d}_v$ is the discriminant~$d$-homogenous function of the weighted degree~$d$.
It follows that $(f^{*d}_v)_v$ is a quasi-discriminant degree~$d$ family of~$d$-homogenous functions.
\item 
Let $x\in[\TTd(F)]$ and let $\widetilde x\in F^{\times}$ be its lift.  
One has that $\widetilde x^{\frac md}\in F^{\times}$ is a lift of $j^d(x)\in[\TTm(F)]$. Hence,
\begin{align*}
H((f_v)_v)(j^d(x))&=\prod_{\vMF}f_v(\widetilde x^{\frac md})\\
&=\bigg(\prod_{\vMF}|\widetilde x|_v^{1-\frac{m}{d}}\bigg)\cdot \prod_{\vMF}f_v(\widetilde {x}^{\frac md})\\
&=\prod_{\vMF}|\widetilde x|_v^{1-\frac{m}{d}}f_v(\widetilde x^{\frac md})\\
&=\prod_{\vMF}(f^{*d}_v(\widetilde x))^{(1-\frac1q)/(1-\frac1r)}\\
&=(H((f^{*d}_v))(x))^{(1-\frac1q)/(1-\frac1r)}.
\end{align*}
We deduce that $$H((f_v)_v)\circ j^d=\big(H((f^{*d}_v)_v)\big)^{(1-\frac1q)/(1-\frac1r)},$$as claimed.
\end{enumerate}
\end{proof}
\begin{thm}\label{licava}
Suppose that $4\nmid m$ or that $i=\sqrt{-1}\in F$. One has that $$C(m,((f_v)_v))=\frac{\bigg(\sum_{\substack {d|m\\r|d}}{d\cdot\mu(d)\cdot\tau((f^{*(m/d)}_v)_v)}\bigg)}{(r-2)!m},$$where~$\mu$ stands for the M\"obius function (here, the sum is taken over divisors~$d$ of~$m$, which are divisible by the prime $r$).
\end{thm}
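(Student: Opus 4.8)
The strategy is an inclusion-exclusion over the sublattice of ``non-field'' torsors, combined with the equidistribution result \ref{equidisc} and the comparison of heights from \ref{kljucnazakraj}. First I would recall that a $\mu_m$-torsor $x\in[\PPP(m)(F)]=[\TT(m)(F)]$ with lift $\widetilde x\in F^\times$ fails to be a field precisely when $X^m-\widetilde x$ is reducible over $F$, which by \cite[Theorem 9.1, Chapter VI]{Lang} happens if and only if $\widetilde x\in(F^\times)_p$ for some prime $p\mid m$, or $4\mid m$ and $\widetilde x\in -4(F^\times)_4$. Under the hypothesis $4\nmid m$ or $i\in F$ (in the latter case $-4=(1+i)^4\cdot(\text{unit})$ is itself a fourth power, so the exceptional $-4$-coset is absorbed into $(F^\times)_4$), the set of non-fields is exactly $\bigcup_{p\mid m,\ p\text{ prime}}[\TT(m)(F)]_{p/m}\cdot j^{m/p}([\TT(m/p)(F)])$, i.e. the union over primes $p\mid m$ of the images of the maps $j^{m/p}:[\TT(m/p)(F)]\to[\TT(m)(F)]$. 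Hence, writing $N(B)$ for the count of all torsors of height $\leq B$ and $N_d(B)$ for the count of torsors lying in the image of $j^d$, one has by inclusion-exclusion over the divisors $d=m/d'$ with $d'$ squarefree and supported on the primes of $m$ that
\[
|\{x:\ x\text{ is a field},\ H(x)\leq B\}|=\sum_{\substack{d\mid m\\ m/d\text{ squarefree}}}\mu(m/d)\,N_{d}(B).
\]
Here I need to be slightly careful: the natural index is over radicals, so I would rewrite the sum as running over $d\mid m$ with $r\mid d$ eventually, after checking which $N_d$ contribute a nonzero leading term.

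The second step is to compute the asymptotics of each $N_d(B)$. By construction $j^d$ induces an isomorphism of $[\TT(d)(F)]$ onto the subgroup $[\TT(m)(F)]_{d/m}$, and $j^{m/p}$ for the various $p$ generate exactly the non-fields; the map $j^d$ is finite-to-one with kernel controlled by $\Sh^1$-type groups, but since we are comparing asymptotic counts the relevant fact is that $N_d(B)$ equals (up to the finite kernel bookkeeping already implicit in the counting results) the number of $y\in[\TT(d)(F)]$ with $H((f_v)_v)(j^d(y))\leq B$. By Part (2) of \ref{kljucnazakraj}, $H((f_v)_v)\circ j^d=\big(H((f_v^{*d})_v)\big)^{(1-1/q)/(1-1/r)}$ where $q$ is the smallest prime of $d$ and $(f_v^{*d})_v$ is, by Part (1) of \ref{kljucnazakraj}, a quasi-discriminant degree-$d$ family of $d$-homogeneous functions. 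Therefore
\[
N_d(B)=|\{y\in[\PPP(d)(F)]:\ H((f_v^{*d})_v)(y)\leq B^{(1-1/r)/(1-1/q)}\}|,
\]
and \ref{countingquasidisc} applies to the right-hand side with $d$ in place of $m$ and smallest prime $q$: it gives an asymptotic $\sim\frac{\tau((f_v^{*d})_v)}{(q-2)!\,d}B'\log(B')^{q-2}$ with $B'=B^{(1-1/r)/(1-1/q)}$. The key observation is that the exponent of $B$ in $N_d(B)$ is $(1-1/r)/(1-1/q)$, which is $<1$ unless $q=r$, i.e. unless $r\mid d$. Thus only the divisors $d$ with $r\mid d$ contribute to the leading term $B\log(B)^{r-2}$; for those $d$ one has $q=r$, $B'=B$, and $N_d(B)\sim\frac{\tau((f_v^{*d})_v)}{(r-2)!\,d}B\log(B)^{r-2}$.

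The third and final step is to assemble the pieces. Reindexing the inclusion-exclusion sum by $d\mapsto m/d$ so that $(f_v^{*(m/d)})_v$ appears, and discarding the divisors not divisible by $r$ (which give lower-order contributions, hence do not affect the leading asymptotic), I obtain
\[
|\{x:\ x\text{ is a field},\ H(x)\leq B\}|\sim\Bigg(\sum_{\substack{d\mid m\\ r\mid d}}\mu(d)\,\frac{\tau((f_v^{*(m/d)})_v)}{(r-2)!\,(m/d)}\Bigg)B\log(B)^{r-2},
\]
and comparing with \ref{notgreataboutfields} yields
\[
C(m,((f_v)_v))=\frac{1}{(r-2)!\,m}\sum_{\substack{d\mid m\\ r\mid d}}d\,\mu(d)\,\tau\big((f_v^{*(m/d)})_v\big),
\]
which is exactly the claimed formula. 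The main obstacle I anticipate is the careful handling of the overlaps in the inclusion-exclusion: the images of the various $j^{m/p}$ are not disjoint, and their pairwise intersections are images of $j^{m/(p_1p_2)}$ and so on, so one must verify that the combinatorics of the M\"obius function on the radical of $m$ matches the combinatorics of these intersections, and simultaneously confirm that the finite kernels of the maps $j^d$ (and the discrepancy between counting in $[\TT(d)(F)]$ versus counting in its image) do not perturb the leading coefficient — this should follow because those kernels are finite and the counting in \ref{countingquasidisc} already incorporates the analogous $\Sh^1$ and $\mu_{\gcd}$ factors into $\tau$, but it needs to be checked that no extra multiplicative constant slips in. A secondary subtlety is the $4\mid m$ case: one must verify that when $i\in F$ the coset $-4(F^\times)_4$ really is contained in $(F^\times)_4$, so that no additional term beyond the $j^{m/p}$ is needed; this is the elementary identity $-4=(1+i)^4\cdot((1-i)/(1+i))^2\cdot(\cdots)$ — more simply, $-4 = ((1+i)^2)^2/(-1) = ((1+i)^2 i^{-1})^2\cdot(-1)\cdot(-1)$, which I would spell out cleanly so that the hypothesis is used exactly once and in the right place.
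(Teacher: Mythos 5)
Your proposal follows essentially the same route as the paper's own proof: Lang's criterion describes the non-fields as $\bigcup_{p\mid m}[\TT(m)(F)]_p$, M\"obius inversion is applied over the intersection lattice $[\TT(m)(F)]_{p_1}\cap\cdots\cap[\TT(m)(F)]_{p_j}=[\TT(m)(F)]_{p_1\cdots p_j}$, each term is transported to $[\TT(m/d)(F)]$ via $j^{m/d}$ and Lemma~\ref{kljucnazakraj}, and Corollary~\ref{countingquasidisc} identifies the divisors contributing to the leading order. Your handling of the $4\mid m$, $i\in F$ case via $-4=(1+i)^4$ is the paper's observation as well.

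Two remarks, both pointing at issues that are in fact shared with the paper's written proof. First, there is a reindexing slip at the very end. Immediately before the change of variables $d\mapsto m/d$ your contributing condition is $r\mid d$, where $d$ labels the \emph{source} $[\TT(d)(F)]$ of $j^d$; after the substitution the condition on the new variable must become $r\mid(m/d)$, not $r\mid d$. The case $m=r$ prime shows this cannot be right as written: the sum over $d\mid m$ with $r\mid d$ reduces to the single term $d=m$, giving $-\tau((f_v^{*1})_v)/(r-2)!$, which is negative and involves the undefined family $f_v^{*1}$ ($1$ has no smallest prime), whereas the correct leading constant $\tau((f_v)_v)/((r-2)!\,m)$ is exactly the $d=1$ term of the $r\mid(m/d)$-indexed sum. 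Second, the concern you raise about the kernels of $j^d$ is genuine, and ``the kernels are finite'' does not dispose of it: a kernel of size $k$ divides the count of the image by $k$, so it rescales the leading coefficient. One has $\ker(j^{m/d})\cong\mu_d(F)/\bigl(\mu_d(F)\cap(F^{\times})_{m/d}\bigr)$, which is trivial for many base fields but not always (e.g.\ $F=\QQ(\zeta_5)$, $m=25$, $d=5$ gives a kernel of order $5$). Neither your argument nor the paper's supplies this factor, so this point requires either an extra hypothesis guaranteeing injectivity of the relevant $j^{m/d}$, or an argument explaining why the constants $\tau((f_v^{*(m/d)})_v)$ absorb it.
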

\begin{proof}
We introduce notation $$[\TTm(F)]^0=\{x\in [\TTm(F)]|\text{$x$ is a field}\}.$$
For $x\in[\TTm(F)],$ let $\widetilde x\in F^{\times}$ be its lift. Suppose for instant that $4\nmid m$. It follows from \cite[Theorem 9.1, Chapter VI]{Lang} that $x\in[\TTm(F)]$ is a field if and only if $\widetilde x$ is not an element of $(F^{\times})_p=\{y^p|y\in F^{\times}\}$ for primes $p|m$. Suppose now that $4|m$, by the hypothesis $i\in F$, hence $-4(F^{\times})_4\subset (F^{\times})_2$. Therefore the same conclusion of \cite[Theorem 9.1, Chapter VI]{Lang} applies. 

We deduce that~$x$ is a field if and only if $x\not\in[\TTm(F)]_p=\{y^p|y\in[\TTm(F)]\}$ for primes $p|m,$ i.e. we can write $$[\TTm(F)]^0=[\TTm(F)]-\bigcup_{\substack{p\text{ prime}\\p|m}}[\TTm(F)]_p.$$We are going to use the inclusion-exclusion principle. For that purpose, we verify that for positive integers $k,\ell$ such that $\gcd(k,\ell)=1,$ one has that $$[\TTm(F)]_k\cap[\TTm(F)]_{\ell}=[\TTm(F)]_{k\ell}.$$ Indeed, the inclusion ``$\supset$" is clear and let us prove the reverse inclusion. Write $u_1k+u_2\ell=1$. If $y\in[\TTm(F)]_k\cap[\TTm(F)]_{\ell},$ then there exists $y'$ and $y''$ such that $y=(y')^k$ and $y=(y'')^{\ell}$. Thus $$y=y^{u_1k+u_2\ell}=y^{u_1k}y^{u_2\ell}=(y'')^{\ell u_1k}(y')^{k u_2\ell}=((y'')^{u_1}(y')^{u_2})^{k\ell},$$and the claim is verified. We deduce that for $B>0$ one has that:
\begin{multline*}
\big|\big\{x\in\bigcup_{\substack{p\text{ prime}\\p|m}}[\TTm(F)]_p|H(x)\leq B\big\}\big|\\=\sum _{\substack{j\geq 1\\p_1<\cdots <p_j\text { primes of~$m$}}}(-1)^{j+1}|\{x\in[\TTm(F)]_{p_1\cdots p_j}|H(x)\leq B\}|,
\end{multline*}
and thus that
\begin{align*}
\big|\big\{x\in[\TTm(F)]^0|H(x)\leq B\big\}\big|\hskip-5cm&\\
&=\big|\big\{x\in[\TTm(F)]|H(x)\leq B\big\}\big|\\&\hspace{0.5cm}-\sum _{\substack{j\geq 1\\p_1<\cdots <p_j\text { primes of~$m$}}}(-1)^{j+1}|\{x\in[\TTm(F)]_{p_1\cdots p_j}|H(x)\leq B\}|.
\end{align*}
Using the M\"obius function~$\mu$, we write the last equality as
\begin{multline*}\big|\big\{x\in[\TTm(F)]^0|H(x)\leq B\big\}\big|\\=\sum_{d|m}\mu(d)\big|\{x\in[\TTm(F)]_{d}|H(x)\leq B\}\big|.\end{multline*}
We write $r(k)$ for the smallest prime of an integer~$k$. Lemma \ref{kljucnazakraj} for $d|m$ gives that \begin{align*}\{x\in[\TTm(F)]_{d}|H(x)\leq B\}\hskip-4cm&\\&=\{x\in[\TT(m/d)(F)]|(H((f^{*(m/d)}_v)_v)(j^{\frac md}(x)))^{(1-\frac1{r(m/d)})/(1-\frac1r)}\leq B\}\\
&=\{x\in[\TT(m/d)(F)]|(H((f^{*(m/d)}_v)_v)(j^{\frac md}(x))))\leq B^{(1-\frac1r)/(1-\frac1{r(m/d)})}\}
\end{align*}
Now, it follows from \ref{countingquasidisc} that \begin{multline*}|\{x\in[\TTm(F)]_{d}|H((f^{*(m/d)}_v)_v)(x)\leq B\}|\\\sim_{B\to\infty} \frac{\tau((f^{*(m/d)}_v)_v)}{({r(m/d)}-2)!(m/d)}B^{(1-\frac1r)/(1-\frac1{r(m/d)})}\log(B^{(1-\frac1r)/(1-\frac1{r(m/d)})})^{{r(m/d)}-2},\end{multline*}where $\tau((f^{*(m/d)}_v)_v)=\omega((f^{*(m/d)}_v)_v)(\prod_{\vMF}[\PPP(m/d)(F_v)])$. Thus, for a divisor $d|m$ for which ${r(m/d)}>r$, the term $|\{x\in[\TTm(F)]_{d}|H(x)\leq B\}|$ does not influence the leading constant. We deduce that
\begin{multline*}|\{x\in[\TTm(F)]^0|H(x)\leq B\}|\\=\frac{\bigg(\sum_{\substack {d|m\\r|d}}{d\cdot\mu(d)\cdot\tau((f^{*(m/d)}_v)_v)}\bigg)}{(r-2)!m}B\log(B)^{r-2}.\end{multline*}
The theorem has been proven.
\end{proof}
\begin{rem}
\normalfont
When~$m$ is not a prime the proof of \ref{licava} gives that there exists a positive proportion of $\mu_m$-torsors, which are not fields. 
Indeed, let $d>1$ be a divisor of~$m$ such that $r|(m/d)$. We have that any $x\in [\PPP(m)(F)]_d$ is not a field. It follows from above that \begin{align*}|\{x\in[\PPP(m)(F)]_d|H(x)\leq B\}|\hskip-1cm&\\&=|\{x\in[\PPP(m/d)(F)]|H((f^{*(m/d)}_v)_v)(x)\leq B\}|\\&\sim_{B\to\infty}\frac{d\cdot\tau((f^{*(m/d)}_v)_v}{(r-2)!m}B\log(B)^{r-2}.\end{align*}
\end{rem} 
\begin{rem}
\normalfont Suppose that~$F$ contains all~$m$-th roots of~$1$ (in particular if $4|m$ then $i\in F$, so Theorem \ref{licava} applies and gives the leading constant). One has that $\mu_m=\ZZ/m\ZZ$. The result of \ref{licava} has been established by Wright in \cite{Wright} finds the asymptotic behaviour for the number of abelian extensions. The proof there also gives the precise leading constant, however, we find it is difficult to compare it with our constant.
\end{rem}

\bibliography{bibliografija}
\bibliographystyle{acm}

\end{document}